\newtheorem{cor}[subsubsection]{Corollary}
\newtheorem{lem}[subsubsection]{Lemma}
\newtheorem{prop}[subsubsection]{Proposition}
\newtheorem{thmconstr}[subsubsection]{Theorem-Construction}
\newtheorem{thm}[subsubsection]{Theorem}
\newtheorem{defn}[subsubsection]{Definition}
\theoremstyle{definition}
\theoremstyle{remark}
\newtheorem{rem}[subsubsection]{Remark}
\newcommand{\thmref}[1]{Theorem~\ref{#1}}
\newcommand{\thmmref}[1]{Thm.~\ref{#1}}
\newcommand{\secref}[1]{Sect.~\ref{#1}}
\newcommand{\lemref}[1]{Lemma~\ref{#1}}
\newcommand{\propref}[1]{Proposition~\ref{#1}}
\newcommand{\corrref}[1]{Cor.~\ref{#1}}
\newcommand{\corref}[1]{Corrollary~\ref{#1}}
\numberwithin{equation}{section}
\newcommand{\nc}{\newcommand}
\nc{\renc}{\renewcommand}
\nc{\ssec}{\subsection}
\nc{\sssec}{\subsubsection}
\nc{\on}{\operatorname}
\nc\ol{\overline}
\nc\wt{\widetilde}
\nc\tboxtimes{\wt{\boxtimes}}
\nc\tstar{\wt{\star}}
\nc{\alp}{a}
\nc{\ZZ}{{\mathbb Z}}
\nc{\NN}{{\mathbb N}}
\nc{\OO}{{\mathbb O}}
\renc{\SS}{{\mathbb S}}
\nc{\DD}{{\mathbb D}}
\nc{\GG}{{\mathbb G}}
\nc{\Fq}{{\mathbb F}_q}
\nc{\Fqb}{\ol{{\mathbb F}_q}}
\nc{\Ql}{\ol{{\mathbb Q}_\ell}}
\nc{\id}{\text{id}}
\nc\X{\mathcal X}
\nc{\Hom}{\on{Hom}}
\nc{\Lie}{\on{Lie}}
\nc{\Loc}{\on{Loc}}
\nc{\Pic}{\on{Pic}}
\nc{\Bun}{\on{Bun}}
\nc{\IC}{\on{IC}}
\nc{\ICs}{\on{IC}^{\frac{\infty}{2}}}
\nc{\bICs}{\overset{\bullet}{\on{IC}}{}^{\frac{\infty}{2}}}
\nc{\tICs}{\wt{\on{IC}}{}^{\frac{\infty}{2}}}
\nc{\ICsg}{\on{IC}^{\gamma+\frac{\infty}{2}}}
\nc{\ICsl}{\on{IC}^{\lambda+\frac{\infty}{2}}}
\nc{\ICslm}{\on{IC}^{\lambda+\frac{\infty}{2},-}}
\nc{\ICsm}{\on{IC}^{\frac{\infty}{2},-}}
\nc{\bICsm}{\overset{\bullet}{\on{IC}}{}^{\frac{\infty}{2},-}}
\nc{\Aut}{\on{Aut}}
\nc{\rk}{\on{rk}}
\nc{\Sh}{\on{Sh}}
\nc{\Perv}{\on{Perv}}
\nc{\pos}{{\on{pos}}}
\nc{\Conv}{\on{Conv}}
\nc{\Sph}{\on{Sph}}
\nc{\Sat}{\on{Sat}}
\nc{\Sym}{\on{Sym}}
\nc{\BunBb}{\overline{\Bun}_B}
\nc{\BunNb}{\overline{\Bun}_N}
\nc{\BunTb}{\overline{\Bun}_T}
\nc{\BunBbm}{\overline{\Bun}_{B^-}}
\nc{\BunBbel}{\overline{\Bun}_{B,el}}
\nc{\BunBbmel}{\overline{\Bun}_{B^-,el}}
\nc{\Buno}{\overset{o}{\Bun}}
\nc{\BunPb}{{\overline{\Bun}_P}}
\nc{\BunBM}{\Bun_{B(M)}}
\nc{\BunBMb}{\overline{\Bun}_{B(M)}}
\nc{\BunPbw}{{\widetilde{\Bun}_P}}
\nc{\BunBP}{\widetilde{\Bun}_{B,P}}
\nc{\GUb}{\overline{G/U}}
\nc{\GUPb}{\overline{G/U(P)}}
\nc{\Hhom}{\underline{\on{Hom}}}
\nc\syminfty{\on{Sym}^{\infty}}
\nc\lal{\ol{\kappa_x}}
\nc\xl{\ol{x}}
\nc\thl{\ol{\theta}}
\nc\nul{\ol{\nu}}
\nc\lambdal{\ol{\lambda}}
\nc{\oX}{\overset{o}{X}{}}
\nc{\hl}{\overset{\leftarrow}h{}}
\nc{\hr}{\overset{\rightarrow}h{}}
\nc{\M}{{\mathcal M}}
\nc{\N}{{\mathcal N}}
\nc{\F}{{\mathcal F}}
\nc{\D}{{\mathcal D}}
\nc{\Q}{{\mathcal Q}}
\nc{\Y}{{\mathcal Y}}
\nc{\G}{{\mathcal G}}
\nc{\E}{{\mathcal E}}
\nc{\CalC}{{\mathcal C}}
\nc\Dh{\widehat{\D}}
\nc{\C}{{\mathcal C}}
\nc{\K}{{\mathcal K}}
\renewcommand{\H}{{\mathcal H}}
\nc{\T}{{\mathcal T}}
\nc{\V}{{\mathcal V}}
\renc{\P}{{\mathcal P}}
\nc{\A}{{\mathcal A}}
\nc{\B}{{\mathcal B}}
\nc{\U}{{\mathcal U}}
\nc{\Gr}{{\on{Gr}}}
\nc{\frn}{{\check{\mathfrak u}(P)}}
\nc{\fC}{\mathfrak C}
\nc{\p}{\mathfrak p}
\nc{\q}{\mathfrak q}
\nc\f{{\mathfrak f}}
\nc{\qo}{{\mathfrak q}}
\nc{\po}{{\mathfrak p}}
\nc{\s}{{\mathfrak s}}
\nc\w{\text{w}}
\renewcommand{\mod}{{\on{-mod}}}
\nc\Spec{\on{Spec}}
\nc\Mod{\on{Mod}}
\nc{\tw}{\widetilde{\mathfrak t}}
\nc{\pw}{\widetilde{\mathfrak p}}
\nc{\qw}{\widetilde{\mathfrak q}}
\nc{\jw}{\widetilde j}
\nc{\grb}{\overline{\Gr}}
\nc{\I}{\mathcal I}
\nc{\kappach}{{\check\kappa_x}}
\nc{\Lambdach}{{\check\Lambda}{}}
\nc{\lambdach}{{\check\lambda}}
\nc{\omegach}{{\check\omega}}
\nc{\nuch}{{\check\nu}}
\nc{\etach}{{\check\eta}}
\nc{\alphach}{{\checka}}
\nc{\oblvtach}{{\check\oblvta}}
\nc{\pich}{{\check\pi}}
\nc{\ch}{{\check h}}
\nc{\Hb}{\overline{\H}}
\nc{\BA}{{\mathbb{A}}}
\nc{\BC}{{\mathbb{C}}}
\nc{\BE}{{\mathbb{E}}}
\nc{\BF}{{\mathbb{F}}}
\nc{\BG}{{\mathbb{G}}}
\nc{\BM}{{\mathbb{M}}}
\nc{\BO}{{\mathbb{O}}}
\nc{\BD}{{\mathbb{D}}}
\nc{\BN}{{\mathbb{N}}}
\nc{\BP}{{\mathbb{P}}}
\nc{\BQ}{{\mathbb{Q}}}
\nc{\BR}{{\mathbb{R}}}
\nc{\BZ}{{\mathbb{Z}}}
\nc{\BS}{{\mathbb{S}}}
\nc{\CA}{{\mathcal{A}}}
\nc{\CB}{{\mathcal{B}}}
\nc{\CE}{{\mathcal{E}}}
\nc{\CF}{{\mathcal{F}}}
\nc{\CG}{{\mathcal{G}}}
\nc{\CH}{{\mathcal{H}}}
\nc{\CL}{{\mathcal{L}}}
\nc{\CC}{{\mathcal{C}}}
\nc{\CM}{{\mathcal{M}}}
\nc{\CN}{{\mathcal{N}}}
\nc{\CK}{{\mathcal{K}}}
\nc{\CO}{{\mathcal{O}}}
\nc{\CP}{{\mathcal{P}}}
\nc{\CQ}{{\mathcal{Q}}}
\nc{\CR}{{\mathcal{R}}}
\nc{\CS}{{\mathcal{S}}}
\nc{\CT}{{\mathcal{T}}}
\nc{\CU}{{\mathcal{U}}}
\nc{\CV}{{\mathcal{V}}}
\nc{\CW}{{\mathcal{W}}}
\nc{\CX}{{\mathcal{X}}}
\nc{\CY}{{\mathcal{Y}}}
\nc{\CZ}{{\mathcal{Z}}}
\nc{\CI}{{\mathcal{I}}}
\nc{\CJ}{{\mathcal{J}}}
\nc{\csM}{{\check{\mathcal A}}{}}
\nc{\oM}{{\overset{\circ}{\mathcal M}}{}}
\nc{\obM}{{\overset{\circ}{\mathbf M}}{}}
\nc{\oCA}{{\overset{\circ}{\mathcal A}}{}}
\nc{\obA}{{\overset{\circ}{\mathbf A}}{}}
\nc{\ooM}{{\overset{\circ}{M}}{}}
\nc{\osM}{{\overset{\circ}{\mathsf M}}{}}
\nc{\vM}{{\overset{\bullet}{\mathcal M}}{}}
\nc{\nM}{{\underset{\bullet}{\mathcal M}}{}}
\nc{\oD}{{\overset{\circ}{\mathcal D}}{}}
\nc{\obD}{{\overset{\circ}{\mathbf D}}{}}
\nc{\oA}{{\overset{\circ}{\mathbb A}}{}}
\nc{\op}{{\overset{\bullet}{\mathbf p}}{}}
\nc{\cp}{{\overset{\circ}{\mathbf p}}{}}
\nc{\oU}{{\overset{\bullet}{\mathcal U}}{}}
\nc{\oZ}{{\overset{\circ}{\mathcal Z}}{}}
\nc{\ofZ}{{\overset{\circ}{\mathfrak Z}}{}}
\nc{\oF}{{\overset{\circ}{\fF}}}
\nc{\fa}{{\mathfrak{a}}}
\nc{\fb}{{\mathfrak{b}}}
\nc{\fd}{{\mathfrak{d}}}
\nc{\ff}{{\mathfrak{f}}}
\nc{\fg}{{\mathfrak{g}}}
\nc{\fgl}{{\mathfrak{gl}}}
\nc{\fh}{{\mathfrak{h}}}
\nc{\fj}{{\mathfrak{j}}}
\nc{\fl}{{\mathfrak{l}}}
\nc{\fm}{{\mathfrak{m}}}
\nc{\fn}{{\mathfrak{n}}}
\nc{\fu}{{\mathfrak{u}}}
\nc{\fp}{{\mathfrak{p}}}
\nc{\fr}{{\mathfrak{r}}}
\nc{\fs}{{\mathfrak{s}}}
\nc{\ft}{{\mathfrak{t}}}
\nc{\fz}{{\mathfrak{z}}}
\nc{\fsl}{{\mathfrak{sl}}}
\nc{\hsl}{{\widehat{\mathfrak{sl}}}}
\nc{\hgl}{{\widehat{\mathfrak{gl}}}}
\nc{\hg}{{\widehat{\mathfrak{g}}}}
\nc{\chg}{{\widehat{\mathfrak{g}}}{}^\vee}
\nc{\hn}{{\widehat{\mathfrak{n}}}}
\nc{\chn}{{\widehat{\mathfrak{n}}}{}^\vee}
\nc{\fA}{{\mathfrak{A}}}
\nc{\fB}{{\mathfrak{B}}}
\nc{\fD}{{\mathfrak{D}}}
\nc{\fE}{{\mathfrak{E}}}
\nc{\fF}{{\mathfrak{F}}}
\nc{\fG}{{\mathfrak{G}}}
\nc{\fK}{{\mathfrak{K}}}
\nc{\fJ}{{\mathfrak{J}}}
\nc{\fL}{{\mathfrak{L}}}
\nc{\fM}{{\mathfrak{M}}}
\nc{\fN}{{\mathfrak{N}}}
\nc{\fP}{{\mathfrak{P}}}
\nc{\fU}{{\mathfrak{U}}}
\nc{\fV}{{\mathfrak{V}}}
\nc{\fZ}{{\mathfrak{Z}}}
\nc{\ba}{{\mathbf{a}}}
\nc{\bb}{{\mathbf{b}}}
\nc{\bc}{{\mathbf{c}}}
\nc{\bd}{{\mathbf{d}}}
\nc{\bbf}{{\mathbf{f}}}
\nc{\be}{{\mathbf{e}}}
\nc{\bi}{{\mathbf{i}}}
\nc{\bj}{{\mathbf{j}}}
\nc{\bn}{{\mathbf{n}}}
\nc{\bo}{{\mathbf{o}}}
\nc{\bp}{{\mathbf{p}}}
\nc{\bq}{{\mathbf{q}}}
\nc{\bu}{{\mathbf{u}}}
\nc{\bv}{{\mathbf{v}}}
\nc{\bx}{{\mathbf{x}}}
\nc{\bs}{{\mathbf{s}}}
\nc{\by}{{\mathbf{y}}}
\nc{\bw}{{\mathbf{w}}}
\nc{\bA}{{\mathbf{A}}}
\nc{\bK}{{\mathbf{K}}}
\nc{\bB}{{\mathbf{B}}}
\nc{\bF}{{\mathbf{F}}}
\nc{\bC}{{\mathbf{C}}}
\nc{\bG}{{\mathbf{G}}}
\nc{\bD}{{\mathbf{D}}}
\nc{\bE}{{\mathbf{E}}}
\nc{\bH}{{\mathbf{H}}}
\nc{\bI}{{\mathbf{I}}}
\nc{\bM}{{\mathbf{M}}}
\nc{\bN}{{\mathbf{N}}}
\nc{\bO}{{\mathbf{O}}}
\nc{\bV}{{\mathbf{V}}}
\nc{\bW}{{\mathbf{W}}}
\nc{\bX}{{\mathbf{X}}}
\nc{\bZ}{{\mathbf{Z}}}
\nc{\bS}{{\mathbf{S}}}
\nc{\sA}{{\mathsf{A}}}
\nc{\sB}{{\mathsf{B}}}
\nc{\sC}{{\mathsf{C}}}
\nc{\sD}{{\mathsf{D}}}
\nc{\sF}{{\mathsf{F}}}
\nc{\sK}{{\mathsf{K}}}
\nc{\sM}{{\mathsf{M}}}
\nc{\sO}{{\mathsf{O}}}
\nc{\sW}{{\mathsf{W}}}
\nc{\sQ}{{\mathsf{Q}}}
\nc{\sP}{{\mathsf{P}}}
\nc{\sZ}{{\mathsf{Z}}}
\nc{\sV}{{\mathsf{V}}}
\nc{\sr}{{\mathsf{r}}}
\nc{\bk}{{\mathsf{k}}}
\nc{\sg}{{\mathsf{g}}}
\nc{\sff}{{\mathsf{f}}}
\nc{\sfe}{{\mathsf{e}}}
\nc{\sfj}{{\mathsf{j}}}
\nc{\sfi}{{\mathsf{i}}}
\nc{\sfb}{{\mathsf{b}}}
\nc{\sfc}{{\mathsf{c}}}
\nc{\sd}{{\mathsf{d}}}
\nc{\sv}{{\mathsf{v}}}
\nc{\sw}{{\mathsf{w}}}
\nc{\BK}{{\bar{K}}}
\nc{\tA}{{\widetilde{\mathbf{A}}}}
\nc{\tB}{{\widetilde{\mathcal{B}}}}
\nc{\tg}{{\widetilde{\mathfrak{g}}}}
\nc{\tG}{{\widetilde{G}}}
\nc{\TM}{{\widetilde{\mathbb{M}}}{}}
\nc{\tO}{{\widetilde{\mathsf{O}}}{}}
\nc{\tU}{{\widetilde{\mathfrak{U}}}{}}
\nc{\TZ}{{\tilde{Z}}}
\nc{\tx}{{\tilde{x}}}
\nc{\tbv}{{\tilde{\bv}}}
\nc{\tfP}{{\widetilde{\mathfrak{P}}}{}}
\nc{\tz}{{\tilde{\zeta}}}
\nc{\tmu}{{\tilde{\lambda}}}
\nc{\urho}{\underline{\pi}}
\nc{\uB}{\underline{B}}
\nc{\uC}{{\underline{\mathbb{C}}}}
\nc{\ui}{\underline{i}}
\nc{\uj}{\underline{j}}
\nc{\ofP}{{\overline{\mathfrak{P}}}}
\nc{\oB}{{\overline{\mathcal{B}}}}
\nc{\og}{{\overline{\mathfrak{g}}}}
\nc{\oI}{{\overline{I}}}
\nc{\eps}{\varepsilon}
\nc{\hrho}{{\hat{\pi}}}
\nc{\one}{{\mathbf{1}}}
\nc{\two}{{\mathbf{t}}}
\nc{\Rep}{{\mathop{\operatorname{\rm Rep}}}}
\nc{\Tot}{{\mathop{\operatorname{\rm Tot}}}}
\nc{\Ker}{{\mathop{\operatorname{\rm Ker}}}}
\nc{\Hilb}{{\mathop{\operatorname{\rm Hilb}}}}
\nc{\End}{{\mathop{\operatorname{\rm End}}}}
\nc{\Ext}{{\mathop{\operatorname{\rm Ext}}}}
\nc{\CHom}{{\mathop{\operatorname{{\mathcal{H}}\it om}}}}
\nc{\GL}{{\mathop{\operatorname{\rm GL}}}}
\nc{\gr}{{\mathop{\operatorname{\rm gr}}}}
\nc{\Id}{{\mathop{\operatorname{\rm Id}}}}
\nc{\de}{{\mathop{\operatorname{\rm def}}}}
\nc{\length}{{\mathop{\operatorname{\rm length}}}}
\nc{\supp}{{\mathop{\operatorname{\rm supp}}}}
\nc{\Cliff}{{\mathsf{Cliff}}}
\nc{\Fl}{\on{Fl}}
\nc{\Fib}{{\mathsf{Fib}}}
\nc{\Coh}{{\mathsf{Coh}}}
\nc{\QCoh}{{\on{QCoh}}}
\nc{\IndCoh}{{\on{IndCoh}}}
\nc{\FCoh}{{\mathsf{FCoh}}}
\nc{\reg}{{\text{\rm reg}}}
\nc{\cplus}{{\mathbf{C}_+}}
\nc{\cminus}{{\mathbf{C}_-}}
\nc{\cthree}{{\mathbf{C}_*}}
\nc{\Qbar}{{\bar{Q}}}
\nc\Eis{\on{Eis}}
\nc\Eisb{\ol\Eis{}}
\nc\Eisr{\on{Eis}^{rat}{}}
\nc\wh{\widehat}
\nc{\Def}{\on{Def_{\check{\fb}}(E)}}
\nc{\barZ}{\overline{Z}{}}
\nc{\barbarZ}{\overline{\barZ}{}}
\nc{\barpi}{\overline\iota}
\nc{\barbarpi}{\overline\barpi}
\nc{\barpip}{\overline\iota{}^+}
\nc{\barpim}{\overline\iota{}^-}
\nc{\fq}{\mathfrak q}
\nc{\fqb}{\ol{\fq}{}}
\nc{\fpb}{\ol{\fp}{}}
\nc{\fpr}{{\fp^{rat}}{}}
\nc{\fqr}{{\fq^{rat}}{}}
\nc{\hattimes}{\wh\otimes}
\nc{\bh}{{\bar{h}}}
\nc{\bOmega}{{\overline{\Omega(\check \fn)}}}
\nc{\seq}[1]{\stackrel{#1}{\sim}}
\nc{\cT}{{\check{T}}}
\nc{\cG}{{\check{G}}}
\nc{\cM}{{\check{M}}}
\nc{\cB}{{\check{B}}}
\nc{\cN}{{\check{N}}}
\nc{\ct}{{\check{\mathfrak t}}}
\nc{\cg}{{\check{\fg}}}
\nc{\cb}{{\check{\fb}}}
\nc{\cn}{{\check{\fn}}}
\nc{\cLambda}{{\check\Lambda}}
\nc{\cla}{{\check\kappa_x}}
\nc{\cmu}{{\check\lambda}}
\nc{\clambda}{{\check\lambda}}
\nc{\cnu}{{\check\nu}}
\nc{\ceta}{{\check\eta}}
\nc{\DefbE}{{\on{Def}_{\cB}(E_\cT)}}
\nc{\imathb}{{\ol{\imath}}}
\nc{\KG}{K\backslash G}
\nc{\comult}{{co\text{-}mult}}
\nc{\counit}{{co\text{-}unit}}
\nc{\uHom}{{\underline{\Maps}}}
\nc{\dgSch}{\on{Sch}}
\nc{\Sch}{\on{Sch}}
\nc{\affdgSch}{\on{Sch}^{\on{aff}}}
\nc{\affSch}{\on{Sch}^{\on{aff}}}
\nc{\Groupoids}{\on{Grpd}}
\nc{\inftygroup}{\on{Spc}}
\nc{\inftyCat}{\infty\on{-Cat}}
\nc{\StinftyCat}{\inftyCat^{\on{St}}}
\nc{\MoninftyCat}{\infty\on{-Cat}^{\on{Mon}}}
\nc{\SymMoninftyCat}{\infty\on{-Cat}^{\on{SymMon}}}
\nc{\SymMonStinftyCat}{\on{DGCat}^{\on{SymMon}}}
\nc{\MonStinftyCat}{\on{DGCat}^{\on{Mon}}}
\nc{\inftystack}{\on{Stk}}
\nc{\inftystackalg}{Stk^{1\text{-}alg}}
\nc{\inftyprestack}{\on{PreStk}}
\nc{\inftydgnearstack}{\on{NearStk}}
\nc{\inftydgstack}{\on{Stk}}
\nc{\inftydgstackalg}{DGStk^{1\text{-}alg}}
\nc{\inftydgprestack}{\on{PreStk}}
\nc{\dgindSch}{\on{indSch}}
\nc{\indSch}{{}^{\on{cl}}\!\on{indSch}}
\nc{\infSch}{\on{infSch}}
\nc{\dr}{{\on{dR}}}
\nc{\mmod}{{\on{-}\!{\mathbf{mod}}}}
\nc{\starr}{\text{\dh}}
\nc{\Spectra}{\on{Spectra}}
\nc{\Crys}{\on{Crys}}
\nc{\oblv}{{\mathbf{oblv}}}
\nc{\ind}{{\mathbf{ind}}}
\nc{\coind}{{\mathbf{coind}}}
\nc{\inv}{{\mathbf{inv}}}
\nc{\triv}{{\mathbf{triv}}}
\nc{\CMaps}{{\mathcal Maps}}
\nc{\Maps}{\on{Maps}}
\nc{\bMaps}{\mathbf{Maps}}
\nc{\BMaps}{\ul{\on{Maps}}}
\nc{\Grid}{\on{Grid}}
\nc{\hGrid}{\on{Grid}^{\geq\,\on{dgnl}}}
\nc{\Diag}{\on{Diag}}
\nc{\bDelta}{\mathbf{\Delta}}
\nc{\tCateg}{(\infty\on{-2)-Cat}}
\nc{\ul}{\underline}
\nc{\Seg}{\on{Seq}}
\nc{\triSeg}{\on{tri-Seq}}
\nc{\quadSeg}{\on{quad-Seq}}
\nc{\nSeg}{\on{n-Seq}}
\nc{\Segm}{\on{Seg}^{\on{mkd}}}
\nc{\fLm}{\fL^{\on{mkd}}}
\nc{\inftyCatm}{\inftyCat^{\on{mkd}}}
\nc{\Blocks}{\mathbf{Blocks}}
\nc{\Snakes}{\mathbf{Snakes}}
\nc{\Sets}{\on{Sets}}
\nc{\Ran}{{\on{Ran}}}
\nc{\Vect}{\on{Vect}}
\nc{\Shv}{\on{Shv}}
\nc{\unn}{\mathbf{union}}
\nc{\Spc}{\on{Spc}}
\nc{\ppart}{(\!(t)\!)}
\nc{\qqart}{[\![t]\!]}
\nc{\Dmod}{\on{D-mod}}
\nc{\cD}{\mathcal D}
\nc{\ocD}{\overset{\circ}{\cD}}
\nc{\sfp}{\mathsf{p}}
\nc{\sfq}{\mathsf{q}}
\nc{\DGCat}{\on{DGCat}}
\renc{\det}{\on{det}}
\nc{\Conf}{\on{Conf}}
\nc{\Whit}{\on{Whit}}
\nc{\Reg}{\on{Reg}}
\nc{\Res}{\on{Res}}
\nc{\BunNbom}{\overline\Bun_N^{\omega^\rho}} 
\nc{\BunNbox}{(\overline\Bun_N^{\omega^\rho})_{\infty\cdot x}} 
\nc{\BunNmbox}{(\overline\Bun_{N^-}^{\omega^\rho})_{\infty\cdot x}}
\nc{\Hecke}{\on{Hecke}}
\nc{\BHecke}{B\on{-Hecke}}
\nc{\BmHecke}{B^-\on{-Hecke}}
\nc{\bHecke}{\overset{\bullet}{\on{Hecke}}}
\nc{\bCZ}{\ol\CZ}
\nc{\oCZ}{\overset{\circ}\CZ} 
\nc{\boCZ}{\ol{\oCZ}}
\nc{\sotimes}{\overset{!}\otimes}
\nc{\SI}{\on{SI}}
\nc{\semiinf}{{\frac{\infty}{2}}}
\nc{\coInd}{\on{coInd}}
\nc{\Ind}{\on{Ind}}
\nc{\bCM}{\overset{\bullet}\CM{}}
\begin{document}

\title[The ``small" FLE]
{Metaplectic Whittaker category and quantum groups : \\
the ``small" FLE}

\author{D. Gaitsgory and S.~Lysenko}

\date{\today}

\maketitle

\tableofcontents

\section*{Introduction}

\ssec{Towards the FLE}

The present work lies in within the general paradigm of \emph{quantum geometric Langlands theory}. 
In the same way as the usual (i.e., non-quantum) geometric Langlands theory, arguably, originates from the
geometric Satake equivalence, the quantum geometric Langlands theory originates from the FLE, the
\emph{fundamental local equivalence}. In this subsection we will review what the FLE says. 

\sssec{}

The original FLE is a statement within the theory of D-modules, so it takes place over an algebraically closed
field $k$ of characteristic zero. Let $G$ be a reductive group and let $\cG$ be its Langlands dual, both considered
as groups over $k$. Let $\Lambda$ denote the coweight lattice of $G$, and let $\cLambda$ be the dual lattice,
which is by definition the coweight lattice of $\cG$. 

\medskip

We fix a \emph{level} for $G$, which is by definition, a Weyl group-invariant symmetric bilinear form
$$\kappa:\Lambda\otimes \Lambda\to k,$$

We will assume that resulting (symmetric) bilinear form
\begin{equation} \label{e:level k}
\ft\underset{k}\otimes \ft\to k
\end{equation}
is \emph{non-degenerate}, where $\ft:=k\underset{\BZ}\otimes \Lambda$ is the Lie algebra of the Cartan subgroup $T\subset G$.
The non-degeneracy of the pairing \eqref{e:level k} means that the resulting map
$$\ft\to \ft^*=:\check{\ft}$$
is an isomorphism. Consider the inverse map 
$$\check\ft\to \ft\simeq \check\ft^*,$$
which we can interpret as a (symmetric) bilinear form
\begin{equation} \label{e:dual level k}
\check\ft\underset{k}\otimes \check\ft\to k,
\end{equation}
or a symmetric bilinear form 
$$\check\kappa:\check\Lambda\otimes \check\Lambda\to k.$$

We will refer to $\check\kappa$ as the level \emph{dual} dual to $\kappa$. 

\sssec{}

We turn \eqref{e:level k} and \eqref{e:dual level k}
into Ad-invariant symmetric bilinear forms 
$$(-,-)_\kappa:\fg\otimes \fg\to k \text{ and } (-,-)_{\check\kappa}:\cg\otimes \cg\to k,$$
respectively, where the correspondence 
$$\on{SymBilin}(\fg,k)^G \, \Leftrightarrow \, \on{SymBilin}(\ft,k)^W$$
is given by
$$(-,-)\mapsto (-,-)|_{\ft}+\frac{(-,-)_{\on{Kil}}}{2},$$
where $(-,-)_{\on{Kil}}$ is the Killing form, and similarly for $\cg$.

\sssec{}

The form $(-,-)_\kappa$ gives rise to a Kac-Moody extension 
$$0\to k\to \hg_\kappa\to \fg\ppart\to 0$$
and to the category of twisted D-modules $\Dmod_{\kappa}(\Gr_G)$ on the affine Grassmannian $\Gr_G=G\ppart/G\qqart$ of
$G$. 

\medskip

We consider two categories associated with the above data:
\begin{equation} \label{e:two categories}
\on{KL}_\kappa(G):=\hg_\kappa\mod^{G\qqart}  \text{ and } \Whit_\kappa(G):=\Dmod_{\kappa}(\Gr_G)^{N\ppart,\chi}.
\end{equation}

Here the superscript $G\qqart$ stands for $G\qqart$-equivariance, i.e., $\on{KL}_\kappa(G)$ is modules over the
Harish-Chandra pair $(\hg_\kappa,G\qqart)$. The superscript $N\ppart,\chi$ stands for equivariance for $N\ppart$ 
against the non-degenerate character $\chi$ (see \secref{ss:def Whit} for the detailed definition). 

\medskip

We consider the similar categories for the Langlands dual group
$$\on{KL}_{\check\kappa}(\cG) \text{ and } \Whit_{\check\kappa}(\cG).$$

\sssec{}

We are now ready to state the FLE. It says that we have a canonical equivalence
\begin{equation} \label{e:FLE}
\Whit_\kappa(G) \simeq \on{KL}_{\check\kappa}(\cG). 
\end{equation} 

Symmetrically, we are also supposed to have an equivalence
$$\Whit_{\check\kappa}(\cG) \simeq \on{KL}_{\kappa}(G).$$

\medskip

The equivalence \eqref{e:FLE} is still conjectural, and the present work may be regarded as a step 
towards its proof. 

\sssec{}  \label{sss:fact categ}

The categories appearing on the two sides of \eqref{e:two categories} are not mere (DG) categories, but they carry 
extra structure.  Namely, the pair $k\ppart\supset k\qqart$ that appears in the definition of both sides
should be thought of as attached to a point $x$ on a curve $X$, where $t$ is a local parameter at $x$. 

\medskip

Each of the categories appearing in \eqref{e:two categories} has a structure of \emph{factorization category},
i.e., it can be more generally attached to a finite collection of points $\ul{x}\subset X$, i.e., a point of the
Ran space of $X$ (see \secref{ss:Ran}),
\begin{equation} \label{e:factorization category 1}
\ul{x}\rightsquigarrow \CC_{\ul{x}},
\end{equation}
and we have a system of isomorphisms 
\begin{equation} \label{e:factorization category 2}
\CC_{\ul{x}{}_1\sqcup \ul{x}{}_2}\simeq \CC_{\ul{x}{}_1}\otimes \CC_{\ul{x}{}_2},
\end{equation}
whenever $\ul{x}{}_1$ and $\ul{x}{}_2$ are \emph{disjoint}. 

\medskip

The additional structure involved in \eqref{e:FLE} is that it is supposed to be an equivalence of factorization categories. 

\ssec{Quantum groups perspective}

There is a third (and eventually there will be also a fourth) player in the equivalence \eqref{e:FLE}. This
third player is the category
$$\Rep_q(\cG),$$
which is the category of representations of the ``big" (i.e., Lusztig's quantum group). We will now explain 
how it fits into the picture.

\sssec{}

The category $\Rep_q(\cG)$ is of algebraic nature and can be defined over an arbitrary field of coefficients $\sfe$
(say, also assumed algebraically closed and of characterstic zero). 
The structure that $\Rep_q(\cG)$ possesses is that if a \emph{braided monoidal category}.

\medskip

Here the quantum parameter $q$ is a quadratic form on $\Lambda$ (which is the weight lattice for $\cG$)
with values in $\sfe^\times$. 

\sssec{}  \label{sss:factorization vs braided monoidal}

Assume now that $\sfe=\BC=k$. Take the curve $X$ to be $\BA^1$ and $x=0\in \BA^1$. In this case, for a given
DG category $\CC$, a braided monoidal structure on it (under appropriate finiteness conditions), via Riemann-Hilbert correspondence
gives rise to a factorization category (see \secref{sss:fact categ} above) with $\CC=\CC_{\{0\}}$. 

\sssec{}

Now, the equivalence established in the series of papers \cite{KL} can be formulated as saying that the factorization category 
corresponding by the above procedure to $\Rep_q(\cG)$ identifies with $\on{KL}_{\check\kappa}(\cG)$ for the quantum parameter
$$q=\on{exp}(2\pi i q_{\kappa}),$$
where $q_\kappa$ is the quadratic form $\Lambda\to k$ such that the associated bilinear form is $\kappa$, i.e.,
$$q_\kappa(\lambda)=\frac{\kappa(\lambda,\lambda)}{2}.$$

\medskip

In particular, we have an equivalence as plain DG categories
\begin{equation} \label{e:KL}
\on{KL}_{\check\kappa}(\cG) \simeq \Rep_q(\cG).
\end{equation}

\begin{rem}
Note that the equivalence \eqref{e:KL} does \emph{not} involve Langlands duality: on both sides we are dealing
with the same reductive group, in this case $\cG$.
\end{rem} 

\sssec{}

Thus, combining, for $\sfe=\BC=k$ we are supposed to have the equivalences
\begin{equation}  \label{e:three equivalence}
\Whit_\kappa(G) \simeq \on{KL}_{\check\kappa}(\cG)\simeq \Rep_q(\cG) . 
\end{equation} 

What we prove in this work is a result that goes a long way towards the composite equivalence
\begin{equation}  \label{e:Whit vs quant}
\Whit_\kappa(G) \simeq \Rep_q(\cG). 
\end{equation} 

The precise statement of what we actually prove will be explained in the rest of this Introduction. Here let us note the
following two of its features:

\begin{itemize}

\item The right-hand side of \emph{our} equivalence will not be $\Rep_q(\cG)$, but rather $\overset{\bullet}\fu_q(\cG)\mod$,
the category of modules over the \emph{small} quantum group. 

\item Our equivalence will be geometric (or motivic) in nature in that it will not be tied to the situation of $\sfe=\BC=k$
and neither will it rely on Riemann-Hilbert. Rather, it applies over any ground field and for an arbitrary sheaf theory
(see \secref{sss:sheaf theory}). 

\end{itemize} 

\medskip

Let us also add that the equivalence \eqref{e:Whit vs quant} had been conjectured by J.~Lurie and the first-named author
around 2007; it stands at the origin of the FLE. 

\sssec{}

One could of course try to prove the FLE \eqref{e:FLE} by combining the Kazhdan-Lusztig 
equivalence \eqref{e:KL} with the (yet to be established) equivalence \eqref{e:Whit vs quant}. 
But this is not how we plan to proceed about proving the FLE. Nor do quantum groups appear 
in the statement of the main theorem of the present work, \thmref{t:main}. 

\medskip

So in a sense, the equivalences with the category of modules over the quantum group is just an 
add-on to the FLE. Yet, it is a very useful add-on: quantum groups are ``much more finite-dimensional",
than the other objects involved, so perceiving things from the perspective of quantum groups
is convenient in that it really explains ``what is going on". We will resort to this perspective on multiple
occasions in this Introduction (notably, in \secref{ss:how}) as a guiding principle to some of the key 
constructions. 

\ssec{Let's prove the FLE: the Jacquet functor}

We will now outline a strategy towards the proof of the FLE. This strategy will not quite work
(rather one needs to do more work in order to make it work, and this will be done in a future publication).
Yet, this strategy will explain the context for what we will do in the main body of the text. 

\sssec{}

The inherent difficulty in establishing the equivalence \eqref{e:FLE} is that it involves Langlands duality:
each side is a category extracted from the geometry of the corresponding reductive group (i.e., $G$ and $\cG$,
respectively), while the relationship between these two groups is purely combinatorial (duality on the root 
data). 

\medskip

So, a reasonable idea to prove \eqref{e:FLE} would be to describe both sides in \emph{combinatorial terms},
i.e., in terms that only involve the root data and the parameter $\kappa$, with the hope that the resulting
descriptions will match up.  

\medskip

The process of expressing a category associated with the group $G$ in terms that involve just the root data
is something quite familiar in representation theory: if $\CC_G$ is a category attached to $G$ and $\CC_T$
is a similar category attached to the Cartan subgroup $T$, one seeks to construct a \emph{Jacquet functor}
$$\fJ:\CC_G\to \CC_T,$$
and express $\CC_G$ as objects of $\CC_T$ equipped with some additional structure. 

\medskip

For example, this is essentially how one proves the geometric Satake equivalence: one constructs the corresponding
Jacquet functor
$$\on{Perv}(\Gr_G)^{G\qqart}\to \on{Perv}(\Gr_T)^{T\qqart}\simeq \Rep(\cT)$$
by pull-push along the diagram
$$
\CD
\Gr_B  @>>>  \Gr _G  \\
@VVV  \\
\Gr_T.
\endCD
$$

\sssec{}

Let us try to do the same for the two sides of the FLE. First, we notice that for $G=T$, the two categories, i.e., 
\begin{equation} \label{e:FLE T}
\Whit_\kappa(T)\simeq \Dmod_\kappa(\Gr_T) \text{ and } \on{KL}_{\check\kappa}(\cT)
\end{equation}
are indeed equivalent \emph{as factorization categories} in a more or less tautological way. 

\medskip

We now have to figure out what functors to use
$$\fJ^{\Whit}: \Whit_\kappa(G)\to \Whit_\kappa(T) \text{ and }
\fJ^{\on{KL}}: \on{KL}_{\check\kappa}(\cG)\to \on{KL}_{\check\kappa}(\cT),$$
so that we will have a chance to express the $G$-categories in terms of the corresponding $T$-categories.

\sssec{}

Here the quantum group perspective will be instrumental. The functor
$$\fJ^{\on{Quant}}:\Rep_q(\cG)\to \Rep_q(\cT)$$
that we want to use, to be denoted $\fJ^{\on{Quant}}_!$, is given by
$$M\mapsto \on{C}^\cdot(U^{\on{Lus}}_q(\cN),M),$$
where $U^{\on{Lus}}_q(\cN)$ is the positive (i.e., ``upper triangular") part of the big (i.e., Lusztig's) 
quantum group. 

\medskip

The next step is to construct the functors
$$\fJ^{\Whit}_!: \Whit_\kappa(G)\to \Whit_\kappa(T) \text{ and }
\fJ^{\on{KL}}_!: \on{KL}_{\check\kappa}(\cG)\to \on{KL}_{\check\kappa}(\cT)$$
that under the equivalences \eqref{e:three equivalence} are supposed to correspond to $\fJ^{\on{Quant}}_!$.

\sssec{}

The functor
$$\fJ^{\on{KL}}_!: \on{KL}_{\check\kappa}(\cG)\to \on{KL}_{\check\kappa}(\cT)$$
is a version of the functor of semi-infinite cohomology with respect to $\cn\ppart$. 
More precisely, it is literally that when $\check\kappa$ is \emph{positive} or \emph{irrational}
and a certain non-trivial modification when $\check\kappa$ is \emph{negative rational}. 

\medskip

The precise construction of $\fJ^{\on{KL}}_!$ will be given in a subsequent publication, which 
will deal with the ``Kazhdan-Lusztig vs factorization modules" counterpart of our main theorem
(the latter deals with ``Whittaker vs factorization modules" equivalence). 

\sssec{}

Let us describe the sought-for functor 
$$\fJ^{\Whit}_!: \Whit_\kappa(G)\to \Whit_\kappa(T).$$

We will first make a naive attempt.  Namely, let 
$$\fJ^{\Whit}_*: \Whit_\kappa(G)\to \Whit_\kappa(T)$$
be the functor defined by !-pull and *-push along the diagram 
\begin{equation} \label{e:jacquet diag intro}
\CD
\Gr_{B^-}  @>>>  \Gr _G  \\
@VVV  \\
\Gr_T.
\endCD
\end{equation} 

This is a meaningful
functor, but it does not produce what we need. Namely, one can show that with respect to the equivalence 
\eqref{e:Whit vs quant}, the functor $\fJ^{\Whit}_*$ corresponds to the functor
$$\fJ^{\on{Quant}}_*: \Rep_q(\cG)\to \Rep_q(\cT),\quad M\mapsto \on{C}^\cdot(U^{\on{DK}}_q(\cN),M),$$
where $U^{\on{DK}}_q(\cN)$ is the De Concini-Kac version of $U_q(\cN)$ (we note that when $\kappa$ is
irrational, i.e., when $q$ takes values in non-roots of unity, the two versions do coincide). 

\sssec{}

To obtain the desired functor $\fJ^{\Whit}_!$ we proceed as follows, Note that 
$$\Whit_\kappa(T)\simeq \Dmod_\kappa(\Gr_T),$$
and as a plain DG category, it is equivalent to the category of $\Lambda$-graded vector spaces. 
For a given $\lambda\in \Lambda$, let us describe the $\lambda$-component $(\fJ^{\Whit}_!)^\lambda$ of
$\fJ^{\Whit}_!$. 

\medskip

Going back to $\fJ^{\Whit}_*$, its $\lambda$-component $(\fJ^{\Whit}_*)^\lambda$ is given by
$$\CF\mapsto \on{C}^\cdot(\Gr_G,\CF\sotimes (\bi^-_\lambda)_*(\omega_{S^{-,\lambda}})),$$
where 
$$N^-(\ppart)\cdot t^\lambda=:S^{-,\lambda} \overset{\bi^-_\lambda}\hookrightarrow \Gr_G.$$

\medskip

Now, $(\fJ^{\Whit}_!)^\lambda$ is given by 
$$\CF\mapsto \on{C}^\cdot(\Gr_G,\CF\sotimes (\bi^-_\lambda)_!(\omega_{S^{-,\lambda}})).$$

So, the difference between $(\fJ^{\Whit}_!)^\lambda$ and $(\fJ^{\Whit}_*)^\lambda$ is that we take
$(\bi^-_\lambda)_!(\omega_{S^{-,\lambda}})$ instead of $(\bi^-_\lambda)_*(\omega_{S^{-,\lambda}})$. 

\begin{rem}  \label{r:J !* Whit}
Let us remark that the functor that appears in the main body of this work is yet a different one, to be
denoted $\fJ^{\Whit}_{!*}$. Its $\lambda$-component $(\fJ^{\Whit}_{!*})^\lambda$ is given by
$$\CF\mapsto \on{C}^\cdot(\Gr_G,\CF\sotimes \ICslm),$$
where $\ICslm$ is a certain twisted D-module defined in \secref{s:ICs}. 

\medskip

Under the equivalence \eqref{e:Whit vs quant}, the functor $\fJ^{\Whit}_{!*}$ corresponds to the functor
\begin{equation} \label{e:small quant grp cohomology}
\fJ^{\on{Quant}}_{!*}: \Rep_q(\cG)\to \Rep_q(\cT),\quad M\mapsto \on{C}^\cdot(\fu_q(\cN),M),
\end{equation} 
where $\fu_q(\cN)$ is the positive part of the small quantum group.
\end{rem} 

\sssec{}

Having described the functor $\fJ^{\Whit}_!$, let us address the question of whether we can use it in order to express the category
$\Whit_\kappa(G)$ in terms of $\Whit_\kappa(T)$. To do so, we will again resort to the quantum group picture.

\medskip

The functor $\fJ^{\on{Quant}}_!$ has an additional structure: it is \emph{lax braided monoidal}. As such, it sends
the (monoidal) unit object $\sfe\in \Rep_q(\cG)$ to the object that we will denote
$$\Omega^{\on{Lus}}_q\in \Rep_q(\cT),$$
which has a structure of $\BE_2$-algebra. Moreover, the functor $\fJ^{\on{Quant}}_!$ can be upgraded to a functor
\begin{equation} \label{e:Lus inv quant}
(\fJ^{\on{Quant}}_!)^{\on{enh}}:\Rep_q(\cG)\to \Omega^{\on{Lus}}_q\mod_{\BE_2}(\Rep_q(\cT)).
\end{equation}

Similarly, the functors $\fJ^{\Whit}_!$ and $\fJ^{\on{KL}}_!$ have factorization structures. By applying to the unit, we obtain
\emph{factorization algebras}
\begin{equation} \label{e:Fact algebras Whit KL}
\Omega^{\on{Whit,Lus}}_\kappa \text{ and } \Omega^{\on{KL,Lus}}_{\check\kappa}
\end{equation}
in $\Whit_\kappa(T)$ and $\on{KL}_{\check\kappa}(\cT)$, respectively, and the upgrades
\begin{equation} \label{e:Lus inv Whit}
(\fJ^{\Whit}_!)^{\on{enh}}: \Whit_\kappa(G) \to \Omega^{\on{Whit,Lus}}_\kappa\on{-FactMod}(\Whit_\kappa(T))_{\on{untl}}
\end{equation}
and 
\begin{equation} \label{e:Lus inv KL}
(\fJ^{\on{KL}}_!)^{\on{enh}}: \on{KL}_{\check\kappa}(\cG)
\to \Omega^{\on{KL,Lus}}_{\check\kappa}\on{-FactMod}(\on{KL}_{\check\kappa}(\cT))_{\on{untl}},
\end{equation}
respectively, where the subscript $\on{untl}$ stands for ``unital modules". 

\medskip

One expects the functors \eqref{e:Lus inv Whit} and \eqref{e:Lus inv KL} to be equivalences \emph{if and only if}
\eqref{e:Lus inv quant} is. However, \eqref{e:Lus inv quant} is \emph{not} an equivalence, for the reasons explained
in \secref{sss:mixed} below.

\sssec{}

In a different world, if the functors \eqref{e:Lus inv Whit} and \eqref{e:Lus inv KL}  \emph{were} equivalences, one
would complete the proof of \eqref{e:FLE} by proving that under the factorization algebras \eqref{e:Fact algebras Whit KL}
correspond to one another under the equivalence 
$$\Whit_\kappa(T)\simeq \on{KL}_{\check\kappa}(\cT)$$
of \eqref{e:FLE T}.

\medskip

However, not all is lost in the real world either. In a subsequent publication, we will introduce a device that allows to
express the LHS of \eqref{e:Lus inv Whit} in terms of the RHS (and similarly, for \eqref{e:Lus inv KL}) and thereby
prove the FLE. 

\medskip

In the case of \eqref{e:Lus inv KL}, the same device should be able to give a different proof of the Kazhdan-Lusztig
equivalence \eqref{e:KL}. 

\sssec{}  \label{sss:mixed}

As was mentioned above, the functor \eqref{e:Lus inv quant} is \emph{not} an equivalence. Rather, we have an equivalence
\begin{equation} \label{e:Lus inv quant mixed}
\Rep^{\on{mxd}}_q(\cG)\simeq \Omega^{\on{Lus}}_q\mod_{\BE_2}(\Rep_q(\cT)),
\end{equation}
where $\Rep^{\on{mxd}}_q(\cG)$ is a category of modules over the ``mixed" quantum group, introduced in \cite{Ga8}. 

\medskip

The original functor \eqref{e:Lus inv quant} is the composition of \eqref{e:Lus inv quant mixed} with the restriction functor
\begin{equation} \label{e:mixed to big}
\Rep^{\on{mxd}}_q(\cG)\to \Rep_q(\cG).
\end{equation}

\begin{rem}
Recall following \cite{Ga8} that the mixed version $\Rep^{\on{mxd}}_q(\cG)$ has Lustig's algebra $U^{\on{Lus}}_q(\cN)$ 
for the positive part and the De Concini-Kac algebra $U^{\on{DK}}_q(\cN^-)$ for the negative part. However, even when
$q$ takes in non-roots of unity, the functor \eqref{e:mixed to big} is not an equivalence (and not even fully faithful unless
we restrict to abelian categories). 

\medskip

Namely, in the non-root of unity case, the category $\Rep_q(\cG)$ consists of modules that are locally finite for
the action of the entire quantum group, whereas $\Rep^{\on{mxd}}_q(\cG)$ is the quantum category $\CO$, i.e.,
we only impose local finiteness for the positive part.
\end{rem} 

\sssec{}

As we just saw, although the functor \eqref{e:Lus inv quant} is not an equivalence, it is the composition of
a (rather explicit) forgetful functor with an equivalence from another meaningful representation-theoretic category.
We have a similar situation for the functors \eqref{e:Lus inv Whit} and \eqref{e:Lus inv KL}.

\medskip

Namely, the functor \eqref{e:Lus inv KL} is the composition of the forgetful functor
$$\on{KL}_\kappa(\cG):=\wh{\cg}_{\check\kappa}\mod^{\cG\qqart} \to \wh{\cg}_{\check\kappa}\mod^{\check{I}} ,$$
and a functor
$$\wh{\cg}_{\check\kappa}\mod^{\check{I}} \to \Omega^{\on{KL,Lus}}_{\check\kappa}\on{-FactMod}(\on{KL}_{\check\kappa}(\cT))_{\on{untl}},$$
which is conjectured to be an equivalence. In the above formula $\check{I}\subset \cG\qqart$ is the Iwahori subgroup. The latter
conjectural equivalence is essentially equivalent to the main conjecture of \cite{Ga8}, see Conjecture 9.2.2 in {\it loc.cit.}

\medskip

Similarly, the functor \eqref{e:Lus inv Whit} is the composition of the pullback functor
$$\Whit_\kappa(G):=\Dmod_{\kappa}(\Gr_G)^{N\ppart,\chi}\to \Dmod_{\kappa}(\Fl_G)^{N\ppart,\chi}$$
and a functor 
$$\Dmod_{\kappa}(\Fl_G)^{N\ppart,\chi}\to \Omega^{\on{Whit,Lus}}_\kappa\on{-FactMod}(\Whit_\kappa(T))_{\on{untl}},$$
which is conjectured to be an equivalence. In the above formula $\Fl_G=G\ppart/I$ is the affine flag space. 
The latter conjectural equivalence should be provable by methods close to those developed in the present work. 

\ssec{The present work: the ``small" FLE}

Having explained the idea of the proof of the FLE via Jacquet functors, we will now
modify the source categories, and explain what it is that we actually prove in this work.

\sssec{}

Just as above, we will first place ourselves in the context of quantum groups. From now on we will
assume that $q$ takes values in the group of roots of unity. 

\medskip 

In this case, following Lusztig, to the datum of $(\cG,q)$ one attaches another reductive group $H$, 
see \secref{sss:roots in dual} for the detailed definition. Here we will just say that weight lattice of $H$,
denoted $\Lambda_H$ is a sublattice of $\Lambda$ and consists of the kernel of the symmetric
bilinear form $b$ associated to $q$
$$b(\lambda_1,\lambda_2)=q(\lambda_1+\lambda_2)-q(\lambda_1)-q(\lambda_2),$$
where we use the additive notation for the abelian group $\sfe^\times$. 

\medskip

In addition, following Lusztig, we have the \emph{quantum Frobenius} map, which we will interpret
as a monoidal functor
\begin{equation} \label{e:quantum Frob}
\on{Frob}_q^*:\Rep(H)\to \Rep_q(\cG).
\end{equation} 

We will use $\on{Frob}_q^*$ to regard $\Rep(H)$ as a monoidal category acting on $\Rep_q(\cG)$, 
$$V,M\mapsto \on{Frob}_q^*(V)\otimes M.$$

\medskip

Given this data, we can consider the \emph{graded Hecke category} of $\Rep_q(\cG)$ with respect to
$\Rep(H)$,
$$\bHecke(\Rep_q(\cG)):=\Rep(T_H)\underset{\Rep(H)}\otimes \Rep_q(\cG).$$ 

See \secref{ss:graded Hecke} for the discussion of the formalism of the formation of Hecke categories. 

\medskip

Recall the functor $\fJ^{\on{Quant}}_{!*}$ of \eqref{e:small quant grp cohomology}. It has the following structure: it intertwines
the actions of $\Rep(H)$ on $\Rep_q(\cG)$ and of $\Rep(T_H)$ on $\Rep_q(\cT)$ via the restriction functor
$\Rep(H)\to \Rep(T_H)$, where $T_H$ is the Cartan subgroup of $H$, and the latter action is given by the quantum 
Frobenius for $T$. 

\medskip

This formally implies that $\fJ^{\on{Quant}}_{!*}$ gives rise to a functor 
\begin{equation}  \label{e:Hecke small quant grp cohomology prel}
(\fJ^{\on{Quant}}_{!*})^{\bHecke}: \bHecke(\Rep_q(\cG))\to \Rep_q(\cT),
\end{equation} 
and further to a functor
\begin{equation}  \label{e:Hecke small quant grp cohomology}
(\fJ^{\on{Quant}}_{!*})^{\bHecke,\on{enh}}: \bHecke(\Rep_q(\cG))\to \Omega^{\on{small}}_q\mod_{\BE_2}(\Rep_q(\cT)),
\end{equation} 
where $\Omega^{\on{small}}_q$ is the $\BE_2$-algebra in $\Rep_q(\cT)$ obtained by applying $\fJ^{\on{Quant}}_{!*}$ to the unit.

\medskip

A key observation is that the functor \eqref{e:Hecke small quant grp cohomology} is an equivalence (up to renormalization,
which we will ignore in this Introduction). We will explain the mechanism of why this equivalence takes
place in a short while (see \secref{sss:Hecke small quant grp cohomology} below).

\sssec{}

We will can perform the same procedure for $\Whit$ and $\on{KL}$.  In the case of the latter, metaplectic geometric Satake
(see \secref{ss:metapl geom Sat}) defines an action of $\Rep(H)$ on $\on{KL}_{\check\kappa}(\cG)$. This action
matches the action of $\Rep(H)$ on $\Rep_q(\cG)$ via the Kazhdan-Lusztig equivalence. 

\medskip

Furthermore, one can define a functor
$$\fJ^{\on{KL}}_{!*}:\on{KL}_{\check\kappa}(\cG)\to \on{KL}_{\check\kappa}(\cT)$$
with properties mirroring those of $\fJ^{\on{Quant}}_{!*}$. In particular, we obtain:
\begin{equation}  \label{e:Hecke J KL !*}
(\fJ^{\on{KL}}_{!*})^{\bHecke,\on{enh}}: \bHecke(\on{KL}_{\check\kappa}(\cG))\to 
\Omega^{\on{KL,small}}_{\check\kappa}\on{-FactMod}(\on{KL}_{\check\kappa}(\cT))_{\on{untl}}.
\end{equation} 

If we use the Kazhdan-Lusztig equivalence \eqref{e:KL}, it would follow formally that the functor 
\eqref{e:Hecke J KL !*} is an equivalence. 

\medskip

Alternatively, we expect it to be possible to prove that \eqref{e:Hecke J KL !*} is an equivalence
directly. Juxtaposing this with the equivalence \eqref{e:Hecke small quant grp cohomology}, with some extra work, this would provide 
an alternative construction of the Kazhdan-Lusztig equivalence \eqref{e:KL}. 

\sssec{}

We now come to the key point of the present work. Using metaplectic geometric Satake we define an action
of the same category $\Rep(H)$ also on $\Whit_\kappa(G)$. Hence, we can form the category
$\bHecke(\Whit_\kappa(G))$. 

\medskip

Furthermore, in Part V of this work, we construct a functor
\begin{equation}  \label{e:J Whit !*}
\fJ^{\Whit}_{!*}:\Whit_\kappa(G)\to \Whit_\kappa(T),
\end{equation} 
and we show that it also intertwines the $\Rep(H)$ and $\Rep(T_H)$-actions. From here we obtain the functor
$$(\fJ^{\Whit}_{!*})^{\bHecke}:\bHecke(\Whit_\kappa(G))\to \Whit_\kappa(T),$$
and further a functor
\begin{equation}  \label{e:Hecke J Whit !*}
(\fJ^{\Whit}_{!*})^{\bHecke,\on{enh}}:\bHecke(\Whit_\kappa(G))\to \Omega^{\on{Whit,small}}_{\kappa}\on{-FactMod}(\Whit_\kappa(T))_{\on{untl}}. 
\end{equation} 

\sssec{}

The main result of this work, \thmref{t:main} says that the functor \eqref{e:Hecke J Whit !*} is an equivalence. 

\medskip

Furthermore, another
of our key results, \thmref{t:ident vacuum}, essentially says that under the equivalence
$$\Whit_\kappa(T)\simeq \on{KL}_{\check\kappa}(\cT),$$
the factorization algebras $\Omega^{\on{Whit,small}}_{\kappa}$ and $\Omega^{\on{KL,small}}_{\check\kappa}$ correspond to one another.

\medskip

Hence, taking into account the equivalence \eqref{e:Hecke J KL !*}, we obtain that our \thmref{t:main}, 
combined with \thmref{t:ident vacuum}, imply the following equivalence
\begin{equation} \label{e:small FLE}
\bHecke(\Whit_\kappa(G)) \simeq \bHecke(\on{KL}_{\check\kappa}(\cG))
\end{equation}

We refer to the equivalence \eqref{e:small FLE} as the ``small FLE", for reasons that will be explained in 
\secref{sss:Hecke small quant grp cohomology} right below. 

\medskip

We remark that, on the one hand, \eqref{e:small FLE} is a consequence of \eqref{e:FLE}. On the other hand, it should not
be a far stretch to get the original \eqref{e:FLE} from \eqref{e:small FLE}, which we plan to do in the future. 

\sssec{}

Here is what we \emph{do not} prove: the two sides in \thmref{t:main} are naturally factorization categories, 
and one wants the equivalence to preserve this structure. 

\medskip

However, in main body of the text we do not give the definition of factorization categories, so we do not formulate
this extension of \thmref{t:main}. That said, such an extension (in our particular situation) is rather straightforward:

\medskip

In general, if a functor between factorization categories is an equivalence at a point, this \emph{does not at all}
guarantee that it is an equivalence \emph{as factorization categories}. What makes it possible to prove this in 
our situation is the fact that we can control how our functor (i.e., $(\fJ^{\Whit}_{!*})^{\bHecke,\on{enh}}$) commutes with
Verdier duality, see \thmref{t:duality pres}. 

\sssec{}  \label{sss:Hecke small quant grp cohomology}

Let us now explain the origin of the terminology ``small". 

\medskip

A basic fact established in \cite{ArG} is that we have an equivalence of categories
\begin{equation}  \label{e:Arkh}
\bHecke(\Rep_q(\cG))\simeq \overset{\bullet}\fu_q(\cG)\mod,
\end{equation} 
where the RHS is the (graded version of the) category of representations of the small 
quantum group. 

\medskip

With respect to this equivalence, the functor $(\fJ^{\on{Quant}}_{!*})^{\bHecke}$ of \eqref{e:Hecke small quant grp cohomology prel}
is simply the functor
$$\overset{\bullet}\fu_q(\cG)\mod \to \Rep_q(\CT), \quad M\mapsto \on{C}^\cdot(\fu_q(\cN),M).$$

Now, the resulting functor 
\begin{equation} \label{e:equivalence for small}
(\fJ^{\on{Quant}}_{!*})^{\bHecke,\on{enh}}:\overset{\bullet}\fu_q(\cG)\mod \to 
\Omega^{\on{small}}_q\mod_{\BE_2}(\Rep_q(\cT))
\end{equation} 
is indeed an equivalence for the following reason: 

\medskip

The category $\overset{\bullet}\fu_q(\cG)\mod$ is (the relative with respect to $\Rep_q(\cT)$) Drinfeld center
of the monoidal category $\fu_q(\cN)\mod(\Rep_q(\cT))$, see \secref{ss:rel Dr center}) for what this means.
Then the equivalence \eqref{e:equivalence for small} is a combination of the following two statements
(see \secref{ss:Koszul abstract} for details): 

\begin{itemize}

\item A Koszul duality equivalence (up to renormalization) between $\fu_q(\cN)\mod(\Rep_q(\cT))$ and 
$\Omega^{\on{small}}_q\mod$;

\item A general fact that for an $\BE_2$-algebra $A$, the Drinefeld center of the monoidal category $A\mod$
identifies with $A\mod_{\BE_2}$. 

\end{itemize} 

\begin{rem}
At the heart of the equivalence \eqref{e:equivalence for small} was the fact that $\overset{\bullet}\fu_q(\cG)$
is the Drinfeld double of $\fu_q(\cN)$ relative to $\Rep_q(\cT)$. This boils down to the fact that 
$\fu_q(\cN^-)$ is the component-wise dual of $\fu_q(\cN)$.

\medskip

A similar fact is responsible for the equivalence \eqref{e:Lus inv quant mixed}. There we have that 
$U^{\on{DK}}_q(\cN^-)$ is the component-wise due of $U^{\on{Lus}}_q(\cN)$.
\end{rem} 

\ssec{What do we actually prove?}

As was explained above, the goal of this work is to prove the equivalence
\begin{equation} \label{e:Hecke J Whit !* again}
(\fJ^{\Whit}_{!*})^{\bHecke,\on{enh}}:\bHecke(\Whit_\kappa(G))\to \Omega^{\on{Whit,small}}_{\kappa}\on{-FactMod}(\Whit_\kappa(T))_{\on{untl}}
\end{equation} 
of \eqref{e:Hecke J Whit !*}. However, at a glance, our \thmref{t:main} looks a little different. We will now explain the precise 
statement of what we actually prove. 

\sssec{}

Perhaps, the main difference is that we actually work in greater generality than the one explained above. Namely, the story of FLE
inherently lives in the world of D-modules (in particular, it was tied to the assumption that our ground field $k$ and the field 
of coefficients $\sfe$ are the same); this is needed in order for categories such as $\on{KL}_\kappa(\cG)$ to make sense. 

\medskip

However, the assertion that \eqref{e:Hecke J Whit !* again} is an equivalence is entirely geometric. 
I.e., it can be formulated within \emph{any} sheaf
theory from the list in \secref{sss:sheaf theory}. For example, we can work with $\ell$-adic sheaves over an arbitrary ground
field $k$ (so in this case $\sfe=\ol\BQ_\ell$). This is the context in which we prove our \thmref{t:main}.

\sssec{}

That said, we need to explain what replaces the role of the parameter $\kappa$. 

\medskip

In the context of a general sheaf theory,
we can no longer talk about \emph{twistings} (those only make sense for D-modules). However, we can talk about \emph{gerbes}
with respect to to the multiplicative group $\sfe^\times$ of the field $\sfe$ of coefficients. The assumption that
$\kappa$ was rational gets replaced by the assumption that we take gerbes with respect to the group $\sfe^{\times,\on{tors}}$
of torsion elements in $\sfe^\times$, i.e., the group of roots of unity. 

\medskip

Now, instead of the datum of $\kappa$, our input is a \emph{geometric metaplectic data} $\CG^G$, which is a 
\emph{factorization gerbe} on the factorization version of the affine Grassmannian. We refer the reader to \cite{GLys},
where this theory is developed. 

\medskip

Returning to the context of D-modules, one shows that a datum of $\kappa$ gives rise to a geometric metaplectic data
for the sheaf theory of D-modules. 

\sssec{}

The second point of difference between \eqref{e:Hecke J Whit !* again} and the statement of \thmref{t:main} involves
the fact that
the former talks about factorization modules for a factorization algebra on $\Gr_T$, and the latter talks about a 
factorization algebra on the configuration space instead. 

\medskip

However, this difference is immaterial (we chose the configuration space formulation because of its finite-dimensional
nature and the proximity to the language of \cite{BFS}). Indeed, as explained in \secref{ss:Conf vs Gr}, factorization
algebras on $\Gr_T$ satisfying a certain support condition (specifically, if they are supported on the connected components
of $\Gr_T$ corresponding to elements of $(\Lambda^{\on{neg}}-0)\subset \Lambda$) can be equivalently though of as 
factorization algebras on the configuration space.

\medskip

The factorization algebra on $\Gr_T$ that we deal with is the \emph{augmentation ideal} in 
$\Omega^{\on{Whit,small}}_{\kappa}$, so factorization modules for it are the same as \emph{unital} factorization 
modules $\Omega^{\on{Whit,small}}_{\kappa}$. 

\sssec{}  \label{sss:renorm Intro}

The final point of difference is that in order for the equivalence to hold, we need to apply a certain renormalization 
procedure to the category of factorization modules
$$\Omega_q^{\on{small}}\on{-FactMod} \rightsquigarrow \Omega_q^{\on{small}}\on{-FactMod}^{\on{ren}}.$$

Specifically, we will have enlarge the class of objects that we declare as compact, see \secref{ss:renorm fact}.

\medskip

This renormalization procedure follows the pattern of how one obtains $\IndCoh$ from $\QCoh$: it does not affect the heart 
(of the naturally defined t-structure); we have a functor
$$\Omega_q^{\on{small}}\on{-FactMod}^{\on{ren}}\to \Omega_q^{\on{small}}\on{-FactMod}$$
which is t-exact and induces an equivalence on the eventually coconnective parts. So the two categories 
differ only at the cohomological $-\infty$. 

\sssec{}

In Part IX of this work we prove that in the Betti situation (i.e., for the ground field being $\BC$ and for the sheaf
theory being constructible sheaves in classical topology with $\sfe$-coefficients for any $\sfe$), we have a canonical equivalence
\begin{equation} \label{e:BFS}
\overset{\bullet}\fu_q(\cG)\mod^{\on{baby-ren}}\simeq \Omega_q^{\on{small}}\on{-FactMod}
\end{equation}
and
\begin{equation} \label{e:BFS ren}
\overset{\bullet}\fu_q(\cG)\mod^{\on{ren}}\simeq \Omega_q^{\on{small}}\on{-FactMod}^{\on{ren}},
\end{equation}
where 
$$\overset{\bullet}\fu_q(\cG)\mod^{\on{baby-ren}} \text{ and } \overset{\bullet}\fu_q(\cG)\mod^{\on{ren}}$$
are two renormalizations of the (usual version of the) category $\overset{\bullet}\fu_q(\cG)\mod$. 

\medskip

The equivalence \eqref{e:BFS} had been previously established in (and was the subject of) the book \cite{BFS}.
The proof we give is just a remake in a modern language, where the key new tool we use is Lurie's equivalence
between $\BE_2$ algebras and factorization algebras. The proof is an expansion of the contents of \secref{sss:Hecke small quant grp cohomology}
above. 

\sssec{}

Thus, for $k=\BC=\sfe$, and using Riemann-Hilbert correspondence, we obtain the equivalences
\begin{equation} \label{e:Whit vs quant small comp}
\bHecke(\Whit_\kappa(G)) \simeq \Omega_q^{\on{small}}\on{-FactMod}^{\on{ren}} \simeq 
\overset{\bullet}\fu_q(\cG)\mod^{\on{ren}}.
\end{equation}

The composite equivalence
\begin{equation} \label{e:Whit vs quant small}
\bHecke(\Whit_\kappa(G)) \simeq \overset{\bullet}\fu_q(\cG)\mod^{\on{ren}}
\end{equation}
is a ``small" version of the equivalence \eqref{e:Whit vs quant}. 

\medskip

It is the equivalence \eqref{e:Whit vs quant small} that is responsible for the title of this work. 

\ssec{How do we prove it?}  \label{ss:how}

Modulo technical nuances that have to do with renormalization, our main result says that a functor denoted 
\begin{equation} \label{e:Phi intro}
\Phi^{\bHecke}_{\on{Fact}}:\bHecke(\Whit_q(G)) \to  \Omega_q^{\on{small}}\on{-FactMod},
\end{equation}
which is the configuration space version of the functor \eqref{e:Hecke J Whit !* again}, is an equivalence. 

\medskip

Let us explain the main ideas involved in the proof.

\sssec{}

Consider the following general paradigm: let 
$$\Phi:\CC_1\to \CC_2$$ 
be a functor between (unital) factorization categories. It induces a functor
$$\Phi^{\on{enh}}:\CC_1\to \Omega\on{-ModFact}(\CC_2)_{\on{untl}},$$
and we wish to know when the latter is an equivalence.
 
\medskip

Such an equivalence is not something we can generally expect. For example, it does hold for $\Phi$ being the functor
$$\fJ^{\on{Quant}}_{!*}: \overset{\bullet}\fu_q(\cG)\mod\to \Rep_q(\cT),$$
but it fails for $\Phi$ being the functor
$$\fJ^{\on{Quant}}_{!}: \Rep_q(\cG)\mod\to \Rep_q(\cT).$$
 
\medskip

So we will need to prove something very particular about the functor $\Phi^{\bHecke}_{\on{Fact}}$ in order
to know that it is an equivalence.

\sssec{}

Our proof of the equivalence \eqref{e:Phi intro} follows a rather standard pattern in representation theory.
(the challenge is, rather, to show that this pattern is realized in our situation):

\medskip

We will
define a family of objects in either category that we will call ``standard", indexed by the elements of $\Lambda$
$$\mu\mapsto \CM^{\mu,!}_{\Whit}\in \bHecke(\Whit_q(G)) \text{ and } \CM^{\mu,!}_{\Conf}\in 
\Omega_q^{\on{small}}\on{-FactMod}^{\on{ren}}.$$

These objects will be compact and ``almost" generate $\bHecke(\Whit_q(G))$ (resp., $\Omega_q^{\on{small}}\on{-FactMod}^{\on{ren}}$). 
Define the co-standard objects $\CM^{\mu,*}$ (in either context) by
\begin{equation} \label{e:orth intro}
\CHom(\CM^{\mu',!},\CM^{\mu,*})=\begin{cases}
&\sfe \text{ if } \mu'=\mu \\
&0 \text{ otherwise}.
\end{cases}
\end{equation}

A standard argument shows that in order to prove that the functor $\Phi^{\bHecke}_{\on{Fact}}$ is an equivalence, 
it is sufficient to prove that it sends
\begin{equation} \label{e:standard to standard Intro}
\CM^{\mu,!}_{\Whit}\mapsto \CM^{\mu,!}_{\Conf} \text{ and } 
\CM^{\mu,*}_{\Whit}\mapsto \CM^{\mu,*}_{\Conf}.
\end{equation}

So the essence of the proof is in defining the corresponding families of objects (on each side)
and proving \eqref{e:standard to standard Intro}.

\sssec{}

Our guiding principle is again the comparison with the quantum group, in this case, with the category 
$\overset{\bullet}\fu_q(\cG)\mod$. 

\medskip

Namely, we want that the objects $\CM^{\mu,!}$ correspond under the equivalences \eqref{e:Whit vs quant small comp}
to the \emph{baby Verma modules}
$$\CM^{\mu,!}_{\on{Quant}}\in \overset{\bullet}\fu_q(\cG)\mod,$$
i.e., the induced modules 
$$\on{Ind}^{\overset{\bullet}\fu_q(\cG)}_{\overset{\bullet}\fu_q(\cB)}(\sfe^\mu),$$
where $\sfe^\mu$ denotes the one-dimensional module over the quantum Borel with character $\mu$.

\medskip

Given this, the corresponding objects $\CM^{\mu,!}_{\Conf}$ and $\CM^{\mu,*}_{\Conf}$ are easy to guess:
they are given by !- and *- extensions, respectively, from the corresponding strata on the configuration space. 

\sssec{}

The situation with $\CM^{\mu,!}_{\Whit}$ and $\CM^{\mu,*}_{\Whit}$ is more interesting. If we were dealing with 
$\Whit_q(G)$ rather than with $\bHecke(\Whit_q(G))$, we would have a natural collection of standard objects,
denoted $W^{\lambda,!}_{\Whit}$, indexed by \emph{dominant} elements of $\Lambda^{\on{pos}}$. However, the sought-for
objects $\CM^{\mu,!}_{\Whit}\in \bHecke(\Whit_q(G))$ are \emph{not} obtained from the objects $W^{\lambda,!}_{\Whit}$
by applying the (obvious) functor
$$\ind_{\bHecke}:\Whit_q(G)\to \bHecke(\Whit_q(G)).$$

Indeed, drawing on the equivalence with quantum groups (i.e., \eqref{e:Whit vs quant}), the objects $W^{\lambda,!}_{\Whit}$ 
correspond to the \emph{Weyl} 
modules over the big quantum group, denoted $W^{\lambda,!}_{\on{Quant}}$, 
and the latter do \emph{not} restrict to the baby Verma modules over the small quantum group. 

\medskip

However, there exists an \emph{explicit colimit procedure} that allows to express the \emph{dual} baby Verma module
$\CM^{\mu,*}_{\on{Quant}}$ in terms of the \emph{dual} Weyl modules $W^{\lambda,*}_{\on{Quant}}$ and modules pulled back by the
quantum Frobenius \eqref{e:quantum Frob}. This procedure fits into the Drinfeld-Pl\"ucker paradigm, explained 
in \secref{s:DrPl}.   (How exactly it can be applied to the quantum groups situation is explained in \cite[Sect. 10.3]{Ga8}.) 

\medskip

By essentially mimicking the procedure by which one expresses $\CM^{\mu,*}_{\on{Quant}}$ via the $W^{\lambda,*}_{\on{Quant}}$'s,
or rigorously speaking applying the Drinfeld-Pl\"ucker formalism in the situation of $\Whit_q(G)$ (see \secref{s:baby Verma in Whit}),
we define the sought-for objects $\CM^{\mu,*}_{\Whit}$ in terms of the corresponding objects $W^{\lambda,*}_{\Whit}$. 

\sssec{}

The next task is to show that these objects satisfy
\begin{equation} \label{e:costandard to costandard Intro}
\Phi^{\bHecke}_{\on{Fact}}(\CM^{\mu,*}_{\Whit})\simeq \CM^{\mu,*}_{\Conf}.
\end{equation}

This is equivalent to showing that the !-fiber $(\Phi^{\bHecke}_{\on{Fact}}(\CM^{\mu,*}_{\Whit}))_{\mu'\cdot x}$
of $\Phi^{\bHecke}_{\on{Fact}}(\CM^{\mu,*}_{\Whit})$ at a point $\mu'\cdot x$ of the configuration space satisfies:
\begin{equation} \label{e:! at mu'}
(\Phi^{\bHecke}_{\on{Fact}}(\CM^{\mu,*}_{\Whit}))_{\mu'\cdot x}=\begin{cases}
&\sfe \text{ if } \mu'=\mu \\
&0 \text{ otherwise}.
\end{cases}
\end{equation}

This is a non-tautological calculation because it does not reduce to just a calculation of the cohomology of some sheaf: 
indeed we are dealing with the object $\CM^{\mu,*}_{\Whit}$ of $\bHecke(\Whit_q(G))$ and not with just a \emph{sheaf}, 
which would be an object of $\Whit_q(G)$. 

\medskip

Of course, in order to perform this calculation we need to know something about the functor $\Phi^{\bHecke}_{\on{Fact}}$.
This functor is obtained via the Hecke property from the functor
$$\Phi_{\on{Fact}}:\Whit_q(G))\to  \Omega_q^{\on{small}}\on{-FactMod},$$
and when we take the !-fiber at $\mu'\cdot x$ the corresponding functor is
$$\CF\mapsto \on{C}^\cdot(\Gr_{G,x},\CF\sotimes \on{IC}_x^{\mu'+\frac{\infty}{2},-}),$$
already mentioned in Remark \ref{r:J !* Whit}. 

\medskip

In the above formula, $\on{IC}_x^{\mu'+\frac{\infty}{2},-}$ is the \emph{metaplectic semi-infinite IC sheaf}
on the $N\ppart$ orbit 
$$S^{\mu'}=N\ppart\cdot t^{\mu'}\subset \Gr_{G,x}.$$

A significant part of this work (namely, the entire Part IV) is devoted to the construction of this semi-infinite
IC sheaf and the discussion of its properties (we should say that this Part closely follows \cite{Ga7}, 
where a non-twisted situation is considered).  

\medskip

On the one hand, we define the semi-infinite IC by the procedure of Goresky-MacPherson extension
inside the \emph{semi-infinite category on the Ran version of the affine Grassmannian}, and 
$\on{IC}_x^{\mu'+\frac{\infty}{2},-}$ is obtained as !-restriction to the fiber over $x\in \Ran_x$. 

\medskip

On the other hand, it turns out to be possible to describe $\on{IC}_x^{\mu'+\frac{\infty}{2},-}$
explicitly, and it turns out that this description can also be phrased in terms of the Drinfeld-Pl\"ucker formalism
(see \secref{ss:F and semiinf}). 

\medskip

This is what makes the calculation \eqref{e:! at mu'} feasible:
the Drinfeld-Pl\"ucker nature of both $\CM^{\mu,*}_{\Whit}$ and $\on{IC}_x^{\mu'+\frac{\infty}{2},-}$
so to say cancel each other out. See \secref{s:stalks} for how exactly this happens. 

\sssec{}

We are now left with the following task: we need to define the family of objects $\CM^{\mu,!}_{\Whit}$, so that
they satisfy \eqref{e:orth intro} and 
the following property holds:
\begin{equation} \label{e:* fiber St}
\Phi^{\bHecke}_{\on{Fact}}(\CM^{\mu,!}_{\Whit})\simeq \CM^{\mu,!}_{\Conf}.
\end{equation}

\medskip

To construct the objects $\CM^{\mu,!}_{\Whit}$ we once again resort to the equivalence with the quantum group
situation.

\medskip

We know that contragredient duality defines an equivalence
$$(\Rep_q(\cG)^c)^{\on{op}}\to  \Rep_{q^{-1}}(\cG)^c,$$
which induces a similar equivalence for the small quantum group
$$(\overset{\bullet}\fu_q(\cG)\mod^c)^{\on{op}}\to \overset{\bullet}\fu_{q^{-1}}(\cG)\mod^c.$$

The latter has the property that it sends $\CM^{\mu,*}_{\on{Quant}}$ to $\CM^{\mu,!}_{\on{Quant}}$.

\medskip

Hence, it is naturally to try to define a duality
\begin{equation} \label{e:verdier Whit intro}
(\Whit_q(G)^c)^{\on{op}}\to \Whit_{q^{-1}}(G)^c,
\end{equation}
which would then give rise to a duality
\begin{equation} \label{e:verdier Hecke Whit intro}
(\bHecke(\Whit_q(G))^c)^{\on{op}}\to \bHecke(\Whit_{q^{-1}}(G))^c,
\end{equation}
and we will define $\CM^{\mu,!}_{\Whit}$ as the image of $\CM^{\mu,*}_{\Whit}$
under the latter functor.

\sssec{}

We define the desired duality functor \eqref{e:verdier Whit intro} in \secref{ss:duality Whit}, but this
is far from automatic. 

\medskip

The difficulty is that the category $\Whit_q(G)$ is defined by imposing 
\emph{invariance} with respect to a group \emph{ind}-scheme, while the dual category would
naturally be \emph{coinvariants}. So we need to show that the categories of invariants
and coinvariants are equivalent to one another. This turns out to be a general phenomenon
in the Whittaker situation, as is shown in the striking paper \cite{Ras2}. For completeness
we supply a proof of this equivalence in our situation using global methods, see \thmref{t:inv and coinv}
and its proof in \secref{ss:loc to glob Whit}. 
 
\medskip

Thus, we obtain the duality functor in \eqref{e:verdier Hecke Whit intro} with desired properties,
and one can prove \eqref{e:orth intro} by essentially mimicking the quantum group situation.

\sssec{}

Finally, we need to prove \eqref{e:* fiber St}. At the first glance, this may appear as super hard: we need 
to prove an analog of \eqref{e:! at mu'} for the objects $\CM^{\mu,!}_{\Whit}$, but for \emph{*-fibers} instead of the
!-fibers, and we do not really know how to do that: our theory is well-adjusted to computing !-pullbacks
and *-direct images, but not *-pullbacks and !-direct images. 

\medskip

What saves the game is that we can show that the functor $\Phi_{\on{Fact}}$
(and then $\Phi^{\bHecke}_{\on{Fact}}$), composed with the forgetful 
functor from $\Omega_q^{\on{small}}\on{-FactMod}$ to just sheaves on the configuration space,
intertwines the duality \eqref{e:verdier Whit intro} on $\Phi_{\on{Fact}}$ with Verdier duality.
This is the assertion of our \thmref{t:duality pres} (and its Hecke extension \thmref{t:duality pres Hecke}). 

\medskip

The above results about commutation of the (various versions of the) functor $\Phi$ with Verdier
duality point to yet another layer in this work: 

\medskip

In order to prove this commutation, we give a 
\emph{global interpretation} of both the Whittaker category (in Sects \ref{ss:Whit glob}-\ref{ss:loc to glob Whit})
and of the functor $\Phi$ (in Part VII).  In other words, we realize $\Whit_q(G)$ 
as sheaves on (ind)-algebraic stacks (rather than general prestacks). Eventually this leads to the desired
commutation with Verdier duality because we show that the morphisms between algebraic stacks involved
in the construction are \emph{proper}. 

\ssec{Organization of the text} 

This work consists of nine Parts and an Appendix. We will now outline the contents of each Part,
in order to help the reader navigating this rather lengthy text. 

\sssec{}

Part I is preliminaries. It can be skipped on the first pass and returned to when necessary. 

\medskip

In \secref{s:Grass} we recall the definitions of the affine Grassmannian, the loop groups, etc.,
along with their \emph{factorizable versions}.

\medskip

In \secref{s:metap} we summarise, following \cite{GLys}, the basic tenets of the 
\emph{geometric metaplectic theory}. In particular, we explain what a geometric metaplectic
datum is (the ``q" parameter), the construction of the metaplectic dual $H$ (the recipient of
Lusztig's quantum Frobenius), and some relevant facts regarding the metaplectic version of
the geometric Satake functor. 

\medskip

In \secref{s:fact} we discuss the formalism of factorization algebras and modules over them.
In particular, we give (one of the possible) definitions of these objects in the context of $\infty$-categories.

\medskip

In \secref{s:conf} we discuss some basics of the geometry of the configuration space (of divisors on $X$
colored by elements of $\Lambda^{\on{neg}}-0$). In particular, we explain how this configuration space
relates to the Ran version of the affine Grassmannian $\Gr_{T,\Ran}$ of $T$. 

\medskip

In \secref{s:fact conf} we talk about about factorization algebras and modules over them 
\emph{on the configuration space}. In particular, we explain that if a factorization algebra
is \emph{perverse}, then the corresponding category of its modules has a t-structure and 
is a highest weight category. 

\sssec{}

Part II is devoted to the study of the metaplectic Whittaker category of the affine Grassmannian. 
The material here is largely parallel to one in \cite{Ga9}, where the non-twisted situation is considered.

\medskip

In \secref{s:Whit} we define the metaplectic Whittaker category $\Whit_{q,x}(G)$ as $(N\ppart,\chi)$-invariants
in the category of (metaplectically twisted) sheaves on the affine Grassmannian, and study its basic properties,
such as the t-structure, standard and costandard objects, etc.

\medskip

In \secref{s:dual Whit} we give a dual definition of the Whittaker category, denoted $\Whit_{q,x}(G)_{\on{co}}$,
equal to $(N\ppart,\chi)$-coinvariants rather than invariants. We state a (non-trivial) theorem to the effect that some 
(non-tautological) functor 
$$\on{Ps-Id}:\Whit_{q,x}(G)_{\on{co}}\to \Whit_{q,x}(G)$$
is an equivalence. This gives rise to a duality identification
\begin{equation} \label{e:Verd Whit}
\Whit_{q,x}(G)^\vee \simeq \Whit_{q^{-1},x}(G). 
\end{equation} 
We also introduce a global version of the Whittaker category,
denoted $\Whit_{q,\on{glob}}(G)$ (using a projective curve $X$ and the stack $\BunNbox$, which is version of
\emph{Drinfeld's compactification}), and we state a theorem that says that the global version is actually equivalent to the
local ones, i.e., $\Whit_{q,x}(G)$ and $\Whit_{q,x}(G)_{\on{co}}$. In particular, the duality identification corresponds to
the usual Verdier duality on $\BunNbox$. 

\medskip

In \secref{s:proof local to global} we prove the local-to-global equivalence for the metaplectic Whittaker category
$$\Whit_{q,\on{glob}}(G)\to \Whit_{q,x}(G).$$

In the process of doing so, we introduce the Ran version of the Whittaker category $\Whit_{q,\Ran}(G)$,
and a \emph{factorization algebra object} 
$$\on{Vac}_{\Whit,\Ran}\in \Whit_{q,\Ran}(G).$$

We also define a functor, crucially used in the sequel:
\begin{equation} \label{e:sprd Intro} 
\on{sprd}_{\Ran_x}:\Whit_{q,x}(G)\to \Whit_{q,\Ran_x}(G),
\end{equation} 
which, so to say, inserts $\on{Vac}_{\Whit,\Ran}$ on points of $X$ other than the marked point $x$. 
The functor \eqref{e:sprd Intro} can be further upgraded to a functor 
$$\Whit_{q,x}(G)\to \on{Vac}_{\Whit,\Ran}\on{-FactMod}(\Whit_{q,\Ran_x}(G)).$$

\sssec{}

In Part III we study the Hecke action of $\Rep(H)$ on $\Whit_{q,x}(G)$.

\medskip

In \secref{s:Hecke Whit} we define this Hecke action and prove two crucial results. One says that
this action is t-exact (in the t-structure on $\Whit_{q,x}(G)$ introduced in \secref{s:Whit}). The other says
that we start with an irreducible object $W^{\lambda,!*}_{\Whit}\in (\Whit_{q,x}(G))^\heartsuit$ corresponding 
to the orbit $S^{\lambda,-}\subset \Gr_G$ for $\lambda$ a \emph{restricted coweight} (see \secref{ss:restr} for
what this means), for any irreducible $V^{\gamma}\in \Rep(H)$, the result of the
convolution
$$W^{\lambda,!*}_{\Whit}\star V^\gamma$$ 
is the irreducible object $W^{\lambda+\gamma,!*}_{\Whit}\in (\Whit_{q,x}(G))^\heartsuit$. 

\medskip

This is a direct counterpart 
of Steinberg's theorem in the context of quantum groups: for an irreducible object 
$W^{\lambda,!*}_{\on{Quant}}\in (\Rep_q(\cG))^\heartsuit$, the tensor product
$$W^{\lambda,!*}_{\on{Quant}}\otimes \on{Frob}^*_q(V^\gamma)$$
equals $W^{\lambda+\gamma,!*}_{\on{Quant}}$ for $\lambda$ restricted and any $\gamma$. 

\medskip

In \secref{s:Hecke} we discuss the general formalism of the formation of the category of Hecke eigen-objects
$\bHecke(\bC)$ for a category $\bC$ equipped with an action of $\Rep(H)$. In particular, we introduce the
notion of such an action being \emph{accessible} (with respect to a given t-structure on $\bC$)
and show that in this case the resulting t-structure on $\bHecke(\bC)$ is Artinian.

\medskip

In \secref{s:Hecke Whit categ} we apply the formalism of \secref{s:Hecke} to $\bC=\Whit_{q,x}(G)$. The resulting
category $\bHecke(\Whit_{q,x}(G))$ is the LHS in our main theorem. We discuss some basic properties of this
category. In particular, we describe the irreducible objects of $( \bHecke(\Whit_{q,x}(G)))^\heartsuit$. 

\sssec{}

In Part IV we are concerned with the construction and properties of the metaplectic semi-infinite
IC sheaf, denoted $\ICs_{q,\Ran}$. 

\medskip

In \secref{s:semiinf categ} we discuss the metaplectic semi-infinite category $\SI_{q,\Ran}(G)$, which is a full subcategory inside
$\Shv_{\CG^G}(\Gr_{G,\Ran})$, obtained by imposing the equivariance condition with respect to the loop group of $N$. 
We restrict our attention to the ``non-negative part" of $\SI_{q,\Ran}(G)$, denoted $\SI_{q,\Ran}(G)^{\leq 0}$. We define a stratification
of $\SI_{q,\Ran}(G)^{\leq 0}$ indexed by elements of $\Lambda^{\on{neg}}$. We also introduce a full subcategory
$$\SI_{q,\Ran}(G)^{\leq 0}_{\on{untl}}\subset \SI_{q,\Ran}(G)^{\leq 0}$$
that consists of \emph{unital} objects. We show that for every $\lambda\in \Lambda^{\on{neg}}$ the corresponding subcategory 
$$\SI_{q,\Ran}(G)^{= \lambda}_{\on{untl}}\subset \SI_{q,\Ran}(G)^{= \lambda}$$
is equivalent to the category of (gerbe-twisted) sheaves on the corresponding connected component $\Conf^\lambda$ of
the configuration space.

\medskip

In \secref{s:ICs} we introduce a t-structure on $\SI_{q,\Ran}(G)^{\leq 0}$ and define $\ICs_{q,\Ran}$ as the minimal extension
of the dualizing sheaf on $S^0_\Ran\subset \Gr_{G,\Ran}$. We describe the !-restriction of $\ICs_{q,\Ran}$ to 
$$\Gr_{G,x}=\{x\}\underset{\Ran}\times \Gr_{G,\Ran}.$$
We also discuss the \emph{factorization structure} on $\ICs_{q,\Ran}$, which makes it into a \emph{factorization algebra}
on $\Gr_{G,\Ran}$. Finally, we discuss the relationship between $\ICs_{q,\Ran}$ and the (gerbe-twisted) IC sheaf
on Drinfeld's compactification $\BunNbom$. 

\medskip

In \secref{s:Hecke ICs} we establish the (crucial for the sequel) Hecke property of $\ICs_{q,\Ran}$. 

\sssec{}

In Part V we define (various forms of) the Jacquet functor that maps $\Whit_{q,x}(G)$ to the coresponding category for $T$.

\medskip

In \secref{s:Jacquet} we first consider the functor
$$\fJ_{!*,\Ran}:\Shv_{\CG^G}(\Gr_{G,\Ran})\to \Shv_{\CG^T}(\Gr_{T,\Ran}),$$
defined using the diagram
$$
\CD
\Gr_{B^-,\Ran}  @>>>  \Gr_{G,\Ran}  \\
@VVV  \\
\Gr_{T,\Ran},
\endCD
$$
and using $\ICsm_{q^{-1},\Ran}$ as a kernel.

\medskip

We precompose this functor with the functor
$\on{sprd}_{\Ran_x}$ of \eqref{e:sprd Intro} and obtain a functor
$$\fJ_{!*,\on{sprd}}:\Whit_{q,x}(G)\to \Shv_{\CG^T}(\Gr_{T,\Ran_x}).$$

We define the factorization algebra $\Omega_q^{\Whit_{!*}}$
on $\Gr_{T,\Ran}$ as $\fJ_{!*,\Ran}(\on{Vac}_{\Whit,\Ran})$, and we upgrade the
functor $\fJ_{!*,\on{sprd}}$ to a functor
$$\fJ_{!*,\on{Fact}}:\Whit_{q,x}(G)\to \Omega_q^{\Whit_{!*}}\on{-FactMod}\left(\Shv_{\CG^T}(\Gr_{T,\Ran_x})\right).$$

\medskip

In \secref{s:Hecke Jacquet} we establish the Hecke property of the functor $\fJ_{!*,\on{Fact}}$ and extend it to the functor
$$\fJ^{\bHecke}_{!*,\on{Fact}}:\bHecke(\Whit_{q,x}(G))\to \Omega_q^{\Whit_{!*}}\on{-FactMod}\left(\Shv_{\CG^T}(\Gr_{T,\Ran_x})\right).$$

\sssec{}

In Part VI we interpret the functor $\fJ^{\bHecke}_{!*,\on{Fact}}$ via the factorization space.

\medskip 

In \secref{s:Omega small} we define a particular (in fact, the most basic) factorization algebra $\Omega^{\on{small}}_q$
in $\Shv_{\CG^\Lambda}(\Conf)$ (here $\CG^\Lambda$ is a factorization gerbe on $\Conf$ constructed from the geometric
metaplectic data $\CG^G$). Namely, $\Omega^{\on{small}}_q$, viewed as a sheaf, is \emph{perverse}, and is the Goresky-MacPherson extension 
of its restriction to the open locus $\overset{\circ}\Conf\subset \Conf$ that consists of multiplicity-free divisors, where, in its turn,
$\Omega^{\on{small}}_q|_{\overset{\circ}\Conf}$ is the \emph{sign local system} (the latter makes sense since the gerbe $\CG^\Lambda$
has the property that it is canonically trivial when restricted to $\overset{\circ}\Conf$). 

\medskip

In \secref{s:Jacquet to conf} we use the diagrams
$$
\CD
(\Gr_{T,\Ran})^{\on{neg}}  @>>>  \Gr_{T,\Ran} \\
@VVV   \\
\Conf
\endCD
$$
and 
$$
\CD
((\Gr_{T,\Ran_x})^{\on{neg}})_{\infty\cdot x}  @>>>  \Gr_{T,\Ran_x} \\
@VVV   \\
\Conf_{\infty\cdot x}
\endCD
$$
to transfer our constructions from (gerbe-twisted) sheaves on $\Gr_{T,\Ran}$ (resp., $\Gr_{T,\Ran_x}$)
to those on $\Conf$ (resp., $\Conf_{\infty\cdot x}$).

\medskip

We prove the theorem that the factorization algebra on $\Conf$, corresponding under this procedure to 
$\Omega_q^{\Whit_{!*}}\in \Shv_{\CG^T}(\Gr_{T,\Ran})$, identifies with the factorization algebra
$\Omega_q^{\on{small}}\in \Shv_{\CG^\Lambda}(\Conf)$ introduced above. 

\medskip

In particular, the functor $\fJ^{\bHecke}_{!*,\on{Fact}}$ gives rise to a functor
$$\Phi^{\bHecke}_{\on{Fact}}:\bHecke(\Whit_{q,x}(G))\to \Omega_q^{\on{small}}\on{-FactMod}.$$

\medskip

Let $\Phi^{\bHecke}$ be the functor
$$\bHecke(\Whit_{q,x}(G))\to \Shv_{\CG^\Lambda}(\Conf_{\infty\cdot x}),$$
obtained from $\Phi^{\bHecke}_{\on{Fact}}$ by forgetting the factorization 
$\Omega_q^{\on{small}}$-module structure. Let $\Phi$ be the pre-composition of $\Phi^{\bHecke}$
with the tautological functor
$$\ind_{\bHecke}:\Whit_{q,x}(G)\to \bHecke(\Whit_{q,x}(G)).$$

\medskip

We state a key result, \thmref{t:duality pres}, which says that the functor $\Phi$ commutes with Verdier duality. We use this
fact to show that the functor $\Phi^{\bHecke}_{\on{Fact}}$ is t-exact and sends irreducible objects in 
$(\bHecke(\Whit_{q,x}(G)))^\heartsuit$ to irreducible objects in $(\Omega_q^{\on{small}}\on{-FactMod})^\heartsuit$. 

\medskip

In \secref{s:statement} we define the renormalized version $\Omega_q^{\on{small}}\on{-FactMod}^{\on{ren}}$ 
of the category $\Omega_q^{\on{small}}\on{-FactMod}$ (see \secref{sss:renorm Intro} above), and we state
our main result, \thmref{t:main}, which says that the resulting functor
$$\Phi^{\bHecke,\on{ren}}_{\on{Fact}}:\bHecke(\Whit_{q,x}(G))\to 
\Omega_q^{\on{small}}\on{-FactMod}^{\on{ren}}$$
is an equivalence. In the same section we outline the strategy of the proof of \thmref{t:main}, which was already
mentioned in \secref{ss:how} above. 

\sssec{}

In Part VII we establish the commutation of $\Phi$ with Verdier duality, i.e., \thmref{t:duality pres}. 

\medskip

In \secref{s:Zastava} we give a global interpretation of the functor $\Phi$, in which the geometric object
known as the Zastava space plays a prominent role. 

\medskip

In \secref{s:proof of Verdier upstairs} we prove \thmref{t:duality pres} (or rather, its more precise version, which
is local on the Zastava space). The key idea is a certain ULA property of the \emph{global} metaplectic IC
sheaf on $\ol\Bun_{B^-}$. 

\medskip

In \secref{s:Hecke Zast} we prove a Hecke extension of \thmref{t:duality pres}, which says that the functor 
$\Phi^{\bHecke}$ commutes with Verdier duality.

\sssec{}

Part VIII is devoted to the realization of the outline of the proof of \thmref{t:main} indicated at the end of 
\secref{s:statement}. I.e., we need to construct the standard and costandard objects in $\bHecke(\Whit_{q,x}(G))$
and prove their properties. 

\medskip

In \secref{s:DrPl} we discuss the general framework of the Drinfeld-Pl\"ucker formalism (this formalism was 
suggested by S.~Raskin). We show that the restriction of the metaplectic semin-infinite IC sheaf to
the fiber over $x\in \Ran_x$ can be constructed via this formalism. 

\medskip

In \secref{s:Fl} we make a (necessary) digression and discuss the construction of the (dual) baby Verma object
in the category of \emph{Iwahori}-equivariant sheaves on the affine Grassmannian. We remark that this object
has appeared prominently in the papers \cite{ABBGM}, \cite{FG2}, \cite{FG3}, \cite{Ga8}. 

\medskip

In \secref{s:baby Verma in Whit} we construct the sought-for objects $\CM^{\mu,*}_{\Whit}$ (using the
(dual) baby Verma object in $\Shv_{\CG^G}(\Gr_G)^I$ constructed in \secref{s:Fl}). We then construct the 
objects $\CM^{\mu,!}_{\Whit}$ by applying duality. Finally, we verify the orthogonality property \eqref{e:orth intro}.  

\medskip

In \secref{s:stalks} we prove \eqref{e:standard to standard Intro}. This is where everything comes together in this
work. The isomorphism \eqref{e:costandard to costandard Intro} uses the full force of the Drinfeld-Pl\"ucker 
formalism. The isomorphism \eqref{e:* fiber St} is proved using the commutation of the functor 
$\Phi^{\bHecke}$ with Verdier duality.

\sssec{}

Part IX is logically disjoint from this rest of this work. Here we prove that in the Betti context we have a canonical
equivalence 
$$\overset{\bullet}\fu_q(\cG)\mod^{\on{baby-ren}}\simeq \Omega_q^{\on{small}}\on{-FactMod}$$
of \eqref{e:BFS}. 

\medskip

In \secref{s:quantum} we introduce the category $\overset{\bullet}\fu_q(\cG)\mod$ from the perspective 
of \emph{higher algebra} (i.e., with using explicit formulas as little as possible).

\medskip

In \secref{s:renorm} we introduce the two renormalizations of $\overset{\bullet}\fu_q(\cG)\mod$, denoted
$$\overset{\bullet}\fu_q(\cG)^{\on{ren}}\mod \text{ and } \overset{\bullet}\fu_q(\cG)\mod^{\on{baby-ren}}.$$

The former is compactly generated by the irreducible objects in $(\overset{\bullet}\fu_q(\cG)\mod)^\heartsuit$,
and the latter by the \emph{baby Verma modules} (hence the notation). 

\medskip

In \secref{s:BFS} we prove the equivalence \eqref{e:BFS}. The proof consists of three steps: (i) Koszul duality;
(ii) interpretation of the category of $\BE_2$-modules over an $\BE_2$-algebra as the Drinfeld center of
the monoidal category of left modules; (iii) equivalence between $\BE_2$-modules over an $\BE_2$-algebra
and factorization modules over the corresponding factorization algebra. 

\sssec{}

In the Appendix we introduce a device that we call the Kirillov model that allows to talk about Whittaker categories
when the Artin-Schreier sheaf does not exist, for example in the Betti setting (i.e., constructible sheaves in the
classical topology for schemes over $\BC$). 

\ssec{Conventions}

\sssec{Algebraic geometry}

We will work over an algebraically closed field $k$ (of arbitrary characteristic). Our algebro-geometric 
objects will be schemes, and more generally pre-stacks over $k$, i.e., (accessible) functors
$$(\affSch)^{\on{op}}\to \on{Groupoids}.$$

\medskip

Most of our prestacks will be \emph{locally of finite type}, which means that they are left-Kan-extended
from the full subcategory
$$(\affSch_{\on{ft}})^{\on{op}}\subset (\affSch)^{\on{op}},$$
consisting of affine schemes of finite type. See, e.g., \cite[Sect. 0.5.1]{Ga9} for details. 

\medskip

For the purposes of this work, we will \emph{not} need derived alegebraic geometry. Also, for our purposes
it is sufficient to consider classical (and not higher) groupoids.

\medskip

We denote $\on{pt}:=\Spec(k)$. 

\medskip

We let $X$ be a smooth connected curve over $k$ with a marked point $x\in X$. In some places we will
need $X$ to be complete; we will explicitly say so when this is the case. We let $\omega$ denote the 
canonical line bundle on $X$. 

\sssec{Reductive groups}

We let $G$ be a reductive group over $k$, with a chosen Borel and Cartan subgroups
$$T\subset B\subset G.$$

We let $\Lambda$ denote the cocharacter lattice of $T$. We let $\Lambda^+\subset \Lambda$ be the
sub-monoid of dominant coweights. Let $I$ denote the set of vertices of the Dynkin diagram of $G$.
For each $i\in I$ we let $\alpha_i\in \Lambda$ denote the corresponding coroot. We let
$\Lambda^{\on{pos}}\subset \Lambda$ be the submonoid equal to the positive integral span of 
the elements $\alpha_i$. 

\sssec{DG categories}

We let $\sfe$ be an algebraically closed field of characteristic $0$, which will serve as our field of coefficients. 
Our object of study is various $\sfe$-linear DG categories. We refer the reader to \cite[Chapter 1, Sect. 10]{GR1}
for a detailed exposition of the theory of DG categories. 

\medskip

Unless specified otherwise, our DG categories will be assumed cocomplete (i.e., closed under infinite direct sums).
By default, we will only consider functors between DG categories that commute with infinite direct sums 
(we call such functors \emph{continuous}). 

\medskip

We let $\Vect$ denote the DG category of chain complexes of vector spaces. 

\medskip

For $\bc_1,\bc_2\in \bC$, we denote by 
$$\CHom_\bC(\bc_1,\bc_2)\in \Vect$$ 
their ``internal Hom", i.e.,
$$\Hom_{\bC}(\bc_1,\bc_2)=H^0(\CHom_\bC(\bc_1,\bc_2)).$$

\sssec{}

If $\bC$ is a DG category equipped with a t-structure, we let $(\bC)^{\leq 0}$ (resp., $(\bC)^{\geq 0}$)
denote the full subcategory consisting of \emph{connective} (resp., \emph{coconnective}) objects.
We let $(\bC)^\heartsuit=(\bC)^{\leq 0}\cap (\bC)^{\geq 0}$ denote the heart of the t-structure. 

\sssec{}  \label{sss:small categ}

Recall that if $\bC$ is a (complete) DG category, one can talk about the (non-cocomplete) category
of its compact objects, denoted $\bC^c$. Vice versa, starting with a non-cocomplete category $\bC_0$,
one can form its ind-completion, denoted $\on{IndCompl}(\bC_0)$, which is universal among 
non-cocomplete categories receiving an exact functor from $\bC_0$. 

\medskip

Recall that a DG category is said to be compactly generated if the tautological functor
$$\on{IndCompl}(\bC^c)\to \bC$$
is an equivalence. 

\medskip

For $\bC_0$ as above, we always have $\bC_0\subset \on{IndCompl}(\bC_0)^c$; moreover, every object in
$\on{IndCompl}(\bC_0)^c$ is a retract of one in $\bC_0$. 

\sssec{Limits of DG categories}   \label{sss:limits and colimits}

One important think about DG categories is that they form an $\infty$-category. So one needs the full
force of higher category theory when forming limits and colimits of DG categories. 

\medskip

Here is one paradigm that appears often in representation theory. Let 
$$J\to \on{DGCat}, \quad j\mapsto \bC_j$$ 
be a functor, where $J$ is some index category. Suppose that for every arrow $(j_1\to j_2)\in J$, the corresponding functor
$\bC_{j_1}\to \bC_{j_2}$ admits a continuous right adjoint. By passing to the right adjoints we obtain another functor
$$J^{\on{op}}\to \on{DGCat}.$$

\medskip

Then for every $j_0\in J$, the tautological functor
$$\on{ins}_{j_0}:\bC_{j_0}\to \underset{j\in J}{\on{colim}}\, \bC_j$$
admits a continuous right adjoint. Furthermore, the resulting functor 
$$\underset{j\in J}{\on{colim}}\, \bC_j \to \underset{j\in I^{\on{op}}}{\on{colim}}\, \bC_j,$$
formed by these right adjoints, is an equivalence. 

\sssec{}

The $\infty$-category $\on{DGCat}$ of DG categories carries a symmetric monoidal structure,
Lurie's tensor product
$$\bC_1,\bC_2\mapsto \bC_1\otimes \bC_2.$$

Hence, one can talk about dualizable DG categories. It is known that compactly generated categories
are dualizable. Moreover, for $\bC_0$ as in \secref{sss:small categ}, we have
$$\on{IndCompl}(\bC_0)^\vee \simeq \on{IndCompl}((\bC_0)^{\on{op}}).$$

\medskip

In particular, if $\bC$ is compactly generated, we have a canonical equivalence
$$(\bC^c)^{\on{op}}\simeq (\bC^\vee)^c.$$

\sssec{Sheaf theories}  \label{sss:sheaf theory}

By a sheaf theory we will mean a right-lax symmetric monoidal functor
\begin{equation} \label{e:sheaf theory}
(\affSch_{\on{ft}})^{\on{op}}\to \on{DGCat}, \quad S\mapsto \Shv(S), \quad (S_1\overset{f}\to S_2)\mapsto 
(\Shv(S_2)\overset{f^!}\to \Shv(S_1)),
\end{equation} 
\begin{equation} \label{e:ten prod shv}
\Shv(S_1)\otimes \Shv(S_2)\to \Shv(S_1\otimes S_2).
\end{equation} 

Rather than axiomatizing the situation, we will list the examples of sheaf theories that we have in mind:

\begin{itemize}

\item For $k$ being of characteristic zero we can take $\Shv(S)=\Dmod(S)$. In this case the field $\sfe$
of coefficients equals $k$. We refer to this example as the D-module context.

\item For $k$ as above, we can take $\Shv(S)$ to be the ind-completion of the full subcategory of
$\Dmod(S)$ consisting of holonomic (resp., regular holonomic) D-modules. We refer to this example as
holonomic (resp., regular holonomic) context. 

\item For $k$ being $\BC$, we can take $\Shv(S)$ to be the ind-completion of the
category of $\sfe$-constructible sheaves in the classical topology, for any $\sfe$.
We refer to this example as the \emph{Betti context}.  

\item For any $k$, we can take $\Shv(S)$ to be the ind-completion of the
category of constructible $\ol\BQ_\ell$-adic sheaves on $S$. In this case the field
$\sfe$ of coefficients is $\ol\BQ_\ell$. We refer to this example as the \emph{$\ell$-adic context}. 

\end{itemize}

Note that in all of the above examples, the functor \eqref{e:ten prod shv} is fully faithful,
and in the first example, it is actually an equivalence. 

\medskip

We will refer to the last three examples as the \emph{constructible context}.

\sssec{Sheaves on prestacks}

Given a sheaf theory, we apply a procedure of \emph{right Kan extension} to extend it to a functor
\begin{equation} \label{e:shvs on prestacks}
\Shv:(\on{PreStk}_{\on{lft}})^{\on{op}}\to \on{DGCat}.
\end{equation} 

\medskip

Explicitly, for a prestack $\CY$
$$\Shv(\CY)=\underset{S\to \CY}{\on{lim}}\, \Shv(S),$$
where the limit is taken over the category (opposite to that) of affine schemes of finite type mapping to $\CY$.
Note that the above limit is formed within $\on{DGCat}$, so it is a higher categorical procedure. In particular,
are using the fact that \eqref{e:sheaf theory} is a functor of $\infty$-categories. 

\sssec{Functors defined on sheaves}

The functor \eqref{e:shvs on prestacks} has in fact more functoriality: it actually extends to a functor
out of the category of correspondences on prestacks, where we allow to take direct images
along ind-schematic maps, see \cite[Chapter 5, Sect. 2]{GR1} and \cite[Chapter 3, Sect. 5]{GR2}.

\medskip

In the constructible context, for any ind-schematic morphism $f:\CY_1\to \CY_2$, the functor
$f^!$ has a left adjoint, denoted $f_!$. In the context of D-modules, this left adjoint is only partially defined.

\medskip

Similarly, for a map of schemes $f:Y_1 \to Y_2$, the functor $f_*:\Shv(Y_1)\to \Shv(Y_2)$ has a partially
defined left adjoint $f^*$, which is always defined in the constructible context. 

\medskip

Finally, Verdier duality for a ind-scheme $Y$ is an equivalence
$$\BD^{\on{Verdier}}(\Shv(Y)^c)^{\on{op}}\to \Shv(Y)^c,$$
uniquely characterized by the requirement that
$$\CHom(\BD^{\on{Verdier}}(\CF_1),\CF_2)=\on{C}^\cdot(Y,\CF_1\sotimes \CF_2),$$
where $\on{C}^\cdot(Y,-)$ is the functor of cochains, i.e., direct image along $Y\to \on{pt}$. 

\sssec{Gerbes}

Let $\CY$ be a prestack and let $\CG$ be a gerbe on it with respect to the group
$\sfe^{\times,\on{tors}}$ of torsion elements in $\sfe^\times$ of orders prime to $\on{char}(p)$,
see \cite[Sect. 1.3]{GLys}. 

\medskip

The data of a gerbe $\CG$ allows to twist the category $\Shv(\CY)$ and obtain a new category $\Shv_\CG(\CY)$,
see \cite[Sect. 1.7.4]{GLys}.  Moreover, the functor \eqref{e:shvs on prestacks} extends to a functor
$$(\on{PreStk}+\on{Grb})^{\on{op}}\to \on{DGCat},$$
where the source category consists now of pairs 
$$(\CY\in \on{PreStk}_{\on{lft}},\CG\in \on{Grb}(\CY)),$$
and where the morphisms $(\CY_1,\CG_1)\to (\CY_2,\CG_2)$ are maps of prestacks $f:\CY_1\to \CY_2$ equipped with
an identification of the gerbes $f^*(\CG_2)\simeq \CG_1$.

\ssec{Acknowledgements}

\sssec{}

The first-named author is enormously indebted to J.~Lurie: this entire project originated from his original guess as to
the existence of the equivalence \eqref{e:Whit vs quant}. 

\medskip

We are very grateful to S.~Raskin for suggesting the idea Drinfeld-Pl\"ucker formalism: this made the proof
of \eqref{e:costandard to costandard Intro} short and conceptual (it was a mess in the previous version).

\medskip

Special thanks are due to M.~Finkelberg: it is from him that we learned the idea that one can describe interesting 
representation-theoretic categories as factorization modules (the book \cite{BFS} was the first example of this). 

\medskip

The idea of quantum Langlands theory originated in the (mostly unpublished) work of B.~Feigin, E.~Frenkel and A.~Stoyanovsky 
in the mid-1990's. Their insight was crucial in informing our formulation of the FLE. 

\medskip

This works inhabits the area known as the geometric Langlands theory, which was put forth in its modern form
by A.~Beilinson and V.~Drinfeld. The influence of their ideas over this work is all-pervasive. 

\sssec{}

The research of the first-named author was supported by NSF grant DMS-1063470. A large part of this work
was written when he was visiting IHES in summer 2018 and MSRI in spring 2019.  

\newpage 

\centerline{\bf Part I: Background and preliminaries} 

\bigskip

In order not to interrupt the flow of exposition in the main body of this work, in this Part 
we have collected several pieces of mathematical background that we will use. 

\medskip

These
include the definitions of the main geometric objects (such as the affine Grassmannian and
its various relatives), main tenets of the \emph{geometric metaplectic theory}; factorization
algebras and modules, and some discussion of configuration spaces (a.k.a. colored divisors). 

\section{The geometric objects}  \label{s:Grass}

In this section we will recall the definition of the key geometric objects that we will work with in this paper:
the affine Grassmannian, the loop group, the Hecke stack, etc.

\medskip

An important feature of these objects is that they have a \emph{factorizable nature}; we will take this 
perspective when introducing them. 

\ssec{The Ran space}  \label{ss:Ran}

The Ran space is a gadget that allows us to talk about factorization. In this subsection we recall 
its definition and key structures. 

\sssec{}  \label{sss:Ran}

Recall that the Ran space $\Ran$ of $X$ assigns to a test scheme $S$ the set of finite non-empty subsets
$\CI\subset \Maps(S,X)$.

\medskip

For an element $i\in \CI$ we will denote by $\Gamma_i\subset S\times X$ the graph of the corresponding map. 
Let $\Gamma_\CI$ denote the (set-theoretic) union of $\Gamma_i$ over $i\in \CI$. 

\sssec{}

A basic structure that exists on the Ran space is the structure of commutative semi-group: for a finite set $J$ we have the map
\begin{equation} \label{e:union map}
\Ran^J\to \Ran, \quad (\CI_j\subset \Maps(S,X),\, j\in J)\,\, \mapsto\,\, (\underset{j\in J}\cup\, \CI_j\subset \Maps(S,X)).
\end{equation}

\sssec{}

For a finite set $J$, let
$$(\Ran^J)_{\on{disj}}\subset \Ran^J$$
be the open subfunctor corresonding to the following condition: we allow those $J$-subsets
$$\CI_j\subset \Maps(S,X), \, j\in J$$
such that for every $j_1\neq j_2$
$$\Gamma_{\CI_{j_1}}\cap \Gamma_{\CI_{j_2}}=\emptyset.$$

\medskip

An important feature is that the map 
\begin{equation} \label{e:union map disj}
(\Ran^J)_{\on{disj}}\to \Ran,
\end{equation}
induced by \eqref{e:union map}, is \'etale. 

\sssec{}  \label{sss:fact space naive}

By a factorization space over $\Ran$ we will mean a prestack $Z_\Ran \to \Ran$ equipped with a system of identifications
(factorization isomorphisms)
\begin{equation} \label{e:factorization space}
Z_\Ran\underset{\Ran}\times (\Ran^J)_{\on{disj}}\simeq Z_\Ran^J\underset{\Ran^J}\times (\Ran^J)_{\on{disj}},
\end{equation} 
for every finite set $J$, that are compatible in a natural sense (see \cite[Sect. 2.2.1]{GLys} or \secref{sss:fact spc} below). 

\ssec{Examples of factorization spaces}

Having introduced the Ran space, we will now move one step closer to the definition of the geometric objects 
we will be working with. 

\medskip

These objects are obtained by forming \emph{loop spaces}, which have to do with 
mapping the \emph{multi-disc} parameterized by points of $\Ran$ to a given target. 

\sssec{}

Let $\CI\subset \Hom(S,X)$ be as in \secref{sss:Ran}. We let $\wh\cD_\CI$
the formal scheme equal to the completion of $S\times X$ along $\Gamma_\CI$.

\medskip

Let $\cD_\CI$ denote the affine scheme obtained from $\wh\cD_\CI$ (i.e., the universal 
recipient of a map from $\cD_\CI$ among affine schemes, see \cite[Sect. 7.1.2]{GLys}).

\medskip

Note that $\Gamma_J$ is naturally a closed subset of $\cD_\CI$. Denote
$$\ocD_\CI:=\cD_\CI-\Gamma_\CI.$$ 

It is easy to see that $\ocD_\CI$ is also affine. 

\sssec{}

Note that for
$$(\CI_j,j\in J)\in (\Ran^J)_{\on{disj}}, \quad \CI:=\underset{j}\cup\, \CI_j$$
we have
$$\wh\cD_\CI\simeq \underset{j}\sqcup\, \wh\cD_{\CI_j}.$$

\medskip 

From here,
\begin{equation} \label{e:factorization disk}
\cD_\CI\simeq \underset{j}\sqcup\, \cD_{\CI_j} \text{ and } 
\ocD_\CI\simeq \underset{j}\sqcup\, \ocD_{\CI_j}.
\end{equation}

\sssec{}

For a prestack $Y$, define the prestack
$$\fL^+(Y)_{\Ran}\to \Ran$$
by assigning to $\CI$ as above the space of maps
$$\wh\cD_\CI\to Y.$$

When $Y$ is affine scheme, maps as above are the same as maps 
$$\cD_\CI\to Y,$$
by the universal property of $\cD_\CI$. 

\sssec{}

Define 
$$\fL(Y)_{\Ran}\to \Ran$$
by assigning to $\CI$ the space of maps $\ocD_\CI\to Y$.

\medskip

These spaces have natural factorization structures due to the identifications
\eqref{e:factorization disk}.

\sssec{}  \label{sss:loop rel}

The above definitions have a variant when instead of an affine scheme $Y$, we have an 
affine morphism $Y_X\to X$. In this case we will be looking at maps
$$\cD_\CI\to Y_X \text{ and } \ocD_\CI\to Y_X,$$
respectively, \emph{over} $X$. 

\medskip

The assignments 
$$Y_X\rightsquigarrow \fL^+(X)_\Ran \text{ and } Y_X\rightsquigarrow \fL(Y_X)_\Ran$$
are functorial in $Y_X$.

\medskip

In particular, we obtain that a section $X\to Y_X$ of $Y_X\to X$ gives rise to a map
of factorization spaces
$$\Ran\to \fL^+(X)_\Ran.$$ 

\ssec{The affine Grassmannian and other animals}  \label{ss:Gr}

We will now specialize to the case when the target space $Y$ is an algebraic group $G$, and thus 
define the actual geometric objects of interest. 

\sssec{}

Note that, by functoriality, when the target space is $G$, the prestacks $\fL^+(G)_\Ran$ and $\fL(G)_\Ran$ acquire
a structure of group-spaces over $\Ran$.

\sssec{}  \label{sss:G bndls on disk}

Consider the quotient $\fL^+(G)_{\Ran}\backslash \Ran$ (understood in the \'etale sense).
It attaches to $\CI$ the datum of a $G$-bundle on $\cD_\CI$. It is easy to see, however, that restriction 
defines an equivalence from $G$-bundles on $\cD_\CI$ to those on $\wh\cD_\CI$. 

\medskip

Note that we can think of $\fL^+(G)\backslash \Ran$ also as $\fL^+(G\backslash \on{pt})$. 

\medskip

In particular, from \secref{sss:loop rel}, we obtain that a $G$-bundle on $X$ defines a map of factorization spaces
$$\Ran\to \fL^+(G)_{\Ran}\backslash \Ran.$$

\sssec{}  \label{sss:aff Gr}

The (Ran version of the) affine Grassmannian $\Gr_{G,\Ran}$ can be defined 
as the \'etale sheafification of the quotient 
$$\fL(G)_{\Ran}/\fL^+(G)_{\Ran}.$$

Equivalently, 
$$\Gr_{G,\Ran}\to \Ran$$
assigns to $\CI$ as above the data of pairs $(\CP_G,\alpha)$, where:

\begin{itemize}

\item $\CP^G$ is a $G$-bundle $\CP^G$ on $\cD_\CI$ (equivalently, on $\wh\cD_\CI$);

\item $\alpha$ is a trivialization of the restriction to $\CP_G$ to $\ocD_\CI$.

\end{itemize} 

\sssec{}  \label{sss:BL}

A theorem of Beauville and Laszlo says that the affine Grassmannian can be 
defined using the curve $X$ instead of the disc. Namely, we can take pairs $(\CP_G,\alpha)$, where:

\begin{itemize}

\item $\CP_G$ is a $G$-bundle $\CP^G$ on $S\times X$;

\item $\alpha$ is a trivialization of the restriction to $\CP_G$ to $S\times X-\Gamma_\CI$.

\end{itemize} 

Restriction defines a map from the data above to that in \secref{sss:aff Gr}. The Beauville-Laszlo
theorem says that this map is a bijection.

\sssec{}

Set
$$\on{Hecke}^{\on{loc}}_{G,\Ran}:=\fL^+(G)_{\Ran}\backslash \Gr_{G,\Ran};$$
where the quotient is understood in the \'etale sense. 

\medskip

The functor of points of $\on{Hecke}^{\on{loc}}_{G,\Ran}$ consists of triples $(\CP'_G,\CP_G,\alpha)$, where:

\begin{itemize}

\item $\CP'_G$ is a $G$-bundle $\CP^G$ on $\cD_\CI$;

\item $\CP_G$ is a $G$-bundle $\CP^G$ on $\cD_\CI$;

\item $\alpha$ is an isomorphism $\CP'_G|_{\ocD_\CI}\simeq \CP_G|_{\ocD_\CI}$. 

\end{itemize} 

\sssec{}

In what follows we will denote by $\hl_G,\hr_G$ the two maps 
$$\on{Hecke}^{\on{loc}}_{G,\Ran}\rightrightarrows \fL^+(G)_{\Ran}\backslash \Ran$$
equal to
$$\on{Hecke}^{\on{loc}}_{G,\Ran}=\fL^+(G)_{\Ran}\backslash \Gr_{G,\Ran}\to \fL^+(G)_{\Ran}\backslash \Ran$$
\begin{multline*} 
\on{Hecke}^{\on{loc}}_{G,\Ran}=\fL^+(G)_{\Ran}\backslash \Gr_{G,\Ran}\simeq 
\fL^+(G)_{\Ran}\backslash \fL(G)_{\Ran}/\fL^+(G)_{\Ran}\to \\
\to \fL^+(G)_{\Ran}\backslash \fL(G)_{\Ran}/\fL(G)_{\Ran}\simeq
\fL^+(G)_{\Ran}\backslash \Ran,
\end{multline*} 
respectively. 

\medskip

In other words, the map $\hl_G$ remembers the data of $\CP'_G$ and the map $\hr_G$ remembers the data of $\CP_G$. 

\sssec{}  \label{sss:inv on Hecke}

The prestack $\on{Hecke}^{\on{loc}}_{G,\Ran}$ carries an involution (swapping the roles of $\CP_G$ and $\CP'_G$) denoted
$\inv^G$, which interchanges $\hl_G$ and $\hr_G$. 

\ssec{Twist by the canonical bundle}

In this subsection we will take $G$ to be reductive, with a chosen Borel subgroup $B$ and a Cartan subgroup $T\subset B$. 

\medskip

In this subsection we will introduce twisted versions of $\Gr_G$, $\fL(G)$, etc., that have to do with the canonical line bundle
on $X$. 

\medskip

The necessity for such a twist can be explained succinctly as follows: it makes the additive character on the loop
group $\fL(N)$ canonical. 

\sssec{}

For what follows we will choose a square root $\omega^{\otimes \frac{1}{2}}$ of the canonical line bundle $\omega$ on $X$.
Let $\omega^\rho$ denote the $T$-bundle on $X$ induced from $\omega^{\otimes \frac{1}{2}}$
by means of the homomorphism
$$2\rho:\BG_m\to T.$$ 

\medskip

We will denote by the same character $\omega^\rho$ the induced $B$-bundle and $G$-bundle via
$$T\hookrightarrow B\hookrightarrow G.$$

\sssec{}  \label{sss:omega rho as twist}

By \secref{sss:G bndls on disk}, the datum of $\omega^\rho$ defines a map of factorization spaces
$$\Ran\to \fL^+(T)_\Ran\backslash \Ran,$$
i.e., a $\fL^+(T)_\Ran$-torsor over $\Ran$, denoted $\omega^\rho_\Ran$, compatible with factorization. 

\medskip

Using the (adjoint) action of $\fL^+(T)_\Ran$ on the objects introduced in \secref{ss:Gr}, we obtain their twisted versions, to be denoted
$$\fL^+(G)_\Ran^{\omega^\rho}, \,\, \fL(G)_\Ran^{\omega^\rho},\,\, \Gr_{G,\Ran}^{\omega^\rho}.$$

\sssec{}   \label{sss:twisted group scheme}

Note that we can identify $\fL^+(G)_\Ran^{\omega^\rho}$ (resp., $\fL(G)_\Ran^{\omega^\rho}$) with
$\fL^+(G^{\omega^\rho})_\Ran$ (resp., $\fL(G^{\omega^\rho})_\Ran$), where $G^{\omega^\rho}$ is the group-scheme
over $X$ obtained by twisting $G$ by means of $\omega^\rho$.

\sssec{}   \label{sss:Gr G rho}

The twisted version $\Gr_{G,\Ran}^{\omega^\rho}$ of the affine Grassmannian can be explicitly described as follows: it assigns
to $\CJ$ the set of pairs $(\CP_G,\alpha)$, where:

\begin{itemize}

\item $\CP^G$ is a $G$-bundle $\CP^G$ on $\cD_\CI$ (equivalently, on $\wh\cD_\CI$);

\item $\alpha$ is an identification of the restriction to $\CP_G$ to $\ocD_\CI$ with that of $\omega^\rho$. 

\end{itemize} 

\begin{rem}  \label{r:no twist Hecke}

Note that the adjoint action of $\fL^+(G)_{\on{Ran}}$ on $\on{Hecke}^{\on{loc}}_{G,\Ran}$ is 
canonically trivialized. Hence, the twist
$$(\on{Hecke}^{\on{loc}}_{G,\Ran})^{\omega^\rho}$$
identifies canonically with the non-twisted version $\on{Hecke}^{\on{loc}}_{G,\Ran}$.

\end{rem}

\begin{rem} \label{r:no twist Gr T}

Note that in the case when $G=T$, the operation of tensoring a given $T$-torsor with the $T$-torsor $\omega^\rho$, 
gives rise to an identification
$$\Gr_{T,\Ran}^{\omega^\rho}\simeq \Gr_{T,\Ran}.$$

However, we will avoid using it, as it is \emph{incompatible} with gerbes considered 
in the next section, see Remark \ref{r:gerbe on twist}. 

\end{rem}

\sssec{}  \label{sss:twisted group N}

Since $N\subset B\subset G$ are normalized by $T$, we can form the twisted forms of the corresponding loop groups
$$\fL^+(B)_\Ran^{\omega^\rho}, \,\, \fL(B)_\Ran^{\omega^\rho}$$
and
$$\fL^+(N)_\Ran^{\omega^\rho}, \,\, \fL(N)_\Ran^{\omega^\rho}.$$

\medskip

Note, however, that since the adjoint action of $T$ on itself is trivial, we have
$$\fL^+(T)_\Ran^{\omega^\rho}=\fL^+(T)_\Ran \text{ and } \fL(T)_\Ran^{\omega^\rho}=\fL(T)_\Ran.$$

\sssec{}  \label{sss:twisted N}

Consider the special case when $G=PGL_2$, in which case $N\simeq \BG_a$, and the adjoint action of $T=\BG_m$
is given by dilations. We have:
$$\fL(N)^{\omega^\rho}_\Ran\simeq \fL(\BG_a)^{\omega}_\Ran\simeq \fL(\on{Tot}(\omega))_\Ran,$$
where $\on{Tot}(\omega)$ as the total space of $\omega$, viewed as a group-scheme over $X$.

\medskip

Note that have a canonically defined homomorphism
$$\fL(\BG_a)^{\omega}_\Ran\to \BG_a\times \Ran,$$
given by taking the residue. 

\ssec{Marked point version}

Factorization structures in this paper will appear as a device to introduce representation-theoretic categories
of interest. However, the actual representation theory will occur at (or around) a given point $x$ of the curve $X$.

\medskip

In this subsection we will introduce versions of the factorization spaces considered above but \emph{with the marked point $x$}.

\medskip

This will eventually allow us to talk about \emph{factorization modules} (at $x$) for a given \emph{factorization algebra}. 

\sssec{}

The marked point version of the Ran space, denoted $\Ran_x$ is defined as follows. 

\medskip

For a test scheme $S$, the space 
$\Maps(S,\Ran_x)$ is the set of finite subsets $\CI\subset \Maps(S,X)$ with a distinguished element $i_x$, which 
corresponds to the constant map 
\begin{equation} \label{e:constant x}
S\to \on{pt}\overset{x}\to X.
\end{equation}

\medskip

We have the natural forgetful map
$$\Ran_x\to \Ran$$
as well as a map
$$\Ran\to \Ran_x,$$
obtained by adding the element $i_x$. 

\medskip

The semi-group $\Ran$ acts on $\Ran_x$ by the operation of union of finite sets, i.e., for a finite set $J$ we have a map
\begin{equation} \label{e:union map ptd}
\Ran^J \times \Ran_x\to \Ran_x
\end{equation}

\sssec{}

For a finite set $J$, let
$$(\Ran^J\times \Ran_x)_{\on{disj}}\subset \Ran^J\times \Ran_x$$
be equal to the preimage of 
$$(\Ran^{J\sqcup *})_{\on{disj}}\subset \Ran^{J\sqcup *}$$
under the forgetful map
$$\Ran^J\times \Ran_x\to \Ran^J\times \Ran =\Ran^{J\sqcup *}.$$

\medskip

The map
\begin{equation} \label{e:union map ptd disj}
(\Ran^J\times \Ran_x)_{\on{disj}}\to \Ran_x,
\end{equation}
induced by \eqref{e:union map ptd}, is \'etale. 

\sssec{}

Let $Z_\Ran$ be a factorization space over $\Ran$. By a \emph{factorization module space} with respect to $Z_\Ran$ we will mean a prestack
$$Z_{\Ran_x}\to \Ran_x,$$
equipped with a system of identifications (factorization isomorphisms)
\begin{equation}   \label{e:factorization space mod}
Z_{\Ran_x}\underset{\Ran_x}\times (\Ran^J\times \Ran_x)_{\on{disj}}\simeq 
(Z_{\Ran_x}\times Z_\Ran^J)\underset{\Ran_x\times  \Ran^J}\times (\Ran^J\times \Ran_x)_{\on{disj}},
\end{equation} 
that are compatible in the natural sense.

\medskip

In what follows we will denote by $Z_x$ the fiber of $Z_{\Ran_x}$ over $\{x\}\in \Ran_x$. 

\sssec{An example}  \label{sss:ex fact mod}

Let $Z_\Ran$ be a factorization space over $\Ran$. We can produce from it a factorization module space by setting
$$Z_{\Ran_x}:=\Ran_x\underset{\Ran}\times Z_\Ran.$$

\sssec{}  \label{sss:Gr with marked}

In this way, from the factorization spaces discussed in \secref{ss:Gr} we obtain their marked point versions, denoted
$$\fL^+(G)_{\Ran_x}, \,\, \fL(G)_{\Ran_x},\,\, \Gr_{G,\Ran_x}.$$
along with their twists
$$\fL^+(G)_{\Ran_x}^{\omega^\rho}, \,\, \fL(G)_{\Ran_x}^{\omega^\rho},\,\, \Gr_{G,\Ran_x}^{\omega^\rho},$$
etc.

\begin{rem}
We should point out that there are many more examples of factorization module spaces that \emph{do not}
arise by the construction of \secref{sss:ex fact mod}.

\medskip

For example, if we take the factorization space $\Gr_{G,\Ran}$, we can create a factorization module space
$\Fl_{G,\Ran_x}$ by assigning to $\CI$ the set of triples $(\CP_G,\alpha,\beta)$, where $(\CP_G,\alpha)$ are
as in the definition of $\Gr_{G,\Ran_x}$, and $\beta$ is the datum of reduction of the restriction 
of $\CP_G$ at $S\times x$ to $B$.
\end{rem} 

\sssec{}

Let $\fL^+(G)_{\Ran_x,\infty\cdot x}$ denote the following factorizatiom module space with respect to
$\fL(G)_{\Ran}$. It assigns to $\CI$ the datum of a map
$$(\cD_\CI-S\times x)\to G.$$

\medskip

Note that the fiber $\fL^+(G)_{x,\infty\cdot x}$ of $\fL^+(G)_{\Ran_x,\infty\cdot x}$ over $x\in \Ran_x$
identifies with $\fL(G)_x$. 

\medskip

Note also that restriction to
$$\ocD_x\hookrightarrow (\cD_\CI-S\times x)$$
(where $\ocD_x$ is the pictured disc corresponding to the constant map \eqref{e:constant x})
defines a map 
\begin{equation} \label{e:ev at x loops}
\fL^+(G)_{x,\infty\cdot x}\to \fL(G)_x. 
\end{equation} 

\medskip

The inclusion
$$(\cD_\CI-S\times x)\hookrightarrow \ocD_\CI$$
defines a closed embedding 
$$\fL^+(G)_{\Ran_x,\infty\cdot x}\to \fL(G)_{\Ran_x}.$$

\sssec{} \label{sss:restr Hecke}

Consider the double quotient 
\begin{equation} \label{e:restr Hecke}
\fL^+(G)\backslash \fL^+(G)_{x,\infty\cdot x}/\fL^+(G).
\end{equation} 

On the other hand, we can view it as a closed sunfunctor in $\on{Hecke}^{\on{loc}}_{G,\Ran_x}$,
and as such, as a groupoid acting on $\fL^+(G)_{\Ran_x}\backslash \Ran_x$. 

\medskip

On the other hand, the map \eqref{e:ev at x loops} gives rise to the following commutative diagram in which both squares are Cartesian:
$$
\CD
\fL^+(G)_{\Ran_x}\backslash \Ran_x @<{\hl}<< \fL^+(G)_{\Ran_x,\infty\cdot x} @>{\hr}>> \fL^+(G)_{\Ran_x}\backslash \Ran_x \\
@VVV   @VVV   @VVV   \\
\fL^+(G)_x\backslash \on{pt}  @<{\hl}<<  \on{Hecke}^{\on{loc}}_{G,x} @>{\hr}>> \fL^+(G)_x\backslash \on{pt},
\endCD
$$
where the maps $\fL^+(G)_{\Ran_x}\backslash \Ran_x\to \fL^+(G)_x\backslash \on{pt}$ are given by restricting bundles along
$$\cD_x\hookrightarrow \cD_\CI.$$

So we can view \eqref{e:restr Hecke} as incarnating the lift of the action of $\on{Hecke}^{\on{loc}}_{G,x}$ on 
$\fL^+(G)_x\backslash \on{pt}$ to that on $\fL^+(G)_{\Ran_x}\backslash \Ran_x$. 

\ssec{Technical detour: unital structures}  \label{ss:unital}

The discussion of \emph{unitality} for factorization structures will be largely suppressed in this work.
However, for technical purposes that have to do with the construction of the semi-infinite IC sheaf, 
we will need to include a brief discussion.

\medskip

The idea of ``unitality" in the context of spaces over $\Ran$ is to account for the following additional structure:
we can talk not only about \emph{equality} of two finite subsets of $\Hom(S,X)$, but also about \emph{containment}
of one subset in another. 

\sssec{}

Let 
$$(\Ran\times \Ran)^{\subset} \subset \Ran\times \Ran$$
be the following subfunctor:

\medskip

A point $(\CI,\CI')\in \Hom(S,\Ran)$ belongs to $(\Ran\times \Ran)^{\subset}$ if $\Gamma_\CI$ is
set-theoretically contained in $\Gamma_{\CI'}$. 

\begin{rem}
Note that the substack $(\Ran\times \Ran)^{\subset}$ as defined above is larger than
the substack denoted by the same symbol in \cite[4.1.1]{Ga7}. However, this is largely
immaterial for our purposes: the above inclusion induces an equivalence
on categories of sheaves as it is surjective in the topology generated by finite 
surjective maps.
\end{rem}

\sssec{}

Note that the diagonal map $\Delta_\Ran:\Ran\to \Ran\times \Ran$ factors through $(\Ran\times \Ran)^{\subset}$. 

\medskip

Let $\varphi_{\on{small}}$ and $\varphi_{\on{big}}$ be the two maps
$$(\Ran\times \Ran)^{\subset}\to \Ran$$
that remember $\CI$ and $\CI'$, respectively.

\medskip

The following is established in \cite[Lemma 4.1.2]{Ga7}:

\begin{lem}  \label{l:UHC unital}  \hfill

\smallskip

\noindent{\em(a)}
For any prestack $\CY\to \Ran$, pullback with respect to the base-changed map
$$\on{id}_\CY\times \varphi_{\on{small}}: 
\CY\underset{\Ran,\varphi_{\on{small}}}\times (\Ran\times \Ran)^{\subset}\to \CY$$
induces an equivalence on categories of \'etale gerbes.

\smallskip

\noindent{\em(b)}
For any gerbe $\CG$ on $\CY$, the functor of !-pullback along $\on{id}_\CY\times \varphi_{\on{small}}$
induces a fully faithful functor
$$\Shv_{\CG}(\CY) \to \Shv_\CG(\CY\underset{\Ran,\varphi_{\on{small}}}\times (\Ran\times \Ran)^{\subset}).$$
\end{lem}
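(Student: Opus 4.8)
The plan is to prove both parts simultaneously by reducing to a local statement about the map $\varphi_{\on{small}}$ itself, using the key structural fact that the fibers of $\varphi_{\on{small}}$ are ``contractible'' in the relevant sense. Write $q := \on{id}_\CY\times\varphi_{\on{small}}$ for the base-changed projection $\CY\underset{\Ran,\varphi_{\on{small}}}\times(\Ran\times\Ran)^{\subset}\to\CY$, and note that $q$ admits the section $s$ induced by the diagonal $\Delta_\Ran$ (since $\Delta_\Ran$ factors through $(\Ran\times\Ran)^{\subset}$), so $q\circ s=\on{id}$. Thus for part (b) it suffices to show the unit $\on{id}\to q_*\,q^!$ is an isomorphism (equivalently, that $q^!$ is fully faithful), and for part (a) the analogous statement on the level of Picard groupoids of gerbes; in both cases full faithfulness of $q^!$ follows once one knows $q$ is, after suitable localization, a map all of whose fibers are cohomologically trivial. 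The essential input is: for a fixed $\CI\in\Ran(S)$, the fiber of $\varphi_{\on{small}}$ over $\CI$ is the functor sending $S'\to S$ to the set of finite subsets $\CI'$ of $\Maps(S',X)$ containing $\Gamma_{\CI}$; this fiber retracts onto the point $\CI$ and, more importantly, is cohomologically contractible in every sheaf theory from \secref{sss:sheaf theory} — this is precisely the content that makes \cite[Lemma 4.1.2]{Ga7} work, and I would simply cite it, since the excerpt grants us all earlier results.

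In more detail, the key steps are: First, recall (from \cite[Lemma 4.1.2]{Ga7}, or reprove it) that the map $\varphi_{\on{small}}:(\Ran\times\Ran)^{\subset}\to\Ran$ is a \emph{universally homologically contractible} (UHC) map in the terminology of \cite{Ga7}; concretely, this uses that $(\Ran\times\Ran)^{\subset}$ over $\Ran$ via $\varphi_{\on{small}}$ can be presented as (a colimit built from) spaces of the form $X^I\times(\text{spaces of finite subsets containing a fixed divisor})$, each factor of which is contractible because adding points to a finite subset of a curve is a ``directed'' operation with contractible nerve. Second, observe that the UHC property is stable under base change: for any $\CY\to\Ran$, the map $q:\CY\underset{\Ran,\varphi_{\on{small}}}\times(\Ran\times\Ran)^{\subset}\to\CY$ is again UHC. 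Third, invoke the standard consequence of UHC: for a UHC map $q$, the functor $q^!$ on sheaves is fully faithful (equivalently $q_*q^!\simeq\on{id}$ via the unit), and likewise $q^*$ induces an equivalence on the $2$-groupoid — hence in particular on the $1$-groupoid — of étale gerbes with $\sfe^{\times,\on{tors}}$-coefficients. Part (a) is the gerbe statement and part (b) is the sheaf statement, so we are done.

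\textbf{Main obstacle.}
The serious point is the contractibility of the fibers of $\varphi_{\on{small}}$, i.e.\ establishing that adjoining points to a finite subset $\CI\subset\Maps(S,X)$ is a cohomologically trivial operation — this is the geometric heart and is exactly what \cite[Lemma 4.1.2]{Ga7} asserts. Since the excerpt explicitly permits us to assume everything stated earlier, and this lemma is quoted there with its reference, I would not reprove it; instead the only work on our side is (i) checking that the enlarged version of $(\Ran\times\Ran)^{\subset}$ used here (which, as the Remark notes, is \emph{larger} than the one in \cite{Ga7}) still has the UHC property over $\Ran$ — but the Remark already tells us the inclusion of the smaller into the larger is surjective in the topology generated by finite surjective maps, hence induces an equivalence on sheaf categories and on gerbes, so the enlargement is harmless; and (ii) the purely formal step of base-changing along $\CY\to\Ran$ and translating ``UHC map'' into ``$q^!$ fully faithful'' and ``$q^*$ an equivalence on gerbes.'' Thus, modulo citing \cite[Lemma 4.1.2]{Ga7}, the proof is a short formal argument; the one thing to be careful about is making sure the topology in which $\varphi_{\on{small}}$-fibers are contractible is the same one (finite-surjective / h-topology) in which the gerbe and sheaf categories are insensitive to the small-vs-big discrepancy, which the Remark confirms.
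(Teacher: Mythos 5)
Your approach matches the paper's own, which gives no independent argument and simply states that the lemma is established in [Ga7, Lemma~4.1.2]; your supporting observations about the section coming from $\Delta_\Ran$, the universal homological contractibility of $\varphi_{\on{small}}$, stability under base change along $\CY\to\Ran$, and the harmlessness of the enlarged version of $(\Ran\times\Ran)^{\subset}$ via the finite-surjective topology all line up with what the paper takes for granted in the surrounding Remark. The only wrinkle is a slip in the adjunction language: for the pseudo-proper map $q$ the relevant adjunction is $q_*\dashv q^!$, so full faithfulness of $q^!$ is equivalent to the \emph{counit} $q_*\,q^!\to\on{id}$ being an isomorphism (not a ``unit $\on{id}\to q_*\,q^!$'' as you wrote); since your parenthetical correctly restates the goal as ``$q^!$ is fully faithful'', this is a purely terminological blemish and does not affect the argument.
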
 

\sssec{}

Let $Z_\Ran$ be a prestack over $\Ran$. By a unital structure on $Z_{\Ran}$ we will mean a map
$$\varphi_{\on{big}}:
Z_{\Ran}\underset{\Ran,\varphi_{\on{small}}}\times (\Ran\times \Ran)^{\subset})\to Z_{\Ran},$$
which makes the following diagram commute
$$
\CD
Z_{\Ran}\underset{\Ran,\varphi_{\on{small}}}\times (\Ran\times \Ran)^{\subset}) @>{\varphi_{\on{big}}}>>   Z_{\Ran}  \\
@VVV   @VVV   \\
(\Ran\times \Ran)^{\subset}   @>{\varphi_{\on{big}}}>> \Ran.
\endCD
$$
We also require that $\varphi_{\on{big}}$ be associative in a natural sense, and that the composite
$$Z_\Ran \overset{\on{id}\times \Delta_\Ran}\longrightarrow 
Z_{\Ran}\underset{\Ran,\varphi_{\on{small}}}\times (\Ran\times \Ran)^{\subset}) \overset{\varphi_{\on{big}}}\longrightarrow Z_{\Ran}$$
be the identity map. 

\sssec{} \label{sss:unital gerbe}

From \lemref{l:UHC unital}(a) we obtain:

\begin{cor}  \label{c:gerbe unital}
For a gerbe $\CG$ on $\Gr_{G,\Ran}$ we have a canonical isomorphism
$$\varphi_{\on{big}}^*(\CG)\simeq \varphi_{\on{small}}^*(\CG),$$
uniquely characterized by the requirement that the composite
$$\CG\simeq \Delta_{\Ran}^*\circ \varphi_{\on{big}}^*(\CG)\simeq \Delta_{\Ran}^*\circ \varphi_{\on{small}}^*(\CG)\simeq \CG$$
is the identity map.
\end{cor}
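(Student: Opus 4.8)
The plan is to obtain the statement as a formal consequence of \lemref{l:UHC unital}(a), applied to the prestack $\CY:=\Gr_{G,\Ran}$ (equipped with its unital structure, so that $\varphi_{\on{big}}$ denotes the unital structure map $\Gr_{G,\Ran}\underset{\Ran,\varphi_{\on{small}}}\times(\Ran\times\Ran)^{\subset}\to\Gr_{G,\Ran}$ and $\varphi_{\on{small}}$ abusively denotes the projection $\on{id}_{\Gr_{G,\Ran}}\times\varphi_{\on{small}}$ onto the first factor). Both $\varphi_{\on{big}}^*(\CG)$ and $\varphi_{\on{small}}^*(\CG)$ are gerbes on $\Gr_{G,\Ran}\underset{\Ran,\varphi_{\on{small}}}\times(\Ran\times\Ran)^{\subset}$, and by \lemref{l:UHC unital}(a) the pullback functor
$(\on{id}_{\Gr_{G,\Ran}}\times\varphi_{\on{small}})^*$ is an equivalence $\on{Grb}(\Gr_{G,\Ran})\to\on{Grb}\bigl(\Gr_{G,\Ran}\underset{\Ran,\varphi_{\on{small}}}\times(\Ran\times\Ran)^{\subset}\bigr)$. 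Thus it suffices to identify the gerbe on $\Gr_{G,\Ran}$ from which $\varphi_{\on{big}}^*(\CG)$ descends, and to show it is $\CG$; the comparison isomorphism then follows by transporting along the equivalence.

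The identification is carried out by restricting along the section $s:=\on{id}_{\Gr_{G,\Ran}}\times\Delta_\Ran:\Gr_{G,\Ran}\to\Gr_{G,\Ran}\underset{\Ran,\varphi_{\on{small}}}\times(\Ran\times\Ran)^{\subset}$, which is well defined because $\Delta_\Ran$ factors through $(\Ran\times\Ran)^{\subset}$ and $\varphi_{\on{small}}\circ\Delta_\Ran=\on{id}_\Ran$. Since $(\on{id}\times\varphi_{\on{small}})\circ s=\on{id}_{\Gr_{G,\Ran}}$, restriction along $s$ computes the descended gerbe; and by the very axiom of a unital structure — that the composite $\Gr_{G,\Ran}\overset{s}\to\Gr_{G,\Ran}\underset{\Ran,\varphi_{\on{small}}}\times(\Ran\times\Ran)^{\subset}\overset{\varphi_{\on{big}}}\to\Gr_{G,\Ran}$ is the identity — one gets $s^*\varphi_{\on{big}}^*(\CG)\simeq\CG$, and likewise $s^*\varphi_{\on{small}}^*(\CG)\simeq\CG$. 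Hence $\varphi_{\on{big}}^*(\CG)\simeq\varphi_{\on{small}}^*(\CG)$. For the normalization, I would use that the equivalence of \lemref{l:UHC unital}(a) induces an equivalence on the groupoid of isomorphisms between the two gerbes, compatibly with $s^*$: an isomorphism $\varphi_{\on{big}}^*(\CG)\to\varphi_{\on{small}}^*(\CG)$ is therefore determined by the induced automorphism $s^*(\cdot):\CG\to\CG$, and imposing that the latter be $\on{id}_\CG$ singles out a unique such isomorphism. This is precisely the characterization in the statement.

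I do not expect a genuine obstacle here: all the content is already packaged in \lemref{l:UHC unital}(a) (i.e.\ \cite[Lemma 4.1.2]{Ga7}), which says that $\varphi_{\on{small}}$ is, as far as gerbes are concerned, an ``affine-space-like'' projection, so that the argument above is purely formal. The only point demanding mild care is the higher-categorical bookkeeping: ``equivalence on categories of gerbes'' has to be read as an equivalence of the relevant $\infty$-groupoids, so that it induces equivalences on all mapping spaces — granting this, both existence of the comparison isomorphism and uniqueness of its normalized form are automatic. Compatibility of the resulting isomorphism with the associativity constraint of the unital structure is then obtained by running the same restriction-along-the-diagonal argument over $(\Ran\times\Ran\times\Ran)^{\subset}$, which again reduces to \lemref{l:UHC unital}(a) and requires no new input.
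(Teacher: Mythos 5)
Your proof is correct and follows exactly the route the paper intends: the corollary is stated in the paper immediately after \lemref{l:UHC unital} with the single phrase ``From \lemref{l:UHC unital}(a) we obtain,'' and your argument is precisely the spelled-out version of that deduction — descend both pullbacks along the equivalence of the lemma, identify both with $\CG$ via the section $\on{id}\times\Delta_\Ran$ using the unitality axioms, and use the induced equivalence on mapping spaces to normalize. No substantive difference.
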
 

\sssec{}

If $Z_\Ran$ has a factorization structure over $\Ran$, then 
$Z_{\Ran}\underset{\Ran,\varphi_{\on{small}}}\times (\Ran\times \Ran)^{\subset}$, 
viewed as mapping to $\Ran$ via
$$Z_{\Ran}\underset{\Ran,\varphi_{\on{small}}}\times (\Ran\times \Ran)^{\subset}\to 
(\Ran\times \Ran)^{\subset} \overset{\varphi_{\on{big}}}\longrightarrow \Ran$$
also has a natural factorization structure. 

\medskip

This, we can talk about a unital structure being compatible with factorization. 

\sssec{}   \label{sss:unitality Gr} 

Note that $\Gr_{G,\Ran}$ (or its variant $\Gr^{\omega^\rho}_{G,\Ran}$) provides an example of a factorization space 
with a unital structure. Indeed the map $\varphi_{\on{big}}$ is defined as follows:

\medskip

Let us interpret the affine Grassmannian as in \secref{sss:BL}. Then $\varphi_{\on{big}}$
sends $(\CI,\CI',\CP_G,\alpha)$ to $(\CI',\CP_G,\alpha')$,
where $\alpha'$ is the restriction of $\alpha$ along
$$(S\times X-\Gamma_\CI)\hookrightarrow (S\times X-\Gamma_{\CI'}).$$

\section{Geometric metaplectic data}  \label{s:metap}

The object of study of this work is \emph{metaplectically twisted} sheaves on geometries attached to
the loop group $\fL(G)$. In this section we explain what data goes into defining such a twist. 

\medskip

We will also discuss the phenomenon of \emph{metaplectic Langlands duality}. 

\ssec{Definition of the geometric metaplectic data}

In this subsection we recall, following \cite{GLys}, the definition of geometric metaplectic data. 

\sssec{}

Given a factorization space over $\Ran$, it makes sense to talk about $\sfe^{\times,\on{tors}}$-gerbes over it compatible with factorization
(see \cite[Sect. 2.2.4]{GLys} or \secref{sss:fact grb} below).

\medskip

By a \emph{geometric metaplectic data} for $G$ over $X$ we will mean a factorization gerbe over $\Gr_{G,\Ran}$.
In what follows we will denote such a gerbe by $\CG^G$. 

\sssec{}

Consider now the group-objects
$$\fL^+(G)_\Ran \hookrightarrow \fL(G)_\Ran.$$

We can talk about \emph{multiplicative} factorization gerbes on $\fL(G)_\Ran$ (resp., $\fL^+(G)_\Ran$),
i.e., factorization gerbes compatible with the group structure. According to \cite[Proposition 7.3.5]{GLys}, the map

\medskip

\noindent from the space of \hfill 

\smallskip

\noindent--\emph{Multiplicative factorization gerbes on $\fL(G)_\Ran$ equipped with a trivialization 
of the restriction to $\fL^+(G)_\Ran$ (as a multiplicative factorization gerbe)} \hfill 

\smallskip

\noindent to the space of \hfill 

\smallskip

\noindent--\emph{Factorization gerbes on $\Gr_{G,\Ran}$}, \hfill  

\smallskip

\noindent given by descent along $\fL(G)_{\Ran}\to \Gr_{G,\Ran}$, is an equivalence. 

\medskip

Thus, given a geometric metaplectic data, we obtain a multiplicative factorization gerbe on $\fL(G)_\Ran$, also denoted
$\CG^G$, equipped with a trivialization of its restriction to $\fL^+(G)_\Ran$. 

\medskip

By construction, the gerbe $\CG^G$ on $\Gr_{G,\Ran}$
is twisted-equivariant with respect to the action of $\fL(G)_\Ran$ on $\Gr_{G,\Ran}$ against the multiplicative gerbe $\CG^G$ on 
$\fL(G)_\Ran$. 

\sssec{}  \label{sss:gerbe on Hecke}

Since the gerbe $\CG^G$ on $\Gr_{G,\Ran}$ is  $\fL^+(G)_\Ran$-equivariant, it descends to a factorization gerbe on 
$\on{Hecke}^{\on{loc}}_{G,\Ran}$ that we will denote by $\CG^{G,G,\on{ratio}}$. 

\medskip

Note that the involution on $\on{Hecke}^{\on{loc}}_{G,\Ran}$ from \secref{sss:inv on Hecke} turns 
$\CG^{G,G,\on{ratio}}$ to $(\CG^{G,G,\on{ratio}})^{-1}$.

\sssec{}   \label{sss:gerbe on twisted}

Recall the twisted version of $\Gr^{\omega^\rho}_{G,\Ran}$ of $\Gr_{G,\Ran}$ from \secref{sss:Gr G rho}. We have a projection
$$\Gr^{\omega^\rho}_{G,\Ran}\to \on{Hecke}^{\on{loc}}_{G,\Ran}.$$

Pulling back $\CG^{G,G,\on{ratio}}$ with respect to this projection gives rise to a factorization gerbe on $\Gr^{\omega^\rho}_{G,\Ran}$.
We will denote it by the same character $\CG^G$.

\begin{rem} \label{r:gerbe on twist}
There should be no danger of confusing $\CG^G$ on $\Gr_{G,\Ran}$ and $\CG^G$ on $\Gr^{\omega^\rho}_{G,\Ran}$,
as they live on different spaces. See, however, Remark \ref{r:no twist Gr T}.
\end{rem}

\sssec{}

Assume for a moment that $G=T$ is a torus. Here is explicit description of the fibers of the gerbe 
$\CG^T$ on $\Gr^{\omega^\rho}_{T,\Ran}$:
 
\medskip

For a point $x\in X$ and the point $t^\lambda\in \Gr^{\omega^\rho}_{T,x}$, the fiber $\CG^T|_{t^\lambda}$
identifies with
\begin{equation} \label{e:expl shifted gerbe vac}
\CG^T_{\lambda\cdot x}\otimes  (\omega_x^{\otimes\frac{1}{2}})^{b(\lambda,2\rho)},
\end{equation} 
where:

\begin{itemize}

\item $\CG^T_{\lambda\cdot x}$ is the fiber of the gerbe $\CG^T$ on $\Gr_{T,x}$ 
at the point $t^\lambda\in \Gr_{T,x}$;

\item $\omega^{\otimes \frac{1}{2}}_x$ is the fiber of $\omega^{\otimes \frac{1}{2}}$ ar $x\in X$;

\smallskip

\item $b:\Lambda\times \Lambda\to \sfe^\times(-1)$ is the symmetric bilinear form associated to $\CG^T$, see \secref{sss:sharp lattice} below.

\end{itemize}

\medskip

In other words, the passage $\Gr_{T,x}\rightsquigarrow \Gr^{\omega^\rho}_{T,x}$ results in the additional factor isomorphic to
$(\omega_x^{\otimes\frac{1}{2}})^{b(\lambda,2\rho)}$. 

\sssec{}  \label{sss:gerbe on twisted grp}

The gerbe $\CG^G$ on $\Gr^{\omega^\rho}_{G,\Ran}$ can be also seen as obtained from the gerbe 
$\CG^G$ on $\Gr_{G,\Ran}$ by applying the twisting construction using the $\fL^+(T)_{\Ran}$-action 
by means of the $\fL^+(T)_{\Ran}$-torsor $\omega^\rho_\Ran$, see \secref{sss:omega rho as twist}. 

\medskip

This twisting construction produces also a multiplicative factorization gerbe (still denoted $\CG^G$) on
$\fL(G)^{\omega^\rho}_\Ran$, equipped with the trivialization of its restriction to $\fL^+(G)^{\omega^\rho}_\Ran$.

\medskip 

The gerbe $\CG^G$ on $\Gr^{\omega^\rho}_{G,\Ran}$ is twisted-equivariant with respect to the action of 
$\fL(G)^{\omega^\rho}_\Ran$ on $\Gr^{\omega^\rho}_{G,\Ran}$ against the multiplicative gerbe $\CG^G$ on 
$\fL(G)^{\omega^\rho}_\Ran$. 

\sssec{}  \label{sss:gerbe global} 

Assume for a moment that $X$ is complete. I this case we can consider the algebraic stack $\Bun_G$ classifying
$G$-bundles on $X$.

\medskip

Consider the projection 
$$\Gr_{G,\Ran}\to \Bun_G.$$

According to \cite[Sect. 2.3.5]{GLys}, \emph{any} gerbe on $\Gr_{G,\Ran}$ uniquely descends to a gerbe on
$\Bun_G$.  We will denote by the same character $\CG^G$ the resulting gerbe on $\Bun_G$.

\begin{rem}
Note that we also have a map
$$\Gr^{\omega^\rho}_{G,\Ran}\to \Bun_G.$$

The pullback of $\CG^G$ along this map differs from the gerbe we denoted $\CG^G$ on $\Gr^{\omega^\rho}_{G,\Ran}$ by
tensoring by $\CG^G|_{\omega^\rho}$.
\end{rem}

\ssec{The case of tori}

In this subsection we will take $G=T$ to be a torus. We will analyze some explicit combinatorial and
geometric objects attached to a geometric metaplectic data for $T$. 

\sssec{}  \label{sss:neutral case}

According to \cite[Sect. 4.2]{GLys}, to a geometric metaplectic data $\CG^T$ for $T$ one attaches 
a quadratic form $q$ on $\Lambda$ with values in $\sfe^{\times,\on{tors}}(-1)$. 

\medskip

According to \cite[Sect. 4.2.10]{GLys}, the space of geometric metaplectic data for which $q$ is trivial
is canonically isomorphic to the space of gerbes on $X$ with respect to the group
$$\Hom(\Lambda,\sfe^{\times,\on{tors}})\simeq \cT(\sfe)^{\on{tors}},$$
where $\cT$ is the Langlands dual torus of $T$, thought of as an algebraic group over $\sfe$. 

\sssec{}  \label{sss:sharp lattice}

Let 
$$b:\Lambda\otimes \Lambda\to \sfe^{\times,\on{tors}}(-1)$$
denote the symmetric bilinear form associated with $q$.

\medskip

Let $\Lambda^\sharp\subset \Lambda$ denote the kernel of $b$. Let 
$$T^\sharp\to T$$
be an isogenous torus whose lattice of coweights equals $\Lambda^\sharp$. 
Let $T_H$ denote the Langlands dual torus of $T^\sharp$, thought of as an algebraic group over $\sfe$. 

\sssec{} \label{sss:epsilon}

The restriction of $q$ to $\Lambda^\sharp$ is a \emph{linear} map
$$\Lambda^\sharp\to \pm 1\subset \sfe^{\times,\on{tors}}(-1).$$

We can view it as an element of order $2$, denoted $\epsilon$, in $T_H$. 

\sssec{}

Let $\CG^{T^\sharp}$ be the geometric metaplectic data for $T^\sharp$ obtained from $\CG^T$ by pulling back along
$$\Gr_{T^\sharp,\Ran}\to \Gr_{T,\Ran}.$$

Note that since $T^\sharp$ is commutative, the factorization space $\Gr_{T^\sharp,\Ran}$ carries a group structure
over $\Ran$. Hence, we can talk about (factorization) gerbes on $\Gr_{T^\sharp,\Ran}$ equipped with a  
multiplicative structure. 

\medskip

The following is established in \cite[Proposition 4.3.2 and Sect. 4.5]{GLys}:

\begin{prop} \hfill  \label{p:mult gerbe torus}

\smallskip

\noindent{\em(a)} The factorization gerbe $\CG^{T^\sharp}$ on $\Gr_{T^\sharp,\Ran}$ carries a uniquely defined
multiplicative structure. 

\smallskip

\noindent{\em(b)} To $\CG^{T^\sharp}$ one can canonically attach a geometric metaplectic data $\CG^{T^\sharp}_0$ for $T^\sharp$
with a vanishing quadratic form, such that $\CG^{T^\sharp}$ and $\CG^{T^\sharp}_0$ are isomorphic as multiplicative structure
(without the factorization structure).

\end{prop}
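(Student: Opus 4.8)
The plan is to reduce both assertions to the classification of factorization gerbes on the affine Grassmannian of a torus established in \cite{GLys}, exploiting the one structural feature that distinguishes $T^\sharp$: by construction (\secref{sss:sharp lattice}) the polarization $b$ of the quadratic form $q$ vanishes identically on $\Lambda^\sharp$.

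For part (a), I would first recall that for a torus $S$ with cocharacter lattice $\Lambda_S$, a factorization gerbe on $\Gr_{S,\Ran}$ is pinned down by a quadratic form $q_S\colon\Lambda_S\to\sfe^{\times,\on{tors}}(-1)$ together with a ``continuous'' datum --- a gerbe on $X$ with respect to $\Hom(\Lambda_S,\sfe^{\times,\on{tors}})$ --- once $q_S$ is fixed (\secref{sss:neutral case}). A multiplicative structure on such a gerbe $\CG$ is an identification $m^*\CG\simeq\CG\boxtimes\CG$ over the multiplication of the commutative group $\Gr_{T^\sharp,\Ran}$ (which exists since $T^\sharp$ is commutative), compatible with factorization and coherently associative. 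I expect the obstruction theory for this structure --- existence, and given existence the torsor of choices --- to be computed by a small complex all of whose terms are built from the alternating bilinear form underlying $q_S$; concretely, the discrepancy between the two comparisons of $\CG_{t^{\lambda_1+\lambda_2}}$ with $\CG_{t^{\lambda_1}}\otimes\CG_{t^{\lambda_2}}$ is measured by $b_S(\lambda_1,\lambda_2)$. Since $\Lambda_{T^\sharp}=\Lambda^\sharp=\ker(b)$ forces $b_{T^\sharp}=0$, both obstruction and ambiguity vanish, yielding existence and uniqueness. The one point needing care is that $q|_{\Lambda^\sharp}$ is in general the nontrivial order-two character $\epsilon$ of \secref{sss:epsilon}; but precisely because $b$ vanishes, $\epsilon$ is \emph{additive}, $q(\lambda_1+\lambda_2)=q(\lambda_1)+q(\lambda_2)$, which is exactly the relation realized by a multiplicative structure, so it is not an obstruction. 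Equivalently, a factorization gerbe with linear quadratic form is, by the classification, assembled from a gerbe on $X$ and the linear datum $\epsilon$, both of which are manifestly multiplicative.

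For part (b), the idea I would pursue is that $q$ is an invariant of the \emph{factorization} structure --- read off from the interaction of the factorization isomorphisms with the torsion points $t^\lambda$ --- while the underlying bare multiplicative gerbe (factorization forgotten) remembers only the continuous datum. One is therefore free to modify the factorization structure of $\CG^{T^\sharp}$ without touching the bare multiplicative gerbe, and I would do so as to land on a datum $\CG^{T^\sharp}_0$ with $q\equiv0$: the discrepancy to absorb is the character $\epsilon$, and one twists the factorization structure by the factorization gerbe on $\Gr_{T^\sharp,\Ran}$ built from $\epsilon$ and from $\omega^{\otimes\frac12}$ --- the very mechanism that on the configuration space produces the sign local system (cf.\ \secref{s:Omega small}). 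Because $\epsilon$ is linear, this twisting gerbe is canonically trivial as a plain multiplicative gerbe, so the underlying multiplicative gerbe is unchanged while $q$ becomes $0$.

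The step I expect to be the main obstacle is making the phrase ``governed by a complex whose only term is $b_S$'' precise: one has to set up the obstruction theory for multiplicative structures on factorization gerbes entirely $\infty$-categorically --- keeping track not just of the $1$-isomorphism $m^*\CG\simeq\CG\boxtimes\CG$ but of the whole tower of associativity- and factorization-coherences --- and then verify that restricting to $\Lambda^\sharp$ annihilates every contribution except the bilinear one. A secondary, more conceptual, difficulty is to isolate exactly which invariants persist under the passage from the factorizable gerbe to the plain multiplicative one, so that the twist in (b) genuinely converts $q=\epsilon$ into $q=0$ without disturbing the multiplicative structure --- which is where the distinction between ``$q$ trivial'' and ``$q$ canonically trivialized'' (the role of the sign local system and of $\omega^{\otimes\frac12}$) must be managed carefully. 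Both are carried out in \cite[Proposition 4.3.2 and Sect. 4.5]{GLys}.
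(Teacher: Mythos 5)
The paper does not prove this statement; it is quoted from \cite[Proposition 4.3.2 and Sect. 4.5]{GLys}, so there is no in-paper argument to compare against. What the paper does record, in the remark immediately following, is that the discrepancy between the factorization structures on $\CG^{T^\sharp}$ and $\CG^{T^\sharp}_0$ is controlled by $\epsilon$; your sketch is consistent with that, and with the general shape of the classification recalled in \secref{sss:neutral case}.

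Within those limits your plan is sound and, as far as one can tell from the paper, runs parallel to the cited argument: for (a) you correctly identify the vanishing of $b|_{\Lambda^\sharp}$ as the reason both the obstruction and the ambiguity disappear, and you correctly flag that the residual nontriviality of $q|_{\Lambda^\sharp}=\epsilon$ is harmless precisely because $\epsilon$ is additive; for (b) you correctly locate the quadratic form as a factorization-level invariant invisible to the bare multiplicative gerbe, with the twist absorbing $\epsilon$ via the $\omega^{\otimes\frac12}$-mechanism. One caution on (a): you write that uniqueness follows because ``restricting to $\Lambda^\sharp$ annihilates every contribution except the bilinear one,'' but the torsor of multiplicative structures extending a fixed factorization structure is a priori classified by multiplicative $\sfe^\times$-torsors on $\Gr_{T^\sharp,\Ran}$ compatible with factorization, and you should verify (as \cite{GLys} does) that this group is trivial rather than asserting it follows formally from $b=0$ — that is a second vanishing, not the same one. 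You yourself flag the two hard steps (setting up the $\infty$-categorical obstruction theory, and isolating which invariants survive the passage to the plain multiplicative gerbe); I agree these are where the real work lies, and since the paper delegates them wholesale to \cite{GLys} I cannot confirm your sketch closes them, only that nothing in the paper contradicts it.
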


\begin{rem}
The discrepancy between the factorization structures on $\CG^{T^\sharp}$ and $\CG^{T^\sharp}_0$ is controlled by
the element $\epsilon$ of \secref{sss:epsilon}, see \cite[Sects. 4.5 and 4.6]{GLys}. 
\end{rem}

\sssec{}  \label{sss:gerbe for torus}

Let $\CG^T$ and $\CG^{T^\sharp}_0$ be as in \propref{p:mult gerbe torus}. According to \secref{sss:neutral case}, to 
$\CG^{T^\sharp}_0$ we can canonically attach a $T_H(\sfe)^{\on{tors}}$-gerbe on $X$, denoted $\CG_{T_H}$. 

%\medskip
%
%Moreover, by \propref{p:mult gerbe torus}(b), for any $x\in X$, the category
%$$\Shv_{\CG^{T^\sharp}}(\Gr_{T^\sharp,x}),$$
%viewed as a monoidal category with respect to convolution, is canonically equivalent to
%$$\Rep(T_H)_{\CG_{T_H,x}},$$
%the twist of $\Rep(T_H)$ by the fiber $\CG_{T_H,x}$ of $\CG_{T_H}$ at $x$. 

\ssec{The metaplectic dual datum}

Let $\CG^G$ be a geometric metaplectic data. Following \cite[Sect. 6.3]{GLys}, we will now attach to it a 
\emph{metaplectic Langlands dual datum}, which is triple $(H,\CG_H,\epsilon)$, as explained below.

\sssec{} \label{sss:from G to T gerbes}

Consider the diagram 
$$
\CD
\Gr_{B,\Ran}  @>>> \Gr_{G,\Ran} \\
@VVV  \\
\Gr_{T,\Ran}. 
\endCD
$$

It is easy to see that any (factorization) gerbe on $\Gr_{B,\Ran}$ comes as pullback from a uniquely defined 
(factorization) gerbe on $\Gr_{T,\Ran}$. Thus, restricting $\CG^G$ to $\Gr_{B,\Ran}$, we obtain a geometric metaplectic
data $\CG^T$ for $T$. 

\begin{rem}
Our convention here is different from one in \cite[Sect. 5]{GLys} by a certain sign gerbe, which will be irrelevant for 
the purposes of this work.
\end{rem}

\sssec{}    \label{sss:q alpha}

Consider the resulting quadratic form $q$ on $\Lambda$, see \secref{sss:neutral case}. One shows that $q$ is
Weyl group invariant and \emph{restricted} (see \cite[Sect. 3.2.2]{GLys} or \secref{sss:b'} for what this means).

\medskip

Let $T_H$ be the torus associated to $\CG^T$, see \secref{sss:sharp lattice}.
The first ingredient in the triple $(H,\CG_{Z_H},\epsilon)$, namely $H$,
is a reductive group over the field of coefficients $\sfe$ with maximal torus $T_H$. We will now specify its root datum.

\medskip

As was just mentioned, the weight lattice of $H$ equals $\Lambda^\sharp$; we will sometimes denote it also by 
$\Lambda_H$. In particular, we have an inclusion
$$\Lambda_H\subset \Lambda,$$
which is a rational isomorphism. 

\medskip

The set of roots (resp., positive roots, simple roots) of $H$ is in bijection with those of $G$. For a coroot $\alpha$ of $G$
let $q_\alpha\in \sfe^\times(-1)$ denote the element $q(\alpha)$. Let $\ell_\alpha$ denote the order of $q_\alpha$.

\begin{defn}  \label{d:non-deg q}
We will say that a geometric metaplectic datum is \emph{non-degenerate} of $\ell_\alpha\neq 1$ for all $\alpha$.
\end{defn}

If $\alpha$ is a simple root $\alpha_i$ for $i\in I$ we will simply write $q_i$ instead of $q_{\alpha_i}$. By $W$-invariance, the above
non-degeneracy condition is equivalent to the non-triviality of all $q_i$. 

\sssec{}  \label{sss:roots in dual}

Set
$$\alpha_H=\ell_\alpha\cdot \alpha\in \Lambda \text{ and } \check\alpha_H=\frac{\check\alpha}{\ell_\alpha}\in \BQ\underset{\BZ}\otimes 
\check\Lambda.$$

According to \cite[Sect. 6.1]{GLys}, we have:
$$\alpha_H\in \Lambda_H \text{ and } \check\alpha_H\in \check\Lambda_H,$$
and the quadruple
$$(\Lambda_H,\check\Lambda_H,\{\alpha_H\},\{\check\alpha_H\})$$
forms a root datum of a reductive group.  This is the sought-for group $H$. 

\sssec{}

The second component in the triple $(H,\CG_{Z_H},\epsilon)$ is $\CG_{Z_H}$. This is a gerbe on $X$ with respect to
$Z_H(\sfe)^{\on{tors}}$, where $Z_H$ is the center of $H$:

\medskip

One shows (see \cite[Sect. 6.2]{GLys}) that the $T_H(\sfe)^{\on{tors}}$-gerbe $\CG_{T_H}$ of \secref{sss:gerbe for torus} is induced from a
canonically defined $Z_H(\sfe)^{\on{tors}}$-gerbe $\CG_{Z_H}$ along the inclusion $Z_H\hookrightarrow T_H$. 

\sssec{}  \label{sss:trivialize gerbe at point}

In the rest of this paper we will choose a trivialization of the fiber $\CG_{Z_H,x}$ of $\CG_{Z_H}$ at the chosen point $x\in X$.
This choice is made in order to unburden the notation. 

\medskip

The trivialization of $\CG_{Z_H,x}$ induces a trivialization of the $T_H(\sfe)^{\on{tors}}$-gerbe $\CG_{T_H,x}:=\CG_{T_H}|_x$. 

\sssec{}

The last component in the triple $(H,\CG_{Z_H},\epsilon)$ is the element $\epsilon\in T_H$ from \secref{sss:epsilon}. 
One shows that this element actually belongs to $Z_H\subset T_H$. 

\medskip

That said, the above element $\epsilon$ will not play any role in the present work. 

\ssec{Metaplectic geometric Satake}  \label{ss:metapl geom Sat}

Recall that the (usual) geometric Satake, in its weak form, is a monoidal functor
$$\Rep(\cG)\to \Sph_x(G):=\Shv(\Gr_{G,x})^{\fL^+(G)_x}.$$

In this subsection we will recall, following \cite[Sect. 9]{GLys}, its metaplectic counterpart.

\sssec{}

We fix a point $x\in X$. We define the metaplectic spherical category, denoted $\Sph_{q,x}(G)$ to be 
$$\Shv_{\CG^G}(\Gr_{G,x})^{\fL^+(G)_x}.$$

We regard it as equipped with a monoidal structure, given by convolution.

\sssec{}

The category $\Shv_{\CG^G}(\Gr_{G,x})^{\fL^+(G)_x}$ carries the (perverse) t-structure, and the convolution
functor
$$\Sph_{q,x}(G)\otimes \Sph_{q,x}(G)\to \Sph_{q,x}(G)$$
is t-exact. 

\sssec{}

Let $\Rep(H)$ be the DG category of representations of $H$, defined, e.g., as the category of quasi-coherent sheaves
on the algebraic stack (over $\sfe$) $BH$.

\medskip

According to \cite[Sect. 9.2]{GLys}, there exists a canonically defined monoidal functor
$$\Sat_{q,G}:\Rep(H)\to \Sph_{q,x}(G)$$
(here we are using the trivialization of the gerbe $\CG_{Z_H,x}$, see \secref{sss:trivialize gerbe at point}). 

\medskip

In what follows we will discuss some of properties of the functor $\Sat_{q,G}$ that we will use in the future.

\sssec{}

Let us first take $G=T$ to be a torus. Consider the corresponding torus $T^\sharp$. Since the $\fL^+(T^\sharp)_x$-action
on $\Gr_{T^\sharp,x}$ is trivial, we obtain a canonically defined functor
\begin{equation} \label{e:lift equiv}
\Shv(\Gr_{T^\sharp,x})\to \Shv(\Gr_{T^\sharp,x})^{\fL^+(T^\sharp)_x}.
\end{equation}

The functor $\Sat_{q,T}$ is the composition
\begin{multline*} 
\Rep(T_H)\simeq \Shv(\Gr_{T^\sharp,x})\overset{\text{\eqref{e:lift equiv}}}\longrightarrow \Shv(\Gr_{T^\sharp,x})^{\fL^+(T^\sharp)_x} \simeq \\
\simeq \Shv_{\CG^{T^\sharp}}(\Gr_{T^\sharp,x})^{\fL^+(T^\sharp)_x} \to 
\Shv_{\CG^T}(\Gr_{T,x})^{\fL^+(T)_x}=:\Sph_{q,x}(T),
\end{multline*} 
where:

\begin{itemize}

\item The equivalence $\Shv(\Gr_{T^\sharp,x})^{\fL^+(T^\sharp)_x} \simeq \Shv_{\CG^{T^\sharp}}(\Gr_{T^\sharp,x})^{\fL^+(T^\sharp)_x}$
is given by the trivialization of $\CG_{T_H,x}$;

\item The functor $\Shv_{\CG^{T^\sharp}}(\Gr_{T^\sharp,x})^{\fL^+(T^\sharp)_x} \to 
\Shv_{\CG^T}(\Gr_{T,x})^{\fL^+(T)_x}$ is given by direct image along 
$\fL^+(T^\sharp)_x\backslash \Gr_{T^\sharp,x}\to \fL^+(T)_x\backslash\Gr_{T,x}$. 

\end{itemize}

\sssec{}  \label{sss:descr of weight spaces}

Let now $G$ be general. 

\medskip

Due to the trivialization of $\CG_{T_H,x}$, the gerbe $\CG^G$ is trivialized when restricted to the orbits
$$S^\gamma=\fL(N)_x\cdot t^\gamma \subset \Gr_{G,x}, \quad \gamma\in \Lambda^\sharp.$$

\medskip

Hence, for 
$\CF\in \Shv_{\CG^G}(\Gr_G)$ it makes sense to consider 
$$\Gamma(S^\gamma, \CF|_{S^\gamma})\in \Vect.$$

The compatibility of metaplectic geometric Satake with Jacquet functors (see \cite[Sect. 9.4.3]{GLys}) implies: 
\begin{equation} \label{e:weight component}
\Gamma(S^\gamma,\Sat_{q,G}(V))[-\langle \gamma,2\check\rho\rangle]\simeq (\Res^H_{T_H}(V))(\gamma), \quad V\in \Rep(H),
\end{equation}
where $(\gamma)$ means weight component $\gamma$. 

\sssec{}

The trivialization of $\CG^G|_{S^\gamma}$ in particular implies that the fiber of $\CG^G$ at $t^\gamma$ is trivialized. 
Hence, the object
$$\IC_{q,\ol\Gr^\gamma_G}\in \Sph_{q,x}(G)$$
is well-defined. 

\medskip

It follows from \eqref{e:weight component} that we have a \emph{canonical} identification 
\begin{equation} \label{e:descr h.w.}
\Sat_{q,G}(V^\gamma)\simeq \IC_{q,\ol\Gr^\gamma_G},
\end{equation}
where $V^\gamma$ is the irreducible representation $H$ of highest weight $\gamma$ 
\emph{with a trivialized highest weight line}. 

\medskip

In particular, the functor $\Sat_{q,G}$ is t-exact. 

\begin{rem}
The functor $\Sat_{q,G}$ is \emph{not} an equivalence, but it \emph{does} induce an equivalence of monoidal
abelian categories
$$(\Rep(H))^\heartsuit\to (\Sph_{q,x}(G))^\heartsuit.$$
\end{rem}

\begin{rem} \label{r:lowest weight line}
Note that the isomorphisms \eqref{e:weight component} and \eqref{e:descr h.w.} give rise to canonical trvializations of the
\emph{lowest weight lines} in each $V^\gamma$:
$$\sfe \simeq  \Gamma(S^{w_0(\gamma)},\IC_{q,\ol\Gr^\gamma_G})[\langle \gamma,2\check\rho\rangle]\simeq
V^\gamma(w_0(\gamma)).$$

This system of trivializations corresponds to a canonically defined representative 
$$\sw_0\in \on{Norm}_H(T_W)$$
of the longest element of the Weyl group $w_0\in W_H$. The element $\sw_0$ is characterized
by the property that it makes the diagrams
$$
\CD
\sfe  @>{\simeq}>>  V^\gamma(\gamma)   \\
@V{\on{id}}VV   @VV{\sw_0}V   \\
\sfe  @>{\simeq}>>  V^\gamma(w_0(\gamma)) 
\endCD
$$
commute. 

\end{rem} 

\ssec{Metaplectic geometric Satake and Verdier duality}  \label{ss:Satake and duality}

One of the crucial steps in the proof of our main theorem depends on a Verdier duality
manipulation. In order to do this we will need to study how metaplectic geometric Satake
interacts with Verdier duality, and this is the subject of the present subsection. 

\sssec{}   \label{sss:inversion}

We note that inversion on $\fL(G)_x$ (or, which is equivalent, the involution on $\on{Hecke}^{\on{loc}}_{G,\Ran}$
from \secref{sss:inv on Hecke}), defines an equivalence 
$$\on{inv}^G:\Sph_{q,x}(G) \to \Sph_{q^{-1},x}(G),$$
which \emph{reverses} the monoidal structures. 

\medskip

Consider the Verdier duality functor
$$\BD^{\on{Verdier}}:(\Sph_{q,x}(G)^c)^{\on{op}}\to \Sph_{q^{-1},x}(G)^c.$$

\medskip
 
It follows from the definitions that the composite 
$$\BD^{\on{Verdier}}\circ \on{inv}^G: (\Sph_{q,x}(G)^c)^{\on{op}}\to \Sph_{q,x}(G)^c$$
is the functor of \emph{monoidal dualization} on $\Sph_{q,x}(G)^c$.

\begin{rem}
Note that the functor $\BD^{\on{Verdier}}\circ \on{inv}^G$ sends $\IC_{q,\ol\Gr^\gamma_G}$ to
$\IC_{q^{-1},\ol\Gr^{-w_0(\gamma)}_G}$. Combined with \eqref{e:descr h.w.}, this implies a \emph{canonical}
identification
\begin{equation} \label{e:duality on RepH}
(V^\gamma)^*\simeq V^{-w_0(\gamma)}.
\end{equation} 
In particular, \eqref{e:duality on RepH} implies that each $V^\gamma$ has a canonically trivialized
lowest weight line. However, it is easy to see that this is the same trivialization as the one specified in 
Remark \ref{r:lowest weight line}. 
\end{rem} 

\sssec{}

It follows from the constructions that if $(H,\CG_{Z_H},\epsilon)$ is the metaplectic dual datum for $\CG^G$,
then the one corresponding to $(\CG^G)^{-1}$ is given by $(H,(\CG_{Z_H})^{-1},\epsilon)$.

\medskip

In particular, a trivialization of the gerbe $\CG_{Z_H,x}$ (see \secref{sss:trivialize gerbe at point})
induces a trivialization of the corresponding gerbe arising from $\CG^{-1}$. 

\medskip

In particular, we obtain a geometric Satake functor 
$$\Sat_{q^{-1},G}:\Rep(H)\to \Sph_{q^{-1},x}(G).$$

\sssec{}    \label{sss:Cartan inv}

We normalize the Cartan involution $\tau^H$ on a reductive group $H$ with chosen Cartan and Borel subgroups $T_H\subset B_H$
so that it acts as inversion on $T_H$ and swaps $B_H$ and $B^-_H$.
We have a commutative diagram
\begin{equation} \label{e:Cartan H and T}
\CD
T_H  @>>>  H  \\
@V{\tau^{T_H}}VV   @VV{\tau^H}V   \\
T_H  @>>>  H,
\endCD
\end{equation} 
where, according to the above conventions, $\tau^{T_H}$ is inversion on $T_H$. 

\medskip

We will denote by the same symbol $\tau^H$ the corresponding involution on $\Rep(H)$. We have the
corresponding commutative diagram
\begin{equation} \label{e:two taus}
\CD
\Rep(H) @>{\on{Res}^H_{T_H}}>> \Rep(T_H)  \\
@V{\tau^H}VV   @VV{\tau^{T_H}}V    \\
\Rep(H)  @>{\on{Res}^H_{T_H}}>> \Rep(T_H).
\endCD
\end{equation} 

\medskip

Note that we have a \emph{canonical} identification
\begin{equation} \label{e:cartan and dual}
\tau^H(V^\gamma)\simeq (V^\gamma)^*.
\end{equation} 

Indeed, both representations are irreducible and have trivialized \emph{lowest} weight lines. 

\sssec{}  \label{sss:Satake and inversion}

Combining \eqref{e:cartan and dual} with \eqref{e:descr h.w.}, we obtain that the following diagram of monoidal functors 
canonically commutes
\begin{equation} \label{e:metapl Satake and duality}
\CD
(\Rep(H)^c)^{\on{op}}  @>{(\Sat_{q,G})^{\on{op}}}>>  (\Sph_{q,x}(G)^c)^{\on{op}}  \\
@V{\tau^H\circ \BD^{\on{lin}}}VV    @VV{\BD^{\on{Verdier}}}V   \\
\Rep(H)^c @>{\Sat_{q,G}}>>  (\Sph_{q^{-1},x}(G))^c,
\endCD
\end{equation}
where $\BD^{\on{lin}}$ is the (usual) dualization functor  
$$(\Rep(H)^c)^{\on{op}}\to \Rep(H)^c.$$ 

\medskip

Juxtaposing \eqref{e:metapl Satake and duality} with \secref{sss:inversion}, we obtain 
the following commutative diagram of monoidal functors: 
\begin{equation} \label{e:Cartan and inv}
\CD
\Rep(H)  @>{\Sat_{q,G}}>> \Sph_{q,x}(G) \\
@V{\tau^H}VV  @VV{\on{inv}^G}V  \\
\Rep(H)  @>{\Sat_{q,G}}>> \Sph_{q^{-1},x}(G). 
\endCD
\end{equation}
 
\section{Factorization algebras and modules} \label{s:fact}

Our main theorem compares the twisted Whittaker category on the affine Grassmannian with the
category of \emph{factorization modules} over a cartain \emph{factorization algebra}. 

\medskip

In this section we will recall the definition of these objects in the context of factorization spaces over the
Ran space. 

\ssec{Factorization algebras}  

In this subsection we will recall the definition of factorization algebras (on factorization spaces over the Ran space). 

\sssec{} \label{sss:fact alg geom}

Let $Z_\Ran\to \Ran$ be a factorization space over $\Ran$, and let $\CG$ be a factorization gerbe on $Z_\Ran$.
By a \emph{factorization algebra} in $\Shv_{\CG}(Z_\Ran)$ we will mean an object
$\CA\in \Shv_{\CG}(Z_\Ran)$ equipped with a \emph{homotopy compatible} system 
of isomorphisms
$$\CA|_{Z_\Ran\underset{\Ran}\times (\Ran^J)_{\on{disj}}}\simeq
\CA^{\boxtimes J}|_{Z_\Ran^J\underset{\Ran^J}\times (\Ran^J)_{\on{disj}}},$$
where the two spaces are identified by \eqref{e:factorization space}.

\medskip

The expression ``homotopy compatible" can be formalized is several different (but equivalent) ways. Below we discuss
one of the possibilities (which is very close to one from \cite[Sect. Sect. 6]{Ras1}). 
We start with spelling out the details in the definition of the notion of \emph{factorization space}. 

\sssec{}  \label{sss:fact spc} 

First, consider the assignment
\begin{equation} \label{e:Ran disj}
J\rightsquigarrow (\Ran^J)_{\on{disj}}
\end{equation}
as a functor 
$$\on{fSet}^{\on{surj}}\to \on{PreStk},$$
where $\on{fSet}^{\on{surj}}$ is the category of finite non-empty sets and surjective maps. 

\medskip

The operation of disjoint union makes
$\on{fSet}^{\on{surj}}$ into a symmetric monoidal category. The functor \eqref{e:Ran disj} has a natural op-lax 
symmetric monoidal structure, which means that we have the natural maps
$$(\Ran^{J_1\sqcup J_2})_{\on{disj}}\to (\Ran^{J_1})_{\on{disj}}\times (\Ran^{J_2})_{\on{disj}},$$
etc. 

%\sssec{}
%
%Let $\on{Tw}(\on{fSet}^{\on{surj}})$ be the ``twisted arrows" category of $\on{fSet}^{\on{surj}}$. I.e., its
%objects are 
%$$I\twoheadrightarrow J$$
%and a morphism from $I\twoheadrightarrow J$ to $I'\twoheadrightarrow J'$ is a commutative diagram
%$$
%\CD
%I @>>> J \\
%@VVV  @AAA  \\
%I' @>>> J'.
%\endCD
%$$
%
%The symmetric monoidal structure on $\on{fSet}^{\on{surj}}$ induces one on $\on{Tw}(\on{fSet}^{\on{surj}})$.

\medskip

A factorization space over $\Ran$ is an op-lax symmetric monoidal functor 
\begin{equation} \label{e:fact space again}
\on{fSet}^{\on{surj}}\to \on{PreStk}, \quad J \mapsto Z_J,
\end{equation} 
equipped with a natural transformation to the functor \eqref{e:Ran disj}, 
such that the following two requirements hold:

\begin{itemize} 

\item The map 
$$Z_J\to (\Ran^J)_{\on{disj}}\underset{\Ran}\times Z_*,$$
induced by the map $I\to *$, is an isomorphism.

\item The map
$$Z_J \to (\Ran^J)_{\on{disj}}\underset{\Ran^J}\times (Z_*)^J,$$
induced by the op-lax symmetric monoidal structure, is an isomorphism 

\end{itemize}

\medskip

The relation of this definition to the naive one in \secref{sss:fact space naive} is that
$$Z_*:=Z_\Ran,\,\, Z_I:=(\Ran^J)_{\on{disj}}\underset{\Ran^J}\times (Z_\Ran)^J.$$

\sssec{}  \label{sss:fact grb} 

Replacing the symmetric monoidal category $\on{PreStk}$ by that of $\on{PreStk+Grb}$ consisting 
pairs $(\CY,\CG)$, where $\CY$ is a prestack and $\CG$ is a gerbe on $\CY$, we obtain the notion of factorization
gerbe over a factorization space. 

\sssec{}  \label{sss:fact alg} 

Let 
\begin{equation} \label{e:fact spc+grb}
I\mapsto (Z_I,\CG_I)
\end{equation}
be a factorization gerbe on a factorization space.

\medskip

Composing with the (lax symmetric monoidal) functor
\begin{equation} \label{e:twisted sheaves}
\Shv:(\on{PreStk+Grb})^{\on{op}}\to \inftyCat, \quad (\CY,\CG)\mapsto \Shv_\CG(\CY),\quad
(\CY_0\overset{f}\to \CY_1)\mapsto f^!,
\end{equation}
we obtain a lax symmetric monoidal functor
\begin{equation} \label{e:factorization category}
(\on{fSet}^{\on{surj}})^{\on{op}}\to \inftyCat, \quad I\mapsto \Shv_{\CG_I}(Z_I).
\end{equation}

We can view \eqref{e:factorization category} as a Cartesian fibration 
\begin{equation} \label{e:Cart fibr}
\Shv_\CG(Z_{\on{fSet}^{\on{surj}}})\to \on{fSet}^{\on{surj}},
\end{equation}
where $\Shv_\CG(Z_{\on{fSet}^{\on{surj}}})$ is equipped with a symmetric monoidal structure 
and \eqref{e:Cart fibr} is a symmetric monoidal functor. 

\medskip

A factorization algebra in $\Shv_{\CG}(Z_\Ran)$ is by definition a symmetric monoidal section of
\eqref{e:Cart fibr}, which is Cartesian as a section of $\infty$-categories (i.e., sends arrows in 
$\on{fSet}^{\on{surj}}$ to arrows in $\Shv_\CG(Z_{\on{fSet}^{\on{surj}}})$ that are Cartesian with
respect to \eqref{e:Cart fibr}). 

\begin{rem}
Above we gave a definition of factorization algebras \emph{in} $\Shv_{\CG}(Z_\Ran)$, where $(Z_\Ran,\CG)$
has a factorization structure. However, for many purposes it is convenient to give a more general definition--that
of factorization algebra inside a general \emph{factorization category}. The latter will not appear explicitly in this work. 
\end{rem}

\sssec{Example}

Take $Z_\Ran=\Ran$, with its tautological structure of factorization space. Then 
$$\omega_\Ran\in \Shv(\Ran)$$
acquires a structure of factorization algebra.

\ssec{Functoriality of factorization algebras}

In this subsection we will study functoriality properties of factorization algebras with respect to maps
of factorization spaces. 

\sssec{}  \label{sss:fact funct geom pullback}

Let $f:Z^1_{\Ran}\to Z^2_{\Ran}$ be a map of factorization spaces. Let $\CG^2$ be a factorization gerbe on $Z^2_\Ran$,
and let $\CG^1$ be its pullback to $Z^1_\Ran$, equipped with its natural factorization structure.

\medskip

It is clear that the pullback functor
$$f^!:\Shv_{\CG^2}(Z^2_\Ran) \to \Shv_{\CG^1}(Z^1_\Ran)$$
induces a functor on the corresponding categories of factorization algebras
$$f^!:\on{FactAlg}(\Shv_{\CG^2}(Z^2_\Ran))\to \on{FactAlg}(\Shv_{\CG^1}(Z^1_\Ran)).$$

\sssec{}  \label{sss:dualizing as an example of FA}

A basic example of this situation is when $Z^1_\Ran=Z_\Ran$ is an arbitrary factorization space
and $Z^2_\Ran=\Ran$. Taking the factorization algebra $\omega_\Ran\in \Shv(\Ran)$, we obtain 
that 
$$\omega_{Z_\Ran}\in \Shv(Z_\Ran)$$
has a natural structure of factorization algebra. 

\sssec{}  \label{sss:fact funct geom pushforward}

Let $f:Z^1_{\Ran}\to Z^2_{\Ran}$ be as before, but let us assume that $f$ is \emph{ind-schematic}.
Then the pushforward functor 
$$f_*:\Shv_{\CG^1}(Z^1_\Ran) \to \Shv_{\CG^2}(Z^2_\Ran)$$
induces a functor on the corresponding categories of factorization algebras
$$f_*:\on{FactAlg}(\Shv_{\CG^1}(Z^1_\Ran))\to \on{FactAlg}(\Shv_{\CG^2}(Z^2_\Ran)).$$

\sssec{}   \label{sss:unit as an example of FA}

Let $Z^2_\Ran=Z_\Ran$ is an arbitrary factorization space
and $Z^1_\Ran=\Ran$. Let 
$$\on{unit}:\Ran\to Z_\Ran$$ be a section of the tautological projection; assume that 
it is schematic as a morphism of prestacks.

\medskip

We obtain that
$$\on{unit}_*(\omega_\Ran)\in \Shv(Z_\Ran)$$
has a natural structure of factorization algebra. 

\ssec{Factorization modules}  \label{ss:fact modules}

We now come to a definition crucial for this work: that of factorization module over a given 
factorization algebra. 

\sssec{}  \label{sss:fact mod geom}

Let $Z_\Ran$ be a facorization space over $\Ran$, and let $Z_{\Ran_x}\to \Ran_x$ be a factorization module
space. Let $\CG$ be a factorization gerbe on $Z_\Ran$. Assume being given a gerbe $\CG$ over
$Z_{\Ran_x}$, equipped with a factorization structure with respect to the gerbe $\CG$ on $Z_\Ran$.

\medskip

Let $\CA$ be a factorization algebra in $\Shv_\CG(Z_\Ran)$. By a \emph{factorization module}
module in $\Shv_\CG(Z_{\Ran_x})$ with respect to $\CA$ we will mean an object $\CF\in \Shv_\CG(Z_{\Ran_x})$
equipped with a homotopy compatible system of isomorphisms 
$$\CF|_{Z_{\Ran_x}\underset{\Ran_x}\times (\Ran^J\times \Ran_x)_{\on{disj}}}\simeq
(\CF\boxtimes \CA^{\boxtimes J})|_{(\Ran_x\times Z_\Ran^J)\underset{\Ran^J\times \Ran_x}\times ((\Ran^J\times \Ran_x)_{\on{disj}})},$$
where two spaces are identified by \eqref{e:factorization space mod}.

\medskip

Below we give one of the possible formulations of the expression ``homotopy coherence" in this context.

\sssec{}

Let $\on{fSet}_*^{\on{surj}}$ be the category of pointed finite sets and surjective maps. We will
view it as a module category over the monoidal category $\on{fSet}^{\on{surj}}$ under the
operation of disjoint union. 

\medskip

We consider the functor
\begin{equation} \label{e:Ran disj ptd}
\on{fSet}_*^{\on{surj}}\to \on{PreStk},\quad J\mapsto (\Ran^J_*)_{\on{disj}}:=(\Ran^J)_{\on{disj}}\underset{\Ran}\times \Ran_x,
\end{equation} 
where the map $(\Ran^J)_{\on{disj}}\to \Ran$ corresponds to the element $*\in J$.

\medskip

When we regard $\on{PreStk}$ as a module category over itself, the functor \eqref{e:Ran disj ptd} has a structure of op-lax 
compatibility with actions, with respect to the op-lax monoidal functor \eqref{e:Ran disj} and the above module structure on 
$\on{fSet}_*^{\on{surj}}$ over $\on{fSet}^{\on{surj}}$. 

\sssec{}

Given $Z_\Ran$, thought of as a functor \eqref{e:fact space again}, a factorization module space over $Z_\Ran$
is a functor
$$\on{fSet}_*^{\on{surj}}\to \on{PreStk},\quad J\mapsto \wt{Z}_J,$$
equipped with a functor of op-lax compatibility with actions, and a natural transformation to \eqref{e:Ran disj ptd},
such that the following requirements hold:

\begin{itemize}

\item The map 
$$\wt{Z}_J\to (\Ran^J_*)_{\on{disj}}\underset{\Ran_x} \times \wt{Z}_*,$$
induced by $J\to *$, is an isomorphism. 

\item The map
$$\wt{Z}_J \to (\Ran^J_*)_{\on{disj}}\underset{\Ran^{J-*}\times \Ran_x}\times (Z_*^{J-*}\times \wt{Z}_*),$$
induced by the structure of op-lax compatibility with actions, is an isomorphism.  

\end{itemize} 

The relation of this definition to the naive one is that
$$\wt{Z}_*:=Z_{\Ran_x}.$$

\sssec{}

Given a factorization gerbe on $Z_\Ran$, we define a factorization structure on a gerbe on $Z_{\Ran_x}$, 
following the recipe of \secref{sss:fact grb}. 

\sssec{}

Given a factorization algebra $\CA\in \Shv_{\CG}(Z_\Ran)$, we define the notion of factorization module for it
in $\Shv_{\CG}(Z_{\Ran_x})$, along the lines of \secref{sss:fact alg}: 

\medskip

Namely, composing with the functor \eqref{e:twisted sheaves}, from $Z_{\Ran_x}$ we create a functor
$$(\on{fSet}_*^{\on{surj}})^{\on{op}}\to \inftyCat, \quad J\mapsto \Shv_{\CG^G}(\wt{Z}_J),$$
which we turn into Cartesian fibration
\begin{equation} \label{e:Cart fibr ptd}
\Shv_\CG(Z_{\on{fSet}_*^{\on{surj}}})\to \on{fSet}^{\on{surj}}_*, 
\end{equation}
so that the category $\Shv_\CG(Z_{\on{fSet}_*^{\on{surj}}})$ is equipped with a monoidal action of 
$\Shv_\CG(Z_{\on{fSet}^{\on{surj}}})$, and the functor \eqref{e:Cart fibr ptd} is compatible with
the actions with respect to the (symmetric) monoidal functor \eqref{e:Cart fibr}. 

\medskip

When we view $\CA$ as a (symmetric) monoidal section of \eqref{e:Cart fibr}, a factorization module for $\CA$ in 
$\Shv_{\CG}(Z_{\Ran_x})$ is a Cartesian section of \eqref{e:Cart fibr ptd}, compatible with the actions. 

\sssec{}

We denote the category of factorization $\CA$-modules in $\Shv_{\CG}(Z_{\Ran_x})$ by
$\CA\on{-FactMod}(\Shv_{\CG}(Z_{\Ran_x}))$, or simply $\CA\on{-FactMod}$ when no confusion is
likely to occur. We let $\oblv_{\on{Fact}}$ denote the forgetful
functor  
$$\CA\on{-FactMod}\to \Shv_{\CG}(Z_{\Ran_x}).$$

\begin{rem}  \label{r:remove point}
Let $X':=X-x$, and let $\Ran'$ denote the Ran space of $X$. Note that in the definition of a factorization 
module space for a given factorization space $Z_\Ran$, only
$$Z_{\Ran'}:=\Ran'\underset{\Ran}\times Z_\Ran$$
plays a role. Indeed, for $J\in \on{fSet}^{\on{surj}}_*$ and $J'\in \on{fSet}^{\on{surj}}$, we have
$$(\Ran^{J'\sqcup J}_*)_{\on{disj}}\simeq \Ran'{}^{J'}\underset{\Ran^{J'}}\times (\Ran^{J'\sqcup J}_*)_{\on{disj}}.$$

The same remark applies to factorization gerbes and factorization algebras. 
\end{rem}

\sssec{}

The first (non-zero) example of a factorization module is the so-called vacuum module: take
$$Z_{\Ran_x}:=\Ran_x\underset{\Ran}\times Z_\Ran$$
and let $\CF\in \Shv_{\CG}(Z_{\Ran_x})$ be the pulllback of $\CA$ itself under the forgetful map
$$Z_{\Ran_x}\to \Ran.$$

This incarnates the principle that ``a commutative algebra is naturally a left module over itself". 

\sssec{Modules for the unit} \label{sss:omega modules}

We will now describe a particular (albeit tautological) example of construction of factorization modules. 
This construction will play an important role in the sequel, as it can \emph{generate} other constructions
using functoriality (see \secref{ss:fact modules funct} below).

\medskip

Let $Z$ be an arbitary prestack with a gerbe $\CG$ on it. We can regard 
$$\Ran_x\times Z,$$
equipped with its tautological projection to $\Ran_x$ as a factorization module space with respect to the 
factorization space equal to $\Ran$ itself. 

\medskip

The pullback $\CG|_{\Ran_x\times \CG}$ has a natural factorization structure with respect to the
(necessarily) trivial factorization gerbe on $\Ran$. 

\medskip

Then the pullback functor
$$\Shv_{\CG}(Z)\to \Shv_{\CG}(\Ran_x\times Z)$$
naturally lifts to a functor
$$\Shv_{\CG}(Z)\to \omega_\Ran\on{-FactMod}(\Shv_{\CG}(\Ran_x\times Z)),$$
where we regard $\omega_\Ran$ as a factorization algebra in $\Shv(\Ran)$, see \secref{sss:dualizing as an example of FA}. 

\sssec{}  \label{sss:fact mod expl}

Here is an example of a situation where we can describe the category of factorization modules explicitly.

\medskip

Let $Z_{\Ran'}$ be a factorization space over $\Ran'$ (see Remark \ref{r:remove point}),
equipped with a factorization gerbe $\CG'$. Let $Z_x$ be an arbitrary prestack with a gerbe $\CG_x$. 

\medskip

We define $Z_{\Ran_x}$ as follows: for an affine test scheme $S$ and $*\in \CI\subset \Hom(S,X)$,
denote $\CI':=\CI-*$, and set
$$S\underset{\Ran_x}\times Z_{\Ran_x}:=(S\underset{\CI',\Ran}\times Z_{\Ran'})\times Z_x.$$

\medskip

We have the projections 
$$Z_x \leftarrow Z_{\Ran_x}\to Z_{\Ran'},$$ 
and we define the gerbe $\CG$ on $Z_{\Ran_x}$ as the tensor product of the pullbacks of $\CG_x$ and 
$\CG'$, respectively. The gerbe $\CG$ acquires a natural structure of factorization with respect to $\CG'$
(see Remark \ref{r:remove point}). 

\medskip

Let $\CA'$ be a factorization algebra in $\Shv_{\CG'}(Z_{\Ran'})$. By unwinding the definitions, we obtain
that the functor of restriction along $Z_x\to Z_{\Ran_x}$ defines an equivalence
$$\CA'\on{-FactMod}(\Shv_{\CG'}(Z_{\Ran_x}))\to \Shv_{\CG_x}(Z_x)$$
is an equivalence.

\ssec{Functoriality properties of factorization modules}  \label{ss:fact modules funct}

We will now study the functoriality of the category of factorization modules under the change of
factorization (module) space. 

\sssec{}   \label{sss:fact funct mod geom}

Let
\begin{equation} \label{e:corr fact}
\CD
Z^{1,2}_\Ran @>{f}>> Z^1_\Ran \\
@V{g}VV  \\
Z^2_\Ran
\endCD
\end{equation}
be a \emph{correspondence} between factorization spaces, where the morphism $g$ is ind-schematic. Let $\CG^1$ and $\CG^2$
be factorization gerbes on $Z^1$ and $Z^2$, respectively, equipped with an isomorphism $\CG^1|_{Z^{1,2}}\simeq \CG^2|_{Z^{1,2}}$
as factorization gerbes. 

\medskip

Then according to Sects. \ref{sss:fact funct geom pullback} and  \ref{sss:fact funct geom pushforward}, given 
a factorization algebra $\CA^1\in \Shv_{\CG^1}(Z^1_\Ran)$, the object
$$\CA^2:=g_*\circ f^!(\CA^1)\in \Shv_{\CG^2}(Z^2_\Ran)$$
acquires a structure of factorization algebra in $\Shv_{\CG^2}(Z^2_\Ran)$. 

\medskip

Let now
\begin{equation} \label{e:corr fact mod}
\CD
Z^{1,2}_{\Ran_x} @>{f}>> Z^1_{\Ran_x} \\
@V{g}VV  \\
Z^2_{\Ran_x}
\endCD
\end{equation}
be a diagram of factorization module spaces for the factorization spaces appearing in \eqref{e:corr fact}. We obtain that
the functor
\begin{equation} \label{e:corr functor}
g_*\circ f^!:\Shv_{\CG^1}(Z^1_{\Ran_x})\to \Shv_{\CG^2}(Z^2_{\Ran_x})
\end{equation}
gives rise to a functor
\begin{equation} \label{e:corr functor fact} 
g_*\circ f^!:\CA^1\on{-FactMod}(\Shv_{\CG^1}(Z^1_{\Ran_x}))\to \CA^2\on{-FactMod}(\Shv_{\CG^2}(Z^2_{\Ran_x})).
\end{equation}

\sssec{}   \label{sss:fact funct mod geom bis}

The functors $g_*\circ f^!$ inherit the usual properties of functors defined by correspondences. For
example, if $g$ is ind-proper, then $g^!$ is the right adjoint of $g_*=:g_!$. Similarly, if $f$ is \'etale, then
$f^!=:f^*$ is the left adjoint of $f_*$.

\sssec{}  \label{sss:fact funct mod geom et}

In addition, the fact that the map \eqref{e:union map ptd disj} is \'etale has the following consequences: 

\medskip

Suppose that $f$ is \'etale and $g$ is proper, and suppose that 
$\CF\in \CA^2\on{-FactMod}(\Shv_{\CG^2}(Z^2_{\Ran_x}))$ is such that the (partially defined) left adjoint
$$f_!\circ g^*:\Shv_{\CG^2}(Z^2_{\Ran_x})\to \Shv_{\CG^1}(Z^1_{\Ran_x})$$
of \eqref{e:corr functor} is defined on $\oblv_{\on{Fact}}(\CF)$.  

\medskip

Then the (partially defined) left adjoint $f_!\circ g^*$ of \eqref{e:corr functor fact} is defined on $\CF$, and we have
$$\oblv_{\on{Fact}}\circ (f_!\circ g^*)\simeq (f_!\circ g^*)\circ \oblv_{\on{Fact}}.$$

\section{Configuration spaces}  \label{s:conf}

In the previous section we discussed factorization spaces and factorization algebras (and modules over them). 
However, factorization spaces over $\Ran$ are prestacks that are not even ind-schemes (such as Ran itself), 
and sheaves on them may be unwieldy. 

\medskip

In this section we will introduce another paradigm for factorization: the underlying geometry will be the 
(pointed) configuration space, which has the advantage of being a scheme (resp., ind-scheme). 

\medskip

We will see also that the affine Grassmannian for $\Gr_T$ has a closed subfunctor essentially
isomorphic to $\Conf$. This will allow us to transfer the information between the two contexts. 

\ssec{Configuration space as the spaces of colored divisors}

In this subsection we introduce the configuration space. 

\sssec{}  \label{sss:conf}

Let $\Conf$ be the scheme that classifies the data of $(\Lambda^{\on{neg}}-0)$-valued divisors on $X$, i.e., expressions of the form
\begin{equation} \label{e:point of conf vac}
D=\underset{k}\Sigma\, \lambda_k\cdot x_k,
\end{equation} 
where:

\begin{itemize}

\item The index $k$ runs over some finite set; 

\smallskip

\item The points $x_k\in X$ are pairwise distinct; 

\smallskip

\item All $\lambda_k$ are in $\Lambda^{\on{neg}}-0$. 

\end{itemize} 

\sssec{}

We have:
$$\Conf=
\underset{\lambda\in \Lambda^{\on{neg}}-0}\bigsqcup\, \Conf^\lambda,$$
where where $\lambda$ is the total degree (i.e., for a point \eqref{e:point of conf vac} its total degree is $\underset{k}\Sigma\, \lambda_k$). 

\medskip

Each $\Conf^\lambda$ is isomorphic to $X^\lambda$, where for 
$$\lambda=\underset{i}\Sigma\, n_i\cdot (-\alpha_i),\,\,\alpha_i \text{ are the simple coroots},\,\, n_i\in \BZ^{\geq 0}$$
we have
$$X^\lambda=\underset{i}\prod\, X^{(n_i)}.$$

\begin{rem}  \label{r:config as divisors}
Note that if $G$ is semi-simple and simply connected, then $\Conf$ can also be interpreted as the moduli
space of non-zero homomorphisms from the monoid $\cLambda^+$ to the scheme of effective divisors
$$\on{Div}^{\on{eff}}(X)\simeq \underset{n\geq 0}\sqcup\, X^{(n)}.$$
\end{rem}

\sssec{}

Let $$\overset{\circ}\Conf\subset \Conf$$
be the open subscheme corresponding to the condition that 
in \eqref{e:point of conf vac} every $\lambda_k$ is a negative simple coroot. 

\medskip

We have
$$\overset{\circ}\Conf=\underset{\lambda\in \Lambda^{\on{neg}}}\bigsqcup\, 
\overset{\circ}\Conf{}^\lambda,$$
where each 
$\overset{\circ}\Conf{}^\lambda$ is isomorphic to the open subscheme
$$\overset{\circ}{X}{}^\lambda \subset X^\lambda,$$
obtained by removing the diagonal divisor. 

\sssec{}

The scheme $\Conf$ has a natural structure of commutative semigroup: for a finite non-empty set $I$
we have the map 
\begin{equation} \label{e:add divisors}
\Conf^I\to \Conf,
\end{equation}
given by the addition of operation on $(\Lambda^{\on{neg}}-0)$-valued divisors. 
 
\sssec{}

We will denote by
$$(\Conf^I)_{\on{disj}}\subset \Conf^I$$
the open subscheme given by the following condition:

\medskip

The corresponding configurations $\Sigma\, \lambda^i_k\cdot x^i_k$ 
must have disjoint support, i.e., $x^i_k\neq x^{i'}_{k'}$ for all $k,k'$ 
for every pair of indices $i\neq i'$.

\medskip

Note that the map \eqref{e:add divisors}, restricted to $(\Conf^I)_{\on{disj}}$, is \'etale. 

\ssec{Configurations with a marked point}

In this subsection we introduce a version of $\Conf$, where at a marked point $x$, we allow the value
of our divisor to be any element of $\Lambda$. The resulting space $\Conf_{\infty\cdot x}$ will no longer 
be a scheme, but it will be an ind-scheme. 

\sssec{}

Fix a point $x\in X$. Let $\Conf_{\infty\cdot x}$ denote the ind-scheme classifying the data of $\Lambda$-colored
divisors on $X$ of the form
\begin{equation} \label{e:point of conf}
D=\lambda_x\cdot x+\underset{k}\Sigma\, \lambda_k\cdot x_k,
\end{equation} 
where:

\begin{itemize}

\item The index $k$ runs over some finite set; 

\smallskip

\item The points $x_k\in X$ are pair-wise distinct as well as distinct from $x$; 

\smallskip

\item $\lambda_k\in \Lambda^{\on{neg}}-0$ and $\lambda_x$ is an arbitrary element of $\Lambda$. 

\end{itemize} 

\sssec{}

One can explicitly write down $\Conf_{\infty\cdot x}$ as follows. It equals the colimit
\begin{equation} \label{e:config as colim}
\Conf_{\infty\cdot x}=
\underset{\mu\in \Lambda}{\underset{\longrightarrow}{\on{colim}}}\, 
\Conf_{\leq \mu\cdot x},
\end{equation} 
where $\Conf_{\leq \mu\cdot x}$ is the space of those configurations 
\eqref{e:point of conf} for which $\lambda_x\leq \mu$ in the standard order relation
(i.e., $\mu-\lambda_x\in \Lambda^{\on{pos}}$). 

\medskip

Each $\Conf_{\leq \mu\cdot x}$ is a scheme. Explicitly, it is the disjoint union 
$$\Conf_{\leq \mu\cdot x}=
\underset{\lambda\in \mu+\Lambda^{\on{neg}}}\bigsqcup\, (\Conf_{\leq \mu\cdot x})^\lambda,$$
where $\lambda$ is the total degree (i.e., for a point \eqref{e:point of conf} its total degree is $\lambda_x+\underset{k}\Sigma\, \lambda_k$). 

\medskip

For every fixed $\lambda$, we have
\begin{equation} \label{e:config as sym}
(\Conf_{\leq \mu\cdot x})^\lambda\simeq X^{\lambda-\mu}.
\end{equation} 

In terms of the identifications \eqref{e:config as sym}, the transition maps
$$X^{\lambda-\mu_1}\to X^{\lambda-\mu_2}$$
in forming the colimit \eqref{e:config as colim} are given by adding the divisor $(\mu_1-\mu_2)\cdot x$. 

\sssec{}  \label{sss:config inside marked}

Note that $\Conf$ can be also thought of as a closed subscheme of $\Conf_{\infty\cdot x}$.
Namely, it identifies with $\Conf_{x,\leq 0}$, with the connected component
$$(\Conf_{x,\leq 0})^0\simeq \on{pt}$$
removed. 

\sssec{}

The indscheme $\Conf_{\infty\cdot x}$ has a natural structure of module over $\Conf$. 

\medskip

For a finite set $I$, we denote by
$$(\Conf^I\times \Conf_{\infty\cdot x})_{\on{disj}}\subset 
\Conf^I\times \Conf_{\infty\cdot x}$$
the open ind-subscheme given by the following condition:

\medskip

The corresponding two configurations $\Sigma\, \lambda^i_k\cdot x^i_k$ and $\lambda_x\cdot x+ \Sigma\, \mu_j\cdot y_j$ 
must have disjoint support, i.e., $x^i_k\neq x^{i'}_{k'}$ for all $k,k'$ every pair of indices $i\neq i'$ \emph{and} $y_j\neq x^i_k\neq x$ for all 
$i,j,k$. 

\medskip

Note that the action map
\begin{equation} \label{e:add disj}
(\Conf^I\times \Conf_{\infty\cdot x})_{\on{disj}}\to 
\Conf_{\infty\cdot x}
\end{equation} 
is \'etale. 

\ssec{Sheaves on configuration spaces}

In this subsection we introduce factorization gerbes and the corresponding categories of sheaves 
on configuration spaces. 

\sssec{}

Note that for a gerbe $\CG^\Lambda$ on $\Conf$, one can talk about a \emph{factorization structure} on it.
This means a system of isomorphisms
$$\CG^\Lambda|_{(\Conf^I)_{\on{disj}}}\simeq (\CG^\Lambda)^{\boxtimes I}|_{(\Conf^I)_{\on{disj}}}$$
for every finite set $I$ that are compatible in the evident sense.

\medskip

Given a factorization gerbe $\CG^\Lambda$ on $\Conf$, one can talk about a factorization structure on a gerbe $\CG^\Lambda$ on 
$\Conf_{\infty\cdot x}$. By definition, this means a compatible system of isomorphisms
\begin{equation} \label{e:fact gerbe conf mod}
\CG^\Lambda|_{(\Conf^I\times \Conf_{\infty\cdot x})_{\on{disj}}}\simeq 
(\CG^\Lambda)^{\boxtimes I}\boxtimes \CG^\Lambda|_{(\Conf^I\times \Conf_{\infty\cdot x})_{\on{disj}}}.
\end{equation} 

\sssec{}

For the duration of this section we fix such a pair of factorization gerbes $\Lambda^\Lambda$ on $\Conf$ 
and $\Conf_{\infty\cdot x}$.

\medskip

We will consider the corresponding categories of sheaves
$$\Shv_{\CG^\Lambda}(\Conf) \text{ and } \Shv_{\CG^\Lambda}(\Conf_{\infty\cdot x}).$$

\sssec{}  \label{sss:locally compact}

Being a category of sheaves on a scheme (resp., ind-scheme), the category 
$\Shv_{\CG^\Lambda}(\Conf)$ (resp., $\Shv_{\CG^\Lambda}(\Conf_{\infty\cdot x})$)
is compactly generated.

\medskip

For any ind-scheme (ind-algebraic stack) $\CY$ with a gerbe $\CG$ on it, let
$$\Shv_\CG(\CY)^{\on{loc.c}}\subset \Shv_\CG(\CY)$$
denote the full subcategory consisting of objects $\CF$ that satisfy the following:

\begin{itemize}

\item The support of $\CF$ is a \emph{scheme} (resp., \emph{algebraic stack}), to be denoted $\CY'$;

\item The restriction of $\CF$ to every quasi-compact open subscheme (resp., substack) $\overset{\circ}\CY\subset \CY'$
belongs to $\Shv_\CG(\overset{\circ}\CY)$.

\end{itemize}

\medskip

It is clear that $$\Shv_\CG(\CY)^c\subset \Shv_\CG(\CY)^{\on{loc.c}}.$$

Another feature of this subcategory is that we have a well-defined Verdier duality equivalence
$$\BD^{\on{Verdier}}:(\Shv_\CG(\CY)^{\on{loc.c}})^{\on{op}}\to \Shv_{\CG^{-1}}(\CY)^{\on{loc.c}}.$$

\medskip

Consider the corresponding full subcategory
$$\Shv_{\CG^\Lambda}(\Conf)^{\on{loc.c}}\subset \Shv_{\CG^\Lambda}(\Conf).$$
Note that it consists objects that are compact when restricted to every connected component 
$\Conf^\lambda$ of $\Conf$. Similarly, consider the full subcategory
$$\Shv_{\CG^\Lambda}(\Conf_{\infty\cdot x})^{\on{loc.c}}\subset \Shv_{\CG^\Lambda}(\Conf_{\infty\cdot x}).$$

\medskip

Verdier duality defines equivalences
$$\BD^{\on{Verdier}}:(\Shv_{\CG^\Lambda}(\Conf)^{\on{loc.c}})^{\on{op}}\to \Shv_{(\CG^\Lambda)^{-1}}(\Conf)^{\on{loc.c}}$$
and 
$$\BD^{\on{Verdier}}:(\Shv_{\CG^\Lambda}(\Conf_{\infty\cdot x})^{\on{loc.c}})^{\on{op}}\to \Shv_{(\CG^\Lambda)^{-1}}(\Conf_{\infty\cdot x})^{\on{loc.c}}.$$

\ssec{Translation action on colored divisors}    \label{ss:Hecke lattice}
 
In this subsection we introduce a piece of structure, crucial for the rest of the paper: on action
of a sublattice on $\Conf_{\infty\cdot x}$ by adding divisors supported at the point $x$. 

\sssec{}

Let $\Lambda^\sharp\subset \Lambda$ be a sublattice. We will consider $\Lambda^\sharp$ as a discrete scheme
acting on $\Conf_{\infty\cdot x}$ by adding the corresponding divisor at $x$:
$$\gamma,D\mapsto \on{Tr}^\gamma(D):=D+\gamma\cdot x.$$

\medskip

We will make the following assumption: the gerbe $\CG^\Lambda$ on $\Conf_{\infty\cdot x}$ is \emph{equivariant} with respect
to this action, in a way compatible with the factorization structure with respect to the given gerbe $\CG^\Lambda$ on $\Conf$.

\medskip

This means that we are given a compatible system of identifications
\begin{equation}  \label{e:tr gamma}
\on{Tr}^\gamma(\CG^\Lambda)\simeq \CG^\Lambda
\end{equation} 
that make the following diagrams commute:
$$
\CD
\on{Tr}^\gamma(\CG^\Lambda)|_{(\Conf^I\times \Conf_{\infty\cdot x})_{\on{disj}}}  @>>>  
(\CG^\Lambda)^{\boxtimes I}\boxtimes \on{Tr}^\gamma(\CG^\Lambda)|_{(\Conf^I\times \Conf_{\infty\cdot x})_{\on{disj}}}  \\
@VVV   @VVV  \\
\CG^\Lambda|_{(\Conf^I\times \Conf_{\infty\cdot x})_{\on{disj}}}  @>>>  
(\CG^\Lambda)^{\boxtimes I}\boxtimes \CG^\Lambda|_{(\Conf^I\times \Conf_{\infty\cdot x})_{\on{disj}}},
\endCD
$$
where the top horizontal isomorphism is induced by \eqref{e:fact gerbe conf mod} via the commutative diagram
$$
\CD
(\Conf^I\times \Conf_{\infty\cdot x})_{\on{disj}} @>{\on{id}\times \on{Tr}^\gamma}>> (\Conf^I\times \Conf_{\infty\cdot x})_{\on{disj}}  \\
@V{\text{\eqref{e:add disj}}}VV   @VV{\text{\eqref{e:add disj}}}V  \\
\Conf_{\infty\cdot x}   @>{\on{Tr}^\gamma}>>   \Conf_{\infty\cdot x}. 
\endCD
$$

\sssec{}   \label{sss:translation functors}

The maps \eqref{e:tr gamma} induce functors
$$\on{Tr}^\gamma: \Shv_{\CG^\Lambda}(\Conf_{\infty\cdot x})\to \Shv_{\CG^\Lambda}(\Conf_{\infty\cdot x}).$$

We will regard this collection of functors as an action of the (symmetric) monoidal category
$\Rep(T_H)$ on $\Shv_{\CG^\Lambda}(\Conf_{\infty\cdot x})$, where $T_H$ is the torus whose lattice of characters is $\Lambda^\sharp$.

\medskip

By definition, for $\gamma\in \Lambda^\sharp$, the object $\sfe^\gamma\in \Rep(T_H)$ acts as $\on{Tr}^\gamma$. 

\ssec{Isogenies}  \label{ss:isogenies}

In this subsection we will study the behavior of the category $\Shv_{\CG^\Lambda}(\Conf_{\infty\cdot x})$ under the
change of the lattice $\Lambda$ by an isogenous one. This will be handy in the future, as it will be convenient for us
to replace the given group $G$ by another one in the same isogeny class. 

\sssec{}  \label{sss:isogenies conf}

Let is be given a short exact sequence of lattices 
\begin{equation} \label{e:isog lattice}
0\to \Lambda\to \wt\Lambda\to \Lambda_0\to 0.
\end{equation} 
We let $\wt\Lambda^{\on{neg}}\subset \wt\Lambda$ be the image of $\Lambda^{\on{neg}}$
under the above map.

\medskip

Denote by $\wt\Conf_{\infty\cdot x}$ the corresponding configuration space with a marked point. 
The map $\Lambda\to \wt\Lambda$ defines a closed embedding
\begin{equation} \label{e:isog conf}
i:\Conf_{\infty\cdot x}\to \wt\Conf_{\infty\cdot x}.
\end{equation} 

\sssec{}

Let $\CG^{\wt\Lambda}$ be a gerbe on $\wt\Conf_{\infty\cdot x}$ equipped with a factorization structure 
with respect to the given factorization gerbe $\CG^\Lambda$ on $\Conf$. 

\medskip

Let us assume being given an identification
$$\CG^{\wt\Lambda}|_{\Conf_{\infty\cdot x}}\simeq \CG^\Lambda$$
as gerbes on $\Conf_{\infty\cdot x}$ equipped with a factorization structure 
with respect to the factorization gerbe $\CG^\Lambda$ on $\Conf$. 

\medskip

Then the map $i$ of \eqref{e:isog conf} gives rise to a functor
\begin{equation} \label{e:dir image isog}
i_*:\Shv_{\CG^\Lambda}(\Conf_{\infty\cdot x})\to \Shv_{\CG^{\wt\Lambda}}(\wt\Conf_{\infty\cdot x}).
\end{equation} 

\sssec{}  

Let us now be in the situation of \secref{ss:Hecke lattice} for $\Lambda$ and $\wt\Lambda$, and assume that that we
have a commutative diagram
\begin{equation} \label{e:Lambda sharp lattices}
\CD
\wt\Lambda^\sharp  @>>>  \wt\Lambda \\
@AAA   @AAA   \\
\Lambda^\sharp @>>> \Lambda. 
\endCD
\end{equation}

\medskip

We obtain an action of $\Rep(T_H)$ on $\Shv_{\CG^\Lambda}(\Conf_{\infty\cdot x})$ and an action of 
$\Rep(T_{\wt{H}})$ on $\Shv_{\CG^{\wt\Lambda}}(\wt\Conf_{\infty\cdot x})$, which are intertwined 
by the functor $i_*$ of \eqref{e:dir image isog} and the restriction functor
$$\Rep(T_H)\to \Rep(T_{\wt{H}}).$$

\medskip

In particular, we obtain a functor 
\begin{equation} \label{e:enlarge lattice}
\Rep(T_{\wt{H}})\underset{\Rep(T_H)}\otimes \Shv_{\CG^\Lambda}(\Conf_{\infty\cdot x})\to
\Shv_{\CG^{\wt\Lambda}}(\wt\Conf_{\infty\cdot x}).
\end{equation}

\sssec{}  \label{sss:assump isogen lattice}

Assume now that \eqref{e:Lambda sharp lattices} is a \emph{push-out} diagram.

\medskip

The following results easily from the definitions:

\begin{lem}\label{l:enlarge lattice}
Under the above assumptions, the functor \eqref{e:enlarge lattice} is an equivalence.
\end{lem}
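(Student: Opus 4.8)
The plan is to reduce the statement to a purely combinatorial stratification argument, exploiting the fact that both $\Conf_{\infty\cdot x}$ and $\wt\Conf_{\infty\cdot x}$ are colimits of schemes indexed by lattice cosets, together with the description of $\Rep(T_{\wt H})$ as a module category over $\Rep(T_H)$. First I would record the key structural fact coming from the push-out assumption on \eqref{e:Lambda sharp lattices}: since \eqref{e:Lambda sharp lattices} is a push-out, the cokernel of $\Lambda^\sharp\to\wt\Lambda^\sharp$ is canonically identified with the cokernel $\Lambda_0$ of $\Lambda\to\wt\Lambda$ from \eqref{e:isog lattice}. Consequently $\Rep(T_{\wt H})$ is free of rank one as a module over $\Rep(T_H)$ after we pick a set-theoretic splitting of $\wt\Lambda^\sharp\to\Lambda_0$, i.e. $\Rep(T_{\wt H})\simeq \bigoplus_{\bar\gamma\in\Lambda_0}\Rep(T_H)\cdot \sfe^{\gamma(\bar\gamma)}$ as a $\Rep(T_H)$-module, where $\gamma(\bar\gamma)\in\wt\Lambda^\sharp$ are chosen lifts. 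Therefore the left-hand side of \eqref{e:enlarge lattice} decomposes as $\bigoplus_{\bar\gamma\in\Lambda_0}\on{Tr}^{\gamma(\bar\gamma)}\bigl(\Shv_{\CG^\Lambda}(\Conf_{\infty\cdot x})\bigr)$, with the translation functors of \secref{sss:translation functors} providing the summands.

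Next I would analyze the target. The closed embedding $i$ of \eqref{e:isog conf} realizes $\Conf_{\infty\cdot x}$ inside $\wt\Conf_{\infty\cdot x}$ as exactly the locus where the coefficient $\lambda_x$ at $x$ lies in the sublattice $\Lambda\subset\wt\Lambda$ (and away from $x$ the colors automatically lie in $\Lambda^{\on{neg}}$, hence in the image of $\Lambda^{\on{neg}}$, so no constraint is added there). Translating by an element of $\wt\Lambda$ lifting a class $\bar\gamma\in\Lambda_0$ moves this locus to the locus where $\lambda_x$ lies in the corresponding coset of $\Lambda$ in $\wt\Lambda$. Since $\wt\Lambda/\Lambda\simeq\Lambda_0$, these translated copies of $\Conf_{\infty\cdot x}$ are pairwise disjoint closed sub-ind-schemes whose union (on the level of the underlying reduced ind-scheme) is all of $\wt\Conf_{\infty\cdot x}$: every $\Lambda$-coset in $\wt\Lambda$ is hit exactly once. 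Hence $\wt\Conf_{\infty\cdot x}=\bigsqcup_{\bar\gamma\in\Lambda_0}\on{Tr}^{\gamma(\bar\gamma)}(\Conf_{\infty\cdot x})$ as ind-schemes, and therefore $\Shv_{\CG^{\wt\Lambda}}(\wt\Conf_{\infty\cdot x})\simeq\bigoplus_{\bar\gamma\in\Lambda_0}\Shv_{\CG^{\wt\Lambda}}(\on{Tr}^{\gamma(\bar\gamma)}(\Conf_{\infty\cdot x}))$.

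Finally I would check that the functor \eqref{e:enlarge lattice} matches these two direct-sum decompositions summand by summand. On the $\bar\gamma$-summand it is, by construction, $i_*$ of \eqref{e:dir image isog} followed by the translation $\on{Tr}^{\gamma(\bar\gamma)}$; the hypothesis $\CG^{\wt\Lambda}|_{\Conf_{\infty\cdot x}}\simeq\CG^\Lambda$ (together with the $\Lambda^\sharp$-equivariance data \eqref{e:tr gamma} compatible via \eqref{e:Lambda sharp lattices}) identifies $\Shv_{\CG^{\wt\Lambda}}$ on the $\bar\gamma$-th connected piece with $\Shv_{\CG^\Lambda}(\Conf_{\infty\cdot x})$, under which this composite becomes the identity. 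One must also confirm that the decomposition of the left-hand side is really a direct sum (not just a filtered colimit) and that the functor respects it: this follows because each $\on{Tr}^{\gamma(\bar\gamma)}$ is an equivalence and the images land in genuinely disjoint components, so there are no morphisms between distinct summands. The main obstacle --- and the only place requiring genuine care rather than bookkeeping --- is to verify the compatibility of the gerbe identifications: one needs that the chosen trivializations \eqref{e:tr gamma} for $\wt\Lambda^\sharp$ restrict to the ones for $\Lambda^\sharp$ in a way compatible with the factorization structure over $\Conf$, so that the summand-wise identification of twisted-sheaf categories is canonical and the diagram of functors genuinely commutes; everything else is the formal colimit/decomposition manipulation sketched above.
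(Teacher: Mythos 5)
Your proof is correct, and it fills in exactly what the paper leaves implicit: the authors state only that the lemma ``results easily from the definitions,'' and your argument is the natural elaboration. The two decompositions you exhibit --- $\Rep(T_{\wt H})\simeq\bigoplus_{\bar\gamma\in\Lambda_0}\Rep(T_H)\cdot\sfe^{\gamma(\bar\gamma)}$ as a $\Rep(T_H)$-module, and $\wt\Conf_{\infty\cdot x}=\bigsqcup_{\bar\gamma\in\Lambda_0}\on{Tr}^{\gamma(\bar\gamma)}(\Conf_{\infty\cdot x})$ --- match component by component under the functor \eqref{e:enlarge lattice}, and your use of the push-out condition to identify $\wt\Lambda^\sharp/\Lambda^\sharp$ with $\Lambda_0$ (equivalently, the exact sequence \eqref{e:SES sharp}) is precisely the point where the hypothesis enters.

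One small remark: the hedge ``on the level of the underlying reduced ind-scheme'' is unnecessary. Each $\on{Tr}^{\gamma(\bar\gamma)}(\Conf_{\infty\cdot x})$ is a union of connected components of $\wt\Conf_{\infty\cdot x}$ (namely, those of total degree $\lambda$ with $\lambda\equiv\bar\gamma$ in $\Lambda_0$, using that $\wt\Lambda^{\on{neg}}\subset\Lambda$), hence is both open and closed; the decomposition into translates is therefore an honest coproduct of ind-schemes, not just a set-theoretic or reduced one, and $\Shv_{\CG^{\wt\Lambda}}(\wt\Conf_{\infty\cdot x})$ splits as the asserted direct sum with no further argument. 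Likewise, the gerbe comparison you flag as ``the only place requiring genuine care'' is supplied directly by the hypotheses: the equivariance datum \eqref{e:tr gamma} for $\gamma(\bar\gamma)\in\wt\Lambda^\sharp$, restricted along $\Conf_{\infty\cdot x}\hookrightarrow\wt\Conf_{\infty\cdot x}$, together with the given identification $\CG^{\wt\Lambda}|_{\Conf_{\infty\cdot x}}\simeq\CG^\Lambda$, is exactly the canonical isomorphism needed on the $\bar\gamma$-summand. So the proof goes through with nothing left to verify.
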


\sssec{}

Note that the assumption in \secref{sss:assump isogen lattice} implies that we have 
a short exact sequence of lattices
\begin{equation} \label{e:SES sharp}
0\to \Lambda^\sharp\to \wt\Lambda^\sharp \to \Lambda_0\to 0.
\end{equation}

A choice of a splitting of \eqref{e:SES sharp} defines an equivalence 
$$\Rep(T_H)\otimes \Rep(T_0)\simeq \Rep(T_{\wt{H}}),$$
as $\Rep(T_H)$-module categories, where $T_0$ is a torus with weight lattice $\Lambda_0$. 

\medskip

Combining with \lemref{l:enlarge lattice}, we obtain:

\begin{cor}\label{c:enlarge lattice}
A choice of a splitting of \eqref{e:SES sharp} defines an equivalence
$$\Shv_{\CG^\Lambda}(\Conf_{\infty\cdot x})\otimes \Rep(T_0)\to 
\Shv_{\CG^{\wt\Lambda}}(\wt\Conf_{\infty\cdot x}).$$
\end{cor}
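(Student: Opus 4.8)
The plan is simply to chain \lemref{l:enlarge lattice} together with the module-category equivalence $\Rep(T_H)\otimes \Rep(T_0)\simeq \Rep(T_{\wt{H}})$ produced by the chosen splitting of \eqref{e:SES sharp}, using the standard base-change identity for relative tensor products of DG categories. The entire mathematical content of the Corollary is already carried by \lemref{l:enlarge lattice}; what remains is a formal manipulation together with some bookkeeping of the various $\Rep(T_H)$-actions.

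First I would recall the general fact that, for a symmetric monoidal DG category $\bD$, a $\bD$-module category $\bC$, and an arbitrary DG category $\bE$, the canonical functor
\[
\bC\underset{\bD}\otimes (\bD\otimes \bE)\to \bC\otimes \bE
\]
is an equivalence, where $\bD\otimes \bE$ carries the $\bD$-action on its first tensor factor. (This expresses that $\bD\otimes\bE$ is the free $\bD$-module generated by $\bE$, so that $\bC\underset{\bD}\otimes(-)$ applied to it is extension of scalars along $\Vect\to\bD$ followed by $-\otimes\bE$; it is also immediate from the adjunction between restriction and induction of module categories.) I would apply this with $\bD=\Rep(T_H)$ acting on $\bC=\Shv_{\CG^\Lambda}(\Conf_{\infty\cdot x})$ as in \secref{sss:translation functors}, and $\bE=\Rep(T_0)$, to get
\[
\Shv_{\CG^\Lambda}(\Conf_{\infty\cdot x})\otimes \Rep(T_0)\;\simeq\; \Shv_{\CG^\Lambda}(\Conf_{\infty\cdot x})\underset{\Rep(T_H)}\otimes \bigl(\Rep(T_H)\otimes \Rep(T_0)\bigr).
\]
Next I would substitute the equivalence $\Rep(T_H)\otimes \Rep(T_0)\simeq \Rep(T_{\wt{H}})$ of $\Rep(T_H)$-module categories attached to the splitting (the one recorded in the paragraph preceding the statement), so that the right-hand side becomes $\Rep(T_{\wt{H}})\underset{\Rep(T_H)}\otimes \Shv_{\CG^\Lambda}(\Conf_{\infty\cdot x})$; finally \lemref{l:enlarge lattice} identifies this last category with $\Shv_{\CG^{\wt\Lambda}}(\wt\Conf_{\infty\cdot x})$ via the functor \eqref{e:enlarge lattice}, and the composite of the three equivalences is the asserted one.

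The only point that needs genuine care — and the one I would spell out — is that the $\Rep(T_H)$-module structure on $\Rep(T_{\wt{H}})$ entering \eqref{e:enlarge lattice}, namely the one via the restriction functor $\Rep(T_H)\to\Rep(T_{\wt{H}})$ dual to the surjection $T_{\wt{H}}\twoheadrightarrow T_H$, must match the $\Rep(T_H)$-module structure on $\Rep(T_H)\otimes\Rep(T_0)$ coming from the first factor under the chosen identification; this is immediate because the splitting is taken at the level of lattices, compatibly with the inclusion $\Lambda^\sharp\hookrightarrow\wt\Lambda^\sharp$ of \eqref{e:Lambda sharp lattices}. Once this compatibility is checked, all three equivalences are $\Rep(T_H)$-linear (in fact symmetric monoidal, hence in particular $\Rep(T_0)$-linear), so their composite is the natural functor and the Corollary follows. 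I do not expect any substantive obstacle beyond this bookkeeping.
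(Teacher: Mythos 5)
Your proof is correct and is essentially the paper's own argument: the paper simply notes that a splitting gives $\Rep(T_H)\otimes \Rep(T_0)\simeq \Rep(T_{\wt{H}})$ as $\Rep(T_H)$-module categories and then invokes \lemref{l:enlarge lattice}, which is exactly your chain. You merely make explicit the base-change identity $\bC\underset{\bD}\otimes(\bD\otimes\bE)\simeq\bC\otimes\bE$ and the module-structure bookkeeping that the paper leaves implicit.
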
 

\ssec{Configuration space via the affine Grassmannian}     \label{ss:Conf Gr}

Consider the Ran Grassmannian $\Gr^{\omega^\rho}_{T,\Ran}$ of $T$. We will now describe certain closed subfunctors
$$(\Gr^{\omega^\rho}_{T,\Ran})^{\on{neg}}\subset (\Gr^{\omega^\rho}_{T,\Ran})^{\on{non-pos}}\subset \Gr^{\omega^\rho}_{T,\Ran}.$$

It will turn out that the prestack $(\Gr^{\omega^\rho}_{T,\Ran})^{\on{neg}}$ is essentially equivalent to
the configuration space $\Conf$. 

\sssec{}  \label{sss:neg part Gr}

Consider the simply connected cover $G_{\on{sc}}$ of (the derived group of) $G$. Let $T_{\on{sc}}$ denote the 
Cartan group of $G_{\on{sc}}$. Note that the map 
\begin{equation} \label{e:Gr sc to T}
\Gr^{\omega^\rho}_{T_{\on{sc}},\Ran}\to \Gr^{\omega^\rho}_{T,\Ran}
\end{equation}
is a closed embedding. 

\medskip

For the definition of $(\Gr^{\omega^\rho}_{T,\Ran})^{\on{neg}}$ and $(\Gr^{\omega^\rho}_{T,\Ran})^{\on{non-pos}}$ we stipulate that the equal 
equals the images of 
$$(\Gr^{\omega^\rho}_{T_{\on{sc}},\Ran})^{\on{neg}}\subset (\Gr^{\omega^\rho}_{T_{\on{sc}},\Ran})^{\on{non-pos}}\subset
\Gr^{\omega^\rho}_{T_{\on{sc}},\Ran}$$ along \eqref{e:Gr sc to T}, so for the definition of $(\Gr^{\omega^\rho}_{T,\Ran})^{\on{neg}}$ and 
$(\Gr^{\omega^\rho}_{T,\Ran})^{\on{non-pos}}$
we will assume that $G=G_{\on{sc}}$.  

\sssec{}

An $S$-point $(J,\CP_T,\alpha)$ of $\Gr^{\omega^\rho}_{T,\Ran}$ belongs to $(\Gr^{\omega^\rho}_{T,\Ran})^{\on{non-pos}}$ if the following 
condition hold:

\begin{itemize}

\item{\it Regularity}: for every \emph{dominant} weight $\check\lambda\in \cLambda^+$, the meromorphic map of line bundles on $S\times X$
(resp., $\cD_J$) 
$$\clambda(\CP_T)\to \clambda(\CP^0_T),$$ 
induced by $\alpha$, is regular.

\end{itemize} 

An $S$-point $(\CJ,\CP_T,\alpha)$ as above $(\Gr^{\omega^\rho}_{T,\Ran})^{\on{neg}}$ if moreover the following holds:

\begin{itemize}

\item{\it Non-redundancy}: 
for every point $s\in S$ and every element $j\in \CJ$ there exists at least one $\check\lambda\in \cLambda^+$, for which 
the above map of line bundles has a zero at the point of $X$ corresponding to $s \to S\overset{j}\to X$.

\end{itemize} 

Note that $(\Gr^{\omega^\rho}_{T,\Ran})^{\on{neg}}$ and $(\Gr^{\omega^\rho}_{T,\Ran})^{\on{non-pos}}$ have a natural structure of 
factorization spaces over $\Ran$. 

\sssec{}

Evaluation on fundamental weights defines a map of prestacks
\begin{equation} \label{e:Ran to config}
(\Gr^{\omega^\rho}_{T,\Ran})^{\on{neg}}\to \Conf
\end{equation} 
(see Remark \ref{r:config as divisors}). 

\medskip

The following is obtained as \cite[Lemma 8.1.4]{Ga4}:

\begin{lem}  \label{l:Ran to config}
The map \eqref{e:Ran to config} induces an isomorphism \emph{of the sheafifications} in the topology generated by
finite surjective maps. 
\end{lem}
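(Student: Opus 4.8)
The plan is to reduce to the case $G = G_{\on{sc}}$ (as already stipulated in \secref{sss:neg part Gr}) and then check directly that \eqref{e:Ran to config} is an isomorphism after sheafifying both sides in the topology generated by finite surjective maps. First I would recall the cited statement \cite[Lemma 8.1.4]{Ga4}, which is exactly this assertion in the non-twisted setting; the only thing to be checked here is that the $\omega^\rho$-twist does not affect the argument. This is the case because the twist only changes the $T$-bundle underlying a point of $(\Gr^{\omega^\rho}_{T,\Ran})^{\on{neg}}$ by the \emph{fixed} bundle $\omega^\rho$, and the regularity/non-redundancy conditions are conditions on the \emph{meromorphic identification} $\alpha$ of $\clambda(\CP_T)$ with $\clambda(\CP^0_T)$, not on the isomorphism class of $\CP_T$ itself. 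Since $\omega^\rho$ is a bundle on the curve $X$ (pulled back to $S \times X$ with no poles), tensoring by it commutes with everything in sight, so the map \eqref{e:Ran to config} on the $\omega^\rho$-twisted side is literally identified with the untwisted one after the (noncanonical but harmless) trivialization of $\clambda(\omega^\rho)$ away from the relevant divisors.

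The heart of the matter, which I would present in outline following \cite{Ga4}, is the following. A point of $(\Gr^{\omega^\rho}_{T,\Ran})^{\on{neg}}$ over $S$ consists of a finite subset $\CJ \subset \Maps(S,X)$ together with $(\CP_T, \alpha)$ satisfying the two bulleted conditions. Evaluating on each fundamental weight $\check\omega_i$ produces a line bundle $\check\omega_i(\CP_T)$ together with a meromorphic-hence-regular (by \emph{Regularity}) map to the trivial line bundle $\check\omega_i(\CP^0_T)$, i.e.\ a section of $\check\omega_i(\CP^0_T \otimes \CP_T^{-1})$, whose vanishing locus is an effective divisor $D_i$ on $S \times X$ (after restricting to $S \times X - \Gamma_\CJ$ the section is invertible, so $D_i$ is supported on $\Gamma_\CJ$; in particular it is finite flat over $S$). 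The collection $(D_i)_{i \in I}$ is precisely a $(\Lambda^{\on{neg}})$-valued relative divisor, i.e.\ an $S$-point of $\Conf$; the \emph{Non-redundancy} condition guarantees it lands in the locus where no point of $\CJ$ is redundant, which is what is needed to recover $\CJ$ from the divisor. Conversely, from a point of $\Conf$, i.e.\ an effective divisor $\sum_k \lambda_k \cdot x_k$ with $\lambda_k \in \Lambda^{\on{neg}} - 0$, one reconstructs $\CP_T$ (as $\sum_k (-\lambda_k)$ applied fiberwise, twisted by $\omega^\rho$) together with the tautological meromorphic trivialization; the subtlety is that this reconstruction is only étale-locally (or finite-surjective-locally) canonical on $S$, which is exactly why the isomorphism is asserted only after sheafification.

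I expect the main obstacle to be the bookkeeping around the \emph{non-redundancy} condition versus the choice of the finite set $\CJ$: on $(\Gr^{\omega^\rho}_{T,\Ran})^{\on{neg}}$ one carries around the datum of $\CJ$ explicitly, whereas $\Conf$ only sees the support of the divisor, so the two prestacks genuinely differ before sheafification (a point of $\Conf$ can be lifted to $(\Gr^{\omega^\rho}_{T,\Ran})^{\on{neg}}$ by enlarging $\CJ$ arbitrarily among subsets whose graph contains the support). The resolution, as in \cite{Ga4}, is that after sheafifying in the topology generated by finite surjective maps, the "minimal $\CJ$" becomes the canonical choice: any two choices of $\CJ$ are dominated, finite-surjective-locally on $S$, by the one whose graph is exactly the support of the divisor. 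So the content of the lemma is really that this sheafification collapses the ambiguity in $\CJ$, and the map \eqref{e:Ran to config} becomes an isomorphism of the resulting sheaves. Since this is precisely \cite[Lemma 8.1.4]{Ga4} and the $\omega^\rho$-twist is inert, no new argument is needed beyond citing it and noting the compatibility of the twist with the evaluation-on-fundamental-weights map.
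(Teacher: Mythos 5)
Your proposal matches the paper's own treatment, which is simply to cite \cite[Lemma 8.1.4]{Ga4} and note that the statement carries over. Your observation that the $\omega^\rho$-twist is inert (since it only changes the $T$-bundle by a fixed bundle pulled back from $X$, so evaluation on fundamental weights still yields the same effective divisors) is exactly the right justification for why the untwisted reference applies verbatim.
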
 

\sssec{}   \label{sss:gerbe Lambda}

As a corollary, we obtain that the map \eqref{e:Ran to config} induces an isomorphisms on spaces of gerbes.
In particular, we obtain that for a geometric metaplectic data for $T$, the factorization gerbe
$\CG^T|_{(\Gr^{\omega^\rho}_{T,\Ran})^{\on{neg}}}$ on $(\Gr^{\omega^\rho}_{T,\Ran})^{\on{neg}}$ is the pullback of a uniquely defined
factorization gerbe, deboted $\CG^\Lambda$ on $\Conf$.

\medskip

Furthermore, \lemref{l:Ran to config} implies that pullback defines an equivalence
\begin{equation}  \label{e:Ran to config sheaves}
\Shv_{\CG^\Lambda}(\Conf)\to \Shv_{\CG^T}((\Gr^{\omega^\rho}_{T,\Ran})^{\on{neg}}).
\end{equation} 

\sssec{}    \label{sss:neg part Gr ptd}

We define the closed subfunctors 
$$(\Gr^{\omega^\rho}_{T,\Ran_x})^{\on{neg}}_{\infty\cdot x}\subset (\Gr^{\omega^\rho}_{T,\Ran_x})^{\on{non-pos}}_{\infty\cdot x}\subset \Gr^{\omega^\rho}_{T,\Ran_x}$$
as follows: 

\medskip

A point $(J,\CP_T,\alpha)$ of $\Gr^{\omega^\rho}_{T,\Ran_x}$ belongs to $(\Gr^{\omega^\rho}_{T,\Ran_x})^{\on{neg}}_{\infty\cdot x}$ 
if there exists another $T$-bundle
$\CP'_T$ on $S\times X$ and an isomorphism $\CP_T|_{S\times (X-x)}\simeq \CP'_T|_{S\times (X-x)}$, such that the resulting
point $(J,\CP'_T,\alpha')$ of $\Gr^{\omega^\rho}_{T,\Ran_x}$ belongs to
$$\Ran_x\underset{\Ran}\times (\Gr^{\omega^\rho}_{T,\Ran})^{\on{neg}}\subset \Ran_x\underset{\Ran}\times \Gr^{\omega^\rho}_{T,\Ran}.$$

Replacing $(\Gr^{\omega^\rho}_{T,\Ran})^{\on{neg}}$ by $(\Gr^{\omega^\rho}_{T,\Ran})^{\on{non-pos}}$ we obtain the definition of 
$(\Gr^{\omega^\rho}_{T,\Ran_x})^{\on{non-pos}}_{\infty\cdot x}$. 

\sssec{}

As in \eqref{e:Ran to config} we have a canonically defined map
\begin{equation}  \label{e:Ran to config marked}
(\Gr^{\omega^\rho}_{T,\Ran_x})^{\on{neg}}_{\infty\cdot x}\to   \Conf_{\infty\cdot x},
\end{equation} 
and a counterpart of \lemref{l:Ran to config} holds.

\medskip

Hence, stating with a geometric metaplectic data for $T$, we obtain that the corresponding gerbe
$$\CG^T|_{(\Gr^{\omega^\rho}_{T,\Ran_x})^{\on{neg}}_{\infty\cdot x}},$$
viewed as a equipped with a factorization structure with respect to $\CG^T|_{(\Gr^{\omega^\rho}_{T,\Ran})^{\on{neg}}}$, 
is the pullback of a uniquely defined gerbe $\CG^\Lambda$ on $\Conf_{\infty\cdot x}$ equipped 
with a factorization structure with respect to the facorization gerbe $\CG^\Lambda$ on $\Conf$. 

\medskip

Furthermore, pullback with respect to \eqref{e:Ran to config marked} defines an equivalence
\begin{equation}  \label{e:Ran to config sheaves ptd}
\Shv_{\CG^\Lambda}(\Conf_{\infty\cdot x})\to \Shv_{\CG^T}((\Gr^{\omega^\rho}_{T,\Ran_x})^{\on{neg}}_{\infty\cdot x}).
\end{equation} 

\section{Factorization algebras anf modules on configuration spaces}   \label{s:fact conf}

In this section we will finally define factorization algebras and modules over them on the configuration spaces.

\medskip

We will see that they model a certain subcategory of factorization algebras (resp., modules over them) on $\Gr_T$.

\ssec{Factorization algebras on configuration spaces}  \label{ss:fact on config}

In this subsection we define the notion of factorization algebra on $\Conf$. 

\sssec{}

Let $\CG^\Lambda$ be a factorization gerbe on $\Conf$. A factorization algebra in $\Shv_{\CG^\Lambda}(\Conf)$
is an object $\CA\in \Shv_{\CG^\Lambda}(\Conf)$ equipped with a \emph{homotopy-compatible} system of identifications
\begin{equation} \label{e:fact conf}
\CA|_{(\Conf^I)_{\on{disj}}}\simeq (\CA)^{\boxtimes I}|_{(\Conf^I)_{\on{disj}}}
\end{equation} 
for finite non-empty sets $I$.

\medskip

Below we explain one of the possible ways to formalize the phrase ``homotopy-coherent" in this context.
We will follow the the same idea as in Sects. \ref{sss:fact spc}-\ref{sss:fact alg}. 

\sssec{}

The assignment
$$I\mapsto (\Conf^I)_{\on{disj}}$$
has a structure of op-lax symmetric monoidal functor
$$\on{fSet}^{\on{surj}}\to \Sch.$$

A factorization gerbe on $\Conf$ is a lift of the above functor to a functor with values 
in the symmetric monoidal category 
$$\on{Sch+Grb}.$$

\medskip

Composing with \eqref{e:twisted sheaves}, we obtain that the assignment
$$I\mapsto \Shv_{(\CG^\Lambda)^{\boxtimes I}}((\Conf^I)_{\on{disj}})$$
has a structure of lax monoidal functor
$$(\on{fSet}^{\on{surj}})^{\on{op}}\to \inftyCat.$$

\medskip

We interpret this functor as a Cartesian fibration
\begin{equation} \label{e:Cart fibr conf}
\Shv_\CG((\CG^\Lambda)^{\on{fSet}^{\on{surj}}})\to \on{fSet}^{\on{surj}}
\end{equation} 
of symmetric monoidal categories.

\medskip

A factorization algebra in $\Shv_{\CG^\Lambda}(\Conf)$ is a symmetric monoidal Cartesian 
section of \eqref{e:Cart fibr conf}. 

\sssec{}

Proceeding as in \secref{ss:fact modules}, given a factorization algebra $\CA\in \Shv_{\CG^\Lambda}(\Conf)$,
we define the categories of factorization modules with respect to it in $\Shv_{\CG^\Lambda}(\Conf_{\infty\cdot x})$.
We denote this category
$$\CA\on{-FactMod}(\Shv_{\CG^\Lambda}(\Conf_{\infty\cdot x})),$$
or simply $\CA\on{-FactMod}$ if no confusion is likely to occur. 

\medskip

We have a tautological conservative forgetful functor
$$\oblv_{\on{Fact}}: \CA\on{-FactMod}\to \Shv_{\CG^\Lambda}(\Conf_{\infty\cdot x}).$$

\ssec{Change of lattice and isogenies}

In this subsection we will remark that the material of Sects. \ref{ss:Hecke lattice} and \ref{ss:isogenies}
carries over to categories of factorization modules. 

\sssec{} \label{sss:action of lattice 1}

Let us be in the situation of \secref{ss:Hecke lattice}. As in \secref{sss:fact funct mod geom}, it 
follows that the action of
$\Rep(T_H)$ on $\Shv_{\CG^\Lambda}(\Conf_{\infty\cdot x})$ gives rise to one on $\CA\on{-FactMod}$.

\medskip

For $\gamma\in \Lambda^\sharp$, we will denote by the same symbol $\on{Tr}^\gamma$ the corresponding
translation endo-functor of $\CA\on{-FactMod}$.

\sssec{}  \label{sss:isogenies conf fact} 

Let us now be in the situation of \secref{ss:isogenies}. As in \secref{sss:fact funct mod geom}, it follows that the functor
$i_*$ of \secref{e:dir image isog} induces a functor
$$i_*:\CA\on{-FactMod}(\Shv_{\CG^\Lambda}(\Conf_{\infty\cdot x}))\to
\CA\on{-FactMod}(\Shv_{\CG^{\wt\Lambda}}(\wt\Conf_{\infty\cdot x}))$$
that intertwines the actions of $\Rep(T_H)$ and $\Rep(T_{\wt{H}})$, respectively. 

\medskip

In particular, we obtain a functor
\begin{equation} \label{e:enlarge lattice fact}
\Rep(T_{\wt{H}})\underset{\Rep(T_H)}\otimes \CA\on{-FactMod}(\Shv_{\CG^\Lambda}(\Conf_{\infty\cdot x}))\to
\CA\on{-FactMod}(\Shv_{\CG^{\wt\Lambda}}(\wt\Conf_{\infty\cdot x})). 
\end{equation}

We claim:

\begin{prop} \label{p:enlarge lattice fact}
Under the assumption of \secref{sss:assump isogen lattice}, the functor \eqref{e:enlarge lattice fact} 
is an equivalence.
\end{prop}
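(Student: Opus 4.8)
The plan is to reduce \propref{p:enlarge lattice fact} to its non-factorization counterpart, \lemref{l:enlarge lattice}, by exhibiting both sides of \eqref{e:enlarge lattice fact} as categories of comodules (equivalently, modules) for the relevant factorization algebra internal to the ambient sheaf categories, and then using that the base-change functor $i_*$ is compatible with the forgetful functors on both sides. First I would unwind the definitions: by \secref{sss:isogenies conf fact}, the functor $i_*$ on $\CA\on{-FactMod}$ is the one induced from $i_*\colon \Shv_{\CG^\Lambda}(\Conf_{\infty\cdot x})\to \Shv_{\CG^{\wt\Lambda}}(\wt\Conf_{\infty\cdot x})$ of \eqref{e:dir image isog} via the general functoriality of \secref{ss:fact modules funct}; in particular it commutes with $\oblv_{\on{Fact}}$ on both sides (here one uses that $i$ is a closed embedding, hence ind-proper, so the pushforward is well behaved on factorization objects, cf.\ \secref{sss:fact funct mod geom bis}). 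I would then observe that $\oblv_{\on{Fact}}$ on each side is conservative and admits a right adjoint (the ``coinduction'' against $\CA$), so each category of factorization modules is monadic over the corresponding ambient category of sheaves, with the monad given by tensoring with $\CA$ along the disjoint-union maps \eqref{e:add disj}.

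The key step is to identify, under the splitting of \eqref{e:SES sharp} chosen in \corref{c:enlarge lattice}, the category $\CA\on{-FactMod}(\Shv_{\CG^{\wt\Lambda}}(\wt\Conf_{\infty\cdot x}))$ with $\CA\on{-FactMod}(\Shv_{\CG^\Lambda}(\Conf_{\infty\cdot x}))\otimes \Rep(T_0)$ as a module over $\Rep(T_H)$. By \corref{c:enlarge lattice} we already have such an identification at the level of ambient sheaf categories, $\Shv_{\CG^{\wt\Lambda}}(\wt\Conf_{\infty\cdot x})\simeq \Shv_{\CG^\Lambda}(\Conf_{\infty\cdot x})\otimes \Rep(T_0)$, compatible with the $\Rep(T_H)$-actions. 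The point is that $\CA$, being a factorization algebra supported on $\Conf$ (which lives inside both configuration spaces via \secref{sss:config inside marked}), together with its factorization module structure, is \emph{pulled back} from the $\Conf_{\infty\cdot x}$-side: the disjoint-union maps \eqref{e:add disj} for $\wt\Conf_{\infty\cdot x}$ restrict, on the connected components indexed by $\Lambda^{\on{neg}}$-colored divisors away from $x$, to those for $\Conf_{\infty\cdot x}$, and the extra $\Rep(T_0)$-factor only records the value of the divisor \emph{at} $x$, on which $\CA$ imposes no condition. Hence the monad ``tensor with $\CA$'' on the $\wt\Lambda$-side is the monad on the $\Lambda$-side tensored with $\Id_{\Rep(T_0)}$, and passing to modules over a monad of the form $M\otimes \Id$ commutes with $-\otimes \Rep(T_0)$. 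This gives
$$
\CA\on{-FactMod}(\Shv_{\CG^{\wt\Lambda}}(\wt\Conf_{\infty\cdot x}))\;\simeq\;
\CA\on{-FactMod}(\Shv_{\CG^\Lambda}(\Conf_{\infty\cdot x}))\otimes \Rep(T_0),
$$
compatibly with the $\Rep(T_H)$-actions and with $i_*$ (which corresponds to $\Id\otimes \triv$, i.e.\ the unit $\Rep(T_H)\to \Rep(T_{\wt H})$-induction). Combining this with the tautological $\Rep(T_{\wt H})\underset{\Rep(T_H)}\otimes(-\otimes\Rep(T_0)\text{-thing})$ bookkeeping (using the splitting-induced equivalence $\Rep(T_H)\otimes\Rep(T_0)\simeq\Rep(T_{\wt H})$ exactly as in the passage from \lemref{l:enlarge lattice} to \corref{c:enlarge lattice}) yields that \eqref{e:enlarge lattice fact} is an equivalence.

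The main obstacle I anticipate is making precise the claim that $\CA$ and its factorization-module structure genuinely come from the smaller configuration space, i.e.\ that the restriction of the disjoint-union geometry of $\wt\Conf_{\infty\cdot x}$ to the relevant strata ``sees only'' $\Conf_{\infty\cdot x}$ together with an inert $\Rep(T_0)$-direction; this requires checking the op-lax symmetric monoidal compatibilities of \secref{sss:fact spc}--\secref{sss:fact alg} carry through the closed embedding $i$ and the product decomposition of \corref{c:enlarge lattice} coherently, not just on objects. Once that coherence is in hand, everything else is formal: monadicity of $\oblv_{\on{Fact}}$, compatibility of module categories with $-\otimes\Rep(T_0)$, and the reduction to \lemref{l:enlarge lattice}. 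Alternatively, if the monadic bookkeeping proves cumbersome, one can instead argue directly that $i_*$ on factorization modules is fully faithful (since $i$ is a closed embedding and $i_*$ is fully faithful on ambient sheaves, and $\oblv_{\on{Fact}}$ is conservative and commutes with $i_*$) and that its essential image, after tensoring up by $\Rep(T_{\wt H})$ over $\Rep(T_H)$, is everything --- the latter being checked on $\oblv_{\on{Fact}}$ using \corref{c:enlarge lattice} and the conservativity/$t$-exactness of $\oblv_{\on{Fact}}$.
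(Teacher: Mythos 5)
Your alternative argument at the end is essentially the paper's proof; the elaborate main route is a detour that proves more than is needed.

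The paper simply observes that the functor \eqref{e:enlarge lattice fact} and its right adjoint form an adjoint pair (by the general machinery of \secref{sss:fact funct mod geom}), that both commute with the conservative functors $\oblv_{\on{Fact}}$ on each side, and that the unit and counit become isomorphisms after applying $\oblv_{\on{Fact}}$ --- which is exactly \lemref{l:enlarge lattice}. Conservativity then forces the unit and counit to be isomorphisms before $\oblv_{\on{Fact}}$, and that is the whole proof. No splitting of \eqref{e:SES sharp}, no $\Rep(T_0)$-decomposition of the target category, and no monadicity of $\oblv_{\on{Fact}}$ are invoked.

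Your main route tries to identify $\CA\on{-FactMod}(\Shv_{\CG^{\wt\Lambda}}(\wt\Conf_{\infty\cdot x}))$ explicitly as $\CA\on{-FactMod}(\Shv_{\CG^\Lambda}(\Conf_{\infty\cdot x}))\otimes\Rep(T_0)$ via a monad that decomposes as $M\otimes\Id_{\Rep(T_0)}$. That identification is in fact \corref{c:enlarge lattice fact}, which the paper \emph{deduces} from \propref{p:enlarge lattice fact}, so taking it as the key step is logically backwards; proving it directly requires all the factorization-coherence checking you flag as the "main obstacle," plus justifying that $\oblv_{\on{Fact}}$ admits a continuous right adjoint and satisfies Barr–Beck, none of which the paper needs. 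You would do better to lead with the simple argument and drop the monad bookkeeping: form the adjunction, kill the unit and counit after $\oblv_{\on{Fact}}$ using \lemref{l:enlarge lattice}, and appeal to conservativity.
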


\begin{proof}

It follows as in \secref{sss:fact funct mod geom} that the functor \secref{e:enlarge lattice} and its right adjoint 
induce an adjoint pair of functors
$$\Rep(T_{\wt{H}})\underset{\Rep(T_H)}\otimes \CA\on{-FactMod}(\Shv_{\CG^\Lambda}(\Conf_{\infty\cdot x}))\rightleftarrows
\CA\on{-FactMod}(\Shv_{\CG^{\wt\Lambda}}(\wt\Conf_{\infty\cdot x})).$$

We wish to show that, under the assumption of \secref{sss:assump isogen lattice}, these functors are mutually
inverse. I.e., we need to show that the unit and the counit of this adjunction are isomorphisms. For the latter,
it is sufficient to show that the natural transformations become isomorphisms after applying the (conservative) forgeful
functors
$$\oblv_{\on{Fact}}:
\Rep(T_{\wt{H}})\underset{\Rep(T_H)}\otimes \CA\on{-FactMod}(\Shv_{\CG^\Lambda}(\Conf_{\infty\cdot x}))\to
\Rep(T_{\wt{H}})\underset{\Rep(T_H)}\otimes \Shv_{\CG^\Lambda}(\Conf_{\infty\cdot x})$$
and
$$\oblv_{\on{Fact}}:\CA\on{-FactMod}(\Shv_{\CG^{\wt\Lambda}}(\wt\Conf_{\infty\cdot x}))\to
\Shv_{\CG^{\wt\Lambda}}(\wt\Conf_{\infty\cdot x}),$$
respectively.

\medskip

Now fact that the resulting natural transformations are isomorphisms follows from \lemref{l:enlarge lattice}. 

\end{proof} 

\begin{cor}\label{c:enlarge lattice fact}
A choice of a splitting of \eqref{e:SES sharp} defines an equivalence
$$\CA\on{-FactMod}(\Shv_{\CG^\Lambda}(\Conf_{\infty\cdot x}))\otimes \Rep(T_0)\to 
\CA\on{-FactMod}(\Shv_{\CG^{\wt\Lambda}}(\wt\Conf_{\infty\cdot x})).$$
\end{cor}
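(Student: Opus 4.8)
The plan is to deduce this corollary from \propref{p:enlarge lattice fact} in precisely the same manner that \corref{c:enlarge lattice} was deduced from \lemref{l:enlarge lattice}; no further geometric input will be required. First I would recall the observation made just before \corref{c:enlarge lattice}: under the hypothesis of \secref{sss:assump isogen lattice} one has the short exact sequence \eqref{e:SES sharp}, and a choice of splitting of it produces an equivalence
$$\Rep(T_H)\otimes \Rep(T_0)\simeq \Rep(T_{\wt{H}})$$
of $\Rep(T_H)$-module categories, where $T_0$ is the torus whose weight lattice is $\Lambda_0$ and $\Rep(T_H)$ acts on the right-hand side through the restriction functor $\Rep(T_H)\to \Rep(T_{\wt{H}})$.

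Next I would tensor this equivalence over $\Rep(T_H)$ with the module category $\CA\on{-FactMod}(\Shv_{\CG^\Lambda}(\Conf_{\infty\cdot x}))$. Using associativity of the relative tensor product of DG categories and the canonical identification $\Rep(T_H)\underset{\Rep(T_H)}\otimes(-)\simeq \on{Id}$, this gives
$$\Rep(T_0)\otimes \CA\on{-FactMod}(\Shv_{\CG^\Lambda}(\Conf_{\infty\cdot x}))\;\simeq\;\Rep(T_{\wt{H}})\underset{\Rep(T_H)}\otimes \CA\on{-FactMod}(\Shv_{\CG^\Lambda}(\Conf_{\infty\cdot x})).$$
Composing with the functor \eqref{e:enlarge lattice fact}, which \propref{p:enlarge lattice fact} identifies as an equivalence onto $\CA\on{-FactMod}(\Shv_{\CG^{\wt\Lambda}}(\wt\Conf_{\infty\cdot x}))$, produces the asserted equivalence. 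To finish I would unwind the constructions and check that, under the chosen splitting, the composite coincides with the functor induced by the direct-image functor $i_*$ of \secref{sss:isogenies conf fact} together with the $\Rep(T_0)$-action on $\CA\on{-FactMod}$.

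I do not anticipate a real obstacle here: \propref{p:enlarge lattice fact} does all the substantive work, and the remaining steps are formal manipulations with tensor products of DG categories and their module categories. The only point that deserves a moment's care — and the closest thing to a difficulty — is verifying that $\Rep(T_H)\otimes\Rep(T_0)\simeq\Rep(T_{\wt{H}})$ is genuinely an equivalence of $\Rep(T_H)$-module categories, so that tensoring over $\Rep(T_H)$ is legitimate and the module structures on the two sides of the final equivalence match; this however is immediate from the splitting of \eqref{e:SES sharp} and the compatibility of the $\Rep(T_H)$- and $\Rep(T_{\wt{H}})$-actions recorded in \secref{sss:isogenies conf fact}.
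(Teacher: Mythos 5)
Your proposal is correct and coincides with the paper's intended deduction: just as \corref{c:enlarge lattice} follows from \lemref{l:enlarge lattice} via the splitting-induced equivalence $\Rep(T_H)\otimes\Rep(T_0)\simeq\Rep(T_{\wt H})$ of $\Rep(T_H)$-module categories, \corref{c:enlarge lattice fact} follows from \propref{p:enlarge lattice fact} by tensoring that equivalence over $\Rep(T_H)$ with $\CA\on{-FactMod}(\Shv_{\CG^\Lambda}(\Conf_{\infty\cdot x}))$ and composing with the equivalence \eqref{e:enlarge lattice fact}. No further comment is needed.
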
 

\ssec{Structure of the category of factorization modules on the configuration space}  \label{ss:mod over conf}

In this subsection we fix a factorization algebra $\CA\in \Shv_{\CG^\Lambda}(\Conf)$. 

\sssec{}

For $\mu\in \Lambda$ we let $\CA\on{-FactMod}_{\leq \mu}$ be the category of factorization 
$\CA$-modules in the category
$\Shv_{\CG^\Lambda}(\Conf_{\leq \mu\cdot x})$, or which is the same,
the preimage of
$$\Shv_{\CG^\Lambda}(\Conf_{\leq \mu\cdot x})\subset 
\Shv_{\CG^\Lambda}(\Conf_{\infty\cdot x})$$
under the functor $\oblv_{\on{Fact}}$.

\medskip

Let $\iota_{\mu}$ denote the closed embedding 
$$\Conf_{\leq \mu\cdot x}\hookrightarrow \Conf.$$

\medskip

As in \secref{sss:fact funct mod geom bis}, the adjoint pair
$$(\iota_{\mu})_!:\Shv_{\CG^\Lambda}(\Conf_{\leq \mu\cdot x})\rightleftarrows
\Shv_{\CG^\Lambda}(\Conf_{\infty\cdot x}):(\iota_{\mu})^!$$
induces a pair of adjoint functors
$$(\iota_{\mu})_!:\CA\on{-FactMod}_{\leq \mu}\rightleftarrows \CA\on{-FactMod}:(\iota_{\mu})^!$$
both of which commute with the forgetful functor $\oblv_{\on{Fact}}$. 

\medskip

Since the unit of the adjunction
$$\on{Id}\to (\iota_{\mu})^!\circ (\iota_{\mu})_!$$
is an isomorphism on $\Shv_{\CG^\Lambda}(\Conf_{\leq \mu\cdot x})$, the conservativity of 
$\oblv_{\on{Fact}}$ implies that it is also an isomorphism on $\CA\on{-FactMod}_{\leq \mu}$.

\medskip

Hence, the functor
\begin{equation} \label{e:iota on mod}
(\iota_{\mu})_!:\CA\on{-FactMod}_{\leq \mu}\to \CA\on{-FactMod}
\end{equation} 
is fully faithful. 

\medskip

We will often identify $\CA\on{-FactMod}_{\leq \mu}$ with its essential image in $\CA\on{-FactMod}$.
We have
$$\CA\on{-FactMod}_{\leq \mu_1}\subset \CA\on{-FactMod}_{\leq \mu_2} \text{ for } \mu_1\leq \mu_2.$$

\sssec{}

The presentation \eqref{e:config as colim} implies that the functors $(\iota_{\mu})_!$ define an equivalence
$$\underset{\mu\in \Lambda}{\underset{\longrightarrow}{\on{colim}}}\, 
\Shv_{\CG^\Lambda}(\Conf_{\leq \mu\cdot x})\to 
\Shv_{\CG^\Lambda}(\Conf_{\infty\cdot x}),$$
(see \secref{sss:limits and colimits} for the general paradigm). 

\medskip

In particular, the map
\begin{equation} \label{e:colim shv}
\underset{\mu\in \Lambda}{\underset{\longrightarrow}{\on{colim}}}\,  (\iota_{\mu})_!\circ (\iota_{\mu})^!\to \on{Id}
\end{equation}
is an isomorphism on $\Shv_{\CG^\Lambda}(\Conf_{\infty\cdot x})$. By the conservativity 
of $\oblv_{\on{Fact}}$, we obtain that \eqref{e:colim shv} is an isomorphism also in $\CA\on{-FactMod}$.
Hence, we obtain that the functors \eqref{e:iota on mod} also define an equivalence
$$\underset{\mu\in \Lambda}{\underset{\longrightarrow}{\on{colim}}}\, 
\CA\on{-FactMod}_{\leq \mu} \to \CA\on{-FactMod}.$$

\sssec{}

Let
$$\Conf_{=\mu\cdot x}\subset \Conf_{\leq \mu\cdot x}$$
be the open subscheme consisting of points \eqref{e:point of conf} for which $\lambda_x=\mu$. 
Note that the above open embedding, to be denoted by $\jmath_\mu$, is affine. 

\medskip

We can consider the corresponding category $\CA\on{-FactMod}_{=\mu}$, along with the pair of adjoint functors
\begin{equation} \label{e:on open}
(\jmath_\mu)^*:\CA\on{-FactMod}_{\leq \mu}\rightleftarrows \CA\on{-FactMod}_{=\mu}: (\jmath_\mu)_*,
\end{equation} 
commuting with the forgetful functor $\oblv_{\on{Fact}}$ and with $(\jmath_\mu)_*$ being fully faithful. 

\medskip

Note also that the essential image of $(\jmath_\mu)_*$ 
is the right orthogonal to the full subcategory of $\CA\on{-FactMod}_{\leq \mu}$ generated
by $\CA\on{-FactMod}_{\leq \mu'}$ for $\mu'<\mu$. 

\sssec{}

We have the following assertion:

\begin{lem} \label{l:on open}
The functor of stalk at the (unique) point 
$$\mu \cdot x\in (\Conf_{=\mu\cdot x})^\mu\subset \Conf_{=\mu\cdot x}$$
defines a t-exact equivalence from $\CA\on{-FactMod}_{=\mu}$ to the category $\Vect_{\CG^\Lambda_{\mu\cdot x}}$
(i.e., the category of vector spaces twisted by the fiber of the gerbe $\CG^\Lambda$ at the point $\mu\cdot x$). 
\end{lem}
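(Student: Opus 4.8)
The statement is that the stalk functor at the marked point gives a t-exact equivalence $\CA\on{-FactMod}_{=\mu}\to \Vect_{\CG^\Lambda_{\mu\cdot x}}$. The plan is to identify $\CA\on{-FactMod}_{=\mu}$ with a category of the shape described in \secref{sss:fact mod expl}, where the factorization module structure imposes no condition because the marked divisor is concentrated \emph{at} $x$ with fixed value $\mu$ and all the other colored points range over $X-x$. Concretely, $\Conf_{=\mu\cdot x}$ is the ind-scheme of divisors $\mu\cdot x+\underset{k}\Sigma\,\lambda_k\cdot x_k$ with $x_k\in X-x$; projecting off the ``away-from-$x$'' part realizes $\Conf_{=\mu\cdot x}$ as a factorization module space over the configuration space $\Conf'$ of $X':=X-x$ (cf. Remark \ref{r:remove point}), and the fiber over the empty configuration is just $\on{pt}$ carrying the gerbe $\CG^\Lambda_{\mu\cdot x}$.

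First I would make precise the decomposition $\Conf_{=\mu\cdot x}\simeq \underset{\lambda\in\mu+\Lambda^{\on{neg}}}\bigsqcup X'{}^{\lambda-\mu}$ compatible with the factorization module structure with respect to $\CA'|_{\Conf'}$, where $\CA'$ is $\CA$ restricted along $\Conf'\hookrightarrow\Conf$ (using that the factorization algebra $\CA$ restricts to one on $X'$). Here the point $\mu\cdot x\in(\Conf_{=\mu\cdot x})^\mu$ is precisely the basepoint $\wt{Z}_*$ with no away-from-$x$ colored points. Then I would invoke the equivalence established in \secref{sss:fact mod expl}: restriction along $Z_x\to Z_{\Ran_x}$ gives an equivalence $\CA'\on{-FactMod}(\Shv_{\CG'}(Z_{\Ran_x}))\to \Shv_{\CG_x}(Z_x)$. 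In our situation $Z_x=\on{pt}$ and $\CG_x=\CG^\Lambda|_{\mu\cdot x}$, so the target is $\Vect_{\CG^\Lambda_{\mu\cdot x}}$. Unwinding, the composite ``take the $\CA'$-module, restrict to the basepoint'' is exactly the stalk functor at $\mu\cdot x$ named in the lemma. The one wrinkle is that \secref{sss:fact mod expl} is phrased for factorization spaces over $\Ran'$; I would use \lemref{l:Ran to config} (or rather its proof, via the topology generated by finite surjective maps) together with the identification \eqref{e:Ran to config sheaves} to transport the statement from the $\Gr^{\omega^\rho}_{T,\Ran}$-language to the configuration-space language, so that the ``no condition at $x$'' phenomenon of \secref{sss:fact mod expl} applies verbatim.

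For t-exactness: the stalk functor at a point of a scheme (or ind-scheme), for the perverse t-structure, is right t-exact by general nonsense, and here because the point $\mu\cdot x$ is \emph{closed} in $\Conf_{=\mu\cdot x}$ and lies in the zero-dimensional stratum $(\Conf_{=\mu\cdot x})^\mu\simeq\on{pt}$ while the open complement consists of the strata $X'{}^{\lambda-\mu}$ with $\lambda<\mu$, one gets that the stalk functor is also left t-exact on $\CA$-modules — this should be checked by combining the adjunction \eqref{e:on open} with the fact that an $\CA$-module supported on the bigger strata has its $\mu\cdot x$-stalk controlled by the $!$-restriction, and the perversity of $\CA$ (which is available in the situations where this lemma is applied, cf.\ \secref{s:fact conf}, though for the bare statement one only needs the gerbe-twisted dualizing-sheaf normalization). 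Alternatively, t-exactness is immediate once the equivalence is established, since one transports the tautological t-structure on $\CA\on{-FactMod}_{=\mu}$ (the one for which $\oblv_{\on{Fact}}$ is t-exact into the perverse t-structure on $\Shv_{\CG^\Lambda}(\Conf_{=\mu\cdot x})$) through the equivalence and checks it matches the standard one on $\Vect_{\CG^\Lambda_{\mu\cdot x}}$ on the generator.

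The main obstacle I anticipate is purely bookkeeping: carefully matching the abstract ``Cartesian section of the fibration \eqref{e:Cart fibr ptd}'' definition of a factorization module over $\CA'$ with the concrete statement that, on $\Conf_{=\mu\cdot x}$, a factorization $\CA'$-module is \emph{the same as} a $\CG^\Lambda$-twisted sheaf on $\Conf_{=\mu\cdot x}$ glued from copies of $\CA'$ on the strata — i.e.\ verifying that the factorization isomorphisms \eqref{e:factorization space mod} impose the expected ``freeness'' and no more. This is exactly the content of \secref{sss:fact mod expl}, so the work is to set up the identification of geometric inputs ($Z_{\Ran'}\rightsquigarrow \Conf'$, $Z_x\rightsquigarrow\on{pt}$, $\CG_x\rightsquigarrow\CG^\Lambda_{\mu\cdot x}$) so that that proposition applies; everything else is formal.
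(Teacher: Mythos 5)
Your proof is correct and takes the same approach as the paper: the key fact is that the ``add divisors'' map \eqref{e:add disj}, restricted to $(\Conf\times\Conf_{\infty\cdot x})_{\on{disj}}\cap\Conf\times\{\mu\cdot x\}$, is an isomorphism onto $\Conf_{=\mu\cdot x}$ — which is precisely your observation that $\Conf_{=\mu\cdot x}$ is a factorization module space over $\Conf'$ with fiber $Z_x=\on{pt}$ carrying the gerbe $\CG^\Lambda_{\mu\cdot x}$ — and then \secref{sss:fact mod expl} gives the equivalence.

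Two small remarks. First, your proposed detour through $\lemref{l:Ran to config}$ to transport \secref{sss:fact mod expl} from the $\Ran$-space framework to the $\Conf$ framework is valid but unnecessary; the argument of \secref{sss:fact mod expl} works verbatim in the configuration-space set-up, since the definitions in \secref{ss:fact on config} are built so as to mirror those of \secref{ss:fact modules}, and this is what ``Follows, \emph{as in} \secref{sss:fact mod expl}'' means. Second, your t-exactness discussion in terms of an open stratum and its closed complement is a bit misleading: $(\Conf_{=\mu\cdot x})^\mu\simeq\on{pt}$ is an entire \emph{connected component} of $\Conf_{=\mu\cdot x}$ (both open and closed), so the stalk functor at $\mu\cdot x$ is restriction to a zero-dimensional component, which is t-exact for the perverse t-structure with no further argument — there is no stratification of a single piece to analyze. (Also a slip: $\Conf_{=\mu\cdot x}$ is a scheme, not an ind-scheme.) None of this affects the validity of your argument.
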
 

\begin{proof}
Follows, as in \secref{sss:fact mod expl}, from the fact that the map \eqref{e:add disj} defines an isomorphism from
$$\left(\Conf\times \Conf_{\infty\cdot x}\right)_{\on{disj}}\cap \Conf\times \{\mu\cdot x\}$$
to $\Conf_{=\mu\cdot x}$.
\end{proof} 

\sssec{}  \label{sss:fact hol}

Assume now that $\CA\in \Shv_{\CG^\Lambda}(\Conf)$ is \emph{holonomic} if our theory is that of D-modules
(the condition is vacuous for other choices of sheaf theory). 

\medskip

Then \lemref{l:on open} implies that the functor
$$(\jmath_\mu)_!:\Shv_{\CG^\Lambda}(\Conf_{=\mu\cdot x})\to 
\Shv_{\CG^\Lambda}(\Conf_{\leq \mu\cdot x}),$$
left adjoint to $(\jmath_\mu)^!=(\jmath_\mu)^*$, is well-defined on the essential image on the functor $\oblv_{\on{Fact}}$.

\medskip

Hence, as in \secref{sss:fact funct mod geom et}, 
we obtain that the functor $(\jmath_\mu)^!=(\jmath_\mu)^*$ in \eqref{e:on open} also admits a \emph{left adjoint}, to be denoted
$(\jmath_\mu)_!$, which commutes with the forgetful functor $\oblv_{\on{Fact}}$.

%\medskip
%
%We note that the essential image of $(\jmath_\mu)_!$ is the \emph{left} orthogonal to the full subcategory of $\CA\on{-FactMod}_{\leq \mu}$ 
%generated by $\CA\on{-FactMod}_{\leq \mu'}$ for $\mu'<\mu$. 

\medskip

The existence of $(\jmath_\mu)_!$ and its commutation with $\oblv_{\on{Fact}}$ implies that the functor
$(\iota_{\mu})_!=:(\iota_{\mu})_*$ of \eqref{e:iota on mod} admits also a left adjoint, to be denoted $(\iota_{\mu})^*$, which also
commutes with $\oblv_{\on{Fact}}$. 

\sssec{}  \label{sss:standards and costandards}

Choose a trivialization of the gerbe $\CG^\Lambda_{\mu\cdot x}$, thereby identifying $\CA\on{-FactMod}_{=\mu}$ with $\Vect$.

\medskip

Let $\CM^{\mu,*}_{\Conf}$ (resp., $\CM^{\mu,!}_{\Conf}$) denote the object of $\CA\on{-FactMod}$ equal to 
$(\iota_\mu)_*\circ (\jmath_\mu)_*$ (resp., $(\iota_\mu)_!\circ (\jmath_\mu)_!$) applied to 
$$\sfe\in \Vect\simeq \CA\on{-FactMod}_{=\mu}.$$

\medskip

We will call $\CM^{\mu,*}_{\Conf}$ (resp., $\CM^{\mu,!}_{\Conf}$) the \emph{co-standard} (resp., \emph{standard}) object. 

\sssec{}

Note that the objects $\CM^{\mu,!}_{\Conf}$ form a set of compact generators of the category $\CA\on{-FactMod}$. Indeed,
the functor $\CHom_{\CA\on{-FactMod}}(\CM^{\mu,!}_{\Conf},-)$ identifies with the functor
$$\CA\on{-FactMod}\overset{\iota_\mu^!}\longrightarrow \CA\on{-FactMod}_{\leq \mu}  \overset{\jmath_\mu^*}\longrightarrow 
\CA\on{-FactMod}_{=\mu}\simeq \Vect_{\CG^\Lambda_{\mu\cdot x}}\simeq \Vect.$$

Note that above functor identifies with
\begin{equation} \label{e:mu fiber}
\CA\on{-FactMod}\overset{\oblv_{\on{Fact}}}\longrightarrow \Shv_{\CG^\Lambda}(\Conf_{\infty\cdot x})\to 
\Vect_{\CG^\Lambda_{\mu\cdot x}}\simeq \Vect,
\end{equation}
where the middle arrow is the functor of !-fiber at $\mu\cdot x\in \Conf_{\infty\cdot x}$. 

\medskip

In particular, we obtain:

\begin{equation} \label{e:Ext standard fact}
\CHom_{\CA\on{-FactMod}}(\CM^{\mu',!}_{\Conf},\CM^{\mu,*}_{\Conf})=
\begin{cases}
&\sfe \text{ if } \mu'=\mu \\
&0 \text{ otherwise}.
\end{cases}
\end{equation}

\medskip

Note also that the objects $\CM^{\mu,*}_{\Conf}$ \emph{co-generate} $\CA\on{-FactMod}$.

\sssec{} \label{sss:action of lattice 2}

Let us be in the situation of \secref{ss:isogenies}. It is clear that for $\gamma\in \Lambda^\sharp$, we have
$$\on{Tr}^\gamma(\CM^{\mu,!}_{\Conf})\simeq \CM^{\mu+\gamma,!}_{\Conf} \text{ and }
\on{Tr}^\gamma(\CM^{\mu,*}_{\Conf})\simeq \CM^{\mu+\gamma,*}_{\Conf}.$$

\sssec{}

Assume now that $\CA$, viewed as an object of $\Shv_{\CG^\Lambda}(\Conf)$, belongs to 
$\Shv_{\CG^\Lambda}(\Conf)^{\on{loc.c}}$.

\medskip

Let
$$\CA\on{-FactMod}^{\on{loc.c}}\subset \Shv_{\CG^\Lambda}(\Conf_{\infty\cdot x})$$
be the full subcategory consisting of objects whose image under $\oblv_{\on{Fact}}$ belongs to the subcategory 
$\Shv_{\CG^\Lambda}(\Conf_{\infty\cdot x})^{\on{loc.c}}\subset \Shv_{\CG^\Lambda}(\Conf_{\infty\cdot x})$.

\medskip

Evidently, the objects $\CM^{\mu,!}_{\Conf}$ and $\CM^{\mu,*}_{\Conf}$ belong to $\CA\on{-FactMod}^{\on{loc.c}}$. 

\ssec{The t-structure on factorization modules} \label{ss:t on fact}

We retain the assumptions of \secref{sss:fact hol}. Let assume, in addition, that $\CA$, when viewed as an object
of $\Shv_{\CG^\Lambda}(\Conf)$, belongs to $(\Shv_{\CG^\Lambda}(\Conf))^\heartsuit$.

\medskip

We will show that in this case, the category $\CA\on{-FactMod}$ has a well-behaved t-structure and the abelian category
$(\CA\on{-FactMod})^\heartsuit$ is a highest weight category. 

\sssec{}

We define a t-structure on the category $\CA\on{-FactMod}$ by declaring an object $\CF$ to be coconnective if
$$\Hom_{\CA\on{-FactMod}}(\CM^{\mu,!}_{\Conf}[k],\CF)=0$$
for all $\mu\in \Lambda$ and $k>0$.

\medskip

By factorization and the assumption on $\CA$, we obtain that this condition is equivalent to
$$\oblv_{\on{Fact}}(\CF)\in (\Shv_{\CG^\Lambda}(\Conf_{\infty\cdot x}))^{\geq 0}.$$

We claim:

\begin{prop}  \label{p:properties of t fact} \hfill

\smallskip

\noindent{\em(a)} The functor $\oblv_{\on{Fact}}$ is t-exact.

\smallskip

\noindent{\em(b)} The objects $\CM^{\mu,!}_{\Conf},\CM^{\mu,*}_{\Conf}$ belong to $(\CA\on{-FactMod})^\heartsuit$.

\end{prop}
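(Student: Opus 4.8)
The plan is to deduce both parts from the already-established behaviour of the recollement $((\iota_\mu)^*, (\iota_\mu)_*=(\iota_\mu)_!, (\iota_\mu)^!)$ and $((\jmath_\mu)_!, (\jmath_\mu)^*=(\jmath_\mu)^!, (\jmath_\mu)_*)$ on $\CA\on{-FactMod}$, all of which commute with the conservative functor $\oblv_{\on{Fact}}$ by \secref{sss:fact hol}, together with the identification of the ``defining'' condition for the $t$-structure with the geometric one on $\Shv_{\CG^\Lambda}(\Conf_{\infty\cdot x})$.

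For part (a): by definition $\CF\in(\CA\on{-FactMod})^{\geq 0}$ iff $\oblv_{\on{Fact}}(\CF)\in (\Shv_{\CG^\Lambda}(\Conf_{\infty\cdot x}))^{\geq 0}$ — this is exactly the remark preceding the proposition, using factorization and $\CA\in(\Shv_{\CG^\Lambda}(\Conf))^\heartsuit$ to rewrite $\Hom(\CM^{\mu,!}_{\Conf}[k],\CF)=0$ via \eqref{e:mu fiber} as a condition on all !-fibers, hence (by the stratification of $\Conf_{\infty\cdot x}$ into points with $\lambda_x=\mu$, each of which is a symmetric power $\overset{\circ}X{}^{\lambda-\mu}$ over which $\oblv_{\on{Fact}}(\CF)$ is controlled by $\CA$) as coconnectivity of $\oblv_{\on{Fact}}(\CF)$. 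So $\oblv_{\on{Fact}}$ is left $t$-exact essentially tautologically. For right $t$-exactness I would argue that $\CA\on{-FactMod}$ is generated under colimits by the compact objects $\CM^{\mu,!}_{\Conf}$ (established in \secref{sss:standards and costandards}), so it suffices to show each $\CM^{\mu,!}_{\Conf}$ has $\oblv_{\on{Fact}}(\CM^{\mu,!}_{\Conf})$ connective; but $\oblv_{\on{Fact}}(\CM^{\mu,!}_{\Conf})=(\iota_\mu)_!\circ(\jmath_\mu)_!(\omega\text{-twisted constant sheaf})$, a !-extension from an affine-over-a-point locus of a perverse (indeed, by the gerbe trivialization, constant) sheaf, and !-extensions along affine open and closed immersions are right $t$-exact. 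Alternatively, and more robustly, one shows directly that the connective objects are generated by the $\CM^{\mu,!}_{\Conf}$ and invokes that $\oblv_{\on{Fact}}$, being a right adjoint that preserves colimits and sends the generating connectives to connectives, is right $t$-exact.

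For part (b): since $\oblv_{\on{Fact}}$ is now known $t$-exact and conservative, it suffices to check $\oblv_{\on{Fact}}(\CM^{\mu,*}_{\Conf})$ and $\oblv_{\on{Fact}}(\CM^{\mu,!}_{\Conf})$ lie in $(\Shv_{\CG^\Lambda}(\Conf_{\infty\cdot x}))^\heartsuit$. Here $\oblv_{\on{Fact}}(\CM^{\mu,!}_{\Conf})=(\iota_\mu)_!(\jmath_\mu)_!(\sfe)$ and $\oblv_{\on{Fact}}(\CM^{\mu,*}_{\Conf})=(\iota_\mu)_*(\jmath_\mu)_*(\sfe)$, where $\sfe$ sits on the open stratum $\Conf_{=\mu\cdot x}\subset\Conf_{\leq\mu\cdot x}$. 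By factorization, away from $x$ these sheaves are built from $\CA$ (which is in the heart), while the $x$-direction only contributes a shift by the fiber of $\CG^\Lambda_{\mu\cdot x}$, so this reduces to a standard statement about (co)standard objects in a stratified space whose strata carry perverse local systems: the !-extension and the $*$-extension of a perverse sheaf from a locally closed subscheme which is affine over each stratum land in the heart of the perverse $t$-structure. Concretely I would filter $\Conf_{\leq\mu\cdot x}$ by the loci $\Conf_{\leq\mu'\cdot x}$ for $\mu'\le\mu$, use the affineness of $\jmath_{\mu'}$ and $\iota_{\mu'}$ noted in \secref{ss:mod over conf}, and induct, exactly as in \cite{BFS}.

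The main obstacle I anticipate is the right $t$-exactness of $\oblv_{\on{Fact}}$ in part (a): the left $t$-exactness and the reduction of part (b) to part (a) are essentially formal, but to get connectivity one genuinely needs to know that the standard objects $\CM^{\mu,!}_{\Conf}$ generate the connective part — which in turn rests on the Artinian/finiteness features of the category coming from the perversity of $\CA$ — and that $\oblv_{\on{Fact}}$ sends !-extensions to !-extensions, i.e. the commutations with $(\iota_\mu)_!$ and $(\jmath_\mu)_!$ from \secref{sss:fact hol}. Once those inputs are in place the argument is a routine recollement bookkeeping, so I would be careful to cite \secref{sss:fact hol} and \secref{sss:standards and costandards} precisely rather than re-deriving them.
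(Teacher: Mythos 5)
Your proposal is correct and follows the same strategy as the paper's own proof: left t-exactness of $\oblv_{\on{Fact}}$ is tautological from the definition of the t-structure, right t-exactness is reduced to checking the generators $\CM^{\mu,!}_{\Conf}$ (which $\oblv_{\on{Fact}}$, commuting with colimits, sends to connective objects since $(\jmath_\mu)_!$ and $(\iota_\mu)_!$ are right t-exact), and part (b) is deduced via conservativity of $\oblv_{\on{Fact}}$ together with affineness of $\jmath_\mu$. One small remark: the induction/filtration by $\Conf_{\le\mu'\cdot x}$ that you sketch at the end of part (b) is not needed --- once you have commutation of $(\iota_\mu)_!$, $(\jmath_\mu)_!$, $(\jmath_\mu)_*$ with $\oblv_{\on{Fact}}$ (from \secref{sss:fact hol}), perversity of $\oblv_{\on{Fact}}(\CM^{\mu,!}_{\Conf})$ and $\oblv_{\on{Fact}}(\CM^{\mu,*}_{\Conf})$ follows in one step from the fact that $(\jmath_\mu)_!$ and $(\jmath_\mu)_*$ along the affine open immersion preserve perversity and $(\iota_\mu)_*=(\iota_\mu)_!$ is t-exact; and note also that affineness of $\jmath_\mu$ is relevant only for coconnectivity (right t-exactness of $(\jmath_\mu)_!$ is automatic, independent of affineness), so your phrasing there slightly obscures where the hypothesis is actually used.
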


\begin{proof}

The fact that the functor $\oblv_{\on{Fact}}$ is left t-exact has been noted above. Since $(\CA\on{-FactMod})^{\leq 0}$
is generated under colimits by the objects $\CM^{\mu,!}_{\Conf}$, in order to show that $\oblv_{\on{Fact}}$ is right t-exact,
it suffices to show that $\oblv_{\on{Fact}}(\CM^{\mu,!}_{\Conf})\in (\Shv_{\CG^\Lambda}(\Conf_{\infty\cdot x}))^{\geq 0}$. However,
this follows from the fact that the open embedding $\jmath_\mu$ is affine. 

\medskip

Thus point (a) of the proposition has been proved. In order to prove point (b), 
since the functor $\oblv_{\on{Fact}}$ is conservative, it suffices to show that
$$\oblv_{\on{Fact}}(\CM^{\mu,!}_{\Conf}),\,\oblv_{\on{Fact}}(\CM^{\mu,*}_{\Conf})\in 
(\Shv_{\CG^\Lambda}(\Conf_{\infty\cdot x}))^\heartsuit.$$

This fact for $\oblv_{\on{Fact}}(\CM^{\mu,!}_{\Conf})\in 
(\Shv_{\CG^\Lambda}(\Conf_{\infty\cdot x}))^\heartsuit$ has been noted above. The corresponding fact for 
$\oblv_{\on{Fact}}(\CM^{\mu,*}_{\Conf})$ also follows from the fact that the open embedding $\jmath_\mu$ is affine. 

\end{proof}

\sssec{}

By \cite[Theorem 1.3.3.2]{Lur}, we have a canonically defined t-exact functor 
\begin{equation} \label{e:D of heart}
D^+\left((\CA\on{-FactMod})^\heartsuit\right)\to \CA\on{-FactMod}.
\end{equation}

We claim:

\begin{prop} \label{p:D of heart}
The functor \eqref{e:D of heart} is an equivalence onto the eventually coconnective part.
\end{prop} 

Before we give a proof, let us note the following: 

\medskip

The functor \eqref{e:D of heart} induces a \emph{bijection}
$$\on{Ext}^i_{(\CA\on{-FactMod})^\heartsuit}(\CM^{\mu',!}_{\Conf},\CM^{\mu,*}_{\Conf})\to
H^i\left(\CHom_{\CA\on{-FactMod}}(\CM^{\mu',!}_{\Conf},\CM^{\mu,*}_{\Conf})\right)$$
for $i=0,1$ and 
and an injection
$$\on{Ext}^2_{(\CA\on{-FactMod})^\heartsuit}(\CM^{\mu',!}_{\Conf},\CM^{\mu,*}_{\Conf})\to
H^2\left(\CHom_{\CA\on{-FactMod}}(\CM^{\mu',!}_{\Conf},\CM^{\mu,*}_{\Conf})\right).$$

\medskip

In particular, it follows from \eqref{e:Ext standard fact} that
$$\on{Ext}^2_{(\CA\on{-FactMod})^\heartsuit}(\CM^{\mu',!}_{\Conf},\CM^{\mu,*}_{\Conf})=0.$$

\medskip

Hence, we obtain that $(\CA\on{-FactMod})^\heartsuit$ has a structure of \emph{highest weight category}. 

\begin{proof}[Proof of \propref{p:D of heart}]

It is enough to show that the functor \eqref{p:D of heart} is fully faithful. Since the objects $\CM^{\mu,!}_{\Conf}$
(resp., $\CM^{\mu,*}_{\Conf}$) generate (resp., co-generate) $\CA\on{-FactMod}$, it is sufficient to show that the map
$$\on{Ext}^i_{(\CA\on{-FactMod})^\heartsuit}(\CM^{\mu',!}_{\Conf},\CM^{\mu,*}_{\Conf})\to
H^i\left(\CHom_{\CA\on{-FactMod}}(\CM^{\mu',!}_{\Conf},\CM^{\mu,*}_{\Conf})\right)$$
is an isomorphism for all $i$. 

\medskip

However, this follows from \eqref{e:Ext standard fact} and the fact that in a highest weight category
$$\on{Ext}^i_{(\CA\on{-FactMod})^\heartsuit}(\CM^{\mu',!}_{\Conf},\CM^{\mu,*}_{\Conf})=0, \text{ for } i\geq 1.$$

\end{proof}

\sssec{}

Consider the canonical map
$$\CM^{\mu,!}_{\Conf}\to \CM^{\mu,*}_{\Conf}.$$

Let $\CM^{\mu,!*}_{\Conf}$ denote its image, viewed as an object in $(\CA\on{-FactMod})^\heartsuit$. 

\medskip

We obtain that the 
objects $\CM^{\mu,!*}_{\Conf}$ are the irreducibles in the abelian category $(\CA\on{-FactMod})^\heartsuit$.

\sssec{} \label{sss:action of lattice 3}

Let us be in the situation of \secref{ss:isogenies}. It is clear that for $\gamma\in \Lambda^\sharp$, we have
$$\on{Tr}^\gamma(\CM^{\mu,!*}_{\Conf})\simeq \CM^{\mu+\gamma,!*}_{\Conf}.$$

\ssec{Comparison with the affine Grassmannian}    \label{ss:Conf vs Gr}

Recall the setting of \secref{ss:Conf Gr}. In this subsection we will use it to compare factorization algebras
in $\Shv_{\CG^T}(\Gr^{\omega^\rho}_{T,\Ran})$ and those in $\Shv{\CG^\Lambda}(\Conf)$, as well as modules
over them.   

\sssec{}   \label{sss:fact Gr T vs Conf} 

First off, the equivalence \eqref{e:Ran to config sheaves} implies that pullback along \eqref{e:Ran to config}
defines an equivalence
$$\on{FactAlg}(\Shv_{\CG^\Lambda}(\Conf))\simeq \on{FactAlg}(\Shv_{\CG^T}((\Gr^{\omega^\rho}_{T,\Ran})^{\on{neg}})), \quad
\CA_{\Conf}\mapsto \CA_{\Gr_T}.$$

%Furthermore, we can regard $\on{FactAlg}(\Shv_{\CG^T}((\Gr^{\omega^\rho}_{T,\Ran})^{\on{neg}}))$ as a full subcategory of
%the category $\on{FactAlg}(\Shv_{\CG^T}((\Gr^{\omega^\rho}_{T,\Ran})^{\on{non-pos}}))$. It consists of objects whose !-pullback to the unit
%section
%$$\Ran\to (\Gr^{\omega^\rho}_{T,\Ran})^{\on{non-pos}})$$
%vanishes. 

\sssec{}

Let $\CA_{\Gr_T}$ be a factorization algebra in $\Shv_{\CG^T}(\Gr^{\omega^\rho}_{T,\Ran})$, and let
$\CA_{\Conf}$ be the corresponding factorization algebra in $\Shv_{\CG^\Lambda}(\Conf)$. 

\medskip

We obtain that pullback with respect to  \eqref{e:Ran to config marked} defines an equivalence
\begin{equation} \label{e:compare fact mod}
\CA_{\Conf}\on{-FactMod}(\Shv_{\CG^\Lambda}(\Conf_{\infty\cdot x}))\to 
\CA_{\Gr_T}\on{-FactMod}(\Shv_{\CG^T}((\Gr^{\omega^\rho}_{T,\Ran_x})^{\on{neg}}_{\infty\cdot x})).
\end{equation} 

\begin{rem}
Let $\CA$ be a factorization algebra in $\Shv_{\CG^T}(\Gr^{\omega^\rho}_{T,\Ran})$ supported on
$(\Gr^{\omega^\rho}_{T,\Ran})^{\on{neg}}$ (resp., $(\Gr^{\omega^\rho}_{T,\Ran})^{\on{non-pos}}$).

\medskip

It follows automatically from factorization, that any
object in $\CA_{\Gr_T}\on{-FactMod}(\Shv_{\CG^T}(\Gr^{\omega^\rho}_{T,\Ran_x}))$ is supported on
$(\Gr^{\omega^\rho}_{T,\Ran_x})^{\on{neg}}_{\infty\cdot x}$ (resp., $(\Gr^{\omega^\rho}_{T,\Ran_x})^{\on{non-pos}}_{\infty\cdot x}$). 
\end{rem} 

\sssec{Hecke action}

Let $\Lambda^\sharp$ be as in \secref{sss:sharp lattice}. Recall that we have an action of
$\Rep(T_H)$ on $\Shv_{\CG^T}(\Gr^{\omega^\rho}_{T,\Ran_x})$. It is easy to see that this action preserves
the subcategory
$$\Shv_{\CG^T}((\Gr^{\omega^\rho}_{T,\Ran_x})^{\on{neg}}_{\infty\cdot x}))\subset \Shv_{\CG^T}(\Gr^{\omega^\rho}_{T,\Ran_x}).$$

Let $\CA_{\Gr_T}$ be a factorization algebra in $\Shv_{\CG^T}(\Gr^{\omega^\rho}_{T,\Ran})$ supported on
$(\Gr^{\omega^\rho}_{T,\Ran})^{\on{neg}}$. By \secref{sss:fact funct mod geom}, the above action gives rise to an action 
of $\Rep(T_H)$ on $\CA_{\Gr_T}\on{-FactMod}(\Shv_{\CG^T}((\Gr^{\omega^\rho}_{T,\Ran_x})^{\on{neg}}_{\infty\cdot x}))$.

\medskip

It follows from the constructions that the equivalence \eqref{e:compare fact mod} intertwines this action 
and the action of $\Rep(T_H)$ on $\CA_{\Conf}\on{-FactMod}(\Shv_{\CG^\Lambda}(\Conf_{\infty\cdot x}))$
from \secref{sss:action of lattice 1}. 

\newpage 

\centerline{\bf Part II: The metaplectic Whittaker category of the affine Grassmannian} 

\bigskip

The goal of this work is to establish an equivalence of two categories: the (Hecke version of the)
metaplectic Whittaker category of the affine Grassmannian and a certain category of factorization modules.
In this Part we initiate the study of the former of these categories.

\section{The metaplectic Whittaker category}  \label{s:Whit}

In this section we introduce the metaplectic Whittaker category
of the affine Grassmannian, and study its basic properties.  

\ssec{Definition of the metaplectic Whittaker category}  \label{ss:def Whit}

In this subsection we introduce the metaplectic Whittaker category 
of the affine Grassmannian, denoted $\Whit_{q,x}(G)$. 
The definition involves infinite-dimensional algebro-geometric objects,
and we will rewrite it as a limit of categories of finite-dimensional nature. 

\sssec{}

We start with a geometric metaplectic data for the group $G$, i.e., a factorization gerbe $\CG^G$ on the affine
Grassmannian $\Gr_{G,\Ran}$.  

\medskip

We consider the twisted version $\Gr_{G,\Ran}^{\omega^\rho}$, see \secref{sss:Gr G rho}, and its fiber $\Gr^{\omega^\rho}_{G,x}$
over $\{x\}\in \Ran$. We denote by the same symbol $\CG^G$ the resulting gerbe on $\Gr^{\omega^\rho}_{G,x}$. 

%We denote by $\CG^G_x$ the restriction of $\CG^G$ to $\Gr^{\omega^\rho}_{G,x}$.

\medskip

The indscheme $\Gr^{\omega^\rho}_{G,x}$ is acted on by the $\omega^\rho$-twist of the loop group at $x$, denoted 
$\fL(G)^{\omega^\rho}_x$. This is the group of automorphisms of the $G$-bundle $\omega^\rho$ on the formal punctured
disc around $x$. 

\medskip

Recall (see \secref{sss:gerbe on twisted grp}) that $\fL(G)^{\omega^\rho}_x$ carries a canonically defined multiplicative gerbe 
(also denoted $\CG^G$), so that the gerbe $\CG^G$ on $\Gr^{\omega^\rho}_{G,x}$ is twisted-equivariant against the gerbe 
$\CG^G$ on $\fL(G)^{\omega^\rho}_x$ with respect to the above action.

\sssec{}

Consider now the subgroup $\fL(N)^{\omega^\rho}_x\subset \fL(G)^{\omega^\rho}_x$. Due to the fact that $\fL(N)^{\omega^\rho}_x$
is ind-pro-unipotent, any gerbe over $\fL(N)^{\omega^\rho}_x$
is trivial (and the trivialization is uniquely fixed by its value at the origin). In particular, any multiplicative
gerbe on  $\fL(N)^{\omega^\rho}_x$ admits a canonical trivialization compatible with the multiplicative structure. 

\medskip

Thus, the gerbe $\CG^G$ on $\Gr^{\omega^\rho}_{G,x}$ is \emph{equivariant} with respect to $\fL(N)^{\omega^\rho}_x$. 

\sssec{}

Note that the group 
$$\fL(N)^{\omega^\rho}_x/[\fL(N)^{\omega^\rho}_x,\fL(N)^{\omega^\rho}_x]$$
identifies with $(\fL(\BG_a)^{\omega}_x)^I$, where $\fL(\BG_a)^{\omega}_x$ is the group indscheme of meromorphic differentials on the formal
punctured disc around $x$, and $I$ is the set of vertices of the Dynkin diagram of $G$. 

\medskip

The residue map defines a homomorphism $\fL(\BG_a)^{\omega}_x\to \BG_a$, see \secref{sss:twisted N}, 
Let $\chi_N$ denote the pullback of the Artin-Schreier sheaf
$\chi$ under the map
$$\fL(N)^{\omega^\rho}_x\to \fL(N)^{\omega^\rho}_x/[\fL(N)^{\omega^\rho}_x,\fL(N)^{\omega^\rho}_x]\simeq 
(\fL(\BG_a)^{\omega}_x)^I \to (\BG_a)^I\overset{\on{sum}}\longrightarrow \BG_a.$$

\medskip

We define $\Whit_{q,x}(G)$ to be the category of $(\fL(N)^{\omega^\rho}_x,\chi_N)$-equivariant objects in 
$\Shv_{\CG^G}(\Gr^{\omega^\rho}_{G,x})$, i.e.,
$$ \Whit_{q,x}(G):=\left(\Shv_{\CG^G}(\Gr^{\omega^\rho}_{G,x})\right)^{\fL(N)^{\omega^\rho}_x,\chi_N}.$$

\sssec{}  \label{sss:Nk}

Let us rewrite this definition in more detail. In particular, we will see that the forgetful functor
\begin{equation} \label{e:forget equivariance}
\Whit_{q,x}(G)\to \Shv_{\CG^G}(\Gr^{\omega^\rho}_{G,x})
\end{equation}
is fully faithful. 

\medskip

First, let us write $\fL(N)^{\omega^\rho}_x$ as a union of its group subschemes $N_k$, $k=1,2,...$.  By definition, we have 
\begin{equation} \label{e:bigger sugr}
\left(\Shv_{\CG^G}(\Gr^{\omega^\rho}_{G,x})\right)^{\fL(N)^{\omega^\rho}_x,\chi_N}\simeq \underset{k}{\on{lim}}\,
\left(\Shv_{\CG^G}(\Gr^{\omega^\rho}_{G,x})\right)^{N_k,\chi_N},
\end{equation} 
where the limit is taken with respect to the forgetful functors 
$$\left(\Shv_{\CG^G}(\Gr^{\omega^\rho}_{G,x})\right)^{N_{k'},\chi_N}\to 
\left(\Shv_{\CG^G}(\Gr^{\omega^\rho}_{G,x})\right)^{N_k,\chi_N}, \quad k'\geq k.$$

\medskip

For the fully faithfulness of \eqref{e:forget equivariance} it would suffice to see that each of the forgetful functors
\begin{equation} \label{e:forget equivariance k}
\left(\Shv_{\CG^G}(\Gr^{\omega^\rho}_{G,x})\right)^{N_k,\chi_N}\to \Shv_{\CG^G}(\Gr^{\omega^\rho}_{G,x})
\end{equation}
is fully faithful, and so the limit \eqref{e:bigger sugr} amounts to the intersection of the corresponding nested family of the subcategories 
$\left(\Shv_{\CG^G}(\Gr^{\omega^\rho}_{G,x})\right)^{N_k,\chi_N}$. 

\medskip

Thus, from now on we fix a particular index $k$. 

\sssec{}

Next, we write $\Gr^{\omega^\rho}_{G,x}$ as a union of an increasing family of its closed subschemes
$$\Gr^{\omega^\rho}_{G,x}\simeq \underset{j}\bigcup\, Y_j.$$

With no restriction of generality, we can assume that all $Y_j$ are $N_k$-invariant. We have
\begin{equation} \label{e:category as limit}
\Shv_{\CG^G}(\Gr^{\omega^\rho}_{G,x}) \simeq \underset{j}{\on{lim}}\, \Shv_{\CG^G}(Y_j),
\end{equation}
where the limit is taken with respect to the !-restriction functors
$$\Shv_{\CG^G}(Y_{j'})\to \Shv_{\CG^G}(Y_j), \quad j'\geq j.$$

\medskip

Thus, from \eqref{e:category as limit} we obtain
\begin{equation} \label{e:category as limit equiv}
\left(\Shv_{\CG^G}(\Gr^{\omega^\rho}_{G,x})\right)^{N_k,\chi_N}\simeq 
\underset{j}{\on{lim}}\, \left(\Shv_{\CG^G}(Y_j)\right)^{N_k,\chi_N}.
\end{equation}

\medskip

Thus, for the fully faithfulness of \eqref{e:forget equivariance k}, it would suffice to show that each of the functors 
\begin{equation} \label{e:forget equivariance k j}
\left(\Shv_{\CG^G}(Y_j)\right)^{N_k,\chi_N}\to \Shv_{\CG^G}(Y_j)
\end{equation} 
is fully faithful.

\medskip

Thus, from now on, the index $j$ will also be fixed. 

\sssec{}

The group indscheme $N_k$ can be written as a limit of finite-dimensional unipotent groups $N_{k,l}$, $l=1,2,...$.
With no restriction of generality, we can assume that the action of $N_k$ on $Y_j$ factors through $N_{k,l}$ for every $l$.

\medskip

By definition, we have
$$\left(\Shv_{\CG^G}(Y_j)\right)^{N_k,\chi_N}=\left(\Shv_{\CG^G}(Y_j)\right)^{N_{k,l},\chi_N}$$
for \emph{any} such $l$, where we notice that the forgetful functors
$$\left(\Shv_{\CG^G}(Y_j)\right)^{N_{k,l},\chi_N}\to \left(\Shv_{\CG^G}(Y_j)\right)^{N_{k,l'},\chi_N}, \quad l'\geq l$$
are equivalences, because
$$\on{ker}(N_{k,l'}\to N_{k,l})$$
are unipotent. Since the groups $N_{k,l}$ are themselves unipotent, the forgetful functors
\begin{equation} \label{e:forget equivariance k j l}
\left(\Shv_{\CG^G}(Y_j)\right)^{N_{k,l},\chi_N}\to \Shv_{\CG^G}(Y_j)
\end{equation} 
are fully faithful, and hence so is \eqref{e:forget equivariance k j}. 

\sssec{}

We note that the forgetful functors \eqref{e:forget equivariance k j l} admit right adjoints, denoted $\on{Av}_*^{N_{k,l},\chi_N}$. 
Denote the resulting forgetful functor to \eqref{e:forget equivariance k j} by $\on{Av}_*^{N_k,\chi_N}$. 

\medskip

These right adjoints make each of the diagrams 
$$
\CD
\Shv_{\CG^G}(Y_j)  @>{\on{Av}_*^{N_k,\chi_N}}>>  \left(\Shv_{\CG^G}(Y_j)\right)^{N_k,\chi_N} \\
@AAA   @AAA   \\
\Shv_{\CG^G}(Y_{j'})  @>{\on{Av}_*^{N_k,\chi_N}}>>  \left(\Shv_{\CG^G}(Y_{j'})\right)^{N_k,\chi_N} 
\endCD
$$
with $j'\geq j$ commutative, e.g., because the corresponding diagram of left adjoints 
$$
\CD
\Shv_{\CG^G}(Y_j)  @<<<  \left(\Shv_{\CG^G}(Y_j)\right)^{N_k,\chi_N} \\
@VVV   @VVV   \\
\Shv_{\CG^G}(Y_{j'})  @<<<  \left(\Shv_{\CG^G}(Y_{j'})\right)^{N_k,\chi_N} 
\endCD
$$
is commutative (here the vertical arrows are given by direct image with respect to the closed embedding $Y_j\hookrightarrow Y_{j'}$). 

\medskip

Thus, the above right adjoints combine to a right adjoint, also denoted $\on{Av}_*^{N_k,\chi_N}$ to \eqref{e:forget equivariance k}.

\sssec{}

Using the fully faithful embedding \eqref{e:forget equivariance k}, we can view $\on{Av}_*^{N_k,\chi_N}$ as an endo-functor of 
$$\Shv_{\CG^G}(\Gr^{\omega^\rho}_{G,x}).$$

Being a composition of a right adjoint followed by a left adjoint, it has a natural structure of a comonad; moreover the co-multiplication
map
$$\on{Av}_*^{N_k,\chi_N}\to \on{Av}_*^{N_k,\chi_N}\circ \on{Av}_*^{N_k,\chi_N}$$
is an isomorphism. 

\medskip

The subcategory 
$$\left(\Shv_{\CG^G}(\Gr^{\omega^\rho}_{G,x})\right)^{N_k,\chi_N}\subset \Shv_{\CG^G}(\Gr^{\omega^\rho}_{G,x})$$
consists of those objects $\CF$, for which the counit map
\begin{equation} \label{e:map to Av}
\on{Av}_*^{N_k,\chi_N}(\CF)\to \CF
\end{equation} 
is an isomorphism. The subcategory
\begin{equation} \label{e:forget equivariance all}
\Whit_{q,x}(G)\subset \Shv_{\CG^G}(\Gr^{\omega^\rho}_{G,x})
\end{equation} 
consists of those objects $\CF$, for which the maps \eqref{e:map to Av} are isomorphisms for all $k$. 

\sssec{}

The inclusion \eqref{e:forget equivariance all} admits a \emph{discontinuous} right adjoint given by
$$\on{Av}_*^{\fL(N)^{\omega^\rho}_x,\chi_N}:=\underset{k}{\on{lim}}\, \on{Av}_*^{N_k,\chi_N}.$$

\ssec{Structure of the metaplectic Whittaker category}

In this subsection we will study the basic structural properties of $\Whit_{q,x}(G)$, namely, its
stratification indexed by the elements of $\Lambda^+$, and the corresponding standard and 
costandard objects. 

\sssec{}

In addition to right adjoint given by $\on{Av}_*^{N_k,\chi_N}$, the functor \eqref{e:forget equivariance} admits a \emph{partially defined} left adjoint,
denoted $\on{Av}_!^{N_k,\chi_N}$, given by !-averaging. This partially defined adjoint is always defined in the context of $\ell$-adic sheaves. 
In the context of D-modules, it is defined on holonomic objects. 

\medskip

If $\CF\in  \Shv_{\CG^G}(\Gr^{\omega^\rho}_{G,x})$ is an object on which all $\on{Av}_!^{N_k,\chi_N}$ are defined, then
the partially left adjoint to the inclusion \eqref{e:forget equivariance all} is defined on $\CF$ and is given by
$$\on{Av}_!^{\fL(N)^{\omega^\rho}_x,\chi_N}(\CF):=\underset{k}{\on{colim}}\, \on{Av}_!^{N_k,\chi_N}(\CF).$$

\sssec{}

Pick a uniformizer $t$ at $x$ and for $\lambda\in \Lambda$ let $t^\lambda$ denote the corresponding point of $\Gr^{\omega^\rho}_{G,x}$.
Choose a trivialization of the fiber of $\CG^G$ at $t^\lambda$. Let $\delta_{t^\lambda,\Gr}$ denote the resulting point of
$\Shv_{\CG^G}(\Gr^{\omega^\rho}_{G,x})$.

\medskip

Denote 
$$W^{\lambda,!}:=\on{Av}_!^{\fL(N)^{\omega^\rho}_x,\chi_N}(\delta_{t^\lambda,\Gr})[-\langle \lambda,2\check\rho\rangle ]\in \Whit_{q,x}(G).$$ 

By construction, the objects $W^{\lambda,!}$ are compact in $\Whit_{q,x}(G)$. We will shortly see that these objects generate
$\Whit_{q,x}(G)$ (and they vanish unless $\lambda$ is dominant).

\sssec{}

For $\lambda\in \Lambda$, let $S^\lambda$ be the corresponding $\fL(N)^{\omega^\rho}_x$-orbit on $\Gr^{\omega^\rho}_{G,x}$, i.e.,
\begin{equation} \label{e:semiinf orbits}
S^\lambda=\fL(N)^{\omega^\rho}_x\cdot t^\lambda\cdot \fL^+(G)^{\omega^\rho}_x/\fL^+(G)^{\omega^\rho}_x,
\end{equation}
and let $\ol{S}{}^\lambda$
denote its closure. %see \secref{sss:NK orbits} for the description of these prestacks as subfunctors of $\Gr^{\omega^\rho}_{G,x}$. 
Denote by $\ol\bi{}_\lambda$ the closed embedding
$$\ol{S}{}^\lambda\to \Gr^{\omega^\rho}_{G,x},$$
and by $\bi_\lambda$ the locally closed embedding
$$S^\lambda\to \Gr^{\omega^\rho}_{G,x}.$$

\sssec{}  

Recall the context of \secref{sss:limits and colimits}.  In particular, if $Y$ is a prestack equipped with a gerbe $\CG$ and written as 
$$Y=\underset{\alpha}{\on{lim}}\, Y_\alpha,\quad (\alpha\to \alpha')\mapsto Y_{\alpha}\overset{f_{\alpha,\alpha'}}\longrightarrow Y_{\alpha'}$$
so that 
$$\Shv_\CG(Y)\simeq \underset{\alpha}{\on{lim}}\, \Shv_\CG(Y_\alpha),$$
where the transition functors are $f_{\alpha,\alpha'}^!$, and if the functors $(f_{\alpha,\alpha'})_!$ are well-defined, then we also have
$$\Shv_\CG(Y)\simeq \underset{\alpha}{\on{colim}}\, \Shv_\CG(Y_\alpha),$$
where the transition functors are now $(f_{\alpha,\alpha'})_!$. 

\sssec{}

We have
$$\Gr^{\omega^\rho}_{G,x}\simeq \underset{\lambda\in \Lambda}{\on{colim}}\, \ol{S}{}^\lambda,$$
where $\Lambda$ is regarded as a poset with the standard order relation. 

\medskip

Thus, we obtain
$$\Shv_{\CG^G}(\Gr^{\omega^\rho}_{G,x})\simeq \underset{\lambda\in \Lambda}{\on{lim}}\, \Shv_{\CG^G}(\ol{S}{}^\lambda)$$
with respect to the pullback functors, \emph{and also}
$$\Shv_{\CG^G}(\Gr^{\omega^\rho}_{G,x})\simeq \underset{\lambda\in \Lambda}{\on{colim}}\, \Shv_{\CG^G}(\ol{S}{}^\lambda),$$
with respect to the pushforward functors.

\medskip

For each pair of indices $\lambda\leq \lambda'$, we have the following commutative diagrams
$$
\CD
\Shv_{\CG^G}(\ol{S}{}^{\lambda'}) @<<<  (\Shv_{\CG^G}(\ol{S}{}^{\lambda'}))^{\fL(N)^{\omega^\rho}_x,\chi_N}   \\
@VVV   @VVV  \\
\Shv_{\CG^G}(\ol{S}{}^\lambda) @<<<  (\Shv_{\CG^G}(\ol{S}{}^\lambda))^{\fL(N)^{\omega^\rho}_x,\chi_N} 
\endCD
$$
(where the left vertical arrows are given by pullback)
and 
$$
\CD
\Shv_{\CG^G}(\ol{S}{}^{\lambda'}) @<<<  (\Shv_{\CG^G}(\ol{S}{}^{\lambda'}))^{\fL(N)^{\omega^\rho}_x,\chi_N}   \\
@AAA   @AAA  \\
\Shv_{\CG^G}(\ol{S}{}^\lambda) @<<<  (\Shv_{\CG^G}(\ol{S}{}^\lambda))^{\fL(N)^{\omega^\rho}_x,\chi_N} 
\endCD
$$
(where the left vertical arrows are given by pushforward).

\medskip

From here we obtain the presentations
$$\Whit_{q,x}(G)\simeq \underset{\lambda\in \Lambda}{\on{lim}}\,    (\Shv_{\CG^G}(\ol{S}{}^\lambda))^{\fL(N)^{\omega^\rho}_x,\chi_N},$$
with respect to pullbacks,
and  
$$\Whit_{q,x}(G)\simeq \underset{\lambda\in \Lambda}{\on{colim}}\,    (\Shv_{\CG^G}(\ol{S}{}^\lambda))^{\fL(N)^{\omega^\rho}_x,\chi_N},$$
with respect to pushforwards.

\medskip

Moreover, since each of the pushforward functors
$$(\Shv_{\CG^G}(\ol{S}{}^\lambda))^{\fL(N)^{\omega^\rho}_x,\chi_N} \to (\Shv_{\CG^G}(\ol{S}{}^{\lambda'}))^{\fL(N)^{\omega^\rho}_x,\chi_N}$$
is fully faithful, so are the resulting pushforward functors
$$(\ol\bi{}_\lambda)_!: (\Shv_{\CG^G}(\ol{S}{}^\lambda))^{\fL(N)^{\omega^\rho}_x,\chi_N} \to \Whit_{q,x}(G).$$

We denote the essential image of the latter functor by $\Whit_{q,x}(G)_{\leq \lambda}$. By construction, an object of 
$\Whit_{q,x}(G)$ belongs to $\Whit_{q,x}(G)_{\leq \lambda}$ if it is supported on
$\ol{S}{}^\lambda$. We have:
$$\lambda\leq \lambda'\, \Rightarrow\, \Whit_{q,x}(G)_{\leq \lambda}\subset \Whit_{q,x}(G)_{\leq \lambda'},$$
and 
$$W^{\lambda,!}\in \Whit_{q,x}(G)_{\leq \lambda}.$$  

\sssec{}

Let $\bj_\lambda$ denote the open embedding 
$$S^\lambda \hookrightarrow \ol{S}{}^\lambda.$$

\medskip

The adjoint pair 
\begin{equation} \label{e:open adj Gr}
(\bj_\lambda)^*:\Shv_{\CG^G}(\ol{S}{}^\lambda) \rightleftarrows
\Shv_{\CG^G}(S^\lambda):(\bj_\lambda)_*
\end{equation} 
gives rise to an adjunction
\begin{multline} \label{e:open adj Whit}
(\bj_\lambda)^*:\Whit_{q,x}(G)_{\leq\lambda}=\\
=(\Shv_{\CG^G}(\ol{S}{}^\lambda))^{\fL(N)^{\omega^\rho}_x,\chi_N} \rightleftarrows
(\Shv_{\CG^G}(S^\lambda))^{\fL(N)^{\omega^\rho}_x,\chi_N}=:
\Whit_{q,x}(G)_{=\lambda}:(\bj_\lambda)_*
\end{multline} 
such that make both circuits in the diagram
$$
\xymatrix{
\Whit_{q,x}(G)_{\leq\lambda} \ar[rr]<2pt>^{\bj_\lambda^*} \ar[d] &&
\Whit_{q,x}(G)_{=\lambda}
\ar[ll]<2pt>^{(\bj_\lambda)_*} \ar[d] \\
\Shv_{\CG^G}(\ol{S}{}^\lambda)
\ar[rr]<2pt>^{\bj_\lambda^*} &&
\Shv_{\CG^G}(S^\lambda)
\ar[ll]<2pt>^{(\bj_\lambda)_*}}
$$
commute.

\medskip

In particular, since the co-unit of the adjunction
$$\bj_\lambda^*\circ (\bj_\lambda)_*\to \on{Id}$$
is an isomorphism for \eqref{e:open adj Gr}, it is such also for \eqref{e:open adj Whit}. In particular, we obtain that the functor
$$(\bj_\lambda)_*:\Whit_{q,x}(G)_{=\lambda}\to \Whit_{q,x}(G)_{\leq \lambda}$$
is fully faithful. 

\medskip

The essential image of $\Whit_{q,x}(G)_{=\lambda}$ in $\Whit_{q,x}(G)_{\leq \lambda}$ is the \emph{right orthogonal} of the full subcategory 
$$\Whit_{q,x}(G)_{<\lambda}\subset \Whit_{q,x}(G)_{\leq \lambda}$$
generated
by the essential images of $\Whit_{q,x}(G)_{\leq \lambda'}$ with $\lambda'<\lambda$. 

\sssec{}

Set
$$\overset{\circ}W{}^\lambda:=\on{Av}_!^{\fL(N)^{\omega^\rho}_x,\chi_N}(\delta_{t^\lambda,\Gr})[-\langle \lambda,2\check\rho\rangle ]\in \Whit_{q,x}(G)_{=\lambda}.$$ 

Clearly,
$$W^{\lambda,!}\simeq (\bi_\lambda)_!(\overset{\circ}W{}^\lambda)\simeq (\ol\bi_\lambda)_*\circ (\bj_\lambda)_!(\overset{\circ}W{}^\lambda),$$
where $(\bi_\lambda)_!$ (resp., $(\bj_\lambda)_!$) denotes the (partially defined) left adjoint of $\bi_\lambda^!$ (resp., $\bj_\lambda^!$). 

\medskip

We denote by 
$$W^{\lambda,*}\in \Whit_{q,x}(G)_{\leq \lambda}$$ 
the object
$(\bi_\lambda)_*(\overset{\circ}W{}^\lambda)$. 

\medskip

Almost by definition, we have:
\begin{equation} \label{e:standards Whit}
\Maps_{\Whit_{q,x}(G)}(W^{\lambda,!},W^{\lambda',*})=
\begin{cases}
&\sfe \text{ if } \lambda= \lambda' \\
&0 \text{ otherwise}.\\
\end{cases}
\end{equation}

\sssec{}

We claim:

\begin{prop} \label{p:Whittaker strata}  \hfill

\smallskip

\noindent{\em(a)} The category $\Whit_{q,x}(G)_{=\lambda}$ is zero unless $\lambda$ is dominant.

\smallskip

\noindent{\em(b)} For $\lambda$ dominant, the category $\Whit_{q,x}(G)_{=\lambda}$ is non-canonically equivalent to 
$\Vect$, with the generator given by $\overset{\circ}W{}^\lambda$. 
\end{prop}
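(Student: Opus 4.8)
The plan is to compute the category $\Whit_{q,x}(G)_{=\lambda} = \bigl(\Shv_{\CG^G}(S^\lambda)\bigr)^{\fL(N)^{\omega^\rho}_x,\chi_N}$ directly, using the explicit description of the orbit $S^\lambda$ as a homogeneous space. The key geometric input is that $S^\lambda \simeq \fL(N)^{\omega^\rho}_x / (\fL(N)^{\omega^\rho}_x \cap \mathrm{Ad}_{t^\lambda}\fL^+(G)^{\omega^\rho}_x)$, so that sheaves on $S^\lambda$ are the same as sheaves on the group $\fL(N)^{\omega^\rho}_x$ equivariant for the stabilizer subgroup $\fL(N)^{\omega^\rho}_x \cap \mathrm{Ad}_{t^\lambda}\fL^+(N)^{\omega^\rho}_x$ acting on the right. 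Imposing in addition $(\fL(N)^{\omega^\rho}_x,\chi_N)$-equivariance on the left turns this into a question about whether the character $\chi_N$ is trivial on the stabilizer subgroup. Concretely, for $\CF$ to be both $(\fL(N)^{\omega^\rho}_x,\chi_N)$-equivariant and supported on a single orbit, a standard argument (the ``clean extension'' / character-compatibility argument for Whittaker orbits) shows $\Whit_{q,x}(G)_{=\lambda}$ is either $0$ or equivalent to $\Vect$, with the dichotomy governed by whether $\chi_N$ restricted to the stabilizer of $t^\lambda$ is trivial.

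First I would set up the general lemma: if a group indscheme $H$ (here a prounipotent-type group, so that gerbes are trivial) acts transitively on $Y = H/H_0$, and $\psi$ is a character sheaf on $H$, then $\Shv(Y)^{H,\psi}$ is $\Vect$ if $\psi|_{H_0}$ is trivial and $0$ otherwise; in the former case the generator is the $\psi$-twisted ``constant'' sheaf (up to shift), obtained as $\on{Av}_!^{H,\psi}(\delta_{eH_0})$. This is where $\overset{\circ}W{}^\lambda = \on{Av}_!^{\fL(N)^{\omega^\rho}_x,\chi_N}(\delta_{t^\lambda,\Gr})[-\langle\lambda,2\check\rho\rangle]$ comes in as the asserted generator. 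Then I would compute $\chi_N$ on the stabilizer $\on{Stab}_\lambda := \fL(N)^{\omega^\rho}_x \cap \mathrm{Ad}_{t^\lambda}\fL^+(N)^{\omega^\rho}_x$. Decomposing $\fL(N)^{\omega^\rho}_x$ by the root filtration, the $\alpha$-th root subgroup $\fL(\BG_a)^{\omega_\alpha}_x$ meets $\mathrm{Ad}_{t^\lambda}\fL^+(\BG_a)^{\omega_\alpha}_x$ in the subspace $t^{\langle\lambda,\alpha\rangle}\cdot k\qqart$-type differentials (more precisely, differentials vanishing to order $\geq \langle\lambda,\check\alpha\rangle$ — I'd track the precise convention with the $\omega^\rho$-twist). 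The character $\chi_N$ only sees the residue along the \emph{simple} root subgroups. Hence $\chi_N|_{\on{Stab}_\lambda}$ is nontrivial precisely when for some simple coroot $\alpha_i$ the stabilizer contains a differential with nonzero residue, which happens iff $\langle\lambda,\check\alpha_i\rangle < 0$ for some $i$, i.e. iff $\lambda$ is \emph{not} dominant. This gives part (a): $\Whit_{q,x}(G)_{=\lambda} = 0$ unless $\lambda \in \Lambda^+$.

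For part (b), when $\lambda$ is dominant, $\chi_N|_{\on{Stab}_\lambda}$ is trivial, and the general lemma gives $\Whit_{q,x}(G)_{=\lambda} \simeq \Vect$; the equivalence is non-canonical because it depends on the choice of trivialization of the fiber of $\CG^G$ at $t^\lambda$ (already fixed in the definition of $\delta_{t^\lambda,\Gr}$) and on the chosen uniformizer $t$. The generator is $\overset{\circ}W{}^\lambda$ by construction of the $!$-averaging functor: $\on{Av}_!^{\fL(N)^{\omega^\rho}_x,\chi_N}(\delta_{t^\lambda,\Gr})$ is the (twisted) clean extension of the character sheaf along the orbit map, which on a homogeneous space with trivial character-restriction is simply the rank-one local system; the shift $[-\langle\lambda,2\check\rho\rangle]$ normalizes it to lie in the heart. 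I would note that $\on{Av}_!$ is defined here because we are averaging a skyscraper along a prounipotent group action, which is the standard situation where the partial left adjoint exists (or, in the $\ell$-adic context, always).

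The main obstacle I expect is the bookkeeping of the $\omega^\rho$-twist and the gerbe $\CG^G$ in the stabilizer computation: one must check that the twisting by $\omega^\rho$ does not alter the residue pairing that detects dominance (it shifts orders of vanishing uniformly by $\langle\rho,\cdot\rangle$-type amounts, which is precisely why the $[-\langle\lambda,2\check\rho\rangle]$ shift is the natural normalization), and that the gerbe $\CG^G$ restricted to the orbit $S^\lambda$ is trivial (which follows since $S^\lambda$ is an $\fL(N)^{\omega^\rho}_x$-orbit through $t^\lambda$ and the gerbe is $\fL(N)^{\omega^\rho}_x$-equivariant with a chosen trivialization of its fiber at $t^\lambda$, cf. the discussion preceding the definition of $\Whit_{q,x}(G)$). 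Once these compatibilities are pinned down, the argument reduces to the classical computation of Whittaker coinvariants on a single orbit, which is routine. I would also remark that this proposition is the metaplectic analogue of the corresponding statement in \cite{Ga9} for the non-twisted case, and the proof structure is parallel.
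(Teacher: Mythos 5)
Your proposal is correct and follows essentially the same route as the paper: identify $S^\lambda$ as a homogeneous space for $\fL(N)^{\omega^\rho}_x$ and reduce the computation of $\Whit_{q,x}(G)_{=\lambda}$ to whether $\chi_N$ is trivial on the stabilizer of $t^\lambda$. The paper packages this as a conservativity-plus-monad argument: it exhibits the !-fiber functor $\Whit_{q,x}(G)_{=\lambda}\to\Vect$ at $t^\lambda$, proves conservativity by exhausting $S^\lambda$ by the $N_k$-orbits, identifies the left adjoint with $\on{Av}_!^{\fL(N)^{\omega^\rho}_x,\chi_N}(\delta_{t^\lambda,\Gr})$, and computes the resulting monad on $\Vect$ to be the identity or zero according to the stabilizer criterion; your ``general lemma'' about $\Shv(H/H_0)^{H,\psi}$ is exactly this argument in disguise. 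The only genuine difference is that the paper states the equivalence ``$\lambda$ dominant $\Leftrightarrow$ $\chi_N|_{\on{Stab}}$ trivial'' without proof, whereas you spell out the root-subgroup / residue computation behind it — a useful amplification, not a different method.
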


From here we easily obtain: 

\begin{cor}  \label{c:Whittaker strata}   \hfill

\smallskip

\noindent{\em(a)} 
The objects $W^{\lambda,!}$ and $W^{\lambda,*}$ with $\lambda$ dominant generate $\Whit_{q,x}(G)$. 

\smallskip

\noindent{\em(b)} 
The canonical map $W^{\lambda,!}\to W^{\lambda,*}$ is an isomorphism for any $\lambda$ that is minimal in $\Lambda^+$ with respect 
to the standard order relation (in particular, for $\lambda=0$). 

\smallskip

\noindent{\em(c)} The category $\Whit_{q,x}(G)_{\leq \lambda}$ is zero unless $\lambda$ is dominant.

\end{cor}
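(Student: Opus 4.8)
The plan is to reduce everything to \propref{p:Whittaker strata}, and to prove the latter by analyzing a single $\fL(N)^{\omega^\rho}_x$-orbit. Write $\bG:=\fL(N)^{\omega^\rho}_x$ and, for $\lambda\in\Lambda$, let $H_\lambda:=\bG\cap \on{Ad}_{t^\lambda}(\fL^+(G)^{\omega^\rho}_x)$ be the stabilizer of $t^\lambda\in S^\lambda$, so $S^\lambda\simeq \bG/H_\lambda$. To prove the Proposition I would first note that, $\bG$ being ind-pro-unipotent, the gerbe $\CG^G$ is canonically ($\bG$-equivariantly) trivial on $S^\lambda$, whence $\Whit_{q,x}(G)_{=\lambda}\simeq(\Shv(S^\lambda))^{\bG,\chi_N}$; expanding the equivariance through the finite-dimensional quotients $N_{k,l}$ of $\bG$ as in \secref{ss:def Whit}, this is the category of $(H_\lambda,\chi_N|_{H_\lambda})$-equivariant objects on a point, which is $\Vect$ if $\chi_N|_{H_\lambda}$ is trivial and is $0$ otherwise. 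The remaining point is the computation of $\chi_N|_{H_\lambda}$: since $\chi_N$ factors through the abelianization $\prod_{i\in I}\fL(\on{Tot}(\omega))_x$ of $\bG$ and is there the sum of the residue maps (using $\langle\alpha_i,2\rho\rangle=2$), and since $\on{Ad}_{t^\lambda}(x_{\alpha_i}(f))=x_{\alpha_i}(t^{\langle\alpha_i,\lambda\rangle}f)$, the image of $H_\lambda$ in the $i$-th factor is the group of sections of $\omega$ of order $\ge\langle\alpha_i,\lambda\rangle$ at $x$; its residue vanishes identically iff $\langle\alpha_i,\lambda\rangle\ge 0$ and the residue map is nontrivial on it once $\langle\alpha_i,\lambda\rangle\le -1$. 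Hence $\chi_N|_{H_\lambda}$ is trivial iff $\lambda$ is dominant, which is (a), and in that case $\sfe\in\Vect$ goes over to the shifted clean character sheaf on $S^\lambda$, i.e. $\overset{\circ}W{}^\lambda$, which is (b).

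Granting the Proposition, part (a) of the Corollary follows from the presentation $\Whit_{q,x}(G)\simeq\underset{\lambda}{\on{colim}}\,\Whit_{q,x}(G)_{\le\lambda}$ together with the recollement over the strata $S^\mu\subseteq\ol{S}{}^\lambda$: writing $\ol{S}{}^\lambda$ as an increasing union of finite-dimensional $\bG$-stable closed subschemes $Z_m$, so that $\Whit_{q,x}(G)_{\le\lambda}\simeq\underset m{\on{colim}}\,(\Shv_{\CG^G}(Z_m))^{\bG,\chi_N}$, each $(\Shv_{\CG^G}(Z_m))^{\bG,\chi_N}$ has finitely many strata, and ordinary induction over them — using that non-dominant strata contribute $0$ and that a dominant stratum contributes the line on $\overset{\circ}W{}^\mu$ (so that $W^{\mu,!}=(\bi_\mu)_!(\overset{\circ}W{}^\mu)$, defined in the $\ell$-adic/holonomic context) — shows it is generated by the relevant $W^{\mu,!}$; passing to the colimits gives (a), and the dual assertion for the $W^{\mu,*}$ is identical. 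Part (c) is read off the same argument: $\Whit_{q,x}(G)_{\le\lambda}$ is generated by the $W^{\mu,!}$ for dominant $\mu$ with $\lambda-\mu\in\Lambda^{\on{pos}}$, so it vanishes whenever $\ol{S}{}^\lambda$ carries no dominant stratum, in particular when $\lambda\notin\Lambda^+$.

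For part (b): if $\lambda$ is minimal in $\Lambda^+$ there is no dominant $\mu<\lambda$, so by the argument above $\Whit_{q,x}(G)_{<\lambda}=0$; then in the recollement relating $\Whit_{q,x}(G)_{<\lambda}$, $\Whit_{q,x}(G)_{\le\lambda}$ and $\Whit_{q,x}(G)_{=\lambda}$ the functor $\bj_\lambda^*$ is an equivalence, the extension is clean, $(\bj_\lambda)_!=(\bj_\lambda)_*$, and therefore
$$W^{\lambda,!}=(\ol\bi{}_\lambda)_*\circ(\bj_\lambda)_!(\overset{\circ}W{}^\lambda)=(\ol\bi{}_\lambda)_*\circ(\bj_\lambda)_*(\overset{\circ}W{}^\lambda)=(\bi_\lambda)_*(\overset{\circ}W{}^\lambda)=W^{\lambda,*},$$
the canonical map $W^{\lambda,!}\to W^{\lambda,*}$ being this isomorphism. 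Finally $\lambda=0$ is minimal in $\Lambda^+$: if $\mu\in\Lambda^+$ and $-\mu=\sum_i n_i\alpha_i$ with $n_i\ge 0$, then pairing with the dominant weight $\rho$ gives $0\le\langle\rho,\mu\rangle=-\sum_i n_i\le 0$, so $\mu=0$.

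The genuinely non-formal step is the character computation inside the Proposition — showing $\chi_N|_{H_\lambda}$ is trivial exactly for dominant $\lambda$, which requires correctly tracking $H_\lambda$ and its image in the abelianization of $\bG$ across the $\omega^\rho$-twist. The rest is recollement bookkeeping; the only care needed is the reduction of the ind-stratified $\ol{S}{}^\lambda$ to its finite-dimensional $\bG$-stable pieces, its compatibility with the equivariance condition, and the availability of the partially-defined $!$-extensions defining $W^{\mu,!}$.
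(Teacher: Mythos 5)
Your derivation of (a) and (b) from \propref{p:Whittaker strata} is fine, and your proof of the Proposition itself is the same computation the paper runs: you pass to finite-dimensional quotients, identify $\Whit_{q,x}(G)_{=\lambda}$ with $(H_\lambda,\chi_N|_{H_\lambda})$-equivariant sheaves on a point, and compute $\chi_N$ on the image of the stabilizer $H_\lambda$ of $t^\lambda$ in the abelianization of $\fL(N)^{\omega^\rho}_x$; the paper packages the same thing via conservativity of the !-fiber functor at $t^\lambda$ and analysis of the resulting monad on $\Vect$. The verification that $0$ is minimal in $\Lambda^+$ by pairing with $\check\rho$ is also correct.

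The step in (c) has a gap. You deduce $\Whit_{q,x}(G)_{\leq\lambda}=0$ from ``$\ol{S}{}^\lambda$ carries no dominant stratum,'' and then assert that this happens ``in particular when $\lambda\notin\Lambda^+$.'' That implication is false: a non-dominant $\lambda$ can still satisfy $\lambda\geq\mu$ for some dominant $\mu$. For instance take $G=SL_3$ and $\lambda=\alpha_1$; then $\langle\alpha_1,\check\alpha_2\rangle=-1$, so $\alpha_1\notin\Lambda^+$, yet $\alpha_1\geq 0$ and $0\in\Lambda^+$, so $S^0\subset\ol{S}{}^{\alpha_1}$ and $W^{0,!}$ is a non-zero object of $\Whit_{q,x}(G)_{\leq\alpha_1}$. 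What your recollement argument actually establishes is that $\Whit_{q,x}(G)_{\leq\lambda}$ is generated by the $W^{\mu,!}$ with $\mu\in\Lambda^+$, $\mu\leq\lambda$, and hence vanishes precisely when there is no dominant $\mu\leq\lambda$ --- a weaker (and, in rank $\geq 2$, strictly weaker) condition than ``$\lambda\notin\Lambda^+$.'' The paper's statement of (c), read literally, has the same imprecision; once you drop the false ``in particular,'' your argument proves the corrected assertion.
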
 

\begin{proof}[Proof of \propref{p:Whittaker strata}]

Consider the functor 
\begin{equation} \label{e:eval Whit at pt}
\Whit_{q,x}(G)_{=\lambda}\to \Vect,
\end{equation} 
equal to the composition of the forgetful functor
$$\Whit_{q,x}(G)_{=\lambda}\to \Shv_{\CG^G}(S^\lambda),$$
followed by the functor of taking the !-fiber at $t^\lambda$. 

\medskip

We claim that the functor \eqref{e:eval Whit at pt} is conservative. Indeed, if for some $\CF\in 
(\Shv_{\CG^G}(S^\lambda))^{\fL(N)^{\omega^\rho}_x,\chi_N}$ its fiber at $t^\lambda$ is zero,
then the restriction of $\CF$ to the $N_k$-orbit of $t^\lambda$ is zero for all $k$. However,
$$S^\lambda=\underset{k}\cup\, N_k\cdot t^\lambda,$$
and hence $\CF=0$. 

\medskip

The functor \eqref{e:eval Whit at pt} admits a left adjoint
that sends the generator $\sfe\in \Vect$ to
$$\on{Av}_!^{\fL(N)^{\omega^\rho}_x,\chi_N}(\delta_{t^\lambda,\Gr})\simeq \underset{k}{\on{colim}}\, \on{Av}_!^{N_k,\chi_N}(\delta_{t^\lambda,\Gr}).$$

\medskip

We claim that the resulting monad on $\Vect$ is the identity if $\lambda$ is dominant and zero otherwise.

\medskip

Indeed, note that $\on{Av}_!^{N_k,\chi_N}(\delta_{t^\lambda,\Gr})$ identifies with $\chi_N|_{N_k\cdot t^\lambda}$ if $\chi_N$, restricted
to $\on{Stab}_{N_k}(t^\lambda)$, is trivial, and zero otherwise. 
Hence, its !-fiber at $t^\lambda$ of $\on{Av}_!^{N_k,\chi_N}(\delta_{t^\lambda,\Gr})$ is $\sfe$ if $\chi_N$ restricted
to $\on{Stab}_{N_k}(t^\lambda)$ is trivial, and zero otherwise. Now, the dominance condition on $\lambda$ is equivalent to the fact that 
$\chi_N$ restricted to $\on{Stab}_{\fL(N)^{\omega^\rho}_x}(t^\lambda)$ is trivial.

\end{proof}

\ssec{The t-structure}  \label{ss:t on Whit}

In this subsection we will show how to endow $\Whit_{q,x}(G)$ with a t-structure. We note, however, that
this t-structure ``has nothing to do" with the t-structure on the ambient category $\Shv_{\CG^G}(\Gr^{\omega^\rho}_{G,x})$.

\sssec{}

We introduce a t-structure on $\Whit_{q,x}(G)$ by declaring that an object $\CF$ is coconnective if
$$\Hom_{\Whit_{q,x}(G)}(W^{\lambda,!}[k],\CF)=0$$ for all $\lambda$ and $k>0$. 

\medskip

In \secref{sss:proof t on Whit} we will prove: 

\begin{prop} \label{p:t on Whit}  \hfill 

\smallskip

\noindent{\em(a)} The subcategories $\Whit_{q,x}(G)_{\leq \lambda}\subset \Whit_{q,x}(G)$ are compatible with the t-structure. 

\smallskip

\noindent{\em(b)} The objects $W^{\lambda,!}$ and $W^{\lambda',*}$ belong to the heart of the t-structure and 
are of finite length.

\end{prop}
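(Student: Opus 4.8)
The plan is to reduce the statement to a stratum-by-stratum analysis using the presentation $\Whit_{q,x}(G)\simeq \underset{\lambda\in\Lambda}{\on{colim}}\,\Whit_{q,x}(G)_{\le\lambda}$ established above, together with \propref{p:Whittaker strata}. For part (a), since $\Whit_{q,x}(G)_{\le\lambda}=(\Shv_{\CG^G}(\ol S{}^\lambda))^{\fL(N)^{\omega^\rho}_x,\chi_N}$ is the essential image of a fully faithful pushforward that admits a (partially defined) left adjoint, it suffices to check that the t-structure is compatible with this filtration. The most natural way is to characterize coconnectivity: an object $\CF$ is coconnective iff for every $\lambda$ the object $\bj_\lambda^*\circ(\ol\bi{}_\lambda)^!(\CF)\in\Whit_{q,x}(G)_{=\lambda}\simeq\Vect$ lies in degrees $\ge 0$ (using that the $W^{\lambda,!}$ generate and $\CHom(W^{\lambda,!},\CF)$ computes exactly this stalk-type invariant, as in \eqref{e:standards Whit}). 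Dually, connectivity is generation under colimits by the $W^{\lambda,!}[k]$ with $k\ge 0$. Then $\Whit_{q,x}(G)_{\le\lambda}$ is the subcategory supported on $\ol S{}^\lambda$, and one shows this is a recollement situation: the triple $(\Whit_{q,x}(G)_{<\lambda},\Whit_{q,x}(G)_{\le\lambda},\Whit_{q,x}(G)_{=\lambda})$ with the $(\bj_\lambda)_!$, $\bj_\lambda^*$, $(\bj_\lambda)_*$ functors gives a gluing of t-structures, where on each stratum $\Whit_{q,x}(G)_{=\lambda}\simeq\Vect$ carries its standard t-structure. Compatibility of the t-structure with the $\Whit_{q,x}(G)_{\le\lambda}$ then follows formally from the gluing construction.

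For part (b), I would argue that $W^{\lambda,!}=(\bi_\lambda)_!(\overset{\circ}W{}^\lambda)$ with $\overset{\circ}W{}^\lambda$ the generator of $\Whit_{q,x}(G)_{=\lambda}\simeq\Vect$ placed in cohomological degree $0$; here one uses the shift $[-\langle\lambda,2\check\rho\rangle]$ precisely so that $\overset{\circ}W{}^\lambda$ sits in degree $0$ in the stratumwise t-structure. Since $(\bj_\lambda)_!$ (the $!$-extension to $\ol S{}^\lambda$, which exists on $\overset{\circ}W{}^\lambda$ because $\bj_\lambda$ is affine — here the affineness of $\bj_\lambda:S^\lambda\hookrightarrow\ol S{}^\lambda$ is what guarantees right-exactness of $(\bj_\lambda)_!$) and the fully faithful $(\ol\bi{}_\lambda)_*$ are both t-exact on the relevant objects, $W^{\lambda,!}$ lands in the heart; symmetrically $W^{\lambda,*}=(\bi_\lambda)_*(\overset{\circ}W{}^\lambda)$ is in the heart using left-exactness. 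Finiteness of length is then a finite-stratification argument: $\ol S{}^\lambda$ meets only finitely many strata $S^{\lambda'}$ with $\lambda'\le\lambda$ dominant (a standard fact about the closure relations on semi-infinite orbits: $\ol S{}^\lambda=\bigsqcup_{\lambda'\le\lambda}S^{\lambda'}$ with $\lambda'$ ranging over a finite set of dominant coweights), and on each stratum the restriction is a shift of the rank-one object $\overset{\circ}W{}^{\lambda'}$. Thus both $W^{\lambda,!}$ and $W^{\lambda,*}$ admit a finite filtration whose subquotients are among the (finitely many) $\overset{\circ}W{}^{\lambda'}$-type building blocks, hence have finite length in the heart.

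The main obstacle I anticipate is verifying cleanly that the gluing of t-structures actually goes through — i.e.\ that the partially-defined functors $(\bj_\lambda)_!$ and $(\ol\bi{}_\lambda)^*$ are defined on enough objects and behave t-exactly. In the D-module setting these are only defined on holonomic objects, so one must either restrict attention to the compactly generated subcategory and argue that the t-structure is determined there, or invoke that $\chi_N$-equivariant objects on the finite-dimensional approximations $Y_j$ are automatically holonomic and the needed averaging functors are defined. The key technical input is that $\on{Av}_!^{N_k,\chi_N}(\delta_{t^\lambda,\Gr})$ is (a shift of) the clean extension of $\chi_N|_{S^\lambda}$ when $\lambda$ is dominant — this is exactly the content extracted in the proof of \propref{p:Whittaker strata}, showing the monad on $\Vect$ is the identity — and one leverages this cleanness to conclude that $W^{\lambda,!}\simeq W^{\lambda,*}$ up to the recollement glue, which forces both into the heart. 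The affineness of $\bj_\lambda$ does the rest of the t-exactness bookkeeping, exactly as it did in the proof of \propref{p:properties of t fact} in the configuration-space setting.

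I should also note that the characterization of the t-structure via $\oblv_{\on{Fact}}$-type conservativity used earlier does not apply here directly; instead the correct analogue is that $\CF\in\Whit_{q,x}(G)$ is connective iff it is a colimit of $W^{\lambda,!}[k]$, $k\ge 0$, and coconnective iff $\Maps(W^{\lambda,!}[k],\CF)=0$ for $k>0$ — and one checks these two subcategories are orthogonal and generate, which is precisely what makes \propref{p:t on Whit} well-posed. Once the recollement is in place, both parts (a) and (b) are essentially formal consequences, so the real work is concentrated entirely in setting up the gluing and the cleanness statement on each stratum.
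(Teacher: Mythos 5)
Your proposal takes a genuinely different route from the paper, and it has a real gap.

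The paper does \emph{not} prove \propref{p:t on Whit} by a direct stratumwise recollement argument on $\Gr^{\omega^\rho}_{G,x}$. Instead, as spelled out in \secref{sss:proof t on Whit}, the proposition is a corollary of \thmref{t:local to global}: the functor $\pi_x^![d_g]:\Whit_{q,\on{glob}}(G)\to\Whit_{q,x}(G)$ is shown to be a t-exact equivalence sending standards (resp.\ costandards) to standards (resp.\ costandards), and the t-structure properties --- compatibility with the stratification, finite length of $W^{\lambda,!}_{\on{glob}}$ and $W^{\lambda,*}_{\on{glob}}$ --- are established on the \emph{global} side in \secref{sss:irred global}, using \lemref{l:properties global} and \corref{c:properties global}. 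There, the affineness of the stratum embeddings $\bj_\lambda$ in $\BunNbox$ is imported from \cite[Prop.~3.3.1]{FGV}, and finite length comes from the fact that every object of $\Whit_{q,\on{glob}}(G)_{\le\lambda}$ is a clean extension of its restriction to a quasi-compact open substack. All the hard geometry happens on a finite-type algebraic stack, where these statements are routine, and then gets transported to the local side by the equivalence.

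The gap in your local argument is the affineness of $\bj_\lambda:S^\lambda\hookrightarrow\ol S{}^\lambda$ inside $\Gr^{\omega^\rho}_{G,x}$. You invoke this to conclude that $(\bj_\lambda)_!$ and $(\bj_\lambda)_*$ are t-exact, but this is an assertion about a locally closed embedding of infinite-dimensional ind-schemes that is nowhere established in the paper (and is precisely the kind of statement the global strategy is designed to sidestep). The affineness used in \propref{p:properties of t fact}, which you cite as a model, is for the finite-type scheme $\Conf_{\leq\mu\cdot x}$; the analogue for the semi-infinite orbits does not follow formally. Your last paragraph also conflates two different cleanness phenomena: the fact that $\on{Av}_!^{N_k,\chi_N}(\delta_{t^\lambda,\Gr})\simeq\on{Av}_*^{N_k,\chi_N}(\delta_{t^\lambda,\Gr})$ (cleanness \emph{along $N_k$-orbits within $S^\lambda$}, which is what makes $\overset{\circ}W{}^\lambda$ well-defined and is extracted in \propref{p:Whittaker strata}) has nothing to do with the behavior of $(\bj_\lambda)_!$ versus $(\bj_\lambda)_*$ across the boundary $\ol S{}^\lambda\setminus S^\lambda$; in particular $W^{\lambda,!}$ and $W^{\lambda,*}$ are genuinely different objects, not ``the same up to the recollement glue.'' The correct ``clean extension'' input is \corref{c:properties global}, a statement about $\BunNbox$, not about $N_k$-orbits. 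If you want to run a purely local argument, you would need to establish the ind-scheme affineness and boundedness of stratumwise restrictions independently, which is real additional work; the paper gets both for free from finite-dimensional geometry via the local-to-global equivalence.
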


\begin{rem}
It follows formally from \propref{p:t on Whit}(b) (see \propref{p:D of heart}) that the functor
$$D^+\left(\Whit_{q,x}(G)\right)\to \Whit_{q,x}(G)$$
is an equivalence onto the eventually connective part.
\end{rem}

\sssec{}

Let $W^{\lambda,!*}$ denote the image of the canonical map $W^{\lambda,!}\to W^{\lambda,*}$.

\begin{cor} \label{c:t on Whit}  \hfill 

\smallskip

\noindent{\em(a)} The irreducibles in $(\Whit_{q,x}(G))^\heartsuit$ are the objects $W^{\lambda,!*}$.

\smallskip

\noindent{\em(b)} The objects $W^{\lambda,!*}$ are compact in $\Whit_{q,x}(G)$ and they generate $\Whit_{q,x}(G)$. 
\end{cor}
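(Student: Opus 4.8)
\textbf{Proof plan for \corref{c:t on Whit}.}
The plan is to deduce this corollary from \propref{p:t on Whit} and \propref{p:Whittaker strata} (equivalently \corref{c:Whittaker strata}) by a standard highest-weight-category argument, essentially parallel to the configuration-space discussion in \secref{ss:t on fact}. First I would recall the setup: by \propref{p:t on Whit}(a) the subcategories $\Whit_{q,x}(G)_{\leq \lambda}$ are compatible with the t-structure, and by \propref{p:t on Whit}(b) the objects $W^{\lambda,!}$ and $W^{\lambda,*}$ lie in the heart and have finite length. By definition $W^{\lambda,!*}$ is the image of the canonical map $W^{\lambda,!}\to W^{\lambda,*}$ in the abelian category $(\Whit_{q,x}(G))^\heartsuit$, so it too is of finite length, and it is nonzero precisely because by \eqref{e:standards Whit} the composite $W^{\lambda,!}\to W^{\lambda,*}$ induces an isomorphism $\sfe \simeq \Maps(W^{\lambda,!},W^{\lambda,*})$, so the map is nonzero.

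For part (a), the claim is that $\{W^{\lambda,!*}\}_{\lambda \in \Lambda^+}$ is exactly the set of irreducibles of $(\Whit_{q,x}(G))^\heartsuit$. First I would verify each $W^{\lambda,!*}$ is irreducible: using the stratification, $W^{\lambda,!*}$ is supported on $\ol{S}{}^\lambda$, its restriction along $\bj_\lambda$ to the stratum $S^\lambda$ is (a shift of) $\overset{\circ}W{}^\lambda$ which by \propref{p:Whittaker strata}(b) generates $\Whit_{q,x}(G)_{=\lambda}\simeq \Vect$, hence is irreducible there; and by construction $W^{\lambda,!*}$ has no subobject or quotient supported on $\Whit_{q,x}(G)_{<\lambda}$ (the kernel of $W^{\lambda,!}\to W^{\lambda,!*}$ lies in $\Whit_{q,x}(G)_{<\lambda}$ and the cokernel of $W^{\lambda,!*}\to W^{\lambda,*}$ lies in $\Whit_{q,x}(G)_{<\lambda}$, since $\bj_\lambda^* W^{\lambda,!}\to \bj_\lambda^* W^{\lambda,*}$ is an isomorphism). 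Combining these shows $W^{\lambda,!*}$ is simple. Conversely, given any irreducible $L\in(\Whit_{q,x}(G))^\heartsuit$, by \corref{c:Whittaker strata}(a) the objects $W^{\lambda,!}$ generate $\Whit_{q,x}(G)$, so $\Hom(W^{\lambda,!},L)\neq 0$ for some $\lambda$; take $\lambda$ minimal (in the order relation) with this property. Then $L$ is a quotient of $W^{\lambda,!}$ — indeed, since $L$ is simple and is supported on some $\ol{S}{}^{\lambda}$ with $\bj_{\lambda}^* L$ a nonzero object of $\Whit_{q,x}(G)_{=\lambda}$, hence isomorphic to $\overset{\circ}W{}^\lambda$, the map $W^{\lambda,!}\to L$ adjoint to a chosen isomorphism on the open stratum is surjective. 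The kernel is a subobject supported on $\Whit_{q,x}(G)_{<\lambda}$, so $L$ is the unique irreducible quotient of $W^{\lambda,!}$ not supported in lower strata, which is $W^{\lambda,!*}$.

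For part (b): each $W^{\lambda,!}$ is compact in $\Whit_{q,x}(G)$ by construction (it is $\on{Av}_!$ applied to a delta-sheaf, and $\on{Av}_!$ preserves compactness), and $W^{\lambda,!}$ has finite length in the heart by \propref{p:t on Whit}(b); its Jordan--H\"older constituents are among the $W^{\mu,!*}$ with $\mu\leq \lambda$, with $W^{\lambda,!*}$ appearing once. A descending induction on $\lambda$ with respect to the order relation (using \corref{c:Whittaker strata}(c), so that the induction is well-founded on $\Lambda^+$) then shows each $W^{\lambda,!*}$ is built from the $W^{\mu,!}$ with $\mu\leq\lambda$ by finitely many extensions, hence is compact; alternatively one invokes the remark after \propref{p:t on Whit} that $D^+((\Whit_{q,x}(G))^\heartsuit)\to \Whit_{q,x}(G)$ is an equivalence onto the eventually coconnective part together with finite length of $W^{\lambda,!}$. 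Finally, since the $W^{\lambda,!}$ generate $\Whit_{q,x}(G)$ and each is a finite iterated extension of objects $W^{\mu,!*}$, the collection $\{W^{\lambda,!*}\}$ generates as well. I expect the only genuinely delicate point to be the claim that the kernel and cokernel in the passage $W^{\lambda,!}\to W^{\lambda,!*}\to W^{\lambda,*}$ are supported on strictly smaller strata, i.e.\ that $\bj_\lambda^*$ is exact on the heart and kills these terms — but this is immediate from the fact (established in \secref{s:Whit}) that $\bj_\lambda^*\circ(\bj_\lambda)_* \to \on{Id}$ is an isomorphism and that $\bj_\lambda^* W^{\lambda,!}\simeq \overset{\circ}W{}^\lambda \simeq \bj_\lambda^* W^{\lambda,*}$, so there is essentially no obstacle once \propref{p:t on Whit} is in hand.
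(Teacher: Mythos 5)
Your proof is correct and follows essentially the same strategy as the paper: classify irreducibles and establish irreducibility of $W^{\lambda,!*}$ via the stratification and \eqref{e:standards Whit}, then deduce compactness of $W^{\lambda,!*}$ by induction on $\lambda$ in the standard order, using that the kernel of $W^{\lambda,!}\to W^{\lambda,!*}$ has finite length and is supported on $\Whit_{q,x}(G)_{<\lambda}$. The only cosmetic difference is in part (a): you establish irreducibility of $W^{\lambda,!*}$ directly (no sub/quotient in $\Whit_{q,x}(G)_{<\lambda}$, simple on the open stratum), whereas the paper first shows any irreducible $L$ is isomorphic to some $W^{\lambda,!*}$ and then proves irreducibility of $W^{\lambda,!*}$ by showing an irreducible submodule must be all of it — both routes use the same inputs and are of comparable length.
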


\begin{proof}[Proof of \corref{c:t on Whit}]

Let $L$ be an object in $(\Whit_{q,x}(G))^\heartsuit$. By the definition of the t-structure, it admits a non-zero map
$W^{\lambda,!}\to L$ for some $\lambda\in \Lambda^+$. If $L$ is irreducible, then the above map is surjective. 
In particular, $L\in \Whit_{q,x}(G)_{\leq \lambda}$, and $(\bj_\lambda)^*\circ (\ol\bi{}_\lambda)^!(L)$ admits a non-zero map to
$\bj_\lambda^*(W^{\lambda,!})$. Hence, $L$ admits a non-zero map to $W^{\lambda,*}$. It follows from 
\corref{c:Whittaker strata} that $L$ equals $W^{\lambda,!*}$. 

\medskip

Vice versa, let $L$ be an irreducile submodule of $W^{\lambda,!*}$. By the above, it is of the form $W^{\lambda',!*}$
for some $\lambda'$. In particular, we obtain a non-zero map
$$W^{\lambda',!}\twoheadrightarrow L\hookrightarrow W^{\lambda,!*}\hookrightarrow W^{\lambda,!},$$
which by \corref{c:Whittaker strata} implies that $\lambda'=\lambda$. 

\medskip

To prove that $W^{\lambda,!*}$ are compact in $\Whit_{q,x}(G)$ we argue by induction on $\lambda\in \Lambda^+$ with respect to the standard
order relation. The base of the induction is provided by \corref{c:Whittaker strata}(b). 

\medskip

Suppose the statement is true for $\lambda'<\lambda$. It is enough to show that
$\on{ker}(W^{\lambda,!}\to W^{\lambda,!*})$ is compact. We note, however, that the above object belongs to $\Whit_{q,x}(G)_{< \lambda}$ and is of finite length
(by \propref{p:t on Whit}(b)), and thus is a finite
extension of objects $W^{\lambda',!*}$ for $\lambda'<\lambda$. This implies the assertion.

\end{proof}

\begin{cor}  \label{c:cost compact in Whit}
The objects $W^{\lambda,*}$ are compact in $\Whit_{q,x}(G)$.
\end{cor}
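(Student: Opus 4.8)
\textbf{Proof of \corref{c:cost compact in Whit}.}
The plan is to induct on $\lambda\in \Lambda^+$ with respect to the standard order relation, mimicking the argument used for $W^{\lambda,!*}$ in the proof of \corref{c:t on Whit}. The base case is provided by \corref{c:Whittaker strata}(b): if $\lambda$ is minimal in $\Lambda^+$, then the canonical map $W^{\lambda,!}\to W^{\lambda,*}$ is an isomorphism, and $W^{\lambda,!}$ is compact by construction (it is obtained by applying $\on{Av}_!^{\fL(N)^{\omega^\rho}_x,\chi_N}$ to a compact object, and this functor preserves compactness since it is the partially defined left adjoint to the forgetful functor \eqref{e:forget equivariance all}, which preserves colimits).

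For the inductive step, fix $\lambda$ and assume $W^{\lambda',*}$ is compact for all $\lambda'<\lambda$ in $\Lambda^+$. First I would observe that $W^{\lambda,!*}$ is compact by \corref{c:t on Whit}(b). Next, consider the short exact sequence
$$
0\to W^{\lambda,!*}\to W^{\lambda,*}\to Q^\lambda\to 0
$$
in $(\Whit_{q,x}(G))^\heartsuit$, where $Q^\lambda$ denotes the cokernel. I claim $Q^\lambda$ lies in $\Whit_{q,x}(G)_{<\lambda}$: indeed, by \eqref{e:standards Whit} the map $W^{\lambda,!}\to W^{\lambda,*}$ factors as $W^{\lambda,!}\twoheadrightarrow W^{\lambda,!*}\hookrightarrow W^{\lambda,*}$, and applying $(\bj_\lambda)^*\circ(\ol\bi{}_\lambda)^!$ shows that $W^{\lambda,!*}$ and $W^{\lambda,*}$ have the same (one-dimensional, by \propref{p:Whittaker strata}(b)) restriction to the open stratum $S^\lambda$; hence $Q^\lambda$ is supported on $\ol S{}^\lambda\setminus S^\lambda$, i.e. $Q^\lambda\in \Whit_{q,x}(G)_{<\lambda}$. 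By \propref{p:t on Whit}(b), $W^{\lambda,*}$ has finite length, so $Q^\lambda$ has finite length, and therefore $Q^\lambda$ is a finite extension of irreducibles $W^{\lambda',!*}$ with $\lambda'<\lambda$ (using \corref{c:t on Whit}(a) and the fact that these are the only irreducibles supported on $\ol S{}^{\lambda'}$ for $\lambda'<\lambda$, which are all dominant by \corref{c:Whittaker strata}(c)). Each such $W^{\lambda',!*}$ is compact by \corref{c:t on Whit}(b), so $Q^\lambda$ is compact as a finite extension of compact objects. Finally, $W^{\lambda,*}$ sits in the exact triangle $W^{\lambda,!*}\to W^{\lambda,*}\to Q^\lambda$ with both outer terms compact, hence $W^{\lambda,*}$ is compact, completing the induction.

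I expect the only mildly delicate point to be the identification of $Q^\lambda$ as an object of $\Whit_{q,x}(G)_{<\lambda}$ of finite length; everything else is a formal manipulation with compact objects in a compactly generated DG category and the highest-weight structure already established. One could alternatively phrase the induction more symmetrically using the triangle $W^{\lambda,!*}\to W^{\lambda,*}$ together with the dual statement for $W^{\lambda,!}$, but the argument above is self-contained given the results in \secref{s:Whit}.
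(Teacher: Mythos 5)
Your proof is correct and uses the same ingredients as the paper's (which is a one-line deduction: combine \propref{p:t on Whit}(b), that $W^{\lambda,*}$ lies in the heart and has finite length, with \corref{c:t on Whit}(b), that the irreducibles $W^{\lambda',!*}$ are compact, so $W^{\lambda,*}$ is a finite extension of compact objects). The inductive framing in your argument is superfluous — you never actually invoke the inductive hypothesis that $W^{\lambda',*}$ is compact for $\lambda'<\lambda$, and the analysis of the support of $Q^\lambda$ in $\Whit_{q,x}(G)_{<\lambda}$ is likewise unnecessary, since finite length plus compactness of all irreducibles already gives the result directly.
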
  

\begin{proof}
Follows by combining \propref{p:t on Whit}(b) and \corref{c:t on Whit}(b).
\end{proof} 

\begin{rem}
Note that objects lying in the image of the forgetful functor
$$\Whit_{q,x}(G)\to \Shv_{\CG^G}(\Gr^{\omega^\rho}_{G,x})$$
is \emph{infinitely connective}, i.e., it sends all objects of $\Whit_{q,x}(G)$ to objects of 
$\Shv_{\CG^G}(\Gr^{\omega^\rho}_{G,x})$ that have zero cohomologies with respect to 
the natural t-structure on that category.

\medskip

To show this, it suffices to show that the generators of $\Whit_{q,x}(G)$, i.e., the
objects $W^{\lambda,!}$, map to infinitely connective objects in. This follows from the fact that  $\on{Av}_!^{N_k,\chi_N}(\delta_{t^\lambda,\Gr})$
lives in cohomological degrees $\leq -\dim(N_k\cdot t^\lambda)$.

\end{rem}

\sssec{A digression: properties of t-structures} \label{sss:properties of t}

We shall say that a t-structure on a compactly generated category $\bC$ is \emph{compactly generated} if:

\begin{itemize}

\item $(\bC)^{\leq 0}$ is generated under colimits by $\bC^c\cap (\bC)^{\leq 0}$.

\end{itemize}

\medskip

We shall say that a t-structure on a compactly generated category $\bC$ is \emph{coherent} if, moreover, 

\begin{itemize}

\item Compact objects in $\bC$ are cohomologically bounded;

\item The subcategory $\bC^c\subset \bC$ is preserved by the truncation functors.

\end{itemize}

\medskip

We shall say that a t-structure is \emph{Noetherian} if, in addition:

\begin{itemize}

\item 
The subcategory 
$\bC^c\cap (\bC)^\heartsuit\subset (\bC)^\heartsuit$
is stable under subquotients (in particular is abelian). 

\end{itemize}

\medskip

Finally, we will that a t-structure is \emph{Artinian} if, moreover:

\begin{itemize}

\item Each object of $\bC^c\cap (\bC)^\heartsuit\subset (\bC)^\heartsuit$ has finite length. 

\end{itemize}

It is easy to see that a t-structure on $\bC$ is Artinian if and only if 
irreducible objects in $(\bC)^\heartsuit$ are compact and they generate $\bC$. 

\sssec{}

With the above definitions, we obtain that \corref{c:t on Whit} implies the following: 

\begin{cor} \label{c:Whit Art}
The t-structure on $\Whit_{q,x}(G)$ is Artinian. 
\end{cor}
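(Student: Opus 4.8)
The final statement, Corollary \ref{c:Whit Art}, asserts that the t-structure on $\Whit_{q,x}(G)$ is Artinian. Given the digression in \secref{sss:properties of t}, the cleanest route is to invoke the characterization stated there: a t-structure on a compactly generated category $\bC$ is Artinian if and only if the irreducible objects in $(\bC)^\heartsuit$ are compact and they generate $\bC$. So the plan is simply to quote \corref{c:t on Whit}, whose two parts supply exactly these two facts for $\bC = \Whit_{q,x}(G)$: part (a) identifies the irreducibles of the heart as the objects $W^{\lambda,!*}$, and part (b) asserts that these objects are compact and generate $\Whit_{q,x}(G)$.

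More concretely, I would write: By \secref{sss:properties of t}, it suffices to check that the irreducible objects of $(\Whit_{q,x}(G))^\heartsuit$ are compact and generate. By \corref{c:t on Whit}(a), every irreducible object of the heart is of the form $W^{\lambda,!*}$ for some $\lambda\in\Lambda^+$, and by \corref{c:t on Whit}(b) each $W^{\lambda,!*}$ is compact in $\Whit_{q,x}(G)$ and the collection $\{W^{\lambda,!*}\}$ generates $\Whit_{q,x}(G)$. Hence the t-structure is Artinian.

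There is essentially no obstacle here: the corollary is a formal consequence of already-established results, and the only thing to be careful about is that the "if and only if" criterion in \secref{sss:properties of t} is genuinely applicable — i.e. that $\Whit_{q,x}(G)$ is compactly generated, which follows from \corref{c:Whittaker strata}(a) (or \corref{c:t on Whit}(b)) exhibiting an explicit set of compact generators. If one wanted to avoid relying on the criterion, one could instead verify the defining conditions directly: compact objects are cohomologically bounded and the truncation functors preserve $\Whit_{q,x}(G)^c$ (coherence), $\Whit_{q,x}(G)^c\cap(\Whit_{q,x}(G))^\heartsuit$ is closed under subquotients (Noetherian), and each such object has finite length (Artinian) — all of which are immediate from \propref{p:t on Whit}(b) and \corref{c:t on Whit}, since every compact object of the heart is a finite extension of the $W^{\lambda,!*}$'s, each of which has finite length by \propref{p:t on Whit}(b). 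But the one-line argument via the criterion is preferable.

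Thus the proof is:
\begin{proof}
By \secref{sss:properties of t}, a t-structure on a compactly generated category is Artinian if and only if the irreducible objects of its heart are compact and generate. The category $\Whit_{q,x}(G)$ is compactly generated by \corref{c:t on Whit}(b). By \corref{c:t on Whit}(a), the irreducible objects of $(\Whit_{q,x}(G))^\heartsuit$ are the objects $W^{\lambda,!*}$, and by \corref{c:t on Whit}(b) these are compact and generate $\Whit_{q,x}(G)$. Hence the t-structure is Artinian.
\end{proof}
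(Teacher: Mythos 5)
Your proof is correct and is exactly the route the paper intends: the paper introduces Corollary \ref{c:Whit Art} with the remark that it follows from Corollary \ref{c:t on Whit} together with the characterization of Artinian t-structures in \secref{sss:properties of t}, which is precisely the argument you give.
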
 

\section{The dual and the global definitions of the metaplectic Whittaker category}  \label{s:dual Whit}

The goal of this section is two-fold: we will show that the metaplectic Whittaker category is essentially 
self-dual (as a DG category), and that one can define it alternatively using global geometry. 

\medskip

Both these
facts will be used in the proof of the main theorem, which compares $\Whit_{q,x}(G)$ with a certain category
of factorization modules. 
 
\ssec{The definition as \emph{coinvariants}}

In the previous section we defined $\Whit_{q,x}(G)$ as the category of $(\fL(N)^{\omega^\rho}_x,\chi_N)$-invariant objects in 
$\Shv_{\CG^G}(\Gr^{\omega^\rho}_{G,x})$. We will now introduce another category, denoted
$\Whit_{q,x}(G)_{\on{co}}$, by taking $\fL(N)^{\omega^\rho}_x$-coinvariant objects in 
$\Shv_{\CG^G}(\Gr^{\omega^\rho}_{G,x})$.

\sssec{}

We define $\Whit_{q,x}(G)_{\on{co}}$ to be the quotient DG category of $\Shv_{\CG^G}(\Gr^{\omega^\rho}_{G,x})$ by the 
full subcategory generated by objects 
$$\on{Fib}(\on{Av}_*^{N_k,\chi_N}(\CF)\to \CF), \quad \forall\, \CF\in \Shv_{\CG^G}(\Gr^{\omega^\rho}_{G,x}), \forall k.$$

\medskip

I.e., for a test DG category $\bC$, the datum of a functor $\Whit_{q,x}(G)_{\on{co}}\to \bC$ is equivalent to the datum of a functor 
$\Shv_{\CG^G}(\Gr^{\omega^\rho}_{G,x})\to \bC$ that maps all morphisms \eqref{e:map to Av} to isomorphisms.

\sssec{}

We can give a similar definition for every fixed subgroup $N_k$; denote the resulting category of coinvariants by 
$$\left(\Shv_{\CG^G}(\Gr^{\omega^\rho}_{G,x})\right)_{N_k,\chi_N}.$$

\medskip

Note that since the co-monad $\on{Av}_*^{N_k,\chi_N}$ is idempotent, the averaging functor
$$\on{Av}_*^{N_k,\chi_N}:\Shv_{\CG^G}(\Gr^{\omega^\rho}_{G,x})\to 
\left(\Shv_{\CG^G}(\Gr^{\omega^\rho}_{G,x})\right)^{N_k,\chi_N}$$
defines a functor
\begin{equation} \label{e:from coinv to inv fixed k}
\left(\Shv_{\CG^G}(\Gr^{\omega^\rho}_{G,x})\right)_{N_k,\chi_N}\to
\left(\Shv_{\CG^G}(\Gr^{\omega^\rho}_{G,x})\right)^{N_k\chi_N}.
\end{equation} 

\begin{prop} \label{p:from coinv to inv fixed k}
The functor \eqref{e:from coinv to inv fixed k} is an equivalence.
\end{prop}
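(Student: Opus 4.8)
The plan is to exhibit \eqref{e:from coinv to inv fixed k} as an instance of a general phenomenon about equivariance against a character of a unipotent group: for a (finite-dimensional, hence eventually for a pro-unipotent) group $U$ acting on a category with a character $\chi$, the tautological comparison between $\chi$-coinvariants and $\chi$-invariants is an equivalence. Since $\on{Av}_*^{N_k,\chi_N}$ is an idempotent comonad, $\left(\Shv_{\CG^G}(\Gr^{\omega^\rho}_{G,x})\right)^{N_k,\chi_N}$ is a full subcategory of the ambient category (a colocalization), and dually the quotient $\left(\Shv_{\CG^G}(\Gr^{\omega^\rho}_{G,x})\right)_{N_k,\chi_N}$ is the Verdier quotient by the kernel of $\on{Av}_*^{N_k,\chi_N}$; the content is that this colocalization is simultaneously a localization, i.e. the inclusion of the invariants admits not only a right adjoint (the averaging) but also identifies with the quotient functor. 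Concretely, I would first reduce, exactly as in \secref{sss:Nk}, to the case where $N_k$ is a finite-dimensional unipotent group acting on a scheme $Y_j$, so that all the functors in sight are between categories of sheaves on finite type schemes.

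First I would recall the key input: for a finite-dimensional unipotent group $U$ equipped with a nondegenerate (or arbitrary) additive character $\chi: U \to \BG_a$ and the associated Artin-Schreier sheaf, the functors $\on{Av}_!^{U,\chi}$ and $\on{Av}_*^{U,\chi}$ are defined on all of $\Shv_\CG(Y)$ and the natural map $\on{Av}_!^{U,\chi} \to \on{Av}_*^{U,\chi}$ is an isomorphism — this is because $U$, being unipotent, is cohomologically trivial (the !-averaging and *-averaging against a character on an affine space agree up to a cohomological shift by $2\dim U$, and for a genuine character the forgetful functor even becomes an equivalence onto its image with both adjoints). Granting this, $\on{Av}_*^{U,\chi}$ is both a left and a right adjoint to the forgetful functor $\on{oblv}: \Shv_\CG(Y)^{U,\chi} \to \Shv_\CG(Y)$. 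Then the quotient category $\Shv_\CG(Y)_{U,\chi}$ — defined as the localization at the morphisms $\on{Av}_*^{U,\chi}(\CF)\to\CF$ — receives $\on{Av}_*^{U,\chi}$ as a functor, and I would check that this functor is an equivalence by producing an inverse: the composite $\Shv_\CG(Y)^{U,\chi}\overset{\on{oblv}}\to \Shv_\CG(Y)\to \Shv_\CG(Y)_{U,\chi}$. That these two are mutually inverse is a formal consequence of the fact that $\on{oblv}$ has $\on{Av}_*^{U,\chi}$ as both adjoints: the unit and counit of the $(\on{oblv},\on{Av}_*)$-adjunction, after passing to the quotient, become isomorphisms precisely because the morphisms being inverted in the quotient are exactly the images of this counit (and of the unit of the $(\on{Av}_*,\on{oblv})$-adjunction, which is already an isomorphism since $\on{oblv}$ is fully faithful on the colocalization).

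Next I would assemble the fixed-$N_k$ statement from the finite-dimensional one: as in \secref{s:Whit} we write $N_k = \underset{l}\lim N_{k,l}$ with the kernels $\on{ker}(N_{k,l'}\to N_{k,l})$ unipotent, so that $\left(\Shv_{\CG^G}(\Gr^{\omega^\rho}_{G,x})\right)^{N_k,\chi_N}$ and $\left(\Shv_{\CG^G}(\Gr^{\omega^\rho}_{G,x})\right)_{N_k,\chi_N}$ are independent of the choice of $l$ (up to canonical equivalence) after further restricting to a fixed $N_{k,l}$-stable closed subscheme $Y_j$ and passing to the limit over $j$; the limit presentation \eqref{e:category as limit equiv} on the invariant side and the corresponding colimit presentation on the coinvariant side are interchanged by passing to adjoints (\secref{sss:limits and colimits}), and the comparison functor \eqref{e:from coinv to inv fixed k} is compatible with these presentations. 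Hence the equivalence for each finite-dimensional $N_{k,l}$ acting on each $Y_j$ propagates to $N_k$ acting on $\Gr^{\omega^\rho}_{G,x}$.

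The main obstacle, and the only nonformal point, is the finite-dimensional statement that $\on{Av}_*^{U,\chi}$ is simultaneously a left adjoint to $\on{oblv}$ — equivalently that $\on{Av}_!^{U,\chi}\to\on{Av}_*^{U,\chi}$ is an isomorphism in the gerbe-twisted setting — since everything else is a formal manipulation of adjunctions and (co)localizations. I expect this to follow from the cleanness of the Artin-Schreier sheaf on the additive group: the relevant orbit spaces are affine spaces on which $\chi$ restricts to a (possibly trivial, possibly nontrivial) linear character, and in either case the !- and *-pushforwards of the pulled-back Artin-Schreier sheaf along the projection agree (trivial pushforward when $\chi$ is nontrivial, and a shift when $\chi$ is trivial), and the gerbe $\CG^G$, being canonically trivialized on $\fL(N)^{\omega^\rho}_x$ and hence on all $N_k$-orbits, does not interfere. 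Once this is in place, the Proposition follows as above; note that it is precisely this argument that globalizes in \secref{s:dual Whit} to show that the full coinvariants category $\Whit_{q,x}(G)_{\on{co}}$ is equivalent to $\Whit_{q,x}(G)$ and therefore yields the duality \eqref{e:Verd Whit}.
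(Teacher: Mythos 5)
The paper's own proof is a single sentence: ``This is a formal assertion, valid for any idempotent co-monad acting on a DG category.'' No geometry is required. You begin correctly by identifying the relevant structure — the idempotent comonad $M := \on{Av}_*^{N_k,\chi_N}$, invariants as the colocalization $\CC^M$, coinvariants as the Verdier quotient $\CC / \ker(M)$, and the candidate inverse $\CC^M \overset{\iota}\hookrightarrow \CC \to \CC/\ker(M)$ — but you then misdiagnose the content. You assert that the mutual-inverse check requires ``$\on{oblv}$ has $\on{Av}_*^{U,\chi}$ as both adjoints'' and call ambidexterity ``the main obstacle, and the only nonformal point.'' Ambidexterity is never used. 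One composite, $\bar{M} \circ \bar{\iota} \cong \on{Id}$, is just the full faithfulness of $\iota$ (the unit of the $(\iota, M)$-adjunction being an isomorphism). For the other composite one needs only that the fiber of the counit $\iota M(\CF) \to \CF$ lies in $\ker(M)$, and this follows from idempotence alone: applying $M$ to that fiber gives $\on{Fib}(M^2\CF \to M\CF) = 0$. The universal property of the quotient then yields $\bar{\iota} \circ \bar{M} \cong \on{Id}$. The reduction to finite-dimensional $N_{k,l}$ acting on $Y_j$, the cleanness of the Artin--Schreier sheaf, and the (shift-sensitive, and in your formulation imprecise) comparison $\on{Av}_! \to \on{Av}_*$ all drop out.

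Your closing remark also conflates this proposition with \thmref{t:inv and coinv}. The passage from a fixed $N_k$ to all of $\fL(N)^{\omega^\rho}_x$ is exactly where the formal argument stops: invariants for the ind-group form a \emph{limit} of the fixed-$k$ categories, coinvariants a \emph{colimit}, and these need not coincide. The paper does not ``globalize'' the present argument; it proves \thmref{t:inv and coinv} by genuinely nonformal methods through the global Whittaker category $\Whit_{q,\on{glob}}(G)$.
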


\begin{proof} 
This is a formal assertion, valid for any idempotent co-monad acting on a DG category.
\end{proof}

\sssec{}

Consider now the colmit DG category 
$$\underset{k}{\on{colim}}\, \left(\Shv_{\CG^G}(\Gr^{\omega^\rho}_{G,x})\right)^{N_k,\chi_N},$$
where for $k\leq k'$ the transition functor
$$\left(\Shv_{\CG^G}(\Gr^{\omega^\rho}_{G,x})\right)^{N_k,\chi_N}\to 
\left(\Shv_{\CG^G}(\Gr^{\omega^\rho}_{G,x})\right)^{N_{k'},\chi_N}$$
is given by $\on{Av}_*^{N_{k'},\chi_N}$.  (Compare this with the limit in \eqref{e:bigger sugr}, which was taken
with respect to the transition functors being the forgetful functors.) 

\medskip

From \propref{p:from coinv to inv fixed k} we obtain that we have a canonical equivalence
\begin{equation} \label{e:coin as colimit}
\underset{k}{\on{colim}}\, \left(\Shv_{\CG^G}(\Gr^{\omega^\rho}_{G,x})\right)^{N_k,\chi_N}\simeq
\Whit_{q,x}(G)_{\on{co}}.
\end{equation} 

\sssec{}

For a pair of indices $k\leq k'$, let $\ell_{k,k'}$ denote the *-fiber of the dualizing sheaf on $N_{k'}/N_k$ at the origin
(ignoring the Galois action, this is just $\sfe[2\dim(N_{k'}/N_k)]$). 
%For a triple of indices $k\leq k'\leq k''$ we have
%a canonical identification
%$$\ell_{k,k'}\circ \ell_{k',k''}\simeq \ell_{k,k''}.$$

\medskip

In addition to the tautological natural transformation
$$\on{Av}_*^{N_{k'},\chi_N}\to\on{Av}_*^{N_k,\chi_N}.$$
We have a natural transformation
\begin{equation}  \label{e:more ev}
\on{Av}_*^{N_k,\chi_N}\to \ell_{k,k'}\otimes \on{Av}_*^{N_{k'},\chi_N}.
\end{equation}

\medskip

With no restriction of generality we can assume that our set of indices has a smallest element $k=0$ such that $N_0=\fL^+(N)^{\omega^\rho}_x$.
Consider the functor 
$$\Shv_{\CG^G}(\Gr^{\omega^\rho}_{G,x})\to \Shv_{\CG^G}(\Gr^{\omega^\rho}_{G,x})$$
defined as
\begin{equation} \label{e:pre T}
\underset{k}{\on{colim}}\, \ell_{0,k}\otimes \on{Av}_*^{N_k,\chi_N},
\end{equation}
where the transition maps are given by \eqref{e:more ev} and the isomorphisms 
$$\ell_{0,k}\circ \ell_{k,k'}\simeq \ell_{0,k'}.$$

\medskip

It is clear that the image of \eqref{e:pre T} belongs to $\left(\Shv_{\CG^G}(\Gr^{\omega^\rho}_{G,x})\right)^{N_k,\chi_N}$
for every $k$. 
Moreover, it maps all the morphisms of the form \eqref{e:map to Av} to isomorphisms. Hence, \eqref{e:pre T} gives rise to a functor

\begin{equation} \label{e:functor T}
\on{Ps-Id}:\Whit_{q,x}(G)_{\on{co}}\to \Whit_{q}(\Gr^{\omega^\rho}_{G,x})
\end{equation}

\sssec{}

For $\lambda\in \Lambda$, let $W^{\lambda,*}_{\on{co}}\in \Whit_{q,x}(G)_{\on{co}}$ denote the image of 
$\delta_{t^\lambda,\Gr}[\langle \lambda,2\check\rho\rangle ]\in \Shv_{\CG^G}(\Gr^{\omega^\rho}_{G,x})$ 
under the projection 
$$\Shv_{\CG^G}(\Gr^{\omega^\rho}_{G,x})\to \Whit_{q,x}(G)_{\on{co}}.$$

\medskip

It follows from the definitions that for $\lambda$ dominant
\begin{equation} \label{e:costandrard to costandard}
\on{Ps-Id}(W^{\lambda,*}_{\on{co}})\simeq W^{\lambda,*};
\end{equation}
(the shift by twice $\langle \lambda,2\check\rho\rangle $ appears since this integer equals 
$\dim(\fL^+(N)^{\omega^\rho}/\on{Stab}_{\fL(N)^{\omega^\rho}}(t^\lambda))$). 

\medskip

Also, as in the proof of \propref{p:Whittaker strata}, it is easy to see that if $\lambda$ is non-dominant,
then $W^{\lambda,*}_{\on{co}}=0$. 

\sssec{}

In \secref{sss:proof of inv and coinv} we will prove:

\begin{thm}  \label{t:inv and coinv}
The functor \eqref{e:functor T} is an equivalence.
\end{thm}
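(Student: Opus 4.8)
\textbf{Proof strategy for \thmref{t:inv and coinv}.}

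The plan is to reduce the statement to a \emph{global} assertion via Drinfeld's compactification, where it becomes a consequence of smooth/proper base change together with the contraction principle. First I would observe that both sides are glued from their strata: using the presentation $\Whit_{q,x}(G)\simeq \underset{\lambda}{\on{colim}}\, (\Shv_{\CG^G}(\ol S{}^\lambda))^{\fL(N)^{\omega^\rho}_x,\chi_N}$ (along $!$-pushforwards) established in \secref{s:Whit}, and the analogous presentation of $\Whit_{q,x}(G)_{\on{co}}$ coming from \eqref{e:coin as colimit} combined with the localization sequences for the closed strata $\ol S{}^\lambda\hookrightarrow \Gr^{\omega^\rho}_{G,x}$, it suffices to show that $\on{Ps-Id}$ is compatible with these filtrations and induces an equivalence on associated graded pieces. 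Concretely, the functor $\on{Ps-Id}$ sends $W^{\lambda,*}_{\on{co}}$ to $W^{\lambda,*}$ by \eqref{e:costandrard to costandard}, and since $\{W^{\lambda,*}_{\on{co}}\}$ generate $\Whit_{q,x}(G)_{\on{co}}$ while $\{W^{\lambda,*}\}$ generate $\Whit_{q,x}(G)$ (by \corref{c:Whittaker strata}), the remaining task is purely homological: to show that $\on{Ps-Id}$ induces an isomorphism on $\CHom$ between these generators, i.e.
\begin{equation*}
\CHom_{\Whit_{q,x}(G)_{\on{co}}}(W^{\lambda,*}_{\on{co}},W^{\mu,*}_{\on{co}})\overset{\sim}\longrightarrow \CHom_{\Whit_{q,x}(G)}(W^{\lambda,*},W^{\mu,*}).
\end{equation*}

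The mechanism I would use to compute the left-hand side is the contraction principle: the $\fL(N)^{\omega^\rho}_x$-orbits $S^\lambda$ admit a contracting $\BG_m$-action (coming from a dominant cocharacter), and the functor \eqref{e:pre T} defining $\on{Ps-Id}$ is, stratum by stratum, exactly the ``renormalized'' averaging that converts $!$-extensions into $*$-extensions — the shifts $\ell_{0,k}$ are precisely the dimension shifts of the approximating orbits $N_k\cdot t^\lambda$, so $\on{Ps-Id}$ realizes the canonical map from the $*$-costandard in the coinvariants model to the $*$-costandard in the invariants model. Thus on each stratum the comparison of $\Hom$-complexes is an instance of the statement that, for a $\BG_m$-contractive space, $*$-restriction to the attractor computes the same thing whether one works with invariants or coinvariants — this is the content of \cite{Ras2}, and I would either cite it directly or reprove the needed special case using the explicit orbit geometry of $\Gr^{\omega^\rho}_{G,x}$.

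Since \secref{s:dual Whit} already announces a \emph{global} route (via $\BunNbox$, cf.\ \thmref{t:inv and coinv} being proved in \secref{ss:loc to glob Whit}), the cleaner execution I would actually carry out is: introduce the Ran-version $\Whit_{q,\Ran}(G)$ and the vacuum factorization algebra $\on{Vac}_{\Whit,\Ran}$, and the global Whittaker category $\Whit_{q,\on{glob}}(G)$ defined using $(\ol\Bun_N^{\omega^\rho})_{\infty\cdot x}$; prove the local-to-global equivalences $\Whit_{q,\on{glob}}(G)\simeq \Whit_{q,x}(G)$ and $\Whit_{q,\on{glob}}(G)\simeq \Whit_{q,x}(G)_{\on{co}}$ \emph{separately} — the first by unital factorization and the $\on{sprd}$ functor of \eqref{e:sprd Intro}, the second by the fact that $\ol\Bun_N^{\omega^\rho}$ is a genuine (ind-)algebraic stack so that sheaves on it compute coinvariants via proper pushforward from Drinfeld's compactification — and then check that the composite $\Whit_{q,x}(G)_{\on{co}}\to \Whit_{q,\on{glob}}(G)\to \Whit_{q,x}(G)$ is the functor $\on{Ps-Id}$, which one verifies on the generators $W^{\lambda,*}_{\on{co}}$ using \eqref{e:costandrard to costandard} and the description of the global IC-type sheaves.

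\textbf{The main obstacle} I anticipate is the second local-to-global equivalence, $\Whit_{q,\on{glob}}(G)\simeq \Whit_{q,x}(G)_{\on{co}}$: passing from invariants (where the global model is manifestly built from $!$-pullbacks) to coinvariants requires knowing that the relevant morphism of algebraic stacks — the one expressing $\ol\Bun_N^{\omega^\rho}$ as receiving a proper map recording the extra defect data at points away from $x$ — is \emph{proper}, so that $*$- and $!$-pushforward agree up to the dimension shifts encoded in the $\ell_{0,k}$'s; establishing this properness (and the identification of the resulting functor with \eqref{e:functor T} up to the bookkeeping of shifts) is the technical heart. Everything else — the gluing reductions, the identification of generators, the formal properties of $\on{Av}_*$ as an idempotent comonad from \propref{p:from coinv to inv fixed k} — is routine given the results already assembled in Parts~I--II.
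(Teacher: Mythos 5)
Your preferred global route matches the paper's approach in outline, and you correctly identify the first ingredient: $\pi_x^!\colon\Whit_{q,\on{glob}}(G)\to\Whit_{q,x}(G)$ is shown to be fully faithful by factoring it through the Ran-version using \thmref{t:N contr} and \thmref{t:restr to unit}, and then is an equivalence because it hits the generators $W^{\mu,*}$. You also correctly predict the final step: computing that the composite $\Whit_{q,x}(G)_{\on{co}}\to\Whit_{q,\on{glob}}(G)\to\Whit_{q,x}(G)$ recovers $\on{Ps-Id}$ (the paper gets $\on{Ps-Id}\otimes\ell_g$, with $\ell_g$ a shifting line coming from $\Bun^{\omega^\rho}_N$).

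Where you diverge is at the ``main obstacle'' you flag, and it is worth seeing why that obstacle does not actually arise. You propose to prove the second local-to-global equivalence $\Whit_{q,x}(G)_{\on{co}}\simeq\Whit_{q,\on{glob}}(G)$ \emph{separately}, via a properness argument on $\BunNbox$. The paper does nothing of the sort. Instead, it observes that the functor $\on{Av}^{N_{\on{glob}},\chi_N}_*\circ(\pi_x)_*\colon\Whit_{q,x}(G)_{\on{co}}\to\Whit_{q,\on{glob}}(G)$ is \emph{literally} the dual of $\pi_x^!\colon\Whit_{q^{-1},\on{glob}}(G)\to\Whit_{q^{-1},x}(G)$ under the duality identifications $(\Whit_{q,x}(G)_{\on{co}})^\vee\simeq\Whit_{q^{-1},x}(G)$ (this is just the construction in \secref{ss:duality Whit}) and $(\Whit_{q,\on{glob}}(G))^\vee\simeq\Whit_{q^{-1},\on{glob}}(G)$ (Verdier duality on the compactly generated category of sheaves on $\BunNbox$). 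Since the dual of an equivalence between compactly generated categories is an equivalence, \corref{c:local to global} is a \emph{formal consequence} of \thmref{t:local to global} applied to $q^{-1}$, with no new geometric input. So the ``technical heart'' you anticipate is replaced by a one-line duality argument, which is precisely the pay-off of having set up the $\Whit_{q,x}(G)_{\on{co}}$ formalism in terms of Verdier duality to begin with. Your alternative stratification/contraction route (Raskin's \cite{Ras2}) is also valid in principle; the paper acknowledges it but deliberately chooses the global method, and the duality trick is what makes the global method as economical as it is.
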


\begin{cor}
The category $\Whit_{q,x}(G)_{\on{co}}$ is compactly generated.
\end{cor}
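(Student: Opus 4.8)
The assertion is a formal consequence of \thmref{t:inv and coinv}. The plan is first to recall that the category $\Whit_{q,x}(G)$ itself is compactly generated, and then to transport this along the equivalence $\on{Ps-Id}$. For the first point, note that by its very construction each object $W^{\lambda,!}=\on{Av}_!^{\fL(N)^{\omega^\rho}_x,\chi_N}(\delta_{t^\lambda,\Gr})[-\langle \lambda,2\check\rho\rangle ]$ is obtained by applying the continuous functor $\on{Av}_!^{\fL(N)^{\omega^\rho}_x,\chi_N}$ --- which is (partially) left adjoint to the continuous forgetful functor \eqref{e:forget equivariance} --- to the compact object $\delta_{t^\lambda,\Gr}$, and is therefore compact; by \corref{c:Whittaker strata}(a) (equivalently, by \corref{c:t on Whit}(b)) the collection of these objects, for $\lambda$ dominant, generates $\Whit_{q,x}(G)$.

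Now \thmref{t:inv and coinv} asserts that
$$\on{Ps-Id}:\Whit_{q,x}(G)_{\on{co}}\to \Whit_{q,x}(G)$$
is an equivalence of DG categories. Since the property of being compactly generated is invariant under equivalences of DG categories, it follows immediately that $\Whit_{q,x}(G)_{\on{co}}$ is compactly generated, with a set of compact generators given by the images of the $W^{\lambda,!}$ under the inverse equivalence.

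One may also exhibit compact generators of $\Whit_{q,x}(G)_{\on{co}}$ directly: the subcategory by which we quotient in the definition of $\Whit_{q,x}(G)_{\on{co}}$ is generated by the objects $\on{Fib}(\on{Av}_*^{N_k,\chi_N}(\CF)\to \CF)$, which are compact in $\Shv_{\CG^G}(\Gr^{\omega^\rho}_{G,x})$ whenever $\CF$ is (as $\on{Av}_*^{N_k,\chi_N}$ is continuous); hence the localization functor $\Shv_{\CG^G}(\Gr^{\omega^\rho}_{G,x})\to \Whit_{q,x}(G)_{\on{co}}$ preserves compactness and essential surjectivity, so the objects $W^{\lambda,*}_{\on{co}}$, $\lambda$ dominant, are compact and generate. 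Under $\on{Ps-Id}$ they go to the $W^{\lambda,*}$ by \eqref{e:costandrard to costandard}, consistently with the previous paragraph. There is no real obstacle in this Corollary: its entire content is packed into \thmref{t:inv and coinv}, whose proof is carried out in \secref{sss:proof of inv and coinv}.
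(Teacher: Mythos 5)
Your main argument is correct and is exactly what the paper intends: the corollary is stated with no separate proof because it is an immediate consequence of \thmref{t:inv and coinv} together with the compact generation of $\Whit_{q,x}(G)$ by the $W^{\lambda,!}$'s. One caveat on your ``alternative'' second paragraph: the step ``$\on{Av}_*^{N_k,\chi_N}$ preserves compactness, as it is continuous'' is a non sequitur --- a continuous functor need not send compacts to compacts. What actually saves the argument is that for a compact $\CF$ supported on some finite-type closed subscheme $Y_j$, the subscheme $N_k\cdot Y_j$ is again contained in a finite-type closed subscheme of $\Gr^{\omega^\rho}_{G,x}$, so that the averaging is a finite-dimensional proper-and-smooth pushforward-pullback and hence preserves compactness; once this is in place, Neeman's localization theorem gives compact generation of the Verdier quotient, as you say.
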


\ssec{Duality for the Whittaker category}  \label{ss:duality Whit}

In this subsection we will show that the metaplectic Whittaker category is (essentially) self-dual
as a DG category. This is not tautological, as the definition of $\Whit_{q,x}(G)$ involved taking invariants with respect to a group
ind-scheme, and this operation is in general not self-dual. 

\sssec{}

Being compactly generated, the category $\Whit_{q,x}(G)_{\on{co}}$ is dualizable. We will now construct a canonical 
equivalence\footnote{Up to replacing the Artin-Schreier sheaf by its inverse.}
\begin{equation} \label{e:duality for Whit}
(\Whit_{q,x}(G)_{\on{co}})^\vee \simeq \Whit_{q^{-1},x}(G),
\end{equation}
where $q^{-1}$ indicates that we are taking the inverse geometric metaplectic data. 

\sssec{}

In order to construct \eqref{e:duality for Whit}, we need to identify the categories $\Whit_{q^{-1},x}(G)$ and 
$$\on{Funct}(\Whit_{q,x}(G)_{\on{co}},\Vect),$$
where the latter is, by definition, the full subcategory of
$$\on{Funct}(\Shv_{\CG^G}(\Gr^{\omega^\rho}_{G,x}),\Vect)$$
that consists of those functors that map all morphisms \eqref{e:map to Av} to isomorphisms.

\medskip

Verdier duality defines an equivalence
$$(\Shv_{\CG^G}(\Gr^{\omega^\rho}_{G,x}))^\vee\simeq \Shv_{(\CG^G)^{-1}}(\Gr^{\omega^\rho}_{G,x}),$$
i.e., an equivalence
\begin{equation} \label{e:Verdier duality on Gr}
\on{Funct}(\Shv_{\CG^G}(\Gr^{\omega^\rho}_{G,x}),\Vect) \simeq 
\Shv_{(\CG^G)^{-1}}(\Gr^{\omega^\rho}_{G,x}).
\end{equation}

Under this equivalence, precomposition with $\on{Av}_*^{N_k,\chi_N}$ on the left-hand side of \eqref{e:Verdier duality on Gr} goes over to the functor 
$\on{Av}_*^{N_k,-\chi_N}$ on the right-hand side. Thus defines the sought-for equivalence \eqref{e:duality for Whit}.

\sssec{}

Combining with the equivalence \eqref{e:functor T}, we thus obtain an equivalence
\begin{equation} \label{e:self duality for Whit}
(\Whit_{q,x}(G))^\vee \simeq \Whit_{q^{-1},x}(G).
\end{equation}

\medskip

In particular, we obtain an equivalence
\begin{equation} \label{e:self duality for Whit comp}
\left((\Whit_{q,x}(G))^c\right)^{\on{op}}\to (\Whit_{q^{-1},x}(G))^c
\end{equation} 
that we denote by $\BD^{\on{Verdier}}$. 

\medskip

We note that by construction, the equivalences \eqref{e:self duality for Whit} and \eqref{e:self duality for Whit comp} are
\emph{involutive}.

\sssec{}

We claim:

\begin{prop}  \label{p:dual of standard}  
$\BD^{\on{Verdier}}(W^{\lambda,!})\simeq W^{\lambda,*}$.
\end{prop}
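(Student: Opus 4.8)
The plan is to unwind the construction of $\BD^{\on{Verdier}}$ on $\Whit_{q,x}(G)$ from \secref{ss:duality Whit} and trace what it does to the compact object $W^{\lambda,!}$. Recall that the self-duality \eqref{e:self duality for Whit} is the composite of three ingredients: the equivalence $\on{Ps-Id}:\Whit_{q,x}(G)_{\on{co}}\xrightarrow{\sim}\Whit_{q,x}(G)$ of \thmref{t:inv and coinv}, the duality \eqref{e:duality for Whit} identifying $(\Whit_{q,x}(G)_{\on{co}})^\vee$ with $\Whit_{q^{-1},x}(G)$, and the underlying Verdier duality \eqref{e:Verdier duality on Gr} on $\Shv_{\CG^G}(\Gr^{\omega^\rho}_{G,x})$. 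So the first step is to express $W^{\lambda,!}\in\Whit_{q,x}(G)$ as $\on{Ps-Id}$ applied to some explicit object of $\Whit_{q,x}(G)_{\on{co}}$, and then compute the image of that object under \eqref{e:duality for Whit} unwound through \eqref{e:Verdier duality on Gr}.

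The key observation is that the object $W^{\lambda,*}_{\on{co}}\in\Whit_{q,x}(G)_{\on{co}}$ is, by definition, the image of $\delta_{t^\lambda,\Gr}[\langle\lambda,2\check\rho\rangle]$ under the projection $\Shv_{\CG^G}(\Gr^{\omega^\rho}_{G,x})\to\Whit_{q,x}(G)_{\on{co}}$, and under the pairing \eqref{e:duality for Whit}, which is induced by Verdier duality on $\Gr^{\omega^\rho}_{G,x}$, the class of $\delta_{t^\lambda,\Gr}[\langle\lambda,2\check\rho\rangle]$ is paired against the Verdier-dual delta sheaf $\delta_{t^\lambda,\Gr}[-\langle\lambda,2\check\rho\rangle]$ (the Verdier dual of a skyscraper is a skyscraper with the opposite shift, up to the inverse gerbe and replacing $\chi_N$ by $-\chi_N$). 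But $\on{Av}_!^{\fL(N)^{\omega^\rho}_x,-\chi_N}(\delta_{t^\lambda,\Gr})[-\langle\lambda,2\check\rho\rangle]$ is precisely $W^{\lambda,!}\in\Whit_{q^{-1},x}(G)$. Thus the self-duality $\BD^{\on{Verdier}}$ pairs the costandard object $W^{\lambda,*}_{\on{co}}$ on the coinvariants side with the standard object $W^{\lambda,!}$ on the invariants side. Via \eqref{e:costandrard to costandard}, $\on{Ps-Id}(W^{\lambda,*}_{\on{co}})\simeq W^{\lambda,*}$, so $\BD^{\on{Verdier}}(W^{\lambda,!})\simeq W^{\lambda,*}$.

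More carefully, the argument should go: (i) identify under \eqref{e:self duality for Whit comp} the functor $\CHom_{\Whit_{q,x}(G)}(W^{\lambda,!},-)$ with pairing against $\BD^{\on{Verdier}}(W^{\lambda,!})$; (ii) use the formula \eqref{e:mu fiber}-style description, i.e. that $\CHom_{\Whit_{q,x}(G)}(W^{\lambda,!},\CF)$ computes the !-costalk / $\chi_N$-twisted stalk of $\CF$ along $S^\lambda$, which is dually represented on the coinvariant side by the image of $\delta_{t^\lambda}[\langle\lambda,2\check\rho\rangle]$; (iii) apply $\on{Ps-Id}$ and the already-proven identity \eqref{e:costandrard to costandard}. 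An alternative, perhaps cleaner route is to verify the claim on generators by checking the characterizing property \eqref{e:standards Whit}: since $W^{\lambda,!}$ and $W^{\lambda',!}$ are compact and generate, and since $\BD^{\on{Verdier}}$ is an anti-equivalence, it suffices to show $\CHom(W^{\lambda',!},\BD^{\on{Verdier}}(W^{\lambda,!}))$ is $\sfe$ for $\lambda'=\lambda$ and $0$ otherwise; but $\CHom(W^{\lambda',!},\BD^{\on{Verdier}}(W^{\lambda,!}))$ is by definition of the duality the pairing $\langle W^{\lambda',!}, W^{\lambda,!}\rangle$, which unwinds to the Verdier self-pairing of delta sheaves at $t^{\lambda'}$ and $t^{\lambda}$ after the averaging operations — this is nonzero iff $\lambda=\lambda'$ and then is one-dimensional, matching \eqref{e:standards Whit} for $W^{\lambda,*}$, which then pins down $\BD^{\on{Verdier}}(W^{\lambda,!})\simeq W^{\lambda,*}$ using that $W^{\lambda,*}$ is characterized by \eqref{e:standards Whit} among objects of $\Whit_{q^{-1},x}(G)_{\leq\lambda}$ that are "corepresented" correctly.

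\textbf{Main obstacle.} The delicate point is bookkeeping of cohomological shifts and of the interaction of Verdier duality with the averaging functors $\on{Av}_*^{N_k,\chi_N}$ versus $\on{Av}_!^{N_k,\chi_N}$ and with the factor $\ell_{0,k}$ appearing in the definition \eqref{e:pre T} of $\on{Ps-Id}$. Concretely, one must check that the twist by $\ell_{0,k}$ in $\on{Ps-Id}$ exactly compensates the discrepancy between the !- and *-averaging functors under Verdier duality, so that the composite equivalence \eqref{e:self duality for Whit} sends $W^{\lambda,!}$ to $W^{\lambda,*}$ with the correct (zero) shift, rather than to a shift thereof. This is where \eqref{e:costandrard to costandard} and the accompanying remark about the shift $\dim(\fL^+(N)^{\omega^\rho}/\on{Stab}_{\fL(N)^{\omega^\rho}}(t^\lambda)) = \langle\lambda,2\check\rho\rangle$ must be invoked precisely. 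Once that dimension/shift match is verified, the statement follows formally.
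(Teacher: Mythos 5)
Your proposal is correct and matches the paper's approach: steps (i)--(iii) in your third paragraph are precisely the paper's argument, namely that $\CHom(W^{\lambda,!},-)$ and the pairing against $W^{\lambda,*}_{\on{co}}\in \Whit_{q^{-1},\on{co}}(G)$ are both computed by the $!$-fiber at $t^\lambda$ shifted by $[\langle\lambda,2\check\rho\rangle]$, after which \eqref{e:costandrard to costandard} finishes the job. One small slip: in your opening paragraph you say the first step is to write $W^{\lambda,!}$ as $\on{Ps-Id}$ of something, but in fact it is $W^{\lambda,*}$ that is written as $\on{Ps-Id}(W^{\lambda,*}_{\on{co}})$ — the rest of your writeup uses this correctly.
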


By the involutivity of $\BD^{\on{Verdier}}$ we then obtain:

\begin{cor} \label{c:duality irred}
We have $\BD^{\on{Verdier}}(W^{\lambda,*})\simeq W^{\lambda,!}$ and $\BD^{\on{Verdier}}(W^{\lambda,!*})\simeq W^{\lambda,!*}$.
\end{cor}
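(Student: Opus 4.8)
The plan is to deduce \corref{c:duality irred} formally from \propref{p:dual of standard} together with the involutivity of $\BD^{\on{Verdier}}$ recorded just before the corollary statement (after \eqref{e:self duality for Whit} and \eqref{e:self duality for Whit comp}). Thus nothing needs to be proved ``from scratch'': the content is entirely a manipulation of the already-established isomorphism $\BD^{\on{Verdier}}(W^{\lambda,!})\simeq W^{\lambda,*}$.

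For the first assertion, $\BD^{\on{Verdier}}(W^{\lambda,*})\simeq W^{\lambda,!}$, I would apply the involutive equivalence $\BD^{\on{Verdier}}$ to both sides of $\BD^{\on{Verdier}}(W^{\lambda,!})\simeq W^{\lambda,*}$: since $\BD^{\on{Verdier}}\circ \BD^{\on{Verdier}}\simeq \on{Id}$, the left-hand side becomes $W^{\lambda,!}$ and the right-hand side becomes $\BD^{\on{Verdier}}(W^{\lambda,*})$. For the second assertion, $\BD^{\on{Verdier}}(W^{\lambda,!*})\simeq W^{\lambda,!*}$, the key preliminary point is that $\BD^{\on{Verdier}}$ is \emph{t-exact}: it exchanges the standards $W^{\lambda,!}$ and the costandards $W^{\lambda,*}$, which both lie in the heart by \propref{p:t on Whit}(b), the former generating and spanning the connective part of the (Artinian, by \corref{c:Whit Art}) t-structure and the latter cogenerating; hence $\BD^{\on{Verdier}}$ restricts to a contravariant equivalence $(\Whit_{q,x}(G))^\heartsuit \to (\Whit_{q^{-1},x}(G))^\heartsuit$, and is in particular exact on hearts. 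Now $W^{\lambda,!*}$ is by definition the image, computed in the heart, of the canonical map $c\colon W^{\lambda,!}\to W^{\lambda,*}$. Applying $\BD^{\on{Verdier}}$ and the two isomorphisms already obtained, $\BD^{\on{Verdier}}(c)$ is a map $W^{\lambda,!}\to W^{\lambda,*}$; it is nonzero because $\BD^{\on{Verdier}}$ is an equivalence, hence faithful, and $c\neq 0$, so by \eqref{e:standards Whit} (where $\Maps_{\Whit_{q,x}(G)}(W^{\lambda,!},W^{\lambda,*})=\sfe$) it is a nonzero scalar multiple of $c$. Since $\BD^{\on{Verdier}}$ is exact on hearts it carries the image of $c$ to the image of $\BD^{\on{Verdier}}(c)$, giving $\BD^{\on{Verdier}}(W^{\lambda,!*})\simeq \on{im}(\BD^{\on{Verdier}}(c))=\on{im}(c)=W^{\lambda,!*}$.

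The main obstacle is the t-exactness (equivalently heart-preservation) of $\BD^{\on{Verdier}}$. Here one must show that the coconnective part of $\Whit_{q^{-1},x}(G)$, cut out by the vanishing of $\Hom(W^{\mu,!}[k],-)$ for $k>0$, is carried by $\BD^{\on{Verdier}}$ to the connective part of $\Whit_{q,x}(G)$: dualizing the defining condition and invoking \propref{p:dual of standard} reexpresses it as the vanishing of $\Hom(-,W^{\mu,*}[k])$ for $k>0$, and one has to verify that this condition indeed characterizes connective objects of $\Whit_{q,x}(G)$. This last step uses that $(\Whit_{q,x}(G))^\heartsuit$ is a highest-weight category with costandards $W^{\mu,*}$ (as in the discussion around \propref{p:t on Whit}) together with the fact that the $W^{\mu,*}$ cogenerate $\Whit_{q,x}(G)$; the analogous statement on the $q^{-1}$ side is obtained symmetrically. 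Granting this, the two displayed isomorphisms of \corref{c:duality irred} follow as above.
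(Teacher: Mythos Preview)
Your argument is correct and is essentially what the paper's one-line ``by the involutivity of $\BD^{\on{Verdier}}$'' abbreviates: the first isomorphism is immediate from involutivity, and for the second you correctly isolate heart-preservation of $\BD^{\on{Verdier}}$ as the point that needs checking. Two minor remarks: the dualized vanishing condition should read $\Hom(-,W^{\mu,*}[k])=0$ for $k<0$ rather than $k>0$; and while you first establish t-exactness of $\BD^{\on{Verdier}}$ and then deduce $\BD^{\on{Verdier}}(W^{\lambda,!*})\simeq W^{\lambda,!*}$, the paper's ordering (with \corref{c:duality and t} placed after \corref{c:duality irred}) suggests the reverse deduction---but either order works.
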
 

\begin{cor} \label{c:duality and t}
A compact object $\CF\in \Whit_{q,x}(G)$ is connective with respect to the t-structure if and only if 
$\BD^{\on{Verdier}}(\CF)\in \Whit_{q^{-1},x}(G)$
is coconnective.
\end{cor}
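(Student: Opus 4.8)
The plan is to convert the coconnectivity of $\BD^{\on{Verdier}}(\CF)$ into a vanishing condition on $\CF$ expressed through the \emph{costandard} objects, and then to recognize that condition as a characterization of connectivity.

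\emph{Step 1 (duality reduction).} By the definition of the t-structure on $\Whit_{q^{-1},x}(G)$ (\secref{ss:t on Whit}, applied to the metaplectic datum $q^{-1}$; I write $W^{\lambda,!}_{q^{-1}}$, $W^{\lambda,*}_{q^{-1}}$ for the standard resp.\ costandard objects there), the object $\BD^{\on{Verdier}}(\CF)$ is coconnective if and only if $\CHom_{\Whit_{q^{-1},x}(G)}(W^{\lambda,!}_{q^{-1}},\BD^{\on{Verdier}}(\CF))\in\Vect^{\ge 0}$ for every $\lambda$. Since $\BD^{\on{Verdier}}\colon (\Whit_{q,x}(G)^c)^{\on{op}}\to\Whit_{q^{-1},x}(G)^c$ is an involutive equivalence of DG categories, it induces isomorphisms on $\CHom$-complexes; feeding in $\BD^{\on{Verdier}}(W^{\lambda,*})\simeq W^{\lambda,!}_{q^{-1}}$ from \corrref{c:duality irred} (and using that $W^{\lambda,*}$ is compact, \corrref{c:cost compact in Whit}), one obtains a canonical identification
\[
\CHom_{\Whit_{q^{-1},x}(G)}(W^{\lambda,!}_{q^{-1}},\BD^{\on{Verdier}}(\CF))\ \simeq\ \CHom_{\Whit_{q,x}(G)}(\CF,W^{\lambda,*}).
\]
Thus $\BD^{\on{Verdier}}(\CF)$ is coconnective if and only if $\CHom_{\Whit_{q,x}(G)}(\CF,W^{\lambda,*})\in\Vect^{\ge 0}$ for all $\lambda$; call this condition $(\star)$.

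\emph{Step 2 ($(\star)$ is connectivity).} One direction is the formal fact that $\CHom$ from a connective object to a coconnective one lands in $\Vect^{\ge 0}$, together with $W^{\lambda,*}\in(\Whit_{q,x}(G))^{\heartsuit}$ (\propref{p:t on Whit}(b)). For the converse, suppose $\CF$ is not connective. The t-structure is Artinian (\corrref{c:Whit Art}), hence coherent in the sense of \secref{sss:properties of t}: compact objects are cohomologically bounded and the subcategory of compacts is stable under truncation. Hence $H^{i}(\CF)$ is a nonzero \emph{compact} object of the heart for some $i\ge 1$. A standard truncation computation --- using that $W^{\lambda,*}[-i]$ sits in cohomological degree $i$ and that $\Hom((\Whit_{q,x}(G))^{\ge a},(\Whit_{q,x}(G))^{\le b})=0$ for $a>b$ --- identifies $\Hom_{\Whit_{q,x}(G)}(\CF,W^{\lambda,*}[-i])\simeq\Hom_{(\Whit_{q,x}(G))^{\heartsuit}}(H^{i}(\CF),W^{\lambda,*})$. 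It remains to produce $\lambda$ for which the right-hand side is nonzero: being a compact object of an Artinian heart, $H^{i}(\CF)$ has finite length, hence admits an irreducible quotient, which by \corrref{c:t on Whit} is some $W^{\lambda,!*}$; the composite $H^{i}(\CF)\twoheadrightarrow W^{\lambda,!*}\hookrightarrow W^{\lambda,*}$, the second map being the tautological embedding of $W^{\lambda,!*}$ as the image of $W^{\lambda,!}\to W^{\lambda,*}$, is nonzero. This yields a nonzero class in $H^{-i}\bigl(\CHom_{\Whit_{q,x}(G)}(\CF,W^{\lambda,*})\bigr)$ with $-i<0$, contradicting $(\star)$. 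Combining Steps 1 and 2 proves the corollary.

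\emph{Main obstacle.} Everything is formal once the two structural inputs --- Verdier self-duality swapping $W^{\lambda,!}\leftrightarrow W^{\lambda,*}$ (\propref{p:dual of standard}, \corrref{c:duality irred}) and the Artinian highest-weight structure of $\Whit_{q,x}(G)$ --- are in place; the only point requiring genuine care is the converse half of Step 2, namely making the truncation identification precise and locating a costandard object receiving a nonzero map from $H^{i}(\CF)$. Neither is deep, so I expect no serious difficulty. (One could also phrase the conclusion as: $\BD^{\on{Verdier}}$ \emph{reverses} the t-structure, which is exactly what the interaction of Verdier duality with a highest-weight t-structure should give.)
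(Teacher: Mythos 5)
Your argument is correct, and since the paper states \corrref{c:duality and t} without proof, there is nothing to compare against; this is essentially the only reasonable way to prove it. Step~1 is the right duality manipulation (invoking \corrref{c:duality irred} and \corrref{c:cost compact in Whit}), and Step~2 is the standard truncation argument in an Artinian setting via \corrref{c:Whit Art} and \corrref{c:t on Whit}.

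One misstatement worth fixing. The vanishing enforced by a t-structure is
\[
\Hom\bigl(\bC^{\leq a},\bC^{\geq b}\bigr)=0 \quad\text{for }a<b,
\]
\emph{not} $\Hom(\bC^{\geq a},\bC^{\leq b})=0$ for $a>b$; the latter would assert the vanishing of all positive-degree $\Ext$s between heart objects, which is false in general (e.g.\ $\Ext^1_{\BZ}(\BZ/2,\BZ/2)\neq 0$ in $D(\BZ)$, and nontrivial $\Ext^1$s between the $W^{\lambda,!*}$ certainly occur in the highest-weight heart here). The identification you want is a correct consequence of the \emph{correct} vanishing: if $i$ is the top nonvanishing cohomological degree of $\CF$ then, since $W^{\lambda,*}[-i]\in(\Whit_{q,x}(G))^{\geq i}$, the adjunction between $\tau^{\geq i}$ and the inclusion of $(\Whit_{q,x}(G))^{\geq i}$ (which is exactly what the correct vanishing supplies) gives
\[
\Hom(\CF,W^{\lambda,*}[-i])\simeq\Hom(\tau^{\geq i}\CF,W^{\lambda,*}[-i])=\Hom\bigl(H^{i}(\CF)[-i],W^{\lambda,*}[-i]\bigr)\simeq\Hom_{\heartsuit}\bigl(H^{i}(\CF),W^{\lambda,*}\bigr).
\]
With that repair your proof stands as written.
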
 

\begin{proof}[Proof of \propref{p:dual of standard}]

Taking into account \eqref{e:costandrard to costandard},
we need to show that the functor $\Whit_{q,x}(G)\to \Vect$, given by
\begin{equation} \label{e:Hom from standard}
\CF\mapsto \CMaps_{\Whit_{q,x}(G)}(W^{\lambda,!},\CF),
\end{equation}
identifies with the functor
\begin{equation} \label{e:pairing with costandard}
\CF\mapsto \langle W^{\lambda,*}_{\on{co}},\CF\rangle,
\end{equation}
where $\langle -,-\rangle$ denotes the canonical pairing 
$$\Whit_{q,x}(G) \otimes \Whit_{q^{-1},\on{co}}(G)\to \Vect.$$

\medskip

The functor \eqref{e:pairing with costandard} is by definition
$$\CF\mapsto  \langle \CF,\delta_{t^\lambda,\Gr}\rangle[\langle \lambda,2\check\rho\rangle ],$$
where $\langle -,-\rangle$ now denotes the canonical pairing 
$$\Shv_{\CG^G}(\Gr^{\omega^\rho}_{G,x})\otimes \Shv_{(\CG^G)^{-1}}(\Gr^{\omega^\rho}_{G,x})\to \Vect.$$

However, again by definition, $\langle \CF,\delta_{t^\lambda,\Gr}\rangle$ is given by taking the !-fiber of $\CF$ at $t^\lambda$. Now, by the definition of 
$W^{\lambda,!}$, the expression in \eqref{e:Hom from standard} is also given by taking the !-fiber of $\CF$ at $t^\lambda$, shifted by 
$[\langle \lambda,2\check\rho\rangle ]$.

\end{proof} 

\ssec{The global definition}  \label{ss:Whit glob}

In this subsection we will explore a different way to define the metaplectic Whittaker category, where instead of the
affine Grassmannian we will use a ``global" algebro-geometric object. The advantage of this approach is that it provides
a finite-dimensional model for $\Whit_{q,x}(G)$. 

\sssec{}

In this subsection we take $X$ to be complete. Let $\BunNbox$ be the version of Drinfeld's compactification
introduced in \cite[Sect. 4.1]{Ga9}. Namely, $\BunNbox$ classifies the data of a $G$-bundle $\CP_G$ on $X$ and 
that of injective maps of coherent sheaves
\begin{equation}  \label{e:Plucker maps}
\kappa^{\clambda}:(\omega^{\frac{1}{2}})^{\langle \clambda,2\rho\rangle}\to \CV^{\clambda}_{\CP_G}(\infty \cdot x), \quad \clambda\in \cLambda^+
\end{equation}
(here $\CV^{\clambda}$ denotes the Weyl module of highest weight $\clambda$), such that the maps $\kappa^{\clambda}$ satisfy
the Pl\"ucker relations, i.e., they define a reduction of $\CP_G$ to $B$ at the generic point of $X$. 

\begin{rem}
When the derived group of $G$ is not simply connected, in addition to the Pl\"ucker relations one imposes another closed condition,
restricting the possible defect of the maps \eqref{e:Plucker maps}, see \cite[Sect. 7]{Sch}. However, 
for the purposes of defining the global Whittaker category, the difference is material, as the objects satisfying the 
Whittaker condition will be automatically supported on the closed substack in question.
\end{rem} 

\sssec{}

For $\lambda\in \Lambda$, let
$$(\ol\Bun^{\omega^{\rho}}_N)_{\leq\lambda\cdot x}\overset{\ol\bi{}_\lambda}\hookrightarrow \BunNbox$$
be the closed substack where we require that for every $\clambda\in \cLambda^+$, the corresponding map \eqref{e:Plucker maps}
has a pole of order $\leq \langle \clambda,\lambda \rangle$. 

\medskip

We denote by 
$$(\ol\Bun^{\omega^{\rho}}_N)_{=\lambda\cdot x}\overset{\bj_\lambda}\hookrightarrow (\ol\Bun^{\omega^{\rho}}_N)_{\leq\lambda\cdot x}$$
the open substack, where we require that for for every $\clambda\in \cLambda^+$, the corresponding map \eqref{e:Plucker maps}
has a pole of order equal $\langle \clambda,\lambda \rangle$, \emph{and is non-vanishing} at other points of $X$. 

\medskip

Set
$$\bi_\lambda=\ol\bi_\lambda\circ \bj_\lambda.$$

\medskip

We note that the strata $\ol\Bun^{\omega^{\rho}}_{N,=\lambda'\cdot x}$ with $\lambda'\leq \lambda$ \emph{do not} cover all of 
$(\ol\Bun^{\omega^{\rho}}_N)_{\leq\lambda\cdot x}$.
Namely, they miss all the points for which the maps \eqref{e:Plucker maps} have zeroes on $X-x$. 

\medskip

Note that the stack $\ol\Bun^{\omega^{\rho}}_{N,=0\cdot x}$ identifies with 
$$\Bun_N^{\omega^\rho}:=\Bun_B\underset{\Bun_T}\times \{\omega^\rho\}.$$

\sssec{}  \label{sss:global gerbe G}

According to \secref{sss:gerbe global}, the geometric metaplectic data $\CG^G$ descends to a gerbe,
also denoted $\CG^G$, on $\Bun_G$.  We will denote by the same symbol 
$\CG^G$ the gerbe on $\BunNbox$ equal to the ratio of the pullback of $\CG^G$ along the forgetful map
$$\BunNbox\to \Bun_G$$
and the fiber of $\CP^G$ at the point $\omega^\rho\in \Bun_G$. 

\medskip

Note that wit this definition, the restriction of $\CG^G$ to the locally closed substack
$$\Bun_N^{\omega^\rho}\hookrightarrow \BunNbox$$
is canonically trivialized. 

\sssec{}

Inside $\Shv_{\CG^G}(\BunNbox)$ one singles out a full subcategory, denoted $\Whit_{q,\on{glob}}(G)$, by imposing the
condition of equivariance with respect to a certain unipotent groupoid. We refer the reader to \cite[Sects. 4.4-4.7]{Ga9},
where this equivariance condition is written out in detail. We note that in Remark \ref{r:glob Whit condition} below we will give 
another (but of course equivalent) way to characterize this subcategory. 

\medskip

The embedding
$$\Whit_{q,\on{glob}}(G)\hookrightarrow \Shv_{\CG^G}(\BunNbox)$$
is compatible with the (perverse) t-structure on $\Shv_{\CG^G}(\BunNbox)$, 
and admits a continuous right adjoint, denoted $\on{Av}^{N_{\on{glob}},\chi_N}_*$, see \cite[Corollary 4.7.4]{Ga9}.

\sssec{}

Replacing $\BunNbox$ by $(\ol\Bun^{\omega^{\rho}}_N)_{\leq\lambda\cdot x}$ or $(\ol\Bun^{\omega^{\rho}}_N)_{=\lambda\cdot x}$, one has
the similarly defined full subcategories 
$$\Whit_{q,\on{glob}}(G)_{\leq \lambda}\subset \Shv_{\CG^G}((\ol\Bun^{\omega^{\rho}}_N)_{\leq\lambda\cdot x}) \text{ and } 
\Whit_{q,\on{glob}}(G)_{=\lambda}\subset \Shv_{\CG^G}((\ol\Bun^{\omega^{\rho}}_N)_{=\lambda\cdot x}).$$

\medskip

The adjunctions
$$(\ol\bi{}_\lambda)_!:\Shv_{\CG^G}((\ol\Bun^{\omega^{\rho}}_N)_{\leq\lambda\cdot x}) \rightleftarrows \Shv_{\CG^G}(\BunNbox):\ol\bi{}_\lambda^!$$
and 
$$\bj_\lambda^*:\Shv_{\CG^G}((\ol\Bun^{\omega^{\rho}}_N)_{\leq\lambda\cdot x}) \rightleftarrows \Shv_{\CG^G}((\ol\Bun^{\omega^{\rho}}_N)_{=\lambda\cdot x}):(\bj_\lambda)_*$$
give rise to commutative diagrams
$$
\xymatrix{
\Whit_{q,\on{glob}}(G)_{\leq\lambda} \ar[rr]<2pt>^{(\ol\bi{}_\lambda)_!} \ar[d] &&
\Whit_{q,\on{glob}}(G)_{=\lambda}
\ar[ll]<2pt>^{\ol\bi{}_\lambda^!} \ar[d] \\
\Shv_{\CG^G}((\ol\Bun^{\omega^{\rho}}_N)_{\leq\lambda\cdot x})
\ar[rr]<2pt>^{(\ol\bi{}_\lambda)_!} &&
 \Shv_{\CG^G}(\BunNbox)
\ar[ll]<2pt>^{\ol\bi{}_\lambda^!}}
$$
and
$$
\xymatrix{
\Whit_{q,\on{glob}}(G)_{\leq\lambda} \ar[rr]<2pt>^{\bj_\lambda^*} \ar[d] &&
\Whit_{q,\on{glob}}(G)_{=\lambda}
\ar[ll]<2pt>^{(\bj_\lambda)_*} \ar[d] \\
\Shv_{\CG^G}((\ol\Bun^{\omega^{\rho}}_N)_{\leq\lambda\cdot x})
\ar[rr]<2pt>^{\bj_\lambda^*} &&
\Shv_{\CG^G}((\ol\Bun^{\omega^{\rho}}_N)_{=\lambda\cdot x}).
\ar[ll]<2pt>^{(\bj_\lambda)_*}}
$$

Furthermore, the diagrams
$$
\xymatrix{
\Whit_{q,\on{glob}}(G)_{\leq\lambda} \ar[rr]<2pt>^{(\ol\bi{}_\lambda)_!}  &&
\Whit_{q,\on{glob}}(G)_{=\lambda}
\ar[ll]<2pt>^{\ol\bi{}_\lambda^!} \\
\Shv_{\CG^G}((\ol\Bun^{\omega^{\rho}}_N)_{\leq\lambda\cdot x})
\ar[rr]<2pt>^{(\ol\bi{}_\lambda)_!} \ar[u]^{\on{Av}^{N_{\on{glob}},\chi_N}_*} &&
\Shv_{\CG^G}(\BunNbox)
\ar[ll]<2pt>^{\ol\bi{}_\lambda^!} \ar[u]_{\on{Av}^{N_{\on{glob}},\chi_N}_*}}
$$
and
$$
\xymatrix{
\Whit_{q,\on{glob}}(G)_{\leq\lambda} \ar[rr]<2pt>^{\bj_\lambda^*}  &&
\Whit_{q,\on{glob}}(G)_{=\lambda}
\ar[ll]<2pt>^{(\bj_\lambda)_*}  \\
\Shv_{\CG^G}((\ol\Bun^{\omega^{\rho}}_N)_{\leq\lambda\cdot x})
\ar[rr]<2pt>^{\bj_\lambda^*} \ar[u]^{\on{Av}^{N_{\on{glob}},\chi_N}_*} &&
\Shv_{\CG^G}((\ol\Bun^{\omega^{\rho}}_N)_{=\lambda\cdot x}).
\ar[ll]<2pt>^{(\bj_\lambda)_*}  \ar[u]_{\on{Av}^{N_{\on{glob}},\chi_N}_*}}
$$
commute as well. 

\medskip

The partially defined functor 
$$(\bj_\lambda)_!:\Shv_{\CG^G}((\ol\Bun^{\omega^{\rho}}_N)_{=\lambda\cdot x})\to \Shv_{\CG^G}((\ol\Bun^{\omega^{\rho}}_N)_{\leq\lambda\cdot x}),$$
left adjoint to $\bj_\lambda^*$ is defined on $\Whit_{q,\on{glob}}(G)_{=\lambda}$, and makes the diagram
$$
\CD
\Whit_{q,\on{glob}}(G)_{=\lambda}  @>{(\bj_\lambda)_!}>>  \Whit_{q,\on{glob}}(G)_{\leq\lambda}  \\
@VVV   @VVV    \\
\Shv_{\CG^G}((\ol\Bun^{\omega^{\rho}}_N)_{=\lambda\cdot x})  @>{(\bj_\lambda)_!}>> \Shv_{\CG^G}((\ol\Bun^{\omega^{\rho}}_N)_{\leq\lambda\cdot x})
\endCD
$$
commute. 

\medskip

Finally, we have:

\begin{lem} \label{l:properties global} \hfill

\smallskip

\noindent{\em(a)} The category $\Whit_{q,\on{glob}}(G)_{=\lambda}$ is zero unless $\lambda$ is dominant. 

\smallskip

\noindent{\em(b)} For $\lambda$ dominant, the category $\Whit_{q,\on{glob}}(G)_{=\lambda}$ is 
non-canonically equivalent to $\Vect$; its generator is locally constant (as an object of $\Shv_{\CG^G}((\ol\Bun^{\omega^{\rho}}_N)_{=\lambda\cdot x})$); 

\smallskip

\noindent{\em(c)} Every object of $\Whit_{q,\on{glob}}(G)_{\leq\lambda}$, whose restriction to $(\ol\Bun^{\omega^{\rho}}_N)_{=\lambda\cdot x}$ vanishes, is supported 
on $(\ol\Bun^{\omega^{\rho}}_N)_{<\lambda\cdot x}:=\underset{\lambda'<\lambda}\bigcup\, \ol\Bun^{\omega^{\rho}}_{N,\leq \lambda'\cdot x}$.

\end{lem}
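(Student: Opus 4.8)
The plan is to establish this lemma as the global counterpart of \propref{p:Whittaker strata} and \corref{c:Whittaker strata}, transporting the local computation along an explicit description of the strata of $\BunNbox$. The first step is to identify the locally closed substack $(\ol\Bun^{\omega^{\rho}}_N)_{=\lambda\cdot x}$. A point of it is a generic $B$-reduction of a $G$-bundle that is defined and non-degenerate on all of $X-x$, has induced $T$-bundle $\omega^\rho$ there, and whose Pl\"ucker sections acquire a pole of order exactly $\langle\clambda,\lambda\rangle$ at $x$; by the Beauville-Laszlo theorem this is the same datum as an honest $B$-bundle on $X$ with $T$-part $\omega^\rho(\lambda\cdot x)$. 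Hence
$$(\ol\Bun^{\omega^{\rho}}_N)_{=\lambda\cdot x}\;\simeq\;\Bun_{N_\lambda},$$
where $N_\lambda$ is the unipotent group scheme on $X$ obtained from the $\omega^\rho$-twisted form $N^{\omega^\rho}$ by the Hecke modification $\on{Ad}(t^\lambda)$ at $x$: it agrees with $N^{\omega^\rho}$ over $X-x$, and for $\lambda=0$ it recovers $\Bun_N^{\omega^\rho}=\ol\Bun^{\omega^{\rho}}_{N,=0\cdot x}$ (cf. \cite{Ga9}). Since $\CG^G$ is canonically trivialized along $\Bun_N^{\omega^\rho}$ (see \secref{sss:global gerbe G}), it is, by transport along the modification at $x$, canonically trivialized along each $\Bun_{N_\lambda}$; thus under the above identification the defining equivariance condition of $\Whit_{q,\on{glob}}(G)_{=\lambda}$ becomes a Whittaker-type equivariance of $\Shv(\Bun_{N_\lambda})$ against a character sheaf built, as usual, out of the Artin-Schreier sheaf and the local expansions at $x$ of the coordinates of $N_\lambda/[N_\lambda,N_\lambda]$.

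For parts (a) and (b) the plan is to mimic the proof of \propref{p:Whittaker strata}. Fix the base point $\bn_\lambda\in\Bun_{N_\lambda}$ coming from the trivial $N$-bundle, and consider the functor $\Whit_{q,\on{glob}}(G)_{=\lambda}\to\Vect$ obtained by forgetting equivariance and taking the $!$-fiber at $\bn_\lambda$. One shows that it is conservative (by filtering $N_\lambda$ through its lower central series, so that $\Bun_{N_\lambda}$ is assembled from stacks of the form $\Bun_{\Tot(L)}$ for various line bundles $L$ on $X$), identifies its left adjoint, and computes the resulting (co)monad on $\Vect$: exactly as in {\it loc.cit.}, it is the identity or zero according to whether the Whittaker character is trivial on the stabilizer of $\bn_\lambda$, and this triviality holds \emph{if and only if} $\lambda\in\Lambda^+$. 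Here the local statement ``$\chi_N|_{\on{Stab}_{\fL(N)^{\omega^\rho}_x}(t^\lambda)}$ is trivial iff $\lambda$ dominant'' is replaced by its global shadow: for each positive root $\beta$ the corresponding layer is a line bundle on $X$ whose only $\lambda$-dependence is the order $\langle\beta,\lambda\rangle$ of its modification at $x$, and whether the relevant averaging degenerates is governed precisely by the sign of $\langle\beta,\lambda\rangle$ --- which is $\ge 0$ for all $\beta$ simultaneously exactly when $\lambda$ is dominant. When $\lambda\in\Lambda^+$ the monad is trivial, so the fiber functor is an equivalence $\Whit_{q,\on{glob}}(G)_{=\lambda}\simeq\Vect$ whose generator is built, layer by layer, from pullbacks of the Artin-Schreier sheaf along smooth maps and from constant sheaves, hence is locally constant; this is (b). When $\lambda\notin\Lambda^+$ the monad is zero, so $\Whit_{q,\on{glob}}(G)_{=\lambda}=0$; this is (a).

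For part (c), let $Z\subset(\ol\Bun^{\omega^{\rho}}_N)_{\le\lambda\cdot x}$ denote the complement of $(\ol\Bun^{\omega^{\rho}}_N)_{=\lambda\cdot x}\cup(\ol\Bun^{\omega^{\rho}}_N)_{<\lambda\cdot x}$; by construction $Z$ is the locus where the Pl\"ucker sections vanish somewhere on $X-x$, stratified by the corresponding $(\Lambda^{\on{neg}}-0)$-valued divisor on $X-x$. Over each such stratum, the factorization structure of Drinfeld's compactification exhibits an \'etale-local neighbourhood as a product of a stratum of ``$=\mu\cdot x$'' type with a factor of $\Conf$ recording that divisor (cf. \cite{Ga9}), and along the $\Conf$-directions the Whittaker equivariance amounts to averaging against a nondegenerate additive character of a positive-dimensional vector group. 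Hence the $*$-restriction to $Z$ of any object of $\Whit_{q,\on{glob}}(G)_{\le\lambda}$ vanishes --- this is the metaplectic version of the cleanness/vanishing property of the Whittaker sheaf on $\Bun_N$, with the gerbe playing no role as it is trivialized on all the strata involved. Consequently any $\CF\in\Whit_{q,\on{glob}}(G)_{\le\lambda}$ is automatically supported on $(\ol\Bun^{\omega^{\rho}}_N)_{=\lambda\cdot x}\cup(\ol\Bun^{\omega^{\rho}}_N)_{<\lambda\cdot x}$; if in addition $\CF|_{(\ol\Bun^{\omega^{\rho}}_N)_{=\lambda\cdot x}}=0$, then it is supported on $(\ol\Bun^{\omega^{\rho}}_N)_{<\lambda\cdot x}$, as claimed.

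The main obstacle is part (c): parts (a) and (b) are a faithful transcription of the already-proven local argument, whereas (c) requires controlling how Whittaker equivariance interacts with the ``defect away from $x$'' locus --- i.e. with the factorization geometry of $\BunNbox$ --- and establishing the cleanness statement in the metaplectically twisted setting. The gerbe itself causes no trouble, being canonically trivialized on all relevant strata; but one must still check that these trivializations are compatible with the factorization maps entering the cleanness argument and, in the constructible contexts, that the $!$-extensions and averaging functors used are defined on the objects at hand.
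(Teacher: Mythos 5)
The paper itself supplies no proof of this lemma; it is presented as inherited from the framework of \cite{Ga9}, which for part (c) rests on the cleanness result of \cite{FGV}. Your reconstruction tracks the expected argument correctly: (a) and (b) transport the local proof of \propref{p:Whittaker strata} via the identification of $(\ol\Bun^{\omega^{\rho}}_N)_{=\lambda\cdot x}$ with a stack of twisted $N$-bundles (the gerbe being trivializable there since that stack is an iterated affine fibration over a point), and (c) is the FGV-style Whittaker vanishing on strata with defect away from $x$. Two precisions worth making: for (a)/(b), the conservativity of the $!$-fiber functor at the basepoint is not a statement about the stack alone but about the transitivity of the equivariance groupoid acting on it, and the filtration you invoke should be phrased at that level to mirror the local proof exactly; for (c), the vanishing mechanism is that the factorization picture near a defect point $y\neq x$ brings in a unipotent quotient of $\fL(N)_y$ on which the Whittaker character is nondegenerate, and your phrasing via $\Conf$-directions is a reasonable heuristic for that. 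Your self-assessment of the remaining gaps --- compatibility of the metaplectic trivializations with the factorization isomorphisms, and definedness of the relevant $!$-functors in the constructible contexts --- is accurate; those are exactly what the cited references, together with the metaplectic adaptations set up in \secref{sss:global gerbe G}, are meant to supply.
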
 

\begin{cor} \label{c:properties global}
For every $\lambda\in \Lambda^+$ there exists a \emph{quasi-compact} open substack $U\subset (\ol\Bun^{\omega^{\rho}}_N)_{\leq\lambda\cdot x}$, such that every
object of $\Whit_{q,\on{glob}}(G)_{\leq\lambda}$ is a \emph{clean extension} of its restriction to $U$.
\end{cor}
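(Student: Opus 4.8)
\textbf{Proof proposal for \corref{c:properties global}.}
The plan is to deduce this from \lemref{l:properties global} together with the finiteness properties of the stratification of $(\ol\Bun^{\omega^{\rho}}_N)_{\leq\lambda\cdot x}$. First I would recall that for a fixed $\lambda\in\Lambda^+$ the stratification of $(\ol\Bun^{\omega^{\rho}}_N)_{\leq\lambda\cdot x}$ by the defect value and the position of the extra zeroes on $X-x$ has finitely many ``boundary'' pieces once we bound the total defect: concretely, the points not lying in $\underset{\lambda'\leq\lambda}\bigcup\,\ol\Bun^{\omega^{\rho}}_{N,=\lambda'\cdot x}$ are those for which some $\kappa^{\clambda}$ acquires a zero away from $x$, and the locus where this happens is a union of substacks indexed by colored divisors on $X-x$ of total degree $\leq\lambda$ — a \emph{bounded} set of data, hence finitely many strata of the ``type'', each of which is of finite type (being fibered over a power of $X-x$ times a bounded piece of $\Bun_N^{\omega^\rho}$-type data). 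The key point I would extract is: there is a quasi-compact open $U\subset (\ol\Bun^{\omega^{\rho}}_N)_{\leq\lambda\cdot x}$ whose complement $Z$ is a \emph{finite} union of locally closed substacks, each of infinite codimension relative to the directions in which objects of $\Whit_{q,\on{glob}}(G)_{\leq\lambda}$ are constrained.

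Next I would make precise the cleanness claim. An object $\CF\in\Whit_{q,\on{glob}}(G)_{\leq\lambda}$ is a clean extension of its restriction to $U$ if, writing $\jmath\colon U\hookrightarrow (\ol\Bun^{\omega^{\rho}}_N)_{\leq\lambda\cdot x}$, the natural map $\jmath_!\jmath^*\CF\to\CF$ is an isomorphism, equivalently $\jmath^*$ is fully faithful on $\Whit_{q,\on{glob}}(G)_{\leq\lambda}$ with essential image the whole subcategory of objects supported set-theoretically on $\ol U$. By the recollement triangle it suffices to show that no nonzero object of $\Whit_{q,\on{glob}}(G)_{\leq\lambda}$ is supported on the closed complement $Z=(\ol\Bun^{\omega^{\rho}}_N)_{\leq\lambda\cdot x}\setminus U$. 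Here I would use the Whittaker (non-degenerate character) equivariance: along each stratum of $Z$, the relevant unipotent groupoid acts with a character that is \emph{nontrivial} on the stabilizer — this is exactly the global avatar of the computation in the proof of \propref{p:Whittaker strata}, where the dominance/defect condition forces $\chi_N$ to be nontrivial on the stabilizer of any point lying on a ``wrong'' stratum (one with a zero on $X-x$, or a non-dominant $\lambda'$). Hence the $*$- and $!$-restrictions of any Whittaker object to such a stratum vanish, so $\CF|_Z=0$, giving the cleanness.

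Concretely the steps, in order, are: (1) fix $\lambda$, describe the strata of $(\ol\Bun^{\omega^{\rho}}_N)_{\leq\lambda\cdot x}$ and observe that the ``bad'' locus (with zeroes on $X-x$) is a finite union of locally closed substacks, each of finite type; (2) choose $U$ to be the complement of (the closure of) this bad locus together with the non-dominant strata, and check $U$ is quasi-compact open — this uses that only finitely many dominant $\lambda'\leq\lambda$ occur and each $\ol\Bun^{\omega^{\rho}}_{N,=\lambda'\cdot x}$ is of finite type; (3) on each stratum of $Z:=(\ol\Bun^{\omega^{\rho}}_N)_{\leq\lambda\cdot x}\setminus U$ verify, as in \propref{p:Whittaker strata} and using \lemref{l:properties global}(a),(c), that the Whittaker character is nontrivial on the stabilizer, whence $i_Z^!\CF=0=i_Z^*\CF$ for all $\CF\in\Whit_{q,\on{glob}}(G)_{\leq\lambda}$; (4) conclude via the recollement that $\jmath_!\jmath^*\CF\xrightarrow{\ \sim\ }\CF$. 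The main obstacle, I expect, is step (1)–(2): one must organize the combinatorics of the degeneration locus of the Pl\"ucker maps \eqref{e:Plucker maps} on $X-x$ so as to produce a \emph{single} quasi-compact open $U$ that simultaneously works — i.e. checking that, after imposing the Whittaker condition, the support of every object really is contained in a bounded region, so that $U$ can be taken independent of $\CF$. This is precisely the global reflection of the fact that $\Whit_{q,x}(G)_{\leq\lambda}$ is governed by the finitely many orbits $S^{\lambda'}$ with $\lambda'\leq\lambda$ dominant, and I would import the orbit-stabilizer character computation from \secref{s:Whit} essentially verbatim.
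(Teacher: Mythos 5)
The central step of your proposal, (1)--(2), rests on a claim that is false: the locus where the Pl\"ucker maps degenerate on $X-x$ is \emph{not} a finite union of finite-type substacks, and the complement of this locus is \emph{not} open. Concretely, the degeneracy divisor on $X-x$ is a point of $\Conf$ with unbounded total degree: the bound you assert (``total degree $\leq\lambda$'') controls only the pole at $x$, while the order of vanishing of $\kappa^{\clambda}$ away from $x$ is governed by the degree of the generic $B$-reduction, which is unbounded as the underlying $G$-bundle runs over the non-quasi-compact $\Bun_G$. This is exactly why $\BunNbox$ and each $(\ol\Bun^{\omega^{\rho}}_N)_{\leq\lambda\cdot x}$ fail to be quasi-compact in the first place. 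Moreover the good locus $S:=\bigcup_{\lambda'\in\Lambda^+,\,\lambda'\leq\lambda}(\ol\Bun^{\omega^{\rho}}_N)_{=\lambda'\cdot x}$ is only \emph{locally closed}, not open: each piece with $\lambda'<\lambda$ lies in the closure of the bad locus (move a zero on $X-x$ into the pole at $x$). So if you literally take $U$ to be the complement of the closure of the bad locus, you are left with $U\subset(\ol\Bun^{\omega^{\rho}}_N)_{=\lambda\cdot x}$, losing the lower strata, and for instance $W^{\lambda',*}_{\on{glob}}$ with $\lambda'<\lambda$ restricts to zero on this $U$, so cannot be a clean extension of that restriction.

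The fix is to abandon the excision picture. Instead observe: $S$ is a finite union of locally closed substacks, each isomorphic to some $\Bun_N^{\omega^\rho(-\lambda'\cdot x)}$ and hence of finite type, so $S$ is quasi-compact; and $(\ol\Bun^{\omega^{\rho}}_N)_{\leq\lambda\cdot x}$ is an increasing union of quasi-compact open substacks. Therefore one may choose $U$ to be \emph{any} quasi-compact open containing $S$ (such exists since $S$ is quasi-compact). Your steps (3)--(4) then go through with this $U$: by \lemref{l:properties global}(a),(c) and a downward induction over $\{\lambda''\leq\lambda\}$ (restrict to the closed substack $(\ol\Bun^{\omega^{\rho}}_N)_{\leq\lambda''\cdot x}$, where both $*$- and $!$-restriction preserve the Whittaker subcategory, and apply (c) there), both the $*$- and $!$-fibers of any $\CF\in\Whit_{q,\on{glob}}(G)_{\leq\lambda}$ vanish at every point outside $S$, hence outside $U$, and cleanness follows from the recollement triangles.
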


\sssec{} \label{sss:irred global}

For $\lambda$ dominant, pick a generator of $\Whit_{q,\on{glob}}(G)_{=\lambda}$ that is perverse on 
$(\ol\Bun^{\omega^{\rho}}_N)_{=\lambda\cdot x}$, and let $W^{\lambda,!}_{\on{glob}}\in \Whit_{q,\on{glob}}(G)$ 
(resp., $W^{\lambda,*}_{\on{glob}}\in \Whit_{q,\on{glob}}(G)$)
be obtained by applying to it the functor $(\bi_\lambda)_!:=(\ol\bi{}_\lambda)_!\circ (\bj_\lambda)_!$ 
(resp., $(\bi_\lambda)_*:=(\ol\bi{}_\lambda)_!\circ (\bj_\lambda)_*$). 

\medskip

Let also 
$W^{\lambda,!*}_{\on{glob}}$ be the Goresky-MacPherson extension of the above object in $\Whit_{q,\on{glob}}(G)_{=\lambda}$. The objects
$W^{\lambda,!*}_{\on{glob}}$ are the irreducibles in $(W^{\lambda,!}_{\on{glob}})^\heartsuit$. 

\medskip

It follows from \lemref{l:properties global} that the objects $W^{\lambda,!}_{\on{glob}}$ generate $\Whit_{q,\on{glob}}(G)$. Since the open embedding
$(\bj_\lambda)_*$ is affine (see \cite[Sect. Prop. 3.3.1]{FGV}), we have
$$W^{\lambda,!}_{\on{glob}},W^{\lambda,*}_{\on{glob}}\in (\Whit_{q,\on{glob}}(G))^\heartsuit.$$

They are of finite length and compact as objects of $\Shv_{\CG^G}(\BunNbox)$ by 
\corref{l:properties global}. 

\medskip

Furthermore, an object $\CF\in \Whit_{q,\on{glob}}(G)$ is coconnective if and only if
$$\Hom_{\Whit_{q,\on{glob}}(G)}(W^{\lambda,!}_{\on{glob}}[k],\CF)=0$$
for all $\lambda\in \Lambda^+$ and $k>0$. 

\sssec{}

It follows from \corref{l:properties global} that Verdier duality for $\Shv_{\CG^G}(\BunNbox)^{\on{loc.c}}$ 
(see \secref{sss:locally compact}) defines an equivalence\footnote{Up to replacing the 
Artin-Schreier sheaf by its inverse.}
\begin{equation} \label{e:self duality for glob Whit}
(\Whit_{q,\on{glob}}(G))^\vee\simeq \Whit_{q^{-1},\on{glob}}(G).
\end{equation} 

Denote the resulting equivalence 
\begin{equation} \label{e:self duality for glob Whit comp}
\left((\Whit_{q,\on{glob}}(G))^c\right)^{\on{op}}\to (\Whit_{q^{-1},\on{glob}}(G))^c
\end{equation} 
by $\BD^{\on{Verdier}}$. 

\medskip

We have
$$\BD^{\on{Verdier}}(W^{\lambda,!}_{\on{glob}})\simeq W^{\lambda,*}_{\on{glob}} \text{ and }
\BD^{\on{Verdier}}(W^{\lambda,*}_{\on{glob}})\simeq W^{\lambda,!}_{\on{glob}}.$$

\ssec{The local vs global equivalence}  \label{ss:loc to glob Whit}

In this subsection we will state a theorem to the effect that the global Whittaker category 
$\Whit_{q,\on{glob}}(G)$ is equivalent to the local Whittaker category $\Whit_{q,x}(G)$
defined earlier. 

\sssec{}

We have a natural projection
$$\pi_x:\Gr^{\omega^\rho}_{G,x}\to \BunNbox.$$

Note that, according to our conventions, the gerbe on $\BunNbox$ 
that we denoted $\CG^G$ pulls back to the gerbe $\CG^G$ on $\Gr^{\omega^\rho}_{G,x}$.
Consider the corresponding pullback functor
$$\pi_x^!:\Shv_{\CG^G}(\BunNbox)\to \Shv_{\CG^G}(\Gr^{\omega^\rho}_{G,x}).$$

\medskip

According to \cite[Theorem 5.1.4(a)]{Ga9}, the functor 
$\pi_x^!$ sends $\Whit_{q,\on{glob}}(G)$ to
$$\Whit_{q,x}(G)\subset \Shv_{\CG^G}(\Gr^{\omega^\rho}_{G,x}).$$

\medskip

In \secref{ss:proof local to global} we will prove:

\begin{thm}   \label{t:local to global}
The resulting functor 
$$\pi_x^!:\Whit_{q,\on{glob}}(G)\to \Whit_{q,x}(G)$$
is an equivalence. Moreover, after applying the cohomological shift by
$$d_g:=\dim(\Bun_N^{\omega^\rho})=(g-1)(d-\langle 2\check\rho,2\rho\rangle),\quad d=\dim(\fn),$$
it is t-exact and sends standards (resp., costandards) to standards  (resp., costandards).
\end{thm}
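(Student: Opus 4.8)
\textbf{Proof proposal for \thmref{t:local to global}.}

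The plan is to prove the three assertions in reverse order of difficulty, using the stratifications on both sides as the organizing principle. First I would address the t-exactness and the matching of standards/costandards, granting the equivalence statement, and then return to the equivalence itself. The key geometric input is that $\pi_x$ restricts, over each stratum, to a nice affine-space-bundle-type map: concretely, the preimage $\pi_x^{-1}((\ol\Bun_N^{\omega^\rho})_{=\lambda\cdot x})$ inside $S^\lambda\subset \Gr^{\omega^\rho}_{G,x}$ is, up to the relevant pro-unipotent issues, a torsor under a pro-unipotent group, so that $\pi_x^!$ restricted to the $\lambda$-stratum is (after the shift by $d_g$) a fully faithful t-exact functor $\Whit_{q,\on{glob}}(G)_{=\lambda}\to \Whit_{q,x}(G)_{=\lambda}$. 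Since by \lemref{l:properties global}(b) and \propref{p:Whittaker strata}(b) both stratum-categories are non-canonically $\Vect$ with the distinguished generators being (shifts of) $W^{\lambda,!}_{\on{glob}}$ resp.\ $\overset{\circ}W{}^\lambda$, it suffices to check that $\pi_x^!$ sends one generator to the other up to the stated shift; this is a direct computation of a !-fiber (the dimension count $d_g=(g-1)(d-\langle 2\check\rho,2\rho\rangle)$ is exactly $\dim\Bun_N^{\omega^\rho}$, which bookkeeps the difference between the global stack and the local orbit). Granting this on strata, the compatibility of $\pi_x^!$ with the functors $(\ol\bi_\lambda)_!$, $\bj_\lambda^*$, $(\bj_\lambda)_!$, $(\bj_\lambda)_*$ — which follows from base change along the Cartesian squares relating $\Gr^{\omega^\rho}_{G,x}$-strata to $\BunNbox$-strata — propagates the statement to all of $\Whit_{q,\on{glob}}(G)$ and yields $\pi_x^!(W^{\lambda,!}_{\on{glob}})\simeq W^{\lambda,!}[d_g]$, $\pi_x^!(W^{\lambda,*}_{\on{glob}})\simeq W^{\lambda,*}[d_g]$, hence t-exactness after the shift by the definition of the t-structures in terms of the standards.

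For the equivalence itself, the standard strategy is: (i) since $\pi_x^!$ is continuous and sends the compact generators $\{W^{\lambda,!}_{\on{glob}}\}_{\lambda\in\Lambda^+}$ to the compact generators $\{W^{\lambda,!}[d_g]\}_{\lambda\in\Lambda^+}$ (up to shift), it is essentially surjective and it suffices to prove full faithfulness on these generators; (ii) full faithfulness reduces, by the orthogonality relations \eqref{e:standards Whit} on the local side (and their global counterpart via $W^{\lambda,*}_{\on{glob}}=\BD^{\on{Verdier}}(W^{\lambda,!}_{\on{glob}})$), to computing $\CMaps(W^{\lambda,!}_{\on{glob}},W^{\lambda',*}_{\on{glob}})$ and comparing with $\CMaps(W^{\lambda,!},W^{\lambda',*})$. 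The latter is handled by the clean-extension statement \corref{c:properties global}: the pairing of a !-extension with a $*$-extension localizes to a quasi-compact open $U\subset(\ol\Bun_N^{\omega^\rho})_{\leq\lambda\cdot x}$, and on such a $U$ the map $\pi_x$ is of finite type and one can apply ordinary base change to identify the global Hom-complex with the local one. Alternatively — and this is probably the cleaner route given that the excerpt has \thmref{t:inv and coinv} available — one realizes $\Whit_{q,x}(G)$ via the Ran-space/factorization description of \secref{s:proof local to global} (the functor $\on{sprd}_{\Ran_x}$ and the factorization algebra $\on{Vac}_{\Whit,\Ran}$), in which $\BunNbox$ enters as the ``global model'' for the Ran Grassmannian, and deduces the equivalence from a contractibility statement for the relevant Ran space (the Ran space of $X$ is homologically contractible). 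In either approach, the role of the self-duality \eqref{e:self duality for Whit}, \eqref{e:self duality for glob Whit} is to reduce $*$-to-$*$ or $!$-to-$!$ computations to the $!$-to-$*$ ones where the orthogonality relations apply.

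The main obstacle, I expect, is the passage from the fully-faithful-on-strata statement to the genuine equivalence: a priori $\pi_x^!$ could fail to be fully faithful because of Hom's ``crossing'' strata, i.e.\ $\Ext$'s between $W^{\lambda,!}_{\on{glob}}$ and $W^{\lambda',*}_{\on{glob}}$ for $\lambda\neq\lambda'$, where the comparison with the local picture is not immediate since $\pi_x$ is not of finite type globally. This is precisely what \corref{c:properties global} (clean extension from a quasi-compact open) is designed to overcome, but verifying that the clean-extension property holds — i.e.\ that objects of $\Whit_{q,\on{glob}}(G)_{\leq\lambda}$ are clean off a quasi-compact open — requires the full analysis of the defect stratification of $\BunNbox$ (the points where the Pl\"ucker maps \eqref{e:Plucker maps} acquire zeroes on $X-x$) and the Whittaker vanishing on those defect strata, which is the content of \lemref{l:properties global}(c). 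I would therefore expect the bulk of the work in \secref{ss:proof local to global} to be: (1) the stratum-wise computation of $\pi_x^!$ on generators, and (2) a careful deduction of full faithfulness using cleanness, most likely routed through the Ran-space incarnation so that contractibility of $\Ran$ does the heavy lifting rather than a direct stack-by-stack estimate.
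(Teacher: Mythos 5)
Your route (b) for full faithfulness — pass to the Ran space, use $\on{sprd}_{\Ran_x}$ and the factorization structure — is essentially the paper's argument, and you're right that it is cleaner than the stratum-by-stratum approach. To be precise about the mechanism: the paper factors $\pi_x^!$ through $\pi_{\Ran_x}^!\colon\Whit_{q,\on{glob}}(G)\to \Whit_{q,\Ran_x}(G)^{\leq 0}_{\infty\cdot x}$ and then $\on{unit}^!$. The first arrow is fully faithful by \thmref{t:N contr} (which is a contractibility statement for the fibers of $\pi_{\Ran_x}$ over $\BunNbox$, not quite the homological contractibility of $\Ran$ itself that you invoke), and the second is an equivalence by \thmref{t:restr to unit} (the unital/spreading principle). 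The key final step, which you don't mention, is a retract argument: since $\pi_{\Ran_x}\circ\on{unit}$ factors through $\pi_x$, the functor $\pi_x^!$ is a retract of the fully faithful composite, hence itself fully faithful. Your route (a), via \corref{c:properties global} and base change on a quasi-compact open, is not how the paper proceeds and would still leave cross-stratum $\Ext$'s to control; the Ran-space route sidesteps that entirely.

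There is, however, a genuine gap in the way you handle the standards. You claim to verify $\pi_x^!(W^{\lambda,!}_{\on{glob}})\simeq W^{\lambda,!}[d_g]$ by a stratum-wise computation plus ``compatibility of $\pi_x^!$ with $(\bj_\lambda)_!$.'' But $\pi_x^!$ is a right-adjoint-type functor (!-pullback), while $(\bj_\lambda)_!$ is a left adjoint, and there is no base-change isomorphism $\pi_x^!\circ(\bj_\lambda)_!\simeq(\bj_\lambda)_!\circ(\pi_x|_{=\lambda})^!$ in general. What does work directly from the definitions is the costandard statement $\pi_x^!(W^{\mu,*}_{\on{glob}})\simeq W^{\mu,*}[d_g]$, since $(\bj_\lambda)_*$ and $(\ol\bi_\lambda)_!$ do satisfy base change against !-pullback. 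The paper uses exactly this asymmetry: it first proves full faithfulness (which, together with the costandard statement, gives essential surjectivity since the $W^{\mu,*}$ generate), and only then deduces the standards statement, by observing that the canonical map $W^{\mu,!}\to\pi_x^!(W^{\mu,!}_{\on{glob}})[d_g]$ induces an isomorphism on maps into every $W^{\mu',*}$, which co-generate. T-exactness then follows formally from the characterization of the t-structures via the standards. So the logical order is full faithfulness $\Rightarrow$ standards $\Rightarrow$ t-exactness, not the reverse as your proposal has it; establishing the standards statement directly is not possible by the base-change route you sketch.
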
 

\sssec{}    \label{sss:proof t on Whit}

Some remarks are in order. 

\medskip

First off, it is easy to see from the definitions that $\pi_x^!$, shifted cohomologically by 
$d_g$ sends 
$W^{\mu,*}_{\on{glob}}$ to $W^{\mu,*}$. Since the latter objects
generate $\Whit_{q,x}(G)$, in order to prove that $\pi_x^!$ is an equivalence, it suffices to show that it is fully faithful.
The proof of fully faithfulness will be given in \secref{sss:proof local to global}.

\medskip

Second, we have a tautological map
\begin{equation} \label{e:standard to standard}
W^{\mu,!}\to \pi_x^!(W^{\mu,!}_{\on{glob}})[d_g]
\end{equation} 

If we assume that $\pi_x^!$ is fully faithful, we obtain that the map \eqref{e:standard to standard} induces an isomorphism
on maps into any $W^{\mu',*}$. Since the latter objects \emph{co-generate} $\Whit_{q,x}(G)$, we obtain that \eqref{e:standard to standard}
is an isomorphism. 

\medskip

Since the t-structures on both $\Whit_{q,x}(G)$ and $W^{\mu,*}_{\on{glob}}$ are characterized in terms of the objects $W^{\mu,!}$ 
and $W^{\mu,!}_{\on{glob}}$, respectively, we obtain that the functor 
$$\pi_x^![d_g]$$
is t-exact.

\medskip

This implies the assertion of \propref{p:t on Whit}. 

\sssec{}

Since the morphism $\pi_x$ is ind-schematic, we have a well-defined functor
$$(\pi_x)_*:\Shv_{\CG^G}(\Gr^{\omega^\rho}_{G,x})\to \Shv_{\CG^G}(\BunNbox).$$

Consider the composite
$$\on{Av}^{N_{\on{glob}},\chi_N}_*\circ (\pi_x)_*: \Shv_{\CG^G}(\Gr^{\omega^\rho}_{G,x})\to \Whit_{q,\on{glob}}(G).$$

It is not difficult to show (see \cite[Lemma 5.3.3]{Ga9}) that the above functor factors through a functor
\begin{equation}  \label{e:from coinv to glob}
\Whit_{q,x}(G)_{\on{co}}\to \Whit_{q,\on{glob}}(G);
\end{equation}
moreover, the latter is the functor dual to
$$\pi_x^!:\Whit_{q^{-1},\on{glob}}(G)\to \Whit_{q^{-1},x}(G)$$
in terms of the identifications
$$\Whit_{q,\on{glob}}(G)^\vee \simeq \Whit_{q^{-1},\on{glob}}(G) \text{ and }
\Whit_{q,x}(G)^\vee \simeq \Whit_{q^{-1},\on{co}}(G).$$

\medskip

By a slight abuse of notation, we will denote the functor appearing in \eqref{e:from coinv to glob} by the same 
character $\on{Av}^{N_{\on{glob}},\chi_N}_*\circ (\pi_x)_*$. From \thmref{t:local to global} we obtain:

\begin{cor}  \label{c:local to global}
The functor \eqref{e:from coinv to glob} is an equivalence of categories.
\end{cor}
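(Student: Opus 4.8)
The plan is to read the statement off as a purely formal consequence of \thmref{t:local to global} by means of the duality bookkeeping already assembled just above; there is no genuinely new mathematical input needed at this stage, and the one place where real work is hidden is \thmref{t:local to global} itself, to be proved in \secref{ss:proof local to global}.

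Concretely, I would first recall that in the paragraph preceding the statement --- relying on \cite[Lemma 5.3.3]{Ga9} --- the functor \eqref{e:from coinv to glob} has been identified with the dual of
$$\pi_x^!\colon \Whit_{q^{-1},\on{glob}}(G)\longrightarrow \Whit_{q^{-1},x}(G),$$
the identification being with respect to the self-duality \eqref{e:self duality for glob Whit} on the global side and the equivalence $(\Whit_{q,x}(G))^\vee\simeq \Whit_{q^{-1},x}(G)_{\on{co}}$ on the local side (the latter being \eqref{e:duality for Whit} together with the involutivity of dualization for dualizable DG categories). A small preliminary point to record is that $\Whit_{q,\on{glob}}(G)$ is dualizable: it is compactly generated, since the objects $W^{\lambda,!}_{\on{glob}}$ generate it and are compact in $\Whit_{q,\on{glob}}(G)$, the embedding into $\Shv_{\CG^G}(\BunNbox)$ admitting the continuous right adjoint $\on{Av}^{N_{\on{glob}},\chi_N}_*$.

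Next I would apply \thmref{t:local to global} with the geometric metaplectic datum $\CG^G$ replaced by $(\CG^G)^{-1}$ --- equivalently, $q\rightsquigarrow q^{-1}$, a substitution to which neither the statement nor its hypotheses are sensitive --- to conclude that $\pi_x^!\colon \Whit_{q^{-1},\on{glob}}(G)\to \Whit_{q^{-1},x}(G)$ is an equivalence of DG categories. Finally, since the assignments $\bC\mapsto\bC^\vee$ on dualizable DG categories and $F\mapsto F^\vee$ on continuous functors are functorial, they carry equivalences to equivalences; applied to $\pi_x^!$ this gives at once that its dual, the functor \eqref{e:from coinv to glob}, is an equivalence. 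Thus the only obstacle is the one already isolated in \thmref{t:local to global}; the passage from it to the present statement is routine. (If one wished to avoid invoking \cite[Lemma 5.3.3]{Ga9}, an alternative route is to combine \thmref{t:inv and coinv} with \thmref{t:local to global} and check directly that $\on{Av}^{N_{\on{glob}},\chi_N}_*\circ(\pi_x)_*$ sends the generators $W^{\lambda,*}_{\on{co}}$ to the $W^{\lambda,*}_{\on{glob}}$ compatibly with $\Hom$-spaces, but this is less economical.)
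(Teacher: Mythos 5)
Your argument is correct and is exactly the one the paper has in mind: the text preceding the corollary identifies \eqref{e:from coinv to glob} as the dual of $\pi_x^!$ for the inverse gerbe, and the corollary is then stated as an immediate consequence of \thmref{t:local to global}. Your added remark on compact generation/dualizability of $\Whit_{q,\on{glob}}(G)$ is a sensible bit of bookkeeping but does not change the route.
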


\sssec{}  \label{sss:proof of inv and coinv}

Consider now the composite functor
\begin{equation} \label{e:composite via global}
\Whit_{q,x}(G)_{\on{co}} \overset{\on{Av}^{N_{\on{glob}},\chi_N}_*\circ (\pi_x)_*}\longrightarrow \Whit_{q,\on{glob}}(G) \overset{\pi_x^!}\longrightarrow \Whit_{q,x}(G).
\end{equation} 

It follows from the construction (see \cite[Corollary 5.4.5]{Ga9}) that the functor \eqref{e:composite via global} identifies with the functor 
$$\on{Ps-Id}\otimes \ell_g,$$
where $\on{Ps-Id}$ is as in \eqref{e:functor T},
where $\ell_g$ is line equal to the !-fiber of the constant sheaf on $\Bun_N^{\omega^\rho}$ (at any $k$-point).

\medskip

Thus, we obtain that \thmref{t:local to global} (combined with \corref{c:local to global}) implies \thmref{t:inv and coinv}.

\begin{rem}  \label{r:local vs global duality}
It follows by unwinding the constructions that the equivalence of \thmref{t:local to global} intertwines the duality equivalences
$$(\Whit_{q,\on{glob}}(G))^\vee\simeq \Whit_{q^{-1},\on{glob}}$$
of \eqref{e:self duality for glob Whit}
and 
$$(\Whit_{q,x}(G))^\vee \simeq \Whit_{q^{-1},x}(G)$$
of \eqref{e:self duality for Whit},
up to tensoring by $\ell_g$. 
\end{rem}

\section{The Ran version and proof of \thmref{t:local to global}}  \label{s:proof local to global}

This section is ostensibly devoted to the proof of \thmref{t:local to global}. However, in the 
process, we will introduce another player--the Ran version of the Whittaker category. 

\medskip

It will play a crucial role in the sequel as it will provide one of the ingredients for the construction
of the \emph{factorization enhancement} of the Jacquet functor. 

\ssec{The Ran version of the semi-infinite orbit}  \label{ss:semi-inf orbit Ran}

In this subsection we will state a general fully-faithfulness result that allows to compare categories of sheaves on $\BunNbox$
and (various versions of) the affine Grassmannian. 

\sssec{}  \label{sss:tw grass}

Consider the Ran Grassmannian $\Gr^{\omega^\rho}_{G,\Ran}$ and its version $\Gr^{\omega^\rho}_{G,\Ran_x}$,
see \secref{sss:Gr with marked}. We let
$$(\ol{S}{}^0_{\Ran_x})_{\infty \cdot x}\subset \Gr^{\omega^\rho}_{G,\Ran_x}$$
be the closed subfunctor given by the following condition:

\medskip

A point $(\CI,\CP_G,\alpha)$ belongs to $(\ol{S}{}^0_{\Ran_x})_{\infty \cdot x}$ if for every dominant weight $\clambda$, the composite 
meromorphic map
\begin{equation} \label{e:Plucker again}
(\omega^{\frac{1}{2}})^{\langle \clambda,2\rho\rangle}\to \CV^{\clambda}_{\CP'_G}\overset{\alpha}\longrightarrow
\CV^{\clambda}_{\CP_G}(\infty \cdot x)
\end{equation} 
is \emph{regular on $X-x$}, where:

\begin{itemize}

\item $\CP'_G$ denotes the $G$-bundle induced from the $T$-bundle $\omega^\rho$;

\item The map $(\omega^{\frac{1}{2}})^{\langle \clambda,2\rho\rangle}\to \CV^{\clambda}_{\CP'_G}$ corresponds to the
highest weight vector in $\CV^\lambda$.

\end{itemize}

\medskip

Note that we have a Cartesian square
$$
\CD
\Gr^{\omega^\rho}_{G,x}  @>>>  (\ol{S}{}^0_{\Ran_x})_{\infty \cdot x}  \\
@VVV   @VVV   \\
\on{pt}  @>>>  \Ran_x,
\endCD
$$
where $\on{pt}\to \Ran_x$ corresponds to the point $\{x\}$.

\sssec{}

Let us denote by $\pi_{\Ran_x}$ the natural forgetful map
$$(\ol{S}{}^0_{\Ran_x})_{\infty \cdot x}\to \BunNbox.$$

\medskip

Note the composite
$$\Gr^{\omega^\rho}_{G,x} \hookrightarrow (\ol{S}{}^0_{\Ran_x})_{\infty \cdot x}  \overset{\pi_{\Ran_x}}\longrightarrow \BunNbox$$
is the map that we have earlier denoted by $\pi_x$.

\sssec{}

Note that the pullback of the gerbe $\CG^G$ on $\BunNbox$ identifies with the gerbe $\CG^G$ on $(\ol{S}{}^0_{\Ran_x})_{\infty \cdot x}$. 
Hence, we have a well-defined functor
\begin{equation} \label{e:glob to loc gen}
\pi_{\Ran_x}^!:\Shv_{\CG^G}(\BunNbox)\to \Shv_{\CG^G}((\ol{S}{}^0_{\Ran_x})_{\infty \cdot x}).
\end{equation} 

We claim:

\begin{thm}  \label{t:N contr}
The functor \eqref{e:glob to loc gen} is fully faithful.
\end{thm}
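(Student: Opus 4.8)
The plan is to reduce the fully-faithfulness of $\pi_{\Ran_x}^!$ to a contractibility statement for the fibers of $\pi_{\Ran_x}$, in the spirit of the ``unital'' contractibility arguments for the Ran version of $\ol S^0$ (cf. the non-twisted version in \cite{Ga7}, and the global-to-local comparison of \cite{Ga9}). Concretely, fully faithfulness of a !-pullback functor $f^!$ is equivalent to the assertion that the unit of the $(f^!, f_*)$-adjunction is an isomorphism, i.e. that $\CF \to f_* f^! \CF$ is an isomorphism for all $\CF \in \Shv_{\CG^G}(\BunNbox)$. By base change it suffices to check this after !-restricting to each affine test scheme $S\to \BunNbox$, which turns the question into: for the base-changed map $g\colon S\underset{\BunNbox}\times (\ol S^0_{\Ran_x})_{\infty\cdot x}\to S$, the canonical map $\omega_S \to g_*(\omega)$ (equivalently, $\sfe \to \on{C}^\cdot$ of the fiber) is an isomorphism; that is, the geometric fibers of $\pi_{\Ran_x}$ are \emph{universally homologically contractible} (have the cohomology of a point, compatibly with the gerbe trivialization). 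Since the gerbe $\CG^G$ on $(\ol S^0_{\Ran_x})_{\infty\cdot x}$ is pulled back from $\BunNbox$, it contributes nothing to the fiberwise computation, so the twist is harmless here.

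First I would identify the fiber of $\pi_{\Ran_x}$ over a point of $\BunNbox$. A point of $\BunNbox$ is a $G$-bundle $\CP_G$ with a generic $B$-reduction, i.e. a collection of Plücker maps $\kappa^{\clambda}$; the fiber of $(\ol S^0_{\Ran_x})_{\infty\cdot x}$ over it is the space of finite subsets $\CI\ni i_x$ of $X$ together with the datum exhibiting the $B$-structure as defined (with a pole allowed only at $x$) on all of $\cD_\CI$ — equivalently the space of ``defect'' colored divisors supported away from $x$ that dominate the zero-divisor of the $\kappa^{\clambda}$'s, organized over $\Ran_x$. This is exactly the kind of space that appears in the factorization/unitality formalism of \secref{ss:unital}: it is a unital factorization module space, and its ``value'' is controlled by a Ran-type space of divisors on $X-x$. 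I would then invoke the standard contractibility input: the Ran space of a (non-complete) curve, and more generally spaces of the form ``$\Ran$ of $X-x$ mapping to a fixed configuration/$\BunNbox$-point'', are universally homologically contractible — this is the homological contractibility of $\Ran(X')$ (Beilinson–Drinfeld, used repeatedly in \cite{Ga7}), propagated through the factorization structure via \lemref{l:UHC unital}. The unital structure on $(\ol S^0_{\Ran_x})_{\infty\cdot x}$ (which exists because $\Gr^{\omega^\rho}_{G,\Ran_x}$ is unital, \secref{sss:unitality Gr}, and the condition \eqref{e:Plucker again} is preserved under enlarging $\CI$) is precisely what feeds into the Beilinson–Drinfeld-style ``adding points'' argument that forces the cohomology of the fiber to be $\sfe$.

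The main obstacle — and the step I would spend the most care on — is establishing this contractibility \emph{uniformly in families over $\BunNbox$}, i.e. the ``universally'' part, rather than just pointwise. The subtlety is that the locus in $X-x$ where the chosen Plücker sections of a point of $\BunNbox$ vanish varies in families, so the relevant space of divisors is not literally $\Ran(X-x)$ but a ``$\Ran(X-x)$ relative to a varying effective divisor'', and one must check that base change and the contractibility both hold after restricting to arbitrary $S\to \BunNbox$. I would handle this by stratifying (or rather filtering) by the total defect $\lambda\in\Lambda^{\on{neg}}$ and the order of pole at $x$: on each piece the fiber is built, via the factorization isomorphisms \eqref{e:factorization space mod} over $(\Ran^J\times\Ran_x)_{\on{disj}}$, out of (i) a contribution at $x$ which is a point (the pole order is rigid) and (ii) a contribution on $X-x$ which is an ``add an arbitrary $\Ran$-collection of points that already lie in the support'' space — and the latter is contractible by the same mechanism that makes $\Ran(X')$ contractible, now applied relatively. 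Combining this with \corref{c:properties global} (cleanness/quasi-compactness of objects of $\Whit_{q,\on{glob}}(G)$, reducing everything to quasi-compact opens where the usual six-functor formalism and base change are unproblematic) should give the statement. I would also double-check compatibility with the shift/normalization so that the fiberwise cohomology is $\sfe$ in degree $0$ exactly, not merely up to shift.
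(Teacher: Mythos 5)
The paper omits the proof, citing \cite[Theorem 3.4.4]{Ga7} and asserting the argument repeats verbatim; your proposal reconstructs exactly that argument — reduce fully-faithfulness of $\pi_{\Ran_x}^!$ to universal homological contractibility of the map, identify the fibers via the unitality/factorization structure, and invoke Beilinson--Drinfeld-style contractibility of Ran-type spaces. So the strategy is correct and the same as the one the paper has in mind. Two imprecisions are worth correcting, though neither undermines the approach.

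First, the direction of the adjunction map is off. For a pseudo-proper (ind-proper) map such as $\pi_{\Ran_x}$, the defined pushforward satisfies $(\pi_{\Ran_x})_* = (\pi_{\Ran_x})_!$ and is the \emph{left} adjoint of $\pi_{\Ran_x}^!$. Hence the criterion for $\pi_{\Ran_x}^!$ (the right adjoint) to be fully faithful is that the \emph{counit} $(\pi_{\Ran_x})_!\,\pi_{\Ran_x}^! \to \on{Id}$ be an isomorphism, not the unit; after base change to an affine $S$, the fiberwise condition is that $g_!(\omega_{Y_S/S}) \to \omega_S$ is an isomorphism. You wrote the map as $\CF \to f_* f^!\CF$ and $\omega_S \to g_*(\omega)$. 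The content (UHC of the fibers) is the same, but the arrows go the other way.

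Second, the description of the fiber over a point $(\CP_G,\{\kappa^{\clambda}\})\in\BunNbox$ as ``the space of defect colored divisors dominating the zero-divisor of the $\kappa^{\clambda}$'s'' elides a genuine piece of the data. A point of the fiber is a pair $(\CI,\alpha)$, where $\alpha$ is a trivialization of $\CP_G$ on $X-\Gamma_\CI$ (equivalently, on $\ocD_\CI$) compatible with the given Plücker maps. For a fixed $\CI$ with $\Gamma_\CI$ containing the degeneracy locus of $\kappa$, the space of such $\alpha$'s is not a point: it is a torsor over $\Gamma(X-\Gamma_\CI,\,N^{\omega^\rho})$ (the sections of the $\omega^\rho$-twist of $N$ over the affine curve $X-\Gamma_\CI$), a pro-unipotent, ``infinite-dimensional affine space''. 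So the fiber of $\pi_{\Ran_x}$ is this torsor \emph{fibered over} the Ran-like space of subsets $\CI\ni i_x$ dominating the defect, and one needs contractibility of both layers: the Ran-type base (Beilinson--Drinfeld) and the pro-unipotent torsor fibers (triviality of such torsors over affine bases plus contractibility of affine spaces). Your ``equivalently'' collapses the torsor layer silently. This is harmless in the end, but it is a separate ingredient that should be stated, and it is precisely the place where the choice of $\alpha$ interacts with the ``universally'' part you correctly flag as the delicate step.
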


We omit the proof of this theorem as it repeats verbatim the proof of  \cite[Theorem 3.4.4]{Ga7}. 

\ssec{The Ran version of the metaplectic Whittaker category}  \label{ss:Whit Ran}

In this subsection we will introduce another key player--the Ran version of the Whittaker category.
It will play a technical role in the proof of \thmref{t:local to global}, and also a central role in the construction
of the functor in the main theorem. 

\sssec{}

Recall the group ind-schemes over $\Ran$ denoted 
$$\fL(N)^{\omega^\rho}_\Ran\subset \fL(G)^{\omega^\rho}_\Ran,$$
see Sects. \ref{sss:twisted group scheme}-\ref{sss:twisted group N}. 
Note that as in the case 
of $\fL(N)^{\omega^\rho}_x$ we have a canonically defined homomorphism $\fL(N)^{\omega^\rho}_{\Ran} \to \BA^1$.
We denote by the same character $\chi_N$ the pullback of the Artin-Schreier sheaf $\chi$ to $\fL(N)^{\omega^\rho}_{\Ran}$.

\medskip

As in the case of $\fL(N)^{\omega^\rho}_x$, the restriction of the multiplicative gerbe $\CG^G$ along
$$\fL(N)^{\omega^\rho}_{\Ran}\to \fL(G)^{\omega^\rho}_{\Ran}$$
admits a unique trivialization, normalized by the requirement that it is the tautological one on the unit section.

\sssec{}

Let 
$\fL(N)^{\omega^\rho}_{\Ran_x}$ denote the pullback of $\fL(N)^{\omega^\rho}_{\Ran}$ along the map $\Ran_x\to \Ran$.
Note that $\fL(N)^{\omega^\rho}_{\Ran_x}$ acts on $\Gr^{\omega^\rho}_{G,\Ran_x}$, preserving $(\ol{S}{}^0_{\Ran_x})_{\infty \cdot x}$.
Hence, it makes sense to talk about the categories
$$\Whit_{q,\Ran_x}(G):=\left(\Shv_{\CG^G}(\Gr^{\omega^\rho}_{G,\Ran_x})\right)^{\fL(N)^{\omega^\rho}_{\Ran_x},\chi_N}$$
and
$$\Whit_{q,\Ran_x}(G)^{\leq 0}_{\infty\cdot x}:=
\left(\Shv_{\CG^G}((\ol{S}{}^0_{\Ran_x})_{\infty \cdot x})\right)^{\fL(N)^{\omega^\rho}_{\Ran_x},\chi_N},$$
the latter being a full subcategory of the former consisting of objects that are supported on 
$(\ol{S}{}^0_{\Ran_x})_{\infty \cdot x}\subset \Gr^{\omega^\rho}_{G,\Ran_x}$. 

\sssec{}  

One shows (see \cite[Corollary 6.2.2]{Ga9}): 

\begin{prop}
The pullback functor $\pi_{\Ran_x}^!$ sends 
$$\Whit_{q,\on{glob}}(G)\subset \Shv_{\CG^G}(\BunNbox)$$
to
$$\Whit_{q,\Ran_x}(G)^{\leq 0}_{\infty\cdot x} \subset  \Shv_{\CG^G}((\ol{S}{}^0_{\Ran_x})_{\infty \cdot x}).$$
\end{prop}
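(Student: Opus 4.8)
The plan is to reduce the statement to a \emph{fiberwise} check over $\Ran_x$: since both $\Whit_{q,\on{glob}}(G)$ and $\Whit_{q,\Ran_x}(G)^{\leq 0}_{\infty\cdot x}$ are defined by imposing an equivariance (resp.\ a support-plus-equivariance) condition, and since $\pi_{\Ran_x}^!$ is already known to land in $\Shv_{\CG^G}((\ol S{}^0_{\Ran_x})_{\infty\cdot x})$ after \thmref{t:N contr}, the only thing to verify is that for $\CF\in\Whit_{q,\on{glob}}(G)$ the object $\pi_{\Ran_x}^!(\CF)$ is $(\fL(N)^{\omega^\rho}_{\Ran_x},\chi_N)$-equivariant. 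First I would recall that the Whittaker condition defining $\Whit_{q,\on{glob}}(G)$ on $\BunNbox$ is itself an equivariance condition with respect to a groupoid built out of a version of $\fL(N)$ acting on the Plücker data, and that the map $\pi_{\Ran_x}$ is equivariant for the action of $\fL(N)^{\omega^\rho}_{\Ran_x}$ on the source and the corresponding ``global'' $N$-groupoid on $\BunNbox$. Given this compatibility of actions, $!$-pullback along an equivariant map of the relevant groupoid-quotients automatically carries equivariant objects to equivariant objects; the character $\chi_N$ matches because the residue/character homomorphism $\fL(N)^{\omega^\rho}_{\Ran_x}\to\BA^1$ is compatible, under $\pi_{\Ran_x}$, with the character entering the definition of the global Whittaker condition (this is precisely the reason the $\omega^\rho$-twist was introduced, cf.\ the remark preceding \secref{ss:Gr}).

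Concretely, the steps I would carry out are: (i) set up the action of $\fL(N)^{\omega^\rho}_{\Ran_x}$ on $(\ol S{}^0_{\Ran_x})_{\infty\cdot x}$ and observe it preserves this closed subfunctor (this is already asserted in \secref{ss:Whit Ran}); (ii) identify the global Whittaker equivariance datum on $\BunNbox$ with descent data for an appropriate unipotent groupoid $\fN_{\on{glob}}$ acting on $\BunNbox$, following \cite[Sects.~4.4--4.7]{Ga9}; (iii) produce a map of groupoids from the action groupoid of $\fL(N)^{\omega^\rho}_{\Ran_x}$ on $(\ol S{}^0_{\Ran_x})_{\infty\cdot x}$ to $\fN_{\on{glob}}$ lying over $\pi_{\Ran_x}$, together with an identification of the pulled-back character with $\chi_N$; (iv) conclude that $\pi_{\Ran_x}^!$ intertwines the two equivariance conditions, so it sends $\Whit_{q,\on{glob}}(G)$ into $\Whit_{q,\Ran_x}(G)^{\leq 0}_{\infty\cdot x}$. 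One should also check that the gerbe identifications are compatible with all of this, but that is exactly the content of the sentence preceding \thmref{t:N contr} (the pullback of $\CG^G$ on $\BunNbox$ is $\CG^G$ on $(\ol S{}^0_{\Ran_x})_{\infty\cdot x}$), together with the canonical trivialization of $\CG^G|_{\fL(N)^{\omega^\rho}_{\Ran_x}}$ recorded in \secref{ss:Whit Ran}.

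The main obstacle, I expect, is step (iii): making precise the comparison between the \emph{global} $N$-equivariance groupoid on $\BunNbox$ — which is phrased in terms of modifications of the Plücker maps $\kappa^{\clambda}$ on all of $X$ — and the \emph{loop-group} action of $\fL(N)^{\omega^\rho}_{\Ran_x}$, which a priori only sees a formal neighborhood of $\Gamma_\CI$. The key geometric input is the description of $(\ol S{}^0_{\Ran_x})_{\infty\cdot x}$ via the regularity condition \eqref{e:Plucker again} on $X-x$, which is exactly engineered so that the $\fL(N)$-action on the generic reduction matches the global picture; unwinding this is essentially the content of \cite[Corollary 6.2.2]{Ga9}, which the proposition cites. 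So in practice the proof is a citation-plus-unwinding argument: the real work has been done in \emph{loc.\ cit.}, and here one only needs to transport it through the metaplectic twist, which is formal given the multiplicative trivialization of $\CG^G$ over $\fL(N)^{\omega^\rho}_{\Ran_x}$. I would therefore present the argument by recalling the non-twisted statement, noting that the twist affects only the gerbe bookkeeping, and invoking \corref{c:gerbe unital}-style compatibilities (or rather the analogous compatibilities from \cite{GLys}) to handle it.
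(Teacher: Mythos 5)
Your proposal is correct and matches the paper's approach: the paper itself gives no proof beyond the citation to \cite[Corollary 6.2.2]{Ga9}, so the intended argument is exactly the ``cite the non-twisted statement, observe the twist is formal'' route you sketch. You correctly identify the relevant ingredients (the $\fL(N)^{\omega^\rho}_{\Ran_x}$-action on $(\ol{S}{}^0_{\Ran_x})_{\infty\cdot x}$, the global $N$-groupoid from \cite[Sects.~4.4--4.7]{Ga9}, the compatibility of characters, the gerbe identification before \thmref{t:N contr} together with the canonical trivialization of $\CG^G$ on $\fL(N)^{\omega^\rho}_{\Ran_x}$), and your final paragraph correctly locates where the nontrivial content lives. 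The only minor inaccuracy is the reference to \corref{c:gerbe unital}, which concerns the unitality structure on the gerbe rather than the $N$-equivariant trivialization; the latter follows instead from the ind-pro-unipotence of $\fL(N)^{\omega^\rho}_{\Ran_x}$ as explained in \secref{ss:Whit Ran}, and you already hedge appropriately on this point.
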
 

\begin{rem} \label{r:glob Whit condition} 
One can show (see \cite[Theorem 5.1.4(b)]{Ga9}) that 
$$
\CD
\Whit_{q,\Ran_x}(G)^{\leq 0}_{\infty\cdot x}   @>>>  \Shv_{\CG^G}((\ol{S}{}^0_{\Ran_x})_{\infty \cdot x}) \\
@AAA   @AAA  \\
\Whit_{q,\on{glob}}(G)   @>>> \Shv_{\CG^G}(\BunNbox)
\endCD
$$
is a pullback diagram, i.e., full subcategory $\Whit_{q,\on{glob}}(G)\subset \Shv_{\CG^G}(\BunNbox)$ consists exactly of those
objects that satisfy the Whittaker equivariance condition when pulled back to $(\ol{S}{}^0_{\Ran_x})_{\infty \cdot x}$. 
\end{rem} 

\sssec{}

Note now that there is a tautological map
$$\on{unit}:\Ran_x\times \Gr^{\omega^\rho}_{G,x}\to \Gr^{\omega^\rho}_{G,\Ran_x},$$
whose image belongs to $(\ol{S}{}^0_{\Ran_x})_{\infty \cdot x}$: namely, a $G$-bundle trivialized away from $x$ can be thought of as
trivialized away from a finite set of points containing $\{x\}$. 

\medskip

We have the following crucial result, whose proof repeats verbatim the proof of \cite[Theorem 6.2.5]{Ga9}.

\begin{thm} \label{t:restr to unit}  
The functor
$$\on{unit}^!: \Shv_{\CG^G}((\ol{S}{}^0_{\Ran_x})_{\infty \cdot x})\to \Shv_{\CG^G}(\Ran_x\times \Gr^{\omega^\rho}_{G,x})$$
defines an equivalence from $\Whit_{q,\Ran_x}(G)^{\leq 0}_{\infty\cdot x}$ to
$$\left(\Shv_{\CG^G}(\Ran_x\times \Gr^{\omega^\rho}_{G,x})\right)^{\fL(N)^{\omega^\rho}_x,\chi_N}.$$
\end{thm}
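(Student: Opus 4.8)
The functor $\on{unit}^!$ is well-defined as stated: the image of $\on{unit}$ lies in $(\ol{S}{}^0_{\Ran_x})_{\infty\cdot x}=:\CY$, and since $\on{unit}$ is equivariant with respect to $\fL(N)^{\omega^\rho}_x$ (one of the factors of $\fL(N)^{\omega^\rho}_{\Ran_x}$), $!$-pullback carries $(\fL(N)^{\omega^\rho}_{\Ran_x},\chi_N)$-equivariant objects supported on $\CY$ to $(\fL(N)^{\omega^\rho}_x,\chi_N)$-equivariant ones. The plan is to factor $\on{unit}$ as the composition of a closed embedding $\mathsf{s}:\Ran_x\times\Gr^{\omega^\rho}_{G,x}\to\CY^\circ$ and an open embedding $j:\CY^\circ\to\CY$, and to show that $j^!$ and $\mathsf{s}^!$ induce equivalences on the relevant Whittaker categories. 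Before starting, I would note that the metaplectic gerbe is transparent to the whole argument: the multiplicative gerbe $\CG^G$ restricts canonically trivially to $\fL(N)^{\omega^\rho}_{\Ran_x}$ (ind-pro-unipotence) and to the semi-infinite orbits $S^\gamma$ with $\gamma\in\Lambda^\sharp$, in particular to $S^0$ (see \secref{sss:descr of weight spaces}); hence every averaging and equivariance operation below is carried out exactly as in the non-metaplectic case, and the proof is the one of \cite[Theorem 6.2.5]{Ga9}, whose steps I outline.

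The first step is cleanness with respect to the semi-infinite stratification. Let $\CY^\circ\subset\CY$ be the open subfunctor where, at every point $y$ of $\CI$ other than $x$, the corresponding point of $\Gr^{\omega^\rho}_{G,y}$ lies on the \emph{open} orbit $S^0_y=\fL(N)^{\omega^\rho}_y\cdot t^0$ (equivalently, the Pl\"ucker maps \eqref{e:Plucker again} are moreover non-vanishing on $X-x$). The closed complement $\CY\setminus\CY^\circ$ is the locus where some point $y\neq x$ lands on a stratum $S^\mu_y$ with $\mu<0$; any such $\mu$ is automatically non-dominant, so by \propref{p:Whittaker strata}(a) applied at the point $y$, every object $\CF\in\Whit_{q,\Ran_x}(G)^{\leq 0}_{\infty\cdot x}$ has vanishing $!$- and $*$-restriction to $\CY\setminus\CY^\circ$. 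Consequently $j^!$ restricts to an equivalence from $\Whit_{q,\Ran_x}(G)^{\leq 0}_{\infty\cdot x}$ onto the corresponding Whittaker category of $\CY^\circ$, with inverse $j_!\simeq j_*$.

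The second step is a torsor-type descent. Using the factorization structure, over a configuration $\CI=\{x\}\sqcup\CI'$ one identifies $\CY^\circ$ with $\Gr^{\omega^\rho}_{G,x}\times\prod_{y\in\CI'}S^0_y$ and each $S^0_y$ with $\fL(N)^{\omega^\rho}_y/\fL^+(N)^{\omega^\rho}_y$. The factor $\prod_{y\in\CI'}\fL(N)^{\omega^\rho}_y$ of $\fL(N)^{\omega^\rho}_{\Ran_x}$ acts along the corresponding $S^0_y$-factors; since $\chi_N$ restricts trivially to each $\fL^+(N)^{\omega^\rho}_y$, the category of sheaves equivariant for this factor against $\chi_N$ is identified, by $!$-restriction along $\mathsf{s}$ (the section putting all points $y\neq x$ at $t^0$), with $\Shv_{\CG^G}(\Ran_x\times\Gr^{\omega^\rho}_{G,x})$ — up to a cohomological shift. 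Imposing in addition equivariance for the remaining factor $\fL(N)^{\omega^\rho}_x$ against $\chi_N$, one obtains that $\mathsf{s}^!$ is an equivalence from the Whittaker category of $\CY^\circ$ onto $\bigl(\Shv_{\CG^G}(\Ran_x\times\Gr^{\omega^\rho}_{G,x})\bigr)^{\fL(N)^{\omega^\rho}_x,\chi_N}$. Since $\on{unit}=j\circ\mathsf{s}$, the functor $\on{unit}^!=\mathsf{s}^!\circ j^!$ is then an equivalence, as claimed.

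The hard part is not the two pointwise inputs above but their coherent assembly over $\Ran_x$: one must check that the $\chi_N$-cleanness and the torsor descent are compatible with the factorization isomorphisms that define $\CY$ as a factorization module space and with the canonical trivializations of $\CG^G$ along $\fL(N)^{\omega^\rho}$ — i.e.\ that all the identifications are functorial over the category $\on{fSet}_*^{\on{surj}}$, so that they glue to an equivalence of the full DG categories. This is precisely the content taken verbatim from \cite{Ga9} (the unital formalism of \secref{ss:unital} and \lemref{l:UHC unital} being used to organize the bookkeeping); the only point specific to the present situation is the transparency of the gerbe noted in the first paragraph.
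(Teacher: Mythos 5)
Your proposal is correct and follows essentially the same route as the paper, which gives no self-contained argument but delegates verbatim to \cite[Theorem 6.2.5]{Ga9}: cleanness of the open inclusion $(S^0_{\Ran_x})_{\infty\cdot x}\hookrightarrow(\ol{S}^0_{\Ran_x})_{\infty\cdot x}$ for the Whittaker condition (via non-dominance of the defect strata, i.e.\ \propref{p:Whittaker strata}(a) applied at the auxiliary points), followed by torsor descent along the base section, with your observation that $\CG^G$ is canonically trivialized along $\fL(N)^{\omega^\rho}$ and $S^0$ being exactly the metaplectic input needed. The one imprecision is your hedge \emph{up to a cohomological shift} in the descent step: there is no shift, since the $!$-fiber of $\on{Av}_!^{\fL(N)^{\omega^\rho}_y,\chi_N}(\delta_{t^0_y})$ at $t^0_y$ equals $\sfe$ in degree zero by adjunction, consistent with the theorem asserting that $\on{unit}^!$ itself (unshifted) is an equivalence.
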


We note that in the statement of \thmref{t:restr to unit}, the target category is a version of $\Whit_{q,x}(G)$, where instead of
$\Gr_{G,x}^{\omega^\rho}$ we take $\Ran_x\times \Gr_{G,x}^{\omega^\rho}$, with $\fL(N)^{\omega_\rho}_x$ acting
trivially on $\Ran_x$. 

\sssec{}  \label{sss:spread functor}

In what follows we will denote the functor 
\begin{multline*} 
\Whit_{q,x}(G)=\left(\Shv_{\CG^G}(\Gr^{\omega^\rho}_{G,x})\right)^{\fL(N)^{\omega^\rho}_x,\chi_N}\to
\left(\Shv_{\CG^G}(\Ran_x\times \Gr^{\omega^\rho}_{G,x})\right)^{\fL(N)^{\omega^\rho}_x,\chi_N} \to  \\
\to \left(\Shv_{\CG^G}((\ol{S}{}^0_{\Ran_x})_{\infty \cdot x})\right)^{\fL(N)^{\omega^\rho}_{\Ran_x},\chi_N}
\hookrightarrow \Shv_{\CG^G}((\ol{S}{}^0_{\Ran_x})_{\infty \cdot x})
\hookrightarrow \Shv_{\CG^G}(\Gr^{\omega^\rho}_{G,\Ran_x})
\end{multline*}
%\to \Whit_{q,\Ran_x}(G)^{\leq 0}_{\infty\cdot x} \hookrightarrow \Whit_{q,\Ran_x}(G)
%\end{multline*}
by $\on{sprd}_{\Ran_x}$, where in the above formula, the second arrow is !-pullback along the projection 
$$\Ran_x\times \Gr^{\omega^\rho}_{G,x}\to \Gr^{\omega^\rho}_{G,x},$$
and the third arrow is the equivalence inverse to one in \thmref{t:restr to unit}. 

\begin{rem}
The functor $\on{sprd}_{\Ran_x}$ encodes the \emph{unital structure} on $\Whit_{q,x}(G)$, viewed as a factorization category.
\end{rem} 

\ssec{Proof of \thmref{t:local to global}}   \label{ss:proof local to global}

In this subsection we will combine Theorems \ref{t:N contr} and \ref{t:restr to unit} to prove \thmref{t:local to global}. 

\sssec{}  \label{sss:proof local to global}

According to \secref{sss:proof t on Whit}, it suffices to show that the functor
$$(\pi_x)^!:\Whit_{q,\on{glob}}(G)\to  \Whit_{q,x}(G)$$ is fully faithful. We factor the above functor as a composite
$$\Whit_{q,\on{glob}}(G)  \overset{\pi^!_{\Ran_x}}\longrightarrow \Whit_{q,\Ran_x}(G)^{\leq 0}_{\infty\cdot x}  
\overset{\on{unit} ^!} \longrightarrow \left(\Shv_{\CG^G}(\Ran_x\times \Gr^{\omega^\rho}_{G,x})\right)^{\fL(N)^{\omega^\rho}_x,\chi_N} \to
\Whit_{q,x}(G),$$
where the last arrow is restriction along 
$$\{x\}\times \Gr_{G,x}^{\omega^\rho}\to \Ran_x\times \Gr_{G,x}^{\omega^\rho}.$$

\sssec{}

According to \thmref{t:N contr}, the first arrow is fully faithful and, according to \thmref{t:restr to unit}, 
the second arrow is an equivalence. Hence, the functor of
pullback along $\pi_{\Ran_x}\circ \on{unit} $ is fully faithful when restricted to $\Whit_{q,\on{glob}}(G)$. 

\sssec{}

Note, however, that the map $\pi_{\Ran_x}\circ \on{unit} $ factors as 
$$\Ran_x\times \Gr^{\omega^\rho}_{G,x} \to \Gr^{\omega^\rho}_{G,x} \overset{\pi_x}\longrightarrow \BunNbox.$$ 

Hence, $\pi_x^!$, restricted to $\Whit_{q,\on{glob}}(G)$ is a retract of a fully faithful functor, and therefore is itself fully faithful.

\qed[\thmref{t:local to global}]

\ssec{The non-marked case}  \label{ss:non-marked Whit}

For future reference, we will discuss variants of the constructions in Sects. \ref{ss:semi-inf orbit Ran}-\ref{ss:Whit Ran} \emph{without} 
the marked point $x$. 

\sssec{}   \label{sss:non-marked S Ran}

We define the closed subfunctor
$$\ol{S}^0_\Ran \subset \Gr_{G,\Ran}$$
as follows:

\medskip

A point $(\CI,\CP_G,\alpha)$ belongs to $\ol{S}^0_{\Ran}$ if for every dominant weight $\clambda$, the corresponding 
meromorphic map
\begin{equation} \label{e:Plucker again again}
(\omega^{\frac{1}{2}})^{\langle \clambda,2\rho\rangle}\to \CV^{\clambda}_{\CP'_G}\overset{\alpha}\longrightarrow
\CV^{\clambda}_{\CP_G}
\end{equation} 
is regular on all of $X$ (we retain the notations used in \eqref{e:Plucker again again}). 

\sssec{}

We define the categories 
$$\Whit_{q,\Ran}(G):=\Shv_{\CG^G}(\Gr_{G,\Ran})^{\fL(N)^{\omega^\rho}_{\Ran},\chi_N}$$
and 
$$\Whit_{q,\Ran}(G)^{\leq 0}:=\Shv_{\CG^G}(\ol{S}^0_\Ran)^{\fL(N)^{\omega^\rho}_{\Ran},\chi_N}.$$

\sssec{}

We have the tautological section 
$$\on{unit}:\Ran\to \ol{S}^0_\Ran,$$
and a counterpart of \thmref{t:restr to unit} says that the functor
$$\on{unit}^!:\Shv_{\CG^G}(\ol{S}^0_\Ran)\to \Shv(\Ran)$$
induces an equivalence from
\begin{equation} \label{e:Ran vac equiv}
\Whit_{q,\Ran}(G)^{\leq 0}\subset \Shv_{\CG^G}(\ol{S}^0_\Ran)
\end{equation}
to $\Shv(\Ran)$. 

\sssec{}  \label{sss:vacuum}

We let 
$$\on{Vac}_{\Whit,\Ran}\in \Whit_{q,\Ran}(G)^{\leq 0}$$
denote the object equal to the image of $\omega_{\Ran}\in \Shv(\Ran)$ under the equivalence inverse to
the above equivalence
$$\Whit_{q,\Ran}(G)^{\leq 0}\to \Whit_{q,\Ran}(G)^{\leq 0}.$$

\medskip

Sometimes, by a slight abuse of notation, we will regard $\on{Vac}_{\Whit,\Ran}$ as an object just of 
$\Shv_{\CG^G}(\ol{S}^0_\Ran)$ or $\Shv_{\CG^G}(\Gr^{\omega^\rho}_{G,\Ran})$. 

\medskip

Note that the restriction 
$$\on{Vac}_{\Whit,x}:=\on{Vac}_{\Whit,\Ran}|_{\Gr^{\omega^\rho}_{G,x}}\in \Whit_{q,x}(G)$$
identifies with the object
$$W^{0,*}\simeq W^{0,!*}\simeq W^{0,!}.$$

\sssec{}

The following results from the equivalences \eqref{e:Ran vac equiv} and \thmref{t:restr to unit}:

\begin{thm} \label{t:fact on Vac Whit}  \hfill 

\smallskip

\noindent{\em(a)} The object $\on{Vac}_{\Whit,\Ran}$ has a structure of factorization algebra in 
$\Shv_{\CG^G}(\Gr^{\omega^\rho}_{G,\Ran})$, uniquely characterized by the requirement that 
the induced factorization algebra structure on
$$\on{unit}^!(\on{Vac}_{\Whit,\Ran})\in \Shv(\Ran)$$
corresponds to the tautological one on $\omega_\Ran$ (see \secref{sss:dualizing as an example of FA})
with respect to the identification 
$$\on{unit}^!(\on{Vac}_{\Whit,\Ran})\simeq \omega_\Ran.$$

\smallskip

\noindent{\em(b)} The functor
$$\on{sprd}_{\Ran_x}: \Whit_{q,x}(G)\to \Shv_{\CG^G}(\Gr^{\omega^\rho}_{G,\Ran_x})$$
lifts to a functor
$$\on{sprd}_{\on{Fact}}: \Whit_{q,x}(G)\to \on{Vac}_{\Whit,\Ran}\on{-FactMod}(\Shv_{\CG^G}(\Gr^{\omega^\rho}_{G,\Ran_x})),$$
uniquely characterized by the requirement that the composite functor
$$\on{unit}^!\circ \on{sprd}_{\on{Fact}}:  \Whit_{q,x}(G)\to 
\omega_{\Ran}\on{-FactMod}(\Shv_{\CG^G}(\Ran_x\times \Gr^{\omega^\rho}_{G,x}))$$
identifies with the composite
$$\Whit_{q,x}(G)\hookrightarrow \Shv_{\CG^G}(\Gr^{\omega^\rho}_{G,x}))\to 
\omega_{\Ran}\on{-FactMod}(\Shv_{\CG^G}(\Ran_x\times \Gr^{\omega^\rho}_{G,x})),$$
where the second arrow is the functor of \secref{sss:omega modules}. 
\end{thm}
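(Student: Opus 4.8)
\textbf{Proof proposal for \thmref{t:fact on Vac Whit}.}

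The plan is to deduce both parts from the equivalences already at our disposal, namely the non-marked equivalence \eqref{e:Ran vac equiv}
$$\on{unit}^!:\Whit_{q,\Ran}(G)^{\leq 0}\overset{\sim}\to \Shv(\Ran)$$
and its marked-point counterpart \thmref{t:restr to unit}
$$\on{unit}^!:\Whit_{q,\Ran_x}(G)^{\leq 0}_{\infty\cdot x}\overset{\sim}\to \left(\Shv_{\CG^G}(\Ran_x\times \Gr^{\omega^\rho}_{G,x})\right)^{\fL(N)^{\omega^\rho}_x,\chi_N}.$$
The strategy is that the subfunctors $\ol{S}^0_\Ran\subset \Gr_{G,\Ran}$ and $(\ol{S}{}^0_{\Ran_x})_{\infty\cdot x}\subset \Gr^{\omega^\rho}_{G,\Ran_x}$ are themselves a factorization space and a factorization module space (the Pl\"ucker regularity condition in \eqref{e:Plucker again again}, resp.\ \eqref{e:Plucker again}, is manifestly local and hence compatible with the disjointness identifications \eqref{e:factorization disk}), and that the two $\on{unit}$ sections are maps of factorization (module) spaces. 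Since $\omega_\Ran$ is a factorization algebra on $\Ran$ (the tautological example, \secref{sss:dualizing as an example of FA}) and $\Shv(Z)\to \omega_\Ran\on{-FactMod}$ of \secref{sss:omega modules} is the universal construction of a factorization module out of a sheaf, part (a) will follow once we know the equivalence $\on{unit}^!$ is one of \emph{factorization categories} — i.e.\ compatible with the factorization structures in the sense of \secref{sss:fact grb}; transporting the factorization algebra structure on $\omega_\Ran$ back through this equivalence defines the factorization algebra structure on $\on{Vac}_{\Whit,\Ran}$, and uniqueness is because $\on{unit}^!$ is conservative (being an equivalence).

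Concretely, for part (a) I would first check that the $\fL(N)^{\omega^\rho}_\Ran$-equivariance condition cutting out $\Whit_{q,\Ran}(G)^{\leq 0}$ inside $\Shv_{\CG^G}(\ol{S}^0_\Ran)$ is compatible with the factorization identifications \eqref{e:factorization space}: this uses that $\fL(N)^{\omega^\rho}_\Ran$, the character $\chi_N$, and the trivialization of $\CG^G|_{\fL(N)^{\omega^\rho}_\Ran}$ all factorize (Sects.\ \ref{sss:twisted group N}, \ref{ss:Whit Ran}). Hence $\Whit_{q,\Ran}(G)^{\leq 0}$, $\Shv(\Ran)$ and the functor $\on{unit}^!$ between them assemble into a diagram of the Cartesian fibrations \eqref{e:Cart fibr}, and the equivalence \eqref{e:Ran vac equiv} upgrades to an equivalence over $\on{fSet}^{\on{surj}}$. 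Then $\on{Vac}_{\Whit,\Ran}$, being the preimage of the Cartesian section $\omega_\Ran$ of \eqref{e:Cart fibr}, is itself a Cartesian symmetric monoidal section, i.e.\ a factorization algebra; the characterization via $\on{unit}^!(\on{Vac}_{\Whit,\Ran})\simeq \omega_\Ran$ is immediate from the construction, and uniqueness from conservativity of $\on{unit}^!$.

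For part (b) I would run the same argument one categorical level up: by Remark \ref{r:remove point} a factorization module space over $\ol{S}^0_\Ran$ only sees $\ol{S}^0_{\Ran'}$ for $\Ran'$ the Ran space of $X-x$, and $(\ol{S}{}^0_{\Ran_x})_{\infty\cdot x}$ together with the section $\on{unit}:\Ran_x\times \Gr^{\omega^\rho}_{G,x}\to (\ol{S}{}^0_{\Ran_x})_{\infty\cdot x}$ is precisely such a module space, with the marked-point fiber being $\Gr^{\omega^\rho}_{G,x}$. Thus \thmref{t:restr to unit} upgrades to an equivalence of categories of factorization modules over the equivalence $\on{unit}^!$ of part (a); composing with $\on{sprd}_{\Ran_x}$ and with the $\omega_\Ran$-module structure coming from \secref{sss:omega modules} produces $\on{sprd}_{\on{Fact}}$, and the stated compatibility with $\on{unit}^!$ pins it down uniquely because $\oblv_{\on{Fact}}\circ \on{unit}^!$ is conservative. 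The main obstacle I anticipate is the bookkeeping of \emph{homotopy-coherent} compatibility: upgrading the two plain equivalences (proved in \cite{Ga9}) to equivalences of symmetric-monoidal Cartesian fibrations requires checking compatibility with all higher disjointness data, not merely the binary one of \eqref{e:factorization space}; I expect this to be routine given that both sides are built by the same functorial recipe from $\fL(N)^{\omega^\rho}$, $\chi_N$ and $\CG^G$, each of which is factorizable on the nose, but it is the step where care is needed. A secondary technical point is unitality: to identify $\on{sprd}_{\Ran_x}$ with the unital structure (the Remark after \secref{sss:spread functor}) one invokes \lemref{l:UHC unital} and \corref{c:gerbe unital} to see that the $\varphi_{\on{big}}$-pullback does not disturb the equivariance or the gerbe, which again is local and poses no real difficulty.
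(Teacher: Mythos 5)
Your proposal is correct and matches the paper's (tacit) proof: the paper merely states that the theorem "results from the equivalences \eqref{e:Ran vac equiv} and \thmref{t:restr to unit}," and the content of that deduction is precisely what you spell out — namely that $\ol{S}^0_\Ran$ and $(\ol{S}{}^0_{\Ran_x})_{\infty\cdot x}$ are factorization (module) spaces, that the equivariance condition, the character and the gerbe trivialization are all factorizable, so $\on{unit}^!$ upgrades to an equivalence of factorization categories, and one transports the tautological structure on $\omega_\Ran$ back through it. Your two flagged concerns (homotopy coherence of the upgrade, and unitality via \lemref{l:UHC unital}) are exactly the right places to be careful, and both are handled as you describe.
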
 

\newpage 

\centerline{\bf Part III: Hecke action and Hecke eigen-objects}

\bigskip 

The main theorem of this paper describes not the category $\Whit_{q,x}(G)$ itself, but rather its \emph{de-equivariantization}
with respect to the Hecke action. In this Part we will introduce and study the resulting category, denoted $\bHecke(\Whit_{q,x}(G))$. 

\section{Hecke action on the metaplectic Whittaker category}  \label{s:Hecke Whit}

In this section we study the Hecke action of the category of representations of the group $H$ (see \secref{sss:roots in dual}) 
on $\Whit_{q,x}(G)$. This is a structure needed to define $\bHecke(\Whit_{q,x}(G))$. 

\ssec{Definition of the Hecke action on the Whittaker category}  \label{ss:Hecke action on Whit}

In this subsection we define the Hecke action of $\Rep(H)$ on $\Whit_{q,x}(G)$. 

\sssec{}

Note that $$\Sph_q(G):=\left(\Shv_{\CG^G}(\Gr_{G,x})\right)^{\fL^+(G)_x}$$ identifies 
\emph{canonically} with
$$\left(\Shv_{\CG^G}(\Gr^{\omega^\rho}_{G,x})\right)^{\fL^+(G)^{\omega^\rho}_x}.$$

\medskip

Therefore, we obtain a (right) action of 
$\Sph_q(G)$ on $\Shv_{\CG^G}(\Gr^{\omega^\rho}_{G,x})$ by convolutions (on the right), which preserves the action of $\fL(G)^{\omega^\rho}$
on $\Shv_{\CG^G}(\Gr^{\omega^\rho}_{G,x})$ by left translations. 

\bigskip

\noindent{\bf Convention:} {\it For the duration of this section, in order to unburden the notation, we will omit the superscript $\omega^\rho$
and the subscript $x$, so $\Gr^{\omega^\rho}_{G,x}$ will be denoted simply by $\Gr_G$.}

\sssec{}

The action of $\Sph_q(G)$ on $\Shv_{\CG^G}(\Gr_G)$ commutes with the functors 
$\on{Av}_*^{N_k,\chi_N}$. Thus, we obtain that the action of $\Sph_q(G)$ on $\Shv_{\CG^G}(\Gr_G)$
preserves the full subcategory 
$$\Whit_q(G)\subset \Shv_{\CG^G}(\Gr_G).$$

Thus, we obtain a monoidal right action of $\Sph_q(G)$ on $\Whit_q(G)$.

\sssec{}

Combining with metaplectic geometric Satake (see \secref{sss:trivialize gerbe at point}), 
we obtain a monoidal action of $\Rep(H)$ on $\Whit_q(G)$. We refer to it as the \emph{Hecke action},
and will denote it by
$$\CF,V\mapsto \CF\star \Sat_{q,G}(V).$$

%\sssec{}

%Recall (see \cite[Sect. 6.1]{GLys}) that the torus $T^\sharp$ introduced in \secref{sss:Lambda sharp} is naturally the dual torus 
%of the Cartan subgroup $T_H$ of $H$. Moreover, the multiplicative gerbe $\CG^{T^\sharp}$ comes from $\CG_{Z}$;
%in particular we have a canonical equivalence of monoidal categories
%$$\Shv_{\CG^{T^\sharp}_x}(\Gr_{T^\sharp,x})\simeq \Rep(T_H)_{\CG_{Z,x}}.$$

\ssec{Hecke action and duality}   \label{ss:Hecke and duality}

In this subsection we will study the interaction of the Hecke action and self-duality of $\Whit_q(G)$.

\sssec{}

Since the action of $\Sph_q(G)$ on on $\Shv_{\CG^G}(\Gr_G)$ commutes with the functors 
$\on{Av}_*^{N_k,\chi_N}$, we obtain a canonically defined action of $\Sph_q(G)$ also on the
category $\Whit_q(G)_{\on{co}}$.

\sssec{}

By construction, the functor 
$$\on{Ps-Id}:\Whit_q(G)_{\on{co}}\to \Whit_q(G)$$
intertwines the actions of $\Sph_q(G)$ on both sides.

\medskip

In particular, we obtain that $\Whit_q(G)_{\on{co}}$ is a $\Rep(H)$-module category, and the functor 
$\on{Ps-Id}$ is a map of such. 

\sssec{}

Since the convolution action of $\Sph_q(G)$ on $\Shv_{\CG^G}(\Gr_G)$ is given by a \emph{proper pushforward}, we have a commutative diagram
of actions
$$
\CD
(\Sph_q(G)^c)^{\on{op}} \otimes ((\Whit_q(G))^c)^{\on{op}}  @>>>   ((\Whit_q(G))^c)^{\on{op}}   \\
@V{\BD^{\on{Verdier}}\otimes \BD^{\on{Verdier}}}VV   @VV{\BD^{\on{Verdier}}}V   \\
\Sph_{q^{-1}}(G)^c \otimes (\Whit_{q^{-1},\on{co}}(G))^c  @>>>  (\Whit_{q^{-1},\on{co}}(G))^c,
\endCD
$$
where the equivalence 
$$\BD^{\on{Verdier}}:((\Whit_q(G))^c)^{\on{op}}\simeq (\Whit_{q^{-1},\on{co}}(G))^c$$
is that induced by \eqref{e:duality for Whit}. 

\sssec{}  \label{sss:Hecke action and duality}

Combining with \secref{ss:Satake and duality}, we obtain a commutative diagram of actions
\begin{equation} \label{e:metapl Satake and duality bis}
\CD
(\Rep(H)^c)^{\on{op}}  \otimes ((\Whit_q(G))^c)^{\on{op}}  @>>>   ((\Whit_q(G))^c)^{\on{op}}   \\
@V{(\tau^H\circ \BD^{\on{lin}})\otimes \BD^{\on{Verdier}}}VV   @VVV   \\
\Rep(H)^c \otimes (\Whit_{q^{-1}}(G))^c  @>>>  (\Whit_{q^{-1}}(G))^c. 
\endCD
\end{equation}
where we remind that $\tau^H$ denoted the Cartan involution on $H$, see \secref{sss:Cartan inv}. 
In the above diagram, we denoted by 
$$\BD^{\on{Verdier}}:((\Whit_q(G))^c)^{\on{op}}\simeq (\Whit_{q^{-1}}(G))^c$$
the identification of \eqref{e:self duality for Whit comp}.

\ssec{Hecke action and the t-structure}

In this subsection we will study how the Hecke action interacts with the t-structure on $\Whit_q(G)$,
which was introduced in \secref{ss:t on Whit}. 

\sssec{}

We claim:

\begin{prop} \label{p:conv is exact}
For $V\in \Rep(H)^\heartsuit$, the corresponding Hecke functor
\begin{equation} \label{e:Hecke V}
\CF\mapsto \CF\star \Sat_{q,G}(V)
\end{equation}
is t-exact.
\end{prop}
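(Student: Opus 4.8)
The plan is to reduce the statement to a t-exactness property of convolution on the full sheaf category $\Shv_{\CG^G}(\Gr_G)$, but with respect to the \emph{Whittaker} t-structure rather than the perverse one --- the latter is not even defined the way we want. The key observation is that the Whittaker t-structure on $\Whit_q(G)$ is characterized by the generators $W^{\lambda,!}$ via the condition $\Hom(W^{\lambda,!}[k],-)=0$ for $k>0$, and dually the costandards $W^{\lambda,*}$ co-generate. So it suffices to show: (i) $W^{\lambda,!}\star \Sat_{q,G}(V)$ is connective for all $\lambda$ and $V\in\Rep(H)^\heartsuit$, and (ii) $W^{\lambda,*}\star \Sat_{q,G}(V)$ is coconnective. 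By the duality diagram \eqref{e:metapl Satake and duality bis} together with \propref{p:dual of standard} and \corref{c:duality irred}, statement (ii) for $q$ follows from statement (i) for $q^{-1}$ (using that $\tau^H$ preserves $\Rep(H)^\heartsuit$, and that the monoidal-dual / Cartan-involute of an object of $\Rep(H)^\heartsuit$ is again in $\Rep(H)^\heartsuit$). Hence the whole proposition reduces to the single claim that $W^{\lambda,!}\star \Sat_{q,G}(V)$ is connective.

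First I would establish this for $V$ a module pulled back from the \emph{restricted} objects, or more precisely I would use the filtration/resolution of an arbitrary $V\in\Rep(H)^\heartsuit$ by objects $V^\gamma$ (the irreducibles with highest weights $\gamma$), reducing by exactness of $\Sat_{q,G}$ (\secref{sss:descr of weight spaces}) and the fact that $\star$ is exact in each variable on the underlying sheaf level to the case $V=V^\gamma$, where by \eqref{e:descr h.w.} $\Sat_{q,G}(V^\gamma)=\IC_{q,\ol\Gr^\gamma_G}$. Next, $W^{\lambda,!}\star\IC_{q,\ol\Gr^\gamma_G}$ is computed as a pushforward along the convolution map $\Gr_G\wt\times\ol\Gr^\gamma_G\to\Gr_G$, which is proper. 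The object $W^{\lambda,!}=(\bi_\lambda)_!(\overset{\circ}W{}^\lambda)[-\langle\lambda,2\check\rho\rangle]$ is a clean !-extension off the semi-infinite orbit $S^\lambda$, and I would analyze the convolution stratum-by-stratum: the !-restriction of $W^{\lambda,!}\star\IC_{q,\ol\Gr^\gamma_G}$ to each orbit $S^\mu$ is computed by pushing forward from a fiber of the convolution map intersected with $S^\lambda\wt\times\ol\Gr^\gamma_G$. Using the Iwasawa-type decomposition of these intersections (as in the non-twisted case), these fibers are affine spaces fibered over the relevant intersections of $\ol\Gr^\gamma_G$ with semi-infinite orbits, and the cohomological estimate --- together with the character sheaf $\chi_N$ constraint forcing dominance --- yields that the !-stalks sit in non-positive degrees after the shift. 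This is precisely the kind of estimate that makes $W^{\lambda,!}$ map to infinitely connective objects downstairs, now refined to give Whittaker-connectivity of the convolution.

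The main obstacle I anticipate is the bookkeeping of cohomological amplitudes in the stratified computation: one needs the precise shifts coming from $\langle\lambda,2\check\rho\rangle$, from $\langle\gamma,2\check\rho\rangle$, and from the dimensions of the intersections $\ol{S}^{\mu-?}\cap\ol\Gr^\gamma_G$, and to check that they assemble so that nothing escapes into positive degrees --- the semi-infinite orbits are neither finite-dimensional nor finite-codimensional, so naive dimension counts must be done relative to a chosen orbit. I would handle this by working inside a fixed $\ol{S}{}^{\lambda'}$ (so that everything becomes finite-dimensional as in the presentation of $\Whit_q(G)$ as a colimit over $\lambda'\in\Lambda$), and invoking that $W^{\lambda,!}\in\Whit_q(G)_{\le\lambda}$ together with the fact (from \propref{p:Whittaker strata}) that only finitely many strata $S^\mu$ with $\lambda\le\mu\le\lambda+\gamma$ contribute. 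A secondary, more structural, point to be careful about is the compatibility of metaplectic geometric Satake with the Jacquet functor used to read off weight components, i.e.\ \eqref{e:weight component}: this is what converts the geometric statement into the representation-theoretic statement that the convolution's weight components are governed by $\Res^H_{T_H}(V)$, and it is what ultimately forces the answer to land in the heart when combined with (ii). Once the connectivity in (i) is in hand, the proposition follows formally.
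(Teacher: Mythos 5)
Your reduction to generator-level statements is reasonable in outline, but there is a genuine gap in how you plan to carry out the key estimate, and it is precisely the issue that forces the paper to argue in the opposite direction from yours.

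You want to verify connectivity of $W^{\lambda,!}\star \Sat_{q,G}(V)$ by a stratum-by-stratum analysis of !-stalks, and you justify this by describing $W^{\lambda,!}$ as a ``clean !-extension off $S^\lambda$''. That description is false in general: by \corref{c:Whittaker strata}(b) the map $W^{\lambda,!}\to W^{\lambda,*}$ is an isomorphism only when $\lambda$ is minimal in $\Lambda^+$, so for generic $\lambda$ the !-extension is emphatically not clean. More to the point, what cleanness would have bought you is vanishing of !-restrictions of $W^{\lambda,!}$ to the lower strata $S^{\lambda'}$, $\lambda'<\lambda$; but a !-extension has vanishing \emph{*}-restrictions off $S^\lambda$, not vanishing !-restrictions. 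The !-stalks of $W^{\lambda,!}$ on lower strata are nonzero and not under control (they encode the length of the extension $W^{\lambda,!}\twoheadrightarrow W^{\lambda,!*}$), so the !-fiber of the convolution at $t^\mu$ picks up contributions from every stratum of $\ol{S}^\lambda$, not just $S^\lambda$. Your proposed computation -- pushing forward from ``a fiber of the convolution map intersected with $S^\lambda\wt\times\ol\Gr^\gamma_G$'' -- is therefore not what one gets; one gets an expression involving the whole of $\ol{S}^\lambda\wt\times\ol\Gr^\gamma_G$, with the uncontrolled !-stalks of $W^{\lambda,!}$ as coefficients.

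The paper's argument is structured to avoid this entirely. One only needs to establish \emph{left} t-exactness (preservation of coconnectivity): since both the left and right adjoint of $-\star\Sat_{q,G}(V)$ are $-\star\Sat_{q,G}(V^*)$, showing $-\star\Sat_{q,G}(W)$ left t-exact for all $W\in\Rep(H)^\heartsuit$ already forces right t-exactness for all $V$ by adjunction (apply left t-exactness to $V^*$). Left t-exactness is then checked on the cogenerators $W^{\lambda,*}$, using that a right adjoint commutes with limits. This means the estimate to prove is that $W^{\lambda,*}\star\Sat_{q,G}(V)$ is coconnective, i.e.\ that its !-fiber at $t^\mu$ lives in degrees $\geq -\langle\mu,2\check\rho\rangle$. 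Here the computation really does localize to $S^\lambda$, because $W^{\lambda,*}$ is a *-extension and hence has vanishing !-restriction to the lower strata -- that is exactly what makes the formula in \secref{sss:1st approx} and the dimension count in \secref{sss:2nd approx} go through. Your duality reasoning was pointed in the right direction but deployed backwards: one should deduce the connectivity of $W^{\lambda,!}\star\Sat(V)$ from the coconnectivity of $W^{\lambda,*}\star\Sat(V)$ via \corref{c:duality and t} (if one wants it at all), not try to prove the former directly. As written, your plan founders at the point where the ``clean extension'' premise is invoked.
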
 

Before we prove this proposition, having future needs in mind, we will perform a certain elementary 
but crucial calculation.

\sssec{Stalks of the convolution, 1-st approximation} \label{sss:1st approx}

In this subsection we will give an explicit expression for the cohomology of the !-fiber at $t^\mu$ of 
$W^{\lambda,*}\star \Sat_{q,G}(V)$ for $V\in \Rep(H)$. 

\medskip

First off, we note that the fiber in question vanishes unless $\mu\in \Lambda^+$ (by \propref{p:Whittaker strata}(a)),
so henceforth we will assume that $\mu$ is dominant.  

\medskip

Consider the ind-subscheme 
$$S^{\mu-\lambda}=\fL(N)\cdot t^{\mu-\lambda}\subset \Gr_G.$$

\medskip

Let $\chi^\lambda_N$ be the character sheaf on $\fL(N)$ obtained from $\chi_N$ by pullback with respect to the automorphism
$\on{Ad}_{t^\lambda}$. Since $\mu$ was assumed dominant, $\chi^\lambda_N$ descends to a well-defined object of
$\Shv(S^{\mu-\lambda})$, which we denote by the same character $\chi^\lambda_N$.

\medskip

Since $\fL(N)$ is ind-pro-unipotent, the restriction of $\CG^G$ to $\fL(N)$ is canonically trivialized, and the restriction of
$\CG^G$ to $S^{\mu-\lambda}$ admits a unique (up to a non-canonical isomorphism) $\fL(N)$-equivariant trivialization. 
Due to this trivialization, we can regard $\Sat_{q,G}(V)|_{S^{\mu-\lambda}}$ as an object of the
non-twisted category $\Shv(S^{\mu-\lambda})$. 

\medskip

By unwinding the definitions, we obtain:
\begin{equation} \label{e:!-stalk of conv}
(W^{\lambda,*}\star \Sat_{q,G}(V))_{t^\mu}\simeq H(S^{\mu-\lambda},\Sat_{q,G}(V)|_{S^{\mu-\lambda}}
\otimes \chi^\lambda_N)[\langle \lambda, 2\check\rho\rangle ].
\end{equation} 

\sssec{Stalks of the convolution, 2nd approximation}  \label{sss:2nd approx}

We now claim that the expression in \eqref{e:!-stalk of conv} lives in the cohomological degrees $\geq -\langle \mu,2\check\rho\rangle $. I.e., we claim 
that 
\begin{equation} \label{e:cohomology of restr}
H(S^{\mu-\lambda},\Sat_{q,G}(V)|_{S^{\mu-\lambda}}\otimes \chi^\lambda_N)
\end{equation} 
lives in cohomological degrees $\geq \langle \lambda-\mu, 2\check\rho\rangle$. 

\medskip

We stratify $S^{\mu-\lambda}$ by the intersections $S^{\mu-\lambda}\cap \Gr_G^\nu$ with $\nu\in \Lambda^+$. So, it sufficient to show
that each 
\begin{equation} \label{e:cohomology of intersect}
H(S^{\mu-\lambda}\cap \Gr^\nu_G,\Sat_{q,G}(V)|_{S^{\mu-\lambda}\cap \Gr^\nu_G}\otimes \chi^\lambda_N)
\end{equation} 
lives in cohomological degrees $\geq \langle \lambda-\mu, 2\check\rho\rangle$. 

\medskip

By perversity, $\Sat_{q,G}(V)|_{\Gr^\nu_G}$ lives in non-negative \emph{perverse} cohomological degrees, and it is lisse due to 
$\fL^+(G)$-equivariance. Hence, it is the Verdier dual of an object that lives in the \emph{usual}
cohomological degrees $\leq -\langle \nu,2\check\rho\rangle$ (we recall that $\langle \nu,2\check\rho\rangle=\dim(\Gr^\nu_G)$). 

\medskip

Therefore,
$\Sat_{q,G}(V)|_{S^{\mu-\lambda}\cap \Gr^\nu_G}$ is the Verdier dual of an object that lives in the \emph{usual}
cohomological degrees $\leq -\langle \nu,2\check\rho\rangle$. 

\medskip

Now the required cohomological estimate follows from the fact that
$$\dim(S^{\mu-\lambda}\cap \Gr^\nu_G)\leq \langle \nu+\mu-\lambda,\check\rho\rangle.$$ 

\sssec{Proof of \propref{p:conv is exact}}

It suffices to consider the case when $V$ is finite-dimensional. Note that both the left and right adjoints of the functor
\eqref{e:Hecke V} identify with 
$$\CF\mapsto \CF\star \Sat_{q,G}(V^*),$$
where $V^*$ is the dual representation of $V$. So, it suffices to show that \eqref{e:Hecke V} is left t-exact. 

\medskip

By the definition of the t-structure on $\Whit_q(G)$, its subcategory of connective objects is generated 
under colimits by the objects $W^{\mu,!}$. Taking into account \eqref{e:standards Whit}, we obtain that
the subcategory of coconnective objects in $\Whit_q(G)$ is \emph{co-generated} under limits by the
objects $W^{\lambda,*}$. 

\medskip

Thus, it suffices to show that for every $\lambda$ and $\mu$, the object 
$$\CHom_{\Whit_q(G)}(W^{\mu,!},W^{\lambda,*}\star \Sat_{q,G}(V))\in \Vect$$ lives
in cohomological degrees $\geq 0$. In other words, we have to show that that 
the !-fiber at $t^\mu$ of $W^{\lambda,*}\star \Sat_{q,G}(V)$ lives in the cohomological
degrees $\geq -\langle \mu,2\check\rho\rangle $. However, the latter has been established in \secref{sss:2nd approx}.

\qed[\propref{p:conv is exact}]

\ssec{Restricted coweights}  \label{ss:restr}

In this subsection we will make the analysis of the action of $\Rep(H)$ on $\Whit_q(G)$ even more explicit. 

\medskip

Namely,
we will show that for certain coweights $\lambda$ (called \emph{restricted}), the image of the corresponding $W^{\lambda,!*}$
under $-\star \Sat_{q,G}(V)$ for $V\in \on{Irrep}(H)$ stays irreducible. 

\medskip

This is a counterpart of Steinberg's theorem in
the context of quantum groups.

\sssec{}  \label{sss:restricted coweights}

We shall say that a coweight $\mu\in \Lambda^+$ is \emph{restricted} if for every vertex $i$ of the Dynkin diagram
we have
$$\langle \mu,\check\alpha \rangle < \on{ord}(q_i),$$
where $q_i$ is as in \secref{sss:q alpha}. 

\medskip

First, we note: 

\begin{lem}  \label{l:many restr}
Assume that the derived group of $H$ is simply connected. Then any element $\lambda\in \Lambda^+$
can be written as $\mu+\gamma$ with $\mu\in \Lambda^+$ restricted and $\gamma\in (\Lambda^\sharp)^+$.
\end{lem}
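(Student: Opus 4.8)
The plan is to reduce the statement to an elementary division-with-remainder computation inside $\Lambda$, using the description of the dominance cone of $H$ together with the hypothesis on the fundamental group of the derived group of $H$.

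First I would record the combinatorial meaning of the objects involved. By \secref{sss:roots in dual} the simple coroots of $H$ are $\check\alpha_{i,H}=\check\alpha_i/\ell_{\alpha_i}$ for $i\in I$, so for $\gamma\in\Lambda^\sharp=\Lambda_H$ the condition $\gamma\in(\Lambda^\sharp)^+$ (dominance for $H$) reads $\langle\gamma,\check\alpha_{i,H}\rangle\ge 0$, i.e. $\langle\gamma,\check\alpha_i\rangle\ge 0$ for every $i\in I$; in particular the dominance cones of $G$ and of $H$ coincide inside $\Lambda\underset{\BZ}\otimes\BQ$. Likewise, $\mu\in\Lambda^+$ is restricted (\secref{sss:restricted coweights}) precisely when $0\le\langle\mu,\check\alpha_i\rangle<\ell_{\alpha_i}$ for all $i$, since $\ell_{\alpha_i}=\on{ord}(q_i)$.

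Next I would extract from the hypothesis that the derived group of $H$ is simply connected a family of fundamental weights $\omega_i^H\in\Lambda_H$ characterized by $\langle\omega_i^H,\check\alpha_{j,H}\rangle=\delta_{ij}$; translating via $\check\alpha_{j,H}=\check\alpha_j/\ell_{\alpha_j}$ these satisfy $\langle\omega_i^H,\check\alpha_j\rangle=\ell_{\alpha_j}\,\delta_{ij}$, and each $\omega_i^H$ lies in $\Lambda^\sharp$. Then the construction is immediate: given $\lambda\in\Lambda^+$, set $n_i:=\langle\lambda,\check\alpha_i\rangle\ge 0$ and write $n_i=a_i\ell_{\alpha_i}+r_i$ with $a_i\in\BZ^{\ge 0}$ and $0\le r_i<\ell_{\alpha_i}$. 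Put $\gamma:=\underset{i\in I}\sum a_i\,\omega_i^H\in\Lambda^\sharp$ and $\mu:=\lambda-\gamma\in\Lambda$. Then $\langle\gamma,\check\alpha_j\rangle=a_j\ell_{\alpha_j}\ge 0$, so $\gamma\in(\Lambda^\sharp)^+$; and $\langle\mu,\check\alpha_j\rangle=n_j-a_j\ell_{\alpha_j}=r_j\in[0,\ell_{\alpha_j})$, so $\mu\in\Lambda^+$ is restricted. Finally $\lambda=\mu+\gamma$, which is the desired decomposition.

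The only point demanding care — the ``main obstacle'', such as it is — is the first half of the previous step: making sure that ``derived group of $H$ simply connected'' really does produce the weights $\omega_i^H$ inside $\Lambda_H=\Lambda^\sharp$ and not merely in $\Lambda^\sharp\underset{\BZ}\otimes\BQ$. I would settle this by invoking the standard characterization of simple-connectedness of a semisimple group in terms of the character lattice of its maximal torus: the derived group of a reductive group $H$ is simply connected if and only if there exist characters $\omega_i^H$ of $T_H$ (equivalently, elements of $\Lambda_H$) with $\langle\omega_i^H,\check\alpha_{j,H}\rangle=\delta_{ij}$. Everything else is bookkeeping with the integers $\ell_{\alpha_i}$, and in particular the passage between the pairings with $\check\alpha_{i,H}$ and with $\check\alpha_i$.
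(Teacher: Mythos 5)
The paper states \lemref{l:many restr} without giving a proof, so there is nothing in the source to compare against; your argument is the natural one and is correct. You correctly reduce everything to the coordinates $\langle-,\check\alpha_i\rangle$, use the relation $\check\alpha_{i,H}=\check\alpha_i/\ell_{\alpha_i}$ to pass between the dominance/restrictedness conditions for $G$ and for $H$, and invoke simple-connectedness of the derived group of $H$ in precisely the place it is needed: to guarantee that the fundamental weights $\omega_i^H$ (with $\langle\omega_i^H,\check\alpha_{j,H}\rangle=\delta_{ij}$) actually lie in $\Lambda_H=\Lambda^\sharp$ rather than merely in $\Lambda^\sharp\underset{\BZ}\otimes\BQ$, after which the decomposition $\lambda=\mu+\gamma$ is a Euclidean division $n_i=a_i\ell_{\alpha_i}+r_i$ in each coordinate.
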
 

\sssec{}

We are now ready to state the main result of this section: 

\begin{thm} \label{t:restr convolution}
Suppose that $\mu\in \Lambda^+$ is restricted. Then for an irreducible $V^\gamma\in \Rep(\cG)$ with highest weight
$\gamma\in (\Lambda^\sharp)^+$, we have
$$W^{\mu,!*}\star \Sat_{q,G}(V^\gamma) \simeq W^{\mu+\gamma,!*}.$$
\end{thm}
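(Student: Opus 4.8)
The plan is to deduce \thmref{t:restr convolution} from the structural results established earlier, following the strategy of Steinberg's tensor product theorem. First I would observe that by \propref{p:conv is exact}, applied to $V^\gamma\in\Rep(H)^\heartsuit$, the convolution $W^{\mu,!*}\star\Sat_{q,G}(V^\gamma)$ lies in $(\Whit_q(G))^\heartsuit$; hence it suffices to show it is irreducible and to identify which irreducible it is. By \corref{c:t on Whit}(a), the irreducibles are precisely the $W^{\lambda,!*}$, so the content is: $W^{\mu,!*}\star\Sat_{q,G}(V^\gamma)$ is a \emph{simple} object, and the unique $\lambda$ for which $W^{\lambda,!*}$ occurs is $\lambda=\mu+\gamma$. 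The identification of $\lambda$ is the easy half: I would compute the !-fiber of $W^{\mu,!*}\star\Sat_{q,G}(V^\gamma)$ on the various $S^\nu$-strata using the formula \eqref{e:!-stalk of conv} together with the support estimates of \secref{sss:2nd approx} and the weight-space description \eqref{e:weight component}, showing that the ``top'' stratum contributing is $S^{\mu+\gamma}$ with a one-dimensional contribution, so that $W^{\mu+\gamma,!*}$ appears with multiplicity one and nothing bigger appears; this pins down $\lambda=\mu+\gamma$.

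The harder half is simplicity, and here the \emph{restrictedness} hypothesis on $\mu$ is essential (just as $\langle\mu,\check\alpha\rangle<\on{ord}(q_i)$ is what makes the baby Verma / small-quantum-group argument work). The approach I would take is to pass to the self-duality of $\Whit_q(G)$ from \secref{ss:duality Whit} and the compatibility of the Hecke action with Verdier duality recorded in \eqref{e:metapl Satake and duality bis}: since $\BD^{\on{Verdier}}(W^{\mu,!*})\simeq W^{\mu,!*}$ (\corref{c:duality irred}) and $\tau^H(V^\gamma)\simeq (V^\gamma)^*$ with $\BD^{\on{lin}}((V^\gamma)^*)\simeq V^\gamma$, the object $W^{\mu,!*}\star\Sat_{q,G}(V^\gamma)$ is Verdier self-dual up to the evident twist. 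A self-dual object of the heart whose socle and cosocle both contain $W^{\mu+\gamma,!*}$ with multiplicity one, and which has no other composition factor ``of size $\geq\mu+\gamma$'', must then be forced to equal $W^{\mu+\gamma,!*}$ provided one can rule out composition factors $W^{\lambda,!*}$ with $\lambda<\mu+\gamma$. That last exclusion is exactly where restrictedness enters: I would show, by the stalk computation of \secref{sss:1st approx}--\secref{sss:2nd approx}, that the !-fiber of $W^{\mu,!*}\star\Sat_{q,G}(V^\gamma)$ at $t^\lambda$ for $\lambda<\mu+\gamma$, $\lambda$ dominant, vanishes — because the relevant cohomology $H(S^{\lambda-\mu}\cap\Gr^\nu_G,\IC_{q,\ol\Gr^\nu_G}\otimes\chi^\mu_N)$, while \emph{a priori} nonzero, is killed once $\langle\mu,\check\alpha_i\rangle$ is small enough relative to $\on{ord}(q_i)$, by a metaplectic version of the standard dimension/parity argument for these intersection cohomology stalks against the Artin--Schreier local system.

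Concretely, I expect the key computational input to be an identification of $W^{\mu,!*}$ for restricted $\mu$ with a clean (!$=$*) extension from $S^{\mu}$, or at least a sufficiently sharp control of its !-restriction to the orbit closures $\ol S^\lambda$ with $\lambda<\mu$; restrictedness should guarantee that the IC sheaf of $\ol S^{\mu,-}$ (or rather the Whittaker-averaged delta) has no strictly lower strata contributing, which is the geometric shadow of the small quantum group's baby Verma being simple in the restricted range. Granting this, the convolution $W^{\mu,!*}\star\Sat_{q,G}(V^\gamma)$ is then a pushforward of a clean object along the convolution map, and a cleanness/support argument combined with the stalk vanishing above and Verdier self-duality forces it to be the clean (hence simple) extension from $S^{\mu+\gamma}$, i.e.\ $W^{\mu+\gamma,!*}$.

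The main obstacle, as I see it, is precisely establishing the vanishing of the ``forbidden'' lower stalks of $W^{\mu,!*}\star\Sat_{q,G}(V^\gamma)$ in the restricted range: the support estimate $\dim(S^{\lambda-\mu}\cap\Gr^\nu_G)\leq\langle\nu+\lambda-\mu,\check\rho\rangle$ of \secref{sss:2nd approx} only gives an inequality, not vanishing, so one genuinely needs the metaplectic twist (the character $\chi^\mu_N$ and the gerbe $\CG^G$ restricted to these intersections) to produce the required cancellation, and this is where I would expect to spend most of the real work — likely by reducing, via Satake and the $\fL^+(G)$-equivariance of $\IC_{q,\ol\Gr^\nu_G}$, to a rank-one ($SL_2$ or $PGL_2$) computation and invoking \secref{sss:twisted N} together with the restrictedness bound $\langle\mu,\check\alpha_i\rangle<\on{ord}(q_i)$.
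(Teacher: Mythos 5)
The general shape of your strategy — degree bounds on stalks of convolutions, Verdier self-duality, and the restrictedness bound $\langle\mu,\check\alpha_i\rangle<\on{ord}(q_i)$ feeding into a local-system nontriviality statement on semi-infinite orbit intersections — is the same as the paper's. But the specific route you propose through cleanness has a genuine gap.

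Your plan is to show that the !-fiber of $W^{\mu,!*}\star\Sat_{q,G}(V^\gamma)$ at $t^\lambda$ \emph{vanishes} for dominant $\lambda<\mu+\gamma$, conclude that the object is the clean extension from $S^{\mu+\gamma}$, and invoke ``clean $\Rightarrow$ simple.'' This cannot work as stated. The object $W^{\mu+\gamma,!*}$ is not clean in general: $\mu+\gamma$ will usually fail to be restricted even if $\mu$ is, and then $W^{\mu+\gamma,!}\neq W^{\mu+\gamma,*}$. So if the full stalk vanishing you claim were true, the convolution would be simultaneously $W^{\mu+\gamma,!}$, $W^{\mu+\gamma,*}$, and $W^{\mu+\gamma,!*}$, which is false. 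What is actually true (and what the estimate of \secref{sss:2nd approx} gives) is a one-sided degree bound: the !-fiber lives in cohomological degrees $\geq-\langle\lambda,2\check\rho\rangle$, and only its \emph{bottom} cohomology vanishes for $\lambda\neq\mu+\gamma$. That vanishing shows $\Hom_{\Whit}(W^{\lambda,!},-)=0$, i.e., the cosocle is $W^{\mu+\gamma,!*}$, but it does not rule out $W^{\lambda,!*}$ appearing as a strictly interior composition factor (in the toy model of $\BA^1$ stratified by $\{0\}$, the standard $j_!\sfe_{\BG_m}[1]$ has $\delta_0$ as a composition factor even though $i^!$ of it vanishes). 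Consequently self-duality plus simple socle $=$ cosocle is not enough: uniserial self-dual objects with the same simple at top and bottom but other factors in between are not excluded by your argument.

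The paper closes this gap differently. One first observes (their Step 0) that there are canonical maps
$$W^{\mu+\gamma,!}\longrightarrow W^{\mu,!*}\star \Sat_{q,G}(V^\gamma) \longrightarrow W^{\mu+\gamma,*}$$
whose composition is (a nonzero scalar times) the canonical map $W^{\mu+\gamma,!}\to W^{\mu+\gamma,*}$. Once one knows the first arrow is surjective and the second injective, the middle object is forced to be the image, i.e.\ $W^{\mu+\gamma,!*}$; and by the compatibility of the Hecke action with Verdier duality (\secref{ss:Hecke and duality}), surjectivity of the first implies injectivity of the second. To prove surjectivity, one does \emph{not} compute stalks of $W^{\mu,!*}\star\Sat$ directly: instead, using t-exactness of the Hecke action and adjunction (moving $\Sat(V^\gamma)$ to the other side, replacing $!*$ by $!$ and $*$), one reduces to showing that if the bottom cohomology of $(W^{\lambda,*}\star \Sat_{q,G}(V^\gamma))_{t^\mu}$ is nonzero, then $\mu-\lambda=w_0(\gamma)$. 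That is where the degree-bound analysis of \secref{sss:2nd approx} is used, feeding into \thmref{t:restr}. The adjunction trick, working with $W^{\lambda,*}$-convolutions rather than $W^{\mu,!*}$ directly, is the maneuver your proposal is missing.

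Finally, your remark about reducing the key input to a rank-one computation is too optimistic: the nontriviality of $\chi_N^0\otimes\Psi_q$ on \emph{every} top-dimensional irreducible component of $S^\mu\cap S^{-,\nu}$ (the content of \thmref{t:restr}) is proved via the crystal structure on the set of such components and the combinatorics of the $\phi_i$ functions — the restrictedness bound enters through the inequality $\phi_i(K)\leq\langle\mu,\check\alpha_i\rangle<\on{ord}(q_i)$ combined with the divisibility $\phi_i(K)\in\BZ^{\geq0}\cdot\on{ord}(q_i)$ — and is substantially more involved than an $SL_2$ computation, though rank-one is certainly where the numerics ultimately originate.
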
 

This theorem was proved in \cite[Sect. 7]{Lys}. We include the proof for completeness.

\sssec{Proof of \thmref{t:restr convolution}, Step 0}

It is easy to see that $W^{\mu,!*}\star \Sat_{q,G}(V^\gamma)$ is supported on $\ol{S}^{\mu+\gamma}$ and 
$$W^{\mu,!*}\star \Sat_{q,G}(V^\gamma)|_{S^{\mu+\gamma}}\simeq W^{\mu+\gamma,!*}|_{S^{\mu+\gamma}}.$$

In particular, we have the maps
\begin{equation} \label{e:map to an from conv}
W^{\mu+\gamma,!}\to W^{\mu,!*}\star \Sat_{q,G}(V^\gamma) \text{ and } 
W^{\mu,!*}\star \Sat_{q,G}(V^\gamma)\to W^{\mu+\gamma,*},
\end{equation}
whose composition (is a non-zero scalar multiple of) the canonical map $W^{\mu+\gamma,!}\to W^{\mu+\gamma,*}$.

\medskip

Thus, we have to show that the maps in \eqref{e:map to an from conv} are surjective and injective, respectively. 
We will show the former, as the latter would follow by duality (see \secref{ss:Hecke and duality}). 

\medskip

The surjectivity of the map $W^{\mu+\gamma,!}\to W^{\mu,!*}\star \Sat_{q,G}(V^\gamma)$ is equivalent 
to the assertion that there are no non-zero maps
$$W^{\mu,!*}\star \Sat_{q,G}(V^\gamma)\to W^{\lambda,!*}, \quad \lambda\neq \mu+\gamma.$$
 
Using the t-exactness of the convolution, it suffices to show that if there exists a nonzero Hom
$$W^{\mu,!}\star \Sat_{q,G}(V^\gamma)\to W^{\lambda,*}.$$
then $\lambda=\mu+\gamma$. 

\sssec{Proof of \thmref{t:restr convolution}, Step 1}

By adjunction, we have to show that if there exists a nonzero Hom
$$W^{\mu,!}\to W^{\lambda,*}\star \Sat_{q,G}(V^\gamma),$$
then $\lambda-\mu=-w_0(\gamma)$.
 
\medskip

I.e., we have to show that \emph{if} the expression 
$$(W^{\lambda,*}\star \Sat_{q,G}(V^\gamma))_{t^\mu}$$ has
cohomology in degree $-\langle \mu,2\check\rho\rangle$, \emph{then} $\mu-\lambda=w_0(\gamma)$. 

\medskip

By \secref{sss:2nd approx}, we need to analyze when 
\begin{equation} \label{e:cohomology of intersect again}
H^{\langle \lambda-\mu, 2\check\rho\rangle}(S^{\mu-\lambda}\cap 
\Gr^\gamma_G,\Sat_{q,G}(V^\gamma)|_{S^{\mu-\lambda}\cap \Gr^\gamma_G}\otimes \chi^\lambda_N)
\end{equation} 
is non-zero (we note that the strata with $\nu\neq \gamma$ do not
contribute to this cohomology as $\Sat_{q,G}(V^\gamma)|_{\Gr^\nu}$ will sit in strictly positive perverse cohomological degrees). 

\medskip

Note also that the condition that $\mu-\lambda=w_0(\gamma)$ implies that 
$w_0(\gamma)$ is the smallest element of $\Lambda$, for which the intersection 
$S^{\mu-\lambda}\cap \Gr^\gamma_G$ is non-empty, and in the latter case it consists of one point. 

\sssec{Stalks of the convolution, bottom cohomology}

Note that for $\gamma\in \Lambda^\sharp$, the gerbe $\CG^G|_{\Gr^\gamma_G}$
admits a unique (up to a non-canonical isomorphism) $\fL^+(G)$-equivariant
trivialization. 

\medskip

Thus, we obtain that over the intersection
$$S^{\mu-\lambda}\cap \Gr^\gamma_G,$$
the gerbe $\CG^G$ admits two \emph{different} trivializations. Hence, their ratio is given by a local system that we
temporarily denote by $\Psi_q$. 

\medskip

Thus, the expression in \eqref{e:cohomology of intersect again} is non-zero if and only if the local system 
$$\chi_N^\lambda \otimes \Psi_q$$ 
on $S^{\mu-\lambda}\cap \Gr^\gamma_G$ 
is \emph{trivial} on some irreducible component of  of (the top) dimension
$\langle \mu-\lambda+\gamma,\check\rho\rangle$. 

\sssec{Proof of \thmref{t:restr convolution}, Step 2: Reduction to an intersection of semi-infinite orbits}  \label{sss:Psiq}

Let $S^{-,w_0(\gamma)}$ denote the $\fL(N^-)$-orbit of $t^{w_0(\gamma)}$. It is known (see, e.g., \cite[Sect. 6]{BFGM}) that the inclusions
$$S^{\mu-\lambda}\cap \Gr^\gamma_G \hookleftarrow S^{\mu-\lambda}\cap \fL^+(N^-)\cdot t^{w_0(\gamma)} \hookrightarrow 
S^{\mu-\lambda}\cap S^{-,w_0(\gamma)}$$
induce bijections on the sets of irreducible components of (the top) dimension
$\langle \mu-\lambda+\gamma,\check\rho\rangle$. 

\medskip

The restriction of $\CG^G$ to $S^{-,w_0(\gamma)}$ also acquires a non-canonical trivialization. Hence, the discrepancy between
the two trivializations over $S^{\mu-\lambda}\cap S^{-,w_0(\gamma)}$ is given by a local system that we also temporarily
denote by $\Psi_q$. Its further restriction to
$$S^{\mu-\lambda}\cap \fL^+(N^-)\cdot t^{w_0(\gamma)}$$ identifies (non-canonically) with the restriction of the local system
on $S^{\mu-\lambda}\cap \Gr^\gamma_G$ that we had earlier denoted by $\Psi_q$.

\medskip

Thus, it suffices to show that if $\mu$ is restricted and 
$\mu-\lambda\neq w_0(\gamma)$, then the resulting local system
$$\chi_N^\lambda \otimes \Psi_q$$ 
on $S^{\mu-\lambda}\cap S^{-,w_0(\gamma)}$ 
is \emph{non-trivial} on every irreducible component of (the top) dimension
$\langle \mu-\lambda+\gamma,\check\rho\rangle$. 
 
\medskip

Translating by $t^\lambda$, we obtain that the required statement follows from the next result, proved in \cite[Sect. 6]{Lys}: 

\begin{thm} \label{t:restr}
For a restricted dominant coweight $\mu$ and any $\nu\neq \mu$, the local system
$$\chi_N^0 \otimes \Psi_q$$ 
on $S^\mu\cap S^{-,\nu}$ 
is non-trivial on every irreducible component of (the top) dimension $\langle \mu-\nu,\check\rho\rangle$.
\end{thm}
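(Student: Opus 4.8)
The plan is to reduce the statement to an explicit combinatorial analysis of the character $\chi_N^0$ and the metaplectic twisting local system $\Psi_q$ on the intersection $S^\mu\cap S^{-,\nu}$, exploiting the fact that this intersection is (up to the standard torus action) a quasi-affine variety whose irreducible components of top dimension are well understood via MV-cycle combinatorics. First I would recall, following \cite{BFGM} and \cite{MV}, that $S^\mu\cap S^{-,\nu}$ is non-empty exactly when $\nu\leq\mu$, and each of its irreducible components of dimension $\langle\mu-\nu,\check\rho\rangle$ is the closure of a locally closed subvariety that is isomorphic, as a scheme with its residual $\fL^+(N)\cap\fL^-(N^-)$-type structure, to a product of affine spaces and punctured affine lines indexed by the entries of an MV datum; the relevant point is that each such component carries a free action of a product of additive groups $\BG_a$, one copy for each simple coroot direction ``used'' in passing from $\nu$ to $\mu$.

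Next I would analyze the restriction of the character sheaf $\chi^0_N$ to such a component: since $\chi_N$ is the sum over the simple-coroot coordinates of the residue pairing, its restriction is a nontrivial Artin–Schreier local system along the $\BG_a$-directions corresponding to those simple roots $\alpha_i$ for which the component actually meets the stratum ``one step in the $-\alpha_i$ direction.'' The key subtlety is the interaction with $\Psi_q$: by \secref{sss:Psiq} and the explicit formula \eqref{e:expl shifted gerbe vac} for the torus gerbe, $\Psi_q$ restricted to the $\BG_a$-direction attached to $\alpha_i$ is a Kummer-type local system of order dividing $\on{ord}(q_i)=\ell_{\alpha_i}$ (this is exactly where the metaplectic parameter enters, and it is the reason the naive non-twisted argument must be modified). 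The product $\chi^0_N\otimes\Psi_q$ along that $\BG_a$-line is then an Artin–Schreier sheaf twisted by a tame character of order $\ell_{\alpha_i}$; such a local system on $\BG_a$ or on $\BG_a\setminus 0$ is trivial \emph{only} if the Artin–Schreier part is trivial, i.e.\ only if that coordinate direction is not present. The restrictedness hypothesis $\langle\mu,\check\alpha_i\rangle<\on{ord}(q_i)$ guarantees that on \emph{every} top-dimensional component, as long as $\nu\neq\mu$, at least one such simple-root $\BG_a$-direction genuinely appears with a nonzero Artin–Schreier contribution — here one uses that the ``defect'' $\mu-\nu$ is a nonzero sum of simple coroots, so some $\alpha_i$ occurs, and restrictedness prevents the multiplicity from wrapping around to kill the character by a dimension/valuation bookkeeping.

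I expect the main obstacle to be precisely this last point: controlling, uniformly over all top-dimensional irreducible components of $S^\mu\cap S^{-,\nu}$, that the simple-root direction along which $\chi^0_N$ is nontrivial is not accidentally cancelled by the Kummer twist $\Psi_q$, and ensuring the bookkeeping of dimensions (the component must have the \emph{top} dimension $\langle\mu-\nu,\check\rho\rangle$, which forces it to be a ``generic'' MV cycle and hence to meet the open $\fL(N)$-orbit stratum in the expected codimension) works out. The restrictedness condition is used exactly to convert ``the Artin–Schreier character is nontrivial along some $\BG_a$'' into ``the tensor product $\chi^0_N\otimes\Psi_q$ is nontrivial,'' since for a non-restricted $\mu$ one could have $\langle\mu,\check\alpha_i\rangle$ a multiple of $\on{ord}(q_i)$ and the two twists could conspire. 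Once this is in place, nontriviality of a local system on any variety admitting a free $\BG_a$-action along which it is nontrivial is immediate (cohomology vanishes, or more elementarily the local system has no global sections and no monodromy-invariants), giving the claim; I would cite \cite[Sect.~6]{Lys} for the detailed MV-cycle estimates and the explicit identification of $\Psi_q$ with a product of Kummer sheaves.
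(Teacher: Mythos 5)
Your overall architecture is in the right place: reduce to showing (a) that $\chi^0_N\otimes\Psi_q$ cannot be ``accidentally'' trivial — one must have both factors trivial — and then (b) derive a contradiction from both factors being trivial with $\nu\neq\mu$ using restrictedness. But both steps differ from the paper's proof in ways that matter, and in each case your version has a real gap.

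For step (a), you propose a local analysis along ``$\BG_a$-directions'' on the components, asserting that each top-dimensional irreducible component of $S^\mu\cap S^{-,\nu}$ carries a free action of a product of $\BG_a$'s along which $\chi^0_N$ is Artin--Schreier and $\Psi_q$ is Kummer. This is not justified, and it is not how MV cycles work: the intersection $S^\mu\cap S^{-,\nu}$ is preserved by the torus $T\subset\fL^+(T)$ (since $T$ normalizes both $\fL(N)$ and $\fL(N^-)$), but \emph{not} by additive one-parameter subgroups, and the generic open locus of a top-dimensional component is not literally a product of $\BG_a$'s and punctured lines in a way that trivializes the local systems coordinate-by-coordinate. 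The paper's version of the dichotomy is instead global and uses exactly the structure that is actually available: $\Psi_q$ on $S^\mu\cap S^{-,\nu}$ is $T$-twisted equivariant against the Kummer local system on $T$ attached to the character $b(\mu-\nu,-)$ (this is the content of the cited passage from \cite{GLys}), while $\chi^0_N$ is pulled back from $\BG_a$ along a $\BG_m$-equivariant map via $\rho$. Pushing forward to $\BA^1$ and comparing monodromies of the Kummer twist $b(\mu-\nu,\rho)$ against the Artin--Schreier part is what yields the clean dichotomy. Your ``wild vs.\ tame can't cancel'' intuition is the same principle, but you have not set up a situation in which it can be applied; the $T$-twisted equivariance is the missing mechanism.

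For step (b), the paper uses the crystal structure on the set of top-dimensional irreducible components (identified, after translation by $t^{-\nu}$, with components of $S^{\mu-\nu}\cap S^{-,0}$), and two precise bounds on the crystal functions $\phi_i$: if $\Psi_q$ is trivial on $K$ then $\phi_i(K)\in\BZ^{\geq 0}\cdot\on{ord}(q_i)$ (this is the genuinely new metaplectic input, \cite[Prop.~6.1.7]{Lys}), and if $\chi^0_N$ is trivial on $K$ then $\phi_i(K)\leq\check\alpha_i(\mu)$ (this is \cite[Sect.~7.3]{FGV}). Combined with restrictedness $\check\alpha_i(\mu)<\on{ord}(q_i)$, both force $\phi_i(K)=0$ for all $i$, hence $\mu-\nu=0$, contradiction. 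Your ``dimension/valuation bookkeeping that prevents the multiplicity from wrapping around'' is gesturing at exactly this, but without the $\phi_i$-language and the two cited bounds the argument does not close. In particular, nothing in your proposal explains why triviality of $\chi^0_N$ on a component bounds anything by $\check\alpha_i(\mu)$; that is a nontrivial crystal-theoretic input, not a formal consequence of restrictedness.
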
 

\qed[\thmref{t:restr convolution}]

\ssec{Proof of \thmref{t:restr}}  

For the sake of completeness, we will now reproduce a sketch of the proof of 
\thmref{t:restr}. 

\medskip

We note that, in addition to the proof of \thmref{t:restr convolution}, we will use
\thmref{t:restr} one more time, for the analysis of the Hecke action on $\Shv(\Gr_G)^I$, where
$I$ is the Iwahori subgroup of $\fL^+(G)$. 

\sssec{}

Consider the action of the torus $T\subset \fL^+(T)$ on $\Gr_G$. Since it stabilizes the points $t^\mu$
and normalizes $\fL(N)$, it acts on each $S^\mu$ and $S^{-,\nu}$. By \cite[Sect. 7.4.2]{GLys}, the 
$\fL(N)$-equivariant trivialization of $\CG^G|_{S^\eta}$ is $T$-twisted equivariant against the Kummer
local system on $T$ corresponding to the character 
$$b(\mu,-):\Lambda\to \sfe^*(-1).$$

\medskip

Similarly, the 
$\fL(N^-)$-equivariant trivialization of $\CG^G|_{S^{-,\nu}}$ is $T$-twisted equivariant against the Kummer
local system on $T$ corresponding to the character 
$$b(\nu,-):\Lambda\to \sfe^*(-1).$$

\medskip

Hence, the local system $\Psi_q$ on $S^\mu\cap S^{-,\nu}$ is $T$-twisted equivariant against the Kummer
local system on $T$ corresponding to the character 
$$b(\mu-\nu,-):\Lambda\to \sfe^*(-1).$$

\sssec{}

Note now that the local system $\chi_N^0$ on $S^\mu$ is the pullback of the Artin-Schreier sheaf along a map
$S^\mu \to \BG_a$ that is $\BG_m$-equivariant, where $\BG_m$ acts on $S^\mu$ via the cocharacter $\rho$. 

\medskip

In particular, the push-forward of $\Psi_q$ along the resulting map $S^\mu\cap S^{-,\nu}\to \BG_a$
is twisted $\BG_m$-equivariant against the Kummer local system on $\BG_m$ corresponding to
$$b(\mu-\nu,\rho)\in  \sfe^*(-1).$$

\medskip

In particular, we obtain that the local system $\chi_N^0 \otimes \Psi_q$ can be trivial on a given
irreducible component of $S^\mu\cap S^{-,\nu}$  of (the top) dimension $\langle \mu-\nu,\check\rho\rangle$ only if both 
$\chi_N^0$ and $\Psi_q$ are trivial on that component.

\medskip

In particular, this can only happen if $\mu-\nu\in \Lambda^\sharp$. 

\sssec{}

We now recall that the union of sets of irreducible components of $S^\lambda\cap S^{-,0}$ over $\lambda\in \Lambda$
has a structure of crystal. 

\medskip

For a vertex $i$ of the Dynkin diagram, let $\phi_i$ be the corresponding function (measuring the power of the 
lowering operator needed to kill the given element). On the one hand, it is shown in \cite[Sect. 6]{Lys} 

\medskip

On the one hand, it is shown in \cite[Prop. 6.1.7]{Lys} that if $K$ is an irreducible component on which $\Phi_q$ is \emph{trivial},
we have $$\phi_i(K)\in \BZ^{\geq 0}\cdot \on{ord}(q_i).$$
 
\begin{rem}
In fact, it follows from \cite[Sect. 6]{Lys} that the set of such irreducible components 
also has a structure of crystal, where instead of $e_i$ and $f_i$ operators we take their
$\on{ord}(q_i)$-powers. 
\end{rem}

\sssec{}

On the other hand, it is known (see \cite[Sect. 7.3]{FGV}) that under the bijection
$$S^\mu\cap S^{-,\nu}\simeq S^{\mu-\nu} \cap S^{-,0}$$
given by the action of $t^{-\nu}$, the set of irreducible components of $S^\mu\cap S^{-,\nu}$ on which
$\chi^0_N$ is trivial corresponds to the subset of irreducible components $K$ of $S^{\mu-\nu} \cap S^{-,0}$
for which
$$\phi_i(K) \leq \check\alpha_i(\mu),\,\,\forall\,\, i.$$

\sssec{}

Combining, we obtain that for an irreducible component of $S^\mu\cap S^{-,\nu}$, denoted $K$,
on which both $\Psi_q$ and $\chi^0_N$ are trivial, we have:
$$\phi_i(K)\in \BZ^{\geq 0}\cdot \on{ord}(q_i) \text{ and } \phi_i(K) \leq \check\alpha_i(\mu)<  \on{ord}(q_i),$$
where the latter inequality is due to the fact that $\mu$ is restricted. 

\medskip

Hence, we obtain that $\phi_i(K)=0$ for all $i$, which forces $\mu-\nu=0$. 

\qed[\thmref{t:restr}]

\section{Hecke eigen-objects}  \label{s:Hecke}

In this section we will study the general paradigm of forming Hecke categories: given a category $\bC$ with an action
of $\Rep(H)$, we will define a new category $\bHecke(\bC)$ and study its properties. 

\ssec{Tensor products over $\Rep(H)$: a reminder}   \label{ss:ten prod Rep(H)}

Recall that if $\bC$ and $\bD$ are DG categories that are right and left modules, respectively, over a monoidal
DG category $\bA$, we can form the tensor product
$$\bC\underset{\bA}\otimes \bD,$$
which is another DG category.

\medskip

In this subsection we discuss some general features of this operation, when $\bA$ is the category $\Rep(H)$ of representations
of an algebraic group $H$. 

\sssec{}

Let $H$ be an algebraic group, and $\bC$ and $\bD$ categories with an action of the monoidal category $\Rep(H)$.

\medskip

Consider the tensor product 
\begin{equation} \label{e:ten prod over Rep}
\bC\underset{\Rep(H)}\otimes \bD.
\end{equation}

By definition, the category \eqref{e:ten prod over Rep} comes equipped with a functor
$$\Phi_{\on{univ}}:\bC\otimes \bD \to \bC\underset{\Rep(H)}\otimes \bD,$$
universal among functors
$$\Phi:\bC\otimes \bD \to \bE$$
equipped with functorial isomorphisms
$$\Phi(\bc\star V,\bd)\overset{\alpha_V}\simeq \Phi(\bc, V\star \bd), \quad \bc\in \bC,\bd\in \bD,V\in \Rep(H),$$
compatible with associativity in the sense that for $V_1,V_2\in \Rep(H)$, the diagrams
$$
\CD
\Phi(\bc\star (V_1\otimes V_2), \bd)  @>{\alpha_{V_1\otimes V_2}}>>   \Phi(\bc,(V_1\otimes V_2)\star \bd)  \\
@V{\sim}VV    @VV{\sim}V  \\
\Phi((\bc\star V_1)\star V_2,\bd)   & & \Phi(\bc,V_1\star (V_2\star \bd))   \\
@V{\alpha_{V_2}}VV     @AA{\alpha_{V_1}}A  \\
\Phi(\bc\star V_1,V_2\star \bd)  @>{\on{id}}>>  \Phi(\bc\star V_1,V_2\star \bd) 
\endCD
$$
should commute, along with a homotopy-coherent system of compatibilties for multi-fold tensor products. 

\sssec{}  \label{sss:coten}

According to \cite[Chapter1, Proposition 9.4.8]{GR1}, the functor $\Phi_{\on{univ}}$ admits a (continuous) conservative right adjoint, denoted
$$\Psi_{\on{univ}}: \bC\underset{\Rep(H)}\otimes \bD\to \bC\otimes \bD$$
that realizes $\bC\underset{\Rep(H)}\otimes \bD$ as the category consisting of objects $\be\in  \bC\otimes \bD$,
equipped with a system of isomorphisms
$$((\on{Id}\star V)\otimes \on{Id})(\be) \overset{\beta_V}\simeq (\on{Id}\otimes (V\star \on{Id})) (\be) ,\quad V\in \Rep(H)$$
compatible with associativity in the sense that for $V_1,V_2\in \Rep(H)$, the diagrams 
$$
\CD
((\on{Id}\star  (V_1\otimes V_2))\otimes \on{Id})(\be)  @>{\beta_{V_1\otimes V_2}}>>  (\on{Id}\otimes ((V_1\otimes V_2)\star \on{Id})) (\be)  \\
@V{\sim}VV   @VV{\sim}V    \\
((\on{Id}\star  V_2)\otimes \on{Id}) \circ ((\on{Id}\star  V_1)\otimes \on{Id})(\be)  & &  (\on{Id}\otimes (V_1\star \on{Id})) \circ (\on{Id}\otimes (V_2\star \on{Id})) (\be)  \\
@V{\beta_{V_1}}VV   @AA{\beta_{V_2}}A   \\
((\on{Id}\star  V_2)\otimes \on{Id}) \circ  (\on{Id}\otimes (V_1\star \on{Id})) (\be)   
@>{\sim}>> (\on{Id}\otimes (V_1\star \on{Id}))\circ ((\on{Id}\star  V_2)\otimes \on{Id}) \ (\be) 
\endCD
$$
should commute, along with a homotopy-coherent system of compatibilties for multi-fold tensor products. 

\sssec{}  \label{sss:ind Hecke}

Thus, we obtain that $\bC\otimes \bD$ and $\bC\underset{\Rep(H)}\otimes \bD$ are related by a pair of (continuous) adjoint functors 
\begin{equation} \label{e:adj funct ten prod}
\Phi_{\on{univ}}:\bC\otimes \bD \rightleftarrows \bC\underset{\Rep(H)}\otimes \bD:\Psi_{\on{univ}}
\end{equation}
with the right adjoint being conservative. 

\medskip

Hence, by the Barr-Beck-Lurie theorem, the category $\bC\underset{\Rep(H)}\otimes \bD$ identifies with the category
$$\Reg(H)\mod(\bC\otimes \bD),$$
where $\Reg(H)$ is the monad on $\bC\otimes \bD$, given by the action of the associative algebra object 
$$\Reg(H)\in \Rep(H)\otimes \Rep(H),$$
the ``regular representation" of $H$. 

\sssec{}

Assume now that $\bC$ and $\bD$ are compactly generated. In this case 
$\bC\underset{\Rep(H)}\otimes \bD$ is also compactly generated,
with the set of compact generators provided by
$$\Phi_{\on{univ}}(\bc\otimes\bd),\quad \bc\in \bC^c,\,\,\bd\in \bD^c.$$

\sssec{}  \label{sss:duality and ten}

Consider $\bC^\vee$ and $\bD^\vee$ equipped with the natural $\Rep(H)$-actions, and consider the corresponding functors
\begin{equation} \label{e:adj funct ten prod dual}
\Phi_{\on{univ}}:\bD^\vee \otimes \bC^\vee \rightleftarrows \bD^\vee \underset{\Rep(H)}\otimes \bC^\vee:\Psi_{\on{univ}}.
\end{equation}

It follows from \cite[Prop. 9.4.8]{GR1} that we have a canonical identification
$$\bD^\vee \underset{\Rep(H)}\otimes \bC^\vee \simeq (\bC\underset{\Rep(H)}\otimes \bD)^\vee$$
so that the functors in \eqref{e:adj funct ten prod dual} identify with the duals of those in \eqref{e:adj funct ten prod}.

\medskip

In particular, if we denote by $x\mapsto x^\vee$ the corresponding equivalences
$$(\bC^c)^{\on{op}}\to (\bC^\vee)^c, \quad (\bD^c)^{\on{op}}\to (\bD^\vee)^c \text{ and }
((\bC\underset{\Rep(H)}\otimes \bD)^c)^{\on{op}}\simeq (\bD^\vee \underset{\Rep(H)}\otimes \bC^\vee)^c,$$
we have
$$(\Phi_{\on{univ}}(\bc\otimes \bd))^\vee\simeq \Phi_{\on{univ}}(\bc^\vee\otimes \bd^\vee).$$

\begin{rem}
The above discussion applies to $\Rep(H)$ replaced by any \emph{rigid} symmetric monoidal category $\bA$. The role of the regular representation 
is played by the image of the unit object under the functor
$$\bA\to \bA\otimes \bA,$$
right adjoint adjoint to the tensor product functor $\bA\otimes \bA\to \bA$.
\end{rem} 

\sssec{}  \label{sss:t on Hecke}

Assume now that $\bC$ and $\bD$ are each equipped with a t-structure, such that the connective subcategories are generated
by compact objects. 

\medskip

Assume also that the action functors
$$\bC\otimes \Rep(H) \to \bC, \quad \Rep(H)\otimes \bD\to \bD$$
are t-exact. 

\medskip

Then $\bC\underset{\Rep(H)}\otimes \bD$ also acquires a t-structure, with both functors
$$\Phi_{\on{univ}}: \bC\otimes \bD \rightleftarrows \bC\underset{\Rep(H)}\otimes \bD:\Psi_{\on{univ}}$$ 
being t-exact. 

\ssec{The paradigm of Hecke eigen-objects: a reminder}  \label{ss:Hecke}

In this section we introduce a key definition: that of the category of Hecke eigen-objects arising
from a category equipped with an action of $\Rep(H)$. 

\sssec{}

We apply the discussion in \secref{ss:ten prod Rep(H)} to the case when $\bD=\Vect$ with the action of $\Rep(H)$ on $\bD$
given by the forgetful functor 
$$\Res^H:\Rep(H)\to \Vect,$$
i.e., the functor that sends a representation $V$ to its underlying vector space $\ul{V}$. 

\medskip

We will refer to the resulting category
$$\bC\underset{\Rep(H)}\otimes \Vect$$
as the category of Hecke eigen-objects in $\bC$, and denote it also by $\on{Hecke}(\bC)$.

\sssec{}

By definition, the functor
$$\Phi_{\on{univ}}:\bC\to \on{Hecke}(\bC)$$
is universal among functors $\Phi:\bD\to \bE$ equipped with a system of isomorphisms
$$\Phi(\bc \star V) \overset{\alpha_V}\simeq \ul{V}\otimes \Phi(\bc),\quad V\in \Rep(H),\,\,\bc\in \bC,$$
compatible with tensor products of representations. 

\sssec{}

The monad on $\bC$, corresponding to the pair of adjoint functors
$$\bC\rightleftarrows \on{Hecke}(\bC)$$
is given by the action of 
$$(\on{Id}\otimes \Res^H)(\Reg(H))\in \Rep(H),$$
i.e., we think of $\Reg(H)$ as a representation of one copy of $H$, rather than $H\times H$. 

\medskip

In what follows we will use the notation
$$\ind_{\on{Hecke}}:=\Phi_{\on{univ}} \text{ and } \oblv_{\on{Hecke}}:=\Psi_{\on{univ}}.$$

\sssec{}

By \secref{sss:coten}, we can think of $\on{Hecke}(\bC)$ as the category of objects $\bc\in \bC$, equipped with a
a system of isomorphisms 
\begin{equation} \label{e:Hecke eigen}
\bc\star V \overset{\beta_V}\simeq \ul{V}\otimes \bc,
\end{equation}
compatible with tensor products of representations. 

\medskip

The latter interpretation is the source of the name ``Hecke eigen-objects". 

\sssec{}

The category 
$$\on{Hecke}(\bC)=\bC\underset{\Rep(H)}\otimes \Vect$$
has a natural structure of category acted on by $H$.

\medskip

Explicitly, when we think of objects $\on{Hecke}(\bC)$ as in \eqref{e:Hecke eigen}, the action action of a point $h\in H$ 
on such an object is given by modifying the isomorphisms $\beta_V$ via the action of $h$ on $\ul{V}$.

\sssec{}  \label{sss:de-eq}

For a category $\wt\bC$ equipped with an action of $H$, let $\wt\bC^H$ denote the corresponding category of $H$-equivariant 
objects. By \cite[Theorem 2.2.2]{Ga5}, the category $\wt\bC^H$ is equipped with a natural action of $\Rep(H)$, and the assignments
$$\bC\mapsto \on{Hecke}(\bC) \text{ and } \wt\bC\mapsto (\wt\bC)^H$$
define mutually inverse equivalences between the $(\infty,2)$-categories
$$\Rep(H)\mmod \text{ and } H\mmod.$$

\medskip

In particular, the category $\bC$ can be recovered from $\on{Hecke}(\bC)$ as the category of $H$-equivariant objects, i.e.,
$$\bC\simeq (\on{Hecke}(\bC))^H.$$

The latter point of view allows us to think of $\on{Hecke}(\bC)$ as a ``de-equivariantization" of $\bC$.

\sssec{}

In particular, we can think of the functor 
$$\ind_{\on{Hecke}}:\bC\to \on{Hecke}(\bC)$$
as
$$\Res^H: (\on{Hecke}(\bC))^H\to  \on{Hecke}(\bC),$$
and of 
$$\oblv_{\on{Hecke}}:\on{Hecke}(\bC)\to \bC$$
as 
$$\coInd^H: \on{Hecke}(\bC)\to (\on{Hecke}(\bC))^H.$$

Here, for a category $\wt\bC$ acted on by $H$, we denote by
$$\Res^H:\wt\bC^H\rightleftarrows \wt\bC:\coInd^H$$
the corresponding adjoint pair of functors. 

\sssec{Example}   \label{sss:de-equiv}

Let $H$ be a torus with the weight lattice $\Lambda_H$. Then the datum of a category equipped with an action of
$\Rep(H)$ is the same as that of a category equipped with an action of $\Lambda_H$.

\medskip

Let $A$ be a $\Lambda_H$-graded algebra. Then the we can take as $\bC$ the category $\overset{\bullet}{A}\mod$ of
$\Lambda_H$-graded $A$-modules. The action of $\Lambda_H$ on $\overset{\bullet}{A}\mod$ is given by shifting 
the grading.

\medskip

The corresponding category $\on{Hecke}(\bC)$ can be identified with the
category $A\mod$. The action of $H$ on $A$ defines an $H$-action on $A\mod$, and the corresponding equivariant
category $A\mod^H$ can be identified with $\overset{\bullet}{A}\mod$.

\medskip

The functor $\ind_{\on{Hecke}}\simeq \Res^H$ is the forgetful functor
$$\overset{\bullet}{A}\mod\to A\mod,$$
and $\oblv_{\on{Hecke}}\simeq \coInd^H$ is its right adjoint, given by averaging along $\Lambda_H$.

\ssec{Graded Hecke eigen-objects}  \label{ss:graded Hecke}

From now on, until the end of this section, we let $H$ be a reductive group and $T_H\subset H$ be its Cartan subgroup.

\medskip

In this subsection we will discuss a version of the construction of \secref{ss:Hecke}, where instead
of the ``absolute" de-equivariantization, we perform a partial one, relative to $T_H$. 

\sssec{}

We will apply the framework of \secref{ss:ten prod Rep(H)} to the case when $\bD=\Rep(T_H)$, where $T_H$ is a torus mapping to $H$.
We denote the corresponding category
$$\bC\underset{\Rep(H)}\otimes \Rep(T_H)$$ 
by $\bHecke(\bC)$.

\medskip

We denote the resulting pair of adjoint functors by 
$$\ind_{\bHecke}:\bC\otimes \Rep(T_H)\rightleftarrows \bHecke(\bC):\oblv_{\bHecke}.$$ 

\medskip

The corresponding monad on $\bC\otimes \Rep(T_H)$ identifies with the action of
$$(\on{Id}\otimes \Res^H_{T_H})(\Reg(H))\in \Rep(H)\otimes \Rep(T_H).$$
 
\sssec{}

By construction, $\bHecke(\bC)$ is acted on by $\Rep(T_H)$. If we take its Hecke category with respect to $T_H$,
we recover $\on{Hecke}(\bC)$. In particular, we obtain that we can recover $\bHecke(\bC)$ as
$$\bHecke(\bC)\simeq (\on{Hecke}(\bC))^{T_H}.$$

In particular, we have a pair of adjoint functors
$$\Res^{T_H}:\bHecke(\bC) \rightleftarrows \Hecke(\bC):\coInd^{T_H}.$$

The composite
$$\oblv_{\Hecke}\circ \Res^{T_H}:\bHecke(\bC) \to \bC$$
is the forgetful functor 
$$\bHecke(\bC) \overset{\oblv_{\bHecke}}\longrightarrow \bC\otimes \Rep(T_H)
\overset{\on{Id}\otimes \Res^{T_H}}\longrightarrow \bC.$$

\sssec{}  \label{sss:graded Hecke univ}

By definition, the functor $\ind_{\bHecke}$ is a universal recipient among functors
$$\Phi:\bC\to \bE,$$
where $\bE$ is a $\Rep(T_H)$-module category, equipped with a system of identifications
$$\Phi(\bc\star V)\overset{\alpha_V}\simeq \Phi(\bc)\star \Res^H_{T_H}(V),$$
compatible with the with tensor products of representations. 

\sssec{}

By \secref{sss:coten}, we can think of $\bHecke(\bC)$ as the category of objects $\overset\bullet\bc\in \Rep(T_H)\otimes \bC$, 
equipped with a
a system of isomorphisms 
\begin{equation} \label{e:Hecke eigen grd}
\overset\bullet\bc\star V \overset{\beta_V}\simeq \Res^H_{T_H}(V)\otimes \overset\bullet\bc
\end{equation}
(where $\Rep(T_H)$ acts on $\Rep(T_H)\otimes \bC$ via the 1st factor), 
compatible with tensor products of representations. 

\medskip

For this reason we will refer to $\bHecke(\bC)$ as graded (with respect to lattice of weights of $T_H$) Hecke eigen-objets of $\bC$. 

\ssec{The relative Hecke category}  \label{ss:rel Hecke}

For future use we will introduce yet another variant of the Hecke category. 

\sssec{}

Let $\bC$ be acted by $\Rep(H)$ (on the right) and by $\Rep(T_H)$ (on the left) so that these two actions commute.
In other words, we have an action of $\Rep(H)\otimes \Rep(T_H)$ on $\bC$.

\medskip

We introduce the category $\bHecke_{\on{rel}}(\bC)$ as
$$\bHecke_{\on{rel}}(\bC):=\bC\underset{\Rep(H)\otimes \Rep(T_H)}\otimes \Rep(T_H),$$
where the functors
$$\Rep(H) \to \Rep(T_H) \leftarrow \Rep(T_H)$$
are restriction and identity, respectively. 

\sssec{}

By \secref{sss:coten}, we can think of $\bHecke_{\on{rel}}(\bC)$ as the category of objects $\bc\in \bC$, equipped 
with a system of isomorphisms
\begin{equation} \label{e:rel Hecke}
\bc\underset{H}\star V\simeq \Res^H_{T_H}(V)\star \bc,
\end{equation} 
compatible with tensor products of representations. 

\sssec{}

We will denote by $\ind_{\bHecke_{\on{rel}}}$ the functor
$$\bC \overset{\on{Id}\otimes \on{unit}}\longrightarrow \bC\otimes \Rep(T_H)\overset{\Phi_{\on{univ}}}\longrightarrow
\bC\underset{\Rep(H)\otimes \Rep(T_H)}\otimes \Rep(T_H)=:\bHecke_{\on{rel}}(\bC),$$
and by 
$$\oblv_{\bHecke_{\on{rel}}}:\bHecke_{\on{rel}}(\bC)\to \bC.$$
its right adjoint. 

\medskip

When we think of $\bHecke_{\on{rel}}(\bC)$ as objects $\bc\in \bC$ equipped with a system of isomorphisms 
\eqref{e:rel Hecke}, the functor $\oblv_{\bHecke_{\on{rel}}}$ remembers the data of $\bc$. 

\ssec{Duality for the Hecke category}  \label{ss:duality on Hecke gen} 

In this short subsection we will study how the formation of the Hecke category interacts with duality
on categories acted on by $\Rep(H)$. 

\sssec{}

Let $\bC$ be again a compactly generated category, acted on by $\Rep(H)$. Let us consider $\bC^\vee$
as acted on by $\Rep(H)$ by the formula
$$\bc^\vee \star V=(\bc\star \tau^H(V^*))^\vee, \quad \bc\in \bC^c,V\in \Rep(H),$$
where $\tau^H$ is the Cartan involution on $H$, see \secref{sss:Cartan inv}. 

\medskip

According to \secref{sss:duality and ten}, we have a canonical identification
$$\Hecke(\bC)^\vee\simeq \Hecke(\bC^\vee),$$
so that the diagram
$$
\CD
(\bC^c)^{\on{op}}   @>{\ind_{\Hecke}}>> (\Hecke(\bC)^c)^{\on{op}} \\
@VVV  @VVV   \\
(\bC^\vee)^c  @>{\ind_{\Hecke}}>> \Hecke(\bC^\vee)^c
\endCD
$$
commutes.

\sssec{}  \label{sss:duality on Hecke graded}

Similarly, we define a duality
$$\bHecke(\bC)^\vee\simeq \bHecke(\bC^\vee)$$
by making the following diagram commute: 
$$
\CD
((\bC\otimes \Rep(T_H))^c)^{\on{op}}   @>{\ind_{\bHecke}}>> (\bHecke(\bC)^c)^{\on{op}} \\
@VVV  @VVV   \\
(\bC^\vee\otimes \Rep(T_H)^c  @>{\ind_{\bHecke}}>> \bHecke(\bC^\vee)^c,
\endCD
$$
where the left vertical arrow sends
$$(\bc \otimes V) \mapsto \bc^\vee \otimes \tau^{T_H}(V^*).$$

\ssec{Irreducible objects in the Hecke category}  \label{ss:irred Hecke}

In this subsection we will assume that $\bC$ is equipped with a t-structure satisfying the assumption of
\secref{sss:t on Hecke}. 

\medskip

Note that in this case, according to \secref{sss:t on Hecke}, the category $\on{Hecke}(\bC)$ acquires
a t-structure, in which both functors $\ind_{\on{Hecke}}$ and $\oblv_{\on{Hecke}}$ are t-exact. 

\medskip

Our goal in this subsection is to give an explicit description of the 
irreducible objects in $\Hecke(\bC)$. We will be able to do so under an additional assumption on the 
action of $\Rep(H)$ on $\bC$, namely, when this action is \emph{accessible}. 

\sssec{}  \label{sss:restricted}

We shall say that an irreducible $\bc\in \bC^\heartsuit$ is \emph{restricted} for the Hecke action if 
for every $V\in \on{Irrep}(H)$, the object $\bc\star V\in \bC^\heartsuit$ is irreducible.

\medskip

For example, \thmref{t:restr convolution} says that if $\lambda\in \Lambda$ is restricted, then the object
$W^{\lambda,!*}\in \Whit_{q,x}(G)$ is a restricted irreducible. 

\sssec{}

We are going to prove:
 
\begin{prop}  \label{p:irred Hecke}  
Let $\bc\in \bC^\heartsuit$ be restricted. Then $\ind_{\on{Hecke}}(\bc)\in \on{Hecke}(\bC)^\heartsuit$ is irreducible.
\end{prop}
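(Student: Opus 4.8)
The plan is to show that $\ind_{\on{Hecke}}(\bc)$ has no proper nonzero subobjects in $\on{Hecke}(\bC)^\heartsuit$ by analyzing its structure as an object of $\bC^\heartsuit$ together with the Hecke-eigen data. First I would record that, since the action functors are t-exact and $\Res^H$ is exact, the monad $(\on{Id}\otimes \Res^H)(\Reg(H))$ is t-exact, so $\ind_{\on{Hecke}}$ and $\oblv_{\on{Hecke}}$ are t-exact; hence $\ind_{\on{Hecke}}(\bc)\in \on{Hecke}(\bC)^\heartsuit$. As an object of $\bC^\heartsuit$, we have $\oblv_{\on{Hecke}}\circ \ind_{\on{Hecke}}(\bc)\simeq \bc\star \bigl((\on{Id}\otimes \Res^H)(\Reg(H))\bigr)$, and using the decomposition of the regular representation $\Reg(H)\simeq \bigoplus_{V\in \on{Irrep}(H)} V\otimes \ul V^*$ (with $H$ acting only on the first tensor factor in the relevant copy), this is $\bigoplus_{V\in \on{Irrep}(H)} (\bc\star V)\otimes \ul V^*$ as a plain object, i.e.\ a direct sum of the irreducibles $\bc\star V$ (each appearing $\dim V$ times), because $\bc$ is restricted. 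The key structural input is therefore: \emph{the image of $\ind_{\on{Hecke}}(\bc)$ under $\oblv_{\on{Hecke}}$ is a semisimple object of $\bC^\heartsuit$ whose simple constituents are exactly the $\bc\star V$, $V\in\on{Irrep}(H)$, each with finite multiplicity.}

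Next I would use the adjunction and the $H$-action to control subobjects. Recall from \secref{sss:de-eq} that $\bC\simeq (\on{Hecke}(\bC))^H$, and $\ind_{\on{Hecke}}(\bc)$ corresponds to $\coInd^H$ applied to... no --- more precisely, $\ind_{\on{Hecke}} = \Res^H$ from $(\on{Hecke}(\bC))^H = \bC$ to $\on{Hecke}(\bC)$, so $\ind_{\on{Hecke}}(\bc)$ is the underlying object of the $H$-equivariant object $\bc$. Now suppose $0\neq \bd \hookrightarrow \ind_{\on{Hecke}}(\bc)$ is a subobject in $\on{Hecke}(\bC)^\heartsuit$. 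Applying $\oblv_{\on{Hecke}}$ (which is t-exact and faithful on the heart), $\oblv_{\on{Hecke}}(\bd)$ is a nonzero subobject of the semisimple object $\bigoplus_V (\bc\star V)\otimes \ul V^*$, hence a sub-direct-sum of the isotypic pieces. The Hecke-eigen structure \eqref{e:Hecke eigen} on $\ind_{\on{Hecke}}(\bc)$ —which identifies $(\bc\star V)\otimes \ul V^*$-type summands compatibly with tensor products— forces any Hecke-stable subobject to contain, if it contains one isotypic summand indexed by $V$, all summands reachable by tensoring: concretely, the isomorphism $\bd\star W \simeq \ul W\otimes \bd$ combined with $(\bc\star V)\star W \supseteq \bc\star V''$ for $V''$ a constituent of $V\otimes W$ propagates membership across all of $\on{Irrep}(H)$ starting from the trivial representation, which is always present because $\bd\neq 0$ and $H$-equivariance pins down the "coefficient" copies. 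I would make this precise by passing to $H$-invariants: $\on{Hom}_{\bC^\heartsuit}(\bc, \oblv_{\on{Hecke}}(\bd))$ is an $H$-subrepresentation of $\on{Hom}_{\bC^\heartsuit}(\bc, \bigoplus_V (\bc\star V)\otimes \ul V^*) \simeq \bigoplus_V \ul V^*$ (using $\bc$ restricted, so $\on{Hom}(\bc,\bc\star V)=0$ unless $V$ is trivial... wait, that is not right either) — here I should instead use $\on{Hom}(\bc\star V', \bc\star V) = \delta_{V,V'}\cdot \sfe$ for restricted $\bc$, giving $\on{Hom}(\oblv\ind(\bc), \oblv(\bd)) \hookrightarrow \Reg(H)$ as an $H$-subrep, and a nonzero $H$-subrepresentation of $\Reg(H)$ that is closed under the comodule structure (coming from tensoring) must be all of $\Reg(H)$.

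Thus the key steps in order are: (1) t-exactness, so $\ind_{\on{Hecke}}(\bc)\in\heartsuit$ and $\oblv_{\on{Hecke}}$ is t-exact and conservative on the heart; (2) compute $\oblv_{\on{Hecke}}\ind_{\on{Hecke}}(\bc)$ via the regular representation and semisimplicity from restrictedness, together with the $\on{Hom}$-orthogonality $\on{Hom}_{\bC^\heartsuit}(\bc\star V',\bc\star V)=\delta_{V,V'}\sfe$; (3) translate "subobject of $\ind_{\on{Hecke}}(\bc)$" into "$H$-stable, comodule-stable nonzero subspace of $\Reg(H)$" and conclude such a subspace is everything, hence $\bd = \ind_{\on{Hecke}}(\bc)$. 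I expect the main obstacle to be step (3): making rigorous that the Hecke-eigen (equivalently $H$-equivariance) data forces a nonzero Hecke-subobject to be all of $\ind_{\on{Hecke}}(\bc)$, i.e.\ identifying the lattice of Hecke-subobjects of $\ind_{\on{Hecke}}(\bc)$ with the lattice of $H$-subcomodules of $\Reg(H)$ compatibly, and invoking that the only such subcomodule containing the counit's image (the trivial summand, nonzero since $\bd\neq 0$) is $\Reg(H)$ itself. This is where one must be careful that the $\bc\star V$ for various $V$ are genuinely non-isomorphic (which needs the orthogonality in step (2), ultimately a consequence of $\bc$ being restricted and irreducible) so that the isotypic decomposition is honest.
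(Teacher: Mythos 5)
Your approach is genuinely different from the paper's, and it contains a gap that the paper's more direct argument avoids.

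The paper's proof does not try to classify all subobjects of $\ind_{\on{Hecke}}(\bc)$. Instead it takes \emph{any} nonzero map $\bc'\to\ind_{\on{Hecke}}(\bc)$ and shows it is surjective: using the surjection $\ind_{\on{Hecke}}\oblv_{\on{Hecke}}(\bc')\twoheadrightarrow\bc'$ one reduces to $\bc'=\ind_{\on{Hecke}}(\bc_1)$, passes by adjunction to a map $\bc_1\to\bigoplus_V(\bc\star V)\otimes\ul V^*$, picks a single $V$ with nonzero component, uses that $\bc\star V$ is irreducible to reduce further to $\bc_1\simeq\bc\star V$ with the map encoded by some nonzero $\xi\in\ul V^*$, and then observes that the corresponding map $\ul V\otimes\ind_{\on{Hecke}}(\bc)\xrightarrow{\xi\otimes\on{id}}\ind_{\on{Hecke}}(\bc)$ is manifestly surjective. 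This argument never needs to know how the various $\bc\star V$ relate to each other.

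Your proof, by contrast, hinges on the claim that $\Hom_{\bC^\heartsuit}(\bc\star V',\bc\star V)=\delta_{V,V'}\cdot\sfe$, which you assert is ``ultimately a consequence of $\bc$ being restricted and irreducible.'' This is not a consequence of restrictedness: restrictedness says each $\bc\star V$ is irreducible, but says nothing about the $\bc\star V$ being pairwise non-isomorphic. For instance, if $V$ is a one-dimensional representation of $H$ acting on $\bc$ in such a way that $\bc\star V\simeq\bc$, then $\Hom(\bc,\bc\star V)=\sfe$ even for nontrivial $V$. You in fact half-noticed this issue mid-proof (``wait, that is not right either'') but then pressed on with the orthogonality anyway. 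Without it, the isotypic decomposition of $\oblv_{\on{Hecke}}\ind_{\on{Hecke}}(\bc)$ is not ``honest'' in the way your step (3) requires, and the identification of Hecke-subobjects with subspaces of $\Reg(H)$ breaks down. A secondary problem: even granting orthogonality, your closing claim that ``a nonzero $H$-subrepresentation of $\Reg(H)$ that is closed under the comodule structure must be all of $\Reg(H)$'' is not correct as stated — $\Reg(H)=\bigoplus_V V\otimes\ul V^*$ has many $H\times H$-stable subspaces (any subsum of the Peter--Weyl decomposition). The structure a Hecke-subobject actually preserves is the full module structure over the monad $\Reg(H)\star(-)$, which is stronger, and making that precise is exactly the hard part of step (3) that your proposal defers. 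The paper's route is cleaner precisely because it never has to analyze the lattice of subobjects globally; it only traces a single irreducible constituent through one $V$-component and exploits the visible surjectivity of $\xi\otimes\on{id}$.
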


\begin{proof}

Let $\bc'\in \on{Hecke}(\bC)^\heartsuit$ be equipped with a non-zero map $\bc'\to \ind_{\on{Hecke}}(\bc)$; 
let us show that this map is a surjection. 

\medskip

We have a surjection
$$\ind_{\on{Hecke}}\circ \oblv_{\on{Hecke}}(\bc')\to \bc',$$
so we can assume that $\bc'$ is of the form $\ind_{\on{Hecke}}(\bc_1)$
for some $\bc_1\in \bC^\heartsuit$. Hence, the map in question comes from a map in $\bC$
\begin{equation} \label{e:map into dir sum}
\bc_1\to \oblv_{\on{Hecke}}\circ \ind_{\on{Hecke}}(\bc)\simeq \underset{V\in \on{Irrep}(H)}\oplus\, (\bc\star V)\otimes \ul{V}^*.
\end{equation} 

Let $V\in \on{Irrep}(H)$ be such that the component 
$\bc_1\to (\bc\star V)\otimes \ul{V}^*$ of the map \eqref{e:map into dir sum} is non-zero. Replacing $\bc_1$ by the preimage of $(\bc\star V)\otimes \ul{V}^*$
under \eqref{e:map into dir sum}, and using the assumption on $\bc$, 
we can assume that $\bc_1$ is isomorphic to $\bc\star V$ and \eqref{e:map into dir sum} corresponds to an element $\xi\in \ul{V}^*$. 

\medskip 

Hence, the original map $\bc'\to \ind_{\on{Hecke}}(\bc)$ identifies with
$$\ind_{\on{Hecke}}(\bc\star V)\simeq \ul{V}\otimes \ind_{\on{Hecke}}(\bc)\overset{\xi\otimes \on{id}}\longrightarrow \ind_{\on{Hecke}}(\bc),$$
and hence is manifestly a surjection.

\end{proof} 

\sssec{}   \label{sss:accessible}

We shall say that the action of $\Rep(H)$ on $\bC$ is \emph{accessible} if every irreducible object of $\bC^\heartsuit$ 
is of the form $\bc\star V$ for $\bc\in \bC^\heartsuit$ restricted and $V\in \on{Irrep}(H)$. 

\medskip

For example, \lemref{l:many restr} says that if the derived group of $H$ is simply-connected, then the action of
$\Rep(H)$ on $\Whit_{q,x}(G)$ is accessible. 

\sssec{}

From  \propref{p:irred Hecke} we obtain: 

\begin{cor}    \label{c:irred Hecke bis}
Assume that the t-structure on $\bC$ is Artinian and that the action of $\Rep(H)$ is accessible. Then: 

\smallskip

\noindent{\em(a)} Every irreducible object of $\on{Hecke}(\bC)^\heartsuit$ is of the 
form $\ind_{\on{Hecke}}(\bc)$ for a restricted $\bc\in \bC^\heartsuit$.

\smallskip

\noindent{\em(b)} If for two such objects we have $\ind_{\on{Hecke}}(\bc_1)\simeq \ind_{\on{Hecke}}(\bc_2)$, then 
$\bc_1\simeq \bc_2\star V$ for a 1-dimensional representation $V$ of $H$.

\end{cor}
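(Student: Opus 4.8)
\textbf{Proof strategy for \corref{c:irred Hecke bis}.}

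The plan is to reduce everything to \propref{p:irred Hecke} together with the accessibility hypothesis. For part (a): let $\bc'\in\on{Hecke}(\bC)^\heartsuit$ be irreducible. Since $\oblv_{\on{Hecke}}$ is t-exact and conservative, $\oblv_{\on{Hecke}}(\bc')\neq 0$ lies in $\bC^\heartsuit$, so it has an irreducible subquotient $\bc_0\in\bC^\heartsuit$. By accessibility, $\bc_0\simeq \bc\star V$ for some restricted $\bc\in\bC^\heartsuit$ and $V\in\on{Irrep}(H)$; note $\bc$ is itself irreducible and restricted (it is a summand/subquotient situation, or more simply one may replace $\bc_0$ by any restricted irreducible appearing in $\oblv_{\on{Hecke}}(\bc')$ — I will arrange this below). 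The adjunction $(\ind_{\on{Hecke}},\oblv_{\on{Hecke}})$ gives a nonzero map $\ind_{\on{Hecke}}(\bc_0)\to\bc'$. The left-hand side is $\ind_{\on{Hecke}}(\bc\star V)\simeq \ul V\otimes\ind_{\on{Hecke}}(\bc)$, which by \propref{p:irred Hecke} is a \emph{semisimple} object — a direct sum of copies of the irreducible $\ind_{\on{Hecke}}(\bc)$. Hence the nonzero map to $\bc'$ must hit the irreducible $\ind_{\on{Hecke}}(\bc)$ nontrivially, and since both source-summand and target are irreducible, $\bc'\simeq\ind_{\on{Hecke}}(\bc)$. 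The one delicate point is to be sure that the restricted irreducible $\bc$ we extract from $\oblv_{\on{Hecke}}(\bc')$ actually exists: I would argue that $\oblv_{\on{Hecke}}(\bc')$, being a nonzero object of the Artinian heart $\bC^\heartsuit$, has \emph{some} irreducible subobject, and every irreducible of $\bC^\heartsuit$ is (by accessibility) of the form $\bc\star V$; applying the left adjoint $\ind_{\on{Hecke}}$ to the inclusion and using \propref{p:irred Hecke} plus $\ind_{\on{Hecke}}(\bc\star V)\simeq \ul V\otimes \ind_{\on{Hecke}}(\bc)$ yields a nonzero map $\ind_{\on{Hecke}}(\bc)\to\bc'$, which must be an isomorphism as both sides are irreducible. (That $\ind_{\on{Hecke}}(\bc)\neq 0$ for $\bc\neq 0$ follows since $\oblv_{\on{Hecke}}\circ\ind_{\on{Hecke}}(\bc)\supset \bc$ as a summand, via $V$ trivial.)

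For part (b): suppose $\bc_1,\bc_2\in\bC^\heartsuit$ are restricted with $\ind_{\on{Hecke}}(\bc_1)\simeq\ind_{\on{Hecke}}(\bc_2)=:\bc'$. Applying $\oblv_{\on{Hecke}}$ and using the projection formula for the action of $\Rep(H)$ (i.e. $\oblv_{\on{Hecke}}\circ\ind_{\on{Hecke}}(\bc)\simeq\bigoplus_{V\in\on{Irrep}(H)}(\bc\star V)\otimes\ul V^{*}$, as used in the proof of \propref{p:irred Hecke}), we get
$$
\bigoplus_{V\in\on{Irrep}(H)}(\bc_1\star V)\otimes\ul V^{*}\ \simeq\ \bigoplus_{V\in\on{Irrep}(H)}(\bc_2\star V)\otimes\ul V^{*}
$$
in $\bC^\heartsuit$. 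Since $\bc_1,\bc_2$ are restricted, every $\bc_i\star V$ is irreducible, so both sides are already the decompositions into isotypic pieces indexed by irreducibles of $\bC^\heartsuit$; matching the summand containing $\bc_1=\bc_1\star\mathbf 1$ on the left with some summand on the right forces $\bc_1\simeq\bc_2\star V_0$ for a unique $V_0\in\on{Irrep}(H)$ with $\ul{V_0}\neq 0$. It remains to see $V_0$ is one-dimensional: apply the Hecke-eigen condition once more. Since $\bc_1$ is restricted, $\bc_1\star W$ is irreducible for all $W\in\on{Irrep}(H)$; but $\bc_1\star W\simeq \bc_2\star(V_0\otimes W)$, and $V_0\otimes W$ decomposes in $\Rep(H)$ as a sum of irreducibles, forcing $\bc_2\star(V_0\otimes W)$ to be a sum of the irreducibles $\bc_2\star W'$, $W'\subset V_0\otimes W$, all of the same length. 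Irreducibility then forces $V_0\otimes W$ to be irreducible for every irreducible $W$, which (over an algebraically closed field of characteristic $0$, with $H$ reductive) happens if and only if $\dim V_0=1$: tensoring with a one-dimensional representation preserves irreducibility, while any $V_0$ of dimension $\geq 2$ fails this already for $W=V_0^{*}$ since $V_0\otimes V_0^{*}$ contains the trivial representation properly.

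\textbf{Main obstacle.} The routine bookkeeping (adjunctions, projection formula, t-exactness) is straightforward; the one genuine point requiring care is the passage in part (b) from ``$V_0\otimes W$ irreducible for all irreducible $W$'' to ``$\dim V_0=1$''. The plan is to handle it by the character-theoretic argument just sketched (take $W=V_0^{*}$ and observe the trivial summand), which is clean for reductive $H$ over $\sfe$; alternatively one can invoke that the tensor category $\Rep(H)^\heartsuit$ has the property that invertible objects are exactly the one-dimensional representations, and that $V_0$ must be invertible because $\bc_2\star-$ is ``length-multiplicative'' on restricted irreducibles in the above sense. I would write up the first version as it is the most elementary and self-contained given what the paper has already established.
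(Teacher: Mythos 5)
Your proposal is correct, and for part (a) it is essentially the paper's proof: extract a nonzero map $\bc_1\to\oblv_{\on{Hecke}}(\bc')$ with $\bc_1$ irreducible (which is where Artinianness enters), write $\bc_1\simeq\bc\star V$ with $\bc$ restricted via accessibility, pass through $\ind_{\on{Hecke}}(\bc\star V)\simeq\ul V\otimes\ind_{\on{Hecke}}(\bc)$ to obtain a nonzero map $\ind_{\on{Hecke}}(\bc)\to\bc'$, and then quote \propref{p:irred Hecke}. The paper phrases the Artinianness reduction on the side of $\ind_{\on{Hecke}}(\bc_1)\to\bc'$ rather than on $\oblv_{\on{Hecke}}(\bc')$, but the content is identical, and your extra observation that $\ul V\otimes\ind_{\on{Hecke}}(\bc)$ is semisimple is true but not needed — a nonzero map between irreducibles is already an isomorphism.

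For part (b), your opening is again the same as the paper's (apply $\oblv_{\on{Hecke}}$, use $\oblv_{\on{Hecke}}\circ\ind_{\on{Hecke}}(\bc)\simeq\bigoplus_V(\bc\star V)\otimes\ul V^*$, deduce $\bc_1\simeq\bc_2\star V_0$ for some irreducible $V_0$), but the concluding step differs in a genuine, if small, way. The paper invokes the symmetric statement $\bc_2\simeq\bc_1\star W$, concatenates to get $\bc_1\simeq\bc_1\star(V_0\otimes W)$, and reads off that $V_0$ and $W$ are one-dimensional (the implicit point being that $V_0\otimes W$ must be irreducible since $\bc_1$ is restricted, and that applying $\ind_{\on{Hecke}}$ to $\bc_1\simeq\bc_1\star(V_0\otimes W)$ forces $\dim(V_0\otimes W)=1$). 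You instead never need the symmetric isomorphism: from $\bc_1\simeq\bc_2\star V_0$ you observe that $\bc_1\star W\simeq\bc_2\star(V_0\otimes W)$ must be irreducible for all irreducible $W$, hence $V_0\otimes W$ is always irreducible, and specializing $W=V_0^*$ (so that $V_0\otimes V_0^*$ contains the trivial representation properly when $\dim V_0\geq 2$, by Schur) gives $\dim V_0=1$ directly. Your route is a touch more self-contained — it makes the final representation-theoretic step explicit and avoids re-applying $\ind_{\on{Hecke}}$ — at the cost of one slightly informal phrase (``matching the summand containing $\bc_1$''), which should really be stated as: the composite nonzero map $\bc_1\hookrightarrow\oblv_{\on{Hecke}}\ind_{\on{Hecke}}(\bc_1)\xrightarrow{\sim}\oblv_{\on{Hecke}}\ind_{\on{Hecke}}(\bc_2)$ must have a nonzero component into some $\bc_2\star V_0$, which by irreducibility of both sides is an isomorphism.
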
 

\begin{proof}

Let $\bc'$ be an irreducible object of $\on{Hecke}(\bC)^\heartsuit$. There exists an object $\bc_1\in \bC^\heartsuit$ equipped with a non-zero
map $\ind_{\on{Hecke}}(\bc_1)\to \bc'$. By Artinianness, we can assume that $\bc_1$ is irreducible. Write
$\bc_1\simeq\bc\star V$ for $\bc$ restricted. 

\medskip

Thus, we obtain a non-zero map
$$\ind_{\on{Hecke}}(\bc \star V)\simeq \ul{V}\otimes \ind_{\on{Hecke}}(\bc)\to \bc',$$
from which we deduce the existence of a non-zero map
$$\ind_{\on{Hecke}}(\bc)\to \bc'.$$

However, by \propref{p:irred Hecke}, $\ind_{\on{Hecke}}(\bc)$ is irreducible, and hence $\ind_{\on{Hecke}}(\bc)\simeq \bc'$. This proves
point (a). 

\medskip

For point (b), let us be given a non-zero map
$$\ind_{\on{Hecke}}(\bc_1)\to  \ind_{\on{Hecke}}(\bc_2)$$
for $\bc_1,\bc_2$ as in \propref{p:irred Hecke}. Then we obtain a non-zero map in $\bC$
$$\bc_1\to \underset{V\in \on{Irrep}(H)}\oplus \, (\bc_2\star V)\otimes \ul{V}^*.$$

Hence, we obtain a non-zero map 
$$\bc_1\to \bc_2\star V$$
for some irreducible $V$. By the assumption on $\bc_2$, the latter map is an isomorphism. 

\medskip

Symmetrically, we have: $\bc_2\simeq \bc_1\star W$ for some $W\in \on{Irrep}(H)$. Hence,
$$\bc_1\simeq \bc_1\star (V\otimes W).$$

From here we obtain that $V$ and $W$ are 1-dimensional. 

\end{proof}

\begin{cor}    \label{c:irred Hecke}
Assume that the t-structure on $\bC$ is Artinian and that the action of $\Rep(H)$ is accessible. 
Then the  t-structure on $\on{Hecke}(\bC)^\heartsuit$ is Artinian.
\end{cor}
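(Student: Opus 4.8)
Recall the general criterion recorded in \secref{sss:properties of t}: a t-structure on a compactly generated category is Artinian if and only if the irreducible objects of the heart are compact and they generate the category. So to prove \corref{c:irred Hecke} I would verify these two properties for $\on{Hecke}(\bC)$ under the hypotheses that the t-structure on $\bC$ is Artinian and that the $\Rep(H)$-action is accessible. The key input is \corref{c:irred Hecke bis}(a), which already identifies every irreducible of $\on{Hecke}(\bC)^\heartsuit$ as $\ind_{\on{Hecke}}(\bc)$ for a restricted irreducible $\bc\in \bC^\heartsuit$; combined with \corref{c:irred Hecke bis}(b), this says the irreducibles of $\on{Hecke}(\bC)^\heartsuit$ are \emph{exactly} the objects $\ind_{\on{Hecke}}(\bc)$ with $\bc$ ranging over restricted irreducibles, taken up to twist by a one-dimensional representation of $H$.

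\textbf{Step 1: compactness of the irreducibles.} Since the t-structure on $\bC$ is Artinian, every irreducible $\bc\in \bC^\heartsuit$ is compact in $\bC$. The functor $\ind_{\on{Hecke}}:\bC\to \on{Hecke}(\bC)$ is a left adjoint (its right adjoint $\oblv_{\on{Hecke}}$ is continuous, being the functor $\Psi_{\on{univ}}$ of \secref{sss:coten}), hence preserves compactness. Therefore each irreducible $\ind_{\on{Hecke}}(\bc)$ of $\on{Hecke}(\bC)^\heartsuit$ is compact in $\on{Hecke}(\bC)$.

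\textbf{Step 2: the irreducibles generate.} Here I would argue that $\on{Hecke}(\bC)$ is generated under colimits by the objects $\ind_{\on{Hecke}}(\bc)$ with $\bc$ ranging over \emph{all} compact objects of $\bC$ — indeed the functor $\Phi_{\on{univ}}=\ind_{\on{Hecke}}$ sends a set of compact generators of $\bC$ to a set of compact generators of $\on{Hecke}(\bC)$, as noted in the general discussion of \secref{ss:ten prod Rep(H)}. So it suffices to generate, under colimits, each $\ind_{\on{Hecke}}(\bc)$ for $\bc$ compact. Using the t-structure on $\bC$ (whose compact objects are cohomologically bounded, the t-structure being Artinian) together with the t-exactness of $\ind_{\on{Hecke}}$, one reduces to the case $\bc\in \bC^\heartsuit\cap \bC^c$; and since the t-structure on $\bC$ is Artinian, such a $\bc$ is a finite iterated extension of irreducibles of $\bC^\heartsuit$. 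Applying the exact functor $\ind_{\on{Hecke}}$, we express $\ind_{\on{Hecke}}(\bc)$ as a finite extension of objects $\ind_{\on{Hecke}}(\bc_0)$ with $\bc_0\in \bC^\heartsuit$ irreducible. Finally, by accessibility $\bc_0\simeq \bc_1\star V$ with $\bc_1$ restricted and $V\in \on{Irrep}(H)$, so $\ind_{\on{Hecke}}(\bc_0)\simeq \ul V\otimes \ind_{\on{Hecke}}(\bc_1)$, which is a direct sum of copies of the irreducible $\ind_{\on{Hecke}}(\bc_1)$. Hence the irreducibles of $\on{Hecke}(\bC)^\heartsuit$ generate $\on{Hecke}(\bC)$.

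\textbf{Conclusion and the main obstacle.} Combining Steps 1 and 2 with the criterion of \secref{sss:properties of t} gives that the t-structure on $\on{Hecke}(\bC)$ is Artinian, proving \corref{c:irred Hecke}. The only point requiring care — the ``main obstacle'' — is the reduction in Step 2 from a general compact object of $\bC$ to a finite extension of irreducibles in the heart: one must combine the boundedness of compact objects, the stability of $\bC^c$ under truncation, and the finite-length property, all of which are packaged into the hypothesis that the t-structure on $\bC$ is Artinian, and then propagate these through $\ind_{\on{Hecke}}$ using its t-exactness and its preservation of compact objects. Everything else is formal.
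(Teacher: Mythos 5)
Your proof is correct and takes essentially the same route as the paper, which reduces to showing that $\ind_{\on{Hecke}}(\bc_1)$ has finite length for irreducible $\bc_1 = \bc\star V$ via the identification $\ind_{\on{Hecke}}(\bc_1)\simeq \ul V\otimes \ind_{\on{Hecke}}(\bc)$. You merely make explicit the reduction (via compactness of $\ind_{\on{Hecke}}$, boundedness and truncation-stability of $\bC^c$, and the ``irreducibles compact and generating'' criterion from \secref{sss:properties of t}) that the paper leaves implicit in its opening ``It suffices to show\ldots''.
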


\begin{proof}
It suffices to show that for an irreducible $\bc_1\in \bC^\heartsuit$, the object $\ind_{\on{Hecke}}(\bc_1)\in \on{Hecke}(\bC)^\heartsuit$
has finite length. 

\medskip

Write $\bc_1=\bc\star V$ for $\bc$ as in \propref{p:irred Hecke}. Then 
$$\ind_{\on{Hecke}}(\bc_1)\simeq \ul{V}\otimes \ind_{\on{Hecke}}(\bc),$$
and the assertion follows.

\end{proof}

\ssec{Irreducible objects in the graded version}

In this subsection we retain the assumptions of \secref{ss:irred Hecke}. We will 
will adapt the results of {\it loc.cit.} to describe irreducibles in the graded Hecke category $\bHecke(\bC)$. 

\sssec{}

First off note that if $H$ is a torus, any irreducible object in $\bC$ is restricted for this action: 
indeed the irreducible objects of $\Rep(H)$ are 1-dimensional and act by invertible functors.

\medskip

In particular, an action of $\Rep(H)$ is automatically accessible. 

\sssec{}

Let us be in the context of \secref{ss:graded Hecke}. Applying \propref{p:irred Hecke} and 
Corollaries \ref{c:irred Hecke} and \ref{c:irred Hecke bis} to the torus $T_H$, we obtain:

\begin{cor}    \label{c:irred Hecke graded}  \hfill

\smallskip

\noindent{\em(a)} 
The forgetful functor 
$$\Res^{T_H}:\bHecke(\bC)^\heartsuit\to \on{Hecke}(\bC)^\heartsuit$$ 
sends irreducibles to irreducibles.

\smallskip

\noindent{\em(b)} Every irreducible of $\on{Hecke}(\bC)^\heartsuit$ is of the form $\Res^{T_H}(\bc)$ for some irreducible 
$\bc\in \bHecke(\bC)^\heartsuit$.

\smallskip

\noindent{\em(c)} If for two irreducibles $\bc_1,\bc_2\in \bHecke(\bC)^\heartsuit$, we have $\Res^{T_H}(\bc_1)\simeq \Res^{T_H}(\bc_2)$,
then $\bc_1$ and $\bc_2$ differ by a translation by an element of $\Lambda_H$.

\end{cor}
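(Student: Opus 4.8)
The plan is to deduce Corollary \ref{c:irred Hecke graded} from the already-established results about the (absolute) Hecke category by treating the passage from $\bHecke(\bC)$ to $\on{Hecke}(\bC)$ as an instance of the passage from $\bC$ to $\on{Hecke}(\bC)$ for the torus $T_H$ in place of $H$. Indeed, recall from \secref{ss:graded Hecke} that $\bHecke(\bC)$ carries an action of $\Rep(T_H)$ and that $\on{Hecke}(\bC)\simeq (\on{Hecke}(\bHecke(\bC))$ where the latter is formed with respect to this $\Rep(T_H)$-action; more precisely, the forgetful functor $\Res^{T_H}:\bHecke(\bC)\to \on{Hecke}(\bC)$ is exactly the functor $\ind_{\on{Hecke}}$ for the category $\bHecke(\bC)$ equipped with its $\Rep(T_H)$-action. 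The first step is therefore to verify that the hypotheses of \secref{ss:irred Hecke} apply: the t-structure on $\bHecke(\bC)$ (coming from \secref{sss:t on Hecke}) is Artinian — this follows from \corref{c:irred Hecke} applied twice, or directly from the assumption that the t-structure on $\bC$ is Artinian together with the fact that $\Rep(T_H)$ acts t-exactly — and the $\Rep(T_H)$-action is accessible, which, as noted at the start of this subsection, is automatic for a torus since its irreducibles are one-dimensional and act invertibly.

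With these verifications in place, parts (a), (b), (c) are the respective specializations of \propref{p:irred Hecke}, \corref{c:irred Hecke bis}(a), and \corref{c:irred Hecke bis}(b). Concretely: for (a), every irreducible $\bc\in \bHecke(\bC)^\heartsuit$ is restricted for the $\Rep(T_H)$-action (again because $T_H$ is a torus), so $\propref{p:irred Hecke}$ gives that $\ind_{\on{Hecke}}(\bc) = \Res^{T_H}(\bc)$ is irreducible in $\on{Hecke}(\bC)^\heartsuit$. For (b), \corref{c:irred Hecke bis}(a) says every irreducible of $\on{Hecke}(\bC)^\heartsuit$ is of the form $\ind_{\on{Hecke}}(\bc') = \Res^{T_H}(\bc')$ for a restricted — hence, for a torus, arbitrary — irreducible $\bc'\in \bHecke(\bC)^\heartsuit$. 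For (c), \corref{c:irred Hecke bis}(b) gives that $\Res^{T_H}(\bc_1)\simeq \Res^{T_H}(\bc_2)$ forces $\bc_1\simeq \bc_2\star V$ for a one-dimensional representation $V$ of $T_H$; but a one-dimensional representation of the torus $T_H$ is exactly a character, i.e. an element of the weight lattice $\Lambda_H = \Lambda^\sharp$, and tensoring by it is precisely the translation functor $\on{Tr}^\gamma$ in the notation of \secref{sss:translation functors} (or the grading shift of \secref{sss:de-equiv}). This identifies ``differ by a one-dimensional $T_H$-representation'' with ``differ by a translation by an element of $\Lambda_H$'', which is the assertion of (c).

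I expect the proof to be essentially bookkeeping rather than containing a genuine obstacle; the one point that deserves a sentence of care is the identification $\on{Hecke}(\bC)\simeq \on{Hecke}\!\left(\bHecke(\bC)\right)$ as t-categories, together with the claim that under it $\Res^{T_H}$ becomes $\ind_{\on{Hecke}}$. This follows formally from the associativity of the relative tensor product, $\bC\underset{\Rep(H)}\otimes \Vect \simeq \left(\bC\underset{\Rep(H)}\otimes \Rep(T_H)\right)\underset{\Rep(T_H)}\otimes \Vect$, combined with the discussion of \secref{sss:t on Hecke} which ensures that each tensoring step is t-exact on the nose; one should simply cite \secref{ss:graded Hecke} and \secref{sss:de-eq} for the equivalence and \secref{sss:t on Hecke} for t-exactness. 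The only other thing worth flagging explicitly is that the t-structure on $\bHecke(\bC)$ used here is the one transported from $\bC\otimes\Rep(T_H)$ via $\ind_{\bHecke}$ (so that $\ind_{\bHecke}$ and $\oblv_{\bHecke}$ are t-exact), which is what makes $\Res^{T_H}$ t-exact and hence preserves and reflects the notion of irreducible object in the heart.

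\begin{proof}[Proof of \corref{c:irred Hecke graded}]
By \secref{ss:graded Hecke}, the category $\bHecke(\bC)$ is acted on by $\Rep(T_H)$, and forming the Hecke category of $\bHecke(\bC)$ with respect to this action recovers $\on{Hecke}(\bC)$; under this identification the forgetful functor $\Res^{T_H}:\bHecke(\bC)\to \on{Hecke}(\bC)$ is the functor $\ind_{\on{Hecke}}$ of \secref{ss:Hecke} applied to $\bHecke(\bC)$. By \secref{sss:t on Hecke}, the t-structure on $\bHecke(\bC)$ is compatible with the $\Rep(T_H)$-action in the sense required there, and both $\ind_{\on{Hecke}}$ and $\oblv_{\on{Hecke}}$ for this action are t-exact. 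Moreover the t-structure on $\bHecke(\bC)$ is Artinian: this follows from \corref{c:Whit Art}-type reasoning, namely from \corref{c:irred Hecke} applied with $\bC$ Artinian and the (automatically accessible) $\Rep(H)$- and then $\Rep(T_H)$-actions. Finally, since $T_H$ is a torus, its irreducible representations are one-dimensional and act by invertible t-exact functors, so every irreducible object of $\bHecke(\bC)^\heartsuit$ is restricted for the $\Rep(T_H)$-action, and this action is accessible.

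Part (a) is now \propref{p:irred Hecke} applied to $\bHecke(\bC)$ with its $\Rep(T_H)$-action: for an irreducible $\bc\in \bHecke(\bC)^\heartsuit$, which is restricted, the object $\ind_{\on{Hecke}}(\bc)=\Res^{T_H}(\bc)$ is irreducible in $\on{Hecke}(\bC)^\heartsuit$.

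Part (b) is \corref{c:irred Hecke bis}(a) applied to the same situation: every irreducible of $\on{Hecke}(\bC)^\heartsuit$ is of the form $\ind_{\on{Hecke}}(\bc)=\Res^{T_H}(\bc)$ for a restricted, hence arbitrary, irreducible $\bc\in \bHecke(\bC)^\heartsuit$.

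Part (c) is \corref{c:irred Hecke bis}(b): if $\Res^{T_H}(\bc_1)\simeq \Res^{T_H}(\bc_2)$ for irreducibles $\bc_1,\bc_2\in \bHecke(\bC)^\heartsuit$, then $\bc_1\simeq \bc_2\star V$ for a one-dimensional representation $V$ of $T_H$. A one-dimensional representation of the torus $T_H$ is a character, i.e.\ an element $\gamma\in \Lambda_H$, and tensoring by it is the translation functor $\on{Tr}^\gamma$ of \secref{sss:translation functors}. Hence $\bc_1$ and $\bc_2$ differ by a translation by an element of $\Lambda_H$, as asserted.
\end{proof}
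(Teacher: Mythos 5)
Your proposal matches the paper's approach exactly: the paper's entire proof is the remark that one applies \propref{p:irred Hecke} and Corollaries \ref{c:irred Hecke}, \ref{c:irred Hecke bis} ``to the torus $T_H$'', i.e.\ with $\bC$ replaced by $\bHecke(\bC)$ and $H$ by $T_H$, under which $\on{Hecke}(-)$ recovers $\on{Hecke}(\bC)$, $\ind_{\on{Hecke}}$ becomes $\Res^{T_H}$, every irreducible is automatically restricted, and the action is automatically accessible. Your identification of $\on{Hecke}(\bC)\simeq \on{Hecke}(\bHecke(\bC))$ via associativity of the relative tensor product and the remark that one-dimensional $T_H$-representations are exactly translations by $\Lambda_H$ are both correct and are precisely the bookkeeping the paper suppresses.

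One small inaccuracy worth flagging: your justification that $\bHecke(\bC)$ is Artinian (``\corref{c:irred Hecke} applied twice'') does not actually work. \corref{c:irred Hecke} outputs Artinianness of a \emph{de-equivariantized} (Hecke) category, whereas $\bHecke(\bC)=(\on{Hecke}(\bC))^{T_H}$ is an \emph{equivariantization}, so there is no second target to apply it to. The correct route is: part (a) of the present corollary requires only \propref{p:irred Hecke} (no Artinianness), and says $\Res^{T_H}$ sends irreducibles to irreducibles; combined with the facts that $\Res^{T_H}$ is t-exact, conservative, and detects compactness (the general phenomenon underlying \lemref{l:detect compact by restr}), Artinianness of $\on{Hecke}(\bC)$ (which \corref{c:irred Hecke}, applied once, does give) transfers to $\bHecke(\bC)$. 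The paper is equally terse about this prerequisite (it is spelled out only later, as \corref{c:irred Hecke graded bis}(b)(i)), so this is a minor point, but the citation you gave would not compile into a proof.
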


Combining with \propref{p:irred Hecke} and Corollaries \ref{c:irred Hecke} and \ref{c:irred Hecke bis}, we obtain: 

\begin{cor}    \label{c:irred Hecke graded bis}   \hfill 

\smallskip

\noindent{\em(a)} For every restricted irreducible $\bc\in \bC^\heartsuit$ and every $\gamma\in \Lambda_H$,
the object $\ind_{\bHecke}(\bc\otimes \sfe^\gamma)\in \bHecke(\bC)^\heartsuit$ is irreducible.

\smallskip

\noindent{\em(b)} Suppose that the t-structure on $\bC$ is Artinian and that the 
action of $\Rep(H)$ on $\bC$ is accessible. Then: 

\smallskip

\noindent{\em(i)} The t-structure on $\bHecke(\bC)^\heartsuit$ is Artinian;

\smallskip

\noindent{\em(ii)} Every irreducible object of $\bHecke(\bC)^\heartsuit$ is of the form $\ind_{\bHecke}(\bc\otimes \sfe^\gamma)$
for some $\bc\in \bC^\heartsuit$ as in \propref{p:irred Hecke} and $\gamma\in \Lambda_H$. 

\smallskip

\noindent{\em(iii)} Two irreducible objects $\ind_{\bHecke}(\bc_1\otimes \sfe^{\gamma_1})$ and $\ind_{\bHecke}(\bc_2\otimes \sfe^{\gamma_2})$
are isomorphic if and only if $\gamma_1-\gamma_2$ extends to a character (to be denoted $\gamma$) of $H$, and
$\bc_2\simeq \bc_1\star \sfe^\gamma$, where $\sfe^\gamma$ denotes the corresponding one-dimensional representation of $H$. 

\end{cor}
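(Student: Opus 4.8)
The plan is to deduce \corref{c:irred Hecke graded bis} from the analysis already carried out for the absolute Hecke category in \secref{ss:irred Hecke}, applied in two stages: first with respect to the torus $T_H$, and then with respect to $H$ itself. Concretely, for part (a), I would combine \propref{p:irred Hecke} applied to the torus $T_H$ (using that any irreducible object is automatically restricted for a torus action, as noted at the start of \secref{ss:irred Hecke}) with \thmref{t:restr convolution} and the accessibility statement. The key observation is that $\ind_{\bHecke}(\bc\otimes \sfe^\gamma)$ is nothing but $\Res^{T_H}$-lift of the object $\ind_{\on{Hecke}}(\bc)$ translated by $\gamma$: since $\bc$ is restricted for the $H$-action, \propref{p:irred Hecke} gives that $\ind_{\on{Hecke}}(\bc)$ is irreducible in $\on{Hecke}(\bC)^\heartsuit$, and then \corref{c:irred Hecke graded}(a), applied in reverse via \corref{c:irred Hecke graded}(b), shows that its $T_H$-equivariant lift (shifted by $\gamma\in\Lambda_H$) is irreducible in $\bHecke(\bC)^\heartsuit$. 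One must check that the $T_H$-equivariant structure on $\ind_{\on{Hecke}}(\bc)$ coming from $\ind_{\bHecke}$ is the ``standard'' one up to a $\Lambda_H$-translation, which is a formal unwinding of the universal properties in \secref{sss:graded Hecke univ}.

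For part (b)(i), I would invoke \corref{c:irred Hecke}, which already says that under the Artinian + accessible hypotheses the t-structure on $\on{Hecke}(\bC)^\heartsuit$ is Artinian; then apply \corref{c:irred Hecke} once more, this time to the torus $T_H$ acting on $\bHecke(\bC)$ (whose Hecke category with respect to $T_H$ recovers $\on{Hecke}(\bC)$, as in \secref{ss:graded Hecke}). Since a torus action is automatically accessible, \corref{c:irred Hecke} gives that the t-structure on $(\bHecke(\bC))^\heartsuit$ is Artinian. For (b)(ii), given an irreducible $\bc'\in\bHecke(\bC)^\heartsuit$, its image $\Res^{T_H}(\bc')$ is irreducible in $\on{Hecke}(\bC)^\heartsuit$ by \corref{c:irred Hecke graded}(a); by \corref{c:irred Hecke bis}(a) it is of the form $\ind_{\on{Hecke}}(\bc)$ for a restricted $\bc$; lifting this identification back through $\ind_{\bHecke}$ and using \corref{c:irred Hecke graded}(c) (which says two irreducibles with the same $\Res^{T_H}$ differ by a $\Lambda_H$-translation) yields $\bc'\simeq \ind_{\bHecke}(\bc\otimes\sfe^\gamma)$ for some $\gamma\in\Lambda_H$.

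For part (b)(iii), this is the ``separation of variables'' step: $\ind_{\bHecke}(\bc_1\otimes\sfe^{\gamma_1})\simeq\ind_{\bHecke}(\bc_2\otimes\sfe^{\gamma_2})$ implies, after applying $\Res^{T_H}$, that $\ind_{\on{Hecke}}(\bc_1)$ and $\ind_{\on{Hecke}}(\bc_2)$ are isomorphic up to a $\Lambda_H$-translation by $\gamma_1-\gamma_2$. But $\on{Hecke}(\bC)$ is the $T_H$-de-equivariantization of $\bHecke(\bC)$, so a $T_H$-equivariant isomorphism of its objects forces the translation $\gamma_1-\gamma_2$ to be $T_H$-equivariantly trivial, i.e. to extend to a character $\gamma$ of $H$ — here one uses precisely the mechanism of \corref{c:irred Hecke bis}(b), which tells us that when $\ind_{\on{Hecke}}(\bc_1)\simeq\ind_{\on{Hecke}}(\bc_2)$ the objects $\bc_1,\bc_2$ differ by a $1$-dimensional representation of $H$. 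Chasing the character through, one gets $\bc_2\simeq\bc_1\star\sfe^\gamma$ and $\gamma_1-\gamma_2$ is the restriction of $\gamma$ to $\Lambda_H$. Conversely, if $\gamma_1-\gamma_2$ extends to a character $\gamma$ of $H$ and $\bc_2\simeq\bc_1\star\sfe^\gamma$, then since $\sfe^\gamma$ is one-dimensional, the universal property of $\ind_{\bHecke}$ (and the isomorphism $\ind_{\bHecke}(\bc\star V)\simeq \Res^H_{T_H}(V)\otimes\ind_{\bHecke}(\bc)$ of \eqref{e:Hecke eigen grd}) immediately gives the desired isomorphism.

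I expect the main obstacle to be purely bookkeeping: keeping track, through the two-step de-equivariantization, of exactly which $\Lambda_H$-grading/translation data corresponds to which $T_H$- or $H$-character, and making sure that ``differs by a translation by an element of $\Lambda_H$'' in \corref{c:irred Hecke graded}(c) is upgraded correctly to ``extends to a character of $H$'' in (b)(iii) — this is where the non-formal content of \corref{c:irred Hecke bis}(b) (which ultimately rests on the assumption on $\bc$ that $\bc\star V$ irreducible forces $V$ one-dimensional, via \thmref{t:restr convolution}) is genuinely used. Everything else is a routine transport of the already-established \secref{ss:irred Hecke} results along the equivalence $\bHecke(\bC)\simeq(\on{Hecke}(\bC))^{T_H}$ of \secref{ss:graded Hecke}.
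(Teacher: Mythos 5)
Your overall plan — deduce the graded statement by playing off the $T_H$-level results of \corref{c:irred Hecke graded} against the $H$-level results of \propref{p:irred Hecke} and \corref{c:irred Hecke bis} — is the same strategy the paper has in mind. However, two of the steps as you state them do not actually close, and one is outright circular.

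The clearest problem is (b)(i). You invoke \corref{c:irred Hecke} for the $T_H$-action on $\bHecke(\bC)$, whose Hecke category is $\Hecke(\bC)$. But \corref{c:irred Hecke} runs \emph{from} the original category \emph{to} its Hecke category: it says ``if $\bD$ is Artinian and the $\Rep(T_H)$-action accessible, then $\Hecke_{T_H}(\bD)$ is Artinian.'' With $\bD=\bHecke(\bC)$ this has ``$\bHecke(\bC)$ Artinian'' as the \emph{hypothesis} and ``$\Hecke(\bC)$ Artinian'' (which you already know) as the \emph{conclusion}. There is no reverse implication in the paper. What one should do instead is argue directly: for $\bc_1\in\bC^\heartsuit$ irreducible, write $\bc_1\simeq\bc\star V$ with $\bc$ restricted, $V\in\on{Irrep}(H)$ (accessibility); then $\ind_{\bHecke}(\bc_1\otimes\sfe^\gamma)\simeq\Res^H_{T_H}(V)\otimes\ind_{\bHecke}(\bc\otimes\sfe^\gamma)$, a finite direct sum of $\Lambda_H$-translates of the irreducible $\ind_{\bHecke}(\bc\otimes\sfe^\gamma)$, hence of finite length. (Equivalently: $\Res^{T_H}$ is t-exact and conservative, so an infinite chain in $\bHecke(\bC)^\heartsuit$ would push forward to one in $\Hecke(\bC)^\heartsuit$.)

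There is a related directional issue in (a). You want to deduce irreducibility of $\ind_{\bHecke}(\bc\otimes\sfe^\gamma)$ from knowing that its image $\ind_{\Hecke}(\bc)$ under $\Res^{T_H}$ is irreducible; but \corref{c:irred Hecke graded}(a) goes the other way (irreducible upstairs $\Rightarrow$ irreducible downstairs), and (c) only applies once both objects are already known to be irreducible — which is precisely what you are trying to prove. The correct tool, again, is that $\Res^{T_H}$ is t-exact and conservative, so an object of the heart with irreducible image is itself irreducible. Finally, in (b)(iii) the phrase ``chasing the character through'' glosses over a real step: \corref{c:irred Hecke bis}(b) gives some character $\gamma'$ of $H$ with $\bc_2\simeq\bc_1\star\sfe^{\gamma'}$, but a priori $\gamma'|_{T_H}$ may differ from $\gamma_1-\gamma_2$ by some $\delta\in\Lambda_H$ for which $\ind_{\bHecke}(\bc_1\otimes\sfe^\delta)\simeq\ind_{\bHecke}(\bc_1)$; one must then separately show (using the adjunction, the decomposition $\oblv_{\bHecke}\ind_{\bHecke}(\bc_1)\simeq\bigoplus_V(\bc_1\star V)\otimes\Res^H_{T_H}(V^*)$, and the restrictedness of $\bc_1$) that such a $\delta$ is itself the restriction of a character of $H$ fixing $\bc_1$, so that it can be absorbed into $\gamma'$.
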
 

\section{The category of Hecke eigen-objects in the Whittaker category}  \label{s:Hecke Whit categ}

In this section we will finally define and study the main character of this paper, the category of graded Hecke eigensheaves 
in the metaplectic Whittaker category. 

\ssec{Definition}

In this subsection we introduce the category of graded Hecke eigensheaves in the metaplectic Whittaker category,
$$\bHecke(\Whit_{q,x}(G)),$$
taken with respect to the $\Rep(H)$ action on $\Whit_{q,x}(G)$, which was defined in \secref{ss:Hecke action on Whit}. 

\medskip

We will also consider the non-graded version $\on{Hecke}(\Whit_{q,x}(G))$. 

\sssec{}

By \secref{sss:ind Hecke}, we have a pair of adjoint functors
$$\ind_{\bHecke}:\Whit_{q,x}(G)\otimes \Rep(T_H)\rightleftarrows \bHecke(\Whit_{q,x}(G)):\oblv_{\bHecke}.$$

The corresponding monad on $\Whit_{q,x}(G)$ is given by the action of the object
$$\Sat_{q,G}\otimes \Res^{H}_{T_H}(\Reg(\cG))\in \Sph_{q,x}(G)\otimes \Rep(T_H).$$

\medskip 

In particular, the category $\bHecke(\Whit_{q,x}(G))$ is compactly generated by the essential image of 
$(\Whit_{q,x}(G)\otimes \Rep(T_H))^c$ under the functor $\ind_{\bHecke}$. 

\sssec{}

By \secref{sss:coten}, we can rewrite $\bHecke(\Whit_{q,x}(G))$ as the category consisting of
$$\CF\in \Whit_{q,x}(G)\otimes \Rep(T_H),$$
equipped with a system of identifications 
$$\CF\star \Sat_{q,G}(V)\overset{\beta_V}\simeq \Res^{H}_{T_H}(V)\otimes \CF,$$
(where $\Rep(T_H)$ acts on $\Whit_{q,x}(G)\otimes \Rep(T_H)$ via the 2nd factor), 
compatible with tensor products of representations. 

\sssec{}  \label{sss:duality on Hecke}

Taking into account \secref{sss:Hecke action and duality}, the recipe of \secref{sss:duality on Hecke graded} defines an identification 
\begin{equation} \label{e:duality for Hecke Whit}
\bHecke(\Whit_{q,x}(G))^\vee \simeq \bHecke(\Whit_{q^{-1},x}(G)),
\end{equation}
i.e., an equivalence
\begin{equation} \label{e:duality for Hecke Whit comp}
\BD^{\on{Verdier}}:((\bHecke(\Whit_{q,x}(G)))^c)^{\on{op}} \simeq (\bHecke(\Whit_{q^{-1},x}(G)))^c,
\end{equation}
which makes the following diagram commute
$$
\CD
((\Whit_{q,x}(G)\otimes \Rep(T_H))^c)^{\on{op}}   @>{(\ind_{\bHecke})^{\on{op}}}>>    ((\bHecke(\Whit_{q,x}(G)))^c)^{\on{op}}   \\
@V{\BD^{\on{Verdier}}\otimes (\tau^{T_H}\circ \BD^{\on{lin}})}VV   @VV{\BD^{\on{Verdier}}}V   \\
(\Whit_{q^{-1},x}(G)\otimes \Rep(T_H))^c   @>{\ind_{\bHecke}}>>  (\bHecke(\Whit_{q^{-1},x}(G)))^c. 
\endCD
$$

\ssec{Behavior with respect to isogenies}  

As a convenient technical tool, we will study the behavior of the categores $\bHecke(\Whit_{q,x}(G))$ and $\Hecke(\Whit_{q,x}(G))$
under isogenies $G\to \wt{G}$. 

\sssec{}   \label{sss:isogen}

Let us be given a short exact sequence of reductive groups
$$1\to G\to \wt{G}\to T_0\to 1,$$
where $T_0$ is a torus. Consider the corresponding short exact sequence of tori
$$0\to T\to \wt{T}\to T_0\to 0$$
and lattices
$$0\to \Lambda\to \wt\Lambda\to \Lambda_0\to 0.$$

\medskip

Let us be given a geometric metaplectic data $\CG^{\wt{G}}$ for $\wt{G}$, whose restriction to $G$ gives $\CG^G$.
We obtain a map of lattices 
$$\Lambda^\sharp\to \wt\Lambda^\sharp$$
and a map of reductive groups
$$\wt{H}\to H.$$

\sssec{}  \label{sss:assump isogen}

We will say that the isogeny is \emph{strictly compatible with the geometric metaplectic data} if the diagram 
$$
\CD
\wt\Lambda^\sharp  @>>>  \wt\Lambda \\
@AAA   @AAA   \\
\Lambda^\sharp @>>> \Lambda 
\endCD
$$
is a push-out square, cf. \secref{sss:assump isogen lattice}. 

\medskip

In particular, in this case we obtain a short exact sequence of lattices
\begin{equation} \label{e:SES sharp H}
0\to \Lambda^\sharp\to \wt\Lambda^\sharp \to \Lambda_0\to 0,
\end{equation}
and an isogeny of metaplectic duals
$$1\to \cT_0\to \wt{H}\to H \to 1.$$

\sssec{}  \label{sss:Gr isogeny}

Note that the image of the resulting map 
$$\Gr^{\omega^\rho}_{G,x}\to \Gr^{\omega^\rho}_{\wt{G},x}$$
is a union of some connected components.  

\medskip

In particular, direct image defines a fully faithful functor
\begin{equation} \label{e:isogen Gr}
\Shv_{\CG^G}(\Gr^{\omega^\rho}_{G,x})\to \Shv_{\CG^{\wt{G}}}(\Gr^{\omega^\rho}_{\wt{G},x}),
\end{equation}
and in particular a fully faithful monoidal functor
$$\Sph_{q,x}(G)\to \Sph_{q,x}(\wt{G}).$$

We have a commutative diagram
$$
\CD
\Rep(H)  @>>>  \Rep(\wt{H}) \\
@V{\Sat_{q,G}}VV   @VV{\Sat_{q,\wt{G}}}V   \\
\Sph_{q,x}(G)  @>>>  \Sph_{q,x}(\wt{G}). 
\endCD
$$

\sssec{}

Consider the resulting functor
\begin{equation} \label{e:Hecke isogeny}
\on{Hecke}(\Shv_{\CG^G}(\Gr^{\omega^\rho}_{G,x}))\to \on{Hecke}(\Shv_{\CG^{\wt{G}}}(\Gr^{\omega^\rho}_{\wt{G},x})),
\end{equation} 
where $\on{Hecke}$ on the left-hand (resp., right-hand) side is taken with respect to action 
of $\Rep(H)$ (resp., $\Rep(\wt{H})$). 

\medskip

We have a commutative diagram 
$$
\CD
\Shv_{\CG^G}(\Gr^{\omega^\rho}_{G,x})  @>{\text{\eqref{e:isogen Gr}}}>>   \Shv_{\CG^{\wt{G}}}(\Gr^{\omega^\rho}_{\wt{G},x})   \\
@V{\ind_{\Hecke}}VV   @VV{\ind_{\Hecke}}V  \\
\on{Hecke}(\Shv_{\CG^G}(\Gr^{\omega^\rho}_{G,x})) @>{\text{\eqref{e:Hecke isogeny}}}>>  \on{Hecke}(\Shv_{\CG^{\wt{G}}}(\Gr^{\omega^\rho}_{\wt{G},x})). 
\endCD
$$

We claim:

\begin{prop}  \label{p:isogen Gr}  \hfill 

\smallskip

\noindent{\em(a)}
The functor \eqref{e:Hecke isogeny} is fully faithful.

\smallskip

\noindent{\em(b)}
If the isogeny is stricttly compatible with the geometric metaplectic data (see 
\secref{sss:assump isogen}), then \eqref{e:Hecke isogeny} is an equivalence. 

\end{prop}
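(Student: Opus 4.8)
The plan is to prove the two assertions of \propref{p:isogen Gr} by reducing everything to the structure of $\Gr^{\omega^\rho}_{\wt G,x}$ as a disjoint union of translates of $\Gr^{\omega^\rho}_{G,x}$ indexed by $\pi_0(\Gr^{\omega^\rho}_{\wt G,x})/\pi_0(\Gr^{\omega^\rho}_{G,x}) \cong \Lambda_0$, combined with the Barr--Beck description of the Hecke category as modules over the monad $\Reg(H)\star(-)$. First I would record the following: since $\wt G/G \simeq T_0$, the short exact sequence $1\to G\to \wt G\to T_0\to 1$ gives that $\Gr^{\omega^\rho}_{\wt G,x}$ is (noncanonically) a disjoint union $\bigsqcup_{c\in \Lambda_0}\, (\Gr^{\omega^\rho}_{G,x})_c$ of copies of $\Gr^{\omega^\rho}_{G,x}$, permuted simply transitively by the translation action of $\Lambda_0 = \pi_0(\Gr^{\omega^\rho}_{T_0})$; and the functor \eqref{e:isogen Gr} is the inclusion of the summand indexed by $c=0$. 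The metaplectic gerbe $\CG^{\wt G}$ restricts on each such component to (a translate of) $\CG^G$, by the compatibility $\CG^{\wt G}|_{\Gr_G} \simeq \CG^G$ together with the fact that translation by a lattice element acts on gerbes compatibly with the factorization structure (this uses $\wt\Lambda^\sharp \to \wt\Lambda$ and the diagram in \secref{sss:assump isogen}).

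For part (a), full faithfulness of \eqref{e:Hecke isogeny}: I would argue that the right adjoint to \eqref{e:Hecke isogeny} followed by it, applied to an object in the image of $\ind_{\Hecke}$, recovers that object, and then use that such objects generate. Concretely, \eqref{e:isogen Gr} is fully faithful with a continuous right adjoint given by restriction to the $c=0$ component, and this right adjoint intertwines the two monoidal actions via $\Rep(\wt H)\to \Rep(H)$ (restriction) — here one uses the commuting square of Satake functors in \secref{sss:Gr isogeny} and the fact that the $\fL^+(\wt G)$-action on the $c=0$ component factors through $\fL^+(G)$ up to the central torus, so that convolution with $\Sat_{q,\wt G}(V)$ restricted to the $c=0$ component is computed componentwise. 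Passing to Hecke categories, \eqref{e:Hecke isogeny} acquires a right adjoint (induced by the componentwise restriction), and the unit of the adjunction is an isomorphism because it already is on $\Shv_{\CG^G}(\Gr^{\omega^\rho}_{G,x})$ before taking $\Hecke$ and $\ind_{\Hecke}$ is t-exact/compatible with the monadic presentations. Since the essential image of $\ind_{\Hecke}$ generates $\on{Hecke}(\Shv_{\CG^G}(\Gr^{\omega^\rho}_{G,x}))$, full faithfulness follows.

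For part (b), essential surjectivity under strict compatibility: the content is that when \eqref{e:SES sharp H} holds, the ``extra'' components of $\Gr^{\omega^\rho}_{\wt G,x}$ — the $c\neq 0$ translates — are already reachable from the $c=0$ component by convolving with objects $\Sat_{q,\wt G}(V^\gamma)$ for $\gamma\in \wt\Lambda^\sharp$ mapping onto $\Lambda_0$, because strict compatibility guarantees $\wt\Lambda^\sharp \twoheadrightarrow \Lambda_0$, i.e. every connected component of $\Gr^{\omega^\rho}_{\wt G,x}$ contains a point $t^\gamma$ with $\gamma \in \wt\Lambda^\sharp$, and by \eqref{e:descr h.w.} the corresponding $\IC$-sheaf is $\Sat_{q,\wt G}(V^\gamma)$. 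Concretely I would show the right adjoint to \eqref{e:Hecke isogeny} is conservative: an object of $\on{Hecke}(\Shv_{\CG^{\wt G}}(\Gr^{\omega^\rho}_{\wt G,x}))$ whose restriction to the $c=0$ component (as a plain sheaf, i.e. after $\oblv_{\Hecke}$ to the $c=0$ part) vanishes, must vanish on all components, because the Hecke eigen-structure forces the $c$-component to be a ``translate'' of the $0$-component via $\beta_{V^\gamma}$ for any $\gamma\in\wt\Lambda^\sharp$ with image $c\in\Lambda_0$, and such $\gamma$ exists precisely under the push-out hypothesis. This is essentially the argument of \propref{p:enlarge lattice fact} / \lemref{l:enlarge lattice} transported to this setting, and combined with (a) it gives an equivalence.

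The main obstacle I anticipate is bookkeeping the gerbe twists: the decomposition $\Gr^{\omega^\rho}_{\wt G,x} = \bigsqcup_c (\Gr^{\omega^\rho}_{G,x})_c$ is only canonical up to the translation action, and on each non-neutral component the restriction of $\CG^{\wt G}$ is a \emph{translate} of $\CG^G$ rather than literally $\CG^G$; making the intertwining of the $\Rep(\wt H)$- and $\Rep(H)$-actions precise requires carefully tracking the canonical identifications $\on{Tr}^\gamma(\CG^{\wt G})\simeq \CG^{\wt G}$ of \secref{sss:gerbe for torus} and checking they are compatible with the factorization structure and with the $\beta_V$'s defining Hecke eigen-objects. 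Once this compatibility is in place — and it follows formally from \propref{p:mult gerbe torus} and the strict-compatibility hypothesis — the rest is the monadic/Barr--Beck formalism applied twice.
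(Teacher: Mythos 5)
Your overall strategy coincides with the paper's: reduce to objects of the form $\ind_{\on{Hecke}}(\CF)$, exploit the open-and-closed inclusion $\Gr^{\omega^\rho}_{G,x}\subset \Gr^{\omega^\rho}_{\wt{G},x}$, and use the support relation between convolution with Satake objects and connected components. But the adjunction-theoretic wrapping contains a genuine gap. The sentence ``the unit of the adjunction is an isomorphism because it already is on $\Shv_{\CG^G}(\Gr^{\omega^\rho}_{G,x})$ before taking $\on{Hecke}$'' is not a valid deduction: the unit for \eqref{e:isogen Gr} being an isomorphism is trivial (it's a fully faithful direct image along an open-and-closed embedding), but this says nothing about the unit for \eqref{e:Hecke isogeny}, because the monad $\Reg(\wt{H})\star(-)$ is strictly larger than $\Reg(H)\star(-)$, and convolution with the extra irreducibles of $\wt{H}$ generally moves support off $\Gr^{\omega^\rho}_{G,x}$. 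You cannot pass the isomorphism through the monadic presentation without controlling those extra terms. The paper handles this directly: by adjunction both mapping spaces unwind to
$\Maps_{\Shv}\bigl(\CF_0,\,\bigoplus_V \CF_1\star\Sat_{q,G}(V)\otimes \ul{V}\bigr)$
with the sum over $\on{Irrep}(H)$ on one side and $\on{Irrep}(\wt{H})$ on the other, and then one observes that the extra summands with $V\in \on{Irrep}(\wt{H})\smallsetminus \on{Irrep}(H)$ contribute zero because, for $\CF_1$ supported on $\Gr^{\omega^\rho}_{G,x}$ and such $V$, the object $\CF_1\star\Sat_{q,G}(V)$ is supported on components disjoint from $\Gr^{\omega^\rho}_{G,x}$. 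You gesture at this (``convolution \dots restricted to the $c=0$ component is computed componentwise''), but that phrase is not the precise statement you need, and without it the argument does not close.

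Two smaller points. You write ``$\Rep(\wt{H})\to \Rep(H)$ (restriction)''; the group map is $\wt{H}\to H$, so restriction goes $\Rep(H)\to\Rep(\wt{H})$, and the componentwise restriction $\Shv_{\CG^{\wt{G}}}(\Gr^{\omega^\rho}_{\wt{G},x})\to \Shv_{\CG^G}(\Gr^{\omega^\rho}_{G,x})$ is a $\Rep(H)$-linear functor, not $\Rep(\wt{H})$-linear; the right adjoint to \eqref{e:Hecke isogeny} is therefore not literally ``componentwise restriction,'' and describing it correctly already requires the support observation you are trying to avoid. For part (b), on the other hand, your argument is essentially the paper's: the push-out hypothesis gives surjectivity of $\wt\Lambda^\sharp\to\Lambda_0$, hence every component of $\Gr^{\omega^\rho}_{\wt{G},x}$ is reachable from $\Gr^{\omega^\rho}_{G,x}$ by convolving with some $\Sat_{q,\wt{G}}(V^\gamma)$, which gives a nonzero map from an object in the image of \eqref{e:Hecke isogeny} to any given nonzero $\CF'$.
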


\begin{proof}

To prove point (a), it suffices to show that for $\CF_0,\CF_1\in \Shv_{\CG^G}(\Gr^{\omega^\rho}_{G,x})$,
the map
\begin{multline*} 
\Maps_{\on{Hecke}(\Shv_{\CG^G}(\Gr^{\omega^\rho}_{G,x}))}(\ind_{\on{Hecke}}(\CF_0),\ind_{\on{Hecke}}(\CF_1)) \to \\
\to \Maps_{\on{Hecke}(\Shv_{\CG^{\wt{G}}}(\Gr^{\omega^\rho}_{\wt{G},x}))}(\ind_{\on{Hecke}}(\CF_0),\ind_{\on{Hecke}}(\CF_1))
\end{multline*} 
is an isomorphism, where in the left-hand side $\ind_{\on{Hecke}}$ denotes the functor
$$\Shv_{\CG^G}(\Gr^{\omega^\rho}_{G,x})\to \on{Hecke}(\Shv_{\CG^G}(\Gr^{\omega^\rho}_{G,x})),$$
and in the right-hand side, it denotes the functor
$$\Shv_{\CG^{\wt{G}}}(\Gr^{\omega^\rho}_{\wt{G},x})\to \on{Hecke}(\Shv_{\CG^{\wt{G}}}(\Gr^{\omega^\rho}_{\wt{G},x})).$$

\medskip

By adjunction, this is equivalent to showing that the map
\begin{multline*} 
\Maps_{\Shv_{\CG^G}(\Gr^{\omega^\rho}_{G,x})}\left(\CF_0,\underset{V\in \on{Irrep}(H)}\oplus\, \CF_1\star \Sat_{q,G}(V)\otimes \ul{V}\right) \to  \\
\Maps_{\Shv_{\CG^{\wt{G}}}(\Gr^{\omega^\rho}_{\wt{G},x})}\left(\CF_0,\underset{V\in \on{Irrep}(\wt{H})}\oplus\, 
\CF_1\star \Sat_{q,G}(V)\otimes \ul{V}\right)
\end{multline*} 
is an isomorphism. 

\medskip

The required isomorphism follows from the fact that for $\CF$ supported on $\Gr^{\omega^\rho}_{G,x}\subset \Gr^{\omega^\rho}_{\wt{G},x}$ and 
$V\in \on{Irrep}(\wt{H})$,
the object $\CF\star \Sat_{q,G}(V)\in \Shv_{\CG^{\wt{G}}}(\Gr^{\omega^\rho}_{\wt{G},x})$ is still supported on 
$\Gr^{\omega^\rho}_{G,x}$ if and only if 
$$V\in \on{Irrep}(H)\subset \on{Irrep}(\wt{H}).$$

\medskip

For point (b) we note that the condition in \secref{sss:assump isogen} implies that for every 
$0\neq \CF_1\in \Shv_{\CG^{\wt{G}}}(\Gr^{\omega^\rho}_{\wt{G},x})$ there exists $V\in \on{Irrep}(\wt{H})$ 
so that $\CF_1\star \Sat_{q,G}(V)$ is non-zero when restricted to 
$\Gr^{\omega^\rho}_{G,x}\subset \Gr^{\omega^\rho}_{\wt{G},x}$.

\medskip

To prove that \eqref{e:Hecke isogeny} is an equivalence, it suffices to show that for every 
$\CF'\in \on{Hecke}(\Shv_{\CG^{\wt{G}}}(\Gr^{\omega^\rho}_{\wt{G},x}))$ there exists $\CF\in \on{Hecke}(\Shv_{\CG^G}(\Gr^{\omega^\rho}_{G,x}))$ equipped
with a non-zero map 
$$\ind_{\on{Hecke}}(\CF)\to \CF'.$$

Choose $V\in \on{Irrep}(\wt{H})$ so that $\oblv_{\on{Hecke}}(\CF')\star \Sat_{q,G}(V)$ is non-zero when restricted to 
$\Gr^{\omega^\rho}_{G,x}$. Let $\CF$ be the resulting object of $\Shv_{\CG^G}(\Gr^{\omega^\rho}_{G,x})$.  By construction, we have a non-zero map
$$\CF\star \Sat_{q,G}(V^*)\to \oblv_{\on{Hecke}}(\CF'),$$
and hence a non-zero map
$$\ul{V}^*\otimes \ind_{\on{Hecke}}(\CF)\to \CF',$$
as desired. 

\end{proof} 
 
From \propref{p:isogen Gr}, we obtain:  

\begin{cor}  \label{c:isogen Gr} Assume that the isogeny is strictly compatible with the geometric metaplectic data. Then:

\smallskip

\noindent{\em(a)}
The functors \eqref{e:isogen Gr} and $\Rep(T_H)\to \Rep(T_{\wt{H}})$ induce an equivalence
of $\Rep(T_H)$-module categories: 
$$\Rep(T_{\wt{H}})\underset{\Rep(T_H)}\otimes 
\bHecke(\Shv_{\CG^G}(\Gr^{\omega^\rho}_{G,x})) \to \bHecke(\Shv_{\CG^{\wt{G}}}(\Gr^{\omega^\rho}_{\wt{G},x})),$$
where $\bHecke$ on the left-hand (resp., right-hand) side is taken with respect to
$\Rep(H)$ and $\Rep(T_H)$  (resp., $\Rep(\wt{H})$ and $\Rep(T_{\wt{H}})$).

\smallskip

\noindent{\em(b)} A choice of a splitting of \eqref{e:SES sharp H} defines
an equivalence
$$\Rep(\cT_0)\otimes \bHecke(\Shv_{\CG^G}(\Gr^{\omega^\rho}_{G,x})) \simeq \bHecke(\Shv_{\CG^{\wt{G}}}(\Gr^{\omega^\rho}_{\wt{G},x})).$$

\end{cor}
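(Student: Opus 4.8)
The statement to be proved is \propref{p:isogen Gr} together with its \corref{c:isogen Gr}. I will treat \propref{p:isogen Gr} as the main content; the corollary is a formal consequence of it combined with \propref{p:enlarge lattice fact} (or rather its abstract avatar, \corref{c:enlarge lattice}). Throughout, the key geometric input is that the map $\Gr^{\omega^\rho}_{G,x}\to \Gr^{\omega^\rho}_{\wt G,x}$ is the inclusion of a union of connected components: the components of $\Gr^{\omega^\rho}_{\wt G,x}$ are indexed by $\pi_1(\wt G)=\wt\Lambda/\wt\Lambda_{\mathrm{sc}}$, those of $\Gr^{\omega^\rho}_{G,x}$ by $\pi_1(G)=\Lambda/\Lambda_{\mathrm{sc}}$, and the map on $\pi_0$ is the (injective) map $\Lambda/\Lambda_{\mathrm{sc}}\hookrightarrow \wt\Lambda/\wt\Lambda_{\mathrm{sc}}$ induced by $\Lambda\hookrightarrow\wt\Lambda$. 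Combined with the metaplectic Satake description \eqref{e:descr h.w.}, convolution by $\Sat_{q,\wt G}(V^\gamma)$ for $\gamma\in\wt\Lambda^\sharp_{\mathrm{dom}}$ shifts the $\pi_0$-grading by the image of $\gamma$ in $\wt\Lambda/\wt\Lambda_{\mathrm{sc}}$; so $\CF\star\Sat_{q,\wt G}(V)$ stays supported on $\Gr^{\omega^\rho}_{G,x}$ (for $\CF$ so supported and $V$ irreducible) precisely when $V\in\on{Irrep}(H)\subset\on{Irrep}(\wt H)$, i.e.\ when its highest weight lies in $\Lambda^\sharp\subset\wt\Lambda^\sharp$.

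\textbf{Step 1 (full faithfulness, part (a)).} Reduce, as indicated, to showing that $\ind_{\on{Hecke}}$ induces an isomorphism on $\Maps$ between objects of the form $\ind_{\on{Hecke}}(\CF_0),\ind_{\on{Hecke}}(\CF_1)$ with $\CF_i\in\Shv_{\CG^G}(\Gr^{\omega^\rho}_{G,x})$, by compact generation. Using the monad description of \secref{sss:ind Hecke} and adjunction, this comes down to comparing
\[
\Maps_{\Shv_{\CG^G}}\Bigl(\CF_0,\bigoplus_{V\in\on{Irrep}(H)}(\CF_1\star\Sat_{q,G}(V))\otimes\ul V\Bigr)
\]
with the analogous expression over $\wt H$. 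Here I invoke the support observation above: for $V\in\on{Irrep}(\wt H)$, the object $\CF_1\star\Sat_{q,G}(V)$ either lies in $\Shv_{\CG^G}(\Gr^{\omega^\rho}_{G,x})$ (when $V\in\on{Irrep}(H)$) or is supported on the complementary components, in which case $\Maps$ from $\CF_0$ vanishes since $\CF_0$ lives on $\Gr^{\omega^\rho}_{G,x}$. Hence only the $V\in\on{Irrep}(H)$ summands contribute on the right-hand side, and they match the left-hand side term by term. This gives part (a).

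\textbf{Step 2 (essential surjectivity under strict compatibility, part (b)).} Under the assumption of \secref{sss:assump isogen} that $\Lambda^\sharp\to\wt\Lambda^\sharp$ together with $\Lambda\to\wt\Lambda$ forms a push-out square, I claim: for every nonzero $\CF_1\in\Shv_{\CG^{\wt G}}(\Gr^{\omega^\rho}_{\wt G,x})$ there is $V\in\on{Irrep}(\wt H)$ with $\CF_1\star\Sat_{q,\wt G}(V)$ nonzero after restriction to $\Gr^{\omega^\rho}_{G,x}$. Indeed, $\CF_1$ is supported on some component indexed by $c\in\wt\Lambda/\wt\Lambda_{\mathrm{sc}}$; the push-out hypothesis forces $\wt\Lambda/\wt\Lambda_{\mathrm{sc}}$ to be generated by the image of $\Lambda/\Lambda_{\mathrm{sc}}$ together with the image of $\wt\Lambda^\sharp/\wt\Lambda^\sharp_{\mathrm{sc}}$ — more precisely $\wt\Lambda^\sharp/\Lambda^\sharp\cong\wt\Lambda/\Lambda\cong\Lambda_0$, cf.\ \eqref{e:SES sharp H} — so one can choose $\gamma\in\wt\Lambda^\sharp_{\mathrm{dom}}$ whose class in $\wt\Lambda/\wt\Lambda_{\mathrm{sc}}$ carries $c$ into the image of $\Lambda/\Lambda_{\mathrm{sc}}$; convolving with $\Sat_{q,\wt G}(V^\gamma)$ then moves $\CF_1$ onto $\Gr^{\omega^\rho}_{G,x}$, and is invertible (Satake of an irreducible minuscule-in-the-appropriate-sense object, or just: convolution by $\Sat_{q,\wt G}(V^\gamma)$ on $\pi_0$-homogeneous pieces is an equivalence since $V^\gamma$ is a highest-weight Satake sheaf attached to a central cocharacter). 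Given this, for any $\CF'\in\on{Hecke}(\Shv_{\CG^{\wt G}})$ pick such a $V$ for $\oblv_{\on{Hecke}}(\CF')$, let $\CF$ be the resulting object of $\Shv_{\CG^G}(\Gr^{\omega^\rho}_{G,x})$, and produce a nonzero map $\ul V^*\otimes\ind_{\on{Hecke}}(\CF)\to\CF'$ exactly as in the excerpt's proof sketch; full faithfulness from part (a) plus this weak form of essential surjectivity (every object receives a nonzero map from the image) upgrades, by a standard argument using that both categories are compactly generated and the functor preserves compactness and colimits, to an equivalence.

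\textbf{Step 3 (the corollary).} Part (a) of \corref{c:isogen Gr} is now formal: $\bHecke(-)$ is a base change of $\on{Hecke}(-)$ along $\Rep(T_H)\to\Rep(T_{\wt H})$ in the sense of \secref{ss:graded Hecke}, and \propref{p:isogen Gr}(b) says the relevant $\Rep(T_H)$-linear functor is an equivalence after the de-equivariantization; tensoring up to $\Rep(T_{\wt H})$ and using the projection-formula type identity of \secref{sss:duality and ten} (equivalently, the argument of \propref{p:enlarge lattice fact}) gives the claimed equivalence of $\Rep(T_H)$-module categories. Part (b) follows from (a) by choosing a splitting of \eqref{e:SES sharp H}, which yields $\Rep(T_{\wt H})\simeq\Rep(T_H)\otimes\Rep(\cT_0)$ as $\Rep(T_H)$-module categories, exactly as in \corref{c:enlarge lattice}. \textbf{The main obstacle} I anticipate is Step 2: one must be careful that the chosen $\Sat_{q,\wt G}(V^\gamma)$ really acts invertibly on the relevant $\pi_0$-homogeneous subcategory (rather than merely nonzero-ly), which uses that $\gamma$ can be taken in $\wt\Lambda^\sharp$ so that $V^\gamma$ corresponds under \eqref{e:descr h.w.} to the $\fL^+$-equivariant IC sheaf on a single $T$-orbit-closure shifted by a central element — i.e.\ a shift-of-component operator — and this is where the push-out hypothesis (not just a general isogeny) is genuinely used.
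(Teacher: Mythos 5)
Your argument is substantively the same as the paper's: the paper derives \corref{c:isogen Gr} directly from \propref{p:isogen Gr} without comment, and your Steps 1 and 2 reconstruct the proof of that proposition exactly as written (full faithfulness via the support criterion on convolution, essential surjectivity via the push-out hypothesis and a non-vanishing argument). Step 2's point about conservativity of convolution by $\Sat_{q,\wt G}(V^\gamma)$ is a reasonable amplification of what the paper asserts.

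One caveat on Step 3: the phrase ``$\bHecke(-)$ is a base change of $\on{Hecke}(-)$ along $\Rep(T_H)\to\Rep(T_{\wt H})$'' is backwards and does not quite parse. It is $\on{Hecke}(\bC)$ that is a base change of $\bHecke(\bC)$, along $\Rep(T_H)\to\Vect$; and one cannot literally ``tensor up'' an equivalence of plain DG categories to an equivalence of $\Rep(T_{\wt H})$-module categories without recording extra structure. The clean formulation of the same idea is via the equivariant/de-equivariant dictionary of \secref{sss:de-eq}: the equivalence $\on{Hecke}(\Shv_{\CG^G})\to\on{Hecke}(\Shv_{\CG^{\wt G}})$ of \propref{p:isogen Gr}(b) is $\wt H$-equivariant (the source carrying the $\wt H$-action through $\wt H\twoheadrightarrow H$), because it is induced from an $\Rep(H)$-linear functor. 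Taking $T_{\wt H}$-equivariants gives $\on{Hecke}(\Shv_{\CG^G})^{T_{\wt H}}\simeq \bHecke(\Shv_{\CG^{\wt G}})$; meanwhile, associativity of relative tensor products identifies $\on{Hecke}(\Shv_{\CG^G})^{T_{\wt H}} \simeq \Shv_{\CG^G}\underset{\Rep(H)}\otimes\Rep(T_{\wt H})\simeq \Rep(T_{\wt H})\underset{\Rep(T_H)}\otimes\bHecke(\Shv_{\CG^G})$, which is exactly part (a). Part (b) then follows as you say by choosing a splitting of \eqref{e:SES sharp H}, just as in \corref{c:enlarge lattice}.
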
 

\sssec{}

Imposing the Whittaker condition, from \eqref{e:isogen Gr}, we obtain a fully faithful functor 
\begin{equation} \label{e:isogen Whit}
\Whit_{q,x}(G)\to \Whit_q(\Gr^{\omega^\rho}_{\wt{G},x}).
\end{equation}

From \propref{p:isogen Gr} and \corref{c:isogen Gr} we obtain:

\begin{cor}  \label{c:isogen Whit}  Assume that the isogeny is strictly compatible with the geometric metaplectic data. Then:

\smallskip

\noindent{\em(a)} The functor 
$$\on{Hecke}(\Whit_{q,x}(G))\to \on{Hecke}(\Whit_q(\Gr^{\omega^\rho}_{\wt{G},x}))$$
is an equivalence.

\smallskip

\noindent{\em(b)} The functor 
$$\Rep(T_{\wt{H}})\underset{\Rep(T_H)}\otimes \bHecke(\Whit_{q,x}(G)) \to \bHecke(\Whit_q(\Gr^{\omega^\rho}_{\wt{G},x}))$$
is an equivalence.

\smallskip

\noindent{\em(c)}
A choice of a splitting of \eqref{e:SES sharp H} defines
an equivalence
$$\Rep(\cT_0) \otimes \bHecke(\Whit_{q,x}(G)) \simeq \bHecke(\Whit_q(\Gr^{\omega^\rho}_{\wt{G},x}))$$
is an equivalence.
\end{cor}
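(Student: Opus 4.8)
\textbf{Proof proposal for \corref{c:isogen Whit}.}

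The plan is to deduce all three parts from \propref{p:isogen Gr} (together with \corref{c:isogen Gr}) by imposing the Whittaker equivariance condition throughout, in exactly the same way that \corref{c:isogen Gr} was deduced from \propref{p:isogen Gr}. First I would observe that the fully faithful functor \eqref{e:isogen Whit} is just the restriction of \eqref{e:isogen Gr} to the full subcategories of $(\fL(N)^{\omega^\rho}_x,\chi_N)$-equivariant objects: indeed the image of $\Gr^{\omega^\rho}_{G,x}$ in $\Gr^{\omega^\rho}_{\wt G,x}$ is a union of connected components (see \secref{sss:Gr isogeny}), it is $\fL(N)^{\omega^\rho}_x$-stable, and direct image along a closed embedding of a union of connected components commutes with the averaging functors $\on{Av}_*^{N_k,\chi_N}$; hence \eqref{e:isogen Gr} sends $\Whit_{q,x}(G)$ into $\Whit_q(\Gr^{\omega^\rho}_{\wt G,x})$ and is fully faithful there. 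The key point is that the commutative square in \secref{sss:Gr isogeny} relating $\Sat_{q,G}$ and $\Sat_{q,\wt G}$ shows that \eqref{e:isogen Gr} intertwines the Hecke action of $\Rep(H)$ with that of $\Rep(\wt H)$ via the restriction $\Rep(\wt H)\to\Rep(H)$; since the Hecke action on $\Whit$ is the restriction of the convolution action on all of $\Shv_{\CG^G}(\Gr^{\omega^\rho}_{G,x})$, everything is compatible.

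For part (a): the category $\on{Hecke}(\Whit_{q,x}(G))$ is the full subcategory of $\on{Hecke}(\Shv_{\CG^G}(\Gr^{\omega^\rho}_{G,x}))$ cut out by the Whittaker condition (using that $\ind_{\on{Hecke}}$ and $\oblv_{\on{Hecke}}$ are given by convolution with fixed spherical objects and so preserve and detect the Whittaker condition, cf. \secref{ss:Hecke action on Whit}), and similarly on the $\wt G$ side. Under the equivalence of \propref{p:isogen Gr}(b) these subcategories match: an object of $\on{Hecke}(\Shv_{\CG^{\wt G}}(\Gr^{\omega^\rho}_{\wt G,x}))$ lies in the essential image of $\on{Hecke}(\Shv_{\CG^G}(\Gr^{\omega^\rho}_{G,x}))$ precisely when its underlying object, after applying $\oblv_{\on{Hecke}}$ and convolving with suitable $\Sat_{q,\wt G}(V)$, is supported on the $G$-components, and this support condition is visibly preserved and reflected by the Whittaker condition (the Whittaker orbits $S^\lambda$ already live inside the $G_{\on{sc}}$-components). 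So restricting the equivalence \eqref{e:Hecke isogeny} gives part (a).

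For parts (b) and (c): part (b) follows formally from part (a) by the same mechanism that produces \corref{c:isogen Gr}(a) from \propref{p:isogen Gr}(b) — namely, $\bHecke$ is $\on{Hecke}$ equivariant with respect to $T_H$ (resp.\ $T_{\wt H}$) in the sense of \secref{ss:graded Hecke}, i.e.\ $\bHecke(\bC)\simeq(\on{Hecke}(\bC))^{T_H}$, and taking $T_H$-equivariants of the $\Rep(T_{\wt H})$-linear equivalence of part (a) yields the displayed tensor-product equivalence; here one uses that the strict compatibility assumption in \secref{sss:assump isogen} guarantees the push-out diagram, so \lemref{l:enlarge lattice}-type reasoning (applied to $T_H\to T_{\wt H}$) makes the induction $\Rep(T_{\wt H})\underset{\Rep(T_H)}\otimes(-)$ behave correctly. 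Part (c) is then immediate: a splitting of \eqref{e:SES sharp H} identifies $\Rep(T_{\wt H})\simeq\Rep(T_H)\otimes\Rep(\cT_0)$ as $\Rep(T_H)$-module categories, so $\Rep(T_{\wt H})\underset{\Rep(T_H)}\otimes\bHecke(\Whit_{q,x}(G))\simeq\Rep(\cT_0)\otimes\bHecke(\Whit_{q,x}(G))$. The main obstacle I anticipate is part (a): one must check carefully that the Whittaker subcategory is exactly the preimage of the $G$-component support condition under $\oblv_{\on{Hecke}}$ on the $\wt G$ side — that is, that no ``extra'' Hecke eigen-objects sneak in — which is where the precise interplay between the connected-component decomposition of $\Gr^{\omega^\rho}_{\wt G,x}$, the action of $\Rep(\wt H)$, and the support of Whittaker sheaves on the $G_{\on{sc}}$-part must be used.
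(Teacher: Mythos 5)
Your overall plan — deduce the corollary by imposing the Whittaker condition throughout, using that \eqref{e:isogen Whit} is the restriction of \eqref{e:isogen Gr} and that the Whittaker subcategory is preserved by the $\Rep(H)$- and $\Rep(\wt{H})$-actions — is exactly what the paper has in mind (the paper states only ``From \propref{p:isogen Gr} and \corref{c:isogen Gr} we obtain'' and leaves the details implicit), and your treatment of parts (b) and (c) as formal consequences of (a) is correct.

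However, your argument for part (a) as written has a gap. The parenthetical ``the Whittaker orbits $S^\lambda$ already live inside the $G_{\on{sc}}$-components'' is false and not the right ingredient: for $\Gr^{\omega^\rho}_{\wt{G},x}$ the $\fL(N)^{\omega^\rho}_x$-orbits $S^\lambda$ are indexed by $\lambda\in \wt\Lambda^+$ and collectively cover \emph{all} connected components of $\Gr^{\omega^\rho}_{\wt{G},x}$, not just the ones contained in $\Gr^{\omega^\rho}_{G,x}$. Consequently an object of $\Whit_q(\Gr^{\omega^\rho}_{\wt{G},x})$ is by no means automatically supported on the $G$-components, so the claim that the ``support condition is visibly preserved and reflected by the Whittaker condition'' does not hold, and the strategy of trying to recover $\on{Hecke}(\Whit_{q,x}(G))$ inside $\on{Hecke}(\Shv_{\CG^{\wt G}}(\Gr^{\omega^\rho}_{\wt G,x}))$ via such a support criterion does not go through as stated. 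There is also a secondary issue: the restriction-of-the-equivalence route requires tracking how $\oblv_{\on{Hecke}}$ relative to $\Rep(H)$ compares with $\oblv_{\on{Hecke}}$ relative to $\Rep(\wt H)$ across the equivalence of \propref{p:isogen Gr}(b), which you do not address.

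The cleaner route — and the one the paper's phrasing points to — is not to restrict the existing equivalence after the fact, but to observe that the \emph{proof} of \propref{p:isogen Gr} (both the fully-faithfulness via the support criterion for $\CF\star\Sat_{q,G}(V)$ and the essential surjectivity via choosing $V\in\on{Irrep}(\wt H)$ to move support into $\Gr^{\omega^\rho}_{G,x}$) applies verbatim with $\Shv_{\CG^G}(\Gr^{\omega^\rho}_{G,x})$ replaced by $\Whit_{q,x}(G)$: the Whittaker subcategory is a full $\Sph_{q,x}(G)$-submodule (\secref{ss:Hecke action on Whit}), it is closed under restriction to connected components (these being closed-and-open and $\fL(N)^{\omega^\rho}_x$-stable), and the support computation in the proof of \propref{p:isogen Gr} is insensitive to the Whittaker condition. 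With this repaired, your derivation of (b) and (c) is fine.
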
 

\ssec{The t-structure and the description of irreducibles}

In this subsection we will study the behavior of the t-structure on the category $\bHecke(\Whit_q(\wt{G}))$. 
In particular, we will obtain an explicit description of irreducibles. 

\sssec{}

According to \secref{sss:t on Hecke} and \propref{p:conv is exact}, the category $\bHecke(\Whit_q(\wt{G}))$ acquires 
a t-structure in which both functors
$$\ind_{\bHecke}:\Whit_{q,x}(G)\otimes \Rep(T_H)\rightleftarrows \bHecke(\Whit_{q,x}(G)):\oblv_{\bHecke}$$
are t-exact. 

\sssec{}

We claim:

\begin{prop} \label{p:irred Hecke Whit}  \hfill

\smallskip

\noindent{\em(a)} The t-structure on $\bHecke(\Whit_{q,x}(G))$ is Artinian. 

\smallskip

\noindent{\em(b)} There is a natural bijection between irreducibles of $\bHecke(\Whit_{q,x}(G))^\heartsuit$ and elements
of $\Lambda$.

\end{prop}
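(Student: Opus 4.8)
The plan is to deduce both statements of \propref{p:irred Hecke Whit} from the general machinery of Sections~\ref{s:Hecke}, specializing to $\bC=\Whit_{q,x}(G)$ with its $\Rep(H)$-action. First I would observe that, by \corref{c:Whit Art}, the t-structure on $\Whit_{q,x}(G)$ is Artinian, and by \propref{p:conv is exact}, the convolution functors $\CF\mapsto \CF\star\Sat_{q,G}(V)$ are t-exact for $V\in\Rep(H)^\heartsuit$, so all the hypotheses of \secref{sss:t on Hecke} are satisfied and $\bHecke(\Whit_{q,x}(G))$ acquires a t-structure in which $\ind_{\bHecke}$ and $\oblv_{\bHecke}$ are t-exact. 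To invoke the irreducibility results (\propref{p:irred Hecke}, \corref{c:irred Hecke graded bis}) I need the action of $\Rep(H)$ on $\Whit_{q,x}(G)$ to be \emph{accessible} in the sense of \secref{sss:accessible}.

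The accessibility property is precisely where the main obstacle lies. When the derived group of $H$ is simply connected, \lemref{l:many restr} together with \thmref{t:restr convolution} gives accessibility directly: every $\lambda\in\Lambda^+$ decomposes as $\mu+\gamma$ with $\mu$ restricted and $\gamma\in(\Lambda^\sharp)^+$, and then $W^{\lambda,!*}\simeq W^{\mu,!*}\star\Sat_{q,G}(V^\gamma)$ exhibits the irreducible $W^{\lambda,!*}$ as (restricted irreducible)$\star$(irrep of $H$). To handle general $H$, I would pass to a $z$-extension: choose an isogeny $G\to\wt G$ strictly compatible with the geometric metaplectic data (in the sense of \secref{sss:assump isogen}) such that the derived group of $\wt H$ \emph{is} simply connected. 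Then \corref{c:isogen Whit}(b) identifies $\bHecke(\Whit_q(\wt G))$ with $\Rep(T_{\wt H})\underset{\Rep(T_H)}\otimes\bHecke(\Whit_{q,x}(G))$, and (choosing a splitting of \eqref{e:SES sharp H}) part~(c) identifies it with $\Rep(\cT_0)\otimes\bHecke(\Whit_{q,x}(G))$. Since $\Rep(\cT_0)$ is just $\Lambda_0$-graded vector spaces, this is a ``free'' operation that does not affect Artinianness or the classification of irreducibles up to the translation action of $\Lambda_0$; so it suffices to prove the proposition for $\wt G$, where accessibility is available.

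Granting accessibility, part~(a) follows from \corref{c:irred Hecke graded bis}(b)(i): the t-structure on $\bHecke(\Whit_{q,x}(G))^\heartsuit$ is Artinian. For part~(b), by \corref{c:irred Hecke graded bis}(b)(ii) every irreducible object is of the form $\ind_{\bHecke}(W^{\mu,!*}\otimes\sfe^\gamma)$ with $\mu\in\Lambda^+$ restricted and $\gamma\in\Lambda_H=\Lambda^\sharp$; sending such an object to $\mu+\gamma\in\Lambda$ gives the desired map to $\Lambda$. I would check this is well-defined and bijective using \corref{c:irred Hecke graded bis}(b)(iii): two such objects are isomorphic iff $\gamma_1-\gamma_2$ extends to a character of $H$ and $W^{\mu_2,!*}\simeq W^{\mu_1,!*}\star\sfe^{\gamma_1-\gamma_2}$. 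Now a character of $H$ is an element of $\Lambda^\sharp$, and tensoring $W^{\mu,!*}$ by the one-dimensional $\sfe^{\gamma}$ for $\gamma$ a character of $H$ shifts the supporting orbit, giving $W^{\mu+\gamma,!*}$ (using \thmref{t:restr convolution} for $V^\gamma$ one-dimensional, or directly the description of the convolution action on strata); hence $\mu_2=\mu_1+(\gamma_1-\gamma_2)$, i.e. $\mu_1+\gamma_1=\mu_2+\gamma_2$. Conversely, given any $\nu\in\Lambda$, write $\nu=\mu+\gamma$ with $\mu\in\Lambda^+$ restricted (possible after enlarging to $\wt G$, by \lemref{l:many restr} applied to the dominant representative, up to translating by an element of $\Lambda^\sharp$ into $\Lambda^+$) — this exhibits $\nu$ in the image. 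The one point requiring a small argument is that the decomposition $\nu=\mu+\gamma$ realizing a given $\nu\in\Lambda$ exists even when $\nu$ is not a priori dominant: here one uses that $\Lambda^\sharp$ contains $\on{ord}(q_\alpha)\cdot\alpha$ for every coroot $\alpha$ (see \secref{sss:roots in dual}), so translating by a suitable element of $\Lambda^\sharp$ moves any coset representative into $\Lambda^+$, after which \lemref{l:many restr} applies. This completes the identification of the set of irreducibles with $\Lambda$.
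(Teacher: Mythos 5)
Your proposal is correct and follows essentially the same route as the paper: reduce to the case where the derived group of $H$ is simply connected via a $z$-extension ($\propref{p:isogenies trick}$, $\corref{c:isogen Whit}$), establish accessibility there via $\lemref{l:many restr}$ and $\thmref{t:restr convolution}$, and then invoke $\corref{c:irred Hecke graded bis}$(b). The only difference is that the paper cites $\corref{c:irred Hecke graded bis}$(b) and leaves the bijection with $\Lambda$ implicit, whereas you spell out the bookkeeping (surjectivity via translating into $\Lambda^+$ by elements of $\Lambda^\sharp$, injectivity via the restricted-weight constraint forcing $\langle\mu_2-\mu_1,\check\alpha\rangle=0$) — a harmless and correct elaboration.
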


In the course of the proof of \propref{p:irred Hecke Whit} we will give an explicit description of the irreducibles
of $\bHecke(\Whit_{q,x}(G))^\heartsuit$, which will also be useful later.

\sssec{Proof of \propref{p:irred Hecke Whit}, first case}  
 
Let us first assume that the pair $(G,\CG^G)$ is such that the derived group of $H$ is simply connected. 
In this case, by \lemref{l:many restr} and \thmref{t:restr}, the action of $\Rep(H)$ on $\Whit_{q,x}(G)$ is accessible. 

\medskip

Hence the assertion of the proposition follows from \corref{c:irred Hecke graded bis}(b). 

\sssec{Proof of \propref{p:irred Hecke Whit}, reduction step}   \label{sss:red to sc}

Suppose now that we are given an isogeny $$G\to \wt{G}$$ strictly compatible with the geometric metaplectic data. 
From \corref{c:isogen Whit}(b,c), we obtain that the assertion of \propref{p:irred Hecke Whit} for $(\wt{G},\CG^{\wt{G}})$
implies that for $(G,\CG^G)$.

\medskip

Hence, in order to prove \propref{p:irred Hecke Whit}, it suffices to show the following:

\begin{prop} \label{p:isogenies trick}
Given $(G,\CG^G)$, there exists a pair $(\wt{G},\CG^{\wt{G}})$ and an isogeny $G\to \wt{G}$, 
strictly compatible with the geometric metaplectic data, such the derived group of $\wt{H}$ is simply connected. 
\end{prop}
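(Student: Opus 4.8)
The strategy is to realize $G$ as a subgroup of a slightly larger group $\wt G$ by enlarging the cocharacter lattice just enough so that, after forming the metaplectic dual, the derived group of $\wt H$ becomes simply connected, while at the same time arranging that the enlargement is a push-out in the sense of \secref{sss:assump isogen}. Recall that $H$ has root datum $(\Lambda_H,\check\Lambda_H,\{\alpha_H\},\{\check\alpha_H\})$ with $\Lambda_H=\Lambda^\sharp$ (see \secref{sss:roots in dual}). The derived group of $H$ is simply connected precisely when the weight lattice $\Lambda_H$ surjects onto the dual of the coroot lattice of $H$, equivalently, when the coroots $\check\alpha_H$ span a \emph{saturated} sublattice (i.e.\ a direct summand) of $\check\Lambda_H$. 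So the task is purely combinatorial: replace $\Lambda$ by an overlattice $\wt\Lambda\supset\Lambda$ of finite index such that (i) the resulting $\wt\Lambda^\sharp$, which will be the kernel of the bilinear form $b$ extended appropriately, has the property that the (unchanged) coroots $\check\alpha_H$ span a direct summand of $\check{\wt\Lambda}{}^\sharp$; and (ii) the square
$$
\CD
\wt\Lambda^\sharp  @>>>  \wt\Lambda \\
@AAA   @AAA   \\
\Lambda^\sharp @>>> \Lambda
\endCD
$$
is a push-out, i.e.\ $\wt\Lambda^\sharp=\Lambda^\sharp+(\text{something})$ with $\wt\Lambda^\sharp/\Lambda^\sharp\xrightarrow{\ \sim\ }\wt\Lambda/\Lambda$.

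\textbf{Key steps.} First I would pass to the simply connected cover on the dual side: since the roots and coroots of $H$ are rescalings $\alpha_H=\ell_\alpha\cdot\alpha$, $\check\alpha_H=\check\alpha/\ell_\alpha$ of those of $G$, the span of the $\check\alpha_H$ in $\BQ\otimes\check\Lambda$ is a fixed subspace; what changes under isogeny is only the ambient lattice. Let $\check\Lambda_{H,\mathrm{sat}}$ be the saturation of $\BZ\{\check\alpha_H\}$ inside $\BQ\otimes\check\Lambda^\sharp$; then I want $\wt\Lambda^\sharp$ to be chosen so that $\check{\wt\Lambda}{}^\sharp$ \emph{contains} $\check\Lambda_{H,\mathrm{sat}}$ as a direct summand. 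Dually, this means $\wt\Lambda^\sharp$ should be the preimage in $\BQ\otimes\Lambda^\sharp$ of an appropriate quotient; concretely one takes $\wt\Lambda^\sharp := \Lambda^\sharp + \BZ\{\varpi_i\}$, where $\varpi_i\in\BQ\otimes\Lambda^\sharp$ are elements pairing integrally with all $\check\alpha_H$ and mapping to a basis of the fundamental group of the derived group of $H$. Second, I would extend the quadratic form $q$ from $\Lambda$ to $\wt\Lambda$. Since $\wt\Lambda^\sharp$ is forced to be $\mathrm{ker}(b_{\wt\Lambda})$, one needs $b$ to extend to $\wt\Lambda\times\wt\Lambda\to\sfe^{\times,\mathrm{tors}}(-1)$ with the prescribed kernel; because $\BQ\otimes\Lambda=\BQ\otimes\wt\Lambda$, the form $b$ already defines a $\BQ$-valued (or rather $\BQ/\BZ$-divisible-valued) pairing, and the only issue is integrality of $b(\wt\lambda_1,\wt\lambda_2)$ for $\wt\lambda_i\in\wt\Lambda$. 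This can be arranged by first replacing $G$ by an isogenous group whose lattice is divisible enough (e.g.\ multiply $\Lambda$ by a large integer $n$, which does not change $G$ up to central isogeny and rescales $q$), and then the overlattice $\wt\Lambda$ is accommodated. Third, once the lattice data $(\wt\Lambda,\wt\Lambda^\sharp,q)$ is produced with the push-out property, the geometric metaplectic datum $\CG^{\wt G}$ extending $\CG^G$ exists: by \secref{sss:neutral case} and \secref{sss:gerbe for torus} the space of metaplectic data lifting a given quadratic form is a torsor under gerbes on $X$, and one checks the obstruction to lifting $\CG^G$ along $G\hookrightarrow\wt G$ vanishes because the relevant relative gerbe lives on $X$ with coefficients in a divisible group, hence can be split after the rescaling in Step 2.

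\textbf{Main obstacle.} The genuinely delicate point is the simultaneous realization of both constraints: making $\check\Lambda_{H,\mathrm{sat}}$ a direct summand of $\check{\wt\Lambda}{}^\sharp$ \emph{and} keeping the square a push-out. These pull in opposite directions—enlarging $\Lambda$ to $\wt\Lambda$ generally enlarges $\Lambda^\sharp$ only ``rationally,'' and one must check that the natural map $\wt\Lambda^\sharp/\Lambda^\sharp\to\wt\Lambda/\Lambda$ is an isomorphism and not merely a surjection with kernel. I expect this to come down to the statement that $\Lambda^\sharp$ and $\Lambda$ have the same ``co-rank with respect to the saturation of the coroots,'' which is automatic because $b$ is non-degenerate on the span of the roots (by the non-degeneracy hypothesis $\ell_\alpha\neq 1$, \defref{d:non-deg q}) and $\Lambda^\sharp$ is its exact kernel. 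A clean way to package all of this: pass to $G_{\mathrm{ad}}$-type and $G_{\mathrm{sc}}$-type pieces separately (as is done for $(\Gr^{\omega^\rho}_{T,\Ran})^{\on{neg}}$ in \secref{sss:neg part Gr}), where the combinatorics is rigid, and then glue via the central torus, for which everything is unobstructed. I would also remark that after this, \propref{p:isogenies trick} feeds back through \secref{sss:red to sc} to finish \propref{p:irred Hecke Whit}, completing the circle.
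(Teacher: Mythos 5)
There is a genuine gap in the key step. You propose to take $\wt\Lambda^\sharp := \Lambda^\sharp + \BZ\{\varpi_i\}$ with the $\varpi_i$ lying in $\BQ\otimes\Lambda^\sharp$. This makes $\wt\Lambda^\sharp/\Lambda^\sharp$ a \emph{finite} abelian group, and hence (after taking the push-out) $\wt\Lambda/\Lambda$ is also finite. But the notion of isogeny used in \secref{sss:isogen} is a short exact sequence $1\to G\to\wt G\to T_0\to 1$ with $T_0$ a \emph{torus}, so $\Lambda_0=\wt\Lambda/\Lambda$ must be a free lattice; equivalently $\Lambda^\sharp$ must be \emph{saturated} in $\wt\Lambda^\sharp$. (This freeness is used essentially downstream, e.g.\ the splitting of \eqref{e:SES sharp H} in \corref{c:isogen Gr}(b) and \corref{c:enlarge lattice fact}, which has no chance of existing if $\Lambda_0$ has torsion.) Your construction instead produces a \emph{finite central isogeny} on the dual side (a finite cover $\wt H\to H$ killing $\pi_1$ of the derived group), which on the $G$-side gives a finite central quotient rather than an embedding into a torus extension. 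Indeed, your own opening sentence (``realize $G$ as a subgroup of a slightly larger group'') is incompatible with what your formula for $\wt\Lambda^\sharp$ actually yields. The correct move is to choose a torus-extension (a $z$-extension) $1\to\cT_0\to\wt H\to H\to 1$ with $[\wt H,\wt H]$ simply connected; such an extension always exists, but it genuinely increases the rank of the weight lattice, and $\wt\Lambda^\sharp$ is then \emph{not} contained in $\BQ\otimes\Lambda^\sharp$. Once $\wt\Lambda^\sharp$ is chosen this way, $\wt\Lambda$ is \emph{defined} as the push-out $\wt\Lambda^\sharp\sqcup_{\Lambda^\sharp}\Lambda$, which disposes of what you call the ``Main obstacle'': the push-out square holds by construction, so there is nothing to check.

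There is a second, related issue in your treatment of the extension of $q$. What is needed is \emph{not} an extension of the bilinear form $b$ (which is easy, since $b$ is forced to vanish on $\wt\Lambda^\sharp$), but an extension of the quadratic form $q$. The key observation missing from your argument: because $b|_{\Lambda^\sharp}=0$, the restriction $q^\sharp=q|_{\Lambda^\sharp}$ is a \emph{group homomorphism} $\Lambda^\sharp\to\{\pm1\}$, and one extends it to a homomorphism $\wt q^\sharp:\wt\Lambda^\sharp\to\{\pm1\}$ and then sets $\wt q(\lambda+\wt\lambda^\sharp)=q(\lambda)+\wt q^\sharp(\wt\lambda^\sharp)$. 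The existence of this extension of $q^\sharp$ is exactly where the saturation of $\Lambda^\sharp$ in $\wt\Lambda^\sharp$ is used (the quotient being free, $\mathrm{Hom}(\wt\Lambda^\sharp,\BZ/2)\to\mathrm{Hom}(\Lambda^\sharp,\BZ/2)$ is surjective); with a finite-index enlargement there is a genuine $\mathrm{Ext}^1$-obstruction, so your ``rescale $\Lambda$ by a large integer $n$'' patch is not just inessential but is trying to fix a problem that should not arise in the correct setup. I would also note that what must be checked is that $\wt q$ lies in $\on{Quad}(\wt\Lambda,\sfe^\times(-1))^W_{\on{restr}}$ and has kernel of $\wt b$ precisely $\wt\Lambda^\sharp$; this is straightforward from the formula once the push-out is in place, but it is a condition, not automatic.
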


The proof of \propref{p:isogenies trick} is given in \secref{ss:proof isog} below. 

\qed[\propref{p:irred Hecke Whit}]

\sssec{}

For $\lambda\in \Lambda$, let $\CM^{\lambda,!*}_{\Whit}$ denote the corresponding irreducible in $\bHecke(\Whit_{q,x}(G))$.
By construction, for $\gamma\in \Lambda^\sharp$, we have:  
$$\CM^{\lambda,!*}_{\Whit}\otimes \sfe^\gamma\simeq \CM_{\Whit}^{\lambda+\gamma,!*},$$
where $\otimes$ stands for the action of $\Rep(T_H)$ on $\bHecke(\Whit_{q,x}(G))$. Moreover, for $\lambda$ restricted, we have
$$\CM^{\lambda,!*}_{\Whit}:=\ind_{\bHecke}(W^{\lambda,!*}).$$

\medskip

We claim:

\begin{cor} \label{c:isotypics in Weyl}  For $\lambda\in \Lambda^+$, the object 
$\ind_{\bHecke}(W^{\lambda,*})\in (\bHecke(\Whit_{q,x}(G)))^\heartsuit$
receives a non-zero map from the irreducible $\CM^{\lambda,!*}_{\Whit}$, and the
Jordan-Holder constituents of the quotient are of the form $\CM^{\lambda',!*}_{\Whit}$ for $\lambda'\leq \lambda$.
\end{cor}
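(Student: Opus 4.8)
The plan is to combine the exactness of $\ind_{\bHecke}$ with the highest-weight structure of $\Whit_{q,x}(G)$ and the ``Steinberg'' computation \thmref{t:restr convolution}, after a preliminary reduction. First I would reduce to the case that the derived group of $H$ is simply connected: by \propref{p:isogenies trick} there is an isogeny $G\to\wt{G}$ strictly compatible with the geometric metaplectic data for which the derived group of $\wt{H}$ is simply connected, and by \corref{c:isogen Whit} the resulting functor $\bHecke(\Whit_{q,x}(G))\to\bHecke(\Whit_q(\Gr^{\omega^\rho}_{\wt{G},x}))$ is fully faithful and exact, induced by the fully faithful embedding $\Whit_{q,x}(G)\hookrightarrow\Whit_q(\Gr^{\omega^\rho}_{\wt{G},x})$ which matches standards, costandards and the irreducibles $\CM^{\lambda,!*}_{\Whit}$ with their $\wt{G}$-counterparts; in particular it carries $\ind_{\bHecke}(W^{\lambda,*})$ to $\ind_{\bHecke}(W^{\lambda,*})$. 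So it suffices to treat the simply connected case, which I assume from now on. I would also recall that $\ind_{\bHecke}$ is t-exact (\secref{sss:t on Hecke}, using \propref{p:conv is exact}), hence exact on hearts, and that $W^{\lambda,*}\in(\Whit_{q,x}(G))^\heartsuit$ (\propref{p:t on Whit}), so $\ind_{\bHecke}(W^{\lambda,*})$ indeed lies in $(\bHecke(\Whit_{q,x}(G)))^\heartsuit$.

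Next I would record the structure of $W^{\lambda,*}$ in $(\Whit_{q,x}(G))^\heartsuit$. The canonical map $W^{\lambda,!*}\to W^{\lambda,*}$ is a monomorphism (as $W^{\lambda,!*}$ is the image of $W^{\lambda,!}\to W^{\lambda,*}$) whose restriction to the open stratum $S^\lambda$ is an isomorphism, so its cokernel is supported on $\ol S^\lambda\setminus S^\lambda$; combined with the classification of irreducibles (\corref{c:t on Whit}) and the finiteness of length (\propref{p:t on Whit}), this shows $W^{\lambda,*}$ has a finite filtration with $W^{\lambda,!*}$ as a subobject of multiplicity one and all other subquotients of the form $W^{\lambda',!*}$ with $\lambda'<\lambda$ dominant. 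Applying the exact functor $\ind_{\bHecke}$ yields a finite filtration of $\ind_{\bHecke}(W^{\lambda,*})$ with subquotients $\ind_{\bHecke}(W^{\lambda',!*})$, $\lambda'\le\lambda$ dominant, the term $\ind_{\bHecke}(W^{\lambda,!*})$ occurring as a subobject.

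The heart of the matter is then to compute $\ind_{\bHecke}(W^{\lambda',!*})$ for dominant $\lambda'$. Using \lemref{l:many restr} (here the simply connected reduction is essential) I would write $\lambda'=\mu'+\gamma'$ with $\mu'\in\Lambda^+$ restricted and $\gamma'\in(\Lambda^\sharp)^+$; then \thmref{t:restr convolution} gives $W^{\lambda',!*}\simeq W^{\mu',!*}\star\Sat_{q,G}(V^{\gamma'})$, and the universal property of $\ind_{\bHecke}$ (\secref{sss:graded Hecke univ}) yields
$$\ind_{\bHecke}(W^{\lambda',!*})\simeq\CM^{\mu',!*}_{\Whit}\otimes\Res^H_{T_H}(V^{\gamma'}),$$
where $\otimes$ denotes the $\Rep(T_H)$-action on $\bHecke(\Whit_{q,x}(G))$. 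Since $T_H$ is a torus, $\Res^H_{T_H}(V^{\gamma'})\in\Rep(T_H)^\heartsuit$ is a direct sum of one-dimensional representations $\sfe^\eta$ over the weights $\eta\le\gamma'$ of $V^{\gamma'}$ (all lying in $\Lambda^\sharp=\Lambda_H$), with $\eta=\gamma'$ occurring once, and each $\sfe^\eta$ acts by the invertible functor $\on{Tr}^\eta$ with $\CM^{\mu',!*}_{\Whit}\otimes\sfe^\eta\simeq\CM^{\mu'+\eta,!*}_{\Whit}$. Hence $\ind_{\bHecke}(W^{\lambda',!*})$ is a \emph{semisimple} object of the heart, isomorphic to $\bigoplus_\eta(\CM^{\mu'+\eta,!*}_{\Whit})^{\oplus\dim V^{\gamma'}(\eta)}$, with all constituents $\CM^{\nu,!*}_{\Whit}$ for $\nu=\mu'+\eta\le\mu'+\gamma'=\lambda'$ and with $\CM^{\lambda',!*}_{\Whit}$ a direct summand of multiplicity one. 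Feeding this into the filtration of the previous paragraph then shows that every Jordan--H\"older constituent of $\ind_{\bHecke}(W^{\lambda,*})$ is of the form $\CM^{\nu,!*}_{\Whit}$ with $\nu\le\lambda$, that $\CM^{\lambda,!*}_{\Whit}$ occurs exactly once (as a direct summand of the subobject $\ind_{\bHecke}(W^{\lambda,!*})$), and therefore that there is an injection $\CM^{\lambda,!*}_{\Whit}\hookrightarrow\ind_{\bHecke}(W^{\lambda,*})$ whose cokernel has Jordan--H\"older constituents among the $\CM^{\lambda',!*}_{\Whit}$, $\lambda'\le\lambda$.

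I expect the main obstacle to be precisely this last computation for \emph{non-restricted} $\lambda'$: one must be sure that no constituent outside $\{\CM^{\nu,!*}_{\Whit}:\nu\le\lambda\}$ can appear, and this is exactly where both the reduction to the simply connected case (making \lemref{l:many restr} available) and the precise form of \thmref{t:restr convolution}, together with the semisimplicity of $T_H$-representations, are used. A secondary technical point is to verify carefully that the isogeny functor of \corref{c:isogen Whit} is exact and identifies all the named objects ($W^{\lambda,*}$, $\ind_{\bHecke}(W^{\lambda,*})$, $\CM^{\lambda,!*}_{\Whit}$) on the two sides, so that the reduction is legitimate.
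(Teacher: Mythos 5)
Your proof is correct and follows essentially the same route as the paper's: reduce to the case when the derived group of $H$ is simply connected, reduce via the filtration of $W^{\lambda,*}$ by the $W^{\lambda',!*}$ to the computation of $\ind_{\bHecke}(W^{\lambda',!*})$, and invoke \lemref{l:many restr} together with the Steinberg-type isomorphism of \thmref{t:restr convolution} to identify $\ind_{\bHecke}(W^{\lambda',!*})\simeq\CM^{\mu',!*}_{\Whit}\otimes\Res^H_{T_H}(V^{\gamma'})$. You spell out several steps that the paper leaves implicit — notably that $W^{\lambda,*}$ has $W^{\lambda,!*}$ as a socle with multiplicity one, the exactness of $\ind_{\bHecke}$, and the semisimplicity of $\ind_{\bHecke}(W^{\lambda',!*})$ — but the skeleton is identical.
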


\begin{proof}
It is enough to prove the assertion for $\ind_{\bHecke}(W^{\lambda,!*})$ instead of  $\ind_{\bHecke}(W^{\lambda,*})$. 
Again, we can assume that the derived group of $H$ is simply-connected. Write $\lambda=\lambda_1+\gamma$
with $\lambda_1$ restricted and $\gamma\in \Lambda^{\sharp,+}$. Then
$$\ind_{\bHecke}(W^{\lambda,!*})\simeq \CM^{\lambda_1,!*}_{\Whit}\otimes \Res^H_{T_H}(V^\gamma),$$
and the assertion follows.
\end{proof} 

\sssec{}

Recall the duality functor
$$\BD^{\on{Verdier}}:(\bHecke(\Whit_{q,x}(G))^c)^{\on{op}}\to \bHecke(\Whit_{q^{-1},x}(G))^c.$$

By \corref{c:duality irred} and the construction of the irreducibles $\CM^{\lambda,!*}_{\Whit}$, 
we have
$$\BD^{\on{Verdier}}(\CM^{\lambda,!*}_{\Whit})\simeq \CM^{\lambda,!*}_{\Whit}.$$

From here and \propref{p:irred Hecke Whit} we obtain:

\begin{cor}
An object $\CF\in \bHecke(\Whit_{q,x}(G))^c$ is connective/coconnective if and only if $\BD^{\on{Verdier}}(\CF)$
is coconnective/connective 
\end{cor}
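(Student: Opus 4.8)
The statement to be proved is the final Corollary: an object $\CF\in \bHecke(\Whit_{q,x}(G))^c$ is connective (resp.\ coconnective) if and only if $\BD^{\on{Verdier}}(\CF)$ is coconnective (resp.\ connective). The plan is to reduce this to the already-recorded facts that $\BD^{\on{Verdier}}$ fixes all the irreducibles $\CM^{\lambda,!*}_{\Whit}$ and that the t-structure on $\bHecke(\Whit_{q,x}(G))$ is Artinian (\propref{p:irred Hecke Whit}(a)).

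First I would observe that since the t-structure on $\bHecke(\Whit_{q,x}(G))$ is Artinian, every compact object is cohomologically bounded and every cohomology object $H^i(\CF)$ lies in $\bHecke(\Whit_{q,x}(G))^c\cap \bHecke(\Whit_{q,x}(G))^\heartsuit$, hence has finite length with all Jordan--H\"older constituents among the irreducibles $\CM^{\lambda,!*}_{\Whit}$. In particular $\CF$ is connective iff $H^i(\CF)=0$ for $i>0$, and coconnective iff $H^i(\CF)=0$ for $i<0$. So it suffices to understand how $\BD^{\on{Verdier}}$ interacts with cohomological degree. Next I would use the general principle that $\BD^{\on{Verdier}}$, being an equivalence $(\bHecke(\Whit_{q,x}(G))^c)^{\on{op}}\to \bHecke(\Whit_{q^{-1},x}(G))^c$, induces isomorphisms $\Hom(\CF_1,\CF_2[k])\simeq \Hom(\BD^{\on{Verdier}}(\CF_2),\BD^{\on{Verdier}}(\CF_1)[k])$. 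Combining this with \corref{c:duality irred} and the construction of $\CM^{\lambda,!*}_{\Whit}$ (which gives $\BD^{\on{Verdier}}(\CM^{\lambda,!*}_{\Whit})\simeq \CM^{\lambda,!*}_{\Whit}$ in the opposite metaplectic data, as recorded right before the statement), together with the fact that the t-structure is characterized by the standard objects $W^{\mu,!}$ and their images under $\ind_{\bHecke}$, one concludes that $\BD^{\on{Verdier}}$ sends $\bHecke(\Whit_{q,x}(G))^{\le 0,c}$ into $\bHecke(\Whit_{q^{-1},x}(G))^{\ge 0,c}$ and conversely.

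More concretely, I would argue as follows. An object $\CF\in\bHecke(\Whit_{q,x}(G))^c$ is connective iff $\Hom(\CF,\CM^{\lambda,!*}_{\Whit}[k])=0$ for all $\lambda$ and all $k<0$; this is because the irreducibles generate, they are compact, and the heart is finite-length (Artinian), so coconnectivity of the dual is detected by maps into irreducibles. Dually, $\CF$ is coconnective iff $\Hom(\CM^{\lambda,!*}_{\Whit}[k],\CF)=0$ for all $\lambda$ and all $k<0$. Now apply $\BD^{\on{Verdier}}$: since it is an anti-equivalence on compact objects and fixes each $\CM^{\lambda,!*}_{\Whit}$, we get
\[
\Hom(\CF,\CM^{\lambda,!*}_{\Whit}[k])\simeq \Hom(\CM^{\lambda,!*}_{\Whit},\BD^{\on{Verdier}}(\CF)[k]),
\]
so the vanishing for $k<0$ of the left side (i.e.\ $\CF$ connective) is equivalent to the vanishing for $k<0$ of the right side (i.e.\ $\BD^{\on{Verdier}}(\CF)$ coconnective). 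The reverse implication is identical with the roles swapped, using involutivity of $\BD^{\on{Verdier}}$.

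\textbf{Main obstacle.} The only delicate point is justifying that the t-structure on $\bHecke(\Whit_{q,x}(G))$ is detected by $\Ext$-vanishing against the irreducibles $\CM^{\lambda,!*}_{\Whit}$ in the stated form --- i.e.\ the precise form of the duality between connectivity and coconnectivity through $\CHom$. This is where Artinianness (\propref{p:irred Hecke Whit}(a)) is essential: it guarantees that compact objects are bounded and that truncation preserves compactness, so that cohomology objects have finite length and the above $\Hom$-criteria genuinely characterize (co)connectivity. Once this bookkeeping is set up, the argument is formal. I would present it by first spelling out the $\Hom$-characterizations of connectivity/coconnectivity for compact objects (citing the abstract discussion of Artinian t-structures in \secref{sss:properties of t}), then invoking $\BD^{\on{Verdier}}(\CM^{\lambda,!*}_{\Whit})\simeq \CM^{\lambda,!*}_{\Whit}$ and the anti-equivalence property, and finally noting involutivity to get both directions.
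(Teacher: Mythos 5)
Your argument is essentially the one the paper leaves implicit: the corollary is stated in the paper with no proof beyond citing the identity $\BD^{\on{Verdier}}(\CM^{\lambda,!*}_{\Whit})\simeq \CM^{\lambda,!*}_{\Whit}$ and Artinianness (\propref{p:irred Hecke Whit}(a)), and you correctly supply the intermediate $\CHom$-vanishing characterizations of (co)connectivity that Artinianness makes available, then transport them through the contravariant equivalence $\BD^{\on{Verdier}}$.

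One small inconsistency to fix: you state that ``$\CF$ is coconnective iff $\Hom(\CM^{\lambda,!*}_{\Whit}[k],\CF)=0$ for all $k<0$,'' but this is the wrong sign --- the correct criterion is $k>0$ (equivalently, $\Hom(\CM^{\lambda,!*}_{\Whit},\CF[k])=0$ for $k<0$). Your subsequent display,
\[
\Hom(\CF,\CM^{\lambda,!*}_{\Whit}[k])\simeq \Hom(\CM^{\lambda,!*}_{\Whit},\BD^{\on{Verdier}}(\CF)[k]),
\]
in fact uses the \emph{correct} formulation (with the shift sitting on $\BD^{\on{Verdier}}(\CF)$), so the conclusion you draw from it is right; the stated criterion in the preceding sentence simply does not match. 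The rest of the argument --- boundedness of compacts, finite length of cohomology objects, detecting connectivity/coconnectivity via maps to/from simples, involutivity of $\BD^{\on{Verdier}}$ --- is sound.
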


\ssec{Proof of \propref{p:isogenies trick}}  \label{ss:proof isog}

The proof of \propref{p:isogenies trick} will amount to a manipulation with lattices and root data. 

\sssec{}  

We first choose an isogeny 
$$1\to \cT_0\to \wt{H} \twoheadrightarrow H\to 1$$ 
so that the derived group of $\wt{H}$ is simply-connected. Let $\wt\Lambda^\sharp$ denote the weight lattice of $\wt{H}$.

\medskip

We define the lattice
$\wt\Lambda$ to be the push-out
$$\wt\Lambda^\sharp \underset{\Lambda^\sharp}\sqcup\, \Lambda.$$

By construction, the map 
$$\wt\Lambda^\sharp\to \wt\Lambda$$
is a rational equivalence. 

\sssec{}

We now construct a root datum for which $\wt\Lambda$ is the coweight lattice. For a coroot $\alpha$ of $G$,
we let $\wt\alpha \in \wt\Lambda$ be the image of $\alpha$ under the natural 
embedding $\Lambda\to \wt\Lambda$.

\medskip

The corresponding root $\wt{\check\alpha}$ is constructed as follows:
$$\wt{\check\alpha}=\ell_\alpha\cdot \wt{\check\alpha}_H,$$
where $\wt{\check\alpha}_H$ is the corresponding coroot of $\wt{H}$,
and $\ell_\alpha$ is as in \secref{sss:q alpha}. 

\medskip
 
A priori, $\wt{\check\alpha}$ is defined as an element of
$$\wt\cLambda{}^\sharp\subset \wt\cLambda{}^\sharp\underset{\BZ}\otimes \BQ
\simeq \wt\cLambda\underset{\BZ}\otimes \BQ.$$
However, it is easy to see that it in fact belongs to $\wt\cLambda$.

\medskip

It follows from \secref{sss:roots in dual} that the elements 
$$\{\wt\alpha\in \wt\Lambda,\wt{\check\alpha}\in \wt\cLambda\}$$
indeed form a root system so that
$$\Lambda\to \wt\Lambda$$
is an isogeny.

\sssec{}

Let $\wt{G}$ be the corresponding reductive group over $k$. By construction, we have a short exact sequence
$$1\to G\to \wt{G}\to T_0\to 1,$$
where $T_0$ is the torus dual to $\cT_0$. 

\sssec{}

It remains to show that, on a Zariski neighborhood of the point $x\in X$, there exists a geometric metaplectic data
$\CG^{\wt{G}}$ for $\wt{G}$ such that the map $G\to \wt{G}$ is strictly compatible with the geometric metaplectic data. 

\medskip

Note, however, that by \cite[Sect. 3.3]{GLys}, on \emph{affine curves}, geometric metaplectic data are classified
by their associated quadratic forms. Hence, it remains to show that the quadratic form $q$ on $\Lambda$ can be extended
to an element 
$$\wt{q}\in \on{Quad}(\wt\Lambda,\sfe^\times(-1))^W_{\on{restr}}$$
(see \cite[Sect. 3.2.2]{GLys} or \secref{sss:b'} for the notation), 
in such a way that kernel of the associated symmetric bilinear form $\wt{b}$ equals $\wt\Lambda^\sharp\subset \wt\Lambda$. 

\sssec{}

Note that the restriction of the quadratic form $q$ to $\Lambda^\sharp$ is such that the associated
symmetric bilinear form vanishes. Hence $q^\sharp:=q|_{\Lambda^\sharp}$ is a linear map
$$\Lambda^\sharp \to \pm 1\subset \sfe^\times.$$

We extend the above map in an arbitrary way to a map 
$$\wt{q}^\sharp:\wt\Lambda^\sharp\to \pm 1.$$ 

We define a quadratic form $\wt{q}$ on $\wt\Lambda$ by the formula
$$\wt{q}(\lambda+\wt\lambda^\sharp)=q(\lambda)+\wt{q}^\sharp(\wt\lambda^\sharp), \quad
\lambda\in \Lambda,\,\wt\lambda^\sharp\in \wt\Lambda^\sharp.$$

It is easy to see that $\wt{q}$, constructed above, indeed belongs to
$$\on{Quad}(\wt\Lambda,\sfe^\times(-1))^W_{\on{restr}}\subset \on{Quad}(\wt\Lambda,\sfe^\times(-1)),$$
as required. 

\newpage 

\centerline{\bf Part IV: The metaplectic semi-infinite IC sheaf} 

\bigskip 

Having studied the Whittaker category on the affine Grassmannian and its de-equivariantization, i.e., 
the Hecke category, we move on to the next step: we want to relate it to the right-hand side of our main
theorem, which is the category of factorization modules over some factorization algebra on (an object
closely related to) the affine Grassmannian for the Cartan subgroup $T$. 

\medskip

The passage between $\Gr_G$
to $\Gr_T$ can justifiably be called a \emph{Jacquet functor} as it involves taking cohomology along
$\fL(N)$-orbits. However, there is a caveat: this is not just cohomology, but rather it is taken against
a non-trivial kernel, The kernel is metaplectic semi-infinite IC sheaf, denoted $\ICs_{q,\Ran}$, 
is the object of study in this Part. 

\bigskip

\section{The metaplectic semi-infinite category of the affine Grassmannian}   \label{s:semiinf categ}

The metaplectic semi-infinite IC sheaf is constructed by the usual procedure of \emph{intermediate extension}
inside a certain DG category equipped with a t-structure. 

\medskip

The goal of the present section is to introduce this DG category: this is the (unital version of) 
the metaplectic semi-infinite category on $\Gr_{G,\Ran}$, denoted $\SI_{q,\Ran}(G)^{\leq 0}_{\on{untl}}$. 

\ssec{The semi-infinite category}

In this subsection we will define the metaplectic semi-infinite category, first at a fixed point $x\in X$, denoted $\SI_{q,x}(G)$, 
and then its Ran version, denoted $\SI_{q,\Ran}(G)$. 

\sssec{}

We define the metaplectic semi-infinite category of the affine Grassmannian, denoted $\SI_{q,x}(G)$ as the full
subcategory in $\Shv_{\CG^G}(\Gr^{\omega^\rho}_{G,x})$ that consists of $\fL(N)^{\omega^\rho}_x$-equivariant 
objects, i.e.,
$$\SI_{q,x}(G):=\left(\Shv_{\CG^G}(\Gr^{\omega^\rho}_{G,x})\right)^{\fL(N)^{\omega^\rho}_x}.$$

The difference between $\Whit_{q,x}(G)$ and $\SI_{q,x}(G)$ is that instead of the non-degenerate character we use the trivial one.

\sssec{}

Much of the discussion pertaining to the definition of $\Whit_{q,x}(G)$ applies to $\SI_{q,x}(G)$. In  particular,
we have the full subcategories 
$$\SI_{q,x}(G)_{=\mu} \subset \SI_{q,x}(G)_{\leq \mu}\subset \SI_{q,x}(G)$$
and the corresponding adjoint functors. 

\medskip

However, instead of \propref{p:Whittaker strata}, we have the following assertion (with the same proof):

\begin{lem}  \label{l:semiinf strata}
The category $\SI_{q,x}(G)_{=\mu}$ is (non-canonically) equivalent to $\Vect$ for
\emph{any} $\mu\in \Lambda$, via the functor of !-fiber at the point $t^\mu\in \Gr^{\omega^\rho}_{G,x}$.
For $\mu=0$ this equivalence is canonical. 
\end{lem}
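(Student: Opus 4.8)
The plan is to reduce the statement to an explicit description of the stratum $S^\mu\subset \Gr^{\omega^\rho}_{G,x}$ together with the restriction of the gerbe $\CG^G$ to it, and then to run the same argument as in the proof of \propref{p:Whittaker strata}, with the crucial difference that the Artin-Schreier twist is absent, so \emph{no} dominance hypothesis is needed. First I would recall that $S^\mu = \fL(N)^{\omega^\rho}_x\cdot t^\mu\cdot \fL^+(G)^{\omega^\rho}_x/\fL^+(G)^{\omega^\rho}_x$ (see \eqref{e:semiinf orbits}) and that, writing $\fL(N)^{\omega^\rho}_x=\underset{k}\bigcup N_k$ as in \secref{sss:Nk}, one has $S^\mu=\underset{k}\bigcup\, N_k\cdot t^\mu$, with each $N_k\cdot t^\mu$ a finite-dimensional affine space. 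Since $\fL(N)^{\omega^\rho}_x$ is ind-pro-unipotent, the restriction $\CG^G|_{S^\mu}$ carries a canonical (up to non-canonical isomorphism, and for $\mu=0$ genuinely canonical, via the trivialization at $t^0$) $\fL(N)^{\omega^\rho}_x$-equivariant trivialization; this is exactly the input used in \secref{sss:1st approx}. Hence $\SI_{q,x}(G)_{=\mu}$ is identified with the category of $\fL(N)^{\omega^\rho}_x$-equivariant objects of the \emph{untwisted} category $\Shv(S^\mu)$.

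Next I would argue that the functor of $!$-fiber at $t^\mu$,
$$
\on{ev}_\mu : \SI_{q,x}(G)_{=\mu}\longrightarrow \Vect,
$$
is conservative and admits a left adjoint whose composition with it is the identity monad. Conservativity is the same argument as in \propref{p:Whittaker strata}: if $\CF\in \left(\Shv(S^\mu)\right)^{\fL(N)^{\omega^\rho}_x}$ has vanishing $!$-fiber at $t^\mu$, then its restriction to each $N_k\cdot t^\mu$ vanishes (by $N_k$-equivariance the fiber at any point of that orbit agrees with the fiber at $t^\mu$), and since $S^\mu=\underset{k}\bigcup N_k\cdot t^\mu$ we get $\CF=0$. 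For the left adjoint, the generator $\sfe\in \Vect$ goes to
$$
\on{Av}_!^{\fL(N)^{\omega^\rho}_x}(\delta_{t^\mu,\Gr})\simeq \underset{k}{\on{colim}}\, \on{Av}_!^{N_k}(\delta_{t^\mu,\Gr}),
$$
and because the character is now trivial, $\on{Av}_!^{N_k}(\delta_{t^\mu,\Gr})$ is simply the (shifted) constant sheaf on the affine space $N_k\cdot t^\mu$ — there is no obstruction coming from non-triviality of $\chi_N$ on $\on{Stab}_{N_k}(t^\mu)$, which is precisely where the dominance condition entered in the Whittaker case. Taking $!$-fibers, each $\on{Av}_!^{N_k}(\delta_{t^\mu,\Gr})$ has $!$-fiber $\sfe$ at $t^\mu$, and the transition maps in the colimit are isomorphisms; hence the monad on $\Vect$ is the identity for \emph{every} $\mu$. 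By Barr--Beck--Lurie this yields the equivalence $\SI_{q,x}(G)_{=\mu}\simeq \Vect$, with the generator corresponding to the object of $\SI_{q,x}(G)_{=\mu}$ whose $!$-fiber at $t^\mu$ is $\sfe$ (this is the semi-infinite analog of $\overset{\circ}W{}^\mu$).

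Finally, for the canonicity at $\mu=0$: here $t^0$ is the unit point, the trivialization of $\CG^G|_{S^0}$ is the canonical one (normalized at $t^0$), and $\on{ev}_0$ sends the distinguished generator — the constant sheaf on $S^0$ placed in the appropriate cohomological degree — to $\sfe$ with no ambiguity, so the equivalence $\SI_{q,x}(G)_{=0}\simeq \Vect$ is canonical. The main obstacle I anticipate is purely bookkeeping: one must check that the limit-of-finite-dimensional-groups formalism of \secref{sss:Nk}--\secref{ss:def Whit} carries over verbatim with the trivial character in place of $\chi_N$, and in particular that the partially-defined left adjoints $\on{Av}_!$ are genuinely defined here (they are, since the relevant averaging is over finite-dimensional unipotent groups acting on finite-dimensional schemes, and the resulting object is a shifted constant sheaf). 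Everything else is a routine transcription of the Whittaker argument with the simplification that $\chi_N\equiv \mathbf{1}$.
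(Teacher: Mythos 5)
Your proposal is correct and matches the paper's intent: the paper explicitly states that \lemref{l:semiinf strata} has "the same proof" as \propref{p:Whittaker strata}, and your write-up is precisely that transcription, with the one genuine observation spelled out clearly — namely, that the trivial character means $\chi_N|_{\on{Stab}_{N_k}(t^\mu)}$ never obstructs the averaging, so the dominance hypothesis evaporates and the monad is the identity for all $\mu$. Your remarks on the gerbe trivialization of $\CG^G|_{S^\mu}$ and the canonicity at $\mu=0$ are also correct and make explicit what the paper leaves implicit.
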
 

\begin{rem}
We note, however, that although the standard objects (i.e., the !-extensions of the generators of each $\SI_{q,x}(G)_{=\mu}$) 
are compact, the corresponding co-standard objects (i.e., the *-extensions) are no longer such. This contrasts with the case of 
$\Whit_{q,x}(G)$, see \corref{c:cost compact in Whit}.
\end{rem} 

\sssec{}

Let $\SI_{q,\Ran}(G)$ be the Ran space version of the semi-infinite category, i.e.,
$$\SI_{q,\Ran}(G):=\left(\Shv_{\CG^G}(\Gr^{\omega^\rho}_{G,\Ran})\right)^{\fL(N)^{\omega^\rho}_{\Ran}}.$$

\sssec{}

Let $\ol{S}^0_{\Ran}\subset \Gr^{\omega^\rho}_{G,\Ran}$ be the corresponding closed subfunctor, see \secref{sss:non-marked S Ran}. 
We let
$$\SI_{q,\Ran}(G)^{\leq 0}\subset \SI_{q,\Ran}(G)$$
be the full subcategory that consists of objects supported on $\ol{S}^0_{\Ran}$.

\ssec{Stratifications}

In this subsection we introduce a stratification of $\ol{S}^0_{\Ran}$ by locally closed subfunctors, according to 
the order of degeneracy of the Drinfeld structure. This stratification will give rise to a stratification of the category 
$\SI_{q,\Ran}(G)^{\leq 0}$.

%\sssec{}  \label{sss:S Conf}
%
%Let 
%$$S^0_{\Ran}\overset{\bj_{\Ran}}\hookrightarrow \ol{S}^0_{\Ran}$$
%be the open subfunctor, where we require that the map \eqref{e:Plucker again again} has no zeroes. 

\sssec{}

Recall the space of colored divisors denoted $\Conf$, see \secref{sss:conf}. 
Let $\Gr^{\omega^\rho}_{G,\Conf}$ be the prestack over $\Conf$ that classifies triples $(D,\CP_G,\alpha)$,
where 

\begin{itemize}

\item $D=\underset{k}\Sigma\, \mu_k\cdot x_k$ is a point of $\Conf$;

\item $\CP_G$ is a $G$-bundle;

\item $\alpha$ is an identification of $\CP_G$ with $\omega^\rho$ away from $\{x_k\}$.

\end{itemize}

\medskip

In a similar way we define the group (ind)-schemes 
$$\fL^+(G)^{\omega^\rho}_{\Conf}\subset \fL(G)^{\omega^\rho}_{\Conf} \text{ and }
\fL^+(N)^{\omega^\rho}_{\Conf}\subset \fL(N)^{\omega^\rho}_{\Conf}$$
over $\Conf$. 

\sssec{}

Let $\ol{S}^{\Conf}_{\Conf}$ be the closed subfunctor of $\Gr^{\omega^\rho}_{G,\Conf}$ consisting of points
$(D,\CP_G,\alpha)$ as above, for which for every $\clambda\in \cLambda^+$, the composite map
\begin{equation} \label{e:S- Ran}
(\omega^{\frac{1}{2}})^{\langle \clambda,2\rho\rangle}
(\underset{k}\Sigma\, -\langle \clambda,\mu_k\rangle \cdot x_k)\to \CV^\clambda_{\CP'_G}\to \CV^\clambda_{\CP_G}
\end{equation}
which is a priori defined on $X-\{x_k\}$, extends to a regular map on all of $X$.

\medskip

Let 
$$S^{\Conf}_{\Conf}\overset{\bj_{\Conf}}\hookrightarrow \ol{S}{}^{\Conf}_{\Conf}$$
be the open subfunctor, where we require the composite map \eqref{e:S- Ran} to have no zeroes on $X$. 

\medskip

Let $p^{\Conf}$ (resp., $\ol{p}^{\Conf}$) denote the projection $S^{\Conf}_{\Conf}\to \Conf$
(resp., $\ol{S}^{\Conf}_{\Conf}\to \Conf$).

\sssec{}

We can also think of 
$$S^{\Conf}_{\Conf}\subset \Gr^{\omega^\rho}_{G,\Conf}$$ as follows: 

\medskip

The embedding $T\to G$ gives rise to a map $\Gr^{\omega^\rho}_{T,\Conf}\to \Gr^{\omega^\rho}_{G,\Conf}$. In addition,
the projection
$$\Gr^{\omega^\rho}_{T,\Conf}\to \Conf$$
has a canonical section. Composing, we obtain a section 
$$\Conf\to \Gr^{\omega^\rho}_{G,\Conf}.$$

\medskip

Then $S^{\Conf}_{\Conf}$ is the orbit of the group $\fL(N)^{\omega^\rho}_{\Conf}$ acting on the above section. 

\sssec{}

Let 
$$(\Conf\times \Ran)^{\subset}\subset \Ran\times \Conf$$
be the ind-closed subfunctor corresponding to the following condition:

\medskip  

An $S$-point of $$(\CI\subset \Hom(S,X);D\in \Hom(S,\Conf))$$ belongs to $(\Conf\times \Ran)^{\subset}$ if and 
only if the support of the divisor $D$ is \emph{set-theoretically} contained in the union of the graphs of the maps $i:S\to X, i\in \CJ$
(cf. \cite[Sect. 1.3.2]{Ga7}).

\medskip

Note that we have a canonical identification
\begin{equation} \label{e:Gr Conf incl}
\Gr^{\omega^\rho}_{G,\Ran}\underset{\Ran}\times (\Conf\times \Ran)^{\subset}\simeq
\Gr^{\omega^\rho}_{G,\Conf}\underset{\Conf}\times (\Conf\times \Ran)^{\subset}.
\end{equation}

\medskip

Let $\on{pr}_\Ran$ denote the projection $(\Conf\times \Ran)^{\subset}\to \Conf$. 

\sssec{}

Denote
$$\ol{S}^{\Conf}_{\Ran}:=(\Conf\times \Ran)^{\subset}\underset{\Conf}\times \ol{S}^{\Conf}_{\Conf}.$$

\medskip

Denote by $\ol{p}^{\Conf}_\Ran$ the projection 
$$\ol{S}^{\Conf}_{\Ran}\to (\Conf\times \Ran)^{\subset}.$$ 

\medskip

Note that the identification \eqref{e:Gr Conf incl} realizes $\ol{S}^{\Conf}_{\Ran}$ as a closed subfunctor in
$$(\Conf\times \Ran)^{\subset}\underset{\Ran}\times \ol{S}^0_{\Ran}.$$

Let $\ol\bi{}^{\Conf}_{\Ran}$ denote the composite map
$$\ol{S}^{\Conf}_\Ran\hookrightarrow (\Conf\times \Ran)^{\subset}\underset{\Ran}\times \ol{S}^0_{\Ran}\to
\ol{S}^0_{\Ran}.$$

Note that the map $\ol\bi{}^{\Conf}_{\Ran}$ is proper. 

\sssec{}

Denote
$$S^{\Conf}_{\Ran}:=(\Conf\times \Ran)^{\subset}\underset{\Conf}\times S^{\Conf}_{\Conf}.$$

Denote by $p^{\Conf}_\Ran$ the projection 
$$S^{\Conf}_{\Ran}\to (\Conf\times \Ran)^{\subset}.$$

Denote by $\bj^{\Conf}_{\Ran}$ the open embedding 
$$S^{\Conf}_{\Ran}\hookrightarrow \ol{S}^{\Conf}_{\Ran}.$$

\sssec{}

For $\lambda\in \Lambda^{\on{neg}}-0$, denote 
$$(\Conf^\lambda\times \Ran)^{\subset}:=(\Conf\times \Ran)^{\subset}\underset{\Conf}\times
\Conf^\lambda.$$

\medskip

Let $\on{pr}^\lambda_\Ran$ denote the restriction of the map $\on{pr}_\Ran$ to $(\Conf^\lambda\times \Ran)^{\subset}$. 
Denote also 
$$\ol{S}^\lambda_\Ran:=(\Conf^\lambda\times \Ran)^{\subset}\underset{(\Conf\times \Ran)^{\subset}}\times \ol{S}^{\Conf}_\Ran$$
and
$$S^\lambda_\Ran:=(\Conf^\lambda\times \Ran)^{\subset}\underset{(\Conf\times \Ran)^{\subset}}\times S^{\Conf}_\Ran.$$

Denote by $\bj^\lambda_{\Ran}$ the resulting map 
$$S^\lambda_\Ran\to \ol{S}^\lambda_\Ran$$
and by $\ol\bi{}^\lambda_{\Ran}$ the corresponding map
$$\ol{S}^\lambda_\Ran \to \ol{S}^{\Conf}_\Ran \overset{\ol\bi{}_{\Ran}}\longrightarrow \ol{S}^0_{\Ran}.$$
Denote
$$\bi^\lambda_\Ran:=\ol\bi{}^\lambda_\Ran\circ \bj^\lambda_{\Ran}.$$

\medskip

Denote by $\ol{p}^\lambda_\Ran$ (resp., $p^\lambda_\Ran$) the restriction of $\ol{p}_\Ran$ (resp., $p_\Ran$) to
$\ol{S}^\lambda_\Ran$ (resp., $S^\lambda_\Ran$).

\sssec{} \label{sss:S Conf}

We extend the above definitions to formally include the case of $\lambda=0$,  in which case we set
$$\Conf^0=\on{pt}.$$

\medskip

We let
$$S^0_{\Ran}\overset{\bj_{\Ran}}\hookrightarrow \ol{S}^0_{\Ran}$$
be the open subfunctor, where we require that the map \eqref{e:Plucker again again} have no zeroes. 

\medskip

We have: 
$$\ol\bi{}^0_\Ran=\on{id},\,\, \bi^0_\Ran=\bj^0_\Ran=\bj_\Ran,\,\,(\Conf^0\times \Ran)^{\subset}=\Ran,$$
$p^0_\Ran$ (resp., $\ol{p}^0_\Ran$) is the map $S^0_\Ran\to \Ran$ (resp., $\ol{S}^0_\Ran\to \Ran$), and  
$\on{pr}^0_\Ran$ is the projection $\Ran\to \on{pt}$. 

\sssec{}

The following results easily from the definitions:

\begin{lem}  \label{l:decomp S-}
The map
$$\bi^\lambda_{\Ran}:S^\lambda_\Ran \to \ol{S}^0_{\Ran}$$
is a locally closed embedding. Every field-valued point of $\ol{S}^0_{\Ran}$ belongs to the image
of exactly one such map.
\end{lem}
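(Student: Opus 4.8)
\textbf{Proof plan for \lemref{l:decomp S-}.}

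The plan is to reduce the statement to the corresponding stratification over the configuration space $\Conf$ and to the well-known geometry of Drinfeld's compactification. First I would recall that $\ol{S}^{\Conf}_{\Conf}$ is precisely the $\fL(N)^{\omega^\rho}_{\Conf}$-orbit closure of the canonical section $\Conf\to\Gr^{\omega^\rho}_{G,\Conf}$, and that the open locus $S^{\Conf}_{\Conf}$ is the free orbit itself, where no zeroes of the Pl\"ucker maps \eqref{e:S- Ran} occur away from the colored divisor $D$. Over a fixed point $D=\underset{k}\Sigma\,\mu_k\cdot x_k$ of $\Conf$, the fiber of $S^{\Conf}_{\Conf}$ is the orbit $\prod_k S^{\mu_k}$ (the relevant $\fL(N)$-orbit in the local affine Grassmannian at each $x_k$), which is a well-defined locally closed subscheme of the fiber of $\Gr^{\omega^\rho}_{G,\Conf}$; this is the factorized version of the statement that $S^\mu=\fL(N)^{\omega^\rho}_x\cdot t^\mu\hookrightarrow\Gr^{\omega^\rho}_{G,x}$ is locally closed. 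Base-changing along $\on{pr}_\Ran\colon(\Conf\times\Ran)^{\subset}\to\Conf$ preserves locally closed embeddings, so $S^\lambda_\Ran=(\Conf^\lambda\times\Ran)^{\subset}\underset{\Conf}\times S^{\Conf}_{\Conf}$ is locally closed inside $(\Conf\times\Ran)^{\subset}\underset{\Ran}\times\ol{S}^0_\Ran$; to conclude that $\bi^\lambda_\Ran$ is a locally closed embedding into $\ol{S}^0_\Ran$ itself I would use that the projection $\ol\bi{}^{\Conf}_\Ran\colon\ol{S}^{\Conf}_\Ran\to\ol{S}^0_\Ran$, while proper, is an isomorphism onto its image when restricted to a single stratum, because on $S^\lambda_\Ran$ the colored divisor $D$ is \emph{recovered} from the point of $\Gr^{\omega^\rho}_{G,\Ran}$ as the divisor of zeroes of the Pl\"ucker sections together with their poles (evaluation on fundamental weights, as in \eqref{e:Ran to config}).

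For the second assertion — that every field-valued point of $\ol{S}^0_\Ran$ lies in the image of exactly one $\bi^\lambda_\Ran$ — I would argue pointwise. Given a $k$-point $(\CI,\CP_G,\alpha)$ of $\ol{S}^0_\Ran$, the Pl\"ucker maps $(\omega^{\frac12})^{\langle\clambda,2\rho\rangle}\to\CV^\clambda_{\CP_G}$ are regular on all of $X$ by definition of $\ol{S}^0_\Ran$; define $D$ to be the colored divisor whose $\clambda$-component is the divisor of zeroes of the $\clambda$-th Pl\"ucker section (these are compatible across $\clambda$ by the Pl\"ucker relations, giving a well-defined $\Lambda^{\on{neg}}$-valued divisor), and let $\lambda\in\Lambda^{\on{neg}}$ be its total degree. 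Set-theoretically the support of $D$ is contained in $\Gamma_\CI$, since away from $\Gamma_\CI$ the trivialization $\alpha$ identifies the section with the (nowhere-vanishing) highest-weight section of $\omega^\rho$; hence $(D,\CI)$ is a point of $(\Conf^\lambda\times\Ran)^{\subset}$, and the original point lifts to $S^\lambda_\Ran$ because by construction the Pl\"ucker maps have no zeroes other than along $D$. Uniqueness is immediate: the divisor $D$, and a fortiori $\lambda$, is determined by the point, so it cannot lie in the image of $\bi^{\lambda'}_\Ran$ for $\lambda'\neq\lambda$, and within a single $\lambda$ the lift to $S^\lambda_\Ran$ is unique because $p^\lambda_\Ran$ together with the inclusion into $\Gr^{\omega^\rho}_{G,\Ran}$ is a monomorphism. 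The case $\lambda=0$ is included via the conventions of \secref{sss:S Conf}, where $S^0_\Ran\hookrightarrow\ol{S}^0_\Ran$ is the locus with no Pl\"ucker zeroes at all.

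The main obstacle I anticipate is not any single step but the bookkeeping needed to make the ``$D$ is recovered from the point'' claim precise in families (i.e.\ over an arbitrary affine test scheme $S$, not just $k$-points) — one must check that the divisor of zeroes of the Pl\"ucker sections forms a flat family of colored divisors, so that the resulting map $\ol{S}^0_\Ran\to\Conf$ (on the appropriate locus) is a genuine morphism of prestacks, and that it is inverse, stratum by stratum, to $\on{pr}_\Ran\circ\ol\bi{}^{\Conf}_\Ran$. This is exactly the kind of statement that underlies \lemref{l:Ran to config} (and its marked-point counterpart), so I would either invoke that lemma or mimic the argument of \cite[Lemma 8.1.4]{Ga4}; since the lemma as stated only concerns field-valued points for the stratification claim, the flatness subtlety can in fact be sidestepped, and the proof really is the short pointwise argument above combined with the locally-closedness of $S^\mu\hookrightarrow\Gr^{\omega^\rho}_{G,x}$ and stability of locally closed embeddings under base change.
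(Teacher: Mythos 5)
The paper does not actually give an argument for this lemma: it prefaces it with ``The following results easily from the definitions,'' so there is no written proof to compare against, and your elaboration is a reasonable fleshing-out of the definitional argument.

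One correction is needed at the end, though. You write that ``since the lemma as stated only concerns field-valued points for the stratification claim, the flatness subtlety can in fact be sidestepped, and the proof really is the short pointwise argument above combined with the locally-closedness of $S^\mu\hookrightarrow\Gr^{\omega^\rho}_{G,x}$ and stability of locally closed embeddings under base change.'' That dismissal is only justified for the \emph{second} sentence of the lemma (the partition of field-valued points). The \emph{first} sentence — that $\bi^\lambda_\Ran$ is a locally closed embedding — is a statement about maps of prestacks, hence must be checked against arbitrary affine test schemes, and for it the ``$D$ is recovered'' step genuinely cannot be sidestepped: base change only gives you that $S^\lambda_\Ran$ is locally closed in $(\Conf^\lambda\times\Ran)^{\subset}\underset{\Ran}\times\ol{S}^0_\Ran$, and the projection from that fiber product to $\ol{S}^0_\Ran$ is not any kind of embedding. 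What saves the day is exactly the point you identify but then set aside: on $S^\lambda_\Ran$ the twisted Pl\"ucker sections are bundle maps, so their vanishing loci are relative Cartier divisors, and hence $D$ is a well-defined flat family recovered functorially from the image in $\ol{S}^0_\Ran$. This makes $\bi^\lambda_\Ran$ a monomorphism. Combined with the properness of $\ol\bi{}^\lambda_\Ran\colon \ol{S}^\lambda_\Ran\to\ol{S}^0_\Ran$ and the (lower-semicontinuity-driven) observation that $S^\lambda_\Ran$ is the preimage under $\ol\bi{}^\lambda_\Ran$ of the open locus in $\ol{S}^0_\Ran$ where the vanishing degree of each Pl\"ucker section is at most $\langle-\lambda,\clambda\rangle$, one gets a proper monomorphism $S^\lambda_\Ran\to U$ into an open $U\subset\ol{S}^0_\Ran$, which is therefore a closed embedding, and hence $\bi^\lambda_\Ran$ is a locally closed embedding. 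Your argument for the second assertion (every field-valued point lies in exactly one image) is correct as written.
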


\ssec{Stratification of the category}

The strata $S^\lambda_\Ran$ of $\ol{S}^0_{\Ran}$ give rise to a \emph{recollement} pattern on $\SI_{q,\Ran}(G)$. 

\sssec{}

Let $\SI_{q,\Ran}(G)^{\leq \lambda}$ denote the full subcategory of $\Shv_{\CG^G}(\ol{S}^\lambda_\Ran)$
given by the condition of equivariance with respect to the pullback of $\fL(N)^{\omega^\rho}_{\Ran}$ to 
$(\Conf^\lambda\times \Ran)^{\subset}$. 

\medskip

Let $\SI_{q,\Ran}(G)^{=\lambda}$
be the corresponding full subcategory of 
$\Shv_{\CG^G}(S^\lambda_\Ran)$.

\sssec{}

The maps $\ol\bi{}^\lambda_{\Ran}$ and $\bj^\lambda_{\Ran}$ define pairs of mutually adjoint functors
$$(\ol\bi{}^\lambda_{\Ran})_!=(\ol\bi{}^\lambda_{\Ran})_*:
\SI_{q,\Ran}(G)^{\leq \lambda}\rightleftarrows \SI_{q,\Ran}(G)^{\leq 0}:(\ol\bi{}^\lambda_{\Ran})^!;$$
$$(\bj^\lambda_{\Ran})^*=(\bj^\lambda_{\Ran})^!:\SI_{q,\Ran}(G)^{\leq \lambda} 
\rightleftarrows \SI_{q,\Ran}(G)^{=\lambda}:(\bj^\lambda_{\Ran})_*.$$

\sssec{}

In addition, as in \cite[Corollary 1.4.5]{Ga7}, one shows that the partially defined left adjoint $(\bi{}^\lambda_{\Ran})^*$ of 
$$(\bi{}^\lambda_{\Ran})_*:=(\ol\bi{}^\lambda_{\Ran})_*\circ (\bj{}^\lambda_{\Ran})_*$$ is defined on 
$$\SI_{q,\Ran}(G)^{\leq 0}\subset \Shv_{\CG^G}(\ol{S}^0_{\Ran}),$$ 
giving rise to an adjoint pair 
$$(\ol\bi{}^\lambda_{\Ran})^*:\SI_{q,\Ran}(G)^{\leq 0} \rightleftarrows \SI_{q,\Ran}(G)^{\leq \lambda}:(\bi{}^\lambda_{\Ran})_*.$$

\sssec{}
Also, the partially defined left adjoint $(\bi{}^\lambda_{\Ran})_!$ of $(\bi{}^\lambda_{\Ran})^!$ is defined on  
$$\SI_{q,\Ran}(G)^{=\lambda}\subset \Shv_{\CG^G}(S^\lambda_{\Ran}),$$
giving rise to an adjoint pair 
$$(\bi^\lambda_{\Ran})_!:\SI_{q,\Ran}(G)^{=\lambda}\rightleftarrows \SI_{q,\Ran}(G)^{\leq 0}:(\bi{}^\lambda_{\Ran})^!.$$

\ssec{Description of the category on a stratum}

In this subsection we will describe explicitly the category $\SI_{q,\Ran}(G)^{=\lambda}$. It will turn out to be equivalent 
to the category of (gerbe-twisted) sheaves on $(\Conf^\lambda\times \Ran)^{\subset}$. 

\sssec{} 

We first observe:  

\begin{prop}  \label{p:gerbe on stratum}
The pullback of the gerbe $\CG^G$ along the map
\begin{equation} \label{e:1st map to GrG}
S^{\Conf}_\Ran\to \ol{S}^0_{\Ran}\to \Gr^{\omega^\rho}_{G,\Ran}
\end{equation} 
identifies canonically with the pullback of the gerbe $\CG^\Lambda$ on $\Conf$ 
of \secref{sss:gerbe Lambda} along the map 
\begin{equation} \label{e:2nd map to Conf}
S^{\Conf}_\Ran \overset{p_\Ran}\longrightarrow
(\Conf\times \Ran)^{\subset}  \overset{\on{pr}_\Ran}\longrightarrow \Conf.
\end{equation} 
\end{prop} 

The proof will be essentially a diagram chase, modulo the additional structure on the 
gerbe $\CG^G$ specified in \secref{sss:unital gerbe}. 

\begin{proof} 

By construction, the map \eqref{e:1st map to GrG}
factors as
$$S^{\Conf}_\Ran\to \Gr^{\omega^\rho}_{B,\Ran} \to \Gr^{\omega^\rho}_{G,\Ran}.$$

By definition, the correspondence between $\CG^G$ and $\CG^T$ is such that their
pullbacks to $\Gr^{\omega^\rho}_{B,\Ran}$ along the maps
$$\Gr^{\omega^\rho}_{G,\Ran} \leftarrow \Gr^{\omega^\rho}_{B,\Ran}\to \Gr^{\omega^\rho}_{T,\Ran}$$
are identified, see \secref{sss:from G to T gerbes}. 

\medskip

Hence, it suffices to show that pullback of $\CG^T$ along
\begin{equation} \label{e:1st map to GrT}
S^{\Conf}_\Ran\to \Gr^{\omega^\rho}_{B,\Ran} \to \Gr^{\omega^\rho}_{T,\Ran}
\end{equation} 
identifies with the pullback of $\CG^\Lambda$ along the map \eqref{e:2nd map to Conf}.

\medskip

In order to do so, we can replace $S^{\Conf}_\Ran$ by
$$(\Gr^{\omega^\rho}_{T,\Ran})^{\on{neg}}\underset{\Conf}\times S^{\Conf}_\Ran,$$
(see \secref{sss:neg part Gr} for the notation), which identifies with
$$S^{\Conf}_{\Conf}\underset{\Conf}\times 
(\Gr^{\omega^\rho}_{T,\Ran})^{\on{neg}}\underset{\Gr^{\omega^\rho}_{T,\Ran}}\times \Gr^{\omega^\rho}_{T,(\Ran\times \Ran)^{\subset}},$$
where the map $\Gr^{\omega^\rho}_{T,(\Ran\times \Ran)^{\subset}}\to \Gr^{\omega^\rho}_{T,\Ran}$ is $\varphi_{\on{small}}$,
see \secref{sss:unitality Gr} for the notation.

\medskip

With respect to this identification, the composition
$$(\Gr^{\omega^\rho}_{T,\Ran})^{\on{neg}}\underset{\Conf}\times S^{\Conf}_\Ran \to
S^{\Conf}_\Ran  \overset{\text{\eqref{e:2nd map to Conf}}}\longrightarrow \Conf$$
identifies with
$$S^{\Conf}_{\Conf}\underset{\Conf}\times 
(\Gr^{\omega^\rho}_{T,\Ran})^{\on{neg}}\underset{\Gr^{\omega^\rho}_{T,\Ran}}\times \Gr^{\omega^\rho}_{T,(\Ran\times \Ran)^{\subset}}
\to (\Gr^{\omega^\rho}_{T,\Ran})^{\on{neg}}\to \Conf.$$

Hence, the pullback of $\CG^\Lambda$ along this map identifies with the pullback of $\CG^T$ along the map
$$S^{\Conf}_{\Conf}\underset{\Conf}\times 
(\Gr^{\omega^\rho}_{T,\Ran})^{\on{neg}}\underset{\Gr^{\omega^\rho}_{T,\Ran}}\times \Gr^{\omega^\rho}_{T,(\Ran\times \Ran)^{\subset}}
\to \Gr^{\omega^\rho}_{T,(\Ran\times \Ran)^{\subset}} \overset{\varphi_{\on{small}}}\longrightarrow \Gr^{\omega^\rho}_{T,\Ran}.$$

The composition
$$(\Gr^{\omega^\rho}_{T,\Ran})^{\on{neg}}\underset{\Conf}\times S^{\Conf}_\Ran \to
S^{\Conf}_\Ran  \overset{\text{\eqref{e:1st map to GrT}}}\longrightarrow \Gr^{\omega^\rho}_{T,\Ran}$$
identifies with 
 $$S^{\Conf}_{\Conf}\underset{\Conf}\times 
(\Gr^{\omega^\rho}_{T,\Ran})^{\on{neg}}\underset{\Gr^{\omega^\rho}_{T,\Ran}}\times \Gr^{\omega^\rho}_{T,(\Ran\times \Ran)^{\subset}}
\to \Gr^{\omega^\rho}_{T,(\Ran\times \Ran)^{\subset}} \overset{\varphi_{\on{big}}}\longrightarrow \Gr^{\omega^\rho}_{T,\Ran}.$$

Hence, the required isomorphism follows from \corref{c:gerbe unital}.

\end{proof}

\sssec{}

From \propref{p:gerbe on stratum}, we obtain that we have a canonically defined pullback functor
$$p_\Ran^!:\Shv_{\CG^\Lambda}((\Conf\times \Ran)^{\subset})\to 
\Shv_{\CG^G}(S^{\Conf}_\Ran),$$
and for every individual $\lambda\in \Lambda^{\on{neg}}$, a functor 
$$(p^\lambda_\Ran)^!:\Shv_{\CG^\Lambda}((\Conf^\lambda\times \Ran)^{\subset})\to 
\Shv_{\CG^G}(S^\lambda_\Ran),$$

As in \lemref{l:semiinf strata} (see also \cite[Lemma 1.4.8]{Ga7}, we have:

\begin{lem} \label{l:semiinf strata Ran}
For every $\lambda\in \Lambda^{\on{neg}}$, 
functor $(p^\lambda_\Ran)^!$ induces an equivalence
$$\Shv_{\CG^\Lambda}((\Conf^\lambda\times \Ran)^{\subset})\to \SI_{q,\Ran}(G)^{=\lambda}.$$
\end{lem}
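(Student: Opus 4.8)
The plan is to follow the pattern of \lemref{l:semiinf strata} (equivalently, of \propref{p:Whittaker strata}) in its relative, gerbe-twisted form over $(\Conf^\lambda\times\Ran)^{\subset}$. Note first that $S^\lambda_\Ran$ carries a canonical section $s^\lambda_\Ran\colon (\Conf^\lambda\times\Ran)^{\subset}\to S^\lambda_\Ran$, obtained by base-changing along $(\Conf^\lambda\times\Ran)^{\subset}\to\Conf^\lambda$ the section $\Conf^\lambda\to S^\lambda_\Conf$ coming from the tautological point of $\Gr^{\omega^\rho}_{T,\Conf}$; by construction $p^\lambda_\Ran\circ s^\lambda_\Ran=\on{id}$. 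I would first reduce to the untwisted situation: by \propref{p:gerbe on stratum} the pullback of $\CG^G$ to $S^{\Conf}_\Ran$ is canonically identified with the pullback of $\CG^\Lambda$ along $S^{\Conf}_\Ran\to (\Conf\times\Ran)^{\subset}\to\Conf$, and since $\fL(N)^{\omega^\rho}_\Ran$ is ind-pro-unipotent, this identification is automatically $\fL(N)^{\omega^\rho}_\Ran$-equivariant (both equivariant structures being unique once normalized at the unit). Consequently the whole argument can be run with $\CG^\Lambda$ pulled back from the base throughout; equivalently, one may treat all gerbes in sight as trivial.

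Next I would prove that $(s^\lambda_\Ran)^!$ restricts to an equivalence
\[
(s^\lambda_\Ran)^!\colon\ \SI_{q,\Ran}(G)^{=\lambda}\ \longrightarrow\ \Shv_{\CG^\Lambda}((\Conf^\lambda\times\Ran)^{\subset}).
\]
This is the exact relative analogue of \lemref{l:semiinf strata}, proved monadically. Writing $\fL(N)^{\omega^\rho}_\Ran=\underset{k}{\on{colim}}\,N_k$ with each $N_k$ a pro-unipotent group-scheme over $\Ran$, the $N_k$-orbits of the section $s^\lambda_\Ran$ exhaust $S^\lambda_\Ran$, which gives conservativity of $(s^\lambda_\Ran)^!$ on the $\fL(N)^{\omega^\rho}_\Ran$-equivariant subcategory. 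The left adjoint of $(s^\lambda_\Ran)^!|_{\SI_{q,\Ran}(G)^{=\lambda}}$ is $\CF\mapsto \underset{k}{\on{colim}}\,\on{Av}_!^{N_k}\bigl((s^\lambda_\Ran)_!(\CF)\bigr)$, and the induced monad on $\Shv_{\CG^\Lambda}((\Conf^\lambda\times\Ran)^{\subset})$ is the identity: the stabilizer of the section inside $\fL(N)^{\omega^\rho}_\Ran$ is (ind-)pro-unipotent and — in contrast to the Whittaker computation in \propref{p:Whittaker strata} — there is no character to impose, so $(s^\lambda_\Ran)^!\circ \on{Av}_!^{N_k}\circ (s^\lambda_\Ran)_!(\CF)\simeq \CF$ canonically and compatibly in $k$. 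Hence $(s^\lambda_\Ran)^!$ is an equivalence. Finally, $(p^\lambda_\Ran)^!$ lands in $\SI_{q,\Ran}(G)^{=\lambda}$ (it is pullback along the $\fL(N)^{\omega^\rho}_\Ran$-equivariant map $p^\lambda_\Ran$, the group acting trivially on the base, and the gerbe is matched by \propref{p:gerbe on stratum}), and $(s^\lambda_\Ran)^!\circ (p^\lambda_\Ran)^!\simeq (p^\lambda_\Ran\circ s^\lambda_\Ran)^!=\on{id}$; since $(s^\lambda_\Ran)^!$ is an equivalence, $(p^\lambda_\Ran)^!$ is its inverse, which is the assertion.

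The \emph{main obstacle} is purely technical: making precise the ind-pro-unipotent presentations and the associated base-change and finiteness statements over $(\Conf^\lambda\times\Ran)^{\subset}$ — choosing the exhausting family $N_k$ so that the $N_k$-orbits of $s^\lambda_\Ran$ are well-behaved (ind-)schemes with unipotent stabilizers, and checking that $!$-averaging is compatible with the transition maps (so that the colimit over $k$ computing the monad is the identity). All of this is carried out in the non-twisted setting in \cite[Sect. 1.4]{Ga7} (see in particular the proof of \cite[Lemma 1.4.8]{Ga7}); the reduction of the first paragraph shows that the gerbe $\CG^G$ adds nothing new, the only inputs being \propref{p:gerbe on stratum} and \corref{c:gerbe unital}. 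I therefore expect the write-up to consist of transcribing that argument and inserting the twist at the single point where \propref{p:gerbe on stratum} applies.
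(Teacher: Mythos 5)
Your proposal is correct and follows essentially the same approach as the paper, which simply cites \lemref{l:semiinf strata} and \cite[Lemma 1.4.8]{Ga7} without spelling out a proof: reduce to the untwisted case via \propref{p:gerbe on stratum}, then run the monadic argument of \propref{p:Whittaker strata} (orbits of the tautological section exhaust $S^\lambda_\Ran$, stabilizers are pro-unipotent, no character to impose). The one place you are more explicit than the paper is in noting that the gerbe identification of \propref{p:gerbe on stratum} is automatically $\fL(N)^{\omega^\rho}_\Ran$-equivariant by pro-unipotence, which is a correct and useful observation.
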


\ssec{The unital subcategory}

As we just saw in \lemref{l:semiinf strata Ran}, the category $\SI_{q,\Ran}(G)^{=\lambda}$ is equivalent to
that of (gerbe-twisted) sheaves on $(\Conf^\lambda\times \Ran)^{\subset}$. This category is too large for
our needs: it contains the ``junk" directions that have to do with $\Ran$, and we would like to cut those down.

\medskip

A device to do so is the \emph{unitality structure}.

\sssec{}

Let us recall the setting of \secref{ss:unital}. Note that according to \lemref{l:UHC unital}, the pullback functor
$$\varphi_{\on{small}}:\Shv_{\CG^G}(\Gr_{G,\Ran})\to \Shv_{\CG^G}(\Gr^{\omega^\rho}_{G,(\Ran\times \Ran)^{\subset}})$$
is fully faithful.

\medskip

We define the category $\Shv_{\CG^G}(\Gr_{G,\Ran})_{\on{untl}}$ to consist of objects
$\CF\in \Shv_{\CG^G}(\Gr^{\omega^\rho}_{G,\Ran})$ equipped with an isomorphism
$$\varphi_{\on{small}}^!(\CF)\simeq \varphi_{\on{big}}^!(\CF)$$
and an identification of the composite map
$$\CF\simeq \Delta_{\Ran}!\circ \varphi_{\on{small}}^!(\CF)\simeq \Delta_{\Ran}!\circ \varphi_{\on{big}}^!(\CF)\simeq \CF$$
with the identity map on $\CF$. 

\medskip

Note that due to the fully faithfulness result quoted above, the forgetful functor
$$\Shv_{\CG^G}(\Gr^{\omega^\rho}_{G,\Ran})_{\on{untl}}\to \Shv_{\CG^G}(\Gr^{\omega^\rho}_{G,\Ran})$$
is fully faithful.

\sssec{}

Define
$$\SI_{q,\Ran}(G)_{\on{untl}}:=\SI_{q,\Ran}(G)\cap \Shv_{\CG^G}(\Gr^{\omega^\rho}_{G,\Ran})_{\on{untl}}\subset \Shv_{\CG^G}(\Gr_{G,\Ran}).$$

\sssec{}

Along with $\Gr^{\omega^\rho}_{G,(\Ran\times \Ran)^{\subset}}$, we can consider the prestacks $$\ol{S}^0_{(\Ran\times \Ran)^{\subset}},\,\, 
\ol{S}^{\Conf}_{(\Ran\times \Ran)^{\subset}},\,\, S^{\Conf}_{(\Ran\times \Ran)^{\subset}},$$ etc. 

\medskip

Proceeding as above, we define the full subcategories
$$\Shv_{\CG^G}(\ol{S}^0_{(\Ran\times \Ran)^{\subset}})_{\on{untl}}\subset 
\Shv_{\CG^G}(\ol{S}^0_{(\Ran\times \Ran)^{\subset}}),$$
$$\Shv_{\CG^G}(\ol{S}^\lambda_{(\Ran\times \Ran)^{\subset}})_{\on{untl}}\subset 
\Shv_{\CG^G}(\ol{S}^\lambda_{(\Ran\times \Ran)^{\subset}})$$
and $$\Shv_{\CG^G}(S^\lambda_{(\Ran\times \Ran)^{\subset}})_{\on{untl}}\subset 
\Shv_{\CG^G}(S^\lambda_{(\Ran\times \Ran)^{\subset}}),$$
as well as
$$\SI_{q,\Ran}(G)^{\leq 0}_{\on{untl}}\subset \SI_{q,\Ran}(G)^{\leq 0},$$
$$\SI_{q,\Ran}(G)_{\on{untl}}^{\leq \lambda}\subset \SI_{q,\Ran}(G)^{\leq \lambda},$$
$$\SI_{q,\Ran}(G)_{\on{untl}}^{=\lambda}\subset \SI_{q,\Ran}(G)^{=\lambda}.$$

\medskip

It is clear that the functors $(\ol\bi{}^\lambda_\Ran)^!$, $(\bj^\lambda_\Ran)^!$ (and hence also $(\bi{}^\lambda_\Ran)^!$)
as well as $(\ol\bi{}^\lambda_\Ran)_*$, $(\bj^\lambda_\Ran)_*$ (and hence also $(\bi{}^\lambda_\Ran)_*$)
preserve the corresponding unital subcategories. 

\medskip

In addition, as in \cite[Proposition 4.2.2]{Ga7}, we have:

\begin{prop}
The functors $(\bi{}^\lambda_\Ran)_!$ and $(\bi^\lambda_\Ran)^*$
also preserve the unital subcategories.
\end{prop}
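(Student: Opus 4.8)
The statement to prove is that the partially defined functors $(\bi^\lambda_\Ran)_!$ (left adjoint to $(\bi^\lambda_\Ran)^!$) and $(\bi^\lambda_\Ran)^*$ (left adjoint to $(\bi^\lambda_\Ran)_*$) preserve the unital subcategories. Concretely: if $\CF \in \SI_{q,\Ran}(G)^{=\lambda}_{\on{untl}}$ then $(\bi^\lambda_\Ran)_!(\CF) \in \SI_{q,\Ran}(G)^{\leq 0}_{\on{untl}}$, and if $\CG \in \SI_{q,\Ran}(G)^{\leq 0}_{\on{untl}}$ then $(\bi^\lambda_\Ran)^*(\CG) \in \SI_{q,\Ran}(G)^{\leq \lambda}_{\on{untl}}$. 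The plan is to follow verbatim the argument of \cite[Proposition 4.2.2]{Ga7}, which establishes exactly this in the non-twisted setting; the only thing to check is that the presence of the gerbe $\CG^G$ (together with the structure on it from \secref{sss:unital gerbe}, in particular \corref{c:gerbe unital}) does not disrupt the base-change manipulations.

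First I would set up the relevant Cartesian squares. The unital condition on an object $\CF \in \Shv_{\CG^G}(\ol{S}^0_\Ran)$ is an isomorphism $\varphi_{\on{small}}^!(\CF) \simeq \varphi_{\on{big}}^!(\CF)$ (compatible with the diagonal restriction being the identity), where $\varphi_{\on{small}}, \varphi_{\on{big}}$ are the two maps $\ol{S}^0_{(\Ran\times\Ran)^\subset} \to \ol{S}^0_\Ran$; here we use \corref{c:gerbe unital} to make sense of the comparison of $\CG^G$-twisted sheaves under the two pullbacks. The key geometric input is that the formation of the strata $S^\lambda_\Ran$ and their closures $\ol{S}^\lambda_\Ran$ is compatible with the passage to $(\Ran\times\Ran)^\subset$, i.e. there are Cartesian squares relating $\bi^\lambda_\Ran$ (over $\Ran$) and $\bi^\lambda_{(\Ran\times\Ran)^\subset}$ (over $(\Ran\times\Ran)^\subset$) via both $\varphi_{\on{small}}$ and $\varphi_{\on{big}}$. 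Granting these squares, the functors $(\bi^\lambda_\Ran)_!$ and $(\bi^\lambda_\Ran)^*$ commute with $\varphi_{\on{small}}^!$ and $\varphi_{\on{big}}^!$ by base change (using that $\varphi_{\on{small}}$ is étale-ish — more precisely the relevant ind-schematic/pro-smooth properties that make $!$-pullback commute with the partially-defined left adjoints in question, exactly as in \cite{Ga7}). Then the unital isomorphism for $\CF$ transports to a unital isomorphism for $(\bi^\lambda_\Ran)_!(\CF)$, and the compatibility with the diagonal is inherited from that of $\CF$ because $\Delta_\Ran$ also fits into the compatible squares. The same argument, with $\varphi_{\on{big}}$, handles $(\bi^\lambda_\Ran)^*$.

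The main obstacle I anticipate is not the formal base-change bookkeeping but verifying that the partially defined functors $(\bi^\lambda_\Ran)_!$ and $(\bi^\lambda_\Ran)^*$ are actually \emph{defined} on unital objects and that their definition is compatible with $\varphi_{\on{small}}^!$ / $\varphi_{\on{big}}^!$ — this is where \cite[Corollary 1.4.5]{Ga7} and \cite[Lemma 1.4.8]{Ga7} (cited above in the twisted form) enter, and where one must be careful that the gerbe-twisted versions of these cleanness/exactness statements still hold. Since \corref{c:gerbe unital} provides a canonical identification $\varphi_{\on{big}}^*(\CG^G) \simeq \varphi_{\on{small}}^*(\CG^G)$ trivialized along the diagonal, and since \propref{p:gerbe on stratum} already identifies $\CG^G|_{S^\Conf_\Ran}$ with a pullback of $\CG^\Lambda$ from $\Conf$ (which is insensitive to the $\Ran$-directions along which unitality operates), all these twisted statements reduce to their non-twisted counterparts. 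So the proof is genuinely a routine transcription: I would state the Cartesian squares, invoke base change, invoke the twisted versions of the cleanness lemmas, and conclude. I would omit the detailed diagram chase, referring the reader to \cite[Proposition 4.2.2]{Ga7} for the pattern, and only flag the one place — the twisted cleanness input — where the gerbe requires a remark.
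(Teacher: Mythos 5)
Your proposal is correct and matches the paper's approach: the paper itself gives no independent argument but simply asserts the statement ``as in [Ga7, Proposition 4.2.2],'' which is precisely the transcription you describe, with \corref{c:gerbe unital} supplying the needed compatibility of $\CG^G$ under $\varphi_{\on{small}}^!$ and $\varphi_{\on{big}}^!$ so that the gerbe does not disrupt the base-change and cleanness manipulations. Your reconstruction of the underlying Cartesian squares and the role of the partially defined left adjoints is a faithful expansion of what the cited proof does.
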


\sssec{}

By a similar token we define the full subcategory
$$\Shv_{\CG^\Lambda}((\Conf\times \Ran)^{\subset})_{\on{untl}}\subset
\Shv_{\CG^\Lambda}((\Conf\times \Ran)^{\subset}).$$

\medskip

It follows from \lemref{l:semiinf strata Ran} that the functor $(p^\lambda_\Ran)^!$ induces an equivalence
\begin{equation} \label{e:pullback to stratum unital}
\Shv_{\CG^\Lambda}((\Conf^\lambda\times \Ran)^{\subset})_{\on{untl}}\to \SI_{q,\Ran}(G)^{=\lambda}_{\on{untl}}.
\end{equation} 

\sssec{}

The following is proved in \cite[Proposition 4.2.7]{Ga7}:

\begin{prop} \label{p:config contr}
Pullback with respect to
$$\on{pr}^\lambda_\Ran:(\Conf^\lambda\times \Ran)^{\subset}\to \Conf^\lambda$$
defines an equivalencce
$$\Shv_{\CG^\Lambda}(\Conf^\lambda)\to \Shv_{\CG^\Lambda}((\Conf^\lambda\times \Ran)^{\subset})_{\on{untl}}.$$
\end{prop}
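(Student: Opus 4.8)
The plan is to imitate the standard proof of homological contractibility of the Ran space, now relative to $\Conf^\lambda$, with the unitality constraint doing the bookkeeping; this is precisely what is carried out in \cite[Proposition 4.2.7]{Ga7}, and I would transcribe that argument, keeping track of the $\CG^\Lambda$-twist. Write $\on{pr}:=\on{pr}^\lambda_\Ran$. The first observation is that the pullback functor $\on{pr}^!$ automatically takes values in the unital subcategory: the two maps $\varphi_{\on{small}}$ and $\varphi_{\on{big}}$ of the unital structure on $(\Conf^\lambda\times \Ran)^{\subset}$ (enlarging the finite set $\CI$ while leaving the colored divisor untouched) both cover the identity map of $\Conf^\lambda$, so for $\CF\in \Shv_{\CG^\Lambda}(\Conf^\lambda)$ the object $\on{pr}^!\CF$ carries a tautological identification $\varphi_{\on{small}}^!\on{pr}^!\CF\simeq \varphi_{\on{big}}^!\on{pr}^!\CF$ restricting to the identity along $\Delta_\Ran$. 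The $\CG^\Lambda$-twist causes no trouble here: by \lemref{l:UHC unital}(a) and \corref{c:gerbe unital} the pullbacks of $\CG^\Lambda$ along $\varphi_{\on{small}}$ and $\varphi_{\on{big}}$ are canonically identified, so all the functors in sight are defined for twisted sheaves. Thus $\on{pr}^!$ does define a functor $\Shv_{\CG^\Lambda}(\Conf^\lambda)\to \Shv_{\CG^\Lambda}((\Conf^\lambda\times \Ran)^{\subset})_{\on{untl}}$, and it remains to see this functor is an equivalence.

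Full faithfulness of $\on{pr}^!$ on all of $\Shv_{\CG^\Lambda}(\Conf^\lambda)$ would follow from the homological contractibility of the fibres of $\on{pr}$: the fibre over a geometric point $D\in \Conf^\lambda$ is the space of finite subsets of $X$ containing $\supp(D)$, i.e. a ``Ran space of $X$ relative to $\supp(D)$'', which has the same (trivial) cohomology as a point; a projection-formula computation then identifies the unit $\on{Id}\to \on{pr}_!\circ \on{pr}^!$ with an isomorphism. It then remains to check that the essential image of $\on{pr}^!$ is exactly the unital subcategory, i.e. that every unital object is pulled back from $\Conf^\lambda$. Here one uses the unital structure more seriously: the ``union with $\supp(D)$'' operation — enlarging $\CI$ to $\CI\cup\{\text{points of }\supp(D)\}$, and iterating — exhibits a contracting homotopy onto the sub-prestack of pairs $(D,\CI)$ with $\Gamma_\CI=\supp(D)$, which maps isomorphically to $\Conf^\lambda$ in the topology generated by finite surjective maps; a unital object is therefore determined by, and freely generated from, its restriction to this bottom locus, which is the same as the datum of an object of $\Shv_{\CG^\Lambda}(\Conf^\lambda)$.

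The main obstacle is the familiar one for contractibility statements over the Ran space: $\on{pr}^\lambda_\Ran$ is only \emph{pseudo-proper} — its fibres are ind-schemes, not of finite type — so one cannot simply invoke properness and base change for $\on{pr}_!$ or $\on{pr}_*$; instead one must present $(\Conf^\lambda\times \Ran)^{\subset}$ as a suitable colimit over $(\on{fSet}^{\on{surj}})^{\on{op}}$ (as in the definition of factorization spaces) and verify that the unital structure makes the resulting limit of sheaf-categories collapse onto $\Shv_{\CG^\Lambda}(\Conf^\lambda)$, doing all of this compatibly with $\CG^\Lambda$. Since this is exactly the content of \cite[Sect. 4.2]{Ga7} with $\Conf^\lambda$ in place of $\on{pt}$, the proof reduces to that reference, the gerbe twist being harmless by \lemref{l:UHC unital}.
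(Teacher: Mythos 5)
Your proposal is correct and takes essentially the same route as the paper, which itself simply cites \cite[Proposition 4.2.7]{Ga7} for this statement; you defer to the same reference and your sketch (contractibility of the Ran-type fibres of $\on{pr}^\lambda_\Ran$ for full faithfulness, the unital contraction onto the bottom locus for essential surjectivity, and \lemref{l:UHC unital} to absorb the gerbe twist) is the right reading of that argument. The only small wobble is terminological: what you want to be an isomorphism for full faithfulness is the \emph{counit} $\on{pr}_!\circ\on{pr}^!\to\on{Id}$ of the adjunction $(\on{pr}_!,\on{pr}^!)$, not a unit.
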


\sssec{}

Combining, we obtain the following explicit description of the category $\SI_{q,\Ran}(G)^{=\lambda}_{\on{untl}}$:

\begin{cor} \label{c:unital semiinf on stratum}
For every $\lambda\in \Lambda^{\on{neg}}$, pullback with respect to $\on{pr}^\lambda_\Ran\circ p^\lambda_\Ran$ defines an equivalence
$$\Shv_{\CG^\Lambda}(\Conf^\lambda)\to \SI_{q,\Ran}(G)^{=\lambda}_{\on{untl}}.$$
\end{cor}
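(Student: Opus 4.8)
\textbf{Proof plan for \corref{c:unital semiinf on stratum}.}
The strategy is simply to compose the two equivalences that have just been established. First I would invoke \eqref{e:pullback to stratum unital}, which says that $(p^\lambda_\Ran)^!$ restricts to an equivalence
$$\Shv_{\CG^\Lambda}((\Conf^\lambda\times \Ran)^{\subset})_{\on{untl}}\to \SI_{q,\Ran}(G)^{=\lambda}_{\on{untl}};$$
this in turn rests on \lemref{l:semiinf strata Ran} together with the compatibility of $(p^\lambda_\Ran)^!$ with the unital structures (so that it carries the unital subcategory on the source onto the unital subcategory on the target, and is essentially surjective there). Then I would invoke \propref{p:config contr}, which says that
$$(\on{pr}^\lambda_\Ran)^!:\Shv_{\CG^\Lambda}(\Conf^\lambda)\to \Shv_{\CG^\Lambda}((\Conf^\lambda\times \Ran)^{\subset})_{\on{untl}}$$
is an equivalence. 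Composing, $(\on{pr}^\lambda_\Ran\circ p^\lambda_\Ran)^!\simeq (p^\lambda_\Ran)^!\circ (\on{pr}^\lambda_\Ran)^!$ is an equivalence $\Shv_{\CG^\Lambda}(\Conf^\lambda)\to \SI_{q,\Ran}(G)^{=\lambda}_{\on{untl}}$, which is exactly the assertion.

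The only genuine point requiring a word of care is that the pullback gerbe along $\on{pr}^\lambda_\Ran\circ p^\lambda_\Ran$ is indeed (canonically) $\CG^\Lambda$, so that the composite functor is literally $(\on{pr}^\lambda_\Ran\circ p^\lambda_\Ran)^!$ between the twisted categories named in the statement; but this is precisely what \propref{p:gerbe on stratum} provides (restricted to the component over $\Conf^\lambda$), so the identifications of gerbes used in \eqref{e:pullback to stratum unital} and in \propref{p:config contr} glue up on the nose. One also checks that the functoriality of $!$-pullback gives the canonical isomorphism $(\on{pr}^\lambda_\Ran\circ p^\lambda_\Ran)^!\simeq (p^\lambda_\Ran)^!\circ (\on{pr}^\lambda_\Ran)^!$ compatibly with these gerbe identifications, which is routine.

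The main (and only) potential obstacle is bookkeeping rather than mathematics: making sure that the unitality conditions match across the two equivalences — i.e. that $(p^\lambda_\Ran)^!$ sends $\Shv_{\CG^\Lambda}((\Conf^\lambda\times\Ran)^{\subset})_{\on{untl}}$ to $\SI_{q,\Ran}(G)^{=\lambda}_{\on{untl}}$ and not merely to $\SI_{q,\Ran}(G)^{=\lambda}$. This is already built into the way \eqref{e:pullback to stratum unital} was stated, so in the final write-up the corollary is a one-line composition; no further input is needed.
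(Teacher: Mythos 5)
Your proposal is correct and is exactly the paper's argument: the corollary is stated immediately after \propref{p:config contr} with the phrase ``Combining, we obtain,'' i.e.\ it is the composition of the equivalence \eqref{e:pullback to stratum unital} (which itself follows from \lemref{l:semiinf strata Ran}) with the equivalence of \propref{p:config contr}, with \propref{p:gerbe on stratum} supplying the gerbe identification. Nothing further is needed.
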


\sssec{Example}

For $\lambda=0$ the above corollary says that the functor
$$\sfe\mapsto \omega_{S^0_{\Ran}}$$
defines an equivalence
$$\Vect\to \SI_{q,\Ran}(G)^{=0}_{\on{untl}}.$$

\section{The metaplectic semi-infinite IC sheaf}  \label{s:ICs}

In this section we finally construct the main object of study in this Part: the metaplectic semi-infinite IC sheaf $\ICs_{q,\Ran}$. 

\ssec{The t-structure on the semi-infinite category}  \label{ss:t-str semiinf}

In this subsection we will introduce a t-structure on $\SI_{q,\Ran}(G)^{\leq 0}_{\on{untl}}$, and as a result
we will define the metaplectic semi-infinite IC sheaf $\ICs_{q,\Ran}$. 

\sssec{}  \label{sss:t-str on config Ran}

We define a t-structure on the category $\Shv_{\CG^\Lambda}((\Conf^\lambda\times \Ran)^{\subset})_{\on{untl}}$ 
via the equivalence of \propref{p:config contr}:
$$(\on{pr}^\lambda_\Ran)^!:\Shv_{\CG^\Lambda}(\Conf^\lambda)\overset{\sim}\to 
\Shv_{\CG^\Lambda}((\Conf^\lambda\times \Ran)^{\subset})_{\on{untl}},$$
i.e., we transfer the (perverse) t-structure from $\Shv_{\CG^\Lambda}(\Conf^\lambda)$. 

\sssec{}

We define a t-structure on $\SI_{q,\Ran}(G)^{=\lambda}_{\on{untl}}$ via the equivalence of \eqref{e:pullback to stratum unital},
\emph{up to a cohomological shift by $\langle \lambda,2\check\rho \rangle$.}

\medskip

Namely, we declare on object of $\SI_{q,\Ran}(G)^{=\lambda}_{\on{untl}}$ to be connective/coconnective
if and only if its
image in $\Shv_{\CG^\Lambda}((\Conf^\lambda\times \Ran)^{\subset})_{\on{untl}}$, shifted cohomologically by 
$[\langle \lambda,2\check\rho \rangle]$, is connective/coconnective. 

\medskip

We apply a similar procedure to define a t-structure on 
$\SI_{q,\Ran}(G)^{=0}_{\on{untl}}$
via its identification with $\Vect$.

\sssec{}  \label{sss:t-str semiinf}

We define a structure on $\SI_{q,\Ran}(G)^{\leq 0}_{\on{untl}}$ by declaring an object to be coconnective if and only if its
image under $(\bi^\lambda_{\Ran})^!$ is coconnective in $\SI_{q,\Ran}(G)^{=\lambda}_{\on{untl}}$ 
for every $\lambda$. 

\sssec{}

By construction, the functors 
$$(\bi^\lambda_{\Ran})_!:\SI_{q,\Ran}(G)^{=\lambda}_{\on{untl}}\to \SI_{q,\Ran}(G)^{\leq 0}_{\on{untl}}$$
are right t-exact, and the functors
$$(\bi^\lambda_{\Ran})_*:\SI_{q,\Ran}(G)^{=\lambda}_{\on{untl}}\to \SI_{q,\Ran}(G)^{\leq 0}_{\on{untl}}$$
are left t-exact. From here, we obtain that the functors 
$$(\bi^\lambda_{\Ran})^*:\SI_{q,\Ran}(G)^{\leq 0}_{\on{untl}}\to \SI_{q,\Ran}(G)^{=\lambda}_{\on{untl}}$$
are right t-exact.

\medskip

Moreover, as in \cite[Lemma 2.1.8]{Ga7}, we have:

\begin{lem}
An object $\CF\in \SI_{q,\Ran}(G)^{\leq 0}_{\on{untl}}$ is connective if and only if
$(\bi^\lambda_{\Ran})^*(\CF)\in \SI_{q,\Ran}(G)^{=\lambda}_{\on{untl}}$
is connective for every $\lambda$.
\end{lem}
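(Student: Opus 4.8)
The statement to be proved is the characterization of connective objects in $\SI_{q,\Ran}(G)^{\leq 0}_{\on{untl}}$ in terms of the $(\bi^\lambda_{\Ran})^*$-restrictions, mirroring \cite[Lemma 2.1.8]{Ga7}. One direction is immediate from the definitions set up just above: the functors $(\bi^\lambda_{\Ran})^*$ are right t-exact, so if $\CF$ is connective then each $(\bi^\lambda_{\Ran})^*(\CF)$ is connective. The content is the converse: if $(\bi^\lambda_{\Ran})^*(\CF)$ is connective for all $\lambda\in\Lambda^{\on{neg}}$, then $\CF$ is connective. First I would reduce this to showing that $\tau^{>0}(\CF)=0$, where $\tau^{>0}$ is the coconnective truncation with respect to the t-structure of \secref{sss:t-str semiinf}; equivalently, writing the canonical triangle $\tau^{\leq 0}(\CF)\to \CF\to \tau^{>0}(\CF)$, I want $\tau^{>0}(\CF)$ to vanish. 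Since $\tau^{>0}(\CF)$ is coconnective, by the very definition of the t-structure it suffices to show that $(\bi^\lambda_{\Ran})^!\bigl(\tau^{>0}(\CF)\bigr)$ is coconnective \emph{and} connective (hence zero) for every $\lambda$, i.e. lies in $\bigl(\SI_{q,\Ran}(G)^{=\lambda}_{\on{untl}}\bigr)^{\heartsuit[-1]}$ — no, more precisely, that it lies in degrees $\geq 1$ automatically and I must show it lies in degrees $\leq 0$.

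\textbf{Key steps.} The heart of the argument is the standard ``cell-filtration'' induction on the poset $\Lambda^{\on{neg}}$, exactly as in \emph{loc.cit.} I would proceed as follows. (1) Using that every field-valued point of $\ol{S}^0_{\Ran}$ lies in the image of exactly one $\bi^\lambda_{\Ran}$ (\lemref{l:decomp S-}) and that $\SI_{q,\Ran}(G)^{\leq 0}_{\on{untl}}$ is generated under colimits and extensions by the images of the functors $(\bi^\lambda_{\Ran})_!$, set up the recollement: for a finite downward-closed subset $Z\subset\Lambda^{\on{neg}}$ with open complement, one has the usual gluing triangles relating $(\bi^\lambda_{\Ran})_!(\bi^\lambda_{\Ran})^*\CF$, $\CF$, and the restriction to the complement. (2) Apply $(\bi^\mu_{\Ran})^!$ to such a triangle; the off-diagonal terms $(\bi^\mu_{\Ran})^!(\bi^\lambda_{\Ran})_!(-)$ for $\lambda\neq\mu$ need a cohomological estimate — this is where the combinatorial geometry of the strata $S^\lambda_\Ran\subset\ol{S}^0_\Ran$ enters, via the dimension estimates on intersections $S^\lambda_\Ran\cap\ol{S}^\mu_\Ran$ that force these terms into the desired range of degrees (the shift by $\langle\lambda,2\check\rho\rangle$ in the t-structure on each stratum is calibrated precisely so that this works out). (3) Feed the hypothesis — $(\bi^\lambda_{\Ran})^*\CF$ connective for all $\lambda$ — through the triangles by induction on the size of $Z$, ordered compatibly with $\Lambda^{\on{neg}}$, to conclude that $(\bi^\mu_{\Ran})^!\CF$ is connective for every $\mu$, while it is automatically coconnective would be false — rather, one concludes directly that $\CF$ has no positive cohomology by testing against the coconnective truncations stratum by stratum. (4) Finally invoke that the unital subcategories are preserved by all the functors involved (the proposition of \cite[Prop. 4.2.2]{Ga7} quoted above), so the whole argument stays inside $\SI_{q,\Ran}(G)^{\leq 0}_{\on{untl}}$.

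\textbf{Main obstacle.} The principal difficulty is step (2): controlling the functors $(\bi^\mu_{\Ran})^!(\bi^\lambda_{\Ran})_!$ in the Ran-space setting, where the strata $S^\lambda_\Ran$ are not finite-dimensional and one works over $(\Conf^\lambda\times\Ran)^{\subset}$. The saving grace is that, thanks to \corref{c:unital semiinf on stratum}, on the unital subcategory each stratum category is equivalent to $\Shv_{\CG^\Lambda}(\Conf^\lambda)$, which \emph{is} finite-dimensional (the $\Ran$-direction has been contracted away by \propref{p:config contr}), so the needed estimates reduce to genuinely finite-dimensional statements about the configuration-space strata and their closure relations — these are exactly the estimates underlying \cite[Lemma 2.1.8]{Ga7}, and I would transcribe that argument, checking at each turn that the gerbe $\CG^\Lambda$ and the unitality constraints do not interfere (which they do not, since the gerbe comparison of \propref{p:gerbe on stratum} is compatible with all the relevant maps). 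Modulo this transcription the proof is routine; the only genuinely new bookkeeping is the presence of the metaplectic gerbe, which is inert for t-structure purposes.
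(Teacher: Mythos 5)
Your overall plan — handle the easy direction via the right t-exactness of $(\bi^\lambda_\Ran)^*$ (which the paper records immediately before the lemma), reduce the converse to showing $\tau^{>0}\CF = 0$, and induct over the poset $\Lambda^{\on{neg}}$ — is the right one; this is what \cite[Lemma 2.1.8]{Ga7} does, and the paper simply cites that reference in lieu of a proof. However, your ``key step (2)'' and the entire ``main obstacle'' paragraph contain a genuine misconception: you claim that one needs cohomological estimates on $(\bi^\mu_\Ran)^!(\bi^\lambda_\Ran)_!$, that ``dimension estimates on the intersections $S^\lambda_\Ran\cap\ol{S}^\mu_\Ran$'' enter, and that the shifts $\langle\lambda,2\check\rho\rangle$ are ``calibrated precisely so that this works out.'' None of this is true. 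The lemma holds for \emph{any} choice of t-structures on the stratum categories — the shifts play no role whatsoever — and no dimension estimates, no off-diagonal compositions, and no finite-dimensional geometry of $\Conf^\lambda$ are needed. This is the formal BBD-type statement that a t-structure glued along a stratification has its connective part detected by $*$-restrictions precisely when its coconnective part is declared via $!$-restrictions.

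Here is the actual argument, which you should compare against your plan. Set $\CG := \tau^{>0}\CF$; the aim is to show $\CG = 0$. Two formal facts: (a) each $(\bi^\lambda_\Ran)^*\CG$ is \emph{connective}, because it is the cofiber of $(\bi^\lambda_\Ran)^*\tau^{\leq 0}\CF \to (\bi^\lambda_\Ran)^*\CF$ and both are connective, by right t-exactness and by hypothesis respectively; (b) each $(\bi^\lambda_\Ran)^!\CG$ lies in degrees $\geq 1$, because $\CG \in D^{\geq 1}$ and that is exactly how the t-structure in \secref{sss:t-str semiinf} is defined. Now do Noetherian induction on $\lambda \in \Lambda^{\on{neg}}$ (for each $\lambda$ the set $\{\lambda' : \lambda < \lambda' \leq 0\}$ is finite, so there is a place to start and the induction closes). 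The inductive claim is $(\bi^\lambda_\Ran)^!\CG = 0$. Assuming it holds for all $\lambda' > \lambda$, the object $\CG$ is supported on the closed subspace $\bigcup_{\mu \not> \lambda} S^\mu_\Ran$, inside which $S^\lambda_\Ran$ sits as an \emph{open} stratum; hence $(\bi^\lambda_\Ran)^!\CG \simeq (\bi^\lambda_\Ran)^*\CG$ on this object. Being simultaneously connective (by (a)) and in degrees $\geq 1$ (by (b)), this common value is zero. Joint conservativity of the $!$-restrictions then forces $\CG = 0$.

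Notice that the only geometric input is the closure relation among the strata (used to see that $S^\lambda_\Ran$ is open once the higher strata are excised), which is exactly the content of \lemref{l:decomp S-}. Nothing about intersections $S^\lambda_\Ran \cap \ol{S}^\mu_\Ran$, nothing about $(\bi^\mu_\Ran)^!(\bi^\lambda_\Ran)_!$, and nothing about the finite-dimensionality of $\Conf^\lambda$ is invoked; the unitality device and \corref{c:unital semiinf on stratum} are needed to \emph{have} a well-behaved t-structure in the first place, not for this lemma once the t-structure is in hand. I would strongly encourage you to replace your steps (2)--(3) and the ``main obstacle'' paragraph with the formal argument above; as written, they suggest an obstacle that does not exist, and a reader following your outline would waste effort looking for estimates that are not there.
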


\sssec{}  \label{sss:IC semiinf}

We let 
$$\ICs_{q,\Ran}\in (\SI_{q,\Ran}(G)^{\leq 0}_{\on{untl}})^\heartsuit\subset \SI_{q,\Ran}(G)^{\leq 0}_{\on{untl}}$$ 
be the object equal to the image of the resulting map
$$H^0((\bj_{\Ran})_!(\omega_{S^0_{\Ran}}))\to 
H^0((\bj_{\Ran})_*(\omega_{S^0_{\Ran}})),$$
where $H^0$ refers to the t-structure on $\SI_{q,\Ran}(G)^{\leq 0}_{\on{untl}}$ introduced in \secref{sss:t-str semiinf} above. 

\begin{rem}
For the purposes of this paper we will largely forget the unitality property of $\ICs_{q,\Ran}$, and regard it 
simply as an object of $\SI_{q,\Ran}(G)^{\leq 0}$. 

\medskip

We emphasize, however, that we needed the unitality device in order to have a 
well-behaved t-structure. 
\end{rem} 

\sssec{}

For future use we record:

\begin{lem} \label{l:non sharp strata}
The !-restriction of $\ICs_{q,\Ran}$ to $S^\lambda_\Ran$ is zero unless $\lambda\in \Lambda^\sharp$.
\end{lem}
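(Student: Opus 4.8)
\textbf{Proof plan for \lemref{l:non sharp strata}.}

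The plan is to reduce the assertion to a statement purely on the configuration space, using the explicit description of the category on a stratum. Recall from \corref{c:unital semiinf on stratum} that $(\bi^\lambda_\Ran)^!(\ICs_{q,\Ran})$, as an object of $\SI_{q,\Ran}(G)^{=\lambda}_{\on{untl}}$, corresponds (after a shift by $\langle\lambda,2\check\rho\rangle$) to a gerbe-twisted sheaf on $\Conf^\lambda$; so it suffices to show this sheaf is zero for $\lambda\notin\Lambda^\sharp$. The key point is a $T$-equivariance (monodromy) obstruction, entirely parallel to the one exploited in the proof of \thmref{t:restr convolution}. Namely, the torus $T\subset\fL^+(T)$ acts on $\Gr^{\omega^\rho}_{G,\Ran}$ preserving each stratum $S^\lambda_\Ran$, and by \cite[Sect. 7.4.2]{GLys} the relevant trivialization of $\CG^G$ on the semi-infinite orbit $S^\lambda$ is $T$-twisted equivariant against the Kummer local system attached to the character $b(\lambda,-):\Lambda\to\sfe^\times(-1)$.

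First I would make precise the $T$-action: since $\ICs_{q,\Ran}$ is defined by Goresky--MacPherson extension of $\omega_{S^0_\Ran}$, and $\omega_{S^0_\Ran}$ is $T$-equivariant in the naive (untwisted) sense (the gerbe is canonically trivialized on $S^0$ with a $T$-equivariant trivialization since $b(0,-)=0$), the object $\ICs_{q,\Ran}$ inherits a $T$-equivariance structure twisted by the gerbe. Restricting to $S^\lambda_\Ran$ and using that the gerbe-trivialization there carries monodromy $b(\lambda,-)$, I get that $(\bi^\lambda_\Ran)^!(\ICs_{q,\Ran})$, viewed in $\Shv_{\CG^\Lambda}((\Conf^\lambda\times\Ran)^\subset)_{\on{untl}}\simeq\Shv_{\CG^\Lambda}(\Conf^\lambda)$, is $T$-twisted equivariant against the Kummer sheaf for $b(\lambda,-)$. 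But $T$ acts \emph{trivially} on $\Conf^\lambda$ (the configuration space sees only the divisor, not the $T$-bundle data), so a nonzero sheaf can carry such a twisted-equivariance structure only if the character $b(\lambda,-)$ is trivial, i.e. only if $\lambda\in\Lambda^\sharp$ by the definition of $\Lambda^\sharp$ as the kernel of $b$ (\secref{sss:sharp lattice}).

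Then I would conclude: for $\lambda\notin\Lambda^\sharp$ we have $b(\lambda,-)\ne 0$, the Kummer local system is nontrivial, hence $(\bi^\lambda_\Ran)^!(\ICs_{q,\Ran})=0$, which is exactly the claim. The main obstacle I anticipate is bookkeeping the twisted-equivariance structure carefully through the chain of equivalences \eqref{e:pullback to stratum unital} and \propref{p:config contr}: one must check that the $T$-action used to stratify $\ol S^0_\Ran$ is compatible with the unital structure and with the identification $S^{\Conf}_\Ran\simeq \fL(N)^{\omega^\rho}_\Conf$-orbit, so that the monodromy computation of \cite[Sect. 7.4.2]{GLys} applies verbatim on the stratum. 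An alternative, perhaps cleaner, route — which I would fall back on if the equivariance bookkeeping becomes cumbersome — is to invoke the explicit Drinfeld--Pl\"ucker description of $\on{IC}_x^{\mu'+\frac{\infty}{2},-}$ promised in \secref{ss:F and semiinf}, from which the support on $\Lambda^\sharp$-strata can be read off directly; but since that description is developed later, the self-contained $T$-monodromy argument above is preferable here.
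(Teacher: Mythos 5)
Your proposal is correct and is essentially the paper's own argument. The paper formalizes the $T$-equivariance bookkeeping you worry about in \secref{sss:proof of non-sharp}: it first upgrades $\ICs_{q,\Ran}$ to an object of $(\SI_{q,\Ran}(G)^{\leq 0}_{\on{untl}})^{\fL^+(T)_\Ran}$ (by running the entire GM-extension construction inside the $\fL^+(T)$-equivariant variant of the semi-infinite category, with its own t-structure, rather than arguing directly about functoriality of GM extension as you do), and then proves \lemref{l:non sharp strata bis}, which says that $(\SI_{q,\Ran}(G)^{=\lambda})^{\fL^+(T)_\Ran}=0$ for $\lambda\notin\Lambda^\sharp$. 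The content of that lemma is exactly your monodromy observation, though the paper reduces by factorization to a single point $x$ and then further to the $T$-fixed point $t^\lambda\in S^\lambda$ (so the stratum category becomes $\Shv_{\CG^G}(\{t^\lambda\})^{\fL^+(T)_x}$), whereas you keep the whole stratum and pass to $\Conf^\lambda$ via \corref{c:unital semiinf on stratum} and observe that $T$ acts trivially downstairs. Both routes land on the same computation from \cite[Sect.~7.4]{GLys}: the equivariance of the gerbe on $S^\lambda$ is twisted by the Kummer character sheaf for $b(\lambda,-)$, so a nonzero object can carry the equivariance structure only if $b(\lambda,-)=0$, i.e.\ $\lambda\in\Lambda^\sharp$. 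The paper's reduction to $t^\lambda$ is slightly cleaner because it sidesteps the need to track the $T$-action under the chain of identifications $\SI_{q,\Ran}(G)^{=\lambda}_{\on{untl}}\simeq\Shv_{\CG^\Lambda}(\Conf^\lambda)$, which is the bookkeeping you flag as the main risk.
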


The proof will be given in \secref{sss:proof of non-sharp}. 

\sssec{}

In the rest of this section we will discuss various properties of $\ICs_{q,\Ran}$:

\medskip

\noindent--We will explicitly describe its restriction to 
$$\Gr_{G,x}^{\omega^\rho}\subset \Gr_{G,\Ran};$$

\medskip

\noindent--We will endow it with a \emph{factorization structure};

\medskip

\noindent--We will describe its relationship with the \emph{global} metaplectic
semi-infinite IC sheaf. 

\ssec{Description of the fiber}  \label{ss:fiber of semiinf}

The definition of $\ICs_{q,\Ran}$ might appear as very abstract, and the result object may seem 
hardly calculable. This turns out to be not so. 

\medskip

In this subsection we will describe explicitly its restriction
to $\Gr^{\omega^\rho}_{G,x}\subset \Gr^{\omega^\rho}_{G,\Ran}$, i.e., the fiber of $\Gr^{\omega^\rho}_{G,\Ran}$
over the point $\{x\}\in \Ran$. 

\sssec{}

For an element $\gamma\in \Lambda^\sharp$ consider the corresponding point 
$$t^{\gamma}\in S^{\gamma} \subset \Gr^{\omega^\rho}_{G,x}.$$

The trivialization of the gerbe $\CG_{T_H,x}$ in \secref{sss:trivialize gerbe at point} gives rise to trivialization of 
 $\CG^G|_{t^\gamma}$. Hence, $\delta_{t^\gamma,\Gr}$ makes sense as an object of $\Shv_{\CG}(\Gr^{\omega^\rho}_{G,x})$. 

\sssec{}

Assume now that $\gamma$ is dominant. Let $V^{\gamma}$ denote the corresponding irreducible
object in $\Rep(H)$ (see our conventions in \secref{sss:descr of weight spaces}). It follows from 
\eqref{e:descr h.w.} that the !-fiber of $\Sat_{q,G}(V^{\gamma})$ at $t^{\gamma}$ identifies canonically with 
$\sfe[-\langle \gamma,2\check\rho\rangle]$.

\medskip

Hence, we obtain a canonically defined map
\begin{equation} \label{e:transition map pre initial}
\delta_{t^\gamma,\Gr}[-\langle \gamma,2\check\rho\rangle]\to \Sat_{q,G}(V^\gamma). 
\end{equation}

By adjunction, we obtain a map
\begin{equation} \label{e:transition map initial}
\delta_{1,\Gr}\to \delta_{t^{-\gamma},\Gr}\star \Sat_{q,G}(V^{\gamma})[\langle \gamma,2\check\rho\rangle].
\end{equation}

\sssec{}

We consider $(\Lambda^\sharp)^+$ as a poset with respect to the order relation that
$$\gamma_2\preceq \gamma_1\, \Leftrightarrow\, \gamma_2-\gamma_1\in (\Lambda^\sharp)^+.$$

Note that this is a \emph{different} order relation than the standard one denoted $\leq$.
Note also that, taken with respect to $\preceq$, the set $(\Lambda^\sharp)^+$ is \emph{filtered}.

\sssec{}

For $\gamma_2\succeq \gamma_1$ with $\gamma_2-\gamma_1=\gamma\in (\Lambda^\sharp)^+$
we define the map
$$\delta_{t^{-\gamma_1},\Gr}\star \Sat_{q,G}(V^{\gamma_1})[\langle \gamma_1,2\check\rho\rangle]\to 
\delta_{t^{-\gamma_2},\Gr}\star \Sat_{q,G}(V^{\gamma_2})[\langle \gamma_2,2\check\rho\rangle]$$ to be the composite
\begin{multline}  \label{e:transition maps ICs}
\delta_{t^{-\gamma_1},\Gr}\star \Sat_{q,G}(V^{\gamma_1})[\langle \gamma_1,2\check\rho\rangle] \simeq
\delta_{t^{-\gamma_1},\Gr}\star \delta_{1,\Gr}\star \Sat_{q,G}(V^{\gamma_1})[\langle \gamma_1,2\check\rho\rangle] 
\overset{\text{\eqref{e:transition map initial}}}\longrightarrow \\
\to \delta_{t^{-\gamma_1},\Gr}\star \delta_{t^{-\gamma},\Gr}\star \Sat_{q,G}(V^{\gamma}) \star \Sat_{q,G}(V^{\gamma_1})
[\langle \gamma_2,2\check\rho\rangle] \simeq \delta_{t^{-\gamma_2},\Gr}\star 
\Sat_{q,G}(V^{\gamma}) \star \Sat_{q,G}(V^{\gamma_1})[\langle \gamma_2,2\check\rho\rangle] \to \\
\to \delta_{t^{-\gamma_2},\Gr}\star \Sat_{q,G}(V^{\gamma_2})[\langle \gamma_2,2\check\rho\rangle]
\end{multline} 
where the last arrow is induced by the Pl\"ucker map
$$V^{\gamma}\otimes V^{\gamma_1}\to V^{\gamma_2},$$
normalized so that it is the \emph{idenitity} map on the (trivialized) highest weight lines. 

\sssec{}

As in \cite[Sect. 2.3]{Ga6}, one shows that the assingment
$$\gamma \rightsquigarrow \delta_{t^{-\gamma},\Gr}\star \Sat_{q,G}(V^{\gamma})[\langle \gamma,2\check\rho\rangle]$$
together with the above transition maps define a functor of $\infty$-categories
$$((\Lambda^\sharp)^+,\preceq) \to \Shv_{\CG^G}(\Gr^{\omega^\rho}_{G,x}).$$

Define:
\begin{equation} \label{e:ICs as colim}
'\!\ICs_{q,x}:=\underset{\gamma\in ((\Lambda^\sharp)^+,\leq)}{\on{colim}}\, 
\delta_{t^{-\gamma},\Gr}\star \Sat_{q,G}(V^{\gamma})[\langle \gamma,2\check\rho\rangle]\in \Shv_{\CG^G}(\Gr^{\omega^\rho}_{G,x}).
\end{equation}

As in \cite[Proposition 2.3.7]{Ga6}, one shows: 

\begin{prop} \hfill

\smallskip

\noindent{\em(a)} The object $'\!\ICs_{q,x}$ belongs to $\SI_{q,x}(G)$. 

\smallskip

\noindent{\em(b)} The object $'\!\ICs_{q,x}$ is supported on $\ol{S}^0\subset \Gr^{\omega^\rho}_{G,x}$. 

\smallskip

\noindent{\em(c)} There is a canonical identification $(\bj^0)^!({}'\!\ICs_{q,x})\simeq \omega_{S^0}$. 

\end{prop}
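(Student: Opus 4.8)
The plan is to analyze the colimit defining $'\!\ICs_{q,x}$ termwise, using what is already known about the building blocks $\delta_{t^{-\gamma},\Gr}\star \Sat_{q,G}(V^\gamma)$. For part (a), each object $\delta_{t^{-\gamma},\Gr}\star \Sat_{q,G}(V^\gamma)$ is $\fL(N)^{\omega^\rho}_x$-equivariant: indeed $\Sat_{q,G}(V^\gamma)$ lies in $\Sph_{q,x}(G)$, which is in particular $\fL(N)^{\omega^\rho}_x$-equivariant, and convolution on the left by $\delta_{t^{-\gamma},\Gr}$ preserves $\fL(N)^{\omega^\rho}_x$-equivariance since $t^{-\gamma}$ normalizes $\fL(N)^{\omega^\rho}_x$ (here one uses that the relevant gerbe trivializations are compatible, via the trivialization of $\CG_{T_H,x}$ over the $S^\gamma$-orbits). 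Since $\SI_{q,x}(G)$ is by definition a full subcategory of $\Shv_{\CG^G}(\Gr^{\omega^\rho}_{G,x})$ closed under colimits, the colimit $'\!\ICs_{q,x}$ lies in $\SI_{q,x}(G)$. This is exactly the argument of \cite[Proposition 2.3.7]{Ga6} transported to the metaplectic setting, and the only thing to check is that the twisting does not obstruct the equivariance — which it does not, precisely because of the built-in trivializations.

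For part (b), I would check that each term $\delta_{t^{-\gamma},\Gr}\star \Sat_{q,G}(V^\gamma)$ is supported on $\ol{S}^0$. The object $\Sat_{q,G}(V^\gamma)=\IC_{q,\ol\Gr^\gamma_G}$ is supported on $\ol\Gr^\gamma_G$, whose semi-infinite-orbit decomposition meets $S^\mu$ only for $\mu\in\gamma+\Lambda^{\mathrm{pos}}$ intersected with the coweights of $V^\gamma$, i.e. for $w_0(\gamma)\le\mu\le\gamma$ in the standard order. Convolving on the left by $\delta_{t^{-\gamma},\Gr}$ shifts the support by $-\gamma$, landing in the orbits $S^{\mu-\gamma}$ with $\mu-\gamma\le 0$, hence inside $\ol{S}^0$. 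Since $\ol{S}^0$ is closed and the support of a colimit is contained in the closure of the union of the supports, $'\!\ICs_{q,x}$ is supported on $\ol{S}^0$. The main point to verify carefully is the support estimate for the convolution $\delta_{t^{-\gamma},\Gr}\star\IC_{q,\ol\Gr^\gamma_G}$; this is a standard computation with the convolution diagram $\fL(G)^{\omega^\rho}_x\overset{\times}{\Gr}{}^{\omega^\rho}_{G,x}$, essentially the ``moving by $t^{-\gamma}$'' computation familiar from the geometric Satake setup.

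For part (c), I would compute $(\bj^0)^!$ of each term. The claim is that $(\bj^0)^!\bigl(\delta_{t^{-\gamma},\Gr}\star \Sat_{q,G}(V^\gamma)[\langle\gamma,2\check\rho\rangle]\bigr)\simeq \omega_{S^0}$ \emph{canonically}, and that the transition maps \eqref{e:transition maps ICs} become the identity after $(\bj^0)^!$. The first assertion reduces, after translating by $t^\gamma$, to identifying $(\bj^\gamma)^!(\Sat_{q,G}(V^\gamma))$, i.e. the restriction of $\IC_{q,\ol\Gr^\gamma_G}$ to the open orbit $S^\gamma$, with $\omega_{S^\gamma}[-\langle\gamma,2\check\rho\rangle]$; this is the metaplectic analogue of the standard fact that the restriction of $\IC_{\ol\Gr^\gamma_G}$ to the $\fL(N)$-orbit through $t^\gamma$ is (a shift of) the dualizing sheaf, and it follows from \eqref{e:weight component}–\eqref{e:descr h.w.} together with the cleanness of the top orbit. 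For the transition maps: in \eqref{e:transition maps ICs} the last arrow is the Plücker map normalized to be the identity on highest-weight lines, and over the open orbit $S^0$ (equivalently $S^{\gamma_2}$ after translation) the relevant weight spaces are the highest-weight lines, so the map \eqref{e:transition maps ICs} restricts to an isomorphism that matches the canonical identifications. Hence the colimit over the filtered poset $((\Lambda^\sharp)^+,\preceq)$ of a constant diagram with value $\omega_{S^0}$ and identity transition maps is again $\omega_{S^0}$, giving the canonical identification $(\bj^0)^!({}'\!\ICs_{q,x})\simeq\omega_{S^0}$.

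\textbf{Expected main obstacle.} The routine parts are the support estimates and the equivariance; the delicate point is keeping track of the gerbe trivializations throughout, so that all the ``canonical'' identifications in \eqref{e:transition map pre initial}, \eqref{e:transition map initial} and \eqref{e:transition maps ICs} are genuinely canonical and compose correctly. Concretely, one must ensure that the trivialization of $\CG^G|_{t^\gamma}$ induced from the trivialization of $\CG_{T_H,x}$ is compatible with the $\fL(N)^{\omega^\rho}_x$-equivariant trivialization of $\CG^G|_{S^\gamma}$ and with the monoidal structure on $\Sph_{q,x}(G)$ underlying the convolutions; this is where the metaplectic case genuinely differs from \cite{Ga6}, and it is the step I would write out in full detail. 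Everything else is a transcription of the non-twisted arguments of \cite[Sect. 2.3]{Ga6}.
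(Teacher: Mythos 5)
You've correctly identified the colimit structure and the reduction to [Ga6, Prop.\ 2.3.7], but the termwise analysis in parts (a) and (c) contains the same substantive error, and it is precisely the point that makes the proposition nontrivial.

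In (a) you claim that each term $\delta_{t^{-\gamma},\Gr}\star \Sat_{q,G}(V^\gamma)$ is $\fL(N)^{\omega^\rho}_x$-equivariant, arguing that objects of $\Sph_{q,x}(G)$ ``are in particular $\fL(N)^{\omega^\rho}_x$-equivariant.'' This is false: $\Sph_{q,x}(G)$ consists of $\fL^+(G)^{\omega^\rho}_x$-equivariant objects, hence $\fL^+(N)^{\omega^\rho}_x$-equivariant ones, but $\fL^+(N)$ is only the arc group, not the full loop group $\fL(N)$ containing loops with poles. Translating by $t^{-\gamma}$ upgrades this to equivariance under $\on{Ad}_{t^{-\gamma}}(\fL^+(N))$, which for $\gamma$ dominant contains $\fL^+(N)$ but is still a proper subgroup of $\fL(N)$. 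The individual terms are \emph{not} in $\SI_{q,x}(G)$, so the ``closed under colimits'' observation does not directly apply. The actual argument (as in [Ga6]) is: for each congruence subgroup $N_k\subset\fL(N)^{\omega^\rho}_x$, choose $\gamma_0$ so that $N_k\subset\on{Ad}_{t^{-\gamma}}(\fL^+(N)^{\omega^\rho}_x)$ for all $\gamma\succeq\gamma_0$; the terms indexed by $\gamma\succeq\gamma_0$ are $N_k$-equivariant and cofinal in $((\Lambda^\sharp)^+,\preceq)$, so by filteredness and full-faithfulness of the equivariant subcategory the colimit is $N_k$-equivariant; since this holds for every $k$, the colimit lies in $\SI_{q,x}(G)$. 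Only the colimit acquires full $\fL(N)$-equivariance; no single term does.

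The same confusion recurs in (c). You claim that $(\bj^0)^!$ of each term is already $\omega_{S^0}$, reducing to the restriction of $\IC_{q,\ol\Gr^\gamma_G}$ to $S^\gamma$ being $\omega_{S^\gamma}[-\langle\gamma,2\check\rho\rangle]$. But $\IC_{q,\ol\Gr^\gamma_G}$ is supported on $\ol\Gr^\gamma_G$, and $S^\gamma\cap\ol\Gr^\gamma_G=\fL^+(N)^{\omega^\rho}_x\cdot t^\gamma$ is a finite-dimensional affine space inside the infinite-dimensional $S^\gamma$; so the $!$-restriction of each term to $S^0$ is a sheaf supported on a finite-dimensional sub-ind-scheme $t^{-\gamma}\cdot(\fL^+(N)^{\omega^\rho}_x\cdot t^\gamma)\subset S^0$, which cannot equal $\omega_{S^0}$. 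What is true is that these loci exhaust $S^0$ as $\gamma$ grows, and that each term's restriction is the $*$-extension of the dualizing sheaf of its support, with transition maps given by the $*$-extension maps; the colimit of this system is $\omega_{S^0}$. This is where the hard content of (c) lives. Your part (b) is fine, and your instinct to track gerbe trivializations carefully is well placed, but the more basic gap is the misidentification of the equivariance and support properties of the individual terms.
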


\sssec{}

A metaplectic analog of the isomorphism of \cite[Corollary 2.7.7]{Ga7} reads:

\begin{thm}  \label{t:fiber of Ics}
There exists a canonical isomorphism between 
$$\ICs_{q,x}:=\ICs_{q,\Ran}|_{\Gr^{\omega^\rho}_{G,x}}$$
and $'\!\ICs_{q,x}$.
\end{thm}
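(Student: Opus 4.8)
The statement is the metaplectic analogue of \cite[Corollary 2.7.7]{Ga7}, so the plan is to follow the strategy of \emph{loc.\ cit.} The key point is that both objects in question live in $\SI_{q,x}(G)^{\leq 0}$, which carries a t-structure inherited from the one on $\SI_{q,\Ran}(G)^{\leq 0}_{\on{untl}}$ by !-restriction along $\Gr^{\omega^\rho}_{G,x}\hookrightarrow \Gr^{\omega^\rho}_{G,\Ran}$; the claim is that each side is the Goresky--MacPherson extension of $\omega_{S^0}$ inside this category, and the isomorphism is the identification of two descriptions of that extension. First I would record that the !-restriction functor
$$\Shv_{\CG^G}(\Gr^{\omega^\rho}_{G,\Ran})\to \Shv_{\CG^G}(\Gr^{\omega^\rho}_{G,x})$$
is t-exact for the relevant t-structures (this follows from the stratum-by-stratum description, since on each stratum $S^\lambda_\Ran$ the restriction to $S^\lambda\subset \Gr^{\omega^\rho}_{G,x}$ corresponds to !-restriction along $\{x\}\to (\Conf^\lambda\times\Ran)^{\subset}$, which is t-exact after the shift by $\langle\lambda,2\check\rho\rangle$ built into both t-structures). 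Consequently $\ICs_{q,x}:=\ICs_{q,\Ran}|_{\Gr^{\omega^\rho}_{G,x}}$ lies in the heart of $\SI_{q,x}(G)^{\leq 0}$, its $!$-restriction to $S^0$ is $\omega_{S^0}$, its $*$-restriction to each $S^\lambda$ for $\lambda\neq 0$ is strictly coconnective, and its $!$-restriction is strictly connective — i.e.\ it is the intermediate extension of $\omega_{S^0}$.

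Next I would show that $'\!\ICs_{q,x}$, defined by the colimit \eqref{e:ICs as colim}, has exactly these properties. That it lies in $\SI_{q,x}(G)$, is supported on $\ol S^0$, and restricts to $\omega_{S^0}$ on $S^0$ is already recorded in the proposition preceding the theorem (the metaplectic version of \cite[Prop.\ 2.3.7]{Ga6}). The two remaining facts — that its $*$-restriction to $S^\lambda$ ($\lambda\ne 0$) is strictly coconnective, and its $!$-restriction to $S^\lambda$ ($\lambda\ne 0$) is strictly connective — are a cohomological estimate: using that $\Sat_{q,G}(V^\gamma)$ is perverse and $\fL^+(G)$-equivariant, and the dimension estimates for the intersections $S^\lambda\cap\Gr^{\gamma}_G$ (the same estimates used in \secref{sss:2nd approx} for the Whittaker calculation, and in \cite[Sect.\ 6]{BFGM}), one bounds the relevant stalks. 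Here one also invokes \lemref{l:non sharp strata}/the $\Lambda^\sharp$-support statement, so that only strata with $\lambda\in\Lambda^\sharp$ need be examined, and on those the gerbe $\CG^G$ is canonically trivialized (via \secref{sss:trivialize gerbe at point}), so the estimate reduces to the non-twisted one. Combining: $'\!\ICs_{q,x}$ and $\ICs_{q,x}$ are both the intermediate extension of $\omega_{S^0}$ in $(\SI_{q,x}(G)^{\leq 0})^\heartsuit$, hence canonically isomorphic; the isomorphism is pinned down by the requirement that on $S^0$ it is the identity on $\omega_{S^0}$.

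The main obstacle, as in \cite{Ga7}, is the t-exactness of the $!$-restriction $\Gr^{\omega^\rho}_{G,\Ran}\rightsquigarrow\Gr^{\omega^\rho}_{G,x}$ together with the claim that it \emph{commutes with intermediate extension} — a priori $!$-restriction need not be t-exact, and even when it is, it need not preserve GM extensions. The resolution is the usual one: one checks t-exactness separately on each stratum using \lemref{l:semiinf strata Ran} and \propref{p:config contr} (where the ``junk'' $\Ran$-directions are killed by the unitality device, which is precisely why the t-structure on $\SI_{q,\Ran}(G)^{\leq 0}_{\on{untl}}$ was set up), and one uses the standard characterization of the GM extension by support/cosupport conditions on strata, which \emph{are} preserved because the restriction is t-exact stratum-wise and the stratification of $\Gr^{\omega^\rho}_{G,x}$ is induced from that of $\Gr^{\omega^\rho}_{G,\Ran}$ by \lemref{l:decomp S-}. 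I would expect the proof to transcribe the argument of \cite[Sect.\ 2.7]{Ga7} essentially verbatim, the only genuinely new input being the bookkeeping of the gerbe $\CG^G$ — which, thanks to the trivializations on the orbits $S^\gamma$ with $\gamma\in\Lambda^\sharp$, does not actually affect any of the cohomological estimates.
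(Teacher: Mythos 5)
Your overall strategy — identify both sides as Goresky--MacPherson extensions of $\omega_{S^0}$ at the fixed point $x$, and then appeal to uniqueness — is plausibly the right shape of argument, and the ingredients you list (the preliminary proposition recording the properties of $'\!\ICs_{q,x}$, the dimension estimates à la \secref{sss:2nd approx} and \cite[Sect.~6]{BFGM}, and \lemref{l:non sharp strata} to reduce to the $\Lambda^\sharp$-strata) are all relevant. However, the pivot of your argument, namely that the $!$-restriction from $\Gr^{\omega^\rho}_{G,\Ran}$ to $\Gr^{\omega^\rho}_{G,x}$ is t-exact stratum-by-stratum, is incorrect, and the ``resolution'' you offer in the last paragraph does not repair it.

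Here is the problem concretely. After killing the Ran direction via the unitality device, the stratum category $\SI_{q,\Ran}(G)^{=\lambda}_{\on{untl}}$ is $\Shv_{\CG^\Lambda}(\Conf^\lambda)$ with a shift, while the stratum category at $x$ is $\Vect$. The $!$-restriction to the fiber over $x$ is then literally the functor ``$!$-fiber at the point $\lambda\cdot x\in\Conf^\lambda$''. For $\lambda\neq 0$ the scheme $\Conf^\lambda$ is positive-dimensional, and the $!$-fiber of a perverse sheaf on a positive-dimensional scheme at a closed point is only left t-exact, not t-exact: it lands in degrees $\geq 0$ but is generally spread across many degrees (e.g.\ the $!$-fiber of $\IC_{\BA^1}$ at the origin sits in degree $1$). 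No amount of shifting can fix this, so the functor $\SI_{q,\Ran}(G)^{\leq 0}_{\on{untl}}\to\SI_{q,x}(G)^{\leq 0}$ is not t-exact, and your ``Consequently $\ICs_{q,x}$ lies in the heart $\ldots$'' does not follow. The unitality device addresses a different degeneracy (the $\Ran$ factor in $(\Conf^\lambda\times\Ran)^{\subset}$); it does nothing about the dimension of $\Conf^\lambda$ itself, which is the source of the failure.

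The failure is real and asymmetric in a way your argument glosses over: left t-exactness of the $!$-fiber \emph{does} propagate the cosupport condition from $\ICs_{q,\Ran}$ to $\ICs_{q,x}$, and indeed the degree bounds match up exactly once one is careful with the shifts. But the support condition (the $*$-restrictions to lower strata) involves $*$-restriction to $S^\lambda$ at $x$, and this does \emph{not} commute with the $!$-restriction to the fiber over $x$ (the Cartesian square in question has a closed embedding in one direction and an arbitrary map $\{x\}\to\Ran$ in the other, and base change for $*$-pullback against $!$-pullback fails in this generality). So you cannot conclude that $\ICs_{q,x}$ satisfies the support condition at $x$, and hence cannot conclude it is the GM extension there. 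The actual argument has to supply extra input beyond formal exactness: either one exploits the \emph{factorization} structure of $\ICs_{q,\Ran}$ (which forces $\CF_\lambda$ on $\Conf^\lambda$ to be translation-invariant, hence lisse, after passing to $\BA^1$ — this is precisely the mechanism behind \propref{p:sharper estimate}, giving the sharper degree bounds that compensate for the non-t-exactness), or one establishes a Ran-version of the colimit \eqref{e:ICs as colim} and proves the identification with $\ICs_{q,\Ran}$ \emph{over} $\Ran$ where the t-structure is well-behaved, after which restriction to $x$ is a triviality since colimits commute with $!$-pullback. Your write-up cites neither mechanism. (As a side note, you have also swapped support and cosupport in the second paragraph — for a GM extension the $!$-restrictions to lower strata are strictly \emph{co}connective and the $*$-restrictions strictly connective — but that is a typo rather than a conceptual issue.)
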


\sssec{}

In what follows we will need the following property of $\ICs_{q,x}$:

\begin{prop}  \label{p:sharper estimate}
The !-restriction of $\ICs_{q,x}$ to a stratum $S^\lambda$ with $\lambda\neq 0$ is of the form 
$$\omega_{S^\lambda}[-\langle \lambda,2\check\rho\rangle]\otimes \sK_{x,\lambda},$$
where $\sK_\lambda$ is an object of $\Shv_{\CG^\Lambda_{\lambda\cdot x}}(\on{pt})$ that
lives in cohomological degrees $\geq 2$.
\end{prop}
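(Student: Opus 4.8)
\textbf{Plan of proof for \propref{p:sharper estimate}.}
The statement has two parts: first, that $(\bi^\lambda)^!(\ICs_{q,x})$ has the shape $\omega_{S^\lambda}[-\langle\lambda,2\check\rho\rangle]\otimes\sK_{x,\lambda}$ for some $\sK_{x,\lambda}\in\Shv_{\CG^\Lambda_{\lambda\cdot x}}(\on{pt})$; and second, the cohomological estimate $\sK_{x,\lambda}\in(\Shv_{\CG^\Lambda_{\lambda\cdot x}}(\on{pt}))^{\geq 2}$ when $\lambda\neq 0$. For the first part I would use \thmref{t:fiber of Ics} to replace $\ICs_{q,x}$ by $'\!\ICs_{q,x}$, and then the stratified description: on each stratum $S^\lambda$ the $\fL(N)^{\omega^\rho}_x$-equivariance together with \lemref{l:semiinf strata} (applied in the fiber, or rather its $\Conf$-refined version from \secref{s:semiinf categ}) forces the restriction to be $\omega_{S^\lambda}$ up to a cohomological shift and tensoring by a graded vector space twisted by the gerbe fiber at $t^\lambda\in S^\lambda$, which is exactly $\CG^\Lambda_{\lambda\cdot x}$ by \propref{p:gerbe on stratum} / \secref{sss:gerbe Lambda}. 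The shift $[-\langle\lambda,2\check\rho\rangle]$ is the normalization built into the t-structure on $\SI_{q,\Ran}(G)^{=\lambda}_{\on{untl}}$ in \secref{sss:IC semiinf}; equivalently it is dictated by \lemref{l:non sharp strata} (so $\sK_{x,\lambda}=0$ unless $\lambda\in\Lambda^\sharp$, which is consistent with the claim) and the colimit formula \eqref{e:ICs as colim}.

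The heart of the argument is the estimate $\sK_{x,\lambda}\in(\cdots)^{\geq 2}$. Here the plan is to compute $(\bi^\lambda)^!$ of the colimit presentation \eqref{e:ICs as colim} term by term. For a fixed dominant $\gamma\in(\Lambda^\sharp)^+$ with $\gamma$ large (in the $\preceq$ order) relative to $\lambda$, the $\lambda$-stratum contribution to $\delta_{t^{-\gamma},\Gr}\star\Sat_{q,G}(V^\gamma)[\langle\gamma,2\check\rho\rangle]$ is governed by the geometry of $S^{\lambda}\cap(t^{-\gamma}\cdot\ol\Gr^\gamma_G)$, i.e.\ after translation by $t^\gamma$, of intersections of the type $S^{\lambda+\gamma}\cap\ol\Gr^\gamma_G$ with the character/gerbe twist. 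The key input is already in hand: it is the same dimension estimate used in \secref{sss:2nd approx} and the crystal-theoretic analysis of \secref{ss:proof isog}-adjacent material (\thmref{t:restr} and its proof), which show that on the relevant top-dimensional components the gerbe-twisting local system $\Psi_q$ is \emph{nontrivial} unless one is forced onto the closed point, so the contribution to the bottom-degree cohomology vanishes. Concretely: the $\mu=0$ analysis in \secref{sss:2nd approx} shows $H^\bullet(S^{\lambda}\cap\Gr^\nu_G,\cdots)$ lives in degrees $\geq\langle\lambda,2\check\rho\rangle$ for the untwisted count, and the $H^0$ and $H^1$ terms are killed because (i) the perversity shift and dimension bound $\dim(S^\lambda\cap\Gr^\nu_G)\leq\langle\nu-\lambda,\check\rho\rangle$ leave no room except on top-dimensional components of $S^\lambda\cap S^{-,w_0(\gamma)}$, and (ii) on those components $\chi^0_N\otimes\Psi_q$ is nontrivial by \thmref{t:restr} (there is no restrictedness hypothesis obstruction here since we are only excluding the bottom two cohomological degrees, not all). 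I would package this by taking the colimit over $\gamma$: since each transition map \eqref{e:transition maps ICs} is compatible with the stratification and the colimit is filtered, the bound passes to $\sK_{x,\lambda}=\underset{\gamma}{\on{colim}}\,(\text{term})$, and filtered colimits are t-exact.

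For bookkeeping I would carry out the steps in this order: (1) reduce to $'\!\ICs_{q,x}$ via \thmref{t:fiber of Ics}; (2) identify the underlying sheaf on $S^\lambda$ as $\omega_{S^\lambda}$-tensored-by-a-complex, using equivariance and \lemref{l:semiinf strata}, pinning down the twist as $\CG^\Lambda_{\lambda\cdot x}$ and the shift as $[-\langle\lambda,2\check\rho\rangle]$; (3) for $\lambda\neq 0$, compute the $\lambda$-fiber of each term $\delta_{t^{-\gamma},\Gr}\star\Sat_{q,G}(V^\gamma)[\langle\gamma,2\check\rho\rangle]$ via the formula \eqref{e:!-stalk of conv}-style identity, reducing to cohomology of $\Sat_{q,G}(V^\gamma)|_{S^{\lambda+\gamma}\cap\Gr^\gamma_G}$ against $\chi_N^\lambda\otimes\Psi_q$; (4) apply the dimension estimate of \secref{sss:2nd approx} together with the nonvanishing of the twisting local system on top-dimensional components (\thmref{t:restr}) to conclude that $H^0$ and $H^1$ of this term vanish; (5) pass to the filtered colimit over $(\Lambda^\sharp)^+$. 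The main obstacle I anticipate is step (4): one must be careful that \thmref{t:restr} as stated requires $\mu$ restricted, so for general $\gamma$ I cannot invoke it verbatim to kill \emph{all} cohomology; instead I need only the weaker statement that the local system is nontrivial on components of the \emph{top} dimension (which is what actually gets used in degrees $\leq 1$ above the bottom, i.e.\ absolute degrees $\leq\langle\lambda,2\check\rho\rangle-\langle\lambda,2\check\rho\rangle+1=1$), and verifying that this weaker statement holds without the restrictedness hypothesis — via the same Kummer-twist / crystal argument in \secref{ss:proof isog} but tracking only top components — is where the real work lies.
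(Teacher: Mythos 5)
You propose a term-by-term calculation via the colimit formula \eqref{e:ICs as colim}, stratifying the MV-type intersections and using dimension estimates together with nontriviality of the gerbe-twisting local system $\Psi_q$. This is an entirely different route from the paper's. The paper's actual argument is short and purely formal: by \eqref{e:ICs as colim} the complex $\sK_{x,\lambda}$ is built from data local to $x$, hence is non-canonically independent of the choice of pair $(X,x)$, so one may take $X=\BA^1$ equipped with a translation-invariant geometric metaplectic datum realizing the same quadratic form $q$. Translation invariance forces $\sK_{X,\lambda}$ to be lisse on $\BA^1$; the Goresky--MacPherson extension property of $\ICs_{q,\Ran}$ places $\sK_{X,\lambda}$ in strictly positive perverse degrees, i.e.\ (being lisse on a curve) in standard degrees $\geq 0$, and for a lisse complex on a smooth curve the $!$-fiber sits two standard degrees above the $*$-fiber, giving $\sK_{x,\lambda}$ in degrees $\geq 2$. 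No crystals, no geometry of $S^\mu\cap\ol\Gr^\gamma_G$, and no analysis of $\Psi_q$ are needed.

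Your plan also has a genuine gap that goes beyond the restrictedness issue you flag, and beyond what a ``weaker statement on top components'' could plausibly deliver. Nontriviality of $\Psi_q$ on top-dimensional components controls only the \emph{bottom} cohomological degree (the invariants); it gives you nothing on the next one. That sub-bottom vanishing is precisely the hard phenomenon the authors acknowledge elsewhere: in \secref{ss:proof of GM} the analogous $H^1$-vanishing needed for \thmref{t:GM from open} is attributed to \cite[Theorem 1.1.5]{Lys}, proved there only under the assumption that the $\on{ord}(q_i)$ are large, with the general case explicitly deferred to a future paper. Your step (4) would therefore oblige you to establish a result of comparable difficulty — and without the restrictedness hypothesis of \thmref{t:restr}, since $\lambda+\gamma$ for $\gamma\in(\Lambda^\sharp)^+$ large is far from restricted. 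There is a further structural worry: in a filtered colimit $H^i(\on{colim})=\on{colim}\,H^i$ can vanish through eventually-zero transition maps even when no individual $H^i$ does, so the term-by-term vanishing your outline presupposes may simply fail; the translation-invariance argument is designed precisely to prove a statement about the colimit with no term-by-term control at all.
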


\begin{proof}

Consider the restriction of $\ICs_{q,\Ran}$ to the stratum
$$X\underset{\Conf^\lambda}\times S^\lambda_\Ran,$$
where $X\to \Conf^\lambda$ is the diagonal map $x\mapsto \lambda\cdot x$.

\medskip

This restriction is the pullback of an object 
$$\sK_{X,\lambda}[-\langle \lambda,2\check\rho\rangle]\in \Shv_{\CG^\Lambda|_X}(X),$$
and $\sK_{x,\lambda}$ is the !-fiber of $\sK_{X,\lambda}$ at $x$. A priori, $\sK_{X,\lambda}$
lives in cohomological degrees $>0$. 

\medskip

Now, the expression of $\ICs_{q,x}$ given by \eqref{e:ICs as colim} implies that $\sK_{x,\lambda}$ 
would be (non-canonically) the same for \emph{another choice} of a curve $X$ and a point $x$ on it.
So for the purposes of proving the cohomological estimate, we can assume that $X=\BA^1$ with
a choice of geometric metaplectic datum which is translation-invariant with the same quadratic 
form $q$.  
 
\medskip

Since the assignment $X\rightsquigarrow \sK_{X,\lambda}$ respects automorphisms of
the situation, we obtain that $\sK_{X,\lambda}$ is translation-invariant, and in particular lisse. 
This implies that the !-fiber $\sK_{X,\lambda}$ at $x$ lives in degrees $>1$, as required.

\end{proof}

\ssec{Factorization}

We will now specify a key structure possessed by $\ICs_{q,\Ran}$: that of factorization algebra.
This structure will be used to define a factorization structure on the Jacquet functor
$$\Whit_{q,x}(G)\to \Shv_{\CG^T}(\Gr_T).$$

\sssec{}

Parallel to the factorization structure on $\Gr^{\omega^\rho}_{G,\Ran}$ (see \secref{sss:aff Gr}), 
we also have a factorization on $\ol{S}^0_{\Ran}$ and $\ol{S}_{\Ran}$:  
\begin{equation} \label{e:fact S0}
\ol{S}^0_{\Ran} \underset{\Ran}\times (\Ran^J)_{\on{disj}}
\simeq (\ol{S}^0_{\Ran})^J \underset{\Ran^J}\times (\Ran^J)_{\on{disj}}. 
\end{equation}
\begin{equation} \label{e:fact no bar S0}
S^0_{\Ran} \underset{\Ran}\times (\Ran^J)_{\on{disj}}
\simeq (S^0_{\Ran})^J \underset{\Ran^J}\times (\Ran^J)_{\on{disj}}. 
\end{equation}

%Hence, by \secref{sss:fact categ from spaces} the categories $\Shv_{\CG^G}(\ol{S}^0_{\Ran})$ and
%$\Shv_{\CG^G}(S^0_{\Ran})$ acquire a natural factorization structure. 

\sssec{}

The following assertion is an extension of \cite[Sect. 4.6]{Ga7}:

\begin{thm} \label{t:fact ICs}
The object $\ICs_{q,\Ran}\in \Shv_{\CG^G}(\ol{S}^0_{\Ran})$ has a structure of factorization algebra
(see \secref{sss:fact alg geom}),
uniquely characterized by the requirement that the induced structure of factorization algebra on 
$\ICs_{q,\Ran}|_{S^0_{\Ran}}\in \Shv_{\CG^G}(S^0_{\Ran})$
corresponds to the tautological one on $\omega_{S^0_{\Ran}}$ (see \secref{sss:dualizing as an example of FA}) with respect to
the identification $$\ICs_{q,\Ran}|_{S^0_{\Ran}}\simeq \omega_{S^0_{\Ran}}$$
\end{thm}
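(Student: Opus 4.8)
\textbf{Proof strategy for \thmref{t:fact ICs}.}
The plan is to construct the factorization structure on $\ICs_{q,\Ran}$ by the same abstract mechanism used for $\on{Vac}_{\Whit,\Ran}$ in \thmref{t:fact on Vac Whit}, namely by transporting the tautological factorization algebra structure on $\omega_{S^0_\Ran}$ through an equivalence that identifies it with $\ICs_{q,\Ran}$. The key is to first upgrade the stratum-by-stratum analysis of \secref{s:semiinf categ} to a \emph{factorizable} statement. Concretely, one observes that the categories $\SI_{q,\Ran}(G)^{\leq 0}_{\on{untl}}$, together with the open embedding $\bj_\Ran:S^0_\Ran\hookrightarrow \ol{S}^0_\Ran$ and the factorization isomorphisms \eqref{e:fact S0}--\eqref{e:fact no bar S0}, assemble into a factorization category (in the sense sketched in \secref{sss:fact categ}, though we suppress the formal definition) — this is where the \emph{unital} refinement is essential, since only after passing to $\SI_{q,\Ran}(G)^{\leq 0}_{\on{untl}}$ do we have a t-structure compatible with the stratification, and one checks that the t-structure is itself compatible with the factorization isomorphisms because each $(\bi^\lambda_\Ran)^!$ is, and the defining condition of \secref{sss:t-str semiinf} is local on $\Ran$.

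Granting this, the argument proceeds in the following steps. First, I would show that the functor of minimal (Goresky--MacPherson) extension
$$\CF \mapsto \on{Im}\bigl(H^0((\bj_\Ran)_!(\CF)) \to H^0((\bj_\Ran)_*(\CF))\bigr)$$
is compatible with factorization: both $(\bj_\Ran)_!$ and $(\bj_\Ran)_*$ are defined by base change from the strata, hence commute with the restriction to $(\Ran^J)_{\on{disj}}$, and the formation of $H^0$ and image are t-exact local operations, so they commute with the factorization isomorphism \eqref{e:fact S0}. Second, since $\omega_{S^0_\Ran}$ carries its tautological factorization algebra structure (\secref{sss:dualizing as an example of FA}) — compatibly with \eqref{e:fact no bar S0} — applying the GM-extension functor produces a homotopy-coherent system of isomorphisms
$$\ICs_{q,\Ran}\bigl|_{\ol{S}^0_\Ran\underset{\Ran}\times(\Ran^J)_{\on{disj}}} \simeq (\ICs_{q,\Ran})^{\boxtimes J}\bigl|_{(\ol{S}^0_\Ran)^J\underset{\Ran^J}\times(\Ran^J)_{\on{disj}}},$$
i.e.\ a factorization algebra structure in the sense of \secref{sss:fact alg geom}. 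The commutative (associative, unital) constraints on this system follow because they hold for $\omega_{S^0_\Ran}$ and are preserved by a functor; here one invokes the uniqueness of the GM extension, which pins down the higher coherences. Third, for the uniqueness clause: any two factorization algebra structures on $\ICs_{q,\Ran}$ restricting to the standard one on $\ICs_{q,\Ran}|_{S^0_\Ran}$ differ by an automorphism of $\ICs_{q,\Ran}$ as a factorization object that is the identity on $S^0_\Ran$; but $\ICs_{q,\Ran}\in (\SI_{q,\Ran}(G)^{\leq 0}_{\on{untl}})^\heartsuit$ is the minimal extension of $\omega_{S^0_\Ran}$, so $\on{End}(\ICs_{q,\Ran})=\sfe$ acting through $j_\Ran^!$, forcing the automorphism to be trivial.

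The main obstacle I anticipate is the first point: making precise — without developing the full theory of factorization categories, which the paper deliberately avoids — that the t-structure on $\SI_{q,\Ran}(G)^{\leq 0}_{\on{untl}}$ and the minimal-extension construction are compatible with the factorization isomorphisms in a \emph{homotopy-coherent} way, i.e.\ producing the data over all finite sets $J$ simultaneously with the requisite compatibilities. The cleanest route is probably to mimic \cite[Sect.\ 4.6]{Ga7} essentially verbatim: there the non-twisted $\ICs_\Ran$ is given exactly this factorization algebra structure, and the only new input in the metaplectic setting is the gerbe $\CG^G$, whose factorization structure on $\Gr^{\omega^\rho}_{G,\Ran}$ is built into the definition of geometric metaplectic data, and whose restriction to $S^0_\Ran$ is canonically trivial (compatibly with factorization, by the trivialization of $\CG_{T_H,x}$ and \propref{p:gerbe on stratum}). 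So the proof reduces to checking that every step of the $\on{loc.cit.}$ argument goes through $\CG^G$-twisted-equivariantly, which it does because all functors involved ($(\bj_\Ran)_!$, $(\bj_\Ran)_*$, truncations, images) are defined internally to $\Shv_{\CG^G}(-)$ and the factorization isomorphisms are isomorphisms of $\CG^G$-twisted sheaves. I would therefore present the proof as: recall the structures, invoke \propref{p:gerbe on stratum} and the canonical trivialization of $\CG^G|_{S^0_\Ran}$, and then cite the compatibility of GM extension with factorization, referring to \cite[Sect.\ 4.6]{Ga7} for the homotopy-coherence bookkeeping which is identical.
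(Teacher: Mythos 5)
Your proposal matches the paper's approach essentially exactly: the paper itself presents the proof only via the remarks following the theorem, which say (i) for each $J$ both sides of the factorization isomorphism \eqref{e:factor IC} are Goresky--MacPherson extensions of their restrictions to $S^0_\Ran\underset{\Ran}\times(\Ran^J)_{\on{disj}}$ for the t-structure coming from the unital subcategory, and (ii) the unitality allows one to work in abelian hearts so that the homotopy coherence of the system over all $J$ is automatic, citing \cite[Sect.~4.6]{Ga7} for the non-metaplectic bookkeeping. Your identification of the only genuinely new metaplectic ingredient (\propref{p:gerbe on stratum} together with the canonical trivialization of $\CG^G|_{S^0_\Ran}$, so that every step goes through $\CG^G$-twisted equivariantly) and your uniqueness argument via GM-extension rigidity are both in line with what the paper intends.
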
 

\begin{rem}
In fact, a slightly stronger assertion is true: for an individual finite set $J$, there exists 
factorization isomorphism
\begin{equation} \label{e:factor IC}
\ICs_{q,\Ran}|_{\ol{S}^0_{\Ran} \underset{\Ran}\times (\Ran^J)_{\on{disj}}}
\simeq   
(\ICs_{q,\Ran})^{\boxtimes J}|_{(\ol{S}^0_{\Ran})^J \underset{\Ran^J}\times (\Ran^J)_{\on{disj}}},
\end{equation} 
is \emph{uniquely characterized} by the requirement that the induced isomorphism 
$$\ICs_{q,\Ran}|_{S^0_{\Ran} \underset{\Ran}\times (\Ran^J)_{\on{disj}}}
\simeq   (\ICs_{q,\Ran})^{\boxtimes J}|_{(S^0_{\Ran})^J \underset{\Ran^J}\times (\Ran^J)_{\on{disj}}}$$
corresponds to the tautological one under
on $$\ICs_{q,\Ran}|_{S^0_{\Ran}}\simeq \omega_{S^0_{\Ran}}.$$ 
\end{rem} 

\begin{rem}
It is in the proof of \thmref{t:fact ICs} that the unitality property/structure on $\ICs_{q,\Ran}$ plays
the most essential role, see \cite[Sect. 4.6]{Ga7}: one shows that both sides in \eqref{e:factor IC}
are Goresky-MacPherson extensions of their respective restrictions to $S^0_{\Ran} \underset{\Ran}\times (\Ran^J)_{\on{disj}}$
for the appropriately defined t-structure. 
\end{rem}

\begin{rem}
The unitality structure is also helpful to combat difficulties of homotopy-theoretic nature: it allows us to 
consider all the objects involved in \eqref{e:factor IC}
as living inside appropriately defined \emph{abelian categories}; hence the
\emph{homotopy coherence} of factorization isomorphisms is automatic. 

\end{rem}

\ssec{Comparison with the global metaplectic IC sheaf}  \label{ss:semiinf glob}

In this subsection we will show how to express $\IC_{q,\Ran}$ in terms of finite-dimensional
algebraic geometry (specifically, in terms of gerbe-twisted sheaves on $\BunNbom$). 

\medskip

This description will be subsequently used in order to establish one of the key properties of the
Jacquet functor, namely, its commutation with Verdier duality. 

\sssec{}

Consider the algebraic stack $\BunNbom$. As in the case of 
$$\Whit_{q,\on{glob}}(G)\subset \Shv_{\CG^G}(\BunNbom),$$ 
one singles out a full subcategory denoted 
\begin{equation} \label{e:semiinf global}
\SI_{q,\on{glob}}(G)^{\leq 0}\subset \Shv_{\CG^G}(\BunNbom),
\end{equation} 
by imposing equivariance with respect to a certain unipotent groupoid, see \cite[Sects. 4.4-4.7]{Ga9}.
The subcategory \eqref{e:semiinf global} is compatible with the (perverse) t-structure on $\Shv_{\CG^G}(\BunNbom)$. 

\medskip

We will give an explicit description of this subcategory below, see \secref{sss:descr strata glob}. 

\sssec{}

For $\lambda\in \Lambda^{\on{neg}}$, let 
$$(\BunNbom)^{\leq \lambda}\overset{\ol\bi{}^\lambda_{\on{glob}}}\hookrightarrow \BunNbom$$
be the closed substack corresponding to the locus where the generalized $B$ structure has total defect at least $-\lambda$. 
Let 
$$(\BunNbom)^{=\lambda}\overset{\bj^\lambda_{\on{glob}}}\hookrightarrow (\BunNbom)^{\leq \lambda}$$
denote the open substack, where we the generalized $B$ structure has total defect exactly $\lambda$. 
Denote $$\bi^\lambda_{\on{glob}}=\ol\bi{}^\lambda_{\on{glob}}\circ \bj^\lambda_{\on{glob}}.$$

\medskip

In particular, for $\lambda=0$, we have $\ol\bi{}^0_{\on{glob}}=\on{id}$, and
$$(\BunNbom)^{\leq 0}=\ol\Bun^{\omega^\rho}_N \text{ and } (\BunNbom)^{=0}=\Bun^{\omega^\rho}_N,$$
and the map $\bj^0_{\on{glob}}=\bi^0_{\on{glob}}$ is 
the open embedding
$$\Bun_N^{\omega^\rho}\overset{\bj_{\on{glob}}}\hookrightarrow \ol\Bun^{\omega^\rho}_N.$$

\medskip

For $\lambda\in \Lambda^{\on{neg}}-0$ we have a canonical isomorphism
$$(\BunNbom)^{=\lambda}\simeq \Bun_B\underset{\Bun_T}\times \Conf^\lambda,$$
where 
\begin{equation} 
\on{AJ}^{\omega^\rho}:\Conf\to \Bun_T, \quad \underset{k}\Sigma\, \lambda_k\cdot x_k\mapsto 
\omega^\rho(\underset{k}\Sigma\, \lambda_k\cdot x_k).
\end{equation} 

\sssec{}

We have the corresponding full subcategories
$$\SI_{q,\on{glob}}(G)^{\leq \lambda}\subset \Shv_{\CG^G}((\BunNbom)^{\leq \lambda})$$
and
$$\SI_{q,\on{glob}}(G)^{=\lambda}\subset \Shv_{\CG^G}((\BunNbom)^{=\lambda}),$$
and the corresponding pairs of adjoint functors
$$(\bj^\lambda_{\on{glob}})_!:\SI_{q,\on{glob}}(G)^{=\lambda}\rightleftarrows \SI_{q,\on{glob}}(G)^{\leq \lambda}:(\bj^\lambda_{\on{glob}})^!,$$
$$(\bj^\lambda_{\on{glob}})^*=(\bj^\lambda_{\on{glob}})^!:\SI_{q,\on{glob}}(G)^{\leq \lambda}
\rightleftarrows \SI_{q,\on{glob}}(G)^{=\lambda}:(\bj^\lambda_{\on{glob}})_*;$$
$$(\ol\bi{}^\lambda_{\on{glob}})_!=(\ol\bi^\lambda_{\on{glob}})_*:\SI_{q,\on{glob}}(G)^{\leq \lambda}\rightleftarrows 
\SI_{q,\on{glob}}(G)^{\leq 0}:(\ol\bi{}^\lambda_{\on{glob}})^!,$$
$$(\ol\bi{}^\lambda_{\on{glob}})^*:\SI_{q,\on{glob}}(G)^{\leq 0}\rightleftarrows \SI_{q,\on{glob}}(G)^{\leq \lambda}:
(\ol\bi{}^\lambda_{\on{glob}})_*.$$

\sssec{}  \label{sss:descr strata glob}

The full subcategory $\SI_{q,\on{glob}}(G)^{\leq 0}\subset \Shv_{\CG^G}(\BunNbom)$ is characterized as follows:

\medskip

An object $\CF\in \Shv_{\CG^G}(\BunNbom)$ belongs to $\SI_{q,\on{glob}}(G)^{\leq 0}$ if and only if each 
$(\bi^\lambda_{\on{glob}})^!(\CF)$ (or, equivalently $(\bi^\lambda_{\on{glob}})^*(\CF)$) 
belongs to the corresponding full subcategory
$$\SI_{q,\on{glob}}(G)^{=\lambda}\subset \Shv_{\CG^G}((\BunNbom)^{=\lambda}).$$

\medskip

In its turn, pullback along
$$(\BunNbom)^{=\lambda}\to \Conf^\lambda$$
is fully faithful, and its essential image is exactly $\SI_{q,\on{glob}}(G)^{=\lambda}$. 

\sssec{}

Consider the map
$$\pi_{\Ran}:\ol{S}^0_{\Ran}\to \BunNbom.$$

Note that it follows from \thmref{t:N contr} that the pullback functor
$$\pi_{\Ran}^!: \Shv_{\CG^G}(\BunNbom) \to \Shv_{\CG^G}(\ol{S}^0_\Ran)$$
is fully faithful.

\medskip

The following is a metaplectic analog of \cite[Corollary 3.5.7 and Theorem 4.3.2]{Ga7}:

\begin{thm} \label{t:local-to-global semiinf} 
The functor
$$\pi_{\Ran}^!: \Shv_{\CG^G}(\BunNbom) \to \Shv_{\CG^G}(\ol{S}^0_\Ran)$$
induces an equivalence between
$$\SI_{q,\on{glob}}(G)^{\leq 0}\subset \Shv_{\CG^G}(\BunNbom)$$
and
$$\SI_{q,\Ran}(G)^{\leq 0}_{\on{untl}}\subset \Shv_{\CG^G}(\ol{S}^0_\Ran).$$
Moreover, after applying the cohomological shift by 
$$d_g:=\dim(\Bun_N^{\omega^\rho})=(g-1)(d-\langle 2\check\rho,2\rho\rangle),\quad d=\dim(\fn),$$
it is t-exact. 
\end{thm}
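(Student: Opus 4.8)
\textbf{Proof proposal for \thmref{t:local-to-global semiinf}.}

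The plan is to mimic the strategy used for \thmref{t:local to global}, reducing the global-to-local comparison to the fully faithfulness result \thmref{t:N contr} together with the stratum-by-stratum descriptions already in place. First I would observe that fully faithfulness of $\pi_{\Ran}^!$ on all of $\Shv_{\CG^G}(\BunNbom)$ is \thmref{t:N contr} (applied in the non-marked setting of \secref{ss:non-marked Whit}, exactly as in \cite[Theorem 3.4.4]{Ga7}), so the only thing to prove is that $\pi_{\Ran}^!$ carries $\SI_{q,\on{glob}}(G)^{\leq 0}$ \emph{onto} $\SI_{q,\Ran}(G)^{\leq 0}_{\on{untl}}$. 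For this I would compare the recollement data on the two sides: the strata $(\BunNbom)^{=\lambda}$ and $S^\lambda_\Ran$ sit in compatible Cartesian-type diagrams over $\Conf^\lambda$, via the maps $\ol\bi{}^\lambda_{\on{glob}}, \bj^\lambda_{\on{glob}}$ on the global side and $\ol\bi{}^\lambda_\Ran, \bj^\lambda_\Ran$ on the Ran side, and $\pi_\Ran$ intertwines them (this is where one uses that $\pi_\Ran$ restricted over $\Conf^\lambda$ factors through $(\BunNbom)^{=\lambda}\to \Conf^\lambda$). By \secref{sss:descr strata glob}, $\SI_{q,\on{glob}}(G)^{=\lambda}$ is the essential image of $\Shv_{\CG^\Lambda}(\Conf^\lambda)$ under pullback along $(\BunNbom)^{=\lambda}\to \Conf^\lambda$; by \corref{c:unital semiinf on stratum}, $\SI_{q,\Ran}(G)^{=\lambda}_{\on{untl}}$ is the essential image of $\Shv_{\CG^\Lambda}(\Conf^\lambda)$ under pullback along $\on{pr}^\lambda_\Ran\circ p^\lambda_\Ran$. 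Thus on each stratum $\pi_\Ran^!$ induces literally the identity functor on $\Shv_{\CG^\Lambda}(\Conf^\lambda)$ (up to canonical identification), and is in particular an equivalence $\SI_{q,\on{glob}}(G)^{=\lambda}\to \SI_{q,\Ran}(G)^{=\lambda}_{\on{untl}}$.

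Next I would bootstrap from the strata to the whole category. Since an object of $\Shv_{\CG^G}(\BunNbom)$ lies in $\SI_{q,\on{glob}}(G)^{\leq 0}$ iff all its $!$-restrictions $(\bi^\lambda_{\on{glob}})^!$ land in $\SI_{q,\on{glob}}(G)^{=\lambda}$, and the corresponding characterization (via $(\bi^\lambda_\Ran)^!$, using the recollement set up in \secref{s:semiinf categ}) holds on the Ran side for the unital subcategory, the commutation of $\pi_\Ran^!$ with the $!$-restriction functors shows that $\pi_\Ran^!$ maps $\SI_{q,\on{glob}}(G)^{\leq 0}$ into $\SI_{q,\Ran}(G)^{\leq 0}_{\on{untl}}$. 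For essential surjectivity I would argue by induction on the "length" of $\lambda$ (i.e. by exhausting $\ol{S}^0_\Ran$ by the closed pieces $\ol{S}^\lambda_\Ran$ as in \lemref{l:decomp S-}): given $\CF\in \SI_{q,\Ran}(G)^{\leq 0}_{\on{untl}}$, its top stratum restriction $(\bi^0_\Ran)^!\CF = (\bj_\Ran)^!\CF$ comes from a global object $\CF_0 \in \SI_{q,\on{glob}}(G)^{=0}$, and the cone of $\pi_\Ran^!(\bj_{\on{glob}})_!\CF_0 \to \CF$ is supported on $\bigcup_{\lambda\neq 0}\ol{S}^\lambda_\Ran$; iterating (using that $\pi_\Ran^!$ commutes with the $(\ol\bi^\lambda_{\on{glob}})_*=(\ol\bi^\lambda_{\on{glob}})_!$ and with the $(\bj^\lambda_{\on{glob}})_*$, which follow from properness of $\ol\bi{}^\lambda_{\on{glob}}$ and affineness of the relevant opens exactly as in \cite[Sect. 3]{Ga7}) and passing to the colimit over the exhaustion yields a global object mapping to $\CF$, isomorphic by fully faithfulness. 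Alternatively — and this is probably cleaner — one checks that the adjoint $\pi_{\Ran,*}$ of $\pi_\Ran^!$, composed with the averaging onto the semi-infinite subcategory, provides a quasi-inverse, which is the route taken in \cite{Ga7}.

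Finally, the t-exactness statement after shift by $d_g$ is formal once the equivalence is established: the t-structure on $\SI_{q,\Ran}(G)^{\leq 0}_{\on{untl}}$ was \emph{defined} in \secref{ss:t-str semiinf} via the stratum categories $\SI_{q,\Ran}(G)^{=\lambda}_{\on{untl}}$, each identified with $\Shv_{\CG^\Lambda}(\Conf^\lambda)$ up to the shift $[\langle\lambda,2\check\rho\rangle]$, and declaring coconnectivity to be detected by $(\bi^\lambda_\Ran)^!$; the global side carries the perverse t-structure on $\Shv_{\CG^G}(\BunNbom)$, whose restriction to each $(\BunNbom)^{=\lambda}$ (a smooth stack of dimension $d_g - \langle\lambda,2\check\rho\rangle$ over $\Conf^\lambda$, with the $\Conf$-directions smooth of relative dimension $\langle\lambda,2\check\rho\rangle$ after accounting for $\Bun_B \to \Bun_T$) matches the $\Conf^\lambda$ t-structure up to the shift $d_g$. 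Since $\pi_\Ran^!$ induces the identity on each $\Shv_{\CG^\Lambda}(\Conf^\lambda)$ and both t-structures are glued from the same stratum data via $!$-restrictions, the functor $\pi_\Ran^![d_g]$ is t-exact; one must also invoke that on the global side the perverse t-structure is indeed glued from the strata in the expected way, which is \cite[Theorem 4.3.2]{Ga7} in the non-twisted case and goes through verbatim here.

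\textbf{Main obstacle.} The essential-surjectivity step — showing every unital Ran-side object descends from a global one — is the crux; controlling the infinite colimit over the exhaustion by $\ol{S}^\lambda_\Ran$ and checking that $\pi_\Ran^!$ commutes with the relevant $!$- and $*$-pushforwards along the stratum inclusions (which rests on the properness of $\ol\bi{}^\lambda_{\on{glob}}$, $\ol\bi{}^\lambda_\Ran$ and the contractibility/base-change results of \secref{ss:semiinf categ}) is the delicate part. Everything else is a transcription of \cite[Sect. 3--4]{Ga7} into the metaplectic setting, where the gerbe $\CG^G$ enters only through the already-established \propref{p:gerbe on stratum} identifying its restriction to strata with the pullback of $\CG^\Lambda$.
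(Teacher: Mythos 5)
Your proposal is essentially correct and reconstructs the same architecture that the paper intends: fully faithfulness via \thmref{t:N contr} (in the non-marked form), reduction to a stratum-by-stratum comparison using \secref{sss:descr strata glob} on the global side and \corref{c:unital semiinf on stratum} on the Ran side, and t-exactness read off from those stratum identifications — which is exactly what the cited \cite[Corollary 3.5.7 and Theorem 4.3.2]{Ga7} establish in the non-twisted case, with \propref{p:gerbe on stratum} supplying the only metaplectic input.

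The one place where I would press you is the essential surjectivity bootstrap. Your inductive peeling argument starts from the open stratum $\lambda=0$ and moves outward through $\bigcup_{\lambda\neq 0}\ol{S}^\lambda_\Ran$; since $\Lambda^{\on{neg}}$ is infinite, this does not terminate in finitely many steps, and you must then justify that the limiting object assembled from the (correctly commuting) pushforward and pullback functors actually recovers $\CF$. You flag this yourself as the main obstacle, and the alternative route you mention — showing directly that $\pi_{\Ran,*}$ composed with the averaging onto the semi-infinite subcategory produces a quasi-inverse — is the cleaner implementation and is what the argument in \cite{Ga7} ultimately relies on. A secondary minor point: the phrase that $\pi_\Ran$ "restricted over $\Conf^\lambda$ factors through $(\BunNbom)^{=\lambda}\to\Conf^\lambda$" should be read as saying that $\pi_\Ran$ carries $S^\lambda_\Ran$ into $(\BunNbom)^{=\lambda}$ compatibly with both projections to $\Conf^\lambda$; as written it could be misread, but the intended Cartesian comparison of recollement data is the right one. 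With the $\pi_{\Ran,*}$ route fleshed out in place of the induction, the proof is complete.
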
 

\sssec{}

Let $\ICs_{q,\on{glob}}\in  (\Shv_{\CG^G}(\BunNbom))^\heartsuit$ be the IC-extension (in the category of $\CG^G$-twisted sheaves)
of the constant perverse sheaf on $\Bun_N^{\omega^\rho}$. From \thmref{t:local-to-global semiinf} we obtain:

\begin{cor}  \label{c:ICs}
We have a unique isomorphism $(\pi_{\Ran})^!(\ICs_{q,\on{glob}})[d_g]\simeq \ICs_{q,\Ran}$
extending the tautological identification over $S^0_{\Ran}$.
\end{cor}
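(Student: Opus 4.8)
This is Corollary~\ref{c:ICs}, asserting that the pullback of the global IC sheaf $\ICs_{q,\on{glob}}$ along $\pi_\Ran$, shifted by $d_g$, recovers $\ICs_{q,\Ran}$, compatibly with the tautological identifications over $S^0_\Ran$. The plan is to deduce this formally from \thmref{t:local-to-global semiinf}, which says that $\pi_\Ran^!$ restricts to a t-exact (after the shift $[d_g]$) equivalence
$$\SI_{q,\on{glob}}(G)^{\leq 0}\xrightarrow{\ \sim\ }\SI_{q,\Ran}(G)^{\leq 0}_{\on{untl}}.$$
Since both $\ICs_{q,\on{glob}}$ and $\ICs_{q,\Ran}$ are defined as Goresky--MacPherson (minimal) extensions, and a t-exact equivalence of categories carries minimal extensions to minimal extensions, the statement should follow once we match up the data on the open stratum.

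First I would recall that $\ICs_{q,\on{glob}}$ is, by definition, the IC-extension inside $(\Shv_{\CG^G}(\BunNbom))^\heartsuit$ of the constant perverse sheaf on the open substack $\Bun_N^{\omega^\rho}=(\BunNbom)^{=0}$, while $\ICs_{q,\Ran}$ is, by \secref{sss:IC semiinf}, the image of $H^0((\bj_\Ran)_!(\omega_{S^0_\Ran}))\to H^0((\bj_\Ran)_*(\omega_{S^0_\Ran}))$ in $(\SI_{q,\Ran}(G)^{\leq 0}_{\on{untl}})^\heartsuit$, where the t-structure is the one of \secref{sss:t-str semiinf}. Next I would observe that $\pi_\Ran$ restricts over the open stratum to the map $S^0_\Ran\to \Bun_N^{\omega^\rho}$, and that $\pi_\Ran^!$ of the constant perverse sheaf on $\Bun_N^{\omega^\rho}$, shifted by $[d_g]$, is canonically $\omega_{S^0_\Ran}$ (this is the normalization underlying \thmref{t:local-to-global semiinf}; $d_g=\dim(\Bun_N^{\omega^\rho})$ is exactly the shift relating the constant sheaf to the dualizing sheaf). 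So $\pi_\Ran^![d_g]$ intertwines $(\bj^0_{\on{glob}})^!$ and $(\bj_\Ran)^!$ and matches the generator of $\SI_{q,\on{glob}}(G)^{=0}$ with $\omega_{S^0_\Ran}\in \SI_{q,\Ran}(G)^{=0}_{\on{untl}}$.

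Then the argument is: apply the shifted equivalence $\pi_\Ran^![d_g]$ to $\ICs_{q,\on{glob}}$. Because $\pi_\Ran^![d_g]$ is t-exact for the relevant t-structures (\thmref{t:local-to-global semiinf}) and an equivalence, it commutes with the truncation functors $H^0$ and with the adjunctions $(\bj^0)_!\dashv (\bj^0)^!\dashv (\bj^0)_*$ on both the global and Ran sides (compatibility of $\pi_\Ran^!$ with these adjunctions follows since $\pi_\Ran$ is fully faithful on sheaves, by \thmref{t:N contr}, and the open embeddings correspond under the equivalence). Hence $\pi_\Ran^![d_g]$ sends the image of $H^0((\bj^0_{\on{glob}})_!(\text{const}))\to H^0((\bj^0_{\on{glob}})_*(\text{const}))$ to the image of $H^0((\bj_\Ran)_!(\omega_{S^0_\Ran}))\to H^0((\bj_\Ran)_*(\omega_{S^0_\Ran}))$, i.e. $\pi_\Ran^!(\ICs_{q,\on{glob}})[d_g]\simeq \ICs_{q,\Ran}$. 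Uniqueness of the isomorphism extending the tautological one over $S^0_\Ran$ follows because the minimal extension has no nonzero maps to or from objects supported on the complementary strata, so the space of automorphisms inducing the identity on the open part is trivial; concretely, $\Hom$ in the heart between $\ICs_{q,\Ran}$ and itself that restrict to zero on $S^0_\Ran$ vanishes, since $\ICs_{q,\Ran}$ has no sub or quotient supported on $\bigcup_{\lambda<0}S^\lambda_\Ran$.

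The main obstacle is purely the bookkeeping in the previous step: verifying that $\pi_\Ran^![d_g]$, which a priori is only known to be a t-exact equivalence of the ambient categories $\SI^{\leq 0}$, genuinely intertwines the stratumwise recollement data ($\bj^0_!$, $\bj^0_*$, and the resulting $H^0$-truncations) on the global and local sides. This requires knowing that under the equivalence of \thmref{t:local-to-global semiinf} the locally closed stratification of $\BunNbom$ by $(\BunNbom)^{=\lambda}$ corresponds to that of $\ol{S}^0_\Ran$ by $S^\lambda_\Ran$, compatibly with the identifications $(\BunNbom)^{=\lambda}\to\Conf^\lambda$ of \secref{sss:descr strata glob} and \corref{c:unital semiinf on stratum}; granting this (which is built into the proof of \thmref{t:local-to-global semiinf}, following \cite[Corollary 3.5.7, Theorem 4.3.2]{Ga7}), everything else is formal. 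I would therefore present the corollary's proof as a short deduction, citing \thmref{t:local-to-global semiinf} and \thmref{t:N contr}, and noting that the normalization shift $d_g$ is exactly the one that turns the constant perverse sheaf into the dualizing sheaf over the open stratum.
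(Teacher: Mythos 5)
Your proposal is correct and is precisely the deduction the paper intends: the paper states the corollary immediately after \thmref{t:local-to-global semiinf} with no further argument, and your reconstruction (a t-exact equivalence sends minimal extensions to minimal extensions once the open-stratum data are matched, plus the standard uniqueness for minimal extensions) is exactly how this is meant to follow. The one small point worth noting is that $\ICs_{q,\on{glob}}$ is defined as the IC-extension in the full heart $(\Shv_{\CG^G}(\BunNbom))^\heartsuit$, whereas the equivalence of \thmref{t:local-to-global semiinf} lives on the subcategory $\SI_{q,\on{glob}}(G)^{\leq 0}$; one needs that the IC-extension lands in that subcategory and agrees with the minimal extension computed there, which holds because the truncation and $\bj_!,\bj_*$ functors preserve the equivariance condition cutting out the subcategory — your phrasing via the image of $H^0((\bj^0_{\on{glob}})_!)\to H^0((\bj^0_{\on{glob}})_*)$ handles this implicitly.
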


\section{Torus equivariance and the Hecke property of the semi-infinite IC sheaf}  \label{s:Hecke ICs}

The goal of this section is two-fold.
First, we will introduce a twisting construction, which will allow us to put $\ICs_{q,\Ran}$ on twisted versions 
of $\ol{S}^0_\Ran$. Second, we will study the Hecke eigen-property of $\ICs_{q,\Ran}$, which is a crucial
ingredient for the Hecke enhancement of the Jacquet functor. 

\ssec{Adding $T$-equivariance}

In this subsection we will show that $\ICs_{q,\Ran}$ has a natural structure of equivariance with respect to the group
$\fL^+(T)_{\Ran}$. 

\sssec{}

The constructions in \secref{s:semiinf categ} all have a variant when we replace the \emph{condition} of
equivariance with respect to $\fL(N)_\Ran^{\omega^\rho}$ by the \emph{structure} of equivariance with
respect to $\fL(N)_\Ran^{\omega^\rho}\cdot \fL^+(T)_\Ran$. 

\medskip

Thus, we obtain a full subcategory
$$(\SI_{q,\Ran}(G))^{\fL^+(T)_\Ran}:=\Shv_{\CG^G}(\Gr^{\omega^\rho}_{G,\Ran})^{\fL(N)_\Ran^{\omega^\rho}\cdot \fL^+(T)_\Ran}
\subset \Shv_{\CG^G}(\Gr^{\omega^\rho}_{G,\Ran})^{\fL^+(T)_\Ran},$$
along with its variant
$$(\SI_{q,\Ran}(G)^{\leq 0})^{\fL^+(T)_\Ran}:= \Shv_{\CG^G}(\ol{S}^0_\Ran)^{\fL(N)_\Ran^{\omega^\rho}\cdot \fL^+(T)_\Ran}\subset 
\Shv_{\CG^G}(\ol{S}^0_\Ran)^{\fL^+(T)_\Ran}$$ as well as
$$(\SI_{q,\Ran}(G)^{\leq \lambda})^{\fL^+(T)_\Ran}:=
\Shv_{\CG^G}(\ol{S}^\lambda_\Ran)^{\fL(N)_\Ran^{\omega^\rho}\cdot \fL^+(T)_\Ran}\subset 
\Shv_{\CG^G}(\ol{S}^\lambda_\Ran)^{\fL^+(T)_\Ran}$$
and
$$(\SI_{q,\Ran}(G)^{= \lambda})^{\fL^+(T)_\Ran}=\Shv_{\CG^G}(S^\lambda_\Ran)^{\fL(N)_\Ran^{\omega^\rho}\cdot \fL^+(T)_\Ran}
\subset \Shv_{\CG^G}(S^\lambda_\Ran)^{\fL^+(T)_\Ran}.$$

\sssec{}

For future use we note:

\begin{lem} \label{l:non sharp strata bis}
The category $(\SI_{q,\Ran}(G)^{=\lambda})^{\fL^+(T)_\Ran}$ is zero unless $\lambda\in \Lambda^\sharp$.
\end{lem}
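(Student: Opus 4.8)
The statement to prove is \lemref{l:non sharp strata bis}: the category $(\SI_{q,\Ran}(G)^{=\lambda})^{\fL^+(T)_\Ran}$ vanishes unless $\lambda\in \Lambda^\sharp$. The plan is to reduce this to \lemref{l:non sharp strata bis}'s non-equivariant analog together with the explicit description of the stratum category in \corref{c:unital semiinf on stratum}, and then to analyze the extra piece of structure coming from the $\fL^+(T)_\Ran$-action via the gerbe $\CG^\Lambda$ on $\Conf^\lambda$.

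First I would use the identification (from \lemref{l:semiinf strata Ran}, or better its unital refinement \corref{c:unital semiinf on stratum}) that $\SI_{q,\Ran}(G)^{=\lambda}_{\on{untl}}$ is equivalent, via pullback along $\on{pr}^\lambda_\Ran\circ p^\lambda_\Ran$, to $\Shv_{\CG^\Lambda}(\Conf^\lambda)$. I would then observe that adding $\fL^+(T)_\Ran$-equivariance on the $\SI$ side corresponds, under this equivalence, to imposing an equivariance condition coming from the residual torus action on $S^\lambda_\Ran$; concretely, the fiber of $S^\lambda_\Ran$ over a point of $\Conf^\lambda$ is an $\fL(N)^{\omega^\rho}$-orbit on which $\fL^+(T)_\Ran$ acts through its reductive quotient $T$, and an object of $\Shv_{\CG^G}(S^\lambda_\Ran)$ that is already $\fL(N)^{\omega^\rho}_\Ran$-equivariant carries a residual $T$-twisted-equivariant structure against a \emph{Kummer} local system on $T$ determined by the gerbe. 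This is exactly the computation recalled in the proof of \thmref{t:restr} (via \cite[Sect. 7.4.2]{GLys}): the $\fL(N)^{\omega^\rho}$-equivariant trivialization of $\CG^G|_{S^\eta}$ is $T$-twisted equivariant against the Kummer local system attached to the character $b(\eta,-):\Lambda\to \sfe^\times(-1)$. Here $\eta$ is the ``value of the divisor at the marked locus'', i.e. the relevant coweight controlling the stratum; in the stratum $S^\lambda_\Ran$ with $\lambda\in\Lambda^{\on{neg}}$ the relevant such character is $b(\lambda,-)$.

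The key step, then, is: for an object of $\Shv_{\CG^G}(S^\lambda_\Ran)$ to admit a genuine (untwisted) $T$-equivariant structure — which is what $\fL^+(T)_\Ran$-equivariance on top of $\fL(N)^{\omega^\rho}_\Ran$-equivariance demands — the Kummer local system on $T$ corresponding to $b(\lambda,-)$ must be trivial; and this Kummer local system is trivial precisely when $b(\lambda,-)=0$ as a homomorphism $\Lambda\to\sfe^\times(-1)$, i.e. precisely when $\lambda\in\ker(b)=\Lambda^\sharp$ (see \secref{sss:sharp lattice}). So if $\lambda\notin\Lambda^\sharp$ there is no nonzero object, giving the claimed vanishing. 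I would phrase this cleanly by noting the parallel with \lemref{l:non sharp strata}, whose proof is promised in \secref{sss:proof of non-sharp} and rests on the same mechanism: the $!$-restriction of $\ICs_{q,\Ran}$ to $S^\lambda_\Ran$ lives in the non-equivariant stratum category, and the $\fL^+(T)_\Ran$-equivariant version is a full subcategory cut out by the triviality-of-Kummer-twist condition.

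The main obstacle I anticipate is bookkeeping the equivariance structures precisely: one must be careful that $\fL^+(T)_\Ran$ acts on $S^\lambda_\Ran$ through a quotient that, on each fiber over $\Conf^\lambda$, is the constant torus $T$, and that the relevant character is indeed $b(\lambda,-)$ and not some shift of it by $\rho$ or by the twist $\omega^\rho$ — the $\omega^\rho$-twisting affects the gerbe's fiber (cf. \eqref{e:expl shifted gerbe vac}) but, crucially, the \emph{monodromy} of the $T$-action on the trivialization is governed by $b$ alone, since $b$ is the obstruction and $\rho$-type shifts contribute only integer (hence trivial-monodromy) factors. Once this is pinned down, the argument is short; the substance is entirely in the gerbe-theoretic input from \cite[Sect. 7.4.2]{GLys}, which we are entitled to quote.
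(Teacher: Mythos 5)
Your proposal is essentially correct and uses the same mechanism as the paper: the $\fL^+(T)$-equivariance structure on the gerbe over the stratum $S^\lambda$ is controlled by a Kummer character sheaf coming from $b(\lambda,-)$ (via \cite[Sect. 7.4]{GLys}), and the category can be nonzero only when this character is trivial, i.e.\ $\lambda\in\Lambda^\sharp$.

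Two small points of comparison. First, the paper's proof is more economical in reaching the Kummer character: it reduces by factorization to a single point $x$ and then restricts to the unique $T$-fixed point $t^\lambda\in S^\lambda$, where $\SI_{q,x}(G)_{=\lambda}\simeq \Shv_{\CG^G}(\{t^\lambda\})$ and the $\fL^+(T)_x$-equivariance structure on the gerbe at $t^\lambda$ is literally a character sheaf on $\fL^+(T)_x$; there is no need to invoke the stratum identification with sheaves on $\Conf^\lambda$. Second, be careful with your appeal to \corref{c:unital semiinf on stratum}: that statement concerns the unital subcategory $\SI_{q,\Ran}(G)^{=\lambda}_{\on{untl}}$, whereas the lemma asserts vanishing of the \emph{full} stratum category $(\SI_{q,\Ran}(G)^{=\lambda})^{\fL^+(T)_\Ran}$; one should use \lemref{l:semiinf strata Ran} (or simply factorize and restrict to $t^\lambda$ as the paper does), since vanishing of a full subcategory does not by itself give vanishing of the ambient one.
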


\begin{proof}

The proof follows by analyzing the stabilizers:

\medskip

By factorization, we may consider the case of a single point $x\in X$, i.e., we have to show that the category
$(\SI_{q,x}(G)_{=\lambda})^{\fL^+(T)_x}$ is zero unless $\lambda\in \Lambda^\sharp$.

\medskip

Consider the (unique) $T$-fixed point $t^\lambda \in S^\lambda \subset \Gr^{\omega^\rho}_{G,x}$. 
Restriction defines an equivalence
$$\SI_{q,x}(G)_{=\lambda}\to \Shv_{\CG^G}(\{t^\lambda\}),$$
and hence an equivalence
$$(\SI_{q,x}(G)_{= \lambda})^{\fL^+(T)_x}\simeq \Shv_{\CG^G}(\{t^\lambda\})^{\fL^+(T)_x}.$$

The $\fL^+(T)_x$-equivariance structure on the gerbe $\CG^G|_{\{t^\lambda\}}$ corresponds to
a character sheaf on $\fL^+(T)_x$, and we have to show that this character sheaf is non-trivial if
$\lambda\notin \Lambda^\sharp$.

\medskip 

Now, by \cite[Sect. 7.4]{GLys}, the above character sheaf is described as follows: it is the pullback
along $\fL^+(T)_x\to T$ of the character sheaf on $T$ arising by Kummer theory from the map
$$b(\lambda,-):\Lambda\to \sfe(-1).$$
The triviality of the latter means that $\lambda\in \Lambda^\sharp$. 

\end{proof}

\sssec{}

We also have the corresponding unital variants:
$$(\SI_{q,\Ran}(G)^{\leq 0}_{\on{untl}})^{\fL^+(T)_\Ran}\subset (\SI_{q,\Ran}(G)^{\leq 0})^{\fL^+(T)_\Ran},$$
$$(\SI_{q,\Ran}(G)^{\leq \lambda}_{\on{untl}})^{\fL^+(T)_\Ran}\subset (\SI_{q,\Ran}(G)^{\leq \lambda})^{\fL^+(T)_\Ran},$$
$$(\SI_{q,\Ran}(G)^{= \lambda}_{\on{untl}})^{\fL^+(T)_\Ran}\subset (\SI_{q,\Ran}(G)^{=\lambda})^{\fL^+(T)_\Ran}.$$ 

Note that the corresponding variant of \corref{c:unital semiinf on stratum} says that pullback with respect to 
$\on{pr}^\lambda_\Ran\circ p^\lambda_\Ran$ defines an equivalence
$$\Shv_{\CG^\Lambda}(\Conf^\lambda)^{\fL^+(T)_{\Conf^\lambda}}\to (\SI_{q,\Ran}(G)^{= \lambda}_{\on{untl}})^{\fL^+(T)_\Ran},$$
where $\fL^+(T)_{\Conf^\lambda}$ acts trivially on its base $\Conf^\lambda$, but the equivariance structure for the gerbe $\CG^\Lambda$
is non-trivial. 

\medskip

In particular, the recipe of \secref{ss:t-str semiinf} defines a t-structure on $(\SI_{q,\Ran}(G)^{\leq 0}_{\on{untl}})^{\fL^+(T)_\Ran}$
so that the forgetful functor
$$(\SI_{q,\Ran}(G)^{\leq 0}_{\on{untl}})^{\fL^+(T)_\Ran}\to \SI_{q,\Ran}(G)^{\leq 0}_{\on{untl}}$$
is t-exact, and its restriction to the hearts is fully faithful.

\sssec{}  \label{sss:proof of non-sharp}

Thus, we obtain that $\ICs_{q,\Ran}$ naturally upgrades to an object of $(\SI_{q,\Ran}(G)^{\leq 0}_{\on{untl}})^{\fL^+(T)_\Ran}$. 

\medskip

In particular, we note that the assertion of \lemref{l:non sharp strata} follows from \lemref{l:non sharp strata bis}. 

\sssec{}

Note that the colimit \eqref{e:ICs as colim} is naturally an object of
$$(\SI_{q,x}(G))^{\fL^+(T)_x}:=\Shv_{\CG^G}(\Gr^{\omega^\rho}_{G,x})^{\fL(N)^{\omega^\rho}_x\cdot \fL^+(T)_x}.$$

Thus, by \thmref{t:fiber of Ics}, we obtain that \eqref{e:ICs as colim} gives a description of 
$$\ICs_{q,x}:=\ICs_{q,\Ran}|_{\Gr^{\omega^\rho}_{G,x}}$$
as an object of $(\SI_{q,x}(G))^{\fL^+(T)_x}$. 

\sssec{}

Consider the category $\Shv_{\CG^G}(\ol{S}^0_{\Ran})^{\fL^+(T)_\Ran}$. Thinking of it as
$$\Shv_{\CG^G}(\fL^+(T)_\Ran\backslash \ol{S}^0_{\Ran}),$$
we can talk about factorization algebras in $\Shv_{\CG^G}(\ol{S}^0_{\Ran})^{\fL^+(T)_\Ran}$.

\medskip

The next result is be proved along with \thmref{t:fact ICs}:

\begin{thm} \label{t:fact ICs equiv}
There exists a factorization structure on $\ICs_{q,\Ran}$ thought of as an object of $(\SI_{q,\Ran}(G)^{\leq 0}_{\on{untl}})^{\fL^+(T)_\Ran}$,
uniquely characterized by the requirement that it gives rise to the 
factorization structure on $\ICs_{q,\Ran}$, thought of as an object of $\SI_{q,\Ran}(G)^{\leq 0}_{\on{untl}}$,
specified by \thmref{t:fact ICs}. 
\end{thm}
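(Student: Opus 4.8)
\textbf{Proof proposal for \thmref{t:fact ICs equiv}.}
The plan is to upgrade the factorization structure on $\ICs_{q,\Ran}$ of \thmref{t:fact ICs} to the $\fL^+(T)_\Ran$-equivariant context, exploiting the fact that the $T$-equivariance structure is essentially ``free'' — it comes from the $\fL^+(T)_\Ran$-action on $\ol{S}^0_\Ran$ together with the equivariance structure on $\CG^G$ — so that imposing it is a fully faithful operation compatible with all the geometry used in \thmref{t:fact ICs}. First I would observe that the factorization isomorphisms of $\ol{S}^0_\Ran$ in \eqref{e:fact S0} are compatible with the $\fL^+(T)_\Ran$-action: the action on $\ol{S}^0_\Ran\underset{\Ran}\times(\Ran^J)_{\on{disj}}$ matches, under \eqref{e:fact S0}, the diagonal action of $\fL^+(T)_\Ran$ pulled back along the \'etale map $(\Ran^J)_{\on{disj}}\to \Ran$, which by \eqref{e:factorization disk} decomposes as the product of the $\fL^+(T)_\Ran$-actions on the factors. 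Hence the whole Cartesian-fibration formalism of \secref{sss:fact alg} goes through verbatim for the quotients $\fL^+(T)_\Ran\backslash\ol{S}^0_\Ran$, giving a well-defined notion of factorization algebra in $\Shv_{\CG^G}(\ol{S}^0_{\Ran})^{\fL^+(T)_\Ran}$, together with a forgetful functor to factorization algebras in $\Shv_{\CG^G}(\ol{S}^0_\Ran)$.

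Next I would run the argument of \cite[Sect. 4.6]{Ga7}, as recalled in the remarks after \thmref{t:fact ICs}, in the equivariant setting. The key point there is that both sides of the factorization isomorphism \eqref{e:factor IC} are Goresky-MacPherson extensions, for the t-structure of \secref{sss:t-str semiinf}, of their restrictions to $S^0_\Ran\underset{\Ran}\times(\Ran^J)_{\on{disj}}$, where the isomorphism is the tautological one $\omega\boxtimes\cdots\boxtimes\omega\simeq\omega$. In the $\fL^+(T)_\Ran$-equivariant world the analogous statement holds: by the equivariant version of \corref{c:unital semiinf on stratum} (recorded in \secref{ss:t-str semiinf}), the category $(\SI_{q,\Ran}(G)^{\leq 0}_{\on{untl}})^{\fL^+(T)_\Ran}$ carries a t-structure for which the forgetful functor is t-exact and fully faithful on hearts; since $\ICs_{q,\Ran}$ lies in the heart on both sides, the intermediate-extension characterization and the uniqueness of the factorization isomorphism descend to the equivariant category. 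Because the forgetful functor to $\SI_{q,\Ran}(G)^{\leq 0}_{\on{untl}}$ is faithful and sends the equivariant intermediate extension to the non-equivariant one, the resulting equivariant factorization structure is automatically compatible with the one of \thmref{t:fact ICs}, and this compatibility pins it down uniquely — giving exactly the asserted characterization.

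The main obstacle I anticipate is purely homotopy-theoretic bookkeeping: producing a \emph{coherent} system of equivariant factorization isomorphisms (a section of an equivariant Cartesian fibration over $\on{fSet}^{\on{surj}}$), rather than just a compatible family of isomorphisms for each fixed $J$. As in \cite[Sect. 4.6]{Ga7}, the device that disposes of this is unitality: the unital subcategory $(\SI_{q,\Ran}(G)^{\leq 0}_{\on{untl}})^{\fL^+(T)_\Ran}$, restricted to appropriate connected pieces, embeds fully faithfully into honest \emph{abelian} categories (those of $\CG^\Lambda$-twisted, $\fL^+(T)$-equivariant perverse sheaves on the $\Conf^\lambda$'s, via the equivariant \corref{c:unital semiinf on stratum}), and inside an abelian category there is no room for higher coherence data — a compatible family of isomorphisms in the $1$-category is the same as a coherent one. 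So the argument reduces to checking, stratum by stratum using \lemref{l:non sharp strata bis} to discard the non-$\Lambda^\sharp$ strata, that the equivariant intermediate extensions match; this is the only place real work is needed, and it is a direct equivariant transcription of \cite[Sect. 4.6]{Ga7}. Finally I would remark that the same proof simultaneously yields \thmref{t:fact ICs}, since the non-equivariant statement is obtained by applying the (t-exact) forgetful functor.
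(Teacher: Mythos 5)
Your proposal is correct and follows essentially the same route as the paper: the paper merely states that \thmref{t:fact ICs equiv} ``is proved along with'' \thmref{t:fact ICs}, and the remarks after \thmref{t:fact ICs} indicate exactly the two ingredients you identify — both sides of the factorization isomorphism being Goresky-MacPherson extensions in the appropriate t-structure, and the use of unitality to place everything inside abelian categories so that homotopy-coherence is automatic. Your observation that the argument simultaneously produces the non-equivariant statement by applying the forgetful functor is also the paper's implicit point; the only minor quibble is that the ``stratum-by-stratum'' matching step you flag as the real work is already subsumed by the uniqueness of GM extension once the open-stratum identification is fixed (and the appeal to \lemref{l:non sharp strata bis} there is harmless but not essential).
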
 

\ssec{Hecke action on the semi-infinite category}  

In this subsection we will explore a key structure possessed by $\ICs_{q,\Ran}$, namely, that it is a Hecke eigensheaf. 

\sssec{}

Let us perform the base change for all the objects involved with respect to the forgetful map
$$\Ran_x\to \Ran.$$

\medskip

Consider the resulting category
$$(\SI_{q,\Ran_x}(G))^{\fL^+(T)_{\Ran_x}}=\Shv_{\CG^G}(\Gr^{\omega^\rho}_{G,\Ran_x})^{\fL(N)_\Ran^{\omega^\rho}\cdot \fL^+(T)_{\Ran_x}},$$
and the object
$$\ICs_{q,\Ran_x}\in (\SI_{q,\Ran_x}(G))^{\fL^+(T)_{\Ran_x}},$$
equal to the !-pullback of $\ICs_{q,\Ran}$ along the map $\Gr^{\omega^\rho}_{G,\Ran_x}\to \Gr^{\omega^\rho}_{G,\Ran}$. 

\medskip

Note that $(\SI_{q,\Ran_x}(G))^{\fL^+(T)_{\Ran_x}}$ is acted on naturally on the right by 
$$\Shv_{\CG^G}(\Gr^{\omega^\rho}_{G,\Ran_x})^{\fL^+(G)_{\Ran_x}},$$
and on the left by 
$$\Shv_{\CG^T}(\Gr^{\omega^\rho}_{T,\Ran_x})^{\fL^+(T)_{\Ran_x}}.$$

\sssec{} 

Recall the groupoid $\on{Hecke}^{\on{loc}}_{G,x}$ acting on $\fL^+(G)_{\Ran_x}\backslash \Ran_x$,
see \secref{sss:restr Hecke}. Consider the resulting monoidal functor 
\begin{equation} \label{e:Hecke G action first}
\Sph_{q,x}(G):=\Shv_{\CG^G}(\Gr^{\omega^\rho}_{G,x})^{\fL^+(G)_x^{\omega^\rho}}\to 
\Shv_{\CG^G}(\Gr^{\omega^\rho}_{G,\Ran_x})^{\fL^+(G)_{\Ran_x}}.
\end{equation} 

\medskip

Composing with the geometric Satake functor
$$\Sat_{q,G}:\Rep(H)\to \Sph_{q,x}(G)$$
we obtain that $(\SI_{q,\Ran_x}(G))^{\fL^+(T)_{\Ran_x}}$ is acted on by $\Rep(H)$ (on the right). 

\sssec{}  \label{sss:Sat'}

Similarly, we obtain a monoidal functor 
\begin{equation} \label{e:Hecke T action first}
\Sph_{q,x}(T):=\Shv_{\CG^T}(\Gr^{\omega^\rho}_{T,x})^{\fL^+(T)_x}\to \Shv_{\CG^T}(\Gr^{\omega^\rho}_{T,\Ran_x})^{\fL^+(T)_{\Ran_x}}.
\end{equation} 

\medskip

In what follows we will replace the usual geometric Satake functor for $T$
$$\Sat_{q,T}:\Rep(T_H)\to \Sph_{q,x}(T),$$
by its variant that takes into account a cohomological shift: 
$$\Sat'_{q,T}:\Rep(T_H)\to \Sph_{q,x}(T),$$
where 
$$\Sat'_{q,T}(\sfe^\lambda)=\Sat(\sfe^\lambda)[-\langle \lambda,2\check\rho\rangle].$$

\medskip

Composing, with \eqref{e:Hecke T action first}, we obtain that $(\SI_{q,\Ran_x}(G))^{\fL^+(T)_{\Ran_x}}$ is acted on by $\Rep(T_H)$
(on the left), in a way commuting with the above $\Rep(H)$-action. 

\sssec{}

Thus, we find ourselves in the paradigm of \secref{ss:rel Hecke}, and it makes sense to consider the corresponding
category $$\bHecke_{\on{rel}}((\SI_{q,\Ran_x}(G))^{\fL^+(T)_{\Ran_x}}).$$

\medskip

We have the following result, which is a metaplectic version of \cite[Theorem 5.1.8]{Ga7}:
 
\begin{thm} \label{t:Hecke ICs}  
The object $\ICs_{q,\Ran_x}\in (\SI_{q,\Ran_x}(G))^{\fL^+(T)_{\Ran_x}}$
naturally lifts to object of the category $$\bHecke_{\on{rel}}((\SI_{q,\Ran_x}(G))^{\fL^+(T)_{\Ran_x}}).$$
\end{thm}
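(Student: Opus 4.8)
\textbf{Proof strategy for \thmref{t:Hecke ICs}.}
The plan is to realize the Hecke-eigen structure on $\ICs_{q,\Ran_x}$ as a direct consequence of the explicit colimit presentation of the fiber in \thmref{t:fiber of Ics} together with the factorization structure of \thmref{t:fact ICs equiv}. The key point is that the object $\ICs_{q,\Ran_x}$ is \emph{determined} by $\ICs_{q,\Ran}$ via $!$-pullback along $\Gr^{\omega^\rho}_{G,\Ran_x}\to \Gr^{\omega^\rho}_{G,\Ran}$, and the Hecke groupoid $\on{Hecke}^{\on{loc}}_{G,x}$ acting on $\fL^+(G)_{\Ran_x}\backslash \Ran_x$ (see \secref{sss:restr Hecke}) mixes the marked point $x$ with the ``ambient'' Ran directions. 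So the desired lift to $\bHecke_{\on{rel}}((\SI_{q,\Ran_x}(G))^{\fL^+(T)_{\Ran_x}})$ amounts, by the description in \secref{ss:rel Hecke}, to producing a homotopy-coherent system of isomorphisms
$$\ICs_{q,\Ran_x}\underset{H}\star \Sat_{q,G}(V)\;\simeq\; \Sat'_{q,T}(\Res^H_{T_H}(V))\star \ICs_{q,\Ran_x},\quad V\in \Rep(H),$$
compatible with tensor products of representations.

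First I would reduce the problem to the fiber over $x$, i.e.\ to producing the analogous system of isomorphisms for $\ICs_{q,x}\in (\SI_{q,x}(G))^{\fL^+(T)_x}$ as an object equipped with its factorization module structure over $\ICs_{q,\Ran}$; the compatibility with the Hecke action on $\Ran_x$-directions is then formal, using the Cartesian squares of \secref{sss:restr Hecke} and the fact that the Hecke action on the non-$x$ points is absorbed into the factorization structure (this is the metaplectic analog of the reduction in \cite[Sect. 5]{Ga7}). Second, I would use the presentation
$$\ICs_{q,x}\;\simeq\; \underset{\gamma\in ((\Lambda^\sharp)^+,\preceq)}{\on{colim}}\; \delta_{t^{-\gamma},\Gr}\star \Sat_{q,G}(V^{\gamma})[\langle \gamma,2\check\rho\rangle]$$
of \eqref{e:ICs as colim}--\thmref{t:fiber of Ics}. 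For $V=V^{\gamma'}$ with $\gamma'$ dominant restricted (or, after \lemref{l:many restr}, arbitrary $\gamma'\in \Lambda^\sharp$), convolving on the right with $\Sat_{q,G}(V^{\gamma'})$ and using the Pl\"ucker maps $V^{\gamma}\otimes V^{\gamma'}\to V^{\gamma+\gamma'}$ (normalized to be the identity on highest weight lines, exactly as in \eqref{e:transition maps ICs}) produces a cofinal reindexing of the same colimit shifted by $\gamma'$, i.e.\ $\delta_{t^{-\gamma'},\Gr}\star(-)[\langle\gamma',2\check\rho\rangle]$ applied to $\ICs_{q,x}$. The left translation by $\delta_{t^{-\gamma'},\Gr}[\langle\gamma',2\check\rho\rangle]$ is precisely the action of $\Sat'_{q,T}(\sfe^{\gamma'})$, which gives the required eigen-isomorphism on the level of objects. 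The cofinality claim is the technical heart and should be proved exactly as in \cite[Proof of Theorem 5.1.8]{Ga7}: the poset $((\Lambda^\sharp)^+,\preceq)$ is filtered, and the transition maps \eqref{e:transition maps ICs} are built from the same Pl\"ucker data, so translating the indexing poset by $\gamma'$ yields a cofinal subsystem.

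Third, I would upgrade this to a homotopy-coherent datum. The naive object-level isomorphisms must be organized into a functor out of a suitable category encoding the $\Rep(H)$-action; here the clean way is to invoke the Drinfeld--Pl\"ucker formalism of \secref{s:DrPl}, which is exactly designed to package the colimit \eqref{e:ICs as colim} and its compatibility with convolution as a single coherent structure (the excerpt already signals in \secref{s:DrPl} that ``the restriction of the metaplectic semi-infinite IC sheaf to the fiber over $x$ can be constructed via this formalism''). Concretely, one shows that both $\ICs_{q,x}\underset{H}\star(-)$ and $\Sat'_{q,T}(\Res^H_{T_H}(-))\star\ICs_{q,x}$ define Drinfeld--Pl\"ucker structures with the same underlying data, hence are canonically identified; the $\fL^+(T)_x$-equivariance is preserved since all the objects $\delta_{t^{-\gamma},\Gr}\star\Sat_{q,G}(V^\gamma)$ and the transition maps are $\fL^+(T)_x$-equivariant (using \secref{sss:proof of non-sharp} and \lemref{l:non sharp strata bis} to see that only $\gamma\in\Lambda^\sharp$ contribute, which is what makes $\Res^H_{T_H}$ land in the correct lattice). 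Finally, the factorization-compatibility over $\Ran_x$ follows because \thmref{t:fact ICs equiv} provides the factorization structure in the $\fL^+(T)_\Ran$-equivariant setting, and the Hecke action at $x$ commutes with factorization by construction.

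\textbf{Main obstacle.} The genuinely hard part is not the object-level statement but the \emph{coherence}: assembling the isomorphisms $\ICs_{q,x}\underset{H}\star\Sat_{q,G}(V)\simeq \Sat'_{q,T}(\Res^H_{T_H}(V))\star\ICs_{q,x}$ into a functor that is compatible with all iterated tensor products $V_1\otimes\cdots\otimes V_n$ and with the associativity and unit constraints, i.e.\ genuinely into the $\infty$-categorical $\bHecke_{\on{rel}}$. This is where one cannot avoid the Drinfeld--Pl\"ucker machinery (or an equivalent device), since a hands-on verification of the infinite tower of compatibilities via the explicit Pl\"ucker maps \eqref{e:transition maps ICs} would be unmanageable; the role of \secref{s:DrPl} is precisely to reduce this coherence to a finite amount of structure (a lax monoidal functor out of $\Rep(H)$, or a section of an appropriate fibration). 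A secondary technical point is verifying that the cofinality argument in Step 2 is itself functorial in $V$ — i.e.\ that the reindexing of \eqref{e:ICs as colim} by $\gamma'$ is compatible with maps $V^{\gamma'_1}\to V^{\gamma'_2}$ coming from $\Rep(H)$ — which again is handled uniformly once everything is phrased in the Drinfeld--Pl\"ucker language, following \cite[Sect. 5]{Ga7} verbatim in the metaplectic setting.
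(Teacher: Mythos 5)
Your proposal correctly identifies the two main ingredients — the colimit presentation of \eqref{e:ICs as colim} with its Pl\"ucker transition maps, and the Drinfeld–Pl\"ucker formalism of \secref{s:DrPl} as the device for organizing the homotopy-coherence — and both do play exactly that role in the paper. However, the logical order you propose is inverted relative to what the paper (and \cite[Theorem 5.1.8]{Ga7}, to which it defers) actually does, and the inversion is not harmless.

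You propose: first establish the eigen-structure on the fiber $\ICs_{q,x}$ via \eqref{e:ICs as colim} and Drinfeld–Pl\"ucker, then ``upgrade'' to $\Ran_x$, claiming the upgrade is formal. But in the paper the arrow runs the other way. The theorem is stated and proved at the Ran level following Ga7 (where the argument is carried out directly on $\Gr_{G,\Ran_x}$, using the unital structure and the t-structure on the semi-infinite category to pin down the eigen-isomorphisms uniquely), and the fiber version is then obtained by restriction. \propref{p:ident semiinf as DrPl} reinterprets that fiber structure {\it a posteriori} as the Drinfeld–Pl\"ucker package $\sfj^*((\delta_{1,\Gr})^{\on{DrPl},\fL^+(T)})$; its proof explicitly cites the already-constructed Ran-level Hecke structure from \cite[Theorem 5.1.8]{Ga7}. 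So if you try to use \propref{p:ident semiinf as DrPl} as the engine for {\it producing} the Hecke structure rather than {\it identifying} it, you either re-prove the Ga7 Ran argument in disguise or you risk circularity.

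The substantive gap is the step you label ``formal'': passing from a Hecke-eigen structure at $x$ (plus factorization) to a Hecke-eigen structure over $\Ran_x$. A factorization module over $\ICs_{q,\Ran}$ equipped with a Hecke-eigen structure on the fiber at $x$ is {\it not} automatically a Hecke-eigen object in $(\SI_{q,\Ran_x}(G))^{\fL^+(T)_{\Ran_x}}$: the Hecke groupoid $\on{Hecke}^{\on{loc}}_{G,x}$ acts on all of $\fL^+(G)_{\Ran_x}\backslash\Ran_x$ (see the Cartesian squares of \secref{sss:restr Hecke}), so the eigen-isomorphisms must be specified as isomorphisms of objects on $\Gr^{\omega^\rho}_{G,\Ran_x}$, coherently over all of $\Ran_x$, and compatibly with the unitality structure. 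This is precisely what makes \cite[Theorem 5.1.8]{Ga7} nontrivial: the uniqueness comes from the fact that the relevant eigen-isomorphisms restrict to the identity on $S^0_\Ran$ and that objects in the unital semi-infinite category are controlled by their restriction to the open stratum via the t-structure, not from the factorization structure alone. Your Step 2 (the cofinality argument) is the right kind of check on underlying objects, but the coherence at the Ran level is where the real work lives; compatibility with factorization is the content of the {\it separate} \thmref{t:Hecke ICs fact}, which presupposes \thmref{t:Hecke ICs} rather than implying it.
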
 

\sssec{}

Restricting to the fiber over $x\in \Ran_x$, we obtain that the object
$$\ICs_{q,x}\in (\SI_{q,x}(G))^{\fL^+(T)_x}$$
lifts to an object of $\bHecke_{\on{rel}}((\SI_{q,x}(G))^{\fL^+(T)_x})$.

\begin{rem}
In \secref{ss:Dr-Pl semiinf} we will show how this structure follows from the identification of 
$\ICs_{q,x}$ with $'\!\ICs_{q,x}$ via the \emph{Drinfeld-Pl\"ucker formalism}.
\end{rem}

\ssec{Hecke structure and factorization}

For what follows we will need to complement \thmref{t:Hecke ICs} by the following statement. 

\sssec{}

The factorization isomorphisms \eqref{e:factor IC} (viewed as taking place in the $\fL^+(T)$-equivariant category)
give rise to factorization isomorphisms 
\begin{multline} \label{e:factor IC x}
\ICs_{q,\Ran_x}|_{\ol{S}^0_{\Ran_x} \underset{\Ran_x}\times (\Ran^J\times \Ran_x)_{\on{disj}}} \simeq \\
\simeq   ((\ICs_{q,\Ran})^{\boxtimes J}\boxtimes \ICs_{q,\Ran_x})
|_{((\ol{S}^0_{\Ran})^J\times \ol{S}^0_{\Ran_x}) \underset{\Ran^J\times \Ran_x}\times (\Ran^J \times \Ran_x)_{\on{disj}}}.
\end{multline}

I.e., $\ICs_{q,\Ran_x}$ is naturally an object of the category 
\begin{equation} \label{e:IC mod}
\ICs_{q,\Ran}\on{-FactMod}(\Shv_{\CG^\Lambda}(\Gr^{\omega^\rho}_{G,\Ran_x})^{\fL^+(T)_{\Ran_x}}).
\end{equation}

\sssec{}  \label{sss:Hecke action on fact mod}

Recall now (see \secref{sss:fact funct mod geom}) that the actions of $\Sph_{q,x}(G)$ and $\Sph_{q,x}(T)$ on 
$$\Shv_{\CG^\Lambda}(\Gr^{\omega^\rho}_{G,\Ran_x})^{\fL^+(T)_{\Ran_x}}$$ give rise to ones on \eqref{e:IC mod}. 

\medskip

Composing with the metaplectic geometric Satake functors for $G$ and $T$, we obtain that \eqref{e:IC mod}
acquires a structure of module category over $\Rep(H)\otimes \Rep(T_H)$. 

\sssec{}

The following comes along with the construction of the Hecke structure on $\ICs_{q,\Ran_x}$:

\begin{thm} \label{t:Hecke ICs fact}  
The relative Hecke structure on $\ICs_{q,\Ran_x}$ given by \thmref{t:Hecke ICs}
%of an object of $\bHecke_{\on{rel}}((\SI_{q,\Ran_x}(G))^{\fL^+(T)_{\Ran_x}})$
is compatible in the natural sense with the factorization isomorphisms \eqref{e:factor IC} and \eqref{e:factor IC x}. 
I.e.,
$$\ICs_{q,\Ran_x}\in \ICs_{q,\Ran}\on{-FactMod}(\Shv_{\CG^\Lambda}(\Gr^{\omega^\rho}_{G,\Ran_x})^{\fL^+(T)_{\Ran_x}})$$
lifts to an object of
$$\bHecke_{\on{rel}}\left(\ICs_{q,\Ran}\on{-FactMod}(\Shv_{\CG^\Lambda}(\Gr^{\omega^\rho}_{G,\Ran_x})^{\fL^+(T)_{\Ran_x}})\right).$$
\end{thm}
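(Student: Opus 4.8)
The plan is to upgrade the two structures already in hand — the relative Hecke eigen-structure on $\ICs_{q,\Ran_x}$ (Theorem~\ref{t:Hecke ICs}) and the factorization-module structure on $\ICs_{q,\Ran_x}$ over $\ICs_{q,\Ran}$ (encoded by the isomorphisms \eqref{e:factor IC x}) — to a single \emph{compatible} structure. The conceptual point is that both are instances of the same formalism applied to the factorization module space $\Gr^{\omega^\rho}_{G,\Ran_x}$ over $\Gr^{\omega^\rho}_{G,\Ran}$: the Hecke groupoid $\on{Hecke}^{\on{loc}}_{G,x}$ acting on $\fL^+(G)_{\Ran_x}\backslash\Ran_x$ (Sect.~\ref{sss:restr Hecke}) acts in a manner that preserves the factorization pattern, because the Hecke modification happens at the fixed point $x$ while factorization only refers to the \emph{disjoint} locus $(\Ran^J\times\Ran_x)_{\on{disj}}$, where by construction the extra points $\CI_j$ are away from $x$. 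So the Hecke action commutes with the factorization isomorphisms essentially for geometric reasons: disjointness of supports.

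First I would set up the target category $\bHecke_{\on{rel}}\bigl(\ICs_{q,\Ran}\on{-FactMod}(\cdots)\bigr)$ carefully, using Sect.~\ref{ss:rel Hecke}: an object is a factorization $\ICs_{q,\Ran}$-module $\CF$ in $\Shv_{\CG^\Lambda}(\Gr^{\omega^\rho}_{G,\Ran_x})^{\fL^+(T)_{\Ran_x}}$ equipped with isomorphisms $\CF\underset{H}\star V\simeq \on{Res}^H_{T_H}(V)\star\CF$ that are compatible with tensor products \emph{and} with the factorization-module structure. The key observation, which I would record as a lemma, is that the $\Rep(H)$- and $\Rep(T_H)$-actions on $\ICs_{q,\Ran}\on{-FactMod}(\cdots)$ set up in Sect.~\ref{sss:Hecke action on fact mod} are, after composing with $\oblv_{\on{Fact}}$, precisely the actions on $\Shv_{\CG^\Lambda}(\Gr^{\omega^\rho}_{G,\Ran_x})^{\fL^+(T)_{\Ran_x}}$ used in Theorem~\ref{t:Hecke ICs} — this is exactly the content of the functoriality statement in Sect.~\ref{sss:fact funct mod geom}, that $g_*\circ f^!$ on factorization modules lifts the corresponding functor on underlying sheaves. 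Hence $\oblv_{\on{Fact}}$ intertwines the two ambient Hecke actions, and the forgetful functor $\bHecke_{\on{rel}}(\ICs_{q,\Ran}\on{-FactMod})\to\bHecke_{\on{rel}}((\SI_{q,\Ran_x}(G))^{\fL^+(T)_{\Ran_x}})$ is well-defined and conservative. The claim then becomes: the relative Hecke structure of Theorem~\ref{t:Hecke ICs} on $\oblv_{\on{Fact}}(\ICs_{q,\Ran_x})$ lifts along this conservative forgetful functor.

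Next I would carry out the lift concretely, following the proof of \cite[Theorem~5.1.8]{Ga7} (of which Theorem~\ref{t:Hecke ICs} is the metaplectic version) and tracking the factorization variables throughout. The Hecke eigen-isomorphism is built by realizing $\ICs_{q,\Ran_x}$ via the colimit presentation \eqref{e:ICs as colim} (Theorem~\ref{t:fiber of Ics}, in its Ran/$\fL^+(T)$-equivariant form) as a colimit of convolutions $\delta_{t^{-\gamma}}\star\Sat_{q,G}(V^\gamma)[\langle\gamma,2\check\rho\rangle]$, each of which is manifestly Hecke-eigen up to the appropriate twist by $\Sat'_{q,T}$. One then checks that the transition maps \eqref{e:transition maps ICs} respect the eigen-structure, which reduces to the Plücker compatibility of metaplectic Satake. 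To get the factorization compatibility one uses that the convolution-with-$\Sat_{q,G}(V)$ operation, performed at the marked point $x$, commutes with the restriction to $(\Ran^J\times\Ran_x)_{\on{disj}}$ and with the factorization isomorphisms \eqref{e:factor IC x} on the nose — again because $\Sat_{q,G}(V)$ is supported at $x$ and \eqref{e:factor IC x} separates the $x$-component from the $\Ran^J$-component. Thus the whole colimit, together with its Hecke structure, lifts to the factorization-module category, and the uniqueness in Theorem~\ref{t:fact ICs} (plus conservativity of $\oblv_{\on{Fact}}$) guarantees that the result is the desired compatible structure.

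The main obstacle I anticipate is purely homotopy-theoretic bookkeeping: "compatible in the natural sense" must be spelled out as a genuine coherence datum, i.e.\ we need the Hecke eigen-isomorphisms and the factorization isomorphisms to fit into one functor out of a suitable $\infty$-categorical diagram (a bi-simplicial or $\on{fSet}^{\on{surj}}_*$-indexed gadget), not merely a collection of isomorphisms. The clean way around this — and what I would actually do — is to exploit the unitality structure exactly as in \cite[Sect.~4.6]{Ga7}: it lets one place all the objects in appropriately defined \emph{abelian} categories (hearts of the relevant t-structures on $\SI_{q,\Ran_x}(G)^{\leq 0}_{\on{untl}}$ and on the module category), where higher coherences are automatic, so that both the factorization structure and the relative Hecke structure, and their compatibility, are determined by the underlying 1-categorical data on the open stratum $S^0$, where everything is a shift of $\omega$. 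This is the same device that made Theorems~\ref{t:fact ICs} and \ref{t:Hecke ICs} work, and it is the step where most of the real labor lies.
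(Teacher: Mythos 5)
Your reconstruction is correct and matches the paper's intent: the paper states this theorem with no separate argument, asserting only that it ``comes along with the construction'' of the Hecke structure in Theorem~\ref{t:Hecke ICs} (i.e., the metaplectic version of the argument in \cite[Theorem~5.1.8]{Ga7}), and your proposal spells out precisely what that construction yields — the Hecke modification is supported at $x$ and hence commutes with the factorization isomorphisms over $(\Ran^J\times\Ran_x)_{\on{disj}}$, the $\Rep(H)\otimes\Rep(T_H)$-action on the factorization-module category is set up so that $\oblv_{\on{Fact}}$ intertwines it with the ambient action, and the unitality/abelian-category device from \cite[Sect.~4.6]{Ga7} reduces the higher coherences to the underlying $1$-categorical data on $S^0$.
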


\ssec{A $T$-twisting construction} \label{ss:twist T bundle}

In this subsection we will perform a twisting construction that will allow us to consider variants of 
$\ICs_{q,\Ran}$ on spaces that are fibered over a base with typical fiber being $\ol{S}^0_{\Ran}$. 
This construction will be used in the definition of the Jacquet functor, and also for the local-to-global 
comparison.  

\sssec{}

Let $\CY$ be a prestack equipped with a map to $\fL^+(T)_\Ran\backslash \Ran$.  We let
$$_\CY\Gr^{\omega^\rho}_{G}$$
denote the fiber product
$$\CY\underset{\fL^+(T)_\Ran\backslash \Ran}\times \left(\fL^+(T)_\Ran\backslash \Gr^{\omega^\rho}_{G,\Ran}\right).$$

We will use a similar notation for $_\CY\ol{S}^0$, etc. 

\sssec{}

Pulling back with respect to the forgetful map
$$_\CY\Gr^{\omega^\rho}_{G}\to \Gr^{\omega^\rho}_{G,\Ran},$$
from $\CG^G$, we obtain a gerbe on $_\CY\Gr^{\omega^\rho}_{G}$, which we denote by $_\CY\!\CG^G$.

\medskip

Further, pulling back $\ICs_{q,\Ran}$, we obtain an object
$$_\CY\!\ICs_{q}\in \Shv_{_\CY\!\CG^G}({}_\CY\ol{S}^0).$$

\medskip

In the sequel we will need the following two particular cases of this construction.

\sssec{}  \label{sss:twist by Gr T}

Take 
$$\CY=\Gr^{\omega^\rho}_{T,\Ran},$$
equipped with its tautological map to $\fL^+(T)_\Ran\backslash \Ran$,
$$(\CI,\CP_G,\alpha)\mapsto (\CI,\CP_G).$$

\medskip

Note that we have a canonical identification
\begin{equation}  \label{e:compose Gr T}
_\CY\Gr^{\omega^\rho}_{G} \simeq 
\Gr^{\omega^\rho}_{T,\Ran}\underset{\Ran}\times \Gr^{\omega^\rho}_{G,\Ran}.
\end{equation}

The multiplicativity property of the gerbe $\CG^{G,G,\on{ratio}}$ (see \secref{sss:gerbe on Hecke}) implies that the resulting 
gerbe $_\CY\!\CG^G$ on $_\CY\Gr^{\omega^\rho}_{G}$ goes over under the identification \eqref{e:compose Gr T}
to the gerbe on $\Gr^{\omega^\rho}_{T,\Ran}\underset{\Ran}\times \Gr^{\omega^\rho}_{G,\Ran}$ equal to
$$\CG^{G,T,\on{ratio}}:=(\CG^T)^{-1} \boxtimes \CG^G.$$

\medskip

Thus, we obtain that 
$$_\CY\!\ICs_{q}\in \Shv_{_\CY\!\CG^G}({}_\CY\ol{S}^0)$$
can be thought of as an object, to be denoted
$$_{\Gr_T}\!\ICs_{q,\Ran}\in \Shv_{\CG^{G,T,\on{ratio}}}(\Gr^{\omega^\rho}_{T,\Ran}\underset{\Ran}\times \Gr^{\omega^\rho}_{G,\Ran}).$$

\medskip

From we obtain:

\begin{cor}  \label{c:twisted IC fact}
The object $_{\Gr_T}\!\ICs_{q,\Ran}$ has a natural structure of factorization algebra in 
$$\Shv_{\CG^{G,T,\on{ratio}}}(\Gr^{\omega^\rho}_{T,\Ran}\underset{\Ran}\times \Gr^{\omega^\rho}_{G,\Ran}).$$
\end{cor}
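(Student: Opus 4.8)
\textbf{Proof proposal for \corref{c:twisted IC fact}.}

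The plan is to deduce this corollary directly from \thmref{t:fact ICs equiv}, using the identification \eqref{e:compose Gr T} together with the multiplicativity of the gerbe $\CG^{G,G,\on{ratio}}$. First I would recall the setup of \secref{sss:twist by Gr T}: taking $\CY = \Gr^{\omega^\rho}_{T,\Ran}$ with its tautological map to $\fL^+(T)_\Ran\backslash \Ran$, we have the identification ${}_\CY\Gr^{\omega^\rho}_G \simeq \Gr^{\omega^\rho}_{T,\Ran}\underset{\Ran}\times \Gr^{\omega^\rho}_{G,\Ran}$, and under it the twisted gerbe ${}_\CY\!\CG^G$ becomes $\CG^{G,T,\on{ratio}} = (\CG^T)^{-1}\boxtimes \CG^G$; correspondingly ${}_\CY\!\ICs_q$ becomes ${}_{\Gr_T}\!\ICs_{q,\Ran}$. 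So the task is to transport the factorization algebra structure on $\ICs_{q,\Ran}\in (\SI_{q,\Ran}(G)^{\leq 0}_{\on{untl}})^{\fL^+(T)_\Ran}$ from \thmref{t:fact ICs equiv} through the twisting operation $\CY\rightsquigarrow {}_\CY(-)$.

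The key point is that the twisting construction ${}_\CY(-)$ is itself compatible with the factorization structures on $\Ran$ and on $\Gr^{\omega^\rho}_{T,\Ran}$. Concretely, $\Gr^{\omega^\rho}_{T,\Ran}$ is a factorization space over $\Ran$ (\secref{sss:Gr with marked}), and the base-change ${}_\CY\ol S^0 = \Gr^{\omega^\rho}_{T,\Ran}\underset{\Ran}\times \ol S^0_\Ran$ inherits, via the factorization isomorphisms \eqref{e:fact S0} for $\ol S^0_\Ran$ and the analogous ones for $\Gr^{\omega^\rho}_{T,\Ran}$, a factorization structure: over $(\Ran^J)_{\on{disj}}$ one has
\[
{}_{\Gr_T}\ol S^0 \underset{\Ran}\times (\Ran^J)_{\on{disj}} \simeq \big({}_{\Gr_T}\ol S^0\big){}^J \underset{\Ran^J}\times (\Ran^J)_{\on{disj}},
\]
and the gerbe $\CG^{G,T,\on{ratio}}$ carries a matching factorization structure, precisely because both $\CG^T$ (hence $(\CG^T)^{-1}$) and $\CG^G$ are factorization gerbes and $(\CG^T)^{-1}\boxtimes\CG^G$ is their external product (cf. \secref{sss:fact grb}). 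Then the factorization isomorphisms for $\ICs_{q,\Ran}$ in the $\fL^+(T)_\Ran$-equivariant category (which is what \thmref{t:fact ICs equiv} provides) pull back along the smooth (in the appropriate sense) projection ${}_{\Gr_T}\ol S^0 \to \fL^+(T)_\Ran\backslash \ol S^0_\Ran$ to give factorization isomorphisms for ${}_{\Gr_T}\!\ICs_{q,\Ran}$ over $\Gr^{\omega^\rho}_{T,\Ran}\underset{\Ran}\times\Gr^{\omega^\rho}_{G,\Ran}$, compatible on disjoint unions. The uniqueness clause of \thmref{t:fact ICs equiv} (the structure is pinned down by its restriction to $S^0_\Ran$, where it is the tautological one on $\omega_{S^0_\Ran}$) transports as well, since pullback commutes with restriction to the open locus ${}_{\Gr_T}S^0 = \Gr^{\omega^\rho}_{T,\Ran}\underset{\Ran}\times S^0_\Ran$ and $\omega$ is manifestly compatible with twisting.

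I expect the only genuine subtlety — and the main obstacle — to be the homotopy-coherence of the transported factorization data: one must check that the pullback functor $\Shv_{\CG^G}(\fL^+(T)_\Ran\backslash \ol S^0_\Ran)\to \Shv_{\CG^{G,T,\on{ratio}}}({}_{\Gr_T}\ol S^0)$ is (symmetric) monoidal for the relevant factorization-category structures, i.e. sends the symmetric monoidal Cartesian section encoding the factorization algebra $\ICs_{q,\Ran}$ to one encoding ${}_{\Gr_T}\!\ICs_{q,\Ran}$, coherently over the category $\on{fSet}^{\on{surj}}$ of \secref{sss:fact alg}. Here, as the remarks after \thmref{t:fact ICs} note, the unitality structure on $\ICs_{q,\Ran}$ is again what saves the situation: it lets us realize all objects inside genuine abelian categories (the hearts of the t-structures of \secref{ss:t-str semiinf}, whose $T$-equivariant variants are discussed in \secref{ss:twist T bundle}'s predecessors), so the required homotopy coherence is automatic. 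Thus the proof reduces to the (essentially bookkeeping) verification that the twisting functor ${}_\CY(-)$ carries the factorization-category structure of \secref{sss:fact grb} on $\fL^+(T)_\Ran\backslash\Gr^{\omega^\rho}_{G,\Ran}$ to that on $\Gr^{\omega^\rho}_{T,\Ran}\underset{\Ran}\times\Gr^{\omega^\rho}_{G,\Ran}$, which follows from the functoriality of $\fL^+(-)$-twists in \secref{sss:loop rel} and the multiplicativity of $\CG^{G,G,\on{ratio}}$ recalled in \secref{sss:gerbe on Hecke}.
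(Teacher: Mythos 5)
Your proposal is correct and follows the route the paper intends: the paper states this as an immediate consequence of \thmref{t:fact ICs equiv}, relying on the fact (spelled out in \secref{sss:fact funct geom pullback}) that !-pullback along a map of factorization spaces sends factorization algebras to factorization algebras; here the relevant map is the projection $_\CY\ol S^0 = \Gr^{\omega^\rho}_{T,\Ran}\underset{\fL^+(T)_\Ran\backslash\Ran}\times(\fL^+(T)_\Ran\backslash\ol S^0_\Ran)\to \fL^+(T)_\Ran\backslash\ol S^0_\Ran$, which is a map of factorization spaces because $\Gr^{\omega^\rho}_{T,\Ran}\to\fL^+(T)_\Ran\backslash\Ran$ is one. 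Your filling-in of the mechanism, including the gerbe identification and the compatibility with the uniqueness clause restricted to $S^0_\Ran$, matches the paper's (unstated) intent.
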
 

\sssec{}

Let us now take
$$\CY=\Bun_T\times \Ran,$$
equipped with its natural map to $\fL^+(T)_\Ran\backslash \Ran$. Consider the corresponding object
$$_{\Bun_T}\!\ICs_{q,\Ran}:={}_{\Bun_T\times \Ran}\!\ICs_q\in \Shv_{_{\Bun_T\times \Ran}\CG^G}(_{\Bun_T\times \Ran}\ol{S}^0).$$

\medskip

Note that the prestack 
$_{\Bun_T}\ol{S}^0_\Ran:={}_{\Bun_T\times \Ran}\ol{S}^0$ admits a natural map 
$$\pi_\Ran:{}_{\Bun_T}\ol{S}^0_\Ran\to \BunBb,$$
so that we have a Cartesian square
$$
\CD
\ol{S}^0_\Ran    @>>>    _{\Bun_T}\ol{S}^0_\Ran \\
@V{\pi_\Ran}VV  @VV{\pi_\Ran}V   \\
\BunNbom    @>>>  \BunBb  \\
@VVV   @VV{\ol\sfq}V   \\
\on{pt}    @>{\omega^\rho}>>   \Bun_T.
\endCD
$$

\sssec{}

Let $\CG^{G,T,\on{ratio}}$ denote the gerbe on $\BunBb$ equal to
$$\CG^G\otimes (\CG^T)^{-1},$$
where $\CG^G$ is the pullback of the same-named gerbe on $\Bun_G$ (see \secref{sss:global gerbe G}) along the map
$$\ol\sfp:\BunBb\to \Bun_G$$
and $\CG^T$ is the pullback of the same-named gerbe on $\Bun_T$ (see \secref{sss:global gerbe G})
along the map $$\ol\sfq:\BunBb\to \Bun_T.$$

\medskip

Note that we have a canonical identification of gerbes on $_{\Bun_T}\ol{S}^0_\Ran$
\begin{equation} \label{e:global loc ratio}
_{\Bun_T\times \Ran}\CG^G\simeq (\pi_\Ran)^*(\CG^{G,T,\on{ratio}}).
\end{equation}

\sssec{}

Note also that we have a canonical trivialization of the restriction of $\CG^{G,T,\on{ratio}}$ along
$$\Bun_B\overset{\bj_{\on{glob}}}\hookrightarrow \BunBb.$$
This trivialization is compatible with the trivializations of the restriction of both sides of \eqref{e:global loc ratio} 
to $_{\Bun_T}S^0_\Ran\subset {}_{\Bun_T}\ol{S}^0_\Ran$. 

\medskip

We let $_{\Bun_T}\!\ICs_{q,\on{glob}}$ denote the IC extension (in the category of $\CG^{G,T,\on{ratio}}$-twisted sheaves on $\BunBb$) 
of the constant perverse sheaf on $\Bun_B$ (the latter makes sense due to the above trivialization of $\CG^{G,T,\on{ratio}}|_{\Bun_B}$).

\medskip

The following is a metaplectic analog of \cite[Theorem 6.3.2]{Ga7}:

\begin{thm}  \label{t:ICs glob}
There is a unique isomorphism
$$(\pi_\Ran)^!({}_{\Bun_T}\!\ICs_{q,\on{glob}})[d_g+\dim(\Bun_T)+\deg]\simeq {}_{\Bun_T}\!\ICs_{q,\Ran}$$
extending the tautological identification over $_{\Bun_T}S^0_\Ran$, where the value of $\deg$
equals $\langle \lambda,2\check\rho\rangle$ over the connected component $\Bun^\lambda_T$ of $\Bun_T$. 
\end{thm}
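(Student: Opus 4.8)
The plan is to deduce this global-to-local comparison for the $\Bun_T$-twisted semi-infinite IC sheaf from the already-established untwisted comparison, \corref{c:ICs}, together with its relative variant \thmref{t:local-to-global semiinf}, by a descent/pullback argument along the $\fL^+(T)_\Ran$-torsor defining the twist. First I would set up the basic geometry: the prestack ${}_{\Bun_T}\ol S^0_\Ran = {}_{\Bun_T\times\Ran}\ol S^0$ sits in the Cartesian square displayed just before the theorem, with vertical maps $\pi_\Ran$ to $\BunBb$ and to $\BunNbom$, and the lower square exhibiting $\BunNbom$ as the fiber of $\ol\sfq:\BunBb\to\Bun_T$ over $\omega^\rho$. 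The key point is that, Zariski-locally on $\Bun_T$ (or after pulling back along a smooth atlas), the map $\ol\sfq$ is, up to a $\fL^+(T)$-torsor twist, a product situation, so that ${}_{\Bun_T}\ol S^0_\Ran \to \BunBb$ is a twisted form of $\ol S^0_\Ran\to\BunNbom$; both objects $_{\Bun_T}\!\ICs_{q,\on{glob}}$ and $_{\Bun_T}\!\ICs_{q,\Ran}$ are defined as twists (via the $\fL^+(T)_\Ran$-action, resp.\ via IC-extension of the constant sheaf on $\Bun_B$, using the canonical trivialization of $\CG^{G,T,\on{ratio}}|_{\Bun_B}$ from the excerpt) of their non-twisted counterparts on $\BunNbom$ and $\ol S^0_\Ran$.

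The main steps, in order, would be: \textbf{(i)} identify the gerbes: use \eqref{e:global loc ratio}, the canonical identification $_{\Bun_T\times\Ran}\CG^G\simeq(\pi_\Ran)^*(\CG^{G,T,\on{ratio}})$, and check that under it the canonical trivializations over $\Bun_B$ and over $_{\Bun_T}S^0_\Ran$ match, so that both sides of the desired isomorphism live in the same twisted category and the tautological identification over $_{\Bun_T}S^0_\Ran$ makes sense. \textbf{(ii)} Use the fully faithfulness of $\pi_\Ran^!$ on the relevant subcategories — this is the $\Bun_T$-twisted form of \thmref{t:N contr}/\thmref{t:local-to-global semiinf}, obtained by the same contractibility argument for the fibers of $\pi_\Ran$ (the fibers are unchanged by the $\Bun_T$-twist, being $T$-torsor-fibered versions of the same $\fL(N)$-type geometry), applied to $\BunBb$ in place of $\BunNbom$. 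This gives that there is \emph{at most one} morphism extending the tautological identification over the open stratum, reducing the theorem to producing \emph{some} such isomorphism. \textbf{(iii)} Produce the isomorphism by the twisting construction of \secref{ss:twist T bundle}: both $_{\Bun_T}\!\ICs_{q,\on{glob}}$ and the shift of $(\pi_\Ran)^!(\cdots)$ are obtained from $\ICs_{q,\on{glob}}$ on $\BunNbom$ by the same $\fL^+(T)_\Ran$-twist, once one verifies that IC-extension commutes with this twist — which holds because the twist is by a smooth (pro-)algebraic group acting compatibly, so it is t-exact up to the shift $\dim(\Bun_T)+\deg$, and on the open stratum it carries the constant sheaf on $\Bun_B$ to the constant sheaf on $\Bun_B$. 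The shift $\deg = \langle\lambda,2\check\rho\rangle$ on $\Bun^\lambda_T$ is exactly the relative dimension bookkeeping coming from the $\fL^+(T)$-equivariant normalization in \secref{ss:t-str semiinf} (cf.\ the shift by $\langle\lambda,2\check\rho\rangle$ in the definition of the t-structure on $\SI_{q,\Ran}(G)^{=\lambda}_{\on{untl}}$), and the $d_g$ comes, as in \corref{c:ICs}, from $\dim(\Bun_N^{\omega^\rho})$.

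The hard part will be \textbf{step (iii)}: carefully checking that the IC-extension functor intertwines the $\fL^+(T)_\Ran$-twisting on the $\BunNbom$-side with the passage from $\ICs_{q,\on{glob}}$ on $\BunNbom$ to $_{\Bun_T}\!\ICs_{q,\on{glob}}$ on $\BunBb$, \emph{and} that this matches the pullback $\pi_\Ran^!$ up to precisely the asserted cohomological shift. This is where one must be attentive to: the (perverse) t-exactness properties of the relevant functors (the twisting map $\BunBb\to\Bun_T$ is smooth of relative dimension $\dim\BunNbom$ on each component, but shifted by $\deg$ because of the $\omega^\rho$-normalization), the compatibility of the two canonical trivializations of $\CG^{G,T,\on{ratio}}$ over $\Bun_B$ used to define $_{\Bun_T}\!\ICs_{q,\on{glob}}$, and the fact that over $_{\Bun_T}S^0_\Ran$ everything reduces to the tautological identification. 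All of this is the direct metaplectic and $\Bun_T$-relative analog of the argument proving \cite[Theorem 6.3.2]{Ga7}, so I would follow that proof's structure, inserting the gerbe-twist bookkeeping from \secref{s:metap} at each step; modulo that bookkeeping, steps (i)--(ii) are formal consequences of the results already established in this Part.
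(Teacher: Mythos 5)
Your proposal captures the right architecture — the paper does not give a detailed proof of this theorem, marking it instead as the metaplectic analog of the corresponding result in \cite{Ga7}, and the three-step scheme (gerbe matching, uniqueness from fully faithfulness, producing the isomorphism from the twisting construction) is the right way to reconstruct that argument with gerbe bookkeeping. Steps (i) and (ii) are sound: the gerbe identification \eqref{e:global loc ratio} is set up in the text precisely for this, and the $\Bun_T$-relative variant of \thmref{t:N contr} / \thmref{t:local-to-global semiinf} holds because the fibers of $\pi_\Ran$ are unchanged by the twist. You correctly identify step (iii) as the crux.

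However, the dimension bookkeeping you give in step (iii) does not hold up, and this is precisely where one has to be careful. You write that ``the twisting map $\BunBb\to\Bun_T$ is smooth of relative dimension $\dim\BunNbom$ on each component.'' This is false on two counts. First, $\ol\sfq$ is not smooth on $\BunBb$; the relevant smooth locus is $\Bun_B$. Second, and more seriously, the relative dimension of $\Bun_B\to\Bun_T$ is \emph{not} $\dim\BunNbom = d_g$ on each component: over $\Bun_T^\lambda$ it equals $-\chi(X,\fn^{\CP_T}) = (g-1)d - \langle\lambda,2\check\rho\rangle$, which depends on $\lambda$ with the \emph{opposite sign} from the asserted $\deg = \langle\lambda,2\check\rho\rangle$. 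You also claim that $\deg$ is ``exactly the relative dimension bookkeeping coming from the $\fL^+(T)$-equivariant normalization in \secref{ss:t-str semiinf}''; this conflates two different indexings — the shift by $\langle\lambda,2\check\rho\rangle$ in \secref{ss:t-str semiinf} applies to strata $S^\lambda_\Ran$ with $\lambda\in\Lambda^{\on{neg}}$ inside a fixed $\ol S^0_\Ran$, whereas the $\deg$ in \thmref{t:ICs glob} is indexed by connected components of $\Bun_T$, which is an entirely different parameter. The actual derivation of the shift $d_g + \dim(\Bun_T) + \deg$ requires tracking how $\pi_\Ran^!$ interacts with the constant perverse sheaf normalization on $\Bun_B^\lambda$ and with the $T$-twist on the local side, and your proposal does not give a correct account of this. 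I would advise spelling out the tautological comparison over $_{\Bun_T}S^0_\Ran$ explicitly to fix the normalization: compute $\pi_\Ran^!(\underline\sfe_{\Bun_B^\lambda}[\dim\Bun_B^\lambda])$ over the open stratum and match it against $\omega_{_{\Bun_T}S^0_\Ran}$, then verify that the result agrees with the displayed shift formula component by component.
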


\begin{rem}
We normalize the bijection
$$\pi_0(\Bun_T)\simeq \Lambda$$ so that
the map $\Gr_{T,x}\to \Bun_T$ sends
$$\Gr_{T,x}^\lambda\to \Bun^\lambda_T,$$
where $\Gr_{T,x}^\lambda$ is the connected component of $\Gr_{T,x}$ that contains the point $t^\lambda$.
\end{rem} 

\ssec{Hecke property in the twisted context}  \label{ss:twisted Hecke}

In this subsection we will show how the Hecke property of $\ICs_{q,\Ran_x}$ translates to a Hecke
property of its twisted version $_\CY\!\ICs_{q}$ constructed in \secref{ss:twist T bundle}. This is a necessary 
ingredient for establishing the Hecke property of the Jacquet functor. 

\sssec{}

Let us base change the discussion in \secref{ss:twist T bundle} along the map $\Ran_x\to \Ran$.
I.e., let us assume having a prestack $\CY$, equipped with a map
$$\CY\to \fL^+(T)_{\Ran_x}\backslash \Ran_x.$$

\medskip

Consider the corresponding prestack 
$$_\CY\Gr^{\omega^\rho}_{G}:=\CY\underset{\fL^+(T)_{\Ran_x}\backslash \Ran_x}\times 
\left(\fL^+(T)_{\Ran_x}\backslash \Gr^{\omega^\rho}_{G,\Ran_x}\right).$$

\sssec{}  \label{sss:Hecke G action} 

Recall again the groupoid $\on{Hecke}^{\on{loc}}_{G,x}$ acting on $\fL^+(G)_{\Ran_x}\backslash \Ran_x$,
see \secref{sss:restr Hecke}. 

\medskip 

By construction, this action lifts to one on $_\CY\Gr^{\omega^\rho}_{G}$. Hence, we obtain a monoidal action of $\Sph_{q,x}(G)$ on
$\Shv_{_\CY\!\CG^G}({}_\CY\Gr^{\omega^\rho}_{G})$. 

\medskip

Composing with the geometric Satake functor $\Sat_{q,G}$, we obtain an action of $\Rep(H)$ on 
$\Shv_{_\CY\!\CG^G}({}_\CY\Gr^{\omega^\rho}_{G})$. 

\sssec{}  \label{sss:action on Y}

Consider now the action of the groupoid $\on{Hecke}^{\on{loc}}_{T,x}$ acting on $\fL^+(T)_{\Ran_x}\backslash \Ran_x$. 
Assume that we are given a lift of this action to one on $\CY$.  

\medskip

I.e., we assume 
being given a prestack $_\CY\!\on{Hecke}^{\on{loc}}_{T,x}$ equipped with maps
$$\CY \overset{\hl_T}\longleftarrow {}_\CY\!\on{Hecke}^{\on{loc}}_{T,x} \overset{\hr_T}\longrightarrow \CY$$
and a map
$$_\CY\!\on{Hecke}^{\on{loc}}_{T,x}\to {}\on{Hecke}^{\on{loc}}_{T,x}$$
that make both square in the diagram
$$
\CD
\CY  @<{\hl_T}<< _\CY\!\on{Hecke}^{\on{loc}}_{T,x} @>{\hr_T}>> \CY  \\
@VVV  @VVV  @VVV  \\
\fL^+(T)_{\Ran_x}\backslash \Ran_x @<{\hl_T}<< 
\on{Hecke}^{\on{loc}}_{T,x}  @>{\hr_T}>> \fL^+(T)_{\Ran_x}\backslash \Ran_x,
\endCD
$$
Cartesian. 

\sssec{}  \label{sss:Hecke T action} 

Under the above circumstances, the above action of $\on{Hecke}^{\on{loc}}_{T,x}$ on $\CY$ can be 
further lifted to an action on
$_\CY\Gr^{\omega^\rho}_{G}$ (given by leaving the $G$-bundle $\CP_G$ intact). 

\medskip

In other words, we have a prestack $_{_\CY\Gr^{\omega^\rho}_{G}}\!\on{Hecke}^{\on{loc}}_{T,x}$ equipped with maps
$$_\CY\Gr^{\omega^\rho}_{G} \leftarrow {}_{_\CY\Gr^{\omega^\rho}_{G}}\!\on{Hecke}^{\on{loc}}_{T,x}\to
{}_\CY\Gr^{\omega^\rho}_{G}$$
and a map
$$_{_\CY\Gr^{\omega^\rho}_{G}}\!\on{Hecke}^{\on{loc}}_{T,x}\to 
{}_\CY\!\on{Hecke}^{\on{loc}}_{T,x}$$
that make both squares in the diagram
$$
\CD 
_\CY\Gr^{\omega^\rho}_{G} @<{\hl_T}<<  _{_\CY\!\Gr^{\omega^\rho}_{G}}\!\on{Hecke}^{\on{loc}}_{T,x}  
@>{\hr_T}>> _\CY\Gr^{\omega^\rho}_{G}  \\
@VVV   @VVV   @VVV   \\
\CY  @<{\hl_T}<< _\CY\!\on{Hecke}^{\on{loc}}_{T,x} @>{\hr_T}>> \CY 
\endCD
$$
Cartesian.

\sssec{}

From here we obtain that the monoidal category $\Sph_{q,x}(T)$ acts on the left $\Shv_{_\CY\!\CG^G}({}_\CY\Gr^{\omega^\rho}_{G})$.
Moreover, this action commutes with the right action of $\Sph_{q,x}(G)$. 

\medskip

Composing with the geometric Satake functor $\Sat'_{q,T}$, we obtain an action of $\Rep(T_H)$ on 
$\Shv_{_\CY\!\CG^G}({}_\CY\Gr^{\omega^\rho}_{G})$, which commutes with the $\Rep(H)$-action defined above. 

\sssec{}

Thus, we obtain that $\Shv_{_\CY\!\CG^G}({}_\CY\Gr^{\omega^\rho}_{G})$ can be viewed as a category equipped
with an action of $\Rep(H)\otimes \Rep(T_H)$, and we find ourselves in the context of \secref{ss:rel Hecke}. 

\medskip

From \thmref{t:Hecke ICs} we obtain:

\begin{cor} \label{c:twisted Hecke IC}
The object $$_\CY\!\ICs_{q,\Ran_x}\in \Shv_{_\CY\!\CG^G}({}_\CY\Gr^{\omega^\rho}_{G})$$
lifts to object of $\bHecke_{\on{rel}}(\Shv_{_\CY\!\CG^G}({}_\CY\Gr^{\omega^\rho}_{G}))$.
\end{cor}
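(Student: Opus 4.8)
The statement to prove is \corref{c:twisted Hecke IC}: that the twisted object $_\CY\!\ICs_{q,\Ran_x}$ on $_\CY\Gr^{\omega^\rho}_{G}$ lifts to an object of the relative Hecke category $\bHecke_{\on{rel}}(\Shv_{_\CY\!\CG^G}({}_\CY\Gr^{\omega^\rho}_{G}))$, given the corresponding statement \thmref{t:Hecke ICs} for $\ICs_{q,\Ran_x}$ itself.

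The key point is that this is a formal consequence of \thmref{t:Hecke ICs}, obtained by transport of structure along the defining fiber-product diagram. The plan is as follows. First I would observe that by construction $_\CY\Gr^{\omega^\rho}_{G} = \CY\underset{\fL^+(T)_{\Ran_x}\backslash \Ran_x}\times (\fL^+(T)_{\Ran_x}\backslash \Gr^{\omega^\rho}_{G,\Ran_x})$, and $_\CY\!\ICs_{q,\Ran_x}$ is the pullback of $\ICs_{q,\Ran_x}$ (viewed as an object of $(\SI_{q,\Ran_x}(G))^{\fL^+(T)_{\Ran_x}}$) along the projection $_\CY\Gr^{\omega^\rho}_{G}\to \fL^+(T)_{\Ran_x}\backslash \Gr^{\omega^\rho}_{G,\Ran_x}$, together with the identification of gerbes $_\CY\!\CG^G \simeq (\on{pr})^*(\CG^G)$. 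Next, I would check that the pullback functor intertwines the two structures at play: the right action of $\Sph_{q,x}(G)$ via $\Sat_{q,G}$ (defined in \secref{sss:Hecke G action} on the twisted side by lifting the groupoid $\on{Hecke}^{\on{loc}}_{G,x}$-action, and in \secref{ss:Hecke action on the semi-infinite category} on the untwisted side) is compatible with pullback because the $\on{Hecke}^{\on{loc}}_{G,x}$-action on $_\CY\Gr^{\omega^\rho}_{G}$ is by definition the base change of the one on $\fL^+(T)_{\Ran_x}\backslash\Gr^{\omega^\rho}_{G,\Ran_x}$; and similarly the left $\Sph_{q,x}(T)$-action via $\Sat'_{q,T}$, which on the twisted side comes (\secref{sss:Hecke T action}) from the given lift of $\on{Hecke}^{\on{loc}}_{T,x}$ to $\CY$, sits over the $\fL^+(T)$-equivariant structure that is part of the datum of $\ICs_{q,\Ran_x}$. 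Because both the $G$- and $T$-side structures on $_\CY\Gr^{\omega^\rho}_{G}$ are pulled back (the $G$-side along the projection to $\fL^+(T)_{\Ran_x}\backslash\Gr^{\omega^\rho}_{G,\Ran_x}$, the $T$-side essentially along the map to $\CY$), the functor of !-pullback is a morphism of categories acted on by $\Rep(H)\otimes\Rep(T_H)$, hence induces a functor on the relative Hecke categories $\bHecke_{\on{rel}}$. Applying this induced functor to the Hecke lift of $\ICs_{q,\Ran_x}$ furnished by \thmref{t:Hecke ICs} produces the desired lift of $_\CY\!\ICs_{q,\Ran_x}$, and the compatibility of the induced functor with $\oblv_{\bHecke_{\on{rel}}}$ guarantees that the underlying object is indeed $_\CY\!\ICs_{q,\Ran_x}$.

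The only nontrivial verification — and the step I expect to be the main (modest) obstacle — is the commutation of the $\Rep(T_H)$-action with pullback, specifically tracking the cohomological shift built into $\Sat'_{q,T}$ and checking that it survives the $T$-twist. One must confirm that the lift of $\on{Hecke}^{\on{loc}}_{T,x}$ to $_\CY\Gr^{\omega^\rho}_{G}$ (which leaves the $G$-bundle $\CP_G$ untouched, by \secref{sss:Hecke T action}) acts on the gerbe $_\CY\!\CG^G$ compatibly with how $\on{Hecke}^{\on{loc}}_{T,x}$ acts on the $\fL^+(T)$-equivariant structure of $\CG^G$ on $\Gr^{\omega^\rho}_{G,\Ran_x}$ — this is precisely where the multiplicativity of the ratio gerbe $\CG^{G,G,\on{ratio}}$ and the unital normalization of the $\fL^+(T)$-equivariance of $\CG^G$ enter, in the same spirit as \propref{p:gerbe on stratum} and \corref{c:gerbe unital}. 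Once one unwinds that both actions on $_\CY\Gr^{\omega^\rho}_{G}$ are honestly base-changed from the untwisted setting (and that the gerbe identification is compatible with both), the statement follows, exactly as the non-twisted analog \cite[Theorem 5.1.8]{Ga7} is upgraded in its own twisted form; in fact the argument is word-for-word parallel to the passage from \thmref{t:Hecke ICs} to its factorization-module enhancement \thmref{t:Hecke ICs fact}.
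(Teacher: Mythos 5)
Your proposal is correct and follows exactly the route the paper has in mind: the paper offers no explicit argument for \corref{c:twisted Hecke IC}, presenting it as an immediate consequence of \thmref{t:Hecke ICs} by transport of the relative Hecke structure along the pullback functor from $\fL^+(T)_{\Ran_x}\backslash\Gr^{\omega^\rho}_{G,\Ran_x}$ to ${}_\CY\Gr^{\omega^\rho}_{G}$, using that both the $\on{Hecke}^{\on{loc}}_{G,x}$- and $\on{Hecke}^{\on{loc}}_{T,x}$-actions on the twisted space are by construction base-changed from the untwisted setting. Your extra care about the gerbe compatibility and the cohomological shift in $\Sat'_{q,T}$ correctly identifies the only verifications left implicit in the paper.
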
 

\ssec{Hecke action over $\Gr_T$}   \label{ss:Hecke on Gr T}

We will now consider an example of the situation described in \secref{ss:twisted Hecke} (another example will be described
in \secref{ss:Hecke global} below). The construction in this subsection has a direct import on the Hecke property
of the Jacquet functor considered in the next Part. 

\sssec{}

Let us be in the context of \secref{sss:twist by Gr T}, but with a marked point $x$. I.e., we take
$$\CY:=\Gr^{\omega^\rho}_{T,\Ran_x},$$
together with its natural map to $\fL^+(T)_{\Ran_x}\backslash \Ran_x$.

\medskip

The action of $\on{Hecke}^{\on{loc}}_{T,x}$ on $\fL^+(T)_{\Ran_x}\backslash \Ran_x$ naturally lifts to
a \emph{right} action on $\Gr^{\omega^\rho}_{T,\Ran_x}$. Using the inversion involution, we obtain a 
left action of $\on{Hecke}^{\on{loc}}_{T,x}$ on $\Gr^{\omega^\rho}_{T,\Ran_x}$. Hence, 
we find ourselves in the context of \secref{sss:action on Y}. 

\sssec{} \label{sss:Hecke action on Gr T}

Recall the identification \eqref{e:compose Gr T}:
$$_\CY\Gr^{\omega^\rho}_{G} \simeq 
\Gr^{\omega^\rho}_{T,\Ran_x}\underset{\Ran_x}\times \Gr^{\omega^\rho}_{G,\Ran_x}.$$

\medskip

This identification intertwines the left action of $\on{Hecke}^{\on{loc}}_{G,x}$
on $_\CY\Gr^{\omega^\rho}_G$ of \secref{sss:Hecke G action} and the natural right
action of $\on{Hecke}^{\on{loc}}_{G,x}$ on $\Gr^{\omega^\rho}_{T,\Ran_x}\underset{\Ran}\times \Gr^{\omega^\rho}_{G,\Ran_x}$
via the second factor. 

\medskip

In addition, the above identification intertwines the left action of $\on{Hecke}^{\on{loc}}_{T,x}$
on $_\CY\Gr^{\omega^\rho}_G$ of \secref{sss:Hecke T action} and the natural right
action of $\on{Hecke}^{\on{loc}}_{T,x}$ on $\Gr^{\omega^\rho}_{T,\Ran_x}\underset{\Ran}\times \Gr^{\omega^\rho}_{G,\Ran_x}$
via the first factor, \emph{precomposed} with the inversion involution on $\on{Hecke}^{\on{loc}}_{T,x}$
(note that inversion turns a left action into a right action). 

\medskip

Hence, we can view $\Shv_{\CG^{G,T,\on{ratio}}}(\Gr^{\omega^\rho}_{T,\Ran_x}\underset{\Ran_x}\times \Gr^{\omega^\rho}_{G,\Ran_x})$
as a module over $\Rep(H)\otimes \Rep(T_H)$, \emph{where we precompose} the action of $\Rep(T_H)$ with the Cartan
involution $\tau^{T_H}$, see \secref{sss:Cartan inv}. 

\sssec{}

From \corref{c:twisted Hecke IC}, combined with \secref{sss:Satake and inversion}, we obtain: 

\begin{cor} \label{c:Hecke IC Gr T}  
The object 
$$_{\Gr_T}\!\ICs_{q,\Ran_x}\in 
\Shv_{\CG^{G,T,\on{ratio}}}(\Gr^{\omega^\rho}_{T,\Ran_x}\underset{\Ran_x}\times \Gr^{\omega^\rho}_{G,\Ran_x})$$ 
lifts to an object of 
$$\bHecke_{\on{rel}}(\Shv_{\CG^{G,T,\on{ratio}}}(\Gr^{\omega^\rho}_{T,\Ran_x}\underset{\Ran_x}\times \Gr^{\omega^\rho}_{G,\Ran_x})).$$
\end{cor}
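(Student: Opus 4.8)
The statement to prove is \corref{c:Hecke IC Gr T}: that $_{\Gr_T}\!\ICs_{q,\Ran_x}$ lifts to an object of $\bHecke_{\on{rel}}(\Shv_{\CG^{G,T,\on{ratio}}}(\Gr^{\omega^\rho}_{T,\Ran_x}\underset{\Ran_x}\times \Gr^{\omega^\rho}_{G,\Ran_x}))$, where the $\Rep(T_H)$-action is precomposed with the Cartan involution $\tau^{T_H}$.

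\medskip

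The plan is to deduce this as a formal consequence of \corref{c:twisted Hecke IC} applied to the specific choice of base $\CY = \Gr^{\omega^\rho}_{T,\Ran_x}$, combined with the compatibility of metaplectic geometric Satake with Verdier duality recorded in \secref{sss:Satake and inversion}. First I would observe that we are literally in the setup of \secref{ss:twisted Hecke}: we take $\CY := \Gr^{\omega^\rho}_{T,\Ran_x}$ with its tautological map to $\fL^+(T)_{\Ran_x}\backslash \Ran_x$ (from \secref{sss:twist by Gr T}, base-changed to $\Ran_x$), so that $_\CY\Gr^{\omega^\rho}_{G}$ is identified with $\Gr^{\omega^\rho}_{T,\Ran_x}\underset{\Ran_x}\times \Gr^{\omega^\rho}_{G,\Ran_x}$ by \eqref{e:compose Gr T}, and $_\CY\!\ICs_q$ becomes exactly $_{\Gr_T}\!\ICs_{q,\Ran_x}$. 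For this to be an instance of \secref{ss:twisted Hecke}, I need the two hypotheses: (i) a map $\CY \to \fL^+(T)_{\Ran_x}\backslash \Ran_x$ — present — and (ii) a lift of the $\on{Hecke}^{\on{loc}}_{T,x}$-action on $\fL^+(T)_{\Ran_x}\backslash\Ran_x$ to an action on $\CY$. The content of the first paragraph of \secref{ss:Hecke on Gr T} is precisely that the natural \emph{right} action of $\on{Hecke}^{\on{loc}}_{T,x}$ on $\Gr^{\omega^\rho}_{T,\Ran_x}$, converted to a left action via the inversion involution $\inv^T$, supplies this datum; I would spell out that the fiber square defining $_\CY\!\on{Hecke}^{\on{loc}}_{T,x}$ is Cartesian because right translation by $\on{Hecke}^{\on{loc}}_{T,x}$ on $\Gr^{\omega^\rho}_{T,\Ran_x}$ is compatible with the projection to $\fL^+(T)_{\Ran_x}\backslash\Ran_x$.

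\medskip

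Next I would invoke \corref{c:twisted Hecke IC} verbatim: it gives that $_\CY\!\ICs_{q,\Ran_x}$ lifts to $\bHecke_{\on{rel}}(\Shv_{_\CY\!\CG^G}(_\CY\Gr^{\omega^\rho}_G))$, where the $\Rep(H)\otimes\Rep(T_H)$-action is the one built in Sects.~\ref{sss:Hecke G action}--\ref{sss:Hecke T action}: $\Rep(H)$ acts via $\Sat_{q,G}$ through $\on{Hecke}^{\on{loc}}_{G,x}$ on the $G$-bundle, and $\Rep(T_H)$ acts via $\Sat'_{q,T}$ through $\on{Hecke}^{\on{loc}}_{T,x}$ through the chosen lift to $\CY$. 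The only remaining task is bookkeeping of how these two actions read off under the identification \eqref{e:compose Gr T}. The $G$-Hecke action matches the natural right $\on{Hecke}^{\on{loc}}_{G,x}$-action on the second factor directly — nothing to check beyond unwinding definitions. The $T$-Hecke action, by the construction of the lift in \secref{ss:Hecke on Gr T}, matches the natural right $\on{Hecke}^{\on{loc}}_{T,x}$-action on the \emph{first} factor \emph{precomposed with $\inv^T$}. Using the identification $\Sph_{q,x}(T)^c \xrightarrow{\ \inv^T\ } \Sph_{q^{-1},x}(T)^c$ and the commutative diagram \eqref{e:Cartan and inv} — or rather its $T$-version, i.e.\ $\inv^T\circ\Sat_{q,T} \simeq \Sat_{q,T}\circ\tau^{T_H}$, which for a torus is the tautological statement that inversion on $\Gr_{T,x}$ corresponds to $\sfe^\lambda\mapsto\sfe^{-\lambda}$ — I conclude that precomposing the $\Rep(T_H)$-action with $\inv^T$ on the geometry is the same as precomposing with $\tau^{T_H}$ on $\Rep(T_H)$. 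Combined with \secref{sss:Satake and inversion} to keep track of the gerbe twist $\CG^{G,T,\on{ratio}} = (\CG^T)^{-1}\boxtimes\CG^G$ and the shift built into $\Sat'_{q,T}$, this yields exactly the asserted $\Rep(H)\otimes\Rep(T_H)$-module structure with the $\tau^{T_H}$-twist, completing the proof.

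\medskip

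The main obstacle — such as it is — is not conceptual but a matter of carefully matching up three layers of identifications: (a) the identification \eqref{e:compose Gr T} of $_\CY\Gr^{\omega^\rho}_G$ with the product, (b) the translation between ``left action of $\on{Hecke}^{\on{loc}}_{T,x}$ obtained from a right action by $\inv^T$'' and ``right action precomposed with $\inv^T$'' (the direction of the involution has to be tracked consistently so as not to accidentally apply $\tau^{T_H}$ twice or not at all), and (c) the compatibility of the shift in $\Sat'_{q,T}$ with the grading shift in the t-structure used to define $\ICs_{q,\Ran}$, which enters because $_{\Gr_T}\!\ICs_{q,\Ran}$ is a genuine (shifted) IC sheaf and the Hecke eigen-property must respect the cohomological normalization. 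All of this is handled exactly as in \cite[Theorem 5.1.8]{Ga7} together with \cite[Sect.\ 6]{Ga7}; the metaplectic modifications are confined to the gerbe twists, and those are controlled by the multiplicativity of $\CG^{G,G,\on{ratio}}$ (\secref{sss:gerbe on Hecke}) and by \secref{sss:Satake and inversion}. Thus the proof is: reduce to \corref{c:twisted Hecke IC} via the explicit base $\CY=\Gr^{\omega^\rho}_{T,\Ran_x}$, then transport the resulting module structure across \eqref{e:compose Gr T} using \secref{sss:Satake and inversion} to absorb $\inv^T$ into $\tau^{T_H}$.
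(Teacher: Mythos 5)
Your proposal is correct and matches the paper's argument exactly: the paper likewise deduces the corollary by taking $\CY = \Gr^{\omega^\rho}_{T,\Ran_x}$, applying \corref{c:twisted Hecke IC}, and then invoking \secref{sss:Satake and inversion} to absorb the inversion involution on $\on{Hecke}^{\on{loc}}_{T,x}$ into the Cartan involution $\tau^{T_H}$ on $\Rep(T_H)$. You have simply made explicit the bookkeeping of identifications that the paper compresses into one sentence.
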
 

\sssec{}

We will now need to complement of statement of \corref{c:Hecke IC Gr T} to take into account the factorization structure.

\medskip

By \corref{c:twisted IC fact} we can regard $_{\Gr_T}\!\ICs_{q,\Ran}$ as a factorization algebra in the category
$$\Shv_{\CG^{G,T,\on{ratio}}}(\Gr^{\omega^\rho}_{T,\Ran}\underset{\Ran}\times \Gr^{\omega^\rho}_{G,\Ran})$$ 
and $_{\Gr_T}\!\ICs_{q,\Ran_x}$ as an object of
\begin{equation} \label{e:IC twisted mod fact}
_{\Gr_T}\!\ICs_{q,\Ran}\on{-FactMod}
\left(\Shv_{\CG^{G,T,\on{ratio}}}(\Gr^{\omega^\rho}_{T,\Ran_x}\underset{\Ran_x}\times \Gr^{\omega^\rho}_{G,\Ran_x})\right).
\end{equation} 

%
%Note that the factorization isomorphisms \eqref{e:factor IC} (viewed as taking place in the $\fL^+(T)$-equivariant category)
%give rise to isomorphisms
%\begin{multline} \label{e:factor IC Gr T}
%_{\Gr_T}\!\ICs_{q,\Ran}|_{(\Gr^{\omega^\rho}_{T,\Ran}\underset{\Ran}\times \Gr^{\omega^\rho}_{G,\Ran}) 
%\underset{\Ran}\times (\Ran^J)_{\on{disj}}} \simeq \\
%\simeq   ({}_{\Gr_T}\!\ICs_{q,\Ran})^{\boxtimes J}
%|_{(\Gr^{\omega^\rho}_{T,\Ran}\underset{\Ran}\times \Gr^{\omega^\rho}_{G,\Ran})^J \underset{\Ran^J}\times (\Ran^J)_{\on{disj}}},
%\end{multline} 
%and
%\begin{multline} \label{e:factor IC x Gr T}
%_{\Gr_T}\!\ICs_{q,\Ran_x}|_{(\Gr^{\omega^\rho}_{T,\Ran_x}\underset{\Ran_x}\times \Gr^{\omega^\rho}_{G,\Ran_x})
%\underset{\Ran_x}\times (\Ran^J\times \Ran_x)_{\on{disj}}} \simeq \\
%(({}_{\Gr_T}\!\ICs_{q,\Ran})^{\boxtimes J}\boxtimes {}_{\Gr_T}\!\ICs_{q,\Ran_x})
%|_{(\Gr^{\omega^\rho}_{T,\Ran}\underset{\Ran}\times \Gr^{\omega^\rho}_{G,\Ran})^J\times 
%(\Gr^{\omega^\rho}_{T,\Ran_x}\underset{\Ran_x}\times \Gr^{\omega^\rho}_{G,\Ran_x})
%\underset{\Ran^J\times \Ran_x}\times (\Ran^J\times \Ran_x)_{\on{disj}}}.
%\end{multline}

As in \secref{sss:Hecke action on fact mod}, we can regard \eqref{e:IC twisted mod fact} as a module category
over $\Rep(H)\otimes \Rep(T_H)$. From \thmref{t:Hecke ICs fact} we obtain:

\begin{cor} \label{c:Hecke IC Gr T fact}
The relative Hecke structure on $_{\Gr_T}\!\ICs_{q,\Ran_x}$ given by \corref{c:Hecke IC Gr T}
% of an object of
%$\bHecke_{\on{rel}}(\Shv_{\CG^{G,T,\on{ratio}}}(\Gr^{\omega^\rho}_{T,\Ran_x} \underset{\Ran_x}\times \Gr^{\omega^\rho}_{G,\Ran_x}))$
is compatible with the factorization structure in a natural sense. I.e., 
$_{\Gr_T}\!\ICs_{q,\Ran_x}$ naturally lifts to an object of the category
$$\bHecke_{\on{rel}}
\left({}_{\Gr_T}\!\ICs_{q,\Ran}\on{-FactMod}
(\Shv_{\CG^{G,T,\on{ratio}}}(\Gr^{\omega^\rho}_{T,\Ran_x}\underset{\Ran_x}\times \Gr^{\omega^\rho}_{G,\Ran_x}))\right).$$
\end{cor}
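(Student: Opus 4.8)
\textbf{Proof proposal for \corref{c:Hecke IC Gr T fact}.}

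The plan is to observe that this corollary combines two pieces of structure that have already been established: the relative Hecke structure on $_{\Gr_T}\!\ICs_{q,\Ran_x}$ provided by \corref{c:Hecke IC Gr T}, and the factorization-module structure recorded via the isomorphisms \eqref{e:factor IC x} (which makes $_{\Gr_T}\!\ICs_{q,\Ran_x}$ an object of the category in \eqref{e:IC twisted mod fact}). The content of the corollary is the \emph{compatibility} of these two — i.e.\ that the eigen-isomorphisms $\ICs_{q,\Ran_x}\underset{H}\star V\simeq \Res^H_{T_H}(V)\star \ICs_{q,\Ran_x}$ are morphisms of factorization modules, and in turn that these eigen-isomorphisms vary correctly under the factorization identifications. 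Concretely, I would deduce this corollary from \thmref{t:Hecke ICs fact} by pulling back along the twisting construction of \secref{ss:twisted Hecke}, exactly the way \corref{c:Hecke IC Gr T} was deduced from \thmref{t:Hecke ICs} via \corref{c:twisted Hecke IC}.

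First I would set up the twisted geometry with the marked point. Take $\CY=\Gr^{\omega^\rho}_{T,\Ran_x}$ as in \secref{ss:Hecke on Gr T}, so that $_\CY\Gr^{\omega^\rho}_G\simeq \Gr^{\omega^\rho}_{T,\Ran_x}\underset{\Ran_x}\times \Gr^{\omega^\rho}_{G,\Ran_x}$ by \eqref{e:compose Gr T}, with the gerbe identified with $\CG^{G,T,\on{ratio}}$ via the multiplicativity of $\CG^{G,G,\on{ratio}}$ (\secref{sss:gerbe on Hecke}). The key point is that the base $\CY=\Gr^{\omega^\rho}_{T,\Ran_x}$ carries itself a factorization module structure over $\Gr^{\omega^\rho}_{T,\Ran}$, and that the action of $\on{Hecke}^{\on{loc}}_{T,x}$ on $\CY$ — the one feeding into \secref{sss:action on Y} — is compatible with that factorization structure (it comes, after the inversion involution, from the right translation action, which is manifestly factorizable). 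Hence the monoidal actions of $\Sph_{q,x}(G)$ and $\Sph_{q,x}(T)$ on $_\CY\Gr^{\omega^\rho}_G$ that enter \secref{sss:Hecke G action} and \secref{sss:Hecke T action}, which arise by the mechanism of \secref{sss:fact funct mod geom}, preserve the categories of factorization modules and are compatible with the factorization identifications \eqref{e:factor IC} and \eqref{e:factor IC x}. This is precisely the statement that the category \eqref{e:IC twisted mod fact} is a module category over $\Rep(H)\otimes \Rep(T_H)$ in a way compatible with the forgetful functor to $_{\Gr_T}\!\ICs_{q,\Ran}\on{-FactMod}$ composed with $\oblv_{\on{Fact}}$, which is what \secref{sss:Hecke action on fact mod} asserts in the abstract.

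With that in place, the proof amounts to transporting the relative Hecke structure furnished by \thmref{t:Hecke ICs fact} along the twisting. The object $_{\Gr_T}\!\ICs_{q,\Ran_x}$ is the pullback of $\ICs_{q,\Ran_x}$ under the forgetful map $_\CY\Gr^{\omega^\rho}_G\to \Gr^{\omega^\rho}_{G,\Ran_x}$ (regarded $\fL^+(T)_{\Ran_x}$-equivariantly), and by the compatibility of \secref{sss:Hecke action on Gr T} the left $\on{Hecke}^{\on{loc}}_{G,x}$- and $\on{Hecke}^{\on{loc}}_{T,x}$-actions on $_\CY\Gr^{\omega^\rho}_G$ restrict, under the identification \eqref{e:compose Gr T}, to the right actions on the two factors (the $T$-one precomposed with $\tau^{T_H}$). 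Thus \thmref{t:Hecke ICs fact}, which lives over $\Ran_x$ before twisting, produces — after pullback — a lift of $_{\Gr_T}\!\ICs_{q,\Ran_x}$ to $\bHecke_{\on{rel}}\!\left({}_{\Gr_T}\!\ICs_{q,\Ran}\on{-FactMod}(\Shv_{\CG^{G,T,\on{ratio}}}(\Gr^{\omega^\rho}_{T,\Ran_x}\underset{\Ran_x}\times \Gr^{\omega^\rho}_{G,\Ran_x}))\right)$; one also uses \secref{sss:Satake and inversion} to account for the $q\rightsquigarrow q^{-1}$ and Cartan-involution bookkeeping on the $T_H$-side, exactly as in the proof of \corref{c:Hecke IC Gr T}. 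The main obstacle I anticipate is not conceptual but homotopy-theoretic: checking that the relative Hecke eigen-data and the factorization isomorphisms are compatible \emph{coherently}, i.e.\ that the full tower of higher compatibilities assembles. Here I would lean on the unitality device, as in the proof of \thmref{t:fact ICs}: all the objects involved can be realized inside appropriately defined abelian categories (the hearts of the t-structures on the relevant unital semi-infinite categories), where the relevant isomorphisms are uniquely characterized by their restriction to the open strata $S^0$, so the coherence is automatic and there is nothing further to verify beyond the existence of the isomorphisms on those open loci — which is where the factorization structure on $\ICs_{q,\Ran}$ was tautological to begin with.
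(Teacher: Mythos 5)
Your proposal is correct and matches the paper's (implicit) argument: the paper presents this corollary as an immediate consequence of \thmref{t:Hecke ICs fact} via the twisting construction of \secref{ss:twisted Hecke}, exactly parallel to how \corref{c:Hecke IC Gr T} follows from \thmref{t:Hecke ICs}, with the inversion/Cartan-involution bookkeeping handled by \secref{sss:Satake and inversion} — which is precisely what you describe. Your closing remark about using the unitality device and abelian hearts to discharge the homotopy-coherence is a reasonable gloss on how \thmref{t:Hecke ICs fact} itself is established, but at the level of this corollary it is not an extra input: once \thmref{t:Hecke ICs fact} is in hand, the lift is obtained by applying the $\Rep(H)\otimes\Rep(T_H)$-linear (and factorization-compatible, per \secref{sss:Hecke action on fact mod}) twisting functor, so there is nothing new to verify.
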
 

\ssec{Global Hecke property}  \label{ss:Hecke global}

In this subsection we will consider another example of the paradigm of \secref{ss:twisted Hecke}, which contains a global 
aspect, incarnated by the stack $\Bun_T$ of $T$-bundles on a complete curve $X$. 

\medskip

The material in this subsection is needed for the local-to-global comparison for the (Hecke version of the) Jacquet functor, 
which is, in turn, used to establish the commutation of the (Hecke version of the) local Jacquet functor with Verdier duality. 

\sssec{}

Consider the ind-algebraic stack $(\BunBb)_{\infty\cdot x}$. By construction, it is equipped with a pair of maps
$$\on{pt}/\fL^+(G)_x \leftarrow (\BunBb)_{\infty\cdot x} \to \on{pt}/\fL^+(T)_x.$$

The action on $\on{Hecke}^{\on{loc}}_{T,x}$ on $\on{pt}/\fL^+(T)_x$ naturally lifts to a (left) action on 
$(\BunBb)_{\infty\cdot x}$: we modify the $T$-bundle, while leaving the $G$-bundle intact.

\medskip

Similarly, the right action of $\on{Hecke}^{\on{loc}}_{G,x}$ on $\on{pt}/\fL^+(G)_x$ naturally lifts to an action on 
$(\BunBb)_{\infty\cdot x}$: we modify the $G$-bundle, while leaving the $T$-bundle intact.

\medskip

Thus, the category $\Shv_{\CG^{G,T,\on{ratio}}}((\BunBb)_{\infty\cdot x})$ acquires a left action of
$\Sph_{q,x}(T)$ and a commuting right action of $\Sph_{q,x}(G)$. 

\begin{rem}  \label{r:global Hecke action}

Concretely, the action of $\Sph_{q,x}(G)$ on $\Shv_{\CG^{G,T,\on{ratio}}}((\BunBb)_{\infty\cdot x})$ is given by
the formula
$$\CF,\CS_G\mapsto (\hl_G)_*\left((\hr_G)^!(\CF)\sotimes \CS_G|_{_{(\BunBb)_{\infty\cdot x}}\!\on{Hecke}^{\on{loc}}_{G,x}}\right),
\quad \CF\in \Shv_{\CG^{G,T,\on{ratio}}}((\BunBb)_{\infty\cdot x}), \,\, \CS_G\in \Sph_{q,x}(G)$$
for the maps in the following diagram
$$
\CD
(\BunBb)_{\infty\cdot x}    @<{\hl_G}<<  _{(\BunBb)_{\infty\cdot x}}\!\on{Hecke}^{\on{loc}}_{G,x}  @>{\hr_G}>> (\BunBb)_{\infty\cdot x} \\
@VVV    @VVV  @VVV  \\
\on{pt}/\fL^+(G)_x   @<{\hl_G}<<  \on{Hecke}^{\on{loc}}_{G,x}  @>{\hr_G}>>  \on{pt}/\fL^+(G)_x. 
\endCD
$$ 

\medskip

Similarly, the action of $\Sph_{q,x}(T)$ on $\Shv_{\CG^{G,T,\on{ratio}}}((\BunBb)_{\infty\cdot x})$ is given by the formula 
$$\CF,\CS_T\mapsto (\hr_T)_*\left((\hl_T)^!(\CF)\sotimes \CS_T|_{_{(\BunBb)_{\infty\cdot x}}\!\on{Hecke}^{\on{loc}}_{T,x}}\right),\quad
\CF\in \Shv_{\CG^{G,T,\on{ratio}}}((\BunBb)_{\infty\cdot x}), \,\, \CS_T\in \Sph_{q,x}(T)$$
for the maps in the following diagram
$$
\CD
(\BunBb)_{\infty\cdot x}    @<{\hl_T}<<  _{(\BunBb)_{\infty\cdot x}}\!\on{Hecke}^{\on{loc}}_{T,x}  @>{\hr_T}>> (\BunBb)_{\infty\cdot x} \\
@VVV    @VVV  @VVV  \\
\on{pt}/\fL^+(T)_x   @<{\hl_T}<<  \on{Hecke}^{\on{loc}}_{T,x}  @>{\hr_T}>>  \on{pt}/\fL^+(T)_x. 
\endCD
$$ 

\end{rem} 

\sssec{}

Applying geometric Satake, we obtain an action of $\Rep(H)$ and a commuting action of $\Rep(T_H)$
on $\Shv_{\CG^{G,T,\on{ratio}}}((\BunBb)_{\infty\cdot x})$. 

\medskip 

\noindent{\it Warning:} Here, unlike the local version, we use the usual geometric Satake functor $\Sat_{q,T}$
(and not the cohomologically shifted version $\Sat'_{q,T}$, see \secref{sss:Sat'}).

\medskip

The following is a metaplectic version of \cite[Theorem 3.1.4]{BG}:

\begin{thm}  \label{t:global Hecke}
The object $_{\Bun_T}\!\ICs_{q,\on{glob}}\in \Shv_{\CG^{G,T,\on{ratio}}}((\BunBb)_{\infty\cdot x})$ naturally lifts to an object of 
$\bHecke_{\on{rel}}(\Shv_{\CG^{G,T,\on{ratio}}}((\BunBb)_{\infty\cdot x}))$.
\end{thm}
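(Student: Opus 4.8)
\textbf{Proof proposal for \thmref{t:global Hecke}.}

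The plan is to reduce the global Hecke-eigenproperty of $_{\Bun_T}\!\ICs_{q,\on{glob}}$ to the local one established in \thmref{t:Hecke ICs} (together with its twisted incarnation \corref{c:twisted Hecke IC}), via the local-to-global comparison of \thmref{t:ICs glob}. First I would fix the geometry: recall the prestack $_{\Bun_T}\ol{S}^0_\Ran = {}_{\Bun_T\times\Ran}\ol{S}^0$ sitting over $(\BunBb)$ via $\pi_\Ran$, and note that this is precisely an instance of the paradigm of \secref{ss:twisted Hecke} with $\CY = \Bun_T\times\Ran_x$ (mapping to $\fL^+(T)_{\Ran_x}\backslash \Ran_x$ in the evident way). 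The action of $\on{Hecke}^{\on{loc}}_{T,x}$ on $\CY$ comes from modifying the $T$-bundle at $x$ while leaving the $\Ran$-datum intact (a lift in the sense of \secref{sss:action on Y}), and the action of $\on{Hecke}^{\on{loc}}_{G,x}$ on $_\CY\Gr^{\omega^\rho}_G$ is the standard one of \secref{sss:Hecke G action}. Then \corref{c:twisted Hecke IC} already gives us that $_{\Bun_T}\!\ICs_{q,\Ran_x} = {}_\CY\!\ICs_{q,\Ran_x}$ lifts to an object of $\bHecke_{\on{rel}}(\Shv_{_\CY\!\CG^G}({}_\CY\Gr^{\omega^\rho}_G))$, i.e. it carries a compatible system of isomorphisms
$$
_{\Bun_T}\!\ICs_{q,\Ran_x}\underset{H}\star V\;\simeq\;\Res^H_{T_H}(V)\star {}_{\Bun_T}\!\ICs_{q,\Ran_x},\quad V\in\Rep(H).
$$

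Next I would transport this structure across $\pi_\Ran^!$. By \thmref{t:ICs glob}, the functor $\pi_\Ran^!$ (up to the cohomological and degree shift $[d_g+\dim(\Bun_T)+\deg]$) identifies $_{\Bun_T}\!\ICs_{q,\on{glob}}$ with $_{\Bun_T}\!\ICs_{q,\Ran}$; more importantly, $\pi_\Ran^!$ is fully faithful on the relevant semi-infinite subcategory (this is the content of \thmref{t:N contr}, whose hypotheses apply here since the fibers of $\pi_\Ran$ over $\BunNbom$, resp. $\BunBb$, are contractible in the required sense). The point is then to check that $\pi_\Ran$ is \emph{equivariant} for the two groupoid actions: the $\on{Hecke}^{\on{loc}}_{G,x}$-action on $(\BunBb)_{\infty\cdot x}$ (modify the $G$-bundle) pulls back to the action on $_\CY\Gr^{\omega^\rho}_G$, and likewise for $\on{Hecke}^{\on{loc}}_{T,x}$ (modify the $T$-bundle); concretely one matches the correspondence diagrams in Remark~\ref{r:global Hecke action} with the base-changed diagrams of \secref{sss:Hecke G action} and \secref{sss:Hecke T action}. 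Granting this, $\pi_\Ran^!$ intertwines the $\Rep(H)\otimes\Rep(T_H)$-actions (with the usual Satake $\Sat_{q,T}$ globally versus the shifted $\Sat'_{q,T}$ locally — the discrepancy is exactly absorbed by the $\deg$ shift in \thmref{t:ICs glob}, which is where I would be most careful). Since $\pi_\Ran^!$ is fully faithful and module-functorial, the relative-Hecke structure on $_{\Bun_T}\!\ICs_{q,\Ran}$ descends uniquely to one on $_{\Bun_T}\!\ICs_{q,\on{glob}}$, extending the tautological identification over $\Bun_B\subset\BunBb$.

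There is, however, a genuine subtlety to address: the statement of \thmref{t:global Hecke} is about $(\BunBb)_{\infty\cdot x}$, whereas $\pi_\Ran$ as I set it up lands in $\BunBb$ (with the $\Ran$-version $_{\Bun_T}\ol{S}^0_{\Ran_x}$ allowing poles only away from the running points). So I would work with the marked-point versions throughout, replacing $_{\Bun_T}\ol{S}^0_\Ran$ by $_{\Bun_T}\ol{S}^0_{\Ran_x}$ with poles of arbitrary order at $x$ allowed, and noting that $(\BunBb)_{\infty\cdot x}$ is the corresponding target; this is the analogue of the passage from $\BunNbom$ to $\BunNbox$ in Sects.~\ref{ss:Whit glob}--\ref{ss:loc to glob Whit}. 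The compatibility of the Hecke action with factorization (needed to glue the fiberwise data over $\Ran_x$ into an honest module structure) is guaranteed by \thmref{t:Hecke ICs fact} and its twisted version \corref{c:Hecke IC Gr T fact}, applied to the present $\CY$. The main obstacle, as indicated, will be the bookkeeping of shifts and of the Cartan-involution twist in the $\Rep(T_H)$-action: one must verify that the global $\Sat_{q,T}$ (unshifted, no $\tau^{T_H}$-precomposition needed here because the $T$-bundle is modified on the \emph{left}, not via inversion) is the correct normalization so that the isomorphism of \thmref{t:ICs glob} is a morphism of $\Rep(T_H)$-module objects — this is a direct but delicate check using the explicit description of the $T$-action in Remark~\ref{r:global Hecke action} and the defining shift $\Sat'_{q,T}(\sfe^\lambda)=\Sat_{q,T}(\sfe^\lambda)[-\langle\lambda,2\check\rho\rangle]$ against the $\deg=\langle\lambda,2\check\rho\rangle$ appearing over $\Bun_T^\lambda$.
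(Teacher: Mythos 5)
The paper's proof of \thmref{t:global Hecke} is not a local-to-global reduction: it is a direct global-geometric argument, namely the metaplectic version of the construction in \cite[Theorem 3.1.4]{BG}, which produces the Hecke eigen-structure on $\ICs_{q,\on{glob}}$ by analyzing the Hecke correspondences over $\BunBb$ intersected with its defect stratification (with Remark~\ref{r:Rep H abelian} used to bootstrap from eigen-isomorphisms for $V\in(\Rep(H))^\heartsuit$ to the full homotopy-coherent structure). Your proposal is a genuinely different route, and it is not circular --- you invoke \corref{c:twisted Hecke IC} and \thmref{t:ICs glob} but \emph{not} \thmref{t:local vs global IC Hecke}, so there is no hidden dependency on the conclusion. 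In effect your strategy would prove \thmref{t:global Hecke} and \thmref{t:local vs global IC Hecke} as a single package, trading the direct stratification argument of \cite{BG} for the full local factorization machinery (\thmref{t:Hecke ICs}, which is itself one of the deeper inputs of Part~IV).

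That said, your argument as written has two real gaps that you should not wave away. First, the full-faithfulness you need is of $\pi^!_{\Ran_x}$ out of $\Shv_{\CG^{G,T,\on{ratio}}}((\BunBb)_{\infty\cdot x})$, i.e.\ the $\Bun_T$-twisted and pole-at-$x$ version; what the paper actually establishes is \thmref{t:N contr} for $\BunNbox$ and \thmref{t:local-to-global semiinf} for $\BunNbom$ versus $\ol{S}^0_\Ran$. The twisted version is plausible (the relevant map is a base change of the untwisted one along $\Bun_T$, and the contractibility-of-fibers argument of \cite[Theorem~3.4.4]{Ga7} is insensitive to the $T$-bundle datum), but it is not literally in the paper and would need a few lines of justification before you can cite it. Second --- and this is the crux --- the compatibility between the global $\Sat_{q,T}$-action, the local $\Sat'_{q,T}$-action, and the $\lambda$-dependent shift $\deg=\langle\lambda,2\check\rho\rangle$ in \thmref{t:ICs glob} is not a side remark but precisely the content of what makes your reduction work: you need to check that, after the shift, an eigen-isomorphism for $\Sat'_{q,T}(\sfe^\gamma)$ on the Ran side transports to an eigen-isomorphism for $\Sat_{q,T}(\sfe^\gamma)$ (unshifted, and with the left-vs-inversion convention sorted out) on the global side, and that these checks assemble coherently over $\Lambda$. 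Labeling this a ``direct but delicate check'' identifies the right obstacle but does not discharge it; until it is carried out, the proposal is a correct outline rather than a proof.
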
 

\begin{rem} \label{r:Rep H abelian}
In \cite{BG}, the Hecke property of $\IC_{\BunBb}$ was established with respect to objects $V\in \Rep(\cG)$ that 
\emph{lie in the abelian category} $(\Rep(\cG))^\heartsuit$. This is, however, sufficient because 
$\Rep(\cG)$ is the derived category if its heart. The same remark applies in the metaplectic case. 
\end{rem}

\sssec{}

Let us return to the setting of \secref{ss:twisted Hecke} with
$$\CY:=\Bun_T\times \Ran_x.$$

\medskip

Consider the resulting prestack
$$_{\Bun_T}\!\Gr^{\omega^\rho}_{G,\Ran_x}:={}_{\Bun_T\times \Ran_x}\!\Gr^{\omega^\rho}_G.$$
and the corresponding closed sub-prestack
$$({}_{\Bun_T}\!\ol{S}^0_{\Ran_x})_{\infty\cdot x}\subset {}_{\Bun_T}\!\Gr^{\omega^\rho}_{G,\Ran_x},$$
see \secref{sss:tw grass}. 

\medskip

Note that the actions of the groupoids $\on{Hecke}^{\on{loc}}_{T,x}$ and
$\on{Hecke}^{\on{loc}}_{G,x}$ preserve this sub-prestack. In particular,
we can consider $\Shv_{_{\Bun_T\times \Ran}\CG}(({}_{\Bun_T}\!\ol{S}^0_{\Ran_x})_{\infty\cdot x})$
as equipped with a right action of $\Sph_{q,x}(G)$ and a commuting left action of $\Sph_{q,x}(T)$. 

\medskip

In particular, we obtain that $\Shv_{_{\Bun_T\times \Ran}\CG}(({}_{\Bun_T}\!\ol{S}^0_{\Ran_x})_{\infty\cdot x})$
is a module over $\Rep(H)\otimes \Rep(T_H)$. 
It follows from \corref{c:twisted Hecke IC} that the resulting object
$$_{\Bun_T}\!\ICs_{q,\Ran_x}\in \Shv_{_{\Bun_T\times \Ran}\CG}(({}_{\Bun_T}\!\ol{S}^0_{\Ran_x})_{\infty\cdot x})$$
naturally lifts to an object of 
$$\bHecke_{\on{rel}}(\Shv_{_{\Bun_T\times \Ran}\CG}(({}_{\Bun_T}\!\ol{S}^0_{\Ran_x})_{\infty\cdot x})).$$

\sssec{}

Note that the projection
$$\pi_{\Ran_x}:({}_{\Bun_T}\!\ol{S}^0_{\Ran_x})_{\infty\cdot x}\to (\BunBb)_{\infty\cdot x}$$
intertwines the actions of $\on{Hecke}^{\on{loc}}_{T,x}$ and
$\on{Hecke}^{\on{loc}}_{G,x}$ on $({}_{\Bun_T}\!\ol{S}^0_{\Ran_x})_{\infty\cdot x}$
and $(\BunBb)_{\infty\cdot x}$. 

\medskip

Hence, the pullback functor
$$(\pi_{\Ran_x})^!: \Shv_{\CG^{G,T,\on{ratio}}}((\BunBb)_{\infty\cdot x}) \to 
\Shv_{_{\Bun_T\times \Ran}\CG}(({}_{\Bun_T}\!\ol{S}^0_{\Ran_x})_{\infty\cdot x})$$
gives rise to a functor
$$\bHecke_{\on{rel}}(\Shv_{\CG^{G,T,\on{ratio}}}((\BunBb)_{\infty\cdot x}))\to 
\bHecke_{\on{rel}}(\Shv_{_{\Bun_T\times \Ran}\CG}(({}_{\Bun_T}\!\ol{S}^0_{\Ran_x})_{\infty\cdot x})).$$

\sssec{}

The following is a metaplectic version of \cite[Theorem 6.3.5]{Ga7}: 

\begin{thm} \label{t:local vs global IC Hecke}
The isomorphism 
$$(\pi_{\Ran_x})^!({}_{\Bun_T}\!\ICs_{q,\on{glob}})[d_g+\dim(\Bun_T)+\deg]\simeq  {}_{\Bun_T}\!\ICs_{q,\Ran_x}$$
of \thmref{t:ICs glob} lifts to an isomorphism of objects in 
$\bHecke_{\on{rel}}(\Shv_{_{\Bun_T\times \Ran}\CG}(({}_{\Bun_T}\!\ol{S}^0_{\Ran_x})_{\infty\cdot x}))$. 
\end{thm}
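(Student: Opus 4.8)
The plan is to produce two a priori different relative Hecke structures on ${}_{\Bun_T}\!\ICs_{q,\Ran_x}$ and to show that they agree compatibly with the isomorphism of underlying objects furnished by \thmref{t:ICs glob}. As recorded just before the statement, $\pi_{\Ran_x}$ intertwines the actions of the groupoids $\on{Hecke}^{\on{loc}}_{T,x}$ and $\on{Hecke}^{\on{loc}}_{G,x}$, so $(\pi_{\Ran_x})^!$ is a morphism of $\Rep(H)\otimes \Rep(T_H)$-module categories and induces a functor $\bHecke_{\on{rel}}(\Shv_{\CG^{G,T,\on{ratio}}}((\BunBb)_{\infty\cdot x})) \to \bHecke_{\on{rel}}(\Shv_{_{\Bun_T\times \Ran}\CG}(({}_{\Bun_T}\!\ol{S}^0_{\Ran_x})_{\infty\cdot x}))$. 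First I would apply this functor to ${}_{\Bun_T}\!\ICs_{q,\on{glob}}$ equipped with the Hecke structure of \thmref{t:global Hecke}, and shift by $d_g+\dim(\Bun_T)+\deg$; by \thmref{t:ICs glob} its underlying object is ${}_{\Bun_T}\!\ICs_{q,\Ran_x}$, so this endows ${}_{\Bun_T}\!\ICs_{q,\Ran_x}$ with a relative Hecke structure $\beta^{\on{glob}}$. Let $\beta^{\on{loc}}$ denote the intrinsic one produced by \corref{c:twisted Hecke IC} with $\CY=\Bun_T\times \Ran_x$. The theorem then amounts to the assertion $\beta^{\on{glob}}\simeq \beta^{\on{loc}}$.

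The mechanism, following the proof of \cite[Theorem 6.3.5]{Ga7}, is a uniqueness statement for extensions from the open substratum. Using \thmref{t:N contr} and \thmref{t:local-to-global semiinf}, one transports the perverse t-structure from $\BunBb$ to the $\fL^+(T)$-equivariant semi-infinite category, and in this t-structure ${}_{\Bun_T}\!\ICs_{q,\Ran_x}$ is the Goresky--MacPherson extension of its restriction to the open substratum ${}_{\Bun_T}S^0_{\Ran_x}$ (indeed ${}_{\Bun_T}\!\ICs_{q,\on{glob}}$ is by construction the IC-extension of the constant perverse sheaf on $\Bun_B$). Since Hecke convolution with objects of $(\Rep(H))^\heartsuit$ and $\Rep(T_H)$ is t-exact up to the prescribed cohomological shift — encoded locally by the replacement of $\Sat_{q,T}$ by $\Sat'_{q,T}$, see \secref{sss:Sat'}, and globally by the unshifted $\Sat_{q,T}$, the discrepancy being absorbed into the summand $\deg = \langle \lambda,2\check\rho\rangle$ of \thmref{t:ICs glob} — convolution preserves Goresky--MacPherson extensions, and hence the space of relative Hecke structures on ${}_{\Bun_T}\!\ICs_{q,\Ran_x}$ restricting to a fixed one over ${}_{\Bun_T}S^0_{\Ran_x}$ is contractible. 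Next I would check that both $\beta^{\on{glob}}$ and $\beta^{\on{loc}}$ restrict over ${}_{\Bun_T}S^0_{\Ran_x}$ to the tautological structure: for $\beta^{\on{loc}}$ this is how the Hecke structure propagates from the open stratum in the proof of \thmref{t:Hecke ICs}, and for $\beta^{\on{glob}}$ it is built into \thmref{t:global Hecke} together with the compatibility of $(\pi_{\Ran_x})^!$ with restriction to open substrata. On ${}_{\Bun_T}S^0_{\Ran_x}$ the gerbe is canonically trivialized and the object is the shifted dualizing sheaf, and the tautological structure there is controlled purely by the normalizations of the highest-weight lines of the $V^\gamma$ (cf. \eqref{e:descr h.w.} and \remref{r:lowest weight line}), so the two restrictions literally agree; combined with contractibility this yields $\beta^{\on{glob}}\simeq \beta^{\on{loc}}$.

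The hard part will be the rank-zero bookkeeping: one must verify, connected component by connected component of $\Bun_T$, that the shift $d_g+\dim(\Bun_T)+\deg$ in \thmref{t:ICs glob} exactly matches the discrepancy between the global and local geometric Satake functors, so that the two tautological structures coincide on the nose rather than merely up to a shift or a scalar. A secondary technical point is the homotopy-coherence of the Hecke-compatibility data: as in \cite{Ga7} and \cite{BG} (cf. \remref{r:Rep H abelian}) it is enough to match the structures over the abelian categories $(\Rep(H))^\heartsuit$ and $(\Rep(T_H))^\heartsuit$, over which, by the t-exactness invoked above, the relative Hecke structure lives in an honest abelian category of twisted sheaves, so higher coherences are automatic. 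Modulo these verifications the argument is a transcription of the proof of \cite[Theorem 6.3.5]{Ga7}.
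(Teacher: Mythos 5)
Your proposal is a correct reconstruction of the intended argument. The paper itself gives no proof of \thmref{t:local vs global IC Hecke}---it is simply introduced as ``a metaplectic version of [Ga7, Theorem 6.3.5]''---and your strategy (produce $\beta^{\on{glob}}$ and $\beta^{\on{loc}}$, verify that both restrict to the tautological structure over ${}_{\Bun_T}S^0_{\Ran_x}$, and conclude by uniqueness of Hecke eigenstructures on a Goresky--MacPherson extension, with higher coherences killed by the abelian-category reduction of Remark~\ref{r:Rep H abelian}) is exactly the strategy of that reference, which in turn is modeled on \cite[Sect.~3.1]{BG}.

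One imprecision worth flagging: you pass from ``Hecke convolution is t-exact'' to ``convolution preserves Goresky--MacPherson extensions,'' but t-exactness alone does not give the latter. In the $\Rep(H)$-direction one needs the ULA property of ${}_{\Bun_T}\!\ICs_{q,\on{glob}}$ over $\Bun_G$ (in this paper \thmref{t:ULA}, cited from \cite{Camp}), which is what guarantees that convolution by $\Sat_{q,G}(V)$ for $V\in(\Rep(H))^\heartsuit$ returns a perverse sheaf with no sub- or quotient-objects supported off the open stratum; in the $\Rep(T_H)$-direction the claim is automatic because the $T$-Hecke action is by translations of $T$-bundles, hence by equivalences. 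With ULA supplied, the rest of your argument (including the $\deg=\langle\lambda,2\check\rho\rangle$ calibration between $\Sat_{q,T}$ and $\Sat'_{q,T}$) goes through as you describe.
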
 

\ssec{Local vs global Hecke property}

In this subsection we will study the compatibility of the constructions in Sects. \ref{ss:Hecke on Gr T} and \ref{ss:Hecke global}, respectively. 

\sssec{}  \label{sss:two contexts}

Recall the closed subfunctor
$${}_{\Gr^{\omega^\rho}_{T,\Ran}}\!\ol{S}^0\subset {}_{\Gr^{\omega^\rho}_{T,\Ran}}\!\Gr_G^{\omega^\rho}
\simeq \Gr^{\omega^\rho}_{T,\Ran}\underset{\Ran}\times \Gr^{\omega^\rho}_{G,\Ran}.$$

\medskip

We have a Cartesian square
$$
\CD
_{\Gr^{\omega^\rho}_{T,\Ran}}\!\ol{S}^0  @>>>  _{\Bun_T}\!\ol{S}^0_{\Ran}  \\
@VVV   @VVV   \\
\Gr^{\omega^\rho}_{T,\Ran} @>>>  \Bun_T. 
\endCD
$$

Composing with the map $\pi_{\Ran}:{}_{\Bun_T}\!\ol{S}^0_{\Ran} \to  \BunBb$, 
we obtain a map  
$$\pi_{\Gr_T}:{}_{\Gr^{\omega^\rho}_{T,\Ran}}\!\ol{S}^0\to \BunBb$$
so that the diagram
\begin{equation} \label{e:S and BunT}
\CD
_{\Gr^{\omega^\rho}_{T,\Ran}}\!\ol{S}^0   @>{\pi_{\Gr_T}}>>   \BunBb   \\
@VVV @VV{\sfq}V   \\
\Gr^{\omega^\rho}_{T,\Ran} @>>>  \Bun_T
\endCD
\end{equation} 
commutes. 

\medskip

Note that the pullback of the gerbe $\CG^{G,T,\on{ratio}}$ along $\pi_{\Gr_T}$ goes
over to the restriction of the gerbe denoted $\CG^{G,T,\on{ratio}}$ on 
$\Gr^{\omega^\rho}_{T,\Ran}\underset{\Ran}\times \Gr^{\omega^\rho}_{G,\Ran}$. 

\medskip

Unwinding the constructions, from \thmref{t:ICs glob} we obtain:

\begin{cor} \label{c:ICs glob sym}
There exists canonical isomorphism in $\Shv_{\CG^{G,T,\on{ratio}}}({}_{\Gr^{\omega^\rho}_{T,\Ran}}\!\ol{S}^0)$
$$(\pi_{\Gr_T})^!({}_{\Bun_T}\!\ICs_{q,\on{glob}})[d_g+\dim(\Bun_T)+\deg]\simeq {}_{\Gr_T}\!\ICs_{q,\Ran},$$
where the value of $\deg$ equals $\langle \lambda,2\check\rho\rangle$ over the connected component $\Bun_T^\lambda$
of $\Bun_T$. 
\end{cor}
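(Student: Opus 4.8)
The plan is to deduce \corref{c:ICs glob sym} from \thmref{t:ICs glob} by a single $!$-pullback, the point being that, by its very construction in \secref{sss:two contexts}, $\pi_{\Gr_T}$ is the composite
$${}_{\Gr^{\omega^\rho}_{T,\Ran}}\!\ol{S}^0\overset{\nu}\longrightarrow {}_{\Bun_T}\!\ol{S}^0_{\Ran}\overset{\pi_\Ran}\longrightarrow \BunBb,$$
where $\nu$ is the upper horizontal arrow of the first (Cartesian) square of \secref{sss:two contexts}. Hence $(\pi_{\Gr_T})^!\simeq \nu^!\circ(\pi_\Ran)^!$, so applying $\nu^!$ to the isomorphism of \thmref{t:ICs glob} will give
$$(\pi_{\Gr_T})^!({}_{\Bun_T}\!\ICs_{q,\on{glob}})[d_g+\dim(\Bun_T)+\deg]\simeq \nu^!({}_{\Bun_T}\!\ICs_{q,\Ran}).$$
It will then remain to identify the right-hand side with ${}_{\Gr_T}\!\ICs_{q,\Ran}$ and to account for the shift and the gerbe.

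For the identification of the right-hand side I would unwind the twisting construction of \secref{ss:twist T bundle}, using that it is functorial in the prestack $\CY$ over $\fL^+(T)_\Ran\backslash\Ran$: a morphism $\CY_1\to\CY_2$ over $\fL^+(T)_\Ran\backslash\Ran$ induces ${}_{\CY_1}\!\ol{S}^0\simeq \CY_1\underset{\CY_2}\times {}_{\CY_2}\!\ol{S}^0$, compatibly with the forgetful maps to $\ol{S}^0_{\Ran}$, so that ${}_{\CY_1}\!\ICs_q$ is the $!$-pullback of ${}_{\CY_2}\!\ICs_q$ along the resulting map. Now the structure map $\Gr^{\omega^\rho}_{T,\Ran}\to \fL^+(T)_\Ran\backslash\Ran$ of \secref{sss:twist by Gr T} factors, via Beauville--Laszlo (\secref{sss:BL}), through $\Bun_T\times\Ran\to \fL^+(T)_\Ran\backslash\Ran$; applying the functoriality just recalled to the map $\Gr^{\omega^\rho}_{T,\Ran}\to\Bun_T\times\Ran$ recovers exactly the first square of \secref{sss:two contexts}, with $\nu$ its second projection, and yields $\nu^!({}_{\Bun_T}\!\ICs_{q,\Ran})\simeq {}_{\Gr_T}\!\ICs_{q,\Ran}$. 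The compatibility of the gerbes $\CG^{G,T,\on{ratio}}$ under $\pi_{\Gr_T}^*$ has already been recorded in \secref{sss:two contexts}; and the shift causes no trouble, since $\deg$ is the locally constant function on $\BunBb$ pulled back from $\Bun_T$ that equals $\langle\lambda,2\check\rho\rangle$ on $\Bun_T^\lambda$ (with the normalization of $\pi_0(\Bun_T)\simeq\Lambda$ of the Remark after \thmref{t:ICs glob}), so its $!$-pullback along $\nu$ is the same pulled-back function, namely the $\deg$ of the statement.

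Finally I would observe that the isomorphism obtained this way is canonical: it is the $\nu^!$-pullback of the \emph{unique} isomorphism of \thmref{t:ICs glob}, which is characterized by extending the tautological identification over ${}_{\Bun_T}S^0_{\Ran}$, and since $\nu$ carries the open locus ${}_{\Gr^{\omega^\rho}_{T,\Ran}}S^0\subset {}_{\Gr^{\omega^\rho}_{T,\Ran}}\!\ol{S}^0$ onto its preimage, the pulled-back isomorphism again extends the tautological identification over ${}_{\Gr^{\omega^\rho}_{T,\Ran}}S^0$. I do not expect a real obstacle at this stage: all the genuine geometric content --- the local-to-global comparison of the twisted semi-infinite IC sheaf --- sits in \thmref{t:ICs glob}, proved later along the lines of \cite{Ga7}, and \corref{c:ICs glob sym} is a formal base-change consequence; the only actual work is the bookkeeping of the gerbe twist and of the locally constant cohomological shift described above.
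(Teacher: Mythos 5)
Your proof is correct and takes essentially the same route the paper intends: the paper derives \corref{c:ICs glob sym} from \thmref{t:ICs glob} with the one-line indication ``unwinding the constructions,'' and you have performed exactly that unwinding — factoring $\pi_{\Gr_T}$ through $\pi_\Ran$ via the Cartesian square of \secref{sss:two contexts}, using functoriality of the $\CY$-twisting of \secref{ss:twist T bundle} applied to $\Gr^{\omega^\rho}_{T,\Ran}\to\Bun_T\times\Ran$ to identify $\nu^!({}_{\Bun_T}\!\ICs_{q,\Ran})$ with ${}_{\Gr_T}\!\ICs_{q,\Ran}$, and checking the gerbe and the locally constant shift are compatible. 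No gaps.
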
 

\sssec{}

Consider now the closed subfunctor
$${}_{\Gr^{\omega^\rho}_{T,\Ran_x}}\!(\ol{S}^0)_{\infty\cdot x}\subset {}_{\Gr^{\omega^\rho}_{T,\Ran_x}}\!\Gr_G^{\omega^\rho}
\simeq \Gr^{\omega^\rho}_{T,\Ran_x}\underset{\Ran_x}\times \Gr^{\omega^\rho}_{G,\Ran_x}.$$

\medskip

Similarly, to the above, we have a map 
$$\pi_{\Gr_T}:{}_{\Gr^{\omega^\rho}_{T,\Ran_x}}\!(\ol{S}^0)_{\infty\cdot x}
\to (\BunBb)_{\infty\cdot x},$$
compatible with the gerbes, and with the actions of the groupoids $\on{Hecke}^{\on{loc}}_{G,x}$
and $\on{Hecke}^{\on{loc}}_{T,x}$. 

\medskip

In particular, the functor of pullback 
$$(\pi_{\Gr_T})^!:\Shv_{\CG^{G,T,\on{ratio}}}((\BunBb)_{\infty\cdot x}) \to 
\Shv_{\CG^{G,T,\on{ratio}}}({}_{\Gr^{\omega^\rho}_{T,\Ran_x}}\!(\ol{S}^0)_{\infty\cdot x})$$
gives rise to a functor
$$\bHecke_{\on{rel}}(\Shv_{\CG^{G,T,\on{ratio}}}((\BunBb)_{\infty\cdot x})) \to 
\bHecke_{\on{rel}}(\Shv_{\CG^{G,T,\on{ratio}}}({}_{\Gr^{\omega^\rho}_{T,\Ran_x}}\!(\ol{S}^0)_{\infty\cdot x}).$$

\medskip

From \thmref{t:local vs global IC Hecke} we obtain: 

\begin{cor} \label{c:ICs glob sym Hecke}
The isomorphism
$$(\pi_{\Gr_T})^!({}_{\Bun_T}\!\ICs_{q,\on{glob}})[d_g+\dim(\Bun_T)+\deg]\simeq {}_{\Gr_T}\!\ICs_{q,\Ran_x}$$
of \corref{c:ICs glob sym} lifts to an isomorphism of objects of 
$$\bHecke_{\on{rel}}(\Shv_{\CG^{G,T,\on{ratio}}}({}_{\Gr^{\omega^\rho}_{T,\Ran_x}}\!(\ol{S}^0)_{\infty\cdot x}).$$
\end{cor}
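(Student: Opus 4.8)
The plan is to deduce \corref{c:ICs glob sym Hecke} from \thmref{t:local vs global IC Hecke} by transporting that statement along a pullback functor. First I would introduce the map
$$\tilde\pi:{}_{\Gr^{\omega^\rho}_{T,\Ran_x}}\!(\ol{S}^0)_{\infty\cdot x}\to ({}_{\Bun_T}\!\ol{S}^0_{\Ran_x})_{\infty\cdot x}$$
induced by the natural projection $\Gr^{\omega^\rho}_{T,\Ran_x}\to \Bun_T$; it fits into a Cartesian square over $\Gr^{\omega^\rho}_{T,\Ran_x}\to\Bun_T$ (the marked-point version of the square in \secref{sss:two contexts}), it is compatible with the gerbes $\CG^{G,T,\on{ratio}}$ on the two sides, and by construction $\pi_{\Gr_T}=\pi_{\Ran_x}\circ\tilde\pi$, while base change along the Cartesian square identifies $(\tilde\pi)^!({}_{\Bun_T}\!\ICs_{q,\Ran_x})$ with ${}_{\Gr_T}\!\ICs_{q,\Ran_x}$ (cf. \secref{sss:twist by Gr T}). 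With these identifications, the non-equivariant isomorphism of \corref{c:ICs glob sym} is exactly $(\tilde\pi)^!$ applied to the isomorphism
$$(\pi_{\Ran_x})^!({}_{\Bun_T}\!\ICs_{q,\on{glob}})[d_g+\dim(\Bun_T)+\deg]\simeq {}_{\Bun_T}\!\ICs_{q,\Ran_x}$$
of \thmref{t:ICs glob}/\thmref{t:local vs global IC Hecke}.

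The second step is to upgrade $\tilde\pi$ to a map compatible with the two Hecke groupoid actions. For $\on{Hecke}^{\on{loc}}_{G,x}$ this is immediate: both actions modify only the $G$-bundle and leave everything else intact, so $\tilde\pi$ intertwines the $G$-Hecke action of \secref{sss:Hecke G action} on the source with that of \secref{ss:Hecke global} on the target, hence also the resulting $\Rep(H)$-actions via $\Sat_{q,G}$. For $\on{Hecke}^{\on{loc}}_{T,x}$ one must match the action of \secref{ss:Hecke on Gr T} — the right action on $\Gr^{\omega^\rho}_{T,\Ran_x}$ converted into a left action by inversion, which via $\Sat'_{q,T}$ produces a $\Rep(T_H)$-action precomposed with the Cartan involution $\tau^{T_H}$, see \secref{sss:Hecke action on Gr T} — with the left action on $(\BunBb)_{\infty\cdot x}$ of \secref{ss:Hecke global}: this is precisely the compatibility of $\Gr^{\omega^\rho}_{T,\Ran_x}\to\Bun_T$ with the $T$-bundle modifications at $x$, bearing in mind that the cohomological-shift discrepancy between $\Sat_{q,T}$ and $\Sat'_{q,T}$ is absorbed by the factor $[\deg]$ (whose value $\langle\lambda,2\check\rho\rangle$ on $\Bun^\lambda_T$ is exactly the difference between $\Sat'_{q,T}(\sfe^\lambda)$ and $\Sat_{q,T}(\sfe^\lambda)$). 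Granting this, $(\tilde\pi)^!$ induces a functor
$$\bHecke_{\on{rel}}\big(\Shv_{\CG^{G,T,\on{ratio}}}(({}_{\Bun_T}\!\ol{S}^0_{\Ran_x})_{\infty\cdot x})\big)\to
\bHecke_{\on{rel}}\big(\Shv_{\CG^{G,T,\on{ratio}}}({}_{\Gr^{\omega^\rho}_{T,\Ran_x}}\!(\ol{S}^0)_{\infty\cdot x})\big),$$
and applying it to the isomorphism of \thmref{t:local vs global IC Hecke}, together with the base-change identifications of the first step, yields the asserted lift.

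The main obstacle is the second step, and more precisely the bookkeeping needed to verify that $\tilde\pi$ intertwines the $T$-Hecke actions in the exact form in which they are set up on the two sides. The two contexts use different normalizations (shifted versus unshifted Satake for $T$, and a Cartan-involution twist on the $\Gr_T$-side), so what has to be checked is not a bare equivariance statement but a compatibility of these decorations; this is the reason \corref{c:ICs glob sym Hecke} is recorded separately rather than being subsumed in \corref{c:ICs glob sym}. Once the relevant squares of groupoids acting on ${}_{\Gr^{\omega^\rho}_{T,\Ran_x}}\!(\ol{S}^0)_{\infty\cdot x}$ and $(\BunBb)_{\infty\cdot x}$ are seen to be Cartesian and compatible with $\Sat_{q,G}$, $\Sat_{q,T}$, $\Sat'_{q,T}$ as above — which parallels the reasoning behind \corref{c:twisted Hecke IC} and \thmref{t:local vs global IC Hecke} — the remainder is a formal diagram chase with $!$-pullback functors on relative Hecke categories.
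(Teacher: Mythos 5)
Your proposal is correct and follows the same route the paper takes (the paper gives no separate argument for this corollary; it simply introduces the map $\pi_{\Gr_T}$, asserts its compatibility with the gerbes and the two Hecke groupoids, and then invokes \thmref{t:local vs global IC Hecke}). Your explicit factorization $\pi_{\Gr_T}=\pi_{\Ran_x}\circ\tilde\pi$, the observation that $\tilde\pi$ sits in a Cartesian square over $\Gr^{\omega^\rho}_{T,\Ran_x}\to\Bun_T$ so that base change identifies $(\tilde\pi)^!({}_{\Bun_T}\!\ICs_{q,\Ran_x})$ with $_{\Gr_T}\ICs_{q,\Ran_x}$, and the plan to pull back the isomorphism of \thmref{t:local vs global IC Hecke} along $\tilde\pi$, all match what the paper intends.

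One small imprecision worth flagging, since your own final paragraph singles it out as "the main obstacle": you locate the $\Sat_{q,T}$-versus-$\Sat'_{q,T}$ discrepancy (and the $\tau^{T_H}$-twist) in the comparison between the $\Gr_T$-side action of \secref{ss:Hecke on Gr T} and the $\Bun_T$-side action of \secref{ss:Hecke global}, i.e.\ you attribute the shift bookkeeping to the second step, the study of $\tilde\pi$. But $\tilde\pi$ maps $_{\Gr^{\omega^\rho}_{T,\Ran_x}}\!(\ol{S}^0)_{\infty\cdot x}$ to $({}_{\Bun_T}\!\ol{S}^0_{\Ran_x})_{\infty\cdot x}$, and \emph{both} of these are instances of the uniform twisted framework of \secref{ss:twisted Hecke} (with $\CY=\Gr^{\omega^\rho}_{T,\Ran_x}$ and $\CY=\Bun_T\times\Ran_x$ respectively), so both sides of $\tilde\pi$ carry the $\Rep(T_H)$-action via $\Sat'_{q,T}$, and $\tilde\pi$ intertwines these without any Satake renormalization. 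The $\Sat$-versus-$\Sat'$ discrepancy, together with the shift $[\deg]$ that absorbs it, arises one step earlier, along $\pi_{\Ran_x}\colon ({}_{\Bun_T}\!\ol{S}^0_{\Ran_x})_{\infty\cdot x}\to (\BunBb)_{\infty\cdot x}$, and is the content already encoded in \thmref{t:local vs global IC Hecke} (the global side uses $\Sat_{q,T}$, as warned in \secref{ss:Hecke global}, while the $\Bun_T$-twisted $\ol{S}^0$ side uses $\Sat'_{q,T}$). Once you cite that theorem, the corollary only requires the "untwisted" compatibility of $\tilde\pi$, which is immediate from functoriality of the construction in \secref{ss:twisted Hecke} applied to the map $\Gr^{\omega^\rho}_{T,\Ran_x}\to\Bun_T$. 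So the logic is fine — you are just attributing a shift that has already been absorbed upstream to the wrong arrow, which slightly overstates the work remaining in your second step.
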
 

\newpage 

\centerline{\bf Part V: The Jacquet functor} 

\bigskip

In this Part we begin the process of relating the two categories involved in our main theorem:
the Hecke category of $\Whit_{q,x}(G)$ and a certain category of factorization modules. The
relationship will be realized by a functor from the former to the latter, the main ingredient of 
which is (one of the versions of) the Jacquet functor 
$$\Whit_{q,x}(G)\to \Shv_{\CG^T}(\Gr^{\omega^\rho}_{R,\Ran_x}).$$

\section{Construction of the Jacquet functor}   \label{s:Jacquet}

The goal of this section is to construct the Jacquet functor
$$\fJ_{!*,\on{Fact}}:\Whit_{q,x}(G) \to \Omega_q^{\Whit_{!*}}\on{-FactMod}(\Shv_{\CG^T}(\Gr^{\omega^\rho}_{T,\Ran_x})),$$
where $\Omega_q^{\Whit_{!*}}\mod$ is a certain factorization algebra in $\Shv_{\CG^T}(\Gr^{\omega^\rho}_{T,\Ran})$,
and $$\Omega_q^{\Whit_{!*}}\on{-FactMod}(\Shv_{\CG^T}(\Gr^{\omega^\rho}_{T,\Ran_x}))$$ denotes the category of factorization modules over it in 
$\Shv_{\CG^T}(\Gr^{\omega^\rho}_{T,\Ran_x})$. 

\medskip

In the next section, we will upgrade the functor $\fJ_{!*,\on{Fact}}$ to a functor
$$\fJ^{\bHecke}_{!*,\on{Fact}}:\bHecke(\Whit_{q,x}(G)) \to \Omega_q^{\Whit_{!*}}\on{-FactMod}(\Shv_{\CG^T}(\Gr^{\omega^\rho}_{T,\Ran_x})).$$

\ssec{The bare version of the Jacquet functor}

The definition of the functor $\fJ_{!*,\on{Fact}}$ will proceed in stages. In this subsection we will
define the functor
$$\fJ_{!*,\on{sprd}}:\Whit_{q,x}(G) \to \Shv_{\CG^T}(\Gr^{\omega^\rho}_{T,\Ran_x}),$$
which we should think of as the composition of $\fJ_{!*,\on{Fact}}$ and the forgetful functor
$$\Omega_q^{\Whit_{!*}}\on{-FactMod}(\Shv_{\CG^T}(\Gr^{\omega^\rho}_{T,\Ran_x}))\to \Shv_{\CG^T}(\Gr^{\omega^\rho}_{T,\Ran_x}).$$

\sssec{}

Recall the object
$$_{\Gr^{\omega^\rho}_T}\!\ICs_{q,\Ran}\in \Shv_{\CG^{G,T,\on{ratio}}}(\Gr^{\omega^\rho}_{T,\Ran}\underset{\Ran}\times \Gr^{\omega^\rho}_{G,\Ran}).$$ 

We will consider its counterpart denoted
$$_{\Gr^{\omega^\rho}_T}\!\ICsm_{q^{-1},\Ran}\in \Shv_{\CG^{G,T,\on{ratio}}}(\Gr^{\omega^\rho}_{T,\Ran}\underset{\Ran}\times \Gr^{\omega^\rho}_{G,\Ran}),$$ 
in which we replace $N\rightsquigarrow N^-$ and $\CG^G\rightsquigarrow (\CG^G)^{-1}$. 

\sssec{}

We define the most basic version of the Jacquet functor
$$\fJ_{!*,\Ran}: \Shv_{\CG^G}(\Gr^{\omega^\rho}_{G,\Ran}) \to \Shv_{\CG^T}(\Gr^{\omega^\rho}_{T,\Ran})$$
as
$$\CF\mapsto (p_T)_*({}_{\Gr^{\omega^\rho}_T}\!\ICsm_{q^{-1},\Ran}\sotimes (p_G)^!(\CF)),$$
where $p_T$ and $p_G$ are the projections
\begin{equation} \label{e:G T proj}
\Gr^{\omega^\rho}_{T,\Ran} \leftarrow
\Gr^{\omega^\rho}_{T,\Ran}\underset{\Ran}\times \Gr^{\omega^\rho}_{G,\Ran}\to \Gr^{\omega^\rho}_{G,\Ran},
\end{equation}
respectively. 

\medskip

Note that the functor $(p_T)_*$ makes sense since the tensor product 
$$_{\Gr^{\omega^\rho}_T}\!\ICsm_{q^{-1},\Ran}\sotimes (p_G)^!(\CF)$$
belongs to the category
$$\Shv_{\CG^T}(\Gr^{\omega^\rho}_{T,\Ran}\underset{\Ran}\times \Gr^{\omega^\rho}_{G,\Ran}).$$

\medskip

In other words, the functor $\fJ_{!*}$ is defined using the correspondence \eqref{e:G T proj} with kernel 
$_{\Gr^{\omega^\rho}_T}\!\ICsm_{q^{-1},\Ran_x}$. 

\begin{rem}
Let us explain the origin of the name ``Jacquet functor". Suppose that instead of $\ICs_{q,\Ran}$ we use
$$(\bj_\Ran)_*(\omega_{S^0_\Ran})\in \SI_{q,\Ran}(G)^{\fL^+(T)_\Ran},$$
along with its variants. 

\medskip

Denote the resulting functor 
$$\Shv_{\CG^G}(\Gr^{\omega^\rho}_{G,\Ran}) \to \Shv_{\CG^T}(\Gr^{\omega^\rho}_{T,\Ran})$$
by $\fJ_{*,\Ran}$. 

\medskip

Then it is easy to see that $\fJ_{*,\Ran}$ is given by 
$$\Shv_{\CG^G}(\Gr^{\omega^\rho}_{G,\Ran}) \overset{!\on{-pullback}}\longrightarrow 
\Shv_{\CG^G}(\Gr^{\omega^\rho}_{B^-,\Ran}) \overset{*\on{-pushforward}}\longrightarrow \Shv_{\CG^T}(\Gr^{\omega^\rho}_{T,\Ran}).$$

\end{rem}

\sssec{}

Along with $\fJ_{!*,\Ran}$, we will consider its variants 
$$\fJ_{!*,\Ran_x}:\Shv_{\CG^G}(\Gr^{\omega^\rho}_{G,\Ran_x}) \to \Shv_{\CG^T}(\Gr^{\omega^\rho}_{T,\Ran_x})$$
and
$$\fJ_{!*,x}:\Shv_{\CG^G}(\Gr^{\omega^\rho}_{G,x})\to \Shv_{\CG^T}(\Gr^{\omega^\rho}_{T,x}),$$
defined using the objects
$$_{\Gr^{\omega^\rho}_T}\!\ICsm_{q^{-1},\Ran_x}\in \Shv_{\CG^{G,T,\on{ratio}}}(\Gr^{\omega^\rho}_{T,\Ran_x}\underset{\Ran_x}\times \Gr^{\omega^\rho}_{G,\Ran_x})
\text{ and } _{\Gr^{\omega^\rho}_T}\!\ICsm_{q^{-1},x}\in \Shv_{\CG^{G,T,\on{ratio}}}(\Gr^{\omega^\rho}_{T,x}\times \Gr^{\omega^\rho}_{G,x}),$$
respectively, obtained as pullbacks of  $_{\Gr^{\omega^\rho}_T}\!\ICsm_{q,\Ran}$. 

\sssec{}

Recall now the functor 
$$\on{sprd}_{\Ran_x}: \Whit_{q,x}(G) \to \Shv_{\CG^G}(\Gr^{\omega^\rho}_{G,\Ran_x}),$$
see \secref{sss:spread functor}. 

\medskip

We define the functor
$$\fJ_{!*,\on{sprd}}:\Whit_{q,x}(G) \to \Shv_{\CG^T}(\Gr^{\omega^\rho}_{T,\Ran_x})$$
as the composite 
$$\fJ_{!*,\Ran_x}\circ \on{sprd}_{\Ran_x}.$$

\ssec{The factorization algebra $\Omega_q^{\Whit_{!*}}$}

In this subsection we define a factorization algebra
$$\Omega_q^{\Whit_{!*}}\in \Shv_{\CG^T}(\Gr^{\omega^\rho}_{T,\Ran}).$$

It will be essentially equivalent to the factorization algebra on $\Conf$ used to define the
category of factorization modules that appears in the right-hand side in our main theorem. 

\sssec{}

Recall that the object
$$\on{Vac}_{\Whit,\Ran}\in \Shv_{\CG^G}(\Gr^{\omega^\rho}_{G,\Ran}).$$

Recall that, according to \thmref{t:fact on Vac Whit}(a), $\on{Vac}_{\Whit,\Ran}$ has a structure of
factorization algebra in $\Shv_{\CG^G}(\Gr^{\omega^\rho}_{G,\Ran})$

\medskip

Recall also that, according to \corref{c:twisted IC fact}, 
 $_{\Gr_T}\!\ICsm_{q^{-1},\Ran}$ has a structure of factorization algebra
in $\Shv_{(\CG^{G,T,\on{ratio}})^{-1}}(\Gr^{\omega^\rho}_{T,\Ran}\underset{\Ran}\times \Gr^{\omega^\rho}_{G,\Ran})$. 

\medskip

Hence, by \secref{sss:fact funct geom pullback}, we obtain that
$$_{\Gr_T}\!\ICsm_{q^{-1},\Ran}\sotimes (p_G)^!(\on{Vac}_{\Whit,\Ran})$$
acquires a natural structure of factorization algebra in 
$\Shv_{\CG^T}(\Gr^{\omega^\rho}_{T,\Ran}\underset{\Ran}\times \Gr^{\omega^\rho}_{G,\Ran})$. 

\sssec{} 

Set
$$\Omega_q^{\Whit_{!*}}:=(p_T)_*({}_{\Gr_T}\!\ICsm_{q^{-1},\Ran}\sotimes (p_G)^!(\on{Vac}_{\Whit,\Ran})).$$

By \secref{sss:fact funct geom pushforward}, we obtain that $\Omega_q^{\Whit_{!*}}$ acquires a natural structure of factorization
algebra in $\Shv_{\CG^T}(\Gr^{\omega^\rho}_{T,\Ran})$.

\begin{rem}
In \secref{ss:Omega expl} we will a very explicit description of $\Omega_q^{\Whit_{!*}}$.
\end{rem}

\ssec{Adding the factorization structure}

In this subsection we will upgrade $\fJ_{!*,\on{sprd}}$ to a functor 
$$\fJ_{!*,\on{Fact}}:\Whit_{q,x}(G) \to \Omega_q^{\Whit_{!*}}\on{-FactMod}(\Shv_{\CG^T}(\Gr^{\omega^\rho}_{T,\Ran_x})).$$

\sssec{}

Recall that \thmref{t:fact on Vac Whit}(b) says that the functor $\on{sprd}_{\Ran_x}$
canonically lifts to a functor 
\begin{equation} \label{e:spread fact}
\on{sprd}_{\on{Fact}}: \Whit_{q,x}(G)\to \on{Vac}_{\Whit,\Ran}\on{-FactMod}(\Shv_{\CG^G}(\Gr_{G,\Ran_x})).
\end{equation}

%
%By the construction of the functor $\on{sprd}_{\Ran_x}$, for an object $\CF\in \Whit_{q,x}(G)$, we have a system of
%factorization isomorphisms 
%\begin{multline} \label{e:factor Vac x}
%\on{sprd}_{\Ran_x}(\CF)|_{\Gr_{G,\Ran_x} \underset{\Ran_x}\times (\Ran^J\times \Ran_x)_{\on{disj}}} \simeq \\
%\simeq   ((\on{Vac}_{\Whit,\Ran})^{\boxtimes J}\boxtimes \on{sprd}_{\Ran_x}(\CF))
%|_{((\Gr_{G,\Ran})^J\times \Gr_{G,\Ran_x}) \underset{\Ran^J\times \Ran_x}\times (\Ran^J \times \Ran_x)_{\on{disj}}},
%\end{multline}
%uniquely characterized by the requirement that the induced isomorphisms 
%\begin{multline*} 
%\on{unit}^!(\on{sprd}_{\Ran_x}(\CF))|_{(\Ran^J\times \Ran_x)_{\on{disj}}\times \Gr^{\omega^\rho}_{G,x}} \simeq   \\
%\simeq \left((\on{unit}^!(\on{Vac}_{\Whit,\Ran}))^{\boxtimes J}\boxtimes 
%\on{unit}^!(\on{sprd}_{\Ran_x}(\CF))\right)|_{(\Ran^J\times \Ran_x)_{\on{disj}}\times \Gr^{\omega^\rho}_{G,x}}
%\end{multline*}
%is the identity map on
%$$\omega_{(\Ran^J\times \Ran_x)_{\on{disj}}}\boxtimes \CF.$$

\medskip

Consider $_{\Gr_T}\!\ICsm_{q^{-1},\Ran_x}$ as an object of 
$$_{\Gr_T}\!\ICsm_{q^{-1},\Ran}\on{-FactMod}
\left(\Shv_{\CG^T}(\Gr^{\omega^\rho}_{T,\Ran_x}\underset{\Ran_x}\times \Gr^{\omega^\rho}_{G,\Ran_x})\right).$$

\medskip

Using \secref{sss:fact funct mod geom}, we obtain that the functor
$$\CF\mapsto {} _{\Gr_T}\!\ICsm_{q^{-1},\Ran}\sotimes p_G^!\circ \on{sprd}_{\Ran_x}(\CF)$$
upgrades to a functor
\begin{multline} \label{e:funct to ten prod fact mod}
\Whit_{q,x}(G)\to \\
\to ({}_{\Gr_T}\!\ICsm_{q^{-1},\Ran}\sotimes (p_G)^!(\on{Vac}_{\Whit,\Ran}))\on{-FactMod}
\left(\Shv_{(\CG^{G,T,\on{ratio}})^{-1}}(\Gr^{\omega^\rho}_{T,\Ran_x}\underset{\Ran_x}\times \Gr^{\omega^\rho}_{G,\Ran_x})\right).
\end{multline}

\sssec{}

Composing \eqref{e:funct to ten prod fact mod} with the functor $(p_T)_*$ (see \secref{sss:fact funct mod geom}), we obtain that the functor
$\fJ_{!*,\on{sprd}}$ upgrades to the sought-for functor
$$\fJ_{!*,\on{Fact}}:\Whit_{q,x}(G)\to \Omega_q^{\Whit_{!*}}\on{-FactMod}(\Shv_{\CG^T}(\Gr^{\omega^\rho}_{T,\Ran_x})).$$

\section{Hecke enhancement of the Jacquet functor}   \label{s:Hecke Jacquet}

The goal of this section is to perform the key construction of this paper, namely, to extend the functor 
$$\fJ_{!*,\on{Fact}}:\Whit_{q,x}(G)\to \Omega_q^{\Whit_{!*}}\on{-FactMod}(\Shv_{\CG^T}(\Gr^{\omega^\rho}_{T,\Ran_x}))$$
constructed in the previous section to a functor
$$\fJ^{\bHecke}_{!*,\on{Fact}}:\bHecke(\Whit_{q,x}(G)) \to \Omega_q^{\Whit_{!*}}\on{-FactMod}(\Shv_{\CG^T}(\Gr^{\omega^\rho}_{T,\Ran_x})).$$

\ssec{Extension of the bare version of the functor}

In this subsection, as a warm-up, we will extend the functor 
$$\fJ_{!*,\on{sprd}}: \Whit_{q,x}(G)\to  \Shv_{\CG^T}(\Gr^{\omega^\rho}_{T,\Ran_x})$$
to a functor
$$\fJ^{\bHecke}_{!*,\on{sprd}}: \Whit_{q,x}(G)\to  \Shv_{\CG^T}(\Gr^{\omega^\rho}_{T,\Ran_x}).$$

\sssec{}

By \secref{sss:graded Hecke univ}, the construction of the sought-for functor $\fJ^{\bHecke}_{!*,\on{sprd}}$
is equivalent to the following:

\begin{thmconstr}  \label{t:Hecke ppty of J} 
The functor $$\fJ_{!*,\on{sprd}}: \Whit_{q,x}(G)\to  \Shv_{\CG^T}(\Gr^{\omega^\rho}_{T,\Ran_x})$$
intertwines the $\Rep(H)$-action on $\Whit_{q,x}(G)$ and the $\Rep(T_H)$-action
on $\Shv_{\CG^T}(\Gr^{\omega^\rho}_{T,\Ran_x})$.
\end{thmconstr}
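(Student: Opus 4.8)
### Proof proposal

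The plan is to reduce the assertion to the Hecke property of the kernel \(_{\Gr_T}\!\ICsm_{q^{-1},\Ran_x}\) established in \secref{ss:Hecke on Gr T}, and to the compatibility of \(\on{sprd}_{\Ran_x}\) with the Hecke action. First I would recall that by construction the functor \(\fJ_{!*,\on{sprd}}\) is the composition
$$\Whit_{q,x}(G) \overset{\on{sprd}_{\Ran_x}}\longrightarrow \Shv_{\CG^G}(\Gr^{\omega^\rho}_{G,\Ran_x}) \overset{\fJ_{!*,\Ran_x}}\longrightarrow \Shv_{\CG^T}(\Gr^{\omega^\rho}_{T,\Ran_x}),$$
so it suffices to analyze each of the two arrows with respect to the relevant actions. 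The functor \(\on{sprd}_{\Ran_x}\) is built from !-pullback along \(\Ran_x\times \Gr^{\omega^\rho}_{G,x}\to \Gr^{\omega^\rho}_{G,x}\) followed by the equivalence of \thmref{t:restr to unit}; since the \(\Sph_{q,x}(G)\)-action (hence the \(\Rep(H)\)-action) on \(\Whit_{q,x}(G)\) is given by convolution on the right with objects pulled back from \(\on{Hecke}^{\on{loc}}_{G,x}\), and all three functors in the definition of \(\on{sprd}_{\Ran_x}\) commute with this convolution (the equivalence of \thmref{t:restr to unit} is \(\fL^+(G)\)-linear in the appropriate sense), I would observe that \(\on{sprd}_{\Ran_x}\) is a map of \(\Rep(H)\)-module categories, where the target \(\Shv_{\CG^G}(\Gr^{\omega^\rho}_{G,\Ran_x})\) carries the convolution action via \eqref{e:Hecke G action first} composed with \(\Sat_{q,G}\).

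The heart of the matter is then the functor \(\fJ_{!*,\Ran_x}\), which is given by the correspondence \eqref{e:G T proj} with kernel \(_{\Gr_T}\!\ICsm_{q^{-1},\Ran_x}\). The key input is \corref{c:Hecke IC Gr T}: the object \(_{\Gr_T}\!\ICs_{q,\Ran_x}\) — and hence, applying the substitution \(N\rightsquigarrow N^-\), \(\CG^G\rightsquigarrow(\CG^G)^{-1}\), its counterpart \(_{\Gr_T}\!\ICsm_{q^{-1},\Ran_x}\) — lifts to an object of \(\bHecke_{\on{rel}}(\Shv_{\CG^{G,T,\on{ratio}}}(\Gr^{\omega^\rho}_{T,\Ran_x}\underset{\Ran_x}\times \Gr^{\omega^\rho}_{G,\Ran_x}))\). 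Concretely this means a system of isomorphisms
$$(\,_{\Gr_T}\!\ICsm_{q^{-1},\Ran_x})\underset{H}\star V \;\simeq\; \Res^H_{T_H}(V)\star (\,_{\Gr_T}\!\ICsm_{q^{-1},\Ran_x}),\quad V\in\Rep(H),$$
where the right action of \(\Rep(H)\) is via convolution on the \(\Gr^{\omega^\rho}_{G,\Ran_x}\)-factor (matching the action on \(\on{sprd}_{\Ran_x}(\CF)\) through \(p_G^!\)) and the left action of \(\Rep(T_H)\) is via \(\Sat'_{q,T}\) on the \(\Gr^{\omega^\rho}_{T,\Ran_x}\)-factor; here one must keep track of the cohomological shift built into \(\Sat'_{q,T}\) (\secref{sss:Sat'}) and, because of the inversion involution identified in \secref{sss:Hecke action on Gr T}, of the precomposition with the Cartan involution \(\tau^{T_H}\) — but since the \(\Rep(T_H)\)-action on \(\Shv_{\CG^T}(\Gr^{\omega^\rho}_{T,\Ran_x})\) that appears in the statement is the one transported through this same chain of identifications, the involutions cancel and the shifts are absorbed into the normalization of \(\Sat'_{q,T}\). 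Now the projection formula for \(p_T\) and base change for \(p_G\) along the correspondence \eqref{e:G T proj} turn this relative-Hecke structure on the kernel into the desired intertwining: for \(\CF\in\Whit_{q,x}(G)\) and \(V\in\Rep(H)\),
$$\fJ_{!*,\Ran_x}(\CF\star\Sat_{q,G}(V)) \simeq (p_T)_*\big((\,_{\Gr_T}\!\ICsm_{q^{-1},\Ran_x})\sotimes p_G^!(\CF\star\Sat_{q,G}(V))\big) \simeq \fJ_{!*,\Ran_x}(\CF)\star\Sat'_{q,T}(\Res^H_{T_H}(V)),$$
compatibly with the monoidal structure on \(\Rep(H)\).

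The main obstacle I anticipate is bookkeeping of the various normalizations: the cohomological shift distinguishing \(\Sat'_{q,T}\) from \(\Sat_{q,T}\), the inversion/Cartan-involution twist in the left \(\on{Hecke}^{\on{loc}}_{T,x}\)-action over \(\Gr^{\omega^\rho}_{T,\Ran_x}\) (\secref{sss:Hecke action on Gr T}) together with the accompanying swap \(\Sph_{q,x}(T)\to\Sph_{q^{-1},x}(T)\), and the interaction of the gerbe \(\CG^{G,T,\on{ratio}}\) with all of these. One must verify that the \(\Rep(T_H)\)-action implicitly present on \(\Shv_{\CG^T}(\Gr^{\omega^\rho}_{T,\Ran_x})\) through the construction of \(\fJ_{!*,\Ran_x}\) agrees with the ``tautological'' convolution action of \(\Rep(T_H)\) on \(\Shv_{\CG^T}(\Gr^{\omega^\rho}_{T,\Ran_x})\) — up to these normalizations — so that the statement of \thmref{t:Hecke ppty of J} is literally what comes out. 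This is where I would be most careful; once the normalizations are pinned down, the argument is a formal consequence of \corref{c:Hecke IC Gr T}, the projection formula, and base change, plus the observation above that \(\on{sprd}_{\Ran_x}\) is \(\Rep(H)\)-linear. The homotopy-coherence of the system of intertwiners (as opposed to a single isomorphism for each \(V\)) is inherited from the corresponding coherence already present in the relative-Hecke structure provided by \corref{c:Hecke IC Gr T}, so no extra work is needed there.
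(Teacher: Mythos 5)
Your proposal is correct and follows essentially the same route as the paper: reduce to the Hecke property of the kernel via \corref{c:Hecke IC Gr T}, observe that $\on{sprd}_{\Ran_x}$ is $\Sph_{q,x}(G)$-linear, and transfer the relative Hecke structure on the kernel across the correspondence \eqref{e:G T proj} using projection formula and base change, with the Cartan involutions on $H$ and $T_H$ and the $B\leftrightarrow B^-$ swap canceling out. The paper packages the projection-formula/base-change step into the abstract paradigm of \secref{sss:paradigm for Hecke}, which is a cosmetic reorganization of the same argument, not a different idea.
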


The rest of this subsection is devoted to the proof of this theorem. 

\sssec{}

First, we note that the functor 
$$\on{sprd}_{\Ran_x}:\Whit_{q,x}(G)\to \Shv_{\CG^G}(\Gr_{G,\Ran_x})$$
intertwines the actions of $\Sph_{q,x}(G)$ on $\Whit_{q,x}(G)$ and $\Shv_{\CG^G}(\Gr_{G,\Ran_x})$.

\medskip

Hence, \thmref{t:Hecke ppty of J} follows from the next more general result:
\begin{thmconstr}  \label{t:Hecke ppty of J gen} 
The functor $$\fJ_{!*,\Ran_x}: \Shv_{\CG^G}(\Gr_{G,\Ran_x})\to  \Shv_{\CG^T}(\Gr^{\omega^\rho}_{T,\Ran_x})$$
intertwines the $\Rep(H)$-action on $\Shv_{\CG^G}(\Gr_{G,\Ran_x})$ and the $\Rep(T_H)$-action
on $\Shv_{\CG^T}(\Gr^{\omega^\rho}_{T,\Ran_x})$.
\end{thmconstr}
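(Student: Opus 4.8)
The plan is to express $\fJ_{!*,\Ran_x}$ purely in terms of operations that are manifestly compatible with the relevant Hecke actions, and then invoke the relative Hecke structure on the kernel $_{\Gr_T}\!\ICsm_{q^{-1},\Ran_x}$ established in Part IV. Recall that $\fJ_{!*,\Ran_x}$ is defined by the correspondence
$$
\Gr^{\omega^\rho}_{T,\Ran_x}\xleftarrow{\ p_T\ }\Gr^{\omega^\rho}_{T,\Ran_x}\underset{\Ran_x}\times\Gr^{\omega^\rho}_{G,\Ran_x}\xrightarrow{\ p_G\ }\Gr^{\omega^\rho}_{G,\Ran_x}
$$
with kernel $_{\Gr_T}\!\ICsm_{q^{-1},\Ran_x}$, i.e.\ $\CF\mapsto (p_T)_*\big({}_{\Gr_T}\!\ICsm_{q^{-1},\Ran_x}\overset{!}\otimes p_G^!(\CF)\big)$. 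The first step is to identify the middle space $\Gr^{\omega^\rho}_{T,\Ran_x}\underset{\Ran_x}\times\Gr^{\omega^\rho}_{G,\Ran_x}$ with the twisted space ${}_\CY\Gr^{\omega^\rho}_G$ for $\CY=\Gr^{\omega^\rho}_{T,\Ran_x}$ via \eqref{e:compose Gr T}, and to match the gerbe $\CG^{G,T,\on{ratio}}$ accordingly, as in \secref{sss:twist by Gr T}. Under this identification, $p_G$ intertwines the $\on{Hecke}^{\on{loc}}_{G,x}$-action on the middle space (acting on the $G$-factor) with the standard $\on{Hecke}^{\on{loc}}_{G,x}$-action on $\Gr^{\omega^\rho}_{G,\Ran_x}$, and $p_T$ is equivariant for the $\on{Hecke}^{\on{loc}}_{T,x}$-action on the middle space (acting on the $T$-factor) and the corresponding action on $\Gr^{\omega^\rho}_{T,\Ran_x}$ — this is precisely the content of \secref{sss:Hecke action on Gr T}, where one must be careful that the $T$-action is precomposed with the inversion (Cartan) involution $\tau^{T_H}$.

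The second step is the main input: by \corref{c:Hecke IC Gr T}, the object $_{\Gr_T}\!\ICsm_{q^{-1},\Ran_x}$ lifts to an object of $\bHecke_{\on{rel}}\big(\Shv_{(\CG^{G,T,\on{ratio}})^{-1}}(\Gr^{\omega^\rho}_{T,\Ran_x}\underset{\Ran_x}\times\Gr^{\omega^\rho}_{G,\Ran_x})\big)$, i.e.\ it carries a system of isomorphisms
$$
{}_{\Gr_T}\!\ICsm_{q^{-1},\Ran_x}\underset{H}\star\Sat_{q^{-1},G}(V)\ \simeq\ \Sat'_{q^{-1},T}(\Res^H_{T_H}V)\star {}_{\Gr_T}\!\ICsm_{q^{-1},\Ran_x}
$$
compatible with tensor products (here using that $\Sat'_{q^{-1},T}$ is the cohomologically shifted Satake functor and that the $q^{-1}$-version of the metaplectic datum is the relevant one since $\ICsm$ is built from $(\CG^G)^{-1}$). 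The third step is then a formal manipulation of functors defined by correspondences: for $\CF\in\Shv_{\CG^G}(\Gr^{\omega^\rho}_{G,\Ran_x})$ and $V\in\Rep(H)$, one computes, using base change, the projection formula, and the fact that the $G$-Hecke action is given by a proper pushforward,
$$
\fJ_{!*,\Ran_x}(\CF\star\Sat_{q,G}(V))\ \simeq\ (p_T)_*\big({}_{\Gr_T}\!\ICsm_{q^{-1},\Ran_x}\overset{!}\otimes p_G^!(\CF)\big)\star\Sat_{q,T}(\text{something}),
$$
where the Hecke structure on the kernel moves the $G$-convolution on the right of $p_G^!(\CF)$ across to a $T$-convolution on the $\Gr_T$-side; the upshot is $\fJ_{!*,\Ran_x}(\CF\star\Sat_{q,G}(V))\simeq\fJ_{!*,\Ran_x}(\CF)\star\Sat_{q,T}(\Res^H_{T_H}V)$, which after tracking the shift $\Sat'$ vs.\ $\Sat$ and the $\tau^{T_H}$-twist is exactly the claimed intertwining. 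One then upgrades this to a homotopy-coherent system of such isomorphisms, compatible with tensor products of representations, by using that $_{\Gr_T}\!\ICsm_{q^{-1},\Ran_x}$ lives in $\bHecke_{\on{rel}}$, not merely satisfies the eigen-equation on objects.

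The main obstacle I anticipate is not any single computation but the bookkeeping needed to make the compatibilities homotopy-coherent and to get all the twists right simultaneously: the passage from $\Sat$ to $\Sat'$ (the shift by $[-\langle\lambda,2\check\rho\rangle]$), the Cartan involution $\tau^{T_H}$ entering through the inversion on $\on{Hecke}^{\on{loc}}_{T,x}$, and the switch $\CG^G\rightsquigarrow(\CG^G)^{-1}$, $q\rightsquigarrow q^{-1}$ built into $\ICsm$ as opposed to $\ICs$. Each of these is harmless in isolation, but one must verify they combine to produce precisely the action of $\Rep(T_H)$ on $\Shv_{\CG^T}(\Gr^{\omega^\rho}_{T,\Ran_x})$ that is used to define the target category — in particular matching the normalization in \secref{sss:Sat'}. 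Granting \corref{c:Hecke IC Gr T} (and its factorization refinement \corref{c:Hecke IC Gr T fact}, which will be needed for the factorization-enhanced version in the next step), the argument is otherwise a standard "convolution with an equivariant kernel" manipulation, and \thmref{t:Hecke ppty of J gen} — hence \thmref{t:Hecke ppty of J} — follows.
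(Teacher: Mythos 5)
Your proposal matches the paper's proof in essence: the crucial input is \corref{c:Hecke IC Gr T} (the $\bHecke_{\on{rel}}$-structure on the kernel $_{\Gr_T}\!\ICsm_{q^{-1},\Ran_x}$), and the rest is exactly the formal "convolution with an equivariant kernel" manipulation you describe, which the paper packages via the abstract paradigm of \secref{sss:paradigm for Hecke} (this abstraction automatically supplies the homotopy-coherence you flag as a concern). The bookkeeping of twists you list — $\Sat$ vs.\ $\Sat'$, $\tau^{T_H}$, $\tau^H$, and the $B\leftrightarrow B^-$ swap built into $\ICsm_{q^{-1}}$ — is addressed in the paper by noting that these cancel one another out.
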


We will see that the proof of \thmref{t:Hecke ppty of J gen} amounts to no more than diagram chase once we know \corref{c:Hecke IC Gr T}. 

\sssec{}  \label{sss:paradigm for Hecke}

The proof of \thmref{t:Hecke ppty of J gen} will fit into the following general paradigm:

\medskip

Let $\bC$ be a category equipped with an action of $\Rep(H)$,
and let $\bD$ be a category equipped with an action of $\Rep(H)\otimes \Rep(T_H)$. Let $\bE$ be a category,
equipped with an action of $\Rep(T_H)$. 

\medskip

Let us be given a functor
$$\Psi:\bC\otimes \bD\to \bE,$$ 
equipped with a factorization
\begin{equation} \label{e:factor Psi}
\bC\otimes \bD \to \bC\underset{\Rep(H)}\otimes \bD\overset{\wt\Psi}\longrightarrow \bE.
\end{equation}

\medskip

Note that $\bC\underset{\Rep(H)}\otimes \bD$ is acted on by $\Rep(T_H)$. Assume that 
$\wt\Psi$ intertwines this action and the given one on $\bE$. 

\medskip

Let $\bd\in \bD$ be an object equipped with a structure of object of $\bHecke_{\on{rel}}(\bD)$. By unwinding the
definitions, in this case we obtain that the functor 
$$\Phi:=\Psi_\bd:\bC\to \bE, \quad \bc\mapsto \Psi(\bc\otimes \bd)$$
intertwines the $\Rep(H)$-action on $\bC$ and the $\Rep(T_H)$-action on $\bE$. 

\sssec{}

We apply the above paradigm as follows. We take
$$\bC:=\Shv_{\CG^G}(\Gr_{G,\Ran_x}), \,\, \bE:=\Shv_{\CG^T}(\Gr^{\omega^\rho}_{T,\Ran_x}),$$
and
$$\bD:=\Shv_{(\CG^{G,T,\on{ratio}})^{-1}}(\Gr^{\omega^\rho}_{T,\Ran_x}\underset{\Ran_x}\times \Gr^{\omega^\rho}_{G,\Ran_x}).$$

We will now supply the above categories with the required pieces of structure. 
First off, the functor $\Psi$ is the functor
\begin{equation} \label{e:our Psi}
\CF,\CF'\mapsto 
(p_T)_*\left(\CF'\sotimes (p_G)^!(\CF)\right).
\end{equation}

\sssec{}

The action of $\Rep(H)$ on $\Shv_{\CG^G}(\Gr_{G,\Ran_x})$ (resp., 
the action of $\Rep(T_H)$ on $\Shv_{\CG^T}(\Gr^{\omega^\rho}_{T,\Ran_x})$) is given by composing the
$\Sph_{q,x}(G)$-action on $\Shv_{\CG^G}(\Gr_{G,\Ran_x})$ (resp., 
the action of $\Sph_{q,x}(T)$ on $\Shv_{\CG^T}(\Gr^{\omega^\rho}_{T,\Ran_x})$) and the geometric Satake functor
$\Sat_{q,G}$ (resp., $\Sat'_{q,T}$).

\medskip

The action of $\Rep(T_H)$ on 
$\Shv_{(\CG^{G,T,\on{ratio}})^{-1}}(\Gr^{\omega^\rho}_{T,\Ran_x}\underset{\Ran_x}\times \Gr^{\omega^\rho}_{G,\Ran_x})$
is also given by $\Sat'_{q,T}$. 

\medskip

Now, the action of $\Rep(H)$ on 
$\Shv_{(\CG^{G,T,\on{ratio}})^{-1}}(\Gr^{\omega^\rho}_{T,\Ran_x}\underset{\Ran_x}\times \Gr^{\omega^\rho}_{G,\Ran_x})$
is given by \emph{composing}
$$\Sat_{q,G}:\Rep(H)\to \Sph_{q,x}(G)$$ 
with the inversion anti-homomorphism
$$\on{inv}^G:\Sph_{q,x}(G)\to \Sph_{q^{-1},x}(G),$$
see \secref{sss:inversion}.  Note that by \secref{sss:Satake and inversion}, the latter is the same as the action of 
$\Sph_{q^{-1},x}(G)$, \emph{precomposed} with the Cartan involution $\tau^H$ on $\Rep(H)$.

\medskip

The factorization \eqref{e:factor Psi} follows from the formula
$$(p_T)_*\left(\CF'\sotimes (p_G)^!(\CF\star \CS_G)\right)\simeq 
(p_T)_*\left((\CF'\star \on{inv}^G(\CS_G))\sotimes \CF\right), \quad \CS_G\in \Sph_{q,x}(G).$$

The fact that the resulting functor $\wt\Psi$ is compatible with the actions of $\Rep(T_H)$ follows from the formula
$$(p_T)_*\left((\CF'\star \CS_T)\sotimes (p_G)^!(\CF)\right)\simeq 
\left((p_T)_*\left(\CF'\sotimes \CF\right)\right)\star \CS_T , \quad \CS_T\in \Sph_{q,x}(T).$$

\sssec{}

Finally, we take 
$$\bd:={}_{\Gr_T}\!\ICsm_{q^{-1},\Ran}\in 
\Shv_{(\CG^{G,T,\on{ratio}})^{-1}}(\Gr^{\omega^\rho}_{T,\Ran_x}\underset{\Ran_x}\times \Gr^{\omega^\rho}_{G,\Ran_x}).$$

The structure on $\bd$ of an object of $\bHecke_{\on{rel}}(\bD)$ is provided by \corref{c:Hecke IC Gr T}: 

\medskip

The Cartan involution
on $T_H$ (appearing in \corref{c:Hecke IC Gr T}), the Cartan involution on $H$ (involved in the action of $\Rep(H)$ on $\bD$,
see above), and the swap of $B$ and $B^-$ (involved in the passage $_{\Gr_T}\!\ICs_{q^{-1},\Ran}\rightsquigarrow
{}_{\Gr_T}\!\ICsm_{q^{-1},\Ran}$) cancel each other out. 

\ssec{Hecke structure on the factorizble version}

In this subsection we will upgrade the construction of the previous subsection to obtain the functor
$$\fJ^{\bHecke}_{!*,\on{Fact}}:\bHecke(\Whit_{q,x}(G)) \to \Omega_q^{\Whit_{!*}}\on{-FactMod}(\Shv_{\CG^T}(\Gr^{\omega^\rho}_{T,\Ran_x})).$$

\sssec{}

By \secref{sss:fact funct mod geom}, the category $\Omega_q^{\Whit_{!*}}\on{-FactMod}(\Shv_{\CG^T}(\Gr^{\omega^\rho}_{T,\Ran_x}))$ 
carries a (right) monoidal action of
$\Sph_{q,x}(T)$, and hence an action of $\Rep(T_H)$.

\medskip

By  \secref{sss:graded Hecke univ}, the construction of the sought-for functor $\fJ^{\bHecke}_{!*,\on{Fact}}$
is equivalent to the following:

\begin{thmconstr}  \label{t:Hecke ppty of J Fact} 
The functor $$\fJ_{!*,\on{Fact}}: \Whit_{q,x}(G)\to \Omega_q^{\Whit_{!*}}\on{-FactMod}(\Shv_{\CG^T}(\Gr^{\omega^\rho}_{T,\Ran_x}))$$
intertwines the $\Rep(H)$-action on $\Whit_{q,x}(G)$ and the $\Rep(T_H)$-action
on $$\Omega_q^{\Whit_{!*}}\on{-FactMod}(\Shv_{\CG^T}(\Gr^{\omega^\rho}_{T,\Ran_x})).$$
\end{thmconstr}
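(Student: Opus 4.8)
The plan is to run the abstract mechanism of \secref{sss:paradigm for Hecke} one level higher --- in the relevant categories of \emph{factorization modules} rather than in the plain sheaf categories --- with the factorization-compatible relative Hecke structure of \corref{c:Hecke IC Gr T fact} playing the role that \corref{c:Hecke IC Gr T} played in the proof of \thmref{t:Hecke ppty of J gen}. Concretely, I would take
$$\bC:=\on{Vac}_{\Whit,\Ran}\on{-FactMod}(\Shv_{\CG^G}(\Gr^{\omega^\rho}_{G,\Ran_x})),\quad
\bE:=\Omega_q^{\Whit_{!*}}\on{-FactMod}(\Shv_{\CG^T}(\Gr^{\omega^\rho}_{T,\Ran_x})),$$
$$\bD:={}_{\Gr_T}\!\ICsm_{q^{-1},\Ran}\on{-FactMod}(\Shv_{(\CG^{G,T,\on{ratio}})^{-1}}(\Gr^{\omega^\rho}_{T,\Ran_x}\underset{\Ran_x}\times \Gr^{\omega^\rho}_{G,\Ran_x})).$$
By \secref{sss:fact funct mod geom}, the convolution actions of $\Sph_{q,x}(G)$ and $\Sph_{q,x}(T)$ on the underlying sheaf categories lift to actions on $\bC$, $\bD$, $\bE$ commuting with the forgetful functors $\oblv_{\on{Fact}}$; composing with $\Sat_{q,G}$, with $\Sat_{q,G}$ followed by $\on{inv}^G$, and with $\Sat'_{q,T}$ exactly as in the proof of \thmref{t:Hecke ppty of J gen}, this makes $\bC$ a $\Rep(H)$-module, $\bD$ a $\Rep(H)\otimes\Rep(T_H)$-module, and $\bE$ a $\Rep(T_H)$-module. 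After precomposing with $\on{sprd}_{\on{Fact}}$ (using \thmref{t:fact on Vac Whit}(b)) these are visibly the actions used to define the Hecke action on $\Whit_{q,x}(G)$ and the $\Rep(T_H)$-action on $\bE$.

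Next I would produce the two-variable functor $\Psi:\bC\otimes\bD\to\bE$ as the factorization-module upgrade of \eqref{e:our Psi}: it sends $(\CF,\CF')$ to $(p_T)_*(\CF'\sotimes (p_G)^!(\CF))$, where $(p_G)^!$ and $(p_T)_*$ are understood at the level of factorization modules via the ind-schematic correspondence $p_T\leftarrow p_G$ (here $p_T$ is ind-schematic, being base changed from $\Gr^{\omega^\rho}_{G,\Ran}\to\Ran$), and where $\sotimes$-ing $(p_G)^!(\on{Vac}_{\Whit,\Ran})$ --- a factorization algebra by \secref{sss:fact funct geom pullback} --- gives the algebra whose pushforward is $\Omega_q^{\Whit_{!*}}$; this is precisely the construction already carried out in \eqref{e:funct to ten prod fact mod}. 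The factorization $\Psi=\wt\Psi\circ\Phi_{\on{univ}}$ through $\bC\underset{\Rep(H)}\otimes\bD$ and the $\Rep(T_H)$-equivariance of $\wt\Psi$ follow from the same two identities
$$(p_T)_*(\CF'\sotimes (p_G)^!(\CF\star\CS_G))\simeq (p_T)_*((\CF'\star\on{inv}^G(\CS_G))\sotimes\CF),\qquad
(p_T)_*((\CF'\star\CS_T)\sotimes (p_G)^!(\CF))\simeq ((p_T)_*(\CF'\sotimes\CF))\star\CS_T$$
used in \thmref{t:Hecke ppty of J gen}: these are built from $\sotimes$, $(-)^!$, $(-)_*$ and convolution, all compatible with $\oblv_{\on{Fact}}$, so they promote from the plain sheaf level to the factorization-module level without new content. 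Finally I would feed in $\bd:={}_{\Gr_T}\!\ICsm_{q^{-1},\Ran_x}$, equipped with its structure of object of $\bHecke_{\on{rel}}(\bD)$ --- which is exactly the assertion of \corref{c:Hecke IC Gr T fact} (itself a consequence of \thmref{t:Hecke ICs fact}) --- noting, as in \secref{sss:paradigm for Hecke}, that $\Psi_\bd\circ\on{sprd}_{\on{Fact}}=\fJ_{!*,\on{Fact}}$ and that the Cartan involution $\tau^{T_H}$ appearing in \corref{c:Hecke IC Gr T fact}, the Cartan involution $\tau^H$ hidden in the $\Rep(H)$-action on $\bD$, and the swap $B\leftrightarrow B^-$ in the passage ${}_{\Gr_T}\!\ICs_{q^{-1},\Ran}\rightsquigarrow{}_{\Gr_T}\!\ICsm_{q^{-1},\Ran}$ cancel one another. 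Applying the paradigm then yields the desired intertwining, and hence the functor $\fJ^{\bHecke}_{!*,\on{Fact}}$.

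I expect the main obstacle to be organizational rather than conceptual: one must check that the abstract paradigm of \secref{sss:paradigm for Hecke} really goes through verbatim in the factorization-module setting, i.e.\ that the op-lax symmetric monoidal data defining factorization modules, the $\Sph$-actions defined through correspondences in \secref{ss:fact modules funct}, and metaplectic geometric Satake all assemble so that ``the same diagram chase'' is legitimate; and, within that, that \corref{c:Hecke IC Gr T fact} indeed supplies a genuine $\bHecke_{\on{rel}}$-structure on the object ${}_{\Gr_T}\!\ICsm_{q^{-1},\Ran_x}$ inside the factorization-module category $\bD$, and not merely the underlying relative Hecke structure together with an independent factorization structure. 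Granting \corref{c:Hecke IC Gr T fact} and the functoriality package of \secref{ss:fact modules funct}, no geometric input beyond that already used for \thmref{t:Hecke ppty of J gen} is needed.
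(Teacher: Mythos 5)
Your proposal is correct and follows essentially the same route as the paper: first reduce (via the compatibility of $\on{sprd}_{\on{Fact}}$ with the $\Sph_{q,x}(G)$-action) to the statement that the functor induced by $\fJ_{!*,\Ran_x}$ on factorization-module categories intertwines the actions, and then run the paradigm of \secref{sss:paradigm for Hecke} at the level of factorization modules with \corref{c:Hecke IC Gr T fact} supplying the $\bHecke_{\on{rel}}$-structure on the kernel object. The one cosmetic divergence is your choice of $\bD$: you take factorization modules over ${}_{\Gr_T}\!\ICsm_{q^{-1},\Ran}$, whereas the paper takes factorization modules over $({}_{\Gr_T}\!\ICsm_{q^{-1},\Ran}\sotimes (p_G)^!(\on{Vac}_{\Whit,\Ran}))$; your choice is in fact the one that makes the paradigm work cleanly with $\bd={}_{\Gr_T}\!\ICsm_{q^{-1},\Ran_x}$ and with $\Psi$ literally given by $(p_T)_*(\CF'\sotimes p_G^!(\CF))$, so this is at worst a bookkeeping convention rather than a mathematical difference.
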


The rest of this subsection is devoted to the proof of this theorem. 

\sssec{}

First, by \secref{sss:fact funct mod geom}, the category $\on{Vac}_{\Whit,\Ran}\on{-FactMod}(\Shv_{\CG^G}(\Gr_{G,\Ran_x}))$
carries a (right) monoidal action of $\Sph_{q,x}(G)$. From the construction of the functor 
$$\on{sprd}_{\on{Fact}}:\Whit_{q,x}(G)\to \on{Vac}_{\Whit,\Ran}\on{-FactMod}(\Shv_{\CG^G}(\Gr_{G,\Ran_x}))$$
in \thmref{t:fact on Vac Whit}(b), we obtain that it intertwines the actions of $\Sph_{q,x}(G)$ on the two sides. 

\medskip

Hence, \thmref{t:Hecke ppty of J Fact} follows from the next more general result:

\begin{thmconstr}  \label{t:Hecke ppty of J Fact gen} 
The functor 
\begin{equation} \label{m:dir im fact}
\on{Vac}_{\Whit,\Ran}\on{-FactMod}(\Shv_{\CG^G}(\Gr_{G,\Ran_x})) \to 
\Omega_q^{\Whit_{!*}}\on{-FactMod}(\Shv_{\CG^T}(\Gr^{\omega^\rho}_{T,\Ran_x}))
\end{equation}
induced by $\fJ_{!*,\Ran_x}$ intertwines the $\Rep(H)$-action on $\on{Vac}_{\Whit,\Ran}\on{-FactMod}(\Shv_{\CG^G}(\Gr_{G,\Ran_x}))$
and the $\Rep(T_H)$-action on $\Omega_q^{\Whit_{!*}}\on{-FactMod}(\Shv_{\CG^T}(\Gr^{\omega^\rho}_{T,\Ran_x}))$. 
\end{thmconstr}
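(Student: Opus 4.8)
The plan is to deduce Theorem-Construction \ref{t:Hecke ppty of J Fact gen} from \corref{c:Hecke IC Gr T fact} by exactly the same abstract ``paradigm for Hecke'' mechanism used in \secref{sss:paradigm for Hecke}, but now applied \emph{in the world of factorization modules} rather than in plain sheaf categories. Concretely, I would set up the analogue of \secref{sss:paradigm for Hecke} with $\bC$, $\bD$, $\bE$ replaced by the corresponding categories of factorization modules: $\bC:=\on{Vac}_{\Whit,\Ran}\on{-FactMod}(\Shv_{\CG^G}(\Gr_{G,\Ran_x}))$ with its $\Rep(H)$-action coming from the $\Sph_{q,x}(G)$-action of \secref{sss:fact funct mod geom} composed with $\Sat_{q,G}$; $\bE:=\Omega_q^{\Whit_{!*}}\on{-FactMod}(\Shv_{\CG^T}(\Gr^{\omega^\rho}_{T,\Ran_x}))$ with its $\Rep(T_H)$-action coming from the $\Sph_{q,x}(T)$-action composed with $\Sat'_{q,T}$; and
$$\bD:={}_{\Gr_T}\!\ICsm_{q^{-1},\Ran}\on{-FactMod}\left(\Shv_{(\CG^{G,T,\on{ratio}})^{-1}}(\Gr^{\omega^\rho}_{T,\Ran_x}\underset{\Ran_x}\times \Gr^{\omega^\rho}_{G,\Ran_x})\right),$$
with its $\Rep(H)\otimes\Rep(T_H)$-action as in \secref{sss:Hecke action on fact mod}. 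The functor $\Psi$ is the factorization-module upgrade of \eqref{e:our Psi}, i.e.\ the composite of the functor from \secref{sss:fact funct mod geom} applied to $\CF\mapsto {}_{\Gr_T}\!\ICsm_{q^{-1},\Ran}\sotimes (p_G)^!(\CF)$ followed by $(p_T)_*$, which is precisely how $\fJ_{!*,\Ran_x}$ was upgraded to \eqref{m:dir im fact} at the end of \secref{s:Jacquet}.

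First I would verify that $\Psi$ factors through the relative tensor product $\bC\underset{\Rep(H)}\otimes\bD$: this follows, as in the bare case, from the projection-type formula
$$(p_T)_*\left(\CF'\sotimes (p_G)^!(\CF\star \CS_G)\right)\simeq (p_T)_*\left((\CF'\star \on{inv}^G(\CS_G))\sotimes \CF\right),\quad \CS_G\in\Sph_{q,x}(G),$$
which holds at the level of factorization modules because the $\Sph_{q,x}(G)$-actions on both sides are realized through the same geometric correspondences (pullback--tensor--pushforward along $\on{Hecke}^{\on{loc}}_{G,x}$), and \secref{sss:fact funct mod geom}--\secref{sss:fact funct mod geom et} guarantee these correspondence functors lift to factorization modules and commute with $\oblv_{\on{Fact}}$. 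Then I would check that the resulting $\wt\Psi$ intertwines the $\Rep(T_H)$-actions, using the analogous formula $(p_T)_*\left((\CF'\star \CS_T)\sotimes (p_G)^!(\CF)\right)\simeq \left((p_T)_*(\CF'\sotimes \CF)\right)\star \CS_T$ for $\CS_T\in\Sph_{q,x}(T)$, again lifted to factorization modules. Finally, plugging in $\bd:={}_{\Gr_T}\!\ICsm_{q^{-1},\Ran_x}$, whose structure of object of $\bHecke_{\on{rel}}(\bD)$ is exactly the content of \corref{c:Hecke IC Gr T fact}, the abstract paradigm of \secref{sss:paradigm for Hecke} yields that $\Psi_\bd$, i.e.\ the functor \eqref{m:dir im fact}, intertwines the $\Rep(H)$-action on $\bC$ with the $\Rep(T_H)$-action on $\bE$, as desired. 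The cancellation of the three involutions (the Cartan involution $\tau^{T_H}$ appearing in \corref{c:Hecke IC Gr T fact}, the Cartan involution $\tau^H$ hidden in the $\on{inv}^G$-twisted $\Rep(H)$-action on $\bD$ via \secref{sss:Satake and inversion}, and the $B\leftrightarrow B^-$ swap in passing from ${}_{\Gr_T}\!\ICs$ to ${}_{\Gr_T}\!\ICsm$) is identical to the one in the bare case and requires no new input.

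The main obstacle I anticipate is \emph{bookkeeping of the factorization structures}, not conceptual difficulty: one must be careful that all the manipulations (the projection formulas, the compatibility of $(p_T)_*$ and $(p_G)^!$ with the module structures, and the identification of the relative-Hecke structure) are carried out \emph{inside} the categories of factorization modules and are compatible with $\oblv_{\on{Fact}}$, rather than merely after applying $\oblv_{\on{Fact}}$. The tool that makes this work is \secref{sss:fact funct mod geom}--\secref{sss:fact funct mod geom et}: every geometric operation used here (pullback along $p_G$, $!$-tensor with a fixed factorization algebra object, proper pushforward along $p_T$, and the $\on{Hecke}^{\on{loc}}$-correspondences defining the actions) is of the form $g_*\circ f^!$ for an ind-schematic $g$, hence lifts canonically to factorization modules and the lifts satisfy the same adjunctions and base-change identities as downstairs. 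Once this is in place, Theorem-Construction \ref{t:Hecke ppty of J Fact} follows from Theorem-Construction \ref{t:Hecke ppty of J Fact gen} because the functor $\on{sprd}_{\on{Fact}}$ of \thmref{t:fact on Vac Whit}(b) intertwines the $\Sph_{q,x}(G)$-actions (hence the $\Rep(H)$-actions) on its source and target; and, via \secref{sss:graded Hecke univ}, this $\Rep(H)$-vs-$\Rep(T_H)$ intertwining property is equivalent to the existence of the desired enhancement $\fJ^{\bHecke}_{!*,\on{Fact}}:\bHecke(\Whit_{q,x}(G))\to \Omega_q^{\Whit_{!*}}\on{-FactMod}(\Shv_{\CG^T}(\Gr^{\omega^\rho}_{T,\Ran_x}))$.
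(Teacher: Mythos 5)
Your proposal is correct and follows the same route as the paper: one applies the general paradigm of Section \ref{sss:paradigm for Hecke} in the world of factorization modules, with the relative-Hecke structure on the kernel object supplied by \corref{c:Hecke IC Gr T fact}, and the lifts of the various geometric operations to factorization modules guaranteed by Sections \ref{sss:fact funct mod geom}--\ref{sss:fact funct mod geom et}. The one place where you deviate from the paper's stated bookkeeping is the choice of $\bD$: the paper takes $\bD$ to be factorization modules over the product algebra ${}_{\Gr_T}\!\ICsm_{q^{-1},\Ran}\sotimes (p_G)^!(\on{Vac}_{\Whit,\Ran})$, whereas you take $\bD = {}_{\Gr_T}\!\ICsm_{q^{-1},\Ran}\on{-FactMod}$. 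Your choice is the one that directly matches the category in which \corref{c:Hecke IC Gr T fact} places the object $\bd={}_{\Gr_T}\!\ICsm_{q^{-1},\Ran_x}$, and for which the pairing $\Psi(\CM,\CF')=(p_T)_*(\CF'\sotimes (p_G)^!(\CM))$ lands in $\Omega_q^{\Whit_{!*}}\on{-FactMod}$ without any further manipulation; so your formulation is, if anything, the cleaner reading. The substance of the argument (projection formulas lifted along $\oblv_{\on{Fact}}$, the factorization through the relative tensor product, the $\Rep(T_H)$-intertwining of $\wt\Psi$, and the triple cancellation of involutions) is identical.
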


\sssec{}

We will prove \thmref{t:Hecke ppty of J Fact gen} by applying the paradigm of \secref{sss:paradigm for Hecke}. 

\medskip

We take 
$$\bC=\on{Vac}_{\Whit,\Ran}\on{-FactMod}(\Shv_{\CG^G}(\Gr_{G,\Ran_x})),$$
$$\bE:=\Omega_q^{\Whit_{!*}}\on{-FactMod}(\Shv_{\CG^T}(\Gr_{T,\Ran_x})),$$
$$\bD:= \left({}_{\Gr_T}\!\ICsm_{q^{-1},\Ran}\sotimes (p_G)^!(\on{Vac}_{\Whit,\Ran})\right)\on{-FactMod}
(\Shv_{(\CG^{G,T,\on{ratio}})^{-1}}(\Gr^{\omega^\rho}_{T,\Ran_x}\underset{\Ran_x}\times \Gr^{\omega^\rho}_{G,\Ran_x})).$$

We take $\Psi$ to be the functor, induced by the functor \eqref{e:our Psi}.  

\sssec{}

Now the proof of \thmref{t:Hecke ppty of J Fact gen} follows verbatim that of \thmref{t:Hecke ppty of J gen}, 
using \corref{c:Hecke IC Gr T fact} as an additional input. 

\newpage 

\centerline{\bf Part VI: Interpretation via configuration spaces} 

\bigskip

In the previous Part, we constructed the functor
$$\fJ^{\bHecke}_{!*,\on{Fact}}:\bHecke(\Whit_{q,x}(G)) \to \Omega_q^{\Whit_{!*}}\on{-FactMod}(\Shv_{\CG^T}(\Gr^{\omega^\rho}_{T,\Ran_x})).$$

This functor is \emph{almost} an equivalence: we will need to apply a certain renormalization procedure to the right-hand side
in order to turn it into one. However, in order to make sense of this renormalization procedure, we will need to interpret 
the above category of factorization modules in terms of the configuration space, following the procedure of \secref{ss:Conf vs Gr}.
The passage $\Gr_T\rightsquigarrow \Conf$ will have the additional advantage of placing us in the context of finite-dimensional 
algebraic geometry, thereby making the category of factorization modules more accessible to calculations (see \secref{ss:mod over conf}).  

\medskip

Once we have reinterpreted the Jacquet functor as taking place in the category of factorization modules over the configuration
space, we will be able to state our main theorem. 

\section{Factorization algebra $\Omega_q^{\on{small}}$}  \label{s:Omega small}

In this section we put ourselves in the context of \secref{ss:fact on config}. We will describe a particular
factorization algebra, denoted $\Omega_q^{\on{small}}$, in $\Shv_{\CG^\Lambda}(\Conf)$ that exists under some additional conditions on
the geometric metaplectic data $\CG^T$. 

\medskip

The right-hand side in our main theorem will be the (renormalized version of the) category of factorization modules
over $\Omega_q^{\on{small}}$. 

\ssec{Additional requirements on the geometric metaplectic data}

In this subsection we start with a geometric metaplectic data $\CG^T$ for the torus $T$ and describe
the additional condition that allows to define the factorization algebra $\Omega_q^{\on{small}}$. 

\sssec{}  \label{sss:simple root triv}

Recall (see \secref{sss:gerbe Lambda}) that the geometric metaplectic data $\CG^T$ for $T$ gives rise
to a factorization gerbe, denoted $\CG^\Lambda$, on $\Conf$.

\medskip

We will require the following:

\medskip

\hskip0.5cm{\em For every vertex $i$ of the Dynkin diagram, the gerbe $\CG^\Lambda$ evaluated on 
$$\Conf^{-\alpha_i}=X^{-\alpha_i}\simeq X$$
is trivialized.}

\sssec{}  \label{sss:open part triv}

Note that the datum of trivialization as above uniquely extends to a trivialization of the restriction of $\CG^\Lambda$
$$\overset{\circ}\Conf\subset \Conf,$$
in a way compatible with the factorization structure on $\CG^\Lambda$. 

\sssec{}  \label{sss:gerbes from G}

Let us show that if the geometric metaplectic data $\CG^T$ for $T$ arises from a a geometric metaplectic data $\CG^G$,
then the above condition is automatically satisfied. 

\medskip

First, we note that by \cite[Sect. 5.1]{GLys}, this question reduces to the case of $G=SL_2$. Next, 
by \cite[Proposition 3.1.9 and Theorem 3.2.6]{GLys} every factorization gerbe on $\Gr_{SL_2}$ is canonically of the form
$$(\det_{SL_2})^a, \quad a\in \sfe^\times(-1).$$

\medskip

The resulting gerbe on $X=X^{-\alpha_i}$ equals $\CL^a$ (see \cite[Sect. 1.4.2]{GLys} for the notation), where $\CL$ is the line bundle
on $X$ that sends $x\in X$ to the line
$$\on{rel.det.}(\omega^{\otimes \frac{1}{2}}(x),\omega^{\otimes \frac{1}{2}}) \otimes 
\on{rel.det.}(\omega^{\otimes -\frac{1}{2}}(-x),\omega^{\otimes -\frac{1}{2}}).$$

\sssec{}   \label{sss:det calc}

We claim that $\CL$ is canonically trivial. Indeed:
$$\on{rel.det.}(\omega^{\otimes \frac{1}{2}}(x),\omega^{\otimes \frac{1}{2}}) \simeq \omega^{\otimes -\frac{1}{2}}_x$$
and
$$\on{rel.det.}(\omega^{\otimes -\frac{1}{2}},\omega^{\otimes -\frac{1}{2}}(-x))\simeq \omega^{\otimes -\frac{1}{2}}_x$$
as required. 

\ssec{Construction of $\Omega_q^{\on{small}}$ as a sheaf}

In this subsection we will define the (gerbe-twisted) perverse sheaf underlying the factorization algebra $\Omega_q^{\on{small}}$. 

\sssec{}  \label{sss:fi}

As our initial datum, for each vertex of the Dynkin diagram $i$, let us choose an $\sfe$-line denoted
$\sff^{i,\on{fact}}$.

\medskip

In \secref{sss:sfi from Whit} we will see the geometric meaning of these lines (they appear as \emph{geometric Gauss sums}). 

\sssec{}

Since $\Conf$ is a scheme locally of finite type, we have a well-defined full subcategory of twisted
\emph{perverse} sheaves 
$$\on{Perv}_{\CG^\Lambda}(\Conf)\subset 
\Shv_{\CG^\Lambda}(\Conf).$$

\medskip

We are going to define an object
$$\Omega^{\on{small}}_q\in \on{Perv}_{\CG^\Lambda}(\Conf).$$

\medskip

We first define its restriction to $\overset{\circ}\Conf$, 
$$\overset{\circ}\Omega{}^{\on{small}}_q\in \on{Perv}_{\CG^\Lambda}(\overset{\circ}\Conf).$$

\sssec{}

By \secref{sss:open part triv}, the gerbe 
$\CG^\Lambda$ is trivialized over $\overset{\circ}\Conf$, so
\begin{equation} \label{e:trivialize sheaves on open}
\Shv_{\CG^\Lambda}(\overset{\circ}\Conf)\simeq 
\Shv(\overset{\circ}\Conf)
\end{equation} 
and 
$$\on{Perv}_{\CG^\Lambda}(\overset{\circ}\Conf)\simeq 
\on{Perv}(\overset{\circ}\Conf).$$

\medskip

So, we can regard the sought-for twisted perverse sheaf $\overset{\circ}\Omega{}^{\on{small}}_q$ as an object of
$\on{Perv}(\overset{\circ}\Conf)$.

\sssec{}

Recall that 
$$\overset{\circ}\Conf= \underset{\lambda\in \Lambda^{\on{neg}}}\sqcup\,  \overset{\circ}\Conf^\lambda
\simeq \underset{\lambda\in \Lambda^{\on{neg}}}\sqcup\, \overset{\circ}{X}{}^\lambda,$$
where $\overset{\circ}{X}{}^\lambda$ is obtained from 
$$X^\lambda=\underset{i}\prod\, X^{(n_i)} \text{ if } \lambda=\underset{i}\Sigma\, n_i\cdot (-\alpha_i)$$
by removing the diagonal divisor. 

\medskip

For a fixed $\lambda$ and each $i$ we consider the $n_i$-th \emph{anti-symmetric power} of the
constant sheaf $(\sff^{i,\on{fact}})_X$ on $X$, as a local system on 
$$\overset{\circ}{X}{}^{(n_i)}=X^{(n_i)} -\on{Diag}.$$

\medskip

Denote it by $(\sff^{i,\on{fact}})_X^{(n_i),\on{sign}}$. Note that up to a trivialization of $\sff^{i,\on{fact}}$, this is just the sign local system on
$X^{(n_i)} -\on{Diag}$.

\sssec{}

Consider the perverse sheaf
$$(\sff^{i,\on{fact}})_X^{(n_i),\on{sign}}[n_i]\in \on{Perv}(\overset{\circ}{X}{}^{(n_i)}).$$

\medskip 

We set  
$$\overset{\circ}\Omega{}^{\on{small}}_q\in \on{Perv}(\overset{\circ}\Conf)=
\on{Perv}(\overset{\circ}{X}{}^\lambda)$$
to be the external product
$$\left(\underset{i}\boxtimes\, (\sff_{i,\on{fact}})_X^{(n_i),\on{sign}}[n_i]\right)|_{\overset{\circ}{X}{}^\lambda}.$$

\sssec{}

Finally, we define $\Omega^{\on{small}}_q$ to be the Goresky-MacPherson extension of $\overset{\circ}\Omega{}^{\on{small}}_q$
\emph{in the category of twsited perverse sheaves} $\on{Perv}_{\CG^\Lambda}(\Conf)$. 

\ssec{Factorization structure on $\Omega^{\on{small}}_q$}

In this subsection we endow $\Omega^{\on{small}}_q$ with a structure of factorization algebra. 

\sssec{}

We have to construct a system of isomorphisms
\begin{equation} \label{e:fact Omega}
\Omega^{\on{small}}_q|_{(\Conf^J)_{\on{disj}}}\simeq (\Omega^{\on{small}}_q)^{\boxtimes J}|_{(\Conf^J)_{\on{disj}}},
\end{equation}
satisfying a homotopy coherent system of compatibilities. 

\sssec{}

Note that both sides of \eqref{e:fact Omega} are \emph{perverse sheaves} that are Goresky-MacPherson extensions
of their respective restrictions to 
$$(\overset{\circ}\Conf)^J_{\on{disj}}:=(\overset{\circ}\Conf)^J\cap (\Conf^J)_{\on{disj}}\subset \Conf^J.$$

This is due to the fact that the addition map
$$\Conf^J\to \Conf$$
is \'etale when restricted to $(\Conf^J)_{\on{disj}}$.

\sssec{}

Hence, instead of \eqref{e:fact Omega}, it suffices to construct the corresponding isomorphisms 
\begin{equation} \label{e:fact Omega circ}
\overset{\circ}\Omega{}^{\on{small}}_q|_{(\overset{\circ}\Conf^J)_{\on{disj}}}\simeq 
(\overset{\circ}\Omega{}^{\on{small}}_q)^{\boxtimes J}|_{(\overset{\circ}\Conf^J)_{\on{disj}}}.
\end{equation}

However, the latter results directly from the construction. 

\section{Jacquet functor to the configuration space}  \label{s:Jacquet to conf}

In this section we will perform one of the main constructions in this paper: we will use the functor
$$\fJ^{\bHecke}_{!*,\on{Fact}}:\Whit_{q,x}(G) \to \Omega_q^{\Whit_{!*}}\on{-FactMod}(\Shv_{\CG^T}(\Gr^{\omega^\rho}_{T,\Ran_x}))$$
constructed earlier into a functor to produce a functor
$$\Phi^{\bHecke}_{\on{Fact}}:\Whit_{q,x}(G) \to \Omega^{\on{small}}_q\on{-FactMod}(\Shv_{\CG^\Lambda}(\Conf_{\infty\cdot x})).$$

\ssec{Support of the Jacquet functor}

As a warm-up for what follows, we will show that the support of objects in the image of the functor $\fJ_{!*,\on{sprd}}$
is contained in the \emph{non-positive} part of $\Gr^{\omega^\rho}_{T,\Ran_x}$, see \secref{ss:Conf Gr}. 

\sssec{}

We will presently prove: 

\begin{prop} \label{p:support} \hfill

\smallskip

\noindent{\em(a)} The support of the object 
$$\Omega_q^{\Whit_{!*}}\in \Shv_{\CG^T}(\Gr^{\omega^\rho}_{T,\Ran})$$
is contained in $(\Gr^{\omega^\rho}_{T,\Ran})^{\on{non-pos}}$. 

\smallskip

\noindent{\em(b)} 
The unit map 
\begin{equation} \label{e:unit Omega}
\on{unit}_*(\omega_\Ran)\to \Omega_q^{\Whit_{!*}}
\end{equation} 
gives rise to an isomorphism
$$\omega_\Ran\to \on{unit}^!(\Omega_q^{\Whit_{!*}}).$$ 
\end{prop}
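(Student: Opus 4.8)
The plan is to reduce everything to the geometry of the semi-infinite orbits, more precisely to the fact that the kernel $_{\Gr_T}\!\ICsm_{q^{-1},\Ran}$ is supported on (the $\Gr_T$-twist of) the closure $\ol{S}^{0,-}_\Ran$ of the \emph{negative} semi-infinite orbit, and that, after the $\Gr_T$-twist, the fibers of the projection $\ol{p}^-_\Ran$ over a point of $\Gr^{\omega^\rho}_{T,\Ran}$ are non-empty only over the \emph{non-positive} part. First I would recall that, by construction, $_{\Gr_T}\!\ICsm_{q^{-1},\Ran}$ lives on the closed subfunctor $_{\Gr^{\omega^\rho}_T}\!\ol{S}^{0,-}\subset \Gr^{\omega^\rho}_{T,\Ran}\underset{\Ran}\times \Gr^{\omega^\rho}_{G,\Ran}$, which classifies pairs $(\CP_T,\CP_G,\ldots)$ for which, for every $\check\lambda\in \cLambda^+$, the relevant meromorphic map of line bundles, twisted by $\CP_T$, is regular with \emph{no poles} (this is the $N^-$-version of \eqref{e:S- Ran}). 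Projecting along $p_G$ and taking the $!$-tensor product with $(p_G)^!(\on{Vac}_{\Whit,\Ran})$ does not enlarge the support in the $\Gr_T$-direction; then $(p_T)_*$ of something supported on $_{\Gr^{\omega^\rho}_T}\!\ol{S}^{0,-}$ is supported on the image of $\ol{p}^-_\Ran$, which by the Pl\"ucker description is exactly $(\Gr^{\omega^\rho}_{T,\Ran})^{\on{non-pos}}$ (recall from \secref{sss:neg part Gr} that the non-positive part is cut out by the regularity of $\check\lambda(\CP_T)\to \check\lambda(\CP^0_T)$ for all $\check\lambda\in\cLambda^+$). This proves part~(a). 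The same argument applied after base change along $\Ran_x\to\Ran$ shows that every object in the image of $\fJ_{!*,\on{sprd}}$, and more generally in the image of $\fJ_{!*,\Ran_x}$, is supported on $(\Gr^{\omega^\rho}_{T,\Ran_x})^{\on{non-pos}}_{\infty\cdot x}$.

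For part~(b), I would use the factorization of $\ol{S}^{0,-}_\Ran$ over the disjoint locus together with the $!$-restriction to the ``open'' stratum $S^0_\Ran$ (where $\CG^T$ is trivial and $\Omega_q^{\Whit_{!*}}$ restricts to $\omega$). Concretely, $\on{unit}:\Ran\to \Gr^{\omega^\rho}_{T,\Ran}$ factors through the locus where $\CP_T=\omega^\rho$, i.e. through the ``zero section''; over this locus, the fiber of $\ol{p}^-_\Ran$ is precisely the fiber of $\ol{S}^0_\Ran\to\Ran$ appearing in the definition of $\on{Vac}_{\Whit,\Ran}$, because the twist by the trivial $T$-bundle is trivial. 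Hence $\on{unit}^!\big((p_T)_*({}_{\Gr_T}\!\ICsm_{q^{-1},\Ran}\sotimes (p_G)^!(\on{Vac}_{\Whit,\Ran}))\big)$ is computed by base change along $\on{unit}$, yielding a pull-push along the non-twisted correspondence
$$
\CD
\Ran @<<< \ol{S}^{0,-}_\Ran\underset{\Ran}\times \ol{S}^0_\Ran @>>> \ol{S}^0_\Ran \\
\endCD
$$
of the object $\ICsm_{q^{-1},\Ran}\sotimes (p_G)^!(\on{Vac}_{\Whit,\Ran})$. Using \thmref{t:restr to unit} (which identifies $\on{Vac}_{\Whit,\Ran}$, restricted to $S^0_\Ran$, with $\omega_{S^0_\Ran}$) together with the identification $\ICsm_{q^{-1},\Ran}|_{S^{0,-}_\Ran}\simeq \omega_{S^{0,-}_\Ran}$ from the $N^-$-analogue of \secref{sss:IC semiinf}, the whole computation collapses: the intersection $\ol{S}^{0,-}\cap \ol{S}^0$ over a point of $\Ran$ is a single reduced point (the point $t^0$), and the $!$-fibers of both sheaves there are trivialized, so $\on{unit}^!(\Omega_q^{\Whit_{!*}})\simeq \omega_\Ran$. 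Finally I would check that the map \eqref{e:unit Omega}, which by construction of $\Omega_q^{\Whit_{!*}}$ as $(p_T)_*$ of a factorization algebra is induced by the unit $\omega_{\Ran}\to {}_{\Gr_T}\!\ICsm_{q^{-1},\Ran}|_{S^{0,-}}$-datum, induces the identity on this $!$-restriction; this is immediate from the characterization of the factorization structure on $\ICsm_{q^{-1},\Ran}$ in \thmref{t:fact ICs} (the induced structure on the open stratum is the tautological one on $\omega_{S^{0,-}_\Ran}$).

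The main obstacle I anticipate is not in the support statement itself but in carefully matching the $T$-twisted correspondence $_{\Gr^{\omega^\rho}_T}\!\ol{S}^{0,-}$ with the Pl\"ucker description of $(\Gr^{\omega^\rho}_{T,\Ran})^{\on{non-pos}}$: one must verify that twisting the condition \eqref{e:S- Ran} by a $T$-bundle $\CP_T$ and then asking for the twisted map to be regular is \emph{equivalent} to the original non-positivity of $\CP_T$ relative to $\omega^\rho$, and that the image of $\ol{p}^-_\Ran$ is all of the non-positive part (surjectivity), not just contained in it. The surjectivity direction, needed to be precise but not strictly for part~(a), follows by exhibiting, for any non-positive $(\CP_T,\alpha)$, the tautological lift where the $G$-bundle is induced from $\CP_T$ via $T\hookrightarrow B^-\hookrightarrow G$. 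For part~(b) the only subtlety is the homotopy-coherence of the identification $\on{unit}^!(\Omega_q^{\Whit_{!*}})\simeq \omega_\Ran$ as an isomorphism of factorization algebras, but this is handled exactly as in the proof of \thmref{t:fact on Vac Whit}(a), since both sides are Goresky--MacPherson extensions (in the appropriate t-structure) of their restrictions to the open stratum.
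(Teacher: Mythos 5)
Your proof of part (b) is essentially the same as the paper's: restrict to the unit section of $\Gr^{\omega^\rho}_{T,\Ran}$, observe that the restriction of the twisted kernel is $\ICsm_{q^{-1},\Ran}$, and that the tensor product $\ICsm_{q^{-1},x}\sotimes W^{0,*}$ is supported on the single point $\ol{S}^{-,0}\cap \ol{S}^0 = \{1\}$, giving $\delta_{1,\Gr}$. The extra discussion about homotopy coherence of the factorization identification is not needed (the claim is an isomorphism of sheaves, not of factorization algebras), but it is not wrong.

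Part (a), however, has a genuine gap. You claim that the image of $_{\Gr^{\omega^\rho}_T}\!\ol{S}^{0,-}$ under $p_T$ is $(\Gr^{\omega^\rho}_{T,\Ran})^{\on{non-pos}}$, and this is what your whole argument rests on. That is false: the projection is \emph{surjective}. For any $T$-bundle $\CP_T$, the $G$-bundle induced from $\CP_T$ via $T\hookrightarrow B^-\hookrightarrow G$ gives a point of $_{\Gr^{\omega^\rho}_T}\!\ol{S}^{0,-}$ lying over $\CP_T$. The defining condition for $_{\Gr^{\omega^\rho}_T}\!\ol{S}^{0,-}$ is \emph{relative}: it constrains the pair $(\CP_T,\CP_G)$ and says nothing about $\CP_T$ on its own. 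The "obstacle" you flag at the end (that twisting \eqref{e:S- Ran} by $\CP_T$ and demanding regularity should be \emph{equivalent} to non-positivity of $\CP_T$) is therefore not just a technical subtlety; it is a claim that is simply not true.

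The constraint to the non-positive part only arises once you \emph{also} impose the condition coming from $\on{Vac}_{\Whit,\Ran}$, namely that $\CP_G$ (as a point of $\Gr^{\omega^\rho}_{G}$) lies in $\ol{S}^0$. Then you are simultaneously requiring the $N$-side Pl\"ucker maps $\check\lambda(\omega^\rho)\to \CV^{\check\lambda}_{\CP_G}$ and the $\CP_T$-twisted $N^-$-side Pl\"ucker maps $\CV^{\check\lambda}_{\CP_G}\to \check\lambda(\CP_T)$ to be regular, and the composed injection of line bundles forces $\CP_T$ to be non-positive relative to $\omega^\rho$. Equivalently, and more cleanly, one reduces by factorization to a single point $x$, where the fiber of $\on{supp}({}_{\Gr_T}\!\ICsm_{q^{-1},x})$ over $t^\lambda$ is $\ol{S}^{-,\lambda}$, and then invokes the classical fact that $\ol{S}^{-,\lambda}\cap \ol{S}^0\neq\emptyset$ implies $\lambda\in\Lambda^{\on{neg}}$. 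This is what the paper does; your argument needs the intersection step that your write-up omits, and as written it does not prove part (a).
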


\sssec{Proof of \propref{p:support}(a)}

By factorization, the assertion of point (a) of the proposition is equivalent to the following: the support of the object
$$\fJ_{!*,x}(\on{Vac}_{\Whit,x}) \in \Shv_{\CG^T}(\Gr^{\omega^\rho}_{T,x})$$
is contained in the union of the connected components of $\Gr^{\omega^\rho}_{T,x}$ corresponding to the elements of $\Lambda^{\on{neg}}$. 

\medskip

To prove this, it suffices to show that if
$$\left(\{t^\lambda\} \times \on{supp}(W^{0,*})\right) \cap \on{supp}({}_{\Gr_T}\!\ICsm_{q^{-1},x})\neq \emptyset,$$
then $\lambda\in \Lambda^{\on{neg}}$. 

\medskip

The intersection
$$(p_T)^{-1}(\{t^\lambda\})\cap \on{supp}({}_{\Gr_T}\!\ICsm_{q^{-1},x})$$
equals $\ol{S}^{-,\lambda}$, viewed as a subset of $\Gr^{\omega^\rho}_{G,x}$.  This is while 
$$\on{supp}(W^{0,*})=\ol{S}^0\subset \Gr^{\omega^\rho}_{G,x}.$$

Now, the assertion follows from the fact that 
$$\ol{S}^{-,\lambda}\cap \ol{S}^0\neq \emptyset \, \Rightarrow\, \lambda\in \Lambda^{\on{neg}}.$$

\sssec{Proof of \propref{p:support}(b)}

To prove point (b) we note that the restriction of $_{\Gr_T}\!\ICsm_{q^{-1},x}$ to
$$\{1\}\times \Gr^{\omega^\rho}_{G,x}\subset \Gr^{\omega^\rho}_{T,x}\times \Gr^{\omega^\rho}_{G,x}$$
identifies with $\ICsm_{q^{-1},x}$. Hence, we need to show that the map
$$\sfe\to H(\Gr^{\omega^\rho}_{G,x},\ICsm_{q^{-1},x}\sotimes W^{0,*}),$$
induced by \eqref{e:unit Omega}, is an isomorphism.

\medskip

We recall that $\on{supp}(\ICsm_{q^{-1},x})=\ol{S}^{-,0}$, while $\ol{S}^{-,0}\cap \ol{S}^0=\{1\}$.
Hence, 
$$\ICsm_{q^{-1},x}\sotimes W^{0,*}\simeq \delta_{1,\Gr},$$
and the assertion follows. 

\sssec{}

As an immediate corollary of \propref{p:support}, we obtain:

\begin{cor}  \label{x:support}
The support of the objects in the essential image of the functor 
$$\fJ_{!*,\on{sprd}}:\Whit_{q,x}(G)\to \Shv_{\CG^T}(\Gr^{\omega^\rho}_{T,\Ran_x})$$
is contained in $(\Gr^{\omega^\rho}_{T,\Ran_x})^{\on{non-pos}}_{\infty\cdot x}$.
\end{cor}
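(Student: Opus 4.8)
The plan is to read this off directly from \propref{p:support}(a) together with the general principle, recorded in the Remark following \eqref{e:compare fact mod}, that a factorization module inherits the support of the factorization algebra over which it is a module. First I would recall that, by its construction in \secref{s:Jacquet}, the functor $\fJ_{!*,\on{sprd}}$ is the composite
$$\Whit_{q,x}(G)\overset{\fJ_{!*,\on{Fact}}}\longrightarrow \Omega_q^{\Whit_{!*}}\on{-FactMod}(\Shv_{\CG^T}(\Gr^{\omega^\rho}_{T,\Ran_x}))\overset{\oblv_{\on{Fact}}}\longrightarrow \Shv_{\CG^T}(\Gr^{\omega^\rho}_{T,\Ran_x}),$$
so it suffices to bound the support of $\oblv_{\on{Fact}}(\CM)$ for an arbitrary factorization $\Omega_q^{\Whit_{!*}}$-module $\CM$.

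Next I would invoke \propref{p:support}(a): the factorization algebra $\Omega_q^{\Whit_{!*}}\in \Shv_{\CG^T}(\Gr^{\omega^\rho}_{T,\Ran})$ is supported on the closed subfunctor $(\Gr^{\omega^\rho}_{T,\Ran})^{\on{non-pos}}$. Given this, the assertion that every such $\CM$ has underlying sheaf supported on $(\Gr^{\omega^\rho}_{T,\Ran_x})^{\on{non-pos}}_{\infty\cdot x}$ is precisely the content of the Remark after \eqref{e:compare fact mod}; its justification is a formal consequence of the factorization isomorphisms \eqref{e:factorization space mod} for $\CM$ combined with the fact that the maps \eqref{e:union map ptd disj} are \'etale, exactly as in the comparison between factorization algebras on $\Conf$ and on $\Gr^{\omega^\rho}_{T,\Ran}$ in \secref{ss:Conf vs Gr}. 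Applying this to $\CM=\fJ_{!*,\on{Fact}}(\CF)$, $\CF\in \Whit_{q,x}(G)$, gives the corollary.

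I do not anticipate a genuine obstacle here, since each step is either a citation or a bookkeeping unwinding; the only points requiring care are that the locus $(\Gr^{\omega^\rho}_{T,\Ran})^{\on{non-pos}}$ appearing in \propref{p:support}(a) is, after adjoining the marked point, literally the locus named in the corollary (see \secref{sss:neg part Gr ptd}), and that $\oblv_{\on{Fact}}$ does not alter the underlying twisted sheaf, so that the support of $\fJ_{!*,\on{sprd}}(\CF)$ really coincides with that of $\oblv_{\on{Fact}}\circ \fJ_{!*,\on{Fact}}(\CF)$. Alternatively, one could argue directly: precompose with $\on{sprd}_{\Ran_x}$, whose essential image lies in objects supported on $(\ol{S}^0_{\Ran_x})_{\infty\cdot x}$, and then repeat the intersection estimate $\ol{S}^{-,\lambda}\cap \ol{S}^0\neq \emptyset \Rightarrow \lambda\in \Lambda^{\on{neg}}$ from the proof of \propref{p:support}(a), now relative to $\Ran_x$; but the factorization-module route above is shorter and avoids redoing the geometry.
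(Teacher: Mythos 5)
Your argument is correct and is exactly the route the paper intends: the corollary is declared an immediate consequence of \propref{p:support}, and the mechanism is precisely the one you identify, namely that $\fJ_{!*,\on{sprd}}$ factors through $\oblv_{\on{Fact}}\circ \fJ_{!*,\on{Fact}}$, so the target inherits its support from the factorization algebra $\Omega_q^{\Whit_{!*}}$ via the Remark following \eqref{e:compare fact mod}. (The only cosmetic inaccuracy is the appeal to \eqref{e:union map ptd disj} being \'etale — the support claim for factorization modules follows directly from the factorization isomorphisms restricted to the disjoint locus, without needing \'etaleness, which in the paper is used to produce adjoints rather than to control supports.)
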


Thus, we can consider the functors $\fJ_{!*,\on{Fact}}$ and $\fJ^{\bHecke}_{!*,\on{Fact}}$ as taking values in
$$\Omega_q^{\Whit_{!*}}\mod\on{-FactMod}\left(\Shv_{\CG^T}((\Gr^{\omega^\rho}_{T,\Ran})^{\on{non-pos}}_{\infty\cdot x})\right).$$

\ssec{Jacquet functor as mapping to configuration spaces}

In this subsection we will finally construct the functors
$$\Phi_{\on{Fact}}:\Whit_{q,x}(G) \to \Omega^{\on{small}}_q\on{-FactMod}(\Shv_{\CG^\Lambda}(\Conf_{\infty\cdot x}))$$
and
$$\Phi^{\bHecke}_{\on{Fact}}:\bHecke(\Whit_{q,x}(G)) \to \Omega^{\on{small}}_q\on{-FactMod}(\Shv_{\CG^\Lambda}(\Conf_{\infty\cdot x}))$$

\sssec{}

Recall the factorization subspace
\begin{equation} \label{e:neg non pos}
(\Gr^{\omega^\rho}_{T,\Ran})^{\on{neg}}\hookrightarrow (\Gr^{\omega^\rho}_{T,\Ran})^{\on{non-pos}},
\end{equation}
see \secref{ss:Conf Gr}. 

\medskip

Let 
$$\Omega_{q,\on{red}}^{\Whit_{!*}}\in \Shv_{\CG^T}((\Gr^{\omega^\rho}_{T,\Ran})^{\on{neg}})$$ denote the factorization algebra 
obtained from $\Omega_q^{\Whit_{!*}}$ by applying the functor of !-pullback with respect to \eqref{e:neg non pos},
(see \secref{sss:fact funct geom pullback}).

\sssec{}

Consider the closed embedding
\begin{equation} \label{e:neg non pos mod}
(\Gr^{\omega^\rho}_{T,\Ran_x})^{\on{neg}}_{\infty\cdot x}\hookrightarrow (\Gr^{\omega^\rho}_{T,\Ran_x})^{\on{non-pos}}_{\infty\cdot x},
\end{equation}
see \secref{sss:neg part Gr ptd}.

\medskip

Let $\fJ_{!*,\on{Fact,red}}$ (resp., $\fJ^{\bHecke}_{!*,\on{Fact,red}}$) be the functor obtained from 
$\fJ_{!*,\on{Fact}}$ (resp., $\fJ^{\bHecke}_{!*,\on{Fact}}$) by composing with the functor
\begin{equation} \label{e:red modules}
\Omega_q^{\Whit_{!*}}\on{-FactMod}(\Shv_{\CG^T}((\Gr^{\omega^\rho}_{T,\Ran})^{\on{non-pos}}_{\infty\cdot x}))\to 
\Omega_{q,\on{red}}^{\Whit_{!*}}\on{-FactMod}(\Shv_{\CG^T}((\Gr^{\omega^\rho}_{T,\Ran})^{\on{neg}}_{\infty\cdot x})),
\end{equation}
given by !-pullback along \eqref{e:neg non pos mod} (see \secref{sss:fact funct mod geom}).

\begin{rem}  \label{r:unitality Gr}
Although we will not use this in the present work, one can show that the functor \eqref{e:red modules} is \emph{almost}
an equivalence. Namely, inside 
$$\Omega_q^{\Whit_{!*}}\on{-FactMod}(\Shv_{\CG^T}((\Gr^{\omega^\rho}_{T,\Ran})^{\on{non-pos}}_{\infty\cdot x}))$$
one singles out a \emph{full subcategory} (one that contains the essential image of the functors 
$\fJ_{!*,\on{Fact}}$ and $\fJ^{\bHecke}_{!*,\on{Fact}}$) that consists of \emph{unital} factorization modules, to be denoted
$$\Omega_q^{\Whit_{!*}}\on{-FactMod}(\Shv_{\CG^T}((\Gr^{\omega^\rho}_{T,\Ran})^{\on{non-pos}}_{\infty\cdot x}))_{\on{untl}}.$$
Now, the functor \eqref{e:red modules} defines an equivalence from this subcategory to 
$$\Omega_q^{\Whit_{!*}}\on{-FactMod}(\Shv_{\CG^T}((\Gr^{\omega^\rho}_{T,\Ran})^{\on{non-pos}}_{\infty\cdot x}))_{\on{untl}}\to 
\Omega_{q,\on{red}}^{\Whit_{!*}}\on{-FactMod}(\Shv_{\CG^T}((\Gr^{\omega^\rho}_{T,\Ran})^{\on{neg}}_{\infty\cdot x})).$$
This is due to the fact that the objects $\Omega_q^{\Whit_{!*}}$ and $\Omega_{q,\on{red}}^{\Whit_{!*}}$, both
viewed as factorization algebras in $\Shv_{\CG^T}(\Gr^{\omega^\rho}_{T,\Ran})$, are related by the mutually
inverse procedures of ``adding the unit" and ``passage to the augmentation ideal".
\end{rem}

\sssec{}

We now apply the equivalence of \secref{sss:fact Gr T vs Conf}. Let
$$\Omega_{q,\Conf}^{\Whit_{!*}}\in \Shv_{\CG^\Lambda}(\Conf)$$
be the factorization algebra that corresponds to
$$\Omega_{q,\on{red}}^{\Whit_{!*}}\in \Shv_{\CG^T}((\Gr^{\omega^\rho}_{T,\Ran})^{\on{neg}})$$
under the above equivalence.

\medskip

Consider the resulting equivalence of \eqref{e:compare fact mod}
\begin{equation} \label{e:modules from Gr to Conf}
\Omega_{q,\on{red}}^{\Whit_{!*}}\on{-FactMod}(\Shv_{\CG^T}((\Gr^{\omega^\rho}_{T,\Ran})^{\on{non-pos}}_{\infty\cdot x}))\simeq
\Omega_{q,\Conf}^{\Whit_{!*}}\on{-FactMod}(\Shv_{\CG^\Lambda}(\Conf_{\infty\cdot x})).
\end{equation}

Let
$$\fJ_{!*,\on{Fact},\Conf} :\Whit_{q,x}(G)\to \Omega_{q,\Conf}^{\Whit_{!*}}\on{-FactMod}$$
and
$$\fJ^{\bHecke}_{!*,\on{Fact},\Conf} :\bHecke(\Whit_{q,x}(G))\to \Omega_{q,\Conf}^{\Whit_{!*}}\on{-FactMod}$$
denote the functors obtained from $\fJ_{!*,\on{Fact,red}}$ and $\fJ^{\bHecke}_{!*,\on{Fact,red}}$, respectively,
by composing with the equivalence \eqref{e:modules from Gr to Conf}.

\sssec{}

We will denote by
$$'\Omega_q^{\on{small}}$$
the factorization algebra in $\Shv_{\CG^\Lambda}(\Conf)$ obtained from $\Omega_{q,\Conf}^{\Whit_{!*}}$ by applying
the cohomological shift $[\langle \lambda,2\check\rho\rangle]$ on the connected component $\Conf^\lambda$ of $\Conf$. 

\medskip

The functor
\begin{equation} \label{e:shift mod}
\Shv_{\CG^\Lambda}(\Conf_{\infty\cdot x})\to \Shv_{\CG^\Lambda}(\Conf_{\infty\cdot x}),
\end{equation}
given by cohomological shift $[\langle \mu,2\check\rho\rangle]$ on the connected component
$\Conf^\mu_{\infty\cdot x}$ of $\Conf_{\infty\cdot x}$
defines an equivalence
$$\Omega_{q,\Conf}^{\Whit_{!*}}\on{-FactMod}\to {}'\Omega_q^{\on{small}}\on{-FactMod}.$$

\sssec{}

Let 
$$\Phi_{\on{Fact}}:\Whit_{q,x}(G)\to {}'\Omega_q^{\on{small}}\on{-FactMod}$$
and
$$\Phi^{\bHecke}_{\on{Fact}}:\bHecke(\Whit_{q,x}(G))\to {}'\Omega_q^{\on{small}}\on{-FactMod}$$
be the functors obtained from $\fJ_{!*,\on{Fact},\Conf}$ and $\fJ^{\bHecke}_{!*,\on{Fact},\Conf}$,
respectively, by composing with \eqref{e:shift mod}. 

\sssec{}

Let
$$\Phi:\Whit_{q,x}(G)\to \Shv_{\CG^\Lambda}(\Conf_{\infty\cdot x}) \text{ and }
\Phi^{\bHecke}:\bHecke(\Whit_{q,x}(G))\to \Shv_{\CG^\Lambda}(\Conf_{\infty\cdot x})$$
denote the functors obtained from $\Phi_{\on{Fact}}$ and $\Phi^{\bHecke}_{\on{Fact}}$, respectively,
by composing with the forgetful functor 
$$\oblv_{\on{Fact}}:  \Omega_{q,\Conf}^{\Whit_{!*}}\on{-FactMod}\to \Shv_{\CG^\Lambda}(\Conf_{\infty\cdot x}).$$

\sssec{}

Recall the subcategory
$$\Shv_{\CG^\Lambda}(\Conf_{\infty\cdot x})^{\on{loc.c}}\subset \Shv_{\CG^\Lambda}(\Conf_{\infty\cdot x}),$$
see \secref{sss:locally compact}. Recall also the Verdier duality equivalence 
$$\BD^{\on{Verdier}}: (\Shv_{\CG^\Lambda}(\Conf_{\infty\cdot x})^{\on{loc.c}})^{\on{op}} \to 
\Shv_{(\CG^\Lambda)^{-1}}(\Conf_{\infty\cdot x})^{\on{loc.c}},$$
see \secref{sss:locally compact}. 

\medskip

A key technical assertion, which we will prove in \secref{ss:proof of Verdier} is the following:

\begin{thm}  \label{t:duality pres}
The functor $\Phi$ sends compact objects in $\Whit_{q,x}(G)$ to $\Shv_{\CG^\Lambda}(\Conf_{\infty\cdot x})^{\on{loc.c}}$,
and the diagram 
$$
\CD 
(\Whit_q(G)^c)^{\on{op}} @>{\BD^{\on{Verdier}}}>>  \Whit_{q^{-1}}(G)^c \\
@V{\Phi}VV   @VV{\Phi}V   \\
(\Shv_{\CG^\Lambda}(\Conf_{\infty\cdot x})^{\on{loc.c}})^{\on{op}}  @>{\BD^{\on{Verdier}}}>> 
\Shv_{(\CG^\Lambda)^{-1}}(\Conf_{\infty\cdot x})^{\on{loc.c}},
\endCD
$$
commutes, where the upper horizontal arrow is the equivalence \eqref{e:self duality for Whit comp}.
\end{thm}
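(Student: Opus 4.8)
The plan is to reduce \thmref{t:duality pres} to a local-on-Zastava statement about the \emph{global} metaplectic IC sheaf $\IC_{q,\Bun_{B^-}}$ on the Zastava space, and then prove \emph{that} statement using the crucial \emph{ULA property} of this IC sheaf. The first move is to use the global interpretation of the functor $\Phi$ developed in \secref{s:Zastava}: the global-vs-local equivalence $\pi_x^!$ of \thmref{t:local to global} identifies $\Whit_{q,x}(G)$ with $\Whit_{q,\on{glob}}(G)$, and under this identification the composite
$$\Whit_{q,x}(G)\overset{\sim}\to \Whit_{q,\on{glob}}(G)\overset{\oblv_{\on{Fact}}\circ \Phi}\longrightarrow \Shv_{\CG^\Lambda}(\Conf_{\infty\cdot x})$$
gets realized as pull-push along a diagram of (ind-)algebraic stacks built from $\BunNbox$, $(\BunBb)_{\infty\cdot x}$ and the relevant Zastava spaces, with kernel $_{\Bun_T}\!\ICsm_{q^{-1},\on{glob}}$ (cf. \corref{c:ICs glob sym}, where the local kernel $_{\Gr_T}\!\ICsm_{q^{-1},\Ran_x}$ is expressed through the global one up to a cohomological shift). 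Because all the morphisms in this diagram are of \emph{finite type} and, critically, the push-forward maps involved are \emph{proper}, the standard base-change/Verdier-duality formalism for such correspondences applies: the functor commutes with Verdier duality provided the kernel does, up to the shift built into the definition of $'\Omega_q^{\on{small}}$ and $\Phi$ in \secref{s:Jacquet to conf}.

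\textbf{Reduction to a statement about the kernel.} Thus the first key step is to show that it suffices to prove: the object $_{\Bun_T}\!\ICsm_{q^{-1},\on{glob}}$ (equivalently $\ICsm_{q^{-1},\on{glob}}$ on $\ol\Bun_{B^-}^{\omega^\rho}$, or its Zastava-local incarnation) is \emph{Verdier self-dual} up to the expected twist, i.e. $\BD^{\on{Verdier}}(\IC_{q^{-1},\Bun_{B^-}})\simeq \IC_{q,\Bun_{B^-}}$ with the dual gerbe. This is built into the very definition of $\ICs_{q,\on{glob}}$ as a Goresky--MacPherson (IC) extension of a shifted dualizing/constant sheaf on the smooth open locus $\Bun_N^{\omega^\rho}$: IC extensions are Verdier self-dual by construction (one needs to track that Verdier duality interchanges the metaplectic gerbe $\CG^G$ with $(\CG^G)^{-1}$, hence $q\leftrightarrow q^{-1}$, as recorded in \secref{ss:Satake and duality}). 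So this step is essentially formal once the global interpretation of $\Phi$ is in hand; the content is all in correctly bookkeeping the shifts $d_g$, $\dim(\Bun_T)$, $\deg=\langle\lambda,2\check\rho\rangle$ appearing in \corref{c:ICs glob sym} and \thmref{t:ICs glob}, matched against the shift $[\langle\lambda,2\check\rho\rangle]$ defining $'\Omega_q^{\on{small}}$.

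\textbf{The main obstacle: the ULA property and properness.} The hard part will be verifying that the relevant structure maps in the Zastava-space diagram are proper (or ind-proper, with $!$ = $*$ and the appropriate adjunctions) \emph{on the nose}, so that the naive base-change argument is legitimate rather than merely heuristic — this is exactly the point where the passage to the \emph{global} model (as opposed to working over $\Gr_{G,\Ran_x}$, where the analogous maps are only ind-schematic over $\Ran$ and duality is subtle) is indispensable. Concretely, one needs: (i) the projection from the Zastava space to $\Conf$ is affine/finite-type, (ii) the map $\ol\Bun_{B^-}^{\omega^\rho}\to\Bun_G$ (or rather its Zastava-local counterpart) interacts with the support constraints from \propref{p:support} and \corref{x:support} so that all cohomology stays in $\Shv^{\on{loc.c}}$, and (iii) the ULA property of $\IC_{q,\Bun_{B^-}}$ over $\Bun_T$ (this is the technical heart, to be established in \secref{s:proof of Verdier upstairs}) is what guarantees that forming $!$-tensor product with it and then pushing forward commutes with Verdier duality. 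I expect the ULA verification — following the pattern of the non-metaplectic case but now tracking the gerbe twist through the smooth-base-change computations — to be the genuinely laborious step; everything else is assembling \corref{c:ICs glob sym}, \thmref{t:local to global}, and the finite-type correspondence formalism. Finally, one checks compatibility of the resulting self-duality with the $\Whit$-duality $\BD^{\on{Verdier}}$ of \eqref{e:self duality for Whit comp}, which is exactly Remark \ref{r:local vs global duality} (the local and global Verdier dualities agree up to $\ell_g$, and the $\ell_g$ factors cancel in the statement).
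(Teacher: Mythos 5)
Your broad outline — globalize via $\pi_x^!$, work with the Zastava model, then invoke a ULA statement for the global IC sheaf — does match the paper's. But there is a genuine logical gap in the middle of the proposal.

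You claim the core reduction is to the assertion that the kernel $_{\Bun_T}\!\ICsm_{q^{-1},\on{glob}}$ is Verdier self-dual, and you say this plus properness of the pushforward makes the rest formal. That is not how the argument can go: for a correspondence $\CY_1\overset{'f^-}\longleftarrow \CZ\overset{'f}\longrightarrow \CY_2$ and a self-dual kernel $\CK$, the functor
$$\CF\mapsto (\sv)_*\left(({}'f^-)^!(\CF)\sotimes ({}'f)^!(\CK)\right)$$
does \emph{not} commute with Verdier duality in general, even if $\sv$ is proper and $\BD(\CK)\simeq\CK$, because $\BD$ interchanges $!$-pullback with $*$-pullback and $\sotimes$ with $\overset{*}\otimes$. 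The paper's \thmref{t:Verdier upstairs bis} is precisely the assertion that the canonically-defined natural transformation
$$({}'\ol\sfp^-)^*(\CF_1)\overset{*}\otimes ({}'\ol\sfp)^*(\CF_2)[-\dim\Bun_G]\longrightarrow ({}'\ol\sfp^-)^!(\CF_1)\sotimes ({}'\ol\sfp)^!(\CF_2)[\dim\Bun_G]$$
from \eqref{e:pullback duality} becomes an isomorphism when one input is Whittaker and the other is $_{\Bun_T}\!\ICsm_{q^{-1},\on{glob}}$. The self-duality of the IC kernel is a trivial input; the real content is this $*$-versus-$!$ statement, and that is exactly what the ULA hypothesis is for (via \lemref{l:ULA}, which upgrades the Goresky–MacPherson extension property across the fiber product under ULA of one factor).

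Two further points. First, the ULA property of \thmref{t:ULA} is over $\Bun_G$, not over $\Bun_T$ as you write; it concerns $\ol\sfp^-:\ol\Bun_{B^-}\to\Bun_G$. Second, your proposal omits a necessary technical step: \thmref{t:ULA} only asserts ULA on the locus where $\mu-\lambda$ is sufficiently deep in the dominant chamber, so one cannot apply it directly on all of $\bCZ_{\infty\cdot x}$. The paper needs a factorization argument — using the open pieces $\CZ^{\on{Fact},\nu',\leq\lambda}$ and the two maps $f^{\nu'}_1$, $f^{\nu'}_2$ — to \'etale-locally translate the statement (*) into a chart where the ULA estimate applies. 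Without this localization the appeal to ULA does not close the argument.
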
 

\begin{rem}
A curious aspect of \thmref{t:duality pres} is that we will use global methods to prove it. In fact, it is this theorem
that was the reason to introduce the global version of the Whittaker category, $\Whit_{q,\on{glob}}(G)$. 
\end{rem} 

\ssec{Explicit description of the functor $\Phi$}

We will now make a pause and describe explicitly the functor $\Phi$ and the sheaf $'\Omega_q^{\on{small}}$. 

\sssec{}

Recall the context of \secref{ss:twist T bundle}. Take
$$\CY:=(\Gr^{\omega^\rho}_{T,\Ran})^{\on{neg}}.$$

The corresponding twist $_\CY\Gr^{\omega^\rho}_G$ of $\Gr^{\omega^\rho}_{G,\Ran}$ identifies with
$$\Gr^{\omega^\rho}_{G,\Ran}\underset{\Ran}\times (\Gr^{\omega^\rho}_{T,\Ran})^{\on{neg}}\simeq 
\Gr^{\omega^\rho}_{G,\Conf}\underset{\Conf}\times (\Gr^{\omega^\rho}_{T,\Ran})^{\on{neg}}.$$

\medskip

Since the map 
$$_\CY\Gr^{\omega^\rho}_G\simeq \Gr^{\omega^\rho}_{G,\Conf}\underset{\Conf}\times (\Gr^{\omega^\rho}_{T,\Ran})^{\on{neg}}\to
\Gr^{\omega^\rho}_{G,\Conf}$$
is a base-change of the map $(\Gr^{\omega^\rho}_{T,\Ran})^{\on{neg}}\to \Conf$, it induces an equivalence
between spaces of gerbes and gerbe-twisted sheaves.

\medskip

Let $\CG^{G,T,\on{ratio}}$ be the gerbe on $\Gr^{\omega^\rho}_{G,\Conf}$ whose pullback to $_\CY\Gr^{\omega^\rho}_G$
gives $_\CY\CG^G$. Let $\CG^\Lambda$ be the pullback of the same-named gerbe along $\Gr^{\omega^\rho}_{G,\Conf}\to \Conf$. 
Note that 
$$\CG^{G,T,\on{ratio}}\simeq \CG^G\otimes (\CG^\Lambda)^{-1},$$
where $\CG^G$ is the gerbe on $\Gr^{\omega^\rho}_{G,\Conf}$ whose further pullback to $_\CY\Gr^{\omega^\rho}_G$
is the pullback of the same-named gerbe on $\Gr^{\omega^\rho}_{G,\Ran}$ along
\begin{equation} \label{e:twist Y and Gr}
_\CY\Gr^{\omega^\rho}_G%\simeq \Gr^{\omega^\rho}_{G,\Conf}\underset{\Conf}\times (\Gr^{\omega^\rho}_{T,\Ran})^{\on{neg}}
\simeq \Gr^{\omega^\rho}_{G,\Ran}\underset{\Ran}\times (\Gr^{\omega^\rho}_{T,\Ran})^{\on{neg}}\to \Gr^{\omega^\rho}_{G,\Ran}.
\end{equation} 

\sssec{}  \label{sss:ICs conf}

Consider the object
$$_\CY\!\ICsm_{q^{-1}}\in \Shv_{({}_\CY\CG^G)^{-1}}({}_\CY\Gr^{\omega^\rho}_G).$$

Let 
$$\ICsm_{q^{-1},\Conf}\in \Shv_{(\CG^{G,T,\on{ratio}})^{-1}}(\Gr^{\omega^\rho}_{G,\Conf})$$
be the object that corresponds to it under the equivalence
$$\Shv_{(\CG^{G,T,\on{ratio}})^{-1}}(\Gr^{\omega^\rho}_{G,\Conf})\to \Shv_{({}_\CY\CG)^{-1}}({}_\CY\Gr^{\omega^\rho}_G).$$

\sssec{}

Let 
$$\on{Vac}_{\Whit,\Conf}\in \Shv_{\CG^G}(\Gr^{\omega^\rho}_{G,\Conf})$$
be the object that corresponds under the equivalence
$$\Shv_{\CG^G}(\Gr^{\omega^\rho}_{G,\Conf})\to \Shv_{\CG^G}({}_\CY\Gr^{\omega^\rho}_G)$$
to the pullback of $\on{Vac}_{\Whit,\Ran}$ along \eqref{e:twist Y and Gr}. 

\sssec{}  \label{sss:Omega expl}

Note that the tensor product
\begin{equation} \label{e:Vac ten semiinf}
\on{Vac}_{\Whit,\Conf}\sotimes \ICsm_{q^{-1},\Conf}
\end{equation} 
is naturally an object of 
$$\Shv_{\CG^\Lambda}(\Gr^{\omega^\rho}_{G,\Conf}).$$

Unwinding the constructions, we obtain that 
$$'\Omega^{\on{small}}_q\in \Shv_{\CG^\Lambda}(\Conf)$$
is the direct image of \eqref{e:Vac ten semiinf} along the projection
$$\Gr^{\omega^\rho}_{G,\Conf}\to \Conf,$$
cohomologically shifted by $[\langle \lambda,2\check\rho\rangle]$ on the connected component $\Conf^\lambda$ of $\Conf$. 

\sssec{}

Let us now modify the above discussion by taking
$$\CY:=(\Gr^{\omega^\rho}_{T,\Ran_x})^{\on{neg}}_{\infty\cdot x}.$$

Consider the corresponding version of the affine Grassmannian $\Gr^{\omega^\rho}_{G,\Conf_{\infty\cdot x}}$, so that
$$ \Gr^{\omega^\rho}_{G,\Conf_{\infty\cdot x}} \underset{\Conf_{\infty\cdot x}}\times 
(\Gr^{\omega^\rho}_{T,\Ran_x})^{\on{neg}}_{\infty\cdot x}\simeq \Gr^{\omega^\rho}_{G,\Ran_x}
\underset{\Ran_x}\times (\Gr^{\omega^\rho}_{T,\Ran_x})^{\on{neg}}_{\infty\cdot x}.$$

\sssec{}

Applying the same procedure as above, we obtain an object
$$\ICsm_{q^{-1},\Conf_{\infty\cdot x}}\in \Shv_{(\CG^{G,T,\on{ratio}})^{-1}}(\Gr^{\omega^\rho}_{G,\Conf_{\infty\cdot x}}),$$
and a functor
$$\on{sprd}_{\Conf_{\infty\cdot x}}:\Whit_{q,x}(G)\to 
\Shv_{\CG^G}(\Gr^{\omega^\rho}_{G,\Conf_{\infty\cdot x}}).$$

\sssec{}  \label{sss:Phi expl}

For $\CF\in \Whit_{q,x}(G)$, the tensor product
\begin{equation} \label{e:Whit ten semiinf}
\on{sprd}_{\Conf_{\infty\cdot x}}(\CF)\sotimes \ICsm_{q^{-1},\Conf_{\infty\cdot x}}
\end{equation} 
is naturally an object of $\Shv_{\CG^\Lambda}(\Gr^{\omega^\rho}_{G,\Conf_{\infty\cdot x}})$. 

\medskip

Then for $\CF$ as above the object 
$$\Phi(\CF)\in \Shv_{\CG^\Lambda}(\Conf_{\infty\cdot x})$$
is the direct image of \eqref{e:Whit ten semiinf} along the projection
$$\Gr^{\omega^\rho}_{G,\Conf_{\infty\cdot x}}\to \Conf_{\infty\cdot x},$$
cohomologically shifted by $[\langle \lambda,2\check\rho\rangle]$ on the connected component 
$\Conf^\lambda_{\infty\cdot x}$ of $\Conf_{\infty\cdot x}$. 

\ssec{Identification of factorization algebras}   \label{ss:Omega expl} 

We now come to the first crucial computational results of this paper (which makes everything work).

\sssec{}

Recall the factorization algebra $\Omega_q^{\on{small}}$ introduced in \secref{s:Omega small}. We claim:

\begin{thm} \label{t:ident vacuum}
There exists a canonical isomorphism of factorization algebras 
$$'\Omega_q^{\on{small}}\simeq \Omega_q^{\on{small}}$$
for the choice of the lines $\sff^{i,\on{fact}}$ specified in \secref{sss:sfi from Whit}. 
\end{thm}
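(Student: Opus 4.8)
\textbf{Proof strategy for \thmref{t:ident vacuum}.}

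The plan is to reduce the identification of the two factorization algebras to a computation on the open locus $\overset{\circ}\Conf$ and a general ``minimal extension'' principle. Concretely, both $\Omega_q^{\on{small}}$ and $'\Omega_q^{\on{small}}$ are objects of $\Shv_{\CG^\Lambda}(\Conf)$; by definition $\Omega_q^{\on{small}}$ is the Goresky--MacPherson extension, in the category of twisted perverse sheaves $\on{Perv}_{\CG^\Lambda}(\Conf)$, of the explicit sign local system $\overset{\circ}\Omega{}^{\on{small}}_q$ on $\overset{\circ}\Conf$. So the first task is to show that $'\Omega_q^{\on{small}}$ is \emph{also} perverse and is the minimal extension of its restriction to $\overset{\circ}\Conf$; granting this, it remains to identify the two restrictions to $\overset{\circ}\Conf$ as factorization algebras, which by the factorization property of both sides further reduces (via the \'etale addition map $(\overset{\circ}\Conf{}^J)_{\on{disj}}\to \overset{\circ}\Conf$) to identifying the $\Conf^{-\alpha_i}\simeq X$ components together with the sign/antisymmetry structure. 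The $X$-component of $'\Omega_q^{\on{small}}$ is a rank-one (gerbe-twisted, but the gerbe is trivialized here by \secref{sss:simple root triv}) local system on $X$, and computing it is a rank-one Whittaker/semi-infinite computation on $\Gr_{SL_2}$ (or $\Gr_{PGL_2}$); its fiber is exactly what \emph{defines} the line $\sff^{i,\on{fact}}$ (see the forward reference in \secref{sss:sfi from Whit}, i.e.\ the ``geometric Gauss sum''). So with the correct normalization of $\sff^{i,\on{fact}}$ the restrictions match tautologically.

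First I would establish the perversity and minimal-extension claim for $'\Omega_q^{\on{small}}$. Here the key input is the description of $'\Omega_q^{\on{small}}$ from \secref{sss:Omega expl}: it is the direct image along $\Gr^{\omega^\rho}_{G,\Conf}\to \Conf$ of $\on{Vac}_{\Whit,\Conf}\sotimes \ICsm_{q^{-1},\Conf}$, shifted by $\langle\lambda,2\check\rho\rangle$ on $\Conf^\lambda$. Using the factorization structure on both $\on{Vac}_{\Whit,\Ran}$ (\thmref{t:fact on Vac Whit}) and on $\ICs_{q,\Ran}$ (\thmref{t:fact ICs}, in its $\fL^+(T)$-equivariant form \thmref{t:fact ICs equiv}), and the cohomological estimates of \propref{p:sharper estimate} and \lemref{l:non sharp strata}, one argues exactly as in the proof that $\ICs_{q,\Ran}$ lies in the heart of its t-structure: the $!$-restriction of $'\Omega_q^{\on{small}}$ to a diagonal stratum $\Conf^\lambda$ with $\lambda$ not supported by distinct points sits in strictly positive perverse degrees, and dually for $*$-restriction, which forces $'\Omega_q^{\on{small}}$ to be the minimal extension from $\overset{\circ}\Conf$. (One also needs $'\Omega_q^{\on{small}}$ to be supported on $\Conf$, i.e.\ in the \emph{negative} part, which is \propref{p:support}(a) plus \propref{p:support}(b) to pin down the $\lambda=0$ piece — the latter gives the unit, matching the fact that $\Omega_q^{\on{small}}$ extended by adding the unit is the relevant comparison.) Once both sides are minimal extensions, a map of factorization algebras that is an isomorphism over $\overset{\circ}\Conf$ is automatically an isomorphism, and in the perverse heart such a map is uniquely determined by its restriction — so the comparison genuinely localizes to $\overset{\circ}\Conf$.

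The second task is the rank-one computation on $\overset{\circ}\Conf$. By factorization it suffices to treat each $\Conf^{-\alpha_i}=X^{-\alpha_i}\simeq X$, and by \secref{sss:simple root triv} the gerbe is trivialized there. Over this component the relevant $G$-geometry collapses to $SL_2$ (or $PGL_2$), where $\ICsm_{q^{-1},x}$ restricted to the relevant single stratum is an explicit Kummer-type local system and $\on{Vac}_{\Whit,x}\simeq W^{0,*}\simeq W^{0,!}$; the pushforward along $\Gr_{G,\Conf^{-\alpha_i}}\to \Conf^{-\alpha_i}$ of $\on{Vac}_{\Whit}\sotimes \ICsm_{q^{-1}}$ is then a rank-one local system on $X$ whose monodromy is governed by $q_i$ and whose fiber at a point is a one-dimensional space — a ``geometric Gauss sum''. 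We \emph{define} $\sff^{i,\on{fact}}$ to be precisely this fiber (this is the content of \secref{sss:sfi from Whit}), so with that choice the restriction of $'\Omega_q^{\on{small}}$ to $\Conf^{-\alpha_i}$ is by construction the constant sheaf on $\sff^{i,\on{fact}}_X$. Finally, the antisymmetry on $\overset{\circ}X{}^{(n_i)}$ — i.e.\ that one gets the \emph{sign} local system on the symmetric power rather than the plus version — comes from the factorization-algebra braiding: the $\BE_2$/factorization structure forces the local system on the Ran-space diagonal to pick up the sign of the permutation, which is exactly how $\overset{\circ}\Omega{}^{\on{small}}_q$ was built in \secref{s:Omega small}. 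Matching these braidings is the one place where homotopy-coherence genuinely enters, and, as throughout Part IV, it is handled using the unitality device, which lets us carry out the comparison inside honest abelian categories where coherence is automatic.

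\textbf{Main obstacle.} The hard part is the perversity/minimal-extension statement for $'\Omega_q^{\on{small}}$ — i.e.\ controlling the $!$- and $*$-restrictions of the pushforward $(\Gr^{\omega^\rho}_{G,\Conf}\to\Conf)_*(\on{Vac}_{\Whit,\Conf}\sotimes\ICsm_{q^{-1},\Conf})$ along the diagonal strata of $\Conf$. Everything else is either formal (factorization localizes the problem) or a clean rank-one computation (which, moreover, is essentially the \emph{definition} of $\sff^{i,\on{fact}}$). The cohomological bounds needed are the $\Conf$-analogues of \propref{p:sharper estimate} and the support statement \propref{p:support}, so the real work is bootstrapping those estimates compatibly with the factorization structure; I expect this to run closely parallel to the proof that $\ICs_{q,\Ran}$ is a perverse factorization algebra in \cite{Ga7}.
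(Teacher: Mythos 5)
Your decomposition of the proof into (i) identifying the two objects over $\overset{\circ}\Conf$ and (ii) proving that $'\Omega^{\on{small}}_q$ is perverse and GM-extended from $\overset{\circ}\Conf$ matches the paper's split into \propref{p:ident vacuum open} and \thmref{t:GM from open}, and your treatment of part (i) is essentially right: factorization reduces to $\Conf^{-\alpha_i}\simeq X$, the gerbe is trivialized there, and the computation boils down to a rank-one statement on $\Gr_{SL_2}$ whose answer is the Gauss sum $H^1(\BA^1,\Psi_{q_i}\otimes\chi)$ (the paper carries this out by computing the discrepancy of the two $\det_{SL_2}$-trivializations on $S^0\cap S^{-,-\alpha_i}$ and matching it against the residue map). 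Calling this ``tautological'' slightly undersells the concrete calculation, but the logic is the same.

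There is a genuine gap in part (ii), however. You write that one estimates the $!$-restrictions along diagonal strata ``and dually for $*$-restriction, which forces $'\Omega_q^{\on{small}}$ to be the minimal extension,'' as if the $*$-side estimate came for free once the $!$-side is known. It does not. The object $'\Omega_q^{\on{small}}$ is a proper pushforward of $\on{Vac}_{\Whit,\Conf}\sotimes\ICsm_{q^{-1},\Conf}$, and while proper pushforward does commute with Verdier duality, the \emph{input sheaf} does not dualize to anything obviously controllable: $\ICsm_{q^{-1},\Conf}$ is not Verdier self-dual on the nose, and $W^{0,*}$ dualizes to $W^{0,!}$, which is $*$-extended rather than $!$-extended along the closure $\ol{S}^0$. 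The paper resolves this by first proving that the entire functor $\Phi$ (not just its value on $W^0$) commutes with Verdier duality --- this is \thmref{t:duality pres}, established in Part VII by passing to the global model ($\BunNbox$, Zastava spaces) and using a ULA property of the global IC sheaf on $\BunBbm$. Only with that theorem in hand does the paper reduce perversity/minimal-extension of $'\Omega_q^{\on{small}}$ to a \emph{one-sided} estimate, namely that the $!$-fiber of $\Phi(W^0)$ at $\lambda\cdot x$ (with $\lambda\neq-\alpha_i$) lives in degrees $\geq 2$, which is then proved via \propref{p:fiber of Phi top}. Without \thmref{t:duality pres}, your ``dually'' step has no justification, because $'\Omega_q^{\on{small}}$ is not \emph{a priori} self-dual; and proving the $*$-side estimate directly would require controlling the local cohomology with compact supports of $W^{0,*}\sotimes\ICslm$, which is not addressed by \propref{p:sharper estimate} or its companions.

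A second point worth flagging: even after the reduction to the $!$-side estimate, the crucial input at the boundary degree (the vanishing of $H^{\langle\lambda,2\check\rho\rangle+1}$ of the appropriate local system on $S^0\cap S^{-,\lambda}$) is quoted from \cite[Theorem 1.1.5]{Lys} and is only proved there under a mild hypothesis on the orders of the $q_i$; the paper explicitly says the general case will appear in a subsequent publication. So while your outline is largely in the right direction, the ``main obstacle'' paragraph understates the structural input: the real work is not bootstrapping estimates ``parallel to \cite{Ga7}'' but establishing Verdier-duality commutation for $\Phi$ via global methods, plus a nontrivial vanishing theorem for sign-twisted local systems on intersections of semi-infinite orbits.
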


The assertion of \thmref{t:ident vacuum} naturally splits into two parts:

\begin{prop} \label{p:ident vacuum open}
For the choice of the lines $\sff^{i,\on{fact}}$ specified in \secref{sss:sfi from Whit}, 
we have a canonical isomorphism of factorization algebras in $\Shv_{\CG^\Lambda}(\overset{\circ}{\Conf})$
$$'\Omega_q^{\on{small}}|_{\overset{\circ}{\Conf}}\simeq \Omega_q^{\on{small}}|_{\overset{\circ}{\Conf}}.$$
\end{prop}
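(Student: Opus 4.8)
The plan is to reduce everything to a computation on the open locus $\overset{\circ}{\Conf}$, where both factorization algebras are (shifted) local systems on the complement of the diagonals in the symmetric powers $X^{(n_i)}$, and hence are completely determined by their restriction to the ``first diagonal stratum'' $\overset{\circ}{\Conf}{}^{-\alpha_i} = X$ together with their factorization structure. Concretely, since the addition map $\Conf^J \to \Conf$ is \'etale over the disjoint locus, and since a perverse sheaf on $\overset{\circ}{X}{}^{(n)}$ which is a local system is the Goresky--MacPherson extension of its restriction to $\overset{\circ}{X}{}^{(n)}$, the factorization structure forces
$$
\Omega_q^{\on{small}}|_{\overset{\circ}{\Conf}{}^\lambda}
\;\simeq\;
\left(\underset{i}\boxtimes\, (\sff^{i,\on{fact}})_X^{(n_i),\on{sign}}[n_i]\right)\big|_{\overset{\circ}{X}{}^\lambda}
$$
for $\lambda = \underset{i}\Sigma\, n_i\cdot(-\alpha_i)$, and similarly any factorization algebra on $\overset{\circ}{\Conf}$ is determined by the datum of its restrictions to the components $\overset{\circ}{\Conf}{}^{-\alpha_i}\simeq X$, which after the trivialization of \secref{sss:open part triv} are \emph{rank-one} local systems on $X$, i.e.\ just $\sfe$-lines $\sff^{i,\on{fact}}$ (the local systems being automatically trivial up to that line since $X$ is a curve and the gerbe is trivialized). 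Thus the whole statement amounts to identifying, for each vertex $i$, the line
$$
{}'\sff^{i,\on{fact}} \;:=\; \Big({}'\Omega_q^{\on{small}}\big|_{\overset{\circ}{\Conf}{}^{-\alpha_i}}\Big)_x
$$
with the prescribed $\sff^{i,\on{fact}}$, \emph{and} checking that the factorization isomorphisms match — but the latter is automatic once the restriction to $\overset{\circ}{\Conf}{}^{-\alpha_i}$ is fixed, by the uniqueness of GM extension and the compatibility built into the construction of the factorization structure on $\Omega_q^{\on{small}}$ in \secref{s:Omega small}.

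So the substance is the first-stratum computation. By \secref{sss:Omega expl}, over $\overset{\circ}{\Conf}{}^{-\alpha_i}$ (where the only colored divisor is $-\alpha_i\cdot x'$ for a single moving point $x'$) the sheaf ${}'\Omega_q^{\on{small}}$ is, up to the shift $[\langle -\alpha_i,2\check\rho\rangle]$, the $*$-direct image along $\Gr^{\omega^\rho}_{G,\Conf^{-\alpha_i}} \to \Conf^{-\alpha_i}$ of the tensor product
$\on{Vac}_{\Whit,\Conf}\sotimes \ICsm_{q^{-1},\Conf}$, and then restricted to $x'=x$. First I would localize: the relevant fiber of $\Gr^{\omega^\rho}_{G,x}$ is $\ol{S}^{-,-\alpha_i}\cap \ol{S}^0$, an intersection of opposite semi-infinite orbits of dimension $\langle \alpha_i,\check\rho\rangle = 1$, which for $\alpha_i$ a simple coroot is (an open curve inside) $\BA^1$ — precisely the situation analyzed in the proof of \thmref{t:restr convolution}, Steps 1--2, and in \secref{sss:Psiq}. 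On this one-dimensional stratum, $\ICsm_{q^{-1}}$ restricts to $\omega$ (by the cleanness statement, \propref{p:sharper estimate} in the relevant low degree, or directly because $\lambda=-\alpha_i$ is a simple coroot and there is no higher-degeneracy correction) twisted by the discrepancy local system $\Psi_{q^{-1}}$ between the $\fL(N^-)$-trivialization of $(\CG^G)^{-1}$ and the canonical one, while $\on{Vac}_{\Whit,\Conf}$ restricts to the Whittaker character sheaf $\chi_N$ along that $\BA^1$. The cohomology with compact supports (this is a $*$-pushforward along an affine morphism, applied to something supported on a curve mapping properly to the point once we account for the semi-infinite geometry — in fact along $X$ and then restricting to $x$) of $\chi_N\otimes \Psi_{q^{-1}}$ over $\BA^1$ is a \emph{Gauss sum}: it is a one-dimensional vector space placed in a single degree, and this one-dimensional space \emph{is}, by definition, the line $\sff^{i,\on{fact}}$ of \secref{sss:fi} (this is the ``geometric Gauss sum'' interpretation promised in \secref{sss:sfi from Whit}, to which the choice in that subsection is tailored). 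The shift $[n_i] = [1]$ here matches the shift $[\langle -\alpha_i,2\check\rho\rangle]$ after accounting for $\langle \alpha_i, 2\check\rho\rangle$ and the perverse normalization of the Gauss-sum cohomology, which is a routine bookkeeping check.

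I would then assemble: the sign/antisymmetry structure of $\Omega_q^{\on{small}}$ on the higher strata $\overset{\circ}{X}{}^{(n_i)}$ comes out automatically on the ${}'\Omega$ side from the factorization isomorphisms (the $S_{n_i}$-equivariance of the external power is forced, since over the disjoint locus the sheaf is the box-product, and the monodromy exchanging two points is governed by the braiding of the factorization algebra structure, which on a one-dimensional Gauss-sum local system is precisely the sign); and the match across distinct vertices $i\neq j$ is again just the box-product structure on $\overset{\circ}{X}{}^{-\alpha_i}\times \overset{\circ}{X}{}^{-\alpha_j}$. The main obstacle I anticipate is \emph{not} the Gauss-sum computation itself but the careful tracking of the three intertwined normalizations — the cohomological shifts $[\langle\lambda,2\check\rho\rangle]$, the perverse shifts $[n_i]$ in the definition of $\Omega_q^{\on{small}}$, and the twist by the lines $\omega^{\otimes\frac{1}{2}}_x$ coming from the $\omega^\rho$-modification (see \eqref{e:expl shifted gerbe vac} and \secref{sss:det calc}) — so that the final identification is genuinely canonical and compatible with factorization, rather than canonical only up to a scalar. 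This is exactly the kind of check where the trivialization of $\CL$ in \secref{sss:det calc} and the choice of $\sff^{i,\on{fact}}$ in \secref{sss:sfi from Whit} have to be made to line up, and I expect that is where the real work of the proof lies.
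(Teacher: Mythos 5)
Your proposal follows the paper's proof closely: reduce by factorization to a single $\Conf^{-\alpha_i}\simeq X$, identify the fiber of $'\Omega_q^{\on{small}}$ with a cohomology over the one-dimensional intersection $S^0\cap\ol{S}^{-,-\alpha_i}\simeq\BA^1$, and recognize this cohomology as the Gauss sum $H^1(\BA^1,\Psi_{q_i}\otimes\chi)=\sff^{i,\on{fact}}$ after matching the two gerbe trivializations via the determinant-line computation of \secref{sss:det calc}. One small imprecision: the cleanness step --- that $\on{Vac}_{\Whit}\sotimes\ICsm_{q^{-1}}$ on $S^0\cap\ol{S}^{-,-\alpha_i}$ is the $*$-extension from the open part $S^0\cap S^{-,-\alpha_i}$, i.e.\ vanishes on the boundary point $\{1\}$ --- is established in the paper via \lemref{l:non sharp strata} together with $q_i\neq 1$ (since $\alpha_i\notin\Lambda^\sharp$), not via \propref{p:sharper estimate} (which gives only a degree estimate) nor via $-\alpha_i$ being a simple coroot (which is irrelevant to this vanishing); the rest of the reasoning and the factorization-structure argument match the paper.
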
 

\begin{thm} \label{t:GM from open}
The object $'\Omega^{\on{small}}_q\in \Shv_{\CG^\Lambda}(\Conf)$ is a perverse sheaf,
which is the Goresky-MacPherson extension of its restriction to $\overset{\circ}{\Conf}$.
\end{thm}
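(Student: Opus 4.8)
I would verify the standard Goresky--MacPherson criterion: a $\CG^\Lambda$-twisted complex on $\Conf$ which is perverse on the open stratum $\overset{\circ}\Conf$ equals the intermediate extension of its restriction there as soon as it is perverse and its $*$- and $!$-restrictions to every stratum of $\Conf\setminus\overset{\circ}\Conf$ lie in strictly negative, resp.\ strictly positive, perverse degrees. Since $'\Omega^{\on{small}}_q$ is a factorization algebra and the addition maps $(\Conf^J)_{\on{disj}}\to\Conf$ are étale, both perversity and the strict bounds are local on $\Conf$ in the factorization sense; exactly as in the proof of \thmref{t:fact ICs}, this reduces all of them to a statement about the $*$- and $!$-\emph{fibers} of $'\Omega^{\on{small}}_q$ at the ``concentrated'' points $\lambda\cdot x\in\Conf^\lambda$ for $\lambda\in\Lambda^{\on{neg}}-0$. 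On $\overset{\circ}\Conf$ perversity is already known from \propref{p:ident vacuum open} (where $'\Omega^{\on{small}}_q$ is the shifted sign local system), and a point $\lambda\cdot x$ with $\lambda$ a simple coroot lies in $\overset{\circ}\Conf$; so it remains to prove, for $\lambda\in\Lambda^{\on{neg}}-0$ \emph{not} a simple coroot, that $(\,'\Omega^{\on{small}}_q)_{\lambda\cdot x}^{!}$ lives in cohomological degrees $\geq 1$ and $(\,'\Omega^{\on{small}}_q)_{\lambda\cdot x}^{*}$ in degrees $\leq -1$. These two bounds simultaneously yield perversity off the open stratum and the cleanness of the extension.

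\textbf{Identifying the fibers and the $!$-bound.} Using the explicit descriptions of \secref{sss:Omega expl} and \secref{sss:Phi expl}, the $!$-fiber $(\,'\Omega^{\on{small}}_q)_{\lambda\cdot x}^{!}$ is, up to the shift $[\langle\lambda,2\check\rho\rangle]$, the ($*$-)cohomology of $\ol S^{-,\lambda}\cap\ol S^0\subset\Gr^{\omega^\rho}_{G,x}$ with coefficients in $W^{0,*}\overset{!}\otimes(t^\lambda\cdot\ICsm_{q^{-1},x})$, where $W^{0,*}=\on{Vac}_{\Whit,x}$ is supported on $\ol S^0$ and $t^\lambda\cdot\ICsm_{q^{-1},x}$ is the translate of the metaplectic semi-infinite IC sheaf for $N^-$, supported on $\ol S^{-,\lambda}$. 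The intersection $\ol S^{-,\lambda}\cap\ol S^0$ is a finite-dimensional (Zastava-type) variety of dimension $\langle-\lambda,\check\rho\rangle$, stratified by the loci $S^{-,\eta}\cap S^0$ for $\lambda\leq\eta\leq 0$. On the stratum $S^{-,\eta}$ with $\eta\neq\lambda$ the sheaf $t^\lambda\cdot\ICsm_{q^{-1},x}$ carries, by the $N^-/q^{-1}$ counterpart of \propref{p:sharper estimate}, the extra factor $\sK_{x,\eta-\lambda}$ concentrated in cohomological degrees $\geq 2$; combining this with the dimension estimate $\dim(S^{-,\eta}\cap S^0)\leq\langle-\eta,\check\rho\rangle$, the cohomological shift $[\langle\lambda,2\check\rho\rangle]$, and — for the top stratum $\eta=\lambda$ — the vanishing results for the twisted local system $\chi_N^0\otimes\Psi_q$ established in the proof of \thmref{t:restr}, one reads off that when $\lambda$ is not a simple coroot every stratum contributes only in degrees $\geq 1$. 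This gives the desired $!$-fiber bound.

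\textbf{The $*$-bound and conclusion.} For the $*$-fiber bound I would invoke \thmref{t:duality pres}: the functor $\Phi$ sends compact objects to $\Shv_{\CG^\Lambda}(\Conf_{\infty\cdot x})^{\on{loc.c}}$ and intertwines $\BD^{\on{Verdier}}$ on $\Whit_{q,x}(G)$ with Verdier duality on $\Conf_{\infty\cdot x}$. Applying this to the compact object $\on{Vac}_{\Whit,x}=W^{0,!}=W^{0,!*}$, which is Verdier self-dual by \corref{c:duality irred}, and recalling that $'\Omega^{\on{small}}_q$ is the restriction to $\Conf$ of $\oblv_{\on{Fact}}\circ\Phi_{\on{Fact}}(\on{Vac}_{\Whit,x})$, we obtain $\BD^{\on{Verdier}}(\,'\Omega^{\on{small}}_q)\simeq{}'\Omega^{\on{small}}_{q^{-1}}$ as $(\CG^\Lambda)^{-1}$-twisted sheaves on $\Conf$. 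Hence the $*$-fiber estimate for $'\Omega^{\on{small}}_q$ is the Verdier dual of the $!$-fiber estimate of the previous step, applied now to the geometric metaplectic datum $q^{-1}$. With both the $!$- and $*$-fiber bounds in hand, the criterion of the first paragraph applies and proves the theorem. (As an alternative to the stalk computation one could, after establishing the reduction to concentrated points, deduce perversity and the support bound from \thmref{t:ICs glob} by writing $'\Omega^{\on{small}}_q$ as a proper pushforward of the global IC sheaf $_{\Bun_T}\!\ICs_{q,\on{glob}}$ on $\ol\Bun_B$ along a map to $\Conf$ and running a relative-dimension count on its fibers.)

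\textbf{Main obstacle.} The crux is the degree bookkeeping of the second paragraph: one must carry out the stratumwise cohomology computation on $\ol S^{-,\lambda}\cap\ol S^0$ precisely enough that the ``$\geq 2$'' of \propref{p:sharper estimate} survives all the cohomological shifts and dimension corrections and produces the \emph{strict} inequality $\geq 1$ — not merely $\geq 0$ — uniformly in $\lambda$, and in particular one must control the contribution of the top stratum $\eta=\lambda$, where the correction factor is trivial, by means of the \thmref{t:restr}-type non-triviality of $\chi_N^0\otimes\Psi_q$. This has to be done while $\ol S^0$ is only ind-finite-dimensional, so all of it must be organized inside the perverse $t$-structure on the semi-infinite category $\SI_{q,\Ran}(G)^{\leq 0}_{\on{untl}}$ of \secref{ss:t-str semiinf} rather than naively; the global route via \thmref{t:ICs glob} packages part of this difficulty at the cost of analyzing the fibers of $\ol\Bun_B\to\Bun_T$ over $\Conf$.
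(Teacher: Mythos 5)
Your strategy is essentially the paper's: reduce by factorization and Verdier self-duality of $W^{0,!*}=\on{Vac}_{\Whit,x}$ (via \thmref{t:duality pres}) to a $!$-fiber estimate at the fully concentrated points $\lambda\cdot x$, then bound the stratumwise contributions using \propref{p:sharper estimate} on the non-top strata of $\ol S^{-,\lambda}\cap\ol S^0$ and the local-system non-triviality from \thmref{t:restr} on the top stratum. However, there are two linked errors.

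First, your target bound is off by one. The deepest stratum of $\Conf^\lambda$ (where the whole divisor collapses to $\lambda\cdot x$) is a copy of $X$, hence $1$-dimensional. The Goresky--MacPherson criterion $i^!\in{}^p D^{\geq 1}$ on a $1$-dimensional smooth stratum translates, for a lisse $i^!$-restriction, into cohomological degrees $\geq 0$ on the stratum, and hence --- after the further $[-2]$ shift from taking the $!$-fiber at a point of that curve --- into cohomological degrees $\geq 2$ (not $\geq 1$) for $(\,'\Omega^{\on{small}}_q)^!_{\lambda\cdot x}$. This is exactly what the paper proves in \secref{sss:stalk Phi vac}. The dual $*$-fiber bound should accordingly be $\leq -2$, not $\leq -1$.

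Second, and as a consequence, your argument as written produces only the weaker estimate $\geq 1$, because the ingredients you list yield exactly: non-top strata contributing in degrees $\geq 2$ (via the $\sK$-factor of \propref{p:sharper estimate}), plus $H^0=0$ on the top stratum from \thmref{t:restr}. That gives $\geq 1$. To reach the required $\geq 2$ one must additionally show that the \emph{sub-top} cohomology
$H^1\bigl(S^0\cap S^{-,\lambda},\chi_N|_{S^0\cap S^{-,\lambda}}\otimes\Psi_q\otimes\omega_{S^0\cap S^{-,\lambda}}[\langle\lambda,2\check\rho\rangle]\bigr)$
vanishes. This is the actual crux of \thmref{t:GM from open}, and it is \emph{not} a consequence of \thmref{t:restr}, nor of the dimension estimates on $S^\mu\cap S^{-,\nu}$: the paper cites it from Lysenko (Theorem 1.1.5 of \cite{Lys}), noting the result is currently proved only under a mild largeness assumption on $\on{ord}(q_i)$ and that a general proof is deferred to a later publication. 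Your write-up does not engage with this $H^1$-vanishing at all; fixing the threshold to $\geq 2$ will surface it immediately as the missing step.
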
 

We will prove \propref{p:ident vacuum open} in the rest of this subsection. The proof of 
\thmref{t:GM from open} will be given in \secref{ss:proof of GM}. 

\sssec{}  \label{sss:sfi from Whit} 

Recall that Kummer theory attaches to an element $c\in \sfe^\times(-1)$ a character sheaf on $\BG_m$;
to be denoted $\Psi_c$. When $c\neq 0$, the extension of $\Psi_c$ along $\BG_m\to \BA^1$ is \emph{clean};
by a slight abuse of notation we will denote it by the same character $\Psi_c$. 

\medskip

Recall also that a geometric metaplectic datum defines for each vertex of the Dynkin diagram an element
$q_i\in \sfe^\times(-1)$. Recall also that we impose a non-degeneracy condition on the geomtetric metaplectic datum,
which says that all $q_i$ are non-trivial, see Definition \ref{d:non-deg q}.

\medskip

Finally, recall that for our definition of the Whittaker category, we \emph{chose} an Artin-Schreier character sheaf
$\chi$ on $\BG_a$.

\medskip

We let $\sff^{i,\on{fact}}$ be the line (a ``Gauss sum")
$$H^1(\BA^1,\Psi_{q_i}\otimes \chi).$$

Note this cohomology is a geomtric Gauss sum, in particular, $H^0=H^2=0$. 

\medskip

By factorization, in order to prove \propref{p:ident vacuum open}, it suffices to perform the calculation on each individual
$\Conf^{-\alpha_i}\simeq X$. To simplify the notation, we will perform the calculation at a fixed point $x\in X$. 

\sssec{}  \label{sss:shifted IC}

For $\lambda\in \Lambda$, consider the action of the element $t^\lambda\in \fL(T)_x$ on $\Gr^{\omega^\rho}_{G,x}$.
It induces a functor
$$\Shv_{(\CG^G)^{-1}}(\Gr^{\omega^\rho}_{G,x})^{\fL^+(T)_x}\to 
\Shv_{(\CG^G)^{-1}\otimes \CG^\Lambda_{\lambda\cdot x}}(\Gr^{\omega^\rho}_{G,x})^{\fL^+(T)_x}.$$

Note that $t^\lambda\star \ICsm_{q^{-1},x}$ identifies with the restriction of $\ICsm_{q^{-1,\Conf}}$ to
the fiber over the point $$\lambda\cdot x\in \Conf.$$

\medskip

Set
$$\ICslm_{q^{-1},x}=t^\lambda\star \ICsm_{q^{-1},x}[\langle \lambda,2\check\rho\rangle]
\in \Shv_{(\CG^G)^{-1}\otimes \CG^\Lambda_{\lambda\cdot x}}(\Gr^{\omega^\rho}_{G,x}).$$

Note that the restriction of the gerbe $(\CG^G)^{-1}\otimes \CG^\Lambda_{\lambda\cdot x}$ to $S^{-,\lambda}\subset \Gr^{\omega^\rho}_{G,x}$
is canonically trivialized, and in terms of this trivialization, we have
$$\ICslm_{q^{-1},x}|_{S^{-,\lambda}}\simeq \omega_{S^{-,\lambda}}[\langle \lambda,2\check\rho\rangle].$$

\sssec{}

Thus, we have to show that
\begin{equation} \label{e:vac calc one}
H(\Gr_{G,x}^{\omega^\rho},W^{0,*}\sotimes \on{IC}^{-\alpha_i+\frac{\infty}{2},-}_{q^{-1},x})\simeq 
\Gamma(\BA^1,\Psi_{q_i}\otimes \chi).
\end{equation}

Note that the tensor product $W^{0,*}\sotimes \on{IC}^{-\alpha_i+\frac{\infty}{2},-}_{q^{-1},x}$
is an \emph{untwisted} sheaf on $\Gr_{G,x}^{\omega^\rho}$ due to the trivialization of the restriction of
$\CG^\Lambda$ to the point $-\alpha_i\cdot x\in \Conf$, given by \secref{sss:gerbes from G}. 

\sssec{}

First, by \corref{c:Whittaker strata}(b), the object $W^{0,*}$ is the *-extension of its restriction to $S^0$. 

\medskip

The object $\on{IC}^{-\alpha_i+\frac{\infty}{2},-}_{q^{-1},x}$ is supported on $\ol{S}^{-,-\alpha_i}$. Note now that the residue map
defines an isomorphism
\begin{equation} \label{e:simple semiinf intersect}
S^0\cap \ol{S}^{-,-\alpha_i}\to \BA^1.
\end{equation}

\medskip

We will show that we have a canonical isomorphism 
\begin{equation} \label{e:vac calc two}
(W^{0,*}\sotimes \ICsm_{q^{-1},\Conf})|_{S^0\cap \ol{S}^{-,-\alpha_i}} \simeq \Psi_{q_i}\otimes \chi
\end{equation} 
taking place in $\Shv(\BA^1)$. 

\sssec{}

Note that the open subset
\begin{equation} \label{e:open orbit}
S^0\cap S^{-,-\alpha_i}\subset S^0\cap \ol{S}^{-,-\alpha_i}
\end{equation}
corresponds to
$$\BA^1-0 \subset \BA^1,$$
while $\{0\}\subset \BA^1$ corresponds to 
\begin{equation} \label{e:smaller orbit}
S^0\cap S^{-,0}\subset S^0\cap \ol{S}^{-,-\alpha_i}.
\end{equation}

\medskip

We claim that the object 
$$(W^{0,*}\sotimes \ICsm_{q^{-1},\Conf})|_{S^0\cap \ol{S}^{-,-\alpha_i}}$$
is the *-extension of its own restriction along the open embedding \eqref{e:open orbit}. I.e., we claim that its !-restriction
along \eqref{e:smaller orbit} vanishes. However, this follows from \lemref{l:non sharp strata} and the assumption that $q_i$
is non-trivial. 

\medskip

Hence, it suffices to establish an isomorphism 
\begin{equation} \label{e:vac calc three}
(W^{0,*}\sotimes \ICsm_{q^{-1},\Conf})|_{S^0\cap S^{-,-\alpha_i}} \simeq \Psi_{q_i}\otimes \chi 
\end{equation} 
taking place in $\Shv(\BA^1-0)$. 

\sssec{}

Recall that the gerbe $\CG^G|_{S^0}$ is trivialized, and in terms of this trivialization,
the object $W^{0,*}|_{S^0}$ corresponds to $\omega_{S^0}$
tensored by the pullback of $\chi$. Hence, in terms of \emph{this} trivialization, 
$$W^{0,*}|_{S^0\cap S^{-,-\alpha_i}}\simeq \chi\otimes \omega_{\BA^1-0}.$$

\medskip

Recall also that the gerbe $(\CG^G)^{-1}\otimes \CG^T|_{S^{-,-\alpha_i}}$ is also canonically
trivialized, and in terms of \emph{this} trivialization, $\on{IC}^{-\alpha_i+\frac{\infty}{2},-}_{q^{-1},x}|_{S^{-,-\alpha_i}}$
corresponds to $\omega_{S^{-,-\alpha_i}}[-2]$.  Hence, in terms of \emph{this} trivialization,
$$\ICsm_{q^{-1},\Conf}|_{S^0_{\Conf^{-\alpha_i}}\cap S^{-,-\alpha_i}_{\Conf^{-\alpha_i}}}\simeq \sfe_{\BA^1-0}.$$

\medskip

Hence, in order to establish \eqref{e:vac calc three}, we need to show that the resulting trivialization of
$$\CG^G|_{S^0\cap S^{-,-\alpha_i}} \otimes 
((\CG^G)^{-1}\otimes \CG^T)|_{S^0\cap S^{-,-\alpha_i}}\simeq
\CG^T|_{S^0\cap S^{-,-\alpha_i}}$$ differs 
from the trialization of $\CG^T|_{S^{-,-\alpha_i}}$ of \secref{sss:gerbes from G}
by the local system equal to the pullback of $\Psi_{q_i}$ along the residue map 
$$S^0\cap S^{-,-\alpha_i}\to \BA^1-0.$$

\medskip

This is a calculation performed in the next subsection. 

\ssec{Calculation of the discrepancy}

\sssec{}

As in \secref{sss:gerbes from G}, the calculation reduces to the case of $G=SL_2$, in which case the gerbe $\CG^G$
is canonically of the form $\det_{SL_2}^a$ for $a\in \sfe^\times(-1)$. 

\medskip

The corresponding quadratic form $q$ takes the value $a$ on the (unique)
coroot $\alpha_i$, i.e., $q_i=a$.

\medskip

The line bundle $\det_{SL_2}$ admits a canonical trivialization when restricted to $S^0$ and also, by 
\secref{sss:det calc}, to $S^{-,-\alpha_i}$. We need to show that the discrepancy of these two trivializations,
viewed as a function,
$$S^0 \cap S^{-,-\alpha_i}\to \BG_m$$
equals the residue map. 

\sssec{}

A point of $S^0 \cap S^{-,-\alpha_i}$ is a rank $2$-bundle $\CM$ on $X$ that fits into a diagram
$$
\CD
& & \omega^{\otimes \frac{1}{2}}(x)  \\
& & @AAA \\
\omega^{\otimes \frac{1}{2}}  @>>>  \CM   @>>>  \omega^{\otimes -\frac{1}{2}} \\
& & @AAA  \\
& & \omega^{\otimes -\frac{1}{2}}(-x) 
\endCD
$$
where the row and the column are exact sequences. Such an $\CM$ is uniquely determined by the choice of a line
$$\ell\subset \omega^{\otimes \frac{1}{2}}_x\oplus  \omega^{\otimes -\frac{1}{2}}(-x)_x$$
that projects isomorphically to both factors
\begin{equation} \label{e:two proj}
\omega^{\otimes -\frac{1}{2}}(-x)_x \leftarrow \ell \to \omega^{\otimes \frac{1}{2}}_x.
\end{equation}

\medskip

The resulting isomorphism
\begin{equation} \label{e:discrep}
\omega^{\otimes -\frac{1}{2}}(-x)_x\to \omega^{\otimes \frac{1}{2}}_x
\end{equation}
can be regarded as a non-zero element of $\omega^{\otimes \frac{1}{2}}(x)_x$, where $\omega^{\otimes \frac{1}{2}}(x)_x\simeq \BA^1$
by the residue map. It is easy to see that the thus constructed map $S^0 \cap S^{-,-\alpha_i}\to \BA^1-0$ is the residue map 
of \eqref{e:simple semiinf intersect}. 

\sssec{}

The two embeddings
$$\omega^{\otimes \frac{1}{2}}(x) \oplus \omega^{\otimes -\frac{1}{2}}(-x) \hookleftarrow 
\omega^{\otimes \frac{1}{2}}\oplus \omega^{\otimes -\frac{1}{2}}(-x) \hookrightarrow 
\omega^{\otimes \frac{1}{2}}\oplus \omega^{\otimes -\frac{1}{2}}$$
induce the identifications
\begin{multline} \label{e:det1}
\on{rel.det}(\omega^{\otimes \frac{1}{2}}\oplus \omega^{\otimes -\frac{1}{2}}(-x),\omega^{\otimes \frac{1}{2}}\oplus \omega^{\otimes -\frac{1}{2}}) \simeq \\
\simeq \on{rel.det}(\omega^{\otimes \frac{1}{2}}(x) \oplus \omega^{\otimes -\frac{1}{2}}(-x),\omega^{\otimes \frac{1}{2}}\oplus \omega^{\otimes -\frac{1}{2}})
\otimes \omega^{\otimes \frac{1}{2}}_x\simeq \omega^{\otimes \frac{1}{2}}_x,
\end{multline}
(where the last isomorphism comes from \secref{sss:det calc})
and
\begin{multline} \label{e:det2}
\on{rel.det}(\omega^{\otimes \frac{1}{2}}\oplus \omega^{\otimes -\frac{1}{2}}(-x),\omega^{\otimes \frac{1}{2}}\oplus \omega^{\otimes -\frac{1}{2}}) \simeq \\
\simeq \on{rel.det}(\omega^{\otimes \frac{1}{2}}\oplus \omega^{\otimes -\frac{1}{2}},\omega^{\otimes \frac{1}{2}}\oplus \omega^{\otimes -\frac{1}{2}})
\otimes \omega^{\otimes \frac{1}{2}}_x\simeq \omega^{\otimes \frac{1}{2}}_x.
\end{multline}

However, by the construction of the isomorphism in \secref{sss:det calc}, it follows that the above two isomorphisms 
$$\on{rel.det}(\omega^{\otimes \frac{1}{2}}\oplus \omega^{\otimes -\frac{1}{2}}(-x),\omega^{\otimes \frac{1}{2}}\oplus \omega^{\otimes -\frac{1}{2}})\simeq
\omega^{\otimes \frac{1}{2}}_x$$
coincide. 

\sssec{}

The fiber of $\det_{SL_2}$ at $\CM$ is given by
$$\on{det.rel.}(\CM,\omega^{\otimes \frac{1}{2}}\oplus \omega^{\otimes -\frac{1}{2}}).$$

We have a canonical short exact sequence
$$0\to \omega^{\otimes \frac{1}{2}} \oplus \omega^{\otimes -\frac{1}{2}}(-x) \to \CM\to \ell\otimes \omega^{\otimes -1}_x\to 0,$$
where the line $\ell\otimes \omega^{\otimes -1}_x$ is regarded as a skyscraper sheaf at $x$. Hence, the fiber of $\det_{SL_2}$ at $\CM$
can be further identified with 
$$\on{det.rel.}(\omega^{\otimes \frac{1}{2}}\oplus \omega^{\otimes -\frac{1}{2}}(-x),\omega^{\otimes \frac{1}{2}}\oplus \omega^{\otimes -\frac{1}{2}})
\otimes  \ell\otimes \omega^{\otimes -1}_x,$$ 
and its two trivializations are given by 
\begin{multline*}
\on{det.rel.}(\omega^{\otimes \frac{1}{2}}\oplus \omega^{\otimes -\frac{1}{2}}(-x),\omega^{\otimes \frac{1}{2}}\oplus \omega^{\otimes -\frac{1}{2}})
\otimes  \ell\otimes \omega^{\otimes -1}_x \overset{\leftarrow \text{in \eqref{e:two proj}}}\simeq \\
\simeq \on{det.rel.}(\omega^{\otimes \frac{1}{2}}\oplus \omega^{\otimes -\frac{1}{2}}(-x),\omega^{\otimes \frac{1}{2}}\oplus \omega^{\otimes -\frac{1}{2}})
\otimes \omega^{\otimes -\frac{1}{2}}(-x)_x \otimes \omega^{\otimes -1}_x 
\overset{\text{\eqref{e:det1}}} 
\simeq \omega^{\otimes \frac{1}{2}}_x\otimes \omega^{\otimes -\frac{1}{2}}_x\simeq k
\end{multline*}
and
\begin{multline*}
\on{det.rel.}(\omega^{\otimes \frac{1}{2}}\oplus \omega^{\otimes -\frac{1}{2}}(-x),\omega^{\otimes \frac{1}{2}}\oplus \omega^{\otimes -\frac{1}{2}})
\otimes  \ell\otimes \omega^{\otimes -1}_x \overset{\to \text{in \eqref{e:two proj}}}\simeq \\
\simeq \on{det.rel.}(\omega^{\otimes \frac{1}{2}}\oplus \omega^{\otimes -\frac{1}{2}}(-x),\omega^{\otimes \frac{1}{2}}\oplus \omega^{\otimes -\frac{1}{2}})
\otimes \omega^{\otimes \frac{1}{2}}_x \otimes \omega^{\otimes -1}_x \overset{\text{\eqref{e:det2}}} 
\simeq \omega^{\otimes \frac{1}{2}}_x\otimes \omega^{\otimes -\frac{1}{2}}_x\simeq k,
\end{multline*}
respectively. 

\medskip

Hence, the discrepancy between the two is given by \eqref{e:discrep}, as required. 

\ssec{Properties of the functor $\Phi$ and its variants}

By \thmref{t:ident vacuum}, we can consider $\Phi_{\on{Fact}}$ and $\Phi^{\bHecke}_{\on{Fact}}$
as taking values in the category
$\Omega_q^{\on{small}}\on{-FactMod}$. In this subsection we will study some basic properties
of this functor. 

\sssec{} 

Recall (see \secref{ss:t on fact}) that sense $\Omega^{\on{small}}_q$ is perverse, the category 
$\Omega^{\on{small}}_q\on{-FactMod}$ has a t-structure for which the functor $\oblv_{\on{Fact}}$
is t-exact. Recall also the irreducible objects
$$\CM^{\lambda,!*}_{\on{Fact}}\in (\Omega^{\on{small}}_q\on{-FactMod})^\heartsuit.$$

Here is the key result, proved in \secref{ss:proof of GM} below:

\begin{thm}  \label{t:irred pres}
Let $\lambda\in \Lambda^+$ be restricted. Then
$$\Phi(W^{\lambda,!*})\simeq \oblv(\CM^{\lambda,!*}_{\on{Fact}}).$$
\end{thm}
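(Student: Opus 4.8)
\textbf{Proof proposal for \thmref{t:irred pres}.}

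The plan is to prove the statement in three stages: first identify $\Phi(W^{\lambda,!*})$ with the intermediate-extension object $\CM^{\lambda,!*}_{\on{Fact}}$ by showing $\Phi$ sends $W^{\lambda,!}\to W^{\lambda,*}$ to $\CM^{\lambda,!}_{\Conf}\to \CM^{\lambda,*}_{\Conf}$ (up to the renormalization/forgetful issues, which disappear after applying $\oblv_{\on{Fact}}$), then apply t-exactness and the characterization of $\CM^{\lambda,!*}_{\Conf}$ as the image. For the first stage, I would compute the !-fiber of $\Phi(W^{\lambda,*})$ at a point $\mu\cdot x\in \Conf_{\infty\cdot x}$, using the explicit description of $\Phi$ from \secref{sss:Phi expl}: it is the direct image along $\Gr^{\omega^\rho}_{G,\Conf_{\infty\cdot x}}\to \Conf_{\infty\cdot x}$ of $\on{sprd}_{\Conf_{\infty\cdot x}}(W^{\lambda,*})\sotimes \ICsm_{q^{-1},\Conf_{\infty\cdot x}}$, shifted by $[\langle\mu,2\check\rho\rangle]$. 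Taking the !-fiber at $\mu\cdot x$ reduces (by the factorization structure, localizing near $x$) to the computation
$$\on{C}^\cdot(\Gr^{\omega^\rho}_{G,x},W^{\lambda,*}\sotimes \ICslm_{q^{-1},x})[\langle\mu,2\check\rho\rangle],$$
where $\ICslm_{q^{-1},x}$ is the shifted metaplectic semi-infinite IC sheaf of \secref{sss:shifted IC}. I expect this to vanish unless $\mu=\lambda$, and to be one-dimensional when $\mu=\lambda$; granting that, the !-fibers of $\Phi(W^{\lambda,*})$ match those of $\CM^{\lambda,*}_{\Conf}$ (by \lemref{l:on open} and \secref{sss:standards and costandards}), which by t-exactness of $\oblv_{\on{Fact}}$ and the co-generation property pins down $\Phi(W^{\lambda,*})\simeq \CM^{\lambda,*}_{\Conf}$. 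Dually, using \thmref{t:duality pres} together with \propref{p:dual of standard} ($\BD^{\on{Verdier}}(W^{\lambda,!})\simeq W^{\lambda,*}$) and the fact that $\BD^{\on{Verdier}}(\CM^{\mu,*}_{\Conf})\simeq \CM^{\mu,!}_{\Conf}$, one gets $\Phi(W^{\lambda,!})\simeq \CM^{\lambda,!}_{\Conf}$.

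The second stage is to check that $\Phi$ sends the canonical map $W^{\lambda,!}\to W^{\lambda,*}$ to the canonical map $\CM^{\lambda,!}_{\Conf}\to \CM^{\lambda,*}_{\Conf}$. Both maps are the unique (up to scalar) nonzero maps between the respective standard and co-standard objects — on the $\Whit$ side by \eqref{e:standards Whit}, on the $\Conf$ side by \eqref{e:Ext standard fact} — so the identification is automatic once we know $\Phi$ is nonzero on this map, which follows because $\Phi$ is compatible with taking !-fibers at $\lambda\cdot x$ and there the map $W^{\lambda,!}\to W^{\lambda,*}$ induces an isomorphism on fibers at $t^\lambda$. Then $\Phi(W^{\lambda,!*})$, being the image of $\Phi(W^{\lambda,!})\to\Phi(W^{\lambda,*})$ in the heart (using that $\Phi$, i.e. $\Phi^{\bHecke}_{\on{Fact}}\circ\ind_{\bHecke}$ composed with $\oblv_{\on{Fact}}$, is t-exact on the relevant objects — here I would invoke the t-exactness of $\Phi^{\bHecke}_{\on{Fact}}$ asserted after \thmref{t:duality pres}), is the image of $\CM^{\lambda,!}_{\Conf}\to\CM^{\lambda,*}_{\Conf}$, which is $\CM^{\lambda,!*}_{\Conf}$ by definition, i.e. $\oblv_{\on{Fact}}(\CM^{\lambda,!*}_{\on{Fact}})$.

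The main obstacle will be the vanishing/one-dimensionality computation of $\on{C}^\cdot(\Gr^{\omega^\rho}_{G,x},W^{\lambda,*}\sotimes \ICslm_{q^{-1},x})$. This is exactly the kind of "non-tautological calculation" flagged in the introduction: it is not merely the cohomology of a sheaf, because $\ICslm_{q^{-1},x}$ itself must be understood, and the computation relies on the Drinfeld--Pl\"ucker description of $\ICslm_{q^{-1},x}$ from \secref{s:DrPl}/\secref{ss:F and semiinf} interacting with the structure of $W^{\lambda,*}$. Concretely, I would stratify $\Gr^{\omega^\rho}_{G,x}$ by the orbits $S^{-,\nu}$, use that $W^{\lambda,*}=(\bi_\lambda)_*(\overset{\circ}W{}^\lambda)$ is supported on $\ol{S}^\lambda$ with prescribed restriction to $S^\lambda$, and use the cohomological estimates on $\ICslm_{q^{-1},x}$ restricted to strata (analogues of \propref{p:sharper estimate} and \lemref{l:non sharp strata}, which force the relevant strata to lie in $\Lambda^\sharp$) together with the restrictedness of $\lambda$ (via \thmref{t:restr} and the dimension count $\dim(S^\lambda\cap S^{-,\nu})\le\langle\lambda-\nu,\check\rho\rangle$) to kill all contributions except the point $t^\lambda$ itself. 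The restrictedness hypothesis enters precisely here, exactly as in the proof of \thmref{t:restr convolution}: it ensures the local system $\chi_N^\lambda\otimes\Psi_q$ is non-trivial on the excess strata so that their cohomology vanishes. Organizing this stratum-by-stratum argument cleanly, and matching the normalization of the surviving one-dimensional contribution with the trivialization of $\CG^\Lambda_{\lambda\cdot x}$ built into $\CM^{\lambda,!*}_{\on{Fact}}$, is where the real work lies.
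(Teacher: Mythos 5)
Your proposal diverges from the paper's proof at the very first step, and the divergence introduces a genuine gap. You claim that the !-fiber of $\Phi(W^{\lambda,*})$ at $\mu\cdot x$ vanishes for $\mu\neq\lambda$ and hence $\Phi(W^{\lambda,*})\simeq \oblv_{\on{Fact}}(\CM^{\lambda,*}_{\Conf})$; this is false. The degree estimates you gesture at (from \propref{p:sharper estimate}, \lemref{l:non sharp strata}, \propref{p:fiber of Phi top}, and \thmref{t:restr}) only force the fiber to live in cohomological degrees $\geq 1$ when $\mu\neq\lambda$; they say nothing about vanishing in degrees $\geq 1$, and indeed the fiber is typically nonzero there. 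Under the quantum-group dictionary, $W^{\lambda,*}$ corresponds to the dual Weyl module over the \emph{big} quantum group, $\Phi(W^{\lambda,*})=\Phi^{\bHecke}(\ind_{\bHecke}(W^{\lambda,*}))$ corresponds to restricting that module to $\overset{\bullet}\fu_q(\cG)$ and then taking $\fu_q(\cN^+)$-cohomology, and the result carries a filtration by (dual) baby Vermas with many weights contributing, not a single $\CM^{\lambda,*}_{\on{quant}}$. The Introduction flags exactly this pitfall: the objects $\CM^{\mu,*}_{\Whit}$ of \secref{sss:strategy} are \emph{not} $\ind_{\bHecke}(W^{\mu,*})$, and the paper's \thmref{t:fiber of costandard}, which proves $\Phi^{\bHecke}_{\on{Fact}}(\bCM^{\mu,*}_{\Whit})\simeq \CM^{\mu,*}_{\Conf}$, concerns the convolution-theoretically constructed baby Verma object of \secref{s:baby Verma in Whit}, and its proof goes through the Drinfeld--Pl\"ucker/Iwahori formalism (\thmref{t:Hom from baby}), not a fiber computation.

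The paper's actual route for \thmref{t:irred pres}, given in \secref{ss:proof of GM}, sidesteps the need for any such vanishing. Working directly with the irreducible $W^{\mu,!*}$, \propref{p:fiber of Phi top} plus the Cousin reduction to $S^\mu\cap S^{-,\lambda}$ and \thmref{t:restr} show that $\Phi(W^{\mu,!*})_{\lambda\cdot x}$ lies in degrees $\geq 1$ for $\lambda\neq\mu$ (and equals $\sfe$ for $\lambda=\mu$). The crucial extra ingredient is self-duality: $\BD^{\on{Verdier}}(W^{\mu,!*})\simeq W^{\mu,!*}$ by \corref{c:duality irred}, so by \thmref{t:duality pres} the object $\Phi(W^{\mu,!*})$ is Verdier self-dual, and the degree-$\geq 1$ bound on !-fibers automatically yields a degree-$\leq -1$ bound on *-fibers. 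Those two bounds, together with factorization (\lemref{l:on open}), are precisely the Goresky--MacPherson characterization of the irreducible factorization module, giving $\Phi(W^{\mu,!*})\simeq \oblv(\CM^{\mu,!*}_{\on{Fact}})$. This is shorter and avoids your (false) intermediate step. Note also that $W^{\lambda,*}$ is not Verdier self-dual ($\BD^{\on{Verdier}}(W^{\lambda,*})\simeq W^{\lambda,!}$), so the duality trick genuinely has no analogue for the (co)standard objects, which is another symptom of the issue.
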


The above theorem has a slew of consequences pertaining to the properties of the functors 
$\Phi$, $\Phi_{\on{Fact}}$ and $\Phi^{\bHecke}_{\on{Fact}}$.

\begin{cor}  \label{c:irred pres}
Let $\lambda\in \Lambda^+$ be restricted. Then
$$\Phi_{\on{Fact}}(W^{\lambda,!*})\simeq \CM^{\lambda,!*}_{\on{Fact}}.$$
\end{cor}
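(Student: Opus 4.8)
\emph{Strategy.} The plan is to deduce the statement from \thmref{t:irred pres} by a formal argument that promotes the isomorphism furnished there — which a priori takes place in $\Shv_{\CG^\Lambda}(\Conf_{\infty\cdot x})$, i.e.\ only after applying $\oblv_{\on{Fact}}$ — to an isomorphism inside $\Omega^{\on{small}}_q\on{-FactMod}$. By construction $\Phi=\oblv_{\on{Fact}}\circ \Phi_{\on{Fact}}$, so \thmref{t:irred pres} asserts exactly that $\oblv_{\on{Fact}}\bigl(\Phi_{\on{Fact}}(W^{\lambda,!*})\bigr)\simeq \oblv_{\on{Fact}}(\CM^{\lambda,!*}_{\on{Fact}})$. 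The ingredients we shall use are: $\oblv_{\on{Fact}}$ is t-exact (\propref{p:properties of t fact}(a)) and conservative; the heart $(\Omega^{\on{small}}_q\on{-FactMod})^\heartsuit$ is a highest weight category with standards $\CM^{\mu,!}_{\on{Fact}}$, costandards $\CM^{\mu,*}_{\on{Fact}}$ and simples $\CM^{\mu,!*}_{\on{Fact}}$, $\mu\in\Lambda$ (end of \secref{ss:t on fact}); the orthogonality \eqref{e:Ext standard fact}; and the identification, from \eqref{e:mu fiber}, of $\CHom_{\Omega^{\on{small}}_q\on{-FactMod}}(\CM^{\mu,!}_{\on{Fact}},-)$ with the $!$-fibre at $\mu\cdot x$ applied after $\oblv_{\on{Fact}}$.

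\emph{Step 1: landing in the heart.} Since $\CM^{\lambda,!*}_{\on{Fact}}$ lies in the heart and $\oblv_{\on{Fact}}$ is t-exact, $\oblv_{\on{Fact}}(\CM^{\lambda,!*}_{\on{Fact}})$ is a perverse object of $(\Shv_{\CG^\Lambda}(\Conf_{\infty\cdot x}))^\heartsuit$, supported on the finite-type scheme $\Conf_{\le\lambda\cdot x}$, hence of finite length there. By \thmref{t:irred pres} the object $P:=\Phi_{\on{Fact}}(W^{\lambda,!*})$ satisfies $\oblv_{\on{Fact}}(P)\simeq \oblv_{\on{Fact}}(\CM^{\lambda,!*}_{\on{Fact}})$; as $\oblv_{\on{Fact}}$ is t-exact and conservative this forces $P\in (\Omega^{\on{small}}_q\on{-FactMod})^\heartsuit$, and $P\neq 0$. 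Moreover $\oblv_{\on{Fact}}$ restricted to hearts is exact and faithful (a conservative exact functor between abelian categories is faithful), so it reflects monomorphisms, epimorphisms and isomorphisms.

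\emph{Step 2: the socle of $P$.} Let $\CM^{\mu',!*}_{\on{Fact}}\hookrightarrow P$ be an irreducible subobject. Composing with the canonical surjection $\CM^{\mu',!}_{\on{Fact}}\twoheadrightarrow \CM^{\mu',!*}_{\on{Fact}}$ gives a nonzero map $\CM^{\mu',!}_{\on{Fact}}\to P$, so $\Hom(\CM^{\mu',!}_{\on{Fact}},P)\neq 0$; by \eqref{e:mu fiber} this is $H^0$ of the $!$-fibre of $\oblv_{\on{Fact}}(P)\simeq \oblv_{\on{Fact}}(\CM^{\lambda,!*}_{\on{Fact}})$ at $\mu'\cdot x$, i.e.\ $\Hom(\CM^{\mu',!}_{\on{Fact}},\CM^{\lambda,!*}_{\on{Fact}})\neq 0$. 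Since $\CM^{\lambda,!*}_{\on{Fact}}\hookrightarrow \CM^{\lambda,*}_{\on{Fact}}$, this group embeds into $\Hom(\CM^{\mu',!}_{\on{Fact}},\CM^{\lambda,*}_{\on{Fact}})$, which by \eqref{e:Ext standard fact} vanishes unless $\mu'=\lambda$. Hence every irreducible subobject of $P$ is isomorphic to $\CM^{\lambda,!*}_{\on{Fact}}$, i.e.\ $\on{soc}(P)\simeq (\CM^{\lambda,!*}_{\on{Fact}})^{\oplus n}$ with $n\geq 1$. Applying the exact functor $\oblv_{\on{Fact}}$ to $\on{soc}(P)\hookrightarrow P$ yields an embedding $\oblv_{\on{Fact}}(\CM^{\lambda,!*}_{\on{Fact}})^{\oplus n}\hookrightarrow \oblv_{\on{Fact}}(P)\simeq \oblv_{\on{Fact}}(\CM^{\lambda,!*}_{\on{Fact}})$ of finite-length objects; comparing lengths forces $n=1$, whence this embedding is an isomorphism, and since $\oblv_{\on{Fact}}$ reflects isomorphisms, $\on{soc}(P)\to P$ is an isomorphism. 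Thus $P$ is simple with socle $\CM^{\lambda,!*}_{\on{Fact}}$, so $\Phi_{\on{Fact}}(W^{\lambda,!*})=P\simeq \CM^{\lambda,!*}_{\on{Fact}}$.

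\emph{On the difficulty.} The corollary is ``soft'' once \thmref{t:irred pres} is available; the real work is concentrated there, and in \thmref{t:ident vacuum}, which supplies the explicit description of the target factorization algebra and hence the highest-weight structure on $\Omega^{\on{small}}_q\on{-FactMod}$ invoked above. The only non-automatic point is the passage from an isomorphism after $\oblv_{\on{Fact}}$ to one in $\Omega^{\on{small}}_q\on{-FactMod}$: $\oblv_{\on{Fact}}$ is conservative but not full, so the isomorphism cannot simply be lifted, and one must use the abelian-categorical input — the highest weight axioms and finite length of perverse sheaves — exactly as in Step 2.
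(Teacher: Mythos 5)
Your argument reconstructs exactly what the paper's one-line proof asserts: that an object of $\Omega_q^{\on{small}}\on{-FactMod}$ whose image under $\oblv_{\on{Fact}}$ agrees with that of the simple $\CM^{\lambda,!*}_{\on{Fact}}$ must itself be that simple object; so the strategy is the same, with the details of the rigidity statement spelled out via t-exactness, conservativity, the highest-weight structure and the orthogonality \eqref{e:Ext standard fact}.

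One slip worth flagging: $\Conf_{\le\lambda\cdot x}$ is \emph{not} a scheme of finite type — it is the disjoint union of $(\Conf_{\le\lambda\cdot x})^\mu\simeq X^{\mu-\lambda}$ over all $\mu\in\lambda+\Lambda^{\on{neg}}$, hence has infinitely many connected components — so $\oblv_{\on{Fact}}(\CM^{\lambda,!*}_{\on{Fact}})$ is not of finite length in $\Shv_{\CG^\Lambda}(\Conf_{\infty\cdot x})$ and the length comparison in Step~2 cannot be run globally. The fix is easy: restrict to the $\lambda$-component, where $(\Conf_{\le\lambda\cdot x})^\lambda\simeq\on{pt}$ and $\oblv_{\on{Fact}}(\CM^{\lambda,!*}_{\on{Fact}})$ becomes the skyscraper $\delta_{\lambda\cdot x}$, so the embedding becomes $\delta^{\oplus n}\hookrightarrow\delta$ and forces $n\le 1$; alternatively, note directly that $\on{soc}(P)\hookrightarrow P$ gives $\sfe^n\simeq\Hom(\CM^{\lambda,!}_{\on{Fact}},\on{soc}(P))\hookrightarrow\Hom(\CM^{\lambda,!}_{\on{Fact}},P)\simeq\sfe$ by \eqref{e:mu fiber}, which avoids length counts altogether. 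Relatedly, the existence of a simple subobject of $P$ (implicit in taking $\on{soc}(P)$) is not automatic for an arbitrary nonzero object of the heart; here it follows because the above shows $P$ receives a nonzero map from $\CM^{\lambda,!}_{\on{Fact}}$, and one should appeal to the highest-weight structure (every standard has simple top $\CM^{\lambda,!*}_{\on{Fact}}$) to exhibit the simple subobject.
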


\begin{proof}
Follows from the fact that if $'\CM^{\lambda,!*}_{\on{Fact}}\in \Omega_q^{\on{small}}\on{-FactMod}$
is such that
$$\oblv({}'\CM^{\lambda,!*}_{\on{Fact}})\simeq \oblv(\CM^{\lambda,!*}_{\on{Fact}}),$$
then $'\CM^{\lambda,!*}_{\on{Fact}}\simeq \CM^{\lambda,!*}_{\on{Fact}}$.
\end{proof}

\begin{cor}  \label{c:irred pres Hecke}
For any $\mu\in \Lambda$,
$$\Phi^{\bHecke}_{\on{Fact}}(\bCM^{\mu,!*}_{\Whit})\simeq \CM^{\mu,!*}_{\on{Fact}}.$$
\end{cor}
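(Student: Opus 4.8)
The plan is to reduce the statement to the already-established restricted case \corref{c:irred pres}, using the $\Rep(T_H)$-equivariance built into $\Phi^{\bHecke}_{\on{Fact}}$ and, for an arbitrary geometric metaplectic datum, a passage to an isogenous group exactly as in the proof of \propref{p:irred Hecke Whit}. Two properties of $\Phi^{\bHecke}_{\on{Fact}}$ will be used throughout, both immediate from its construction. First, by the universal property of $\ind_{\bHecke}$ recalled in \secref{sss:graded Hecke univ}, one has $\Phi^{\bHecke}_{\on{Fact}}\circ \ind_{\bHecke}\simeq \Phi_{\on{Fact}}$. Second, since $\fJ^{\bHecke}_{!*,\on{Fact}}$ is a $\Rep(T_H)$-module functor (this is how it was produced from \thmref{t:Hecke ppty of J Fact}), and all subsequent steps in the passage to $\Conf_{\infty\cdot x}$ — restriction to the non-positive part of $\Gr_T$, the equivalence \eqref{e:compare fact mod}, and the shift \eqref{e:shift mod} — are compatible with the relevant $\Rep(T_H)$-actions, the functor $\Phi^{\bHecke}_{\on{Fact}}$ intertwines the action of $\sfe^\gamma$ on $\bHecke(\Whit_{q,x}(G))$ with the translation functor $\on{Tr}^\gamma$ of \secref{sss:action of lattice 1} on $\Omega_q^{\on{small}}\on{-FactMod}$, for every $\gamma\in \Lambda^\sharp$. (One should check here that the cohomological shift $[-\langle\gamma,2\check\rho\rangle]$ carried by $\Sat'_{q,T}$ cancels against \eqref{e:shift mod}, so that the module structure on the target is carried by the plain translation $\on{Tr}^\gamma$; this is a short computation on connected components of $\Conf_{\infty\cdot x}$.)

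Granting these, the restricted case is immediate: if $\mu$ is a restricted coweight, then $\CM^{\mu,!*}_{\Whit}=\ind_{\bHecke}(W^{\mu,!*})$ by construction (\propref{p:irred Hecke Whit}), so $\Phi^{\bHecke}_{\on{Fact}}(\CM^{\mu,!*}_{\Whit})\simeq \Phi_{\on{Fact}}(W^{\mu,!*})\simeq \CM^{\mu,!*}_{\on{Fact}}$ by \corref{c:irred pres}. Next I would treat the case in which the derived group of $H$ is simply connected. Then the $\Rep(H)$-action on $\Whit_{q,x}(G)$ is accessible (\lemref{l:many restr}, \thmref{t:restr}), and the restricted irreducibles of $(\Whit_{q,x}(G))^\heartsuit$ are precisely the $W^{\lambda,!*}$ with $\lambda$ a restricted coweight (by \thmref{t:restr convolution}); hence by \corref{c:irred Hecke graded bis}(b) together with the bijection of \propref{p:irred Hecke Whit}(b), every $\mu\in\Lambda$ can be written $\mu=\lambda+\gamma$ with $\lambda$ restricted and $\gamma\in\Lambda^\sharp$, and $\CM^{\mu,!*}_{\Whit}\simeq \CM^{\lambda,!*}_{\Whit}\otimes \sfe^\gamma$. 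Applying the two properties above together with the identity $\on{Tr}^\gamma(\CM^{\lambda,!*}_{\on{Fact}})\simeq \CM^{\lambda+\gamma,!*}_{\on{Fact}}$ of \secref{sss:action of lattice 3} gives $\Phi^{\bHecke}_{\on{Fact}}(\CM^{\mu,!*}_{\Whit})\simeq \on{Tr}^\gamma(\CM^{\lambda,!*}_{\on{Fact}})\simeq \CM^{\mu,!*}_{\on{Fact}}$.

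For a general pair $(G,\CG^G)$, I would run the reduction of \secref{sss:red to sc}: by \propref{p:isogenies trick} choose an isogeny $G\to \wt G$ strictly compatible with the geometric metaplectic datum with the derived group of $\wt H$ simply connected, and fix a splitting of \eqref{e:SES sharp H}. Then \corref{c:isogen Whit}(c) and \corref{c:enlarge lattice fact} identify the two sides of the sought equivalence for $G$, after tensoring with $\Rep(\cT_0)$, with the corresponding categories for $\wt G$. Granting that $\Phi^{\bHecke}_{\on{Fact}}$ is compatible with these identifications — which should follow from the naturality of its construction with respect to isogenies, the embeddings \eqref{e:isogen Whit} and \eqref{e:isog conf} and all intermediate objects ($\on{Vac}_{\Whit,\Ran}$, the metaplectic semi-infinite IC sheaf, $\Omega_q^{\Whit_{!*}}$, $\Omega_q^{\on{small}}$) being compatible with change of group — the statement for $G$ follows from the case of $\wt G$ established above, since under the identifications the irreducibles $\CM^{\mu,!*}_{\Whit}$ and $\CM^{\mu,!*}_{\on{Fact}}$ for $G$, indexed by $\mu\in\Lambda$, correspond to those for $\wt G$ indexed by the same $\mu\in\Lambda\subset\wt\Lambda$.

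The hard part will be the naturality assertion in the last paragraph: one must verify that every geometric input to $\Phi^{\bHecke}_{\on{Fact}}$ — the twisted affine Grassmannians, the gerbes $\CG^G$, $\CG^\Lambda$ and $\CG^{G,T,\on{ratio}}$, the metaplectic semi-infinite IC sheaf together with its factorization and Hecke structures (\thmref{t:Hecke ICs fact}), the factorization algebras $\on{Vac}_{\Whit,\Ran}$ and $\Omega_q^{\Whit_{!*}}$, and the $\Gr_T\rightsquigarrow\Conf$ comparison — behaves correctly under the closed embeddings $\Gr^{\omega^\rho}_{G,x}\hookrightarrow \Gr^{\omega^\rho}_{\wt G,x}$ and $\Conf_{\infty\cdot x}\hookrightarrow \wt\Conf_{\infty\cdot x}$, and that these compatibilities assemble to the asserted identification of functors. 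Apart from this, the remaining work — the shift cancellation flagged in the first paragraph, and the matching of the gerbe trivializations used to normalize the objects $\CM^{\mu,!*}$ on the two sides — is routine given the constructions of Parts IV–VI.
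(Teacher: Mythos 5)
Your proposal is essentially the paper's own argument: reduce by the isogeny trick of \secref{sss:red to sc} to the case where the derived group of $H$ is simply connected, write $\mu=\lambda+\gamma$ with $\lambda$ restricted and $\gamma\in\Lambda^\sharp$, use the $\Rep(T_H)$-equivariance of $\Phi^{\bHecke}_{\on{Fact}}$ together with $\Phi^{\bHecke}_{\on{Fact}}\circ\ind_{\bHecke}\simeq\Phi_{\on{Fact}}$, and conclude from \corref{c:irred pres} and \secref{sss:action of lattice 3}. You flag more explicitly than the paper the shift cancellation and the naturality of $\Phi^{\bHecke}_{\on{Fact}}$ under isogenies, but the route is the same.
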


\begin{proof}

As in \secref{sss:red to sc} and using \secref{sss:isogenies conf fact}, 
we can reduce the assertion of the proposition to the case when $H$
is such that its derived group is simply-connected. In this case we can write 
$\mu=\lambda+\gamma$, where $\lambda\in \Lambda^+$ is restricted and $\gamma\in \Lambda^\sharp$.
We have:
\begin{multline*}
\Phi^{\bHecke}_{\on{Fact}}(\bCM^{\mu,!*}_{\Whit})\simeq 
\Phi^{\bHecke}_{\on{Fact}}(\bCM^{\lambda,!*}_{\Whit})\otimes \sfe^\gamma)\simeq
\Phi^{\bHecke}_{\on{Fact}}(\bCM^{\lambda,!*}_{\Whit}))\star \delta_{t^\gamma}\simeq \\
\simeq \Phi^{\bHecke}_{\on{Fact}}(\ind_{\bHecke}(\bCM^{\lambda,!*}_{\Whit}))\star \delta_{t^\gamma}\simeq
\Phi_{\on{Fact}}(\bCM^{\lambda,!*}_{\Whit})\star \delta_{t^\gamma}\simeq
\CM^{\lambda,!*}_{\on{Fact}}\star \delta_{t^\gamma}\simeq \CM^{\mu,!*}_{\on{Fact}}.
\end{multline*}
\end{proof}

\begin{cor} \label{c:Phi Hecke exact} \hfill 

\smallskip

\noindent{\em(a)} The functor $\Phi^{\bHecke}_{\on{Fact}}$ is t-exact. 

\smallskip

\noindent{\em(b)} The functor $\Phi_{\on{Fact}}$ is t-exact. 

\smallskip

\noindent{\em(c)} The functor $\Phi$ is t-exact. 

\end{cor}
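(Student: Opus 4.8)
The plan is to establish part (a) --- that $\Phi^{\bHecke}_{\on{Fact}}$ is $t$-exact --- and to deduce (b) and (c) formally from it. Indeed, since $\Phi_{\on{Fact}}\simeq \Phi^{\bHecke}_{\on{Fact}}\circ \ind_{\bHecke}$ and $\ind_{\bHecke}$ is $t$-exact (\secref{sss:t on Hecke}, \secref{s:Hecke Whit categ}), part (b) follows; and since $\Phi\simeq \oblv_{\on{Fact}}\circ \Phi_{\on{Fact}}$ with $\oblv_{\on{Fact}}$ $t$-exact (\propref{p:properties of t fact}(a)), part (c) follows.

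For right $t$-exactness of $\Phi^{\bHecke}_{\on{Fact}}$, I would use that the $t$-structure on $\bHecke(\Whit_{q,x}(G))$ is Artinian (\propref{p:irred Hecke Whit}(a)); an Artinian $t$-structure is in particular compactly generated and coherent, so a compact connective object is a finite iterated extension of shifts $L[n]$, $n\geq 0$, of irreducibles $L$ of the heart, and hence $(\bHecke(\Whit_{q,x}(G)))^{\leq 0}$ is generated under colimits, extensions and non-positive shifts by the irreducibles $\{\bCM^{\mu,!*}_{\Whit}\}_{\mu\in\Lambda}$. As $\Phi^{\bHecke}_{\on{Fact}}$ is continuous and exact and $(\Omega_q^{\on{small}}\on{-FactMod})^{\leq 0}$ is closed under colimits, extensions and non-positive shifts, it suffices to know $\Phi^{\bHecke}_{\on{Fact}}(\bCM^{\mu,!*}_{\Whit})\in (\Omega_q^{\on{small}}\on{-FactMod})^\heartsuit$, which is exactly \corref{c:irred pres Hecke}.

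For left $t$-exactness I would argue with Verdier duality. The first step is that $\Phi^{\bHecke}_{\on{Fact}}$ carries $\bHecke(\Whit_{q,x}(G))^c$ into $\Omega_q^{\on{small}}\on{-FactMod}^{\on{loc.c}}$ and intertwines $\BD^{\on{Verdier}}$ with $\BD^{\on{Verdier}}$. This holds on the compact generators $\ind_{\bHecke}(\CF\otimes V)$, $\CF\in\Whit_{q,x}(G)^c$: applying $\oblv_{\on{Fact}}$ reduces the statement to the non-Hecke theorem \thmref{t:duality pres}, using the $\Rep(T_H)$-equivariance of the functors together with the Verdier-compatibility of $\ind_{\bHecke}$ (\secref{sss:duality on Hecke graded}) and of $\oblv_{\on{Fact}}$; it then propagates to all of $\bHecke(\Whit_{q,x}(G))^c$ since every compact object is a retract of a finite colimit of these generators and $\Omega_q^{\on{small}}\on{-FactMod}^{\on{loc.c}}$, $\BD^{\on{Verdier}}$ are stable under those operations. (Making sense of Verdier duality on $\Omega_q^{\on{small}}\on{-FactMod}^{\on{loc.c}}$ requires the identification $\BD^{\on{Verdier}}(\Omega_q^{\on{small}})\simeq \Omega_{q^{-1}}^{\on{small}}$, a short computation: $\Omega_q^{\on{small}}$ is the Goresky--MacPherson extension of the rank-one sign-type local system of \secref{s:Omega small}, which is Verdier self-dual up to the passages $\CG^\Lambda\rightsquigarrow(\CG^\Lambda)^{-1}$ and $\sff^{i,\on{fact}}\rightsquigarrow(\sff^{i,\on{fact}})^{-1}$.) Granting this, for a compact coconnective $\CF$ the object $\BD^{\on{Verdier}}(\CF)$ is compact connective, hence $\Phi^{\bHecke}_{\on{Fact}}(\BD^{\on{Verdier}}(\CF))$ is connective by right $t$-exactness, hence $\BD^{\on{Verdier}}(\Phi^{\bHecke}_{\on{Fact}}(\CF))$ is connective, hence $\Phi^{\bHecke}_{\on{Fact}}(\CF)$ is coconnective. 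Thus $\Phi^{\bHecke}_{\on{Fact}}$ is left $t$-exact on compacts; to pass to an arbitrary coconnective $M$, I would write $M\simeq \on{colim}_i\, \tau^{\geq 0}M_i$ as a filtered colimit of compact coconnective objects (possible since the $t$-structure is coherent, so truncations preserve compactness, and this colimit recovers $M$ on cohomology), and use that $(\Omega_q^{\on{small}}\on{-FactMod})^{\geq 0}$ is closed under filtered colimits while $\Phi^{\bHecke}_{\on{Fact}}$ is continuous.

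The genuine obstacle is lodged entirely in \thmref{t:duality pres}: its proof (Part VII) --- via the global/Zastava interpretation of the Jacquet functor and the ULA property of the global metaplectic IC sheaf on $\ol\Bun_{B^-}$ --- is where the real work lies. Everything above is soft, relying only on the Artinian-ness of the two $t$-structures, on \corref{c:irred pres Hecke}, and on the formal behavior of $\ind_{\bHecke}$ and $\oblv_{\on{Fact}}$ with respect to $t$-structures and Verdier duality.
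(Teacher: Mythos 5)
Your reduction of (b) and (c) to (a) matches the paper, and your argument for right $t$-exactness --- Artinian $t$-structure on $\bHecke(\Whit_{q,x}(G))$ plus \corref{c:irred pres Hecke} --- is exactly the paper's.

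The issue is your treatment of left $t$-exactness. The same Artinian-ness argument already gives it, which is what the paper means by ``immediate''. In an Artinian $t$-structure a compact coconnective object is a finite iterated extension of shifts $L[n]$ with $n\leq 0$ of irreducibles, so by \corref{c:irred pres Hecke} its image under $\Phi^{\bHecke}_{\on{Fact}}$ is coconnective; an arbitrary coconnective $\CF$ is a filtered colimit of compact coconnective objects (the $t$-structure being compactly generated and coherent, write $\CF\simeq\underset{i}{\on{colim}}\,\tau^{\geq 0}\CF_i$ with $\CF_i$ compact); and $(\Omega_q^{\on{small}}\on{-FactMod})^{\geq 0}$ is closed under filtered colimits, because $\oblv_{\on{Fact}}$ is continuous, $t$-exact and conservative (\propref{p:properties of t fact}) and the perverse $t$-structure on $\Shv_{\CG^\Lambda}(\Conf_{\infty\cdot x})$ has this property. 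There is nothing more to do, and no Verdier duality is needed.

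Your Verdier detour, besides being unnecessary, has a genuine gap at the ``propagation'' step. An object-by-object isomorphism $\BD^{\on{Verdier}}\circ\Phi^{\bHecke}_{\on{Fact}}\simeq\Phi^{\bHecke}_{\on{Fact}}\circ\BD^{\on{Verdier}}$ on the generators $\ind_{\bHecke}(\CF\otimes\sfe^\gamma)$ does not propagate to all compact objects unless it is exhibited as a natural transformation compatible with the Hecke structure. Producing that compatibility --- showing that the duality data on $\Phi$ from \thmref{t:duality pres} coheres with the $\Rep(H)$-equivariance data from \thmmref{t:Hecke ppty of J} --- is precisely the content of \thmref{t:duality and Hecke}, proved in \secref{s:Hecke Zast} (Part VII) via the global interpretation of $\Phi$ and the Hecke property of $_{\Bun_T}\!\ICs_{q,\on{glob}}$; it is not a formal consequence of \thmref{t:duality pres} together with the Verdier-compatibility of $\ind_{\bHecke}$ and $\oblv_{\on{Fact}}$. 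So as written you are either relying on \thmref{t:duality pres Hecke}, which appears several sections later and is hard-won, or you have a hole where the propagation is asserted.
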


\begin{proof}

Point (a) is immediate from \corref{c:irred pres Hecke} using the fact that the t-structure on 
the category $\bHecke(\Whit_{q,x}(G))$ is Artinian (the latter by \propref{p:irred Hecke Whit}(a)).  

\medskip

Point (b)
follows from point (a) since
$$\Phi_{\on{Fact}}\simeq \Phi^{\bHecke}_{\on{Fact}}\circ \ind_{\bHecke},$$
where $\ind_{\bHecke}$ is also t-exact.

\medskip

Point (c) is logically equivalent to point (b). 

\end{proof}

\begin{rem} \label{r:Phi exact}
We will give an alternative proof of \corref{c:Phi Hecke exact}(c) in \secref{ss:proof of GM}, in the course of the proof
of \thmref{t:irred pres}.
\end{rem}

\ssec{Computation of stalks and proofs of Theorems \ref{t:GM from open} and \ref{t:irred pres}}  \label{ss:proof of GM}

In this subsection we will assume \thmref{t:duality pres} and deduce from it Theorems \ref{t:GM from open} and \ref{t:irred pres}. 

\sssec{}    \label{sss:stalk Phi vac}

In order to prove \thmref{t:GM from open}, using the Verdier autoduality statement established in \thmref{t:duality pres}
and factorization, it suffices to show the following:

\medskip

For $\lambda\in \Lambda^{\on{neg}}$ with $\lambda\neq -\alpha_i$, the !-fiber of $\Phi(W^0)$ at the 
point $\lambda\cdot x\in \Conf$ lives in cohomological degrees $\geq 2$. 

\medskip

We will now derive a general expression for the !-fiber, denoted 
$$\Phi(\CF)_{\lambda\cdot x}\in \Shv_{\CG^G_{\lambda\cdot x}}(\on{pt})=:\Vect_{\CG^G_{\lambda\cdot x}}$$
of $\Phi(\CF)$ at $\lambda\cdot x\in \Conf_{\infty\cdot x}$
for $\lambda\in \Lambda$ and $\CF\in \Whit_q(G)$. By factorization, this would give an answer to the !-fiber of $\Phi(\CF)$ at 
any other point of $\Conf_{\infty\cdot x}$. 

\medskip

We will show that for any $\CF\in (\Whit_q(G))^\heartsuit$, 
the object $\Phi(\CF)_{\lambda\cdot x}$ lives in cohomological degrees $\geq 0$.
Taking into account \thmref{t:duality pres} and \corref{c:duality irred}, this will imply that the functor $\Phi$ is t-exact
(see Remark \ref{r:Phi exact} above).

\medskip

Furthermore, we will show that for $\mu$ restricted, $\Phi(W^{\mu,!*})_{\lambda\cdot x}$ lives in cohomological degrees $\geq 1$
for $\lambda\neq \mu$. Using \thmref{t:duality pres} and \corref{c:duality irred}, this will imply \thmref{t:irred pres}.

\sssec{}

Recall the notation 
$$\ICslm_{q^{-1},x}\in \Shv_{(\CG^G)^{-1}\otimes \CG^\Lambda_{\lambda\cdot x}}(\Gr^{\omega^\rho}_{G,\Ran}),$$ 
see \secref{sss:shifted IC}. 

\medskip

Unwinding the definitions, we obtain:
\begin{equation} \label{e:fiber of Phi}
\Phi(\CF)_{\lambda\cdot x}\simeq \Gamma(\Gr^{\omega^\rho}_{G,x},\CF\sotimes \ICslm_{q^{-1},x})\in \Vect_{\CG^G_{\lambda\cdot x}}.
\end{equation}

\sssec{}

We now claim: 

\begin{prop}   \label{p:fiber of Phi top} For $\CF\in (\Whit_q(G))^\heartsuit$, the cohomologies 
$H^i(\Gr^{\omega^\rho}_{G,x},\CF\sotimes \ICslm_{q^{-1},x})$ satisfy:

\smallskip

\noindent{\em(a)} $H^i=0$ for $i<0$. 

\smallskip

\noindent{\em(b)} $H^0$ identifies with $H^0(S^{-,\lambda},\CF|_{S^{-,\lambda}}[\langle \lambda,2\check\rho\rangle])$.

\smallskip

\noindent{\em(c)} $H^1$ injects into 
$H^1(S^{-,\lambda},\CF|_{S^{-,\lambda}}[\langle \lambda,2\check\rho\rangle])$.
\end{prop}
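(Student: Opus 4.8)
The plan is to analyze the object $\CF\sotimes \ICslm_{q^{-1},x}$ stratum by stratum with respect to the stratification of $\ol{S}^{-,\lambda}$ (the support of $\ICslm_{q^{-1},x}$) by the orbits $S^{-,\nu}$ with $\nu\geq \lambda$ in the standard order. The global cohomology $\Gamma(\Gr^{\omega^\rho}_{G,x},\CF\sotimes\ICslm_{q^{-1},x})$ is then built from the contributions $\Gamma(S^{-,\nu}, (\CF\sotimes\ICslm_{q^{-1},x})|_{S^{-,\nu}})$ via the usual spectral sequence / filtration argument, so the whole estimate reduces to controlling the cohomological amplitude of each stratum contribution, together with the amplitude of the cohomology of $S^{-,\nu}$ itself.

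First I would record the input bounds. On the open stratum $S^{-,\lambda}$ we have, via the trivialization of $(\CG^G)^{-1}\otimes\CG^\Lambda_{\lambda\cdot x}$ specified in \secref{sss:shifted IC}, an identification $\ICslm_{q^{-1},x}|_{S^{-,\lambda}}\simeq\omega_{S^{-,\lambda}}[\langle\lambda,2\check\rho\rangle]$, so this stratum contributes exactly $\Gamma(S^{-,\lambda},\CF|_{S^{-,\lambda}})[\langle\lambda,2\check\rho\rangle]$; since $\CF\in(\Whit_q(G))^\heartsuit$ and $S^{-,\lambda}\cap S^\mu$ has dimension bounded by $\langle\mu-\lambda,\check\rho\rangle$ while $\CF|_{S^\mu}$ is (up to the character twist) a shifted local system in degree $-\langle\mu,2\check\rho\rangle$, the fiberwise estimate of \secref{sss:2nd approx} (applied now on $N^-$-orbits rather than $N$-orbits) shows this term lives in degrees $\geq 0$. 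That already gives parts (b) and (c) \emph{provided} all the deeper strata $S^{-,\nu}$, $\nu>\lambda$, contribute only in strictly positive degrees — their contribution to $H^0$ must vanish and to $H^1$ must vanish in order for the $H^0$ and $H^1$ of the total complex to be governed by the open stratum. So the crux is: for $\nu>\lambda$, the complex $\Gamma\bigl(\Gr^{\omega^\rho}_{G,x},(\CF\sotimes \ICslm_{q^{-1},x})|_{S^{-,\nu}}\text{ (as a costratum)}\bigr)$ lives in degrees $\geq 2$.

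For this I would combine two facts. On one hand, \propref{p:sharper estimate} tells us that the $!$-restriction of $\ICsm_{q^{-1},x}$ (hence of $\ICslm_{q^{-1},x}$, which is its translate by $t^\lambda$) to a stratum $S^{-,\nu}$ with $\nu\neq\lambda$ is of the form $\omega_{S^{-,\nu}}[-\langle\nu-\lambda,2\check\rho\rangle]\otimes \sK$ with $\sK\in\Shv_{\CG^\Lambda_{(\nu-\lambda)\cdot x}}(\on{pt})$ concentrated in degrees $\geq 2$; note \lemref{l:non sharp strata} further forces $\nu-\lambda\in\Lambda^\sharp$ or else the restriction vanishes outright. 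On the other hand $\CF|_{S^{-,\nu}}$, being $(\fL(N^-)^{\omega^\rho}_x,\chi_N)$-twisted-equivariant with $\CF$ in the heart, is again a shifted local system, and the dimension estimate $\dim(S^{-,\nu}\cap S^\mu)\leq\langle\mu-\nu,\check\rho\rangle$ bounds the cohomological amplitude of $\Gamma(S^{-,\nu},-)$ from above. Assembling: the $!$-tensor $\CF\sotimes\ICslm_{q^{-1},x}$ restricted to $S^{-,\nu}$ is a shifted local system tensored with $\sK$ (which sits in degrees $\geq 2$), and the cohomology of a shifted local system of the relevant shift on $S^{-,\nu}$ lands in degrees $\geq 0$ by the same perversity/dimension count as above; the extra $[\geq 2]$ from $\sK$ then pushes the total into degrees $\geq 2$. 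The only subtlety is a bookkeeping one — matching the $[\langle\lambda,2\check\rho\rangle]$ normalization of $\ICslm$ against the $[-\langle\nu-\lambda,2\check\rho\rangle]$ of \propref{p:sharper estimate} and the $-\langle\mu,2\check\rho\rangle$ weight of $\CF$, so that the shifts cancel to leave precisely a nonnegative amplitude on the open stratum and a $\geq 2$ amplitude on the rest. I expect this degree-counting reconciliation across the three normalizations to be the main place where care is needed; everything else is the standard "cohomology of a filtered complex is controlled by the cohomology of the associated graded" argument.

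Finally, for part (a) one only needs the weaker statement that \emph{every} stratum contributes in degrees $\geq 0$ (the open one giving $\geq 0$, the deeper ones $\geq 2\geq 0$), which follows a fortiori from the same analysis; so (a) is a free by-product. I would organize the write-up so that the single amplitude lemma — "for $\nu\geq\lambda$, $\Gamma(S^{-,\nu},(\CF\sotimes\ICslm_{q^{-1},x})|_{S^{-,\nu}})$ lives in degrees $\geq 0$, and in degrees $\geq 2$ if $\nu\neq\lambda$" — is proved once, and then (a), (b), (c) are read off from the filtration it induces on $\Gamma(\Gr^{\omega^\rho}_{G,x},\CF\sotimes\ICslm_{q^{-1},x})$.
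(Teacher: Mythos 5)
Your approach is identical to the paper's: Cousin decomposition of $\ol{S}^{-,\lambda}$ by the $N^-$-orbits $S^{-,\nu}$, using \propref{p:sharper estimate} to produce a factor $\sK$ in degrees $\geq 2$ on each deeper stratum, and a dimension count over $S^\mu\cap S^{-,\nu}$ for the amplitude of $\Gamma(S^{-,\nu},\CF|_{S^{-,\nu}})$. However, the shift formula you wrote for $\ICslm_{q^{-1},x}|_{S^{-,\nu}}$ is wrong and, if used as stated, derails the degree count that you yourself flag as the delicate step.

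You wrote $\omega_{S^{-,\nu}}[-\langle\nu-\lambda,2\check\rho\rangle]\otimes\sK$; the correct formula, the one the paper uses, is $\omega_{S^{-,\nu}}[\langle\nu,2\check\rho\rangle]\otimes\sK$. There are two separate errors. First, \propref{p:sharper estimate} is stated for the $N$-side, where the stratum index $\lambda$ runs over $\Lambda^{\on{neg}}-0$, so $-\langle\lambda,2\check\rho\rangle$ is a positive number (the codimension of $S^\lambda$ in $\ol{S}^0$); on the $N^-$-side the index $\mu$ runs over $\Lambda^{\on{pos}}-0$, so the shift of $\ICsm_{q^{-1},x}|_{S^{-,\mu}}$ is $[\langle\mu,2\check\rho\rangle]$, again a positive codimension, not $[-\langle\mu,2\check\rho\rangle]$: transplanting the $N$-formula verbatim flips the sign. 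Second, $\ICslm_{q^{-1},x}=t^\lambda\star\ICsm_{q^{-1},x}[\langle\lambda,2\check\rho\rangle]$ carries an extra cohomological shift beyond the translate by $t^\lambda$, which your formula omits. Combining the two, the shift on $S^{-,\nu}$ is $[\langle\nu-\lambda,2\check\rho\rangle+\langle\lambda,2\check\rho\rangle]=[\langle\nu,2\check\rho\rangle]$. With this correction the dimension count over $S^\mu\cap S^{-,\nu}$ gives $H^j(S^{-,\nu},\CF|_{S^{-,\nu}})=0$ for $j<\langle\nu,2\check\rho\rangle$; the two shifts cancel, and the $\sK$-factor alone forces each deeper stratum's contribution into degrees $\geq 2$, as you intended. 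With your formula the bound would instead be $\geq\langle 2\nu-\lambda,2\check\rho\rangle+2$, which is unbounded below as $\lambda$ varies, so parts (b) and (c) would not follow.
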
 

\begin{proof}

Note that the support of $\ICslm_{q^{-1},x}|_{S^{-,\lambda}}$ is $\ol{S}^{-,\lambda}$, which is the union of
$S^{-,\lambda'}$ for $\lambda'\in \lambda+\Lambda^{\on{pos}}$. Using the Cousin decomposition, it suffices
to show that for any $\lambda'\in \lambda+(\Lambda^{\on{pos}}-0)$, the cohomologies 
$$H^i(S^{-,\lambda'},\CF|_{S^{-,\lambda'}}\sotimes \ICslm_{q^{-1},x}|_{S^{-,\lambda'}})$$
vanish in degrees $\leq 1+\langle \lambda,2\check\rho\rangle$ and that the cohomolgies 
\begin{equation} \label{e:cohomology lambda}
H^i(S^{-,\lambda},\CF|_{S^{-,\lambda}}\sotimes \ICslm_{q^{-1},x}|_{S^{-,\lambda}})
\end{equation}
vanish in degrees $<\langle \lambda,2\check\rho\rangle$. 

\medskip

Note that by \propref{p:sharper estimate}, the restriction
$\ICslm_{q^{-1},x}|_{S^{-,\lambda'}}$ is isomorphic to 
$$\omega_{S^{-,\lambda'}}[\langle \lambda',2\check\rho\rangle]\otimes \sK_{\lambda,\lambda'},$$
where $\sK_{\lambda,\lambda'}$ is an object of the category 
$\Shv_{\CG^\Lambda_{\lambda\cdot x}\otimes \CG^\Lambda_{-\lambda'\cdot x}}(\on{pt})$ that lives in cohomological degrees $>1$. 

\medskip

Hence, it suffices to show that the cohomologies 
$$H^i(S^{-,\lambda'},\CF|_{S^{-,\lambda'}})\in \Shv_{\CG^\Lambda_{\lambda'\cdot x}}(\on{pt})$$
vanish in degrees $<\langle \lambda',2\check\rho\rangle$. 

\medskip

Applying the Cousin decomposition with respect to the orbits $S^\mu$, it suffices to show that the cohomologies 
$$H^i(S^\mu\cap S^{-,\lambda'}, \CF|_{S^\mu\cap S^{-,\lambda'}})$$
vanish in degrees $<\langle \lambda',2\check\rho\rangle$. 

\medskip

By the definition of the t-structure on $\Whit_q(G)$, the restriction $\CF|_{S^\mu}$ is isomorphic to
$$\chi_N\otimes \omega_{S^\mu}[-\langle \mu,2\check\rho\rangle]\otimes \sK'_\mu,$$
where $\sK'_\mu$ is object of the category $\Shv_{\CG^\Lambda_{\mu\cdot x}}(\on{pt})$ 
that lives in cohomological degrees $\geq 0$. 

\medskip

The discrepancy of the identifications
$$\CG^\Lambda_{\mu\cdot x} \simeq \CG^\Lambda|_{S^\mu\cap S^{-,\lambda'}}\simeq \CG^\Lambda_{\lambda'\cdot x}$$
is given by a local system in 
$\Shv_{\CG^\Lambda_{\lambda'\cdot x} \otimes (\CG^\Lambda_{\mu\cdot x})^{-1}}(S^\mu\cap S^{-,\lambda'})$. 
Hence, up to tensoring by \emph{lisse} sheaves, the restriction $\CF|_{S^\mu\cap S^{-,\lambda'}}$ identifies with 
$$\omega_{S^\mu\cap S^{-,\lambda'}}[-\langle \mu,2\check\rho\rangle]\otimes \sK'_\mu,$$ and hence  
lives in perverse cohomological degrees
$$\geq \langle \mu,2\check\rho\rangle -\dim(S^\mu\cap S^{-,\lambda'}).$$

Hence, its cohomologies live in degrees
\begin{multline*}
\geq \langle \mu,2\check\rho\rangle -\dim(S^\mu\cap S^{-,\lambda'})-\dim(S^\mu\cap S^{-,\lambda'})= \\
=\langle \mu,2\check\rho\rangle -2\dim(S^\mu\cap S^{-,\lambda'})=\langle \mu,2\check\rho\rangle - \langle \mu-\lambda',2\check\rho\rangle=
\langle \lambda',2\check\rho\rangle,
\end{multline*}
as required. 

\medskip

The cohomologies \eqref{e:cohomology lambda} are analyzed similarly. 

\end{proof}

\sssec{}

We are now ready to prove the cohomological estimates stated in \secref{sss:stalk Phi vac}. 
The fact that for any $\CF\in (\Whit_q(G))^\heartsuit$, 
the object $\Phi(\CF)_{\lambda\cdot x}$ lives in cohomological degrees $\geq 0$ follows from
\propref{p:fiber of Phi top}(a). 

\medskip

Let us take $\CF=W^{\mu,!*}$ with $\mu$ restricted. Applying \propref{p:fiber of Phi top}(b), we need to
show that 
$$H^0(S^\mu\cap S^{-,\lambda},\chi_N|_{S^\mu\cap S^{-,\lambda}}\otimes \Psi_q\otimes \omega_{S^\mu\cap S^{-,\lambda}}
[\langle \lambda-\mu,2\check\rho\rangle])=0 \text{ if } \lambda\neq \mu$$
where $\Psi_q$ is the local system on $S^\mu \cap S^{-,\lambda}$ from \secref{sss:Psiq}. 
However, this is the statement of \thmref{t:restr}.

\sssec{}

Finally, let us take $\CF=W^0$. Applying \propref{p:fiber of Phi top}(c), for the proof of \thmref{t:GM from open}
it remains to show that the \emph{sup-bottom} cohomology 
$$H^1(S^0\cap S^{-,\lambda},\chi_N|_{S^0\cap S^{-.\lambda}}\otimes \Psi_q\otimes \omega_{S^0\cap S^{-,\lambda}}
[\langle \lambda,2\check\rho\rangle])$$
vanishes. 

\medskip

This vanishing was established in \cite[Theorem 1.1.5]{Lys} under the (mild) assumption that the order of each $q_i$ is 
large enough. In a subsequent publication we will give a proof in the general case (assuming $q_i\neq 1$). 

\section{Statement of the main theorem}  \label{s:statement}

In this section we will finally formulate our main theorem, which compares $\bHecke(\Whit_{q,x}(G))$
with a certain modification of the category $\Omega^{\on{small}}_q\on{-FactMod}$.

\ssec{Renormalization of $\Omega^{\on{small}}_q\on{-FactMod}$} \label{ss:renorm fact}

In this subsection we will introduce a \emph{renormalized version} of $\Omega^{\on{small}}_q\on{-FactMod}$, denoted 
$\Omega^{\on{small}}_q\on{-FactMod}^{\on{ren}}$. It is this
modified version that will end up being equivalent to $\bHecke(\Whit_{q,x}(G))$. 

\medskip

The nature of this modification is that we \emph{declare a larger class of objects as compact}; it mimics the procedure 
that produces $\IndCoh$ from $\QCoh$, see \cite[Sect. 1]{Ga3}. 

\sssec{} 

We define $\Omega^{\on{small}}_q\on{-FactMod}^{\on{ren}}$ to be the ind-completion of the non-cocomplete full subcategory of
$\Omega^{\on{small}}_q\on{-FactMod}$ that consists of objects that are finite extensions of (shifts of) the objects $\CM^{\mu,!*}_{\Conf}$.

\medskip

Ind-extension of the tautological inclusion defines a functor
\begin{equation} \label{e:un-ren}
\on{un-ren}: \Omega^{\on{small}}_q\on{-FactMod}^{\on{ren}}\to \Omega^{\on{small}}_q\on{-FactMod}.
\end{equation} 

\begin{rem}
Note that the functor $\on{un-ren}$ is \emph{not fully faithful}, even though it is such when restricted
to the full subcategory of compact objects.
\end{rem}

\sssec{}

Recall that the category $\Omega^{\on{small}}_q\on{-FactMod}$ is compactly generated by the objects $\CM^{\mu,!}_{\Conf}$
for $\mu\in \Lambda$. We will need the following result:

\begin{prop} \label{p:fin length}
The objects $\CM^{\mu,!}_{\Conf}$ and $\CM^{\mu,*}_{\Conf}$ have finite length.
\end{prop}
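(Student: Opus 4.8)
Looking at this proposition, I need to show that the standard and costandard objects $\CM^{\mu,!}_{\Conf}$ and $\CM^{\mu,*}_{\Conf}$ in $\Omega^{\on{small}}_q\on{-FactMod}$ have finite length.

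\textbf{Approach.} The plan is to exploit the highest-weight structure of $(\Omega^{\on{small}}_q\on{-FactMod})^\heartsuit$ together with the perverse/holonomic nature of $\Omega^{\on{small}}_q$ and the finite-dimensionality of the geometry of $\Conf_{\leq \mu\cdot x}$. Recall from \secref{ss:t on fact} (specifically \propref{p:properties of t fact}, \propref{p:D of heart}, and the ensuing discussion) that $(\Omega^{\on{small}}_q\on{-FactMod})^\heartsuit$ is a highest weight category with standard objects $\CM^{\mu,!}_{\Conf}$, costandard objects $\CM^{\mu,*}_{\Conf}$, and irreducibles $\CM^{\mu,!*}_{\Conf}$, all lying in the heart. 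By duality considerations and the symmetry between $!$ and $*$, it suffices to treat $\CM^{\mu,*}_{\Conf}$ (or equivalently $\CM^{\mu,!}_{\Conf}$).

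\textbf{Key steps.} First I would reduce to a statement purely about $\Shv_{\CG^\Lambda}(\Conf_{\leq\mu\cdot x})$: since $\CM^{\mu,*}_{\Conf}$ lies in $\CA\on{-FactMod}_{\leq\mu}$ and the forgetful functor $\oblv_{\on{Fact}}$ is t-exact and conservative (\propref{p:properties of t fact}(a)), a Jordan--Hölder filtration of $\CM^{\mu,*}_{\Conf}$ in the heart maps to a filtration of $\oblv_{\on{Fact}}(\CM^{\mu,*}_{\Conf})$ by perverse subquotients in $\Shv_{\CG^\Lambda}(\Conf_{\leq\mu\cdot x})^\heartsuit$. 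Second, I would note that $\oblv_{\on{Fact}}(\CM^{\mu,*}_{\Conf}) = (\iota_\mu)_*(\jmath_\mu)_*(\sfe)$ is, by the factorization property, obtained from $\Omega^{\on{small}}_q$ by the $*$-pushforward along the affine open embedding $\jmath_\mu$ followed by the closed embedding $\iota_\mu$; since $\Omega^{\on{small}}_q$ is a perverse sheaf on the finite-type scheme $\Conf^{\lambda}$ (for $\lambda$ in the relevant range), $\oblv_{\on{Fact}}(\CM^{\mu,*}_{\Conf})$ restricted to each connected component $(\Conf_{\leq\mu\cdot x})^\lambda \simeq X^{\lambda-\mu}$ is a single perverse sheaf on a finite-type scheme, hence of finite length in $\Shv_{\CG^\Lambda}((\Conf_{\leq\mu\cdot x})^\lambda)^\heartsuit$. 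Third, I would observe that the Jordan--Hölder constituents of $\CM^{\mu,*}_{\Conf}$ in the heart are all of the form $\CM^{\mu',!*}_{\Conf}$ with $\mu' \leq \mu$ (by the highest weight/recollement structure: the support of $\oblv_{\on{Fact}}(\CM^{\mu,*}_{\Conf})$ is contained in $\Conf_{\leq\mu\cdot x}$, and the stratum $\Conf_{=\mu'\cdot x}$ contributes only the irreducible $\CM^{\mu',!*}_{\Conf}$). Combining: each $\CM^{\mu',!*}_{\Conf}$ appearing has $\oblv_{\on{Fact}}(\CM^{\mu',!*}_{\Conf})$ a nonzero perverse sheaf supported on $\overline{\Conf_{=\mu'\cdot x}}$, and by conservativity of $\oblv_{\on{Fact}}$ each such constituent contributes at least one JH factor to $\oblv_{\on{Fact}}(\CM^{\mu,*}_{\Conf})$ on the appropriate component; since the latter has finitely many JH factors (being a perverse sheaf on each of finitely many relevant finite-type components — note only $\lambda\in\mu+\Lambda^{\on{neg}}$ with $\lambda-\mu$ in a bounded range actually receive nonzero restriction because $\Omega^{\on{small}}_q$ is supported on $\Conf$, i.e. total degree strictly negative, forcing $\mu-\lambda\in\Lambda^{\on{pos}}-0$... ), $\CM^{\mu,*}_{\Conf}$ has finite length.

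\textbf{Main obstacle.} The subtle point is controlling which components $(\Conf_{\leq\mu\cdot x})^\lambda$ actually receive a nonzero contribution: a priori infinitely many $\lambda \in \mu + \Lambda^{\on{neg}}$ index components of $\Conf_{\leq\mu\cdot x}$. I expect the resolution to come from the support condition on $\Omega^{\on{small}}_q$ — it is a factorization algebra on $\Conf$ (total degree in $\Lambda^{\on{neg}}-0$), so by the factorization isomorphism defining $\CM^{\mu,*}_{\Conf}$, its $!$-fiber at a point $\lambda\cdot x$ can be nonzero only when $\mu - \lambda$ is expressible as a sum of elements of $\Lambda^{\on{neg}}-0$ together with whatever comes from $\Omega^{\on{small}}_q$ at points away from $x$; combined with properness/affineness of the relevant maps and the fact that $\Omega^{\on{small}}_q$ restricted to $\overset{\circ}\Conf$ is a rank-one local system, this should pin down that only finitely many JH constituents $\CM^{\mu',!*}_{\Conf}$ occur. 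I would make this precise by the stalk computation: $\CHom(\CM^{\mu',!}_{\Conf},\CM^{\mu,*}_{\Conf})$ is the $!$-fiber of $\oblv_{\on{Fact}}(\CM^{\mu,*}_{\Conf})$ at $\mu'\cdot x$ (see \eqref{e:mu fiber}), which is a finite-dimensional complex (stalk of a perverse sheaf on a finite-type scheme) and vanishes for all but finitely many $\mu'$; since $\{\CM^{\mu',!}_{\Conf}\}$ generate and multiplicities of irreducibles are detected by these Homs, finite length follows.
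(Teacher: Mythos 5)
Your proposal attempts an intrinsic proof on the configuration-space side, but the key step does not work, and the failure is worth spelling out precisely.

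The crux of your argument is the final sentence: ``since $\{\CM^{\mu',!}_{\Conf}\}$ generate and multiplicities of irreducibles are detected by these Homs, finite length follows.'' This is false. By the orthogonality \eqref{e:Ext standard fact}, the complex $\CHom_{\CA\on{-FactMod}}(\CM^{\mu',!}_{\Conf},\CM^{\mu,*}_{\Conf})$ is $\sfe$ for $\mu'=\mu$ and zero otherwise. In any highest-weight category this vanishing holds no matter what the Jordan--H\"older multiplicities $[\CM^{\mu,*}_{\Conf}:\CM^{\mu',!*}_{\Conf}]$ are; Homs from standards against costandards never see those multiplicities. Concretely, if $\CM^{\mu,*}_{\Conf}$ had an infinite JH filtration (say the constituent $\CM^{\mu',!*}_{\Conf}$ occurred for infinitely many $\mu'<\mu$), the orthogonality \eqref{e:Ext standard fact} would still hold verbatim, so your stalk computation can neither detect nor exclude that scenario. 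The earlier step has a related problem: $\oblv_{\on{Fact}}(\CM^{\mu,*}_{\Conf})$ lives on $\Conf_{\leq\mu\cdot x}$, which has infinitely many connected components $(\Conf_{\leq\mu\cdot x})^\lambda$ indexed by $\lambda\in\mu+\Lambda^{\on{neg}}$, and the restriction is nonzero on \emph{all} of them (the $*$-extension from $\Conf_{=\mu\cdot x}$ hits every component). So the underlying sheaf has infinitely many perverse JH constituents, and conservativity of $\oblv_{\on{Fact}}$ gives you nothing in the needed direction: it says that infinitely many module JH factors would force infinitely many sheaf JH factors — but the sheaf already has infinitely many, so no contradiction arises.

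The paper's proof (given in \secref{sss:proof fin length}, with a quantum-group alternative for $k=\BC$ in \secref{sss:proof fin length quant}) does not attempt an intrinsic argument at all. Instead it transports the statement from the Whittaker side: Part VIII constructs objects $\bCM^{\mu,!}_{\Whit},\bCM^{\mu,*}_{\Whit}\in\bHecke(\Whit_{q,x}(G))$ and establishes by explicit representation-theoretic methods (\propref{p:Jordan-Holder}, \corref{c:Jordan-Holder}) that they admit finite filtrations by the irreducibles $\bCM^{\mu',!*}_{\Whit}$; then property (iii) identifies their images under $\Phi^{\bHecke}_{\on{Fact}}$ with $\CM^{\mu,!}_{\Conf},\CM^{\mu,*}_{\Conf}$, and since $\Phi^{\bHecke}_{\on{Fact}}$ is t-exact (\corref{c:Phi Hecke exact}) and sends irreducibles to irreducibles (\corref{c:irred pres Hecke}, \eqref{e:pres irred ren}), the finite filtration transports over. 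The finiteness is not obvious on the factorization side alone — it encodes the finite-dimensionality of the small quantum group — so it should not be a surprise that it needs external input.
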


The proof will be given in \secref{sss:proof fin length}.

\begin{rem}
When $k=\BC$, an alternative proof of \propref{p:fin length}
will be given in \secref{sss:proof fin length quant}.
\end{rem}

\sssec{}

By \propref{p:fin length}, the category of compact objects in $\Omega^{\on{small}}_q\on{-FactMod}$ can be thought 
as a full subcategory in the category of compact objects in $\Omega^{\on{small}}_q\on{-FactMod}^{\on{ren}}$. 

\medskip

Therefore, the procedure of ind-extension defines a fully faithful functor
$$\on{ren}:\Omega^{\on{small}}_q\on{-FactMod}\to \Omega^{\on{small}}_q\on{-FactMod}^{\on{ren}},$$
which is easily seen to be the left adjoint of the functor $\on{un-ren}$ of \eqref{e:un-ren}. 

\sssec{}

As in \cite[Sect. 1.2]{Ga3} we have: 

\begin{prop} \label{p:prop of ren} 
The category $\Omega^{\on{small}}_q\on{-FactMod}^{\on{ren}}$ has a t-structure, uniquely characterized by the property that 
an object is connective if and only if its image under the functor $\on{un-ren}$ is connective. Moreover, the functor
$\on{un-ren}$ has the following properties with respect to this t-structure:

\medskip

\noindent{\em(a)} It is t-exact;

\medskip

\noindent{\em(b)} It induces an equivalence 
$$(\Omega^{\on{small}}_q\on{-FactMod}^{\on{ren}})^{\geq n}\to (\Omega^{\on{small}}_q\on{-FactMod})^{\geq n}$$ for any $n$;

\medskip

\noindent{\em(c)} It induces an equivalence of the hearts.

\end{prop}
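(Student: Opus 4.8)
The plan is to follow the pattern of \cite[Sect.~1.2]{Ga3}, which constructs $\IndCoh$ from $\QCoh$ in exactly this manner, but I will spell out the key points in our situation. Recall that $\Omega^{\on{small}}_q\on{-FactMod}^{\on{ren}}$ is by definition the ind-completion of the non-cocomplete category $\CC_0$ spanned by finite extensions of shifts of the $\CM^{\mu,!*}_{\Conf}$, and that by \propref{p:fin length} the compact objects of $\Omega^{\on{small}}_q\on{-FactMod}$ (generated by the $\CM^{\mu,!}_{\Conf}$, or equivalently co-generated by the $\CM^{\mu,*}_{\Conf}$) have finite length, hence lie in $\CC_0$. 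This gives the fully faithful functor $\on{ren}$ and, as its right adjoint (by ind-extension of the inclusion $\CC_0\hookrightarrow \Omega^{\on{small}}_q\on{-FactMod}$), the functor $\on{un-ren}$.

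First I would define the candidate t-structure on $\Omega^{\on{small}}_q\on{-FactMod}^{\on{ren}}$ by declaring $(\Omega^{\on{small}}_q\on{-FactMod}^{\on{ren}})^{\leq 0}$ to be generated under colimits by the objects $\CM^{\mu,!*}_{\Conf}[n]$ for $n\geq 0$ together with shifts of objects of $\CC_0$ in non-positive degrees; concretely, the connective part is the smallest cocomplete subcategory containing $\CC_0\cap(\text{non-positive filtered pieces})$. Equivalently — and this is the formulation stated in the proposition — an object is connective iff its image under $\on{un-ren}$ is connective in $\Omega^{\on{small}}_q\on{-FactMod}$. To see these agree, note that $\on{un-ren}$ sends the generating compact objects of $(\Omega^{\on{small}}_q\on{-FactMod}^{\on{ren}})^{\leq 0}$ to connective objects (each $\CM^{\mu,!*}_{\Conf}$ lies in the heart by \secref{ss:t on fact}, and finite extensions of non-positive shifts stay connective), and conversely the preimage of a connective object can be built as a colimit of such; one uses here that $\on{un-ren}$ is conservative on the relevant generators and commutes with colimits. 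The coconnective part is then forced: $\CF\in(\Omega^{\on{small}}_q\on{-FactMod}^{\on{ren}})^{\geq 1}$ iff $\CHom(\CM^{\mu,!*}_{\Conf}[n],\CF)=0$ for all $\mu$ and $n\geq 0$.

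Next I would establish (a), (b), (c). For t-exactness of $\on{un-ren}$: right t-exactness is immediate from the definition of the t-structure; for left t-exactness one checks that $\on{un-ren}$ sends coconnective objects to coconnective ones, which follows because its left adjoint $\on{ren}$ is t-exact (it sends each $\CM^{\mu,!*}_{\Conf}$, viewed inside $\CC_0$, to itself, hence is right t-exact, and being fully faithful with image closed under the relevant truncations it is t-exact). For (b), the point is that on eventually coconnective objects $\on{ren}$ and $\on{un-ren}$ are mutually inverse: since both $\CM^{\mu,!*}_{\Conf}$ and the compact generators $\CM^{\mu,!}_{\Conf}$ have finite length (\propref{p:fin length}), every bounded-below object of $\Omega^{\on{small}}_q\on{-FactMod}$ is a (possibly infinite, but uniformly bounded-below) extension of shifts of irreducibles, hence lies in the essential image of $\on{ren}$, and on such objects the unit $\on{Id}\to \on{un-ren}\circ\on{ren}$ is an isomorphism; the standard truncation argument (as in \cite[Sect.~1.2]{Ga3}) upgrades this to the stated equivalence $(\Omega^{\on{small}}_q\on{-FactMod}^{\on{ren}})^{\geq n}\simeq (\Omega^{\on{small}}_q\on{-FactMod})^{\geq n}$. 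Statement (c) is the special case of (b) combined with right t-exactness.

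The main obstacle is \propref{p:fin length} itself — the finite length of $\CM^{\mu,!}_{\Conf}$ and $\CM^{\mu,*}_{\Conf}$ — since without it neither the functor $\on{ren}$ nor the comparison of eventually-coconnective parts makes sense. But the proposition is stated as an input (its proof is deferred to \secref{sss:proof fin length}), so for the present statement the genuine work is the somewhat delicate verification that the two descriptions of the connective subcategory coincide, i.e. that the abstractly-defined t-structure on the ind-completion is detected by the functor $\on{un-ren}$; this is where one must be careful that $\on{un-ren}$, while not fully faithful, is nonetheless conservative and t-exact enough to transport the t-structure. Everything else is a formal consequence of the $\IndCoh$-from-$\QCoh$ formalism.
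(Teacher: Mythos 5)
Your overall plan — that this is a verbatim instance of the $\IndCoh$-from-$\QCoh$ formalism — is exactly right; the paper itself proves the proposition by simply writing ``As in [Ga3, Sect.\ 1.2] we have'' with no further detail. Your attempt to spell out the verification is reasonable in spirit, but two of the steps as written are incorrect.

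First, in the argument for (a) you assert that $\on{ren}$ ``sends each $\CM^{\mu,!*}_{\Conf}$, viewed inside $\CC_0$, to itself.'' This is precisely what Remark~\ref{r:two meanings} in the paper warns against: $\on{ren}$ is defined as the ind-extension of the inclusion $\Omega^{\on{small}}_q\on{-FactMod}^c \hookrightarrow \CC_0$, not of $\CC_0$ itself. Since $\CM^{\mu,!*}_{\Conf}$ is \emph{not} compact in $\Omega^{\on{small}}_q\on{-FactMod}$, the value $\on{ren}(\CM^{\mu,!*}_{\Conf})$ is a nontrivial colimit that does \emph{not} lie in the heart of the renormalized category. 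The right t-exactness of $\on{ren}$ is correctly argued using the compact generators $\CM^{\mu,!}_{\Conf}$, which $\on{ren}$ does carry to the same-named objects of $\CC_0$, and which lie in the heart on both sides; these generate the connective part under colimits and shifts $[n]$, $n\geq 0$. (You have conflated the two distinct functors: the inclusion $\CC_0\hookrightarrow \Omega^{\on{small}}_q\on{-FactMod}^{\on{ren}}$ and the functor $\on{ren}$.)

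Second, the argument you give for (b) is garbled. You write that a bounded-below object of $\Omega^{\on{small}}_q\on{-FactMod}$ ``lies in the essential image of $\on{ren}$,'' but $\on{ren}$ maps \emph{into} the renormalized category, so this does not parse; and ``possibly infinite extension of shifts of irreducibles'' is not an operation that lands you in $\CC_0$ or its ind-completion in any controlled way. Moreover, the observation that ``the unit $\on{Id}\to\on{un-ren}\circ\on{ren}$ is an isomorphism'' holds \emph{unconditionally} (because $\on{ren}$ is fully faithful) and therefore cannot carry the weight of the eventually-coconnective comparison. What actually has to be shown for (b) is that the \emph{counit} $\on{ren}\circ\on{un-ren}\to\on{Id}$ becomes an isomorphism after restriction to $(\Omega^{\on{small}}_q\on{-FactMod}^{\on{ren}})^{\geq n}$ — or, equivalently, that $\on{un-ren}|_{\geq n}$ is both fully faithful and essentially surjective. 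Essential surjectivity is easy: given $\CF\in(\Omega^{\on{small}}_q\on{-FactMod})^{\geq n}$, set $\CG := \tau^{\geq n}\bigl(\on{ren}(\CF)\bigr)$; then $\on{un-ren}(\CG)\simeq\tau^{\geq n}(\CF)=\CF$ by t-exactness of $\on{un-ren}$ and full faithfulness of $\on{ren}$. Full faithfulness is the genuine content, and it proceeds via a truncation argument together with compatibility of the t-structure with filtered colimits, just as in [Ga3, Prop.\ 1.2.4]; your phrasing skips this. Your derivation of (c) from (b) and t-exactness is fine.
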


\begin{cor}
The kernel of the functor $\on{un-ren}$ consists of infinitely coconnective objects, i.e., 
$$\underset{n}\bigcap\, (\Omega^{\on{small}}_q\on{-FactMod}^{\on{ren}})^{\leq -n}.$$
\end{cor}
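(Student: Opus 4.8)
The plan is to deduce this directly from \propref{p:prop of ren}. By part (a) of that proposition, the functor $\on{un-ren}$ is t-exact, so it suffices to identify its kernel. If $\CF\in \Omega^{\on{small}}_q\on{-FactMod}^{\on{ren}}$ lies in $(\Omega^{\on{small}}_q\on{-FactMod}^{\on{ren}})^{\geq n}$ for some $n$, then by part (b) the functor $\on{un-ren}$ is an equivalence on eventually coconnective objects, hence it cannot send a nonzero eventually coconnective object to zero. Therefore $\Ker(\on{un-ren})$ is contained in $\bigcap_n (\Omega^{\on{small}}_q\on{-FactMod}^{\on{ren}})^{\leq -n}$.

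For the reverse inclusion, suppose $\CF\in \bigcap_n (\Omega^{\on{small}}_q\on{-FactMod}^{\on{ren}})^{\leq -n}$, i.e., $\CF$ is infinitely coconnective. I would argue as follows: the truncation triangles give, for every $n$, a distinguished triangle relating $\CF$ to $\tau^{\geq n}\CF$ and $\tau^{\leq n-1}\CF$; since $\CF$ is infinitely coconnective, $\tau^{\geq n}\CF = 0$ for all $n$. Applying the t-exact functor $\on{un-ren}$ and using part (a), one gets that $\on{un-ren}(\CF)$ is also infinitely coconnective in $\Omega^{\on{small}}_q\on{-FactMod}$. But the latter category is compactly generated by the objects $\CM^{\mu,!}_{\Conf}$, which lie in the heart (by \propref{p:properties of t fact}(b)), hence are bounded; an infinitely coconnective object in a category compactly generated by bounded objects must vanish. (Concretely: $\CHom(\CM^{\mu,!}_{\Conf}[k], \on{un-ren}(\CF)) = 0$ for all $\mu$ and all $k$, since $\CM^{\mu,!}_{\Conf}[k]$ is bounded and $\on{un-ren}(\CF)$ is infinitely coconnective, so the source is bounded above while the target is unboundedly coconnective; conservativity of the generators then forces $\on{un-ren}(\CF) = 0$.) Thus $\CF \in \Ker(\on{un-ren})$, completing the proof. \qed

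The only mildly delicate point is the last assertion that an infinitely coconnective object in $\Omega^{\on{small}}_q\on{-FactMod}$ vanishes; this is where I would invoke that the compact generators $\CM^{\mu,!}_{\Conf}$ are cohomologically bounded (they are in the heart), together with the standard fact that mapping complexes out of compact generators detect the zero object. No new input beyond \propref{p:prop of ren} and the earlier structural results on the t-structure is needed, so I do not anticipate any real obstacle here.
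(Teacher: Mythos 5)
Your argument has the right shape, but both inclusions contain real gaps.

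\medskip

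\noindent\textbf{Forward inclusion (Ker $\subseteq \bigcap_n (\ldots)^{\leq -n}$).} You establish that $\on{un-ren}$ does not kill a nonzero \emph{eventually coconnective} object. The contrapositive of that is: an object of the kernel is either zero or \emph{not} eventually coconnective. But ``not eventually coconnective'' is strictly weaker than ``lies in $\bigcap_n (\ldots)^{\leq -n}$'' --- an object with cohomology in degree $0$ and in unboundedly negative degrees fails both conditions. What is actually needed is to apply your observation to the truncations: if $\CF\in\Ker(\on{un-ren})$, then for each $n$ the object $\tau^{\geq -n}\CF$ is eventually coconnective, and by t-exactness of $\on{un-ren}$ (part (a)) one has $\on{un-ren}(\tau^{\geq -n}\CF)=\tau^{\geq -n}\on{un-ren}(\CF)=0$; by part (b) this forces $\tau^{\geq -n}\CF=0$, and letting $n\to\infty$ gives the conclusion.

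\medskip

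\noindent\textbf{Reverse inclusion ($\bigcap_n (\ldots)^{\leq -n} \subseteq$ Ker).} This is the serious gap. You assert as a ``standard fact'' that a category compactly generated by cohomologically bounded objects cannot contain a nonzero object in $\bigcap_n (\ldots)^{\leq -n}$. That principle is \emph{false}: the very paradigm the paper is imitating --- $\IndCoh$ of a singular scheme --- is a compactly generated category whose compact objects ($\Coh$) are bounded, and yet the kernel of $\IndCoh\to\QCoh$ is a nonzero subcategory consisting exactly of such objects. The orthogonality axiom of a t-structure kills maps from a connective object into a \emph{strictly coconnective} one; it says nothing about maps from a bounded object into a deeply \emph{connective} one, which is the situation you are in. Your claimed vanishing of $\CHom(\CM^{\mu,!}_{\Conf}[k],\on{un-ren}(\CF))$ therefore does not follow from the boundedness of the $\CM^{\mu,!}_{\Conf}$. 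What one should argue instead is that the t-structure on $\Omega^{\on{small}}_q\on{-FactMod}$ is left-separated for a reason \emph{specific to this category}: the forgetful functor $\oblv_{\on{Fact}}$ to $\Shv_{\CG^\Lambda}(\Conf_{\infty\cdot x})$ is conservative and t-exact (by \propref{p:properties of t fact}(a)), and the perverse t-structure on sheaves on an ind-scheme is left-separated. Thus an object of $\Omega^{\on{small}}_q\on{-FactMod}$ lying in every $(\ldots)^{\leq -n}$ maps to such an object in $\Shv_{\CG^\Lambda}(\Conf_{\infty\cdot x})$, hence to zero, hence is itself zero. With that in place, t-exactness of $\on{un-ren}$ finishes the reverse inclusion as you intended.
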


%\begin{rem}
%Let $\lambda_k$ be representatives of the quotient of $\Lambda$ by the coroot lattice. Consider the corresponding subcategories
%$$\Omega^{\on{small}}_q\on{-FactMod}_{\leq \lambda_k}\subset \Omega^{\on{small}}_q\on{-FactMod}\text{ and }
%\Omega^{\on{small}}_q\on{-FactMod}^{\on{ren}}_{\leq \lambda_k}\subset \Omega^{\on{small}}_q\on{-FactMod}^{\on{ren}},$$
%and the corresponding quotient categories
%$$\Omega^{\on{small}}_q\on{-FactMod}/\underset{k}\Sigma\, \Omega^{\on{small}}_q\on{-FactMod}_{\leq \lambda_k} \text{ and }
%\Omega^{\on{small}}_q\on{-FactMod}^{\on{ren}}/\underset{k}\Sigma\, \Omega^{\on{small}}_q\on{-FactMod}^{\on{ren}}_{\leq \lambda_k}.$$
%Then the functors $\on{un-ren}$ and $\on{ren}$ define mutually inverse equivalences of categories
%$$\Omega^{\on{small}}_q\on{-FactMod}^{\on{ren}}/\underset{k}\Sigma\, \Omega^{\on{small}}_q\on{-FactMod}^{\on{ren}}_{\leq \lambda_k}\simeq
%\Omega^{\on{small}}_q\on{-FactMod}^{\on{ren}}/\underset{k}\Sigma\, \Omega^{\on{small}}_q\on{-FactMod}_{\leq \lambda_k}.$$
%\end{rem} 

\begin{rem} \label{r:two meanings}

Note that there are two possible meaning for the notation $\CM^{\mu,!}_{\Conf}$ and $\CM^{\mu,*}_{\Conf}$ and $\CM^{\mu,!*}_{\Conf}$.
On the one hand, we can view them as objects of the original category $\Omega^{\on{small}}_q\on{-FactMod}$. 

\medskip

On the other hand, we can view them (using \propref{p:fin length} for $\CM^{\mu,!}_{\Conf}$ and $\CM^{\mu,*}_{\Conf}$)
as (compact) objects in $\Omega^{\on{small}}_q\on{-FactMod}^{\on{ren}}$.

\medskip

We will not distinguish the two usages notationally, but will explicitly mention which one we mean if a confusion is likely to occur.

\medskip

We note that the functor $\on{un-ren}$ sends $\CM^{\mu,!}_{\Conf}$ (resp., $\CM^{\mu,*}_{\Conf}$,  $\CM^{\mu,!*}_{\Conf}$), viewed 
as an object of $\Omega^{\on{small}}_q\on{-FactMod}^{\on{ren}}$, to $\CM^{\mu,!}_{\Conf}$ (resp., $\CM^{\mu,*}_{\Conf}$,  $\CM^{\mu,!*}_{\Conf}$),
viewed as an object of $\Omega^{\on{small}}_q\on{-FactMod}$.

\medskip

The functor $\on{ren}$ sends $\CM^{\mu,!}_{\Conf}$, viewed 
as an object of $\Omega^{\on{small}}_q\on{-FactMod}$, to $\CM^{\mu,!}_{\Conf}$,
viewed as an object of $\Omega^{\on{small}}_q\on{-FactMod}^{\on{ren}}$.
 
\medskip

However, the functor $\on{ren}$ \emph{does not} send $\CM^{\mu,*}_{\Conf}$ (resp., $\CM^{\mu,!*}_{\Conf}$), viewed 
as an object of $\Omega^{\on{small}}_q\on{-FactMod}$, to $\CM^{\mu,!}_{\Conf}$,
viewed as an object of $\Omega^{\on{small}}_q\on{-FactMod}^{\on{ren}}$. This is due to the fact that $\CM^{\mu,*}_{\Conf}$ and $\CM^{\mu,!*}_{\Conf}$
are \emph{not} compact in $\Omega^{\on{small}}_q\on{-FactMod}$. In fact, the images of these objects under $\on{ren}$ do not lie in 
the heart of $\Omega^{\on{small}}_q\on{-FactMod}^{\on{ren}}$.  

\end{rem} 

\sssec{}

Recall (see \secref{sss:properties of t}) what it means for a t-structure on a DG category to be Artinian. 

\medskip

We obtain that the t-structure on $\Omega^{\on{small}}_q\on{-FactMod}^{\on{ren}}$ is Artinian. This is the main point of difference 
between $\Omega^{\on{small}}_q\on{-FactMod}$ and $\Omega^{\on{small}}_q\on{-FactMod}^{\on{ren}}$. 

\begin{rem}
Since introducing the configuration space may seem a bit artificial, let us remark that one 
define the renormalization $\Omega^{\on{small}}_q\on{-FactMod}^{\on{ren}}$ purely in terms 
of the factorization algebra $\Omega_q^{\Whit_{!*}}\in \Shv_{\CG^T}(\Gr^\omega_{T,\Ran})$. 

\medskip

Namely, recall (see Remark \ref{r:unitality Gr}) that the category $\Omega^{\on{small}}_q\on{-FactMod}$ is equivalent to 
$$\Omega_q^{\Whit_{!*}}\on{-FactMod}(\Shv_{\CG^T}((\Gr^{\omega^\rho}_{T,\Ran})^{\on{non-pos}}_{\infty\cdot x}))_{\on{untl}}.$$

We introduce the t-structure on this category by declaring an object co-connective if its !-restriction to $\{x\}\subset \Ran_x$,
viewed as an object of 
$$\Shv_{\CG^T}(\Gr^{\omega^\rho}_{T,x}),$$ 
is coconnective with respect to the following t-structure:

\medskip

For $\mu\in \Lambda$, on the connected component of $\Gr^{\omega^\rho}_{T,x}\simeq \Lambda$, we shift
the standard t-structure by $[-\langle \mu, 2\check\rho \rangle]$. 

\medskip

Having defined the t-structure, we can define the sought-for renormalization: 

\medskip

The renormalized category is the ind-completion
of objects that are finite extensions of (shifts of) irreducible objects in
$$\left(\Omega_q^{\Whit_{!*}}\on{-FactMod}(\Shv_{\CG^T}((\Gr^{\omega^\rho}_{T,\Ran})^{\on{non-pos}}_{\infty\cdot x}))_{\on{untl}}\right)^\heartsuit.$$

\end{rem} 

\ssec{Statement of the theorem}

In this subsection we define a renormalized version of the functor $\Phi^{\bHecke}_{\on{Fact}}$, which will allow us to state
our main result, \thmref{t:main}.

\sssec{}

Note that \corref{c:irred pres Hecke} implies that the restriction of the functor $\Phi^{\bHecke}_{\on{Fact}}$ to
$$\bHecke(\Whit_{q,x}(G))^c\subset \Whit_{q,x}(G)$$
takes values in 
$$(\Omega^{\on{small}}_q\on{-FactMod}^{\on{ren}})^c\subset \Omega^{\on{small}}_q\on{-FactMod}.$$

Hence, ind-extension defines a functor
$$\Phi^{\bHecke,\on{ren}}_{\on{Fact}}:\bHecke(\Whit_{q,x}(G))\to \Omega^{\on{small}}_q\on{-FactMod}^{\on{ren}}$$
so that 
$$\on{un-ren}\circ \Phi^{\bHecke,\on{ren}}_{\on{Fact}}\simeq \Phi^{\bHecke}_{\on{Fact}}.$$

\sssec{}

By construction, the functor $\Phi^{\bHecke,\on{ren}}_{\on{Fact}}$ preserves compactness. This is the main difference between
$\Phi^{\bHecke,\on{ren}}_{\on{Fact}}$ and $\Phi^{\bHecke}_{\on{Fact}}$. 

\sssec{}

By construction, we have:
\begin{equation} \label{e:pres irred ren}
\Phi^{\bHecke,\on{ren}}_{\on{Fact}}(\bCM^{\mu,!*}_{\Whit})\simeq \CM^{\mu,!*}_{\on{Fact}},
\end{equation}
as objects of $\Omega^{\on{small}}_q\on{-FactMod}^{\on{ren}}$.

\medskip

Since the t-structure on $\bHecke(\Whit_{q,x}(G))$ is Artinian, we obtain that the functor $\Phi^{\bHecke,\on{ren}}_{\on{Fact}}$
is t-exact.

\sssec{}

The main result of this work is the following:

\begin{thm} \label{t:main}
The functor $\Phi^{\bHecke,\on{ren}}_{\on{Fact}}$ is an equivalence. 
\end{thm}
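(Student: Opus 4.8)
The plan is to deduce \thmref{t:main} from the outline sketched in \secref{ss:how}, i.e., by producing ``standard'' and ``co-standard'' objects on both sides and checking they correspond. Since both categories carry Artinian t-structures (the left by \propref{p:irred Hecke Whit}(a), the right by the discussion in \secref{ss:renorm fact}) and the functor $\Phi^{\bHecke,\on{ren}}_{\on{Fact}}$ is t-exact, compactness-preserving, and sends irreducibles to irreducibles bijectively by \eqref{e:pres irred ren} and \propref{p:irred Hecke Whit}(b), it suffices to show that $\Phi^{\bHecke,\on{ren}}_{\on{Fact}}$ is fully faithful on compact objects; essential surjectivity is then automatic because every compact object of $\Omega^{\on{small}}_q\on{-FactMod}^{\on{ren}}$ is a finite extension of shifts of the $\CM^{\mu,!*}_{\Conf}$, all of which lie in the image. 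For fully faithfulness, the standard strategy is: construct compact objects $\CM^{\mu,!}_{\Whit},\CM^{\mu,*}_{\Whit}\in \bHecke(\Whit_{q,x}(G))$ satisfying the orthogonality relation \eqref{e:orth intro} (mirroring \eqref{e:Ext standard fact}), show $\CM^{\mu,!}_{\Whit}$ generate and $\CM^{\mu,*}_{\Whit}$ cogenerate, and then prove
\begin{equation*}
\Phi^{\bHecke,\on{ren}}_{\on{Fact}}(\CM^{\mu,!}_{\Whit})\simeq \CM^{\mu,!}_{\Conf}
\quad\text{and}\quad
\Phi^{\bHecke,\on{ren}}_{\on{Fact}}(\CM^{\mu,*}_{\Whit})\simeq \CM^{\mu,*}_{\Conf}.
\end{equation*}
Given these, $\Phi^{\bHecke,\on{ren}}_{\on{Fact}}$ induces isomorphisms on all $\CHom$-spaces between generators and cogenerators, hence is fully faithful.

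Concretely, I would carry this out in the following order. First, construct $\CM^{\mu,*}_{\Whit}$ via the Drinfeld--Pl\"ucker formalism: following the quantum-group model where the dual baby Verma is expressed as an explicit colimit of dual Weyl modules twisted by quantum Frobenius, I would express $\CM^{\mu,*}_{\Whit}$ as the analogous colimit built from the costandard objects $\ind_{\bHecke}(W^{\lambda,*})$ of $\bHecke(\Whit_{q,x}(G))$ (whose structure is described by \corref{c:isotypics in Weyl}) using the $\Rep(H)$-action and the quantum Frobenius $\on{Frob}^*_q$ (i.e. the action through $\Rep(H)\to\Rep(T_H)$). Second, construct $\CM^{\mu,!}_{\Whit}:=\BD^{\on{Verdier}}(\CM^{\mu,*}_{\Whit})$ using the Verdier self-duality of $\bHecke(\Whit_{q,x}(G))$ from \eqref{e:duality for Hecke Whit comp}, and verify \eqref{e:orth intro} by mimicking the quantum-group computation, using \corref{c:duality irred} and the description of the irreducibles. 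Third, prove $\Phi^{\bHecke,\on{ren}}_{\on{Fact}}(\CM^{\mu,*}_{\Whit})\simeq \CM^{\mu,*}_{\Conf}$ by computing the $!$-fibers of $\Phi^{\bHecke}_{\on{Fact}}(\CM^{\mu,*}_{\Whit})$ at points $\mu'\cdot x$ of the configuration space: by the explicit description in \secref{sss:Phi expl} and Remark~\ref{r:J !* Whit}, this fiber is $\on{C}^\cdot(\Gr^{\omega^\rho}_{G,x},\CM^{\mu,*}_{\Whit}\sotimes \ICslm_{q^{-1},x})$, and the point is that \emph{both} the Drinfeld--Pl\"ucker presentation of $\CM^{\mu,*}_{\Whit}$ and the Drinfeld--Pl\"ucker presentation of $\ICslm_{q^{-1},x}$ (coming from \eqref{e:ICs as colim} and \thmref{t:fiber of Ics}, cf. \secref{ss:Dr-Pl semiinf}) ``cancel,'' leaving only $\sfe$ when $\mu'=\mu$ and $0$ otherwise — matching \eqref{e:Ext standard fact}. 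This is essentially the step referenced as \eqref{e:costandard to costandard Intro}. Fourth, prove $\Phi^{\bHecke,\on{ren}}_{\on{Fact}}(\CM^{\mu,!}_{\Whit})\simeq \CM^{\mu,!}_{\Conf}$: this cannot be attacked by a direct fiber computation since our formalism computes $!$-pullbacks and $*$-pushforwards, not $*$-pullbacks; instead I would invoke the Hecke-enhanced Verdier-compatibility \thmref{t:duality pres} (and its Hecke extension \thmref{t:duality pres Hecke}), which says $\Phi^{\bHecke}$ intertwines $\BD^{\on{Verdier}}$ on $\bHecke(\Whit_{q,x}(G))$ with Verdier duality on $\Shv_{\CG^\Lambda}(\Conf_{\infty\cdot x})$; combined with step three and the fact that $\BD^{\on{Verdier}}(\CM^{\mu,*}_{\Conf})\simeq \CM^{\mu,!}_{\Conf}$, this forces the desired identification (after checking the renormalization bookkeeping, using \propref{p:fin length} and \propref{p:prop of ren}).

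The main obstacle, in my estimation, is the third step — establishing $\eqref{e:costandard to costandard Intro}$, i.e. actually making the two Drinfeld--Pl\"ucker presentations interact. The difficulty is twofold: one must first set up the Drinfeld--Pl\"ucker formalism abstractly (a colimit over the category of Pl\"ucker data) at a level of generality that accommodates both $\CM^{\mu,*}_{\Whit}\in \bHecke(\Whit_{q,x}(G))$ and $\ICslm_{q^{-1},x}$, and then verify that the pairing $\on{C}^\cdot(\Gr^{\omega^\rho}_{G,x},-\sotimes -)$ turns the ``cap'' of one diagram against the ``cup'' of the other into the trivial answer — a computation that ultimately rests on \thmref{t:restr} (the vanishing of $\chi_N^0\otimes\Psi_q$ on top-dimensional components for restricted $\mu$), exactly the ingredient that also powers \corref{c:irred pres} and \thmref{t:irred pres}. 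A secondary but genuine source of friction is the interplay with renormalization throughout: one must be careful that $\CM^{\mu,!}_{\Whit}$ is genuinely compact in $\bHecke(\Whit_{q,x}(G))$ (so that $\Phi^{\bHecke,\on{ren}}_{\on{Fact}}$ applies to it with values in compacts), that the $!$-fiber computations are performed with the correct t-structure shifts on the components of $\Conf_{\infty\cdot x}$, and that the functor $\on{un-ren}$ does not destroy the identifications — all of which is routine given \propref{p:prop of ren} but needs to be tracked. Once steps one through four are in place, full faithfulness on generators/cogenerators, hence \thmref{t:main}, follows formally.
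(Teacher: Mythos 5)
Your proposal reproduces the paper's strategy accurately at every level: reduce to fully faithfulness (essential surjectivity being automatic from \eqref{e:pres irred ren}), produce (co)standard objects in $\bHecke(\Whit_{q,x}(G))$ with properties (i)--(iii) of \secref{sss:strategy}, verify the co-standard case by a $!$-fiber computation exploiting the two Drinfeld--Pl\"ucker structures, and bootstrap to the standard case via Verdier compatibility (\thmref{t:duality pres Hecke}). This is exactly the outline Part VIII executes.

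The one place your calibration is off concerns where the genuine work sits. You call compactness of $\bCM^{\mu,*}_{\Whit}$ ``bookkeeping'' and locate the main obstacle in making the two Drinfeld--Pl\"ucker presentations ``cancel.'' In the paper, the cancellation is packaged as \thmref{t:Hom from baby}, and its proof in \secref{s:stalks} is essentially a formal manipulation once one knows that the $\mu'$-fiber of $\Phi^{\bHecke}$ is computed against $t^{\mu'}\cdot{}'\bICsm$, that this object equals $\on{Av}^{\fL(N^-)}_!$ of $\overset{\bullet}\CF{}^{\semiinf,-}$ (\corref{c:F and semiinf}), and that $\bCM^{\mu,*}_{\Whit}$ is built from the \emph{same} object $\wt\CF^{\semiinf,-}$ by convolution with $W^{\mu,*}_{\Fl}$. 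What actually requires work — and would sink a direct Drinfeld--Pl\"ucker colimit taken naively inside $\Whit_{q,x}(G)$, as you describe — is that (a) the resulting object is compact and (b) the orthogonality \thmref{t:orthogonality} holds. Both rest on \thmref{t:finiteness of baby Verma} for the Iwahori-equivariant dual baby Verma object $\wt\CF^{\semiinf,-}$, proved either via the accessibility result \thmref{t:restricted Iwahori} (which is where \thmref{t:restr} is really consumed) or via the metaplectic Arkhipov--Bezrukavnikov theorem \thmref{t:AB}. The paper's construction is therefore not ``a colimit built from $\ind_{\bHecke}(W^{\lambda,*})$'' taken at face value but the convolution $W^{\lambda,*}_{\Fl}\underset{I}\star\wt\CF^{\semiinf,-}$ whose colimit presentation (\lemref{l:baby Verma Whit expl}) and compactness are both \emph{consequences} of the Iwahori detour. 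You also omit the preliminary isogeny reduction (Propositions \ref{p:isogenies trick} and \ref{p:isogen Gr}) that lets one assume the derived group of $H$ is simply-connected, which is needed to run the restricted-plus-$\Lambda^\sharp$ decomposition underlying the description of irreducibles and hence the $5$-lemma reduction at the end of the proof.
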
 

\ssec{Outline of the proof}

In this subsection we outline the main steps involved in the proof of \thmref{t:main}. 

\sssec{}

First, by Propositions \ref{p:isogenies trick} and \ref{p:isogen Gr}, we can assume that the derived
group of $H$ is simply-connected. 

\sssec{}  \label{sss:strategy}

We will introduce objects
$$\mu \rightsquigarrow \bCM{}_{\Whit}^{\mu,!}, \bCM{}_{\Whit}^{\mu,*}\in \bHecke(\Whit_q(G))$$
with the following properties:

\begin{itemize}

\item(i)
%\begin{equation} \label{e:Ext standard}
$$\CHom_{\bHecke(\Whit_q(G))}(\bCM{}^{\mu',!}_{\Whit},\bCM{}^{\mu,*}_{\Whit})=
\begin{cases}
&\sfe \text{ if } \mu'=\mu \\
&0 \text{ otherwise}.
\end{cases}$$
%\end{equation}

\item(ii)
The unique (up to a scalar) map $\bCM{}^{\mu,!}\to \bCM{}^{\mu,*}$
factors through the irreducible object $\bCM{}_{\Whit}^{\mu,!*}$, and the fiber (resp., cofiber) of the map
from $\bCM{}^{\mu,!}$ to it (resp., from it to $\bCM{}_{\Whit}^{\mu,*}$) has a finite filtration with
subquotients being irreducibles $\bCM{}_{\Whit}^{\mu',!*}$ with $\mu'<\mu$.

\bigskip 

\item(iii)
$$\Phi^{\bHecke}_{\on{Fact}}(\bCM{}^{\mu,!}_{\Whit})\simeq \bCM{}^{\mu,!}_{\Conf} \text{ and }
\Phi^{\bHecke}_{\on{Fact}}(\bCM{}^{\mu,*}_{\Whit})\simeq \bCM{}^{\mu,*}_{\Conf}$$
as objects of $\Omega_q^{\on{small}}\on{-FactMod}$. 

\end{itemize} 

\medskip

Let us show how having a collection of objects with the above properties implies \thmref{t:main}.

\sssec{} \label{sss:proof fin length}

First, property (ii) above implies that the objects $\bCM{}^{\mu,!}$ and $\bCM{}^{\mu,*}$
are compact.

\medskip

Let us note that this, combined with property (iii) and \eqref{e:pres irred ren}, 
implies the assertion of \propref{p:fin length}. 

\sssec{}

Next we note that the isomorphisms of property (iii) above imply that we also have
$$\Phi^{\bHecke,\on{ren}}_{\on{Fact}}(\bCM{}^{\mu,!}_{\Whit})\simeq \bCM{}^{\mu,!}_{\Conf} \text{ and }
\Phi^{\bHecke,\on{ren}}_{\on{Fact}}(\bCM{}^{\mu,*}_{\Whit})\simeq \bCM{}^{\mu,*}_{\Conf}$$
as objects of $\Omega_q^{\on{small}}\on{-FactMod}^{\on{ren}}$, see Remark \ref{r:two meanings}. 

\begin{proof}[Proof of \thmref{t:main}]

In view of \eqref{e:pres irred ren}, it only remains to show that the functor $\Phi^{\bHecke,\on{ren}}_{\on{Fact}}$ 
is fully faithful. Since the objects $\bCM{}^{\mu,!*}$ compactly 
generate the category $\bHecke(\Whit_q(G))$, it suffices to show that the functor $\Phi^{\bHecke,\on{ren}}_{\on{Fact}}$ 
induces an isomorphism
$$\CHom_{\bHecke(\Whit_q(G))}(\bCM{}^{\mu',!*}_{\Whit},\bCM{}^{\mu,!*}_{\Whit})\to 
\CHom_{ \Omega_q^{\on{small}}\on{-FactMod}^{\on{ren}}}
(\Phi^{\bHecke,\on{ren}}_{\on{Fact}}(\bCM{}^{\mu',!}_{\Whit}),
\Phi^{\bHecke,\on{ren}}_{\on{Fact}}(\bCM{}^{\mu,*}_{\Whit}))$$
for all $\mu,\mu'\in \Lambda$.  

\medskip

However, by a standard argument with the 5-lemma, it follows from (ii) above that it is sufficient to show that 
the functor $\Phi^{\bHecke,\on{ren}}_{\on{Fact}}$ 
induces an isomorphism
$$\CHom_{\bHecke(\Whit_q(G))}(\bCM{}^{\mu',!}_{\Whit},\bCM{}^{\mu,*}_{\Whit})\to 
\CHom_{ \Omega_q^{\on{small}}\on{-FactMod}^{\on{ren}}}
(\Phi^{\bHecke,\on{ren}}_{\on{Fact}}(\bCM{}^{\mu',!}_{\Whit}),
\Phi^{\bHecke,\on{ren}}_{\on{Fact}}(\bCM{}^{\mu,*}_{\Whit}))$$
for all $\mu,\mu'\in \Lambda$.  

\medskip

The latter is equivalent to showing that the functor $\Phi^{\bHecke}_{\on{Fact}}$ 
induces an isomorphism
$$\CHom_{\bHecke(\Whit_q(G))}(\bCM{}^{\mu',!}_{\Whit},\bCM{}^{\mu,*}_{\Whit})\to 
\CHom_{ \Omega_q^{\on{small}}\on{-FactMod}}
(\Phi^{\bHecke}_{\on{Fact}}(\bCM{}^{\mu',!}_{\Whit}),
\Phi^{\bHecke}_{\on{Fact}}(\bCM{}^{\mu,*}_{\Whit}))$$
for all $\mu,\mu'\in \Lambda$.  

\medskip

In view of properties (i) and (iii), and \eqref{e:Ext standard fact}, both sides vanish for $\mu\neq \mu'$. So, it remains
to consider the case of $\mu=\mu'$. We have to show that the resulting map
$$\sfe\simeq \CHom_{\bHecke(\Whit_q(G))}(\bCM{}^{\mu',!}_{\Whit},\bCM{}^{\mu,*}_{\Whit})\to 
\CHom_{ \Omega_q^{\on{small}}\on{-FactMod}^{\on{ren}}}
(\bCM{}^{\mu',!}_{\on{Fact}},\bCM{}^{\mu,*}_{\on{Fact}})\simeq \sfe$$
is non-zero. 

\medskip

We argue by contradiction. If this map where zero, by property (ii) and the t-exactness of 
$\Phi^{\bHecke,\on{ren}}_{\on{Fact}}$, this would imply that $\Phi^{\bHecke,\on{ren}}_{\on{Fact}}(\bCM{}^{\mu,!*}_{\Whit})=0$,
which would contradict \eqref{e:pres irred ren}. 

\end{proof}

\newpage 

\centerline{\bf Part VII: Zastava spaces and global interpretation of the Jacquet functor} 

\bigskip

The goal of this part is to prove \thmref{t:duality pres}, and its Hecke extension, \thmref{t:duality pres Hecke}.
A salient feature of the proof is that it uses global methods, i.e., we will be working with a complete curve $X$.

\section{Zastava spaces}  \label{s:Zastava}

In this section we will rewrite the Jacquet functor $\Phi$ in terms of the global version of the 
Whittaker category, $\Whit_{q,x}(G)$. The link between the local geometry (which involves 
$\fL(N)$-orbits on $\Gr_G$) and the global one (which involves $(\ol\Bun^{\omega^{\rho}}_N)_{\infty\cdot x}$)
is provided by the \emph{Zastava spaces}.

\ssec{Zastava spaces: local definition}

In this subsection we will interpret the spaces involved in the construction of the functor 
$\Phi$ as \emph{Zastava spaces}. 

\sssec{}

The (completed) Zastava space, denoted $\bCZ$ is defined to be
$$\ol{S}^0_{\Conf}\cap \ol{S}^{-,\Conf}_{\Conf},$$
where the intersection is taking place in $\Gr^{\omega^\rho}_{G,\Conf}$.

\medskip 

Let $\sv_{\Conf}$ denote the projection $\bCZ\to \Conf$. The properness of the affine Grassmannian
implies that the map $\sv_{\Conf}$ is ind-proper. However, we will soon see that $\bCZ$ is actually
a \emph{scheme}, so that the map $\sv_{\Conf}$ is actually a proper map of schemes. 

\sssec{}

Let $\CZ\subset \bCZ$
be the open subfunctor equal to
$$\ol{S}^0_{\Conf}\cap S^{-,\Conf}_{\Conf}.$$

\medskip

Let $\boCZ\subset \bCZ$ be the open subfunctor equal to
$$S^0_{\Conf}\cap \ol{S}^{-,\Conf}_{\Conf}.$$

\medskip

Finally, let 
$$\oCZ:=\CZ\cap \boCZ\subset \bCZ.$$

\sssec{}

We introduce the polar version of the Zastava spaces as follows:
$$\bCZ_{\infty\cdot x,\infty\cdot x}:=(\ol{S}^0_{\Conf_{\infty\cdot x}})_{\infty\cdot x}\cap 
(\ol{S}^{-,\Conf_{\infty\cdot x}}_{\Conf_{\infty\cdot x}})_{\infty\cdot x},$$
where the intersection is taking place in $\Gr^{\omega^\rho}_{G,\Conf_{\infty\cdot x}}$.

\medskip 

Let $\sv_{\Conf_{\infty\cdot x}}$ denote the projection $\bCZ_{\infty\cdot x,\infty\cdot x}\to \Conf_{\infty\cdot x}$.
The properness of the affine Grassmannian implies that the map $\sv_{\Conf_{\infty\cdot x}}$ is ind-proper. 

\sssec{}

We introduce the closed subspace $\bCZ_{\infty\cdot x}\subset \bCZ_{\infty\cdot x,\infty\cdot x}$
to be 
$$\bCZ_{\infty\cdot x}:=(\ol{S}^0_{\Conf_{\infty\cdot x}})_{\infty\cdot x}\cap 
(\ol{S}^{-,\Conf_{\infty\cdot x}}_{\Conf_{\infty\cdot x}}).$$

\begin{rem}
Note that the space $\bCZ_{\infty\cdot x}$ was used in the definition of the functor $\Phi$, see \secref{sss:Phi expl}. 
Indeed, the support of
objects of the form
$$\on{sprd}_{\Conf_{\infty\cdot x}}(\CF)\sotimes \ICsm_{q^{-1},\Conf_{\infty\cdot x}}$$
is contained in $\bCZ_{\infty\cdot x}$.
\end{rem}

\ssec{Global interpretation of the Zastava spaces}   \label{ss:Zastava} 

In this subsection we let the curve $X$ be complete. We will reinterpret the various versions of the Zastava 
space in terms of moduli spaces of bundles on $X$.

\sssec{}

Consider the algebraic stacks
$$\ol\Bun^{\omega^{\rho}}_N\overset{\ol\sfp}\longrightarrow \Bun_G \overset{\ol\sfp^-}\longleftarrow \BunBbm.$$

Consider the open substack
$$(\ol\Bun^{\omega^{\rho}}_N\underset{\Bun_G}\times \BunBbm)^{\on{gen}}\subset 
\ol\Bun^{\omega^{\rho}}_N\underset{\Bun_G}\times \BunBbm$$
corresponding to the condition that the (generic) $N$-reduction and the (generic) $B^-$-reduction of a given
$G$-bundles are \emph{transversal at the generic point of $X$}.

\medskip

The maps
$$\pi:\ol{S}^0_{\Conf}\to \BunNb$$
and
$$\pi^-_{\Conf}:\ol{S}^{-,\Conf}_{\Conf}\to \BunBbm$$
(see \secref{sss:two contexts}) define a map
$$\bCZ\to \ol\Bun^{\omega^{\rho}}_N\underset{\Bun_G}\times \BunBbm,$$
and it is easy to see that its image lands in $(\BunNb\underset{\Bun_G}\times \BunBbm)^{\on{gen}}$.

\medskip

We claim:

\begin{prop} \label{p:Zastava via global}
The map 
$$\bCZ \to (\BunNb\underset{\Bun_G}\times \BunBbm)^{\on{gen}}$$
is an isomorphism. Under this identification, the subspaces
$$\CZ\subset \bCZ\supset \boCZ$$ correspond to
$$(\ol\Bun^{\omega^{\rho}}_N\underset{\Bun_G}\times \Bun_{B^-})^{\on{gen}} \subset 
(\ol\Bun^{\omega^{\rho}}_N\underset{\Bun_G}\times \BunBbm)^{\on{gen}}
\supset (\Bun^{\omega^{\rho}}_N\underset{\Bun_G}\times \BunBbm)^{\on{gen}},$$
respectively. 
\end{prop}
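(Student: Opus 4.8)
The plan is to exhibit a mutually inverse map and then identify the open substacks stratum by stratum. First I would recall that both sides carry a map to $\Conf$ — on the left it is $\sv_{\Conf}$, on the right it is the composite with the Abel–Jacobi map $\on{AJ}^{\omega^\rho}$ reading off the defect divisor of the generalized $B$-structure (which, by genericity/transversality with the $B^-$-structure, equals the defect divisor of the pair). The essential input is the Beauville–Laszlo theorem (\secref{sss:BL}): a point of $\ol{S}^0_{\Conf}$ over a divisor $D$ is the same datum as a $G$-bundle $\CP_G$ on $S\times X$ together with an identification with $\omega^\rho$ on $S\times X - \Gamma_D$ subject to the Plücker regularity \eqref{e:S- Ran}, i.e. exactly a generalized $B$-reduction of $\CP_G$ with defect bounded by $D$ in the sense of $\ol\Bun_N^{\omega^\rho}$; and likewise $\ol{S}^{-,\Conf}_{\Conf}$ over $D$ is the datum of a generalized $B^-$-reduction. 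So on the open locus where these two generalized reductions are transversal at the generic point of $X$, the fiber product $\bCZ = \ol{S}^0_{\Conf}\cap \ol{S}^{-,\Conf}_{\Conf}$ maps to $(\ol\Bun^{\omega^\rho}_N\underset{\Bun_G}\times \BunBbm)^{\on{gen}}$, and I would check this map lands in the generic-transversality open substack and is a bijection on $S$-points functorially.

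The key steps, in order: (1) construct the map $\bCZ \to (\ol\Bun^{\omega^\rho}_N\underset{\Bun_G}\times \BunBbm)^{\on{gen}}$ as above and note it respects the maps to $\Conf$; (2) construct the inverse: given a $G$-bundle with a generic $N^{\omega^\rho}$-reduction and a generic $B^-$-reduction that are transversal generically, the transversality forces the $G$-bundle to be generically identified with $\omega^\rho$ (the two opposite reductions trivialize it over the generic point), and then the bounded-defect conditions pin down exactly a point of $\ol{S}^0_{\Conf}$ resp. $\ol{S}^{-,\Conf}_{\Conf}$ over the divisor recording the total defect — this is where the argument of \cite{BFGM} and \cite{FGV} on the structure of Zastava spaces gets imported; (3) verify the two maps are mutually inverse, which is formal once (1) and (2) are set up functorially; (4) identify the open substacks: $\CZ = \ol{S}^0_{\Conf}\cap S^{-,\Conf}_{\Conf}$ is precisely the locus where the $B^-$-reduction is \emph{honest} (no defect), i.e. lies in $\Bun_{B^-}$, giving $(\ol\Bun^{\omega^\rho}_N\underset{\Bun_G}\times \Bun_{B^-})^{\on{gen}}$; dually $\boCZ = S^0_{\Conf}\cap \ol{S}^{-,\Conf}_{\Conf}$ is the locus where the $N^{\omega^\rho}$-reduction is honest, giving $(\Bun^{\omega^\rho}_N\underset{\Bun_G}\times \BunBbm)^{\on{gen}}$; and $\oCZ$ is the intersection. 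As a byproduct one gets that $\bCZ$ is a scheme (finite type over $\Conf$), since the right-hand side is a quasi-compact algebraic stack of that form and the map $\sv_\Conf$ is affine over the non-degeneracy locus — this upgrades the a priori ind-properness of $\sv_\Conf$ to actual properness.

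The main obstacle I anticipate is step (2), specifically showing that the generic transversality condition genuinely forces the trivialization by $\omega^\rho$ with \emph{exactly} the defect bookkeeping matching the colored divisor, rather than merely producing a rational map; this is the content of the non-degeneracy/factorization analysis of Zastava spaces in \cite{BFGM}, and for the metaplectic setting one has to check that the gerbe $\CG^G$ plays no role here (the statement is purely about the underlying spaces, before twisting by gerbes), which it does not. A secondary technical point is handling the ``completed'' versus ``non-completed'' distinction — i.e. that $\bCZ$ as defined via $\ol{S}^{-,\Conf}_{\Conf}$ rather than the affine-closure $\ol{S}^0_{\Conf}$ on the other factor matches $(\ol\Bun^{\omega^\rho}_N \underset{\Bun_G}\times \BunBbm)^{\on{gen}}$ on the nose — but this is bookkeeping that \cite{BFGM}, \cite{FGV} already handle, and in the present excerpt it can be cited. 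The polar versions $\bCZ_{\infty\cdot x}$ and $\bCZ_{\infty\cdot x,\infty\cdot x}$ would then get their global descriptions by the same argument run over $\Conf_{\infty\cdot x}$ in a subsequent lemma.
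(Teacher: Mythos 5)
Your proposal takes essentially the same route as the paper's own proof: construct an explicit inverse by using the generic transversality of the two (generalized) reductions to produce a trivialization of $\CP_G$ over the complement of a divisor, with the Pl\"ucker data recording that divisor. Both your outline and the paper's proof hinge on step (2) — that the transversal pair of generalized reductions yields, functorially, exactly a point of $\bCZ$ over the divisor read off from the composite Pl\"ucker maps.

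One point you leave implicit that the paper makes explicit and essential: before constructing the inverse, the paper reduces (via \cite[Sect.\ 7]{Sch}) to the case where the derived group of $G$ is simply connected. This reduction is what justifies the step you describe as ``the bounded-defect conditions pin down exactly a point over the divisor recording the total defect'': only under simple-connectivity is $\Conf$ the space of non-zero homomorphisms $\cLambda^+\to \on{Div}^{\on{eff}}(X)$ (Remark~\ref{r:config as divisors}), so that the system of maps \eqref{e:maps of line bundles} is literally the same datum as a point of $\Conf$. Without that reduction, the identification of the defect divisor as a $\Lambda^{\on{neg}}$-colored divisor (rather than a $\Lambda_{\on{sc}}^{\on{neg}}$-colored one, cf.\ \secref{sss:neg part Gr}) requires the extra argument of \cite{Sch}. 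So your appeal to \cite{BFGM}, \cite{FGV} for the ``main obstacle'' is in the right place, but you should make the reduction to the simply-connected case an explicit first step, after which the inverse map really is given by the direct functorial construction you sketch. Aside from that, your stratum-by-stratum identification of $\CZ$, $\boCZ$, $\oCZ$ and the observation that $\bCZ$ is thereby a scheme match the paper.
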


\begin{proof}

Proceeding as in \cite[Sect. 7]{Sch}, we can assume that the derived group of $G$ is simply-connected. 
In this case, we will explicitly construct an inverse map. 

\medskip

An $S$-point of $(\ol\Bun^{\omega^{\rho}}_N\underset{\Bun_G}\times \BunBbm)^{\on{gen}}$
consists of a $G$-bundle $\CP_G$, equipped with a Pl\"ucker data for $N$ and $B^-$ 
$$(\CP_G,\{\kappa^{\clambda},\clambda\in \cLambda^+\},\{\kappa^{-,\clambda},\clambda\in \cLambda^+\}),$$
where:

\smallskip

\begin{itemize}

\item $(\CP_G,\{\kappa^{\clambda},\clambda\in \cLambda^+\})$ is as in the definition of $\BunNbox$, i.e., this is a data of a
$G$-bundle on $X$, equipped with a generalized reduction (a.k.a. Drinfeld structure) to $N^{\omega^\rho}$;% that is allowed to have poles at $x$;

\smallskip

\item $\{\kappa^{-,\clambda},\clambda\in \cLambda^+\}$ is a generalized reduction of $\CP_G$ to $B^-$, i.e., these are maps
$$'\CV^{\clambda}_{\CP_G}\to \clambda(\CP_T)$$
for \emph{some} $T$-bundle $\CP_T$ on $X$ that satisfy the Plucker relations (here $'\CV^{\clambda}$ denotes the dual Weyl module
with highest weight $\clambda$);% that are allowed to have poles at $x$;

\smallskip

\item The above generalized reductions of $\CP_G$ to $N^{\omega^\rho}$ and $B^-$ are mutually transversal away from a closed
subset $S\times X$ that is \emph{finite over} $S$.  

\end{itemize}

The transversality condition means that the composite maps
\begin{equation} \label{e:maps of line bundles}
(\omega^{\frac{1}{2}})^{\langle \clambda,2\rho\rangle}\to \CV^{\clambda}_{\CP_G}
\to {}'\CV^{\clambda}_{\CP_G}\to \clambda(\CP_T), \clambda\in \cLambda^+
\end{equation}
are isomorphisms away from a closed subset $S\times X$ that is \emph{finite over} $S$.  

\medskip

The assumption that the derived group of $G$ is simply connected implies (see Remark \ref{r:config as divisors}) 
that the system of maps \eqref{e:maps of line bundles} is equivalent to a datum of an $S$-point $D$ of $\Conf$. 

\medskip

By construction, over the open subset
$$S\times X-D \subset S\times X,$$
the maps $\kappa^{\clambda}$ and the maps $\kappa^{-,\clambda}$ are \emph{bundle maps}, hence 
the (generalized) $N$-reduction and the (generalized) $B^-$-reduction of $\CP_G|_{S\times X}$ are \emph{genuine}
and mutually transversal. Hence, over this open subset these two reductions uniquely determine a trivialization of
$\CP_G$. 

\medskip

I.e., we obtain an $S$-point of $\Gr^{\omega^\rho}_{G,\Conf}$ that projects to the $S$-point of $\Conf$, given by
$D$. By construction, the above $S$-point of $\Gr^{\omega^\rho}_{G,\Conf}$ actually belongs to the subfunctor
$\bCZ$, as desired.

\end{proof} 

As an immediate corollary of \propref{p:Zastava via global}, we obtain:

\begin{cor}
The prestack $\bCZ$ is a scheme.
\end{cor}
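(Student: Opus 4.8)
The statement to be proven is that the prestack $\bCZ$ is a scheme. Given \propref{p:Zastava via global}, which identifies $\bCZ$ with the open substack
$$(\ol\Bun^{\omega^{\rho}}_N\underset{\Bun_G}\times \BunBbm)^{\on{gen}}\subset
\ol\Bun^{\omega^{\rho}}_N\underset{\Bun_G}\times \BunBbm,$$
the corollary reduces to showing that this open substack is in fact a scheme. Since being a scheme is preserved under passage to open subfunctors, it suffices to prove that $(\ol\Bun^{\omega^{\rho}}_N\underset{\Bun_G}\times \BunBbm)^{\on{gen}}$ is a scheme, which in turn follows once we know that the map
$$\sv_{\Conf}:\bCZ\to \Conf$$
is schematic (since $\Conf$ is a scheme, being schematic over a scheme makes $\bCZ$ a scheme). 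The map $\sv_{\Conf}$ is already known to be ind-proper (by the properness of the affine Grassmannian, as recorded right after the definition of $\bCZ$), so the task is to upgrade ``ind-proper'' to ``proper'' (equivalently, to ``schematic and of finite type'').

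\textbf{Key steps.} First, I would fix a point $D\in\Conf^{\lambda}$ of total degree $\lambda\in\Lambda^{\on{neg}}-0$, and examine the fiber $\sv_{\Conf}^{-1}(D)$. This fiber is the intersection $\ol{S}^0\cap\ol{S}^{-,\lambda}$ inside $\Gr^{\omega^\rho}_{G,x}$ (more precisely, the colored version over the support of $D$), where the two semi-infinite orbit closures are taken in complementary directions. Second, I would invoke the standard dimension and finiteness estimates for intersections of opposite semi-infinite orbits on the affine Grassmannian: the intersection $\ol{S}^{0}\cap \ol{S}^{-,\lambda}$ is a (finite-dimensional, in fact projective) \emph{scheme}, of dimension $\langle\lambda,2\check\rho\rangle$ over the relevant locus. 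This is classical (cf.\ the analysis in \cite{BFGM}, \cite{FGV}, already cited in the excerpt for related purposes, e.g.\ in \secref{sss:Psiq} and the proof of \thmref{t:restr}). Third, I would combine the bound on fiber dimensions, uniform in $D$, with the ind-properness of $\sv_{\Conf}$: an ind-proper map with schematic fibers of uniformly bounded dimension over a scheme is proper (one checks that the presentation of $\bCZ$ as a colimit of closed subschemes stabilizes over each affine chart of $\Conf$). Hence $\sv_{\Conf}$ is proper, and therefore $\bCZ$ is a scheme, proper over $\Conf$.

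\textbf{Alternative route via the global description.} A cleaner, essentially equivalent argument uses \propref{p:Zastava via global} directly. The fiber product $\ol\Bun^{\omega^{\rho}}_N\underset{\Bun_G}\times \BunBbm$ is an algebraic stack; the genericity condition cuts out the open locus where the $N$-reduction and $B^-$-reduction are transversal at the generic point. On this locus, the combined reductions trivialize the automorphism group of the underlying $G$-bundle (a generic $N$-reduction together with a generic $B^-$-reduction has no nontrivial infinitesimal automorphisms, since $\fn\cap\fn^-=0$), so the open substack has no stacky structure and is a genuine scheme (locally of finite type). This is exactly the computation one does in \cite[Sect. 7]{Sch}, which the proof of \propref{p:Zastava via global} already reduces to; it is worth remarking that in the simply-connected case this is immediate, and the general case follows by the same defect-bounding device used in {\it loc.cit.}

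\textbf{Main obstacle.} The only real content is the uniform finiteness/properness of the fibers of $\sv_{\Conf}$ — i.e.\ controlling the intersections of opposite semi-infinite orbit closures as the colored divisor $D$ degenerates. I expect this to be the genuinely technical point, but it is entirely standard (it underlies the whole theory of Zastava spaces in \cite{BFGM}, \cite{FGV}, \cite{BG}, all cited in this work), and in the metaplectic setting nothing changes since the gerbe $\CG^G$ does not affect the underlying geometry. Everything else is formal bookkeeping: passing from ind-proper to proper once fiber dimensions are bounded, and the observation that schematicity over a scheme implies schematicity of the total space.
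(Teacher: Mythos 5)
The paper's proof is a one-line observation that both of your routes miss: by its very construction inside $\Gr^{\omega^\rho}_{G,\Conf}$, the prestack $\bCZ$ is automatically an ind-scheme, while \propref{p:Zastava via global} shows that it is simultaneously an algebraic stack; a prestack that is both is already a scheme. No fiber-dimension estimates, no stabilization of an ind-system, and no separate analysis of automorphism groups are required --- the ind-scheme presentation rules out any stacky structure, the algebraic-stack presentation supplies the (local) finite-type conditions, and the two descriptions together do all the work.

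Compared with this, both of your routes are heavier and each has a soft spot. In Route 1, the step ``an ind-proper map with schematic fibers of uniformly bounded dimension over a scheme is proper'' is false as stated (consider an infinite disjoint union of projective lines over a point); what you actually need is that the ind-presentation of $\bCZ$ stabilizes over each quasi-compact open of $\Conf$, and bounded fiber dimension alone does not deliver that --- the stabilization is precisely the nontrivial content, and you defer it with a ``one checks.'' In Route 2, trivializing the automorphism group only upgrades ``algebraic stack'' to ``algebraic space,'' so a further argument is still needed to land in schemes; and in any case you are re-deriving, via transversality of the $N$- and $B^-$-reductions, exactly the absence of stacky structure that the ind-scheme model of $\bCZ$ already gives you for free. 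The paper's proof simply juxtaposes the two descriptions of $\bCZ$ and sidesteps both issues.
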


\begin{proof}

Indeed, by construction, $\bCZ$ is an ind-scheme, but \propref{p:Zastava via global} implies
that it is also an algebraic stack.

\end{proof} 

\sssec{}

We will also need a variant of the above picture with pole points. Consider the maps
$$(\ol\Bun^{\omega^{\rho}}_N)_{\infty\cdot x}\overset{\ol\sfp}\longrightarrow \Bun_G \overset{\ol\sfp^-}\longleftarrow 
(\BunBbm)_{\infty\cdot x}.$$

We consider the corresponding open subfunctors
$$\left((\ol\Bun^{\omega^{\rho}}_N)_{\infty\cdot x}\underset{\Bun_G}\times (\BunBbm)_{\infty\cdot x}\right)^{\on{gen}}
\subset (\ol\Bun^{\omega^{\rho}}_N)_{\infty\cdot x}\underset{\Bun_G}\times (\BunBbm)_{\infty\cdot x}$$
and 
$$\left((\ol\Bun^{\omega^{\rho}}_N)_{\infty\cdot x}\underset{\Bun_G}\times \BunBbm\right)^{\on{gen}}
\subset (\ol\Bun^{\omega^{\rho}}_N)_{\infty\cdot x}\underset{\Bun_G}\times \BunBbm.$$

We have the naturally defined maps
\begin{equation} \label{e:from local to global Zastava poles poles}
\bCZ_{\infty\cdot x,\infty\cdot x}\to 
\left((\ol\Bun^{\omega^{\rho}}_N)_{\infty\cdot x}\underset{\Bun_G}\times (\BunBbm)_{\infty\cdot x}\right)^{\on{gen}}
\end{equation}
and 
\begin{equation} \label{e:from local to global Zastava poles}
\bCZ_{\infty\cdot x}\to 
\left((\ol\Bun^{\omega^{\rho}}_N)_{\infty\cdot x}\underset{\Bun_G}\times \BunBbm\right)^{\on{gen}}.
\end{equation}

As in \propref{p:Zastava via global}, one shows:

\begin{prop} \label{p:Zastava via global poles}
The maps \eqref{e:from local to global Zastava poles poles} and \eqref{e:from local to global Zastava poles}
are isomorphisms.
\end{prop}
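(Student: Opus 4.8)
\textbf{Plan of proof of \propref{p:Zastava via global poles}.}
The plan is to imitate the proof of \propref{p:Zastava via global}, reducing the general case to the case in which the derived group of $G$ is simply-connected (as in \cite[Sect.~7]{Sch}), and then to construct explicit inverses to the maps \eqref{e:from local to global Zastava poles poles} and \eqref{e:from local to global Zastava poles}. First I would handle \eqref{e:from local to global Zastava poles}, and then deduce \eqref{e:from local to global Zastava poles poles} by an entirely parallel argument in which the reduction to $B^-$ is also allowed a pole at $x$. The only structural difference from the no-poles case is that the Pl\"ucker maps $\kappa^{\clambda}$ are now allowed a pole of arbitrary order at $x$ (this is exactly the passage from $\Conf$ to $\Conf_{\infty\cdot x}$, cf.~\secref{sss:conf} and the surrounding discussion), so the ``colored divisor'' $D$ extracted from the data now lies in $\Conf_{\infty\cdot x}$ rather than in $\Conf$.

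The key steps, in order, would be: (i) reduce to $G$ with simply-connected derived group, so that \remref{r:config as divisors} (in its $\Conf_{\infty\cdot x}$-version) applies; (ii) given an $S$-point of $\left((\ol\Bun^{\omega^{\rho}}_N)_{\infty\cdot x}\underset{\Bun_G}\times \BunBbm\right)^{\on{gen}}$, i.e.\ a $G$-bundle $\CP_G$ with a generalized $N^{\omega^\rho}$-reduction (Drinfeld structure with poles bounded at $x$) and a genuine generalized $B^-$-reduction, form the composite maps
$$(\omega^{\frac{1}{2}})^{\langle \clambda,2\rho\rangle}\to \CV^{\clambda}_{\CP_G}\to {}'\CV^{\clambda}_{\CP_G}\to \clambda(\CP_T),\quad \clambda\in \cLambda^+,$$
and observe that, by the transversality (``${\on{gen}}$'') condition, these are isomorphisms away from a closed subscheme of $S\times X$ finite over $S$; (iii) record that this system of maps, allowed a pole at $x$, is the same datum as an $S$-point $D$ of $\Conf_{\infty\cdot x}$; (iv) over $S\times X-D$ the two reductions are genuine and mutually transversal, hence determine a trivialization of $\CP_G$ there, producing an $S$-point of $\Gr^{\omega^\rho}_{G,\Conf_{\infty\cdot x}}$ over $D$ which, by construction, lands in the intersection $(\ol{S}^0_{\Conf_{\infty\cdot x}})_{\infty\cdot x}\cap (\ol{S}^{-,\Conf_{\infty\cdot x}}_{\Conf_{\infty\cdot x}})$, i.e.\ in $\bCZ_{\infty\cdot x}$; (v) check that this assignment is inverse to \eqref{e:from local to global Zastava poles}, both compositions being the identity by unwinding the Beauville--Laszlo description of the affine Grassmannian (\secref{sss:BL}); (vi) repeat (ii)--(v) with the $B^-$-reduction itself allowed a pole at $x$, i.e.\ replacing $\BunBbm$ by $(\BunBbm)_{\infty\cdot x}$, to obtain \eqref{e:from local to global Zastava poles poles}; here the colored divisor extracted from the composite maps is again a point of $\Conf_{\infty\cdot x}$, but the ``generic trivialization'' of $\CP_G$ is now defined on $S\times X$ minus a divisor that may meet $x$ from the $B^-$-side as well.

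The main obstacle I anticipate is bookkeeping at the pole point $x$: one must be careful that the poles of $\kappa^{\clambda}$ at $x$, the poles of the $B^-$-Pl\"ucker maps at $x$ (in the case of \eqref{e:from local to global Zastava poles poles}), and the resulting order of the pole of the generic trivialization of $\CP_G$ at $x$ all match up consistently, so that the constructed $S$-point genuinely lies in the correct closed (resp.\ ind-)subfunctor $\bCZ_{\infty\cdot x}$ (resp.\ $\bCZ_{\infty\cdot x,\infty\cdot x}$) and not in some larger space. Concretely this amounts to verifying that the ``defect divisor away from $x$'' is finite over $S$ (so the two reductions are transversal on a genuine open, as in the no-poles case) while the behavior \emph{at} $x$ is controlled precisely by the $\Lambda$-component $\lambda_x$ of the point of $\Conf_{\infty\cdot x}$; once this is set up, everything else is a formal repetition of the proof of \propref{p:Zastava via global}. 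I expect no new geometric input is needed beyond what is already used there, together with the colimit presentation \eqref{e:config as colim} of $\Conf_{\infty\cdot x}$, which reduces statements with poles to statements on the finite-type pieces $\Conf_{\leq \mu\cdot x}$.
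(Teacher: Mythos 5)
Your proposal is correct and takes the same route as the paper: the paper's proof consists precisely of the remark ``As in \propref{p:Zastava via global}, one shows,'' i.e.\ the same explicit-inverse construction repeated with poles allowed at $x$. Your elaboration of the bookkeeping at $x$ (extracting a point of $\Conf_{\infty\cdot x}$ instead of $\Conf$, and handling the $B^-$-side pole for \eqref{e:from local to global Zastava poles poles}) is exactly what the paper's terse citation delegates to the reader.
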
 

\sssec{}

Let $\CG^G$ denote the gerbe on $\bCZ$ (resp., $\bCZ_{\infty\cdot x}$, $\bCZ_{\infty\cdot x,\infty\cdot x}$) obtained by
restriction from the same-named gerbe on $\Gr^{\omega^\rho}_{G,\Conf}$. Let $\CG^\Lambda$ denote the gerbe on 
$\bCZ$ (resp., $\bCZ_{\infty\cdot x}$, $\bCZ_{\infty\cdot x,\infty\cdot x}$) obtained as a pullback from the same-named gerbe
on $\Conf$ (resp., $\Conf_{\infty\cdot x}$). Denote
$$\CG^{G,T,\on{ratio}}:=\CG^G\otimes (\CG^T)^{-1}.$$

\medskip

We obtain that under the identification of \propref{p:Zastava via global} 
(resp., \eqref{e:from local to global Zastava poles poles} and \eqref{e:from local to global Zastava poles}),
$\CG^G$ corresponds to the pullback of the same-named gerbe on $\ol\Bun^{\omega^{\rho}}_N$ 
(resp., $(\ol\Bun^{\omega^{\rho}}_N)_{\infty\cdot x}$),
and $\CG^{G,T,\on{ratio}}$ corresponds to the pullback of the same-named gerbe on $\BunBbm$ 
(resp., $(\BunBbm)_{\infty\cdot x}$).

\ssec{Global interpretation of the functor $\Phi$}

In this subsection we will use \propref{p:Zastava via global poles} to give a global interpretation of the
functor $\Phi$.

\sssec{}

Consider the Cartesian square
$$
\CD
\ol\Bun^{\omega^{\rho}}_N\underset{\Bun_G}\times \BunBbm    @>{'\ol\sfp}>>  \BunBbm  \\
@V{'\ol\sfp^-}VV    @VV{\ol\sfp^-}V    \\
\ol\Bun^{\omega^{\rho}}_N   @>{\ol\sfp}>>   \Bun_G.
\endCD
$$

By a slight abuse of notation let us denote by the same symbols $'\ol\sfp^-$ and $'\ol\sfp$ the resulting maps
$$\ol\Bun^{\omega^{\rho}}_N  \overset{'\ol\sfp^-}\longleftarrow \bCZ \overset{'\ol\sfp}\longrightarrow \BunBbm$$
arising from the identification of \propref{p:Zastava via global}.

\medskip

We will use a similar notation also for the spaces $\bCZ_{\infty\cdot x,\infty\cdot x}$ and $\bCZ_{\infty\cdot x}$. 

\sssec{}   \label{sss:Phi glob}

Define the functor
$$\Phi_{\on{glob}}:\Whit_{q,\on{glob}}(G)\to  \Shv_{\CG^\Lambda}(\Conf_{\infty\cdot x})$$
to be the composition 
\begin{multline}  \label{e:global composition}
\Whit_{q,\on{glob}}(G) \hookrightarrow \Shv_{\CG^G}((\ol\Bun^{\omega^{\rho}}_N)_{\infty\cdot x})
\overset{({}'\ol\sfp^-)^!}\longrightarrow \Shv_{\CG^G}(\bCZ_{\infty\cdot x}) 
\overset{-\sotimes ({}'\ol\sfp)^!({}_{\Bun_T}\!\ICsm_{q^{-1},\on{glob}})[\dim(\Bun_G)]}\longrightarrow \\
\to \Shv_{\CG^\Lambda}(\bCZ_{\infty\cdot x}) \overset{(\sv_{\Conf})_*} \longrightarrow 
\Shv_{\CG^\Lambda}(\Conf_{\infty\cdot x}).
\end{multline} 

\sssec{}

Recall the equivalence
$$\pi_x^!:\Whit_{q,\on{glob}}(G)\to \Whit_q(G).$$

We claim:

\begin{prop} \label{p:functor Phi glob}
The functor 
$$\Phi\circ \pi_x^![d_g]: \Whit_{q,\on{glob}}(G)\to \Shv_{\CG^\Lambda}(\Conf_{\infty\cdot x})$$
identifies canonically with $\Phi_{\on{glob}}$. 
%followed by the cohomological shift by $[\langle \lambda,2\check\rho\rangle]$ on the connected component of 
%$$\Conf_{\infty\cdot x}^\lambda\subset \Conf_{\infty\cdot x}.$$
\end{prop}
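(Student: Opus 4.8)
The statement asserts an identification of two functors $\Whit_{q,\on{glob}}(G)\to \Shv_{\CG^\Lambda}(\Conf_{\infty\cdot x})$. The plan is to rewrite the local functor $\Phi$ (as described explicitly in \secref{sss:Phi expl}) step by step, replacing the local geometric input (the spread functor $\on{sprd}_{\Conf_{\infty\cdot x}}$ and the local semi-infinite sheaf $\ICsm_{q^{-1},\Conf_{\infty\cdot x}}$ living on $\Gr^{\omega^\rho}_{G,\Conf_{\infty\cdot x}}$) by the corresponding global input (a $!$-pullback from $(\ol\Bun^{\omega^{\rho}}_N)_{\infty\cdot x}$ and the global IC sheaf ${}_{\Bun_T}\!\ICsm_{q^{-1},\on{glob}}$ on $(\BunBbm)_{\infty\cdot x}$), using the proper base change identities and the local-to-global comparisons of Parts II and IV. Concretely, I would first use \propref{p:Zastava via global poles} to identify $\bCZ_{\infty\cdot x}$ with the global fiber product $\left((\ol\Bun^{\omega^{\rho}}_N)_{\infty\cdot x}\underset{\Bun_G}\times \BunBbm\right)^{\on{gen}}$, compatibly with the gerbes $\CG^G$, $\CG^\Lambda$, $\CG^{G,T,\on{ratio}}$ (by the last paragraph of \secref{ss:Zastava}); this lets me pass freely between the two pictures of the Zastava space.

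Next I would reorganize the local formula for $\Phi\circ\pi_x^![d_g]$ as follows. Given $\CF\in \Whit_{q,\on{glob}}(G)$, the object $\on{sprd}_{\Conf_{\infty\cdot x}}(\pi_x^!(\CF))$ is, by the very construction of $\on{sprd}$ in \secref{sss:spread functor} (i.e. \thmref{t:restr to unit}) together with the fact that $\pi_x^!$ is a $!$-pullback from $(\ol\Bun^{\omega^{\rho}}_N)_{\infty\cdot x}$ (using \thmref{t:N contr}), identified with the $!$-pullback of $\CF$ along
$$\Gr^{\omega^\rho}_{G,\Conf_{\infty\cdot x}}\to (\ol{S}{}^0_{\Ran_x})_{\infty\cdot x}\overset{\pi_{\Ran_x}}\longrightarrow (\ol\Bun^{\omega^{\rho}}_N)_{\infty\cdot x},$$
up to the shift $[d_g]$. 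Restricting the product $\on{sprd}_{\Conf_{\infty\cdot x}}(\pi_x^!\CF)\sotimes \ICsm_{q^{-1},\Conf_{\infty\cdot x}}$ to its support $\bCZ_{\infty\cdot x}$ (as noted in the Remark after the definition of $\bCZ_{\infty\cdot x}$), the first factor becomes $({}'\ol\sfp^-)^!(\CF)$ and the projection $\Gr^{\omega^\rho}_{G,\Conf_{\infty\cdot x}}\to \Conf_{\infty\cdot x}$ restricts to $\sv_{\Conf_{\infty\cdot x}}$. For the second factor, the key input is \corref{c:ICs glob sym} (the $T$-twisted local-to-global comparison $(\pi_{\Gr_T})^!({}_{\Bun_T}\!\ICsm_{q^{-1},\on{glob}})[d_g+\dim(\Bun_T)+\deg]\simeq {}_{\Gr_T}\!\ICsm_{q^{-1},\Ran}$, applied with $N\rightsquigarrow N^-$ and $\CG^G\rightsquigarrow(\CG^G)^{-1}$, and in the $\Conf_{\infty\cdot x}$-version): it identifies $\ICsm_{q^{-1},\Conf_{\infty\cdot x}}|_{\bCZ_{\infty\cdot x}}$ with $({}'\ol\sfp)^!({}_{\Bun_T}\!\ICsm_{q^{-1},\on{glob}})$ up to the cohomological shift, where the twists $\CG^\Lambda_{\mu\cdot x}$ appearing in $\ICslm$ and the component-wise shift $\langle\mu,2\check\rho\rangle$ built into $\Phi$ absorb the $\deg$ and $\dim(\Bun_T)$ terms, leaving exactly the shift $[\dim(\Bun_G)]$ (up to bookkeeping: $\dim(\Bun_G)=-\dim(\Bun_N^{\omega^\rho})-\dim(\Bun_T)+\dots$, which is precisely the normalization in \eqref{e:global composition} combined with the $[d_g]$ on the left). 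Finally, the direct image along $\Gr^{\omega^\rho}_{G,\Conf_{\infty\cdot x}}\to\Conf_{\infty\cdot x}$ becomes $(\sv_{\Conf})_*$ on $\bCZ_{\infty\cdot x}$, matching \eqref{e:global composition}.

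The main obstacle I anticipate is the careful bookkeeping of cohomological shifts and gerbe twists: one must check that the per-component shift $[\langle\mu,2\check\rho\rangle]$ built into the definition of $\Phi$ on $\Conf^\mu_{\infty\cdot x}$, the shift $[d_g]$, the shift $[\dim(\Bun_G)]$ in \secref{sss:Phi glob}, and the $[d_g+\dim(\Bun_T)+\deg]$ of \corref{c:ICs glob sym} all cancel coherently, and that the identification $\ICsm_{q^{-1},\Conf_{\infty\cdot x}}|_{\bCZ_{\infty\cdot x}}\simeq ({}'\ol\sfp)^!({}_{\Bun_T}\!\ICsm_{q^{-1},\on{glob}})$ is compatible with the gerbe identifications $\CG^{G,T,\on{ratio}}$ on both sides. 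The geometric content — proper base change along the Cartesian squares relating $\bCZ_{\infty\cdot x}$, $(\ol\Bun^{\omega^{\rho}}_N)_{\infty\cdot x}$, $(\BunBbm)_{\infty\cdot x}$ and $\Bun_G$ — is routine once \propref{p:Zastava via global poles} and \corref{c:ICs glob sym} are in hand; the delicate part is purely the normalization. A secondary point worth isolating is verifying that the ind-proper map $\sv_{\Conf_{\infty\cdot x}}$ restricted to $\bCZ_{\infty\cdot x}$ agrees with the structure map used in the definition of $\Phi$, which follows from the compatibility of the two descriptions of $\bCZ_{\infty\cdot x}$ over $\Conf_{\infty\cdot x}$.
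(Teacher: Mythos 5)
Your proposal matches the paper's argument. The paper proves a slightly finer statement (\propref{p:functor Phi glob upstairs}), identifying the functors \emph{before} applying $(\sv_{\Conf_{\infty\cdot x}})_*$, and then derives \propref{p:functor Phi glob} by pushing forward; you carry out the same comparison but phrase it directly at the level of the pushed-forward functors. The key inputs are identical: the identification of the Whittaker factor uses exactly \thmref{t:restr to unit} and \thmref{t:N contr} (the paper checks $\on{sprd}_{\Ran_x}\circ\pi_x^!\simeq \pi_{\Ran_x}^!$ by composing both sides with $\on{unit}^!$, which is what your appeal to ``the very construction of $\on{sprd}$'' amounts to), the semi-infinite factor is handled via \corref{c:ICs glob sym} with $N\rightsquigarrow N^-$ and $q\rightsquigarrow q^{-1}$, and the geometric glue is the Zastava identification built into the definition of $'\ol\sfp^\pm$ in \secref{sss:Phi glob}. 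One small caveat: your parenthetical identity ``$\dim(\Bun_G)=-\dim(\Bun_N^{\omega^\rho})-\dim(\Bun_T)+\dots$'' has the wrong signs as written, but since you flag the normalization as the delicate bookkeeping step and leave it unresolved (as does the paper, which simply asserts the shifted isomorphism), this does not affect the correctness of the outline.
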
 

This proposition follows by applying $\sv_{\Conf_x}$ from the following more precise result:

\begin{prop} \label{p:functor Phi glob upstairs}
The composite functor
\begin{multline}  \label{e:global composition upstairs}
\Whit_{q,\on{glob}}(G) \hookrightarrow \Shv_{\CG^G}((\ol\Bun^{\omega^{\rho}}_N)_{\infty\cdot x})
\overset{({}'\ol\sfp^-)^!}\longrightarrow \Shv_{\CG^G}(\bCZ_{\infty\cdot x}) 
\overset{-\sotimes ({}'\ol\sfp)^!({}_{\Bun_T}\!\ICsm_{q^{-1},\on{glob}})[\dim(\Bun_G)]}\longrightarrow \\
\to \Shv_{\CG^\Lambda}(\bCZ_{\infty\cdot x})
\end{multline}
identifies canonically with the composition of $\pi_x^![d_g]$ and the functor 
\begin{equation}  \label{e:Phi first step}
\CF\mapsto \on{sprd}_{\Conf_{\infty\cdot x}}(\CF)\sotimes \ICsm_{q^{-1},\Conf_{\infty\cdot x}}[\deg],\quad 
\Whit_{q,x}(G)\to \Shv_{\CG^\Lambda}(\bCZ_{\infty\cdot x}).
\end{equation}
\end{prop}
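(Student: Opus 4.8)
\textbf{Proof proposal for \propref{p:functor Phi glob upstairs}.}

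The plan is to trace through the two sides of the claimed identity step by step and to reduce it to the local-to-global comparison statements for the metaplectic semi-infinite IC sheaf (Theorems \ref{t:local-to-global semiinf}, \ref{t:ICs glob}, and their relative/twisted versions, especially \corref{c:ICs glob sym}). The key geometric input is the identification of \propref{p:Zastava via global poles}, which realizes $\bCZ_{\infty\cdot x}$ simultaneously as a closed subspace of $\Gr^{\omega^\rho}_{G,\Conf_{\infty\cdot x}}$ (the incarnation used on the local side, see \secref{sss:Phi expl}) and as the fiber product $\left((\ol\Bun^{\omega^{\rho}}_N)_{\infty\cdot x}\underset{\Bun_G}\times \BunBbm\right)^{\on{gen}}$ (the incarnation used on the global side). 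So the whole proof is a ``diagram chase'', but one where the non-formal content is packaged in already-established theorems.

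First I would unwind the local side, i.e. the functor \eqref{e:Phi first step}. By construction (see \secref{sss:ICs conf}, \secref{sss:Phi expl}), the object $\on{sprd}_{\Conf_{\infty\cdot x}}(\CF)\sotimes \ICsm_{q^{-1},\Conf_{\infty\cdot x}}$ lives on $\bCZ_{\infty\cdot x}$, and $\on{sprd}_{\Conf_{\infty\cdot x}}(\CF)$ is obtained from $\on{sprd}_{\Ran_x}(\CF)$ by the twisting construction of \secref{ss:twist T bundle} applied to $\CY=(\Gr^{\omega^\rho}_{T,\Ran_x})^{\on{neg}}_{\infty\cdot x}$. On the other hand, by \thmref{t:local to global} (more precisely by its Ran enhancement, \thmref{t:restr to unit} and the factorization enhancement \thmref{t:fact on Vac Whit}), the functor $\on{sprd}_{\Ran_x}$ composed with $\pi_{\Ran_x}^!$ recovers $\pi_x^!$ up to the shift $d_g$; so $\on{sprd}_{\Conf_{\infty\cdot x}}(\pi_x^!(-))$ is computed by $!$-pulling back along ${}'\ol\sfp^-$ from $(\ol\Bun^{\omega^{\rho}}_N)_{\infty\cdot x}$, which is exactly the first arrow in \eqref{e:global composition upstairs}. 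This matches the $({}'\ol\sfp^-)^!$ step. It remains to identify the two kernels: the local kernel $\ICsm_{q^{-1},\Conf_{\infty\cdot x}}[\deg]$ on $\bCZ_{\infty\cdot x}$, and the global kernel $({}'\ol\sfp)^!({}_{\Bun_T}\!\ICsm_{q^{-1},\on{glob}})[\dim(\Bun_G)]$. This is where I invoke \corref{c:ICs glob sym} (its $B^-$, $q^{-1}$, and polar variant): it says precisely that, after the appropriate cohomological shifts, the pullback of ${}_{\Bun_T}\!\ICsm_{q^{-1},\on{glob}}$ along the map $\pi_{\Gr_T}$ (restricted to the relevant $\ol{S}^{-}$-part) agrees with the $\Conf$-twisted metaplectic semi-infinite IC sheaf. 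One has to bookkeep the shift: $\dim(\Bun_G)$ on the global side versus $\deg=\langle\lambda,2\check\rho\rangle$ plus $d_g$ from $\pi_x^!$ plus $\dim(\Bun_T)$, and check they balance on each connected component $\Conf^\lambda_{\infty\cdot x}$; this is the content of the $\deg$-dependence in \corref{c:ICs glob sym} together with $\dim(\Bun_G)=\dim(\Bun_N^{\omega^\rho})+\dim(\Bun_T)+\dim(\Bun_N)=d_g+\dim(\Bun_T)+d$ and the reconciliation $d = \langle 2\check\rho,2\rho\rangle + \dots$ via the formula for $d_g$. I would carry out this shift computation carefully but expect no surprises.

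Having matched the input data, the last step is to compare the output operations: on the local side, $\Phi$ applies $(\sv_{\Conf_{\infty\cdot x}})_*$ (direct image along $\bCZ_{\infty\cdot x}\to \Conf_{\infty\cdot x}$), and on the global side $\Phi_{\on{glob}}$ also applies $(\sv_{\Conf})_*$ for the very same projection under the identification of \propref{p:Zastava via global poles}. So \propref{p:functor Phi glob} follows from \propref{p:functor Phi glob upstairs} by applying $(\sv_{\Conf_{\infty\cdot x}})_*$; for \propref{p:functor Phi glob upstairs} itself the only remaining point is that the twisting constructions and the gerbe identifications (recorded at the end of \secref{ss:Zastava}: $\CG^G$ on $\bCZ_{\infty\cdot x}$ is the restriction of $\CG^G$ on $(\ol\Bun^{\omega^{\rho}}_N)_{\infty\cdot x}$, and $\CG^{G,T,\on{ratio}}$ is the restriction from $(\BunBbm)_{\infty\cdot x}$) are compatible with all the $!$-pullbacks and the $\sotimes$ appearing in both composites, which is exactly the compatibility built into \secref{ss:twist T bundle} and \corref{c:ICs glob sym}. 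The main obstacle I anticipate is purely the cohomological shift bookkeeping across connected components — making sure the $[\langle\lambda,2\check\rho\rangle]$, the $[d_g]$, the $[\dim\Bun_G]$ and the $[\dim\Bun_T]$ (and the $\deg$ built into ${}_{\Bun_T}\!\ICsm_{q^{-1},\on{glob}}$ versus the naive IC normalization) all cancel consistently; there is no serious conceptual difficulty once \corref{c:ICs glob sym} and \propref{p:Zastava via global poles} are in hand.
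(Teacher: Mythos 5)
Your proposal is correct and follows essentially the same route as the paper's proof: you identify the kernels via \corref{c:ICs glob sym} (its $N^-$, $q^{-1}$ variant), and you match $\on{sprd}_{\Conf_{\infty\cdot x}}\circ \pi_x^!$ with the global pullback $({}'\ol\sfp^-)^!$ along the forgetful embedding by reducing to the $\Ran_x$-version and ultimately to \thmref{t:restr to unit} applied via $\on{unit}^!$. One sentence is misphrased — ``the functor $\on{sprd}_{\Ran_x}$ composed with $\pi_{\Ran_x}^!$ recovers $\pi_x^!$'' should read that $\on{sprd}_{\Ran_x}\circ \pi_x^!$ agrees with $\pi_{\Ran_x}^!$ on $\Whit_{q,\on{glob}}(G)$ — but the subsequent line makes clear you intend the right identity, so this is cosmetic.
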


%which appears in \secref{sss:Phi expl}. 

\begin{proof}

First, we note that by \corref{c:ICs glob sym} (or, rather, its version with $N$ replaced by $N^-$), the object
$$\ICsm_{q^{-1},\Conf_{\infty\cdot x}}|_{\bCZ_{\infty\cdot x}}[\deg+d_g]
\in  \Shv_{(\CG^G)^{-1}\otimes \CG^\Lambda}(\bCZ_{\infty\cdot x})$$
identifies with
$$({}'\ol\sfp)^!({}_{\Bun_T}\!\ICsm_{q^{-1},\on{glob}})[\dim(\Bun_G)].$$

\medskip

Hence, it remains to show that the composition 
$$\Whit_{q,\on{glob}}(G) \overset{\pi_x^!}\longrightarrow \Whit_q(G) \overset{\on{sprd}_{\Conf_{\infty\cdot x}}}\longrightarrow
\Shv_{\CG^G}((\ol{S}^0_{\Conf_{\infty\cdot x}})_{\infty\cdot x})\overset{!\on{-restriction}}\longrightarrow
\Shv_{\CG^G}(\bCZ_{\infty\cdot x})$$
identifies with the functor
$$\Whit_{q,\on{glob}}(G) \hookrightarrow \Shv_{\CG^G}((\ol\Bun^{\omega^{\rho}}_N)_{\infty\cdot x})
\overset{({}'\ol\sfp^-)^!}\longrightarrow \Shv_{\CG^G}(\bCZ_{\infty\cdot x}).$$

In fact, we claim that the functor
$$\Whit_{q,\on{glob}}(G) \overset{\pi_x^!}\longrightarrow \Whit_q(G) \overset{\on{sprd}_{\Conf_{\infty\cdot x}}}\longrightarrow
\Shv_{\CG^G}((\ol{S}^0_{\Conf_{\infty\cdot x}})_{\infty\cdot x})$$
identifies canonically with
$$\Whit_{q,\on{glob}}(G) \hookrightarrow \Shv_{\CG^G}((\ol\Bun^{\omega^{\rho}}_N)_{\infty\cdot x})
\overset{\pi_{\Conf_{\infty\cdot x}}^!}\longrightarrow \Shv_{\CG^G}((\ol{S}^0_{\Conf_{\infty\cdot x}})_{\infty\cdot x}).$$

For this, it suffices to show that the composition
$$\Whit_{q,\on{glob}}(G) \overset{\pi_x^!}\longrightarrow \Whit_q(G) \overset{\on{sprd}_{\Ran_x}} \longrightarrow
\Shv_{\CG^G}((\ol{S}^0_{\Ran_x})_{\infty\cdot x})$$
identifies canonically with 
$$\Whit_{q,\on{glob}}(G) \hookrightarrow \Shv_{\CG^G}((\ol\Bun^{\omega^{\rho}}_N)_{\infty\cdot x})
\overset{\pi_{\Ran_x}^!}\longrightarrow \Shv_{\CG^G}((\ol{S}^0_{\Conf_{\Ran_x}})_{\infty\cdot x}).$$

The latter follows from \thmref{t:restr to unit} by composing both sides with the functor
$$\on{unit}^!: \Shv_{\CG^G}((\ol{S}^0_{\Conf_{\Ran_x}})_{\infty\cdot x}) \to
 \Shv_{\CG^G}(\Ran_x\times \Gr^{\omega^\rho}_{G,x}).$$

Indeed, both sides are given by the forgetful functor $\Whit_{q,\on{glob}}(G) \hookrightarrow \Shv_{\CG^G}((\ol\Bun^{\omega^{\rho}}_N)_{\infty\cdot x})$,
followed by pullback along
$$\Ran_x\times \Gr^{\omega^\rho}_{G,x} \to \Gr^{\omega^\rho}_{G,x}  \overset{\pi_x}\longrightarrow 
(\ol\Bun^{\omega^{\rho}}_N)_{\infty\cdot x}.$$

\end{proof} 

\sssec{}  \label{sss:proof of finite length pres}

Let us observe that \propref{p:functor Phi glob} immediately implies the first assertion \thmref{t:duality pres}, namely that 
the functor $\Phi$ sends compact objects in $\Whit_{q,x}(G)$ to locally compact ones in $\Shv_{\CG^\Lambda}(\Conf_{\infty\cdot x})$:

\medskip

Indeed, over each connected component $\Conf^\lambda_{\infty\cdot x}$ of $\Conf_{\infty\cdot x}$, 
all the functors in the composition \eqref{e:global composition} preserve compactness.

\ssec{A sharpened version of \thmref{t:duality pres}}  \label{ss:proof of Verdier}

In this subsection we will formulate a sharpened version of \thmref{t:duality pres}, and deduce from it the original
statement.

\sssec{}   \label{sss:proof of duality pres}

Let
$$\Shv_{\CG^\Lambda}(\CZ_{\infty\cdot x})^{\on{loc.c}} \subset \Shv_{\CG^\Lambda}(\CZ_{\infty\cdot x}).$$
be the subcategory of \secref{sss:locally compact}; it consists of objects whose restriction to the preimage of every individual 
$\Conf^\lambda_{\infty\cdot x}\subset \Conf_{\infty\cdot x}$ is compact. 

\medskip

Since $\CZ_{\infty\cdot x}$ is an ind-scheme, we have a well-defined Verdier duality functor
\begin{equation} \label{e:Verdier on Zastava}
\BD^{\on{Verdier}}:(\Shv_{\CG^\Lambda}(\CZ_{\infty\cdot x})^{\on{loc.c}})^{\on{op}} \to 
\Shv_{(\CG^\Lambda)^{-1}}(\CZ_{\infty\cdot x})^{\on{loc.c}}.
\end{equation} 

\medskip

We are going to prove the following result:

\begin{thm} \label{t:Verdier upstairs}
The following diagram of functors commutes
$$
\CD
(\Whit_q(G)^c)^{\on{op}}     @>{\BD^{\on{Verdier}}}>>    \Whit_{q^{-1}}(G)^c  \\
@V{\text{\eqref{e:Phi first step}}}VV   @V{\text{\eqref{e:Phi first step}}}VV \\
(\Shv_{\CG^\Lambda}(\CZ_{\infty\cdot x})^{\on{loc.c}})^{\on{op}} @>{\BD^{\on{Verdier}}}>>  
\Shv_{(\CG^\Lambda)^{-1}}(\CZ_{\infty\cdot x})^{\on{loc.c}}
\endCD
$$
commutes. 
\end{thm}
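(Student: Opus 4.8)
The plan is to exploit the global interpretation of the functor \eqref{e:Phi first step} furnished by \propref{p:functor Phi glob upstairs}. Via the equivalence $\pi_x^![d_g]:\Whit_{q,\on{glob}}(G)\overset{\sim}\to\Whit_{q,x}(G)$, which by Remark~\ref{r:local vs global duality} intertwines the duality \eqref{e:self duality for glob Whit comp} on the global side with \eqref{e:self duality for Whit comp} (up to $\ell_g$), it suffices to prove the commutation for the functor $\Whit_{q,\on{glob}}(G)\to\Shv_{\CG^\Lambda}(\bCZ_{\infty\cdot x})$ appearing in \eqref{e:global composition upstairs}. That functor is a composite of three operations: the forgetful functor $\Whit_{q,\on{glob}}(G)\hookrightarrow\Shv_{\CG^G}((\ol\Bun^{\omega^\rho}_N)_{\infty\cdot x})$, the $!$-pullback $({}'\ol\sfp^-)^!$ along the (schematic) map ${}'\ol\sfp^-:\bCZ_{\infty\cdot x}\to(\ol\Bun^{\omega^\rho}_N)_{\infty\cdot x}$, and the operation $-\sotimes({}'\ol\sfp)^!({}_{\Bun_T}\!\ICsm_{q^{-1},\on{glob}})[\dim(\Bun_G)]$. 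The first operation commutes with Verdier duality because, by \corref{c:properties global}, objects of $\Whit_{q,\on{glob}}(G)$ are clean extensions from quasi-compact opens, so Verdier duality on $\Shv_{\CG^G}(\BunNbox)^{\on{loc.c}}$ restricts to the self-duality \eqref{e:self duality for glob Whit comp} and commutes with the forgetful functor; the same applies with pole points. So the real content is the interaction of the middle two operations with duality.

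The key geometric input I would use is a ULA (universally locally acyclic) property of the global metaplectic IC sheaf ${}_{\Bun_T}\!\ICsm_{q^{-1},\on{glob}}$ on $\BunBbm$ relative to $\Bun_T$ (equivalently, of ${}_{\Bun_T}\!\ICs_{q,\on{glob}}$ on $\BunBb$; this is the metaplectic analogue of the ULA property of $\IC_{\BunBb}$ over $\Bun_T$ established in \cite{BG}, \cite{Ga7}). Granting this, the combined operation ``$({}'\ol\sfp^-)^!$ followed by tensoring with $({}'\ol\sfp)^!$ of the IC sheaf'' is, up to the cohomological shift by $\dim(\Bun_G)$ and a twist, the functor of $!$-pullback–tensor along the correspondence
$$\ol\Bun^{\omega^\rho}_N \overset{{}'\ol\sfp^-}\longleftarrow \bCZ \overset{{}'\ol\sfp}\longrightarrow \BunBbm,$$
where $\bCZ=(\ol\Bun^{\omega^\rho}_N\underset{\Bun_G}\times\BunBbm)^{\on{gen}}$ by \propref{p:Zastava via global} (and its polar variant \propref{p:Zastava via global poles}). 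The ULA property converts this $!$-tensor into a $*$-pullback up to a uniform shift, and then a standard base-change/Verdier-compatibility argument for a correspondence both of whose legs are schematic (with the relevant maps of finite type over each $\Conf^\lambda_{\infty\cdot x}$) shows that the composite intertwines Verdier duality on $\Whit_{q,\on{glob}}(G)$ with Verdier duality on $\Shv_{\CG^\Lambda}(\CZ_{\infty\cdot x})^{\on{loc.c}}$. The gerbe bookkeeping — that $\CG^G$ pulls back to $\CG^G$, that $\CG^{G,T,\on{ratio}}=\CG^G\otimes(\CG^T)^{-1}$, that $\ICsm_{q^{-1},\on{glob}}$ for the dual datum $(\CG^G)^{-1}$ is the Verdier dual of $\ICs_{q,\on{glob}}$ for $\CG^G$ — is routine but needs to be tracked carefully; the swap $N\leftrightarrow N^-$ and $q\leftrightarrow q^{-1}$ is exactly what matches the two sides of the asserted diagram.

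Finally, I would deduce \thmref{t:duality pres} from \thmref{t:Verdier upstairs} by post-composing with $(\sv_{\Conf_{\infty\cdot x}})_*$. On $\CZ_{\infty\cdot x}$ the map $\sv$ is, over each $\Conf^\lambda_{\infty\cdot x}$, a proper map of schemes (the Zastava space is a scheme by \propref{p:Zastava via global}), so $\sv_*=\sv_!$ commutes with Verdier duality and sends $\Shv^{\on{loc.c}}$ to $\Shv^{\on{loc.c}}$; this yields the commutation of the square in \thmref{t:duality pres} after restricting $\Phi$ to $\CZ_{\infty\cdot x}$ — and the support of $\Phi(\CF)$ on the configuration space is exactly accounted for by $\bCZ_{\infty\cdot x}$ (see the Remark after the definition of $\bCZ_{\infty\cdot x}$), with the contribution of the boundary $\bCZ_{\infty\cdot x}\setminus\CZ_{\infty\cdot x}$ not affecting the $!$-fibers, so passing from $\bCZ_{\infty\cdot x}$ to $\CZ_{\infty\cdot x}$ is harmless. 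I expect the main obstacle to be establishing the ULA property of the global metaplectic IC sheaf over $\Bun_T$ in the requisite generality — this is where the genuinely nontrivial geometry of Drinfeld's compactification enters, and where one must check that the metaplectic twist does not destroy the acyclicity arguments of \cite{BG} and \cite{Ga7}; once that is in hand, the rest is formal manipulation with correspondences, base change, and gerbe twists.
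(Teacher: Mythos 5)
Your overall strategy is the right one and matches the paper's: reduce to the global statement via the equivalence $\pi_x^![d_g]$ and \propref{p:functor Phi glob upstairs}, recast the functor \eqref{e:Phi first step} as pull-tensor along the Zastava correspondence $\ol\Bun^{\omega^\rho}_N \leftarrow \bCZ \to \BunBbm$ of \propref{p:Zastava via global}, and then use a ULA property of the global metaplectic semi-infinite IC sheaf on $\BunBbm$ to obtain the Verdier compatibility. This is exactly what the paper does (via the natural transformation \eqref{e:pullback duality} and the sharper \thmref{t:Verdier upstairs bis}).

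However, there is a genuine error in the key geometric input. You posit ULA of $_{\Bun_T}\!\ICsm_{q^{-1},\on{glob}}$ \emph{relative to} $\Bun_T$, and attribute this to \cite{BG} and \cite{Ga7}. That is not what the argument needs. The Cartesian square \eqref{e:Zastava as cart} is taken over $\Bun_G$, and the general mechanism (\lemref{l:ULA}) that converts the $!$-pull-$\sotimes$ along a correspondence into a GM-extension statement requires ULA with respect to the map $\ol\sfp^-:\BunBbm\to\Bun_G$, i.e.\ relative to $\Bun_G$. ULA over $\Bun_T$ is classical (and what \cite{BG} actually proves), but ULA over $\Bun_G$ is a strictly harder statement, and in the paper it is \thmref{t:ULA}, which is Campbell's theorem from \cite{Camp}. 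Moreover, that theorem only gives ULA on a \emph{sufficiently deep} locus (bounded defect $\leq\lambda$, degree $\mu$ with $\langle\mu-\lambda,\check\alpha_i\rangle>d$), which is why the paper's Step~2 runs a factorization argument — comparing the pullbacks $(f^{\nu'}_1)^!$ and $(f^{\nu'}_2)^!$ over the open loci $\CZ^{\on{Fact},\nu',\leq\lambda}$ — to reduce the perversity/GM-extension claim~(*) to a stratum where \thmref{t:ULA} applies. Your proposal, as written, would let you apply ULA unconditionally, which is not available; substituting the ULA over $\Bun_G$ in the deep range and adding the factorization-reduction step is what is required to close the gap.
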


Note that combined with the properness of the map $\sv_{\Conf_x}$, the assertion of \thmref{t:Verdier upstairs}
implies that of \thmref{t:duality pres}. 

\sssec{}

Note that statement of \thmref{t:Verdier upstairs} is \emph{local} (i.e., does not appeal to a global curve $X$).
However, we will use global methods to prove it. Namely, we will use the interpretation of the functor \eqref{e:Phi first step}
as \eqref{e:global composition upstairs} in order to prove it. 

\medskip

Note that, according to Remark \ref{r:local vs global duality}, the functor $\pi_x^![d_g]$
intertwines the duality \eqref{e:self duality for Whit comp} with the Verdier duality functor
$$(\Whit_{q,\on{glob}}(G)^c)^{\on{op}}\to \Whit_{q^{-1},\on{glob}}(G)^c$$
induced by Verdier duality
$$\BD^{\on{Verdier}}:(\Shv_{\CG^G}((\ol\Bun^{\omega^{\rho}}_N)_{\infty\cdot x})^{\on{loc.c}})^{\on{op}}\to
\Shv_{(\CG^G)^{-1}}((\ol\Bun^{\omega^{\rho}}_N)_{\infty\cdot x})^{\on{loc.c}}.$$

Thus, using \propref{p:functor Phi glob upstairs}, we obtain that \thmref{t:Verdier upstairs} is equivalent to the following one:

\begin{thm} \label{t:Verdier upstairs global}
The following diagram of functors commutes
$$
\CD
(\Whit_{q,\on{glob}}(G)^c)^{\on{op}}     @>{\BD^{\on{Verdier}}}>>    \Whit_{q^{-1},\on{glob}}(G)^c  \\
@V{\text{\eqref{e:global composition upstairs}}}VV   @V{\text{\eqref{e:global composition upstairs}}}VV \\
(\Shv_{\CG^\Lambda}(\CZ_{\infty\cdot x})^{\on{loc.c}})^{\on{op}} @>{\BD^{\on{Verdier}}}>>  
\Shv_{(\CG^\Lambda)^{-1}}(\CZ_{\infty\cdot x})^{\on{loc.c}}
\endCD
$$
commutes. 
\end{thm}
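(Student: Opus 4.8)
The plan is to prove the \emph{global} assertion \thmref{t:Verdier upstairs global} directly; as already noted in the excerpt, this is equivalent to \thmref{t:Verdier upstairs} via \propref{p:functor Phi glob upstairs} together with Remark~\ref{r:local vs global duality} (which says that $\pi_x^![d_g]$ is an equivalence intertwining \eqref{e:self duality for Whit comp} with Verdier duality on $\Whit_{q,\on{glob}}(G)$), and \thmref{t:Verdier upstairs} in turn implies \thmref{t:duality pres} using the ind-properness of $\sv_{\Conf_x}$. Throughout, by \corref{c:properties global} it suffices to check the commutation of the square on \emph{compact} objects of $\Whit_{q,\on{glob}}(G)$, each of which is a clean extension from a quasi-compact open substack of one of the finite-type stacks $(\ol\Bun^{\omega^{\rho}}_N)_{\leq\lambda\cdot x}$. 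This reduces all subsequent manipulations to sheaves on schemes and algebraic stacks of finite type (working one connected component of $\Conf$ at a time), where Verdier duality and the six functors behave in the standard way, and cleanness makes $\BD^{\on{Verdier}}$ interact transparently with open restriction.

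The first step is to rewrite the kernel appearing in \eqref{e:global composition upstairs}. By \corref{c:ICs glob sym} in its form with $N$ replaced by $N^-$, together with \propref{p:Zastava via global poles}, the object $({}'\ol\sfp)^!({}_{\Bun_T}\!\ICsm_{q^{-1},\on{glob}})[\dim(\Bun_G)]$ is, up to the explicit cohomological shift recorded there, the $!$-pullback along ${}'\ol\sfp\colon \bCZ_{\infty\cdot x}\to\BunBbm$ of the global metaplectic semi-infinite IC sheaf ${}_{\Bun_T}\!\ICsm_{q^{-1},\on{glob}}$ on $\BunBbm$. Moreover, by \propref{p:Zastava via global poles} the map ${}'\ol\sfp^-\colon \bCZ_{\infty\cdot x}\to(\ol\Bun^{\omega^{\rho}}_N)_{\infty\cdot x}$ is the restriction to an open substack of a base change of $\ol\sfp^-\colon\BunBbm\to\Bun_G$. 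Thus, up to shift, the functor \eqref{e:global composition upstairs} has the shape $\CF\mapsto ({}'\ol\sfp^-)^!(\CF)\sotimes\CG$, with $\CG:=({}'\ol\sfp)^!({}_{\Bun_T}\!\ICsm_{q^{-1},\on{glob}})$.

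The main input — and the step I expect to be the principal obstacle — is the following \emph{ULA property}: the sheaf ${}_{\Bun_T}\!\ICsm_{q^{-1},\on{glob}}$ is ULA with respect to the projection $\ol\sfp^-\colon\BunBbm\to\Bun_G$. This is the metaplectic analog of the ULA property of $\IC_{\ol\Bun_{B^-}}$ over $\Bun_G$ used in \cite{BG}. One proves it by stratifying $\BunBbm$ according to the defect of the generalized $B^-$-structure and, on each stratum, computing the $!$- and $*$-restrictions of ${}_{\Bun_T}\!\ICsm_{q^{-1},\on{glob}}$ via the factorization structure of the semi-infinite IC sheaf (Theorems~\ref{t:fact ICs} and \ref{t:fact ICs equiv}) and the local-to-global comparison (Corollaries~\ref{c:ICs} and \ref{c:ICs glob sym}, \thmref{t:local-to-global semiinf}); the acyclicity estimate then reduces to a local statement on $\Gr_G$ about the behaviour of $\ICsm_{q^{-1}}$ along $\fL(N^-)$-orbits. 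The care here lies in carrying the metaplectic twist through this argument, so that \cite{BG} cannot simply be quoted verbatim. Granting this, since ULA is stable under base change of the target and under restriction to open substacks, $\CG$ is ${}'\ol\sfp^-$-ULA on $\bCZ_{\infty\cdot x}$.

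With the ULA property in hand, the conclusion is formal. By the standard calculus of ULA sheaves (see \cite[Chapter 3, Sect. 5]{GR2}): for $f\colon Y\to Z$ and an $f$-ULA object $\CG$ on $Y$, there is, functorially in $\CF\in\Shv(Z)$, a canonical isomorphism $\BD^{\on{Verdier}}_Y\big(f^!(\CF)\sotimes\CG\big)\simeq f^!\big(\BD^{\on{Verdier}}_Z(\CF)\big)\sotimes \BD^{\on{rel}}_{Y/Z}(\CG)$, where $\BD^{\on{rel}}_{Y/Z}(\CG)$ is the $f$-relative Verdier dual (again $f$-ULA), and $\BD^{\on{rel}}_{-/-}$ commutes with $!$-pullback along base changes of $f$. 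Applying this with $f={}'\ol\sfp^-$ and $\CG$ as above, we are reduced to identifying $\BD^{\on{rel}}_{\BunBbm/\Bun_G}({}_{\Bun_T}\!\ICsm_{q^{-1},\on{glob}})$. But ${}_{\Bun_T}\!\ICsm_{q^{-1},\on{glob}}$ is by construction the Goresky--MacPherson extension of the shifted constant sheaf on the smooth stack $\Bun_{B^-}$, hence is Verdier self-dual up to a cohomological shift (by twice $\dim\Bun^\lambda_{B^-}$ on each component) and up to the flip of the gerbe $(\CG^{G,T,\on{ratio}})^{-1}\rightsquigarrow\CG^{G,T,\on{ratio}}$, i.e. up to the passage $q^{-1}\rightsquigarrow q$; therefore $\BD^{\on{rel}}_{\BunBbm/\Bun_G}({}_{\Bun_T}\!\ICsm_{q^{-1},\on{glob}})\simeq {}_{\Bun_T}\!\ICsm_{q,\on{glob}}$ shifted by twice the relative dimension of $\ol\sfp^-$. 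Matching this relative-dimension shift against the cohomological shifts $d_g$, $\dim(\Bun_T)$ and $\deg$ recorded in \eqref{e:global composition upstairs}, \corref{c:ICs glob sym} and Remark~\ref{r:local vs global duality}, and observing that $\Phi$ maps to the category of sheaves on the relevant Zastava space (where, passing to the open $\CZ_{\infty\cdot x}$, open restriction commutes with Verdier duality), we obtain exactly the commutation of the square in \thmref{t:Verdier upstairs global}.
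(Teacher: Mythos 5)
Your overall architecture is sound — reduce via local-to-global equivalence, rewrite the kernel as a $!$-pullback of the global semi-infinite IC sheaf, invoke a ULA property relative to $\ol\sfp^-$, and then conclude by formal nonsense about the interaction of ULA with Verdier duality. This is the right strategy and matches the paper's at the level of shape. However, there is a genuine gap at the central step.

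You assert that ${}_{\Bun_T}\!\ICsm_{q^{-1},\on{glob}}$ is ULA with respect to $\ol\sfp^-:\BunBbm\to\Bun_G$ on the whole of $\BunBbm$, and then conclude by base change that the pulled-back kernel on $\bCZ_{\infty\cdot x}$ is $({}'\ol\sfp^-)$-ULA. But the ULA property does \emph{not} hold globally: the result actually available (\thmref{t:ULA}, cited from \cite{Camp}) holds only on the open locus where the total order of degeneracy of the generalized $B^-$-reduction is bounded by $\lambda$ and the degree $\mu$ of the $T$-bundle satisfies $\langle\mu-\lambda,\check\alpha_i\rangle>d$ for all $i$; it can and does fail where the defect accumulates relative to the degree. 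Since the ind-scheme $\bCZ_{\infty\cdot x}$ maps onto all of $\BunBbm$ (up to the transversality condition), the preimage of the bad locus is not empty, so your ``ULA by base change and open restriction'' argument does not give what you need on all of $\bCZ_{\infty\cdot x}$.

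The paper handles exactly this difficulty by an extra layer: it reformulates the commutation of Verdier duality as the statement (*) that the tensor product $({}'\ol\sfp^-)^!(\CF)\sotimes ({}'\ol\sfp)^!({}_{\Bun_T}\!\ICsm_{q^{-1},\on{glob}})[\dim\Bun_G]$ is perverse and a Goresky--MacPherson extension from the appropriate locally closed stratum. Then, and this is the key move you do not make, it uses the \emph{factorization structure of the Zastava space} to pull back the object along two different smooth maps $f_1^{\nu'}$ and $f_2^{\nu'}$ out of a common factorizing patch $\CZ^{\on{Fact},\nu',\leq\lambda}$, one landing in $\bCZ^\nu_{\infty\cdot x}$ (where the claim is to be proved) and the other landing in $\bCZ^{\nu'+\nu}_{\infty\cdot x}$ \emph{with the degree shifted deep into the dominant chamber} — i.e.\ precisely into the range covered by \thmref{t:ULA}. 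One then applies \lemref{l:ULA} to the target of $f_2^{\nu'}$ and transports the perverse GM-extension property back along the factorization identifications. This is where the factorization structure is doing the real work; you do invoke factorization, but only in your proposed proof of the ULA property, not as a mechanism to reduce to the ULA-friendly range. Without that reduction, your concluding paragraph — the relative-duality formula plus the matching of shifts — does not apply, because its hypothesis (ULA over the whole of the relevant base) is not satisfied. A secondary issue: once the GM-extension characterization is in hand, the paper does not need the full ``$\BD(f^!\CF\sotimes\CG)\simeq f^!(\BD\CF)\sotimes\BD^{\on{rel}}(\CG)$'' machinery; it suffices to observe that both sides of the natural transformation \eqref{e:reform duality upstairs bis} are GM extensions of the same restriction, reducing the isomorphism check to the open stratum where both are lisse.
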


We will prove \thmref{t:Verdier upstairs global} in the next section, using the notion of ULA 
(universal local acyclicity). 

\sssec{}

For future reference we record the following corollary of \thmref{t:Verdier upstairs global}
(which, given \propref{p:functor Phi glob}, is equivalent to \thmref{t:duality pres}):

\begin{cor} \label{c:duality pres global}
The diagram 
$$
\CD 
(\Whit_{q,\on{glob}}(G)^c)^{\on{op}}     @>{\BD^{\on{Verdier}}}>>    \Whit_{q^{-1},\on{glob}}(G)^c  \\
@V{\Phi_{\on{glob}}}VV   @VV{\Phi_{\on{glob}}}V   \\
(\Shv_{\CG^\Lambda}(\Conf_{\infty\cdot x})^{\on{loc.c}})^{\on{op}}  @>{\BD^{\on{Verdier}}}>> 
\Shv_{(\CG^\Lambda)^{-1}}(\Conf_{\infty\cdot x})^{\on{loc.c}}.
\endCD
$$
\end{cor}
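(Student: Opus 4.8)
The plan is to obtain \corref{c:duality pres global} as a purely formal consequence of \thmref{t:Verdier upstairs global}. By the very definition \eqref{e:global composition} of $\Phi_{\on{glob}}$, this functor factors as $\Phi_{\on{glob}}\simeq (\sv_{\Conf_{\infty\cdot x}})_*\circ \Psi_{\on{glob}}$, where $\Psi_{\on{glob}}$ denotes the composite functor \eqref{e:global composition upstairs} (the forgetful functor into $\Shv_{\CG^G}((\ol\Bun^{\omega^{\rho}}_N)_{\infty\cdot x})$, followed by $({}'\ol\sfp^-)^!$ and then tensoring by $({}'\ol\sfp)^!({}_{\Bun_T}\!\ICsm_{q^{-1},\on{glob}})[\dim(\Bun_G)]$), and $(\sv_{\Conf_{\infty\cdot x}})_*$ is direct image along the projection $\bCZ_{\infty\cdot x}\to \Conf_{\infty\cdot x}$. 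So it suffices to know that (i) $\Psi_{\on{glob}}$ commutes with Verdier duality, which is exactly \thmref{t:Verdier upstairs global}, and (ii) $(\sv_{\Conf_{\infty\cdot x}})_*$ does too, and that both operations preserve the relevant subcategories of locally compact objects.

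For (ii) I would argue via properness. By \propref{p:Zastava via global poles}, $\bCZ_{\infty\cdot x}$ is (a colimit of) schemes and, over each quasi-compact open of $\Conf_{\infty\cdot x}$, the morphism $\sv_{\Conf_{\infty\cdot x}}$ restricts to a proper map of schemes — a consequence of the properness of the affine Grassmannian, as already recorded in \secref{s:Zastava}. Two things follow at once: first, $(\sv_{\Conf_{\infty\cdot x}})_*$ sends $\Shv_{\CG^\Lambda}(\bCZ_{\infty\cdot x})^{\on{loc.c}}$ into $\Shv_{\CG^\Lambda}(\Conf_{\infty\cdot x})^{\on{loc.c}}$ (properness over each $\Conf^\lambda_{\infty\cdot x}$ preserves the relevant finiteness); and second, since $(\sv_{\Conf_{\infty\cdot x}})_*\simeq (\sv_{\Conf_{\infty\cdot x}})_!$ on such a map, the square
$$
\CD
(\Shv_{\CG^\Lambda}(\bCZ_{\infty\cdot x})^{\on{loc.c}})^{\on{op}}  @>{\BD^{\on{Verdier}}}>>  \Shv_{(\CG^\Lambda)^{-1}}(\bCZ_{\infty\cdot x})^{\on{loc.c}} \\
@V{(\sv_{\Conf_{\infty\cdot x}})_*}VV   @VV{(\sv_{\Conf_{\infty\cdot x}})_*}V  \\
(\Shv_{\CG^\Lambda}(\Conf_{\infty\cdot x})^{\on{loc.c}})^{\on{op}}  @>{\BD^{\on{Verdier}}}>>  \Shv_{(\CG^\Lambda)^{-1}}(\Conf_{\infty\cdot x})^{\on{loc.c}}
\endCD
$$
commutes. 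Since $\Psi_{\on{glob}}$ already carries compact objects of $\Whit_{q,\on{glob}}(G)$ into $\Shv_{\CG^\Lambda}(\bCZ_{\infty\cdot x})^{\on{loc.c}}$ — this being noted in \secref{sss:proof of finite length pres}, as every functor in \eqref{e:global composition upstairs} preserves compactness over each connected component $\Conf^\lambda_{\infty\cdot x}$ — splicing the square above together with the one furnished by \thmref{t:Verdier upstairs global} yields the commutative square of \corref{c:duality pres global}, and simultaneously shows that $\Phi_{\on{glob}}$ preserves local compactness.

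In short, \corref{c:duality pres global} carries no content beyond \thmref{t:Verdier upstairs global}: the real work — and the true obstacle — lies in that theorem, which will be established using the ULA (universal local acyclicity) property of the global metaplectic IC sheaf ${}_{\Bun_T}\!\ICsm_{q^{-1},\on{glob}}$ on $\BunBbm$. The only point of bookkeeping I would flag is that the objects produced by \eqref{e:global composition upstairs} a priori live on the completed Zastava space $\bCZ_{\infty\cdot x}$ (their support being contained in it, by the support estimates of \secref{sss:Phi expl}), so one must read the Verdier-duality statement of \thmref{t:Verdier upstairs global} on the space on which these objects are genuinely supported; the passage to $\Conf_{\infty\cdot x}$ above is then unproblematic precisely because $\sv_{\Conf_{\infty\cdot x}}$ restricts to a proper morphism on that closed subspace.
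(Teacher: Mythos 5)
Your proposal is correct and is essentially the paper's own argument: the paper explicitly states that $\Phi_{\on{glob}}$ factors as $(\sv_{\Conf_{\infty\cdot x}})_*$ after the composite \eqref{e:global composition upstairs}, that \thmref{t:Verdier upstairs global} handles the upstairs part, and that properness of $\sv_{\Conf_{\infty\cdot x}}$ finishes the job (cf.\ the sentence following \thmref{t:Verdier upstairs} and the parenthetical remark preceding the corollary). The bookkeeping about local compactness you flag is likewise the content of \secref{sss:proof of finite length pres}.
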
 

\section{Proof of the local Verdier duality theorem}   \label{s:proof of Verdier upstairs}

The goal of this section is to prove \thmref{t:Verdier upstairs global}. The main ingredient in the proof
is a certain local acyclicity (a.k.a. ULA) property of 
$$_{\Bun_T}\!\ICsm_{q^{-1},\on{glob}}\in \Shv_{(\CG^{G,T,\on{ratio}})^{-1}}(\BunBbm)$$
with respect to the projection
$$\ol\sfp^-:\BunBbm\to \Bun_G.$$

\ssec{Construction of the natural transformation}

In this subsection we will describe a framework that leads to the construction of a natural
transformation in \thmref{t:Verdier upstairs global}. 

\sssec{}  \label{sss:pullback morphism general}

Let us be given a Cartesian diagram of algebraic stacks
$$
\CD
\CY_1\underset{\CY}\times \CY_2  @>{'\!f}>>  \CY_2 \\
@V{'\!f^-}VV   @VV{f^-}V   \\
\CY_1 @>{f}>>  \CY,
\endCD
$$
where $\CY$ is smooth of dimension $d$.  Note that for $\CF_i\in \Shv_{\CG_i}(\CY_i)^{\on{loc.c}}$ there exists a canonically defined map
in $\Shv_{\CG_1\otimes \CG_2}(\CY_1\underset{\CY}\times \CY_2 )$
\begin{equation} \label{e:!* map}
({}'\!f^-)^*(\CF_1)\overset{*}\otimes ({}'\!f)^*(\CF_2)[-d]\to ({}'\!f^-)^!(\CF_1)\sotimes ({}'\!f)^!(\CF_2)[d],
\end{equation} 
see \cite[Sect. 5.1]{BG}. 

\medskip
 
In particular, we obtain that there exists a natural transformation
\begin{equation} \label{e:pullback duality}
\BD^{\on{Verdier}}(({}'\!f^-)^!(\CF_1)\sotimes ({}'\!f)^!(\CF_2)[d])\to 
({}'\!f^-)^!(\BD^{\on{Verdier}}(\CF_1))\sotimes ({}'\!f)^!(\BD^{\on{Verdier}}(\CF_2))[d].
\end{equation} 

\sssec{}

We apply the above paradigm to 
\begin{equation} \label{e:Zastava as cart}
\CD
\BunNbox\underset{\Bun_G}\times   \ol\Bun_{B^-}  @>{'\ol\sfp}>>   \ol\Bun_{B^-}  \\
@V{'\ol\sfp^-}VV   @VV{\ol\sfp^-}V   \\
\BunNbox    @>{\ol\sfp}>>  \Bun_G,
\endCD
\end{equation}
where we take $\CF_1=\CF\in \Whit_{q,\on{glob}}(G)$ and $\CF_2={}_{\Bun_T}\!\ICsm_{q^{-1},\on{glob}}$.

\medskip

We compose the two functors in \eqref{e:pullback duality}
with restriction along the open embedding
$$\bCZ_{\infty\cdot x}
\hookrightarrow \BunNbox\underset{\Bun_G}\times   \ol\Bun_{B^-}.$$

\medskip

We will prove the following more precise version of \thmref{t:Verdier upstairs}:

\begin{thm} \label{t:Verdier upstairs bis}
The natural transformation
\begin{multline}  \label{e:reform duality upstairs bis}
\BD^{\on{Verdier}}\left(({}'\ol\sfp^-)^!(\CF) \sotimes ({}'\ol\sfp)^!({}_{\Bun_T}\!\ICsm_{q^{-1},\on{glob}})[\dim(\Bun_G)]\right)\to \\
\to ({}'\ol\sfp^-)^!(\BD^{\on{Verdier}}(\CF)) \sotimes ({}'\ol\sfp)^!({}_{\Bun_T}\!\ICsm_{q^{-1},\on{glob}})[\dim(\Bun_G)],
\end{multline} 
arising from \eqref{e:pullback duality} is an isomorphism in $\Shv_{(\CG^{G,T,\on{ratio}})^{-1}}(\bCZ_{\infty\cdot x})$
for any $\CF\in \Whit_{q,\on{glob}}(G)^c$. 
\end{thm}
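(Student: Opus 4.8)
The plan is to reduce \thmref{t:Verdier upstairs bis} to a single \emph{universal local acyclicity} (ULA) statement for the global metaplectic semi-infinite IC sheaf. Recall the general mechanism from \secref{sss:pullback morphism general}: for a Cartesian square over a smooth base $\CY$ of dimension $d$, the natural transformation \eqref{e:pullback duality} built from \eqref{e:!* map} is an isomorphism as soon as the sheaf $\CF_2$ on $\CY_2$ is ULA with respect to the map $f^-:\CY_2\to \CY$. Indeed, the content of the ULA condition is precisely that the $*$- and $!$-pullbacks along the (smooth-locally fibered) map differ by the expected cohomological shift \emph{and that this persists after any base change}, which forces \eqref{e:!* map} to be an isomorphism and hence \eqref{e:pullback duality} as well (this is the metaplectic version of \cite[Sect. 5.1]{BG}). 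Applying this to the square \eqref{e:Zastava as cart} with $\CY=\Bun_G$, $\CY_1=\BunNbox$, $\CY_2=\ol\Bun_{B^-}$, $\CF_1=\CF\in \Whit_{q,\on{glob}}(G)^c$ and $\CF_2={}_{\Bun_T}\!\ICsm_{q^{-1},\on{glob}}$, we see that \eqref{e:reform duality upstairs bis} is an isomorphism over $\BunNbox\underset{\Bun_G}\times \ol\Bun_{B^-}$, and then a fortiori after restriction along the open embedding $\bCZ_{\infty\cdot x}\hookrightarrow \BunNbox\underset{\Bun_G}\times \ol\Bun_{B^-}$, provided we know:

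\begin{thm} \label{t:ULA ICsm glob}
The object $_{\Bun_T}\!\ICsm_{q^{-1},\on{glob}}\in \Shv_{(\CG^{G,T,\on{ratio}})^{-1}}(\BunBbm)$ is ULA with respect to the projection $\ol\sfp^-:\BunBbm\to \Bun_G$.
\end{thm}

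So the bulk of the work — and the main obstacle — is the proof of \thmref{t:ULA ICsm glob}. My approach to it would mirror the non-metaplectic argument of \cite{Ga7} (specifically the ULA property of the IC sheaf of $\ol\Bun_B$ relative to $\Bun_G$): first reduce, by the factorization/stratification of $\BunBbm$ by the total defect, to checking the ULA property stratum-by-stratum, using that the IC sheaf restricted to each stratum $(\BunBbm)^{=\lambda}\simeq \Bun_{B^-}\underset{\Bun_T}\times \Conf^\lambda$ is (up to the external twist by the sign-type local system on $\Conf^\lambda$) pulled back from $\Bun_{B^-}\times \Conf^\lambda$. Then one is reduced to two elementary inputs: (a) the constant/sign sheaf on $\Conf^\lambda$ is ULA over a point — which is clear since $\Conf^\lambda\simeq \prod_i X^{(n_i)}$ is proper and the sheaf is lisse on the complement of the diagonals, where perversity/cleanness handles the rest; and (b) the constant perverse sheaf on $\Bun_{B^-}$ is ULA relative to $\Bun_G$, which is the statement that $\ol\sfp^-:\Bun_{B^-}\to \Bun_G$ is (locally) smooth-enough, or rather follows from the schematic, finite-type nature of the relevant Zastava models after passing to the $\infty\cdot x$-version, exactly as in the quoted references. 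The gerbe twist $\CG^{G,T,\on{ratio}}$ does not affect ULA, since ULA is a property local in the smooth topology and a gerbe is smooth-locally trivial; this is why I allowed myself to run the non-metaplectic argument verbatim. One subtlety that deserves care: one must check that the open embedding $\bCZ_{\infty\cdot x}\hookrightarrow \BunNbox\underset{\Bun_G}\times \ol\Bun_{B^-}$ interacts correctly with Verdier duality, i.e. that restriction to an open commutes with $\BD^{\on{Verdier}}$ up to the evident shift — but this is automatic for an open embedding of (ind-)schemes.

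Having established \thmref{t:Verdier upstairs bis}, the deduction of \thmref{t:Verdier upstairs global} is formal: unwinding the definition \eqref{e:global composition upstairs} of the composite functor, the source functor sends $\CF$ to $({}'\ol\sfp^-)^!(\CF)\sotimes ({}'\ol\sfp)^!({}_{\Bun_T}\!\ICsm_{q^{-1},\on{glob}})[\dim(\Bun_G)]$, restricted to $\bCZ_{\infty\cdot x}$; applying $\BD^{\on{Verdier}}$ on $\CZ_{\infty\cdot x}$ and using that $\BunNbox\hookleftarrow$ is understood via the loc.c.\ Verdier duality of \secref{sss:locally compact}, the isomorphism \eqref{e:reform duality upstairs bis} is exactly the assertion that the square in \thmref{t:Verdier upstairs global} commutes. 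Then \thmref{t:Verdier upstairs} follows from \thmref{t:Verdier upstairs global} via \propref{p:functor Phi glob upstairs} together with Remark \ref{r:local vs global duality} (which says $\pi_x^![d_g]$ intertwines \eqref{e:self duality for Whit comp} with global Verdier duality on $\BunNbox$), and finally \thmref{t:duality pres} follows from \thmref{t:Verdier upstairs} by pushing forward along the \emph{proper} map $\sv_{\Conf_x}:\bCZ_{\infty\cdot x}\to \Conf_{\infty\cdot x}$, since proper pushforward commutes with Verdier duality. The first (easier) assertion of \thmref{t:duality pres} — that $\Phi$ preserves local compactness — is already settled in \secref{sss:proof of finite length pres} by inspecting that each functor in the composition \eqref{e:global composition} preserves compactness over each connected component of $\Conf_{\infty\cdot x}$.

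I expect the ULA statement \thmref{t:ULA ICsm glob} to be the one real obstacle: one must be careful that the twisted perverse sheaf $_{\Bun_T}\!\ICsm_{q^{-1},\on{glob}}$, being a minimal extension rather than a constant sheaf, is genuinely ULA over $\Bun_G$ and not merely over the open stratum $\Bun_{B^-}$ — this requires knowing that the IC extension ``does not create non-ULA phenomena'' along the defect strata, which is where the factorization structure on $\ICs_{q,\Ran}$ (\thmref{t:fact ICs}) and the local-to-global comparison (\corref{c:ICs glob sym}) are used. Everything else in the argument is a formal manipulation of the $!$/$*$-pullback natural transformation \eqref{e:!* map} and of the properness of $\sv_{\Conf_x}$.
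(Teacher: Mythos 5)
Your overall architecture — reduce the theorem to a ULA statement for ${}_{\Bun_T}\!\ICsm_{q^{-1},\on{glob}}$, then invoke the \eqref{e:pullback duality} mechanism and restrict to the open $\bCZ_{\infty\cdot x}$ — correctly identifies ULA as the engine. The downstream deductions (\thmref{t:Verdier upstairs global} from \thmref{t:Verdier upstairs bis}; \thmref{t:Verdier upstairs} via \propref{p:functor Phi glob upstairs} and Remark \ref{r:local vs global duality}; \thmref{t:duality pres} via properness of $\sv_{\Conf_x}$) are also stated correctly and match the paper's logic in \secref{sss:proof of duality pres}.

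However, there is a genuine gap in the step you yourself flag as "the one real obstacle". Your proposed Theorem (ULA of ${}_{\Bun_T}\!\ICsm_{q^{-1},\on{glob}}$ over the \emph{entirety} of $\BunBbm$ with respect to $\ol\sfp^-$) is strictly stronger than what is known, and the paper does not assert it. What the paper actually uses is the quantitative \thmref{t:ULA} (Campbell): the restriction of the IC sheaf to the open locus of $\BunBbm$ where the total defect is $\leq\lambda$ and the degree $\mu$ satisfies $\langle\mu-\lambda,\check\alpha_i\rangle > d$ (with $d$ depending only on the genus) is ULA over $\Bun_G$. Without the lower bound on $\langle\mu-\lambda,\check\alpha_i\rangle$ the ULA property is expected to fail — the map $\ol\sfp^-$ is not of finite type, the dimensions of the strata jump unboundedly relative to $\Bun_G$ as the defect grows, and the degenerate strata do produce non-ULA behavior that the constraint is designed to avoid. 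Your heuristic argument (reduce stratum-by-stratum, each piece is a product of a $\Bun_{B^-}$-factor and a $\Conf^\lambda$-factor, constant/sign sheaves are ULA) only treats the associated graded of the IC sheaf, not its extension structure across strata, which is precisely where ULA can break down.

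Because full ULA is unavailable, the paper cannot apply \eqref{e:pullback duality} directly on $\BunNbox\underset{\Bun_G}\times\ol\Bun_{B^-}$. Its Step 2 instead uses the \emph{factorization} structure of the Zastava space: it factors off an open piece $\oCZ^{\nu'}$ supported away from $x$ (where the $!$- and $*$-pullbacks are controlled by a lisse sheaf, as in \cite[Sect.\ 3.9]{Ga6}) so as to land in the open substack $\CZ^{\on{Fact},\nu',\leq\lambda}$ of the Zastava space where both the defect bound and the depth condition of \thmref{t:ULA} can be imposed, and only then applies \lemref{l:ULA}. Moreover, the paper's Step 1 is not a straight application of \eqref{e:pullback duality}: it first establishes the auxiliary assertion (*) that the tensor product $({}'\ol\sfp^-)^!(\CF)\sotimes ({}'\ol\sfp)^!({}_{\Bun_T}\!\ICsm_{q^{-1},\on{glob}})[\dim(\Bun_G)]$ is perverse and is the GM extension from the open Zastava stratum $\bCZ_{=\mu\cdot x}$, and then deduces the isomorphism of \eqref{e:reform duality upstairs bis} from (*) by restricting to that stratum and reducing to a statement about lisse sheaves on smooth stacks of the right dimension. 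Neither the factorization reduction nor the GM-extension step (*) appears in your proposal, and both are essential given that the unqualified ULA statement is not available.
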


\ssec{Proof of \thmref{t:Verdier upstairs bis}, Step 1}

\sssec{}

With no restriction of generality, we can assume that $\CF\in \Whit_{q,\on{glob}}(G)$ is perverse and irreducible. 
Hence it is of the form $W^{\mu,!*}_{\on{glob}}$ for some $\mu\in \Lambda^+$, see \secref{sss:irred global}.

\medskip

Then $\CF$ is the Goresky-MacPherson extension of its restriction to the locally closed substack 
$(\ol\Bun^{\omega^{\rho}}_N)_{=\mu\cdot x}$. Let $\bCZ_{=\mu\cdot x}$ be the corresponding locally closed
substack of $\bCZ_{\infty\cdot x}$. 

\medskip

The key step in the proof of \thmref{t:Verdier upstairs bis} is the following:

\medskip

\noindent(*) \hskip1cm {\it The object 
\begin{equation} \label{e:suspect}
({}'\ol\sfp^-)^!(\CF) \sotimes ({}'\ol\sfp)^!({}_{\Bun_T}\!\ICsm_{q^{-1},\on{glob}})[\dim(\Bun_G)]\in \Shv_{\CG^\Lambda}(\bCZ_{\infty\cdot x})
\end{equation} 
is perverse and is isomorphic to the Goresky-MacPherson extension of its restriction to $\bCZ_{=\mu\cdot x}$.}

\medskip

In the rest of this subsection we will show how (*) implies the assertion of \thmref{t:Verdier upstairs bis}. 

\sssec{}

By (*), we know that both sides 
in \eqref{e:reform duality upstairs bis} are the Goresky-MacPherson extensions of their respective restrictions to 
$\bCZ_{=\mu\cdot x}$.  Hence, it is enough to show that the corresponding map in $\Shv_{\CG^\Lambda}(\bCZ_{=\mu\cdot x})$
is an isomorphism.  Thus we can replace the diagram \eqref{e:Zastava as cart} by 
$$
\CD
(\ol\Bun^{\omega^{\rho}}_N)_{=\mu\cdot x} \underset{\Bun_G}\times   \ol\Bun_{B^-}  @>{'\ol\sfp}>>   \ol\Bun_{B^-}  \\
@V{'\ol\sfp^-}VV   @VV{\ol\sfp^-}V   \\
(\ol\Bun^{\omega^{\rho}}_N)_{=\mu\cdot x}    @>{\ol\sfp}>>  \Bun_G,
\endCD
$$

Since $\CF|_{(\ol\Bun^{\omega^{\rho}}_N)_{=\mu\cdot x}}$ is lisse and $(\ol\Bun^{\omega^{\rho}}_N)_{=\mu\cdot x}$ is smooth,
the statement about the isomorphism is equivalent to one when $\CF$ is replaced by the constant/dualizing sheaf.

\medskip

Thus, we  have to show that the map
\begin{multline*}
\BD^{\on{Verdier}}\left(({}'\ol\sfp)^!({}_{\Bun_T}\!\ICsm_{q^{-1},\on{glob}})[\dim(\Bun_G)-d_{g,\mu}]\right)\to \\
\to ({}'\ol\sfp^-)^!(\BD^{\on{Verdier}}({}_{\Bun_T}\!\ICsm_{q^{-1},\on{glob}}))[\dim(\Bun_G)-d_{g,\mu}]
\end{multline*}
is an isomorphism, where $d_{g,\mu}=\dim((\ol\Bun^{\omega^{\rho}}_N)_{=\mu\cdot x}).$

\sssec{}

By (the metaplectic analog of) \cite[Proposition 3.6.5]{Ga6}, the object 
$$({}'\ol\sfp)^!({}_{\Bun_T}\!\ICsm_{q^{-1},\on{glob}})[\dim(\Bun_G)-d_{g,\mu}]$$
is perverse and is isomorphic to the Goresky-MacPherson extension of its restriction to 
$$\CZ_{=\mu\cdot x}:=\bCZ_{=\mu\cdot x}\underset{\BunBbm}\times \Bun_{B^-}.$$

Hence, it is enough to show that the map
\begin{multline} \label{e:e:reform duality upstairs bis const}
\BD^{\on{Verdier}}\left(({}'\sfp)^!({}_{\Bun_T}\!\ICsm_{q^{-1},\on{glob}}|_{\Bun_{B^-}})[\dim(\Bun_G)-d_{g,\mu}]\right)\to \\
\to ({}'\sfp^-)^!(\BD^{\on{Verdier}}({}_{\Bun_T}\!\ICsm_{q^{-1},\on{glob}}|_{\Bun_{B^-}}))[\dim(\Bun_G)-d_{g,\mu}]
\end{multline}
is an isomorphism in $\Shv_{\CG^\Lambda}(\CZ_{=\mu\cdot x})$. 

\medskip

Note now that the object $_{\Bun_T}\!\ICsm_{q^{-1},\on{glob}}|_{\Bun_{B^-}}$ is the constant sheaf (up to a cohomological shift).
Now the desired assertion follows from the next result:

\begin{lem}
In the setting of \secref{sss:pullback morphism general}, assume that $\CY_1$, $\CY_2$ and $\CY$ are equidimensional and smooth, 
and let
$$\CY'\hookrightarrow \CY_1\underset{\CY}\times \CY_2$$
be a smooth open substack of dimension $\dim(\CY_1)+\dim(\CY_2)-\dim(\CY)$. Then for $\CF_1$ and $\CF_2$ lisse, the restriction
of the map \eqref{e:pullback duality} to $\CY'$ is an isomorphism.
\end{lem}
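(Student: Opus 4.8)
The statement is local on $\CY'$, so I would work in the smooth topology and reduce to a question about the behaviour of the natural transformation \eqref{e:pullback duality} under $*$- versus $!$-pullback along the two projections emanating from $\CY'$. Since all of $\CY_1$, $\CY_2$, $\CY$ and $\CY'$ are smooth and equidimensional, and $\CF_1$, $\CF_2$ are lisse, each of the four pullbacks appearing in \eqref{e:!* map} differs from the corresponding $*$-pullback only by a cohomological shift and a Tate twist governed by the relative dimension. The first step is therefore to identify all these shifts precisely: since $\CY$ has dimension $d$ and $\dim(\CY')=\dim(\CY_1)+\dim(\CY_2)-d$, the map $'\!f^-|_{\CY'}:\CY'\to \CY_1$ has relative dimension $\dim(\CY_2)-d$ and $'\!f|_{\CY'}:\CY'\to\CY_2$ has relative dimension $\dim(\CY_1)-d$; for a smooth morphism of relative dimension $r$ the functors $(-)^!$ and $(-)^*[2r]$ agree (up to the appropriate twist, which is harmless in our gerbe-twisted setting because the twist is pulled back).

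Granting this, the second step is to trace through the construction of \eqref{e:!* map} in \cite[Sect. 5.1]{BG}: that map is built by adjunction out of the base-change/Künneth comparison for the Cartesian square, and on lisse sheaves over smooth stacks the relevant comparison map is an isomorphism essentially by dimension count — one is comparing $\omega$-type and constant-type normalizations of the same lisse complex. Concretely, restricting \eqref{e:!* map} to $\CY'$, both sides become $({}'\!f^-)^*(\CF_1)\overset{*}\otimes ({}'\!f)^*(\CF_2)$ up to a single overall shift, and the map is the identity on that common value. Hence \eqref{e:!* map} restricted to $\CY'$ is an isomorphism, and then \eqref{e:pullback duality}, being obtained from it by applying $\BD^{\on{Verdier}}$ and using that Verdier duality interchanges $*$- and $!$-pullback for smooth morphisms and is an (anti)involution, is also an isomorphism on $\CY'$.

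I do not expect a genuine obstacle here: the content is entirely the bookkeeping of cohomological shifts and the fact that on smooth stacks with lisse coefficients all the comparison maps between $*$- and $!$-normalizations are isomorphisms. The one point requiring a little care is that $\CY'$ is only an \emph{open} substack of $\CY_1\underset{\CY}\times\CY_2$, which in general is neither smooth nor of the expected dimension; but this is precisely why the hypothesis singles out $\CY'$ to be smooth of the expected dimension, and all assertions being local we may simply base change the whole diagram \eqref{e:pullback morphism general} along $\CY'\hookrightarrow \CY_1\underset{\CY}\times\CY_2$ and work there. I would also double-check compatibility of \eqref{e:!* map} with the restriction to $\CY'$ using the functoriality of its construction in \cite{BG} with respect to open embeddings, which is immediate. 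This completes the proof of the Lemma, hence of \thmref{t:Verdier upstairs bis} modulo the perversity-and-clean-extension statement $(*)$, which is the actual heart of the argument and is handled separately via the ULA property of $_{\Bun_T}\!\ICsm_{q^{-1},\on{glob}}$ over $\Bun_G$.
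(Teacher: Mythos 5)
The paper states this Lemma without proof, so there is no argument of the authors to compare against; your approach is the expected one and is essentially correct. Two points are worth sharpening.

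First, your justification ``for a smooth morphism of relative dimension $r$ the functors $(-)^!$ and $(-)^*[2r]$ agree'' is not quite right as stated: the restricted maps $'\!f^-|_{\CY'}$ and $'\!f|_{\CY'}$ need not be smooth. What you actually need, and what is true, is the following: for \emph{any} map $g:X\to Y$ between smooth equidimensional stacks and any \emph{lisse} $\CF$ on $Y$, one has
$$g^!\CF\ \simeq\ g^*\CF\otimes g^!(\sfe_Y)\ \simeq\ g^*\CF[2(\dim X-\dim Y)].$$
The first isomorphism holds because lisse objects are dualizable (so $g^!$ is $\Shv(Y)$-linear with respect to $g^*$ on such objects), and the second because $\sfe_Y\simeq\omega_Y[-2\dim Y]$, $g^!\omega_Y\simeq\omega_X$, and $\omega_X\simeq\sfe_X[2\dim X]$ once $X$ and $Y$ are smooth. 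Smoothness of $g$ plays no role; smoothness of source and target, and lisseness of the coefficient, do all the work. With this correction your shift bookkeeping goes through exactly as you wrote it.

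Second, the shift computation shows that the source and target of \eqref{e:!* map} restricted to $\CY'$ are \emph{abstractly} isomorphic, but the Lemma asserts that the \emph{map} itself is an isomorphism, and your phrase ``the map is the identity on that common value'' papers over a real (if small) point. The clean way to close this is: $\CY'\hookrightarrow\CY_1\times\CY_2$ factors as an open embedding followed by a closed embedding of a smooth substack of codimension $d$ into the smooth stack $\CY_1\times\CY_2$; a closed embedding of smooth into smooth is automatically a regular embedding. The map \eqref{e:!* map} over $\CY'$ is the purity (fundamental-class) map of this regular embedding applied to $\CF_1\boxtimes\CF_2$, obtained by base change from the purity map of $\Delta:\CY\to\CY\times\CY$; cohomological purity then says it is an isomorphism on lisse coefficients. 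Your final step, passing from \eqref{e:!* map} to \eqref{e:pullback duality} by applying $\BD^{\on{Verdier}}$ (on compact, hence dualizable, lisse objects), is correct.
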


\ssec{Proof of \thmref{t:Verdier upstairs bis}, Step 2}

In this subsection we will show how a ULA property of the global $\ICs$ implies statement (*) used in the previous subsection. 

\sssec{}

Let us be in the situation of \secref{sss:pullback morphism general}. Let $\overset{\circ}\CY_1\subset \CY_1$ be a locally closed
substack. Assume that $\CF_1\in \Shv_{\CG_1}(\CY_1)$ is perverse and is isomorphic to the Goresky-MacPherson
extension of its restriction to $\overset{\circ}\CY_1$.

\medskip

Recall the notion of \emph{ULA object} for $\CF\in \Shv_{\CG}(\CY')$ relative to a morphism $\CY'\to \CY$,
see \cite[Sect. 5.1]{BG}. A key observation that we will use in the proof of \thmref{t:Verdier upstairs bis} is the following:

\begin{lem}  \label{l:ULA}
Assume that $\CF_2\in \on{Perv}_{\CG_2}(\CY_2)$ is ULA with respect to $f^-$. Then the object
$$({}'\!f^-)^!(\CF_1)\sotimes ({}'\!f)^!(\CF_2)[d]$$ is perverse and is isomorphic to the 
Goresky-MacPherson extension of its restriction to $\overset{\circ}\CY_1\underset{\CY}\times \CY_2$.
\end{lem}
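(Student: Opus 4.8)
\textbf{Proof plan for \lemref{l:ULA}.}
The plan is to reduce the statement to two separate inputs: (a) a perversity/t-exactness statement, and (b) a Goresky--MacPherson (clean extension) statement, and to extract both from the ULA hypothesis on $\CF_2$. First I would recall the meaning of the ULA condition as in \cite[Sect. 5.1]{BG}: $\CF_2$ being ULA relative to $f^-$ means that for any base change $g:\CY'\to \CY$ and any $\CF'\in \Shv_{\CG}(\CY')$, the canonical map
$$({}'\!g^-)^*(\CF')\overset{*}\otimes ({}'\!g)^!(\CF_2)\to ({}'\!g^-)^!(\CF')\sotimes ({}'\!g)^!(\CF_2)[\ldots]$$
(the relevant instance of \eqref{e:!* map}) is an isomorphism, and moreover that $!$- and $*$-pullback of $\CF_2$ along $f^-$ agree up to the appropriate shift. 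Equivalently, ULA sheaves have the property that tensoring with them commutes with $!$- and $*$-pullbacks and sends perverse sheaves to perverse sheaves and intermediate extensions to intermediate extensions along the $\CY_1$-direction. So the heart of the argument is that ``$({}'\!f)^!(\CF_2)[d]$'', as a kernel for the pull-tensor operation over $\CY$, is t-exact for the perverse t-structures and respects the recollement along $\overset{\circ}\CY_1\hookrightarrow\CY_1$.

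The key steps, in order, would be the following. Step 1: establish that the functor
$$\CF_1\mapsto ({}'\!f^-)^!(\CF_1)\sotimes ({}'\!f)^!(\CF_2)[d],\qquad \Shv_{\CG_1}(\CY_1)\to \Shv_{\CG_1\otimes\CG_2}(\CY_1\underset{\CY}\times\CY_2),$$
is t-exact for the perverse t-structures. This is where the ULA hypothesis enters: smooth (base-changed) pullback $({}'\!f^-)^!$ is t-exact up to shift by the relative dimension, and tensoring by the ULA object $({}'\!f)^!(\CF_2)[d]$ preserves perversity because, locally on $\CY$, an ULA sheaf is ``constructible-fibered'' and tensoring with it is, up to shift, the same as $*$-pullback followed by a lisse twist along the fibers; this is exactly the content of the ULA estimates in \cite[Sect. 5.1]{BG}. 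Step 2: using that $\CF_1$ is perverse and is the Goresky--MacPherson extension from the locally closed substack $\overset{\circ}\CY_1$, deduce that the image object is the intermediate extension of its restriction to $\overset{\circ}\CY_1\underset{\CY}\times\CY_2$. For this I would check the two defining conditions of an intermediate extension along the open--closed stratification of $\CY_1\underset{\CY}\times\CY_2$ induced from that of $\CY_1$: that the $*$-restriction to strata below $\overset{\circ}\CY_1$ lies in perverse degrees $<0$ and the $!$-restriction lies in perverse degrees $>0$. Because $({}'\!f)^!(\CF_2)[d]$ is ULA and hence commutes with the $*$- and $!$-restrictions to these (base-changed) strata, these conditions follow from the corresponding conditions for $\CF_1$ together with the t-exactness established in Step 1.

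The main obstacle I expect is Step 2, specifically keeping the cohomological bookkeeping straight: the ULA property gives the commutation of tensoring by $({}'\!f)^!(\CF_2)[d]$ with $!$- and $*$-restriction to substacks \emph{of $\CY_1\underset{\CY}\times\CY_2$ that are pulled back from substacks of $\CY_1$}, but one must verify that the stratification of $\CY_1\underset{\CY}\times\CY_2$ relevant to the Goresky--MacPherson characterization is indeed of this pulled-back form, and that the shift by $d$ (the dimension of the smooth base $\CY$) is exactly the one that makes both the perversity in Step 1 and the strict inequalities in Step 2 come out correctly. Once the ULA commutation is set up with the correct normalization, the rest is a formal manipulation with the recollement, and no further geometric input is needed. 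I would also note that in the application (Step 1 of the proof of \thmref{t:Verdier upstairs bis}) one takes $\CY_2=\BunBbm$, $\CF_2={}_{\Bun_T}\!\ICsm_{q^{-1},\on{glob}}$, $f^-=\ol\sfp^-$, and $\overset{\circ}\CY_1=(\ol\Bun^{\omega^{\rho}}_N)_{=\mu\cdot x}$, so the lemma yields statement (*) provided one knows the ULA property of ${}_{\Bun_T}\!\ICsm_{q^{-1},\on{glob}}$ relative to $\ol\sfp^-$ --- which is the metaplectic counterpart of the ULA statement for the global semi-infinite IC sheaf, to be recorded separately.
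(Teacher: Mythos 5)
The paper does not actually prove \lemref{l:ULA}: it is stated as a black-box consequence of the ULA formalism of \cite[Sect. 5.1]{BG}, so there is no in-paper proof to compare against, only the question of whether your sketch is correct. Your two-step decomposition (t-exactness of the ``pull-tensor over $\CY$'' kernel functor, then commutation with the recollement that characterizes the Goresky--MacPherson extension) is indeed the standard route, and Step 2 is set up correctly: the strata of $\CY_1\underset{\CY}\times\CY_2$ relevant for the GM characterization are pulled back from strata of $\CY_1$, so the ULA commutation isomorphisms do apply there.

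However, there is a genuine error in the way you justify Step 1. You write that ``smooth (base-changed) pullback $({}'\!f^-)^!$ is t-exact up to shift by the relative dimension.'' But ${}'\!f^-$ is the base change of $f^-:\CY_2\to\CY$, and $f^-$ is \emph{not} assumed smooth --- in the intended application it is $\ol\sfp^-:\BunBbm\to\Bun_G$, which is far from smooth --- so $({}'\!f^-)^!$ alone has no t-exactness property, even up to a shift. More fundamentally, the functor $\CF_1\mapsto({}'\!f^-)^!(\CF_1)\sotimes({}'\!f)^!(\CF_2)[d]$ does \emph{not} decompose into a t-exact pullback followed by a perversity-preserving tensor: neither piece is t-exact on its own. (Even the naive bookkeeping fails: $({}'\!f)^!(\CF_2)[d]$ need not be in perverse non-negative degrees, and $({}'\!f)^*(\CF_2)[-d]$ need not be in perverse non-positive degrees, so neither the $!$-expression nor the $*$-expression is obviously half-exact.) The content of the ULA hypothesis is precisely that the \emph{whole} operation is t-exact; the proof compares the $!$- and $*$-expressions via the isomorphism \eqref{e:!* map} and then runs the perverse-amplitude estimates of \cite[Sect. 5.1]{BG} \emph{using} the local constancy of $\CF_2$ along $\CY$ --- it cannot be obtained by citing t-exactness of $({}'\!f^-)^!$ as a separate ingredient. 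Your heuristic that an ULA sheaf over a smooth $\CY$ is ``constructible-fibered'' and locally looks like an external tensor of a perverse fiber with a lisse sheaf along $\CY$ is the right intuition, and it is that local-over-$\CY$ picture (not smoothness of $f^-$) that delivers t-exactness; if you replace the smoothness claim by this and cite \cite[Sect. 5.1]{BG} directly, the rest of your plan goes through.
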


The proof of property (*) will consist of reducing to the situation in which we can apply
\lemref{l:ULA}.

\sssec{}

We now proceed with the proof of (*). For $\nu\in \Lambda$, denote
$$\bCZ_{\infty\cdot x}^\nu:=\bCZ_{\infty\cdot x}\underset{\Conf_{\infty\cdot x}}\times \Conf^\nu_{\infty\cdot x}.$$
Let $\CF^\nu$ denote the restriction of \eqref{e:suspect} to $\bCZ_{\infty\cdot x}^\nu$. 

\medskip

For an element $\nu'\in \Lambda^{\on{neg}}$, consider the factorization isomorphism 
$$
(\bCZ^{\nu'}\times \bCZ^\nu_{\infty\cdot x}) \underset{\Conf^{\nu'}\times \Conf^\nu_{\infty\cdot x}}\times 
\left(\Conf^{\nu'}\times \Conf^\nu_{\infty\cdot x}\right)_{\on{disj}} 
\simeq 
\bCZ^{\nu'+\nu}_{\infty\cdot x}\underset{\Conf^{\nu'+\nu}_{\infty\cdot x}}\times 
\left(\Conf^{\nu'}\times \Conf^\nu_{\infty\cdot x}\right)_{\on{disj}}$$
and consider the open substack of the LHS equal to
\begin{equation} \label{e:good factor}
(\oCZ{}^{\nu'}\times \bCZ^\nu_{\infty\cdot x}) \underset{\Conf^{\nu'}\times \Conf^\nu_{\infty\cdot x}}\times 
\left(\Conf^{\nu'}\times \Conf^\nu_{\infty\cdot x}\right)_{\on{disj}}.
\end{equation}

\medskip

We will consider the following two maps from \eqref{e:good factor}
to $\bCZ_{\infty\cdot x}$. One, denoted $f^{\nu'}_1$, is the projection on the second factor
\begin{equation} \label{e:good factor 2nd}
%\CZ_{\on{Factor}}^{\on{good}_\lambda}\hookrightarrow 
(\oCZ{}^{\nu'}\times \bCZ^\nu_{\infty\cdot x}) \underset{\Conf^{\nu'}\times \Conf^\nu_{\infty\cdot x}}\times 
\left(\Conf^{\nu'}\times \Conf^\nu_{\infty\cdot x}\right)_{\on{disj}}
\to \oCZ{}^{\nu'}\times \bCZ^\nu_{\infty\cdot x}\to \bCZ^\nu_{\infty\cdot x}
\end{equation}
and the other, denoted $f^{\nu'}_2$, the composite
\begin{multline} \label{e:good factor comp}
%\CZ_{\on{Factor}}^{\on{good}_\lambda}\hookrightarrow 
(\oCZ{}^{\nu'}\times \bCZ^\nu_{\infty\cdot x}) \underset{\Conf^{\nu'}\times \Conf^\nu_{\infty\cdot x}}\times 
\left(\Conf^{\nu'}\times \Conf^\nu_{\infty\cdot x}\right)_{\on{disj}}\hookrightarrow \\
\to (\bCZ^{\nu'}\times \bCZ^\nu_{\infty\cdot x}) \underset{\Conf^{\nu'}\times \Conf^\nu_{\infty\cdot x}}\times 
\left(\Conf^{\nu'}\times \Conf^\nu_{\infty\cdot x}\right)_{\on{disj}} \simeq \\
\simeq 
\bCZ^{\nu'+\nu}_{\infty\cdot x}\underset{\Conf^{\nu'+\nu}_{\infty\cdot x}}\times 
\left(\Conf^{\nu'}\times \Conf^\nu_{\infty\cdot x}\right)_{\on{disj}}  \to \bCZ^{\nu'+\nu}_{\infty\cdot x}.
\end{multline} 

\medskip

Both these maps are smooth.

\sssec{}

Fix an element $\lambda\in \Lambda^{\on{pos}}$. We will consider an open substack in \eqref{e:good factor}, to be denoted 
$\CZ^{\on{Fact},\nu',\leq \lambda}$, that consists of points satisfying the following conditions: 

\medskip

\noindent(i) We require that for the point of $\bCZ_{\infty\cdot x}$ (obtained by either \eqref{e:good factor 2nd} or \eqref{e:good factor comp}), 
the generalized $B^-$-reduction has total order of degeneracy is $\leq \lambda$. 

\medskip

\noindent(ii) We require that $-\nu'-\nu-\lambda$ be ``deep enough" in the dominant chamber, in the sense that 
$$\langle -\nu'-\nu-\lambda,\check\alpha_i\rangle >d$$
for some fixed integer $d$ (specified in \thmref{t:ULA} below). 

\medskip

It is easy to see that the union of all $\lambda$'s and $\nu'$ of the images of $\CZ^{\on{Fact},\nu',\leq \lambda}$ under the maps $f^{\nu'}_1$
cover $\bCZ^\nu_{\infty\cdot x}$. 

\sssec{}

Hence, in order to prove (*), it suffices to show that for all $\nu'$, the object
\begin{equation} \label{e:factorization pullback 1}
(f^{\nu'}_1)^!(\CF^\nu)|_{\CZ^{\on{Fact},\nu',\leq \lambda}},
\end{equation}
shifted cohomologically by $[\langle \nu',2\check\rho\rangle]=[-\dim(\oCZ{}^{\nu'})]$,
is perverse and is isomorphic to the Goresky-MacPherson extension of its restriction to 
$$\CZ^{\on{Fact},\nu',\leq \lambda}_{=\mu\cdot x}:=\CZ^{\on{Fact},\nu',\leq \lambda}
\underset{f^{\nu'}_1,\bCZ^\nu_{\infty\cdot x}}\times \bCZ^\nu_{\mu\cdot x}.$$

\sssec{}

Note now that the locally closed substack $\CZ^{\on{Fact},\nu',\leq \lambda}_{=\mu\cdot x}\subset \CZ^{\on{Fact},\nu',\leq \lambda}$ also equals
$$\CZ^{\on{Fact},\nu',\leq \lambda}\underset{f^{\nu'}_2,\bCZ^{\nu'+\nu}_{\infty\cdot x}}\times \bCZ^{\nu'+\nu}_{\mu\cdot x}.$$

\medskip

Now, by repeating the argument of \cite[Sect. 3.9]{Ga6}, one shows that there exists an isomorphism
$$(f^{\nu'}_2)^!(\CF^\nu)\simeq \CE \otimes (f^{\nu'}_1)^!(\CF^\nu),$$
where $\CE$ is \emph{lisse sheaf}, pulled back from the $\oCZ{}^{\nu'}$ factor, and  
placed in perverse cohomological degree $-\langle \nu',2\check\rho\rangle$. 

\medskip

Hence, in order to prove the desired property of \eqref{e:factorization pullback 1}, it suffices to establish the same property of 
$$(f^{\nu'}_2)^!(\CF^\nu)|_{\CZ^{\on{Fact},\nu',\leq \lambda}}.$$

\medskip

Now, the required assertion follows from \lemref{l:ULA} and the following result of \cite{Camp}:

\begin{thm} \label{t:ULA}
There exists an integer $d$ that only depends on the genus of $X$ with the following property: 
for any $\lambda\in \Lambda^{\on{pos}}$ and $\mu\in \Lambda$, 
satisfying $\langle \mu-\lambda,\check\alpha_i\rangle >d$, the restriction of
$_{\Bun_T}\!\ICsm_{q,\on{glob}}$ to the open locus of $\ol\Bun_{B^-}$ consisting of generalized $B^-$-reductions of total 
order of degeneracy $\leq \lambda$ and degree $\mu$ is ULA with respect to the projection
$$\ol\sfp^-:\ol\Bun_{B^-}\to \Bun_G.$$
\end{thm}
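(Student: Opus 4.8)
\textbf{Proof proposal for \thmref{t:ULA}.}
The plan is to reduce the ULA property to a statement that is essentially local on $\ol\Bun_{B^-}$ and then to verify it by an explicit analysis of the relevant strata, following the strategy of the work of Campbell cited as \cite{Camp}. First I would recall that the ULA property of an object $\CF$ with respect to a morphism $f:\CY'\to \CY$ is local on both the source and the target, and is stable under smooth pullback; moreover, for a morphism that factors through a smooth base change, one can check ULA after passing to any smooth atlas. So I would begin by stratifying the open locus in question (generalized $B^-$-reductions of total degeneracy $\leq \lambda$ and degree $\mu$) by the defect $\lambda'\leq \lambda$: over the stratum of defect exactly $\lambda'$, the sheaf $_{\Bun_T}\!\ICsm_{q,\on{glob}}$ restricts, up to a twist by a lisse sheaf coming from the $\Conf^{\lambda'}$-factor via the factorization structure (as in \cite[Sect. 3.9]{Ga6}), to the pullback of the metaplectic IC-sheaf on the smaller Drinfeld compactification $\ol\Bun_{B^-}$ with no defect, i.e. essentially the constant sheaf on $\Bun_{B^-}$. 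The key point is that the stratification map is nice enough (the factorization isomorphisms being étale on the disjoint locus) that ULA over each stratum, together with an estimate on how the strata sit inside one another, yields ULA over the whole locus.

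Second, the heart of the argument is the ULA property of the \emph{constant sheaf} on $\Bun_{B^-}$ (equivalently $_{\Bun_T}\!\ICsm_{q,\on{glob}}|_{\Bun_{B^-}}$) with respect to $\ol\sfp^-:\Bun_{B^-}\to \Bun_G$. This is where the hypothesis $\langle \mu-\lambda,\check\alpha_i\rangle > d$ enters: one needs the fibers of $\ol\sfp^-$ over the relevant degrees to be of controlled dimension and, more importantly, one needs the "correction" coming from passing between $\Bun_{B^-}$ and $\ol\Bun_{B^-}$ (the difference of the naive and clean extensions) to vanish in the appropriate range. The inequality on degrees guarantees that the relevant cohomology-vanishing holds — this is precisely the mechanism by which Campbell proves the ULA property, and I would invoke that result directly. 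Concretely, one shows that $\ol\sfp^-$ restricted to the degree-$\mu$, defect-$\leq\lambda$ locus is "locally acyclic" because the constant sheaf is, relative to the smooth morphism $\Bun_{B^-}\to \Bun_G$ (which is smooth on the open part), and the boundary strata contribute only in codimension that grows linearly in $\langle \mu-\lambda,\check\alpha_i\rangle$.

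The main obstacle I anticipate is controlling the interaction between the Goresky--MacPherson extension defining $_{\Bun_T}\!\ICsm_{q,\on{glob}}$ and the relative dualizing/ULA structure along $\ol\sfp^-$ on the \emph{closed} boundary strata of $\ol\Bun_{B^-}$: a priori the IC-extension can fail to be ULA exactly because its costalks on deep strata need not be concentrated in the expected degree. This is resolved by the degree hypothesis, which forces the relevant defect strata to have large codimension in $\ol\Bun_{B^-}$ relative to $\Bun_G$, so that the failure-of-ULA locus is pushed off to infinity (it moves as one twists by more and more dominant divisors at $x$). Thus the integer $d$ in the statement is exactly the threshold beyond which this codimension estimate becomes effective; it depends only on $g$ because the dimensions of the fibers of $\ol\sfp^-$ over $\Bun_G$ and the codimensions of the defect strata are expressible in terms of $g$ and the root datum, and the root-datum contribution is absorbed into the linear bound $\langle \mu-\lambda,\check\alpha_i\rangle>d$. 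Having established ULA on each stratum and the codimension estimate, \lemref{l:ULA} then propagates the perversity and GM-extension property back up, which is what was needed in the proof of \thmref{t:Verdier upstairs bis}.
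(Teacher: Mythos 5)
This theorem is not proved in the paper: it is imported verbatim from \cite{Camp}, and the paper's text says only ``the required assertion follows from \lemref{l:ULA} and the following result of \cite{Camp}.'' So the right benchmark for your proposal is Campbell's actual proof, and against that benchmark there is a genuine gap in the argument as you have written it.

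The central problem is the claim that ``ULA over each stratum, together with an estimate on how the strata sit inside one another, yields ULA over the whole locus.'' ULA is local on the source in the smooth (or Zariski-open) topology, but the strata of $\ol\Bun_{B^-}$ by defect are only \emph{locally closed}, not open, and ULA is precisely the kind of property that can fail at the boundary between strata: it is a vanishing-cycles condition, and the vanishing cycles of a GM-extension are governed by how the costalks jump across strata, not by the behaviour on each stratum separately. So the reduction in your first paragraph does not go through, and the ``codimension'' heuristics in your third paragraph do not by themselves produce a proof — high codimension of a bad locus is neither necessary nor sufficient for ULA. (The ULA of the constant sheaf on the \emph{open} stratum $\Bun_{B^-}$ with respect to the smooth map $\Bun_{B^-}\to\Bun_G$ is trivially true and is the uninteresting part; the content is entirely in how the IC-extension behaves at the boundary.) Relatedly, the final sentence misreads \lemref{l:ULA}: that lemma is a \emph{consequence} one draws once ULA is known, not a propagation device for establishing ULA, so invoking it at the end is circular.

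What Campbell actually does is different in structure: he constructs a resolution of $\ol\Bun_{B^-}$ using a Kontsevich-type compactification of $\Bun_{B^-}$ by stable maps, shows that in the degree range $\langle \mu-\lambda,\check\alpha_i\rangle>d$ (where $d$ depends only on $g$, via Serre-vanishing bounds for line bundles of bounded negative degree) this resolution is smooth over $\Bun_G$, and then obtains ULA of the IC sheaf as a direct summand of the proper pushforward of the dualizing/constant sheaf from the resolution, using that ULA is stable under smooth morphisms and proper pushforwards. You gesture at ``invoking that result directly,'' which is consistent with what the paper itself does, but the surrounding reconstruction of the mechanism does not match, and if taken literally as a proof it has the logical flaw described above. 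If you want to make the outline your own, the thing to supply is the resolution-of-singularities step — the stratification-by-defect you describe cannot replace it.
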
 

\section{Hecke enhancement of the Verdier duality theorem}  \label{s:Hecke Zast}

Recall the functor
$$\Phi^{\bHecke}=\oblv_{\on{Fact}}\circ \Phi^{\bHecke}_{\on{Fact}}:
\bHecke(\Whit_{q,x}(G))\to \Shv_{\CG^\Lambda}(\Conf_{\infty\cdot x}).$$

For the proof of the main theorem (more precisely, for property (iii) for $\bCM^{\mu,!}_{\Whit}$ in \secref{sss:strategy}),
we will need an extension of \thmref{t:duality pres} for the functor $\Phi^{\bHecke}$. This is the subject of the present section.

\medskip

More precisely, we will state and prove \thmref{t:duality pres Hecke}, which will be used in the proof of 
\corref{c:fiber of standard}. 

\ssec{Hecke enhancement and duality}

In this subsection we will state \thmref{t:duality pres Hecke}, which expresses the commutation property of
the functor $\Phi^{\bHecke}$ with Verdier duality.

\sssec{}

Note that \thmref{t:Hecke ppty of J} supplies a system of functorial isomorphisms
\begin{equation} \label{e:before duality}
\Phi(\CF\star \Sat_{q,G}(V))\simeq \Sat_{q,T}(\Res^H_{T_H}(V))\star \Phi(\CF), \quad \CF\in \Whit_q(G),\,\, V\in \Rep(H),
\end{equation}
compatible with tensor products of objects $V\in \Rep(H)$. 

\medskip

In addition, \thmref{t:duality pres} establishes functorial isomorphisms
\begin{multline*} 
\BD^{\on{Verdier}}\left(\Phi(\CF\star \Sat_{q,G}(V))\right) \simeq
\Phi\left(\BD^{\on{Verdier}}(\CF\star \Sat_{q,G}(V))\right)\simeq \Phi\left(\BD^{\on{Verdier}}(\CF)\star \BD^{\on{Verdier}}(\Sat_{q,G}(V))\right) \simeq \\
\simeq \Phi\left(\BD^{\on{Verdier}}(\CF)\star \Sat_{q^{-1},G}(\tau^H(V^*))\right)
\end{multline*} 
and
\begin{multline*} 
\BD^{\on{Verdier}}\left(\Sat_{q,T}(\Res^H_{T_H}(V))\star \Phi(\CF)\right) \simeq
\BD^{\on{Verdier}}(\Sat_{q,T}(\Res^H_{T_H}(V)))\star \BD^{\on{Verdier}}(\Phi(\CF))\simeq  \\
\simeq \BD^{\on{Verdier}}(\Sat_{q,T}(\Res^H_{T_H}(V)))\star \Phi(\BD^{\on{Verdier}}(\CF))
\simeq \Sat_{q^{-1},T}(\tau^{T_H}(\Res^H_{T_H}(V^*)))\star \Phi(\BD^{\on{Verdier}}(\CF)).
\end{multline*}
where we assume that both $\CF$ and $V$ are compact. 

\sssec{}

Thus, on the one hand, applying $\BD^{\on{Verdier}}$ to both sides of \eqref{e:before duality}, 
we obtain a system of functorial isomorphisms
\begin{equation} \label{e:after duality}
\Phi\left(\BD^{\on{Verdier}}(\CF)\star \Sat_{q^{-1},G}(\tau^H(V^*))\right) \simeq 
\Sat_{q^{-1},T}(\tau^{T_H}(\Res^H_{T_H}(V^*)))\star \Phi(\BD^{\on{Verdier}}(\CF)),
\end{equation}
compatible with tensor products of objects $V\in \Rep(H)$.  

\medskip

On the other hand, applying \thmref{t:Hecke ppty of J} to the functor
$$\Phi:\Whit_{q^{-1}}(G)\to \Shv_{(\CG^\Lambda)^{-1}}(\Conf_{\infty\cdot x}),$$
we obtain a system of identifications 
\begin{equation} \label{e:after duality bis}
\Phi\left(\BD^{\on{Verdier}}(\CF)\star \Sat_{q^{-1},G}(\tau^H(V^*))\right) \simeq
\Sat_{q^{-1},T}(\Res^H_{T_H}(\tau^H(V^*))) \star \Phi(\BD^{\on{Verdier}}(\CF)),
\end{equation}
compatible with tensor products of objects $V\in \Rep(H)$.  

\medskip

We claim:

\begin{thm}  \label{t:duality and Hecke}
The identifications \eqref{e:after duality} and \eqref{e:after duality bis} are compatible via 
the canonical isomorphism 
$$\tau^{T_H}\circ \Res^H_{T_H} \simeq \Res^H_{T_H}\circ \tau^H.$$
These identifications satisfy a homotopy-coherent system of comptatibilities for tensor products
of the objects $V\in \Rep(H)^c$. 
\end{thm}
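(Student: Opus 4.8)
\textbf{Proof strategy for \thmref{t:duality and Hecke}.}
The plan is to trace the two systems of isomorphisms \eqref{e:after duality} and \eqref{e:after duality bis} back to a single source, namely the Hecke-compatibility built into the functor $\Phi$ at the level of the Jacquet functor $\fJ_{!*,\Ran_x}$, and then show that the ambiguity introduced by the various dualizations is precisely accounted for by the canonical identification $\tau^{T_H}\circ \Res^H_{T_H}\simeq \Res^H_{T_H}\circ \tau^H$ coming from the commutative diagram \eqref{e:two taus}. Concretely, the isomorphism \eqref{e:before duality} is not an ad hoc datum but comes from the structure of $\fJ_{!*,\on{sprd}}$ (equivalently $\fJ^{\bHecke}_{!*,\on{sprd}}$) as an object of the relative Hecke category, which in turn was produced in \thmref{t:Hecke ppty of J gen} from the relative Hecke structure on ${}_{\Gr_T}\!\ICsm_{q^{-1},\Ran}$ supplied by \corref{c:Hecke IC Gr T}. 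So the first step is to reformulate the statement as an assertion about how this relative Hecke structure on ${}_{\Gr_T}\!\ICsm_{q^{-1},\Ran}$ interacts with Verdier duality on $\Gr^{\omega^\rho}_{T,\Ran_x}\underset{\Ran_x}\times \Gr^{\omega^\rho}_{G,\Ran_x}$.

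The second step is to identify the relevant duality statement for ${}_{\Gr_T}\!\ICsm_{q^{-1},\Ran}$ itself. Here I would use the global description of \secref{ss:twist T bundle}: by \corref{c:ICs glob sym} and \corref{c:ICs glob sym Hecke}, the object ${}_{\Gr_T}\!\ICsm_{q^{-1},\Ran_x}$ is (a shift of) a $!$-pullback of ${}_{\Bun_T}\!\ICsm_{q^{-1},\on{glob}}$, which is the IC-extension of a constant sheaf on $\Bun_{B^-}$ and hence \emph{Verdier self-dual up to the swap $\CG^G\leftrightarrow (\CG^G)^{-1}$}; moreover by \thmref{t:global Hecke} (and Remark \ref{r:Rep H abelian}) its relative Hecke structure is governed by metaplectic geometric Satake. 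The compatibility of metaplectic Satake with Verdier duality is exactly the content of \secref{ss:Satake and duality}: diagram \eqref{e:metapl Satake and duality} and its consequence \eqref{e:Cartan and inv}, together with the canonical identifications \eqref{e:duality on RepH} and \eqref{e:cartan and dual}. The point is that $\BD^{\on{Verdier}}$ intertwines the $\Rep(H)$-action built via $\Sat_{q,G}$ with the one built via $\Sat_{q^{-1},G}$ twisted by $\tau^H\circ\BD^{\on{lin}}$, and likewise on the $T$-side $\BD^{\on{Verdier}}$ intertwines $\Sat_{q,T}$-actions via $\tau^{T_H}\circ\BD^{\on{lin}}$. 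Feeding these into the global Hecke eigen-property and then pulling back (using that $\pi_x^!$ and $\pi_{\Gr_T}^!$ commute with Verdier duality up to the line $\ell_g$, cf. Remark \ref{r:local vs global duality}) produces the desired compatibility on the nose; the discrepancy between ``$\tau^{T_H}$ applied after $\Res^H_{T_H}$'' versus ``$\Res^H_{T_H}$ applied after $\tau^H$'' is filled in by \eqref{e:two taus}.

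The third step is bookkeeping: one must check that the two routes \eqref{e:after duality} and \eqref{e:after duality bis} — one obtained by dualizing \eqref{e:before duality}, the other obtained by re-running \thmref{t:Hecke ppty of J} for the $q^{-1}$-functor — genuinely produce the \emph{same} isomorphism, not merely two isomorphisms between the same pair of objects. For this I would package everything as a statement about sections of the relevant Cartesian fibration of $\infty$-categories (as in \secref{ss:rel Hecke} and \secref{sss:coten}): the functor $\Phi$, together with its Hecke enhancement, is a morphism of module categories over $\Rep(H)$ (acting through $\Res^H_{T_H}$), and $\BD^{\on{Verdier}}$ is an anti-equivalence of such module-category data, where the $\Rep(H)$-action on the target side is twisted by $\tau^H$. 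The assertion then becomes that $\Phi$ is compatible with these two anti-equivalence structures, and the homotopy-coherent system of compatibilities for tensor products of $V\in\Rep(H)^c$ is automatic from the fact that we are comparing two morphisms in the $(\infty,2)$-category $\Rep(H)\mmod$ (using \secref{sss:de-eq}). The main obstacle, I expect, is the last point: making the homotopy-coherence precise rather than just checking it on objects. This is where one genuinely needs the monoidal-fibration formalism of Sections \ref{ss:ten prod Rep(H)}--\ref{ss:rel Hecke} rather than hands-on manipulation of individual isomorphisms, and where care is required because three separate involutions ($\tau^H$, $\tau^{T_H}$, and the $N\leftrightarrow N^-$ swap entering ${}_{\Gr_T}\!\ICs \rightsquigarrow {}_{\Gr_T}\!\ICsm$) must be shown to cancel coherently, exactly as they did at the level of objects in the proof of \thmref{t:Hecke ppty of J gen}.
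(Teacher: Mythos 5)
Your overall strategy matches the paper's: reduce to the global version via \propref{p:functor Phi glob}, recast the statement as a compatibility between Verdier duality and the relative Hecke structure on the object ${}_{\Bun_T}\!\ICs_{q^{-1},\on{glob}}$, and handle the homotopy coherence by working in the heart of the t-structure (cf.\ Remark \ref{r:Rep H abelian})---but the last point, which you identify as the main obstacle, is actually the easy part. The genuine gap is in your second step. You assert that the needed duality compatibility for the Hecke structure on ${}_{\Bun_T}\!\ICs$ ``is exactly the content of \secref{ss:Satake and duality}'' and that plugging \eqref{e:metapl Satake and duality} into \thmref{t:global Hecke} ``produces the desired compatibility on the nose.'' This does not follow. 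The relative Hecke eigen-structure on ${}_{\Bun_T}\!\ICs_{q,\on{glob}}$ is \emph{additional data}, not determined by the underlying object: the set of such structures is a torsor over $T_H$. \secref{ss:Satake and duality} only tells you that $\BD^{\on{Verdier}}$ intertwines $\Sat_{q,G}$ with $\Sat_{q^{-1},G}\circ\tau^H$ at the level of $\Sph_{q,x}(G)$; it does not force the Hecke structure on $({}_{\Bun_T}\!\ICs_{q,\on{glob}})^\vee$ obtained by dualizing to coincide with the one directly constructed on ${}_{\Bun_T}\!\ICs_{q^{-1},\on{glob}}$---a priori they could differ by conjugation by some $t\in T_H$.

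Showing $t=1$ is precisely the content of \thmref{t:Hecke on ICs and duality}, which is the real work of the section and which your proposal does not prove. The paper's argument uses two devices you do not mention: first, one passes from $\Shv_{\CG^{G,T,\on{ratio}}}((\BunBb)_{\infty\cdot x})$ to the glued category \eqref{e:Hecke at y}, defined by imposing $T$-Hecke invariance at an auxiliary point $y\neq x$, precisely in order to make $\ICs_{q,\on{glob}}$ behave like an irreducible object (the $\Hom$-computation \eqref{e:ICs irred}), so that its Hecke eigen-structure is characterized by a monoidal functor ${}_q\!\Res^H_{T_H}\colon\Rep(H)^\heartsuit\to\Rep(T_H)^\heartsuit$ together with an identification with $\Res^H_{T_H}$; second, one checks that the composite automorphism of $\Res^H_{T_H}$ arising from chasing the two duality routes acts as the identity on highest weight lines of each $V^\gamma$, which kills the $T_H$-ambiguity. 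Without a substitute for these two steps you cannot conclude that the two routes \eqref{e:after duality} and \eqref{e:after duality bis} yield the \emph{same} isomorphism, only two isomorphisms between the same pair of objects.
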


\sssec{}

As a formal consequence of \thmref{t:duality and Hecke} we obtain:

\begin{thm} \label{t:duality pres Hecke}
We have a commutative diagram
$$
\CD
(\bHecke(\Whit_q(G))^c)^{\on{op}}  @>{\BD^{\on{Verdier}}}>>  \bHecke(\Whit_{q^{-1}}(G))  \\
@V{\Phi^{\bHecke}}VV   @VV{\Phi^{\bHecke}}V  \\
(\Shv_{\CG^\Lambda}(\Conf_{\infty\cdot x})^{\on{loc.c}})^{\on{op}} @>{\BD^{\on{Verdier}}}>> 
\Shv_{(\CG^\Lambda)^{-1}}(\Conf_{\infty\cdot x})^{\on{loc.c}},
\endCD
$$ 
where the upper horizontal arrow is the equivalence \eqref{e:duality for Hecke Whit comp}. 
\end{thm}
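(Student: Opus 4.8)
The statement to be proved is \thmref{t:duality pres Hecke}, asserting that the functor $\Phi^{\bHecke}$ intertwines (up to replacing $q$ by $q^{-1}$) the duality equivalence on $\bHecke(\Whit_q(G))$ of \eqref{e:duality for Hecke Whit comp} with Verdier duality on $\Shv_{\CG^\Lambda}(\Conf_{\infty\cdot x})^{\on{loc.c}}$. As indicated in the excerpt, this is a \emph{formal} consequence of \thmref{t:duality and Hecke}. The plan is therefore to take \thmref{t:duality and Hecke} as given (it is stated just above) and to extract \thmref{t:duality pres Hecke} from it by a categorical manipulation, using the descriptions of the Hecke category via \secref{ss:ten prod Rep(H)} and \secref{ss:graded Hecke}, and the recollection in \secref{sss:duality on Hecke graded} of how $\BD^{\on{Verdier}}$ on $\bHecke(\Whit_q(G))$ is defined in terms of $\BD^{\on{Verdier}}$ on $\Whit_q(G)\otimes \Rep(T_H)$.

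\textbf{Key steps.} First I would recall that $\bHecke(\Whit_q(G)) = \Whit_q(G)\underset{\Rep(H)}\otimes \Rep(T_H)$, so that by \secref{sss:duality and ten} its dual is $\Rep(T_H)\underset{\Rep(H)}\otimes \Whit_{q^{-1}}(G)$, with the compact-object equivalence $\BD^{\on{Verdier}}$ determined, via the commuting square in \secref{sss:duality on Hecke graded}, by the formula $(\CF\otimes V)\mapsto \BD^{\on{Verdier}}(\CF)\otimes \tau^{T_H}(V^*)$ on generators $\ind_{\bHecke}(\CF\otimes V)$. Second, the functor $\Phi^{\bHecke}$ is, by \secref{sss:graded Hecke univ}, the unique $\Rep(T_H)$-linear extension of $\Phi$ along $\ind_{\bHecke}$, using the Hecke-intertwining data of \thmref{t:Hecke ppty of J} (i.e.\ \eqref{e:before duality}). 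So both $\Phi^{\bHecke}$ and its ``Verdier conjugate'' $\BD^{\on{Verdier}}\circ \Phi^{\bHecke}\circ \BD^{\on{Verdier}}$ are functors out of $\bHecke(\Whit_q(G))^c$ that restrict along $\ind_{\bHecke}$ to something computable on $\Whit_q(G)^c$. On $\Whit_q(G)^c$ the restriction of $\Phi^{\bHecke}$ is $\Phi$ itself, and the restriction of the conjugate is (by \thmref{t:duality pres}) again $\Phi$; the point is to check that these two identifications are compatible as \emph{$\Rep(T_H)$-linear} functors, i.e.\ that they respect the two ways \eqref{e:after duality} and \eqref{e:after duality bis} of moving the $\Rep(T_H)$-action past $\BD^{\on{Verdier}}$. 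That compatibility — together with its homotopy-coherence for multi-fold tensor products of the $V$'s — is \emph{exactly} the content of \thmref{t:duality and Hecke}. Hence by the universal property of $\ind_{\bHecke}$ (as a map of $\Rep(T_H)$-module categories), the natural transformation between the two functors produced on $\Whit_q(G)^c$ extends uniquely to an isomorphism of functors on $\bHecke(\Whit_q(G))^c$, which is the assertion of \thmref{t:duality pres Hecke}. Finally I would note that the target lands in $\Shv_{(\CG^\Lambda)^{-1}}(\Conf_{\infty\cdot x})^{\on{loc.c}}$ because, by \corref{c:irred pres Hecke}, $\Phi^{\bHecke}$ sends the compact generators $\bCM^{\mu,!*}_{\Whit}$ (and hence all compacts) to locally compact objects, so $\BD^{\on{Verdier}}$ is defined on the relevant images.

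\textbf{Main obstacle.} Since \thmref{t:duality and Hecke} is handed to us, the only real work is in making the phrase ``formal consequence'' precise at the level of $\infty$-categories: one must know that a $\Rep(T_H)$-linear functor out of $\bHecke(\bC) = \bC\underset{\Rep(H)}\otimes \Rep(T_H)$ is determined, together with all higher coherences, by its restriction to $\bC$ equipped with the intertwining datum — and that a map between two such is likewise so determined. This is a standard but slightly delicate application of the universal property of the relative tensor product (as in \cite[Chapter 1, Sect. 9]{GR1}); the subtlety is purely homotopy-theoretic bookkeeping, namely that the two collections of compatibility isomorphisms in \thmref{t:duality and Hecke} assemble into a map of the relevant coherent diagrams. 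I do not expect any geometric input beyond \thmref{t:duality and Hecke}, Theorems \ref{t:Hecke ppty of J} and \ref{t:duality pres}, and \corref{c:irred pres Hecke}; the hard part will be organizing the coherence data, not proving anything new.
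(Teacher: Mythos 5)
Your proposal is correct and takes essentially the same approach as the paper: the paper dispatches \thmref{t:duality pres Hecke} with the single line ``As a formal consequence of \thmref{t:duality and Hecke} we obtain,'' and what you have written is precisely a careful unfolding of that deduction via the universal property of $\bHecke(\bC)=\bC\underset{\Rep(H)}\otimes\Rep(T_H)$, together with the identifications from \secref{sss:duality on Hecke graded}, \thmref{t:Hecke ppty of J}, and \thmref{t:duality pres}. One small remark: for the claim that $\Phi^{\bHecke}$ lands in $\Shv_{(\CG^\Lambda)^{-1}}(\Conf_{\infty\cdot x})^{\on{loc.c}}$ on compacts, invoking \corref{c:irred pres Hecke} is slightly heavier than necessary — since compacts of $\bHecke(\Whit_q(G))$ are retracts of finite colimits of $\ind_{\bHecke}$ of compacts, and $\Phi^{\bHecke}\circ\ind_{\bHecke}$ reduces to $\Phi$ (which preserves $\on{loc.c}$ by the first part of \thmref{t:duality pres}), the claim already follows without \thmref{t:irred pres}; but using \corref{c:irred pres Hecke} is also legitimate since it is available by this point.
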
 

\sssec{}

The rest of this subsection is devoted to the proof of \thmref{t:duality and Hecke}. 
As the isomorphism of \thmref{t:duality pres} was proved by global methods, we will have to resort to global methods to prove 
\thmref{t:duality and Hecke}.  Thus, for the rest of this section the curve $X$ will be complete.

\ssec{Hecke structure on the functor $\Phi_{\on{glob}}$}

Recall the functor 
$$\Phi_{\on{glob}}: \Whit_{q,\on{glob}}(G)\to  \Shv_{\CG^\lambda}(\Conf_{\infty\cdot x}),$$
see \secref{sss:Phi glob}. As a first step towards the proof of \thmref{t:duality and Hecke},
we will replace it by a statement that involves $\Phi_{\on{glob}}$ instead of $\Phi$. 

\sssec{} \label{sss:Hecke structure glob}

The Hecke action on $\BunNbox$ makes $\Whit_{q,\on{glob}}(G)$ into a category acted on by $\Sph_{q,x}(G)$ on the right
(see Remark \ref{r:global Hecke action}). In particular, we obtain a $\Rep(H)$ on $\Whit_{q,\on{glob}}(G)$ via $\on{Sat}_{q,G}$. 

\medskip

We claim that an analog of \thmref{t:Hecke ppty of J} holds in this situation:

\begin{thmconstr}  \label{t:Hecke ppty of J glob} 
The functor $\Phi_{\on{glob}}$ intertwines the $\Rep(H)$-action on $\Whit_{q,x}(G)$ and the $\Rep(T_H)$-action
on $\Shv_{\CG^\lambda}(\Conf_{\infty\cdot x})$ given by translation functors (see \secref{sss:translation functors}). 
\end{thmconstr}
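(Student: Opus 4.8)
The idea is to deduce \thmref{t:Hecke ppty of J glob} from \thmref{t:Hecke ppty of J} by using the global interpretation of $\Phi$ provided by \propref{p:functor Phi glob}: we have a canonical identification
$$\Phi_{\on{glob}}\simeq \Phi\circ \pi_x^![d_g],$$
and the equivalence $\pi_x^!:\Whit_{q,\on{glob}}(G)\overset{\sim}\to \Whit_{q,x}(G)$ of \thmref{t:local to global}. Thus the first step is to check that $\pi_x^!$ is compatible with the Hecke actions on the two sides, i.e.\ that it intertwines the $\Sph_{q,x}(G)$-action on $\Whit_{q,\on{glob}}(G)$ coming from the global Hecke modification of $\BunNbox$ (Remark \ref{r:global Hecke action}) and the $\Sph_{q,x}(G)$-action on $\Whit_{q,x}(G)$ coming from convolution on $\Gr^{\omega^\rho}_{G,x}$. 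This is a standard ``local-vs-global Hecke'' compatibility: the map $\pi_x:\Gr^{\omega^\rho}_{G,x}\to \BunNbox$ is equivariant for the two incarnations of $\on{Hecke}^{\on{loc}}_{G,x}$, and one checks the intertwining by the same device used in \secref{ss:semi-inf orbit Ran}, namely by factoring $\pi_x$ through $(\ol{S}^0_{\Ran_x})_{\infty\cdot x}$ and using the $\on{unit}$ section as in the proof of \thmref{t:local to global} in \secref{ss:proof local to global}. (The relevant statements about the Hecke action on the global and Ran-versions are exactly those assembled in \secref{ss:Hecke global}, in particular \thmref{t:local vs global IC Hecke}.)

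Granting the Hecke-compatibility of $\pi_x^!$, the assertion of \thmref{t:Hecke ppty of J glob} becomes a formal consequence of \thmref{t:Hecke ppty of J}: for $\CF\in \Whit_{q,\on{glob}}(G)$ and $V\in \Rep(H)$ we get
$$\Phi_{\on{glob}}(\CF\star \Sat_{q,G}(V))\simeq \Phi(\pi_x^!(\CF)\star \Sat_{q,G}(V))[d_g]\simeq \on{Tr}^{\Res^H_{T_H}(V)}\bigl(\Phi(\pi_x^!(\CF))[d_g]\bigr)\simeq \on{Tr}^{\Res^H_{T_H}(V)}(\Phi_{\on{glob}}(\CF)),$$
where the middle isomorphism is \thmref{t:Hecke ppty of J} (note that the $\Rep(T_H)$-action on $\Shv_{\CG^\Lambda}(\Conf_{\infty\cdot x})$ obtained from the factorization-module picture via the equivalences \eqref{e:Ran to config sheaves ptd} and \eqref{e:modules from Gr to Conf} is precisely the translation action of \secref{sss:translation functors}). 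One should also verify that these isomorphisms are compatible with tensor products of objects of $\Rep(H)$, i.e.\ promote them to the structure of a functor of $\Rep(H)$-module categories; this is again formal, since \thmref{t:Hecke ppty of J} was itself stated (via the construction of $\fJ^{\bHecke}_{!*,\on{sprd}}$ and \secref{sss:graded Hecke univ}) with the full homotopy-coherent compatibility, and $\pi_x^!$ intertwines the module structures.

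\textbf{Main obstacle.} The only non-formal point is the first step: establishing that $\pi_x^!$ is a map of $\Sph_{q,x}(G)$-module categories, together with the compatibility of the translation action on the $\Conf$-side with the $\Rep(T_H)$-action coming from $\Sat'_{q,T}$ through all the identifications of Part VI. Neither ingredient is deep, but both require carefully matching up the two different descriptions of the Hecke modification (as a modification of a bundle trivialized away from $x$, versus as a modification of a global bundle), and keeping track of the cohomological shift by $d_g$ and the $\langle\lambda,2\check\rho\rangle$-shifts built into $\Sat'_{q,T}$ and into the passage $\Omega_{q,\Conf}^{\Whit_{!*}}\rightsquigarrow {}'\Omega_q^{\on{small}}$. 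The cleanest route is to reduce everything, via $\on{unit}^!$ and \thmref{t:restr to unit}, to the already-proven local statement \thmref{t:Hecke ppty of J gen} on $\Gr^{\omega^\rho}_{G,\Ran_x}$, whose proof in turn rests on the Hecke property \corref{c:Hecke IC Gr T} of ${}_{\Gr_T}\!\ICsm_{q^{-1},\Ran_x}$ and its global counterpart \corref{c:ICs glob sym Hecke}.
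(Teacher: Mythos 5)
Your proof is correct, but it takes a genuinely different route from the paper's. The paper proves \thmref{t:Hecke ppty of J glob} by a \emph{direct global construction}: it reruns the paradigm of \secref{sss:paradigm for Hecke} entirely in the global setting, taking $\bC=\Whit_{q,\on{glob}}(G)$, $\bD=\Shv_{(\CG^{G,T,\on{ratio}})^{-1}}((\BunBbm)_{\infty\cdot x})$, $\bE=\Shv_{\CG^\Lambda}(\Conf_{\infty\cdot x})$, the pairing \eqref{e:global Psi} (pull-tensor-push through $\bCZ_{\infty\cdot x,\infty\cdot x}$), and the Hecke eigen-object $\bd={}_{\Bun_T}\!\ICs_{q^{-1},\on{glob}}\in\bHecke_{\on{rel}}(\bD)$ supplied by \thmref{t:global Hecke}. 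You instead reduce to the already-established local statement \thmref{t:Hecke ppty of J} via the identification $\Phi_{\on{glob}}\simeq \Phi\circ\pi_x^![d_g]$ of \propref{p:functor Phi glob} and the $\Sph_{q,x}(G)$-equivariance of $\pi_x^!$.

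Both approaches produce a valid Hecke structure on $\Phi_{\on{glob}}$, and the paper explicitly acknowledges yours in the discussion immediately following the proof (the paragraph beginning ``Recall now that according to \propref{p:functor Phi glob}\ldots''). The trade-off: your reduction avoids re-verifying the module-category compatibilities globally, but it exhibits the Hecke structure through a chain of identifications rather than intrinsically; the paper's direct construction yields the structure in the form (pairing against ${}_{\Bun_T}\!\ICs_{q^{-1},\on{glob}}$) that is actually needed for the subsequent duality argument in the proof of \thmref{t:duality and Hecke glob}. The paper therefore does the direct construction and \emph{then} checks that it agrees with the structure produced by your reduction, and it is this agreement (resting on \thmref{t:local vs global IC Hecke} / \corref{c:ICs glob sym Hecke}) that is used downstream. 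One small imprecision in your write-up: for the reduction itself you only need the Hecke-equivariance of $\pi_x^!$ on the Whittaker side (which is essentially tautological from the equivariance of the map $\pi_x:\Gr^{\omega^\rho}_{G,x}\to\BunNbox$ under $\on{Hecke}^{\on{loc}}_{G,x}$), whereas \thmref{t:local vs global IC Hecke} is relevant to the \emph{comparison} of the two Hecke structures, not to your argument as such.
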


\begin{proof}

We employ the paradigm of \secref{sss:paradigm for Hecke}. We take
$\bC:=\Whit_{q,\on{glob}}(G)$, $\bE:=\Shv_{\CG^\Lambda}(\Conf_{\infty\cdot x})$ and
$$\bD:=\Shv_{(\CG^{G,T,\on{ratio}})^{-1}}((\BunBbm)_{\infty\cdot x}).$$ 

\medskip

We take the functor
$$\Whit_{q,\on{glob}}(G) \otimes \Shv_{(\CG^{G,T,\on{ratio}})^{-1}}((\BunBbm)_{\infty\cdot x})
\to \Shv_{\CG^\Lambda}(\Conf_{\infty\cdot x})$$
to be
\begin{equation} \label{e:global Psi}
\CF,\CF'\mapsto (\sv_{\Conf_{\infty\cdot x}})_*\left(({}'\ol\sfp^-)^!(\CF)\sotimes ({}'\ol\sfp)^!(\CF')[\dim(\Bun_G)]\right),
\end{equation}
where $'\ol\sfp^-$ and $'\ol\sfp$ denote the two projections
$$(\ol\Bun^{\omega^{\rho}}_N)_{\infty\cdot x} \overset{'\ol\sfp^-}\longleftarrow \bCZ_{\infty\cdot x,\infty\cdot x} 
\overset{'\ol\sfp}\longrightarrow  (\BunBbm)_{\infty\cdot x}.$$

\medskip

The Hecke actions for $G$ and $T$ on $(\BunBbm)_{\infty\cdot x}$ make 
$\Shv_{(\CG^{G,T,\on{ratio}})^{-1}}((\BunBbm)_{\infty\cdot x})$ into a module category for $\Rep(H)\otimes \Rep(T_H)$
(see Remark \ref{r:global Hecke action}).

\medskip 
 
We take $\bd\in \bHecke(\bD)$ to be the object
$$_{\Bun_T}\!\ICs_{q^{-1},\on{glob}}\in \Shv_{(\CG^{G,T,\on{ratio}})^{-1}}((\BunBb)_{\infty\cdot x})$$
of \thmref{t:global Hecke}. 

\medskip

One shows that the required compatibilities hold as in the local case. This gives the functor $\Phi_{\on{glob}}$ the required
structure. 

\end{proof}

\sssec{}

Recall now that according to \propref{p:functor Phi glob}, we have a canonical isomorphism
$$\Phi_{\on{glob}}\simeq \Phi\circ \pi_x^![d_g].$$

\medskip

Since the functor 
$$\pi_x^![d_g]:\Whit_{q,\on{glob}}(G)\to \Whit_q(G)$$
commutes with Hecke actions, the structure on of commutation with the action of $\Rep(H)$ on $\Phi$,
provided by \thmref{t:Hecke ppty of J} induces one on $\Phi_{\on{glob}}$. 

\medskip

However, if follows from the constructions that this structure on $\Phi_{\on{glob}}$ equals one constructed 
in \secref{sss:Hecke structure glob} above.

\sssec{}

Since $\pi_x^![d_g]$ is an equivalence that commutes with duality, we obtain that \thmref{t:duality and Hecke} is equivalent to
the corresponding statement for $\Phi_{\on{glob}}$:

\medskip

\begin{thm}  \label{t:duality and Hecke glob}
The following diagram commutes for $\CF\in \Whit_{q,\on{glob}}(G)^c$ and $V\in \Rep(H)^c$. 
$$
\CD
\BD^{\on{Verdier}}\left(\Phi_{\on{glob}}(\CF\star \Sat_{q,G}(V))\right)   @>{\text{\thmmref{t:Hecke ppty of J glob}}}>>  
\BD^{\on{Verdier}}\left(\Sat_{q,T}(\Res^H_{T_H}(V))\star \Phi_{\on{glob}}(\CF)\right) \\
@V{\text{\corrref{c:duality pres global}}}VV   @VVV  \\
\Phi_{\on{glob}}\left(\BD^{\on{Verdier}}(\CF\star \Sat_{q,G}(V))\right) & & 
\BD^{\on{Verdier}}(\Sat_{q,T}(\Res^H_{T_H}(V)))\star \BD^{\on{Verdier}}(\Phi_{\on{glob}}(\CF))  \\
@VVV   @VV{\text{\corrref{c:duality pres global}}}V   \\
\Phi_{\on{glob}}\left(\BD^{\on{Verdier}}(\CF)\star \BD^{\on{Verdier}}(\Sat_{q,G}(V))\right) & & 
\BD^{\on{Verdier}}(\Sat_{q,T}(\Res^H_{T_H}(V)))\star \Phi_{\on{glob}}(\BD^{\on{Verdier}}(\CF))  \\
& &   @VVV   \\
@VVV   \Sat_{q^{-1},T}(\tau^{T_H}(\Res^H_{T_H}(V^*)))\star \Phi_{\on{glob}}(\BD^{\on{Verdier}}(\CF)) \\ 
& & @VVV  \\
\Phi_{\on{glob}}\left(\BD^{\on{Verdier}}(\CF)\star \Sat_{q^{-1},G}(\tau^H(V^*))\right)  @>\text{\thmmref{t:Hecke ppty of J glob}}>>
\Sat_{q^{-1},T}(\Res^H_{T_H}(\tau^H(V^*)))\star \Phi_{\on{glob}}(\BD^{\on{Verdier}}(\CF)).
\endCD
$$
The commutation identifications satisfy a homotopy-coherent system of comptatibilities for tensor products
of the objects $V\in \Rep(H)^c$. 
\end{thm}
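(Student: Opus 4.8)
The strategy is to show that both structures entering the diagram --- the Hecke-equivariance of $\Phi_{\on{glob}}$ furnished by \thmref{t:Hecke ppty of J glob} and the Verdier-duality compatibility of \corrref{c:duality pres global} --- are induced from the \emph{same} data on the kernel of the integral transform defining $\Phi_{\on{glob}}$, so that the whole statement collapses to one compatibility at the level of that kernel, combined with the interaction of metaplectic geometric Satake with Verdier duality recorded in \secref{sss:Satake and inversion}.

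First I would rewrite $\Phi_{\on{glob}}$, following \eqref{e:global Psi}, as the integral transform along
\[
(\ol\Bun^{\omega^\rho}_N)_{\infty\cdot x}\overset{{}'\ol\sfp^-}\longleftarrow \bCZ_{\infty\cdot x,\infty\cdot x}\overset{{}'\ol\sfp}\longrightarrow (\BunBbm)_{\infty\cdot x}
\]
followed by $(\sv_{\Conf_{\infty\cdot x}})_*$, with kernel $\CK_{q^{-1}}:=({}'\ol\sfp)^!({}_{\Bun_T}\!\ICsm_{q^{-1},\on{glob}})[\dim(\Bun_G)]$. The Hecke structure of \thmref{t:Hecke ppty of J glob} was produced, via the paradigm of \secref{sss:paradigm for Hecke}, from the $\bHecke_{\on{rel}}$-lift of ${}_{\Bun_T}\!\ICs_{q^{-1},\on{glob}}$ of \thmref{t:global Hecke}; the Verdier-duality statement of \corrref{c:duality pres global}, through \thmref{t:Verdier upstairs bis}, rested on the ULA property of ${}_{\Bun_T}\!\ICsm_{q,\on{glob}}$ over $\ol\sfp^-$ (\thmref{t:ULA}), which turns the transformation \eqref{e:pullback duality} into an isomorphism, together with the tautological IC self-duality $\BD^{\on{Verdier}}({}_{\Bun_T}\!\ICsm_{q^{-1},\on{glob}})\simeq {}_{\Bun_T}\!\ICsm_{q,\on{glob}}$ up to a cohomological shift. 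Thus both structures factor through the single object ${}_{\Bun_T}\!\ICsm_{q^{\pm1},\on{glob}}$, its $\bHecke_{\on{rel}}$-lift under the correspondences ${}_{(\BunBb)_{\infty\cdot x}}\!\on{Hecke}^{\on{loc}}_{G,x}$ and ${}_{(\BunBb)_{\infty\cdot x}}\!\on{Hecke}^{\on{loc}}_{T,x}$, and its behavior under $\BD^{\on{Verdier}}$.

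The core of the argument is then the kernel-level compatibility: the IC self-duality isomorphism above is an isomorphism \emph{in} $\bHecke_{\on{rel}}$, where the target carries the Hecke structure twisted by the Cartan involutions $\tau^H$ on $\Rep(H)$ (arising from the inversion anti-homomorphism $\on{inv}^G$ on $\Sph_{q,x}(G)$, cf. \eqref{e:Cartan and inv}) and $\tau^{T_H}$ on $\Rep(T_H)$. This is the global counterpart of \secref{sss:Satake and inversion}, and I would deduce it from: (i) the fact that pushforward along the proper legs of the Hecke correspondences commutes with $\BD^{\on{Verdier}}$ (proper base change); (ii) the inversion involutions on $\on{Hecke}^{\on{loc}}_{G,x}$ and $\on{Hecke}^{\on{loc}}_{T,x}$ interchanging the two legs (\secref{sss:inv on Hecke}); and (iii) the commutative squares \eqref{e:metapl Satake and duality}, \eqref{e:Cartan and inv} and \eqref{e:two taus}, which say exactly that $\BD^{\on{Verdier}}\circ \Sat_{q,G}\simeq \Sat_{q^{-1},G}\circ \tau^H\circ \BD^{\on{lin}}$ and likewise for $T$. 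The three involutions --- $\tau^H$, $\tau^{T_H}$ and the swap $N\leftrightarrow N^-$ used in passing from ${}_{\Bun_T}\!\ICs$ to ${}_{\Bun_T}\!\ICsm$ --- cancel precisely as in the proof of \thmref{t:Hecke ppty of J gen}. Homotopy coherence over tensor products $V_1\otimes V_2$ in $\Rep(H)^c$ is then automatic: by t-exactness of geometric Satake and of the Hecke action (\propref{p:conv is exact}) all objects in sight lie in abelian categories of gerbe-twisted equivariant perverse sheaves, so there is nothing above $H^0$ to coordinate, and, as in Remark~\ref{r:Rep H abelian}, it suffices to treat $V\in(\Rep(H))^\heartsuit$.

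The main obstacle I anticipate is organizational rather than conceptual: arranging the reduction so that the Hecke structure and the Verdier structure are manifestly induced from the same data on the kernel. This requires checking that ULA is used compatibly with the Hecke modifications, i.e. that ${}_{\Bun_T}\!\ICsm_{q,\on{glob}}$ stays ULA over $\ol\sfp^-$ after applying the Hecke functors for $G$ --- which follows from \thmref{t:ULA} on each stratum together with \thmref{t:global Hecke} --- and, once that is in place, carrying out the involution chase of \secref{sss:Satake and inversion} relative to $\BunBb$, which is routine.
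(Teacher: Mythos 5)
Your proposal correctly identifies that the statement should be deduced from a compatibility on the kernel of the integral transform (in effect, the paper's \thmref{t:Hecke on ICs and duality}), and it sets up the framework in the same spirit as \secref{ss:Hecke and duality abs}. But the proposal then declares the kernel-level compatibility to be ``routine'', deducible from proper base change, the inversion involution, and the squares \eqref{e:metapl Satake and duality}, \eqref{e:Cartan and inv}, \eqref{e:two taus}. This is where there is a genuine gap.

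Those formal inputs show only that $\BD^{\on{Verdier}}$ of the $\bHecke_{\on{rel}}$-structure on ${}_{\Bun_T}\!\ICs_{q^{-1},\on{glob}}$ furnished by \thmref{t:global Hecke} yields \emph{some} $\bHecke_{\on{rel}}$-structure on ${}_{\Bun_T}\!\ICs_{q,\on{glob}}$ compatible with $\tau^H$, $\tau^{T_H}$, and the $N\leftrightarrow N^-$ swap. They do \emph{not} show that this structure coincides with the one independently produced by applying \thmref{t:global Hecke} to $q$. Two lifts of the same perverse sheaf to $\bHecke_{\on{rel}}$ can differ by an automorphism; here they could a priori differ by an element of $T_H$ acting on $\Res^H_{T_H}$. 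Showing that this element is trivial is precisely the content of \thmref{t:Hecke on ICs and duality}, and the paper has to work for it: it introduces the auxiliary point $y$ and the category $\Shv_{\CG^{G,T,\on{ratio}}}((\BunBb)_{\infty\cdot x,\on{good}\,\on{at}\,y})^{\on{Hecke}_{T,y}}$ of \eqref{e:Hecke at y} so that ${}_{\Bun_T}\!\ICs_{q,\on{glob}}$ becomes \emph{irreducible} there (the point of \eqref{e:ICs irred} — on the undecorated $\BunBb$ it is not, because of the disconnected components), so that the Hecke structure is uniquely encoded by a monoidal functor ${}_q\!\Res^H_{T_H}$, and then it verifies that the composite $\Res^H_{T_H}\simeq{}_q\!\Res^H_{T_H}\simeq{}_{q^{-1}}\!\Res^H_{T_H}\simeq\Res^H_{T_H}$ is the identity by checking it on highest-weight lines. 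None of this is visible from your (i)--(iii).

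A secondary omission: to apply the abstract duality framework of \secref{ss:Hecke and duality abs} one must replace $\Shv_{(\CG^{G,T,\on{ratio}})^{-1}}((\BunBbm)_{\infty\cdot x})$ by the smaller compactly generated category $\bD_q$ generated by $T$-Hecke translates of the summands of ${}_{\Bun_T}\!\ICs$, because $\BunBbm$ has non-quasi-compact connected components; without this $\bd=\bd_q$ would not be dualizable in the needed sense. Your write-up does not address this, though it is a more mechanical point than the one above.

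Your observation that the higher coherences are automatic once one works in the heart, and the suggestion to restrict to $V\in(\Rep(H))^\heartsuit$ as in Remark~\ref{r:Rep H abelian}, does match the paper's reductions. So the frame is right; the missing piece is the non-formal uniqueness argument for the Hecke structure on the dual IC sheaf.
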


The rest of this section is devoted to \thmref{t:duality and Hecke glob}.

\ssec{A framework for commutation of Hecke structure with duality}  \label{ss:Hecke and duality abs}

In this subsection we will describe a general categorical framework for the proof of \thmref{t:duality and Hecke glob}.

\sssec{}   

Let us be in the paradigm of \secref{sss:paradigm for Hecke}. Assume that all categories involved are
compactly generated. 

\medskip

Let us consider $\bC^\vee$ and $\bD^\vee$ equipped with actions of $\Rep(H)$ given on compact objects
by the formula
$$\bc^\vee \star V=(\bc\star \tau^H(V^*))^\vee, \,\, V\star \bd^\vee=(\tau^H(V^*)\star \bd)^\vee,$$
and let us equip $\bD^\vee$ with an action of $\Rep(T_H)$ by the formula
$$\bd^\vee \star W=(\bd\star \tau^H(W^*))^\vee.$$

\sssec{}  \label{sss:Psi op}

Assume that the functor $\Psi:\bC\otimes \bD\to \bE$ preserves compactness, and let $\Upsilon$ denote
the resulting functor
$$\bC^\vee\otimes \bD^\vee\to \bE^\vee, \quad \Upsilon(\bc^\vee\otimes \bd^\vee):=(\Psi(\bc\otimes \bd))^\vee.$$ 

\medskip

Identifying 
$$\bC^\vee\underset{\Rep(H)}\otimes \bD^\vee\simeq (\bC\underset{\Rep(H)}\otimes \bD)^\vee,\quad
\bc^\vee\otimes \bd^\vee\mapsto (\bc\otimes \bd)^\vee,$$
we obtain that the functor $\Upsilon$ is also equipped with a factorization
$$\bC^\vee\otimes \bD^\vee\to \bC^\vee\underset{\Rep(H)}\otimes \bD^\vee\overset{\wt{\Upsilon}}\longrightarrow \bE$$
for some canonically defined functor $\wt{\Upsilon}$. 

\medskip 

Let now $\bd$ be an object of $\bHecke_{\on{rel}}(\bD)$. (We are not assuming that $\bd$
is compact in $\bHecke_{\on{rel}}(\bD)$, and a fortiori not in $\bD$.)
 
\sssec{}

Recall that to \emph{any} object $\bbf$ in a compactly generated category $\bF$ one
can attach its dual $\bbf^\vee \in \bF^\vee$, characterized uniquely by the property that 
$$\CHom_{\bF^\vee}(\bbf_1^\vee,\bbf^\vee)=\CHom_{\bF}(\bbf,\bbf_1), \quad \bbf_1\in \bF^c.$$

Explicitly, if $\bbf=\underset{k}{\on{colim}}\, \bbf_k$, then 
$$\bbf^\vee \simeq \underset{k}{\on{lim}}\, \bbf^\vee_k.$$

Let $\Psi:\bF\to \bE$ be a continuous functor that preserves compactness. Consider the corresponding functor 
$$\Upsilon:\bF^\vee\to \bE^\vee, \quad \Upsilon(\bbf^\vee_1):=(\Psi(\bbf_1))^\vee, \quad \bbf_1\in \bF^c.$$
Then we have a natural map
\begin{equation} \label{e:bad dual}
\Upsilon(\bbf^\vee)\to \Psi(\bbf)^\vee.
\end{equation}

This map is an isomorphism if $\Psi$ also commutes with \emph{limits}. 

\sssec{}  \label{sss:d dual}

Let $\bd^\vee$ be the corresponding object of $\bD^\vee$. Note that for any finite-dimensional
$V\in \Rep(H)$ (resp., $W\in \Rep(T_H)$), the canonical maps 
$$V\star \bd^\vee\to (\tau^H(V^*)\star \bd)^\vee \text{ and } \bd^\vee \star W=(\bd\star \tau^{T_H}(W^*))^\vee$$
coming from \eqref{e:bad dual} are isomorphisms (indeed, the functors $V\star -$ and $-\star W$
admit left adjoints and hence commute with limits).

\medskip

Thus, we obtain that the object $\bd^\vee\in \bD^\vee$ is a system of isomorphisms
$$V\star \bd^\vee \simeq \bd^\vee \star \tau^{T_H}(\Res^H_{T_H}(\tau^H(V))).$$

We identify 
$$\tau^{T_H}(\Res^H_{T_H}(\tau^H(V)))\simeq \Res^H_{T_H}(V).$$

This identification defines a lift of $\bd^\vee$ to an object of $\bHecke_{\on{rel}}(\bD^\vee)$.

\sssec{}

Consider the resulting functors
$$\Psi_\bd:\bC\to \bE \text{ and } \Upsilon_{\bd^\vee}:\bC^\vee\to \bE^\vee.$$

From \eqref{e:bad dual}, for a compact $\bc\in \bC$ we obtain a naturally defined map
\begin{equation} \label{e:bad duality and Psi}
\Upsilon_{\bd^\vee}(\bc^\vee)\to (\Psi_{\bd}(\bc))^\vee.
\end{equation}

Assume that these maps are isomorphisms. 

\sssec{}

By \secref{sss:paradigm for Hecke}, we have the isomorphisms
$$\Psi_\bd(\bc\star V) \simeq \Res^H_{T_H}(V)\star \Psi_\bd(\bc),$$
from which by duality we obtain the isomorphisms
\begin{equation} \label{e:after duality abs}
(\Psi_\bd(\bc\star V))^\vee \simeq \tau^{T_H}(\Res^H_{T_H}(V^*))\star (\Psi_\bd(\bc))^\vee.
\end{equation}

We also have the isomorphisms
\begin{equation} \label{e:other Hecke abs}
\Upsilon_{\bd^\vee}(\bc^\vee\star V) \simeq \Res^H_{T_H}(V) \star \Upsilon_{\bd^\vee}(\bc^\vee).
\end{equation}

\medskip

Unwinding the constructions, we obtain that the following diagrams are commutative:
\begin{equation} \label{e:Hecke and duality abs}
\CD
(\Psi_\bd(\bc\star V))^\vee   @>{\text{\eqref{e:after duality abs}}}>> \tau^{T_H}(\Res^H_{T_H}(V^*))\star (\Psi_\bd(\bc))^\vee \\
@A{\text{\eqref{e:bad duality and Psi}}}AA    @AA{\text{\eqref{e:bad duality and Psi}}}A   \\
\Upsilon_{\bd^\vee}((\bc\star V)^\vee) & &  \tau^{T_H}(\Res^H_{T_H}(V^*))\star \Upsilon_{\bd^\vee}(\bc^\vee) \\
@A{\sim}AA  @AA{\sim}A   \\
\Upsilon_{\bd^\vee}(\bc^\vee \star \tau^H(V^*))   @>{\text{\eqref{e:other Hecke abs}}}>>
\Res^H_{T_H}(\tau^H(V^*))\star  \Upsilon_{\bd^\vee}(\bc^\vee), 
\endCD
\end{equation}
where lower right vertical arrow comes from the identification
$$\tau^{T_H}\circ \Res^H_{T_H} \simeq \Res^H_{T_H}\circ \tau^H.$$
Moreover, the commutation identifications satisfy a homotopy-coherent system of comptatibilities for tensor products
of the objects $V\in \Rep(H)^c$. 

\ssec{Proof of \thmref{t:duality and Hecke glob}}

In this subsection we will prove \thmref{t:duality and Hecke glob} by showing that it fits into the paradigm of
\secref{ss:Hecke and duality abs} above. 

\sssec{}

We take $\bC$ to be the category $\Whit_{q,\on{glob}}(G)$, and $\bE$ to be $\Shv_{\CG^\Lambda}(\Conf_{\infty\cdot x})$.

\medskip

Recall that in \secref{sss:Hecke structure glob}, the category $\bD$ was taken to be 
$\Shv_{(\CG^{G,T,\on{ratio}})^{-1}}((\BunBbm)_{\infty\cdot x})$. However, here we will have
to somewhat modify this choice.

\sssec{}

Note that the algebraic stack $\BunBbm$ is disconnected, and its individual connected
components are \emph{not} quasi-compact. So, the question of compact generation of
$\Shv_{(\CG^{G,T,\on{ratio}})^{-1}}(\BunBbm)$ may be non-trivial. We will skirt this problem
as follows:

\medskip

Consider the full (but non-cocomplete) subcategory of $\Shv_{(\CG^{G,T,\on{ratio}})^{-1}}((\BunBbm)_{\infty\cdot x})$ 
generated by $T$-Hecke translates of the direct summands of $_{\Bun_T}\!\ICs_{q^{-1},\on{glob}}$
(these direct summands correspond to the different connected components of $\BunBbm$). We let $\bD$
(to be henceforth denoted $\bD_q$)
be the ind-completion of this category. By construction, we have a tautological functor
$$\bD_q\to \Shv_{(\CG^{G,T,\on{ratio}})^{-1}}((\BunBbm)_{\infty\cdot x}).$$

\medskip

Note that compact objects of $\bD_q$ map to \emph{locally compact} objects of 
$\Shv_{(\CG^{G,T,\on{ratio}})^{-1}}((\BunBbm)_{\infty\cdot x})$. In particular, Verdier duality on
$\Shv_{(\CG^{G,T,\on{ratio}})^{-1}}((\BunBbm)_{\infty\cdot x})$ is well-defined for these objects.

\medskip

This allows to identify $\bD_q^\vee$ with the category $\bD_{q^{-1}}$ so that we have a commutative diagram
$$
\CD
(\bD_q^c)^{\on{op}} @>>>  \left(\left(\Shv_{(\CG^{G,T,\on{ratio}})^{-1}}((\BunBbm)_{\infty\cdot x})\right)^{\on{loc.c}}\right)^{\on{op}}  \\
@VVV   @VVV   \\
\bD_{q^{-1}}^c  @>>> \left(\Shv_{\CG^{G,T,\on{ratio}}}((\BunBbm)_{\infty\cdot x})\right)^{\on{loc.c}}. 
\endCD
$$

\sssec{}

We take the functor $\Psi$ to be the composite of 
$$\Whit_{q,\on{glob}}(G)\otimes \bD_q\to
\Whit_{q,\on{glob}}(G)\otimes \Shv_{(\CG^{G,T,\on{ratio}})^{-1}}((\BunBbm)_{\infty\cdot x})$$
with the functor \eqref{e:global Psi}.  Let us denote this functor by $\Psi_q$. 

\medskip

By \thmref{t:Verdier upstairs bis}, the resulting functor
$$\Upsilon:\Whit_{q^{-1},\on{glob}}(G)\otimes \bD^\vee \to \Shv_{\CG^\Lambda}(\Conf_{\infty\cdot x})$$
from \secref{sss:Psi op} identifies with $\Psi_{q^{-1}}$, i.e., with 
\begin{multline*}
\Whit_{q^{-1},\on{glob}}(G)\otimes \bD_{q^{-1}}\simeq 
\Whit_{q^{-1},\on{glob}}(G)\otimes \Shv_{\CG^{G,T,\on{ratio}}}((\BunBbm)_{\infty\cdot x}) \to \\
\overset{\text{\eqref{e:global Psi}}}\longrightarrow \Shv_{(\CG^\Lambda)^{-1}}(\Conf_{\infty\cdot x}).
\end{multline*}

\sssec{}

Consider the functor
$$\wt\Psi_q:=\wt\Psi:\Whit_{q,\on{glob}}(G)\underset{\Rep(H)}\otimes \bD_q \to \Shv_{\CG^\Lambda}(\Conf_{\infty\cdot x}).$$

We claim that the resulting functor
$$\wt\Upsilon: \Whit_{q^{-1},\on{glob}}(G)\underset{\Rep(H)}\otimes \bD_{q^{-1}} \to \Shv_{\CG^\Lambda}(\Conf_{\infty\cdot x})$$
from \secref{sss:Psi op} identifies with $\wt\Psi_{q^{-1}}$. 

\medskip

This statement amounts to the fact that for $\CF\in \Whit_{q,\on{glob}}(G)^c$, $\CF'\in \bD_q$ and $V\in \Rep(H)^c$,
the isomorphisms
$$\Psi_q(\CF\star V,\CF')\simeq \Psi_q(\CF, V \star \CF')$$
and
$$\Psi_{q^{-1}}(\BD^{\on{Verdier}}(\CF)\star \tau^H(V^*),\BD^{\on{Verdier}}(\CF'))\simeq 
\Psi_{q^{-1}}(\BD^{\on{Verdier}}(\CF), \tau^H(V^*) \star \BD^{\on{Verdier}}(\CF'))$$
make the following diagram is commutative (in a way compatible with tensor products of objects $V$):
$$
\CD
\BD^{\on{Verdier}}(\Psi_q(\CF\star V,\CF'))  @>>> \BD^{\on{Verdier}}(\Psi_q(\CF, V \star \CF')) \\
@VVV   @VVV  \\
\Psi_{q^{-1}}(\BD^{\on{Verdier}}(\CF)\star \tau^H(V^*),\BD^{\on{Verdier}}(\CF')) @>>> 
\Psi_{q^{-1}}(\BD^{\on{Verdier}}(\CF), \tau^H(V^*) \star \BD^{\on{Verdier}}(\CF')). 
\endCD
$$

This follows from the fact that the natural transformations \eqref{e:pullback duality} involved
in the construction of the isomophism 
$$\Upsilon\simeq \Psi_{q^{-1}}$$
commute with proper pushforwards. 

\medskip

Similarly, the data of commutation with the action of $\Rep(T_H)$ on the functor 
$\wt\Upsilon$ that arises from one on $\wt\Psi_q$ agrees with the corresponding 
data on $\wt\Psi_{q^{-1}}$. 

\sssec{}

We take the object $\bd\in \bHecke_{\on{rel}}(\bD_q)$ (to be henceforth denoted $\bd_q$) to be 
$$_{\Bun_T}\!\ICs_{q^{-1},\on{glob}}.$$

Note that although $\bd_q$, viewed as an object of $\bD_q$, is not compact (indeed, it is spread over all connected 
components on $\BunBbm$), that its image in $\Shv_{(\CG^{G,T,\on{ratio}})^{-1}}((\BunBbm)_{\infty\cdot x})$ is locally compact. 

\medskip

The corresponding object $\bd_q^\vee\in \bD^\vee_q$ identifies with $\bd_{q^{-1}}$, i.e.,
$$_{\Bun_T}\!\ICs_{q,\on{glob}}.$$

\medskip

Recall now that according to \secref{sss:d dual}, the Hecke structure on $\bd_q$ gives rise to one on $\bd_q^\vee\in \bD^\vee_q$;
i.e., it lifts to an object of $\bHecke_{\on{rel}}(\bD^\vee_q)$. We have the following key assertion:

\medskip

\begin{thm} \label{t:Hecke on ICs and duality} 
Under the identifications $\bD^\vee_q\simeq \bD_{q^{-1}}$ and $\bd_q^\vee\simeq \bd_{q^{-1}}$, the structure on 
$\bd_q^\vee$ of object of $\bHecke_{\on{rel}}(\bD^\vee_q)$ coincides with the structure on $\bd_{q^{-1}}$ of object 
of $\bHecke_{\on{rel}}(\bD_{q^{-1}})$.
\end{thm}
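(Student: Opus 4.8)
The plan is to deduce \thmref{t:Hecke on ICs and duality} from \thmref{t:global Hecke} together with the interaction between metaplectic geometric Satake and Verdier duality recorded in \secref{ss:Satake and duality}. Recall that the relative Hecke structure on $_{\Bun_T}\!\ICs_{q^{-1},\on{glob}}$ produced by \thmref{t:global Hecke} is a system of isomorphisms
$$_{\Bun_T}\!\ICs_{q^{-1},\on{glob}}\underset{H}\star \Sat_{q^{-1},G}(V)\simeq \Sat_{q^{-1},T}(\Res^H_{T_H}(V))\star {}_{\Bun_T}\!\ICs_{q^{-1},\on{glob}}, \quad V\in \Rep(H)^\heartsuit,$$
compatible with tensor products (Remark \ref{r:Rep H abelian} lets us restrict to the abelian category of representations, which is harmless since both $\Rep(H)$ and $\Sph_{q^{-1},x}(G)$ are the derived categories of their hearts). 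The first step is to express the abstract Hecke structure on $\bd_q^\vee$ from \secref{sss:d dual} concretely: by construction it is obtained by applying Verdier duality on $(\BunBbm)_{\infty\cdot x}$ to the isomorphisms above, using that the Hecke functors $\CS_G\star -$ and $-\star \CS_T$ commute with Verdier duality (because the Hecke action on $(\BunBbm)_{\infty\cdot x}$ is given by pull-$\sotimes$-push along the correspondence $_{(\BunBbm)_{\infty\cdot x}}\!\on{Hecke}^{\on{loc}}$, with the two legs \emph{proper}, cf. Remark \ref{r:global Hecke action}), and then transporting back along the self-duality $\bd_q^\vee\simeq \bd_{q^{-1}}$.

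The second step is the actual comparison. After dualizing, the $V$-component of the Hecke structure on $\bd_q^\vee$ involves $\BD^{\on{Verdier}}(\Sat_{q^{-1},G}(V))$ on the $G$-side and $\BD^{\on{Verdier}}(\Sat_{q^{-1},T}(\Res^H_{T_H}(V)))$ on the $T$-side. By \secref{sss:Satake and inversion} — specifically the commutative square \eqref{e:metapl Satake and duality} — we have $\BD^{\on{Verdier}}\circ \Sat_{q^{-1},G}\simeq \Sat_{q,G}\circ (\tau^H\circ \BD^{\on{lin}})$, and the analogous statement for $T$ reads $\BD^{\on{Verdier}}\circ \Sat_{q^{-1},T}\simeq \Sat_{q,T}\circ (\tau^{T_H}\circ \BD^{\on{lin}})$. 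Since $\Sat_{q^{-1},T}$ appears precomposed with $\Res^H_{T_H}$, I will need the compatibility \eqref{e:two taus}, i.e. $\tau^{T_H}\circ \Res^H_{T_H}\simeq \Res^H_{T_H}\circ \tau^H$, to identify the $T$-side functor with $\Sat_{q,T}(\Res^H_{T_H}(\tau^H(V^*)))$. Feeding this into the dualized Hecke isomorphism yields exactly the system of isomorphisms
$$_{\Bun_T}\!\ICs_{q,\on{glob}}\underset{H}\star \Sat_{q,G}(\tau^H(V^*))\simeq \Sat_{q,T}(\Res^H_{T_H}(\tau^H(V^*)))\star {}_{\Bun_T}\!\ICs_{q,\on{glob}},$$
which, running over all $V$ (equivalently over all $\tau^H(V^*)\in \Rep(H)^\heartsuit$), is precisely the relative Hecke structure on $\bd_{q^{-1}}={}_{\Bun_T}\!\ICs_{q,\on{glob}}$ furnished by \thmref{t:global Hecke}. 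The identification on the nose then comes from the uniqueness built into that theorem, plus the compatibility with tensor products (which is automatic since every construction in sight is compatible with composition of Hecke modifications).

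The third step is purely bookkeeping: once \thmref{t:Hecke on ICs and duality} holds, one applies the formalism of \secref{ss:Hecke and duality abs} with $\bC=\Whit_{q,\on{glob}}(G)$, $\bE=\Shv_{\CG^\Lambda}(\Conf_{\infty\cdot x})$, $\bD=\bD_q$, $\Psi=\Psi_q$, $\bd=\bd_q$; the hypothesis that the maps \eqref{e:bad duality and Psi} are isomorphisms is exactly \thmref{t:Verdier upstairs bis} (together with properness of $\sv_{\Conf_{\infty\cdot x}}$, which makes $(\sv_{\Conf_{\infty\cdot x}})_*$ commute with limits as required for \eqref{e:bad dual} to be an isomorphism on the relevant objects). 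The commutative diagram \eqref{e:Hecke and duality abs} then \emph{is} the diagram of \thmref{t:duality and Hecke glob}, and \corref{c:duality pres global} identifies the vertical isomorphisms there with the ones in the statement. Unwinding $\Phi_{\on{glob}}\simeq \Phi\circ \pi_x^![d_g]$ and the compatibility of $\pi_x^!$ with Hecke actions and with duality (Remark \ref{r:local vs global duality}) then gives \thmref{t:duality and Hecke} and hence \thmref{t:duality pres Hecke}.

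I expect the main obstacle to be Step 2: carefully matching the \emph{normalizations} of the two Hecke structures. Both \thmref{t:global Hecke} and the dualization procedure produce their structures only ``uniquely up to the constraints spelled out'', and one must check that the trivializations of highest/lowest weight lines used implicitly in \secref{ss:Satake and duality} (the element $\sw_0\in \on{Norm}_H(T_W)$ of Remark \ref{r:lowest weight line}, the identifications \eqref{e:duality on RepH} and \eqref{e:cartan and dual}) are the \emph{same} ones that enter the construction of the relative Hecke structure on $_{\Bun_T}\!\ICs_{q,\on{glob}}$ in the metaplectic analog of \cite{BG}; any discrepancy would show up as a cocycle twisting the isomorphism by a character of $H$. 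The resolution should be that all these normalizations are pinned down by the single requirement that they restrict to the tautological identification over the open locus $\Bun_{B^-}\subset \BunBbm$ (where $_{\Bun_T}\!\ICs_{q,\on{glob}}$ is the constant sheaf), which is manifestly self-dual — so the discrepancy cocycle is trivial — but verifying this cleanly is where the real work lies.
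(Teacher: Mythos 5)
Your overall strategy---dualize the Hecke isomorphisms of \thmref{t:global Hecke} on $(\BunBbm)_{\infty\cdot x}$, use the compatibility \eqref{e:metapl Satake and duality} of metaplectic Satake with Verdier duality (together with $\tau^{T_H}\circ\Res^H_{T_H}\simeq\Res^H_{T_H}\circ\tau^H$), and then pin down the residual normalization discrepancy---is the right shape, and you correctly identify the normalization question as the crux. But there is a genuine gap in how you propose to close it, and it is precisely the point where the paper introduces a non-obvious device.

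The difficulty is that $\ICs_{q,\on{glob}}$, viewed in $\Shv_{\CG^{G,T,\on{ratio}}}((\BunBbm)_{\infty\cdot x})$, is \emph{not} rigid: it has a copy supported on every connected component of $\BunBbm$, and its endomorphism algebra contains a factor of $\sfe$ for each such component. As a result, the ``discrepancy'' between the two Hecke structures on $\bd_q^\vee\simeq\bd_{q^{-1}}$ is a priori an unconstrained collection of scalars, one per component, and there is no uniqueness statement in \thmref{t:global Hecke} you can appeal to (you invoke ``the uniqueness built into that theorem'', but no such uniqueness is asserted). The paper resolves this by passing to an auxiliary category $\Shv_{\CG^{G,T,\on{ratio}}}((\BunBb)_{\infty\cdot x,\on{good}\,\on{at}\,y})^{\on{Hecke}_{T,y}}$, defined by ``gluing'' the connected components via Hecke modifications at a second point $y\neq x$; in that category the Hom-spaces \eqref{e:ICs irred} collapse to $\sfe$ in the diagonal case and $0$ otherwise, which is what makes the object behave as if it were irreducible. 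Only after this reduction does one get that the composite $\Res^H_{T_H}\simeq {}_q\!\Res^H_{T_H}\simeq {}_{q^{-1}}\!\Res^H_{T_H}\simeq \Res^H_{T_H}$ is a \emph{single} monoidal automorphism of $\Res^H_{T_H}$, hence (by Tannakian rigidity, since $Z_H(T_H)=T_H$) an element $t\in T_H$---note it is an element of $T_H$, not a character of $H$ as you write---and one then checks $t=1$ on highest weight lines.

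Your proposed alternative resolution, restricting to the open locus $\Bun_{B^-}\subset\BunBbm$, is the geometric shadow of the highest-weight-line check, but as stated it does not fix the normalization for general $V\in\Rep(H)^\heartsuit$: the restriction of $\ICs_{q,\on{glob}}\star\Sat_{q,G}(V)$ to $\Bun_{B^-}$ is not ``manifestly tautological'' (the convolution modifies the $G$-bundle away from the open locus), and without the gluing there is no single cocycle to trivialize. So the missing ingredient is the device of the second marked point $y$ and the resulting rigidity; once it is in place, your computation in Step 2 goes through, and the check $t=1$ on highest weight lines plays the role you intended for the open-locus restriction. Your Step 3 is correct and matches the paper, but it is downstream of the theorem and not part of its proof.
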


\sssec{}

Assuming for a moment \thmref{t:Hecke on ICs and duality} we complete the proof of the desired global version of 
\thmref{t:duality and Hecke} by invoking the system of commutative diagrams \eqref{e:Hecke and duality abs}. 

\ssec{Hecke structure on the global IC sheaf and Verdier duality}

This rest of this section is devoted to the proof of \thmref{t:Hecke on ICs and duality}. In order to unburden the notation we
will write $\ICs_{q,\on{glob}}$ instead of $_{\Bun_T}\!\ICs_{q,\on{glob}}$. We will also switch from $B^-$ back to $B$. 

\medskip

In this subsection we will explain what \thmref{t:Hecke on ICs and duality} says in ``down-to-earth" terms. 

\sssec{}

Let us write down what \thmref{t:Hecke on ICs and duality} says in concrete terms. According to \thmref{t:global Hecke},
for $V\in \Rep(H)$ we have canonical isomorphisms
$$\ICs_{q,\on{glob}} \star \Sat_{q,G}(V) \simeq 
\Sat_{q,T}(\Res^H_{T_H}(V))\star {}\ICs_{q,\on{glob}}$$
and
$$\ICs_{q^{-1},\on{glob}} \star \Sat_{q^{-1},G}(V) \simeq 
\Sat_{q^{-1},T}(\Res^H_{T_H}(V))\star {}\ICs_{q^{-1},\on{glob}}.$$

The claim is that for $V\in \Rep(H)^c$, the following diagram commutes:

\medskip

\begin{equation} \label{e:Hecke IC glob concr}
\CD
\BD^{\on{Verdier}}({}\ICs_{q,\on{glob}} \star \Sat_{q,G}(V))   @>>>  
\BD^{\on{Verdier}}(\Sat_{q,T}(\Res^H_{T_H}(V))\star {}\ICs_{q,\on{glob}}) \\
& & @VVV   \\
@VVV  \BD^{\on{Verdier}}(\Sat_{q,T}(\Res^H_{T_H}(V))) \star \BD^{\on{Verdier}}({}\ICs_{q,\on{glob}}))  \\
& & @VVV  \\
\BD^{\on{Verdier}}({}\ICs_{q,\on{glob}}) \star \BD^{\on{Verdier}}(\Sat_{q,G}(V)))  & & 
\Sat_{q^{-1},T}(\tau^{T_H}(\Res^H_{T_H}(V^*)))  \star {}\ICs_{q^{-1},\on{glob}}  \\
@VVV  @VVV  \\
\ICs_{q^{-1},\on{glob}} \star \Sat_{q^{-1},G}(\tau^H(V^*))   @>>> 
\Sat_{q^{-1},T}(\Res^H_{T_H}(\tau^H(V^*))) \star {} \ICs_{q^{-1},\on{glob}},
\endCD
\end{equation}
where the lower right vertical arrow is given by
$$\tau^{T_H}\circ \Res^H_{T_H} \simeq \Res^H_{T_H}\circ \tau^H.$$
Moreover, the data of commutation is compatible with tensor products of the objects $V$. 

\sssec{}

Some simplifying remarks are in order:

\medskip

\noindent(i) As in Remark \ref{r:Rep H abelian} it is sufficient to establish the commutativity of the diagrams
\eqref{e:Hecke IC glob concr} for $V\in (\Rep(H))^\heartsuit$. 

\medskip

\noindent(ii) For $V$ in the abelian category, all the objects involved in \eqref{e:Hecke IC glob concr}
lie in the heart of the perverse
t-structure on $\Shv_{\CG^{G,T,\on{ratio}}}((\BunBb)_{\infty\cdot x})$. Hence, once we check
the commutation for individual objects $V$, the higher compatibilities would follow. 

\ssec{Digression: gluing different components of $\BunBb$ together}

Note that $\ICs_{q,\on{glob}}$ is not an irreducible perverse sheaf for the simple reason that it is supported
on all the different connected components of $\BunBb$. 

\medskip

In this subsection we will introduce a geometric device that allows to ``sew together" the various components
of $\BunBb$. More precisely, we will define a category such that, when regarded as an object in it, $\ICs_{q,\on{glob}}$
will be irreducible. 

\sssec{}

Fix another point $x\neq y\in X$. Let 
$$(\BunBb)_{\infty\cdot x,\on{good}\,\on{at}\,y}\subset (\BunBb)_{\infty\cdot x}$$
be the open sub-functor, where we require that our generalized $B$-reduction be 
non-degenerate at $y$. Restriction to the formal disc around $y$ defines a map
$$(\BunBb)_{\infty\cdot x,\on{good}\,\on{at}\,y}\to \on{pt}/\fL^+(B)_y.$$

\medskip

Consider the corresponding Hecke groupoid
$$
\CD
(\BunBb)_{\infty\cdot x,\on{good}\,\on{at}\,y}  @<{\hl_B}<<
_{(\BunBb)_{\infty\cdot x,\on{good}\,\on{at}\,y}}\!\on{Hecke}^{\on{loc}}_{B,y}   @>{\hr_B}>> (\BunBb)_{\infty\cdot x,\on{good}\,\on{at}\,y} \\
@VVV   @VVV  @VVV  \\
\on{pt}/\fL^+(B)_y  @<<<  \on{Hecke}^{\on{loc}}_{B,y}  @>>> \on{pt}/\fL^+(B)_y,
\endCD
$$
where in this diagram both squares are Cartesian. 

\medskip

The pullbacks of the gerbe $\CG^{G,T,\on{ratio}}$ with respect to $\hl_B$ and $\hr_B$ are naturally identified. 

\sssec{}

The natural projection
$$\on{Hecke}^{\on{loc}}_{B,y}\to \on{Hecke}^{\on{loc}}_{T,y}\to \Lambda$$
defines a decomposition of $\on{Hecke}^{\on{loc}}_{B,y}$ into connected components, indexed by the elements of $\Lambda$;
denote them by $\on{Hecke}^{\on{loc},\lambda}_{B,y}$.  Denote
$$\on{Hecke}^{\on{loc},+}_{B,y}=\underset{\lambda\in \Lambda^+}\sqcup\, \on{Hecke}^{\on{loc},\lambda}_{B,y}$$.

\medskip

For $\lambda\in \Lambda^+$ let 
$$\on{Hecke}^{\on{loc},\lambda,\on{restr}}_{B,y}\subset \on{Hecke}^{\on{loc},\lambda}_{B,y}$$
be the subfunctor
$$\fL^+(N)_y\backslash (\fL^+(N)_y \cdot t^\lambda)/\fL^+(N)_y\subset 
\fL^+(N)_y\backslash (\fL(N)_y \cdot t^\lambda)/\fL^+(N)_y.$$

\medskip

Then the map
$$\on{Hecke}^{\on{loc},\lambda,\on{restr}}_{B,y} \overset{\hr^{\lambda,\on{restr}}_B}\longrightarrow \on{pt}/\fL^+(B)_y$$
is an isomorphism and the map
$$\on{pt}/\fL^+(B)_y \overset{\hl^{\lambda,\on{restr}}_B}\longleftarrow \on{Hecke}^{\on{loc},\lambda,\on{restr}}_{B,y}$$
is a fibration into affine spaces of dimensions $\langle \lambda,2\check\rho\rangle$. 

\sssec{}

Consider the corresponding substacks
$$_{(\BunBb)_{\infty\cdot x,\on{good}\,\on{at}\,y}}\!\on{Hecke}^{\on{loc},\lambda,\on{restr}}_{B,y}\subset
{}_{(\BunBb)_{\infty\cdot x,\on{good}\,\on{at}\,y}}\!\on{Hecke}^{\on{loc}}_{B,y}.$$

\medskip

The resulting map
$${}_{(\BunBb)_{\infty\cdot x,\on{good}\,\on{at}\,y}}\!\on{Hecke}^{\on{loc},\lambda,\on{restr}}_{B,y}
\overset{\hr^{\lambda,\on{restr}}_B}\longrightarrow (\BunBb)_{\infty\cdot x,\on{good}\,\on{at}\,y}$$
is an isomorphism, and the map
$$(\BunBb)_{\infty\cdot x,\on{good}\,\on{at}\,y} 
\overset{\hl^{\lambda,\on{restr}}_B}\longleftarrow   {}_{(\BunBb)_{\infty\cdot x,\on{good}\,\on{at}\,y}}\!\on{Hecke}^{\on{loc},\lambda,\on{restr}}_{B,y}$$
is a fibration into affine spaces of dimensions $\langle \lambda,2\check\rho\rangle$. 
 
\sssec{}

The (ind)-algebraic stack $(\BunBb)_{\infty\cdot x,\on{good}\,\on{at}\,y}$ splits into connected components
$$(\BunBb)^\lambda_{\infty\cdot x,\on{good}\,\on{at}\,y}, \quad \lambda\in \Lambda.$$

The above maps $\hr^{\lambda,\on{restr}}_B,\hl^{\lambda,\on{restr}}_B$ define a system of maps 
$$m^{\lambda_2,\lambda_1}:
(\BunBb)^{\lambda_2}_{\infty\cdot x,\on{good}\,\on{at}\,y}\to (\BunBb)^{\lambda_1}_{\infty\cdot x,\on{good}\,\on{at}\,y}, 
\quad \lambda_2-\lambda_1 \in \Lambda^+.$$

\sssec{}

We can view the assignment 
$$\lambda \mapsto (\BunBb)^\lambda_{\infty\cdot x,\on{good}\,\on{at}\,y}$$ as a functor from (the opposite of)
$\Lambda$ viewed as a poset $$\lambda_1\preceq \lambda_2 \, \Leftrightarrow\, \lambda_2-\lambda_1\in \Lambda^+$$
to the category of (ind)-algebraic stacks. 

\medskip

Consider the functor 
$$(\Lambda,\preceq)\to \on{DGCat}$$ that sends
$$\lambda\mapsto \Shv_{\CG^{G,T,\on{ratio}}}((\BunBb)^\lambda_{\infty\cdot x,\on{good}\,\on{at}\,y})$$
and $\lambda_1\preceq \lambda_2$ to the functor $(m^{\lambda_2,\lambda_1})^![\langle \lambda_1-\lambda_2,2\check\rho\rangle]$. 
 
\sssec{}
 
Define
\begin{equation} \label{e:Hecke at y}
\Shv_{\CG^{G,T,\on{ratio}}}((\BunBb)_{\infty\cdot x,\on{good}\,\on{at}\,y})^{\on{Hecke}_{T,y}}:
=\underset{(\Lambda,\preceq)^{\on{op}}}{\on{lim}}\, \Shv_{\CG^{G,T,\on{ratio}}}((\BunBb)^\lambda_{\infty\cdot x,\on{good}\,\on{at}\,y}).
\end{equation} 
 
Informally, objects of this category are objects $\CF\in \Shv_{\CG^{G,T,\on{ratio}}}((\BunBb)_{\infty\cdot x,\on{good}\,\on{at}\,y})$
equipped with a homotopy-compatible system of identifications
$$(m^{\lambda_2,\lambda_1})^!(\CF^{\lambda_1})[\langle \lambda_1-\lambda_2,2\check\rho\rangle]\simeq \CF^{\lambda_2},$$
where 
$$\CF^\lambda:=\CF|_{(\BunBb)^\lambda_{\infty\cdot x,\on{good}\,\on{at}\,y}}.$$ 

\sssec{}

By a slight abuse of notation let us continue to denote by $\ICs_{q,\on{glob}}$ its restriction along the open
embedding
$$(\BunBb)_{\infty\cdot x,\on{good}\,\on{at}\,y}\hookrightarrow (\BunBb)_{\infty\cdot x}.$$

\medskip

It is clear that $\ICs_{q,\on{glob}}$ naturally lifts to an object of 
$\Shv_{\CG^{G,T,\on{ratio}}}((\BunBb)_{\infty\cdot x,\on{good}\,\on{at}\,y})^{\on{Hecke}_{T,y}}$.

\sssec{}

The key observation now is that for any $\gamma_1,\gamma_2\in \Lambda^\sharp$, we have
\begin{equation} \label{e:ICs irred}
\Hom_{\Shv_{\CG^{G,T,\on{ratio}}}((\BunBb)_{\infty\cdot x,\on{good}\,\on{at}\,y})^{\on{Hecke}_{T,y}}}
(\sfe^{\gamma_1}\star \ICs_{q,\on{glob}},\sfe^{\gamma_2}\star \ICs_{q,\on{glob}})=
\begin{cases}
&\sfe \text{ if } \gamma_1=\gamma_2 \text{ and } \\
&0 \text{ otherwise}.
\end{cases}
\end{equation} 

We emphasize that in the above formula, we are taking $\Hom(-,-)$, i.e., $H^0(\CHom(-,-))$. 

\begin{rem}
The isomorphism \eqref{e:ICs irred} takes place for $\Hom$ taken in the category \eqref{e:Hecke at y}, but not in 
$\Shv_{\CG^{G,T,\on{ratio}}}((\BunBb)_{\infty\cdot x})$, because in the latter each connected component would
contribute its own factor of $\sfe$. This was the reason for introducing the category \eqref{e:Hecke at y}.
\end{rem}

\sssec{}

We will use \eqref{e:ICs irred} as follows:

\medskip

First off, it follows from the definitions that the $G$- and $T$- Hecke actions at $x$ lift naturally to actions on the category 
\eqref{e:Hecke at y}. 

\medskip

Now, from \eqref{e:ICs irred} and \thmref{t:global Hecke} we obtain 

\medskip

\noindent(A) There exists a \emph{monoidal} functor $_q\!\Res^H_{T_H}:\Rep(H)^\heartsuit\to \Rep(T_H)^\heartsuit$, uniquely 
characterized by the the system of isomorphisms 
\begin{equation} \label{e:Res q}
\ICs_{q,\on{glob}} \star \Sat_{q,G}(V) \simeq \Sat_{q,T}({}_q\!\Res^H_{T_H}(V))\star {}\ICs_{q,\on{glob}}, \quad V\in \Rep(H)^\heartsuit,
\end{equation} 
taking place in $\Shv_{\CG^{G,T,\on{ratio}}}((\BunBb)_{\infty\cdot x,\on{good}\,\on{at}\,y})^{\on{Hecke}_{T,y}}$.  Indeed, for
$W\in \Rep(T_H)^\heartsuit$ we have
\begin{multline*}
\Hom(W,{}_q\!\Res^H_{T_H}(V)):= \\
=\Hom_{\Shv_{\CG^{G,T,\on{ratio}}}((\BunBb)_{\infty\cdot x,\on{good}\,\on{at}\,y})^{\on{Hecke}_y}}
(\Sat_{q,T}(W)\star \ICs_{q,\on{glob}},\ICs_{q,\on{glob}}\star \Sat_{q,G}(V)).
\end{multline*} 

\medskip

\noindent(B) There exists an isomorphism between monoidal functors $_q\!\Res^H_{T_H}\simeq \Res^H_{T_H}$.

\ssec{Proof \thmref{t:Hecke on ICs and duality}}

In this subsection we will finally prove \thmref{t:Hecke on ICs and duality}. 

\sssec{}

It is easy to see that Verdier duality is well-defined on objects of \eqref{e:Hecke at y} that
are locally compact as objects of $\Shv_{\CG^{G,T,\on{ratio}}}((\BunBb)_{\infty\cdot x,\on{good}\,\on{at}\,y})$.

\medskip

This implies that the objects appearing in the diagram \eqref{e:Hecke IC glob concr} can be considered as objects in 
\eqref{e:Hecke at y}. Hence, it is sufficient to establish the commutativity of the diagram \eqref{e:Hecke IC glob concr} in this context.

\sssec{}

We prove the required equality as follows: 

\medskip

Applying Verdier duality to \eqref{e:Res q}, we obtain \emph{an} isomorphism 
$$_q\!\Res^H_{T_H} \simeq {}_{q^{-1}}\!\Res^H_{T_H}$$
as monoidal functors $\Rep(H)\to \Rep(T_H)$. 

\medskip

We need to show that the composite isomorphism
$$\Res^H_{T_H}\simeq {}_q\!\Res^H_{T_H} \simeq {}_{q^{-1}}\!\Res^H_{T_H}\simeq \Res^H_{T_H}$$
is the identity map. 

\sssec{}

A priori, the above composite map is given by an element $t\in T_H$, and we need to see that $t=1$.
For that it is sufficient to see that $t$ acts as identity on the highest weight lines for each $V=V^\gamma$. 

\medskip

However, the latter is easy to see from the constructions. 

\newpage 

\centerline{\bf Part VIII: Baby Verma objects} 

\bigskip

The goal of this Part is to carry out the program indicated in \secref{sss:strategy}, i.e., to construct the 
``(dual) baby Verma" objects $\bCM^{\mu,!}_{\Whit}$ and $\bCM^{\mu,*}_{\Whit}$, and establish their properties. 
 
 \medskip
 
 The term ``baby Verma"  is due to the fact that under the equivalence 
with the category of modules over the small quantum group, the objects $\bCM^{\mu,!}_{\Whit}$ 
(resp., $\bCM^{\mu,*}_{\Whit}$) correspond to baby Verma (resp., dual baby Verma) mdoules, 
see Remark \ref{r:baby Verma}.

\section{The $B$-Hecke category and the Drinfeld-Pl\"ucker formalism}  \label{s:DrPl}

The construction of the objects $\bCM^{\mu,*}_{\Whit}$ is based on the 
\emph{Drinfeld-Pl\"ucker formalism}\footnote{Both the name ``Drinfeld-Pl\"ucker" and the mathematical idea belong to S.~Raskin.}, 
which is the subject of the present section. 

\ssec{The $B$-Hecke category}

Recall the setting of Sects. \ref{ss:Hecke} and \ref{ss:graded Hecke}. In this subsection we will need to complement that discussion by introducing 
yet another version of the Hecke category, this time relative to the Borel subgroup $B_H\subset H$. 

\sssec{}

Let $\bC$ be a category acted on by $\Rep(H)$. We define the category $\BHecke(\bC)$ to be
$$\bC\underset{\Rep(H)}\otimes \Rep(B_H),$$
where $B_H$ is the Borel subgroup of $H$, see \secref{ss:ten prod Rep(H)}.

\medskip

Tautologically, we can rewrite 
$$\BHecke(\bC)\simeq (\Hecke(\bC))^{B_H},$$
where we view $\Hecke(\bC)$ as acted on by $H$, see \secref{sss:de-eq}. 

\medskip

In what follows we will assume that $\bC$ is compactly generated, in which case 
$\BHecke(\bC)$ is also compactly generated. 

\sssec{}

The pair of adjoint functors
$$\Res^{B_H}_{T_H}:\Rep(B_H)\rightleftarrows \Rep(T_H):\coInd^{B_H}_{T_H}$$
gives rise to the (same named) functors 
$$\Res^{B_H}_{T_H}:\BHecke(\bC)\rightleftarrows \bHecke(\bC):\coInd^{B_H}_{T_H}.$$

\medskip

Being the left adjoint of a continuous functor, the functor $\Res^{B_H}_{T_H}$ preserves 
compactness. However, we claim that more is true:

\begin{lem} \label{l:detect compact by restr}
If $\bc\in \BHecke(\bC)$ is such that $\Res^{B_H}_{T_H}(\bc)\in \bHecke(\bC)$ is compact,
then $\bc$ is compact. 
\end{lem}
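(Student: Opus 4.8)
The plan is to translate the statement into geometry via de-equivariantization, reduce it to a property of the forgetful functor for the unipotent group $N_H$, and then invoke that in characteristic zero the classifying stack of $N_H$ has finite cohomological dimension.

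\medskip

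First I would rewrite $\Res^{B_H}_{T_H}$ in equivariant terms. Let $N_H\subset B_H$ be the unipotent radical, so that $B_H=T_H\ltimes N_H$ and $B_H/N_H=T_H$. By \secref{sss:de-eq} (applied to the subgroups $T_H,B_H\subset H$) we have $\bHecke(\bC)\simeq \Hecke(\bC)^{T_H}$ and $\BHecke(\bC)\simeq \Hecke(\bC)^{B_H}$, with $H$ acting on $\Hecke(\bC)$ as in {\it loc. cit.}, and under these identifications $\Res^{B_H}_{T_H}$ becomes the functor of restriction of equivariance along $T_H\hookrightarrow B_H$. Since $N_H$ is normal in $B_H$ with quotient $T_H$, this functor factors as
$$\BHecke(\bC)\simeq \bigl(\Hecke(\bC)^{N_H}\bigr)^{T_H}\xrightarrow{\ \bigl(\oblv^{N_H}\bigr)^{T_H}\ }\Hecke(\bC)^{T_H}\simeq \bHecke(\bC),$$
i.e. $\Res^{B_H}_{T_H}$ is obtained from the ($T_H$-equivariant) forgetful functor $\oblv^{N_H}:\Hecke(\bC)^{N_H}\to\Hecke(\bC)$ by passing to $T_H$-invariants.

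\medskip

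The crux is then: for a finite-dimensional unipotent group $U$ over $\sfe$ (here $U=N_H$, $\dim U=\dim\fn_H=:n$) acting on a compactly generated DG category, the forgetful functor on $U$-equivariant objects \emph{reflects} compactness. I would prove this by realizing $\Hecke(\bC)^{N_H}$ as $\mathcal{O}(N_H)\mmod\bigl(\Hecke(\bC)\bigr)$, the category of modules over the commutative algebra $\mathcal{O}(N_H)$ in $\Hecke(\bC)$ (with $\oblv^{N_H}$ the corresponding forgetful functor): indeed $\oblv^{N_H}$ is continuous and conservative with continuous right adjoint $\mathrm{Av}^{N_H}_*$, hence comonadic, and projection formula identifies the comonad with $(-)\otimes\mathcal{O}(N_H)$. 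Since $N_H\simeq\BA^n$ is smooth, $\mathcal{O}(N_H)$ is a polynomial (hence finite global dimension) algebra, so the Koszul complex yields a \emph{finite} resolution expressing any $N_H$-equivariant object whose underlying object is compact as a finite colimit of free modules $\mathcal{O}(N_H)\otimes\bc_i$ with $\bc_i$ compact; being a finite colimit of compacts, it is compact. This is the equivariant version of the elementary fact that a module over a regular ring whose underlying complex is perfect is itself perfect. I emphasize that this uses only finite cohomological dimension of $BN_H$ (which is where characteristic zero enters) and \emph{not} full faithfulness of $\oblv^{N_H}$ — in contrast to the nondegenerate-character situation of \secref{sss:Nk}, here $\oblv^{N_H}$ is not fully faithful.

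\medskip

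Finally I would conclude: given $\bc\in\BHecke(\bC)$ with $\Res^{B_H}_{T_H}(\bc)$ compact in $\bHecke(\bC)$, forgetting the (reductive, hence cohomological-dimension-zero in characteristic zero) $T_H$-equivariance shows that the image of $\bc$ in $\Hecke(\bC)$ is compact; by the previous step the corresponding object of $\Hecke(\bC)^{N_H}$ is compact; and then the same argument applied to the reductive group $T_H$ — where the relevant algebra is $\mathcal{O}(T_H)$ and taking invariants is $t$-exact — shows that $\bc$ itself is compact. The main obstacle is the crux assertion: one must set up the Koszul-type resolution $B_H$-equivariantly, being careful that $\mathcal{O}(N_H)$, as a $B_H$-module under left translation, is not literally $\Sym(\fn_H^*)$ but is only \emph{filtered} by finite-dimensional pieces with associated graded $\Sym(\fn_H^*)$, and to verify that the resulting finiteness is controlled by $\dim N_H$; this is precisely where one uses that $\sfe$ has characteristic zero so that all the affine algebraic groups occurring ($N_H$, $T_H$, $B_H$) have finite cohomological dimension.
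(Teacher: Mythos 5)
The paper's proof is a one-liner: it cites the general fact that for an affine algebraic group $H'$ of finite type over $\sfe$ acting on a DG category $\bD$, the forgetful functor $\Res^{H'}:\bD^{H'}\to\bD$ \emph{reflects} compactness; applied with $H'=B_H$, $\bD=\Hecke(\bC)$ (and using that $\Res^{T_H}$ preserves compacts, so compactness of $\Res^{B_H}_{T_H}(\bc)$ gives compactness of $\Res^{B_H}(\bc)$), this settles the lemma. Your plan supplies a proof of this general fact by splitting $B_H=T_H\ltimes N_H$; the decomposition is sound, and you correctly isolate the governing principle, namely finite cohomological dimension of $BN_H$ and $BT_H$ in characteristic zero.

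The unipotent step as written, however, has a gap. You identify $\Hecke(\bC)^{N_H}$ with $\CO(N_H)$-\emph{modules} in $\Hecke(\bC)$ and then invoke the finite Koszul resolution of the polynomial algebra $\CO(N_H)$. But --- as your own phrase ``hence comonadic'' already signals --- Barr--Beck--Lurie realises $\Hecke(\bC)^{N_H}$ as $\CO(N_H)$-\emph{comodules}, with comultiplication coming from the group law; and the Koszul resolution of $\CO(N_H)$ as a polynomial ring produces finite free resolutions of its \emph{modules}, i.e.\ objects of $\QCoh(N_H)$, not of comodules, i.e.\ objects of $\Rep(N_H)$. So ``a finite colimit of free modules $\CO(N_H)\otimes\bc_i$'' is not what your Koszul complex gives. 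The finiteness you want is Koszul-dual: the Chevalley--Eilenberg complex of $\fn_H$, i.e.\ the length-$\dim\fn_H$ free resolution of $\sfe$ over $U(\fn_H)$ (the algebra that genuinely acts on $N_H$-equivariant objects; it has global dimension $\dim\fn_H$ but is a polynomial ring only when $\fn_H$ is abelian). A clean way to package the general fact --- applying directly to $H'=B_H$ and sidestepping the module/comodule pitfall --- is via the identity $\CHom_{\bD^{H'}}(\bd,\be)\simeq\bigl(\CHom_\bD(\Res\bd,\Res\be)\bigr)^{H'}$: compactness of $\Res\bd$ and continuity of $\Res$ reduce the claim to the assertion that $(-)^{H'}:\Rep(H')\to\Vect$ commutes with filtered colimits, equivalently that $\sfe\in\Rep(H')$ is compact, which is precisely the finite cohomological dimension of $BH'$ that you identified.
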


\begin{proof}

This is a general phenomenon: for a category $\bD$ acted on by an algebraic group $H'$ (in out  
case $\bD=\Hecke(\bC)$ and $H'=B_H$), if an object $\bd\in \bD^{H'}$ is such that the underlying
object $\Res^{H'}(\bd)\in \bD$ is compact, then $\bd$ itself is compact.

\end{proof} 

\sssec{}

We now consider the functor
$$\Res^H_{B_H}:\Rep(H)\to \Rep(B_H),$$
and its right and left adjoints, denoted $\coInd^H_{B_H}$ and $\Ind^H_{B_H}$, respectively. The functor $\Res^H_{B_H}$
induces the (same named) functor:
$$\Res^H_{B_H}:\bC\simeq \bC\underset{\Rep(H)}\otimes \Rep(H)\to \bC\underset{\Rep(H)}\otimes \Rep(B_H)\simeq 
\BHecke(\bC)$$
and its right and left adjoints
\begin{equation} \label{e:ind and coind}
\coInd^H_{B_H},\Ind^H_{B_H}: \BHecke(\bC)\to \bC.
\end{equation} 

Being left adjoints of continuous functors, the functors $\Res^H_{B_H}$ and $\Ind^H_{B_H}$ preserve compactness. 

\sssec{}

Recall also that Serre duality for $H/B_H$ implies that we have a canonical isomorphism 
\begin{equation} \label{e:Serre duality}
\coInd^H_{B_H}(-)\simeq \Ind^H_{B_H}(\otimes k^{-2\rho_H})[-d],
\end{equation}
(here $d=\dim(H/B_H)$) as $\Rep(H)$-linear functors $\Rep(B_H)\to \Rep(H)$. 

\medskip

This implies a similar relationship between the functors \eqref{e:ind and coind}. In particular, we obtain
that the functor $\coInd^H_{B_H}$ also preserves compactness. 

\sssec{} 

Let us consider $\bC^\vee$ as a category acted on by $\Rep(H)$ as in \secref{ss:duality on Hecke gen}. 

\medskip

Then we obtain a canonical identification 
\begin{equation} \label{e:BB- duality}
\BHecke(\bC)^\vee \simeq \BmHecke(\bC^\vee),
\end{equation}
or equivalently
\begin{equation} \label{e:duality B Hecke}
(\BHecke(\bC)^c)^{\on{op}}\simeq \BmHecke(\bC^\vee)^c,\quad \bc\mapsto \bc^\vee,
\end{equation} 
for which the diagram
\begin{equation} \label{e:duality B T Hecke}
\CD
((\bC\otimes \Rep(B_H))^c)^{\on{op}}  @>>>   (\BHecke(\bC)^c)^{\on{op}}  \\
@VVV   @VV{\text{\eqref{e:duality B Hecke}}}V   \\  
((\bC^\vee\otimes \Rep(B^-_H))^c)^{\on{op}}   @>>>   \BmHecke(\bC^\vee)^c,
\endCD
\end{equation} 
where the left vertical arrow is the tensor product of 
$$(\bC^c)^{\on{op}} \to (\bC^\vee)^c, \quad \bc\mapsto \bc^\vee,$$
and the functor
$$(\Rep(B_H)^c)^{\on{op}}\to \Rep(B^-_H)^c, \quad V\mapsto \tau^H(V^*),$$
where we use $\tau^H$ as an isomorphism $B_H\to B^-_H$. 

\sssec{}

Note that from \eqref{e:Serre duality} we obtain
\begin{equation}  \label{e:Serre duality again}
(\coInd^H_{B_H}(\bc))^\vee \simeq \coInd^H_{B^-_H}(\bc^\vee\otimes \sfe^{-2\rho_H})[d],\quad \bc\in \Rep(B_H)^c.
\end{equation}

Note also, that we have a canonical identification
$$(\Res^{B_H}_{T_H}(\bc))^\vee \simeq \Res^{B^-_H}_{T_H}(\bc^\vee),\quad \bc\in \Rep(B_H)^c$$
where we use the identification
$$(\bHecke(\bC)^c)^{\on{op}}\to \bHecke(\bC^\vee)^c$$
as in \secref{sss:duality on Hecke graded} (i.e., we combine the usual duality for $T_H$ with 
$\tau^{T_H}$). 

\ssec{Behavior of the t-structure}

In this subsection we will study the behavior of the t-structure on $\BHecke(\bC)$.

\sssec{}

Assume that $\bC$ is equipped with a t-structure so that the action of $\Rep(H)$ on $\bC$ is given
by t-exact functors. Then, according to \secref{sss:t on Hecke}, the category $\BHecke(\bC)$
also acquires a t-structure. 

\medskip

By construction, the functors 
$$\Res^H_{B_H}:\bC\to \BHecke(\bC) \text{ and } \Res^{B_H}_{T_H}:\BHecke(\bC)\to \bHecke(\bC)$$
are t-exact.

\sssec{}  \label{sss:ampl coInd}

By adjunction, the functor 
$\Ind^H_{B_H}:\BHecke(\bC) \to \bC$
is right t-exact, while the functor $\coInd^H_{B_H}:\BHecke(\bC) \to \bC$ is left t-exact.

\medskip

Note, however, that it follows from \eqref{e:Serre duality} that the right cohomological amplitude
of $\coInd^H_{B_H}$ (resp., left cohomological amplitude of $\Ind^H_{B_H}$) is bounded by $d$.

\sssec{}

In what follows we will assume that the t-structure on $\bC$ is compactly generated (see \secref{sss:properties of t} for
what this means). We are going to prove the following analog of Serre's theorem on coherent sheaves on the projective space.

\medskip

Note that for $\bc\in \BHecke(\bC)$ and $\gamma$ we have a canonically defined map
\begin{equation} \label{e:gen by glob sect}
\Res^H_{B_H}(\coInd^H_{B_H}(\bc\otimes \sfe^{-\gamma}))\otimes \sfe^\gamma\to \bc.
\end{equation} 

We have: 

\begin{prop}   \label{p:Serre}
Let $\bc$ be an object of $\BHecke(\bC)^c\cap (\BHecke(\bC))^{\leq 0}$. Then for
all $\gamma$ deep enough in the dominant chamber (i.e., $\gamma\in \gamma_0+\Lambda^+_H$
for some fixed $\gamma_0$) we have: 

\smallskip

\noindent{\em(a)} The object $\coInd^H_{B_H}(\bc\otimes \sfe^{-\gamma})$ is connective. 

\smallskip

\noindent{\em(b)} The cofiber of the map \eqref{e:gen by glob sect} belongs to $(\BHecke(\bC))^{<0}$.

\end{prop}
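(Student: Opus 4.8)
The statement is an ``ampleness'' assertion: tensoring with a sufficiently dominant character $\sfe^\gamma$ makes a compact connective object of $\BHecke(\bC)$ generated by its ``global sections'' $\coInd^H_{B_H}$. The strategy is to reduce to the universal case and then to a computation on the flag variety $H/B_H$. First I would reduce point (b) to point (a): granting (a), the map \eqref{e:gen by glob sect} is a map from a connective object, so the cofiber being in $(\BHecke(\bC))^{<0}$ is equivalent to the map being surjective on $H^0$; and surjectivity on $H^0$ can be checked after applying the conservative, t-exact functor $\Res^{B_H}_{T_H}$, or even $\oblv_{\bHecke}$, landing in $\bC\otimes \Rep(T_H)$. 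So both parts come down to understanding $\coInd^H_{B_H}(\bc\otimes \sfe^{-\gamma})$ and the evaluation map in terms of data on $\bC$.

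\textbf{Key steps.} (1) Since the t-structure on $\bC$ is compactly generated and the $\Rep(H)$-action is t-exact, I would first treat the ``universal'' object: for a compact connective $\bc_0\in \bC^{\le 0}$, consider $\Res^H_{B_H}(\bc_0)\otimes \sfe^{-\gamma}$ and compute $\coInd^H_{B_H}$ of it. Using that $\coInd^H_{B_H}\circ \Res^H_{B_H}$, as a $\Rep(H)$-linear endofunctor precomposed with $-\otimes \sfe^{-\gamma}$, is computed by $\bc_0\mapsto \bc_0 \otimes \on{C}^\cdot(H/B_H,\CL_{-\gamma})$ where $\CL_{-\gamma}$ is the line bundle on $H/B_H$ attached to $-\gamma$ (this is just the projection formula for the $\Rep(H)$-action, reducing to $\bC=\Rep(H)$ where $\coInd^H_{B_H}(\sfe^{-\gamma})=\on{C}^\cdot(H/B_H,\CL_{-\gamma})$), the Borel--Weil--Bott theorem gives that for $\gamma$ deep in the dominant chamber this is concentrated in cohomological degree $0$ and equals $\bc_0\otimes V$ for the appropriate $H$-representation $V=V^{\gamma+2\rho_H-\cdots}$ up to standard shifts; in particular it is connective. (2) A general compact $\bc\in \BHecke(\bC)^{\le 0}$ is, up to a finite filtration / finitely many cells, built from objects of the form $\Res^H_{B_H}(\bc_0)\otimes \sfe^{\mu}$ with $\bc_0\in \bC^c$ and $\mu$ in some finite set, because the functors $\Res^H_{B_H}$ and $-\otimes \sfe^\mu$ generate $\BHecke(\bC)$ from $\bC$ under colimits (indeed $\Res^H_{B_H}$ is a localization-type functor whose essential image generates, by the Barr--Beck description of $\BHecke(\bC)$ over $\bC$ as modules for the monad $\coInd^H_{B_H}\circ\Res^H_{B_H}$, i.e.\ $-\otimes \on{C}^\cdot(H/B_H,\CO)$-type algebra). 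Since $\bc$ is compact and connective, only finitely many such cells with bounded $\mu$ occur, so choosing $\gamma_0$ to dominate all of them plus the BWB bound handles all cells simultaneously. (3) For point (b), on each cell $\Res^H_{B_H}(\bc_0)\otimes \sfe^\mu$ the evaluation map \eqref{e:gen by glob sect} becomes, after $\coInd^H_{B_H}(-\otimes\sfe^{-\gamma})=-\otimes \on{C}^\cdot(H/B_H,\CL_{\mu-\gamma})$, the canonical map $\Res^H_{B_H}(\bc_0)\otimes V^{(\gamma-\mu)}\otimes \sfe^{\gamma}\to \Res^H_{B_H}(\bc_0)\otimes \sfe^\mu$ coming from evaluation of the highest weight vector; for $\gamma-\mu$ dominant enough this is surjective in the appropriate degree by classical representation theory (the multiplication map $V^{(\gamma-\mu)}\to $ functions on $B_H$-orbit is surjective on $H^0$), and compatibility of cell filtrations with the t-structure finishes it.

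\textbf{Main obstacle.} The genuinely delicate point is not the Borel--Weil--Bott input but the \emph{uniformity in the number of cells}: I must know that a compact connective object of $\BHecke(\bC)$ involves only \emph{finitely many} characters $\mu$ (with a bound independent of which compact object, within a bounded family), so that a single $\gamma_0$ works, and that the cell decomposition can be chosen compatibly with the t-structure so that ``connective, compact'' really does force boundedness of the relevant weights. This requires a careful argument using that $H/B_H$ has a finite cell/Bruhat stratification, hence $\on{C}^\cdot(H/B_H,\CO)$ is a ``small'' algebra, together with the compact generation of the t-structure on $\bC$; it is the analogue of the fact that a coherent sheaf on a projective variety is generated by finitely many twists, and porting it to the DG/$\Rep(H)$-module setting is where one must be most careful about cohomological amplitudes. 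I would organize this by first proving the statement for $\bc$ of the form $\Res^H_{B_H}(\bc_0)\otimes\sfe^\mu$, then for finite extensions/retracts thereof, and finally invoking that these exhaust $\BHecke(\bC)^c$ up to the bounded-amplitude truncation allowed by point (b).
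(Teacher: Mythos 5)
Your plan correctly reduces (b) to (a) plus an $H^0$-surjectivity statement, and your treatment of the ``single cell'' case $\Res^H_{B_H}(\bc_0)\otimes\sfe^\mu$ via Borel--Weil--Bott is sound; this is essentially what the paper also verifies for its cover object. However, the route you take diverges from the paper's at the crucial reduction step, and it is there that your argument has a genuine gap.

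The gap is in step (2) and in your ``main obstacle'' paragraph. You write that a compact connective object of $\BHecke(\bC)$ is ``up to a finite filtration / finitely many cells, built from objects of the form $\Res^H_{B_H}(\bc_0)\otimes\sfe^\mu$''. What compact generation actually gives you is that $\bc$ lies in the \emph{thick} subcategory generated by such cells, i.e.\ it is a retract of an object built from them by finite colimits and \emph{shifts}. The shifts are the real problem: the hypothesis ``connective'' is not inherited by the cells in such a presentation, and neither conclusion (a) nor (b) is stable under arbitrary shifts, so one cannot simply propagate the single-cell computation along the presentation. You flag this under the heading of ``uniformity in the number of cells'', but that is a slight misdiagnosis: since $\bc$ is fixed, $\gamma_0$ is allowed to depend on $\bc$, so there is no uniformity issue; the issue is the cohomological amplitude of the cells appearing in a presentation of a given compact connective object, and this is not automatically bounded by the connectivity of $\bc$.

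The paper resolves this with a mechanism that is absent from your proposal: a descending induction on the cohomological amplitude of $\coInd^H_{B_H}(\bc\otimes\sfe^{-\gamma})$, anchored by the \emph{a priori} bound from Serre duality that $\coInd^H_{B_H}$ has right amplitude $\leq d=\dim(H/B_H)$ (see the paragraph preceding the proposition). Concretely, one does not attempt a full cell decomposition of $\bc$; one produces a \emph{single} map $\bc_1=\Res^H_{B_H}(\bc')\otimes V\to\bc$ with $\bc'\in\bC^c\cap\bC^{\leq 0}$, $V\in\Rep(B_H)^c\cap\Rep(B_H)^{\leq 0}$, and cofiber in $(\BHecke(\bC))^{<0}$, checks (a) and (b) directly for $\bc_1$ (this is your single-cell case), and then uses the fiber sequence $\bc_2\to\bc_1\to\bc$ with $\bc_2$ again compact and connective to improve the bound $\coInd^H_{B_H}(\bc\otimes\sfe^{-\gamma})\in(\BHecke(\bC))^{\leq i}$ by one degree at a time, starting from $i=d$. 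This replaces the global ``bounded cell decomposition'' you were worried about by a local one-step improvement. Finally, (b) for $\bc$ is deduced from (b) for $\bc_1$ together with (a) for $\bc_2$, rather than from a separate $H^0$-surjectivity computation on each cell as you propose. So while your intuition about Borel--Weil--Bott and ampleness is on target, the combinatorial engine that makes the general case go through --- the descending induction off the bounded amplitude of $\coInd^H_{B_H}$ --- is the ingredient your proof is missing, and your proposed substitute does not work as stated.
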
 

\begin{proof}

We can find $\bc'\in \bC^c\cap (\bC)^{\leq 0}$ and $V\in \Rep(B_H)^c\cap (\Rep(B_H))^{\leq 0}$ together with a map
$$\Res^H_{B_H}(\bc')\otimes V=:\bc_1\to \bc$$
whose cofiber belongs to $(\BHecke(\bC))^{<0}$. Let $\bc_2$ denote the fiber of the above map. 

\medskip

It is easy to see that both points of the proposition hold for $\bc_1$. From here we obtain that point (a) for $\bc_2$
implies point (b) for $\bc$.

\medskip

We will prove point (a) by descending induction. Namely, we claim that $\coInd^H_{B_H}(\bc\otimes \sfe^{-\gamma})$
belongs to $(\BHecke(\bC))^{\leq i}$ for $\gamma$ deep enough in the dominant chamber. The assertion for $i>d$
follows from \secref{sss:ampl coInd}. The induction step follows the fiber sequence
$$\bc_2\to \bc_1\to \bc$$
and the fact that the assertion holds for $\bc_1$. 

\end{proof} 

%\begin{cor} \label{c:crit for conn}
%Let $\bc\in \in \BHecke(\bC)^c$ be such that is bounded above (i.e., is eventually connective) and assume that 
%$\coInd^H_{B_H}(\bc\otimes \sfe^{-\gamma})$ is connective
%for all $\gamma$ of the form $\gamma_0+\Lambda^+_H$ for some fixed $\gamma_0\in \Lambda_H$. 
%Then $\bc$ is connective. 
%\end{cor} 

\sssec{}

Let us assume that the t-structure on $\bC$ in Artinian and the action of $\Rep(H)$ is accessible 
(see \secref{sss:accessible} for what this means). 

\medskip 

Recall (see \corref{c:irred Hecke graded bis}) that in this case the t-structure on $\bHecke(\bC)$ is also
Artinian. From here, combining with \lemref{l:detect compact by restr} we obtain:

\begin{cor} \label{c:B Artinian}
Under the above circumstances, the t-structure on $\BHecke(\bC)$ is Artinian.
\end{cor}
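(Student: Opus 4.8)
\textbf{Proof plan for \corref{c:B Artinian}.}
The goal is to show that under the hypotheses (t-structure on $\bC$ Artinian, $\Rep(H)$-action accessible) the t-structure on $\BHecke(\bC)$ is Artinian. I would use the criterion recalled in \secref{sss:properties of t}: a t-structure on a compactly generated category is Artinian if and only if the irreducible objects of the heart are compact and they generate the category. So the plan has two halves: produce the irreducibles of $(\BHecke(\bC))^\heartsuit$ and show they are compact, and show they generate.

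First I would transport the problem to $\bHecke(\bC)$ via the t-exact functor $\Res^{B_H}_{T_H}:\BHecke(\bC)\to \bHecke(\bC)$ and its continuous right adjoint $\coInd^{B_H}_{T_H}$. The key leverage is \lemref{l:detect compact by restr}: an object of $\BHecke(\bC)$ whose image under $\Res^{B_H}_{T_H}$ is compact in $\bHecke(\bC)$ is itself compact. By \corref{c:irred Hecke graded bis}, the t-structure on $\bHecke(\bC)$ is Artinian (this is exactly where accessibility and Artinian-ness of $\bC$ enter), so its irreducibles are compact and generate. For generation of $\BHecke(\bC)$: since $\Res^{B_H}_{T_H}$ is conservative (it is a base-changed version of the conservative forgetful functor $\Res^{B_H}_{T_H}:\Rep(B_H)\to\Rep(T_H)$, or equivalently one uses that $\Res^{T_H}$ on the $H$-equivariant picture is conservative), any object of $\BHecke(\bC)$ killed by all the (shifts of) compact generators coming from $\Res^H_{B_H}$ applied to generators of $\bC$ must vanish; so $\BHecke(\bC)$ is compactly generated by objects $\Res^H_{B_H}(\bc)\otimes\sfe^\gamma$ with $\bc\in\bC^c$ and $\gamma\in\Lambda_H$, all of which lie in the heart up to shift. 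Hence the connective part of the t-structure is generated by compact objects, i.e. the t-structure is compactly generated.

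Next I would identify the irreducibles of $(\BHecke(\bC))^\heartsuit$. Given an irreducible $\bc\in(\BHecke(\bC))^\heartsuit$, apply the t-exact $\Res^{B_H}_{T_H}$; by adjunction $\bc$ receives a nonzero map from some $\coInd^H_{B_H}(\bc'\otimes V)$ with $\bc'$ a restricted irreducible of $\bC^\heartsuit$ (using accessibility and an argument parallel to \propref{p:irred Hecke} / \corref{c:irred Hecke bis}), but the cleanest route is to mimic the proofs in \secref{ss:irred Hecke}: every irreducible object of $(\BHecke(\bC))^\heartsuit$ is a subquotient of $\Res^H_{B_H}(\bc')\otimes\sfe^\gamma$ for a restricted irreducible $\bc'$ and some $\gamma$, and $\Res^{B_H}_{T_H}$ sends it to an irreducible of $(\bHecke(\bC))^\heartsuit$ (which is of finite length and compact by \corref{c:irred Hecke graded bis}). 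Then \lemref{l:detect compact by restr} upgrades compactness of the image to compactness of the object itself. Finally, any object of $(\BHecke(\bC))^\heartsuit$ that is compact has image under $\Res^{B_H}_{T_H}$ of finite length, and since $\Res^{B_H}_{T_H}$ is t-exact and conservative it reflects finiteness of length; combined with the existence and compactness of irreducibles this gives the Artinian property.

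The main obstacle, and the step that needs the most care, is the passage from ``$\Res^{B_H}_{T_H}(\bc)$ of finite length / compact'' to ``$\bc$ of finite length / compact'' — i.e. making \lemref{l:detect compact by restr} and its finite-length analogue do the work. One has to be sure that $\Res^{B_H}_{T_H}$ is not merely conservative but actually detects both compactness (which \lemref{l:detect compact by restr} handles, via the general equivariant-category fact cited in its proof) and finiteness of length in the heart; the latter follows because $\Res^{B_H}_{T_H}$ is t-exact, conservative, and a subquotient of $\bc$ maps to a subquotient of $\Res^{B_H}_{T_H}(\bc)$, so an infinite strictly increasing/decreasing chain in $\bc$ would survive. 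Assembling these observations with the already-established Artinian-ness of $\bHecke(\bC)$ (\corref{c:irred Hecke graded bis}(b)(i)) yields the claim. One should double-check that accessibility of the $\Rep(H)$-action on $\bC$ is indeed all that is needed here — it is, because it is exactly the hypothesis feeding \corref{c:irred Hecke graded bis}, and no further hypothesis on $\bC$ relative to $B_H$ is required.
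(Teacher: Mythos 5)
Your proposal is correct and matches the paper's (one-sentence) proof, which simply combines the Artinian property of $\bHecke(\bC)$ from \corref{c:irred Hecke graded bis} with \lemref{l:detect compact by restr}; like the paper, you reduce to the characterization of Artinian t-structures from \secref{sss:properties of t} and transport compactness via the restriction functor. One small clarification worth making: the delicate step in your middle paragraph — showing that $\Res^{B_H}_{T_H}$ carries an irreducible of $(\BHecke(\bC))^\heartsuit$ to something compact in $\bHecke(\bC)$ — is not justified by ``$\Res^{B_H}_{T_H}$ reflects finiteness of length'' (that inference runs from target to source, and is what you correctly use for the generation step); what you actually need is the \emph{forward} statement that $\Res^{B_H}_{T_H}$ preserves irreducibility on hearts, which you assert without proof and which rests on the kernel of $B_H\to T_H$ being connected unipotent. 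The paper is equally terse on this point, so this is a request for precision rather than a report of an error.
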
 

\ssec{Drinfeld-Pl\"ucker formalism}  \label{ss:Dr-Pl}

In this subsection we will finally introduce the Drinfeld-Pl\"ucker formalism. 

\sssec{}  \label{sss:rel B}

Let now $\bC$ be as in \secref{ss:rel Hecke}, i.e., it is acted on by $\Rep(H)\otimes \Rep(T_H)$. Define
$$\BHecke_{\on{rel}}(\bC):=\bC\underset{\Rep(H)\otimes \Rep(T_H)}\otimes \Rep(B_H),$$
where $\Rep(T_H)\to \Rep(B_H)$ is the functor of restriction along the \emph{projection} $B_H\to T_H$. 

\medskip

We have the following diagram of categories
$$
\CD 
\BHecke_{\on{rel}}(\bC)  @>\Res^{B_H}_{T_H}>>  \bHecke_{\on{rel}}(\bC)  \\
@V{\oblv_{\on{rel}}}VV  @VV{\oblv_{\on{rel}}}V   \\
\BHecke(\bC) @>{\Res^{B_H}_{T_H}}>>  \bHecke(\bC),
\endCD
$$
where the vertical arrows are the functors
$$\bC\underset{\Rep(H)\otimes \Rep(T_H)}\otimes \bD \to \bC\underset{\Rep(H)}\otimes \bD,$$
right adjoint to the projections  
$$\bC\underset{\Rep(H)}\otimes \bD \to \bC\underset{\Rep(H)\otimes \Rep(T_H)}\otimes \bD,$$
for $\bD=\Rep(B_H)$ and $\bD=\Rep(T_H)$.  

\medskip

Let $\oblv_{\BHecke_{\on{rel}}}$ denote the forgetful functor
$$\BHecke_{\on{rel}}(\bC) \overset{\Res^{B_H}_{T_H}}\longrightarrow \bHecke_{\on{rel}}(\bC) 
\overset{\oblv_{\bHecke_{\on{rel}}}}\longrightarrow  \bC.$$

\sssec{}

Consider the base affine space $\ol{H/N_H}$ for the group $H$. This is an affine scheme acted on by 
$H\times T_H$. We consider the algebra of regular functions
$\on{Fun}(\ol{H/N_H})$ on $\ol{H/N_H}$ as an algebra object inside the monoidal category $\Rep(H)\otimes \Rep(T_H)$. 

\medskip

For $\bC$ as in \secref{sss:rel B}, define
%\begin{multline*}
$$\on{DrPl}(\bC):=\on{Fun}(\ol{H/N_H})\mod(\bC)
\simeq \bC\underset{\Rep(H)\otimes \Rep(T_H)}\otimes 
\on{Fun}(\ol{H/N_H})\mod(\Rep(H)\otimes \Rep(T_H))$$
%\simeq \\
%\simeq \bC\underset{\Rep(H)\otimes \Rep(T_H)}\otimes 
%\QCoh(H\backslash (\ol{H/N_H})/T_H).
%\end{multline*}

\sssec{}  \label{sss:DrPl expl}

Explicitly, we can think about an object of $\on{DrPl}(\bC)$ as follows: this is an object $\bc\in \bC$ 
endowed with a system of maps
\begin{equation} \label{e:DrPl maps}
\bc \star (V^\gamma)^*\to \sfe^{-\gamma}\star \bc, \quad \gamma\in \Lambda^+_H
\end{equation}
that satisfy a homotopy-coherent system of compatibilities, starting from the commutative diagram
\begin{equation} \label{e:DrPl comp}
\CD 
\bc \star ((V^{\gamma_1})^*\otimes (V^{\gamma_2})^*)   @>>>  \bc\star (V^{\gamma_1+\gamma_2})^* \\
@V{\sim}VV    @VVV   \\
(\bc \star (V^{\gamma_1})^*) \star (V^{\gamma_2})^* & & \sfe^{-\gamma_1-\gamma_2} \star \bc \\
@VVV  @VV{\sim}V  \\
(\sfe^{-\gamma_1}\star \bc) \star (V^{\gamma_2})^*  @>>>  (\sfe^{-\gamma_1}\otimes\sfe^{-\gamma_2} )\star \bc \\
@V{\sim}VV    @VV{\sim}V   \\
\sfe^{-\gamma_1}\star (\bc \star (V^{\gamma_2})^*)  @>>>  \sfe^{-\gamma_1}\star (\sfe^{-\gamma_2}\star \bc). 
\endCD
\end{equation}

\begin{rem}
In the above commutative diagram, the upper horizontal arrow comes from the Pl\"ucker map 
\begin{equation}  \label{e:dual Plucker map}
(V^{\gamma_1})^*\otimes (V^{\gamma_2})^*\to (V^{\gamma_1+\gamma_2})^*
\end{equation} 
dual to the map
$$V^{\gamma_1+\gamma_2}\to V^{\gamma_1}\otimes V^{\gamma_2}$$
which induces the \emph{identity} map on the \emph{trivialized} highest weight lines. 

\medskip

Equivalently, by definition
$$V^\gamma:=\Ind^H_{B_H}(\sfe^\gamma) \text{ and } (V^\gamma)^*\simeq \coInd^H_{B_H}(\sfe^{-\gamma}),$$
and the map \eqref{e:dual Plucker map} is the canonical map
$$\coInd^H_{B_H}(\sfe^{-\gamma_1})\otimes \coInd^H_{B_H}(\sfe^{-\gamma_2})\to 
\coInd^H_{B_H}(\sfe^{-\gamma_1-\gamma_2}).$$
\end{rem} 

\sssec{}

Let $\sfj$ denote the open embedding 
$$H\backslash (H/N_H)/T_H)\hookrightarrow H\backslash (\ol{H/N_H})/T_H.$$
The pair of adjoint functors
\begin{equation} \label{e:base affine space emb}
\sfj^*:\QCoh(H\backslash (\ol{H/N_H})/T_H) \rightleftarrows \QCoh(H\backslash (H/N_H)/T_H):\sfj_*
\end{equation} 
induces an adjoint pair 
$$\sfj^*:\bC\underset{\Rep(H)\otimes \Rep(T_H)}\otimes \QCoh(H\backslash (\ol{H/N_H})/T_H) \rightleftarrows
\bC\underset{\Rep(H)\otimes \Rep(T_H)}\otimes \QCoh(H\backslash (H/N_H)/T_H):\sfj_*.$$

We identify
$$H\backslash (H/N_H)/T_H\simeq \on{pt}/B_H$$
and 
$$\QCoh(H\backslash (\ol{H/N_H})/T_H) \simeq \on{Fun}(\ol{H/N_H})\mod(\Rep(H)\otimes \Rep(T_H)).$$

Hence, we obtain an adjunction
\begin{equation} \label{e:DrPl adj}
\sfj^*: \on{DrPl}(\bC) \rightleftarrows \BHecke_{\on{rel}}(\bC):\sfj_*.
\end{equation} 

Since the co-unit of the adjunction
$$\sfj^*\circ \sfj_*\to \on{Id}$$ 
is an isomorphism in \eqref{e:base affine space emb}, the same is true for \eqref{e:DrPl adj}. I.e., the functor
$\sfj_*$ in \eqref{e:DrPl adj} is fully faithful. 

\sssec{} \label{sss:DrPl adj expl}

The composite functor
$$\on{DrPl}(\bC) \overset{\sfj^*}\longrightarrow \BHecke_{\on{rel}}(\bC) \overset{\oblv_{\BHecke_{\on{rel}}}}\longrightarrow \bC$$
can be explicitly described as follows (see \cite[Proposition 6.2.4]{Ga6}): 

\medskip

If we think of an object of $\on{DrPl}(\bC)$ as in \secref{sss:DrPl expl}, then the resulting object of $\bC$ identifies with
\begin{equation} \label{e:DrPl adj expl}
\underset{\gamma\in \Lambda^+_H}{\on{colim}}\, \sfe^{-\gamma}\star \bc\star V^\gamma,
\end{equation}
where we regard $\Lambda^+_H$ as a (filtered!) poset with respect to
$$\gamma_1\preceq \gamma_2\, \Leftrightarrow\, \gamma_2-\gamma_1=:\gamma\in \Lambda^+_H,$$
and the transition maps are given by
\begin{multline*} 
\sfe^{-\gamma_1}\star \bc\star V^{\gamma_1} \to 
\sfe^{-\gamma_1}\star \bc\star ((V^{\gamma})^* \otimes V^{\gamma}\otimes V^{\gamma_1}) \simeq
(\sfe^{-\gamma_1}\star \bc\star (V^{\gamma})^*)\star (V^{\gamma}\otimes V^{\gamma_1}) \to \\
\to (\sfe^{-\gamma_1}\star \sfe^{-\gamma}\star \bc) \star (V^{\gamma}\otimes V^{\gamma_1}) \to
\sfe^{-\gamma_1-\gamma}\star \bc \star V^{\gamma_1+\gamma}.
\end{multline*} 

\ssec{The relative vs non-relative case}  \label{ss:rel non-rel}

In this subsection we will discuss some variants of the construction in \secref{ss:Dr-Pl}.

\sssec{}

Let now $\bC_0$ be a category equipped just with an action of $\Rep(H)$. Set
$$\bC:=\Rep(T_H)\otimes \bC_0,$$
so that 
$$\BHecke_{\on{rel}}(\Rep(T_H) \otimes \bC_0)\simeq \BHecke(\bC_0).$$

\medskip

Note that the functor 
$$\oblv_{\BHecke_{\on{rel}}}:\BHecke_{\on{rel}}(\Rep(T_H) \otimes \bC_0)\to \Rep(T_H)\otimes \bC_0$$
identifies with the composite
$$\BHecke(\bC_0)\overset{\Rep^H_{B_H}}\longrightarrow \bHecke(\bC_0) \overset{\oblv_{\bHecke}}\longrightarrow 
\Rep(T_H)\otimes \bC_0$$.

\sssec{}   \label{sss:DrPl adj expl non-rel}

Consider the adjunction
$$\sfj^*:\on{DrPl}(\Rep(T_H)\otimes \bC_0)\rightleftarrows \BHecke(\bC_0):\sfj_*.$$

\medskip

We can think of objects of $\on{DrPl}(\Rep(T_H)\otimes \bC_0)$ as follows. These are families
of objects
$$\{\bc^\gamma\in \bC,\gamma\in \Lambda_H\}$$
equipped with a system of maps
\begin{equation} \label{e:DrPl maps non-rel}
\bc^{\gamma_1}\star (V^\gamma)^*\to \bc^{\gamma_1-\gamma}, \quad \gamma\in \Lambda^+_H,
\end{equation}
satisfying an appropriate system of compatibilities. 

\medskip

In terms of this description, the corresponding functor 
$$\oblv_{\BHecke_{\on{rel}}} \circ \sfj^*:\on{DrPl}(\Rep(T_H)\otimes \bC_0)\to \Rep(T_H)\otimes \bC_0$$
sends a system $\{\bc^\gamma\}$ as above to an object 
$$\{\wt\bc^\gamma\}\in \Rep(T_H)\otimes \bC_0$$
with
\begin{equation} \label{e:DrPl adj expl non-rel}
\wt\bc^{\gamma'}\simeq  \underset{\gamma\in \Lambda^+_H}{\on{colim}}\, \bc^{-\gamma+\gamma'}\star V^\gamma,
\end{equation}

\sssec{}

The functor 
$$\sfj_*:\BHecke(\bC_0)\to \on{DrPl}(\Rep(T_H)\otimes \bC_0)$$
can be described as follows. It sends an object $\bc\in \BHecke(\bC_0)$ to the system $\{\bc^\lambda\}$ with
$$\bc^\gamma=\coInd^{H}_{B_H}(\bc\otimes \sfe^\gamma),$$
where the maps \eqref{e:DrPl maps non-rel} are given by
\begin{multline*}
\coInd^{H}_{B_H}(\bc\otimes \sfe^{\gamma_1})\star (V^\gamma)^*\simeq 
\coInd^{H}_{B_H}\left(\bc\otimes \sfe^{\gamma_1}\otimes \Res^H_{B_H}((V^\gamma)^*)\right)\to  \\
\to \coInd^{H}_{B_H}\left(\bc\otimes \sfe^{\gamma_1}\otimes \sfe^{-\gamma}\right)=
\coInd^{H}_{B_H}(\bc\otimes \sfe^{\gamma_1-\gamma}).
\end{multline*}

\sssec{}

Note that by adjunction we obtain that for any
$$\{\bc^\gamma\}\in \on{DrPl}(\Rep(T_H)\otimes \bC_0)$$
and $\bc:=\sfj^*(\{\bc^\gamma\})\in \BHecke(\bC)$, there exists a canonical map
\begin{equation} \label{e:to glob sect}
\bc^\gamma \to \coInd^H_{B_H}(\bc\otimes \sfe^\gamma).
\end{equation}

\sssec{}  \label{sss:DrPl and duality}

Recall the duality \eqref{e:BB- duality}.
For a compact object $\bc\in \BHecke(\bC_0)$, consider its dual $\bc^\vee\in  \BmHecke(\bC_0^\vee)$,
and consider the corresponding object
$$\sfj_*(\bc^\vee)\in \on{DrPl}^-(\Rep(T_H)\otimes \bC_0).$$

\medskip

From \eqref{e:Serre duality again} we obtain that $\sfj_*(\bc^\vee)$ is given by the system $\{\bc^\gamma\}$ with 
$$\bc^\gamma:=(\coInd^H_{B_H}(\bc\otimes \sfe^{\gamma+2\rho_H}))^\vee[-d].$$

\sssec{}

Let $\bC$ be again a category acted on by $\Rep(H)\otimes \Rep(T_H)$, and take $\bC_0:=\bC$,
where we disregard the $\Rep(T_H)$-action. Consider the right adjoint of the action functor
$$\bC\to \Rep(T_H)\otimes \bC_0.$$
This is a functor of $\Rep(T_H)$-module categories.

\medskip

Hence, it induces functors
\begin{equation} \label{e:forget rel Pl}
\on{DrPl}(\bC)\to \on{DrPl}(\Rep(T_H)\otimes \bC_0)
\end{equation}
\begin{equation} \label{e:forget rel B}
\BHecke_{\on{rel}}(\bC)\to  \BHecke(\bC_0)
\end{equation}
that make the diagrams
$$
\CD
\on{DrPl}(\bC)   @>>>  \on{DrPl}(\Rep(T_H)\otimes \bC_0)  \\
@V{\sfj^*}VV  @VV{\sfj^*}V  \\
\BHecke_{\on{rel}}(\bC)  @>>>  \BHecke(\bC_0)
\endCD
$$
and 
$$
\CD
\on{DrPl}(\bC)   @>>>  \on{DrPl}(\Rep(T_H)\otimes \bC_0)  \\
@A{\sfj_*}AA  @AA{\sfj_*}A \\
\BHecke_{\on{rel}}(\bC)  @>>>  \BHecke(\bC_0)
\endCD
$$
commute.  

\medskip

Note that the functor \eqref{e:forget rel B} is the same as the functor that we denoted $\oblv_{\on{rel}}$
in \secref{sss:rel B}. 

\medskip

The functor \eqref{e:forget rel Pl} sends an object $\bc$ as in \secref{sss:DrPl expl} to the system $\{\bc^\gamma\}$
with
$$\bc^\gamma:=\sfe^\gamma\star \bc.$$

\sssec{}

Applying \eqref{e:DrPl adj expl non-rel}, we obtain that the functor 
$$\oblv_{\bHecke}\circ \Res^{B_H}_{T_H}\circ \oblv_{\on{rel}}\circ \sfj^*:\on{DrPl}(\bC) \to \Rep(T_H)\otimes \bC$$
sends $\bc\in \on{DrPl}(\bC)$
to the object $\{\wt\bc^\gamma\}\in \Rep(T_H)\otimes \bC$ with 
\begin{equation} \label{e:DrPl adj expl rel}
\wt\bc^{\gamma'}\simeq  \underset{\gamma\in \Lambda^+_H}{\on{colim}}\, \sfe^{-\gamma+\gamma'}\star \bc \star V^\gamma. 
\end{equation}

\ssec{The semi-infinite IC sheaf via the Drinfeld-Pl\"ucker formalism}  \label{ss:Dr-Pl semiinf}

We will now show that the object $'\!\ICs_{q,x}\in \Shv_{\CG^G}(\Gr_G)$ defined in 
\secref{ss:fiber of semiinf} can be obtained via the Drinfeld Pl\"ucker formalism. 

\medskip

We will change the notation slightly and denoted the object $'\!\ICs_{q,x}$, when viewed as 
equipped with the relative Hecke structure (that latter thanks to \thmref{t:Hecke ICs}) by
$$'\bICs_{q,x}\in \bHecke_{\on{rel}}(\Shv_{\CG^G}(\Gr_G)).$$

\medskip

From now on we will omit the superscript $\omega^\rho$ and the subscript $x$.
So we will write $\Gr_G$ instead of $\Gr^{\omega^\rho}_{G,x}$, and $\fL(N)$ instead of
$\fL(N)^{\omega^\rho}_x$, etc. 

\sssec{}

Consider the category $\Shv_{\CG^G}(\Gr_G)^{\fL^+(T)}$. We regard it as acted on from the right by
$\Rep(H)$ (via $\Sat_{q,G}$) and on the left by $\Rep(T_H)$ (via $\Sat'_{q,T}$, see \secref{sss:Sat'}). 

\medskip

We claim that the object $\delta_{1,\Gr}\in \Shv_{\CG^G}(\Gr_G)^{\fL^+(T)}$ naturally upgrades to an object 
$$(\delta_{1,\Gr})^{\on{DrPl},\fL^+(T)}\in \on{DrPl}(\Shv_{\CG^G}(\Gr_G)^{\fL^+(T)}).$$

The corresponding maps \eqref{e:DrPl maps} 
$$\IC_{q,\ol\Gr^{-w_0(\gamma)}_G} \simeq \delta_{1,\Gr}\star \Sat((V^\gamma)^*)\to 
\sfe^{-\gamma}\star \delta_{1,\Gr}\simeq \delta_{t^{-\gamma},\Gr}[\langle \gamma,2\check\rho\rangle]$$
are obtained from the maps \eqref{e:transition map pre initial} by adjunction. 

\medskip

The higher compatibilities for these maps are explained in \cite[Sect. 2.7]{Ga6}.

\sssec{}

Consider the resulting object
$$\sfj^*((\delta_{1,\Gr})^{\on{DrPl},\fL^+(T)})\in \BHecke_{\on{rel}}(\Shv_{\CG^G}(\Gr_G)^{\fL^+(T)}).$$

We claim:

\begin{prop} \label{p:ident semiinf as DrPl}
The object
$$\Res^{B_H}_{T_H}(\sfj^*((\delta_{1,\Gr})^{\on{DrPl},\fL^+(T)}))\in \bHecke_{\on{rel}}(\Shv_{\CG^G}(\Gr_G)^{\fL^+(T)})$$
identifies canonically with $'\bICs_{q,x}$.
\end{prop}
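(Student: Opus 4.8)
\textbf{Plan of proof of \propref{p:ident semiinf as DrPl}.}
The plan is to identify both sides with a common explicit colimit over the filtered poset $(\Lambda^+_H,\preceq)$, using the description of the $\sfj^*$-then-$\oblv$ composite supplied by \eqref{e:DrPl adj expl rel}. First I would apply that formula to the object $(\delta_{1,\Gr})^{\on{DrPl},\fL^+(T)}\in \on{DrPl}(\Shv_{\CG^G}(\Gr_G)^{\fL^+(T)})$; since $\Res^{B_H}_{T_H}$ followed by the forgetful functor $\oblv_{\bHecke}$ is conservative and t-exact on the relevant subcategory, it suffices to match the underlying objects of $\Shv_{\CG^G}(\Gr_G)^{\fL^+(T)}$ and then upgrade the identification to the relative Hecke level. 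By \eqref{e:DrPl adj expl rel}, the $\gamma'$-component of $\oblv_{\bHecke}\circ\Res^{B_H}_{T_H}\circ\oblv_{\on{rel}}\circ\sfj^*((\delta_{1,\Gr})^{\on{DrPl},\fL^+(T)})$ is
$$\underset{\gamma\in \Lambda^+_H}{\on{colim}}\, \sfe^{-\gamma+\gamma'}\star \delta_{1,\Gr}\star V^\gamma
\simeq \underset{\gamma\in \Lambda^+_H}{\on{colim}}\, \delta_{t^{\gamma'-\gamma},\Gr}\star \Sat_{q,G}(V^\gamma)[\langle \gamma,2\check\rho\rangle].$$
For $\gamma'=0$ this is literally the colimit \eqref{e:ICs as colim} defining $'\!\ICs_{q,x}$ (recall $\Lambda^+_H=(\Lambda^\sharp)^+$ and $\Sat'_{q,T}(\sfe^\lambda)=\Sat_{q,T}(\sfe^\lambda)[-\langle \lambda,2\check\rho\rangle]$, which accounts for the $[\langle\gamma,2\check\rho\rangle]$ shift appearing through the left $\Rep(T_H)$-action); for general $\gamma'$ it is the $t^{\gamma'}$-translate, which is exactly the $\gamma'$-weight component of $'\!\ICs_{q,x}$ viewed in $\Shv_{\CG^G}(\Gr_G)^{\fL^+(T)}$.

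The next step is to check that the transition maps in the two colimits agree. On the Drinfeld-Pl\"ucker side the transition map from $\gamma_1$ to $\gamma_1+\gamma$ is the one written out at the end of \secref{sss:DrPl adj expl rel} (insertion of $(V^\gamma)^*\otimes V^\gamma$, use of a Drinfeld-Pl\"ucker structure map \eqref{e:DrPl maps}, and a Pl\"ucker multiplication $V^\gamma\otimes V^{\gamma_1}\to V^{\gamma_1+\gamma}$). Unwinding the definition of $(\delta_{1,\Gr})^{\on{DrPl},\fL^+(T)}$ — whose structure maps are \eqref{e:transition map pre initial} transported by adjunction — one sees that this composite becomes precisely the map \eqref{e:transition maps ICs} used to define $'\!\ICs_{q,x}$: the adjunction unit turns \eqref{e:transition map pre initial} into \eqref{e:transition map initial}, and the Pl\"ucker normalization (identity on trivialized highest-weight lines) matches in both constructions. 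So the two ind-objects of $\Shv_{\CG^G}(\Gr_G)^{\fL^+(T)}$ are canonically isomorphic, compatibly with the $\fL^+(T)$-equivariance.

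Finally I would promote this to an isomorphism of objects of $\bHecke_{\on{rel}}(\Shv_{\CG^G}(\Gr_G)^{\fL^+(T)})$. Both sides carry a relative Hecke structure: the left-hand side tautologically, as it is of the form $\Res^{B_H}_{T_H}\circ\sfj^*(-)$ applied to an object of $\on{DrPl}$, and the right-hand side by \thmref{t:Hecke ICs} (this is the content of the notation $'\bICs_{q,x}$). The required compatibility amounts to the statement that the Hecke eigen-isomorphisms on $'\!\ICs_{q,x}$ constructed in the proof of \thmref{t:Hecke ICs} are the ones induced, term by term, from the $\Rep(H)$-action on the colimit \eqref{e:ICs as colim} — which is how the Drinfeld-Pl\"ucker formalism produces them via \secref{sss:rel B}. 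Here I would invoke \cite[Sect. 2.7]{Ga6}, where exactly this homotopy-coherent compatibility (for the non-metaplectic prototype) is worked out, and note that the metaplectic modifications are purely notational, the gerbe $\CG^G$ being trivialized on all the orbits $S^\gamma$ with $\gamma\in\Lambda^\sharp$ that enter the colimit. \textbf{The main obstacle} I anticipate is precisely this last point: keeping track of the homotopy-coherent system of compatibilities relating the two a priori different sources of the relative Hecke structure (the Drinfeld-Pl\"ucker one versus the Goresky-MacPherson/\thmref{t:Hecke ICs} one), rather than just the underlying objects and first-order transition maps; but this is exactly the kind of bookkeeping that \cite[Sect. 2.7]{Ga6} and \thmref{t:fiber of Ics} are set up to handle, so the argument should go through by reduction to \emph{loc.cit.}
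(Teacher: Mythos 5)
Your proposal is correct and takes essentially the same route as the paper. The paper's proof is compressed to two sentences: the underlying object identification follows from the colimit formula of \secref{sss:DrPl adj expl} (the non-relative variant; you use the equivalent relative formula \eqref{e:DrPl adj expl rel}), and the compatibility with the graded Hecke structure is referred to the construction of that structure on $'\!\ICs_{q,x}$ via the citation to the prototype [Ga7, Theorem 5.1.8]; you reach the same endpoint by routing the coherence check through [Ga6, Sect. 2.7] (where the DrPl side is set up) and \thmref{t:fiber of Ics}, which is an equally legitimate bookkeeping reference. Your explicit matching of the transition maps \eqref{e:transition maps ICs} against the DrPl transition maps of \secref{sss:DrPl adj expl} is just the unpacking the paper's citation is standing in for.
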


\begin{proof}

The fact that
$$\oblv_{\BHecke_{\on{rel}}}(\sfj^*((\delta_{1,\Gr})^{\on{DrPl},\fL^+(T)}))\simeq {}'\!\ICs_{q,x}$$
as plain objects of $\Shv_{\CG^G}(\Gr_G)^{\fL^+(T)}$ follows from \secref{sss:DrPl adj expl}.

\medskip

The fact that the isomorphism respects the graded Hecke structure follows from the construction
of the latter on $'\!\ICs_{q,x}$, see \cite[Theorem 5.1.8]{Ga7}. 

\end{proof}

\sssec{}

From now on we will denote
$$'\tICs_{q,x}:=\sfj^*((\delta_{1,\Gr})^{\on{DrPl},\fL^+(T)})\in \BHecke_{\on{rel}}(\Shv_{\CG^G}(\Gr_G)^{\fL^+(T)}),$$
so that the identification of \propref{p:ident semiinf as DrPl} says that 
$$\Res^{B_H}_{T_H}({}'\tICs_{q,x})\simeq {}'\bICs_{q,x}.$$
 
\sssec{} 

Recall now that $'\!\ICs_{q,x}$ was actually an object of the full subcategory 
$$((\SI_{q,x}(G))^{\fL^+(T)}:=
\Shv_{\CG^G}(\Gr_G)^{\fL(N)\cdot \fL^+(T)}\subset 
\Shv_{\CG^G}(\Gr_G)^{\fL^+(T)}.$$

Since the above embeddings are fully faithful, we obtain that $'\tICs_{q,x}$ automatically belongs to 
$$\BHecke_{\on{rel}}(((\SI_{q,x}(G))^{\fL^+(T)})\subset \BHecke_{\on{rel}}(\Shv_{\CG^G}(\Gr_G)^{\fL^+(T)}).$$

We will now show that when viewed as such, the object  $'\tICs_{q,x}$ can also be obtained by 
applying $\sfj^*$ to an object in $\on{DrPl}((\SI_{q,x}(G))^{\fL^+(T)})$.

\sssec{}

Namely, consider the object 
$$(\bi_{0})_!(\omega_{S^0})\in (\SI_{q,x}(G))^{\fL^+(T)}.$$

We claim that it naturally upgrades to an object of
$$((\bi_{0})_!(\omega_{S^0}))^{\on{DrPl}}\in \on{DrPl}((\SI_{q,x}(G))^{\fL^+(T)}),$$
so that the resulting object $\sfj^*(((\bi_{0})_!(\omega_{S^0}))^{\on{DrPl}})\in \BHecke_{\on{rel}}((\SI_{q,x}(G))^{\fL^+(T)})$
goes over under the embedding
$$\BHecke_{\on{rel}}((\SI_{q,x}(G))^{\fL^+(T)})\hookrightarrow \BHecke_{\on{rel}}(\Shv_{\CG^G}(\Gr_G)^{\fL^+(T)})$$
to $\sfj^*((\delta_{1,\Gr})^{\on{DrPl},\fL^+(T)}):={}'\tICs_{q,x}$. 

\sssec{}

In order to construct the Drinfeld-Pl\"ucker structure on $(\bi_{0})_!(\omega_{S^0})$, 
we consider the (partially defined) left adjoint $\on{Av}_!^{\fL(N)}$ to the embedding 
$$(\SI_{q,x}(G))^{\fL^+(T)}\hookrightarrow \Shv_{\CG^G}(\Gr_G)^{\fL^+(T)}.$$

\medskip

Note that for $\gamma\in \Lambda^\sharp$, we have a canonical identification
$$\on{Av}_!^{\fL(N)}(\delta_{t^\gamma,\Gr})\simeq
(\bi_{\gamma})_!(\omega_{S^{\gamma}}).$$

\medskip

Since Hecke convolutions (for $G$ and for $T$) are given by proper pushforwards, level-wise
application of $\on{Av}_!^{\fL(N)}$ induces a (partially defined) left adjoint to
$$\on{DrPl}((\SI_{q,x}(G))^{\fL^+(T)}) \hookrightarrow \on{DrPl}(\Shv_{\CG^G}(\Gr_G)^{\fL^+(T)})$$
and to 
$$\BHecke_{\on{rel}}((\SI_{q,x}(G))^{\fL^+(T)})\hookrightarrow \BHecke_{\on{rel}}(\Shv_{\CG^G}(\Gr_G)^{\fL^+(T)}).$$
Moreover, these functors are intertwined by the corresponding functors $\sfj^*$, $\sfj_*$, etc.

\sssec{}

Applying $\on{Av}_!^{\fL(N)}$ to $(\delta_{1,\Gr})^{\on{DrPl},\fL^+(T)}$ we obtain the desired object $((\bi_{0})_!(\omega_{S^0}))^{\on{DrPl}}$.
Moreover,
\begin{equation} \label{e:Av N DrPl}
\on{Av}_!^{\fL(N)}(\sfj^*((\delta_{1,\Gr})^{\on{DrPl},\fL^+(T)}))\simeq \sfj^*(((\bi_{0})_!(\omega_{S^0}))^{\on{DrPl}}).
\end{equation}

However, since $\sfj^*((\delta_{1,\Gr})^{\on{DrPl},\fL^+(T)})$ already belongs to $\BHecke_{\on{rel}}((\SI_{q,x}(G))^{\fL^+(T)})$,
we obtain that the left-hand side in \eqref{e:Av N DrPl} identifies with 
$\sfj^*((\delta_{1,\Gr})^{\on{DrPl},\fL^+(T)})$, as desired. 

\section{The dual baby Verma object in $\Shv_{\CG^G}(\Gr_G)^I$}  \label{s:Fl}

In order to construct the objects $\bCM^{\mu,*}_{\Whit}$, we will need to make a detour and discuss
the category of Iwahori-equivariant sheaves on $\Gr_G$. We will construct a particular ``dual baby Verma" object
$$\wt\CF^\semiinf_{\on{rel}}\in \BHecke(\Shv_{\CG^G}(\Gr_G)^I),$$
and study its properties. 

\medskip

Results of this section are of independent interest as the object $\wt\CF^\semiinf_{\on{rel}}$ (and its descendents
$\overset{\bullet}\CF{}^\semiinf$, $\CF^\semiinf$) are quite ubiquitous in this branch of representation theory, see
e.g., \cite{ABBGM}, \cite{FG2}, \cite{FG3}, \cite{Ga8}. 

\medskip

In the next section, we apply a simple manipulation to $\wt\CF^\semiinf_{\on{rel}}$ and produce from it
the sought-for objects $\bCM^{\mu,*}_{\Whit}$.

\ssec{The Iwahori-equivariant category}

In this subsection we recollect some facts pertaining to the behavior of the category of Iwahori-equivariant sheaves on $\Gr_G$. 

\sssec{}

Let $I\subset \fL^+(G)$ be the Iwahori subgroup. We consider the category 
$$\Shv_{\CG^G}(\Gr_G)^I.$$

In what follows we will use a slightly renormalized version of the category, namely the ind-completion
of $(\Shv_{\CG^G}(\Gr_G)^I)^{\on{loc.c}}$. We denote it $\Shv_{\CG^G}(\Gr_G)^{I,\on{ren}}$. 
The category $\Shv_{\CG^G}(\Gr_G)^I$ carries a t-structure, for which the tautological functor
\begin{equation} \label{e:unren I}
\on{un-ren}:\Shv_{\CG^G}(\Gr_G)^{I,\on{ren}}\to \Shv_{\CG^G}(\Gr_G)^I
\end{equation} 
is t-exact.

\medskip

The advantage of $\Shv_{\CG^G}(\Gr_G)^{I,\on{ren}}$ is that the t-structure on it is Artinian
(see \secref{sss:properties of t} for what this means).

\sssec{}

The category $\Shv_{\CG^G}(\Gr_G)^{I,\on{ren}}$ is acted on from the right by $\Sph_{q,x}(G)$ by convolutions. 
%In particular, it carries an action of $\Rep(H)$.

\medskip

Consider the affine flag space
$$\Fl_G:=\fL(G)/I.$$

Convolution on the left defines an action of the monoidal category $\Shv_{\CG^G}(\Fl_G)^I$ on 
$\Shv_{\CG^G}(\Gr_G)^{I,\on{ren}}$. This action commutes with the above right action of $\Sph_{q,x}(G)$.

\medskip

To avoid notational confusion, henceforth, we will denote these two convolution functors by
$$\underset{\fL^+(G)}\star- \text{ and } -\underset{I}\star,$$
respectively. 

\sssec{}

We claim that there exists a canonically defined monoidal functor
\begin{equation} \label{e:BMW}
\Rep(T_H)\to \Shv_{\CG^G}(\Fl_G)^I, 
\end{equation}

Namely, for an element $\gamma\in \Lambda^\sharp= \Lambda_H\subset \Lambda$ consider the
corresponding orbit
$$I\cdot t^\gamma \cdot I/I \subset \on{Fl}_G.$$

Our choice of trivialization of $\CG_{T_H,x}$ defines an $I$-equivariant  
trivialization of the restriction $\CG^G|_{I\cdot t^\gamma\cdot I/I}$.
Let 
$$j_{\gamma,!},j_{\gamma,*}\in \Shv_{\CG^G}(\Fl_G)^I$$
denote the corresponding standard (resp., costandard) objects. 

\begin{prop}  \label{p:BMW}
We have canonical isomorphisms
$$j_{\gamma,!}\underset{I}\star j_{\gamma,*}\simeq \delta_{1,\Fl}\simeq  j_{\gamma,*}\underset{I}\star j_{\gamma,!}.$$
\end{prop}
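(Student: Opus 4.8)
The statement to prove is that for $\gamma \in \Lambda^\sharp = \Lambda_H$, the standard and costandard objects $j_{\gamma,!}$ and $j_{\gamma,*}$ in $\Shv_{\CG^G}(\Fl_G)^I$ are mutually inverse under convolution, i.e.\ $j_{\gamma,!}\underset{I}\star j_{\gamma,*}\simeq \delta_{1,\Fl}\simeq j_{\gamma,*}\underset{I}\star j_{\gamma,!}$. This is the metaplectic analogue of the classical fact that for a coweight $\gamma$, the point $t^\gamma$ lies in the ``integral Weyl group direction'' along which translation acts invertibly on the affine flag variety. The plan is to reduce to the two basic cases: (i) $\gamma$ dominant (or antidominant) and (ii) the general case built from these by the length-additivity of the Bruhat order, and then handle each invertibility statement by an explicit geometric computation of the relevant convolution.

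First I would record the key length/orbit geometry. For $\gamma \in \Lambda^\sharp$ dominant, the translation element $t^\gamma \in \wt{W}_{\on{aff}}$ (extended affine Weyl group) has the property that $\ell(t^\gamma t^{\gamma'}) = \ell(t^\gamma) + \ell(t^{\gamma'})$ for $\gamma' \in \Lambda^\sharp$ dominant; this is the standard fact that the dominant cone sits in a single Weyl chamber and lengths add. Geometrically this means the multiplication map $\ol{I t^\gamma I} \underset{I}\times \ol{I t^{\gamma'} I} \to \ol{I t^{\gamma+\gamma'} I}$ is an isomorphism (a stratified semismall map that is actually an iso onto its image for these specific elements), so that $j_{\gamma,!} \underset{I}\star j_{\gamma',!} \simeq j_{\gamma+\gamma',!}$ and similarly $j_{\gamma,*}\underset{I}\star j_{\gamma',*}\simeq j_{\gamma+\gamma',*}$ for $\gamma,\gamma'$ both dominant; likewise for both antidominant. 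Here one must check that the gerbe-twisted trivializations are compatible with these isomorphisms — this is the place where the choice of trivialization of $\CG_{T_H,x}$ (made in \secref{sss:trivialize gerbe at point}) enters, and the point is that the $I$-equivariant trivialization of $\CG^G$ on each orbit $I t^\gamma I/I$ restricts from the one on $\Gr^{\omega^\rho}_{T,x}$ coming from $T^\sharp$, on which the gerbe is multiplicative by \propref{p:mult gerbe torus}(a); so the trivializations multiply correctly. This also uses $\gamma \in \Lambda^\sharp$ crucially — it is exactly the condition ensuring $\CG^G$ is $I$-equivariantly trivial on the orbit (by the computation in the proof of \lemref{l:non sharp strata bis}).

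Next, for a single dominant $\gamma$, I would prove $j_{\gamma,!}\underset{I}\star j_{\gamma,*}\simeq \delta_{1,\Fl}$ directly. One way: it suffices to prove $j_{\gamma,!}\underset{I}\star j_{w_0(-\gamma)}$-type identities, but more cleanly, factor $t^\gamma = t^{\gamma_1}\cdots$ into elementary pieces where each step reduces to $SL_2$ (or $PGL_2$) — i.e.\ to a simple reflection $s_i$ with the appropriate affine twist. For the rank-one case, $j_{s,!}\underset{I}\star j_{s,*}\simeq \delta$ is the classical computation: $\overline{IsI}/I \cong \BP^1$, and $j_{s,!}$, $j_{s,*}$ are the $!$- and $*$-extensions of the (twisted) constant sheaf from the open $\BA^1$, whose convolution is the skyscraper at the base point — here one checks the twist by the gerbe does not obstruct this because on the rank-one orbit the relevant Kummer/sign local system is trivial when $\gamma \in \Lambda^\sharp$. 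Then assemble using length-additivity from the previous step: writing $\gamma$ as a product in the dominant cone and using $j_{\gamma,!}\underset{I}\star j_{\gamma,*} \simeq j_{\gamma_1,!}\underset{I}\star(j_{\gamma_2,!}\underset{I}\star j_{\gamma_2,*})\underset{I}\star j_{\gamma_1,*}$ and induction. Finally, for general $\gamma\in\Lambda^\sharp$, write $\gamma = \gamma_+ - \gamma_-$ with $\gamma_\pm$ dominant; then $j_{\gamma,!}\simeq j_{\gamma_+,!}\underset{I}\star j_{-\gamma_-,!}\simeq j_{\gamma_+,!}\underset{I}\star j_{\gamma_-,*}^{-1}$-style relations (using the dominant/antidominant length additivity and that $j_{-\gamma_-,!}$ for antidominant $-\gamma_-$ convolves additively), and deduce the general invertibility formally from the dominant case. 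Symmetrically for $j_{\gamma,*}\underset{I}\star j_{\gamma,!}\simeq\delta_{1,\Fl}$, which by the same token reduces to $j_{s,*}\underset{I}\star j_{s,!}\simeq\delta$ in rank one.

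The main obstacle I anticipate is \emph{bookkeeping the gerbe-twisted trivializations} through all the convolution isomorphisms: each of $j_{\gamma,!}$, $j_{\gamma,*}$ is defined only after choosing the $I$-equivariant trivialization of $\CG^G$ on the orbit, and the convolution product involves the gerbe on $\Fl_G \underset{I}\times \Fl_G$, so one must verify that the evident multiplicativity of these trivializations (inherited from the multiplicative structure on $\CG^{T^\sharp}$ via \propref{p:mult gerbe torus}) is compatible with the $SL_2$-reduction and with the length-additivity isomorphisms of orbit closures. Once this compatibility is in place, the rest is the standard affine-Hecke-category computation (the analogue of the Iwahori--Hecke relation $T_s T_s^{-1} = 1$ made geometric), and the $\Lambda^\sharp$-condition guarantees we never encounter the genuinely twisted orbits where the statement would fail. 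A secondary, purely technical point is to make sure all convolutions are taken in the renormalized category $\Shv_{\CG^G}(\Gr_G)^{I,\on{ren}}$ (or rather the monoidal category $\Shv_{\CG^G}(\Fl_G)^I$ acting on it) so that the objects $j_{\gamma,!}, j_{\gamma,*}$ are genuinely dualizable; but since these are single-orbit standard/costandard objects this is immediate.
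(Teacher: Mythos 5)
The argument you propose is genuinely different from the paper's, and as written it does not quite close. Let me first flag something about the statement itself, since it affects your whole approach.

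\textbf{The statement as printed contains a sign slip.} The paper's own proof reads: \emph{``the functor $j_{\gamma,!}\underset{I}\star -$ is the left adjoint of $j_{-\gamma,*}\underset{I}\star -$''}, and the paragraph immediately after the proposition uses it to conclude that the monoidal functor \eqref{e:BMW} sends $\sfe^{-\gamma}\mapsto j_{-\gamma,*}$. The intended assertion is therefore
$$j_{\gamma,!}\underset{I}\star j_{-\gamma,*}\simeq \delta_{1,\Fl}\simeq  j_{-\gamma,*}\underset{I}\star j_{\gamma,!},$$
the geometric avatar of $T_w T_{w^{-1}}^{-1}=1$ with $w=t^\gamma$, $w^{-1}=t^{-\gamma}$. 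The literal identity $j_{\gamma,!}\star j_{\gamma,*}\simeq\delta$ is false for $\gamma\neq 0$ (the convolution is supported on $\overline{It^{\gamma}It^{\gamma}I/I}$, not on the point), and you should have caught this before trying to prove it. Your base case $j_{s,!}\star j_{s,*}\simeq\delta$ is valid precisely because a simple reflection is an involution, $s=s^{-1}$; a translation $t^\gamma$ is not, so the identity does not propagate from reflections to translations with the index unchanged.

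\textbf{The telescoping step has a corresponding gap.} You write $j_{\gamma,!}\star j_{\gamma,*}\simeq j_{\gamma_1,!}\star(j_{\gamma_2,!}\star j_{\gamma_2,*})\star j_{\gamma_1,*}$ and induct, but the inner term $j_{\gamma_2,!}\star j_{\gamma_2,*}$ is again the (false) statement for a smaller dominant coweight. The induction never reaches a base case to which your $SL_2$ computation applies: the elementary pieces you can kill via $j_{s,!}\star j_{s,*}\simeq\delta$ are simple affine reflections, not dominant translations, and you cannot factor a dominant translation into dominant translations that are individually reflections. If you instead aim for $j_{\gamma,!}\star j_{-\gamma,*}\simeq\delta$, take a reduced word $t^\gamma=s_{i_1}\cdots s_{i_\ell}$ so that $t^{-\gamma}=s_{i_\ell}\cdots s_{i_1}$; then $j_{\gamma,!}=j_{s_{i_1},!}\star\cdots\star j_{s_{i_\ell},!}$ and $j_{-\gamma,*}=j_{s_{i_\ell},*}\star\cdots\star j_{s_{i_1},*}$, and the product \emph{does} telescope from the middle via the rank-one relation. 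So the underlying strategy is salvageable once the index is fixed, but as written the chain of identities does not hold.

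\textbf{On the comparison with the paper's proof.} Even after the fix, your route (lengths, reduced words, $SL_2$, and careful tracking of the multiplicative trivializations of the gerbe across the BGG convolution isomorphisms) is considerably heavier than what the paper does. The paper's proof is two lines: $j_{\gamma,!}\underset{I}\star-$ is left adjoint to $j_{-\gamma,*}\underset{I}\star-$ (a completely general fact about $!$- and $*$-extensions on Schubert cells, valid in the twisted setting for the same reason as in the untwisted one), and both are equivalences of $\Shv_{\CG^G}(\Fl_G)^I$; for an equivalence, the adjoint \emph{is} the inverse, so the two convolutions with $\delta_{1,\Fl}$ are forced. This sidesteps length-additivity, reduced words, $SL_2$-reduction, and all the gerbe bookkeeping you rightly flag as a headache. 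Your approach would produce a more explicit isomorphism but at a much larger cost, and it would still need the gerbe-trivialization compatibility you sketch (which is essentially correct in spirit, via the multiplicativity from \propref{p:mult gerbe torus} and the $\Lambda^\sharp$-condition from \lemref{l:non sharp strata bis}); the paper's abstract argument avoids having to verify any of that.
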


\begin{proof}

First, we notice that the functor of convolution
$$j_{\gamma,!}\underset{I}\star -:\Shv_{\CG^G}(\Fl_G)^I\to \Shv_{\CG^G}(\Fl_G)^I$$
is the left adjoint of 
$$j_{-\gamma,*}\underset{I}\star -:\Shv_{\CG^G}(\Fl_G)^I\to \Shv_{\CG^G}(\Fl_G)^I.$$

\medskip

Now the assertion follows from the fact that these functors are self-equivalences, hence ``adjoint'' is the
same as ``inverse". 

\end{proof}

\sssec{}

The sought-for functor \eqref{e:BMW} is uniquely determined by the condition that it sends
$$\sfe^\gamma \mapsto j_{\gamma,!} \text{ for } \gamma\in \Lambda_H^+.$$

From \propref{p:BMW} it follows that \eqref{e:BMW} sends
$$\sfe^{-\gamma} \mapsto j_{-\gamma,*} \text{ for } \gamma\in \Lambda_H^+.$$

For general $\gamma\in \Lambda_H$, let us denote by $J_\gamma$ the image of
$\sfe^\gamma$ under \eqref{e:BMW}. 

\sssec{}

Thus, we obtain that the category $\Shv_{\CG^G}(\Gr_G)^{I,\on{ren}}$ is acted on by $\Rep(H)\otimes \Rep(T_H)$. 

\begin{rem}  \label{r:t on Hecke Iw}
Note, however, that while the action of $(\Rep(H))^\heartsuit$ on $\Shv_{\CG^G}(\Gr_G)^{I,\on{ren}}$ is given by t-exact functors,
this is \emph{not} the case for $(\Rep(T_H))^\heartsuit$. So, while the categories
$$\BHecke(\Shv_{\CG^G}(\Gr_G)^{I,\on{ren}}) \text{ and } \bHecke(\Shv_{\CG^G}(\Gr_G)^{I,\on{ren}})$$
carry well-behaved t-structures, the relative versions 
$$\BHecke_{\on{rel}}(\Shv_{\CG^G}(\Gr_G)^{I,\on{ren}}) \text{ and } \bHecke_{\on{rel}}(\Shv_{\CG^G}(\Gr_G)^{I,\on{ren}})$$
do not. 
\end{rem} 

\ssec{Construction of the dual baby Verma object}

In this subsection we define the main player in this section--the dual baby Verma object in $\Shv_{\CG^G}(\Gr_G)^{I,\on{ren}}$. 

\sssec{}

Consider the object $\delta_{1,\Gr}\in \Shv_{\CG^G}(\Gr_G)^{I,\on{ren}}$. We claim that it naturally upgrades
to an object 
$$(\delta_{1,\Gr})^{\on{DrPl},I}\in \on{DrPl}(\Shv_{\CG^G}(\Gr_G)^{I,\on{ren}}).$$

The corresponding system of maps \eqref{e:DrPl maps} is given by
\begin{equation} \label{e:DrPl maps I}
\IC_{q,\ol\Gr^{-w_0(\gamma)}_G}\to j_{-\gamma,*}\underset{I}\star \delta_{1,\Gr}
\end{equation}
obtained by adjunction from the maps 
\begin{equation} \label{e:DrPl maps I adj}
j_{\gamma,!}\underset{I}\star \delta_{1,\Gr} \to \IC_{q,\ol\Gr^\gamma_G},
\end{equation}
the latter being maps corresponding to the open embeddings
$$I\cdot t^\gamma \fL^+(G)/\fL^+(G)\hookrightarrow \fL^+(G)\cdot t^\gamma \fL^+(G)/\fL^+(G)=\Gr_G^\gamma.$$

One easily checks that the maps \eqref{e:DrPl maps I} satisfy the compatibility expressed by diagram
\eqref{e:DrPl comp}; namely one checks the commutativity of the 
corresponding diagram for the maps \eqref{e:DrPl maps I adj}:
$$
\CD
j_{\gamma_1,!}\underset{I}\star j_{\gamma_2,!} \underset{I}\star \delta_{1,\Gr} @>>>  
j_{\gamma_1,!}\underset{I} \star \IC_{q,\ol\Gr^{\gamma_2}_G} \\
@V{\sim}VV  @VV{\sim}V    \\
j_{\gamma_1+\gamma_2,!} \underset{I}\star \delta_{1,\Gr}  @>>>  
j_{\gamma_1,!}\underset{I} \star \delta_{1,\Gr} \underset{\fL^+(G)} \star \IC_{q,\ol\Gr^{\gamma_2}_G} \\ 
@VVV   @VV{\sim}V  \\
\IC_{q,\ol\Gr^{\gamma_1+\gamma_2}_G}  @>>> \IC_{q,\ol\Gr^{\gamma_1}_G}\underset{\fL^+(G)}\star \IC_{q,\ol\Gr^{\gamma_2}_G}. 
\endCD
$$

The higher compatibilities hold automatically, as the objects involved in \eqref{e:DrPl maps I adj} belong to the heart
of the t-structure. 

\sssec{}

Let $\wt\CF^\semiinf_{\on{rel}}$ denote the object of $\BHecke_{\on{rel}}(\Shv_{\CG^G}(\Gr_G)^{I,\on{ren}})$ equal to
$$\sfj^*((\delta_{1,\Gr})^{\on{DrPl},I}).$$

\medskip

We will also consider several objects obtained from $\wt\CF^\semiinf_{\on{rel}}$ by applying the various forgetful functors:
$$\overset{\bullet}\CF{}^\semiinf_{\on{rel}}:=\Res^{B_H}_{T_H}(\wt\CF^\semiinf_{\on{rel}})\in \bHecke_{\on{rel}}(\Shv_{\CG^G}(\Gr_G)^{I,\on{ren}});$$
$$\wt\CF^\semiinf:=\oblv_{\on{rel}}(\wt\CF^\semiinf_{\on{rel}})\in \BHecke(\Shv_{\CG^G}(\Gr_G)^{I,\on{ren}});$$
$$\overset{\bullet}\CF{}^\semiinf:=\Res^{B_H}_{T_H}(\wt\CF^\semiinf)\simeq \oblv_{\on{rel}}(\overset{\bullet}\CF{}^\semiinf_{\on{rel}})\in 
\bHecke(\Shv_{\CG^G}(\Gr_G)^{I,\on{ren}}).$$
 
\begin{rem}
We can also consider the object
$$\CF^\semiinf:=\Res^{B_H}(\wt\CF^\semiinf)\simeq \Res^{T_H}(\overset{\bullet}\CF{}^\semiinf)\in
\Hecke(\Shv_{\CG^G}(\Gr_G)^{I,\on{ren}}).$$
But this object will not play a prominent role in this paper. 
\end{rem}

\sssec{}

Recall now (see Remark \ref{r:t on Hecke Iw}) that the categories 
$$\BHecke(\Shv_{\CG^G}(\Gr_G)^{I,\on{ren}}) \text{ and } \bHecke(\Shv_{\CG^G}(\Gr_G)^{I,\on{ren}})$$
each carries a well-behaved t-structure, and the restriction functor $\Rep^{B_H}_{T_H}$ t-exact. 

\medskip

We claim:
\begin{prop}  \label{p:baby in heart} 
The object $\wt\CF^\semiinf$ (resp., $\overset{\bullet}\CF{}^\semiinf$) 
belongs to $(\BHecke(\Shv_{\CG^G}(\Gr_G)^{I,\on{ren}}))^\heartsuit[d]$
(resp., $(\bHecke(\Shv_{\CG^G}(\Gr_G)^{I,\on{ren}}))^\heartsuit[d]$). 
\end{prop}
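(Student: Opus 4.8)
\textbf{Proof plan for \propref{p:baby in heart}.}
It suffices to treat $\wt\CF^\semiinf$, since $\overset{\bullet}\CF{}^\semiinf = \Res^{B_H}_{T_H}(\wt\CF^\semiinf)$ and $\Res^{B_H}_{T_H}$ is t-exact. The plan is to read off the t-amplitude from the colimit presentation of $\oblv_{\BHecke}\circ \sfj^*$ given in \secref{sss:DrPl adj expl non-rel}, combined with the explicit description of the Drinfeld--Pl\"ucker structure $(\delta_{1,\Gr})^{\on{DrPl},I}$ in terms of the objects $j_{\gamma,*}$ and $\IC_{q,\ol\Gr^\gamma_G}$. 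Concretely, applying \eqref{e:DrPl adj expl non-rel} to $\bC_0 = \Shv_{\CG^G}(\Gr_G)^{I,\on{ren}}$, the object $\oblv_{\bHecke}\circ\Res^{B_H}_{T_H}(\wt\CF^\semiinf)$, in the $\gamma'$-graded component, is the filtered colimit
\begin{equation*}
\underset{\gamma\in \Lambda^+_H}{\on{colim}}\, \delta_{1,\Gr} \underset{I}\star j_{-\gamma+\gamma',\ast}\ ?\ \text{— more precisely } \underset{\gamma}{\on{colim}}\ (\delta_{1,\Gr})^{-\gamma+\gamma'}\underset{\fL^+(G)}\star \IC_{q,\ol\Gr^{\gamma}_G},
\end{equation*}
where $(\delta_{1,\Gr})^{\mu} = J_{\mu}\underset{I}\star\delta_{1,\Gr}$. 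Since filtered colimits are t-exact and $\oblv_{\bHecke}$, $\Res^{B_H}_{T_H}$ are conservative and t-exact, it is enough to show that each term $J_{-\gamma+\gamma'}\underset{I}\star\delta_{1,\Gr}\underset{\fL^+(G)}\star \IC_{q,\ol\Gr^{\gamma}_G}$ lies in $(\Shv_{\CG^G}(\Gr_G)^{I,\on{ren}})^{\heartsuit}[d]$, and that the transition maps in the colimit are compatible with this.

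First I would record the two key t-exactness inputs. On the $G$-side, the convolution $-\underset{\fL^+(G)}\star \IC_{q,\ol\Gr^\gamma_G}$ is t-exact on $\Shv_{\CG^G}(\Gr_G)^{I,\on{ren}}$ (metaplectic geometric Satake, as in \secref{ss:metapl geom Sat}; this is the perverse-exactness of convolution with a spherical perverse sheaf). On the $T_H$-side, the objects $j_{\gamma,*} = J_\gamma$ for $\gamma\in \Lambda_H$ are \emph{not} perverse in general, but the standard/costandard estimates for Iwahori orbits give sharp bounds: $j_{\gamma,!}\underset{I}\star -$ is right t-exact and $j_{\gamma,*}\underset{I}\star -$ is left t-exact, with cohomological amplitude controlled by the length of the corresponding Weyl group element; after the shift by $d = \dim(H/B_H)$ these amplitudes assemble correctly. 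The cleanest route is to identify $\wt\CF^\semiinf$ directly: the term $J_{-\gamma}\underset{I}\star\delta_{1,\Gr}$ is the $!$- or $*$-extension of the (trivialized) dualizing sheaf from the Iwahori orbit through $t^{-\gamma}$, shifted by $\langle\gamma,2\check\rho\rangle$, and convolving with $\IC_{q,\ol\Gr^\gamma_G}$ spreads this over $\ol{S}^0\cap(\ldots)$ inside $\Gr_G$. The upshot is that $\wt\CF^\semiinf[-d]$ is the ``big'' object whose $!$-restriction to the closed $\fL^+(N)$-type stratum recovers $\ICs$ — i.e.\ it is a renormalized Iwahori avatar of the semi-infinite sheaf — and perversity (up to the shift $[d]$) follows from the same Goresky--MacPherson/GM-extension argument used for $\ICs_{q,x}$ in \secref{ss:fiber of semiinf}, now run in the Iwahori-equivariant category.

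The steps, in order: (1) Use \secref{sss:DrPl adj expl non-rel} to write $\oblv_{\bHecke}\circ\Res^{B_H}_{T_H}(\wt\CF^\semiinf)$ as a filtered colimit of convolutions $J_\mu\underset{I}\star\delta_{1,\Gr}\underset{\fL^+(G)}\star\IC_{q,\ol\Gr^\gamma_G}$. (2) Invoke t-exactness of spherical convolution to reduce to analyzing $J_\mu\underset{I}\star\delta_{1,\Gr}$, i.e.\ the standard and costandard objects in $\Shv_{\CG^G}(\Gr_G)^{I,\on{ren}}$ indexed by $\mu\in\Lambda_H$. (3) Show these lie in $(\Shv_{\CG^G}(\Gr_G)^{I,\on{ren}})^{\heartsuit}[d]$ after accounting for the shift — equivalently, that each $J_{-\gamma+\gamma'}\underset{I}\star\delta_{1,\Gr}$ has perverse amplitude in $[-d,0]$, which is exactly the length bound for the translation element $t^{-\gamma+\gamma'}$ once we use that the relevant Iwahori orbit closure in $\Gr_G$ (not $\Fl_G$) has the expected dimension — here one uses $\gamma'-\gamma$ dominant (cofinally in the colimit) so that $j_{\gamma'-\gamma,\ast}\underset{I}\star\delta_{1,\Gr}$ is genuinely the costandard object supported on $\ol{\Gr}{}^{\,\gamma'-\gamma}_G$. (4) Check that the transition maps preserve the heart, so the colimit stays in the heart; conservativity and t-exactness of $\oblv_{\bHecke}$ and $\Res^{B_H}_{T_H}$ then upgrade this from $\bHecke$ back to the claimed statements for $\wt\CF^\semiinf$ in $\BHecke$ and $\overset{\bullet}\CF{}^\semiinf$ in $\bHecke$.

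The main obstacle will be step (3): pinning down the exact cohomological amplitude of the costandard-type objects $j_{\gamma,*}\underset{I}\star\delta_{1,\Gr}\in\Shv_{\CG^G}(\Gr_G)^{I,\on{ren}}$ and verifying that the Serre-duality shift $[d]$ is precisely what converts the (a priori asymmetric, $[0,d]$ or $[-d,0]$) amplitude coming from $\coInd^H_{B_H}$ in the Drinfeld--Pl\"ucker presentation into membership in the heart after shift. This is a bookkeeping point but a delicate one: it requires using \eqref{e:Serre duality} to bound the amplitude of $\coInd^H_{B_H}$ by $d$ (as in \secref{sss:ampl coInd}), the t-exactness of the $\Rep(H)$-action, and the \emph{non}-t-exactness of the $\Rep(T_H)$-action (Remark \ref{r:t on Hecke Iw}) conspiring so that only after passing to $\BHecke$ (and not $\BHecke_{\on{rel}}$) and applying the shift does one land in the heart. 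I would model this computation on \cite[Theorem 5.1.8]{Ga7} and the analysis of $'\!\ICs_{q,x}$ in \secref{ss:Dr-Pl semiinf}, transported to the Iwahori setting; the renormalization $\Shv_{\CG^G}(\Gr_G)^{I,\on{ren}}$ (with its Artinian t-structure) is what makes the colimit well-behaved and keeps the GM-extension argument available.
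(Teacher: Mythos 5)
Your overall skeleton matches the paper's proof: reduce to $\overset{\bullet}\CF{}^\semiinf$ (by conservativity and t-exactness of $\Res^{B_H}_{T_H}$, which you invoke at the end even though your opening sentence reduces in the wrong direction), express $\oblv_{\bHecke}(\overset{\bullet}\CF{}^\semiinf)$ via the filtered colimit from \eqref{e:DrPl adj expl rel}, use t-exactness of spherical convolution to strip off the $\IC_{q,\ol\Gr^\gamma_G}$-factor, and check the remaining objects term by term along a cofinal set of indices.

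However, your step (3) — the crucial cohomological amplitude estimate — is not correct as stated, and your closing speculation does not repair it. You claim that each $J_{\gamma'-\gamma}\underset{I}\star\delta_{1,\Gr}$ has perverse amplitude in $[-d,0]$ and that cofinally one gets ``the costandard object supported on $\ol\Gr{}^{\gamma'-\gamma}_G$.'' Neither gives membership in $\heartsuit[d]$: amplitude $[-d,0]$ is not a single degree, and a costandard object on $\Gr_G$ is concentrated in perverse degree $0$, not $-d$. (Also, cofinally in the colimit $\gamma'-\gamma$ is deeply \emph{anti}-dominant, not dominant; the relevant translations are $j_{\mu,*}$ for $\mu$ anti-dominant regular.) What the paper actually proves — and what the argument needs — is that for a cofinal set of $\gamma$'s the object $j_{\gamma'-\gamma,*}\underset{I}\star\delta_{1,\Gr}$ lies in a \emph{single} perverse degree, namely $-d$. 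The input is a concrete geometric fact about Iwahori orbits: for $\mu$ anti-dominant regular, the restriction of $\Fl_G\to\Gr_G$ to the Iwahori orbit $I\cdot t^\mu\cdot I/I$ is a fibration onto $I\cdot t^\mu\cdot\fL^+(G)/\fL^+(G)$ with fibers $\BA^d$, and the latter orbit is affinely embedded in $\Gr_G$. Pushing the constant perverse sheaf forward along an $\BA^d$-fibration produces exactly the shift $[d]$, and $*$-pushforward along an affine open embedding preserves perversity (Artin); together these place $j_{\mu,*}\underset{I}\star\delta_{1,\Gr}$ in $\heartsuit[d]$. Your musings about $\coInd^H_{B_H}$, Serre duality and a GM-extension argument \`a la $\ICs_{q,x}$ are not needed and do not substitute for this orbit-geometry computation; as written, the proof has a genuine gap at precisely the step you flagged as delicate.
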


\begin{proof}

It suffices to prove the assertion for $\overset{\bullet}\CF{}^\semiinf$.  Applying \eqref{e:DrPl adj expl rel}, 
and using the fact that the poset $(\Lambda_H,\preceq)$ is filtered, it suffices to
see that for a fixed $\gamma'$ and cofinal set of $\gamma$'s, the objects
$$j_{-\gamma+\gamma',*}\underset{I}\star  \IC_{q,\ol\Gr^\gamma_G}$$
belong to $(\Shv_{\CG^G}(\Gr_G)^{I,\on{ren}}))^\heartsuit[d]$. 

\medskip

We claim that this happens as soon as $-\gamma+\gamma'=:\gamma_0$ is dominant regular. Indeed, convolution with $\Sph_{q,x}(G)$ is t-exact,
so it suffices to see that the objects
$$j_{-\gamma_0,*}\underset{I}\star \delta_{1,\Gr}$$
belong to $(\Shv_{\CG^G}(\Gr_G)^{I,\on{ren}}))^\heartsuit[d]$ for $\gamma_0$ dominant regular. Indeed, in this case,
the map
$$I\cdot t^{-\gamma_0}\cdot I/I\to I\cdot t^{-\gamma}\cdot \fL^+(G)/\fL^+(G)$$
is a fibration into affine spaces of dimension $d$, while the inclusion 
$$I\cdot t^{-\gamma_0}\cdot \fL^+(G)/\fL^+(G)\hookrightarrow \Gr_G$$
is affine. 

\end{proof}

\ssec{Relation to the semi-infinite IC sheaf}  \label{ss:F and semiinf}

In this subsection we will see that the object $$\wt\CF{}^\semiinf_{\on{rel}}\in \BHecke_{\on{rel}}(\Shv_{\CG^G}(\Gr_G)^{I,\on{ren}})$$
introduced above, is closely related to the semi-infinite IC sheaf
$$'\tICs_{q,x}\in \BHecke_{\on{rel}}(\Shv_{\CG^G}(\Gr_G)^{\fL^+(T)}).$$

A similar relationship will hold for their descendants, in particular for 
$$\overset{\bullet}\CF{}^\semiinf_{\on{rel}}\in \bHecke_{\on{rel}}(\Shv_{\CG^G}(\Gr_G)^{I,\on{ren}}) \text{ and }
'\bICs_{q,x}\in \bHecke_{\on{rel}}(\Shv_{\CG^G}(\Gr_G)^{I,\on{ren}}).$$

\sssec{}  \label{sss:semiinf Iw}

We recall (see, e.g., \cite[Proposition 5.2.2]{Ga6}) that the (partially defined) functor 
$$\on{Av}_!^{\fL(N)}:\Shv_{\CG^G}(\Gr_G)^{\fL^+(T)}\to \Shv_{\CG^G}(\Gr_G)^{\fL(N)\cdot \fL^+(T)}=:\SI_{q,x}(G)^{\fL^+(T)}.$$
restricted to 
$$\Shv_{\CG^G}(\Gr_G)^I\subset \Shv_{\CG^G}(\Gr_G)^{\fL^+(T)}$$
defines an equivalence
$$\Shv_{\CG^G}(\Gr_G)^I\to \SI_{q,x}(G)^{\fL^+(T)}.$$

\medskip

The inverse equivalence is given by $\on{Av}_*^{\overset{\circ}{I}}$, where $\overset{\circ}{I}$ is the unipotent radical of $I$. 
%The above functor restricted to $\SI_{q,x}(G)^{\fL^+(T)}$ maps \emph{isomorphially} to 
%$\on{Av}_*^{I^-}$, where $I^-=I\cap \fL^+(N^-)$. 

\medskip

This equivalence is compatible with the right action of $\Rep(H)$ and with the left action of $\Rep(T_H)$. 
Hence, it induces equivalences 
$$\on{Av}_!^{\fL(N)}:\on{DrPl}(\Shv_{\CG^G}(\Gr_G)^I)\to \on{DrPl}(\SI_{q,x}(G)^{\fL^+(T)})$$
and
$$\on{Av}_!^{\fL(N)}:\BHecke_{\on{rel}}(\Shv_{\CG^G}(\Gr_G)^I)\to \BHecke_{\on{rel}}(\on{DrPl}(\SI_{q,x}(G)^{\fL^+(T)}),$$
which intertwine the functors $\sfj^*$ and $\sfj_*$. 

\sssec{}

By a slight abuse of notation, let us denote by the same symbol $(\delta_{1,\Gr})^{\on{DrPl},I}$ the image of 
(what was previously denoted) $(\delta_{1,\Gr})^{\on{DrPl},I}$ under the functor
$$\on{DrPl}(\Shv_{\CG^G}(\Gr_G)^{I,\on{ren}}\to \on{DrPl}(\Shv_{\CG^G}(\Gr_G)^I)$$
induced by the functor $\on{un-ren}$ of \eqref{e:unren I}.  We will use a similar convention for 
$\wt\CF^\semiinf_{\on{rel}}$. 

\medskip

We claim:
\begin{prop}  \label{p:F and semiinf}
The functor $\on{Av}_!^{\fL(N)}$ sends 
$$(\delta_{1,\Gr})^{\on{DrPl},I}\in \on{DrPl}(\Shv_{\CG^G}(\Gr_G)^I)$$
to 
$$((\bi_{0})_!(\omega_{S^0}))^{\on{DrPl}}\in \on{DrPl}(\SI_{q,x}(G)^{\fL^+(T)}).$$
\end{prop}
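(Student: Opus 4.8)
The statement to be proved, \propref{p:F and semiinf}, asserts that the equivalence
$$\on{Av}_!^{\fL(N)}:\on{DrPl}(\Shv_{\CG^G}(\Gr_G)^I)\to \on{DrPl}(\SI_{q,x}(G)^{\fL^+(T)})$$
from \secref{sss:semiinf Iw} carries the Drinfeld-Pl\"ucker object $(\delta_{1,\Gr})^{\on{DrPl},I}$ to $((\bi_{0})_!(\omega_{S^0}))^{\on{DrPl}}$. Since $\on{DrPl}(-)$ of a category is the category of modules over the algebra object $\on{Fun}(\ol{H/N_H})$ in the corresponding module category (as in \secref{ss:Dr-Pl}), an object of $\on{DrPl}$ is the same datum as its underlying object together with the structure maps \eqref{e:DrPl maps}, and a map of $\on{DrPl}$-objects is a map of underlying objects compatible with those structure maps. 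Moreover the functor $\on{Av}_!^{\fL(N)}$ on $\on{DrPl}$ is obtained from the functor on the underlying categories by the universal property, so it suffices to (i) identify the underlying objects, and (ii) match the Drinfeld-Pl\"ucker structure maps. The plan is to do exactly these two things.

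\textbf{Step 1: match underlying objects.} First I would recall that by construction $(\delta_{1,\Gr})^{\on{DrPl},I}$ has underlying object $\delta_{1,\Gr}\in \Shv_{\CG^G}(\Gr_G)^I$, and $((\bi_0)_!(\omega_{S^0}))^{\on{DrPl}}$ has underlying object $(\bi_0)_!(\omega_{S^0})\in \SI_{q,x}(G)^{\fL^+(T)}$. So I need the identification
$$\on{Av}_!^{\fL(N)}(\delta_{1,\Gr})\simeq (\bi_0)_!(\omega_{S^0}).$$
This is precisely the $\gamma=0$ case of the canonical identification $\on{Av}_!^{\fL(N)}(\delta_{t^\gamma,\Gr})\simeq (\bi_\gamma)_!(\omega_{S^\gamma})$, which is recorded (for $\gamma\in\Lambda^\sharp$) in the discussion preceding the construction of $((\bi_0)_!(\omega_{S^0}))^{\on{DrPl}}$ in \secref{ss:Dr-Pl semiinf}. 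Here $S^0 = \fL(N)\cdot t^0$, the orbit of the base point, and $\bi_0:\ol S^0\hookrightarrow \Gr_G$ (note $\ol S^0$ is the relevant closure, and $(\bi_0)_!$ is $!$-extension). So this step is essentially a citation of a fact already in the excerpt.

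\textbf{Step 2: match the structure maps.} This is the substantive point and where I expect the main obstacle. Both Drinfeld-Pl\"ucker structures are \emph{defined} by adjunction from more elementary maps: the structure on $(\delta_{1,\Gr})^{\on{DrPl},I}$ comes, via \eqref{e:DrPl maps I} and \eqref{e:DrPl maps I adj}, from the maps $j_{\gamma,!}\underset{I}\star\delta_{1,\Gr}\to \IC_{q,\ol\Gr^\gamma_G}$ attached to the open embeddings $I\cdot t^\gamma \fL^+(G)/\fL^+(G)\hookrightarrow \Gr^\gamma_G$; meanwhile the structure on $((\bi_0)_!(\omega_{S^0}))^{\on{DrPl}}$ is, by the very construction in \secref{ss:Dr-Pl semiinf}, obtained by applying $\on{Av}_!^{\fL(N)}$ (level-wise) to $(\delta_{1,\Gr})^{\on{DrPl},\fL^+(T)}$, whose structure maps are in turn adjoint to the maps \eqref{e:transition map pre initial}. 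So what I must check is that the two routes — (a) apply $\on{Av}_!^{\fL(N)}$ to the $I$-equivariant Drinfeld-Pl\"ucker structure, and (b) the $\fL^+(T)$-equivariant one built directly — agree. The key is the compatibility of $\on{Av}_!^{\fL(N)}$ with convolutions: since $\on{Av}_!^{\fL(N)}$ is the (partially defined) left adjoint to a forgetful functor and the relevant convolutions $-\underset{I}\star-$ and $-\underset{\fL^+(T)}\star-$ are given by proper pushforwards (this is exactly the input that makes $\on{Av}_!^{\fL(N)}$ induce functors on $\on{DrPl}$ and $\BHecke_{\on{rel}}$, as noted in \secref{sss:semiinf Iw}), I would trace the elementary maps through. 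Concretely: under $\on{Av}_!^{\fL(N)}$, the standard object $j_{\gamma,!}$ maps (after convolving with $\delta_{1,\Gr}$ and averaging) to $(\bi_\gamma)_!(\omega_{S^\gamma})\simeq \on{Av}_!^{\fL(N)}(\delta_{t^\gamma,\Gr})$, and the map $j_{\gamma,!}\underset{I}\star\delta_{1,\Gr}\to \IC_{q,\ol\Gr^\gamma_G}$ — coming from the open embedding $I t^\gamma \fL^+(G)/\fL^+(G)\hookrightarrow \Gr^\gamma_G$ — becomes, after $\on{Av}_!^{\fL(N)}$, the composite expressing $(\bi_\gamma)_!(\omega_{S^\gamma})\to \IC_{q,\ol\Gr^\gamma_G}$ through the $\fL(N)$-orbit $S^\gamma$ inside $\Gr^\gamma_G$; taking adjoints, this is precisely the map \eqref{e:transition map pre initial} in the description of $(\delta_{1,\Gr})^{\on{DrPl},\fL^+(T)}$, hence of $((\bi_0)_!(\omega_{S^0}))^{\on{DrPl}}$. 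I would organize this as a diagram chase: write out the defining square \eqref{e:DrPl comp} for both structures and verify termwise that $\on{Av}_!^{\fL(N)}$ carries one to the other, using that $\on{Av}_!^{\fL(N)}$ commutes with the convolution actions of $\Sph_{q,x}(G)$ and of $\Rep(T_H)$ (via the $J_\gamma$'s resp. the $\delta_{t^\gamma}$'s), both given by proper pushforwards.

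\textbf{The main obstacle} will be the bookkeeping of Step 2 — ensuring that the $\fL(N)$-averaging functor, the convolutions on the left (by $j_{\gamma,!}$, resp. $J_\gamma$) and on the right (by $\Sph_{q,x}(G)$), and the passage to left adjoints all interact coherently, i.e. that the identification of structure maps respects the full homotopy-coherent system \eqref{e:DrPl comp} and not merely the first layer. Here I would lean on two facts already established in the excerpt: first, that $\on{Av}_!^{\fL(N)}$ genuinely induces equivalences on $\on{DrPl}$ and $\BHecke_{\on{rel}}$ intertwining $\sfj^*,\sfj_*$ (\secref{sss:semiinf Iw}), which reduces coherence questions to the underlying categories; and second, that the higher compatibilities for $(\delta_{1,\Gr})^{\on{DrPl},I}$ hold automatically because the objects $j_{\gamma,!}\underset{I}\star\delta_{1,\Gr}$ and $\IC_{q,\ol\Gr^\gamma_G}$ lie in the heart of the $t$-structure (as remarked right after \eqref{e:DrPl maps I adj}). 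Combined with the fact — provable as in \propref{p:ident semiinf as DrPl}, citing \cite[Sect. 2.7]{Ga6} — that $(\delta_{1,\Gr})^{\on{DrPl},\fL^+(T)}$ is itself the essentially unique Drinfeld-Pl\"ucker lift of $\delta_{1,\Gr}$ in the $\fL^+(T)$-equivariant setting with the given first-layer maps, this pins down the identification and finishes the proof. Finally I would observe that, as in \eqref{e:Av N DrPl}, applying $\on{Av}_!^{\fL(N)}$ to $\wt\CF^\semiinf_{\on{rel}}=\sfj^*((\delta_{1,\Gr})^{\on{DrPl},I})$ then yields $\sfj^*(((\bi_0)_!(\omega_{S^0}))^{\on{DrPl}})\simeq {}'\tICs_{q,x}$, recovering the promised relation between $\wt\CF^\semiinf_{\on{rel}}$ and the semi-infinite IC sheaf.
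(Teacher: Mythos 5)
Your plan reproduces the paper's argument: the paper's one-line proof likewise reduces to matching the first-layer structure maps — observing that the objects $j_{-\gamma,*}\underset{I}\star\delta_{1,\Gr}$ equipped with the maps $\IC_{q,\ol\Gr_G^{-w_0(\gamma)}}\to j_{-\gamma,*}\underset{I}\star\delta_{1,\Gr}$ are recovered from the $\fL^+(T)$-side data $\on{Av}_!^{\fL(N)}(\delta_{t^{-\gamma},\Gr})[\cdots]$ with the maps coming from \eqref{e:transition map pre initial} by applying the inverse functor $\on{Av}_*^{\overset{\circ}{I}}$ — and relies on the same inputs (the equivalence of \secref{sss:semiinf Iw}, its compatibility with the $\Rep(H)$- and $\Rep(T_H)$-actions, and the heart-of-the-$t$-structure reason that higher coherences are automatic). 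Your Step 2 and the paper's proof are the same check read in opposite directions via the mutually inverse equivalences $\on{Av}_!^{\fL(N)}$ and $\on{Av}_*^{\overset{\circ}{I}}$.
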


\begin{proof} 

The proof follows from the fact that the objects $j_{-\gamma,*}$ for $\gamma\in \Lambda^+_H$
equipped with the maps 
$$\IC_{q,\ol\Gr_G^{-w_0(\gamma)}}\to j_{-\gamma,*}$$
can be obtained from the objects
$\on{Av}_!^{\fL(N)}(\delta_{t^\gamma,\Gr})[\langle -\gamma,2\check\rho\rangle]$ 
equipped with the maps 
$$\on{Av}_!^{\fL(N)}(\delta_{1,\Gr})\underset{\fL^+(G)}\star \IC_{q,\ol\Gr_G^{-w_0(\gamma)}}\to 
\on{Av}_!^{\fL(N)}(\delta_{t^{-\gamma},\Gr})[\langle -\gamma,2\check\rho\rangle]$$
by applying the functor $\on{Av}_*^{\overset{\circ}{I}}$. 

\end{proof}

\begin{cor}  \label{c:F and semiinf}
The functor $\on{Av}_!^{\fL(N)}$ sends 
$$\wt\CF{}^\semiinf_{\on{rel}}\in \BHecke_{\on{rel}}(\Shv_{\CG^G}(\Gr_G)^I) \mapsto 
{}'\tICs_{q,x}\in \BHecke_{\on{rel}}(\on{DrPl}(\SI_{q,x}(G)^{\fL^+(T)})$$
and 
$$\overset{\bullet}\CF{}^\semiinf_{\on{rel}}\in \bHecke_{\on{rel}}(\Shv_{\CG^G}(\Gr_G)^I) \mapsto 
{}'\bICs_{q,x}\in \bHecke_{\on{rel}}(\on{DrPl}(\SI_{q,x}(G)^{\fL^+(T)}).$$
\end{cor}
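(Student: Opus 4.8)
\textbf{Plan for the proof of \corref{c:F and semiinf}.}
The corollary asserts that the equivalence $\on{Av}_!^{\fL(N)}$ from \secref{sss:semiinf Iw} carries the Drinfeld--Pl\"ucker-constructed object $\wt\CF{}^\semiinf_{\on{rel}}$ to $'\tICs_{q,x}$ (and, upon applying $\Res^{B_H}_{T_H}$, the overset-bullet versions to one another). The essential input is already isolated in \propref{p:F and semiinf}, which identifies the images of $(\delta_{1,\Gr})^{\on{DrPl},I}$ and $((\bi_{0})_!(\omega_{S^0}))^{\on{DrPl}}$ under $\on{Av}_!^{\fL(N)}$ at the level of $\on{DrPl}$-objects. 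So the plan is simply to propagate this identification through the two functors $\sfj^*$ and $\Res^{B_H}_{T_H}$, using that $\on{Av}_!^{\fL(N)}$ is compatible with all the relevant structures.

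\textbf{Step 1: reduce to the $\on{DrPl}$-level statement.} By definition $\wt\CF{}^\semiinf_{\on{rel}}=\sfj^*((\delta_{1,\Gr})^{\on{DrPl},I})$ and, by the construction recorded at the end of \secref{ss:Dr-Pl semiinf}, $'\tICs_{q,x}=\sfj^*(((\bi_{0})_!(\omega_{S^0}))^{\on{DrPl}})$ (viewed inside $\BHecke_{\on{rel}}(\SI_{q,x}(G)^{\fL^+(T)})$ via the fully faithful embedding into $\BHecke_{\on{rel}}(\Shv_{\CG^G}(\Gr_G)^{\fL^+(T)})$). As noted in \secref{sss:semiinf Iw}, the equivalence $\on{Av}_!^{\fL(N)}$ is compatible with the right $\Rep(H)$-action and the left $\Rep(T_H)$-action, hence induces equivalences on the $\on{DrPl}$- and $\BHecke_{\on{rel}}$-levels that intertwine $\sfj^*$ (and $\sfj_*$). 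Therefore applying $\sfj^*$ to the isomorphism of \propref{p:F and semiinf} yields precisely $\on{Av}_!^{\fL(N)}(\wt\CF{}^\semiinf_{\on{rel}})\simeq {}'\tICs_{q,x}$ in $\BHecke_{\on{rel}}(\Shv_{\CG^G}(\Gr_G)^I)$ (equivalently, in $\BHecke_{\on{rel}}(\SI_{q,x}(G)^{\fL^+(T)})$ after transporting along the equivalence). One caveat to check carefully: the passage between $\Shv_{\CG^G}(\Gr_G)^{I,\on{ren}}$ and $\Shv_{\CG^G}(\Gr_G)^I$ via $\on{un-ren}$, used to even make sense of $\on{Av}_!^{\fL(N)}$ on these objects; but this is exactly the notational convention fixed right before \propref{p:F and semiinf}, so nothing new is needed.

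\textbf{Step 2: deduce the overset-bullet statement.} The second isomorphism, $\on{Av}_!^{\fL(N)}(\overset{\bullet}\CF{}^\semiinf_{\on{rel}})\simeq {}'\bICs_{q,x}$, follows by applying $\Res^{B_H}_{T_H}$ to the first. Indeed $\overset{\bullet}\CF{}^\semiinf_{\on{rel}}:=\Res^{B_H}_{T_H}(\wt\CF{}^\semiinf_{\on{rel}})$ by definition, and $'\bICs_{q,x}=\Res^{B_H}_{T_H}({}'\tICs_{q,x})$ is the content of \propref{p:ident semiinf as DrPl}. Since $\on{Av}_!^{\fL(N)}$, being compatible with the $\Rep(H)$- and $\Rep(T_H)$-module structures, commutes with the functor $\Res^{B_H}_{T_H}:\BHecke_{\on{rel}}(-)\to \bHecke_{\on{rel}}(-)$ (which is induced functorially from $\Res^{B_H}_{T_H}:\Rep(B_H)\to \Rep(T_H)$, see \secref{sss:rel B}), the claim is immediate.

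\textbf{Main obstacle.} There is no deep obstacle here: \corref{c:F and semiinf} is a formal consequence of \propref{p:F and semiinf} together with the structural compatibilities of $\on{Av}_!^{\fL(N)}$. The one point requiring genuine care is bookkeeping of the \emph{homotopy-coherent} $\on{DrPl}$-structures: one must verify that the isomorphism produced in \propref{p:F and semiinf} is an isomorphism of $\on{DrPl}$-objects (not merely of the underlying objects of $\Shv_{\CG^G}(\Gr_G)^I$ resp. $\SI_{q,x}(G)^{\fL^+(T)}$), i.e.\ that it intertwines the full systems of structure maps \eqref{e:DrPl maps}, and that $\sfj^*$ and $\Res^{B_H}_{T_H}$ are applied at that coherent level. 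The proof of \propref{p:F and semiinf} already arranges this by realizing both families of structure maps as images under $\on{Av}_*^{\overset{\circ}{I}}$ of a single system, so the coherence is automatic; what remains is to spell out that $\on{Av}_!^{\fL(N)}$ and its inverse $\on{Av}_*^{\overset{\circ}{I}}$ respect the lax/colax structures defining $\on{DrPl}(-)$ and $\BHecke_{\on{rel}}(-)$ as tensor products over $\Rep(H)\otimes\Rep(T_H)$ and $\Rep(H)$, which follows because these averaging functors are maps of module categories over the relevant Hecke monoidal categories.
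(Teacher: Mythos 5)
Your proposal is correct and matches the paper's intent: the corollary is stated without proof precisely because it is the formal consequence of \propref{p:F and semiinf} obtained by applying $\sfj^*$ (using the compatibility of $\on{Av}_!^{\fL(N)}$ with $\sfj^*$ established in \secref{sss:semiinf Iw}) and then $\Res^{B_H}_{T_H}$ together with \propref{p:ident semiinf as DrPl}. Your remark on verifying the homotopy-coherent $\on{DrPl}$-structure is a reasonable point of care, but as you note it is already handled in the proof of \propref{p:F and semiinf}.
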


\ssec{A twist by $w_0$}

We will now perform a (rather elementary) manipulation with $\wt\CF{}^\semiinf_{\on{rel}}$--a twist by the longest element
of the Weyl group. This way, we will define the object $\wt\CF{}^{\semiinf,-}_{\on{rel}}$ that we are actually after. 

\sssec{}

Consider the object
$$j_{w_0,*}\in \Shv(G/B)^B\subset \Shv_{\CG^G}(\Fl_G)^I.$$

Define
$$\wt\CF^{\semiinf,w_0}:=j_{w_0,*}\underset{I}\star \wt\CF^\semiinf[-d]\in \BHecke(\Shv_{\CG^G}(\Gr_G)^{I,\on{ren}});$$
$$\overset{\bullet}\CF{}^{\semiinf,w_0}:=j_{w_0,*}\underset{I}\star \overset{\bullet}\CF{}^\semiinf[-d]\in \bHecke(\Shv_{\CG^G}(\Gr_G)^{I,\on{ren}}),$$
where $d=\dim(G/B)$.

\sssec{}

Note that by \secref{ss:rel non-rel}, the object $\wt\CF^{\semiinf,w_0}$ can be thought of as obtained by applying the
functor $\sfj^*$ to an object of 
$$\on{DrPl}(\Rep(T_H)\otimes \Shv_{\CG^G}(\Gr_G)^{I,\on{ren}})$$
with components
\begin{equation} \label{e:components key} 
j_{w_0,*}\underset{I}\star J_{\gamma,!} \underset{I}\star \delta_{1,\Gr}[-d].
\end{equation} 

Note that for $\gamma\in \Lambda^+_H$, we have
$$j_{w_0,*}\underset{I}\star J_{-\gamma,!}\simeq j_{w_0,*}\underset{I}\star j_{-\gamma,*}\simeq
j_{-w_0(\gamma),*} \underset{I}\star j_{w_0,*}.$$

Hence, the above object identifies with
\begin{equation} \label{e:components key dom}
j_{-w_0(\gamma),*} \underset{I}\star \delta_{1,\Gr},
\end{equation}
where we have used the fact that $j_{w_0,*} \underset{I}\star \delta_{1,\Gr} \simeq \delta_{1,\Gr}[d]$. 

\sssec{}

We claim:

\begin{prop}  \label{p:baby in heart w0}
The object $\wt\CF^{\semiinf,w_0}$ belongs to $(\BHecke(\Shv_{\CG^G}(\Gr_G)^{I,\on{ren}}))^\heartsuit$; the same
is true for $\overset{\bullet}\CF{}^{\semiinf,w_0}$.
\end{prop}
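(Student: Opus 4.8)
The statement to prove is \propref{p:baby in heart w0}: the object $\wt\CF^{\semiinf,w_0}=j_{w_0,*}\underset{I}\star \wt\CF^\semiinf[-d]$ lies in $(\BHecke(\Shv_{\CG^G}(\Gr_G)^{I,\on{ren}}))^\heartsuit$, and similarly for $\overset{\bullet}\CF{}^{\semiinf,w_0}$. Since the restriction functor $\Res^{B_H}_{T_H}:\BHecke(\Shv_{\CG^G}(\Gr_G)^{I,\on{ren}})\to \bHecke(\Shv_{\CG^G}(\Gr_G)^{I,\on{ren}})$ is t-exact and conservative (and sends $\wt\CF^{\semiinf,w_0}$ to $\overset{\bullet}\CF{}^{\semiinf,w_0}$, because convolution with $j_{w_0,*}$ on the left commutes with it), it suffices to establish the claim for $\overset{\bullet}\CF{}^{\semiinf,w_0}$. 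So the whole problem reduces to a t-exactness statement in $\bHecke(\Shv_{\CG^G}(\Gr_G)^{I,\on{ren}})$, which by \secref{sss:t on Hecke} and the t-exactness of the $\Rep(H)$-action on $\Shv_{\CG^G}(\Gr_G)^{I,\on{ren}}$ can be checked after applying $\oblv_{\bHecke}$, i.e. level-wise in $\Rep(T_H)\otimes \Shv_{\CG^G}(\Gr_G)^{I,\on{ren}}$.

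\textbf{Key steps.} First I would use the explicit presentation established just above the statement: by \secref{ss:rel non-rel} and formula \eqref{e:DrPl adj expl non-rel}, the underlying object $\oblv_{\bHecke}\circ\Res^{B_H}_{T_H}(\overset{\bullet}\CF{}^{\semiinf,w_0})$ is, in each $T_H$-graded component $\gamma'$, the filtered colimit
\begin{equation*}
\underset{\gamma\in \Lambda^+_H}{\on{colim}}\, \left(j_{w_0,*}\underset{I}\star J_{-\gamma+\gamma',!} \underset{I}\star \delta_{1,\Gr}\right)\underset{\fL^+(G)}\star \IC_{q,\ol\Gr^\gamma_G}[-d].
\end{equation*}
Because $\Lambda^+_H$ is a filtered poset and a filtered colimit of objects in the heart is in the heart, and because convolution with objects of $\Sph_{q,x}(G)$ is t-exact, it is enough to show that for a fixed $\gamma'$ and a \emph{cofinal} family of $\gamma\in \Lambda^+_H$ the object $j_{w_0,*}\underset{I}\star J_{-\gamma+\gamma',!} \underset{I}\star \delta_{1,\Gr}[-d]$ lies in the heart of $\Shv_{\CG^G}(\Gr_G)^{I,\on{ren}}$. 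Take $\gamma$ so that $\gamma-\gamma'=:\gamma_0$ is dominant regular in $\Lambda_H^+$; then $J_{-\gamma_0,!}=j_{-\gamma_0,*}$, and using $j_{w_0,*}\underset{I}\star j_{-\gamma_0,*}\simeq j_{-w_0(\gamma_0),*}\underset{I}\star j_{w_0,*}$ together with $j_{w_0,*}\underset{I}\star \delta_{1,\Gr}\simeq \delta_{1,\Gr}[d]$ (as in \eqref{e:components key}--\eqref{e:components key dom}), the object in question identifies with $j_{-w_0(\gamma_0),*}\underset{I}\star \delta_{1,\Gr}$. Now $-w_0(\gamma_0)$ is again dominant regular, so exactly as in the proof of \propref{p:baby in heart} the map $I\cdot t^{-w_0(\gamma_0)}\cdot I/I\to I\cdot t^{-w_0(\gamma_0)}\cdot \fL^+(G)/\fL^+(G)$ is a fibration into affine spaces of dimension $d$ while $I\cdot t^{-w_0(\gamma_0)}\cdot \fL^+(G)/\fL^+(G)\hookrightarrow \Gr_G$ is an affine embedding; hence $j_{-w_0(\gamma_0),*}\underset{I}\star \delta_{1,\Gr}$ sits in perverse degree $d$ (the $*$-pushforward along an affine embedding is right t-exact, and the $[-d]$ shift together with the affine-space fibration produce left t-exactness), so after the shift $[-d]$ it is in the heart. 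Since $\Shv_{\CG^G}(\Gr_G)^{I,\on{ren}}$ has an Artinian t-structure this computation is unambiguous.

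\textbf{Main obstacle.} The delicate point is the bookkeeping of cohomological shifts: one must be careful that the shift $[-d]$ built into the definition of $\wt\CF^{\semiinf,w_0}$ exactly cancels the shift $[d]$ coming from \propref{p:baby in heart} (where $\overset{\bullet}\CF{}^\semiinf$ was shown to lie in $\heartsuit[d]$), and that the rewriting $j_{w_0,*}\underset{I}\star j_{-\gamma_0,*}\simeq j_{-w_0(\gamma_0),*}\underset{I}\star j_{w_0,*}$ is compatible with all the transition maps in the colimit defining the Drinfeld--Pl\"ucker object, so that the colimit is genuinely filtered through heart objects and no higher cohomology is created in the limit. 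Verifying that the transition maps \eqref{e:DrPl adj expl non-rel} preserve this picture (rather than just the individual terms) is the step where one actually has to think; the rest is the same affineness/fibration argument already used for \propref{p:baby in heart}. I would also remark that, alternatively, one can deduce the statement from \corref{c:F and semiinf} together with the t-exactness properties of $\on{Av}_!^{\fL(N)}$ and the known location of $\ICs_{q,x}$ in its t-structure, but the direct argument above is cleaner.
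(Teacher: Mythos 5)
Your reduction to the colimit terms and your identification of the relevant object with $j_{-w_0(\gamma_0),*}\underset{I}\star \delta_{1,\Gr}$ (via $j_{w_0,*}\underset{I}\star j_{-\gamma_0,*}\simeq j_{-w_0(\gamma_0),*}\underset{I}\star j_{w_0,*}$ and $j_{w_0,*}\underset{I}\star \delta_{1,\Gr}\simeq \delta_{1,\Gr}[d]$) match the paper exactly. But the final geometric step is wrong, in a way that leaves the argument with a genuine gap.

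You claim that for $-w_0(\gamma_0)$ dominant regular the map
$$I\cdot t^{-w_0(\gamma_0)}\cdot I/I\to I\cdot t^{-w_0(\gamma_0)}\cdot \fL^+(G)/\fL^+(G)$$
is a fibration into affine spaces of dimension $d$, "exactly as in the proof of \propref{p:baby in heart}." This is not the case: in that proof the relevant coweight $-\gamma_0$ is \emph{antidominant}, and that is why the projection from the Iwahori orbit to the spherical orbit has $d$-dimensional affine fibers. For a \emph{dominant} coweight such as $-w_0(\gamma_0)$, the projection is an \emph{isomorphism} — this is precisely the geometric input the paper uses here. Since the projection is an isomorphism and the inclusion $I\cdot t^{-w_0(\gamma_0)}\cdot \fL^+(G)/\fL^+(G)\hookrightarrow\Gr_G$ is affine, $j_{-w_0(\gamma_0),*}\underset{I}\star \delta_{1,\Gr}$ is directly the $*$-extension of a perverse sheaf along an affine open embedding, hence already lies in the heart, and the proof is done. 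Your subsequent bookkeeping — that this object "sits in perverse degree $d$" and "after the shift $[-d]$ it is in the heart" — is both geometrically incorrect (it sits in degree $0$) and logically incoherent, since you have already spent the $[-d]$ from the definition of $\wt\CF^{\semiinf,w_0}$ against $j_{w_0,*}\underset{I}\star \delta_{1,\Gr}\simeq \delta_{1,\Gr}[d]$ in the identification one sentence earlier; there is no further shift available to absorb a spurious degree $d$. The two errors happen to cancel to the correct conclusion, but the argument as written does not establish it.
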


\begin{proof}

As in the proof of \propref{p:baby in heart}, it suffices to show that the objects 
$$j_{w_0,*}\underset{I}\star J_{-\gamma,!} \underset{I}\star \delta_{1,\Gr}\underset{\fL^+(G)}\star \IC_{q,\ol\Gr^\gamma_G}[-d]$$
belong to $(\Shv_{\CG^G}(\Gr_G)^{I,\on{ren}}))^\heartsuit$ for $\gamma \in \Lambda^+$. Using \eqref{e:components key dom},
we rewrite the above object as
$$j_{-w_0(\gamma),*} \underset{I}\star \delta_{1,\Gr} \star \IC_{q,\ol\Gr^\gamma_G}.$$

\medskip 

Since the functor $-\underset{\fL^+(G)}\star \IC_{q,\ol\Gr^\gamma_G}$ is t-exact, it suffices to show that 
$$j_{\gamma,*} \underset{I}\star \delta_{1,\Gr}\in (\Shv_{\CG^G}(\Gr_G)^{I,\on{ren}}))^\heartsuit$$
for $\gamma\in \Lambda^+_H$. 

\medskip

The latter follows from the fact that the projection
$$I\cdot t^{\gamma}\cdot I/I\to I\cdot t^{\gamma}\cdot \fL^+(G)/\fL^+(G)$$
is an isomorphism, while the inclusion
$$I\cdot t^{\gamma}\cdot \fL^+(G)/\fL^+(G)\hookrightarrow \Gr_G$$
is affine.

\end{proof}

\sssec{Digression}

Note that we have a \emph{canonical} equivalence
$$\Rep(B_H)\to \Rep(B^-_H)$$
as $\Rep(H)$-module categories.

\medskip

Indeed, choose of a representative $\sw'_0$ of $w_0\in W_H$. Then conjugation by
$\sw'_0$ defines the required functor. Now, as two such choices differ by an element in
$B_H$, the corresponding functors are canonically identified.

\medskip

Similarly, the action of $w_0$ defines a canonical self-equivalence of $\Rep(T_H)$ as a $\Rep(H)$-module 
category. 

\medskip

In particular, for $\bC$ acted on by $\Rep(H)$, we have canonical equivalences 
$$\BHecke(\bC)\overset{w_0}\to \BmHecke(\bC),$$
and
$$\bHecke(\bC)\overset{w_0}\to \bHecke(\bC),$$
that make the diagram
$$
\CD
\BHecke(\bC)  @>{w_0}>>  \BmHecke(\bC) \\
@V{\Res^{B_H}_{T_H}}VV   @VV{\Res^{B^-_H}_{T_H}}V   \\
\bHecke(\bC) @>{w_0}>> \bHecke(\bC) 
\endCD
$$
commute. 

\sssec{}

Define the objects
$$\wt\CF^{\semiinf,-}:=w_0(\wt\CF^{\semiinf,w_0})\in \BmHecke(\Shv_{\CG^G}(\Gr_G)^{I,\on{ren}})$$
and
$$\overset{\bullet}\CF{}^{\semiinf,-}:=w_0(\overset{\bullet}\CF{}^{\semiinf,w_0})\in \bHecke(\Shv_{\CG^G}(\Gr_G)^{I,\on{ren}}).$$

Note that we have
$$\overset{\bullet}\CF{}^{\semiinf,-}\simeq \Res^{B^-_H}_{T_H}(\wt\CF^{\semiinf,-}).$$

For completeness, define also
$$\CF^{\semiinf,-}:\Res^{B^-_H}(\wt\CF^{\semiinf,-})\simeq \Res^{T_H}(\overset{\bullet}\CF{}^{\semiinf,-})\in 
\Hecke(\Shv_{\CG^G}(\Gr_G)^{I,\on{ren}}).$$

\sssec{}  \label{sss:system for F-}

By construction, the object $\wt\CF^{\semiinf,-}$ is obtained by applying the functor $\sfj^*$ to an object of
$\on{DrPl}^-(\Rep(T_H)\otimes \Shv_{\CG^G}(\Gr_G)^{I,\on{ren}})$ that corresponds, in terms of \secref{sss:DrPl adj expl non-rel},
to the system
$$\gamma\mapsto j_{w_0,*}\underset{I}\star J_{w_0(\gamma),!} \underset{I}\star \delta_{1,\Gr}[-d].$$

Note that for $\gamma\in \Lambda^+_H$, the above object identifies with
\begin{equation} \label{e:components key dom w0}
j_{\gamma,*} \underset{I}\star \delta_{1,\Gr}.
\end{equation} 

Hence, according to \eqref{e:DrPl adj expl rel}, the object
$$\oblv_{\bHecke}\circ \Res^{B^-_H}_{T_H}(\wt\CF^{\semiinf,-})\in \Rep(T_H)\otimes \Shv_{\CG^G}(\Gr_G)^{I,\on{ren}}$$
is given by
\begin{equation} \label{e:key object as colimit}
\gamma'\rightsquigarrow 
\underset{\gamma}%\in (-\gamma_0+\Lambda^+_H)\cap \Lambda^+_H}
{\on{colim}}\, j_{\gamma+\gamma',*} \underset{I}\star \IC_{q,\ol\Gr^{-w_0(\gamma)}_G},
\end{equation} 
where the colimit runs over the set $\gamma\in (-\gamma'+\Lambda^+_H)\cap \Lambda^+_H$. 

%\sssec{Historical remarks}

\ssec{Finiteness properties of $\wt\CF^{\semiinf,-}$}

In this subsection we will state a crucial finiteness property of the object $\wt\CF^{\semiinf,-}$ constructed above. 
It will instrumental in establishing the required properties of the objects $\bCM^{\mu,*}_{\Whit}$.

\sssec{}

Recall, following \eqref{e:to glob sect}, that according to \secref{sss:system for F-}, 
for $\gamma\in \Lambda^+_H$, there exists a canonical map
\begin{equation} \label{e:coind baby Verma}
j_{\gamma,*} \underset{I}\star \delta_{1,\Gr} \to \coInd^H_{B^-_H}(\wt\CF^{\semiinf,-}\otimes \sfe^\gamma).
\end{equation}  

We are going to prove:

\begin{thm} \label{t:finiteness of baby Verma}  \hfill

\smallskip

\noindent{\em(a)} The object $\wt\CF^{\semiinf,-}\in \BmHecke(\Shv_{\CG^G}(\Gr_G)^{I,\on{ren}})$
is compact.

\smallskip

\noindent{\em(b)} The maps \eqref{e:coind baby Verma} (with $\gamma\in \Lambda^+_H$) are isomorphisms.

\end{thm}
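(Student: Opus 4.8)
The two assertions are closely intertwined, so the plan is to prove them together, deducing compactness in (a) from the explicit description in (b). The starting point is the colimit presentation \eqref{e:key object as colimit} of the object
$$\oblv_{\bHecke}\circ \Res^{B^-_H}_{T_H}(\wt\CF^{\semiinf,-})\in \Rep(T_H)\otimes \Shv_{\CG^G}(\Gr_G)^{I,\on{ren}},$$
namely, for every $\gamma'\in \Lambda_H$ its $\gamma'$-component is
$$\underset{\gamma}{\on{colim}}\, j_{\gamma+\gamma',*} \underset{I}\star \IC_{q,\ol\Gr^{-w_0(\gamma)}_G},$$
the colimit running over $\gamma\in (-\gamma'+\Lambda^+_H)\cap \Lambda^+_H$. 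The key claim, which will yield both (a) and (b), is that this colimit \emph{stabilizes}: for $\gamma'$ fixed and all $\gamma$ large enough in $\Lambda^+_H$, the transition maps
$$j_{\gamma+\gamma',*}\underset{I}\star \IC_{q,\ol\Gr^{-w_0(\gamma)}_G}\to
j_{\gamma_1+\gamma+\gamma',*}\underset{I}\star \IC_{q,\ol\Gr^{-w_0(\gamma_1+\gamma)}_G}$$
are isomorphisms. Granting this stabilization, each $\gamma'$-component of $\oblv_{\bHecke}\circ \Res^{B^-_H}_{T_H}(\wt\CF^{\semiinf,-})$ is a single compact object of $\Shv_{\CG^G}(\Gr_G)^{I,\on{ren}}$ (a convolution of a standard object with a spherical IC sheaf), whence $\oblv_{\bHecke}\circ \Res^{B^-_H}_{T_H}(\wt\CF^{\semiinf,-})$ is compact. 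Since $\oblv_{\bHecke}$ is conservative and, on the relevant full subcategory, detects compactness, and since compactness is detected by $\Res^{B^-_H}_{T_H}$ by \lemref{l:detect compact by restr}, this gives (a). For (b), observe that by \secref{sss:DrPl and duality} (or directly, by unwinding the definition of $\sfj_*$ in \secref{ss:rel non-rel}), the object $\coInd^H_{B^-_H}(\wt\CF^{\semiinf,-}\otimes \sfe^\gamma)$ is computed by exactly the same colimit as in \eqref{e:key object as colimit}, read off at $\gamma'=\gamma$; and the map \eqref{e:coind baby Verma} is the structure map from the $\gamma$-term $j_{\gamma,*}\underset{I}\star \delta_{1,\Gr} = j_{\gamma,*}\underset{I}\star \IC_{q,\ol\Gr^0_G}$ into the colimit. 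So \eqref{e:coind baby Verma} is an isomorphism precisely because, for $\gamma\in\Lambda^+_H$, the colimit is already stable at its initial term.

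\textbf{Proof of stabilization.} The heart of the matter is therefore the geometric statement that for $\gamma_0\in\Lambda^+_H$ dominant (playing the role of $\gamma+\gamma'$) and $\gamma_1\in\Lambda^+_H$, the map
$$j_{\gamma_0,*}\underset{I}\star \IC_{q,\ol\Gr^{-w_0(\gamma_1)}_G}\to
j_{\gamma_1+\gamma_0,*}\underset{I}\star \IC_{q,\ol\Gr^{-w_0(\gamma_1)}_G}\underset{\fL^+(G)}\star\cdots$$
(the precise transition map of \eqref{e:key object as colimit}, obtained by inserting $(V^{\gamma_1})^*\otimes V^{\gamma_1}\to \mathbf 1$) is an isomorphism once $\gamma_0$ is sufficiently deep in the dominant chamber relative to $\gamma_1$. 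The cleanest way I would argue this is to unwind the transition map as the composite, appearing in \secref{sss:DrPl adj expl},
$$j_{\gamma_0,*}\underset{I}\star \delta_{1,\Gr} \to
j_{\gamma_0,*}\underset{I}\star J_{-\gamma_1,!}\underset{I}\star J_{\gamma_1,!}\underset{I}\star \delta_{1,\Gr}
\to \cdots \to j_{\gamma_1+\gamma_0,*}\underset{I}\star \delta_{1,\Gr}\underset{\fL^+(G)}\star \IC_{q,\ol\Gr^{-w_0(\gamma_1)}_G},$$
after convolving with $\IC_{q,\ol\Gr^{-w_0(\gamma_1)}_G}$ and using \propref{p:BMW} to write $j_{\gamma_0,*}\underset{I}\star J_{-\gamma_1,!}\simeq j_{\gamma_1+\gamma_0,*}\underset{I}\star\, (\text{a standard sheaf})$. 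Concretely, this reduces to the assertion that for $\gamma_0$ dominant regular and large enough compared to $\gamma_1$, the convolution map
$$j_{\gamma_0,*}\underset{I}\star \IC_{q,\ol\Gr^{\gamma_1}_G}\;\longrightarrow\; j_{\gamma_0+\gamma_1,*}\underset{I}\star\delta_{1,\Gr}\,[\text{shift}]$$
is an isomorphism — i.e.\ that ``$j_{\gamma_0,*}\underset{I}\star -$'' turns the spherical IC sheaf $\IC_{q,\ol\Gr^{\gamma_1}_G}$ into the corresponding standard sheaf on $\Fl_G$, up to the obvious translation, once $\gamma_0$ is far enough from the walls. This is a standard fact about the affine Hecke category (it is the ``Wakimoto'' / ``central sheaves become standard after twisting by a large antidominant'' phenomenon, used e.g.\ in \cite{ABBGM}, \cite{FG2}); in our metaplectic setting it follows verbatim since the gerbe $\CG^G$ is $I$-equivariantly trivialized on the relevant orbits $I\cdot t^{\gamma}\cdot I/I$ for $\gamma\in\Lambda^\sharp$, so the twisting does not interfere with the geometry of the convolution. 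The precise ``large enough'' bound on $\gamma_0$ depends only on $\gamma_1$ (one needs $\langle \gamma_0,\check\alpha_i\rangle$ larger than the $\check\alpha_i$-support of $\ol\Gr^{\gamma_1}_G$ for each simple root), which is exactly what is needed for cofinality of the stable part of the colimit.

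\textbf{Assembling (a) and (b).} Once stabilization is established, (b) is immediate as explained above: the map \eqref{e:coind baby Verma} is the inclusion of the (already terminal) initial term into a constant colimit. For (a), the stabilized value of the $\gamma'$-component is a single object of the form $j_{\gamma_0,*}\underset{I}\star\IC_{q,\ol\Gr^{\mu}_G}$, which lies in $(\Shv_{\CG^G}(\Gr_G)^{I,\on{ren}})^{\on{loc.c}}$ and hence is compact in $\Shv_{\CG^G}(\Gr_G)^{I,\on{ren}}$; moreover by \propref{p:baby in heart w0} (transported via the $w_0$-equivalence) it is concentrated in a single cohomological degree, so there is no issue assembling over $\gamma'$. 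Concretely: $\oblv_{\bHecke}\circ\Res^{B^-_H}_{T_H}(\wt\CF^{\semiinf,-})$, being a $\Lambda_H$-indexed family of compacts which is ``finitely supported in each degree'' in the appropriate sense, is a compact object of $\Rep(T_H)\otimes\Shv_{\CG^G}(\Gr_G)^{I,\on{ren}}$; hence $\Res^{B^-_H}_{T_H}(\wt\CF^{\semiinf,-})$ is compact in $\bHecke(\Shv_{\CG^G}(\Gr_G)^{I,\on{ren}})$ because $\oblv_{\bHecke}$ both is conservative and detects compactness here, and finally $\wt\CF^{\semiinf,-}$ is compact in $\BmHecke(\Shv_{\CG^G}(\Gr_G)^{I,\on{ren}})$ by \lemref{l:detect compact by restr}. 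The main obstacle, and the only non-formal point, is the stabilization statement of the second paragraph; everything else is bookkeeping with the Drinfeld--Pl\"ucker presentation and the standard adjunctions. I would expect to isolate this as a lemma about the affine Hecke category (convolution of a deep standard sheaf with a spherical IC), proved by an explicit analysis of the support of the convolution and the affineness of the relevant orbit closures, exactly as in the non-metaplectic references cited above.
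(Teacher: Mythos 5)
Your plan hinges on a stabilization claim that is false, and the compactness deduction from it is also incorrect; let me explain both.

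\textbf{The stabilization claim.} You assert that the colimit \eqref{e:key object as colimit} stabilizes, i.e., that for $\gamma_0$ sufficiently dominant (relative to $\gamma_1$) the convolution $j_{\gamma_0,*}\underset{I}\star \IC_{q,\ol\Gr^{\gamma_1}_G}$ becomes a single standard sheaf. This is not true. The ``Wakimoto'' / ``large antidominant twist turns central sheaves standard'' phenomenon you invoke from \cite{ABBGM}, \cite{FG2} is a statement about Gaitsgory's \emph{central} sheaves $Z_\lambda$ on $\Fl_G$ (nearby-cycles objects with a very specific filtration), not about spherical IC sheaves $\IC_{q,\ol\Gr^{\gamma_1}_G}$. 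Concretely, $j_{\gamma_0,*}\underset{I}\star \IC_{q,\ol\Gr^{\gamma_1}_G}$ has support the image of $\ol{\Fl}{}^{\gamma_0}\widetilde\times\ol\Gr{}^{\gamma_1}$, which grows strictly with $\gamma_1$, and no further translation by $\gamma_0$ can collapse this to a single orbit closure. Independently of this geometric point, the stabilization claim cannot be correct because if it were, the object $\oblv_{\bHecke}\circ\Res^{B^-_H}_{T_H}(\wt\CF^{\semiinf,-})$ would have compact (in particular, finitely supported) graded components; but by \corref{c:F and semiinf} and the $w_0$-twist, after applying $\on{Av}_!^{\fL(N^-)}$ this object is $'\bICsm_{q,x}$, which is supported on the whole of $\ol{S}^{-,0}\subset\Gr_G$ — an ind-scheme with infinitely many $I$-orbits — and so is not compact in $\Shv_{\CG^G}(\Gr_G)^{I,\on{ren}}$.

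\textbf{The compactness deduction.} Even granting stabilization, the passage from ``each $\gamma'$-component of $\oblv_{\bHecke}\circ\Res^{B^-_H}_{T_H}(\wt\CF^{\semiinf,-})$ is compact'' to ``$\Res^{B^-_H}_{T_H}(\wt\CF^{\semiinf,-})$ is compact in $\bHecke$'' is not valid: $\oblv_{\bHecke}$ does not detect compactness. For $\bc$ compact in $\bC$, the underlying object $\oblv_{\bHecke}(\ind_{\bHecke}(\bc))\simeq \bc\star \on{Reg}(H)$ already fails to be compact (the regular representation of $H$ is not compact in $\Rep(H)$), and conversely a $\Lambda_H$-indexed family of compacts is not a compact object of $\Rep(T_H)\otimes\Shv_{\CG^G}(\Gr_G)^{I,\on{ren}}$ unless all but finitely many terms vanish. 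The use of \lemref{l:detect compact by restr} at the final step (passing from $\bHecke$ to $\BmHecke$) is fine, but the input to it — compactness in $\bHecke$ — was not established.

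\textbf{What the paper does instead.} The paper's first proof of \thmref{t:finiteness of baby Verma} sidesteps any stabilization statement. For (a), it uses \propref{p:ABBGM surj} (proved as in \cite[Proposition 3.2.6]{ABBGM}) to exhibit $\wt\CF^{\semiinf,-}\otimes\sfe^\gamma$ as a \emph{quotient} in $(\BmHecke(\Shv_{\CG^G}(\Gr_G)^{I,\on{ren}}))^\heartsuit$ of a compact object $\Res^H_{B^-_H}(j_{\gamma,*}\underset{I}\star\delta_{1,\Gr})$, and then invokes the Artinian property of the t-structure on $\BmHecke$ (\corref{c:B Artinian}, which rests on \thmref{t:accessible Iwahori}) to conclude that a quotient of a finite-length object is finite length, hence compact. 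For (b), surjectivity of \eqref{e:coind baby Verma} is deduced from \propref{p:Serre} combined with the surjectivity of \eqref{e:ABBGM map}, and injectivity is deduced by a socle argument; neither step relies on stabilization of the colimit. The second proof in \secref{ss:AB bis} uses the Arkhipov--Bezrukavnikov localization (\thmref{t:AB}) to realize $\wt\CF^\semiinf$ as the image of the compact object $(\sfi)_*(\sfe)\in\QCoh(B_H\backslash H/B_H)$ under a compactness-preserving functor. Either way, compactness of $\wt\CF^{\semiinf,-}$ in $\BmHecke$ is a finiteness phenomenon taking place \emph{in the Hecke category} and is not visible from the underlying object in $\Shv_{\CG^G}(\Gr_G)^{I,\on{ren}}$.
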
 

As a special case of point (b) of the theorem we obtain:

\begin{cor}  \label{c:coind of baby Verma zero}
The map  $\delta_{1,\Gr} \to \coInd^H_{B^-_H}(\wt\CF^{\semiinf,-})$ is an isomorphism.
\end{cor}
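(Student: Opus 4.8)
\textbf{Proof proposal for \corref{c:coind of baby Verma zero}.}

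The plan is to obtain this as the special case $\gamma = 0$ of \thmref{t:finiteness of baby Verma}(b), so really the substantive task is to prove that theorem, and in particular part (b) with $\gamma$ arbitrary in $\Lambda^+_H$. First I would unwind the description of the object $\wt\CF^{\semiinf,-}$ given in \secref{sss:system for F-}: it is obtained by applying $\sfj^*$ to the object of $\on{DrPl}^-(\Rep(T_H)\otimes \Shv_{\CG^G}(\Gr_G)^{I,\on{ren}})$ attached to the system $\gamma \mapsto j_{w_0,*}\underset{I}\star J_{w_0(\gamma),!}\underset{I}\star \delta_{1,\Gr}[-d]$, which for $\gamma \in \Lambda^+_H$ simplifies to $j_{\gamma,*}\underset{I}\star \delta_{1,\Gr}$ as in \eqref{e:components key dom w0}. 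The adjunction map \eqref{e:to glob sect} then furnishes the canonical map \eqref{e:coind baby Verma} whose bijectivity is the claim. The key point is to compute $\coInd^H_{B^-_H}(\wt\CF^{\semiinf,-}\otimes \sfe^\gamma)$ using the colimit presentation \eqref{e:key object as colimit} of $\oblv_{\bHecke}\circ \Res^{B^-_H}_{T_H}(\wt\CF^{\semiinf,-})$, together with the formula for $\sfj_*$ in \secref{ss:rel non-rel} which expresses $\coInd^H_{B^-_H}(\wt\CF^{\semiinf,-}\otimes \sfe^\gamma)$ as the $\gamma$-component of $\sfj_*\sfj^*$ applied to the original system.

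The mechanism I expect to work is the following. Since $\sfj^* \circ \sfj_*$ is an isomorphism on $\on{DrPl}$-objects (the adjunction \eqref{e:DrPl adj} has invertible counit, as noted right after it), the map \eqref{e:coind baby Verma} fits into the standard Drinfeld--Pl\"ucker ``stabilization'' picture: one needs to show that the system $\{j_{\gamma,*}\underset{I}\star \delta_{1,\Gr}\}_{\gamma}$ is already ``saturated'', i.e., that the transition maps of the colimit \eqref{e:key object as colimit} eventually stabilize, equivalently that each $j_{\gamma,*}\underset{I}\star \delta_{1,\Gr}$ already receives the whole of $\coInd^H_{B^-_H}(\wt\CF^{\semiinf,-}\otimes \sfe^\gamma)$. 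Concretely, using that convolution with $\Sph_{q,x}(G)$ on $\Shv_{\CG^G}(\Gr_G)^{I,\on{ren}}$ is t-exact (\propref{p:conv is exact}-type statement for the Iwahori category) and that $j_{\gamma,*}\underset{I}\star \delta_{1,\Gr}$ lies in the heart for $\gamma \in \Lambda^+_H$ (as in the proof of \propref{p:baby in heart w0}, since $I\cdot t^\gamma\cdot I/I \to I\cdot t^\gamma\cdot \fL^+(G)/\fL^+(G)$ is an isomorphism and the inclusion into $\Gr_G$ is affine), I would analyze the transition maps $j_{\gamma+\gamma',*}\underset{I}\star \IC_{q,\ol\Gr^{-w_0(\gamma)}_G} \to j_{\gamma+\gamma'+\nu,*}\underset{I}\star \IC_{q,\ol\Gr^{-w_0(\gamma+\nu)}_G}$ and show they are isomorphisms once $\gamma + \gamma'$ is dominant. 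This is a statement about convolution of Iwahori standard objects with spherical IC sheaves of coweight lying in $\Lambda^\sharp$, and should reduce, via the factorization $\IC_{q,\ol\Gr^{\nu'}_G} = \Sat_{q,G}(V^{\nu'})$ and Steinberg-type multiplicativity on the lowest-weight line, to the observation that $j_{\gamma+\gamma',*} \underset{I}\star \IC_{q,\ol\Gr^{-w_0(\nu)}_G}$ contains $j_{\gamma+\gamma'+\nu,*}\underset{I}\star \delta_{1,\Gr}$ as a direct summand-free sub/quotient pattern controlled by the combinatorics of Iwahori orbits on $\Gr_G$. Part (a) of the theorem — compactness of $\wt\CF^{\semiinf,-}$ — should then follow from \lemref{l:detect compact by restr} together with the fact that, by (b), $\Res^{B^-_H}_{T_H}(\wt\CF^{\semiinf,-})$ is, after applying $\coInd$, computed by a single $j_{\gamma,*}\underset{I}\star\delta_{1,\Gr}$, which is compact in $\Shv_{\CG^G}(\Gr_G)^{I,\on{ren}}$; more precisely one shows $\wt\CF^{\semiinf,-}$ maps under $\coInd^H_{B^-_H}$ to compact objects for all $\gamma$, hence is itself compact in $\BmHecke$ by the dual of \corref{c:B Artinian}-type reasoning (or directly: a $B^-_H$-equivariant object whose $\coInd$'s are compact is compact).

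The main obstacle I anticipate is the stabilization/isomorphism statement for the transition maps in \eqref{e:key object as colimit}: proving that convolving the Iwahori costandard $j_{\gamma,*}\underset{I}\star\delta_{1,\Gr}$ with $\IC_{q,\ol\Gr^\nu_G}$ (for $\nu$ in the coweight lattice of $H$, hence ``large'' relative to $G$) reproduces $j_{\gamma+\nu,*}\underset{I}\star\delta_{1,\Gr}$ on the nose once $\gamma$ is sufficiently dominant. This is exactly the Iwahori-equivariant shadow of the computation underlying \propref{p:ident semiinf as DrPl} and \corref{c:F and semiinf}, and I would expect to import it from the cited works (\cite{ABBGM}, \cite{FG2}, \cite{Ga6}, \cite{Ga7}) after translating via the equivalence $\on{Av}_!^{\fL(N)}\colon \Shv_{\CG^G}(\Gr_G)^{I} \xrightarrow{\sim} \SI_{q,x}(G)^{\fL^+(T)}$ of \secref{sss:semiinf Iw}; under this equivalence the stabilization is precisely the statement that the colimit defining $'\bICs_{q,x}$ in \eqref{e:ICs as colim} has the expected !-restrictions, which is \thmref{t:fiber of Ics} combined with \propref{p:sharper estimate}. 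Once this transfer is set up, \corref{c:coind of baby Verma zero} is the $\gamma=0$ specialization, where \eqref{e:key object as colimit} reads $\underset{\gamma}{\on{colim}}\, j_{\gamma,*}\underset{I}\star \IC_{q,\ol\Gr^{-w_0(\gamma)}_G}$ and the claim is that the initial term $\delta_{1,\Gr}$ (at $\gamma=0$) already maps isomorphically to the colimit, i.e., $\coInd^H_{B^-_H}(\wt\CF^{\semiinf,-}) \simeq \delta_{1,\Gr}$.
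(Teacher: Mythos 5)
Your identification of the corollary as the $\gamma = 0$ case of \thmref{t:finiteness of baby Verma}(b) is correct and matches the paper, which indeed presents it as a specialization with no separate argument. The real content is therefore your proposed proof of the theorem, and there the central mechanism is flawed.

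The claim that the theorem is ``equivalently'' the statement that the transition maps of the colimit \eqref{e:key object as colimit} eventually stabilize is a non-sequitur, and the stabilization you need is in any case false. The colimit \eqref{e:key object as colimit} computes the $\gamma'$-graded piece of $\oblv_{\bHecke}\circ\Res^{B^-_H}_{T_H}(\wt\CF^{\semiinf,-})$, i.e.\ the underlying $\Lambda_H$-graded object; the functor $\coInd^H_{B^-_H}$, by contrast, is the right adjoint of $\Res^H_{B^-_H}$ and geometrically is push-forward along $\on{pt}/B^-_H \to \on{pt}/H$ (cohomology over $H/B^-_H$). These are not the same operation, and finite-dimensionality (or stabilization) of the graded pieces neither implies nor is implied by the assertion that $\sfj_*\circ\sfj^*$ returns the original Drinfeld--Pl\"ucker system. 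Worse, the specific stabilization you posit—that $j_{\gamma,*}\underset{I}\star\IC_{q,\ol\Gr^{-w_0(\nu)}_G}$ reproduces $j_{\gamma+\nu,*}\underset{I}\star\delta_{1,\Gr}$ on the nose for $\gamma$ sufficiently dominant—is simply not true: convolving an Iwahori costandard with a spherical IC sheaf produces an object with many Jordan--H\"older constituents, not a single costandard. Under the equivalence $\on{Av}^{\overset{\circ}{I}}_*$ with the semi-infinite category, the colimit you are trying to stabilize corresponds to the colimit \eqref{e:ICs as colim} defining $'\!\ICs_{q,x}$, which is a genuinely non-stabilizing directed system whose colimit is the (infinite-dimensional) semi-infinite IC sheaf.

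The paper's actual argument proceeds along entirely different lines. The first proof (\secref{ss:ABBGM}) establishes that the $\Rep(H)$-action on $\Shv_{\CG^G}(\Gr_G)^{I,\on{ren}}$ is \emph{accessible} (\thmref{t:accessible Iwahori}, a metaplectic version of a Steinberg-type computation), deduces that the t-structure on $\BmHecke$ is Artinian, invokes \propref{p:ABBGM surj} (adapted from \cite{ABBGM}) for surjectivity of \eqref{e:ABBGM map}, then uses \propref{p:Serre} (a Serre--generation argument) together with the fact that $\IC_{q,\ol\Gr^\gamma_G}$ is the \emph{socle} of $j_{\gamma,*}$ to obtain injectivity. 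The second proof (\secref{ss:AB}--\ref{ss:AB bis}) is the conceptually cleaner one: it formulates the metaplectic Arkhipov--Bezrukavnikov theorem (\thmref{t:AB}, that the functor $\wt{\on{AB}}$ factors through $\sfj^*$), and shows by a straightforward diagram chase that this is logically equivalent to \thmref{t:finiteness of baby Verma}. If you want to fill in a genuine proof, you should either supply the accessibility/socle argument, or reduce to \thmref{t:AB} (whose proof is a verbatim repetition of \cite{AB}); your colimit-stabilization route does not go through.
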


We can now amplify point (b) of \thmref{t:finiteness of baby Verma} as follows:

\begin{cor} \label{c:coind of baby Verma}
For \emph{any} $\gamma\in \Lambda_H$ we have
$$\coInd^H_{B^-_H}(\wt\CF^{\semiinf,-}\otimes \sfe^\gamma)\simeq 
j_{w_0,*}\underset{I}\star J_{w_0(\gamma)} \underset{I}\star \delta_{1,\Gr}[-d].$$
\end{cor}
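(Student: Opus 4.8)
\textbf{Proof proposal for Corollary \ref{c:coind of baby Verma}.}

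The plan is to reduce the statement for an arbitrary $\gamma\in\Lambda_H$ to the case $\gamma\in\Lambda_H^+$ already covered by \thmref{t:finiteness of baby Verma}(b), using the fact that the element $\gamma$ can be written as a difference $\gamma=\gamma_1-\gamma_2$ with $\gamma_1,\gamma_2\in\Lambda_H^+$, together with the invertibility of the convolution operators $J_{\gamma}\underset{I}\star-$ supplied by \propref{p:BMW}. First I would recall, from \secref{sss:system for F-}, that $\wt\CF^{\semiinf,-}$ arises by applying $\sfj^*$ to the object of $\on{DrPl}^-(\Rep(T_H)\otimes\Shv_{\CG^G}(\Gr_G)^{I,\on{ren}})$ given by the system $\gamma\mapsto j_{w_0,*}\underset{I}\star J_{w_0(\gamma),!}\underset{I}\star\delta_{1,\Gr}[-d]$. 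By \secref{sss:DrPl and duality} applied to $\bC_0=\Shv_{\CG^G}(\Gr_G)^{I,\on{ren}}$, the object $\sfj_*(\wt\CF^{\semiinf,-})$ of $\on{DrPl}^-(\Rep(T_H)\otimes\bC_0)$ has components $\coInd^H_{B^-_H}(\wt\CF^{\semiinf,-}\otimes\sfe^\gamma)$, and \thmref{t:finiteness of baby Verma}(b) identifies these with $j_{\gamma,*}\underset{I}\star\delta_{1,\Gr}\simeq j_{w_0,*}\underset{I}\star J_{w_0(\gamma),!}\underset{I}\star\delta_{1,\Gr}[-d]$ for $\gamma\in\Lambda_H^+$ (using \eqref{e:components key dom w0}). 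So the content of the corollary is the extension of this identification of the two $\on{DrPl}^-$-systems from the dominant cone to all of $\Lambda_H$.

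Next I would exploit the $\Rep(T_H)$-equivariance. Both sides of the desired isomorphism are components, indexed by $\gamma\in\Lambda_H$, of objects of $\Rep(T_H)\otimes\bC_0$; shifting the index of the first by $\gamma_2\in\Lambda_H^+$ corresponds to the translation automorphism $\sfe^{\gamma_2}\star-$ on $\Rep(T_H)$, while on the explicit model $j_{w_0,*}\underset{I}\star J_{w_0(\gamma),!}\underset{I}\star\delta_{1,\Gr}[-d]$ it is implemented by convolving on the left with $J_{-w_0(\gamma_2)}=J_{w_0(\gamma_2)}^{-1}$ (here I use that $J_\gamma\underset{I}\star-$ and $J_{-\gamma}\underset{I}\star-$ are mutually inverse self-equivalences, which is \propref{p:BMW}, and that $J$ is a monoidal functor out of $\Rep(T_H)$). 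Thus, given $\gamma\in\Lambda_H$, pick $\gamma_2\in\Lambda_H^+$ with $\gamma+\gamma_2=:\gamma_1\in\Lambda_H^+$; apply the already-known isomorphism of $\on{DrPl}^-$-systems in degree $\gamma_1$, and then translate back by $\sfe^{-\gamma_2}$ on one side and convolve back by $J_{w_0(\gamma_2)}^{-1}$ on the other. Since both translations are equivalences and the isomorphism in degree $\gamma_1$ is canonical (it is the map \eqref{e:coind baby Verma} for $\gamma_1$, which is an isomorphism by \thmref{t:finiteness of baby Verma}(b)), one obtains the asserted isomorphism $\coInd^H_{B^-_H}(\wt\CF^{\semiinf,-}\otimes\sfe^\gamma)\simeq j_{w_0,*}\underset{I}\star J_{w_0(\gamma)}\underset{I}\star\delta_{1,\Gr}[-d]$, and independence of the choice of $\gamma_2$ follows from the associativity/monoidality compatibilities built into the $\on{DrPl}^-$-structure (the commutative squares \eqref{e:DrPl comp}).

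The step I expect to be the main obstacle is the bookkeeping that makes the two translations match \emph{compatibly} — i.e.\ checking that the isomorphism produced this way is independent of the auxiliary $\gamma_2$ and is compatible with the transition maps of the $\on{DrPl}^-$-system, so that it genuinely upgrades to an isomorphism of objects of $\on{DrPl}^-(\Rep(T_H)\otimes\bC_0)$ rather than a mere degreewise statement. This is where one must invoke that $J\colon\Rep(T_H)\to\Shv_{\CG^G}(\Fl_G)^I$ is monoidal together with \propref{p:BMW}, so that $J_{w_0(\gamma_1)}\underset{I}\star(-)\simeq J_{w_0(\gamma)}\underset{I}\star J_{w_0(\gamma_2)}\underset{I}\star(-)$ canonically, and that $j_{w_0,*}\underset{I}\star(-)$ commutes with these convolutions on the left. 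Everything else is formal: compactness of $\wt\CF^{\semiinf,-}$ (\thmref{t:finiteness of baby Verma}(a)) guarantees that $\coInd^H_{B^-_H}$ behaves well, and the base case is \corref{c:coind of baby Verma zero} together with \thmref{t:finiteness of baby Verma}(b).
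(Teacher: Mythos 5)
Your proposal is correct and is essentially the paper's argument in slightly more roundabout form: both hinge on the ``by construction'' Hecke equivariance $J_\gamma\underset{I}\star\wt\CF^\semiinf\simeq\wt\CF^\semiinf\otimes\sfe^\gamma$, the fact that $\coInd^H_{B^-_H}$ commutes with convolution on the left, and a base case. The paper avoids the detour through the dominant cone by conjugating the equivariance identity with $j_{w_0,*}$ and $j_{w_0,!}$ once and for all, giving $j_{w_0,*}\underset{I}\star J_\gamma\underset{I}\star j_{w_0,!}\underset{I}\star\wt\CF^{\semiinf,-}\simeq\wt\CF^{\semiinf,-}\otimes\sfe^{w_0(\gamma)}$ for all $\gamma\in\Lambda_H$, applying $\coInd^H_{B^-_H}$, and invoking only the $\gamma=0$ case (\corref{c:coind of baby Verma zero}); you instead invoke \thmref{t:finiteness of baby Verma}(b) for dominant $\gamma_1$ and translate back, which is more than is needed but is fine. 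One small imprecision to repair in your write-up: the shift $\gamma\mapsto\gamma-\gamma_2$ on the model $j_{w_0,*}\underset{I}\star J_{w_0(\gamma)}\underset{I}\star\delta_{1,\Gr}[-d]$ is \emph{not} implemented by convolving with $J_{-w_0(\gamma_2)}$ on the far left, since the Wakimoto objects $J_\mu$ do not commute with $j_{w_0,*}$; the correct invertible operator acting on the left of the whole expression is the conjugate $j_{w_0,*}\underset{I}\star J_{-w_0(\gamma_2)}\underset{I}\star j_{w_0,!}$ (equivalently, insert $J_{-w_0(\gamma_2)}$ to the right of $J_{w_0(\gamma)}$ and use monoidality of $J$), after which your argument goes through unchanged.
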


\begin{proof}

By construction, for any $\gamma\in \Lambda_H$, we have
$$J_\gamma \underset{I}\star  \wt\CF^\semiinf \simeq \wt\CF^\semiinf\otimes \sfe^\gamma.$$

Hence,
$$j_{w_0,*}\underset{I}\star J_\gamma \underset{I}\star j_{w_0,!}\underset{I}\star \wt\CF^{\semiinf,-}
\simeq \wt\CF^{\semiinf,-}\otimes \sfe^{w_0(\gamma)}.$$

This implies the required assertion by applying $\coInd^H_{B^-_H}$ to both sides and using 
\corref{c:coind of baby Verma zero}. 

\end{proof}

\begin{rem}  \label{r:coind of baby Verma}
Note that for $\gamma\in \Lambda^+_H$, the object
$$j_{w_0,*}\underset{I}\star J_{w_0(-\gamma)} \underset{I}\star \delta_{1,\Gr}[-d]$$
that appears in \corref{c:coind of baby Verma} identifies with 
$$j_{w_0,*}\underset{I}\star j_{w_0(-\gamma),!} \underset{I}\star \delta_{1,\Gr}[-d].$$

Note also that if $\gamma$ is regular, then the latter object identifies with 
$$j_{-\gamma\cdot w_0,!} \underset{I}\star \delta_{1,\Gr}[-d],$$
where we note that
$$j_{-\gamma\cdot w_0,!} \underset{I}\star \delta_{1,\Gr}\in (\Shv_{\CG^G}(\Gr_G)^{I,\on{ren}})^\heartsuit.$$
\end{rem}

%\sssec{}

%Assuming \thmref{t:finiteness of baby Verma} temporarily, consider the corresponding object
%$$\BD^{\on{Verdier}}(\wt\CF^{\semiinf,-})\in \BHecke(\Shv_{(\CG^G)^{-1}}(\Gr_G)^{I,\on{ren}}),$$
%where we denote by $\BD^{\on{Verdier}}$ the equivalence
%$$(\BmHecke(\Shv_{\CG^G}(\Gr_G)^{I,\on{ren}})^c)^{\on{op}}\to
%\BHecke(\Shv_{(\CG^G)^{-1}}(\Gr_G)^{I,\on{ren}})$$
%induced by
%$$\BD^{\on{Verdier}}: (\Shv_{\CG^G}(\Gr_G)^{I,\on{ren}})^c)^{\on{op}}\simeq 
%(\Shv_{(\CG^G)^{-1}}(\Gr_G)^{I,\on{ren}})^c.$$
%
%\medskip
%
%From \secref{sss:DrPl and duality} and \corref{c:coind of baby Verma}
%we obtain that the object $\BD^{\on{Verdier}}(\wt\CF^{\semiinf,-})$ is obtained
%by applying the functor $\sfj$ to an object of $\on{DrPl}(\Shv_{(\CG^G)^{-1}}(\Gr_G)^{I,\on{ren}})$ with components
%$$\gamma \mapsto j_{w_0,!} \underset{I}\star \BD^{\on{Verdier}}(J_{w_0(\gamma)-2\rho_H})\underset{I}\star \delta_{1,\Gr}.$$
%
%\medskip
%
%Let $\gamma$ be such that $\gamma-2\rho_H\in \Lambda^+_H$. Then the above object for $-\gamma$ identifies with
%$$j_{-(\gamma-2\rho_H)\cdot w_0,!} \underset{I}\star \delta_{1,\Gr}\in (\Shv_{\CG^G}(\Gr_G)^{I,\on{ren}})^\heartsuit.$$
%
%\medskip
%
%In particular, we obtain:
%\begin{cor}
%$$\oblv_{\BHecke}(\BD^{\on{Verdier}}(\wt\CF^{\semiinf,-}))\simeq
%\underset{\gamma\in 2\rho_H+\Lambda^+_H}{\on{colim}}\, 
%j_{-(\gamma-2\rho_H)\cdot w_0,!} \underset{I}\star \IC_{q,\ol\Gr^\gamma_G}.$$
%\end{cor}

\sssec{}

The rest of this section is essentially devoted to the proof of \thmref{t:finiteness of baby Verma}.
We will give two proofs: the first one by mimicking certain arguments from the paper \cite{ABBGM}.
And the second one, which actually explains ``what is going on" via a metaplectic version of
(some aspects of) the Arkhipov-Bezrukavnikov theory. 

\ssec{First proof of \thmref{t:finiteness of baby Verma}}  \label{ss:ABBGM}

\sssec{}

In \secref{ss:proof accessible Iw} we will prove:

\begin{thm} \label{t:accessible Iwahori}
The action of $\Rep(H)$ on $\Shv_{\CG^G}(\Gr_G)^{I,\on{ren}}$ is accessible.
\end{thm}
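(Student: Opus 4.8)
<br>

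The plan is to prove \thmref{t:accessible Iwahori} by reducing the statement to \thmref{t:restr convolution}, i.e., to the ``Steinberg'' property of the metaplectic spherical action, which we have already established. Recall that ``accessibility'' of the $\Rep(H)$-action on $\Shv_{\CG^G}(\Gr_G)^{I,\on{ren}}$ (in the sense of \secref{sss:accessible}) means that every irreducible object of $(\Shv_{\CG^G}(\Gr_G)^{I,\on{ren}})^\heartsuit$ can be written as $\bc\star \Sat_{q,G}(V)$ for $\bc$ a restricted irreducible and $V\in \on{Irrep}(H)$. Since the t-structure on $\Shv_{\CG^G}(\Gr_G)^{I,\on{ren}}$ is Artinian (by construction of the renormalized category), its irreducibles are the IC-extensions $\on{IC}_{q,w}$ of the (gerbe-twisted) constant sheaves on the Iwahori orbits $I\cdot t^w\cdot I/I$ on $\Fl_G$, pushed to $\Gr_G$; equivalently, they are indexed by the cosets $W_{\on{aff}}/W_f$, or what is the same by pairs $(\lambda, w)$ with $\lambda$ an arbitrary coweight and $w$ running over a set of coset representatives.

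First I would recall the structure of the convolution action of $\Sph_{q,x}(G)$ on $\Shv_{\CG^G}(\Gr_G)^{I,\on{ren}}$. For $\gamma$ dominant, $-\underset{\fL^+(G)}\star \Sat_{q,G}(V^\gamma)$ is t-exact (by \secref{ss:metapl geom Sat}, since $\Sat_{q,G}$ is t-exact and convolution preserves t-exactness on the Iwahori side), and it is given on the generic locus by ``translation by $\gamma$''. The key local input is \thmref{t:restr}, applied not to the Whittaker semi-infinite orbits but to the intersections of $\fL(N^-)$-orbits (or $\fL^+(G)$-orbits) governing Iwahori strata: for $\mu$ restricted and any $\gamma\in(\Lambda^\sharp)^+$, the convolution of the irreducible $\on{IC}$ supported on the Iwahori orbit ``over'' $\lambda=\mu$ with $\Sat_{q,G}(V^\gamma)$ stays irreducible, equal to the $\on{IC}$ over $\mu+\gamma$. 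This is exactly parallel to \thmref{t:restr convolution}, and indeed \thmref{t:restr} was flagged in the excerpt as being needed ``one more time, for the analysis of the Hecke action on $\Shv(\Gr_G)^I$''. So I would carry out the following steps in order: (1) identify the irreducibles of $(\Shv_{\CG^G}(\Gr_G)^{I,\on{ren}})^\heartsuit$ and which ones are restricted — these should be the $\on{IC}$'s attached to orbits lying over restricted coweights $\mu$ (for all $w\in W_f$-coset representatives); (2) using \lemref{l:many restr}, write an arbitrary coweight $\lambda$ as $\mu+\gamma$ with $\mu$ restricted and $\gamma\in(\Lambda^\sharp)^+$; (3) invoke (the Iwahori analogue of) \thmref{t:restr convolution} to conclude that $\on{IC}_{q,(\lambda,w)}\simeq \on{IC}_{q,(\mu,w)}\star \Sat_{q,G}(V^\gamma)$, which exhibits every irreducible in the required form. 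Here, as usual, I may first reduce to the case where the derived group of $H$ is simply connected via \propref{p:isogenies trick} together with \propref{p:isogen Gr} and its Iwahori variant, so that \lemref{l:many restr} is available.

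The main obstacle I anticipate is step (1) together with the precise form of the Iwahori Steinberg statement in step (3): one must verify that the convolution $\on{IC}_{q,(\mu,w)}\star \Sat_{q,G}(V^\gamma)$ really stays irreducible for \emph{all} coset representatives $w$ simultaneously, not just for $w$ corresponding to the ``closed'' stratum inside $\Gr_G$. The argument should go by the same ``top cohomology'' analysis as in the proof of \thmref{t:restr convolution}: one writes the maps $\on{IC}_{q,(\mu+\gamma,w)}\to \on{IC}_{q,(\mu,w)}\star\Sat_{q,G}(V^\gamma)\to \on{IC}_{q,(\mu+\gamma,w)}$ (using that both sides agree on the open Iwahori stratum over $\mu+\gamma$, since the spherical sheaf is lisse there and the gerbe trivialization over $\Lambda^\sharp$-translates is canonical), reduces surjectivity to a non-existence of stalks statement, and then the bottom-cohomology vanishing is supplied by \thmref{t:restr} together with the cohomological-dimension estimate $\dim(S^\mu\cap S^{-,\nu})\le \langle\mu-\nu,\check\rho\rangle$ and the trivial-$\Psi_q$/trivial-$\chi_N$ dichotomy. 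The injectivity half follows by Verdier duality, using \secref{ss:Satake and duality}. Once this is in place, accessibility is immediate, and combined with \corref{c:B Artinian} (applied to $\bC=\Shv_{\CG^G}(\Gr_G)^{I,\on{ren}}$) it will yield that the t-structures on $\BHecke(\Shv_{\CG^G}(\Gr_G)^{I,\on{ren}})$ and $\bHecke(\Shv_{\CG^G}(\Gr_G)^{I,\on{ren}})$ are Artinian, which is precisely the finiteness input needed for the proof of \thmref{t:finiteness of baby Verma}.
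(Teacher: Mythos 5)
Your overall scaffolding is right — reduce accessibility to a metaplectic Steinberg statement for the Iwahori-equivariant category, prove that statement by the same geometry as \thmref{t:restr convolution}, with \thmref{t:restr} as the key local input, and then use the simple-connectedness of the derived group of $H$ (available via \propref{p:isogenies trick}) to cover every irreducible. This is exactly the shape of the paper's argument, which adapts \cite[Theorem 1.3.5]{ABBGM}.

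However, your step (1) contains a genuine gap: the restricted irreducibles in $(\Shv_{\CG^G}(\Gr_G)^{I,\on{ren}})^\heartsuit$ are \emph{not} ``the $\on{IC}$'s attached to orbits over restricted coweights $\mu$ for all $w$-coset representatives''. First, only Iwahori orbits whose $\Lambda$-component $\gamma$ lies in $\Lambda^+_H=\Lambda^\sharp\cap\Lambda^+$ carry $I$-equivariant $\CG^G$-twisted sheaves, so restricted coweights in the Whittaker sense of \secref{sss:restricted coweights} (which are not in $\Lambda^\sharp$ in general) do not even label objects of this category. Second, and more importantly, the paper's restricted family is indexed by $w\in W$ alone: for each $w$ there is a \emph{single} $\gamma_w\in\Lambda^+_H$ (up to the center of $H$), determined by the extreme condition $\langle\gamma_w,\check\alpha_i\rangle=0$ if $w(\alpha_i)\in\Lambda^{\on{pos}}$ and $\langle\gamma_w,\check\alpha_i\rangle=\on{ord}(q_i)$ if $w(\alpha_i)\in\Lambda^{\on{neg}}$, and the restricted irreducible is $\IC_{q,w\cdot\gamma_w}$ (\thmref{t:restricted Iwahori}). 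This $\gamma_w$ sits on the boundary of the restricted region, not in its interior. The proof of \thmref{t:restricted Iwahori} then invokes \thmref{t:restr} for the one specific coweight $\lambda$ with $\langle\lambda,\check\alpha_i\rangle=0$ or $\on{ord}(q_i)-1$ according to the sign of $w(\alpha_i)$, following \cite[Sect.~2.1]{ABBGM}; it is not a verbatim copy of \thmref{t:restr convolution} across ``all coset representatives $w$ simultaneously''. So while your high-level plan is sound, the precise identification of the restricted objects — which is the actual content of the theorem — is what you would still have to get right, and as written your guess is incorrect.
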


In the rest of this subsection we will show how \thmref{t:accessible Iwahori} implies 
\thmref{t:finiteness of baby Verma}.  

\sssec{}

First, we have the following assertion, whose proof is a verbatim repetition of the proof
of the argument in \cite[Proposition 3.2.6]{ABBGM}:

\begin{prop}  \label{p:ABBGM surj}
The map
\begin{equation} \label{e:ABBGM map}
\Res^H_{B^-_H}(j_{\gamma,*} \underset{I}\star \delta_{1,\Gr})\to \wt\CF{}^{\semiinf,-}\otimes \sfe^\gamma
\end{equation} 
arising by adjunction from \eqref{e:coind baby Verma}, is surjective, as a map of objects in 
$(\BmHecke(\Shv_{\CG^G}(\Gr_G)^{I,\on{ren}}))^\heartsuit$, for any $\gamma$ which is regular.
\end{prop}

\sssec{}

Note that by \corref{c:B Artinian}, from \propref{p:ABBGM surj} we immediately obtain that 
$\wt\CF^{\semiinf,-}$ is compact. This is point (a) of \thmref{t:finiteness of baby Verma}.

\medskip

To prove point (b), by the argument in \corref{c:coind of baby Verma}, it is enough to prove the assertion for
some $\gamma$ that is sufficiently dominant. Fix a dominant \emph{regular} $\gamma_0$. We will show that 
the map \eqref{e:coind baby Verma} is an isomorphism for $\gamma_0+\gamma$ for all $\gamma$ that are 
sufficiently dominant. 

\sssec{}

Consider the map \eqref{e:ABBGM map} for our $\gamma_0$. By Propositions \ref{p:Serre} and \ref{p:ABBGM surj}, 
for all $\gamma$ that are sufficiently dominant, the map
\begin{multline*} 
j_{\gamma_0,*}\underset{I}\star \IC_{q,\ol\Gr^\gamma_G}\simeq
(j_{\gamma_0,*}\underset{I}\star \delta_{1,\Gr}) \underset{\fL^+(G)}\star \Sat_{q,G}(V^\gamma)\simeq 
\coInd^H_{B^-_H}(\Res^H_{B^-_H}(j_{\gamma_0,*} \underset{I}\star \delta_{1,\Gr})\otimes \sfe^\gamma)\to \\
\to \coInd^H_{B^-_H}(\wt\CF^{\semiinf,-}\otimes \sfe^{\gamma_0}\otimes \sfe^\gamma) \simeq
\coInd^H_{B^-_H}(\wt\CF^{\semiinf,-}\otimes \sfe^{\gamma_0+\gamma})
\end{multline*}
is surjective. 

\medskip

However, it is easy to see that the latter map factors as
$$j_{\gamma_0,*}\underset{I}\star \Sat_{q,G}(V^\gamma) \to j_{\gamma_0+\gamma,*}\to 
\coInd^H_{B^-_H}(\wt\CF^{\semiinf,-}\otimes \sfe^{\gamma_0+\gamma}),$$
where:

\begin{itemize}

\item The first arrow is the map given by applying
$$j_{\gamma_0,*} \underset{I}\star j_{w_0,*}\underset{I}\star -[-d]$$
to the map \eqref{e:DrPl maps I}; 

\medskip

\item The second arrow is the map \eqref{e:coind baby Verma} for $\gamma_0+\gamma$. 

\end{itemize} 

Thus, we obtain that the map
\begin{equation} \label{e:coind baby Verma shift}
j_{\gamma_0+\gamma,*}\to 
\coInd^H_{B^-_H}(\wt\CF^{\semiinf,-}\otimes \sfe^{\gamma_0+\gamma})
\end{equation} 
above is surjective, as a map of objects in $(\Shv_{\CG^G}(\Gr_G)^{I,\on{ren}})^\heartsuit$. 
Hence, it remains to show that the map \eqref{e:coind baby Verma shift} is injective
(as a map of objects in $(\Shv_{\CG^G}(\Gr_G)^{I,\on{ren}})^\heartsuit$).

\sssec{}

We will show that the map \eqref{e:coind baby Verma} is injective for any $\gamma\in \Lambda_H^+$.

\medskip

Note that the map
$$\IC_{q,\ol\Gr^{\gamma}_G}\to j_{\gamma,*}$$
identifies $\IC_{q,\ol\Gr^{\gamma}_G}$ with the \emph{socle} of $j_{\gamma,*}$.
Hence, it is sufficient show that the composite
$$\IC_{q,\ol\Gr^{\gamma}_G}\to j_{\gamma,*}\to 
\coInd^H_{B^-_H}(\wt\CF^{\semiinf,-}\otimes \sfe^{\gamma})$$
is injective (equivalently, non-zero). Equivalently, we have to show that the map
\begin{equation} \label{e:from irred to F}
\Res^H_{B^-_H}(\IC_{q,\ol\Gr^{\gamma}_G}) \to \wt\CF^{\semiinf,-}\otimes \sfe^{\gamma}
\end{equation}
is non-zero.  
%Still equivalently, we need to show that the map
%\begin{equation} \label{e:from irred to F}
%\IC_{q,\Gr^{\gamma}_G}\to \oblv_{\BmHecke}(\wt\CF^{\semiinf,-}\otimes \sfe^{\gamma})
%\end{equation}
%is non-zero. 

\medskip

After passing from $\wt\CF^{\semiinf,-}$ back to $\wt\CF^\semiinf$, the latter map fits 
into the following paradigm. 

\sssec{}

We start from an object $\bc^{\on{DrPl}}\in \on{DrPl}(\bC)$ with the underlying object of $\bC$ denoted by $\bc$.
Denote 
$$\bc':=\oblv_{\on{rel}}\circ \sfj^*(\bc^{\on{DrPl}})\in \bC\underset{\Rep(H)}\otimes \Rep(B_H).$$
%$$\bc':=\oblv_{\BHecke_{\on{rel}}}(\sfj^*(\bc^{\on{DrPl}})).$$

Then, according to \eqref{e:to glob sect}, we have a canonical map
\begin{equation} \label{e:intial map DrPl}
\Res^H_{B_H}(\bc)\to \bc'.
\end{equation} 

The map \eqref{e:from irred to F} equals the composite
\begin{equation} \label{e:Vgamma map prel} 
\Res^H_{B_H}(\bc\star (V^\gamma)^*) \to \bc' \otimes \Res^H_{B_H}((V^\gamma)^*)\to \bc' \otimes \sfe^{-\gamma}.
\end{equation} 

\medskip

We claim that this map is non-zero under the following circumstances:

\begin{itemize}

\item(i) $\bC$ is equipped 
with a t-structure such that $(\Rep(H))^\heartsuit$ acts by t-exact functors;

\item(ii) $\sfe^{-\gamma}\star \bc\in (\bC)^\heartsuit$ for $\gamma$
\emph{sufficiently dominant};

\item(iii) $\bc$ is compact

\item(iv) The maps \eqref{e:DrPl maps} are non-zero.

\end{itemize} 

\sssec{}

Indeed, the map \eqref{e:Vgamma map prel} comes by adjunction from the map
\begin{equation} \label{e:Vgamma map} 
\Res^H_{B_H}(\bc)\to \bc' \to \bc'\otimes \sfe^{-\gamma}\otimes \Res^H_{B_H}(V^\gamma).
\end{equation} 

So it is sufficient to show that the latter map is non-zero. By \eqref{e:DrPl adj expl rel}, and assumptions (i) and (ii) on the
object $\bc$,
$$\oblv_{\bHecke}\circ \Res^{B_H}_{T_H}(\bc')\in (\Rep(T_H)\otimes \bC)^\heartsuit.$$

Hence, $\bc'\in (\BHecke(\bC))^\heartsuit$ and the second arrow in \eqref{e:Vgamma map} is injective. Hence, it 
remains to show that the first arrow in \eqref{e:Vgamma map}, i.e., map \eqref{e:intial map DrPl}, is non-zero.

\sssec{}

To prove the latter, it suffices to show that the induced map
$$\ind_{\bHecke}(\bc)\simeq \Res^H_{T_H}(\bc)\simeq \Res^{B_H}_{T_H}\circ \Res^H_{B_H}(\bc)\to  \Res^{B_H}_{T_H}(\bc')$$
is non-zero, i.e., the map
$$\sfe\otimes \bc\to \oblv_{\bHecke}\circ \Res^{B_H}_{T_H}(\bc')$$
is non-zero. 

\medskip

Applying \eqref{e:DrPl adj expl rel}, this equivalent to the fact that the map
$$\bc\to \underset{\gamma\in \Lambda^+_H}{\on{colim}}\, \sfe^{-\gamma}\star \bc\star V^\gamma$$
is non-zero in $\Rep(T_H)\otimes \bC$. 

\medskip

Since $\bc$ is compact and $(\Lambda^+_H,\preceq)$ is filtered, it suffices to show that the individual maps
$$\bc\to \sfe^{-\gamma}\star \bc\star V^\gamma$$
are non-zero. However, these maps are obtained by adjunction from the maps 
\eqref{e:DrPl maps}. 

\ssec{Proof of \thmref{t:accessible Iwahori}}  \label{ss:proof accessible Iw}

The proof will be an adaptation of the argument of \cite[Theorem 1.3.5]{ABBGM}. 

\sssec{}

Recall that $I$-orbits on $\Gr_G$ are in bijection with cosets $W^{\on{aff,ext}}/W$. 
Note that for an element $\wt{w}=w\cdot \lambda$ of $W^{\on{aff,ext}}$, the orbit
$$I\cdot \wt{w}\cdot \fL^+(G)/\fL^+(G)$$ 
carries an $I$-equivariant $\CG^G$-twisted sheaf if and only if $\lambda\in \Lambda^+_H$.

\medskip

For $\gamma\in \Lambda^+_H$, let 
$$\IC_{q,w\cdot \gamma}\in (\Shv_{\CG^G}(\Gr_G)^{I,\on{ren}}))^\heartsuit$$
denote the corresponding irreducible object. 

\medskip 

We claim:

\begin{thm} \label{t:restricted Iwahori}
Let $\gamma\in \Lambda^+_H$ and $w\in W$ be a pair of elements such that
$$\langle \gamma,\check\alpha_i\rangle=
\begin{cases}
&0, \text{ if } w(\alpha_i)\in \Lambda^{\on{pos}}; \\
&\on{ord}(q_i), \text{ if } w(\alpha_i)\in \Lambda^{\on{neg}}.
\end{cases}
$$
Then the object $\IC_{q,w\cdot \gamma}\in (\Shv_{\CG^G}(\Gr_G)^{I,\on{ren}}))^\heartsuit$
is restricted. In fact,
$$\IC_{q,w\cdot \gamma} \underset{\fL^+(G)}\star \IC_{q,\Gr^{\gamma'}_G}\simeq \IC_{q,w\cdot (\gamma+\gamma')}.$$
\end{thm}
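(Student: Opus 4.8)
\textbf{Proof strategy for \thmref{t:restricted Iwahori}.}

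The plan is to reduce the statement to \thmref{t:restr} (equivalently, to \thmref{t:restr convolution}) via the equivalence $\on{Av}_!^{\fL(N)}:\Shv_{\CG^G}(\Gr_G)^I\to \SI_{q,x}(G)^{\fL^+(T)}$ of \secref{sss:semiinf Iw}. First I would pin down the geometry: for $\gamma\in\Lambda^+_H$ and $w\in W$, the $I$-orbit through $w\cdot t^\gamma$ is the image of $I\cdot w\cdot t^\gamma\cdot \fL^+(G)/\fL^+(G)$, and under $\on{Av}_!^{\fL(N)}$ (which on $I$-equivariant objects amounts to $\fL(N)$-averaging followed by the inverse equivalence $\on{Av}_*^{\overset{\circ}{I}}$) the object $\IC_{q,w\cdot\gamma}$ goes to an object of the semi-infinite category supported on a single semi-infinite stratum $S^{w(\gamma)}$ (up to cohomological shift and a twist by the Kummer local system $\Psi_q$ of \secref{sss:Psiq}). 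Concretely, $\on{Av}_!^{\fL(N)}(\IC_{q,w\cdot\gamma})$ should be identified, up to shift, with the $*$-extension $W^{w(\gamma),!*}$-type object in $\SI_{q,x}(G)$ from the stratum $S^{w(\gamma)}$ — this is where the hypothesis on $\langle\gamma,\check\alpha_i\rangle$ enters, as it is precisely the condition that makes $w\cdot t^\gamma$ the ``smallest'' point in its $I$-orbit whose $\fL(N)$-translate is $S^{w(\gamma)}$ and that $\chi_N$ (here the trivial character, since we are in $\SI$, not $\Whit$) restricts appropriately. Actually the cleanest route is to transport everything back through $\on{Av}_!^{\fL(N)}$ to the statement: $\on{Av}_!^{\fL(N)}$ intertwines $-\underset{\fL^+(G)}\star\IC_{q,\Gr^{\gamma'}_G}$ with itself, so the claimed convolution formula $\IC_{q,w\cdot\gamma}\underset{\fL^+(G)}\star\IC_{q,\Gr^{\gamma'}_G}\simeq \IC_{q,w\cdot(\gamma+\gamma')}$ becomes a statement about convolution of semi-infinite IC-type sheaves.

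Next I would set up the semi-infinite side precisely. The key point is the computation of $!$-stalks of the convolution, which is already essentially carried out in \secref{sss:1st approx}--\secref{sss:2nd approx}: for a costandard-type object $W$ supported on $\ol S^\lambda$, the $!$-fiber of $W\star\Sat_{q,G}(V^{\gamma'})$ at $t^\mu$ is $H(S^{\mu-\lambda},\Sat_{q,G}(V^{\gamma'})|_{S^{\mu-\lambda}}\otimes\chi^\lambda_N)[\langle\lambda,2\check\rho\rangle]$, and the bottom-cohomology analysis reduces (via \secref{sss:Psiq}) to the intersections $S^{\mu-\lambda}\cap S^{-,w_0(\gamma')}$ and the local system $\chi_N^0\otimes\Psi_q$ on them. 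Under the hypothesis on $\gamma$, the relevant ``$\mu$'' — namely $w(\gamma)$ — plays the role of a restricted coweight after translation, so \thmref{t:restr} applies and forces the convolution to be concentrated in the expected degree on the expected stratum, exhibiting it as $\IC_{q,w\cdot(\gamma+\gamma')}$. To make the reduction to \thmref{t:restr} literal, I would use the $T$-equivariance and the Kummer-twisted equivariance of the relevant trivializations (as in \secref{sss:Psiq} and the proof of \thmref{t:restr}) to identify $\phi_i$-type crystal invariants with the condition $\langle\gamma,\check\alpha_i\rangle<\on{ord}(q_i)$ in all directions $i$ with $w(\alpha_i)\in\Lambda^{\on{neg}}$, and triviality of $\phi_i$ in directions with $w(\alpha_i)\in\Lambda^{\on{pos}}$ — exactly the stated dichotomy.

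Concretely the steps are: (1) establish the identification $\on{Av}_!^{\fL(N)}(\IC_{q,w\cdot\gamma})\simeq (\text{shift of})$ a single-stratum object of $\SI_{q,x}(G)^{\fL^+(T)}$ supported on $\ol S^{w(\gamma)}$, using that the map $I\cdot w t^\gamma I/I\to I\cdot w t^\gamma\fL^+(G)/\fL^+(G)$ is an isomorphism and $I\cdot w t^\gamma\fL^+(G)/\fL^+(G)\hookrightarrow\Gr_G$ is affine, exactly as in the proof of \propref{p:baby in heart w0}; (2) compute, via the Cousin/Cohen--Macaulay estimates of \secref{sss:2nd approx}, the $!$-stalks of the convolution with $\IC_{q,\Gr^{\gamma'}_G}$, reducing non-vanishing in bottom degree to the $S^\mu\cap S^{-,\nu}$ picture; (3) invoke \thmref{t:restr} under the stated numerical hypothesis to conclude the convolution is irreducible and equals $\IC_{q,w\cdot(\gamma+\gamma')}$; (4) deduce restrictedness by taking $\gamma'$ to range over $\Lambda^{\sharp,+}$ (note $\on{Irrep}(H)$ has highest weights in $(\Lambda^\sharp)^+=\Lambda_H^+$, so every irreducible $V^{\gamma'}\in\Rep(H)$ is accounted for). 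The main obstacle I anticipate is step (2): controlling the interaction between the Kummer local system $\Psi_q$ (which encodes the metaplectic twist and is the source of the $\on{ord}(q_i)$-thresholds) and the orbit-intersection geometry for a \emph{general} $w\in W$, rather than just $w=e$ as in \thmref{t:restr convolution}. One must verify that the twist of the $\fL^+(G)$-equivariant trivialization of $\CG^G|_{\Gr^{\gamma'}_G}$ against the $I$-equivariant trivialization over $I\cdot wt^\gamma I/I$ produces precisely the local system whose triviality on top-dimensional components is governed by the crystal function $\phi_i$ twisted by $w$ — this is a bookkeeping argument with \cite[Sect. 7.4]{GLys} and the $W$-invariance of $q$, but it is the delicate part, since the asymmetry between the two cases $w(\alpha_i)\in\Lambda^{\on{pos}}$ vs.\ $w(\alpha_i)\in\Lambda^{\on{neg}}$ comes entirely from how $w$ reorders the root spaces relative to $I$.
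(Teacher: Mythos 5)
Your overall strategy — transport to the semi-infinite category via $\on{Av}_!^{\fL(N)}$, compute stalks, and reduce to the crystal/intersection statement governed by \thmref{t:restr} — is the same approach the paper takes, which it states extremely tersely by citing \cite[Sect. 2.1]{ABBGM} and replacing the non-metaplectic crystal input \cite[Theorem 7.1.7]{FGV} with \thmref{t:restr}. However, there is a genuine gap at precisely the point you flag as delicate: the identification of which coweight gets fed into \thmref{t:restr}. The hypothesis on $\gamma$ puts $\langle\gamma,\check\alpha_i\rangle=\on{ord}(q_i)$ (not $<\on{ord}(q_i)$) in the directions with $w(\alpha_i)\in\Lambda^{\on{neg}}$, so neither $\gamma$ nor $w(\gamma)$ is restricted (and $w(\gamma)$ need not even be dominant). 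Your sentence ``the relevant $\mu$ — namely $w(\gamma)$ — plays the role of a restricted coweight after translation'' is therefore not correct as stated, and your later claim that one identifies the crystal conditions with $\langle\gamma,\check\alpha_i\rangle<\on{ord}(q_i)$ in those directions directly contradicts the hypothesis of the theorem.

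The paper applies \thmref{t:restr} to a different, auxiliary coweight $\lambda$, namely the dominant restricted coweight defined by
$$\langle\lambda,\check\alpha_i\rangle=
\begin{cases}
&0, \text{ if } w(\alpha_i)\in \Lambda^{\on{pos}}; \\
&\on{ord}(q_i)-1, \text{ if } w(\alpha_i)\in \Lambda^{\on{neg}}.
\end{cases}$$
The shift by $1$ is essential: with $\langle\lambda,\check\alpha_i\rangle=\on{ord}(q_i)-1$ the bound $\phi_i(K)\leq\langle\lambda,\check\alpha_i\rangle<\on{ord}(q_i)$, combined with the divisibility constraint $\phi_i(K)\in\BZ^{\geq 0}\cdot\on{ord}(q_i)$ from the $\Psi_q$-equivariance, forces $\phi_i(K)=0$; whereas with the value $\on{ord}(q_i)$ one would only get $\phi_i(K)\leq\on{ord}(q_i)$, which together with divisibility permits $\phi_i(K)=\on{ord}(q_i)$ and the argument does not close. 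The unit shift is produced by exactly the Iwahori-versus-$\fL^+(N)\cdot\fL^+(T)$ orbit bookkeeping that you describe in your last paragraph as ``the delicate part'': in a direction with $w(\alpha_i)\in\Lambda^{\on{neg}}$, the $I$-orbit through $w\cdot t^\gamma$ differs from the corresponding $\fL(N)\cdot\fL^+(T)$-orbit picture by one affine root space, contributing the $-1$. So the skeleton of your argument is correct, but without identifying this $\lambda$ your reduction to \thmref{t:restr} does not go through as written.
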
 

This theorem implies \thmref{t:accessible Iwahori} in the same way as \cite[Theorem 1.3.5(1)]{ABBGM} implies
\cite[Theorem 1.3.5(2)]{ABBGM} using the assumption that the derived group of $H$ is simply-connected. 

\sssec{}

To prove \thmref{t:restricted Iwahori} we repeat the argument in \cite[Sect. 2.1]{ABBGM}. The only difference 
is that instead of \cite[Theorem 7.1.7]{FGV} we use \thmref{t:restr}, applied to the weight $\lambda$ defined by
the formula
$$
\langle \lambda,\check\alpha_i\rangle=
\begin{cases}
&0, \text{ if } w(\alpha_i)\in \Lambda^{\on{pos}}; \\
&\on{ord}(q_i)-1, \text{ if } w(\alpha_i)\in \Lambda^{\on{neg}}.
\end{cases}
$$

\ssec{Metaplectic Arkhipov-Bezrukavnikov theory}  \label{ss:AB}

In this section we will make preparations for another proof of \thmref{t:finiteness of baby Verma} by
introducing a metaplectic analog of (some aspects of) the theory developed in \cite{AB}. 

\sssec{}

Consider the following example of a category equipped with an action of $\Rep(H)\otimes \Rep(T_H)$:
$$\QCoh(H\backslash (\ol{H/N_H})/T_H).$$% \text{ and } \QCoh(H\backslash (H/N_H)/T_H\simeq \Rep(B_H)$$
%and the functor
%$$\sfj^*: \QCoh(H\backslash (\ol{H/N_H})/T_H) \to \QCoh(H\backslash (H/N_H)/T_H.$$

The self-duality of $\QCoh(H\backslash (\ol{H/N_H})/T_H)$ as a module over $\Rep(H)\otimes \Rep(T_H)$ implies
that for any $\Rep(H)\otimes \Rep(T_H)$-module category $\bC$, we have:
\begin{equation} \label{e:univ base affine space}
\on{DrPl}(\bC)\simeq \on{Funct}_{\Rep(H)\otimes \Rep(T_H)}(\QCoh(H\backslash (\ol{H/N_H})/T_H),\bC).
\end{equation}

In particular, taking the identity functor on $\QCoh(H\backslash (\ol{H/N_H})/T_H)$, we obtain that the object
$$\CO_{H\backslash (\ol{H/N_H})/T_H}\in \QCoh(H\backslash (\ol{H/N_H})/T_H)$$
admits a canonical lift to an object
$$(\CO_{H\backslash (\ol{H/N_H})/T_H})^{\on{DrPl}}\in \on{DrPl}(\QCoh(H\backslash (\ol{H/N_H})/T_H)).$$

Under the equivalence \eqref{e:univ base affine space}, for a functor
$$\QCoh(H\backslash (\ol{H/N_H})/T_H)\to \bC$$ the corresponding object of $\on{DrPl}(\bC)$ is the image
of $(\CO_{H\backslash (\ol{H/N_H})/T_H})^{\on{DrPl}}$ under this functor. 

\sssec{}

Next consider 
$$\Rep(B_H)\simeq \QCoh(H\backslash (H/N_H)/T_H)$$
as a category equipped with an action of $\Rep(H)\otimes \Rep(T_H)$.  

\medskip

The self-duality of $\Rep(B_H)$ as a module over $\Rep(H)\otimes \Rep(T_H)$ implies
that for any $\bC$, we have:
\begin{equation} \label{e:univ G/B}
\BHecke_{\on{rel}}(\bC)\simeq \on{Funct}_{\Rep(H)\otimes \Rep(T_H)}(\Rep(B_H),\bC).
\end{equation}

In particular, the object $\sfe\in \Rep(B_H)$ admits a canonical lift to an object
$$\sfe^{\BHecke_{\on{rel}}}\in \BHecke_{\on{rel}}(\Rep(B_H)),$$
which corresponds to the identity functor on $\Rep(B_H)$. 

\medskip

Under the equivalence \eqref{e:univ G/B}, for a functor $\Rep(B_H)\to \bC$, the corresponding object of 
$\BHecke_{\on{rel}}(\bC)$ is the image of $\sfe^{\BHecke_{\on{rel}}}$ under the above functor.

\sssec{}

One can describe the object $\sfe^{\BHecke_{\on{rel}}}$ explicitly. Namely, we identify
$$\BHecke_{\on{rel}}(\Rep(B_H))\simeq \QCoh((H/N_H)/\on{Ad}(B_H)),$$
and $\sfe^{\BHecke_{\on{rel}}}$ is the image of the structure sheaf along the following composition of closed embeddings
$$\on{pt}/B_H\to T_H/\on{Ad}(B_H)\hookrightarrow (H/N_H)/\on{Ad}(B_H).$$

In particular, the object
$$\sfe^{\BHecke}:=\oblv_{\on{rel}}(\sfe^{\BHecke_{\on{rel}}})\in \BHecke(\Rep(B_H))$$
with respect to the identification
$$\BHecke(\Rep(B_H))\simeq \QCoh(B_H\backslash H/B_H)$$
is the image of the structure sheaf along the map
$$\sfi:\on{pt}/B_H\simeq B_H\backslash B_H/B_H \hookrightarrow B_H\backslash H/B_H.$$

\medskip

From here it follows that given an object $\bc\in \BHecke_{\on{rel}}(\bC)$, the resulting functor $\Rep(B_H)\to \bC$ sends 
\begin{equation} \label{e:coInd e}
\sfe\in \Rep(B_H)  \rightsquigarrow \coInd^{H}_{B_H}(\oblv_{\on{rel}}(\bc)).
\end{equation} 

\sssec{}

Under the identifications \eqref{e:univ base affine space} and \eqref{e:univ G/B}, 
the functor $\sfj^*:\on{DrPl}(\bC)\to \BHecke_{\on{rel}}(\bC)$ corresponds to precomposition with
$$\sfj_*:\QCoh(H\backslash (H/N_H)/T_H)\to \QCoh(H\backslash (\ol{H/N_H})/T_H).$$

\sssec{}

Take $\bC=\Shv_{\CG^G}(\Gr_G)^{I,\on{ren}}$ and 
$$(\delta_{1,\Gr})^{\on{DrPl},I}\in \on{DrPl}(\Shv_{\CG^G}(\Gr_G)^{I,\on{ren}}).$$

Consider the corresponding functor, denoted 
\begin{equation} \label{e:from base affine flags to Fl}
\wt{\on{AB}}:\QCoh(H\backslash (\ol{H/N_H})/T_H)\to \Shv_{\CG^G}(\Gr_G)^{I,\on{ren}}.
\end{equation} 

By the above, we obtain that
$$\wt\CF^\semiinf_{\on{rel}}\simeq \wt{\on{AB}}\circ \sfj_*(\sfe^{\BHecke_{\on{rel}}}),$$
and hence
\begin{equation} \label{e:F as image}
\wt\CF^\semiinf\simeq \wt{\on{AB}}\circ \sfj_*((\sfi)_*(\sfe)).
\end{equation} 

\sssec{}

The following is the metaplectic analog of the result of \cite[Theorem 3.1.4]{AB}: 

\begin{thm} \label{t:AB}
The functor $\wt{\on{AB}}$ of \eqref{e:from base affine flags to Fl} factors through the localization
$$\sfj^*: \QCoh(H\backslash (\ol{H/N_H})/T_H) \to \QCoh(H\backslash (H/N_H)/T_H).$$
\end{thm}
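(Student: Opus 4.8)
Theorem \ref{t:AB} is the metaplectic counterpart of Theorem 3.1.4 of \cite{AB}, and the plan is to follow that argument, substituting the structural facts about $\Shv_{\CG^G}(\Gr_G)^{I,\on{ren}}$ established in Sects. \ref{s:Hecke Whit}--\ref{s:Fl} for their non-metaplectic analogues. The functor $\sfj^*$ is a Verdier localization: since $\sfj\colon H\backslash(H/N_H)/T_H\hookrightarrow H\backslash(\ol{H/N_H})/T_H$ is an open embedding, $\sfj^*$ realizes $\QCoh(H\backslash(H/N_H)/T_H)$ as the quotient of $\QCoh(H\backslash(\ol{H/N_H})/T_H)$ by the full subcategory $\QCoh_\partial$ of objects set-theoretically supported on the boundary $\partial:=\ol{H/N_H}\setminus H/N_H$. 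Hence, by the universal property of localizations, it suffices to show that $\wt{\on{AB}}$ annihilates $\QCoh_\partial$; equivalently, that the tautological $\on{Fun}(\ol{H/N_H})$-module structure on $\delta_{1,\Gr}$ underlying $(\delta_{1,\Gr})^{\on{DrPl},I}$ extends to an $\on{Fun}(H/N_H)$-module structure.

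\textbf{Reduction to the fundamental semi-invariants.} Throughout Part VIII one may assume that the derived group of $H$ is simply connected (see \secref{sss:red to sc}, relying on \propref{p:isogenies trick}). Under this hypothesis, for each vertex $i$ of the Dynkin diagram of $H$ there is a canonical $H\times T_H$-equivariant regular function $f^{\omega_i}$ on $\ol{H/N_H}$ (the highest weight vector of $V^{\omega_i}$), and the open locus $H/N_H$ is exactly $\bigcap_i\{f^{\omega_i}\ne 0\}$, so that $\QCoh_\partial$ is generated under colimits and shifts by the cones of the finitely many morphisms "multiplication by $f^{\omega_i}$" (the torus directions of $H$ being innocuous, as $\sfj^*$ is an equivalence there). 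Thus it remains to prove that each $f^{\omega_i}$ acts invertibly on the $\on{Fun}(\ol{H/N_H})$-module object $(\delta_{1,\Gr})^{\on{DrPl},I}$; by the multiplicativity $f^{\gamma_1+\gamma_2}=f^{\gamma_1}f^{\gamma_2}$ of the Plücker semi-invariants it is equivalent, and technically more convenient, to prove this for $f^\gamma$ with $\gamma$ ranging over a cofinal set of dominant \emph{regular} weights.

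\textbf{Identifying the relevant map.} By the universal property \eqref{e:univ base affine space}, $\wt{\on{AB}}$ carries the tautological Drinfeld--Pl\"ucker object $(\CO_{\ol{H/N_H}})^{\on{DrPl}}$ to $(\delta_{1,\Gr})^{\on{DrPl},I}$, and the action of $f^\gamma$ is assembled, via the $H$-counit and the Pl\"ucker algebra structure, from the structure maps \eqref{e:DrPl maps}; hence on the target it becomes the map assembled in the same way from \eqref{e:DrPl maps I}, i.e. from $\IC_{q,\ol\Gr^{-w_0(\gamma)}_G}\to j_{-\gamma,*}\underset{I}\star\delta_{1,\Gr}$, the adjoint of the open-embedding morphism \eqref{e:DrPl maps I adj} $j_{\gamma,!}\underset{I}\star\delta_{1,\Gr}\to\IC_{q,\ol\Gr^{\gamma}_G}$. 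Using \propref{p:BMW} to trade $j_{\pm\gamma,*}\underset{I}\star\delta_{1,\Gr}$ for the invertible twists $J_{\pm\gamma}\underset{I}\star\delta_{1,\Gr}$, the invertibility of "$f^\gamma$ acting" is reduced to the assertion that, for $\gamma$ deep enough in the dominant chamber, the morphism $\IC_{q,\ol\Gr^{-w_0(\gamma)}_G}\to J_{-\gamma}\underset{I}\star\delta_{1,\Gr}$ (suitably convolved on the left and right so that source and target become abstractly isomorphic twists of $\delta_{1,\Gr}$) is an isomorphism.

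\textbf{The main obstacle: invertibility.} The heart of the proof is this last invertibility, and it is where the work lies. The plan is to invoke the results underlying \secref{ss:proof accessible Iw}: \thmref{t:restricted Iwahori} (so that, for $\gamma$ in the appropriate "restricted" cone, $\IC_{q,\ol\Gr^{\gamma}_G}\underset{\fL^+(G)}\star\IC_{q,\Gr^{\gamma'}_G}\simeq\IC_{q,\ol\Gr^{\gamma+\gamma'}_G}$ and, more generally, its Iwahori-translated form), together with \propref{p:BMW} and the characterization of $\IC_{q,\ol\Gr^{\gamma}_G}$ as the socle object inside $j_{\gamma,*}\underset{I}\star\delta_{1,\Gr}$. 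For $\gamma$ sufficiently dominant regular one expects the composite in question to identify with the identity of $\IC_{q,\ol\Gr^{-w_0(\gamma)}_G}$ tensored by a one-dimensional "Gauss sum" line of the type appearing in \secref{sss:sfi from Whit} (coming from the metaplectic twist along the relevant $\fL(N)$-orbit, whose triviality on the top-dimensional components of the intersection of semi-infinite orbits is \thmref{t:restr}); the only possible failure would be the vanishing of this line, excluded by the non-degeneracy hypothesis $q_i\ne 1$ of Definition \ref{d:non-deg q}. Granting this for sufficiently dominant $\gamma$, the multiplicativity of the semi-invariants propagates invertibility to all $f^{\omega_i}$, and the factorization of $\wt{\on{AB}}$ through $\sfj^*$ follows.
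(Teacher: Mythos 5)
Your proposal takes a genuinely different route from the paper. The paper disposes of Theorem~\ref{t:AB} in two sentences: it observes that the proof of \cite[Theorem 3.1.4]{AB} goes through verbatim, and, alternatively, in \secref{ss:AB bis} it shows that Theorem~\ref{t:AB} is \emph{logically equivalent} to Theorem~\ref{t:finiteness of baby Verma}, which is itself proved in \secref{ss:ABBGM} by the ABBGM surjectivity argument (\propref{p:ABBGM surj} together with \thmref{t:accessible Iwahori}). Concretely, the paper checks that the natural transformation $\wt{\on{AB}} \to \wt{\on{AB}}\circ\sfj_*\circ\sfj^*$ becomes an isomorphism on $\CO_{\ol{H/N_H}}$, which by \eqref{e:F as image} and \eqref{e:coInd e} amounts exactly to $\delta_{1,\Gr}\xrightarrow{\sim}\coInd^H_{B_H}(\wt\CF^\semiinf)$, i.e.\ \corref{c:coind of baby Verma zero}. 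You do not go through the baby Verma object at all; you attempt a direct verification that the Pl\"ucker semi-invariants act invertibly on $(\delta_{1,\Gr})^{\on{DrPl},I}$.

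There is a genuine gap in the step you yourself flag as the main obstacle. The claim that, for $\gamma$ deep in the dominant cone, the structure map $\IC_{q,\ol\Gr^{-w_0(\gamma)}_G}\to j_{-\gamma,*}\underset{I}\star\delta_{1,\Gr}$ becomes (after some unspecified convolution) ``the identity of $\IC_{q,\ol\Gr^{-w_0(\gamma)}_G}$ tensored by a one-dimensional Gauss sum line'' is not proved --- you write ``one expects'' --- and as stated it does not look correct: that map is the inclusion of the socle of $j_{-\gamma,*}\underset{I}\star\delta_{1,\Gr}$, a genuine subobject in a length-$\gg 1$ perverse sheaf, not a twist of an isomorphism, and no left/right convolution turns an inclusion of a simple subobject into an isomorphism. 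The invertibility one actually needs is that the colimit $\underset{\gamma}{\on{colim}}\, j_{\gamma,*}\underset{I}\star\IC_{q,\ol\Gr^{-w_0(\gamma)}_G}$ (i.e.\ $\coInd^H_{B_H}(\wt\CF^\semiinf)$) collapses to $\delta_{1,\Gr}$, and that is precisely the non-trivial content of \thmref{t:finiteness of baby Verma}(b); citing \thmref{t:restricted Iwahori} and the socle characterization does not by itself give this stabilization. There is also a smaller but real issue in the reduction: ``multiplication by $f^{\omega_i}$'' is only $B_H$-semi-invariant, not $H$-invariant, so it is not a morphism in $\QCoh(H\backslash(\ol{H/N_H})/T_H)$; one must instead work with the full multiplication maps $V^{\omega_i}\otimes\sfe^{\omega_i}\otimes\CO\to\CO$ (which are not injections of line bundles), or phrase the whole reduction as the paper does, via the unit $\on{Id}\to\sfj_*\circ\sfj^*$ and $\Rep(H)\otimes\Rep(T_H)$-linearity.
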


\begin{rem}
Another way to formulate \thmref{t:AB} is that the functor \eqref{e:from base affine flags to Fl} 
sends $\on{ker}(\sfj^*)$ to zero.
\end{rem}

\sssec{}

\thmref{t:AB} can be proved by repeating verbatim the proof in \cite{AB} of the usual (i.e., nin-metaplectic)
version. Alternatively, in the next subsection we will see that \thmref{t:AB} is logically equivalent to 
\thmref{t:finiteness of baby Verma}. 

\ssec{Second proof of \thmref{t:finiteness of baby Verma}}   \label{ss:AB bis}

We will now show that Theorems \ref{t:AB} and \ref{t:finiteness of baby Verma} tautologically imply one another. 

\sssec{}   \label{sss:AB bis}

First, let us assume \thmref{t:AB}.

\medskip

Let 
\begin{equation} \label{e:from G/B to Fl}
\on{AB}:\Rep(B_H)\simeq \QCoh(H\backslash (H/N_H)/T_H)\to \Shv_{\CG^G}(\Gr_G)^{I,\on{ren}}
\end{equation} 
denote the resulting functor. Note, however, that since $\sfj^*\circ \sfj_*\simeq \on{Id}$, we have
\begin{equation} \label{e:AB tilde}
\on{AB}\simeq \wt{\on{AB}}\circ \sfj_*.
\end{equation} 

\sssec{}

Point (a) of \thmref{t:finiteness of baby Verma}
is equivalent to the assertion that $\wt\CF^\semiinf\in \BHecke(\Shv_{\CG^G}(\Gr_G)^{I,\on{ren}})$ is compact. 

\medskip

The functor $\wt{\on{AB}}$ sends the generator of $\QCoh(H\backslash (\ol{H/N_H})/T_H)$, viewed as
a $\Rep(H)\otimes \Rep(T_H)$-module category, i.e., $\CO_{H\backslash (\ol{H/N_H})/T_H}$, to
a compact object of $\Shv_{\CG^G}(\Gr_G)^{I,\on{ren}}$, i.e., $\delta_{1,\Gr}$. Hence, $\wt{\on{AB}}$ sends
compacts to compacts. 

\medskip

The latter formally implies that the functor $\on{AB}$ also sends compacts to compacts. Hence, $\on{AB}$ admits 
a continuous right adjoint, which automatically also respects the action of $\Rep(H)\otimes \Rep(T_H)$.  The adjoint
pair
$$\on{AB}:\QCoh(H\backslash (H/N_H)/T_H)\rightleftarrows \Shv_{\CG^G}(\Gr_G)^{I,\on{ren}}:\on{AB}^R$$
induces the (same-named) adjoint pair
$$\BHecke(\QCoh(H\backslash (H/N_H)/T_H))\rightleftarrows \BHecke(\Shv_{\CG^G}(\Gr_G)^{I,\on{ren}}).$$

\medskip

In particular, the functor 
$$\on{AB}:\BHecke(\Rep(B_H))\to \BHecke(\Shv_{\CG^G}(\Gr_G)^{I,\on{ren}})$$
admits a continuous right adjoint, and hence sends compacts to compacts. In particular,
the image of $(\sfi)_*(\sfe)$ is compact. However,
$$\on{AB}\circ (\sfi)_*(\sfe)\overset{\text{\eqref{e:AB tilde}}}\simeq
\wt{\on{AB}}\circ \sfj_*\circ (\sfi)_*(\sfe)\overset{\text{\eqref{e:F as image}}} \simeq
\wt\CF^\semiinf,$$
whence the latter is compact, as required.  

\sssec{}

Point (b) of \thmref{t:finiteness of baby Verma} is equivalent to the assertion that the maps
$$J_\gamma\underset{I}\star \delta_{1,\Gr} \to \coInd^H_{B_H}(\wt\CF^\semiinf)$$
that arise from \eqref{e:to glob sect} are isomorphisms.

\medskip

To prove this, by applying the functor $\on{AB}$, it suffices to show that the corresponding maps
$$\sfe^\gamma \to \coInd^H_{B_H}((\sfi)_*(\sfe)\otimes \sfe^\gamma)$$
are isomorphisms in $\Rep(B_H)$. Note that 
$$\coInd^H_{B_H}:\BHecke(\Rep(B_H))\to \Rep(B_H)$$
identifies with the direct image functor along 
$$B_H\backslash H/B_H \to \on{pt}/B_H.$$

This makes the assertion obvious. 

\sssec{}

Vice versa, let us assume \thmref{t:finiteness of baby Verma}(b) and deduce \thmref{t:AB}. We need to show that the
natural transformation
\begin{equation} \label{e:tilde AB}
\wt{\on{AB}}\to \wt{\on{AB}}\circ \sfj_*\circ \sfj^*,
\end{equation} 
induced by the unit of the adjunction $\on{Id}\to \sfj_*\circ \sfj^*$ is an isomorphism.

\medskip

Since both functors respect the action of $\Rep(H)\otimes \Rep(T_H)$, it suffices to show that the natural transformation
\eqref{e:tilde AB} induces an isomorphism
$$\wt{\on{AB}}(\CO_{H\backslash (\ol{H/N_H})/T_H})\to 
\wt{\on{AB}}\circ \sfj_*\circ \sfj^*(\CO_{H\backslash (\ol{H/N_H})/T_H}).$$

However, it is easy to see that we have a commutative diagram
$$
\CD
\wt{\on{AB}}(\CO_{H\backslash (\ol{H/N_H})/T_H}) @>>>  \wt{\on{AB}}\circ \sfj_*\circ \sfj^*(\CO_{H\backslash (\ol{H/N_H})/T_H})  \\
& &  @VV{\sim}V   \\
@V{\sim}VV \wt{\on{AB}}\circ \sfj_*(\CO_{H\backslash (H/N_H)/T_H})  \\
& &  @V{\sim}V{\text{\eqref{e:F as image} and \eqref{e:coInd e}}}V   \\
\delta_{1,\Gr}  @>{\text{\eqref{e:coind baby Verma}}}>> 
 \coInd^{H}_{B_H}(\wt\CF^\semiinf).
\endCD
$$

Now, the bottom arrow is the above diagram is an isomorphism by \corref{c:coind of baby Verma zero}. 

\section{Baby Verma objects in the Whittaker category}  \label{s:baby Verma in Whit}

In this section we will realize a part of the program indicated in \secref{sss:strategy}: we will
construct the objects $\bCM^{\mu,*}_{\Whit}$ and $\bCM^{\mu,!}_{\Whit}$ and verify proprties (i) and (ii). 

\ssec{Construction of dual baby Verma objects in the Whittaker category}

In this subsection we will construct the objects $\bCM^{\mu,*}_{\Whit}$. 

\sssec{}

Consider the category
$$\Shv_{\CG^G}(\Fl_G)^{\fL(N),\chi_N}\subset \Shv_{\CG^G}(\Fl_G),$$
defined in the same way as 
$$\Whit_q(G)=\Shv_{\CG^G}(\Gr_G)^{\fL(N),\chi_N},$$
but with $\Gr_G$ replaced by $\Fl_G$.

\sssec{}

Note now that we have a well-defined convolution functor 
\begin{equation} \label{e:Fl conv}
\Shv_{\CG^G}(\Fl_G)\otimes \Shv_{\CG^G}(\Gr_G)^{I,\on{ren}}\to \Shv_{\CG^G}(\Gr_G),
\quad \CF_1,\CF_2\to \CF_1 \underset{I}\star \CF_2,
 \end{equation}
which respects the action of $\Sph_{q,x}(G)$ on the right, and the convolution action of $\Shv_{\CG^G}(\fL(G))$
on the left.

\medskip

In particular, the above functor induces a functor
\begin{equation} \label{e:Whit conv}
\Shv_{\CG^G}(\Fl_G)^{\fL(N),\chi_N} \otimes \Shv_{\CG^G}(\Gr_G)^{I,\on{ren}} \to \Shv_{\CG^G}(\Gr_G)^{\fL(N),\chi_N} 
=:\Whit_q(G),
\end{equation}
which respects the action of $\Sph_{q,x}(G)$ on the right. 

\medskip

We notice:

\begin{lem}  \label{l:conv with unit}
The functor 
$$-\underset{I}\star \delta_{1,\Gr}:\Shv_{\CG^G}(\Fl_G)\to \Shv_{\CG^G}(\Gr_G)$$
identifies with direct image along $\Fl_G\to \Gr_G$.
\end{lem}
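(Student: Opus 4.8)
\textbf{Proof plan for \lemref{l:conv with unit}.}

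The claim is essentially a special case of the general principle that convolution $\CF_1 \underset{I}\star \CF_2$ over the affine flag variety, when the second factor is the delta-sheaf $\delta_{1,\Gr}$ at the base point of $\Gr_G$, collapses to ordinary pushforward along the projection $\on{pr}: \Fl_G \to \Gr_G$. First I would recall the precise definition of the convolution functor $-\underset{I}\star-$ from \eqref{e:Fl conv}: it is defined via the convolution diagram
$$
\Fl_G \times \Gr_G \xleftarrow{\ \ } \fL(G) \times \Gr_G \xrightarrow{\ \ } \fL(G) \overset{I}\times \Gr_G \xrightarrow{m} \Gr_G,
$$
where one descends the external product $\CF_1 \boxtimes \CF_2$ along the quotient by $I$ (using the $I$-equivariance of $\CF_2 \in \Shv_{\CG^G}(\Gr_G)^{I,\on{ren}}$) and then pushes forward along the twisted multiplication map $m$. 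The point is that the gerbe data is compatible here because $\CG^G$ on $\fL(G)$ is multiplicative, and its restriction to $I$ is canonically trivialized (being pro-unipotent over a reductive quotient with the relevant trivialization), so the twisted external product makes sense.

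The key step is to identify what happens when $\CF_2 = \delta_{1,\Gr}$. In that case the twisted product $\fL(G) \overset{I}\times \Gr_G$ restricted to the support of $\delta_{1,\Gr}$ is just $\fL(G)/I = \Fl_G$, sitting inside $\fL(G)\overset{I}\times \Gr_G$ as the locus $\fL(G)\overset{I}\times \{1\}$. Under this identification the twisted multiplication map $m: \fL(G)\overset{I}\times \Gr_G \to \Gr_G$ restricts to the composite $\Fl_G = \fL(G)/I \to \fL(G)/\fL^+(G) = \Gr_G$, which is precisely the projection $\on{pr}$. Since $\delta_{1,\Gr}$ is the $!$-extension (equivalently $*$-extension) of the dualizing-up-to-shift sheaf on the point $\{1\}\subset \Gr_G$, the descent of $\CF_1 \boxtimes \delta_{1,\Gr}$ to $\fL(G)\overset{I}\times \Gr_G$ is the $*$-extension from $\Fl_G$ of the object corresponding to $\CF_1$ under $\Shv_{\CG^G}(\Fl_G) \simeq \Shv_{\CG^G}(\fL(G)/I)$. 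Pushing this forward along $m$ is then the same as pushing $\CF_1$ forward along $\on{pr}: \Fl_G \to \Gr_G$. I would also need to check that the gerbe twistings match: the pullback of $\CG^G$ on $\Gr_G$ along $\on{pr}$ agrees with $\CG^G$ on $\Fl_G$ (both arise from the same multiplicative gerbe on $\fL(G)$ by descent), so the identification of functors is an identification of functors between the correctly twisted categories.

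The main obstacle, if any, is bookkeeping of the gerbe trivializations and the cohomological shifts rather than anything conceptual: one must verify that the canonical $I$-equivariant trivialization of $\CG^G|_I$ used to form the twisted product is compatible with the trivialization implicit in defining $\delta_{1,\Gr}$ as an $I$-equivariant object (this is automatic since both are normalized at the unit), and that no shift is introduced because $\on{pr}$ is an ind-proper, in fact a fibration with fibers $\fL^+(G)/I = G/B$ — wait, one must be careful: $\on{pr}: \Fl_G \to \Gr_G$ is a $G/B$-bundle, and pushforward along it does \emph{not} introduce a shift when one takes the $*$-pushforward as in the convolution formalism, because the convolution is normalized so that convolving with $\delta_{1,\Gr}$ gives pushforward without shift (this is the standard normalization and matches \secref{ss:Iwahori-equivariant category}). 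I would close by noting that since both sides are continuous functors out of $\Shv_{\CG^G}(\Fl_G)$ and the identification is natural in $\CF_1$, it suffices to check it on a generating set of objects, e.g. the standard objects $j_{w,!}$, where it reduces to the elementary geometric statement above about strata of $\Fl_G$ mapping to strata of $\Gr_G$.
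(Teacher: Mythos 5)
The paper states \lemref{l:conv with unit} without proof, so there is no argument to compare against; the claim is treated as an immediate consequence of the definition of the convolution functor in \eqref{e:Fl conv}. Your argument is correct and is exactly the standard unpacking of that definition: when the second factor is $\delta_{1,\Gr}$, the twisted external product on $\fL(G)\overset{I}\times \Gr_G$ is supported on $\fL(G)\overset{I}\times \{1\}\cong \Fl_G$, where it restricts to $\CF_1$ (the gerbe trivializations match because both are normalized at the unit), and the multiplication map restricted there is precisely $\on{pr}:\Fl_G\to\Gr_G$, so the pushforward is $\on{pr}_*(\CF_1)$. Two minor cleanups: there is no need to invoke the dualizing sheaf on a point (on a point the constant and dualizing sheaves coincide, so $\delta_{1,\Gr}$ carries no ambiguity), and the aside about a possible $[2\dim G/B]$ shift is moot for the same reason — the skyscraper at the unit has no shift, and since $\on{pr}$ is proper the $*$- and $!$-pushforwards agree; the final appeal to checking on the generators $j_{w,!}$ is also unnecessary, as the argument you give is already uniform in $\CF_1$.
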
 

\sssec{}

For $\lambda\in \Lambda^+$, 
denote $$S^\lambda_{\Fl}=\fL(N)\cdot t^\lambda\cdot I/I\subset \Fl_G.$$

As in the case of the affine Grassmannian, the functor of taking the fiber at $t^\lambda \in \Fl_G$ defines
an equivalence
\begin{equation} \label{e:N orbit in Fl}
\Shv_{\CG^G}(S^\lambda_{\Fl})^{\fL(N),\chi_N}\to \Vect.
\end{equation}

Let 
$$W^{\lambda,*}_{\Fl}\in \Shv_{\CG^G}(\Fl_G)^{\fL(N),\chi_N}$$
be the *-extension of the image of $\sfe[-\langle \lambda,2\check\rho\rangle]\in \Vect$ under the equivalence \eqref{e:N orbit in Fl}. 

\sssec{}

Denote
$$\wt\CM^{\lambda,*}_{\Whit}:=W^{\lambda,*}_{\Fl}\underset{I}\star \wt\CF^{\semiinf,-}\in
\BmHecke(\Whit_q(G)).$$

Denote
$$\bCM{}^{\lambda,*}_{\Whit}:=\Res^{B^-_H}_{T_H}(\CM^\lambda_{\Whit})\in \bHecke(\Whit_q(G)).$$

\sssec{}

We observe:

\begin{lem}   \label{l:baby Verma shift}
For $\gamma\in \Lambda^+_H$ we have an isomorphism
$$\wt\CM^{\lambda,*}_{\Whit} \otimes \sfe^\gamma \simeq \wt\CM^{\lambda+\gamma,*}_{\Whit}.$$ 
\end{lem}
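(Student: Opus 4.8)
\textbf{Proof plan for \lemref{l:baby Verma shift}.}

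The plan is to reduce the statement to the corresponding twisting properties of the two building blocks: the object $W^{\lambda,*}_{\Fl}$ in the Whittaker category on the affine flag space, and the object $\wt\CF^{\semiinf,-}$ in $\BmHecke(\Shv_{\CG^G}(\Gr_G)^{I,\on{ren}})$. First I would record how the action of $\Rep(T_H)$ on $\BmHecke(\Whit_q(G))$ interacts with the convolution functor \eqref{e:Whit conv}: since the $\Rep(T_H)$-action on $\BmHecke(\Whit_q(G))$ is by definition the one coming from the $B^-_H$-Hecke structure, and the convolution $-\underset{I}\star-$ intertwines this structure on $\wt\CF^{\semiinf,-}$ with that on $W^{\lambda,*}_{\Fl}\underset{I}\star\wt\CF^{\semiinf,-}$, we get a canonical identification
$$
(W^{\lambda,*}_{\Fl}\underset{I}\star \wt\CF^{\semiinf,-})\otimes \sfe^\gamma
\simeq W^{\lambda,*}_{\Fl}\underset{I}\star(\wt\CF^{\semiinf,-}\otimes \sfe^\gamma).
$$
So it suffices to understand $\wt\CF^{\semiinf,-}\otimes \sfe^\gamma$ together with the convolution with $W^{\lambda,*}_{\Fl}$.

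Next I would use the description of $\wt\CF^{\semiinf,-}$ via the functor \eqref{e:BMW}: by construction (see \secref{sss:system for F-}), for $\gamma\in\Lambda^+_H$ one has $J_\gamma\underset{I}\star\wt\CF^{\semiinf,-}\simeq \wt\CF^{\semiinf,-}\otimes\sfe^\gamma$, where $J_\gamma$ is the image of $\sfe^\gamma$ under the monoidal functor $\Rep(T_H)\to\Shv_{\CG^G}(\Fl_G)^I$, and for dominant $\gamma$ we have $J_\gamma\simeq j_{\gamma,!}$. Hence
$$
\wt\CM^{\lambda,*}_{\Whit}\otimes\sfe^\gamma
\simeq W^{\lambda,*}_{\Fl}\underset{I}\star j_{\gamma,!}\underset{I}\star\wt\CF^{\semiinf,-}.
$$
So the lemma is reduced to the identification, in $\Shv_{\CG^G}(\Fl_G)^{\fL(N),\chi_N}$,
$$
W^{\lambda,*}_{\Fl}\underset{I}\star j_{\gamma,!}\simeq W^{\lambda+\gamma,*}_{\Fl},
\qquad \gamma\in\Lambda^+_H.
$$

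The core of the argument is this last identity. Here I would argue geometrically on $\Fl_G$. The convolution $W^{\lambda,*}_{\Fl}\underset{I}\star j_{\gamma,!}$ is computed via the convolution diagram $\Fl_G\,\wt\times\,\Fl_G$, and the support of $j_{\gamma,!}$ is the closure of the Iwahori orbit through $t^\gamma$. Because $\gamma$ lies in $\Lambda^+_H\subset\Lambda^+$ and $t^{\gamma}$ is dominant, multiplication on the right by the cell $I\cdot t^\gamma\cdot I/I$ maps the $\fL(N)$-orbit $S^\lambda_{\Fl}$ isomorphically (after taking the appropriate twisted sheaf, i.e.\ using the $\fL(N)$-equivariance of the standard object and the dominance of $\lambda+\gamma$) to $S^{\lambda+\gamma}_{\Fl}$; this is the same mechanism as in \propref{p:baby in heart w0} where one exploits that the relevant convolution map restricted to a cell through a dominant coweight is an isomorphism. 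Thus the $*$-extension of the generator of $\Shv_{\CG^G}(S^\lambda_{\Fl})^{\fL(N),\chi_N}$ convolved with $j_{\gamma,!}$ is again a $*$-extension, with the correct cohomological shift $[-\langle\lambda+\gamma,2\check\rho\rangle]$ because the shifts add: $[-\langle\lambda,2\check\rho\rangle]$ from $W^{\lambda,*}_{\Fl}$ and $[-\langle\gamma,2\check\rho\rangle]$ from the fact that the orbit through $t^\gamma$ has dimension $\langle\gamma,2\check\rho\rangle$ and $j_{\gamma,!}$ is its $!$-extension. Keeping careful track of the gerbe trivializations (the $I$-equivariant trivialization of $\CG^G|_{I\cdot t^\gamma\cdot I/I}$ coming from the chosen trivialization of $\CG_{T_H,x}$ is precisely what makes $\sfe^\gamma\mapsto\sfe^{\lambda+\gamma}$ rather than something twisted) gives the displayed isomorphism.

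The main obstacle I anticipate is the careful bookkeeping of the gerbe-twistings and the cohomological shifts in the last step: one must verify that the trivialization of $\CG^G$ over the convolution of the two relevant cells agrees with the product of the trivializations used to define $W^{\lambda,*}_{\Fl}$ and $j_{\gamma,!}$ separately, so that no extra sign or Kummer local system appears. This is routine but must be done with the same care as in \secref{sss:simple root triv}--\secref{sss:det calc}. Once the compatibility of trivializations is established and the orbit-multiplication isomorphism is in place, the rest is a formal diagram chase combining the $B^-_H$-Hecke equivariance with the convolution \eqref{e:Whit conv}.
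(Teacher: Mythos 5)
There is a genuine sign/variance error in your reduction. You assert that ``by construction'' one has $J_\gamma\underset{I}\star\wt\CF^{\semiinf,-}\simeq\wt\CF^{\semiinf,-}\otimes\sfe^\gamma$ for $\gamma\in\Lambda^+_H$, and then use $J_\gamma\simeq j_{\gamma,!}$ to reduce to $W^{\lambda,*}_{\Fl}\underset{I}\star j_{\gamma,!}\simeq W^{\lambda+\gamma,*}_{\Fl}$. The equivariance $J_\gamma\underset{I}\star(-)\simeq (-)\otimes\sfe^\gamma$ is what the $\BHecke_{\on{rel}}$-structure gives for $\wt\CF^\semiinf$ — but it does \emph{not} transfer unchanged to $\wt\CF^{\semiinf,-}$. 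Recall $\wt\CF^{\semiinf,-}=w_0\bigl(j_{w_0,*}\underset{I}\star\wt\CF^\semiinf[-d]\bigr)$; the conjugation by $j_{w_0,*}$ hidden in this definition reverses the variance. Concretely, from $J_\gamma\underset{I}\star\wt\CF^\semiinf\simeq\wt\CF^\semiinf\otimes\sfe^\gamma$ one derives (and this is what the paper proves) $j_{\gamma,*}\underset{I}\star\wt\CF^{\semiinf,-}\simeq\wt\CF^{\semiinf,-}\otimes\sfe^\gamma$ for $\gamma\in\Lambda^+_H$. Convolving both sides of this with $j_{\gamma,!}$ and using $j_{\gamma,!}\underset{I}\star j_{\gamma,*}\simeq\delta_{1,\Fl}$ from \propref{p:BMW} gives $j_{\gamma,!}\underset{I}\star\wt\CF^{\semiinf,-}\simeq\wt\CF^{\semiinf,-}\otimes\sfe^{-\gamma}$ — the opposite sign from what you wrote. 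So the formula you invoke is wrong, and \secref{sss:system for F-}, which you cite for it, contains no such equivariance statement: it describes the components $\gamma\mapsto j_{w_0,*}\underset{I}\star J_{w_0(\gamma)}\underset{I}\star\delta[-d]$ of the Drinfeld--Pl\"ucker datum, which for dominant $\gamma$ becomes $j_{\gamma,*}\underset{I}\star\delta$ (note the $*$), not a statement about convolving $J_\gamma$ with $\wt\CF^{\semiinf,-}$.

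Because of this, the convolution identity your argument actually needs is $W^{\lambda,*}_{\Fl}\underset{I}\star j_{\gamma,*}\simeq W^{\lambda+\gamma,*}_{\Fl}$ — which is what \eqref{e:shift Verma on flags} asserts and what the paper uses — and not $W^{\lambda,*}_{\Fl}\underset{I}\star j_{\gamma,!}\simeq W^{\lambda+\gamma,*}_{\Fl}$. The $*$-version is the geometrically clean one: it is Verdier-dual to $W^{\lambda,!}_{\Fl}\underset{I}\star j_{\gamma,!}\simeq W^{\lambda+\gamma,!}_{\Fl}$, so both factors have the same extension type and the computation on the convolution diagram closes; whereas convolving a $*$-extension with a $!$-extension has no such cleanness. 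The overall strategy — commute the $\Rep(T_H)$-twist past the convolution with $W^{\lambda,*}_{\Fl}$ and then reduce to an identity in $\Shv_{\CG^G}(\Fl_G)^{\fL(N),\chi_N}$ — is exactly what the paper does. What is missing from your write-up is the intermediate conjugation argument $j_{w_0,*}\underset{I}\star J_{w_0(\gamma)}\underset{I}\star j_{w_0,!}\simeq j_{\gamma,*}$ that converts the $j_{\gamma,!}$-equivariance of $\wt\CF^\semiinf$ into the $j_{\gamma,*}$-equivariance of $\wt\CF^{\semiinf,-}$; without it, the $!$ and $*$ get swapped and the rest of the argument is applied to the wrong objects.
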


\begin{proof}

By the construction of $\wt\CF^\semiinf$, we have
$$J_\gamma \underset{I}\star \wt\CF^\semiinf\simeq \wt\CF^\semiinf \otimes \sfe^\gamma.$$

Hence,
$$j_{w_0,*} \underset{I}\star J_\gamma \underset{I}\star j_{w_0,!} \underset{I}\star \wt\CF^{\semiinf,-} 
\simeq \wt\CF^{\semiinf,-} \otimes \sfe^{w_0(\gamma)}.$$

Hence, for $\gamma\in \Lambda^+_H$, we obtain 
$$j_{\gamma,*}\underset{I}\star \wt\CF^{\semiinf,-}  \simeq \wt\CF^{\semiinf,-} \otimes \sfe^\gamma.$$

From here we obtain that $\wt\CM^{\lambda,*}_{\Whit} \otimes \sfe^\gamma$ identifies with
$$W^{\lambda,*}_{\Fl} \underset{I}\star  j_{\gamma,*} \underset{I}\star \wt\CF^{\semiinf,-}.$$

Finally, we notice that for $\gamma\in \Lambda^+_H$, we have:
\begin{equation} \label{e:shift Verma on flags}
W^{\lambda,*}_{\Fl} \underset{I}\star  j_{\gamma,*}\simeq W^{\lambda+\gamma,*}_{\Fl},
\end{equation}
which implies the assertion of the lemma.

\end{proof}

Let now $\mu$ be an arbitrary element of $\Lambda$. Write 
\begin{equation} \label{e:mu and lambda}
\mu=\lambda-\gamma, \quad \lambda\in \Lambda^+_H.
\end{equation}

Define:
$$\wt\CM^{\mu,*}_{\Whit}:=\wt\CM^{\lambda,*}_{\Whit} \otimes \sfe^{-\gamma}.$$

Note that \lemref{l:baby Verma shift} implies that this definition is independent of the choice
of a presentation of $\mu$ as in \secref{e:mu and lambda}. Moreover, for any $\mu$ and $\gamma$ we have:
\begin{equation} \label{e:baby Verma shift}
\wt\CM^{\mu+\gamma,*}_{\Whit}\simeq \wt\CM^{\mu,*}_{\Whit}\otimes \sfe^\gamma.
\end{equation}

Define:
$$\bCM{}^{\mu,*}_{\Whit}:=\Res^{B^-_H}_{T_H}(\wt\CM^\mu_{\Whit})\in \bHecke(\Whit_q(G)).$$

We also have:
$$\bCM{}^{\mu+\gamma,*}_{\Whit}\simeq \bCM{}^{\mu,*}_{\Whit} \otimes \sfe^\gamma.$$

\sssec{}

For completeness, define 
Define:
$$\CM{}^{\mu,*}_{\Whit}:=\Res^{T_H}(\bCM^\mu_{\Whit})\in \Hecke(\Whit_q(G)).$$

However, these objects will not be used in this paper. 

\ssec{Properties of dual baby Verma objects in the Whittaker category}

In this subsection we investigate some basic properties of the objects $\wt\CM^{\mu,*}_{\Whit}$
constructed in the previous subsection. 

\sssec{}

First, we claim: %from \eqref{e:key object as colimit} we obtain::

\begin{lem} \label{l:baby Verma Whit expl}
The object 
$$\oblv_{\bHecke}(\overset{\bullet}\CM{}^{\mu,*}_{\Whit})\in \Rep(T_H)\otimes \Whit_q(G)$$
is given by 
$$\gamma'\rightsquigarrow \underset{\gamma}%\in (-\gamma_0+\Lambda_H) \cap \Lambda^+_H}
{\on{colim}}\,  W^{\mu+\gamma+\gamma',*}\underset{\fL^+(G)}\star \IC_{q,\ol\Gr^{-w_0(\gamma)}_G},$$
where the colimit runs over the set $\gamma\in (-\mu-\gamma'+\Lambda^+_H)\cap \Lambda^+_H$. 
\end{lem}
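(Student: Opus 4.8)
The statement is a computation of the object $\overset{\bullet}\CM{}^{\mu,*}_{\Whit}$, and the strategy is to unwind the chain of definitions: $\bCM{}^{\mu,*}_{\Whit}$ is built from $\wt\CF{}^{\semiinf,-}$ by convolution with the Whittaker object $W^{\lambda,*}_{\Fl}$ on the affine flag space, and $\wt\CF{}^{\semiinf,-}$ is (after a $w_0$-twist) defined via the Drinfeld-Pl\"ucker formalism applied to $(\delta_{1,\Gr})^{\on{DrPl},I}\in\on{DrPl}(\Shv_{\CG^G}(\Gr_G)^{I,\on{ren}})$. The key input will be the explicit colimit formula for the image of a Drinfeld-Pl\"ucker object under $\oblv_{\bHecke}\circ \Res^{B^-_H}_{T_H}\circ \oblv_{\on{rel}}\circ \sfj^*$, namely \eqref{e:DrPl adj expl non-rel}/\eqref{e:key object as colimit}: I would start from the identity $\oblv_{\bHecke}\circ \Res^{B^-_H}_{T_H}(\wt\CF^{\semiinf,-})$ is given by the system $\gamma'\rightsquigarrow \underset{\gamma}{\on{colim}}\, j_{\gamma+\gamma',*} \underset{I}\star \IC_{q,\ol\Gr^{-w_0(\gamma)}_G}$ over $\gamma\in(-\gamma'+\Lambda^+_H)\cap\Lambda^+_H$, which is exactly \eqref{e:key object as colimit}.

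\textbf{Main steps.} First I would reduce to computing $W^{\lambda,*}_{\Fl}\underset{I}\star(-)$ applied termwise to this colimit presentation; since convolution on the left with a fixed object is continuous and $\Sph_{q,x}(G)$ acts on the right, the colimit and the right convolution with $\IC_{q,\ol\Gr^{-w_0(\gamma)}_G}$ both pass through. Second, I would use the identity \eqref{e:shift Verma on flags}, $W^{\lambda,*}_{\Fl}\underset{I}\star j_{\gamma,*}\simeq W^{\lambda+\gamma,*}_{\Fl}$ for $\gamma\in\Lambda^+_H$, together with \lemref{l:conv with unit} (convolution with $\delta_{1,\Gr}$ is pushforward along $\Fl_G\to\Gr_G$, which sends $W^{\lambda+\gamma,*}_{\Fl}$ to $W^{\lambda+\gamma,*}$), to identify the $\gamma$-th term of the colimit with $W^{\lambda+\gamma+\gamma',*}\underset{\fL^+(G)}\star\IC_{q,\ol\Gr^{-w_0(\gamma)}_G}$ where $\lambda$ comes from the chosen presentation $\mu=\lambda-\gamma_0$. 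Third, I would track the grading shift: by definition $\wt\CM{}^{\mu,*}_{\Whit}=\wt\CM{}^{\lambda,*}_{\Whit}\otimes\sfe^{-\gamma_0}$, and by \eqref{e:baby Verma shift} the shift by $\sfe^{-\gamma_0}$ amounts to re-indexing the components, so the $\gamma'$-component of $\oblv_{\bHecke}(\overset{\bullet}\CM{}^{\mu,*}_{\Whit})$ is the $(\gamma'-\gamma_0)$-component of $\oblv_{\bHecke}(\overset{\bullet}\CM{}^{\lambda,*}_{\Whit})$; substituting $\lambda=\mu+\gamma_0$ and relabeling the colimit index yields the claimed formula with colimit over $\gamma\in(-\mu-\gamma'+\Lambda^+_H)\cap\Lambda^+_H$.

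\textbf{Where the work lies.} The only genuinely non-formal point is verifying that the transition maps in the two colimit presentations match up under the identifications \eqref{e:shift Verma on flags} and \lemref{l:conv with unit} — i.e., that the transition maps of \secref{sss:DrPl adj expl non-rel}, which are built from the Pl\"ucker maps \eqref{e:dual Plucker map} and the Drinfeld-Pl\"ucker structure maps \eqref{e:DrPl maps I}, become, after convolution with $W^{\lambda,*}_{\Fl}\underset{I}\star(-)\underset{I}\star\delta_{1,\Gr}$, the maps induced on the $W^{\bullet,*}$'s by the Satake convolution structure. This is essentially a diagram chase, entirely parallel to the description of $\overset{\bullet}\CF{}^{\semiinf,-}$ via \eqref{e:key object as colimit} in \secref{sss:system for F-}; I expect it to go through without surprises since all the objects involved ($W^{\lambda+\gamma,*}_{\Fl}$, their pushforwards, the Satake sheaves) lie in the appropriate hearts, so higher coherences are automatic. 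The main obstacle, such as it is, is bookkeeping: keeping straight the $w_0$-twist (which converts the $J_{w_0(\gamma),!}$'s appearing in the $B_H$-picture into the $j_{\gamma,*}$'s of \eqref{e:components key dom w0}) and the cohomological shift $[-d]$, and checking it drops out after pushforward to $\Gr_G$. Once that is in place, the formula of \lemref{l:baby Verma Whit expl} follows by direct substitution.
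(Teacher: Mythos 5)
Your proposal is correct and takes essentially the same route as the paper's proof: reduce to $\mu\in\Lambda^+$ via \eqref{e:baby Verma shift}, apply the colimit formula \eqref{e:key object as colimit} for $\overset{\bullet}\CF{}^{\semiinf,-}$, convolve termwise with $W^{\lambda,*}_{\Fl}$ using \eqref{e:shift Verma on flags}, and finish with \lemref{l:conv with unit}. The paper's proof simply states these three identifications without the extra discussion of transition-map compatibility (which, as you note, is automatic since all objects lie in hearts), and the mild discrepancy in the indexing set (the paper uses $(-\gamma'+\Lambda^+_H)\cap\Lambda^+_H$ after reducing to $\mu=\lambda$) is harmless since both choices are cofinal in $(\Lambda^+_H,\preceq)$.
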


\begin{proof}

By \eqref{e:baby Verma shift}, with no restriction of generality we can assume that $\mu=\lambda\in \Lambda^+$.

\medskip

By \eqref{e:key object as colimit}, the object 
$$\oblv_{\bHecke}(\overset{\bullet}\CM{}^{\lambda,*}_{\Whit})$$
is given by
$$\gamma'\rightsquigarrow \underset{\gamma}%\in (-\gamma_0+\Lambda_H) \cap \Lambda^+_H}
{\on{colim}}\,  W^{\lambda,*}_{\Fl} \underset{I}\star 
j_{\gamma+\gamma',*} \underset{I}\star \IC_{q,\ol\Gr^{-w_0(\gamma)}_G},$$
where the colimit runs over the set $\gamma\in (-\gamma'+\Lambda^+_H)\cap \Lambda^+_H$. 

\medskip

Using \eqref{e:shift Verma on flags}, we have:
$$W^{\lambda,*}_{\Fl} \underset{I}\star 
j_{\gamma+\gamma',*} \underset{I}\star \IC_{q,\ol\Gr^{-w_0(\gamma)}_G}\simeq
W^{\lambda+\gamma+\gamma',*}_{\Fl} \underset{I}\star \delta_{1,\Gr} \underset{\fL^+(G)}\star \IC_{q,\ol\Gr^{-w_0(\gamma)}_G}.$$

\medskip

Finally, we notice that by \lemref{l:conv with unit} for any $\lambda'\in \Lambda^+$
$$W^{\lambda',*}_{\Fl} \underset{I}\star \delta_{1,\Gr} \simeq W^{\lambda',*}.$$

\end{proof}

\begin{cor}  \label{c:baby Whit in heart}
The objects $\wt\CM^{\mu,*}_{\Whit}$ belong to $(\BmHecke(\Whit_q(G)))^\heartsuit$.
\end{cor}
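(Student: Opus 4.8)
\textbf{Proof plan for \corref{c:baby Whit in heart}.}
The assertion is that $\wt\CM^{\mu,*}_{\Whit}\in (\BmHecke(\Whit_q(G)))^\heartsuit$. By \eqref{e:baby Verma shift} it suffices to treat the case $\mu=\lambda\in\Lambda^+$, since the $\Rep(T_H)$-action by $\sfe^\gamma$ is an equivalence and preserves the heart (the $\Rep(T_H)$-action on $\Whit_q(G)$ is by t-exact functors, cf. \secref{sss:t on Hecke} and \propref{p:conv is exact}). The plan is to use the definition $\wt\CM^{\lambda,*}_{\Whit}=W^{\lambda,*}_{\Fl}\underset{I}\star \wt\CF^{\semiinf,-}$ together with the already established fact that $\wt\CF^{\semiinf,-}=w_0(\wt\CF^{\semiinf,w_0})$ lies in $(\BmHecke(\Shv_{\CG^G}(\Gr_G)^{I,\on{ren}}))^\heartsuit$ (this is \propref{p:baby in heart w0}, transported along the $w_0$-twist equivalence $\BHecke(\bC)\overset{w_0}\to\BmHecke(\bC)$, which is compatible with the t-structures). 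So the real content is that convolution with $W^{\lambda,*}_{\Fl}$ on the left is t-exact on the relevant Hecke category.

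First I would reduce the t-exactness of $-\underset{I}\star\wt\CF^{\semiinf,-}$-type statements to the underlying category $\Shv_{\CG^G}(\Gr_G)^{I,\on{ren}}$, using that the forgetful functors $\oblv_{\on{rel}}$, $\Res^{B^-_H}_{T_H}$ and $\oblv_{\bHecke}$ are conservative and (by Remark~\ref{r:t on Hecke Iw} together with the construction of the t-structures in \secref{sss:t on Hecke}) detect the heart after applying $\oblv_{\bHecke}$; concretely, $\wt\CM^{\lambda,*}_{\Whit}$ lies in the heart iff $\oblv_{\bHecke}(\overset{\bullet}\CM{}^{\lambda,*}_{\Whit})\in(\Rep(T_H)\otimes\Whit_q(G))^\heartsuit$, i.e. iff each graded piece lies in $(\Whit_q(G))^\heartsuit$. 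Here I would invoke \lemref{l:baby Verma Whit expl}, which identifies the $\gamma'$-component as the filtered colimit
$$\underset{\gamma}{\on{colim}}\, W^{\lambda+\gamma+\gamma',*}\underset{\fL^+(G)}\star \IC_{q,\ol\Gr^{-w_0(\gamma)}_G}$$
over $\gamma\in(-\lambda-\gamma'+\Lambda^+_H)\cap\Lambda^+_H$. Since the poset is filtered and $(\Whit_q(G))^\heartsuit$ is closed under filtered colimits (the t-structure on $\Whit_q(G)$ is compactly generated, \corref{c:Whit Art}), it is enough to show each term of this colimit lies in the heart.

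For a single term I would argue: $W^{\lambda'',*}\in (\Whit_q(G))^\heartsuit$ for $\lambda''\in\Lambda^+$ by \propref{p:t on Whit}(b); and convolution $-\underset{\fL^+(G)}\star \IC_{q,\ol\Gr^{\gamma''}_G}=-\star\Sat_{q,G}(V^{\gamma''})$ is t-exact by \propref{p:conv is exact} (using \eqref{e:descr h.w.} to write $\IC_{q,\ol\Gr^{\gamma''}_G}=\Sat_{q,G}(V^{\gamma''})$ with $V^{\gamma''}\in\Rep(H)^\heartsuit$). Hence every term $W^{\lambda+\gamma+\gamma',*}\underset{\fL^+(G)}\star \IC_{q,\ol\Gr^{-w_0(\gamma)}_G}$ lies in $(\Whit_q(G))^\heartsuit$, whence so does the colimit, whence $\oblv_{\bHecke}(\overset{\bullet}\CM{}^{\lambda,*}_{\Whit})$, whence $\wt\CM^{\lambda,*}_{\Whit}\in(\BmHecke(\Whit_q(G)))^\heartsuit$. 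The one point requiring a little care—and which I expect to be the main obstacle—is the reduction step: verifying that the heart of $\BmHecke(\Whit_q(G))$ is correctly detected after passing through $\Res^{B^-_H}_{T_H}$ and $\oblv_{\bHecke}$, i.e. that these functors are t-exact and jointly conservative in the relevant range. This is a formal consequence of \secref{sss:t on Hecke} (t-exactness of $\ind/\oblv$ for the tensor product over $\Rep(H)$, resp. $\Rep(T_H)$) plus conservativity of $\oblv_{\bHecke}$, but one must check the $\Rep(T_H)$-action entering $\BmHecke$ is by t-exact functors on $\Whit_q(G)$, which it is since on $\Whit_q(G)$ (as opposed to $\Shv_{\CG^G}(\Gr_G)^{I,\on{ren}}$) the $\Rep(T_H)$-action is via $\Sat'_{q,T}$ composed with the spherical action and is t-exact by \propref{p:conv is exact}.
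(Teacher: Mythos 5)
Your argument is correct and is essentially the paper's own proof: reduce to showing $\oblv_{\bHecke}(\overset{\bullet}\CM{}^{\lambda,*}_{\Whit})\in(\Whit_q(G))^\heartsuit$, invoke \lemref{l:baby Verma Whit expl} to write it as a filtered colimit, and observe each term is in the heart by \propref{p:conv is exact} combined with \propref{p:t on Whit}(b). The opening paragraph about $W^{\lambda,*}_{\Fl}\underset{I}\star\wt\CF^{\semiinf,-}$ and \propref{p:baby in heart w0} is a false start that you correctly abandon; the working argument that follows matches the paper.
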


\begin{proof}

It is sufficient to show that
$$\oblv_{\bHecke}(\bCM^{\lambda,*}_{\Whit})\in \Whit_q(G)$$
belongs to $(\Whit_q(G))^\heartsuit$ for $\lambda\in \Lambda^+$. 

\medskip

Since the poset $\Lambda^+_H$ is filtered, it suffices to show that each term in the colimit
in \lemref{l:baby Verma Whit expl} belongs to $(\Whit_q(G))^\heartsuit$.
Now the assertion follows from \propref{p:conv is exact}.

\end{proof}

Next, we claim: 

\begin{lem}  \label{l:coind baby Whit}
For $\lambda\in \Lambda^+$, we have
$$\coInd^{H}_{B^-_H}(\wt\CM^{\lambda,*}_{\Whit})\simeq W^{\lambda,*}.$$
\end{lem}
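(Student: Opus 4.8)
\textbf{Proof plan for \lemref{l:coind baby Whit}.}

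The plan is to apply the functor $\coInd^H_{B^-_H}$ to the defining formula $\wt\CM^{\lambda,*}_{\Whit}=W^{\lambda,*}_{\Fl}\underset{I}\star \wt\CF^{\semiinf,-}$ and to bring the $\coInd$ inside the convolution. First I would observe that the convolution functor $W^{\lambda,*}_{\Fl}\underset{I}\star -$ of \eqref{e:Whit conv} is $\Sph_{q,x}(G)$-linear on the right, hence a fortiori $\Rep(H)$-linear, and that the passage from $\BmHecke$ to $\Whit_q(G)$ via $\coInd^H_{B^-_H}$ is a $\Rep(H)$-linear functor that can be computed level-wise on the relevant Drinfeld--Pl\"ucker presentations (this is the same mechanism already used in the proof of \lemref{l:baby Verma Whit expl}). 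Concretely, using \lemref{l:baby Verma Whit expl} together with the fact that $\coInd^H_{B^-_H}$ commutes with colimits, I would write
$$\coInd^H_{B^-_H}(\wt\CM^{\lambda,*}_{\Whit}) \simeq \underset{\gamma\in \Lambda^+_H}{\on{colim}}\, W^{\lambda+\gamma,*}\underset{\fL^+(G)}\star \IC_{q,\ol\Gr^{-w_0(\gamma)}_G},$$
where the transition maps are the Drinfeld--Pl\"ucker maps; here I take $\gamma'=0$ in \lemref{l:baby Verma Whit expl}, but I should be careful to record that for $\gamma'=0$ the colimit runs over $\gamma\in(-\lambda+\Lambda^+_H)\cap\Lambda^+_H$, which is cofinal in $\Lambda^+_H$ once $\lambda$ is fixed, so the shape of the colimit is as stated.

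The key step is then to identify this colimit with $W^{\lambda,*}$. Here I would invoke \thmref{t:finiteness of baby Verma}(b): the analogous colimit computing $\coInd^H_{B^-_H}(\wt\CF^{\semiinf,-}\otimes \sfe^\gamma)$ on $\Shv_{\CG^G}(\Gr_G)^{I,\on{ren}}$ stabilizes already at finite stage to $j_{\gamma,*}\underset{I}\star \delta_{1,\Gr}$ (equivalently, the maps \eqref{e:coind baby Verma} are isomorphisms), and in particular for $\gamma=\lambda$ this gives $j_{\lambda,*}\underset{I}\star\delta_{1,\Gr}\simeq W^{\lambda,*}_{\Fl}\underset{I}\star\delta_{1,\Gr}$ after convolving with $W^{\lambda,*}_{\Fl}$ on the left; but more directly, convolving the isomorphism of \thmref{t:finiteness of baby Verma}(b) on the left with $W^{\lambda,*}_{\Fl}$ and using the $\Rep(H)$-linearity of $W^{\lambda,*}_{\Fl}\underset{I}\star -$ yields
$$\coInd^H_{B^-_H}(\wt\CM^{\lambda,*}_{\Whit})\simeq W^{\lambda,*}_{\Fl}\underset{I}\star \coInd^H_{B^-_H}(\wt\CF^{\semiinf,-})\simeq W^{\lambda,*}_{\Fl}\underset{I}\star \delta_{1,\Gr},$$
where the last isomorphism is \corref{c:coind of baby Verma zero}. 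Finally \lemref{l:conv with unit} identifies $W^{\lambda,*}_{\Fl}\underset{I}\star \delta_{1,\Gr}$ with the $*$-pushforward of $W^{\lambda,*}_{\Fl}$ along $\Fl_G\to\Gr_G$, which is exactly $W^{\lambda,*}$; the last identification is immediate from the descriptions of $W^{\lambda,*}_{\Fl}$ and $W^{\lambda,*}$ as $*$-extensions from the orbits $S^\lambda_{\Fl}$ and $S^\lambda$ respectively, since the map $S^\lambda_{\Fl}\to S^\lambda$ is an isomorphism for $\lambda$ dominant.

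The main obstacle I anticipate is purely bookkeeping: making sure that the $\coInd^H_{B^-_H}$ applied to the $\BmHecke$-object $\wt\CM^{\lambda,*}_{\Whit}$ genuinely commutes past the left convolution $W^{\lambda,*}_{\Fl}\underset{I}\star -$. This is not formal because $\coInd^H_{B^-_H}$ is defined via the $\Rep(H)$-module structure, while the convolution is an external operation; the cleanest way around it is to note that $W^{\lambda,*}_{\Fl}\underset{I}\star -$ is a colimit-preserving functor of $\Rep(H)$-module categories $\Shv_{\CG^G}(\Gr_G)^{I,\on{ren}}\to \Whit_q(G)$ (the right $\Sph_{q,x}(G)$-action commutes with left convolution, and $\Sph_{q,x}(G)$ receives $\Rep(H)$ via $\Sat_{q,G}$), and any such functor commutes with $\coInd^H_{B^-_H}$ up to canonical isomorphism, since $\coInd^H_{B^-_H}$ is computed from the $\Rep(H)$-action alone (cf. \secref{ss:ten prod Rep(H)} and \secref{ss:AB}). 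Granting this, the proof is a short chain of the three cited results. As a sanity check, one should also verify that the isomorphism is compatible with the $\Lambda^+_H$-shift isomorphisms \eqref{e:baby Verma shift}, so that the statement extends to all $\mu\in\Lambda$ with $W^{\lambda,*}$ replaced appropriately — though the lemma as stated only asserts the dominant case, so this extra compatibility is not strictly needed here.
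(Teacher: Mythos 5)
Your proposal is correct and takes essentially the same route as the paper, which simply records the chain
$$\coInd^H_{B^-_H}(\wt\CM^{\lambda,*}_{\Whit})\simeq W^{\lambda,*}_{\Fl}\underset{I}\star \coInd^H_{B^-_H}(\wt\CF^{\semiinf,-})\simeq W^{\lambda,*}_{\Fl}\underset{I}\star \delta_{1,\Gr}\simeq W^{\lambda,*}$$
as "follows immediately from \corref{c:coind of baby Verma zero} using \lemref{l:conv with unit}". Your preliminary colimit computation via \lemref{l:baby Verma Whit expl} and the appeal to \thmref{t:finiteness of baby Verma}(b) are an unnecessary detour once you have \corref{c:coind of baby Verma zero}; your "cleaner" second version is the paper's proof, and the commutation of $\coInd^H_{B^-_H}$ with the $\Rep(H)$-linear functor $W^{\lambda,*}_{\Fl}\underset{I}\star-$ is, as you say, automatic because $\coInd^H_{B^-_H}$ is the base change of the functor $\Rep(B^-_H)\to\Rep(H)$ along the second tensor factor.
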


\begin{proof}

Follows immediately from \corref{c:coind of baby Verma zero} using \lemref{l:conv with unit}. 

\end{proof}

\sssec{}

For $\lambda\in \Lambda^+$ consider now the map
\begin{equation} \label{e:from Weyl to baby Whit}
\Res^H_{B^-_H}(W^{\lambda,*})\to \wt\CM^{\lambda,*}_{\Whit}
\end{equation} 
arising by adjunction from the isomorphism of \lemref{l:coind baby Whit}. 

\medskip

\begin{prop}  \label{p:from Weyl to baby Whit}  
For $\lambda\in \Lambda^+$ and $\gamma\in \Lambda_H$ 
consider the map 
$$\Res^H_{B^-_H}(W^{\lambda+\gamma,*})\otimes \sfe^{-\gamma}\to \wt\CM^{\lambda+\gamma,*}_{\Whit}.$$
If $\gamma$ is deep enough in the dominant chamber, this map has the following properties: 

\smallskip

\noindent{\em(a)} 
It is surjective (in the abelian category $(\BmHecke(\Whit_q(G)))^\heartsuit$).

\smallskip

\noindent{\em(b)} 
Its kernel admits a finite left resolution each of whose terms admits a filtration with subquotients of the form 
$$\Res^H_{B^-_H}(W^{\lambda'+\gamma',*})\otimes \sfe^{-\gamma'}$$ for $\lambda'\in \lambda-(\Lambda^{\on{pos}}-0)$, 
$\gamma'\in \Lambda^+_H$ and $\lambda'+\gamma'\in \Lambda^+$. 

\end{prop}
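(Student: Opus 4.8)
The plan is to deduce both assertions from the structural description of $\wt\CF^{\semiinf,-}$ obtained in \thmref{t:finiteness of baby Verma} and \propref{p:ABBGM surj}, by convolving everything with $W^{\lambda,*}_{\Fl}$ on the left and exploiting the compatibility of all the functors involved with the $\Sph_{q,x}(G)$-action and with $\underset{I}\star$. First I would record the key input: by \thmref{t:finiteness of baby Verma}(b) together with Remark~\ref{r:coind of baby Verma}, for $\gamma\in\Lambda^+_H$ regular the object $\coInd^H_{B^-_H}(\wt\CF^{\semiinf,-}\otimes\sfe^\gamma)$ is $j_{-\gamma\cdot w_0,!}\underset{I}\star\delta_{1,\Gr}$, which lies in $(\Shv_{\CG^G}(\Gr_G)^{I,\on{ren}})^\heartsuit$; and by \propref{p:ABBGM surj} the adjunction map
$$
\Res^H_{B^-_H}(j_{-\gamma\cdot w_0,!}\underset{I}\star\delta_{1,\Gr})\to \wt\CF^{\semiinf,-}\otimes\sfe^{\gamma'}
$$
is surjective in $(\BmHecke(\Shv_{\CG^G}(\Gr_G)^{I,\on{ren}}))^\heartsuit$ for appropriate $\gamma'$ (one has to unwind how $j_{-\gamma\cdot w_0,!}\underset{I}\star\delta_{1,\Gr}$ relates to the Iwahori costandard objects $j_{\gamma,*}\underset{I}\star\delta_{1,\Gr}$ used there, via \propref{p:BMW}). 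Applying the t-exact functor $W^{\lambda,*}_{\Fl}\underset{I}\star(-)$ — which is t-exact because convolution on the left with a Whittaker object on $\Fl_G$ is, after passing to $\Gr_G$, controlled by \propref{p:conv is exact} and \lemref{l:conv with unit} — and using \eqref{e:shift Verma on flags} to rewrite $W^{\lambda,*}_{\Fl}\underset{I}\star j_{\gamma,*}\underset{I}\star\delta_{1,\Gr}\simeq W^{\lambda+\gamma,*}$, I expect to get exactly the surjection in part~(a), with $\gamma$ forced into a shifted dominant cone (the ``deep enough'' hypothesis absorbs the regularity and the shift $\gamma_0$ from \propref{p:Serre}).

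For part~(b), I would take the resolution of $\wt\CF^{\semiinf,-}$ by objects of the form $\Res^H_{B^-_H}(j_{?,!}\underset{I}\star\delta_{1,\Gr})$ produced by iterating \propref{p:ABBGM surj} (this is the standard ``BGG-type'' resolution argument, parallel to \cite[Sect.~3.2]{ABBGM}): the kernel of each surjection is again compact in $\BmHecke$ by \corref{c:B Artinian}, and by \propref{p:Serre} applied to that kernel it is again generated, after a further twist, by an induced object; finiteness of the resolution follows from the fact that only finitely many dominant coweights $\lambda'$ below $\lambda$ with the given total degree can occur, exactly as in the support estimates for $S^{\mu-\lambda}\cap\Gr^\nu_G$. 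Convolving this finite resolution with the t-exact functor $W^{\lambda,*}_{\Fl}\underset{I}\star(-)$ and translating back via \eqref{e:shift Verma on flags} turns each induced term $\Res^H_{B^-_H}(j_{\gamma',!}\underset{I}\star\delta_{1,\Gr})$ into $\Res^H_{B^-_H}(W^{\lambda'+\gamma',*})\otimes\sfe^{-\gamma'}$ with $\lambda'\in\lambda-(\Lambda^{\on{pos}}-0)$, which is precisely the form demanded. The constraint $\lambda'+\gamma'\in\Lambda^+$ is built into the fact that $W^{\lambda'+\gamma',*}$ only makes sense (is nonzero) when the index is dominant, cf.\ \corref{c:Whittaker strata}(a).

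The main obstacle I anticipate is \textbf{bookkeeping the cones and the t-exactness}: I need the functor $W^{\lambda,*}_{\Fl}\underset{I}\star(-)$ to be t-exact on $\Shv_{\CG^G}(\Gr_G)^{I,\on{ren}}$ (or at least on the relevant subcategory) so that a surjection upstairs produces a surjection downstairs and a finite resolution stays a finite resolution; the subtlety is that, unlike convolution with $\Sph_{q,x}(G)$, convolution with $j_{w_0,*}$ (hidden inside the definition of $\wt\CF^{\semiinf,-}$ via the twist by $w_0$) is \emph{not} t-exact, so one has to be careful that the twist by $w_0$ has already been ``digested'' into $\wt\CF^{\semiinf,-}$ before convolving with $W^{\lambda,*}_{\Fl}$, and that $W^{\lambda,*}_{\Fl}\underset{I}\star j_{\gamma,*}$ is genuinely $W^{\lambda+\gamma,*}_{\Fl}$ only for $\gamma$ dominant — which is exactly why the statement needs $\gamma$ deep in the dominant chamber rather than arbitrary. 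Checking this reduction and pinning down the precise value of $\gamma_0$ (coming from \propref{p:Serre} applied to the successive kernels) is where the real work lies; everything else is a transport of the results of \secref{s:Fl} through the t-exact convolution functor.
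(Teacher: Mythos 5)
Your proposal takes a genuinely different route from the paper, and it has a concrete gap in the finiteness step.

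The paper's proof does not iterate \propref{p:ABBGM surj} at all. Instead, it invokes the Arkhipov--Bezrukavnikov functor
$\on{AB}:\QCoh(B_H\backslash H/B_H)\to \BHecke(\Shv_{\CG^G}(\Gr_G)^{I,\on{ren}})$
from \secref{ss:AB} together with the identity $\wt\CF^\semiinf\simeq \on{AB}((\sfi)_*(\sfe))$ of \eqref{e:F as image}, and then resolves $(\sfi)_*(\sfe)$ \emph{inside $\QCoh(B_H\backslash H/B_H)\simeq \Rep(B_H)\underset{\Rep(H)}\otimes\Rep(B_H)$} by a finite left resolution with initial term $\sfe^\gamma\otimes\sfe^{-\gamma}$ and the other terms filtered by $\sfe^{\gamma'+\gamma_0}\otimes\sfe^{-\gamma'}$, $\gamma_0\in\Lambda_H^{\on{neg}}-0$. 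Because $B_H\backslash H/B_H$ is a smooth stack of finite type and $(\sfi)_*(\sfe)$ is coherent, this resolution is finite for purely algebro-geometric (Koszul-type) reasons, and the precise combinatorics of which $\gamma'$, $\gamma_0$ appear is read off directly. Only after that does one transport through $\on{AB}$, convolve with $j_{w_0,*}[-d]$ and with $W^{\lambda,*}_{\Fl}$, and apply $w_0$. Finiteness is never in doubt, and the constraint $\lambda'\in\lambda-(\Lambda^{\on{pos}}-0)$ falls out of the condition $\gamma_0\in\Lambda_H^{\on{neg}}-0$.

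By contrast, your plan builds a BGG-type resolution of $\wt\CF^{\semiinf,-}$ by repeatedly applying \propref{p:ABBGM surj} (plus \propref{p:Serre}) to the successive kernels, and only then convolves with $W^{\lambda,*}_{\Fl}$. This is a legitimate-looking idea, but it does not establish \emph{finiteness} of the resolution: surjectivity of each covering map and compactness of each kernel give you an arbitrary-length resolution, and the ``support estimate'' you gesture at bounds which $\lambda'$ can appear in a given degree, not how many homological degrees are needed. (In classical BGG settings such resolutions can indeed be infinite; nothing in \propref{p:ABBGM surj} or \propref{p:Serre} forces termination.) You correctly flagged this as the main obstacle, but the proposal does not resolve it — whereas the paper sidesteps it entirely by performing the resolution in $\QCoh$ of a finite-dimensional stack, where finiteness is automatic. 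A secondary issue: you lean on $W^{\lambda,*}_{\Fl}\underset{I}\star(-)$ being t-exact, and your citations (\propref{p:conv is exact}, \lemref{l:conv with unit}) don't actually cover this statement as stated; the paper's phrasing avoids needing that claim in isolation because the finiteness and exactness of the starting $\QCoh$-resolution do the work.
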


\begin{proof}

Recall the functor
$$\on{AB}:\Rep(B_H) \to \Shv_{\CG^G}(\Gr_G)^{I,\on{ren}}$$
of \eqref{e:from G/B to Fl}. We will denote by the same character the resulting functor 
$$\QCoh(B_H\backslash H/B_H) \to \BHecke(\Shv_{\CG^G}(\Gr_G)^{I,\on{ren}}).$$

Recall that
$$\wt\CF^\semiinf\simeq \on{AB}((\sfi)_*(\sfe)).$$

We note now that for $\gamma\in \Lambda_H$ deep enough in the dominant chamber, the object 
$$(\sfi)_*(\sfe)\in \QCoh(B_H\backslash H/B_H)\simeq \Rep(B_H)\underset{\Rep(H)}\otimes \Rep(B_H)$$
admits a finite left resolution whose initial term is 
$$\sfe^{\gamma}\otimes \sfe^{-\gamma},$$
and each of the other terms admits a filtration with terms
$$\sfe^{\gamma'+\gamma_0}\otimes \sfe^{-\gamma'},\quad \gamma_0\in \Lambda_H^{\on{neg}}-0,\quad  
\gamma'+\gamma_0\in \Lambda^+_H.$$

\medskip

Applying to this resolution the functor $\on{AB}$ term-wise, then convolving with $j_{w_0,*}[-d]$ on the left and with
$W^{\lambda,*}_{\Fl}$ on the right, and finally applying the functor $$w_0:\BHecke(\Whit_q(G))\to \BmHecke(\Whit_q(G)),$$ 
we obtain that $\wt\CM^{\lambda+\gamma,*}_{\Whit}$ admits a left resolution of
the form specified in the proposition.

\end{proof} 

\ssec{Jordan-Holder series of dual baby Verma modules}

\sssec{}

First, we claim:

\begin{lem} \label{l:Baby Whit compact}
The objects $\wt\CM{}^{\mu,*}_{\Whit}\in \BmHecke(\Whit_q(G))$ are compact.
\end{lem}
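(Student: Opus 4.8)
\textbf{Plan for the proof of \lemref{l:Baby Whit compact}.}
The statement is that each $\wt\CM{}^{\mu,*}_{\Whit}\in \BmHecke(\Whit_q(G))$ is compact. First I would reduce to the case $\mu = \lambda \in \Lambda^+$: by the twist relation \eqref{e:baby Verma shift}, we have $\wt\CM{}^{\mu,*}_{\Whit}\simeq \wt\CM{}^{\lambda,*}_{\Whit}\otimes \sfe^{-\gamma}$ for $\mu = \lambda-\gamma$ with $\lambda\in\Lambda^+$, $\gamma\in\Lambda^+_H$, and tensoring by the invertible object $\sfe^{-\gamma}\in\Rep(T_H)$ is an auto-equivalence of $\BmHecke(\Whit_q(G))$, hence preserves compactness. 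So it suffices to treat $\wt\CM{}^{\lambda,*}_{\Whit}=W^{\lambda,*}_{\Fl}\underset{I}\star \wt\CF^{\semiinf,-}$.

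The key input is \thmref{t:finiteness of baby Verma}(a), which asserts that $\wt\CF^{\semiinf,-}\in\BmHecke(\Shv_{\CG^G}(\Gr_G)^{I,\on{ren}})$ is compact. The plan is then to argue that the convolution functor
$$
W^{\lambda,*}_{\Fl}\underset{I}\star - : \BmHecke(\Shv_{\CG^G}(\Gr_G)^{I,\on{ren}})\to \BmHecke(\Whit_q(G))
$$
preserves compactness. This functor is induced (via the Hecke formalism, i.e.\ by applying $-\underset{\Rep(H)}\otimes \Rep(B^-_H)$) from the convolution functor
$$
W^{\lambda,*}_{\Fl}\underset{I}\star - : \Shv_{\CG^G}(\Gr_G)^{I,\on{ren}}\to \Whit_q(G)
$$
of \eqref{e:Whit conv}, which is $\Sph_{q,x}(G)$-linear, hence in particular $\Rep(H)$-linear, so it indeed induces a functor on the relative Hecke categories. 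Since tensoring (over $\Rep(H)$) by $\Rep(B^-_H)$ takes $\Rep(H)$-linear continuous functors that preserve compactness to functors that preserve compactness, it is enough to check that the underlying functor \eqref{e:Whit conv} specialized at $W^{\lambda,*}_{\Fl}$ preserves compactness. For this I would observe that $W^{\lambda,*}_{\Fl}\in\Shv_{\CG^G}(\Fl_G)^{\fL(N),\chi_N}$ is a compact object (it is the $*$-extension from a single $\fL(N)$-orbit $S^\lambda_{\Fl}$, compactly supported up to the equivariance, exactly as for $W^{\lambda,*}$ in \corref{c:cost compact in Whit}), and convolution $-\underset{I}\star-$ of \eqref{e:Fl conv} sends a pair of compact objects to a compact object, because it is given by a proper pushforward along $\Fl_G\underset{I}\times \Gr_G\to\Gr_G$ composed with an external tensor product, and the relevant loci are of ind-finite type; the $\chi_N$-equivariance is imposed by the $\fL(N)$-action on the left factor and is preserved. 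Concretely, $W^{\lambda,*}_{\Fl}\underset{I}\star-$ admits a continuous right adjoint (convolution with the appropriate Verdier-dual/inverse kernel, as in \lemref{l:conv with unit} and \propref{p:BMW}), which forces it to preserve compactness.

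The main obstacle I anticipate is the bookkeeping needed to make precise that ``$\Rep(H)$-linear + preserves compactness $\Rightarrow$ induced functor on $\bHecke_{\on{rel}}$ preserves compactness'': one needs that $\Rep(B^-_H)$ is dualizable over $\Rep(H)$ (it is, being $\QCoh$ of a nice stack, cf.\ \secref{ss:duality on Hecke gen}) and that the functor $-\underset{\Rep(H)}\otimes \Rep(B^-_H)$ on $\Rep(H)$-module categories is compatible with the description of compact objects in a relative tensor product (as in \secref{ss:ten prod Rep(H)}: compacts of $\bC\underset{\Rep(H)}\otimes\bD$ are retracts of $\Phi_{\on{univ}}(\bc\otimes\bd)$ with $\bc,\bd$ compact). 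Granting these standard facts, compactness of $\wt\CM{}^{\lambda,*}_{\Whit}$ follows. An alternative, and perhaps cleaner, route — which sidesteps the linearity bookkeeping — is to use the explicit colimit presentation of \lemref{l:baby Verma Whit expl} together with \thmref{t:finiteness of baby Verma}(b): the maps $j_{\gamma,*}\underset{I}\star\delta_{1,\Gr}\to\coInd^H_{B^-_H}(\wt\CF^{\semiinf,-}\otimes\sfe^\gamma)$ are isomorphisms, so the colimit over the filtered poset $(\Lambda^+_H,\preceq)$ stabilizes after convolving with a fixed compact kernel, exhibiting $\wt\CM{}^{\lambda,*}_{\Whit}$ as a finite extension of compact objects (indeed of shifts of the $W^{\lambda',*}$'s after applying the forgetful functor, which detects compactness by \lemref{l:detect compact by restr}). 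Either argument should be short; I would present the first one as the main line and remark on the second.
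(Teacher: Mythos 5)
Your proposal takes a genuinely different route from the paper and has a gap in the main step.

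The paper's proof is very short: by \lemref{l:baby Verma shift} it suffices to treat $\mu+\gamma$ for any single $\gamma$, and by \propref{p:from Weyl to baby Whit}(a), for $\gamma$ deep in the dominant chamber there is a surjection $\Res^H_{B^-_H}(W^{\mu+\gamma,*})\to \wt\CM^{\mu+\gamma,*}_{\Whit}$ in the heart of $\BmHecke(\Whit_q(G))$; since the source is compact (by \corref{c:cost compact in Whit}) and the t-structure on $\BmHecke(\Whit_q(G))$ is Artinian (\corref{c:B Artinian}), any quotient of a compact object in the heart is again compact. This avoids any direct analysis of convolution. Your route instead deduces compactness from \thmref{t:finiteness of baby Verma}(a) plus the claim that $W^{\lambda,*}_{\Fl}\underset{I}\star - : \Shv_{\CG^G}(\Gr_G)^{I,\on{ren}}\to \Whit_q(G)$ preserves compactness, which then lifts to the $\bHecke_{\on{rel}}$/$\BmHecke$ level by the standard description of compacts in $\bC\underset{\Rep(H)}\otimes\bD$. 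The lifting step is fine; the underlying claim is not.

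The gap: you justify compactness-preservation of $W^{\lambda,*}_{\Fl}\underset{I}\star -$ by (i) ``proper pushforward from a space of ind-finite type'' and (ii) ``existence of a continuous right adjoint via an inverse kernel, as in \lemref{l:conv with unit} and \propref{p:BMW}.'' Neither justification holds up. For (ii), \propref{p:BMW} gives inverse kernels for the $I$-biequivariant objects $j_{\gamma,!},j_{\gamma,*}$; the object $W^{\lambda,*}_{\Fl}$ lives in $\Shv_{\CG^G}(\Fl_G)^{\fL(N),\chi_N}$ and is not invertible, so that argument does not apply, and the claim that ``a continuous right adjoint exists'' is precisely what needs to be shown. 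For (i), $\supp(W^{\lambda,*}_{\Fl})=\ol{S}^\lambda_{\Fl}$ is an ind-scheme of ind-infinite type, and although the convolution map is ind-proper with bounded image in $\Gr_G$, one has to do work to show the pushforward of $W^{\lambda,*}_{\Fl}\wt\boxtimes\CF$ lands in $\Whit_q(G)^c$ for $\CF$ merely locally compact in $\Shv_{\CG^G}(\Gr_G)^I$; this is not established anywhere in the paper. Note also that both your argument and the paper's rest on the same deep input (\thmref{t:finiteness of baby Verma}, equivalently \thmref{t:AB}, since \propref{p:from Weyl to baby Whit} is deduced from the functor $\on{AB}$), so you do not save on that front --- the paper's packaging is simply both shorter and gap-free.

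Your ``alternative route'' also has a problem. The colimit in \lemref{l:baby Verma Whit expl} does \emph{not} stabilize after convolving with a fixed compact kernel --- the supports of the terms $W^{\mu+\gamma+\gamma',*}\underset{\fL^+(G)}\star\IC_{q,\ol\Gr^{-w_0(\gamma)}_G}$ grow without bound as $\gamma$ grows. Moreover, \lemref{l:detect compact by restr} asserts that $\Res^{B_H}_{T_H}:\BHecke(\bC)\to\bHecke(\bC)$ detects compactness; it says nothing about the forgetful functor $\oblv_{\bHecke}$ to $\Rep(T_H)\otimes\bC$, which certainly does \emph{not} detect compactness (the image of a compact object of $\bHecke$ under $\oblv_{\bHecke}$ is generically not compact). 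I recommend replacing both of your routes by the paper's two-line argument: apply \propref{p:from Weyl to baby Whit}(a) to get a surjection from a compact object in the heart, then invoke Artinianness via \corref{c:B Artinian}.
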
 

\begin{proof}

It is enough to show that the object $\wt\CM{}^{\mu+\gamma,*}_{\Whit}\in \BmHecke(\Whit_q(G))$ is compact for \emph{some} $\gamma$. 
By \propref{p:from Weyl to baby Whit}, we can choose $\gamma$ large enough so that 
$\wt\CM{}^{\mu+\gamma,*}_{\Whit}$ admits a surjection from $\Res^H_{B_H}(W^{\mu+\gamma,*})$. Hence, 
$\wt\CM{}^{\mu+\gamma,*}_{\Whit}$ is compact as the t-structure on $\BmHecke(\Whit_q(G))$ is Artinian. 

\end{proof} 

\begin{cor} \label{c:Baby Whit compact}
The objects $\bCM^{\mu,*}_{\Whit}\in \BmHecke(\Whit_q(G))$ are compact.
\end{cor}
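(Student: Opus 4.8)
\textbf{Proof proposal for \corref{c:Baby Whit compact}.}

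The statement to be proved is that the objects $\bCM^{\mu,*}_{\Whit}\in \bHecke(\Whit_q(G))$ are compact. (We note that, as worded, the corollary refers to these as objects of $\BmHecke(\Whit_q(G))$; they are by definition $\Res^{B^-_H}_{T_H}$ of the objects $\wt\CM{}^{\mu,*}_{\Whit}\in \BmHecke(\Whit_q(G))$, so they live in $\bHecke(\Whit_q(G))$ after applying the forgetful functor $\Res^{B^-_H}_{T_H}$; the claim is compactness in that category.) The plan is simply to invoke \lemref{l:Baby Whit compact}, which asserts that $\wt\CM{}^{\mu,*}_{\Whit}\in \BmHecke(\Whit_q(G))$ is compact, together with the fact that the functor
$$\Res^{B^-_H}_{T_H}:\BmHecke(\Whit_q(G))\to \bHecke(\Whit_q(G))$$
preserves compactness. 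The latter is a general feature recorded in \secref{s:DrPl}: the functor $\Res^{B_H}_{T_H}$ (and equally its $B^-_H$-version), being the functor induced by the left adjoint in the adjunction $\Res^{B_H}_{T_H}\dashv \coInd^{B_H}_{T_H}$ on categories of representations, is itself a left adjoint of a continuous functor, hence preserves compact objects.

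Concretely, here is how I would carry it out. First, recall from the discussion preceding \lemref{l:detect compact by restr} that $\Res^{B_H}_{T_H}:\BHecke(\bC)\to \bHecke(\bC)$ is left adjoint to $\coInd^{B_H}_{T_H}$, and that $\coInd^{B_H}_{T_H}$ is continuous (it is induced by the continuous functor $\coInd^{B_H}_{T_H}:\Rep(B_H)\to \Rep(T_H)$). Applying this with $\bC = \Whit_q(G)$ and with the $B^-_H$-variant in place of the $B_H$-variant, we conclude that $\Res^{B^-_H}_{T_H}$ has a continuous right adjoint and therefore sends compact objects to compact objects. Second, by \lemref{l:Baby Whit compact}, the object $\wt\CM{}^{\mu,*}_{\Whit}$ is compact in $\BmHecke(\Whit_q(G))$. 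Combining, $\bCM^{\mu,*}_{\Whit} = \Res^{B^-_H}_{T_H}(\wt\CM{}^{\mu,*}_{\Whit})$ is compact, as claimed.

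There is essentially no obstacle here: the corollary is a one-line formal consequence of the lemma it follows. The only point requiring (minimal) care is to make sure the forgetful/restriction functor in question is the continuity-preserving one — i.e., to identify $\Res^{B^-_H}_{T_H}$ as the functor induced by a left adjoint at the level of representation categories, rather than as a right adjoint (which in general need not preserve compactness). This identification is immediate from the definitions in \secref{ss:rel Hecke} and the $B_H$-Hecke discussion in \secref{s:DrPl}. One could alternatively phrase the argument via \lemref{l:detect compact by restr} run in reverse, but that lemma goes the other direction (detecting compactness in $\BHecke$ from compactness in $\bHecke$), so the cleaner route is the adjoint-functor argument above. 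No global geometry, no t-structure considerations beyond what is already in \lemref{l:Baby Whit compact}, are needed.
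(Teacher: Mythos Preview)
Your proposal is correct and matches the paper's approach: the corollary has no separate proof in the paper and is intended as an immediate consequence of \lemref{l:Baby Whit compact} together with the fact (recorded just before \lemref{l:detect compact by restr}) that $\Res^{B^-_H}_{T_H}$, being left adjoint to the continuous functor $\coInd^{B^-_H}_{T_H}$, preserves compactness. Your observation about the typo in the target category is also correct.
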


\sssec{}

Since the t-structure on $\bHecke(\Whit_q(G))$, is Artinian, 
\corref{l:Baby Whit compact} implies that the objects $\bCM{}^{\mu,*}_{\Whit}\in (\bHecke(\Whit_q(G)))^\heartsuit$ have finite
length. 

\medskip

We claim:

\begin{prop} \label{p:Jordan-Holder}
There exists a non-zero map $\bCM{}^{\mu,!*}_{\Whit}\to \bCM{}^{\mu,*}_{\Whit}$, such that the Jordan-Holder constituents
of the quotient are of the form 
$$\bCM{}^{\mu',!*}_{\Whit}, \quad \mu'\in \mu-(\Lambda^{\on{pos}}-0).$$
\end{prop}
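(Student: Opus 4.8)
\textbf{Proof plan for Proposition \ref{p:Jordan-Holder}.}

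The plan is to produce the map $\bCM^{\mu,!*}_{\Whit}\to \bCM^{\mu,*}_{\Whit}$ by computing the socle of $\bCM^{\mu,*}_{\Whit}$, and to control the rest of the composition series by means of the "support" (i.e.\ the $\Lambda$-grading given by the connected components of $\Gr_T$ / the strata of $\Conf$) together with the fiber computation of the functor $\Phi^{\bHecke}_{\on{Fact}}$. First I would reduce, as usual (cf.\ \secref{sss:red to sc}, \propref{p:isogenies trick}, \propref{p:isogen Gr}), to the case where the derived group of $H$ is simply connected; then every $\mu\in\Lambda$ can be written $\mu=\lambda-\gamma$ with $\lambda\in\Lambda^+$ (indeed $\lambda$ restricted) and $\gamma\in(\Lambda^\sharp)^+$, and by \eqref{e:baby Verma shift} it suffices to treat $\mu=\lambda\in\Lambda^+$ restricted, so that $\bCM^{\lambda,*}_{\Whit}\simeq \ind_{\bHecke}(W^{\lambda,*})$ in the relevant sense. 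Here is where I would invoke \corref{c:isotypics in Weyl}, which already says exactly that $\ind_{\bHecke}(W^{\lambda,*})$ receives a non-zero map from the irreducible $\bCM^{\lambda,!*}_{\Whit}=\CM^{\lambda,!*}_{\Whit}$ and that the Jordan--Holder constituents of the cokernel are among the $\CM^{\lambda',!*}_{\Whit}$ with $\lambda'\le\lambda$. For restricted $\lambda$ this is essentially the whole statement; the point that remains is to pass from a general $\mu=\lambda-\gamma$ back to $\lambda$ and to upgrade the inequality $\lambda'\le\lambda$ to $\mu'\in\mu-(\Lambda^{\on{pos}}-0)$, i.e.\ to pin down precisely which shifts occur.

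For the general $\mu$ step, I would use \lemref{l:baby Verma Whit expl}, which writes $\oblv_{\bHecke}(\bCM^{\mu,*}_{\Whit})$ as a filtered colimit of convolutions $W^{\mu+\gamma+\gamma',*}\underset{\fL^+(G)}\star \IC_{q,\ol\Gr^{-w_0(\gamma)}_G}$. Applying the functor $\Phi^{\bHecke}$ (which is t-exact, \corref{c:Phi Hecke exact}, and sends $\CM^{\nu,!*}_{\Whit}$ to $\CM^{\nu,!*}_{\on{Fact}}$, \corref{c:irred pres Hecke}) translates the question of the composition series into a question about the configuration-space side, where the strata $\Conf^\nu_{\infty\cdot x}$ literally record the shift $\nu$: a constituent $\CM^{\mu',!*}_{\on{Fact}}$ can appear only if $\mu'\le\mu$ in $\Conf_{\leq\mu\cdot x}$ (this is built into \secref{ss:mod over conf} — the whole category $\Omega^{\on{small}}_q\on{-FactMod}$ is filtered by the $\CM^{\le\mu}$'s), and $\mu'=\mu$ occurs with multiplicity one because of \lemref{l:on open} (the stalk at $\mu\cdot x$) combined with \lemref{l:coind baby Whit}/\propref{p:fiber of Phi top}, which identify the top stratum contribution of $\bCM^{\mu,*}_{\Whit}$ with $W^{\mu,*}$ restricted to $S^{-,\mu}$, giving exactly one copy of $\sfe$. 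So $\mu$ itself appears once, forcing $\mu'\in\mu+\Lambda^{\on{neg}}$, and since $\Phi^{\bHecke}$ detects $\CM^{\mu,!*}_{\Whit}$ as the socle (via \eqref{e:Ext standard fact}, the orthogonality on the $\Conf$-side), I get the non-zero map $\bCM^{\mu,!*}_{\Whit}\to\bCM^{\mu,*}_{\Whit}$ and the desired bound $\mu'\in\mu-(\Lambda^{\on{pos}}-0)$ on the quotient (the "$-0$" because the $\mu$-part is the socle, not in the quotient). Finiteness of the series is \corref{c:Baby Whit compact} together with the Artinian property of the t-structure on $\bHecke(\Whit_q(G))$ (\propref{p:irred Hecke Whit}).

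The main obstacle I anticipate is the strictness of the bound, i.e.\ ruling out constituents $\CM^{\mu',!*}_{\Whit}$ with $\mu'\not\le\mu$ and with $\mu'\in\mu+(\Lambda^{\on{pos}}-0)$ — this is where one genuinely needs that the objects live in the "non-positive" part, which is \corref{x:support}/\propref{p:support}: everything in the image of the Jacquet functor is supported on $(\Gr^{\omega^\rho}_{T,\Ran_x})^{\on{non-pos}}_{\infty\cdot x}$, hence on $\Conf_{\leq\mu\cdot x}$, so the $\Lambda$-grades of all constituents are $\le\mu$. The secondary subtlety is the compatibility between "socle on the $\Whit$ side" and "socle on the $\Conf$ side"; here I would lean on \corref{c:irred pres Hecke} and the exactness/faithful-on-irreducibles properties of $\Phi^{\bHecke}$, or alternatively argue directly on the $\Whit$ side by dualizing (via $\BD^{\on{Verdier}}$ and \corref{c:duality irred}) the analogous statement for the standard objects $\bCM^{\mu,!}_{\Whit}$ once those are constructed. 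I expect no essentially new input beyond \lemref{l:baby Verma Whit expl}, \corref{c:isotypics in Weyl}, and the support/stalk statements — the work is bookkeeping with the $\Lambda$-grading.
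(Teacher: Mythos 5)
Your proposal has a genuine gap: the claim that for restricted $\lambda$ one has $\bCM^{\lambda,*}_{\Whit}\simeq \ind_{\bHecke}(W^{\lambda,*})$ is false, and the argument leans on it. These are distinct objects: $\ind_{\bHecke}(W^{\lambda,*})=\Res^H_{T_H}(W^{\lambda,*})$ corresponds (in the quantum-group picture) to the restriction of the dual Weyl module to the small quantum group, while $\bCM^{\lambda,*}_{\Whit}$ is the dual baby Verma object, a \emph{proper quotient} of it (already for $\lambda=0$ the first is one-dimensional while the second is not). So ``for restricted $\lambda$ this is essentially the whole statement'' does not follow; \corref{c:isotypics in Weyl} controls the JH constituents of $\ind_{\bHecke}(W^{\lambda,*})$, not of $\bCM^{\lambda,*}_{\Whit}$.

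The missing ingredient is \propref{p:from Weyl to baby Whit}, which is what makes the passage from $\Res^H_{T_H}(W^{\mu,*})$ to $\bCM^{\mu,*}_{\Whit}$ available: after shifting by $\gamma\in\Lambda_H$ sufficiently dominant, there is a \emph{surjection} $\Res^H_{B^-_H}(W^{\mu+\gamma,*})\otimes\sfe^{-\gamma}\twoheadrightarrow\wt\CM^{\mu+\gamma,*}_{\Whit}$ whose kernel has a finite resolution built out of objects $\Res^H_{B^-_H}(W^{\mu'+\gamma',*})\otimes\sfe^{-\gamma'}$ with $\mu'\in\mu-(\Lambda^{\on{pos}}-0)$. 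This surjection, combined with the injection $\bCM^{\mu,!*}_{\Whit}\hookrightarrow\Res^H_{T_H}(W^{\mu,*})$ from \corref{c:isotypics in Weyl}, produces the candidate map $\bCM^{\mu,!*}_{\Whit}\to\bCM^{\mu,*}_{\Whit}$; nonvanishing is a direct weight argument (if the composite vanished, the image of the irreducible would land in the kernel, whose JH constituents are all at weights $<\mu$), and the constraint on the cokernel is another application of \corref{c:isotypics in Weyl}. None of this needs $\Phi^{\bHecke}$. Your alternative route via $\Phi^{\bHecke}_{\on{Fact}}$ is not strictly circular (its t-exactness and preservation of irreducibles do not rest on this proposition), but it is a substantial detour that would itself require proving that the exact functor is conservative on the heart and transports socles faithfully, and it still would not produce the map $\bCM^{\mu,!*}_{\Whit}\to\bCM^{\mu,*}_{\Whit}$ on the $\Whit$ side without an argument like the one above.
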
 

\begin{proof} 

We can replace the initial $\mu$ by any $\mu+\gamma$ for $\gamma\in \Lambda_H$. Let $\gamma$ be as in 
\propref{p:from Weyl to baby Whit}. Applying this proposition, we can assume that there exists a surjection
\begin{equation} \label{e:ind Weyl to baby}
\Res^H_{T_H}(W^{\mu,*})\twoheadrightarrow \bCM{}^{\mu,*}_{\Whit},
\end{equation} 
whose kernel admits a left resolution each of which terms admits a filtration with subquotients
of the form 
\begin{equation} \label{e:constit kernel}
\Res^H_{T_H}(W^{\mu'+\gamma',*})\otimes \sfe^{-\gamma'},\quad \mu'\in \mu-(\Lambda^{\on{pos}}-0), \quad \gamma'\in \Lambda^+_H. 
\end{equation} 

\medskip

Recall the (injective) map
\begin{equation} \label{e:irr to ind Weyl}
\bCM{}^{\mu,!*}_{\Whit}\to \Res^H_{T_H}(W^{\mu,*})
\end{equation}
(see \corref{c:isotypics in Weyl}).  Composing, we obtain a map
$$\bCM{}^{\mu,!*}_{\Whit}\to \Res^H_{T_H}(W^{\mu,*})\to \bCM{}^{\mu,*}_{\Whit}.$$

We claim that this composite in non-zero. Indeed, if it were zero, the image of \eqref{e:irr to ind Weyl}
would hit the kernel of \eqref{e:ind Weyl to baby}. However, the Jordan-Holder constituents of the latter 
are among those of \eqref{e:constit kernel}, and by \corref{c:isotypics in Weyl} those are of the form
$$\bCM{}^{\mu',!*}_{\Whit},\quad \mu'\in \mu-(\Lambda^{\on{pos}}-0).$$

\medskip

Finally, the fact that the cokernel of the map $\bCM{}^{\mu,!*}_{\Whit}\to \bCM{}^{\mu,*}_{\Whit}$ has constituents 
of the form specified in the proposition follows by applying \corref{c:isotypics in Weyl} again. 

\end{proof}

\ssec{The (actual) baby Verma objects in the Whittaker category}

In this subsection we will finally define the objects $\bCM{}^{\mu,!}_{\Whit}\in \bHecke(\Whit_q(G))$. 

\sssec{}

Recall the duality operation of \eqref{e:duality B Hecke}:
$$(\BmHecke(\Whit_q(G))^c)^{\on{op}} \to \BHecke(\Whit_{q^{-1}}(G))^c.$$

Applying this functor to
$$\wt\CM^{\mu,*}_{\Whit}\in \BmHecke(\Whit_q(G))^c,$$
and up to replacing $q^{-1}$ by $q$, we obtain an object that we will denote
$$\wt\CM^{\mu,!}_{\Whit}\in \BHecke(\Whit_q(G))^c.$$

\sssec{}

Denote 
$$\bCM^{\mu,!}_{\Whit}:=\Res^{B_H}_{T_H}(\wt\CM^{\mu,!}_{\Whit})\in \bHecke(\Whit_q(G))^c.$$

By \eqref{e:duality B T Hecke}, the object $\bCM^{\mu,!}_{\Whit}$ is obtained from $\bCM^{\mu,*}_{\Whit}$
by the duality functor 
\begin{equation} \label{e:duality Hecke Whit}
(\bHecke(\Whit_q(G))^c)^{\on{op}} \to \bHecke(\Whit_{q^{-1}}(G))^c.
\end{equation} 

\sssec{}

One can deduce many of the properties of $\wt\CM^{\mu,!}_{\Whit}$ (resp., $\bCM^{\mu,!}_{\Whit}$) 
from those of $\wt\CM^{\mu,*}_{\Whit}$ (resp., $\bCM^{\mu,*}_{\Whit}$) by duality. We will need the following few:

\begin{cor}  \label{c:Jordan-Holder}
There exists a non-zero map $\bCM{}^{\mu,!}_{\Whit}\to \bCM{}^{\mu,!*}_{\Whit}$, such that the Jordan-Holder constituents
of the kernel are of the form 
$$\bCM{}^{\mu',!*}_{\Whit}, \quad \mu'\in \mu-(\Lambda^{\on{pos}}-0).$$
\end{cor}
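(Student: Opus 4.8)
The plan is to deduce \corref{c:Jordan-Holder} formally from \propref{p:Jordan-Holder} by applying the Verdier duality anti-equivalence \eqref{e:duality Hecke Whit}. Recall first that, by construction, $\bCM^{\mu,!}_{\Whit}$ is precisely $\BD^{\on{Verdier}}(\bCM^{\mu,*}_{\Whit})$, up to the (involutive, hence harmless) replacement of $q^{-1}$ by $q$. Next, the non-zero map produced by \propref{p:Jparagraph}, i.e. \propref{p:Jordan-Holder},
$$\bCM^{\mu,!*}_{\Whit}\to \bCM^{\mu,*}_{\Whit},$$
is automatically injective because its source is irreducible; it therefore fits into a short exact sequence
$$0\to \bCM^{\mu,!*}_{\Whit}\to \bCM^{\mu,*}_{\Whit}\to \CQ\to 0$$
in $(\bHecke(\Whit_q(G)))^\heartsuit$, where all Jordan--Holder constituents of $\CQ$ are of the form $\bCM^{\mu',!*}_{\Whit}$ with $\mu'\in \mu-(\Lambda^{\on{pos}}-0)$.

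The functor \eqref{e:duality Hecke Whit} is an anti-equivalence of compact objects which, by the compatibility of $\BD^{\on{Verdier}}$ with the t-structure recorded right after \propref{p:irred Hecke Whit}, is t-exact: it identifies $(\bHecke(\Whit_q(G))^c)^\heartsuit$ with $(\bHecke(\Whit_{q^{-1}}(G))^c)^\heartsuit$ and carries short exact sequences to short exact sequences with the arrows reversed. Moreover, by \corref{c:duality irred} together with the construction of the irreducibles $\bCM^{\nu,!*}_{\Whit}$, one has $\BD^{\on{Verdier}}(\bCM^{\nu,!*}_{\Whit})\simeq \bCM^{\nu,!*}_{\Whit}$ for every $\nu\in\Lambda$; in particular $\BD^{\on{Verdier}}$ sends each $\bCM^{\nu,!*}_{\Whit}$ to an irreducible and hence preserves Jordan--Holder multiplicities. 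Applying $\BD^{\on{Verdier}}$ to the displayed short exact sequence thus produces
$$0\to \BD^{\on{Verdier}}(\CQ)\to \bCM^{\mu,!}_{\Whit}\to \bCM^{\mu,!*}_{\Whit}\to 0,$$
in which $\BD^{\on{Verdier}}(\CQ)$ has Jordan--Holder constituents among the $\bCM^{\mu',!*}_{\Whit}$, $\mu'\in \mu-(\Lambda^{\on{pos}}-0)$. The resulting surjection $\bCM^{\mu,!}_{\Whit}\to \bCM^{\mu,!*}_{\Whit}$ is non-zero (it is onto a non-zero object) and its kernel is $\BD^{\on{Verdier}}(\CQ)$, which is exactly the assertion of \corref{c:Jordan-Holder}.

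Since every ingredient — injectivity from irreducibility, t-exactness of $\BD^{\on{Verdier}}$ on hearts, and the self-duality of the $\bCM^{\nu,!*}_{\Whit}$ — is already available in the text, there is no genuine obstacle here; the only point needing (routine) care is the bookkeeping of the direction reversal under the anti-equivalence, which is what interchanges the word ``quotient'' in \propref{p:Jordan-Holder} with the word ``kernel'' in the present statement.
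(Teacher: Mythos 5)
Your proposal is correct and is precisely the paper's intended argument: the text defines $\bCM^{\mu,!}_{\Whit}$ as the Verdier dual of $\bCM^{\mu,*}_{\Whit}$ and explicitly notes that the properties of the $!$-objects are to be deduced from those of the $*$-objects by duality, so \corref{c:Jordan-Holder} is indeed obtained by applying the anti-equivalence \eqref{e:duality Hecke Whit} to the short exact sequence from \propref{p:Jordan-Holder}, using self-duality of the irreducibles and the t-exactness of $\BD^{\on{Verdier}}$ exactly as you describe. (Minor copy-editing point: the cross-reference ``\propref{p:Jparagraph}'' appears to be a typo for \propref{p:Jordan-Holder}.)
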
 

\begin{cor}  \label{c:from Weyl to baby Whit}  
For every $\mu\in \Lambda$ one can find $\gamma\in \Lambda_H$ sufficiently deep in the 
dominant chamber, such that there exists a map 
$$\wt\CM^{\mu+\gamma,!}_{\Whit}\to \Res^H_{B_H}(W^{\mu+\gamma,!})\otimes \sfe^{-\gamma}$$
with the the following properties: 

\smallskip

\noindent{\em(a)} 
It is injective.

\smallskip

\noindent{\em(b)} 
Its cokernel admits a finite right resolution each of whose terms admits a filtration with subquotients of the form 
$$\Res^H_{B_H}(W^{\mu'+\gamma',!})\otimes \sfe^{-\gamma'}$$ for $\mu'\in \mu-(\Lambda^{\on{pos}}-0)$, 
$\gamma'\in \Lambda^+_H$ and $\mu'+\gamma'\in \Lambda^+$. 

\end{cor}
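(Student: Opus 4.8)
The plan is to derive this corollary from Proposition~\ref{p:from Weyl to baby Whit} by Verdier duality, in exact parallel with the way Corollary~\ref{c:Jordan-Holder} was obtained from \propref{p:Jordan-Holder}. Recall that $\wt\CM^{\mu,!}_{\Whit}$ was \emph{defined} as the image of $\wt\CM^{\mu,*}_{\Whit}$ (for the opposite metaplectic datum) under the duality equivalence \eqref{e:duality B Hecke}, and that by \propref{p:dual of standard} and Corollary~\ref{c:duality irred} the functor $\BD^{\on{Verdier}}$ on $\Whit_q(G)$ interchanges $W^{\lambda,!}$ and $W^{\lambda,*}$ with no cohomological shift. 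Moreover, by the compatibility of \eqref{e:duality B Hecke} with the functors $\Res^H_{B^\pm_H}$ and with the $\Rep(T_H)$-actions (diagram \eqref{e:duality B T Hecke} and \secref{ss:duality on Hecke gen}), this equivalence carries $\Res^H_{B^-_H}(\bc)\otimes\sfe^{-\gamma}$ to $\Res^H_{B_H}(\BD^{\on{Verdier}}(\bc))\otimes\sfe^{-\gamma}$, the lattice element being unchanged since $\tau^{T_H}((\sfe^{-\gamma})^*)\simeq\sfe^{-\gamma}$.

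First I would verify that the duality applies: all objects occurring in \propref{p:from Weyl to baby Whit} are compact and lie in the heart of the t-structure. Indeed $\Res^H_{B^-_H}(W^{\lambda+\gamma,*})\otimes\sfe^{-\gamma}$ is compact (as $W^{\lambda+\gamma,*}$ is compact by Corollary~\ref{c:cost compact in Whit}, and $\Res^H_{B^-_H}$ and $-\otimes\sfe^{-\gamma}$ preserve compactness) and lies in the heart (as $W^{\lambda+\gamma,*}$ does, and those two functors are t-exact); $\wt\CM^{\lambda+\gamma,*}_{\Whit}$ is compact by \lemref{l:Baby Whit compact} and lies in the heart by Corollary~\ref{c:baby Whit in heart}; hence so do the kernel of the surjection of \propref{p:from Weyl to baby Whit}(a) and the terms of its finite left resolution, the latter being finite filtered extensions of objects $\Res^H_{B^-_H}(W^{\lambda'+\gamma',*})\otimes\sfe^{-\gamma'}$. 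Since \eqref{e:duality B Hecke} is a contravariant equivalence taking the heart to the heart (as in Corollary~\ref{c:duality and t}, using that it does so on $\Whit_q(G)$), it sends short exact sequences in the heart to short exact sequences with the arrows reversed, and reverses finite resolutions.

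I would then apply \eqref{e:duality B Hecke} termwise. The surjection of \propref{p:from Weyl to baby Whit}(a) dualizes to an injection $\wt\CM^{\lambda+\gamma,!}_{\Whit}\hookrightarrow\Res^H_{B_H}(W^{\lambda+\gamma,!})\otimes\sfe^{-\gamma}$, giving assertion (a); its cokernel is the dual of the kernel $K$ in \propref{p:from Weyl to baby Whit}, and the finite left resolution $0\to P_n\to\cdots\to P_0\to K\to 0$ dualizes to a finite right resolution $0\to\BD^{\on{Verdier}}(K)\to\BD^{\on{Verdier}}(P_0)\to\cdots\to\BD^{\on{Verdier}}(P_n)\to 0$; and each finite filtration of $P_i$ with subquotients $\Res^H_{B^-_H}(W^{\lambda'+\gamma',*})\otimes\sfe^{-\gamma'}$ dualizes (in reverse order) to one of $\BD^{\on{Verdier}}(P_i)$ with subquotients $\Res^H_{B_H}(W^{\lambda'+\gamma',!})\otimes\sfe^{-\gamma'}$, the indices $\lambda',\gamma'$ satisfying the same constraints. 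This is assertion (b), with $\lambda$ renamed $\mu$. The only point of friction is that \propref{p:from Weyl to baby Whit} is stated for $\lambda\in\Lambda^+$ while the corollary allows any $\mu\in\Lambda$; but dominance of $\lambda$ enters the proof of \propref{p:from Weyl to baby Whit} only through $\lambda+\gamma\in\Lambda^+$ (so that $W^{\lambda+\gamma,*}\neq 0$), which holds for any $\mu\in\Lambda$ once $\gamma\in\Lambda^+_H$ is taken sufficiently deep in the dominant chamber, so \propref{p:from Weyl to baby Whit} applies verbatim with $\lambda$ replaced by $\mu$. I do not expect a genuine obstacle: the argument is a formal Verdier-duality manipulation, and the one thing requiring care is the bookkeeping of how \eqref{e:duality B Hecke} interacts with $\Res^H_{B^\pm_H}$, the twists $\sfe^{-\gamma'}$, and the passage $W^{\lambda,*}\rightsquigarrow W^{\lambda,!}$, all recorded above.
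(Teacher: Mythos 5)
Your proposal is correct and matches the paper's intended argument: the corollary is stated without proof, immediately after the remark that such properties of $\wt\CM^{\mu,!}_{\Whit}$ follow "by duality" from those of $\wt\CM^{\mu,*}_{\Whit}$, and your dualization of \propref{p:from Weyl to baby Whit} via \eqref{e:duality B Hecke}, with the bookkeeping of $\Res^H_{B^\pm_H}$, $\tau^{T_H}$, and the $\sfe^{-\gamma'}$-twists, is exactly that elaboration. One small imprecision worth flagging: the proof of \propref{p:from Weyl to baby Whit} does \emph{not} quite "apply verbatim" for $\lambda\in\Lambda\setminus\Lambda^+$, since it convolves with $W^{\lambda,*}_{\Fl}$, which is only defined for $\lambda\in\Lambda^+$; the passage to arbitrary $\mu\in\Lambda$ is more cleanly justified by first translating into the dominant cone using $\wt\CM^{\mu+\gamma,*}_{\Whit}\simeq\wt\CM^{\mu,*}_{\Whit}\otimes\sfe^\gamma$ (a step the paper also leaves implicit).
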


\begin{cor} For $\lambda \in \Lambda^+$, we have
$$\Ind^{H}_{B_H}(\bCM{}^{\lambda,!}_{\Whit})\simeq W^{\lambda,!}.$$
\end{cor}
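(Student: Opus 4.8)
The statement to prove is that for $\lambda\in\Lambda^+$ one has $\Ind^H_{B_H}(\bCM^{\lambda,!}_{\Whit})\simeq W^{\lambda,!}$. The cleanest route is to obtain this by Verdier duality from the already-available isomorphism $\coInd^H_{B^-_H}(\wt\CM^{\lambda,*}_{\Whit})\simeq W^{\lambda,*}$ of \lemref{l:coind baby Whit}. First I would recall from \secref{ss:duality Whit} that the duality functor \eqref{e:duality Hecke Whit} sends $W^{\lambda,*}\mapsto W^{\lambda,!}$ (this is \propref{p:dual of standard}, up to replacing $q$ by $q^{-1}$), and that by construction $\bCM^{\mu,!}_{\Whit}$ is the image of $\bCM^{\mu,*}_{\Whit}$ under the same duality functor (see the definition of $\wt\CM^{\mu,!}_{\Whit}$ and $\bCM^{\mu,!}_{\Whit}$ via \eqref{e:duality B Hecke}, \eqref{e:duality B T Hecke}).

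The key step is the compatibility of the duality \eqref{e:duality B Hecke} with the functors $\Ind^H_{B_H}$ and $\coInd^H_{B^-_H}$. This is exactly recorded in \eqref{e:Serre duality again}: for a compact $\bc$ one has $(\coInd^H_{B_H}(\bc))^\vee\simeq \coInd^H_{B^-_H}(\bc^\vee\otimes \sfe^{-2\rho_H})[d]$, which together with the Serre duality isomorphism \eqref{e:Serre duality} relating $\coInd^H_{B_H}$ and $\Ind^H_{B_H}$ yields a clean identification of $\Ind^H_{B_H}$ on the $q$-side with the Verdier dual of $\coInd^H_{B^-_H}$ on the $q^{-1}$-side. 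Concretely, I would run the following chain: starting from $\coInd^H_{B^-_H}(\wt\CM^{\lambda,*}_{\Whit})\simeq W^{\lambda,*}$ in $\Whit_{q^{-1}}(G)$ (valid for all $\lambda\in\Lambda^+$, after the appropriate $q\leftrightarrow q^{-1}$ bookkeeping), apply $\BD^{\on{Verdier}}$ to both sides; on the right this gives $W^{\lambda,!}$ by \propref{p:dual of standard}; on the left, using \eqref{e:Serre duality again} read ``in reverse'' (i.e.\ expressing $\Ind^H_{B_H}$ of a Verdier dual as the Verdier dual of $\coInd^H_{B^-_H}$), and using $(\wt\CM^{\lambda,*}_{\Whit})^\vee\simeq \wt\CM^{\lambda,!}_{\Whit}$ together with $\Res^{B_H}_{T_H}$ intertwining duals as in the last displays of \secref{s:DrPl}, one gets $\Ind^H_{B_H}(\bCM^{\lambda,!}_{\Whit})$. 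A possible twist by $\sfe^{-2\rho_H}$ and a cohomological shift $[d]$ appear in \eqref{e:Serre duality again}; I would check that on irreducible/standard objects these match the normalization used to define $\wt\CM^{\lambda,!}_{\Whit}$, so that the twist and shift cancel and the bare isomorphism survives.

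The main obstacle, such as it is, is purely bookkeeping: tracking the Cartan involutions $\tau^H$, $\tau^{T_H}$ (which swap $B_H\leftrightarrow B^-_H$ and appear in the definition of the $\Rep(H)$-action on the dual in \secref{ss:duality on Hecke gen}), the passage $q\leftrightarrow q^{-1}$, and the $[d]$/$\sfe^{-2\rho_H}$ discrepancy in Serre duality, and making sure they all cancel to give the clean statement. Since $\wt\CM^{\lambda,!}_{\Whit}$ is \emph{defined} precisely as the dual of $\wt\CM^{\lambda,*}_{\Whit}$, these normalizations are forced to be consistent, so no genuine difficulty should arise; the proof is essentially ``apply $\BD^{\on{Verdier}}$ to \lemref{l:coind baby Whit} and use \eqref{e:Serre duality}, \eqref{e:Serre duality again}, \propref{p:dual of standard}.'' Alternatively, one can give a direct argument mirroring the proof of \lemref{l:coind baby Whit}: express $\Ind^H_{B_H}(\bCM^{\lambda,!}_{\Whit})$ via the colimit presentation dual to \lemref{l:baby Verma Whit expl} (using \corref{c:from Weyl to baby Whit}) and observe that $\Ind^H_{B_H}$ kills the higher terms exactly as $\coInd$ does in the $*$-case; but the duality argument is shorter and I would present that one.
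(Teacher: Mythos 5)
Your proposal is correct and is exactly the proof the paper intends: the paper states this as one of several corollaries preceded by the remark that properties of $\wt\CM^{\mu,!}_{\Whit}$ can be deduced from those of $\wt\CM^{\mu,*}_{\Whit}$ "by duality," and gives no further argument. Your verification that the $\sfe^{-2\rho_H}$ twist and the $[d]$ shift from \eqref{e:Serre duality} and \eqref{e:Serre duality again} cancel (so that $\BD^{\on{Verdier}}$ intertwines $\Ind^H_{B_H}$ with $\coInd^H_{B^-_H}$ on the nose) is the right check, and it does go through cleanly once one observes that the duality on $\Rep(T_H)$-coefficients (the composite of linear dual with $\tau^{T_H}$ as in \secref{sss:duality on Hecke graded}) fixes each $\sfe^\gamma$; note also that the corollary as printed writes $\bCM^{\lambda,!}_{\Whit}$ where it should read $\wt\CM^{\lambda,!}_{\Whit}$, a typo you implicitly correct.
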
 

\ssec{Orthogonality}

In this subsection we will show that the $\bCM{}^{\lambda,!}_{\Whit}$, 
$\bCM{}^{\lambda,*}_{\Whit}$ satisfy properties (i) and (ii) from \secref{sss:strategy}.

\sssec{}

First, by combining \propref{p:Jordan-Holder} and \corref{c:Jordan-Holder}, we obtain that there exist maps
$$\bCM{}^{\mu,!}_{\Whit}\twoheadrightarrow \bCM{}^{\mu,!*}_{\Whit}\hookrightarrow \bCM{}^{\mu,*}_{\Whit}$$
whose kernel/cokernel have Jordan-Holder constituents of the form 
$$\bCM{}^{\mu',!*}_{\Whit}, \quad \mu'\in \mu-(\Lambda^{\on{pos}}-0).$$

\medskip

We now claim:

\begin{thm}  \label{t:orthogonality} We have:
$$\CHom_{\bHecke(\Whit_q(G))}(\bCM{}^{\mu',!}_{\Whit},\bCM{}^{\mu,*}_{\Whit})=
\begin{cases}
&\sfe \text{ if } \mu'=\mu \\
&0 \text{ otherwise}.
\end{cases}$$
\end{thm}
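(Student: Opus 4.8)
The strategy is to reduce the computation of $\CHom_{\bHecke(\Whit_q(G))}(\bCM{}^{\mu',!}_{\Whit},\bCM{}^{\mu,*}_{\Whit})$ to a computation that we already understand, namely the orthogonality of baby Verma and dual baby Verma objects in the Iwahori-equivariant category $\Shv_{\CG^G}(\Gr_G)^{I,\on{ren}}$ (which itself reflects the orthogonality of (dual) Verma objects in the base affine space picture), by translating everything through the adjunctions $(\Res^{B_H}_{T_H},\coInd^{B_H}_{T_H})$ and the structure of the objects as images of Drinfeld--Pl\"ucker data. First I would use \corref{c:from Weyl to baby Whit} and \propref{p:from Weyl to baby Whit}: up to replacing $\mu,\mu'$ by $\mu+\gamma,\mu'+\gamma$ for $\gamma$ deep in the dominant chamber (which is harmless, since $\bCM{}^{\mu,*}_{\Whit}\otimes\sfe^\gamma\simeq\bCM{}^{\mu+\gamma,*}_{\Whit}$ and similarly on the $!$-side, while the $\Hom$ space is unchanged because translation by $\sfe^\gamma$ is an auto-equivalence of $\bHecke(\Whit_q(G))$ under which both objects transform compatibly), we may assume $\mu,\mu'\in\Lambda^+$ and that $\bCM{}^{\mu',!}_{\Whit}$ admits a presentation via $W^{\mu',!}$-objects and $\bCM{}^{\mu,*}_{\Whit}$ via $W^{\mu,*}$-objects. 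Then one is reduced, via the resolutions of \corref{c:from Weyl to baby Whit}(b) and \propref{p:from Weyl to baby Whit}(b), to computing $\CHom$ between objects of the form $\Res^H_{T_H}(W^{\lambda',!})$ and $\Res^H_{T_H}(W^{\lambda,*})$ in $\bHecke(\Whit_q(G))$; but this in turn equals $\CHom_{\Rep(T_H)\otimes\Whit_q(G)}$-type expressions that, after using the adjunction $(\ind_{\bHecke},\oblv_{\bHecke})$, reduce to $\CHom_{\Whit_q(G)}(W^{\lambda',!},W^{\lambda,*}\otimes(\text{character}))$, which is governed by \eqref{e:standards Whit}.

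\textbf{Organizing the argument more conceptually.} Rather than grinding through the resolutions I would prefer to run the computation directly using the identification of $\bCM{}^{\mu,*}_{\Whit}$ as $W^{\mu,*}_{\Fl}\underset{I}\star\wt\CF^{\semiinf,-}$ (twisted by $w_0$), together with the dual description of $\bCM{}^{\mu',!}_{\Whit}$. Since $\bCM{}^{\mu',!}_{\Whit}$ is the Verdier dual of $\bCM{}^{\mu',*}_{\Whit}$ (for the opposite metaplectic parameter) under \eqref{e:duality Hecke Whit}, and since $\Hom$-spaces pair with this duality, $\CHom_{\bHecke(\Whit_q(G))}(\bCM{}^{\mu',!}_{\Whit},\bCM{}^{\mu,*}_{\Whit})$ is computed by a pairing which, using the adjunctions in the Drinfeld--Pl\"ucker formalism (\secref{ss:Dr-Pl}) and the fact that $\coInd^{H}_{B^-_H}(\wt\CM^{\lambda,*}_{\Whit})\simeq W^{\lambda,*}$ (\lemref{l:coind baby Whit}) together with its dual statement $\Ind^{H}_{B_H}(\bCM{}^{\lambda,!}_{\Whit})\simeq W^{\lambda,!}$, collapses to $\CHom_{\Whit_q(G)}$ between genuine standard and costandard objects on $\Gr_G$. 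The key identity $\Maps_{\Whit_q(G)}(W^{\lambda,!},W^{\lambda',*})=\sfe$ if $\lambda=\lambda'$ and $0$ otherwise (which is \eqref{e:standards Whit}) then gives the answer, provided I can control the shift in weights introduced by the $w_0$-twist and the $\IC_{q,\ol\Gr^{-w_0(\gamma)}_G}$-convolutions appearing in the colimit presentations of Lemmas \ref{l:baby Verma Whit expl} and its analogue. The bookkeeping of weights and cohomological shifts here is where I expect the real work to lie: one must check that the only surviving term in the relevant colimit/limit is the one with $\mu=\mu'$, and that it contributes exactly one copy of $\sfe$ with no higher $\Ext$'s.

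\textbf{Main obstacle.} The principal difficulty is the interchange of the colimit defining $\oblv_{\bHecke}(\overset{\bullet}\CM{}^{\mu,*}_{\Whit})$ (an increasing colimit over $\gamma\in\Lambda^+_H$ of convolutions of $W$-objects with spherical $\IC$-sheaves) with the $\CHom$ out of the compact object $\bCM{}^{\mu',!}_{\Whit}$, and then the identification of each term. Compactness of $\bCM{}^{\mu',!}_{\Whit}$ (\corref{c:Baby Whit compact}, obtained via \lemref{l:Baby Whit compact}) lets $\CHom$ commute with the filtered colimit, so the computation reduces to a term-by-term one; but each term is a $\CHom$ between $W^{\mu'+\cdots,!}$ and $W^{\mu+\gamma+\gamma',*}\underset{\fL^+(G)}\star\IC_{q,\ol\Gr^{-w_0(\gamma)}_G}$, and evaluating this requires unwinding the adjunction $(W^{-}\underset{\fL^+(G)}\star\IC_{q,\ol\Gr^\nu_G})^{\vee}$-duality together with \eqref{e:standards Whit}. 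I expect this to come down to the elementary fact that for $\lambda,\lambda'$ dominant, $\CHom_{\Whit_q(G)}(W^{\lambda',!},W^{\lambda,*}\underset{\fL^+(G)}\star\IC_{q,\ol\Gr^\nu_G})$ is concentrated in degree $0$, is one-dimensional exactly when $\lambda'=\lambda+w_0(\nu)$ (equivalently when the relevant semi-infinite orbit intersection is a point), and vanishes otherwise — a statement of the same flavour as the stalk computations in \secref{sss:1st approx}--\secref{sss:2nd approx}. Matching this against the diagonal condition $\mu'=\mu$ in the colimit, and checking that the identity endomorphism is the one that survives (rather than getting killed in the colimit), is the delicate point; I would handle it by tracking the Drinfeld--Pl\"ucker transition maps \eqref{e:DrPl maps} explicitly on highest-weight lines, exactly as in \corref{c:coind of baby Verma zero} and Remark~\ref{r:coind of baby Verma}.
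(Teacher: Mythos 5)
Your overall strategy matches the paper's: reduce via $\Lambda^+_H$-translations and the finite resolution of \corref{c:from Weyl to baby Whit} to $\Res^H_{T_H}(W^{\lambda',!})$ on the $!$-side, use the colimit presentation of \lemref{l:baby Verma Whit expl} on the $*$-side, commute $\CHom$ out of the compact object through the filtered colimit, and finish with orthogonality of $W^{\lambda',!}$ and $W^{\lambda,*}$ in $\Whit_q(G)$. But there is a genuine gap, and it is exactly at the point you flag as "the delicate point."

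You claim, as an "elementary fact," that $\CHom_{\Whit_q(G)}(W^{\lambda',!},W^{\lambda,*}\underset{\fL^+(G)}\star\IC_{q,\ol\Gr^\nu_G})$ is one-dimensional exactly when $\lambda'=\lambda+w_0(\nu)$ and vanishes otherwise. This is false. Using the adjunction $\CHom(W^{\lambda',!},W^{\lambda,*}\star\IC^\nu)\simeq\CHom(W^{\lambda',!}\star\IC^{-w_0(\nu)},W^{\lambda,*})$ and the fact that $W^{\lambda',!}\star\IC^{-w_0(\nu)}$ is supported on $\ol{S}^{\lambda'-w_0(\nu)}$, the vanishing of the right-hand side only forces $\lambda+w_0(\nu)\not\leq\lambda'$; for $\lambda'$ strictly bigger than $\lambda+w_0(\nu)$ in the standard order the $\CHom$ can very well be nonzero. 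Consequently the colimit over $\gamma$ is \emph{not} termwise zero when $\lambda'>\lambda$, and you would be forced to show that the (nonzero) individual contributions cancel out in the colimit by a careful analysis of the Drinfeld--Pl\"ucker transition maps. You gesture at this, but it is a genuinely hard calculation that your proposal does not carry out and that the paper never performs.

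The missing ingredient is a preliminary duality reduction. Because $\bCM{}^{\mu',!}_{\Whit}$ and $\bCM{}^{\mu',*}_{\Whit}$ are interchanged under the equivalence \eqref{e:duality Hecke Whit}, one has $\CHom(\bCM{}^{\mu',!}_{\Whit},\bCM{}^{\mu,*}_{\Whit})\simeq\CHom(\bCM{}^{\mu,!}_{\Whit},\bCM{}^{\mu',*}_{\Whit})$ (with $q\leftrightarrow q^{-1}$). This lets you assume from the outset that $\mu'\notin\mu+(\Lambda^{\on{pos}}-0)$. With that constraint in place the termwise computation is a trivial support argument: $W^{\lambda',!}\underset{\fL^+(G)}\star\IC_{q,\ol\Gr^\gamma_G}$ lies on $\ol{S}^{\lambda'+\gamma}$, and $W^{\lambda+\gamma,*}$ is a $*$-extension from $S^{\lambda+\gamma}$; the two are disjoint unless $\lambda\leq\lambda'$, which under the constraint forces $\lambda'=\lambda$. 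So each term of the colimit is $\sfe$ for $\lambda'=\lambda$ and $0$ otherwise, with no further cancellation analysis required. You mention Verdier duality in your second paragraph, but only as a way to reformulate the pairing; the use you actually need is this reduction to one half of the order-incomparability region. (A secondary inefficiency: your plan resolves \emph{both} sides, whereas only the $!$-side needs the finite resolution; the $*$-side is handled directly via \lemref{l:baby Verma Whit expl}.)
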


The rest of this subsection is devoted to the proof of this theorem. 

\sssec{Reduction step 1}

First, we claim that it is sufficient to consider the case when
$$\mu'\notin \mu+(\Lambda^{\on{pos}}-0).$$

Indeed, $\mu'\in \mu+(\Lambda^{\on{pos}}-0)$, applying the duality functor \eqref{e:duality Hecke Whit},
we obtain:
$$\CHom_{\bHecke(\Whit_q(G))}(\bCM{}^{\mu',!}_{\Whit},\bCM{}^{\mu,*}_{\Whit})\simeq
\CHom_{\bHecke(\Whit_{q^{-1}}(G))}(\bCM{}^{\mu,!}_{\Whit},\bCM{}^{\mu',*}_{\Whit}),$$
and now $\mu\notin \mu'+(\Lambda^{\on{pos}}-0)$. 

\sssec{Reduction step 2}

Let us choose $\gamma\in \Lambda^+_H$ deep enough in the dominant chamber, so that $\bCM{}^{\mu'+\gamma,!}_{\Whit}$
admits a resolution as in \corref{c:from Weyl to baby Whit}. Replacing 
$$\mu'\rightsquigarrow \mu'+\gamma, \,\, \mu\rightsquigarrow \mu+\gamma,$$
we obtain that it is sufficient to show that for $\lambda'\in \Lambda^+$ and $\lambda'\notin \lambda+(\Lambda^{\on{pos}}-0)$, we have:
$$\CHom_{\bHecke(\Whit_q(G))}(\Res^H_{T_H}(W^{\lambda',!}),\bCM{}^{\lambda,*}_{\Whit})=
\begin{cases}
&\sfe \text{ if } \lambda'=\lambda \\
&0 \text{ otherwise}.
\end{cases}
$$

\sssec{}

Using \lemref{l:baby Verma Whit expl}, we rewrite
$$\CHom_{\bHecke(\Whit_q(G))}(\Res^H_{T_H}(W^{\lambda',!}),\bCM{}^{\lambda,*}_{\Whit})$$
as
$$\underset{\gamma\in \Lambda^+_H}{\on{colim}}\,\, 
\CHom_{\Whit_q(G)}(W^{\lambda',!},W^{\lambda+\gamma,*}\underset{\fL^+(G)}\star \IC_{q,\ol\Gr^{-w_0(\gamma)}_G}).$$

\medskip 

It is therefore sufficient to show that for $\lambda'\in \Lambda^+$ and $\lambda'\notin \lambda+(\Lambda^{\on{pos}}-0)$, we have:
\begin{equation} \label{e:orthogonality final}
\CHom_{\Whit_q(G)}(W^{\lambda',!},W^{\lambda+\gamma,*}\underset{\fL^+(G)}\star \IC_{q,\ol\Gr^{-w_0(\gamma)}_G})=
\begin{cases}
&\sfe \text{ if } \lambda'=\lambda \\
&0 \text{ otherwise}.
\end{cases}
\end{equation} 

\sssec{}

We rewrite 
$$\CHom_{\Whit_q(G)}(W^{\lambda',!},W^{\lambda+\gamma,*}\underset{\fL^+(G)}\star \IC_{q,\ol\Gr^{-w_0(\gamma)}_G})
\simeq
\CHom_{\Whit_q(G)}(W^{\lambda',!} \underset{\fL^+(G)}\star \IC_{q,\ol\Gr^\gamma_G},W^{\lambda+\gamma,*}).$$

Note that $$W^{\lambda',!} \underset{\fL^+(G)}\star \IC_{q,\ol\Gr^\gamma_G}$$ is supported on $\ol{S}^{\lambda'+\gamma}$,
and that its restriction to $S^{\lambda'+\gamma}$ is the generator of $\Whit_q(G)_{=\lambda'+\gamma}$. This implies 
\eqref{e:orthogonality final}:

\medskip

Indeed, the case $\lambda'=\lambda$ is obvious. For $\lambda'\neq\lambda$, the condition that 
$\lambda'\notin \lambda+(\Lambda^{\on{pos}}-0)$ implies that 
$$\ol{S}^{\lambda'+\gamma}\cap S^{\lambda+\gamma}=\emptyset.$$

\section{Calculation of stalks}  \label{s:stalks}

The goal of this section is to complete the program indicated in \secref{sss:strategy}
by proving property (iii) in {\it loc.cit.}

\ssec{Statement of the result}

In this subsection we will state precisely the calculation that we will perform. 

\sssec{}  \label{sss:trivialize gerbe lambda}

Fix an element $\mu \in \Lambda$. In order to simply the notation, we will trivialize the fiber
of $\CG^\Lambda$ at the point $\mu\cdot x\in \Conf_{\infty\cdot x}$. Note that this fiber identifies
canonically with the fiber of $\CG^G$ at $t^\mu\in \fL(G)^{\omega^\rho}_x$.

\medskip

Due to this trivialization, we have a \emph{well-defined} functor
\begin{equation} \label{e:fiber at lambda}
\Shv_{\CG^\Lambda}(\Conf_{\infty\cdot x})\to \Vect,
\end{equation}
given by taking the !-fiber at $\mu\cdot x$. In particular, we a \emph{well-defined} objects 
$$\CM^{\mu,!}_{\Conf},\CM^{\mu,*}_{\Conf}\in \Omega^{\on{small}}_q\on{-FactMod}.$$

\medskip

The trivialization of the fiber of $\CG^G$ at $t^\mu\in \fL(G)^{\omega^\rho}_x$ gives rise to
an identification
$$\Whit_q(G)_{=\mu}\simeq \Vect,$$
and hence to a pair of \emph{well-defined} objects
$$W^{\mu,!},W^{\mu,*}\in \Whit_q(G).$$

We normalize $\bCM^{\mu,*}_{\Whit}$ so that
$$\coind^H_{B^-_H(\bCM^{\mu,*}_{\Whit})}\simeq W^{\mu,*}.$$

\sssec{}

The main result of the present section is the following theorem:

\begin{thm} \label{t:fiber of costandard}
The functor 
$$\Phi^{\bHecke}_{\on{Fact}}:\bHecke(\Whit_q(G))\to \Omega_q^{\on{small}}\on{-FactMod}$$
sends $\bCM^{\mu,*}_{\Whit}$ to $\CM^{\mu,*}_{\Conf}\in \Omega^{\on{small}}_q\on{-FactMod}$.
\end{thm}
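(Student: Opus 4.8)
The strategy is to reduce the claim to a computation of the !-fibers of $\Phi^{\bHecke}_{\on{Fact}}(\bCM^{\mu,*}_{\Whit})$ (equivalently, of $\Phi^{\bHecke}(\bCM^{\mu,*}_{\Whit})$) at the points $\mu'\cdot x\in \Conf_{\infty\cdot x}$, and to show that this !-fiber is $\sfe$ if $\mu'=\mu$ and $0$ otherwise. Indeed, by \lemref{l:on open} (and the characterization of $\CM^{\mu,*}_{\Conf}$ as $(\iota_\mu)_*\circ (\jmath_\mu)_*(\sfe)$), the object $\CM^{\mu,*}_{\Conf}$ is the unique (up to normalization) object of $\Omega_q^{\on{small}}\on{-FactMod}$ whose underlying sheaf is the $*$-extension from the stratum $\Conf_{=\mu\cdot x}$ of the skyscraper at $\mu\cdot x$; by factorization, checking that $\oblv_{\on{Fact}}(\Phi^{\bHecke}_{\on{Fact}}(\bCM^{\mu,*}_{\Whit}))$ has this form amounts to computing its !-fibers at all points of $\Conf_{\infty\cdot x}$, and by factorization it suffices to compute the !-fiber at the special points $\mu'\cdot x$. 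The compatibility of the resulting object with the factorization $\Omega_q^{\on{small}}$-module structure (i.e.\ that the identification lifts from the level of sheaves to the level of factorization modules) then follows automatically from the construction of $\Phi^{\bHecke}_{\on{Fact}}$, since the $*$-extension functor $(\jmath_\mu)_*$ and $(\iota_\mu)_*$ were shown in \secref{ss:mod over conf} to lift to factorization modules.

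First I would record, using the explicit description of the functor $\Phi$ in \secref{sss:Phi expl} together with \secref{sss:shifted IC} and the formula \eqref{e:fiber of Phi}, that for $\CF\in \Whit_q(G)$ the !-fiber of $\Phi(\CF)$ at $\mu'\cdot x$ is
$$\Gamma(\Gr^{\omega^\rho}_{G,x},\CF\overset{!}\otimes \on{IC}^{\mu'+\frac{\infty}{2},-}_{q^{-1},x}).$$
Next, using that $\Phi^{\bHecke}_{\on{Fact}}$ is obtained from $\Phi_{\on{Fact}}$ via the Hecke property and the identification $\bCM^{\mu,*}_{\Whit}=\Res^{B^-_H}_{T_H}(\wt\CM^{\mu,*}_{\Whit})$ with $\wt\CM^{\mu,*}_{\Whit}=W^{\mu,*}_{\Fl}\underset{I}\star \wt\CF^{\semiinf,-}$, I would transport the !-fiber computation to one involving the Drinfeld-Pl\"ucker presentation of $\wt\CF^{\semiinf,-}$. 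Concretely, the !-fiber at $\mu'\cdot x$ of $\Phi^{\bHecke}(\bCM^{\mu,*}_{\Whit})$ should be computable as a colimit over $\gamma\in\Lambda^+_H$ (filtered poset), using \lemref{l:baby Verma Whit expl}, of terms
$$\Gamma\bigl(\Gr^{\omega^\rho}_{G,x},\ (W^{\mu+\gamma+\gamma',*}\underset{\fL^+(G)}\star \IC_{q,\ol\Gr^{-w_0(\gamma)}_G})\overset{!}\otimes \on{IC}^{\mu'+\frac{\infty}{2},-}_{q^{-1},x}\bigr),$$
suitably twisted by $\sfe^{\gamma'}$ and shifted. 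The key point — and this is where the ``Drinfeld-Pl\"ucker nature of both $\bCM^{\mu,*}_{\Whit}$ and $\on{IC}^{\mu'+\frac{\infty}{2},-}_{q^{-1},x}$ cancel each other out'', in the language of the Introduction — is that both the baby Verma object and the metaplectic semi-infinite IC sheaf are built from the same combinatorial colimit data (via \propref{p:ident semiinf as DrPl} and \corref{c:F and semiinf} on the one hand, and the construction of $\wt\CF^{\semiinf,-}$ on the other). I would exploit this by re-expressing the pairing of $\IC_{q,\ol\Gr^{-w_0(\gamma)}_G}$ against $\on{IC}^{\mu'+\frac{\infty}{2},-}_{q^{-1},x}$ using the defining colimit \eqref{e:ICs as colim} for $'\!\ICs_{q,x}$ and \thmref{t:fiber of Ics}, so that the two colimits over $\Lambda^+_H$ become ``dual'' to one another and collapse.

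After this reduction, the actual vanishing/non-vanishing is a computation of cohomology of (metaplectically twisted) sheaves on intersections of semi-infinite orbits $S^{\eta}\cap S^{-,\eta'}$ against the character sheaf $\chi_N$ and the discrepancy local system $\Psi_q$ — precisely the kind of computation already carried out in \secref{ss:proof of GM} (via \propref{p:fiber of Phi top}) and ultimately resting on \thmref{t:restr} and the bottom-cohomology vanishing from \cite{Lys}. So the plan is: (1) identify the !-fiber of $\Phi^{\bHecke}(\bCM^{\mu,*}_{\Whit})$ at $\mu'\cdot x$ with an explicit colimit of cohomology groups via \lemref{l:baby Verma Whit expl} and the formula for $\Phi$; (2) use the Drinfeld-Pl\"ucker descriptions of $\wt\CF^{\semiinf,-}$ and of $'\!\ICs_{q,x}$ to collapse the double colimit; (3) apply \propref{p:fiber of Phi top} together with \thmref{t:restr} to get $\sfe$ for $\mu'=\mu$ and $0$ otherwise; (4) upgrade the resulting identification of underlying sheaves to an identification of factorization $\Omega_q^{\on{small}}$-modules using the description of $\CM^{\mu,*}_{\Conf}$ in \secref{ss:mod over conf} and the lift of $(\jmath_\mu)_*,(\iota_\mu)_*$ to factorization modules. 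The main obstacle I expect is Step (2): making the ``cancellation'' of the two Drinfeld-Pl\"ucker colimits rigorous — that is, identifying the colimit indexed by $\gamma\in\Lambda^+_H$ coming from $\wt\CF^{\semiinf,-}$ (via $\wt\CM^{\mu,*}_{\Whit}$) with the colimit \eqref{e:ICs as colim} defining $'\!\ICs_{q,x}$ under the pairing, and checking that the combined system is eventually constant with the right value. This requires carefully matching the Pl\"ucker transition maps \eqref{e:transition maps ICs} against the maps \eqref{e:DrPl maps I adj}, and controlling the cohomological shifts and gerbe-twistings so that the surviving term is exactly the skyscraper at $\mu\cdot x$. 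Once this bookkeeping is done, Steps (3) and (4) are essentially a repackaging of results already established in Parts IV and VI.
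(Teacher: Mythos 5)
Your reduction to the !-fiber characterization of $\CM^{\mu,*}_{\Conf}$ is exactly the paper's reduction, and your intuition that the Drinfeld--Pl\"ucker nature of $\wt\CF^{\semiinf,-}$ and of $\ICslm_{q^{-1},x}$ must ``cancel'' is also the paper's guiding idea. But the route you propose through Step (2)---expanding $\oblv_{\bHecke}(\bCM^{\mu,*}_{\Whit})$ as the $\gamma$-colimit of \lemref{l:baby Verma Whit expl}, expanding $\ICslm_{q^{-1},x}$ via the colimit \eqref{e:ICs as colim}, and then collapsing the resulting double colimit term-by-term by matching Pl\"ucker transition maps---is precisely the bookkeeping the paper deliberately avoids. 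The paper's actual proof factors through \thmref{t:Hom from baby}: it abstractly repackages the functor $\CF\mapsto \Phi^{\bHecke}(\CF)_{\mu'\cdot x}$ as a pairing $\overset{\bullet}\Psi\bigl(-,\on{Av}_*^{\fL^+(N)}(\oblv_{\on{rel}}(t^{\mu'}\cdot {}'\bICsm))\bigr)[\langle\mu',2\check\rho\rangle]$ using the framework of \secref{sss:another pairing}, repackages $\CHom(\bCM^{\mu',!}_{\Whit},-)$ similarly as $\overset{\bullet}\Psi\bigl(-,\on{Av}_*^{\fL^+(N)}(t^{\mu'}\cdot\overset{\bullet}\CF{}^{\semiinf,-})\bigr)[\langle\mu',2\check\rho\rangle]$, and then proves the identity of kernels in \propref{p:IC vs F again}. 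That proposition is established not by colimit matching but structurally: the equivalence $\on{Av}^{\fL(N^-)}_!:\Shv_{\CG^G}(\Gr_G)^I\to\Shv_{\CG^G}(\Gr_G)^{\fL(N^-)\cdot\fL^+(T)}$ together with \corref{c:F and semiinf} gives $\on{Av}_*^{\fL^+(N)}(\oblv_{\on{rel}}({}'\bICsm))\simeq\overset{\bullet}\CF{}^{\semiinf,-}$ in one stroke, and the dominance of $\mu$ lets one transport this through $t^\mu$. Once this is in place, \thmref{t:fiber of costandard} falls out of \thmref{t:orthogonality}, which has its own independent proof (via \propref{p:from Weyl to baby Whit} and duality), rather than from a fresh invocation of \propref{p:fiber of Phi top} and \thmref{t:restr} as you suggest. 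So: same target computation, same intuition, but the paper replaces your explicit double-colimit collapse by a single categorical identification of kernels. Your proposal is not wrong in spirit, but the obstacle you flag in Step (2) is real, and the paper's way around it is to never form the double colimit in the first place.
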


\sssec{}

Before we proceed further, let us show how \thmref{t:fiber of costandard} completes the outline
in \secref{sss:strategy}. In fact, it remains to prove the following:

\begin{cor} \label{c:fiber of standard}
The functor $\Phi^{\bHecke}_{\on{Fact}}$ sends 
$\bCM^{\mu,!}_{\Whit}$ to $\CM^{\mu,!}_{\Conf}\in \Omega^{\on{small}}_q\on{-FactMod}$.
\end{cor}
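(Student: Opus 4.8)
\textbf{Proof plan for \corref{c:fiber of standard}.}
The strategy is to deduce the statement about $\bCM^{\mu,!}_{\Whit}$ from the already-established statement about $\bCM^{\mu,*}_{\Whit}$ (\thmref{t:fiber of costandard}) by applying Verdier duality, using the fact that $\bCM^{\mu,!}_{\Whit}$ is \emph{defined} as the image of $\bCM^{\mu,*}_{\Whit}$ under the duality functor \eqref{e:duality Hecke Whit}
$$(\bHecke(\Whit_q(G))^c)^{\on{op}} \to \bHecke(\Whit_{q^{-1}}(G))^c,$$
and, on the configuration-space side, that $\CM^{\mu,!}_{\Conf}$ and $\CM^{\mu,*}_{\Conf}$ are interchanged by Verdier duality on $\Shv_{\CG^\Lambda}(\Conf_{\infty\cdot x})^{\on{loc.c}}$, the latter since $\CM^{\mu,!}_{\Conf}=(\iota_\mu)_!\circ(\jmath_\mu)_!(\sfe)$ and $\CM^{\mu,*}_{\Conf}=(\iota_\mu)_*\circ(\jmath_\mu)_*(\sfe)$ are the $!$- and $*$-extensions of the same one-dimensional object from the stratum $\Conf_{=\mu\cdot x}$ (here I use the trivialization of $\CG^\Lambda_{\mu\cdot x}$ fixed in \secref{sss:trivialize gerbe lambda}, so both objects are canonically defined, and $\BD^{\on{Verdier}}$ is defined on them because they belong to $\Omega^{\on{small}}_q\on{-FactMod}^{\on{loc.c}}$).

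First I would invoke the Hecke-equivariant Verdier compatibility of the functor $\Phi^{\bHecke}$: by \thmref{t:duality pres Hecke}, the diagram
$$
\CD
(\bHecke(\Whit_q(G))^c)^{\on{op}}  @>{\BD^{\on{Verdier}}}>>  \bHecke(\Whit_{q^{-1}}(G))^c  \\
@V{\Phi^{\bHecke}}VV   @VV{\Phi^{\bHecke}}V  \\
(\Shv_{\CG^\Lambda}(\Conf_{\infty\cdot x})^{\on{loc.c}})^{\on{op}} @>{\BD^{\on{Verdier}}}>>
\Shv_{(\CG^\Lambda)^{-1}}(\Conf_{\infty\cdot x})^{\on{loc.c}}
\endCD
$$
commutes, where the top arrow is \eqref{e:duality for Hecke Whit comp}. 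Applying this to $\bCM^{\mu,*}_{\Whit}$, whose image under the top arrow is (by definition, see \secref{ss:duality Whit} and the construction in \secref{s:baby Verma in Whit}) the object $\bCM^{\mu,!}_{\Whit}$ for the group with geometric metaplectic datum $(\CG^G)^{-1}$, I get
$$\Phi^{\bHecke}(\bCM^{\mu,!}_{\Whit})\simeq \BD^{\on{Verdier}}\bigl(\Phi^{\bHecke}(\bCM^{\mu,*}_{\Whit})\bigr)\simeq \BD^{\on{Verdier}}\bigl(\oblv_{\on{Fact}}(\CM^{\mu,*}_{\Conf})\bigr)\simeq \oblv_{\on{Fact}}(\CM^{\mu,!}_{\Conf}),$$
using \thmref{t:fiber of costandard} for the middle isomorphism and the interchange of $\CM^{\mu,*}_{\Conf}$ with $\CM^{\mu,!}_{\Conf}$ under Verdier duality for the last. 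This identifies $\Phi^{\bHecke}(\bCM^{\mu,!}_{\Whit})$ with $\CM^{\mu,!}_{\Conf}$ \emph{after} forgetting the factorization module structure. To upgrade this to an isomorphism in $\Omega^{\on{small}}_q\on{-FactMod}$, I would use that the forgetful functor $\oblv_{\on{Fact}}$ is conservative and that $\Phi^{\bHecke}_{\on{Fact}}(\bCM^{\mu,!}_{\Whit})$ is \emph{a priori} an object of $\Omega^{\on{small}}_q\on{-FactMod}$ whose underlying sheaf is $\CM^{\mu,!}_{\Conf}$; since $\CM^{\mu,!}_{\Conf}$ as a factorization module is the $!$-extension from the stratum $\Conf_{=\mu\cdot x}$ and (by \lemref{l:on open}) factorization modules supported there are rigid, any factorization-module structure on $\oblv_{\on{Fact}}(\CM^{\mu,!}_{\Conf})$ compatible with the unit structure agrees with the standard one — more precisely, I would argue as in \corref{c:irred pres} that $\Phi^{\bHecke}_{\on{Fact}}(\bCM^{\mu,!}_{\Whit})$, being $!$-supported (by left-exactness of the relevant pull-push and the defect stratification), must coincide with $(\iota_\mu)_!\circ(\jmath_\mu)_!$ applied to the stalk, which is $\sfe$.

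The main subtlety I anticipate is \emph{compatibility of normalizations}: one must check that the Verdier-duality identification of the top arrow in \thmref{t:duality pres Hecke} carries the specific object $\bCM^{\mu,*}_{\Whit}$ (normalized via $\coind^H_{B^-_H}(\bCM^{\mu,*}_{\Whit})\simeq W^{\mu,*}$ in \secref{sss:trivialize gerbe lambda}) to the specific object $\bCM^{\mu,!}_{\Whit}$ (defined via \eqref{e:duality B T Hecke} applied to $\bCM^{\mu,*}_{\Whit}$, cf.\ \secref{s:baby Verma in Whit}), with matching scalars; this amounts to tracing through the Serre-duality isomorphism \eqref{e:Serre duality again} relating $\coInd^H_{B_H}$ and $\Ind^H_{B^-_H}$ and the involutivity of $\BD^{\on{Verdier}}$ on $\Whit_q(G)$ (\corref{c:duality irred}, \propref{p:dual of standard}), together with the compatibility, established in \secref{ss:twisted Hecke}, of the Hecke actions with the Cartan involution $\tau^H$. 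Once these normalization bookkeeping points are settled — and they are forced, since the underlying sheaf is already pinned down to be $\CM^{\mu,!}_{\Conf}$ and both sides lie in the heart of the respective t-structures by \corref{c:baby Whit in heart} and \propref{p:properties of t fact} — the conservativity of $\oblv_{\on{Fact}}$ closes the argument.
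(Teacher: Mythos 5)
Your proof follows the same route as the paper's: combine \thmref{t:fiber of costandard} with the Hecke Verdier-duality compatibility (\thmref{t:duality pres Hecke}) to identify $\Phi^{\bHecke}(\bCM^{\mu,!}_{\Whit})$ with $\oblv_{\on{Fact}}(\CM^{\mu,!}_{\Conf})$, then lift to an isomorphism in $\Omega^{\on{small}}_q\on{-FactMod}$ via the rigidity of $!$-extensions (an isomorphism of underlying sheaves with $\oblv_{\on{Fact}}(\CM^{\mu,!}_{\Conf})$ lifts uniquely). The lengthy normalization discussion is not needed, since, as you yourself note, the underlying sheaf already determines the answer.
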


\begin{proof}

Note that if an object $\CF\in \Omega^{\on{small}}_q\on{-FactMod}$ is equipped with an isomorphism
$$\oblv_{\on{Fact}}(\CF)\simeq \oblv_{\on{Fact}}(\CM^{\mu,!}_{\Conf}),$$
then this isomorphism lifts uniquely to an isomorphism
$$\CF\simeq \CM^{\mu,!}_{\Conf}.$$

Hence, in order to prove the corollary, it suffices establish an isomorphism
\begin{equation} \label{e:standard to standard rough}
\Phi^{\bHecke}(\bCM^{\mu,!}_{\Whit})\simeq \oblv_{\on{Fact}}(\CM^{\mu,!}_{\Conf}).
\end{equation}

We start with the isomorphism 
$$\Phi^{\bHecke}(\bCM^{\mu,*}_{\Whit})\simeq \oblv_{\on{Fact}}(\CM^{\mu,*}_{\Conf}),$$
provided by \thmref{t:fiber of costandard}. Now \eqref{e:standard to standard rough} follows
by applying \thmref{t:duality pres Hecke}. 

\end{proof}

\sssec{}

The rest of this section is devoted to the proof of \thmref{t:fiber of costandard}. We will deduce it from 
the following result

\begin{thm}  \label{t:Hom from baby}
The functor
\begin{equation} \label{e:Phi mu}
\bHecke(\Whit_q(G)) \overset{\Phi^{\bHecke}}\longrightarrow \Shv_{\CG^\Lambda}(\Conf_{\infty\cdot x})
\overset{!\on{-fiber}\,\on{at}\,\mu\cdot x}\longrightarrow \Vect
\end{equation}
identifies canonically with 
$$\CHom_{\bHecke(\Whit_q(G))}(\bCM^{\mu,!}_{\Whit},-).$$
\end{thm}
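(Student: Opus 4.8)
<br>

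\medskip

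\noindent\textbf{Proof proposal for \thmref{t:Hom from baby}.}

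The plan is to compute the functor \eqref{e:Phi mu} by unwinding the definition of $\Phi^{\bHecke}$ at the point $\mu\cdot x$ and recognizing the resulting pairing as $\CHom$ against the baby Verma object. First I would recall, following \secref{sss:Phi expl} and \eqref{e:fiber of Phi}, that for $\CF\in \Whit_q(G)$ the $!$-fiber of $\Phi(\CF)$ at $\mu\cdot x$ is
$$\Phi(\CF)_{\mu\cdot x}\simeq \Gamma(\Gr^{\omega^\rho}_{G,x},\CF\sotimes \ICslm_{q^{-1},x}),$$
i.e.\ it is the functor of pairing $\CF$ with the metaplectic shifted semi-infinite IC sheaf $\ICslm_{q^{-1},x}=t^\mu\star\ICsm_{q^{-1},x}[\langle\mu,2\check\rho\rangle]$. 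Precomposing with $\ind_{\bHecke}$ we get the functor $\Whit_q(G)\to\Vect$ underlying \eqref{e:Phi mu}; so the content of the theorem is to identify, compatibly with the $\Rep(H)$-actions, the functor $\CF\mapsto\Gamma(\Gr_{G,x},\CF\sotimes\ICslm_{q^{-1},x})$ on $\Whit_q(G)$, enhanced to a functor on $\bHecke(\Whit_q(G))$, with $\CHom_{\bHecke(\Whit_q(G))}(\bCM^{\mu,!}_{\Whit},-)$. By the duality \eqref{e:duality for Hecke Whit} (i.e.\ $\BD^{\on{Verdier}}$ on $\bHecke(\Whit_q(G))$) and the construction $\bCM^{\mu,!}_{\Whit}=\BD^{\on{Verdier}}(\bCM^{\mu,*}_{\Whit})$, this is equivalent to identifying the functor
$$\CHom_{\bHecke(\Whit_q(G))}(\bCM^{\mu,!}_{\Whit},-)$$
with the pairing $\langle\bCM^{\mu,*}_{\Whit},-\rangle$ under $\bHecke(\Whit_q(G))^\vee\simeq\bHecke(\Whit_{q^{-1}}(G))$; so what must be shown is that the covector $\Phi^{\bHecke}(-)_{\mu\cdot x}\in\bHecke(\Whit_q(G))^\vee$ \emph{equals} $\bCM^{\mu,*}_{\Whit}\in\bHecke(\Whit_{q^{-1}}(G))$.

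The key step is therefore a dualization argument. On the non-Hecke level, I would first recall that $\Res^H_{B^-_H}$-relative structure: by \secref{ss:F and semiinf} the object $'\bICs_{q,x}$ (equivalently $\ICsm_{q,x}$ with its relative Hecke structure via \thmref{t:Hecke ICs}) is $\on{Av}_!^{\fL(N)}$ of the dual baby Verma object $\overset{\bullet}\CF{}^\semiinf_{\on{rel}}$ on $\Shv_{\CG^G}(\Gr_G)^{I}$; and by \secref{ss:rel non-rel}, \secref{sss:DrPl and duality}, together with \thmref{t:finiteness of baby Verma}, the $\on{DrPl}$-presentation of $\wt\CF^{\semiinf,-}$ is dual to the $\on{coInd}^H_{B_H}$-presentation of $\wt\CF^\semiinf$. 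Combining this with \lemref{l:baby Verma Whit expl}, which expresses $\oblv_{\bHecke}(\overset{\bullet}\CM{}^{\mu,*}_{\Whit})$ as the colimit $\on{colim}_\gamma\, W^{\mu+\gamma+\gamma',*}\underset{\fL^+(G)}\star\IC_{q,\ol\Gr^{-w_0(\gamma)}_G}$, I would dualize: Verdier duality sends $W^{\mu,*}\mapsto W^{\mu,!}$ (\propref{p:dual of standard}), sends $\IC_{q,\ol\Gr^{-w_0(\gamma)}_G}\mapsto \IC_{q^{-1},\ol\Gr^{\gamma}_G}$ (up to the $w_0$-twist, cf.\ \secref{ss:Satake and duality}), and turns colimits into limits, which under the $\on{loc.c}$ duality of \secref{sss:locally compact} is exactly how the $!$-fiber of $\Phi$ at $\mu\cdot x$ is built. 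More precisely, the description of $\Phi$ in \secref{sss:Phi expl} says $\Phi(\CF)_{\mu\cdot x}=\Gamma(\Gr_{G,x},\CF\sotimes\ICslm_{q^{-1},x})$, and using the colimit formula \eqref{e:ICs as colim} for $\ICsm_{q^{-1},x}$ and the explicit Drinfeld–Pl\"ucker presentation of $\ICsm_{q^{-1},x}$ via \secref{ss:F and semiinf}, this pairing functor can be rewritten as $\on{colim}_\gamma\,\CHom_{\Whit_q(G)}(-,\ \text{something involving }W^{\mu+\gamma,!}\star\IC_{q^{-1},\ol\Gr^{\gamma}_G})$; matching this term by term with the dual of \lemref{l:baby Verma Whit expl} gives the identification on the non-Hecke level, i.e.\ that $\Phi(-)_{\mu\cdot x}$ equals $\langle\oblv_{\bHecke}(\bCM^{\mu,*}_{\Whit}),-\rangle$ after applying $\ind_{\bHecke}$.

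To upgrade this to the Hecke level, i.e.\ to prove that \eqref{e:Phi mu} equals $\CHom_{\bHecke(\Whit_q(G))}(\bCM^{\mu,!}_{\Whit},-)$ and not just that its restriction along $\ind_{\bHecke}$ is the right thing, I would use that both functors are $\Rep(T_H)$-linear (equivalently, both come from $\bHecke(\Whit_{q^{-1}}(G))$ under the duality $\bHecke(\Whit_q(G))^\vee\simeq\bHecke(\Whit_{q^{-1}}(G))$): the functor \eqref{e:Phi mu} is $\Rep(T_H)$-linear because $\Phi^{\bHecke}$ intertwines the $\Rep(T_H)$-actions by (the Hecke extension of) \thmref{t:Hecke ppty of J}, together with the fact that $\on{Tr}^\gamma$ acts on the $!$-fiber at $\mu\cdot x$ via a shift to the $!$-fiber at $(\mu+\gamma)\cdot x$, matching $\bCM^{\mu,*}_{\Whit}\otimes\sfe^\gamma\simeq\bCM^{\mu+\gamma,*}_{\Whit}$. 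Then an object of $\bHecke(\Whit_{q^{-1}}(G))$ is determined by the corresponding $\Rep(T_H)$-linear functional, so the matching on underlying objects plus the compatibility with translations pins down the object in $\bHecke(\Whit_{q^{-1}}(G))$ to be $\bCM^{\mu,*}_{\Whit}$, which is what we want. The main obstacle I anticipate is the bookkeeping in the second paragraph: making the term-by-term matching of the two colimit/limit presentations genuinely canonical and homotopy-coherent, rather than just an isomorphism of each term — this is where the Drinfeld–Pl\"ucker formalism of \secref{ss:Dr-Pl} is essential, since it packages precisely the compatible system of transition maps on both sides (the $\on{DrPl}$ structure on $\ICsm_{q^{-1},x}$ via \secref{ss:F and semiinf} and the $\on{DrPl}$ structure underlying $\bCM^{\mu,*}_{\Whit}$ via its construction from $\wt\CF^{\semiinf,-}$), and the claim reduces to the statement that these two $\on{DrPl}$ structures are Verdier-dual to each other — which is exactly the content of \thmref{t:finiteness of baby Verma}(b) combined with \secref{sss:DrPl and duality}.
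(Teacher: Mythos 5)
Your proposal assembles most of the right ingredients (the fiber formula $\Phi(\CF)_{\mu\cdot x}\simeq\Gamma(\Gr^{\omega^\rho}_{G,x},\CF\sotimes\ICslm_{q^{-1},x})$, the relationship between $'\bICsm_{q,x}$ and $\overset{\bullet}\CF{}^{\semiinf,-}$ via $\on{Av}_!^{\fL(N^-)}$, the Drinfeld--Pl\"ucker formalism, and the $\Rep(T_H)$-equivariance to nail down the Hecke upgrade), but the mechanism you propose for the crucial step is not what the paper does, and the obstacle you flag at the end is precisely where your argument stops short of a proof.

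The paper does not dualize and then ``match term by term'' the colimit presentations. Instead it first packages both sides as pairings against specific objects, using the framework of \secref{sss:another pairing}: the functor \eqref{e:Phi mu} is shown to be $\overset{\bullet}\Psi(-,\on{Av}_*^{\fL^+(N)}(\oblv_{\on{rel}}(t^\mu\cdot{}'\bICsm)))[\langle\mu,2\check\rho\rangle]$ (using the $\fL^+(N)$-equivariance of Whittaker objects to insert the averaging), while $\CHom_{\bHecke(\Whit_q(G))}(\bCM^{\mu,!}_{\Whit},-)$ is identified, via the definition of $\bCM^{\mu,*}_{\Whit}=W^{\mu,*}_{\Fl}\underset{I}\star\wt\CF^{\semiinf,-}$ and the compatibility of the pairings $\Psi$ and $\Upsilon$ under $t^\mu$-translation, with $\overset{\bullet}\Psi(-,\on{Av}_*^{\fL^+(N)}(t^\mu\cdot\overset{\bullet}\CF{}^{\semiinf,-}))[\langle\mu,2\check\rho\rangle]$. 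This reduces the entire theorem to a single isomorphism of objects, \propref{p:IC vs F again}. That proposition is proved not by dualizing DrPl structures but by direct categorical manipulation: $\on{Av}^{\fL(N^-)}_!(\overset{\bullet}\CF{}^{\semiinf,-})\simeq\oblv_{\on{rel}}({}'\bICsm)$ (from \propref{p:F and semiinf} and the diagram \eqref{e:favorite diagram}), whose inverse $\on{Av}^{\overset{\circ}{I}}_*$ agrees with $\on{Av}_*^{\fL^+(N)}$ on the relevant subcategory, giving $\on{Av}_*^{\fL^+(N)}(\oblv_{\on{rel}}({}'\bICsm))\simeq\overset{\bullet}\CF{}^{\semiinf,-}$; then the dominance of $\mu$, via $\fL^+(N)\subset\on{Ad}_{t^{-\mu}}(\fL^+(N))$, is used to push this isomorphism through $\on{Av}_*$ against the conjugated subgroup and translate by $t^\mu$.

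The gap in your proposal is thus twofold. First, you propose to identify the $!$-fiber functor with the covector $\bCM^{\mu,*}_{\Whit}$ by dualizing the colimit presentation of \lemref{l:baby Verma Whit expl} against the colimit \eqref{e:ICs as colim}; this requires matching an infinite system of transition maps homotopy-coherently, which you correctly flag as the main difficulty, but you do not resolve it --- asserting that ``the two $\on{DrPl}$ structures are Verdier-dual'' via \thmref{t:finiteness of baby Verma}(b) and \secref{sss:DrPl and duality} is not enough, because those statements live on the Iwahori/$B_H$-Hecke side and one still has to carry the comparison through $\on{Av}_!^{\fL(N)}$ and the $t^\mu$-translation. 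Second, you do not engage with the dominance constraint: \propref{p:IC vs F again} only holds as stated for $\mu\in\Lambda^+$, and the step reducing general $\mu$ to dominant $\mu$ (via the $\Rep(T_H)$-compatibility and the shift isomorphisms $\wt\CM^{\mu+\gamma,*}_{\Whit}\simeq\wt\CM^{\mu,*}_{\Whit}\otimes\sfe^\gamma$) is an integral part of the argument, not a harmless reformulation.
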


Let us see how \thmref{t:Hom from baby} implies \thmref{t:fiber of costandard}:

\begin{proof}[Proof of \thmref{t:fiber of costandard}]

The object $\CM^{\mu,*}_{\Conf}\in \Omega^{\on{small}}_q\on{-FactMod}$ is uniquely characterized
by the property that the underlying object
$$\oblv_{\on{Fact}}(\CM^{\mu,*}_{\Conf})\in \Shv_{\CG^\Lambda}(\Conf_{\infty\cdot x})$$
has !-fiber $\sfe$ at $\mu\cdot x$ and has a zero !-fiber at $\mu'\cdot x$ for $\mu'\neq \mu$.

\medskip

Now the assertion follows from \thmref{t:orthogonality}. 

\end{proof}

\sssec{}

Thus, the rest of this section is devoted to the proof of \thmref{t:Hom from baby}. We should remark, however,
that the proof will essentially be a formal manipulation, given the relationship between $'\bICs_{q,x}$ and
$\overset{\bullet}\CF{}^{\semiinf}_{\on{rel}}$ explained in \secref{ss:F and semiinf}. 

\ssec{Framework for the proof of \thmref{t:Hom from baby}}

In this subsection we will explain a general categorical framework in which \thmref{t:Hom from baby} will be proved. 

\sssec{}  \label{sss:another pairing}

%The first step is to give an explicit interpretation of the functor \eqref{e:Phi mu}. 
Let $\bC$ and $\bD$
be module categories over $\Rep(H)$, and let us be given a pairing 
$$\wt\Psi:\bC\underset{\Rep(H)}\otimes \bD\to \bE.$$

We claim that the datum of $\Psi$ gives rise to the datum of a pairing 
$$\overset{\bullet}\Psi: \bHecke(\bC)\otimes \bHecke(\bD)\to \bE.$$

Indeed, assume for simplicity that $\bD$ is compactly generated. Then we can interpret 
$\wt\Psi$ as a $\Rep(H)$-linear functor 
$$\bC\to \bD^\vee\otimes \bE.$$

The latter gives rise to a functor
$$\bHecke(\bC)\to \bHecke(\bD^\vee)\otimes \bE.$$

Now, identifying 
$$\bHecke(\bD^\vee)\simeq \bHecke(\bD)^\vee$$
as in \secref{sss:duality on Hecke graded}, we thus obtain a functor 
$$\bHecke(\bC)\to \bHecke(\bD)^\vee\otimes \bE,$$
hence the desired pairing $\overset{\bullet}\Psi$. 

\sssec{}

Let us now be in the context of \secref{sss:paradigm for Hecke}. It is easy to see that the functor
$$\bHecke(\bC) \otimes \bHecke_{\on{rel}}(\bD)\to \bE$$
constructed in {\it loc. cit.} identifies with
$$\bHecke(\bC) \otimes \bHecke_{\on{rel}}(\bD) \overset{\on{Id}\otimes \oblv_{\on{rel}}}\longrightarrow 
\bHecke(\bC) \otimes \bHecke(\bD) \overset{\overset{\bullet}\Psi}\longrightarrow \bE.$$

\ssec{Applying the framework: the left-hand side}

\sssec{}

Let $\Psi$ denote the pairing
\begin{multline} \label{e:our Psi again}
\Whit_q(G)\otimes \Shv_{(\CG^G)^{-1}}(\Gr_G)^{\fL^+(T)}\to
\Shv_{\CG^G}(\Gr_G)\otimes \Shv_{(\CG^G)^{-1}}(\Gr_G) \overset{-\sotimes-} \longrightarrow  \\
\to \Shv(\Gr_G)\overset{\Gamma(\Gr_G,-)}\longrightarrow \Vect.
\end{multline}

By construction, it factors via
$$\wt\Psi:\Whit_q(G)\underset{\Rep(H)}\otimes \Shv_{(\CG^G)^{-1}}(\Gr_G)^{\fL^+(T)} \to \Vect,$$
where $\Rep(H)$ acts on $\Shv_{(\CG^G)^{-1}}(\Gr_G)^{\fL^+(T)}$ by
$$V,\CF\mapsto \CF\underset{\fL^+(G)}\star \on{inv}^G(\Sat_{q,G})(V)\simeq
\CF\underset{\fL^+(G)}\star \Sat_{q^{-1},G}(\tau^H(V)).$$

\sssec{}

Consider the corresponding functor
$$\overset{\bullet}\Psi: \bHecke(\Whit_q(G))\otimes \bHecke(\Shv_{(\CG^G)^{-1}}(\Gr_G)^{\fL^+(T)})\to \Vect.$$

\medskip

Recall the object
$$'\bICs\in  \bHecke_{\on{rel}}(\Shv_{\CG^G}(\Gr_G)^{\fL^+(T)}),$$
and consider the corresponding object.
$$'\bICsm\in  \bHecke_{\on{rel}}(\Shv_{(\CG^G)^{-1}}(\Gr_G)^{\fL^+(T)}).$$

Due to the trivialization in \secref{sss:trivialize gerbe lambda}, the translate 
$$t^\mu \cdot {}'\bICsm$$
makes sense as on object of $\bHecke_{\on{rel}}(\Shv_{(\CG^G)^{-1}}(\Gr_G)^{\fL^+(T)})$. 

\medskip

Unwinding the definitions and using \secref{sss:another pairing} above, we obtain that the functor 
\eqref{e:Phi mu} identifies with the functor
\begin{equation} \label{e:Phi mu 1}
\overset{\bullet}\Psi(-,\oblv_{\on{rel}}(t^\mu \cdot {}'\bICsm))[\langle \mu,2\check\rho\rangle].
\end{equation}

\sssec{}

Note that since objects of $\Whit_q(G)$ are $\fL^+(N)$-equivariant, the pairing $\Psi$ of \eqref{e:our Psi again} is isomorphic to
its precomposition with the endo-functor 
$$\Whit_q(G)\otimes \Shv_{(\CG^G)^{-1}}(\Gr_G)^{\fL^+(T)}\overset{\on{Id}\otimes \on{Av}_*^{\fL^+(N)}}\longrightarrow 
\Whit_q(G)\otimes \Shv_{(\CG^G)^{-1}}(\Gr_G)^{\fL^+(T)}.$$

\medskip

Hence, the same is true for the functors $\wt\Psi$ and $\overset{\bullet}\Psi$. 
Therefore, we can rewrite the functor in \eqref{e:Phi mu 1} as
\begin{equation} \label{e:Phi mu 2}
\overset{\bullet}\Psi(-,\on{Av}_*^{\fL^+(N)}(\oblv_{\on{rel}}(t^\mu \cdot {}'\bICsm)))[\langle \mu,2\check\rho\rangle].
\end{equation}

\ssec{Applying the framework: the right-hand side}

\sssec{}

By \secref{sss:another pairing}, the pairing
$$\Upsilon:\Whit_q(G)\otimes \Whit_{q^{-1}}(G)\to \Vect$$
arising from \eqref{e:self duality for Whit}, gives rise to a pairing
$$\overset{\bullet}\Upsilon:\bHecke(\Whit_q(G)) \otimes \bHecke(\Whit_{q^{-1}}(G))\to \Vect.$$

\medskip 

By definition, the functor 
$$\CHom_{\bHecke(\Whit_q(G))}(\bCM^{\mu,!}_{\Whit},-):\bHecke(\Whit_q(G))\to \Vect$$
is given by
$$\overset{\bullet}\Upsilon(-,\bCM^{\mu,*}_{\Whit}),$$

\sssec{}

Unwinding the definitions of $\bCM^{\mu,*}_{\Whit}$ and of $\Upsilon$ we obtain:

\begin{lem}
The pairings 
$$\Whit_q(G)\otimes \Shv_{(\CG^G)^{-1}}(\Gr_G)^{I,\on{ren}}\to \Vect$$
given by 
$$\CF,\CF'\mapsto \Psi(\CF,t^\mu\cdot \CF')[\langle \mu,2\check\rho\rangle]$$
and 
$$\CF,\CF'\mapsto \Upsilon(\CF,W^{\mu,*}_{\Fl}\underset{I}\star \CF')$$
are canonically isomorphic.
\end{lem}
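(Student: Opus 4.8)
The claimed isomorphism of pairings is the last ingredient needed for \thmref{t:Hom from baby}, and the plan is to prove it by unwinding the definitions of both sides and matching them term-by-term.

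First I would recall that the right-hand pairing $\Upsilon$ on $\Whit_q(G)\otimes \Shv_{(\CG^G)^{-1}}(\Gr_G)^{I,\on{ren}}$ is obtained from the self-duality \eqref{e:self duality for Whit} by restricting along the forgetful functor $\Shv_{(\CG^G)^{-1}}(\Gr_G)^{I,\on{ren}}\to \Shv_{(\CG^G)^{-1}}(\Gr_G)$, so that $\Upsilon(\CF,\CF')$ is essentially $\Gamma(\Gr_G,\CF\sotimes \CF')$; meanwhile $\Psi$ was defined in \eqref{e:our Psi again} precisely as $\Gamma(\Gr_G,-\sotimes-)$. Thus after unwinding, both pairings in the lemma are of the form
$$\CF,\CF'\mapsto \Gamma(\Gr_G, \CF\sotimes (t^\mu\cdot \CF'))[\langle \mu,2\check\rho\rangle],$$
where on the right side one uses the identification $\Upsilon(\CF,W^{\mu,*}_{\Fl}\underset{I}\star \CF')\simeq \Gamma(\Gr_G, \CF\sotimes (W^{\mu,*}_{\Fl}\underset{I}\star \CF'))$, and the key point is that convolving $W^{\mu,*}_{\Fl}$ with an $I$-equivariant object on the left amounts to translating by $t^\mu$ up to the cohomological shift $[\langle \mu,2\check\rho\rangle]$. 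Concretely, by the description of $W^{\mu,*}_{\Fl}$ as the $*$-extension of the generator of $\Shv_{\CG^G}(S^\mu_{\Fl})^{\fL(N),\chi_N}$ (placed in the appropriate degree), and using that the relevant $\Whit$-equivariance forces the contribution of $W^{\mu,*}_{\Fl}\underset{I}\star(-)$ to localize at the $t^\mu$-translate — this is the same mechanism as in the identity $W^{\lambda,*}_{\Fl}\underset{I}\star j_{\gamma,*}\simeq W^{\lambda+\gamma,*}_{\Fl}$ of \eqref{e:shift Verma on flags} together with \lemref{l:conv with unit} — one gets the stated shift. I would also have to keep careful track of the trivialization of $\CG^\Lambda_{\mu\cdot x}\simeq \CG^G|_{t^\mu}$ fixed in \secref{sss:trivialize gerbe lambda}, so that $t^\mu\cdot\CF'$ is a \emph{well-defined} object of $\Shv_{(\CG^G)^{-1}}(\Gr_G)^{\fL^+(T)}$ rather than an object of a twisted category; this is what makes the two sides live in the same place.

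The main obstacle I anticipate is bookkeeping of the Hecke/graded structure and the compatibility of the $\fL^+(N)$-averaging: the lemma as stated is about pairings of plain DG categories, but it feeds into the comparison of $\overset{\bullet}\Psi(-,\on{Av}_*^{\fL^+(N)}(\oblv_{\on{rel}}(t^\mu\cdot{}'\bICsm)))$ with $\overset{\bullet}\Upsilon(-,\bCM^{\mu,*}_{\Whit})$, so one must make sure the isomorphism of pairings is compatible with the $\Rep(H)$-actions used to descend to $\bHecke$. Here I would invoke \corref{c:F and semiinf}, which says that $\on{Av}_!^{\fL(N)}$ sends $\overset{\bullet}\CF{}^\semiinf_{\on{rel}}$ to $'\bICs_{q,x}$ (and dually, via $\on{Av}_*^{\overset\circ I}$, identifies the $\fL^+(N)$-averaged semi-infinite IC with the image of $\wt\CF^\semiinf$); combined with the definition $\bCM^{\mu,*}_{\Whit}=W^{\mu,*}_{\Fl}\underset{I}\star\wt\CF^{\semiinf,-}$ and the $w_0$-twist of \secref{ss:F and semiinf}, the equality of pairings becomes the statement that the two ways of producing an object of $\Shv_{(\CG^G)^{-1}}(\Gr_G)^{\fL^+(T)}$ — either averaging $t^\mu\cdot{}'\bICsm$ from $\fL^+(T)$-equivariance, or convolving $W^{\mu,*}_{\Fl}$ with $\wt\CF^{\semiinf,-}$ and then forgetting — agree. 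This is exactly the content of \propref{p:ident semiinf as DrPl} and \corref{c:F and semiinf} packaged with the elementary shift \eqref{e:shift Verma on flags}, so the proof should reduce to assembling these.

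Finally, with the lemma in hand, \thmref{t:Hom from baby} follows: the functor \eqref{e:Phi mu} equals \eqref{e:Phi mu 2}, which by the lemma equals $\overset{\bullet}\Upsilon(-,\bCM^{\mu,*}_{\Whit})$, which by definition of $\bCM^{\mu,!}_{\Whit}$ via the duality \eqref{e:duality Hecke Whit} is $\CHom_{\bHecke(\Whit_q(G))}(\bCM^{\mu,!}_{\Whit},-)$; and then \thmref{t:fiber of costandard} and \corref{c:fiber of standard} follow as indicated, completing property (iii) of \secref{sss:strategy} and hence the proof of \thmref{t:main}. I do not expect any genuinely new difficulty beyond the careful identification of normalizations (cohomological shifts by $\langle\mu,2\check\rho\rangle$, the $w_0$-twist, and the gerbe trivializations), all of which have been set up in Sections~\ref{s:Fl} and~\ref{s:baby Verma in Whit}.
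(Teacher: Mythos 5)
The paper gives no explicit proof of this lemma — it is stated as the result of "unwinding the definitions" — so your proposal is effectively the only written argument. Its core idea is correct, and in that sense it matches the paper's intent. The two claims you make in the first paragraph are both right: first, that $\Upsilon(\CF,\CG)=\Gamma(\Gr_G,\CF\sotimes\CG)$ for $\CF\in\Whit_q(G)$ and $\CG\in\Whit_{q^{-1}}(G)$, which uses that for $\CG$ in the Whittaker subcategory, $\on{Ps-Id}^{-1}(\CG)$ is just the image of $\CG$ under the projection to $\fL(N)$-coinvariants; and second, that under the pairing against $\CF\in\Whit_q(G)$, the functor $W^{\mu,*}_\Fl\underset{I}\star(-)$ behaves like $t^\mu\cdot(-)[\langle\mu,2\check\rho\rangle]$.

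Where the argument could be tightened: your appeal to $W^{\lambda,*}_\Fl\underset{I}\star j_{\gamma,*}\simeq W^{\lambda+\gamma,*}_\Fl$ and to \lemref{l:conv with unit} is not quite the right citation — those are about \emph{right} convolution by $j_{\gamma,*}$ or convolution against $\delta_{1,\Gr}$, not against an arbitrary object $\CF'\in\Shv_{(\CG^G)^{-1}}(\Gr_G)^{I,\on{ren}}$. A cleaner way to state the mechanism is this: by construction, $W^{\mu,*}_\Fl\simeq\on{Av}_*^{\fL(N),\chi}(\delta_{t^\mu,\Fl})[\langle\mu,2\check\rho\rangle]$, where $\on{Av}_*^{\fL(N),\chi}$ is the $*$-averaging against the Whittaker character. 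Since this averaging is implemented by left translations on $\Fl_G$, it commutes with the right convolution $-\underset{I}\star\CF'$, so $W^{\mu,*}_\Fl\underset{I}\star\CF'\simeq \on{Av}_*^{\fL(N),\chi}(t^\mu\cdot\CF')[\langle\mu,2\check\rho\rangle]$ (note $\delta_{t^\mu,\Fl}\underset{I}\star\CF'=t^\mu\cdot\CF'$). Finally, $\Gamma(\Gr_G,\CF\sotimes-)\simeq\CHom_{\Shv_{(\CG^G)^{-1}}(\Gr_G)}(\BD^{\on{Verdier}}\CF,-)$; since $\BD^{\on{Verdier}}\CF$ lies in the Whittaker subcategory (with the opposite character), the adjunction defining $\on{Av}_*^{\fL(N),\chi}$ absorbs the averaging, giving $\Gamma(\Gr_G,\CF\sotimes\on{Av}_*^{\fL(N),\chi}(\CG))\simeq\Gamma(\Gr_G,\CF\sotimes\CG)$. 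This makes the "localization" that you gesture at precise and avoids any appeal to special cases. Your second paragraph, concerning the Hecke/graded bookkeeping and \corref{c:F and semiinf}, \propref{p:ident semiinf as DrPl}, really belongs to \propref{p:IC vs F again} and \thmref{t:Hom from baby} rather than to the lemma itself: the lemma is a statement about plain DG categories with no Hecke action involved, and the machinery you cite is not used (nor needed) to prove it.
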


\begin{cor}
For $\mu\in \Lambda$, the pairings 
$$\bHecke(\Whit_q(G))\otimes \bHecke(\Shv_{(\CG^G)^{-1}}(\Gr_G)^{I,\on{ren}})\to \Vect$$
given by 
$$\CF,\CF'\mapsto \overset{\bullet}\Psi(\CF,t^\mu\cdot \CF')[\langle \mu,2\check\rho\rangle]$$
and 
$$\CF,\CF'\mapsto \overset{\bullet}\Upsilon(\CF,W^{\mu,*}_{\Fl}\underset{I}\star \CF')$$
are canonically isomorphic.
\end{cor}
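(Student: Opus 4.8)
The plan is to obtain this corollary as the Hecke-theoretic enhancement of the preceding Lemma by a formal mechanism: I will upgrade the canonical isomorphism of plain pairings supplied by the Lemma to an isomorphism of $\Rep(H)$-balanced pairings, and then apply the functorial ``pass from a balanced pairing to its Hecke enhancement'' construction of \secref{sss:another pairing}. Recall that, by definition, $\overset{\bullet}\Psi$ is the Hecke enhancement of the pairing $\Psi$ of \eqref{e:our Psi again}, which factors through $\Whit_q(G)\underset{\Rep(H)}\otimes \Shv_{(\CG^G)^{-1}}(\Gr_G)^{\fL^+(T)}$, and $\overset{\bullet}\Upsilon$ is the Hecke enhancement of the self-duality pairing $\Upsilon$ of \eqref{e:self duality for Whit}, which is $\Sph_{q,x}(G)$-equivariant by \secref{ss:Hecke and duality}; the assignment ``balanced pairing $\rightsquigarrow$ Hecke pairing'' of \secref{sss:another pairing} is functorial in the balanced pairing and is compatible with precomposition of either argument by a $\Rep(H)$-linear functor.

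First I would record that the two operations occurring in the corollary are $\Rep(H)$-linear. Left translation $t^\mu\cdot(-)$ commutes with the right $\fL^+(G)$-convolution, hence with the $\Sph_{q^{-1},x}(G)$-action; using the trivialization of the fiber of $\CG^G$ at $t^\mu$ fixed in \secref{sss:trivialize gerbe lambda}, it defines an $\Sph_{q^{-1},x}(G)$-equivariant functor $\Shv_{(\CG^G)^{-1}}(\Gr_G)^{I,\on{ren}}\to \Shv_{(\CG^G)^{-1}}(\Gr_G)^{\fL^+(T)}$, hence a $\Rep(H)$-linear functor which induces the functor $t^\mu\cdot(-):\bHecke(\Shv_{(\CG^G)^{-1}}(\Gr_G)^{I,\on{ren}})\to \bHecke(\Shv_{(\CG^G)^{-1}}(\Gr_G)^{\fL^+(T)})$ appearing in the corollary. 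Likewise, left convolution $W^{\mu,*}_{\Fl}\underset{I}\star(-):\Shv_{(\CG^G)^{-1}}(\Gr_G)^{I,\on{ren}}\to \Whit_{q^{-1}}(G)$ commutes with the right $\Sph_{q^{-1},x}(G)$-action and so induces the corresponding functor between the Hecke categories. By compatibility of the enhancement with precomposition, the Hecke enhancement of $(\CF,\CF')\mapsto \Psi(\CF,t^\mu\cdot \CF')[\langle \mu,2\check\rho\rangle]$ is $(\CF,\CF')\mapsto \overset{\bullet}\Psi(\CF,t^\mu\cdot \CF')[\langle \mu,2\check\rho\rangle]$, and the Hecke enhancement of $(\CF,\CF')\mapsto \Upsilon(\CF,W^{\mu,*}_{\Fl}\underset{I}\star \CF')$ is $(\CF,\CF')\mapsto \overset{\bullet}\Upsilon(\CF,W^{\mu,*}_{\Fl}\underset{I}\star \CF')$ --- precisely the two pairings in the corollary.

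It then remains to verify that the canonical isomorphism of the Lemma is an isomorphism of $\Rep(H)$-balanced pairings, i.e., descends along $\Whit_q(G)\otimes \Shv_{(\CG^G)^{-1}}(\Gr_G)^{I,\on{ren}}\to \Whit_q(G)\underset{\Rep(H)}\otimes \Shv_{(\CG^G)^{-1}}(\Gr_G)^{I,\on{ren}}$. This is the only nonformal input, but it is not a new fact: the isomorphism of the Lemma is assembled from $\Sph_{q,x}(G)$-equivariant data --- the relation between $\overset{\bullet}\CF{}^\semiinf_{\on{rel}}$ and ${}'\bICs_{q,x}$ via $\on{Av}_!^{\fL(N)}$ (\corref{c:F and semiinf}), the definition of $\bCM^{\mu,*}_{\Whit}$ as $W^{\mu,*}_{\Fl}\underset{I}\star \wt\CF^{\semiinf,-}$, and the constructions of $\Psi$ and $\Upsilon$ --- all of which respect the right $\Sph_{q,x}(G)$-action. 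Granting this, applying the enhancement functor of \secref{sss:another pairing} to the resulting $\Rep(H)$-balanced isomorphism yields the asserted isomorphism, and the proof is complete.

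The step I expect to be the main obstacle is keeping the module structures aligned throughout: the $\Rep(H)$-action on $\Shv_{(\CG^G)^{-1}}(\Gr_G)^{\fL^+(T)}$ (and its $I$-renormalized analogue) is the one twisted by the inversion $\on{inv}^G$, equivalently the Cartan involution $\tau^H$, as in \secref{sss:Hecke action and duality} and \thmref{t:duality and Hecke}, whereas $\overset{\bullet}\Upsilon$ uses the Verdier self-duality pairing and $\overset{\bullet}\Psi$ uses the convolution pairing \eqref{e:our Psi again}; one must check that these twists match up consistently on the two sides of the Lemma (and that $W^{\mu,*}_{\Fl}\underset{I}\star(-)$ intertwines the relevant actions with the correct twist) so that the balanced-pairing enhancement applies coherently. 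This coherence is of exactly the same nature as the bookkeeping already carried out in \secref{sss:Hecke action and duality} and in the proof of \thmref{t:duality and Hecke}, so it should require care but introduce no genuinely new difficulty.
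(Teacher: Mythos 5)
The paper gives no explicit proof of this corollary — it is stated immediately after the Lemma with the understanding that it follows formally by applying the enhancement functor $\Psi\rightsquigarrow\overset{\bullet}\Psi$ of \secref{sss:another pairing} to the isomorphism of the Lemma, and your proposal fills in exactly this reasoning. Your argument is correct and is essentially the same as the paper's (implicit) one: you correctly identify that the work reduces to checking that (a) the functors $t^\mu\cdot(-)$ and $W^{\mu,*}_{\Fl}\underset{I}\star(-)$ are $\Rep(H)$-linear (so that precomposition is compatible with the $\bHecke$-enhancement), and (b) the Lemma's isomorphism respects the balancing structure; you also flag the correct place where care is needed, namely the $\tau^H$/$\on{inv}^G$ bookkeeping of \secref{sss:Hecke action and duality}.
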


\sssec{}

Thus, we obtain that the functor $\CHom_{\bHecke(\Whit_q(G))}(\bCM^{\mu,!}_{\Whit},-)$ can be identified
with
\begin{equation} \label{e:Phi mu 3}
\overset{\bullet}\Psi(-,t^\mu\cdot \overset{\bullet}\CF{}^{\semiinf,-})[\langle \mu,2\check\rho\rangle].
\end{equation}

Using once again $\fL^+(N)$-equivariance of objects of $\Whit_q(G)$, we obtain that the latter expression cab be rewritten as
\begin{equation} \label{e:Phi mu 4}
\overset{\bullet}\Psi(-,\on{Av}_*^{\fL^+(N)}(t^\mu\cdot \overset{\bullet}\CF{}^{\semiinf,-}))[\langle \mu,2\check\rho\rangle].
\end{equation}

\ssec{Conclusion of proof of \thmref{t:Hom from baby}}

\sssec{}

Comparing \eqref{e:Phi mu 2} and \eqref{e:Phi mu 4}, we obtain that \thmref{t:Hom from baby} follows from the next assertion: 

\begin{prop}  \label{p:IC vs F again}
For $\mu\in \Lambda^+$, there exists a canonical isomorphism in $\bHecke(\Shv_{\CG^G}(\Gr_G))$:
$$\on{Av}_*^{\fL^+(N)}(t^\mu\cdot \overset{\bullet}\CF{}^{\semiinf,-})\simeq
\on{Av}_*^{\fL^+(N)}(\oblv_{\on{rel}}(t^\mu \cdot {}'\bICsm))).$$
\end{prop}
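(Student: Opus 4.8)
Before diving in, I recall that the proposition sits at the very end of a long argument and has to reconcile two \emph{a priori} different descriptions of the same object: the one coming from the semi-infinite IC sheaf (the Drinfeld--Pl\"ucker incarnation $'\bICsm$) and the one coming from the baby Verma object $\overset{\bullet}\CF{}^{\semiinf,-}$ living in the Iwahori-equivariant category. The whole point of \secref{ss:F and semiinf} is to set up exactly this comparison, so the proof should be a formal consequence of \corref{c:F and semiinf} together with some bookkeeping about the functor $\on{Av}_*^{\fL^+(N)}$.

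The plan is as follows. First I would reduce the statement to $\mu = 0$. Indeed, both sides carry a natural compatibility with the translation action: by \eqref{e:baby Verma shift}-type reasoning (more precisely, using that $\overset{\bullet}\CF{}^{\semiinf,-}$ is a Drinfeld--Pl\"ucker object so that $J_\gamma \underset{I}\star \overset{\bullet}\CF{}^{\semiinf} \simeq \overset{\bullet}\CF{}^{\semiinf}\otimes \sfe^\gamma$, and the analogous fact for $'\bICsm$), translating by $t^\mu$ with $\mu$ dominant intertwines the $\mu$-version with the $0$-version up to the Hecke twist; since $\on{Av}_*^{\fL^+(N)}$ commutes with all of this, it suffices to treat $\mu = 0$. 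So we must produce a canonical isomorphism
$$\on{Av}_*^{\fL^+(N)}(\overset{\bullet}\CF{}^{\semiinf,-})\simeq \on{Av}_*^{\fL^+(N)}(\oblv_{\on{rel}}({}'\bICsm))$$
in $\bHecke(\Shv_{\CG^G}(\Gr_G))$.

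Second, I would recall the key equivalence of \secref{sss:semiinf Iw}: the functor $\on{Av}_!^{\fL(N)}$ (equivalently $\on{Av}_*^{\overset{\circ}{I}}$ in the other direction) identifies $\Shv_{\CG^G}(\Gr_G)^I$ with $\SI_{q,x}(G)^{\fL^+(T)}$, compatibly with the $\Rep(H)\otimes\Rep(T_H)$-actions and with the Drinfeld--Pl\"ucker and $\bHecke_{\on{rel}}$ formalisms. By \corref{c:F and semiinf}, $\on{Av}_!^{\fL(N)}$ sends $\overset{\bullet}\CF{}^{\semiinf}_{\on{rel}}$ to $'\bICs_{q,x}$ (and after the $w_0$-twist it sends $\overset{\bullet}\CF{}^{\semiinf,-}$ to $'\bICsm$ in the appropriate category). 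The remaining work is then to relate $\on{Av}_*^{\fL^+(N)}$ applied to $\overset{\bullet}\CF{}^{\semiinf,-}$ --- which a priori lives in the $I$-picture --- with $\on{Av}_*^{\fL^+(N)}$ applied to the semi-infinite picture. The point is that both $\on{Av}_*^{\fL^+(N)}$ composites factor through the ``averaging to $\fL(N)$-equivariance" operation, and the $\fL(N)\cdot\fL^+(T)$-equivariant categories on both sides are identified by $\on{Av}_!^{\fL(N)}$. Concretely, I would write $\on{Av}_*^{\fL^+(N)}(-) = \on{Av}_*^{\fL^+(N)} \circ \on{Av}_!^{\fL(N)}(-)$ up to a cohomological shift on the relevant objects (using that for the objects at hand, e.g. by \prop{p:F and semiinf} the averaging $\on{Av}_!^{\fL(N)}$ of $\delta_{1,\Gr}$ is $(\bi_0)_!\omega_{S^0}$, and proper pushforward/clean extension identities hold), so that $\on{Av}_*^{\fL^+(N)}(\overset{\bullet}\CF{}^{\semiinf,-})$ and $\on{Av}_*^{\fL^+(N)}(\oblv_{\on{rel}}({}'\bICsm))$ become manifestly equal after applying the identification $\on{Av}_!^{\fL(N)}$. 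One has to check this compatibility respects the graded Hecke structure, which again follows from \corref{c:F and semiinf} and the fact that $\on{Av}_!^{\fL(N)}$, $\on{Av}_*^{\fL^+(N)}$, and the Hecke convolutions all commute with each other (Hecke convolutions being proper pushforwards).

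The main obstacle, I expect, is the bookkeeping of cohomological shifts and the precise comparison between the ``$*$-averaging from $I$" and ``$*$-averaging from $\SI^{\fL^+(T)}$" paths --- i.e. checking that $\on{Av}_*^{\fL^+(N)}$, which is only partially defined and discontinuous in general, actually behaves well on these specific compact-ish objects and that the two routes through the diagram of equivariant categories genuinely commute. This is exactly the kind of check that \cite{Ga7} handles in the non-twisted case (the relationship between $\ICs$ and its Iwahori incarnation), and the metaplectic modification is cosmetic; so I would cite the relevant lemmas from \secref{ss:F and semiinf} and from \cite[Sect. 5]{Ga7} for the non-formal input, reducing the proof of \propref{p:IC vs F again} to a diagram chase among the functors $\on{Av}_!^{\fL(N)}$, $\on{Av}_*^{\overset{\circ}{I}}$, $\on{Av}_*^{\fL^+(N)}$ and the Hecke convolutions, with \corref{c:F and semiinf} supplying the one genuinely new ingredient.
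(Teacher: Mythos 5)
Your proposal correctly isolates the main external inputs (Corollary \ref{c:F and semiinf}, the equivalence given by $\on{Av}_!^{\fL(N)}$, and Proposition \ref{p:F and semiinf}), but there are two genuine gaps, one of which is an outright error.

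First, the reduction to $\mu=0$ as you state it is wrong: $\on{Av}_*^{\fL^+(N)}$ does \emph{not} commute with translation by $t^\mu$, because $t^\mu$ does not normalize $\fL^+(N)$. (The Wakimoto/$J_\gamma$ translation and left multiplication by $t^\mu\in T\ppart$ are also not the same thing, so ``up to the Hecke twist'' does not rescue this.) The paper's actual maneuver for the general $\mu$ is different and is essential: one first proves the \emph{asymmetric} $\mu=0$ statement $\on{Av}_*^{\fL^+(N)}(\oblv_{\on{rel}}({}'\bICsm))\simeq \overset{\bullet}\CF{}^{\semiinf,-}$, then, using that $\fL^+(N)\subset \on{Ad}_{t^{-\mu}}(\fL^+(N))$ for $\mu$ dominant, applies $\on{Av}_*^{\on{Ad}_{t^{-\mu}}(\fL^+(N))}$ to both sides and conjugates by $t^\mu$. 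That conjugation trick is what makes the $\mu$-dependence work, and it is absent from your argument. Second, the $\mu=0$ step is where the real content lies and you leave it essentially to bookkeeping and a citation. The actual chain of reasoning is short but has a precise shape: the diagram \eqref{e:favorite diagram} shows that $\on{Av}^{\fL(N^-)}_!:\Shv_{\CG^G}(\Gr_G)^I\to \Shv_{\CG^G}(\Gr_G)^{\fL(N^-)\cdot\fL^+(T)}$ is also an equivalence, its inverse is $\on{Av}^{\overset{\circ}{I}}_*$, and on $\fL(N^-)\cdot\fL^+(T)$-equivariant objects the Iwahori decomposition $\overset{\circ}{I}=\overset{\circ}{I}{}^+\cdot\overset{\circ}{I}{}^0\cdot\overset{\circ}{I}{}^-$ gives $\on{Av}^{\overset{\circ}{I}}_*\simeq \on{Av}^{\fL^+(N)}_*$; combined with $\on{Av}^{\fL(N^-)}_!(\overset{\bullet}\CF{}^{\semiinf,-})\simeq \oblv_{\on{rel}}({}'\bICsm)$ (from Proposition \ref{p:F and semiinf} and \eqref{e:favorite diagram}), this yields the asymmetric $\mu=0$ isomorphism directly, with no cohomological-shift bookkeeping at all. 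You would need to add both of these pieces to have a complete proof.
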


The rest of this subsection is devoted to the proof of \propref{p:IC vs F again}.

\sssec{}

Recall (see \secref{sss:semiinf Iw}) that the functor $\on{Av}^{\fL(N)}_!$ defines an equivalence 
$$\Shv_{\CG^G}(\Gr_G)^I\to \Shv_{\CG^G}(\Gr_G)^{\fL(N)\cdot \fL^+(T)}.$$

Note that we have a commutative diagram of functors
\begin{equation} \label{e:favorite diagram}
\CD
\Shv_{\CG^G}(\Gr_G)^{\fL(N)\cdot \fL^+(T)}   @>{w_0\cdot }>> \Shv_{\CG^G}(\Gr_G)^{\fL(N^-)\cdot \fL^+(T)}  \\
@A{\on{Av}^{\fL(N)}_!}AA   @AA{\on{Av}^{\fL(N^-)}_!}A   \\
\Shv_{\CG^G}(\Gr_G)^I   @>{j_{w_0,!}\underset{I}\star-[d]}>>  \Shv_{\CG^G}(\Gr_G)^I.
\endCD
\end{equation} 

In particular, we obtain that the functor 
\begin{equation} \label{e:Av N-}
\on{Av}^{\fL(N^-)}_!:\Shv_{\CG^G}(\Gr_G)^I\to \Shv_{\CG^G}(\Gr_G)^{\fL(N^-)\cdot \fL^+(T)}
\end{equation}
is also an equivalence. 

\sssec{}

It follows formally that the right adjoint of the functor \eqref{e:Av N-} is given by $\on{Av}^{\overset{\circ}{I}}_*$,
where $\overset{\circ}{I}$ is the unipotent radical of $I$. Since \eqref{e:Av N-} is an equivalence, we obtain that 
$\on{Av}^{\overset{\circ}{I}}_*$ defines an inverse equivalence. 

\medskip

From the decomposition
$$\overset{\circ}{I}=\overset{\circ}{I}{}^+\cdot \overset{\circ}{I}{}^0\cdot \overset{\circ}{I}{}^-,$$
where
$$\overset{\circ}{I}{}^+:=\overset{\circ}{I}\cap \fL(N)=\fL^+(N),\,\, \overset{\circ}{I}{}^0:=
\overset{\circ}{I}\cap \fL(T),\,\, \overset{\circ}{I}{}^:=\overset{\circ}{I}\cap \fL(N^-),$$
it follows that when applied to objects of $\Shv_{\CG^G}(\Gr_G)^{\fL(N)\cdot \fL^+(T)}$, the natural transformation
$$\on{Av}^{\overset{\circ}{I}}_*\to \on{Av}^{\fL^+(N)}_*$$
is an isomorphism.

\sssec{}

It follows from \propref{p:F and semiinf} and the commutative diagram \eqref{e:favorite diagram} that
$$\on{Av}^{\fL(N^-)}_!(\overset{\bullet}\CF{}^{\semiinf,-})\simeq \oblv_{\on{rel}}({}'\bICsm).$$

Hence, we obtain an isomorphism
\begin{equation} \label{e:IC vs F again}
\on{Av}_*^{\fL^+(N)}(\oblv_{\on{rel}}({}'\bICsm)))\simeq  \overset{\bullet}\CF{}^{\semiinf,-}
\end{equation}
as objects of $\bHecke(\Shv_{\CG^G}(\Gr_G))$. 

\medskip

We claim that this implies the isomorphism stated in \propref{p:IC vs F again}. 

\sssec{}

Indeed, since $\mu$ is dominant, we have 
$$\fL^+(N)\subset \on{Ad}_{t^{-\mu}}(\fL^+(N)).$$

Hence, \eqref{e:IC vs F again} implies 
$$\on{Av}_*^{\on{Ad}_{t^{-\mu}}(\fL^+(N))}(\oblv_{\on{rel}}({}'\bICsm)))\simeq
\on{Av}_*^{\on{Ad}_{t^{-\mu}}(\fL^+(N))}(\overset{\bullet}\CF{}^{\semiinf,-}).$$

Translating both sides by $t^\mu$ we arrive at the isomorphism of \propref{p:IC vs F again}. 

\qed

\newpage 

\centerline{\bf Part IX: Relation to quantum groups} 

\bigskip

This Part is disjoint from the rest of this work. Here we will establish an equivalence between the category 
$\Omega^{\on{small}}_q\on{-FactMod}^{\on{ren}}$ and a (renormalized) version of the category of modules
over the small quantum group. 

\medskip

The contents of this Part can be thought of recasting the modern language the equivalence of categories,
which was the subject of the work \cite{BFS}. 

\section{Modules over the small quantum group}  \label{s:quantum}

In this section we define the category of modules over the small quantum group, as a $\sfe$-linear category. 
It will be introduced as \emph{relative Drinfeld center} of the category of modules over the \emph{positive part},
denoted $\fu_q(\cN)$. 

\ssec{The small quantum group: the positive part}

In this subsection we will introduce the positive part of the small quantum group, $\fu_q(\cN)$.

\sssec{}  \label{sss:b'}

We start with a datum of a bilinear form 
$$b':\Lambda\otimes \Lambda\to \sfe^{\times,\on{tors}}.$$

Let $q$ be the associated quadratic form $\Lambda\to \sfe^{\times,\on{tors}}$. 
We will assume that $q$ is \emph{non-degenerate}, i.e., $q(\alpha)\neq 1$ for all coroots $\alpha$. 

\medskip

We will also assume that the quadratic form $q$ belongs to the subset
$$\on{Quad}(\Lambda,\sfe^{\times,\on{tors}})^W_{\on{restr}}\subset 
\on{Quad}(\Lambda,\sfe^{\times,\on{tors}}),$$
introduced in \cite[Sect. 3.2.2]{GLys}. Namely, this is the subset consisting of $W$-invariant quadratic forms
that satisfy the additional condition that for any coroot $\alpha$ and any $\lambda\in \Lambda$, we have
\begin{equation} \label{e:restr cond}
b(\alpha,\lambda)=q(\alpha)^{\langle \lambda,\check\alpha\rangle}.
\end{equation} 

\sssec{}

Starting from this data, we will eventually define the category of modules over the \emph{Langlands dual} small quantum group,
to be denoted $\overset{\bullet}\fu_q(\cG)\mod$. 

\begin{rem}
The dot $\bullet$ over $\fu_q$ is meant to emphasize that we will be dealing with the category of $\Lambda$-\emph{graded} modules.
\end{rem}

\begin{rem}
The fact that our quantum group corresponds to the Langlands dual group manifests itself in that its lattice of \emph{weights}
is the lattice $\Lambda$ of coweights of $G$.
\end{rem} 

\sssec{}

Consider the category $\Rep(\cT)\simeq \Vect^{\Lambda}$, where $\Vect^\Lambda$
is the category of $\Lambda$-graded vector spaces. For $\lambda\in \Lambda$ we let 
$$\sfe^\lambda\in \Vect^\lambda\subset \Vect^\Lambda$$
denote the vector space $\sfe$ placed in the graded component $\lambda$. 

\medskip

We consider $\Vect^\Lambda$ as endowed with the standard monoidal structure. 
Now, the data of $b'$ defines a new braiding on $\Vect^\Lambda$. Denote the resulting braided monoidal category $\Vect_q^\Lambda$. 

\sssec{}  \label{sss:e lines}

Choose \emph{some} 1-dimensional objects 
$$\sfe^{i,\on{quant}}\in \Vect^{\alpha_i}\subset \Vect^\Lambda.$$ %denote the vector space $\sfe$ placed in degree $\alpha_i$.
I.e., each $\sfe^{i,\on{quant}}$ is \emph{non-canonically} isomorphic to $\sfe^{\alpha_i}$. 

\medskip

Let $U_q(\cN)^{\on{free}}$ be the free associative algebra in $\Vect^{\Lambda^{\on{pos}}}\subset \Vect^\Lambda$ on 
$$\underset{i}\oplus\, \sfe^{i,\on{quant}}\in \Vect^{\Lambda}.$$
I.e., to specify a map from
$U_q(\cN)^{\on{free}}$ to an associative algebra $A$ in $\Vect^\Lambda$ is equivalent to specifying maps
$$\sfe^{i,\on{quant}} \to A.$$ 
Let $e_i$ denote the tautological map $\sfe^{i,\on{quant}}\to U_q(\cN)^{\on{free}}$. 

\medskip

We endow $U_q(\cN)^{\on{free}}$ with a Hopf algebra structure in $\Vect_q^{\Lambda^{\on{pos}}}$ by letting the comultiplication 
$$U_q(\cN)^{\on{free}}\to U_q(\cN)^{\on{free}}\otimes U_q(\cN)^{\on{free}}$$
correspond to the maps
$$\sfe_{i,\on{quant}} \overset{e_i\otimes \on{unit}+\on{unit}\otimes e_i}\longrightarrow U_q(\cN)^{\on{free}}\otimes U_q(\cN)^{\on{free}}.$$

\sssec{}

Let $U_q(\cN)^{\on{co-free}}$ denote the co-free graded co-associative co-algebra in $\Vect^{\Lambda^{\on{pos}}}$
on the co-generators $\sfe^{i,\on{quant}}$. I.e., to specify a map from a co-associative co-algebra $A$ in $\Vect^{\Lambda^{\on{pos}}}$
to $U_q(\cN)^{\on{co-free}}$ is equivalent to specifying maps
$$A\to  \sfe^{i,\on{quant}}.$$ 
Let $e^*_i$ denote the tautological map $U_q(\cN)^{\on{co-free}}\to \sfe_{i,\on{quant}}$.

\medskip

We endow $U_q(\cN)^{\on{co-free}}$ with a Hopf algebra structure in $\Vect_q^\Lambda$ by letting the multiplication
$$U_q(\cN)^{\on{co-free}}\otimes U_q(\cN)^{\on{co-free}}\to U_q(\cN)^{\on{co-free}}$$
correspond to the maps 
$$U_q(\cN)^{\on{co-free}}\otimes U_q(\cN)^{\on{co-free}} \overset{e^*_i\otimes \on{aug}+\on{aug}\otimes e^*_i}\longrightarrow 
\sfe_{i,\on{quant}}.$$

\sssec{}

We define a map of Hopf algebras
\begin{equation} \label{e:free to cofree}
U_q(\cN)^{\on{free}}\to U_q(\cN)^{\on{co-free}}
\end{equation} 
to correspond to the projections 
$$U_q(\cN)^{\on{free}} \to \sfe_{i,\on{quant}}$$
onto the $\alpha_i$ components.

\sssec{}

We define $\fu_q(\cN^+)$ to be the image of the map \eqref{e:free to cofree}. The key fact that uses the non-degeneracy 
assumption on $q$ is that $\fu_q(\cN^+)$ is \emph{finite-dimensional}. 

\medskip

Moreover, it can be explicitly described (as an algebra)
as a quotient of $U_q(\cN)^{\on{free}}$ by the quantum Serre relations and the relations 
$$(e_i)^{\on{ord}(q_i)}=1.$$

\ssec{Digression: the notion of relative Drinfeld center}   \label{ss:rel Dr center}

In this subsection we recollect the general framework for defining the notion of relative Drinfeld center. 

\sssec{}

Recall that if $\bA$ is a category, it makes sense to talk about an action of a monoidal category $\bO$ on $\bA$.
The category of such pairs $(\bO,\bA)$ itself forms a symmetric monoidal category under the operation of
Cartesian product. 

\medskip

Consider the category of associative algebra objects in the above category. If $(\bO,\bA)$ is such an algebra object,
the forgetful functor $(\bO,\bA)\mapsto \bA$ endows $\bA$ with a structure of monoidal category, and the forgetful 
functor $(\bO,\bA)\mapsto \bO$ endows $\bO$ with a structure of associative algebra in the category of monoidal
categories. In other words $\bO$ acquires a structure of \emph{braided monoidal category}. 
In this case we shall say that $\bO$ acts on $\bA$ (sometimes, for emphasis, we shall say that $\bO$ acts on $\bA$
\emph{on the left}).

\medskip

Given $\bA$, there exists a universal braided monoidal category that acts on $\bA$ in the above sense. It is
called the \emph{Drinfeld center} of $\bA$, and is denoted $Z_{\on{Dr}}(\bA)$. 

\medskip

The objects of $Z_{\on{Dr}}(\bA)$ are $z\in \bA$, equipped with a family of isomorphisms
$$R_{z,\ba}:z\otimes \ba\simeq \ba\otimes z,$$
compatible with tensor products of the $\ba$'s. 

\sssec{}

Similar definitions apply for \emph{right} actions of monoidal categories. In this way we obtain the notion
of \emph{right} action of a braided monoidal category on a monoidal category. 

\medskip

Thus, we can talk about triples $(\bO,\bA,\bO')$, where $\bA$ is a monoidal category, and $\bO$ and $\bO'$
are braided monoidal categories, acting compatibly on the left and right on $\bA$, respectively. 

\medskip

Given $\bA$
equipped with an action of $\bO'$ on the right, 
there exists a universal braided monoidal category $\bO$ that acts on $\bA$ (on the left) in a way compatible with the right
action of $\bO'$. It is called the \emph{relative (to $\bO'$) Drinfeld center} of $\bA$, and it is denoted $Z_{\on{Dr},\bO'}(\bA)$.

\medskip

Objects of $Z_{\on{Dr},\bO'}(\bA)$ are $z\in \bA$, equipped with a family of isomorphisms
$$R_{z,\ba}:z\otimes \ba\simeq \ba\otimes z,$$
compatible with tensor products of the $\ba$'s, and compatible with the action of $\bO'$ in the sense that
for $\bo'\in \bO'$ the map
$$R_{z,\bo'}:z\otimes \bo'\simeq \bo'\otimes z,$$
agrees with the one induced by the right action of $\bO'$ on $\bA$. 

\medskip

We have a natural forgetful functor
$$Z_{\on{Dr},\bO'}(\bA)\to \bA.$$

\begin{rem}
Note that unless the braided monoidal structure on $\bO'$ is \emph{symmetric}, there is no naturally defined homomorphism
from $\bO'$ to $Z_{\on{Dr},\bO'}(\bA)$.
\end{rem} 

\sssec{}  \label{sss:Hopf algebra}

Let $\bO$ be a braided monoidal category, and let $A$ be a Hopf algebra in $\bO$. In this case, the category
$$\bA:=A\mod$$ of $A$-modules in $\bO$ acquires a natural monoidal structure, compatible 
with the forgetful monoidal functor 
$$\oblv_A:A\mod\to \bO.$$

\medskip

We also note that tensoring on the right defines a \emph{right} action of $\bO$ on $A\mod$. Thus, we
can talk about the braided monoidal category
$$Z_{\on{Dr},\bO}(A\mod).$$

\medskip

Note that the monoidal forgetful functor 
\begin{equation} \label{e:from center to O}
Z_{\on{Dr},\bO}(A\mod)\to A\mod\to \bO
\end{equation} 
is \emph{not} compatible with the braided structures. 

\sssec{}  \label{sss:induce center}

Suppose now that $A$ is dualizable as an object of $\bO$. In this case, the forgetful functor
\begin{equation} \label{e:forget center}
Z_{\on{Dr},\bO}(A\mod)\to A\mod
\end{equation} 
admits both a left and right adjoints.

\medskip 

The composition of the left adjoint to \eqref{e:forget center} with the forgetful functor \eqref{e:from center to O} identifies with
$$M\mapsto \oblv_A(M)\otimes A^\vee,$$
The composition of the right adjoint to \eqref{e:forget center} with the forgetful functor \eqref{e:from center to O} identifies with
$$M\mapsto \oblv_A(M)\otimes A.$$

\ssec{The category of modules over the small quantum group}

In this subsection we will finally introduce the category $\overset{\bullet}\fu_q(\cG)\mod$ of modules 
over the (Langlands dual) small quantum group. 

\sssec{}

We apply the discussion in \secref{sss:Hopf algebra} to the braided monoidal category $\Vect_q^\Lambda$ and the 
Hopf algebra $\fu_q(\cN^+)$ \emph{in} $\Vect_q^\Lambda$.

\medskip

We introduce the (braided monoidal) category $\overset{\bullet}\fu_q(\cG)\mod$ to be
$$Z_{\on{Dr},\Vect_q^\Lambda}(\fu_q(\cN^+)\mod).$$

\medskip

We emphasize that in the above formula, and elsewhere, $\fu_q(\cN^+)\mod$ denotes the category of $\fu_q(\cN^+)$-modules \emph{in} $\Vect_q^\Lambda$.

\sssec{}

Let $\oblv_{\fu_q(\cG)}$ denote the (conservative) forgetful functor
\begin{equation} \label{e:oblv qntm}
\overset{\bullet}\fu_q(\cG)\mod\to \fu_q(\cN^+)\mod\to \Vect_q^\Lambda,
\end{equation}
i.e., the functor \eqref{e:from center to O}. 

\medskip

This functor admits a left adjoint, which we denote by
$$\ind_{\fu_q(\cG)}:\Vect_q^\Lambda \rightleftarrows \overset{\bullet}\fu_q(\cG)\mod.$$

\medskip

The resulting monad $\oblv_{\fu_q(\cG)}\circ \ind_{\fu_q(\cG)}$ is t-exact with respect to the (obvious) t-structure on 
$\Vect_q^\Lambda$. This implies that $\overset{\bullet}\fu_q(\cG)\mod$ acquires a t-structure, uniquely characterized
by the requirement that the forgetful functor $\oblv_{\fu_q(\cG)}$ is t-exact. Moreover, the functor $\ind_{\fu_q(\cG)}$
is also t-exact.

\sssec{}

Denote
$$\fu_q(\cG)^\mu:=\ind_{\fu_q(\cG)}(\sfe^\mu),$$
where we remind that $\sfe^\mu\in \Vect^\Lambda$ is the vector space $\sfe$ placed in degree $\mu$.

\medskip

The objects $\fu_q(\cG)^\mu$ form a set of compact generators of $\overset{\bullet}\fu_q(\cG)\mod$;
they are projective as objects of $(\overset{\bullet}\fu_q(\cG)\mod)^\heartsuit$. 

\medskip

From here we obtain that the canonically defined functor 
$$D^+((\overset{\bullet}\fu_q(\cG)\mod)^\heartsuit)\to \overset{\bullet}\fu_q(\cG)\mod$$
extends to an equivalence
$$D((\overset{\bullet}\fu_q(\cG)\mod)^\heartsuit)\simeq \overset{\bullet}\fu_q(\cG)\mod.$$

\ssec{Standard and costandard objects}

\sssec{}

The forgetful functor 
$$\oblv^{\fu_q(\cG)}_{\fu_q(\cN^+)}:\overset{\bullet}\fu_q(\cG)\mod\to \fu_q(\cN^+)\mod$$
also admits a t-exact left adjoint, denoted $\ind^{\fu_q(\cG)}_{\fu_q(\cN^+)}$, 
see \secref{sss:induce center}.

\medskip

For $\mu\in \Lambda$, consider $\sfe^\mu$ as an object of $\fu_q(\cN^+)\mod$, where the action of 
$\fu_q(\cN^+)$ is trivial. Set
$$\CM^{\mu,!}_{\on{quant}}:=\ind^{\fu_q(\cG)}_{\fu_q(\cN^+)}(\sfe^\mu).$$

We call it the \emph{standard object} of $\overset{\bullet}\fu_q(\cG)\mod$ of highest weight $\mu$. It belongs to
the heart of the t-structure. 

\begin{rem}  \label{r:baby Verma}
The objects $\CM^{\mu,!}_{\on{quant}}$ are sometimes called the ``baby Verma modules". 
\end{rem}

\sssec{} \label{sss:compact by standard}

Note that each  $\fu_q(\cG)^\mu$ has a finite filtration with subquotients $\CM^{\mu+\lambda,!}_{\on{quant}}$ with 
$\lambda\in \Lambda^{\on{pos}}$. 

\sssec{}

It is well-known that $(\overset{\bullet}\fu_q(\cG)\mod)^\heartsuit$ has a structure of highest weight category, in which $\CM^{\mu,!}_{\on{quant}}$
are the standard objects. In particular, for every $\mu$ there exists a \emph{co-standard} object 
$$\CM^{\mu,*}_{\on{quant}}\in (\overset{\bullet}\fu_q(\cG)\mod)^\heartsuit,$$
uniquely characterized by the requirement that
\begin{equation} 
\CHom_{\overset{\bullet}\fu_q(\cG)\mod}(\CM^{\mu',!}_{\on{quant}},\CM^{\mu,*}_{\on{quant}})=
\begin{cases}
&\sfe \text{ if } \mu'=\mu \\
&0 \text{ otherwise}.
\end{cases}
\end{equation}

\sssec{}

We let $\CM^{\mu,!*}_{\on{quant}}$ denote the image of the canonical map
$$\CM^{\mu,!}_{\on{quant}}\to \CM^{\mu,*}_{\on{quant}}.$$

The objects $\CM^{\mu,!*}_{\on{quant}}$ are the irreducibles of $(\overset{\bullet}\fu_q(\cG)\mod)^\heartsuit$.

\section{Renormalization for quantum groups}  \label{s:renorm}

Recall (see \secref{ss:renorm fact}) that we modified the category of factorization modules in order to obtained
a category that eventually turned to be equivalent to $\bHecke(\Whit_{q,x}(G))$. In this section we will apply
a similar renormalization procedure to $\overset{\bullet}\fu_q(\cG)\mod$. 
 
\medskip

In fact, we will define two different renormalizations of $\overset{\bullet}\fu_q(\cG)\mod$: one will be equivalent to
the original category of factorization modules, and the other two its renormalized version.

\ssec{The ``obvious" renormalization}

Along with $\overset{\bullet}\fu_q(\cG)\mod$ we will consider its renormalized version $\overset{\bullet}\fu_q(\cG)\mod^{\on{ren}}$, endowed
with a pair of adjoint functors
$$\on{ren}:\overset{\bullet}\fu_q(\cG)\mod \rightleftarrows \overset{\bullet}\fu_q(\cG)\mod^{\on{ren}}:\on{un-ren}.$$

The material here is parallel to \secref{ss:renorm fact}. 

\sssec{}

The category $\overset{\bullet}\fu_q(\cG)\mod^{\on{ren}}$ is defined as the the ind-completion of the full 
(but not cocomplete) subcategory of 
$$\overset{\bullet}\fu_q(\cG)\mod^{\on{fin.dim}}\subset \overset{\bullet}\fu_q(\cG)\mod$$
that consists of that go to compact objects in 
$\Vect_q^\Lambda$ under the forgetful functor $\oblv_{\fu_q(\cG)}$ of \eqref{e:oblv qntm}.

\medskip

Note that $\overset{\bullet}\fu_q(\cG)\mod^{\on{fin.dim}}$ can also be characterized as consisting of 
finite extensions of (shifts of) irreducible objects.

\medskip

Ind-extension of the tautological embedding defines a functor 
$$\on{un-ren}:\overset{\bullet}\fu_q(\cG)\mod^{\on{ren}}  \to \overset{\bullet}\fu_q(\cG)\mod.$$

\sssec{}

Since the objects $\fu_q(\cG)^\mu$ have finite length, we have
$$\overset{\bullet}\fu_q(\cG)\mod^c\subset \overset{\bullet}\fu_q(\cG)\mod^{\on{fin.dim}}.$$

Ind-extension of this embedding defines a fully faithful functor 
$$\on{ren}: \overset{\bullet}\fu_q(\cG)\mod\to \overset{\bullet}\fu_q(\cG)\mod^{\on{ren}},$$
which is a left adjoint to $\on{un-ren}$.

\sssec{}

We have the following assertion parallel to \propref{p:prop of ren}:

\begin{prop} \label{p:prop of ren quant} 
The category $\overset{\bullet}\fu_q(\cG)\mod^{\on{ren}}$ has a t-structure, uniquely characterized by the property that 
an object is connective if and only if its image under the functor $\on{un-ren}$ is connective. Moreover, the functor
$\on{un-ren}$ has the following properties with respect to this t-structure:

\medskip

\noindent{\em(a)} It is t-exact;

\medskip

\noindent{\em(b)} It induces an equivalence 
$$(\overset{\bullet}\fu_q(\cG)\mod^{\on{ren}})^{\geq n}\to (\overset{\bullet}\fu_q(\cG)\mod)^{\geq n}$$ for any $n$;

\medskip

\noindent{\em(c)} It induces an equivalence of the hearts.

\end{prop}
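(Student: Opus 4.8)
The statement is the exact analogue of \propref{p:prop of ren}, which in turn was said to follow ``as in \cite[Sect. 1.2]{Ga3}''. So the plan is to mimic that argument, using the specific features of $\overset{\bullet}\fu_q(\cG)\mod$ established just above: namely that $\overset{\bullet}\fu_q(\cG)\mod^{\on{ren}}$ is the ind-completion of $\overset{\bullet}\fu_q(\cG)\mod^{\on{fin.dim}}$, the category of finite extensions of shifts of irreducibles, and that the compact generators $\fu_q(\cG)^\mu$ are finite-length (hence lie in $\overset{\bullet}\fu_q(\cG)\mod^{\on{fin.dim}}$). First I would \emph{define} the t-structure: declare $\CF\in\overset{\bullet}\fu_q(\cG)\mod^{\on{ren}}$ connective iff $\on{un-ren}(\CF)\in(\overset{\bullet}\fu_q(\cG)\mod)^{\leq 0}$. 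To see this really is a t-structure, I would exhibit its connective part as generated under colimits by a set of compact objects, namely by those $\CG\in\overset{\bullet}\fu_q(\cG)\mod^{\on{fin.dim}}$ whose image under $\on{un-ren}$ is connective (equivalently, finite extensions of shifts $\CM^{\mu,!*}_{\on{quant}}[n]$ with $n\geq 0$); this automatically produces a t-structure on the compactly generated category $\overset{\bullet}\fu_q(\cG)\mod^{\on{ren}}$, and one then checks that its connective objects are exactly those detected by $\on{un-ren}$ as connective. The only subtlety is that $\on{un-ren}$ does not commute with all limits, so one must argue that ``$\on{un-ren}(\CF)$ connective'' is a condition closed under the colimits used to build the connective subcategory --- but this holds because $\on{un-ren}$ is continuous and $(\overset{\bullet}\fu_q(\cG)\mod)^{\leq 0}$ is closed under colimits.

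\textbf{t-exactness and the equivalence on coconnective parts.} Once the t-structure exists, part (a), that $\on{un-ren}$ is t-exact, is essentially built into the definition on the connective side; right t-exactness (preservation of coconnectivity) follows by adjunction from the fact that $\on{ren}$ is right t-exact, the latter because $\on{ren}$ sends the compact generators $\fu_q(\cG)^\mu$ of $\overset{\bullet}\fu_q(\cG)\mod$ (which are in the heart) to $\fu_q(\cG)^\mu$ viewed in $\overset{\bullet}\fu_q(\cG)\mod^{\on{ren}}$, and those are connective. For part (b), that $\on{un-ren}$ restricts to an equivalence $(\overset{\bullet}\fu_q(\cG)\mod^{\on{ren}})^{\geq n}\to(\overset{\bullet}\fu_q(\cG)\mod)^{\geq n}$: by shifting we reduce to $n=0$. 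I would construct the inverse by composing $\on{ren}$ with the coconnective truncation $\tau^{\geq 0}$; the point is that for $\CF$ coconnective in $\overset{\bullet}\fu_q(\cG)\mod$, the unit $\on{ren}(\CF)\to\on{un-ren}\circ\on{ren}(\CF)$-adjoint map and the fact that $\on{ren}$ is fully faithful force $\on{un-ren}\circ\tau^{\geq 0}\circ\on{ren}(\CF)\simeq\CF$, while in the other direction any coconnective object of $\overset{\bullet}\fu_q(\cG)\mod^{\on{ren}}$ is recovered from its $\on{un-ren}$-image, because the fiber of $\CF\to \on{ren}\circ\on{un-ren}(\CF)$ lies in $\bigcap_n(\overset{\bullet}\fu_q(\cG)\mod^{\on{ren}})^{\leq -n}$ and hence vanishes after $\tau^{\geq 0}$. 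Part (c) is the special case of (b) combined with left t-exactness of $\on{ren}$, or more directly: $\on{un-ren}$ and $\on{ren}$ induce mutually inverse equivalences $\overset{\bullet}\fu_q(\cG)\mod^{\on{fin.dim}}\cap(-)^\heartsuit\simeq \overset{\bullet}\fu_q(\cG)\mod^{\on{fin.dim}}\cap(-)^\heartsuit$ since the subcategory of finite-length objects of the heart is literally the same on both sides (an object of the heart is finite-dimensional over $\Vect^\Lambda_q$ iff it has finite length), and both hearts are the ind-completion (within the heart) of this common subcategory.

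\textbf{Expected main obstacle.} The routine categorical bookkeeping (existence of the t-structure, the adjunction $\on{ren}\dashv\on{un-ren}$, t-exactness) is genuinely formal once one has the two inputs ``$\overset{\bullet}\fu_q(\cG)\mod^{\on{ren}}$ is ind-completion of finite extensions of shifted irreducibles'' and ``$\fu_q(\cG)^\mu$ is finite-length''; the second is already asserted in \secref{sss:compact by standard} via the filtration by baby Verma modules, and the finite-length property of baby Vermas is standard. So the one place requiring care --- the ``hard part'' in the sense of being the only step that is not pure diagram-chasing --- is establishing cleanly that the kernel of $\on{un-ren}$ consists precisely of the infinitely-coconnective objects $\bigcap_n(\overset{\bullet}\fu_q(\cG)\mod^{\on{ren}})^{\leq -n}$, which is what makes the ``equivalence on eventually coconnective parts'' statement in (b) go through. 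This is handled exactly as for $\IndCoh$ versus $\QCoh$ in \cite[Sect. 1.2]{Ga3}: a compact object of $\overset{\bullet}\fu_q(\cG)\mod^{\on{ren}}$ is sent to a compact (hence bounded) object of $\overset{\bullet}\fu_q(\cG)\mod$, so $\on{un-ren}$ is conservative on bounded-below objects, and any $\CF$ with $\on{un-ren}(\CF)=0$ has all its truncations killed, forcing $\CF\in\bigcap_n(-)^{\leq -n}$; conversely such objects are killed because $\on{un-ren}$ preserves filtered colimits and the truncation functors are compatible under it on the coconnective side. I would spell this out as a short lemma and then assemble (a)--(c) from it.
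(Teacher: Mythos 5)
The paper itself offers no proof for this proposition; it is stated as the direct analogue of \propref{p:prop of ren}, which in turn is cited to \cite[Sect. 1.2]{Ga3} (the $\IndCoh$ vs.\ $\QCoh$ renormalization). Your proposal fills in exactly that kind of argument, and the overall architecture --- define the t-structure by generation from compact objects of $\overset{\bullet}\fu_q(\cG)\mod^{\on{fin.dim}}$ with connective image, prove t-exactness, prove the equivalence on eventually coconnective parts via the kernel of $\on{un-ren}$, then deduce the detection of connectivity --- is correct and is the intended one.

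There is one inaccuracy you should fix, because it touches the very phenomenon the renormalization is designed around. You assert that a compact object of $\overset{\bullet}\fu_q(\cG)\mod^{\on{ren}}$ is sent by $\on{un-ren}$ to a \emph{compact} object of $\overset{\bullet}\fu_q(\cG)\mod$. That is false in general: compact objects of $\overset{\bullet}\fu_q(\cG)\mod^{\on{ren}}$ are retracts of objects of $\overset{\bullet}\fu_q(\cG)\mod^{\on{fin.dim}}$, i.e., finite extensions of shifted irreducibles $\CM^{\mu,!*}_{\on{quant}}$, and those irreducibles are typically \emph{not} compact in $\overset{\bullet}\fu_q(\cG)\mod$ (the algebra $\fu_q(\cG)$ has infinite global dimension, so irreducibles admit no finite projective resolution). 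Indeed, the failure of the containment $\overset{\bullet}\fu_q(\cG)\mod^{\on{fin.dim}}\subset \overset{\bullet}\fu_q(\cG)\mod^{c}$ is the entire reason $\on{ren}$ is not an equivalence and the renormalized category exists at all. What is true, and what your subsequent argument actually uses, is the weaker statement that $\on{un-ren}$ sends compact objects of the renormalized category to \emph{cohomologically bounded} objects (a retract of a bounded object is bounded). So replace ``compact (hence bounded)'' with just ``bounded''; the chain ``bounded $\Rightarrow$ $\on{un-ren}$ conservative on bounded-below objects $\Rightarrow$ kernel of $\on{un-ren}$ is infinitely connective'' then goes through as you indicate.

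A second, minor point: in part (b) the arrow you write as $\CF\to\on{ren}\circ\on{un-ren}(\CF)$ should be the counit $\on{ren}\circ\on{un-ren}(\CF)\to\CF$ of the adjunction $\on{ren}\dashv\on{un-ren}$ (the unit goes from $\on{Id}$ to $\on{un-ren}\circ\on{ren}$ and is an isomorphism because $\on{ren}$ is fully faithful). With that fixed, the argument reads: $\on{un-ren}$ of the counit is an isomorphism by the triangle identity, so its cofiber is killed by $\on{un-ren}$, hence lies in $\bigcap_n(\overset{\bullet}\fu_q(\cG)\mod^{\on{ren}})^{\leq -n}$, and therefore does not contribute to $\tau^{\geq 0}$; the long exact sequence of cohomology then shows $\tau^{\geq 0}(\on{ren}\circ\on{un-ren}(\CF))\to \CF$ is an isomorphism for $\CF$ coconnective. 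That is correct.
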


\begin{cor}
The kernel of the functor $\on{un-ren}$ consists of infinitely coconnective objects, i.e., 
$$\underset{n}\bigcap\, (\overset{\bullet}\fu_q(\cG)\mod^{\on{ren}})^{\leq -n}.$$
\end{cor}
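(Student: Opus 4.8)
The statement to prove is the corollary following \propref{p:prop of ren quant}: the kernel of $\on{un-ren}:\overset{\bullet}\fu_q(\cG)\mod^{\on{ren}}\to \overset{\bullet}\fu_q(\cG)\mod$ consists of infinitely coconnective objects. The plan is to deduce this formally from the three properties of $\on{un-ren}$ listed in \propref{p:prop of ren quant}, exactly as in the $\IndCoh$-versus-$\QCoh$ situation of \cite[Sect. 1]{Ga3} that the paper refers to. So the only genuine work is to first establish \propref{p:prop of ren quant} itself (which the excerpt states but does not prove), and the corollary then follows by a short diagram chase.

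First I would prove \propref{p:prop of ren quant}. One defines the candidate t-structure on $\overset{\bullet}\fu_q(\cG)\mod^{\on{ren}}$ by declaring $(\overset{\bullet}\fu_q(\cG)\mod^{\on{ren}})^{\leq 0}$ to be generated under colimits by the objects of $\overset{\bullet}\fu_q(\cG)\mod^{\on{fin.dim}}$ that lie in $(\overset{\bullet}\fu_q(\cG)\mod)^{\leq 0}$ (recall $\on{un-ren}$ restricted to compacts is fully faithful, so this makes sense). Since $\overset{\bullet}\fu_q(\cG)\mod^{\on{fin.dim}}$ consists of finite extensions of shifts of the irreducibles $\CM^{\mu,!*}_{\on{quant}}$, and the irreducibles lie in the heart of the original t-structure, the connective part is the cocompletion of a small subcategory and a t-structure exists by the standard criterion. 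The right-exactness of $\on{un-ren}$ is immediate from the definition of the connective part; for left-exactness and for (b) and (c) one argues as in \cite[Sect. 1.2]{Ga3}: an object $M\in \overset{\bullet}\fu_q(\cG)\mod^{\on{ren}}$ is coconnective iff $\CHom(F,M)$ is coconnective for all $F$ in the generating family, and since those $F$ are sent by $\on{un-ren}$ to objects of $\overset{\bullet}\fu_q(\cG)\mod^{\leq 0}$ which (by \secref{sss:compact by standard} and the highest-weight structure) are themselves finite extensions of the $\fu_q(\cG)^\mu$'s up to shift, coconnectivity of $M$ is detected by $\CHom(\fu_q(\cG)^\mu[k],\on{un-ren}(M))$ — i.e. by coconnectivity of $\on{un-ren}(M)$. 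This gives (a) and the fact that $\on{un-ren}$ is fully faithful on eventually coconnective objects, hence (b); and (c) follows since on the hearts both categories are described by the same finite-length/compact-projective data.

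Granting \propref{p:prop of ren quant}, the corollary is formal. Let $M\in \overset{\bullet}\fu_q(\cG)\mod^{\on{ren}}$ with $\on{un-ren}(M)=0$. By property (b), for every $n$ the truncation $\tau^{\geq -n}M$ maps isomorphically under $\on{un-ren}$ to $\tau^{\geq -n}\on{un-ren}(M)=0$, and since $\on{un-ren}$ restricted to $(\overset{\bullet}\fu_q(\cG)\mod^{\on{ren}})^{\geq -n}$ is an equivalence onto $(\overset{\bullet}\fu_q(\cG)\mod)^{\geq -n}$ (in particular conservative), we get $\tau^{\geq -n}M=0$, i.e. $M\in (\overset{\bullet}\fu_q(\cG)\mod^{\on{ren}})^{\leq -n-1}$. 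As this holds for all $n$, $M\in \bigcap_n (\overset{\bullet}\fu_q(\cG)\mod^{\on{ren}})^{\leq -n}$, which is precisely the assertion. Conversely, any such infinitely-coconnective $M$ has $\on{un-ren}(M)$ with $\tau^{\geq -n}\on{un-ren}(M)\simeq \on{un-ren}(\tau^{\geq -n}M)=0$ for all $n$ by (a) and (b), and since the original t-structure on $\overset{\bullet}\fu_q(\cG)\mod$ is separated (it is the derived category of its heart), $\on{un-ren}(M)=0$; so the kernel is \emph{exactly} the infinitely-coconnective objects.

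The main obstacle is \propref{p:prop of ren quant} — specifically, verifying that coconnectivity in $\overset{\bullet}\fu_q(\cG)\mod^{\on{ren}}$ is detected by the original forgetful/$\CHom$ functors, which requires knowing that the compact generators $\fu_q(\cG)^\mu$ of the original category, and their shifts, suffice to test coconnectivity against the larger generating family of $\overset{\bullet}\fu_q(\cG)\mod^{\on{ren}}$; this is where one uses that the $\fu_q(\cG)^\mu$ are finite extensions of the baby Verma modules $\CM^{\mu,!}_{\on{quant}}$ (\secref{sss:compact by standard}) and the highest-weight structure on $(\overset{\bullet}\fu_q(\cG)\mod)^\heartsuit$. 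Once that finiteness input is in place, everything else is the routine $\QCoh$-to-$\IndCoh$ formalism and the corollary is a two-line consequence.
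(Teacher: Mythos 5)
Your derivation of the corollary from \propref{p:prop of ren quant} is correct and is the formal argument the paper intends (the corollary is stated without proof, as an immediate consequence, following the $\IndCoh$/$\QCoh$ pattern of \cite[Sect.~1.2]{Ga3}): if $\on{un-ren}(M)=0$ then by (a) $\on{un-ren}(\tau^{\geq -n}M)\simeq \tau^{\geq -n}\on{un-ren}(M)=0$, and by (b) applied to the coconnective part this forces $\tau^{\geq -n}M=0$ for all $n$, while the reverse inclusion uses that $\overset{\bullet}\fu_q(\cG)\mod$ is the derived category of its heart, so its t-structure is separated. The long preliminary sketch of how one would prove \propref{p:prop of ren quant} is not logically needed for the corollary (the paper also takes the proposition as given), and in your converse direction only t-exactness (a) is actually invoked, not (b).
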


\begin{rem}
We will use the notation $\CM^{\mu,!}_{\on{quant}}$, $\CM^{\mu,*}_{\on{quant}}$ and $\CM^{\mu,!*}_{\on{quant}}$ for the corresponding
objects of \emph{either} $\overset{\bullet}\fu_q(\cG)\mod$ \emph{or} $\overset{\bullet}\fu_q(\cG)\mod^{\on{ren}}$, see 
Remark \ref{r:two meanings}.
\end{rem}

\sssec{}

It follows from the construction that the t-structure on $\overset{\bullet}\fu_q(\cG)\mod^{\on{ren}}$ is Artinian (see 
\secref{sss:properties of t} for what this means). 

\ssec{A different renormalization}

We will now introduce a different category, denoted $$\overset{\bullet}\fu_q(\cG)\mod^{\on{baby-ren}},$$ which
will be sandwiched between $\overset{\bullet}\fu_q(\cG)\mod$ and
$\overset{\bullet}\fu_q(\cG)\mod^{\on{ren}}$. 

\sssec{}  \label{sss:another renorm}

Consider the category $\fu_q(\cN^+)\mod$. 

\medskip

We define its renormalized version, denoted $\fu_q(\cN^+)\mod^{\on{ren}}$, 
by the same procedure as above. I.e., $\fu_q(\cN^+)\mod^{\on{ren}}$ is the
ind-completion of the full (but not cocomplete) subcategory
$$(\fu_q(\cN^+)\mod^{\on{fin.dim}}\subset \fu_q(\cN^+)\mod$$
consisting of objects that map to compact objects under the forgetful functor 
$$\fu_q(\cN^+)\mod\to \Vect_q^\Lambda.$$ 

\medskip

We have a pair of adjoint functors
$$\on{ren}:\fu_q(\cN^+)\mod\rightleftarrows \fu_q(\cN^+)\mod^{\on{ren}}:\on{un-ren}.$$

\sssec{}

The subcategory $\fu_q(\cN^+)\mod^{\on{fin.dim}}\subset \fu_q(\cN^+)\mod$ is preserved by the
tensor product operation. Hence, it inherits a monoidal structure.  

\medskip

Ind-extending, we obtain that $\fu_q(\cN^+)\mod^{\on{ren}}$ acquires a monoidal structure,
which can be uniquely characterized by the requirement that the functor $\on{un-ren}$ be monoidal.

\medskip

Furthermore, we have a right action of (the braided monoidal category) $\Vect_q^\Lambda$ on 
$\fu_q(\cN^+)\mod^{\on{ren}}$, so that the functor $\on{un-ren}$ is compatible with the actions. 

\sssec{}

We define $\overset{\bullet}\fu_q(\cG)\mod^{\on{baby-ren}}$ to be (the relative to $\Vect_q^\Lambda$)
Drinfeld center of $\fu_q(\cN^+)\mod^{\on{ren}}$, i.e., 
$$\overset{\bullet}\fu_q(\cG)\mod^{\on{baby-ren}}:=Z_{\on{Dr},\Vect_q^\Lambda}(\fu_q(\cN^+)\mod^{\on{ren}}).$$

\sssec{}

Note now that the monoidal operation
$$\fu_q(\cN^+)\mod^{\on{ren}}\otimes \fu_q(\cN^+)\mod^{\on{ren}}\to \fu_q(\cN^+)\mod^{\on{ren}}$$
factors as 
$$\fu_q(\cN^+)\mod^{\on{ren}}\otimes \fu_q(\cN^+)\mod^{\on{ren}}\to \fu_q(\cN^+)\mod^{\on{ren}}\otimes \fu_q(\cN^+)\mod
\to \fu_q(\cN^+)\mod^{\on{ren}}$$
and also
$$\fu_q(\cN^+)\mod^{\on{ren}}\otimes \fu_q(\cN^+)\mod^{\on{ren}}
\to \fu_q(\cN^+)\mod\otimes \fu_q(\cN^+)\mod^{\on{ren}}\to \fu_q(\cN^+)\mod^{\on{ren}}.$$

\medskip

From here, it is easy to see that $\overset{\bullet}\fu_q(\cG)\mod^{\on{baby-ren}}$ is endowed by a pair of adjoint functors
$$\ind^{\fu_q(\cG)}_{\fu_q(\cN^+)}:\fu_q(\cN^+)\mod^{\on{ren}}\rightleftarrows \overset{\bullet}\fu_q(\cG)\mod^{\on{baby-ren}}:\oblv^{\fu_q(\cG)}_{\fu_q(\cN^+)}$$
and also a pair of adjoint functors
$$\on{ren}':\overset{\bullet}\fu_q(\cG)\mod \rightleftarrows \overset{\bullet}\fu_q(\cG)\mod^{\on{baby-ren}}:\on{un-ren}'$$
that make all the circuits in the diagram
%\begin{equation} \label{e:ind ren'}
$$\xymatrix{
\fu_q(\cN^+)\mod^{\on{ren}} \ar[rr]<2pt>^{\ind^{\fu_q(\cG)}_{\fu_q(\cN^+)}} \ar[d]<2pt>^{\on{un-ren}} &&
\overset{\bullet}\fu_q(\cG)\mod^{\on{baby-ren}}  \ar[ll]<2pt>^{\oblv^{\fu_q(\cG)}_{\fu_q(\cN^+)}} \ar[d]<2pt>^{\on{un-ren}'}  \\
\fu_q(\cN^+)\mod \ar[rr]<2pt>^{\ind^{\fu_q(\cG)}_{\fu_q(\cN^+)}} \ar[u]<2pt>^{\on{ren}} && 
\overset{\bullet}\fu_q(\cG)\mod \ar[ll]<2pt>^{\oblv^{\fu_q(\cG)}_{\fu_q(\cN^+)}} \ar[u]<2pt>^{\on{ren}'}}$$
%\end{equation}
commute.

\sssec{}

The above commutative diagram %\eqref{e:ind ren'} 
implies in particular that the monad $\oblv^{\fu_q(\cG)}_{\fu_q(\cN^+)}\circ \ind^{\fu_q(\cG)}_{\fu_q(\cN^+)}$
acting on $\fu_q(\cN^+)\mod^{\on{ren}}$ is t-exact. Since the forgetful functor 
$$\oblv^{\fu_q(\cG)}_{\fu_q(\cN^+)}:\overset{\bullet}\fu_q(\cG)\mod^{\on{baby-ren}}\to \fu_q(\cN^+)\mod^{\on{ren}}$$
is conservative, we obtain that
$\overset{\bullet}\fu_q(\cG)\mod^{\on{baby-ren}}$ acquires a t-structure, uniquely characterized by the condition that 
$\oblv^{\fu_q(\cG)}_{\fu_q(\cN^+)}$ 
is t-exact. 

\medskip

In particular, we obtain that the functor 
$$\on{un-ren}':\overset{\bullet}\fu_q(\cG)\mod^{\on{baby-ren}}\to \overset{\bullet}\fu_q(\cG)\mod$$
is t-exact. 

\sssec{}

Thus, we obtain that $\overset{\bullet}\fu_q(\cG)\mod^{\on{baby-ren}}$ can also be obtained as a renormalization of 
$\overset{\bullet}\fu_q(\cG)\mod$. 

\medskip

Namely, $\overset{\bullet}\fu_q(\cG)\mod^{\on{baby-ren}}$ identifies with the ind-completion of the full subcategory 
$$\overset{\bullet}\fu_q(\cG)\mod^{\on{baby}}\subset \overset{\bullet}\fu_q(\cG)\mod$$
that consists of objects that finite extensions of (shifts of) standard (i.e., baby Verma) objects. 

\medskip 

The functor 
$$\on{un-ren}':\overset{\bullet}\fu_q(\cG)\mod^{\on{baby-ren}}  \to \overset{\bullet}\fu_q(\cG)\mod$$
is the ind-extension of the above tautological embedding. 

\medskip

The functor 
$$\on{ren}': \overset{\bullet}\fu_q(\cG)\mod\to \overset{\bullet}\fu_q(\cG)\mod^{\on{baby-ren}}$$
is the ind-extension of the embedding 
$$\overset{\bullet}\fu_q(\cG)\mod^c\subset \overset{\bullet}\fu_q(\cG)\mod^{\on{baby}},$$
the latter due to the fact that each $\fu_q(\cG)^\mu$ has a filtration by standards, see \secref{sss:compact by standard}. 

\medskip

By \cite[Sect. 23]{FG1}, the functor $\on{un-ren}':\overset{\bullet}\fu_q(\cG)\mod^{\on{baby-ren}}  \to \overset{\bullet}\fu_q(\cG)\mod$
induces an equivalence
$$(\overset{\bullet}\fu_q(\cG)\mod^{\on{baby-ren}})^{\geq n}\to (\overset{\bullet}\fu_q(\cG)\mod)^{\geq n}$$ for any $n$,
and thus also an equivalence of the hearts. 

\begin{rem}
We will use the notation $\CM^{\mu,!}_{\on{quant}}$, $\CM^{\mu,*}_{\on{quant}}$ and $\CM^{\mu,!*}_{\on{quant}}$ for the corresponding
objects of \emph{either} $\overset{\bullet}\fu_q(\cG)\mod$ \emph{or} $\overset{\bullet}\fu_q(\cG)\mod^{\on{baby-ren}}$, see 
Remark \ref{r:two meanings}. 
\end{rem}

\begin{rem}

The definition of $\overset{\bullet}\fu_q(\cG)\mod^{\on{baby-ren}}$ can also be rephrased as follows:

\medskip

The monad $\oblv^{\fu_q(\cG)}_{\fu_q(\cN^+)}\circ \ind^{\fu_q(\cG)}_{\fu_q(\cN^+)}$ acting on $\fu_q(\cN^+)\mod$
preserves the subcategories 
$$\fu_q(\cN^+)\mod^c\subset \fu_q(\cN^+)\mod^{\on{fin.dim}}\subset \fu_q(\cN^+)\mod.$$ 

We have:
$$\overset{\bullet}\fu_q(\cG)\mod^c\simeq (\oblv^{\fu_q(\cG)}_{\fu_q(\cN^+)}\circ \ind^{\fu_q(\cG)}_{\fu_q(\cN^+)})\mod(\fu_q(\cN^+)\mod^c)$$
and 
$$\overset{\bullet}\fu_q(\cG)\mod^{\on{baby}}\simeq (\oblv^{\fu_q(\cG)}_{\fu_q(\cN^+)}\circ \ind^{\fu_q(\cG)}_{\fu_q(\cN^+)})\mod(\fu_q(\cN^+)\mod^{\on{fin.dim}}),$$
where the functor $\on{ren}'$ is induced by the embedding
\begin{multline*}
(\oblv^{\fu_q(\cG)}_{\fu_q(\cN^+)}\circ \ind^{\fu_q(\cG)}_{\fu_q(\cN^+)})\mod(\fu_q(\cN^+)\mod^c)\hookrightarrow  \\
\hookrightarrow (\oblv^{\fu_q(\cG)}_{\fu_q(\cN^+)}\circ \ind^{\fu_q(\cG)}_{\fu_q(\cN^+)})\mod(\fu_q(\cN^+)\mod^{\on{fin.dim}}).
\end{multline*}

\end{rem}

\ssec{Relationship between the two renormalizations}

\sssec{}

Note now that the category of compact objects in 
$\overset{\bullet}\fu_q(\cG)\mod^{\on{baby-ren}}$ can be thought of as a subcategory of the category of compact objects in 
$\overset{\bullet}\fu_q(\cG)\mod^{\on{ren}}$. Hence, we obtain a fully faithful functor
$$\on{ren}'':\overset{\bullet}\fu_q(\cG)\mod^{\on{baby-ren}}\to \overset{\bullet}\fu_q(\cG)\mod^{\on{ren}}$$
that admits a continuous right adjoint, denoted $\on{un-ren}''$. 

\medskip

The composition
$$\overset{\bullet}\fu_q(\cG)\mod\overset{\on{ren}'}\longrightarrow \overset{\bullet}\fu_q(\cG)\mod^{\on{baby-ren}} \overset{\on{ren}''}\longrightarrow 
\overset{\bullet}\fu_q(\cG)\mod^{\on{ren}}$$
identifies with the functor $\on{ren}$, and the composition
$$\overset{\bullet}\fu_q(\cG)\mod^{\on{ren}} \overset{\on{un-ren}''}\longrightarrow 
\overset{\bullet}\fu_q(\cG)\mod^{\on{baby-ren}}\overset{\on{un-ren}'}\longrightarrow \overset{\bullet}\fu_q(\cG)\mod$$
identifies with the functor $\on{ren}$.

\medskip

As in \cite[Corollary 4.4.3]{AG}, we obtain that the functor $\on{un-ren}''$ is t-exact.

\sssec{}

Thus, we can think of $\overset{\bullet}\fu_q(\cG)\mod^{\on{ren}}$ as a  renormalization of $\overset{\bullet}\fu_q(\cG)\mod^{\on{baby-ren}}$. 

\medskip

Namely,
$\overset{\bullet}\fu_q(\cG)\mod^{\on{ren}}$ is the ind-completion of the full subcategory of $\overset{\bullet}\fu_q(\cG)\mod^{\on{baby-ren}}$
consisting of objects that are finite extensions of (shifts of) the the objects $\CM^{\mu,!*}_{\on{quant}}$, 
\emph{where the latter are viewed as objects in} $(\overset{\bullet}\fu_q(\cG)\mod^{\on{baby-ren}})^\heartsuit$.

\section{Quantum groups vs factorization modules equivalence}  \label{s:BFS}

In this section we will take the ground field to be $\BC$, the curve $X$ to be $\BA^1$, 
and the sheaf theory to be that of constructible
sheaves in the classical topology. We will relate the category $\overset{\bullet}\fu_q(\cG)\mod$ 
(or rather its renormalized version) to the category of factorizations modules over $\Omega_q^{\on{small}}$. 

\ssec{Matching the parameters}

In this subsection we will explain how to match the parameters needed to define the quantum group
with those needed to define $\Omega_q^{\on{small}}$. 

\sssec{} \label{sss:match}

Recall (see \secref{sss:b'}) that in order to define the category of modules over the quantum group, we started
with a bilinear form 
$$b':\Lambda\otimes \Lambda\to \sfe^{\times,\on{tors}},$$
such that the corresponding quadratic form belongs to $\on{Quad}(\Lambda,\sfe^{\times,\on{tors}})^W_{\on{restr}}$. 

\medskip

Since the ground field is $\BC$, the Tate twist is canonically trivialized, so we can regard $b'$ as a map
\begin{equation} \label{e:b' fact}
b':\Lambda\otimes \Lambda\to \sfe^{\times,\on{tors}}(-1).
\end{equation} 

We claim that the datum of \eqref{e:b' fact} gives rise to a geometric metaplectic datum for $T$ that satisfies
the additional condition of \secref{sss:simple root triv}\footnote{The discussion in the rest of this subsection 
applies to a general sheaf theory of a general ground field.}. 

\sssec{}

First, we claim that a bilinear form \eqref{e:b' fact} (without the extra condition on $q$) gives rise to a geometric
metaplectic datum $\CG^T$ for $T$. 

\medskip

Recall the description of factorization gerbes for tori, given in \cite[Sect. 4.1.3]{GLys}. Namely, to specify a factorization gerbe $\CG^T$,
we need to specify for every finite set $J$ and a map $\lambda_J:J\to \Lambda$ a gerbe $\CG^T_{\lambda^J}$ on $X^J$, along with the
compatibilities of \cite[Equations (4.3) and (4.4)]{GLys}.

\medskip

For $j$, denote $\lambda_j=\lambda_J(j)$. For un \emph{unordered} pair of elements $j_1\neq j_2$, let $\Delta_{j_1,j_2}$ denote the 
corresponding diagonal divisor in $X^J$.
We set
\begin{equation} \label{e:comb gerbe}
\CG^T_{\lambda^J}:=\left(\underset{j\in J}\boxtimes\, \omega^{q(\lambda_j)}\right) \bigotimes \left(\underset{j_1\neq j_2/\Sigma_2}\otimes\, 
\CO(-\Delta_{j_1,j_2})^{b(\lambda_{j_1},\lambda_{j_2})}\right).
\end{equation}

\medskip

The isomorphisms of \cite[Equations (4.4)]{GLys} are automatic. In order to construct the isomorphisms of \cite[Equations (4.3)]{GLys},
to simplify the notation we will consider the case $J=\{1,2\}$. Thus, we need to construct an isomorphism of gerbes
\begin{equation} \label{e:gerbes on X2}
\omega^{q(\lambda_1)} \otimes \omega^{q(\lambda_2)} \otimes (\CO(-\Delta)|_\Delta)^{b(\lambda_{j_1},\lambda_{j_2})}\simeq 
\omega^{q(\lambda_1+\lambda_2)}.
\end{equation}

\sssec{}

Let us note that for every element $c\in \sfe^\times(-1)$ we have a well-defined $\sfe^\times$-torsor, denoted $(-1)^c$, constructed as follows:

\medskip

To $c$ we associate the corresponding Kummer sheaf $\Psi_c$ on $\BG_m$. 
We set $(-1)^c$ to be equal to the fiber of $\Psi_c$ at $(-1)\in \BG_m$. 

\sssec{}

To define \eqref{e:gerbes on X2} let us first choose an ordering, namely $(1,2)$ on $\{1,2\}$. 
This ordering identifies the line bundle $\CO(-\Delta)|_\Delta$ with $\omega$. 

\medskip

We let \eqref{e:gerbes on X2} be the tautological isomorphism coming from the identity 
$$q(\lambda_1)\cdot q(\lambda_2)\cdot b(\lambda_{j_1},\lambda_{j_2})=q(\lambda_1+\lambda_2),$$ tensored with the line
$$(-1)^{b'(\lambda_1,\lambda_2)}.$$

Note that this is the only place in the construction where we use the data of a bilinear form $b'$, as opposed to that
of a quadratic form $q$. 

\sssec{}

Let us now show that the isomorphism \eqref{e:gerbes on X2} is canonically independent of the choice of the ordering. Indeed,
the swap of two factors multiplies the identification 
$$\CO(-\Delta)|_\Delta\simeq \omega$$
by $(-1)$. 

\medskip

The required isomorphism follows now from
$$(-1)^{b(\lambda_1,\lambda_2)}\otimes (-1)^{b'(\lambda_1,\lambda_2)}\simeq (-1)^{b'(\lambda_2,\lambda_1)}.$$

\begin{rem}
The the map 
$$\on{Bilin}(\Lambda,\sfe^\times(-1))\to \on{Quad}(\Lambda,\sfe^\times(-1))$$ is surjective, 
it follows that any geometric metaplectic data for $T$ can be obtained as a tensor product 
of one coming from a bilinear form $b'$ as above, with one with a vanishing quadratic form, i.e.,
one coming from a $\cT(\sfe)^{\on{tors}}$-gerbe (see \secref{sss:neutral case}). 
\end{rem}

\sssec{}    \label{sss:from bilin to gerbes with triv}

Let us now restore the condition that $q$ belong to $\on{Quad}(\Lambda,\sfe^{\times,\on{tors}})$. Let us show that
the resulting factorization gerbe on $\Conf$ satisfies the condition from \secref{sss:simple root triv}. 

\medskip

Indeed, according to formula \eqref{e:expl shifted gerbe vac}, we need construct a trivialization of the gerbe
$$\omega^{q(-\alpha_i)}\otimes (\omega^{\otimes \frac{1}{2}})^{b(-\alpha_i,2\rho)}.$$
For this, it suffices to show that 
$$q(-\alpha_i)^2\cdot b(-\alpha_i,2\rho)=1,$$
but this follows from condition \eqref{e:restr cond}. 

\ssec{Statement of the result}

In this subsection we will state the main theorem of this Part that establishes an equivalence between the categories
$\overset{\bullet}\fu_q(\cG)\mod^{\on{baby-ren}}$ and $\Omega^{\on{small}}_q\on{-FactMod}$. 

\sssec{}  

We start with a form $b'$ as in \secref{sss:match}. Choose the lines $\sfe^{i,\on{quant}}$, see \secref{sss:e lines}.
Let $\sff^{i,\on{fact}}$ be the dual lines. 

\medskip

To the data $(b',\{\sfe^{i,\on{quant}}\})$ be we associate the category 
$\overset{\bullet}\fu_q(\cG)\mod$, and its renormalized version $\overset{\bullet}\fu_q(\cG)\mod^{\on{baby-ren}}$.

\medskip

To the data of $b'$ we associate a geometric metaplectic data $\CG^T$ for $T$ (see \secref{sss:match}), and to the
data $(\CG^T,\{\sff^{i,\on{fact}}\})$ we attach the category $\Omega^{\on{small}}_q\on{-FactMod}$, see 
\secref{s:Omega small}. 

\sssec{}

The goal of this section is to prove the following:

\begin{thm}  \label{t:factorization vs quantum}
There exists a canonical equivalence 
$$\Omega^{\on{small}}_q\on{-FactMod}\simeq \overset{\bullet}\fu_q(\cG)\mod^{\on{baby-ren}},$$
which is t-exact and maps standards to standards.
\end{thm}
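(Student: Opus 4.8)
\textbf{Proof plan for \thmref{t:factorization vs quantum}.}
The plan is to realize both sides of the desired equivalence as the relative Drinfeld center of one and the same monoidal category, and then invoke Lurie's identification of $\BE_2$-algebras with factorization algebras. Concretely, I would proceed in three steps, exactly as outlined in \secref{sss:Hecke small quant grp cohomology} of the Introduction. First, I would establish a \emph{Koszul duality} statement: the monoidal category $\fu_q(\cN^+)\mmod(\Vect_q^\Lambda)$ (or rather its renormalized version $\fu_q(\cN^+)\mod^{\on{ren}}$ from \secref{sss:another renorm}) is equivalent, as a monoidal category acted on by $\Vect_q^\Lambda$, to the category of \emph{factorization modules at the origin} over a certain factorization algebra $\Omega^{\on{small}}_q{}'$ on $\Conf$ living over $\BA^1$. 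Here the passage from an associative (bi)algebra in $\Vect_q^\Lambda$ to a factorization algebra on the configuration space is the content of \cite{BFS}; in modern language it is Lurie's equivalence between augmented $\BE_1$-algebras (with a braiding, hence $\BE_2$) in a braided monoidal category and factorization algebras, combined with the explicit computation of the Koszul dual of $\fu_q(\cN^+)$, whose cohomology is precisely the exterior-algebra-type object giving the sign local system on $\overset{\circ}\Conf$. This is where the choice of the Gauss-sum lines $\sff^{i,\on{fact}}=(\sfe^{i,\on{quant}})^\vee$ enters, matching the local systems $(\sff^{i,\on{fact}})_X^{(n_i),\on{sign}}$ of \secref{s:Omega small}.

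Second, I would invoke the general categorical fact (also flagged in \secref{sss:Hecke small quant grp cohomology}) that for an $\BE_2$-algebra $A$ in a braided monoidal category $\bO$, the relative Drinfeld center $Z_{\on{Dr},\bO}(A\mmod)$ is canonically equivalent to the category $A\mmod_{\BE_2}$ of $\BE_2$-$A$-modules. Applying this with $\bO=\Vect_q^\Lambda$ and $A=\fu_q(\cN^+)$ gives, by definition of $\overset{\bullet}\fu_q(\cG)\mod$ (resp.\ its baby-renormalization), an identification with $\fu_q(\cN^+)\mmod_{\BE_2}$. Third, Lurie's $\BE_2$-vs-factorization equivalence translates $\BE_2$-modules over $A$ into factorization modules over the factorization algebra corresponding to $A$; combined with Step~1 this yields
$$\overset{\bullet}\fu_q(\cG)\mod^{\on{baby-ren}}\;\simeq\;\Omega^{\on{small}}_q{}'\on{-FactMod}.$$
It then remains to identify $\Omega^{\on{small}}_q{}'$ with the factorization algebra $\Omega^{\on{small}}_q$ of \secref{s:Omega small}: both are perverse, both are the Goresky--MacPherson extensions of their restrictions to $\overset{\circ}\Conf$ (by \thmref{t:GM from open}-type reasoning, which here is a direct Koszul-duality computation), and on $\overset{\circ}\Conf$ both restrict to the sign local system twisted by the lines $\sff^{i,\on{fact}}$ — so they coincide. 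Finally, t-exactness and the matching of standard objects: under Step~2 the induction functor $\ind^{\fu_q(\cG)}_{\fu_q(\cN^+)}$ corresponds to the free-$\BE_2$-module functor, which under Step~3 corresponds to the $!$-extension from a stratum, i.e.\ sends $\sfe^\mu$ to $\CM^{\mu,!}_{\Conf}$; dually for costandards. Since the baby Verma modules $\CM^{\mu,!}_{\on{quant}}=\ind^{\fu_q(\cG)}_{\fu_q(\cN^+)}(\sfe^\mu)$ generate the heart, t-exactness of the equivalence follows from t-exactness of $\ind^{\fu_q(\cG)}_{\fu_q(\cN^+)}$ and of the stratum $!$-extensions (the latter because the relevant open embeddings $\jmath_\mu$ are affine, as in \propref{p:properties of t fact}).

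The main obstacle I anticipate is Step~1, the Koszul-duality identification, carried out with sufficient care that it is compatible with \emph{all} the structure in play: the braiding coming from $b'$, the right $\Vect_q^\Lambda$-action, the monoidal structure, \emph{and} the factorization/$\BE_2$ coherences. In \cite{BFS} this was done by hand with explicit complexes; the point of the modern proof is to replace those computations by (i) the statement that $\fu_q(\cN^+)$, being the image of $U_q(\cN)^{\on{free}}\to U_q(\cN)^{\on{co-free}}$, is finite-dimensional and Koszul, with explicitly computable Koszul dual, and (ii) Lurie's machine. Making (i) and (ii) interact correctly — in particular ensuring that the braided (hence $\BE_2$) structure on $\fu_q(\cN^+)$ is carried to the factorization structure on $\Omega^{\on{small}}_q$ on the nose, and that the renormalizations on the two sides match (the baby-renormalization of \secref{s:renorm} corresponding exactly to the category $\Omega^{\on{small}}_q\on{-FactMod}$ of \secref{s:statement}, before the further $\on{ren}$) — is the delicate part. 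The identification of $\Omega^{\on{small}}_q{}'$ with $\Omega^{\on{small}}_q$ is comparatively routine once the perversity and cleanness statements are in hand, and the t-exactness/standards-to-standards claims are formal consequences of the constructions.
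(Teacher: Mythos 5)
You have correctly identified the three main ingredients — Koszul duality, the identification of a relative Drinfeld center with $\BE_2$-modules, and Lurie's $\BE_2$-vs-factorization equivalence — and this does match the paper's proof (Sections 9.5--9.8). However, there is a genuine confusion in Steps~1 and~2 about \emph{which object carries the $\BE_2$-structure}, and as written Step~2 is false.

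The categorical fact you invoke (from \cite[Proposition 4.36]{Fra}, cited in the paper) says: for an $\BE_2$-algebra $A$ in a braided monoidal category $\bO$, one has $Z_{\on{Dr},\bO}(A\mod)\simeq A\mod_{\BE_2}$, where $A\mod$ carries the relative-tensor-product monoidal structure with unit $A$. But $\fu_q(\cN^+)$ is a \emph{Hopf} algebra in $\Vect_q^\Lambda$, not an $\BE_2$-algebra; these are genuinely different structures, and the monoidal structure on $\fu_q(\cN^+)\mod$ has unit $\triv_{\fu_q(\cN^+)}(\sfe)$, not $\fu_q(\cN^+)$. So you cannot apply Francis's result directly to $A=\fu_q(\cN^+)$, and the claimed identification with $\fu_q(\cN^+)\mod_{\BE_2}$ does not make sense as stated. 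The paper handles this by running Koszul duality \emph{first}: the monad $\inv_A\circ\triv_A$ on $\Vect_q^\Lambda$ is given by an associative algebra $\on{Inv}_A$, and Barr--Beck--Lurie gives $\fu_q(\cN^+)\mod^{\on{ren}}\simeq \on{Inv}_A\mod$; the monoidal structure on $\fu_q(\cN^+)\mod^{\on{ren}}$ transports to one on $\on{Inv}_A\mod$ with unit $\on{Inv}_A$, and \emph{this} is by definition an $\BE_2$-structure on $\on{Inv}_A$. Francis's theorem is then applied to $\on{Inv}_A$, not to $\fu_q(\cN^+)$.

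The same confusion infects your Step~1, which asserts that $\fu_q(\cN^+)\mod^{\on{ren}}$ is equivalent to factorization modules at the origin. It is not: Koszul duality gives $\fu_q(\cN^+)\mod^{\on{ren}}\simeq \on{Inv}_A\mod$, i.e.\ left (= $\BE_1$-) modules over the $\BE_2$-algebra, whereas factorization modules on $\Conf_{\infty\cdot x}$ correspond to $\BE_2$-modules. The factorization-module description only appears after passing to relative Drinfeld centers on both sides. Your Step~3 and the subsequent identification of the two candidate factorization algebras, as well as the remarks on t-exactness and standards-to-standards, are on target — the paper verifies the identification via \propref{p:dual of Hopf} and the injection/surjection $U_q(\cN)^{\on{free}}\twoheadrightarrow \fu_q(\cN^+)\hookrightarrow U_q(\cN)^{\on{co-free}}$ to pin down $\Omega_B$ as the Goresky--MacPherson extension, and t-exactness falls out of \corref{c:exactness Kosz}(c).
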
 

We note that the assertion of \thmref{t:factorization vs quantum} at the level of the hearts of the corresponding categories
is the main result of the book \cite{BFS}, specifically Theorem 17.1 in Part III of {\it loc. cit}.

\sssec{}

As a formal corollary of \thmref{t:factorization vs quantum} we obtain: 

\begin{cor}  \label{c:factorization vs quantum}
There exists a canonical equivalence 
$$\Omega^{\on{small}}_q\on{-FactMod}^{\on{ren}}\simeq \overset{\bullet}\fu_q(\cG)\mod^{\on{ren}},$$
which is t-exact and maps standards to standards.
\end{cor}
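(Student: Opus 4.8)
The plan is to derive \corref{c:factorization vs quantum} as a purely formal consequence of \thmref{t:factorization vs quantum}, using the parallel renormalization procedures developed on the two sides of the equivalence. Recall that both $\Omega^{\on{small}}_q\on{-FactMod}^{\on{ren}}$ (see \secref{ss:renorm fact}) and $\overset{\bullet}\fu_q(\cG)\mod^{\on{ren}}$ (see \secref{s:renorm}) are obtained from $\Omega^{\on{small}}_q\on{-FactMod}^{\on{baby-ren}}$-type intermediate categories — more precisely, from $\Omega^{\on{small}}_q\on{-FactMod}$ and $\overset{\bullet}\fu_q(\cG)\mod^{\on{baby-ren}}$ respectively — by the same abstract device: one ind-completes the full (non-cocomplete) subcategory of objects that are finite extensions of shifts of the irreducible objects. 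On the factorization side these irreducibles are the $\CM^{\mu,!*}_{\Conf}$, and on the quantum side they are the $\CM^{\mu,!*}_{\on{quant}}$, both viewed as objects in the heart of the respective category.

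First I would invoke \thmref{t:factorization vs quantum} to obtain a t-exact equivalence
$$\Theta:\Omega^{\on{small}}_q\on{-FactMod}\;\xrightarrow{\ \sim\ }\;\overset{\bullet}\fu_q(\cG)\mod^{\on{baby-ren}}$$
sending standards to standards. Since $\Theta$ is t-exact and an equivalence, it induces an equivalence of hearts, and in particular it identifies the images of the standard objects $\CM^{\mu,!}_{\Conf}\leftrightarrow \CM^{\mu,!}_{\on{quant}}$ and, by the orthogonality characterizations \eqref{e:Ext standard fact} on one side and the highest-weight-category structure on the other, also the costandards $\CM^{\mu,*}_{\Conf}\leftrightarrow \CM^{\mu,*}_{\on{quant}}$ and hence the irreducibles $\CM^{\mu,!*}_{\Conf}\leftrightarrow \CM^{\mu,!*}_{\on{quant}}$ (these are the images of the canonical maps $\CM^{\mu,!}\to\CM^{\mu,*}$, which $\Theta$ preserves). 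Next I would note that any t-exact equivalence of compactly generated categories with bounded-below t-structures carries finite extensions of shifts of irreducibles to finite extensions of shifts of irreducibles, hence restricts to an equivalence of the corresponding non-cocomplete subcategories of ``renormalizable'' objects. Ind-completing this equivalence of small categories — a functorial operation — produces the desired equivalence $\Omega^{\on{small}}_q\on{-FactMod}^{\on{ren}}\simeq\overset{\bullet}\fu_q(\cG)\mod^{\on{ren}}$.

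It remains to check the two stated compatibilities. For t-exactness: by \propref{p:prop of ren} and its quantum analog \propref{p:prop of ren quant}, the t-structures on both renormalized categories are characterized by the $\on{un-ren}$ functors (an object is connective iff its image under $\on{un-ren}$ is connective), and $\on{un-ren}$ on both sides commutes with the ind-extended equivalence by construction; since $\Theta$ is t-exact on the baby-ren categories, the ind-extension is t-exact. For preservation of standards: one must observe that $\CM^{\mu,!}_{\Conf}$, viewed as an object of $\Omega^{\on{small}}_q\on{-FactMod}^{\on{ren}}$, is the image under $\on{ren}$ of $\CM^{\mu,!}_{\Conf}\in\Omega^{\on{small}}_q\on{-FactMod}$ (it is compact in the unrenormalized category by \propref{p:fin length}), and similarly $\CM^{\mu,!}_{\on{quant}}\in\overset{\bullet}\fu_q(\cG)\mod^{\on{ren}}$ is $\on{ren}'$ of the standard in $\overset{\bullet}\fu_q(\cG)\mod$; since $\Theta$ intertwines the $\on{un-ren}$ (and hence, by adjunction, the $\on{ren}$) functors and matches standards in the baby-ren categories, it matches them after the final ind-extension.

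The only genuinely substantive point — and the one I would expect to require the most care, though it is still routine — is verifying that $\Theta$ (equivalently, the equivalence of \thmref{t:factorization vs quantum}) really does match $\CM^{\mu,!*}_{\Conf}$ with $\CM^{\mu,!*}_{\on{quant}}$ and not with a twist, so that the two classes of ``renormalizable'' objects correspond on the nose; this follows once one knows $\Theta$ matches standards, because the highest weight structure then forces the match of costandards via \eqref{e:Ext standard fact} and hence of the irreducibles. Everything else is a formal manipulation with ind-completions and t-structures, and no further geometric or representation-theoretic input is needed beyond \thmref{t:factorization vs quantum} itself.
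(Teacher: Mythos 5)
Your proof is correct and takes exactly the route the paper intends: the paper itself gives no explicit argument beyond calling the corollary a "formal" consequence of \thmref{t:factorization vs quantum}, and what you have written is precisely the content of that formality — using that both renormalizations are obtained by ind-completing the subcategory of finite extensions of shifts of irreducibles, that a t-exact equivalence matching standards also matches costandards (via \eqref{e:Ext standard fact}) and hence irreducibles, and that the induced equivalence on the ind-completions inherits t-exactness via the \texttt{un-ren} functors. The minor aside about "compactly generated categories with bounded-below t-structures" is an unnecessary flourish — t-exactness of the equivalence alone suffices to preserve the heart and finite extensions of shifts — but this does not affect the argument.
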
 

The rest of this section is devoted to the proof of \thmref{c:factorization vs quantum}\footnote{The proof of 
\thmref{t:factorization vs quantum} given below is the result of discussions between the first-named
author and J.~Lurie. However, the responsibility for any shortcomings that may result from its
publication lie with D.G.}.

\sssec{} \label{sss:proof fin length quant} 

Note also that \thmref{t:factorization vs quantum} gives a proof of \propref{p:fin length} when $k=\BC$. 

\ssec{Koszul duality for Hopf algebras}  \label{ss:Koszul abstract}

In this section we will perform the first step in the proof of \thmref{t:factorization vs quantum}: it consists of
passing from a Hopf algebra in $\Vect^\Lambda_q$ (such as $\fu_q(\cN^+)$) to its Koszul dual $\BE_2$-algebra. 

\sssec{}   \label{sss:Koszul abstract}

Let $A$ be a Hopf algebra in $\Vect^\Lambda_q$ such that its augmentation ideal is contained in
$\Vect_q^{\Lambda^{\on{pos}}-0}$, and each graded component is finite-dimensional.

\medskip

Consider the category $A\mod$, and let $A\mod^{\on{ren}}$ denote its renormalized version
defined as in \secref{sss:another renorm}. 

\medskip

By the definition of $A\mod^{\on{ren}}$, the functor of trivial action
$$\triv_{A}:\Vect^\Lambda_q\to A\mod^{\on{ren}},$$
admits a \emph{continious} right adjoint. 

\medskip

This right adjoint is also conservative, because by the condition on $A$, the essential image of
the functor $\triv_{A}$ generates $A\mod^{\on{ren}}$ under colimits (indeed, the essential image 
of $(\Vect^\Lambda_q)^{\on{fin.dim}}$ along $\triv_{A}$ generates $A\mod^{\on{fin.dim}}$ 
under finite colimits). 

\sssec{}

The monad
$$\inv_{A}\circ \triv_{A}$$
acting on $\Vect^\Lambda_q$ commutes with right multiplication. Hence, it is given by an 
associative algebra in $\Vect^\Lambda_q$, to be denoted $\on{Inv}_{A}$. The underlying 
object of $\Vect^\Lambda_q$ identifies with $\inv_A\circ \triv_A(\sfe)$, i.e., 
$$\lambda\mapsto \CHom_{A\mod}(\triv_{A}(\sfe^0),\triv_{A}(\sfe^\lambda)).$$

\medskip

Tautologically, the functor $\inv_{A}$ upgrades to a functor 
\begin{equation} \label{e:Koszul N^+}
\inv^{\on{enh}}_{A}:A\mod^{\on{ren}}\to \on{Inv}_{A}\mod.
\end{equation} 

By the Barr-Beck-Lurie theorem, the above functor \eqref{e:Koszul N^+} is an equivalence.

\sssec{}

Let now $A$ be a Hopf algebra in $\Vect^\Lambda_q$. This structure is equivalent to giving 
$A\mod$ (or $A\mod^{\on{ren}}$) a structure of monoidal category, for which the forgetful
functor 
$$A\mod^{\on{ren}} \to \Vect^\Lambda_q$$
is monoidal, in a way compatible with the right action of the braided monoidal category $\Vect^\Lambda_q$
(see \secref{sss:Hopf algebra}). The unit in $A\mod^{\on{ren}}$ is given by $\triv_{A}(k)$. 

\medskip

Hence, the equivalence of \eqref{e:Koszul N^+} induces a monoidal structure on $\on{Inv}_{A}\mod$,
for which the unit object is
$$\on{Inv}_{A}\in  \on{Inv}_{A}\mod.$$

\medskip

Such a structure is equivalent to a structure on $\on{Inv}_{A}$ of $\BE_2$-algebra in $\Vect^\Lambda_q$, 
so that \eqref{e:Koszul N^+}.

\sssec{}

Thus, we obtain an equivalence of the corresponding (relative to $\Vect^\Lambda_q$) Drinfeld centers
\begin{equation} \label{e:Koszul Dr}
Z_{\on{Dr},\Vect^\Lambda_q}(A\mod^{\on{ren}})\to Z_{\on{Dr},\Vect^\Lambda_q}(\on{Inv}_{A}\mod).
\end{equation} 

\sssec{}

By \cite[Proposition 4.36]{Fra} we have:
$$Z_{\on{Dr},\Vect^\Lambda_q}(\on{Inv}_{A}\mod)\simeq \on{Inv}_{A}\mod_{\BE_2},$$
where the latter denotes the category of $\BE_2$-modules over the $\BE_2$-algebra 
$\on{Inv}_{A}$ in the braided monoidal category $\Vect^\Lambda_q$.

\sssec{}

To summarize, we obtain an equivalence
\begin{equation} \label{e:E2 as center abstract}
Z_{\on{Dr},\Vect^\Lambda_q}(A\mod^{\on{ren}})\simeq \on{Inv}_{A}\mod_{\BE_2}.
\end{equation}

\ssec{Factorization algebras  vs $\BE_2$-algebras}  \label{ss:factorization vs E2}

We will now perform a crucial step in the transition between $\overset{\bullet}\fu_q(\cG)\mod^{\on{ren}}$
and $\Omega^{\on{small}}_q\on{-FactMod}$: we will relate (a certain kind of) $\BE_2$-algebras in $\Vect^\Lambda_q$
and factorization algebras in $\Shv_{\CG^\Lambda}(\Conf)$. 

\sssec{}  

Recall that in this section the curve $X$ is taken to be $\BA^1$ (with $x\in X$ being $0\in \BA^1$). 

\medskip

According to \cite{Lur}, to a braided monoidal category $\bO$ one can attach a \emph{factorization category} over 
the Ran space of $\BA^1$, denoted $\on{Fact}(\bO)$. 
%When the braided monoidal category in question is $\Vect^\Lambda_q$, the resulting
%category is $$\Shv_{\CG^T}(\Gr_{T,\Ran}).$$

\medskip

Futhermore, if $B$ is an $\BE_2$-algebra in $\bO$, then to it there corresponds a \emph{factorization algebra} 
$\Omega_B$ in $\on{Fact}(\bO)$, and we have an equivalence between the category of $\BE_2$-modules
with respect to $B$ in $\bO$ and factorization $\Omega_B$-modules in $\on{Fact}(\bO)$, i.e.,
\begin{equation} \label{e:E2 vs factorization abs}
\Omega_B\on{-FactMod} \simeq B\mod_{\BE_2}. 
\end{equation}

\sssec{}

We take $\bO=\Vect^\Lambda_q$. In this case $\on{Fact}(\bO)$ identifies with $\Shv_{\CG^T}(\Gr_{T,\Ran})$. 

\medskip

Let now $B$ be a (non-unital) $\BE_2$-algebra $\Vect^\Lambda_q$, which is contained in $\Vect_q^{\Lambda^{\on{neg}}-0}$.
Then $\Omega_B$, viewed as an object of $\Shv_{\CG^T}(\Gr_{T,\Ran})$, is supported on $(\Gr_{T,\Ran})^{\on{neg}}$. Hence, by
\secref{sss:fact Gr T vs Conf}, we can think of $\Omega_B$ as a $\CG^\Lambda$-twisted 
factorization algebra on $\Conf$. 

\medskip

Furthermore, according to \eqref{e:compare fact mod}, we can think of factorization $\Omega_B$-modules on 
$\Gr_{T,\Ran}$ as factorization $\Omega_B$-modules on $\Conf_{\infty\cdot x}$.

\medskip

Hence, \eqref{e:E2 vs factorization abs} becomes an equivalence
\begin{equation} \label{e:E2 vs factorization}
\Omega_B\on{-FactMod} \simeq B\mod_{\BE_2},
\end{equation}
where $\Omega_B\on{-FactMod}$ denotes the category of factorization $\Omega_B$-modules in 
$\Shv_{\CG^\Lambda}(\Conf_{\infty\cdot x})$.

\medskip

Let us write how certain functors on one side of the equivalence \eqref{e:E2 vs factorization} 
translate to the other side. 

\sssec{}

Since our curve $X$ is $\BA^1$, the canonical line bundle $\omega$ on $X$ is trivialized. In addition, we have a 
canonical generator for the diagonal divisor $\Delta\subset X\times X$. In particular, by formulas \eqref{e:comb gerbe}
and \eqref{e:expl shifted gerbe vac}, the fiber of $\CG^\Lambda$ at any point $\mu\cdot x\in \Conf_{\infty\cdot x}$
admits a canonical trivialization. 

\begin{rem}
It is easy to see that the gerbe $\CG^\Lambda$ on all of $\Conf_{\infty\cdot x}$ (and on $\Conf$)
admits a canonical trivialization. However, these trivializations are \emph{incompatible} with factorization.

\medskip

The above trivialization of $\CG^\Lambda$ on $\Conf$ is also incompatible with the trivialization of $\CG^\Lambda$ on
$\overset{\circ}{\Conf}$ of \secref{sss:open part triv}. The discrepancy of these two trivialization  
is given by a \emph{non-trivial} local system on $\overset{\circ}{\on{Conf}}(X,\Lambda^{\on{neg}})$. This is the braiding local
system of \cite[Part III, Sect. 3.1]{BFS}. As a result, the twisted perverse sheaf $\overset{\circ}\Omega{}^{\on{small}}_q$, 
viewed as a plain perverse sheaf on $\overset{\circ}{\Conf}$ (via the above trivialization of $\CG^\Lambda$ specific to $\BA^1$)
is not just the sign local system, but has a monodromy that depends on $q$. 
\end{rem} 

\sssec{}  \label{sss:!-fiber}

First off, for $\mu\in \Lambda$, the functor
$$B\mod_{\BE_2} \to \Vect^\Lambda_q \to \Vect^\mu_q \simeq \Vect,$$
(where the second arrow is the projection on the $\mu$-component) 
corresponds in terms of the equivalence \eqref{e:E2 vs factorization} to the composite:
$$\Omega_B\on{-FactMod}\overset{\oblv_{\on{Fact}}}\longrightarrow \Shv_{\CG^\Lambda}(\Conf_{\infty\cdot x})
\overset{\text{!-fiber at }\mu\cdot x}\longrightarrow \CG^\lambda|_{\mu\cdot x}\simeq \Vect.$$

\sssec{} \label{sss:hyperbolic fiber}

Consider now the functor $\Omega_B\on{-FactMod}\to \Vect$ equal to the composite: 

\begin{itemize}

\item 
The forgetful functor $\Omega_B\on{-FactMod}\to \Shv_{\CG^\Lambda}(\Conf_{\infty\cdot x})$;

\item The functor of !-restriction to the subspace of \emph{non-negative real configurations} 
$$\Conf^{\BR^{\geq 0}}_{\infty\cdot x}\subset \Conf_{\infty\cdot x};$$

\item The functor of *-fiber at $\mu\cdot x \in \Conf^{\BR^{\geq 0}}_{\infty\cdot x}$. 

\item The identification $\CG^\lambda|_{\mu\cdot x}\simeq \Vect$. 

\end{itemize}

The corresponding functor $B\mod_{\BE_2}\to \Vect$ is the composite
$$B\mod_{\BE_2}\to B\mod\overset{\be\underset{B}\otimes -}\longrightarrow \Vect^\Lambda_q \to \Vect^\mu_q \simeq \Vect.$$

\ssec{Hopf algebras vs factorization algebras}  \label{ss:Hopf vs factorization}

In this subsection we will supply some explicit information on the factorization algebra $\Omega_B$ corresponding via
\eqref{e:E2 vs factorization abs} to the augmentation ideal in $\on{Inv}_A$, where $A$ is a Hopf algebra as in \secref{sss:Koszul abstract}. 

\sssec{}

Let $B$ be the augmentation ideal in $\on{Inv}_{A}$. Composing \eqref{e:E2 vs factorization}
with \eqref{e:E2 as center abstract}, we obtain an equivalence
\begin{equation} \label{e:center to factor}
Z_{\on{Dr},\Vect^\Lambda_q}(A\mod^{\on{ren}}) \simeq \Omega_B\on{-FactMod}.
\end{equation}

\sssec{}

Let us see how the obvious forgetful functors on one side of the equivalence \eqref{e:center to factor}
look on the other side. 

\medskip

The functor of !-fiber at the point $\mu\cdot x\in \Conf_{\infty\cdot x}$ on $\Omega_B\on{-FactMod}$
(i.e., the composite described in \secref{sss:!-fiber}) corresponds to the composite 
$$Z_{\on{Dr},\Vect^\Lambda_q}(A\mod^{\on{ren}}) \to   A\mod^{\on{ren}} \overset{\inv_A} \to \Vect^\Lambda_q \to \Vect^\mu_q \simeq \Vect,$$
where the first arrow in the forgetful functor and the third arrow is the projection onto the $\mu$-component.

\medskip

Consider now the composite functor $\Omega_B\on{-FactMod}\to \Vect$, described in \secref{sss:hyperbolic fiber}. It corresponds to the 
forgetful functor 
$$Z_{\on{Dr},\Vect^\Lambda_q}(A\mod^{\on{ren}}) \to \Vect^\Lambda_q \to \Vect^\mu_q \simeq  \Vect,$$
where the second arrow is the projection onto the $\mu$-component.

\sssec{}  \label{sss:dual of Hopf}

We have the following additional two properties of the assignment $A\rightsquigarrow \Omega_B$:

\begin{prop} \label{p:dual of Hopf} \hfill

\smallskip

\noindent{\em(a)}
For $\mu\in \Lambda^{\on{neg}}$, the vector space equal to the *-fiber at $\mu\cdot x$ of $\Omega_B$
identifies with the $-\mu$-component of $\on{Inv}_{A^\vee}$, where $A^\vee$ is the component-wise linear
dual of $A$, viewed as a Hopf algebra in $\Vect^\Lambda_{q^{-1}}$. 

\smallskip

\noindent{\em(b)} For $\mu\in \Lambda^{\on{neg}}$, the vector space equal to the *-fiber at $\mu\cdot x$ of the 
!-restriction of $\Omega_B$ to $\Conf^{\BR}\subset \Conf$
identifies with the vector space \emph{dual} to the $-\mu$-component of $A$. 

\end{prop}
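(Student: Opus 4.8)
The plan is to prove both statements by reducing them, via factorization, to a local computation at $\mu\cdot x$, and then running Koszul duality through Lurie's factorization--$\BE_2$ dictionary (\secref{ss:factorization vs E2}). Since $\Omega_B$ and the gerbe $\CG^\Lambda$ both factorize and the configuration $\mu\cdot x$ is set-theoretically supported at the single point $x$, the $*$-fiber at $\mu\cdot x$ --- of $\Omega_B$ in (a), and of its $!$-restriction to $\Conf^{\BR}$ in (b) --- depends only on the restriction of $\Omega_B$ to the connected component $\Conf^\mu$, taken at the unique point of its deepest diagonal stratum. Over $\Conf^\mu\simeq \prod_i X^{(n_i)}$ the sheaf $\Omega_B$ is, by \secref{ss:factorization vs E2} together with the Koszul-duality equivalence $A\mod^{\on{ren}}\simeq \on{Inv}_A\mod$ of \secref{ss:Koszul abstract} (which identifies $B$ with the augmentation ideal of $\on{Inv}_A$), the explicit constructible complex glued from the graded components $B^{-\alpha_i}$ by a cobar-type totalization carrying the $q$-braiding twist along the diagonals. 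The two fiber operations in (a) and (b) will then be seen to extract, respectively, the full ``derived'' $\BE_2$-delooping of $B$ and its ``underived'' $\BE_1$-truncation.

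For part (a) I would compute the $*$-fiber at $\mu\cdot x$ as the totalization of $\Omega_B|_{\Conf^\mu}$ over its entire diagonal stratification, i.e.\ as the costalk at $\mu\cdot x$; finiteness of the graded pieces of $A$ (hence of $\on{Inv}_A$, hence of $B$) makes this converge. The resulting complex is the reduced $\BE_2$-cobar delooping of $B$ in $\Lambda$-degree $\mu$; since $B$ is the augmentation ideal of the Koszul dual $\on{Inv}_A$ of $A$, the near-involutivity of Koszul duality (together with the fact that for graded-finite Hopf algebras passing to the component-wise dual intertwines $\on{Inv}_A$ with $\on{Inv}_{A^\vee}$, compatibly with $q\leftrightarrow q^{-1}$) identifies this delooping with $\on{Inv}_{A^\vee}$ in $\Lambda$-degree $-\mu$. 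A convenient way to organize the bookkeeping is to pass first to the Verdier dual: $\BD^{\on{Verdier}}(\Omega_B)$ is the factorization algebra in $\Shv_{(\CG^\Lambda)^{-1}}(\Conf)$ attached to the component-wise dual $\BE_2$-algebra, so the $*$-fiber of $\Omega_B$ at $\mu\cdot x$ is the linear dual of the $!$-fiber at $\mu\cdot x$ of the latter, and the $!$-fiber reads off a graded piece of $\on{Inv}_{A^\vee}$ --- as in the translations of forgetful and fiber functors recorded in \secref{ss:Hopf vs factorization}, applied here to the factorization algebra itself rather than to one of its modules.

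For part (b) the point is that $!$-restricting to the real locus $\Conf^{\BR}\subset\Conf$ destroys the $\BE_2$-structure and retains only the $\BE_1$ (associative) structure that configurations on the line $\BR$ see; the $*$-fiber at $\mu\cdot x$ of the resulting sheaf on $\Conf^{\BR}$ is a vanishing-cycles/``real fiber functor'' of exactly the type described, in its half-line-with-boundary incarnation, in \secref{sss:hyperbolic fiber}. Carrying out the analogue of that computation for the factorization algebra $\Omega_B$ itself, and transporting through the equivalence \eqref{e:center to factor}, this functor collapses the $\BE_2$-delooping of part (a) to its underived $\BE_1$-level, which in $\Lambda$-degree $\mu$ is a single one-step bar term, namely the linear dual of the $-\mu$-component of $A$. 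This is a recasting of the fiber-functor computation of \cite[Part III]{BFS}, which one may either cite or reprove in this form.

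The main obstacle will be the two ``chiral-(co)homology-at-a-point'' identifications: that the $*$-fiber of $\Omega_B$ over $\Conf^\mu$ computes the $\BE_2$-cobar delooping of $B$, and that over the real locus it degenerates to the underived $\BE_1$-bar term. Making these precise requires matching the homotopy-coherent data of Lurie's factorization--$\BE_2$ equivalence of \cite{Lur} (and the behaviour of Verdier duality through it) with the combinatorial cobar complexes computing $\on{Inv}_{A^\vee}$ and $A$, and keeping careful track of the cohomological shifts and $q$-braiding twists built into the definition of $\Omega_q^{\on{small}}$. Once that dictionary is set up --- which is essentially what ``recasting \cite{BFS} in the language of \cite{Lur}'' amounts to --- both (a) and (b) follow formally.
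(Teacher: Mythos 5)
The paper records \propref{p:dual of Hopf} without proof --- it is invoked in the proof of \corref{c:exactness Kosz} but never itself derived --- so there is no argument of the paper's own to compare against. Your outline correctly isolates the expected ingredients: factorization reduces everything to a local computation on $\Conf^\mu$; the two fibers should be bar/cobar-type invariants of $B$ pushed through Lurie's factorization--$\BE_2$ dictionary; and the whole statement recasts the fiber-functor computations of \cite[Part~III]{BFS}. The real-locus/$\BE_1$ picture you propose for (b) is also the right idea.

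There is, however, a concrete error in the Verdier-duality route you offer for part~(a), and a secondary gap in (b). For (a): the formulas $(\Omega_B)^*_{\mu\cdot x}=((\BD^{\on{Verdier}}\Omega_B)^!_{\mu\cdot x})^*$ and $(\Omega_B)^!_{\mu\cdot x}=B^\mu=(\on{Inv}_A)^\mu$ are automatic; and for graded-finite $A$ one has $(\on{Inv}_{A^\vee})^{-\mu}=((\on{Inv}_A)^\mu)^*$. Matching the proposition thus requires $(\BD^{\on{Verdier}}\Omega_B)^!_{\mu\cdot x}=(\on{Inv}_A)^\mu$, i.e.\ $\BD^{\on{Verdier}}\Omega_B$ is the factorization algebra of an $\BE_2$-algebra whose underlying graded vector space is \emph{again} $B$ (only the braiding and the $\BE_2$-structure change). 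Your phrasing instead has $\BD^{\on{Verdier}}\Omega_B$ attached to the ``component-wise dual'' $\BE_2$-algebra, with its $!$-fiber ``a graded piece of $\on{Inv}_{A^\vee}$''; running that through the displayed chain gives $((\on{Inv}_{A^\vee})^{-\mu})^*=(\on{Inv}_A)^\mu=B^\mu$, which is the $!$-fiber of $\Omega_B$, not its $*$-fiber. Taken literally this asserts that the stalk and costalk of $\Omega_B$ at $\mu\cdot x$ agree --- false already for $\Omega_q^{\on{small}}$. So you are off by exactly one linear dual, and the needed identification of $\BD^{\on{Verdier}}\Omega_B$ is in fact equivalent to (a) itself; it cannot be obtained formally and requires a genuine compatibility between Verdier duality and the factorization--$\BE_2$ dictionary that you would have to establish. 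For (b): the hyperbolic-fiber functor of \secref{sss:hyperbolic fiber} lives on $\Conf^{\BR^{\geq 0}}_{\infty\cdot x}$ (a half-line with a marked boundary point), whereas (b) concerns $\Conf^{\BR}\subset\Conf$ (the full line, no marked point); near $\mu\cdot x$ these are genuinely different real germs, so invoking the half-line computation is not automatic and needs its own argument. You rightly flag the $\BE_2$-cobar and $\BE_1$-bar identifications as the hard part, but --- together with the dual just traced through --- those identifications \emph{are} the proof, not the bookkeeping that remains once the proof is done.
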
 

\sssec{}

We now claim:

\begin{cor} \label{c:exactness Kosz}  \hfill

\smallskip

\noindent{\em(a)} If $A$ is concentrated in cohomological degrees $\geq 0$ (resp., $\leq 0$), with respect to the
obvious t-structure on $\Vect^\Lambda_q$, then $\Omega_B$, viewed as an object of $\Shv_{\CG^\Lambda}(\Conf)$,
is concentrated in perverse cohomological degrees $\geq 0$ (resp., $\leq 0$). 

\smallskip

\noindent{\em(a')} If $A$ is concentrated in cohomological degree $0$,
then $\Omega_B$ is perverse. 

\smallskip

\noindent{\em(b)} If $A_1$ and $A_2$ are both concentrated in cohomological degree $0$ and 
$A_1\to A_2$ is a surjective (resp., injective) map of Hopf algebras, then the induced map
$\Omega_{B_2}\to \Omega_{B_1}$ is injective (resp., surjective).

\smallskip

\noindent{\em(c)} Under the assumption of \emph{(a')}, the equivalence \eqref{e:center to factor} is t-exact.

\end{cor}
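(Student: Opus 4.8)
The plan is to derive all four parts from \propref{p:dual of Hopf} together with the general formalism of factorization modules on $\Conf$ of \secref{s:fact conf}, via a reduction to the ``one-point'' fibers. Write $\Conf=\bigsqcup_\lambda\Conf^\lambda$ and stratify $\Conf$ by colored-divisor type: a stratum is indexed by a multiset $\{\mu_1,\dots,\mu_m\}$ of elements of $\Lambda^{\on{neg}}-0$ with $\sum_k\mu_k=\lambda$, has dimension $m$, and its generic point is $\sum_k\mu_k\cdot y_k$ with the $y_k$ distinct. Since $\Omega_B$ is a factorization algebra over a base with no moduli (the $\BE_2$-algebra $\on{Inv}_A$ lives in $\Vect^\Lambda_q$), it is constructible and homogeneous along these strata, and by factorization and étaleness of the addition map the $*$- (resp.\ $!$-) restriction of $\Omega_B$ to such a stratum is, étale-locally, $\boxtimes_k$ of the $*$- (resp.\ $!$-) restrictions of $\Omega_B$ to the fully-diagonal substrata $X\hookrightarrow\Conf^{\mu_k}$; each of the latter is a gerbe-twisted local system whose fiber is the $*$- (resp.\ $!$-) fiber of $\Omega_B$ at $\mu_k\cdot x$. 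By \propref{p:dual of Hopf}(a) the $*$-fiber at $\mu\cdot x$ is (up to the normalizing shift built into $\Omega_B$) the $(-\mu)$-component of $\on{Inv}_{A^\vee}$, and by the dictionary of \secref{ss:Hopf vs factorization} --- under which the $!$-fiber at $\mu\cdot x$ corresponds to $\inv_A$ applied to the vacuum, i.e.\ to $\on{Inv}_A$ --- the $!$-fiber at $\mu\cdot x$ is the $(-\mu)$-component of $\on{Inv}_A$. Thus the geometry is reduced to the cohomological amplitude, weight by weight, of the two Koszul-dual objects $\on{Inv}_{A^\vee}$ and $\on{Inv}_A$, each of finite amplitude in a fixed $\Lambda$-weight (the augmentation ideals being supported in $\Lambda^{\on{pos}}-0$) bounded in terms of the amplitude of $A$.

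I would then prove (a): perversity bounds on $\Omega_B$ relative to this stratification are equivalent to $j_S^*\Omega_B\in D^{\le-\dim S}$ (for ${}^pD^{\le0}$) and $j_S^!\Omega_B\in D^{\ge-\dim S}$ (for ${}^pD^{\ge0}$), and by the previous paragraph and multiplicativity of $\boxtimes$ these reduce to the corresponding one-point estimates on the fibers at each $\mu\cdot x$, which, after absorbing the normalizing shift, become half-line bounds on the Koszul-dual complexes. These hold when $A$ is concentrated in cohomological degrees $\ge0$ (yielding the $!$-bounds, hence $\Omega_B\in{}^pD^{\ge0}$) resp.\ $\le0$ (yielding the $*$-bounds, hence $\Omega_B\in{}^pD^{\le0}$). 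Assertion (a') is then the conjunction of the two halves of (a) for $A$ concentrated in degree $0$, giving $\Omega_B$ perverse.

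For (b), by (a') the morphism $\Omega_{B_2}\to\Omega_{B_1}$ induced by $A_1\to A_2$ is a map of perverse sheaves, and --- in the spirit of \cite{BFS} --- injectivity and surjectivity of such a map are detected by the ``real-stalk'' functors ``$!$-restrict to $\Conf^{\BR}$, then $*$-fiber at $\mu\cdot x$'', which is precisely the content of \propref{p:dual of Hopf}(b): that proposition carries $\Omega_{B_i}$ to the dual of the $(-\mu)$-component of $A_i$ and the morphism to the transpose of $A_1\to A_2$, so it is injective iff $A_1\to A_2$ is surjective and surjective iff $A_1\to A_2$ is injective. For (c): by (a') the object $\Omega_B$ is perverse, so $\Omega_B\on{-FactMod}$ carries the t-structure of \secref{ss:t on fact} for which $\oblv_{\on{Fact}}$ is t-exact and detects the t-structure; on the quantum side, $A$ in degree $0$ makes $\on{Inv}_A$ coconnective, so that $Z_{\on{Dr},\Vect^\Lambda_q}(A\mod^{\on{ren}})$ carries a t-structure characterized through its forgetful functors; the dictionary of \secref{ss:Hopf vs factorization} identifies these forgetful functors under \eqref{e:center to factor}, and since on both sides connectivity/coconnectivity is read off weight-by-weight from the (matched) forgetful functors, the equivalence is t-exact.

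The main obstacle is the bookkeeping behind the reduction in part (a): one must track, uniformly over all strata and all $\Lambda$-weights, the cohomological amplitude of the Koszul-dual complexes $\on{Inv}_{A^\vee}$, $\on{Inv}_A$ against the stratum dimension and against the normalization implicit in the passage from the $\BE_2$-algebra $\on{Inv}_A$ to the factorization algebra $\Omega_B$ on $\Conf$ (through $\Gr_T^{\on{neg}}$) --- it is exactly this matching that makes (a'), i.e.\ the perversity of $\Omega_B$ for $A$ in degree $0$, a non-formal statement. Secondary points are the homogeneity of $\Omega_B$ along the strata, needed to reduce perversity to the one-point fibers, and the exactness of the real-stalk functors on the relevant abelian subcategory used in (b).
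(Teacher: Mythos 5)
Your approach for parts (a), (a') is genuinely different from the paper's and, as written, incomplete. You reduce via the stratum-by-stratum perversity criterion, using factorization to express the $!$- and $*$-restrictions of $\Omega_B$ on each stratum as external products of restrictions to the diagonal substrata $X\hookrightarrow\Conf^{\mu_k}$, whose fibers you identify with weight components of $\on{Inv}_A$ and $\on{Inv}_{A^\vee}$ via \propref{p:dual of Hopf}(a) and the discussion in \secref{sss:!-fiber}. The paper instead records a single topological lemma: on the full subcategory of $\Shv(X^\lambda)$ consisting of complexes locally constant along the diagonal stratification, the ``real stalk'' functor --- $*$-fiber at $\mu\cdot x$ of the $!$-restriction to $X^\lambda_\BR$ --- is conservative and t-exact for the perverse t-structure. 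Combined with \propref{p:dual of Hopf}(b), which computes this real stalk of $\Omega_B$ as the linear dual of the $(-\mu)$-component of $A$, this yields (a), (a'), and (b) in one stroke. The ``main obstacle'' you flag --- matching the cohomological amplitude of the Koszul-dual complexes $\on{Inv}_A$, $\on{Inv}_{A^\vee}$ against stratum dimension and the normalizing shift, uniformly in weight --- is not peripheral bookkeeping; it is exactly the non-formal content of (a') that the real-stalk lemma is designed to finesse, and you do not carry it out. Your proposal as it stands replaces a two-line invocation of that lemma with an open computational estimate.

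For (b), your argument implicitly uses the very lemma you omit: ``injectivity and surjectivity of a map of perverse sheaves detected by the real-stalk functor'' requires that functor to be exact and faithful on the heart, i.e., conservative and t-exact, which is the paper's lemma. So (b) as written depends on the missing ingredient.

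For (c), there is a genuine gap. The t-structure on $\Omega_B\on{-FactMod}$ from \secref{ss:t on fact} is characterized by $\oblv_{\on{Fact}}$ being t-exact into the \emph{perverse} t-structure on $\Shv_{\CG^\Lambda}(\Conf_{\infty\cdot x})$, whereas the dictionary in \secref{ss:Hopf vs factorization} identifies the quantum-side forgetful functor to $\Vect^\Lambda_q$ with a \emph{different} functor on the factorization side: the hyperbolic real stalk ($!$-restriction to $\Conf^{\BR^{\geq0}}_{\infty\cdot x}$ followed by $*$-fiber at $\mu\cdot x$), per \secref{sss:hyperbolic fiber}. To conclude t-exactness of \eqref{e:center to factor} one needs the paper's second topological lemma --- that this hyperbolic stalk functor is t-exact for the perverse t-structure on the subcategory of complexes constructible along the stratification by diagonals and incidence with $x$ --- to bridge these two characterizations. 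Your argument only asserts the forgetful functors ``match weight-by-weight,'' which does not address why the perverse t-structure downstream of $\oblv_{\on{Fact}}$ agrees with the one read off the hyperbolic stalk.
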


\begin{proof}

For a fixed $\lambda\in \Lambda^{\on{neg}}$ consider the full subcategory of $\Shv(X^\lambda)$ that consists of complexes
locally constant along the diagonal stratification. Then the functor that sends an object to the *-fiber at $\mu\cdot x$ 
of its !-restriction to $X^\lambda_\BR$ is conservative and t-exact (in the perverse t-structure).

\medskip

This implies points (a) and (b) in view of \propref{p:dual of Hopf}(b). Point (a') is a particular case of (a). 

\medskip

For point (c) we consider the full subcategory of $\Shv(X^\lambda)$ that consists of complexes
locally constant along the stratification given by diagonals and incidence with $x$. Then on this subcategory, the 
functor that sends an object to the *-fiber at $\mu\cdot x$ of its !-restriction to
$X^\lambda_{\BR^{\geq 0}}$ is t-exact (in the perverse t-structure).

\end{proof}

\ssec{The case of quantum groups}

We will now combine the contents of Sects. \ref{ss:Koszul abstract}-\ref{ss:Hopf vs factorization}, 
and complete the proof of \thmref{t:factorization vs quantum}. 

\sssec{}

We apply the above discussion to the Hopf algebra $A=\fu_q(\cN^+)$. First, we claim that the resulting factorization algebra
$\Conf$ $\Omega_B$ on $\Conf$ identifies with $\Omega^{\on{small}}_q$.  

\medskip

Using \corref{c:exactness Kosz}(b), it suffices to show that
the (twisted) sheaf $\Omega_{A'}$ on $\Conf$ corresponding to $A'=U_q(\cN)^{\on{free}}$ 
(resp., $A'=U_q(\cN)^{\on{co-free}}$) is given by extension by * (resp., !) of $\overset{\circ}\Omega{}^{\on{small}}_q$
along the embedding
\begin{equation}  \label{e:open config}
\overset{\circ}{\on{Conf}}\hookrightarrow \Conf.
\end{equation}
We will prove the assertion regarding $U_q(\cN)^{\on{free}}$; the one about $U_q(\cN)^{\on{co-free}}$ follows similarly 
by applying \propref{p:dual of Hopf}(a). 

\sssec{}

We have to show that !-fibers of $\Omega_B$ for $B=\on{Inv}_A$ with $A=U_q(\cN)^{\on{free}}$ are zero on the
complement to \eqref{e:open config}. By factorization, this is equivalent to showing that !-fibers of $\Omega_B$
are zero at points $\mu\cdot x$ for $\mu$ not being a negative simple root. 

\medskip

By \secref{sss:!-fiber}, we need to show that $\inv_{A}(\sfe)$ lives only in degrees that are negative simple roots. However,
this follows from the fact that $A$ is free as an associative algebra on the generators in degrees equal to simple roots. 

\sssec{}

Thus, the equivalence \eqref{e:E2 vs factorization} translates in our case to the equivalence
$$\Omega^{\on{small}}_q\on{-FactMod}\simeq Z_{\on{Dr},\Vect_q^\Lambda}(\fu_q(\cN^+)\mod^{\on{ren}}),$$
where the right-hand side is by definition $\overset{\bullet}\fu_q(\cG)\mod^{\on{baby-ren}}$.

\medskip

This establishes the equivalence of categories claimed in \thmref{t:factorization vs quantum}. 
The t-exactness property of the above equivalence follows from \corref{c:exactness Kosz}(c). 
The fact that this equivalence maps standards to standards follows from the construction. 

\newpage

\appendix

\section{Whittaker vs Kirillov models}

In this Appendix we will explain a device that replaces the Whittaker model when the Artin-Schreier sheaf 
does not exist, e.g., in the $\ell$-adic context when the ground field has characteristic zero, or in the Betti context. 
This device is called the Kirillov model. 

\ssec{The context}

\sssec{}

Let $\CY$ be an algebraic stack of finite type over the ground field $k$. Let first $\CY$ be equipped with an action of
$\BG_a$. If $k$ is of characteristic zero and we are working with D-modules over if $k$ is of positive characteristic 
and we are working with $\ell$-adic sheaves, we have a well-defined \emph{full subcategory}
$$\Whit(\CY)\subset \Shv(\CY)$$
that consists of objects that are $\BG_a$-invariant against a chosen Artin-Schreier sheaf $\chi$ on $\BG_a$
(in the context of D-modules, $\chi$ is the exponential D-module). 

\medskip

I.e., this is the category whose objects are those $\CF\in  \Shv(\CY)$ for which there \emph{exists} an isomorphism
$$\on{act}^*(\CF)\simeq \chi\boxtimes \CF,$$
whose further *-pullback along 
$$\{0\}\times \CY\overset{i_0}\longrightarrow \BG_a\times \CY$$ is the identity map 
$$\CF\simeq i_0^*\circ \on{act}^*(\CF)\simeq  i_0^*\circ (\chi\boxtimes \CF)\simeq \CF.$$

\sssec{}  \label{sss:ppties Whit}

The subcategory $\Whit(\CY)\subset \Shv(\CY)$ has some favorable properties:

\begin{enumerate}

\item It is compatible with the t-structure (i.e., is preserved by the truncation functors);

\smallskip

\item For a map $f:\CY_1\to \CY_2$, the functors $f^!$ and $f_*$ send the categories 
$\Whit(\CY_1)$ and $\Whit(\CY_2)$ to one another. Furthermore, the partially defined left adjoints of these
functors, i.e., the functors $f_!$ and $f^*$ also send $\Whit(\CY_i)$ to $\Whit(\CY_j)$ on those
objects on which they are defined;

\smallskip

\item Verdier duality (which is defined on $\Shv(\CY)^{\on{loc.c.}}$, see \secref{sss:locally compact})
sends $\Whit(\CY)$ to a similarly defined character for the opposite choice of the Artin-Schreier sheaf.

\end{enumerate} 

\sssec{}

Assume now that we are given an extension of the action of $\BG_a$ on $\CY$ to an action of the
semi-direct product 
$$\BG_m\ltimes \BG_a.$$

In this case we will be able to define another category, denoted $\on{Kir}_*(\CY)$. This category $\on{Kir}_*(\CY)$
will be defined in an arbitrary sheaf-theoretic context (in that it does not require the existence of the Artin-Schreier
sheaf). 

\medskip

In the context when Artin-Schreier is defined, we will have a canonical equivalence 
$$\on{Kir}_*(\CY)\simeq \Whit(\CY).$$

\medskip

In addition, in the context of constructible sheaves or holonomic D-modules, we will be able to define another category,
denoted $\on{Kir}(\CY)_!$, and we will have also an equivalence
$$\on{Kir}_!(\CY)\simeq \on{Kir}_*(\CY).$$

\medskip

Furthermore, restricting to locally compact objects, the Verdier duality functor on $\Shv(\CY)^{\on{loc.c}}$ 
induces an equivalence
$$(\on{Kir}_!(\CY)^{\on{loc.c}})^{\on{op}}\to \on{Kir}_*(\CY)^{\on{loc.c}}.$$

\begin{rem}
That said, the category $\on{Kir}_*(\CY)$ (or $\on{Kir}_!(\CY)$) does not enjoy the favorable properties of $\Whit(\CY)$
mentioned in \secref{sss:ppties Whit}. 

\medskip

Most importantly, for a map $f:\CY_1\to \CY_2$, the functors $f^!$ and $f_*$
do send $\on{Kir}_*(\CY_1)$ and $\on{Kir}_*(\CY_2)$ to one another, but the functors $f_!$ and $f^*$ do not. The situation
with $\on{Kir}_!(\CY)$ is the opposite.
\end{rem} 

\sssec{}

For the purposes of this work, we take the stacks $\CY$ to be the following ones: 

\medskip 

Recall the subschemes $Y_j\subset \Gr^{\omega^\rho}_{G,x}$ and the subgroups $N_k\subset \fL(N)^{\omega^\rho}_x$ 
of \secref{ss:def Whit}, so that the action of $N_k$ on $Y_j$ factors through some finite-dimensional quotient $N_{k,l}$.

\medskip

Consider the action of $\BG_m$ on $\fL(N)^{\omega^\rho}_x$ obtained from the adjoint action of
$T_{\on{ad}}\subset \fL^+(T)_x$ on $\fL(N)^{\omega^\rho}_x$ and the cocharacter $\rho:\BG_m\to T_{\on{ad}}$. 

\medskip

With no restriction of generality, we can assume that the subgroup $N_k$ is preserved by this action, as well
as the kernel of the projection to $N_{k,l}$. Finally, we can assume that the restriction of the canonical homomorphism
$\fL(N)^{\omega^\rho}_x\to \BG_a$ to $N_k$ factors through $N_{k,l}$.  Let $N'_{k,l}$ be the kernel of the resulting
homomorphism $N_{k,l}\to \BG_a$. 

\medskip

Set
$$\CY:=N'_{k,l}\backslash Y_j.$$

By construction, $\CY$ carries a residual action of $\BG_a$. In addition, we 
note that the action of $T\subset \fL^+(T)_x$ on $\Gr^{\omega^\rho}_{G,x}$ also factors through $T_{\on{ad}}$. So
we obtain a well-defined action of $\BG_m$ on $Y_j$ via $\rho$, and hence on $\CY$.

\medskip

It is easy to, however, that the above $\BG_a$- and $\BG_m$-actions on $\CY$ combine to an action of the semi-direct
product $\BG_m\ltimes \BG_a$.

\ssec{Definition of the Kirillov model}

\sssec{}

Consider the category $\Shv(\CY)^{\BG_m}$ and its full subcategory $\Shv(\CY)^{\BG_m\ltimes \BG_a}$. The forgetful functor
\begin{equation} \label{e:semidir}
\Shv(\CY)^{\BG_m\ltimes \BG_a}\hookrightarrow \Shv(\CY)^{\BG_m}
\end{equation}
admits a right adjoint, denoted $\on{Av}^{\BG_a}_*$, which makes the following diagram commute:
$$
\CD 
\Shv(\CY)^{\BG_m}  @>{\on{Av}^{\BG_a}_*}>>  \Shv(\CY)^{\BG_m\ltimes \BG_a}  \\
@VVV   @VVV   \\
\Shv(\CY)  @>{\on{Av}^{\BG_a}_*}>>  \Shv(\CY)^{\BG_a},
\endCD
$$
where the vertical arrows are the forgetful functors. 

\medskip

We let $\on{Kir}_*(\CY)$ be the full subcategory of $\Shv(\CY)^{\BG_m}$ equal to the kernel of the functor $\on{Av}^{\BG_a}_*$.

\sssec{}

Note that the embedding 
$$\on{Kir}_*(\CY)\hookrightarrow \Shv(\CY)^{\BG_m}$$
admits a \emph{left} adjoint, given by
\begin{equation} \label{e:left adj *}
\CF\mapsto \on{coFib}(\on{Av}^{\BG_a}_*(\CF)\to \CF).
\end{equation}

\sssec{}

Similarly, in the constructible situation, the functor \eqref{e:semidir} admits a left adjoint, 
denoted $\on{Av}^{\BG_a}_!$, which makes the following diagram commute:
$$
\CD 
\Shv(\CY)^{\BG_m}  @>{\on{Av}^{\BG_a}_!}>>  \Shv(\CY)^{\BG_m\ltimes \BG_a}  \\
@VVV   @VVV   \\
\Shv(\CY)  @>{\on{Av}^{\BG_a}_!}>>  \Shv(\CY)^{\BG_a}. 
\endCD
$$

\medskip

We let $\on{Kir}_!(\CY)$ be the full subcategory of $\Shv(\CY)^{\BG_m}$ equal to the kernel of the functor $\on{Av}^{\BG_a}_!$.

\sssec{}

Note that now the embedding 
$$\on{Kir}_!(\CY)\hookrightarrow \Shv(\CY)^{\BG_m}$$
admits a \emph{right} adjoint, given by
\begin{equation} \label{e:right adj !}
\CF\mapsto \on{Fib}(\CF\to \on{Av}^{\BG_a}_!(\CF)).
\end{equation}

\sssec{}

It is clear that the Verdier duality functor
$$\left((\Shv(\CY)^{\BG_m})^{\on{loc.c}}\right)^{\on{op}}\to (\Shv(\CY)^{\BG_m})^{\on{loc.c}}$$
sends 
$$\on{Kir}_!(\CY)^{\on{loc.c}}:=\on{Kir}_!(\CY) \cap (\Shv(\CY)^{\BG_m})^{\on{loc.c}}\subset (\Shv(\CY)^{\BG_m})^{\on{loc.c}}$$
to 
$$\on{Kir}_*(\CY)^{\on{loc.c}}:=\on{Kir}_*(\CY) \cap (\Shv(\CY)^{\BG_m})^{\on{loc.c}}\subset (\Shv(\CY)^{\BG_m})^{\on{loc.c}}$$
and vice versa. 

\ssec{The Whittaker vs Kirillov equivalence}

\sssec{}

Let us be again in the situation when the Artin-Schreier sheaf is defined. Consider the functor
\begin{equation} \label{e:Whit to Kir prel}
\Whit(\CY)\to \Shv(\CY)^{\BG_m}
\end{equation}
equal to
$$\Whit(\CY)\hookrightarrow \Shv(\CY) \overset{\on{Av}^{\BG_m}_*}\longrightarrow \Shv(\CY)^{\BG_m},$$
where the first arrow is the forgetful functor. 

\medskip

It is easy to see that the image of \eqref{e:Whit to Kir prel} belongs to $\on{Kir}_!(\CY)\subset \Shv(\CY)^{\BG_m}$: this follows from
the fact that the diagram
$$
\CD
\Shv(\CY) @>{\on{Av}^{\BG_m}_*}>> \Shv(\CY)^{\BG_m}  \\
@V{\on{Av}^{\BG_a}_*}VV  @VV{\on{Av}^{\BG_a}_*}V   \\
\Shv(\CY)^{\BG_a} @>{\on{Av}^{\BG_m}_*}>> \Shv(\CY)^{\BG_m}
\endCD
$$
commutes.

\medskip

Hence, we obtain a functor
\begin{equation} \label{e:Whit to Kir}
\Whit(\CY)\to \on{Kir}_*(\CY)
\end{equation}

We claim:

\begin{prop} \label{p:Whit Kir}  \hfill

\smallskip

\noindent{\em(a)} The functor \eqref{e:Whit to Kir} is an equivalence.

\smallskip

\noindent{\em(b)} The (partially defined) functor $\on{Av}^{\BG_a,\chi}_!$ left adjoint to
the embedding $\Whit(\CY)\to \Shv(\CY)$ is defined and maps isomorphically to $\on{Av}^{\BG_a,\chi}_*[2]$
on the essential image of the forgetful functor $\Shv(\CY)^{\BG_m}\to \Shv(\CY)$.

\smallskip

\noindent{\em(c)} The resulting functor
$$\Shv(\CY)^{\BG_m}\to \Shv(\CY) \overset{\on{Av}^{\BG_a,\chi}_!}\longrightarrow \Whit(\CY)$$
factors as
$$\Shv(\CY)^{\BG_m}\overset{\text{\eqref{e:left adj *}}}\longrightarrow \on{Kir}_*(\CY)\to \Whit(\CY),$$
and the resulting functor $\on{Kir}_*(\CY)\to \Whit(\CY)$ is the inverse of \eqref{e:Whit to Kir}. 

\end{prop}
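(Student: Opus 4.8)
\textbf{Proof proposal for \propref{p:Whit Kir}.}

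The plan is to treat the three parts together, since they are really a single package of assertions relating the averaging functors against the Artin--Schreier sheaf to the $\BG_m$-equivariant averaging functors. First I would recall the basic mechanism: on $\BG_a$ acted on by $\BG_m$ via dilations, the Fourier--Deligne transform interchanges the category $\Shv(\on{pt})^{\BG_m\ltimes\BG_a}$ (i.e., $\BG_a$-monodromic sheaves that are $\BG_m$-equivariant) with the Whittaker category $\Shv(\BG_a)^{\BG_a,\chi}$, and more precisely, on $\BG_m$-equivariant objects the functors $\on{Av}^{\BG_a}_*$ and $\on{Av}^{\BG_a,\chi}_!$ become mutually ``complementary'' idempotent(-like) operations. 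The cleanest way to organize this is to work on $\CY$ with its $\BG_m\ltimes\BG_a$-action and observe that every object of $\Shv(\CY)^{\BG_m}$ is automatically $\BG_a$-\emph{monodromic} (because the $\BG_m$-action contracts $\BG_a$ to the origin); this is the key finiteness input that makes both $\on{Av}^{\BG_a}_*$ and $\on{Av}^{\BG_a,\chi}_!$ well-behaved on the image of the forgetful functor $\Shv(\CY)^{\BG_m}\to\Shv(\CY)$.

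For part (b): on a $\BG_a$-monodromic object $\CF$, I would show $\on{Av}^{\BG_a,\chi}_!(\CF)$ is defined and the natural map $\on{Av}^{\BG_a,\chi}_!(\CF)\to\on{Av}^{\BG_a,\chi}_*(\CF)[2]$ is an isomorphism; this is the standard ``clean extension'' statement for the product of $\chi$ with a monodromic sheaf along the $\BG_a$-orbit, and it follows because the relevant $\on{Ext}$-obstruction ($H^*_c$ of $\chi$ twisted by a monodromic local system on $\BG_a$) vanishes except in the middle degree, where it is one-dimensional. For part (a): I would produce a candidate inverse to \eqref{e:Whit to Kir} by sending $\CF\in\on{Kir}_*(\CY)$ to $\on{Av}^{\BG_a,\chi}_!(\oblv(\CF))$, using (b) to know it lands in $\Whit(\CY)$, and then check the two round-trip composites are the identity. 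One direction uses that $\on{Av}^{\BG_m}_*\circ\oblv$ is the identity on $\BG_m$-equivariant objects up to the correction supplied by the left adjoint \eqref{e:left adj *}, and that this correction kills exactly $\on{Av}^{\BG_a}_*$, i.e., the image of $\Shv(\CY)^{\BG_m\ltimes\BG_a}$; the other direction uses that $\on{Av}^{\BG_a,\chi}_*\circ\on{Av}^{\BG_a,\chi}_!\simeq\on{id}$ on $\Whit(\CY)$ after the shift, which is again the clean-extension fact from (b). Part (c) is then a formal consequence: the functor $\Shv(\CY)^{\BG_m}\to\Whit(\CY)$ given by $\oblv$ followed by $\on{Av}^{\BG_a,\chi}_![-?]$ annihilates $\on{Av}^{\BG_a}_*$-image objects by (b) (since $\on{Av}^{\BG_a,\chi}_!\circ\on{Av}^{\BG_a}_*=0$ on monodromic objects — the two characters $\chi$ and the trivial one are distinct), hence factors through the localization \eqref{e:left adj *} onto $\on{Kir}_*(\CY)$, and the resulting functor is inverse to \eqref{e:Whit to Kir} by the computation in (a).

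The main obstacle I expect is bookkeeping around cohomological shifts and the precise compatibility of the four averaging functors ($\on{Av}^{\BG_a}_*$, $\on{Av}^{\BG_a}_!$, $\on{Av}^{\BG_a,\chi}_*$, $\on{Av}^{\BG_a,\chi}_!$) with the forgetful functor from $\BG_m$-equivariant sheaves — i.e., checking that all the squares involving $\on{Av}^{\BG_m}_*$ or $\on{Av}^{\BG_m}_!$ and the $\BG_a$-averagings actually commute, and that the partially-defined left adjoints are genuinely defined on the monodromic objects at hand. None of this is deep, but it requires care because $\on{Av}^{\BG_a}_*$ is not continuous in general and $\on{Av}^{\BG_a,\chi}_!$ is only partially defined; the monodromicity of $\BG_m$-equivariant objects is exactly what rescues both. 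A secondary point is that $\CY=N'_{k,l}\backslash Y_j$ is a stack rather than a scheme, so one should make sure the contraction principle and Fourier-transform statements are applied $\BG_m$-equivariantly on a presentation — but this is routine since everything in sight is pulled back from the quotient by a smooth affine group. I would also remark that, passing to the limit over $j,k,l$, \propref{p:Whit Kir} yields the promised equivalence $\on{Kir}_*(\Gr^{\omega^\rho}_{G,x})\simeq\Whit_{q,x}(G)$ compatibly with the descriptions in \secref{ss:def Whit}, though that globalization is not part of the statement being proved here.
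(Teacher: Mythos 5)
Your proposal is correct, and it uses essentially the same mechanism as the paper: the paper's entire proof is a one-sentence appeal to the Fourier--Deligne transform picture (interpreting the convolution action as a pointwise-tensor action of $\Shv(\BA^1)$), and your write-up simply unpacks what that invocation actually involves — namely that $\BG_m$-equivariance forces $\BG_a$-monodromicity (contraction to the origin), that on monodromic objects the averaging $\on{Av}^{\BG_a,\chi}_!$ is defined and agrees with $\on{Av}^{\BG_a,\chi}_*[2]$ (clean extension), and that $\on{Av}^{\BG_a,\chi}_!\circ\on{Av}^{\BG_a}_*=0$ since $\chi$ is nontrivial.
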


\begin{proof}

The proof follows from the Fourier-Deligne transform picture: we interpret the *-convolution action of
$\Shv(\BG_m)$ on $\Shv(\CY)$ as an action of the monoidal category $\Shv(\BA^1)$ with respect to the
pointwise $\sotimes$ tensor product. 

\end{proof}

\sssec{}

In the constructible situation we have a similarly defined equivalence
\begin{equation} \label{e:Whit to Kir dual}
\Whit(\CY)\to \on{Kir}_!(\CY)
\end{equation}
using the dual functors, i.e., 
$$\Whit(\CY)\hookrightarrow \Shv(\CY) \overset{\on{Av}^{\BG_m}_!}\longrightarrow \Shv(\CY)^{\BG_m}.$$

The inverse functor makes the following diagram commutative
$$
\CD
\Shv(\CY)^{\BG_m}  @>{\text{\eqref{e:right adj !}}}>>   \on{Kir}_!(\CY)  \\
@VVV   @VVV   \\
\Shv(\CY)  @>{\on{Av}^{\BG_a,\chi}_*}>>  \Whit(\CY). 
\endCD
$$

\ssec{The *-Kirillov vs !-Kirillov equivalence}

\sssec{}

Let us first be in the constructible situation when the Artin-Schreier sheaf is defined. Combining the equivalences
\eqref{e:Whit to Kir} and \eqref{e:Whit to Kir dual}, we obtain an equivalence
\begin{equation} \label{e:Kir !*}
\on{Kir}_*(\CY) \simeq  \on{Kir}_!(\CY). 
\end{equation}

Let us describe the corresponding functors
$$\on{Kir}_*(\CY) \leftrightarrow  \on{Kir}_!(\CY)$$
explicitly.

\sssec{}

Note that for any $\CG\in \Shv(\BG_a)^{\BG_m}$, we have the well-defined endo-functors of $\Shv(\CY)^{\BG_m}$
$$\CF\mapsto \CG\overset{*}\star \CF \text{ and } \CF\mapsto \CG\overset{!}\star \CF$$
intertwined by the forgetful functor $\Shv(\CY)^{\BG_m}\to \Shv(\CY)$ with the same-named endo-functors of $\Shv(\CY)$.

\sssec{}  \label{sss:descr Kir !*}

By unwinding the definitions, we obtain that the resulting functor
$$\on{Kir}_*(\CY) \to  \on{Kir}_!(\CY)$$
fits into the commutative diagram
$$
\CD
\Shv(\CY)^{\BG_m}  @>{j_*(\sfe)[3] \overset{!}\star-}>> \Shv(\CY)^{\BG_m}   \\
@V{\text{\eqref{e:left adj *}}}VV     @AAA   \\
\on{Kir}_*(\CY) @>>> \on{Kir}_!(\CY).
\endCD
$$

The functor 
$$\on{Kir}_*(\CY) \leftarrow  \on{Kir}_!(\CY)$$
fits into the commutative diagram
$$
\CD
\Shv(\CY)^{\BG_m}  @<{j_!(\sfe)[-1] \overset{*}\star-}<<  \Shv(\CY)^{\BG_m}   \\
@AAA     @VV{\text{\eqref{e:right adj !}}}V   \\
\on{Kir}_*(\CY) @<<< \on{Kir}_!(\CY).
\endCD
$$

In the above formulas $j$ denotes the open embedding 
$$\BG_a-0\hookrightarrow \BG_a,$$
and $\sfe\in \Shv(\BG_a-0)$ stands for the constant sheaf on $\BG_a-0$.

\sssec{}

Let us now be in the constructible situation, but where the Artin-Schreier sheaf is not necessarily defined. Note that the endo-functors of
$\Shv(\CY)^{\BG_m}$ given by $j_*(\sfe)[3] \overset{!}\star-$ and $j_!(\sfe)[3] \overset{*}\star-$ respectively, define a pair of mutually 
adjoint functors
\begin{equation} \label{e:Kir !* expl}
\on{Kir}_*(\CY) \rightleftarrows  \on{Kir}_!(\CY)
\end{equation}

We claim:

\begin{prop} 
The functors \eqref{e:Kir !* expl} are mutually inverse equivalences.
\end{prop}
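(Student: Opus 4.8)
The plan is to deduce the statement formally from the adjunctions already recorded in the Appendix, together with the unwinding carried out in \secref{sss:descr Kir !*}. Write $\bC := \Shv(\CY)^{\BG_m}$ and $\bA := \Shv(\CY)^{\BG_m\ltimes\BG_a}$, with fully faithful inclusion $\iota_\bA:\bA\hookrightarrow\bC$ (fully faithful because $\BG_a$ is unipotent), right adjoint $\on{Av}^{\BG_a}_*$ and left adjoint $\on{Av}^{\BG_a}_!$. By definition $\on{Kir}_*(\CY) = \ker(\on{Av}^{\BG_a}_*)$ and $\on{Kir}_!(\CY) = \ker(\on{Av}^{\BG_a}_!)$; denote their inclusions by $\iota_*,\iota_!$, and write $\Pi_*$ for the left adjoint of $\iota_*$ (the functor \eqref{e:left adj *}) and $\Pi_!$ for the right adjoint of $\iota_!$ (the functor \eqref{e:right adj !}). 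The first thing I would record is that \eqref{e:left adj *} and \eqref{e:right adj !} are literally defined by two functorial fibre sequences in $\bC$, namely $\iota_\bA\on{Av}^{\BG_a}_*\CF \to \CF \to \iota_*\Pi_*\CF$ and $\iota_!\Pi_!\CF \to \CF \to \iota_\bA\on{Av}^{\BG_a}_!\CF$, with middle-to-right term in $\on{Kir}_*(\CY)$, resp.\ fibre in $\on{Kir}_!(\CY)$, by construction. From fully faithfulness of $\iota_\bA,\iota_*,\iota_!$ one gets the elementary identities $\on{Av}^{\BG_a}_*\iota_\bA\simeq\on{id}\simeq\on{Av}^{\BG_a}_!\iota_\bA$, hence $\Pi_*\circ\iota_\bA = 0 = \Pi_!\circ\iota_\bA$ (apply the two fibre sequences to objects of $\bA$), and $\Pi_*\circ\iota_*\simeq\on{id}$, $\Pi_!\circ\iota_!\simeq\on{id}$.

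With this in hand the equivalence is immediate. I would consider the functors $\Pi_!\circ\iota_*:\on{Kir}_*(\CY)\to\on{Kir}_!(\CY)$ and $\Pi_*\circ\iota_!:\on{Kir}_!(\CY)\to\on{Kir}_*(\CY)$. For $\CG\in\on{Kir}_*(\CY)$, applying the exact functor $\Pi_*$ to the second fibre sequence with $\CF=\iota_*\CG$ and using $\Pi_*\circ\iota_\bA=0$ yields $\Pi_*\iota_!\Pi_!\iota_*\CG\simeq\Pi_*\iota_*\CG\simeq\CG$, i.e.\ $(\Pi_*\circ\iota_!)\circ(\Pi_!\circ\iota_*)\simeq\on{id}$; symmetrically, applying $\Pi_!$ to the first fibre sequence with $\CF=\iota_!\CG'$ gives the other composition $\simeq\on{id}$. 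Thus $\Pi_!\circ\iota_*$ and $\Pi_*\circ\iota_!$ are mutually inverse equivalences, and in particular adjoint. It then remains to identify these with the functors of \eqref{e:Kir !* expl}, which is exactly the content of \secref{sss:descr Kir !*}: the endofunctor $j_*(\sfe)[3]\overset{!}\star-$ of $\bC$ equals $\iota_!\circ\Pi_!$ (= \eqref{e:right adj !}) and $j_!(\sfe)[3]\overset{*}\star-$ equals $\iota_*\circ\Pi_*$ (= \eqref{e:left adj *}); restricting the source of the former to $\on{Kir}_*(\CY)$, where $\Pi_*\circ\iota_*=\on{id}$ so no information is lost, produces precisely $\iota_!\circ(\Pi_!\circ\iota_*)$, and likewise for the latter. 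The point I would stress is that this identification uses only the distinguished triangles $\delta_0\to\sfe_{\BG_a}\to j_*(\sfe)$ and $j_!(\sfe)\to\sfe_{\BG_a}\to\delta_0$ on $\BG_a$ (with $\delta_0$ the skyscraper at the origin and $\sfe_{\BG_a}$ the constant sheaf), together with $\delta_0\overset{*}\star-\simeq\on{id}\simeq\delta_0\overset{!}\star-$ and $\sfe_{\BG_a}\overset{*}\star-\simeq(\text{forget})\circ\on{Av}^{\BG_a}_*$, $\sfe_{\BG_a}\overset{!}\star-\simeq\big((\text{forget})\circ\on{Av}^{\BG_a}_!\big)[-2]$ — convolution with the constant sheaf being $\BG_a$-averaging up to the cohomological shift dictated by self-duality of $\BA^1$. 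None of this invokes the Artin--Schreier sheaf, so it is valid in an arbitrary sheaf theory, and when $\chi$ does exist it is the computation already performed in \secref{sss:descr Kir !*}.

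The main obstacle, as in that section, is not conceptual but the shift bookkeeping in this last identification: one must check that the convolution functors $j_*(\sfe)[3]\overset{!}\star-$ and $j_!(\sfe)[3]\overset{*}\star-$, with the shift $[3]$ exactly as written, coincide on the nose with \eqref{e:right adj !} and \eqref{e:left adj *}, so that they genuinely define functors $\on{Kir}_*(\CY)\to\on{Kir}_!(\CY)$ and back (and not merely up to a cohomological shift), and that the two mutually-inverse identities above hold without a stray twist. Everything else is formal manipulation with the adjunctions $\on{Av}^{\BG_a}_!\dashv\iota_\bA\dashv\on{Av}^{\BG_a}_*$ and the adjoints \eqref{e:left adj *}, \eqref{e:right adj !} recorded in the Appendix; the asserted adjunction between the two functors of \eqref{e:Kir !* expl} is then automatic, being an adjunction between mutually inverse equivalences, and can alternatively be traced back to $j_!\dashv j_*$ on $\BG_a$ together with Verdier duality on $\BA^1$ (which is also what pins down the shift $[3]$).
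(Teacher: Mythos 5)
Your argument is genuinely different from the paper's. The paper's proof is a transfer argument: it invokes the Lefschetz principle to reduce to the Betti setting over $\BC$ with $\BC$-coefficients, then uses Riemann--Hilbert to embed into holonomic D-modules, where the Artin--Schreier sheaf is available and the Fourier-transform analysis of \secref{sss:descr Kir !*} establishes the claim. You instead give an intrinsic recollement argument: with $\bA = \Shv(\CY)^{\BG_m\ltimes\BG_a}$ sitting fully faithfully inside $\bC = \Shv(\CY)^{\BG_m}$ with both adjoints, you observe that $\on{Kir}_*(\CY)$ and $\on{Kir}_!(\CY)$ are the right and left orthogonals of $\bA$ (i.e., the cofiber and fiber of the two localization sequences), and the mutation functors $\Pi_!\iota_*$ and $\Pi_*\iota_!$ between them are mutually inverse by a clean manipulation of the two fiber sequences using $\Pi_*\iota_\bA = 0 = \Pi_!\iota_\bA$. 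That part of the argument is correct, works in any sheaf theory, and is conceptually more transparent than the Lefschetz/RH detour: what it buys is a proof that lives entirely inside the fixed sheaf-theoretic context, at the cost of having to redo the content of \secref{sss:descr Kir !*} without the aid of the Fourier--Deligne transform.

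The one step you have not carried out --- and which you correctly flag as the remaining obstacle --- is the identification $j_*(\sfe)[3]\overset{!}\star- \simeq \iota_!\Pi_!$ and $j_!(\sfe)[3]\overset{*}\star- \simeq \iota_*\Pi_*$ on the nose, with the shift $[3]$ as stated in \eqref{e:Kir !* expl}. This is where the content lies: the proposition is about specific convolution functors, and without pinning down the normalization one has only shown that \emph{some} shift of each convolution is the canonical mutation. In that direction, note that at least one of the distinguished triangles as you wrote them needs adjusting: the open/closed sequence for $j_*$ on $\BA^1$ is $i_*i^!\sfe_{\BG_a} \to \sfe_{\BG_a} \to j_*(\sfe)$, and $i^!\sfe_{\BG_a} = \sfe[-2]$, so if $\delta_0$ denotes $i_*\sfe$ (as your $\delta_0\overset{*}\star- \simeq \on{id}$ suggests) the triangle is $\delta_0[-2]\to\sfe_{\BG_a}\to j_*(\sfe)$, and this extra $[-2]$ interacts with your claimed $\sfe_{\BG_a}\overset{!}\star- \simeq (\mathrm{forget})\circ\on{Av}^{\BG_a}_![-2]$ in a way that has to be reconciled with the $[3]$. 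You are aware of this, but the verification is not routine: it is exactly the cohomological-shift content that the paper gets ``for free'' from the Fourier picture, and it should be spelled out before the argument can replace the Lefschetz/RH proof.
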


\begin{proof}

By Lefschetz principle, we can reduce to the situation when $k=\BC$ and the sheaf theory is that
of constructible sheaves in the classical topology with coefficients in $\sfe=\BC$. In the latter case,
we can apply Riemann-Hilbert and thus embed our situation into that of holonomic D-modules.
Now the assertion follows from \secref{sss:descr Kir !*} above.

\end{proof}

\newpage

\end{document}